%%%%%%%%%%%%%%%%%%%%%%%%%%%%%%%%%%%%%%%%%%%%%%%%%%%%%%%%%%%%%%%%%%%%%%%%
% A Bundle Gerbe Construction of a Spinor Bundle
% from the Smooth Free Loop of a Vector Bundle by Stuart Ambler
% Last Change:  Sunday, July 1, 2012  07:13
%%%%%%%%%%%%%%%%%%%%%%%%%%%%%%%%%%%%%%%%%%%%%%%%%%%%%%%%%%%%%%%%%%%%%%%%

\documentclass[final,noinfo,textrefs,12pt]{nddiss2e}
%\documentclass[review,noinfo,textrefs,12pt]{nddiss2e}

%nddiss2e causes some packages to be loaded
%\usepackage{etex,amsmath,amsfonts,amssymb,amsthm,amsxtra,natbib,url}
\usepackage{etex,amsmath,amsthm}
\usepackage{dcpic,pictex}
\usepackage{xspace}
\usepackage{graphicx}
\usepackage{makeidx}
\makeindex

\usepackage{ifthen}
\newboolean{doexecute}
\setboolean{doexecute}{true}

% stuart's LaTeX theorem environment macros, rev. 09 June 2009
% last rev Wednesday, March 31, 2010  23:59

\theoremstyle{definition}
\newtheorem{ass}[equation]{Assumption}
\newtheorem{cor}[equation]{Corollary}
\newtheorem{defn}[equation]{Definition}
\newtheorem{eg}[equation]{Example}

\newtheorem{lem}[equation]{Lemma}
\newtheorem{prop}[equation]{Proposition}
\newtheorem{thm}[equation]{Theorem}

\theoremstyle{remark}

\newtheorem{notat}[equation]{Notation}
\newtheorem{note}[equation]{Note}
 
% stuart's LaTeX preamble macros.  rev dates after June 2009
% rev 07 Feb 2010 add \Frechet - requires xspace package
% rev 12 Feb 2010 change \id to \ident (\id seems used by dcpic)
% rev 20 Feb 2010 add open ball to \B (other changes in interim)
% rev 31 Mar 2010 and many other times, no new packages though
% rev 09 May 2010
% rev 13 Feb 2011 added Clifford Algebra for \cl, $C^{*}$ for \Cl
% rev 17 Feb 2011 added \Proj
% rev 18 Feb 2011 added \csim, removed \cequiv (same meaning)
% rev 31 Mar 2011 added \ev
% rev 05 Apr 2011 added \qone, \binphantom, \relphantom
% rev 02 Oct 2011 removed sstar
% rev 15 Oct 2011 changed \contract, hard to see, added \mathrel
% rev 23 Oct 2011 removed csim, ccong, cifrom, cito
% rev 19 Nov 2011 added pcoker, pim, scoker

\newcommand{\binphantom}[1]{\mathbin{\phantom{#1}}}

    %something to do
\newcommand{\Frechet}{Fr\'{e}chet\xspace}

\newcommand{\CC}{\mathbb{C}}      %complex numbers
\newcommand{\QH}{\mathbb{H}}      %quaternions
\newcommand{\qone}{{}\mathbf{1}}  %quaternion 1
\newcommand{\qi}{{}\mathbf{i}}    %quaternion i
\newcommand{\qj}{{}\mathbf{j}}    %quaternion j
\newcommand{\qk}{{}\mathbf{k}}    %quaternion k
\newcommand{\NN}{\mathbb{N}}      %natural numbers
\newcommand{\PP}{\mathbb{P}}      %projective space
\newcommand{\QQ}{\mathbb{Q}}      %rational numbers
\newcommand{\RR}{\mathbb{R}}      %real numbers
\newcommand{\ZZ}{\mathbb{Z}}      %integers

                      %slash prev char
                    %Dirac operator

\newcommand{\contract}{\mathrel{\lrcorner}}         %contraction
\newcommand{\cross}{\times}                         %cartesian prod
                     %cup product
\newcommand{\dirsum}{\oplus}                        %direct sum
\newcommand{\isomfrom}{\overset{\sim}{\leftarrow}}  %(l) isomorphism
\newcommand{\isomto}{\overset{\sim}{\rightarrow}}   %(r) isomorphism
\newcommand{\tensor}{\otimes}                       %tensor product
\newcommand{\Tensor}{\bigotimes}                    %tensor product

\newcommand{\abs}[1]{\lvert#1 \rvert}               %absolute value
              %ceiling
\newcommand{\norm}[1]{\lVert#1 \rVert}              %norm
                 %parallel trans.
\newcommand{\st}{\mid}                              %such that

\DeclareMathOperator{\Aut}{Aut}         %group of automorphisms
\DeclareMathOperator{\B}{B}             %bounded morphisms algebra
                                        %or open ball
\DeclareMathOperator{\CH}{\check{H}}    %\v{C}ech (co)homology
\DeclareMathOperator{\cl}{cl}           %closure or Clifford algebra
\DeclareMathOperator{\Cl}{Cl}           %$C^{*}$ Clifford algebra
     %set of Cliff. alg. modules
     %codimension
\DeclareMathOperator{\coker}{coker}     %cokernel
\DeclareMathOperator{\Det}{Det}         %determinant
\DeclareMathOperator{\Diff}{Diff}       %set of diffeomorphisms
%\DeclareMathOperator{\dim}{dim}        %dimension - already defined
 %distance
         %set of endomorphisms
\DeclareMathOperator{\ev}{ev}           %evaluation map
         %Ext
\DeclareMathOperator{\F}{F}             %Fock space
\DeclareMathOperator{\GL}{GL}           %general linear group
           %Lie algebra of GL
\DeclareMathOperator{\Gr}{Gr}           %Grassmannian
\DeclareMathOperator{\HH}{H}            %(co)homology
 % Sobolev half-diff
         %holonomy
\DeclareMathOperator{\Hom}{Hom}         %set of morphisms
\DeclareMathOperator{\Homeo}{Homeo}     %set of homeomorphisms
\DeclareMathOperator{\Hot}{\pi}         %homotopy group
          %Hilbert space of
                                        %Hilbert-Schmidt operators
\DeclareMathOperator{\ident}{id}        %identity morphism
\DeclareMathOperator{\im}{im}           %image = range
\DeclareMathOperator{\incl}{incl}       %inclusion
       %interior
\DeclareMathOperator{\inv}{inv}         %inverse
       %set of isomorphisms or
                                        %isometries, depending on
                                        %context
%\DeclareMathOperator{\ker}{ker}        %kernel - already defined
\DeclareMathOperator{\Lagr}{Lagr}       %set of Lagrangian subspaces
   %length (of a curve)
             %set of matrices
         %maps from 1st arg to 2nd
\DeclareMathOperator{\Mor}{Mor}         %morphisms of a category
\DeclareMathOperator{\Obj}{Obj}         %objects of a category
\DeclareMathOperator{\Orth}{O}          %orthogonal group
\DeclareMathOperator{\pcoker}{Pcoker}   %presheaf cokernel
\DeclareMathOperator{\pim}{Pim}         %presheaf image
         %pin group
       %Presheaf kernel
         %polarization class
        %projective (argument)
        %projective orthogonal group
\DeclareMathOperator{\PU}{\PP U}        %projective unitary group
         %range = image
\DeclareMathOperator{\rank}{rank}       %rank
\DeclareMathOperator{\scoker}{Scoker}   %sheaf cokernel
\DeclareMathOperator{\sign}{sign}       %+1 arg > 0, -1 arg < 0; 0
\DeclareMathOperator{\Sym}{\mathbb{S}}  %symmetric space
           %special linear group
\DeclareMathOperator{\SO}{SO}           %special orthogonal group
           %Lie algebra of SO
       %span u.case avoid conflict
\DeclareMathOperator{\Symp}{Sp}         %symplectic group
\DeclareMathOperator{\Spin}{Spin}       %spin group
       %spectrum
   %string group
         %Tor
           %trace
\DeclareMathOperator{\SU}{SU}           %special unitary group
\DeclareMathOperator{\UU}{U}            %unitary group
\DeclareMathOperator{\US}{\mathbb{U}}   %space of unitary structures
             %center of a group

\begin{document}

\frontmatter

\title{A BUNDLE GERBE CONSTRUCTION OF A SPINOR BUNDLE FROM THE SMOOTH FREE LOOP OF A VECTOR BUNDLE}
\author{Stuart Ambler}
\work{Dissertation}
\degaward{%\vskip 24pt
Doctor of Philosophy
%\\in\\Mathematics
}
\advisor{Stephan Stolz}
\department{Mathematics}
\degdate{April 2012}
\maketitle

\copyrightholder{Stuart Ambler}
\copyrightyear{2012}
\makecopyright

\begin{abstract}
A bundle gerbe is constructed from an oriented smooth vector bundle of even rank with a fiberwise inner product, over a compact connected orientable smooth manifold with Riemannian metric.  From a trivialization of the bundle gerbe is constructed an irreducible Clifford module bundle, a spinor bundle over the smooth free loop space of the manifold.

First, a Clifford algebra bundle over the loop space is constructed from the vector bundle.  A polarization class bundle is constructed, choosing continuously over each point of the loop space a polarization class of Lagrangian subspaces of the complexification of the real vector space from which the Clifford algebra is made.  Being unable to choose a Lagrangian subspace continuously from the polarization class over each point, the thesis constructs a bundle gerbe over the loop space of the base manifold to encode over each loop all such subspaces, along with the isomorphisms between the Fock spaces made from them, resulting from their being in the same polarization class.

The vanishing of the Dixmier-Douady class of the bundle gerbe implies that the latter has a trivialization, from which is constructed a spinor bundle.
\end{abstract}

\renewcommand{\dedicationname}{Dedication}
\begin{dedication}
To whoever reads it.
\end{dedication}

\setcounter{tocdepth}{2}
\tableofcontents

\begin{symbols}
  \sym{\CC}{complex numbers}
  \sym{\CC^{\cross}}{nonzero complex numbers}
  \sym{\QH}{quaternions}
  \sym{\NN}{natural numbers}
% \sym{\PP}{projective space}
  \sym{\QQ}{rational numbers}
  \sym{\RR}{real numbers}
  \sym{\ZZ}{integers}
  \sym{\ZZ_n}{integers modulo $n$}

  \sym{\contract}{contraction}
  \sym{\cross}{cartesian product}
  \sym{\dirsum}{direct sum}
  \sym{\cong}{isomorphic}
  \sym{\isomfrom}{isomorphism}
  \sym{\isomto}{isomorphism}
  \sym{\tensor}{tensor product}

  \sym{\abs{z}}{absolute value of $z$}
  \sym{\norm{v}}{norm of $v$}
  \sym{(, )}{real inner product}
  \sym{\langle , \rangle}{usually complex, Hermitian inner product}
  \sym{\langle , \rangle_J}{Hermitian inner product on $V_J$}
  \sym{\st}{such that}
  \sym{[\quad]}{often used to denote an equivalence class}
  \sym{[1,n]}{the set of integers from $1$ to $n$}

  \sym{A_g}{part of $g$ that anticommutes with (implied) $J$}
  \sym{\Aut(A)}{group of automorphisms of $A$}
  \sym{\B(A)}{set of bounded morphisms of $A$}
  \sym{B_{res, J}}{restricted bounded operators}
  \sym{\B(A,C)}{set of bounded morphisms from $A$ to $C$}
  \sym{\B_{\delta}(x)}{open ball of radius $\delta$ about $x$}
  \sym{C_g}{part of $g$ that commutes with (implied) $J$}
  \sym{C(X,Y)}{set of continuous maps from $X$ to $Y$}
  \sym{C^{\infty}(X,Y)}{set of smooth maps from $X$ to $Y$}
  \sym{c_1 (Q)}{first Chern class of $Q$}
  \sym{\cl(V)}{uncompleted Clifford algebra of $V$}
  \sym{\Cl(V)}{$C^{*}$ Clifford algebra of $V$}
  \sym{\coker(T)}{cokernel of $T$}
  \sym{\delta}{e.g. coboundary operator}
  \sym{DD(P, Y, X)}{Dixmier-Douady class of bundle gerbe (P, Y, X)}
  \sym{\Det(T)}{determinant of $T$}
  \sym{\Diff(X,Y)}{diffeomorphisms $X \rightarrow Y$}
  \sym{\dim(V)}{dimension of $V$}
  \sym{E}{a real vector bundle}
  \sym{E_Q}{tautological quaternionic vector bundle, viewed as real}
  \sym{F}{standard Fock space bundle}
  \sym{\F(L)}{Fock space constructed from $L$}
  \sym{FY}{Fock space bundle}
  \sym{\gamma}{e.g. loop in $M$}
  \sym{\Gamma(P)}{set of sections of $P$}
  \sym{\GL(n)}{general linear group of $\RR^n$}
  \sym{H}{e.g. Hilbert space, complexification of $V$}
  \sym{\HH_p(X;G)}{$p$-th homology of $X$, coeff. in $G$}
  \sym{\HH^q(X;G)}{$q$-th cohomology of $X$, coeff. in $G$}
  \sym{\CH^q(X;G)}{$q$-th \v{C}ech cohomology of $X$, coeff. in $G$}
  \sym{H_{J,\pm i}}{$\pm i$ eigenspaces of $J$}
  \sym{\Hom(A)}{set of endomorphisms of $A$}
  \sym{\Hom(A,B)}{set of morphisms from $A$ to $B$}
  \sym{\Homeo(X,Y)}{homeomorphisms $X \rightarrow Y$}
  \sym{\Hot_p(X)}{$p$-th homotopy group of $X$}
  \sym{\iota}{e.g. identity section}
  \sym{\ident}{identity morphism, $\ident(x) = x$}
  \sym{\Im(z)}{imaginary part of $z$}
  \sym{\im(T)}{image = range of $T$}
  \sym{\incl}{inclusion morphism, $\incl(x) = x$}
  \sym{\inv}{inverse, $\inv(x) = x^{-1}$}
  \sym{J}{a unitary structure}
  \sym{\ker(T)}{kernel of $T$}
  \sym{L}{Lagrangian subspace}
  \sym{L X}{free loop of $X$}
  \sym{\Lagr(H, \Sigma)}{Lagrangian subspaces of $H$ with real structure $\Sigma$}
  \sym{\Lambda^k(L)}{$k$-th exterior product of $L$}
  \sym{M}{manifold; e.g., basis for $E$}
  \sym{m}{e.g., dimension of $M$}
  \sym{\Mor(A,B)}{morphisms from $A$ to $B$}
  \sym{n}{e.g., rank of $E$}
  \sym{\Obj(\mathcal{C})}{objects of a category $\mathcal{C}$}
  \sym{\Omega X}{based loop of $X$}
  \sym{\Omega}{vacuum vector}
  \sym{\Orth(V)}{orthogonal group of $V$}
  \sym{\Orth_{res}(V)}{restricted orthogonal group of $V$}
  \sym{P}{e.g. principal bundle, in particular of a bundle gerbe}
  \sym{p_1 (E)}{first Pontryagin class of $E$}
  \sym{Presh}{category of presheaves}
  \sym{Presh/X}{category of presheaves over $X$}
  \sym{\PU(V)}{projective unitary group of $V$}
  \sym{\QH P^1}{quaternionic projective space}
  \sym{\rank(T)}{rank of $T$}
  \sym{\Re(z)}{real part of $z \in \CC$ or $\in \QH$}
  \sym{\Sigma}{real structure; cx. conj. on complexified real vector space}
  \sym{\Sigma X}{unreduced suspension of $X$}
  \sym{S X}{reduced suspension of $X$}
  \sym{\Sym(\Sigma(L))}{space of operators with a skew-symmetry property}
  \sym{Shv/X}{category of sheaves over $X$}
  \sym{\sign(r)}{$+1$ if $r > 0$, $-1$ if $r < 0$, else $0$}
  \sym{\sim}{used for equivalence relations}
  \sym{\SO(n)}{special orthogonal group of $\RR^n$}
  \sym{\SO(E)}{oriented orthogonal frame bundle of $E$}
  \sym{\Spin(n)}{spin group of $\RR^n$}
  \sym{\Spin(E)}{spin bundle (structure) for $E$}
  \sym{\Symp(n)}{symplectic group of $\QH^n$}
  \sym{\Symp(E)}{symplectic frame bundle of $E$}
  \sym{\SU(n)}{special unitary group of $\CC^n$}
  \sym{\tau}{e.g., transgression or transposition}
  \sym{\theta_g}{Bogoliubov automorphism}
  \sym{\tensor^k(V)}{$k$-th tensor product of $V$}
  \sym{T}{e.g. linear operator, intertwiner, standard intertwiner bundle}
  \sym{\UU(n)}{unitary group of $\CC^n$}
  \sym{\UU(V)}{unitary group of $V$}
  \sym{\US(V)}{set of unitary structures of $V$}
  \sym{\US_{res}(V)}{restricted set of unitary structures of $V$}
  \sym{V}{generally, a real vector space}
  \sym{V_J}{$V$ viewed as complex using $J$}
  \sym{Y}{e.g. $Y$ space of a bundle gerbe}
  \sym{note \colon}{also see the index}
\end{symbols}

\begin{preface}
This document is intended for a person with the background of a mathematics graduate student who has taken basic year courses in algebra, point set and algebraic topology, real and functional analysis, and has some acquaintance with differential geometry and Lie groups.  It is not a place to start learning about manifolds or bundles, but it contains a lot of material unnecessary for experts.

Beyond a more elementary level, it tries to give full statements of definitions and results from other sources, and a few details about translation from different conventions in the sources, though most proofs of facts from the sources are omitted.  Some material that isn't logically necessary is included for greater understandability, internal coherence, or interest.

A number of global assumptions and conventions are collected, first at the beginning of the document, and then at the beginnings of a few chapters or sections.  The intent is to make them as clear and visible as possible, avoiding some need for constantly referring to other places in the text that could be difficult to find.

I might have liked to tell a story, but the typical mathematical definitions, lemmas, and theorems, presented in a logical sequence, necessitate finding motivation in comments, from reading further, or from oneself.

My apologies for errors, and please let me know.  I'd also like to hear from you if you found this thesis useful.  One way to contact me is via \url{www.zulazon.com}.
\end{preface}

%\setboolean{doexecute}{false}
\setboolean{doexecute}{true}
\ifthenelse{\boolean{doexecute}}{

\begin{acknowledge}
My debt to my adviser, Stephan Stolz, for help with this thesis is very great.  It was a long thing to read, longer than either of us wanted.  Many thanks to the readers, Laurence Taylor, Bruce Williams, and Liviu Nicolaescu, for reading and for conversations.  Daniel Cibotaru gave me some of the ideas of the proof of lemma \ref{l-ev-star-h4s4-h4os}.  Discussions with Ryan Grady and Steven Broad were helpful.

Some of the basic ideas of the thesis came from my adviser:  for instance, constructing as concretely as possible a spinor bundle on the loop space of a manifold, using a bundle gerbe, using associated bundles; and for what has ended up as chapter \ref{c-futu-work}, relating the transgression of the first Pontryagin class to the Dixmier-Douady class, that this in general could be a consequence of the same relation starting with the tautological quaternionic line bundle, that there could be a connection to a Pfaffian line bundle, and that a spin structure on the original vector bundle might be helpful.

My stay in graduate school certainly importantly involved other things than mathematics, but this is not the place to expound on them.  However, let me thank, in addition to the people already mentioned and my long-distance friends, some fellow graduate students, undergraduates, professors in and out of the department, University officials and employees, Catholic priests, and South Bend residents.  Also, I'll very much miss Notre Dame's excellent athletic facilities.
\end{acknowledge}

}{}
\setboolean{doexecute}{true}

\mainmatter

\chapter{INTRODUCTION}\label{c-intr}

As a graduate student past the first or second year, the intended reader of the preface may not have the background to understand all the terms used in the abstract.  The introduction in section \ref{s-intr} provides a way to understand something about the problem to be solved in the thesis, without getting into technicalities.

Section \ref{s-intr-note} contains a few examples of things in the thesis that might be useful to someone aside from the solution to the main problem, a brief listing summarizing the contents of the other chapters of the thesis, and a note on citation of references.

After the introduction is a more technical overview of the overall plan, in section \ref{s-tech-over}.

\section{Introduction}\label{s-intr}

The story begins with spinors.  A student of physics may be introduced to spinors as $4$-tuples of complex valued functions, meeting them first, as used for solutions to the Dirac equation of quantum field theory for a free electron.  This is a partial differential equation whose coefficients include factors that are $4 \cross 4$ constant matrices, elements of a Clifford algebra.

Starting from a vector space with a bilinear form, which in this thesis will be an inner product, Clifford algebras can be thought of as giving a way to multiply vectors, obtaining scalars, vectors, and indicated products of vectors:  the Clifford algebra.  The convention of the thesis is that in a Clifford algebra, the square of a vector equals the square of its norm, a real number.

Clifford algebras, exterior algebras, and quaternions, all of which are widely used in geometric investigations, play roles in this thesis.  The Clifford algebras here start from real vector spaces, but the resulting algebras are complex.  They are $C^{*}$-algebras.

Representations of Clifford algebras may be thought of as a ways to see these algebras in the $C^{*}$-algebras of bounded operators on complex Hilbert spaces.  A prototypical irreducible representation, which the thesis will use, is a Fock representation.  A Fock space is built by choosing a Lagrangian subspace of the complexification of the vector space the Clifford algebra is built on, taking the exterior algebra of that Lagrangian subspace, and making it into a Hilbert space.  A Lagrangian subspace in the thesis is a subspace whose orthogonal complement is its complex conjugate.

To get an idea how Fock representations work, start by noticing that both the Clifford algebra and the exterior algebra are made by some kind of multiplication of vectors.  By thinking of Clifford algebra elements as built up from vectors, one can make them act on exterior algebra elements by using the wedge product and by using a contraction operator.  From this is constructed an algebra homomorphism from the Clifford algebra to the bounded operators on the Fock space.  A Fock representation is irreducible; the Hilbert space is no larger than it need be.  Elements of a Fock space are examples of spinors.

The next step in understanding the abstract is to make similar constructions in bundles.  Hoping it's easier to understand at first, we start with a finite dimensional case (no loops), although the thesis didn't develop that way.  The introduction being written when the thesis is nearly complete, in retrospect it looks like it would be possible to use very close analogs to the methods of the thesis, for the finite-dimensional case about to be described; but this development is not carried out in the thesis, so it should be taken as an introductory analogy.

Over a point $x$ of a smooth manifold $M$, the tangent space $T_x M$ is a real vector space, and if $M$ is a Riemannian manifold, there is an inner product on $T_x M$.  Thus there is a Clifford algebra $\Cl(T_x M)$ over $x$; and over varying $x$, these form a Clifford algebra bundle, a fiber bundle.  Similarly, given a vector bundle $E$ over $M$ with fiberwise inner product, over each $x$ we can build a Clifford algebra $\Cl(E_x)$ from the fiber $E_x$, and get a Clifford algebra bundle.

Over each $x$ we can also pick a Lagrangian subspace $L$ of $\CC \tensor E_x$, supposing that $E$ is of even rank, and form the corresponding Fock space $\F(L)$, obtaining a Fock representation of $\Cl(E_x)$ on $\F(L)$.  However, choosing a Lagrangian subspace is not so easy to do continuously as $x$ varies.  It's possible to choose continuously a set of Lagrangian subspaces, called a polarization class, each element of which results in an equivalent Fock representation, but choosing one Lagrangian subspace out of the polarization class, is in general not possible to do continuously.

The idea of the thesis would construct an object called a bundle gerbe over $M$, which encodes the entire polarization class of Lagrangian subspaces over each point, and all the isomorphisms between the resulting equivalent Fock representations.  To each bundle gerbe over $M$ is associated a cohomology class in $\HH^3 (M; \ZZ)$, the Dixmier-Douady class.  Triviality of the bundle gerbe is equivalent to the vanishing of the Dixmier-Douady class.

When the bundle gerbe is trivial, the data in the bundle gerbe, together with the data of a trivialization, can be used to construct an irreducible representation of $\Cl(E_x)$ over each $x$, in such a way that the Hilbert spaces $S_x$ for the representation fit together to form a fiber bundle $S$, which we call a spinor bundle.

Now to move to the loop spaces the thesis is concerned with.  Starting with a smooth manifold $M$, we ``loop it'' to get its smooth free loop space $LM$, the set of smooth functions $S^1 \rightarrow M$.  Although infinite-dimensional, $LM$ is a kind of manifold, a \Frechet manifold, with local trivializations taking values in \Frechet spaces.

Given a smooth map $M \xrightarrow{f} N$ of smooth manifolds, we can loop the whole thing to get a smooth map of \Frechet manifolds, $LM \xrightarrow{Lf} LN$; in fact smooth looping is a functor.  Further, given some conditions, we can loop a smooth principal bundle $P \rightarrow M$ to get a \Frechet principal bundle $LP \rightarrow LM$, and similarly with associated bundles.  (At about this point, to avoid technical difficulties, since the construction of the thesis didn't depend on smoothness of the bundle gerbes, it starts working with topological rather than smooth objects.)

The thesis starts with a vector bundle $E \rightarrow M$, loops that to get $LE \rightarrow LM$, constructs the Clifford algebra bundle $\Cl(LE)$ over $LM$, and constructs a bundle gerbe using a continuous choice, for all $\gamma \in LM$, of a polarization class of $\CC \tensor LE_{\gamma}$.  The bundle gerbe encodes also, all the isomorphisms between equivalent Fock representations built from all the Lagrangian subspaces in that polarization class over $\gamma$, and does this in a continuous way for all $\gamma$.

Finally, if the bundle gerbe is trivial, the thesis uses the data of the bundle gerbe and of a trivialization, to construct a spinor bundle $S$ over $LM$.

\section{Introductory Notes}\label{s-intr-note}

Besides a solution to the main problem, the thesis also contains exposition that was difficult to find in the literature in the form needed.  Examples are continuity of some constructions involving Clifford algebras and Fock representations, some details relating to bundle gerbes, and an exposition of Cech cohomology with $\UU(1)$ sheaf coefficients, that includes the map in cohomology induced by a map of spaces.  Although certainly not a textbook, the thesis could be useful for some of this material.

Another subsidiary item that might be of independent use is proposition \ref{p-dd-bg-susp-c1-pb}.  From a bundle gerbe over the unreduced suspension $\Sigma X$ of a space $X$, this proposition constructs a principal $\UU(1)$ bundle over the space $X$ itself; and the first Chern class of the principal bundle is isomorphic via the suspension isomorphism to the Dixmier-Douady class of the bundle gerbe.

Other things that might be useful may be found by browsing the table of contents and the thesis itself.  Chapter \ref{c-con-text} is background on bundles, \Frechet spaces and manifolds, loop spaces, and point set topology.  Chapter \ref{c-loop-bndl} is about loops of bundles.  Chapter \ref{c-clif-alg-fock-rep} is on Clifford algebras and Fock representations.  Chapter \ref{c-calg-bndl} is about the Clifford algebra bundle over $LM$, including some point set topology.  Chapter \ref{c-res-orth-grp} goes a little deeper into Fock representations and answers more point set topological questions.  Chapter \ref{c-pol-clas-bndl} defines the bundle that has a polarization class over each point in $LM$, and gives some translation between a more geometric viewpoint and the associated bundle methods used in the thesis.  Chapter \ref{c-std-fock-spac-bndl} is about a standard Fock space bundle, not over $LM$, but over a polarization class.  Chapter \ref{c-fock-spac-bndl} is another Fock space bundle, not over $LM$, but over the total space of the polarization class bundle.  Chapter \ref{c-cont-bndl-gerb} is about the continuous bundle gerbes used in the thesis, with background on $\UU(1)$ torsors and principal bundles, sheaf theory and \v{C}ech cohomology, and detailed proofs of a number of properties of bundle gerbes.  Chapter \ref{c-cnst-bndl-gerb} gives the construction of the particular bundle gerbe used in the thesis, and chapter \ref{c-bndl-gerb-cnst-func} shows that the construction is functorial.  Chapter \ref{c-clif-alg-modu-bndl} gives the main result about the bundle gerbe constructed in the thesis, constructing from the bundle gerbe and its trivialization a spinor bundle.  Chapter \ref{c-futu-work} contains notes for possible further work, mainly the possibility of using a spin structure for the original vector bundle, and the relating of its first Pontryagin class to the bundle gerbe's Dixmier-Douady class.

Please note that originality is not necessarily claimed for items without a cited reference.  Effort was taken to provide many citations, but some things were considered too small, or part of general mathematical knowledge, and some things were not found in references, or not in the form needed.  As an example, mentioned shortly before, the thesis proves continuity of some constructions involving Clifford algebras and Fock representations.  The constructions were found in the references but the continuity was not.  The continuity statements surely would not be news to an expert in that area.

\section{A More Technical Overview}\label{s-tech-over}

The intent of this section is to help the reader understand in a more technical way, the overall plan used to come up with the various bundle constructions that follow.  Also, it contains a brief introduction to associated bundles, as used in the thesis.

\index{spinors}
\index{ClLE@$\Cl(LE)$}
\index{Clifford algebra bundle}
\index{bundle!Clifford algebra}
\index{LSOE@$L\SO(E)$}
\index{orthonormal frame bundle}
\index{bundle!orthonormal frame}
Since the goal of the thesis is to construct a spinor bundle over the loop space $LM$ of the base of the vector bundle $E \rightarrow M$ of assumption \ref{a-mfld-vb}, we first need the Clifford algebra bundle $\Cl(LE)$ over $LM$; over each $\gamma \in LM$, $\Cl(LE_{\gamma})$.

We will not use this directly as the definition, but instead use an associated bundle construction.  In the development of the thesis, using associated bundle constructions allowed us not to confront some technical difficulties, and they make the functoriality of the bundle gerbe construction natural to see.

\index{associated bundle}
\index{bundle!associated}
\index{LE@$LE$}
\index{LRn@$L\RR^n$}
\index{frame!orthonormal!oriented}
\index{oriented orthonormal frame}
\index{LSOn@$L\SO(n)$}
\index{LSOE@$L\SO(E)$}
\index{bundle!oriented orthonormal frame}
For those not very familiar with associated bundles, here is a brief introduction to their use in the thesis.  As in example \ref{e-loop-soe}, given a fiber $LE_{\gamma} = \Gamma(\gamma^{*}E)$, the associated bundle construction for $LE$ refers analysis from $\Gamma(\gamma^{*}E)$ to the standard \Frechet space $L\RR^n = C^{\infty}(S^1, \RR^n)$, using an oriented orthonormal frame (for each $x \in M$, an orientation preserving orthogonal map $\RR^n \rightarrow E_x$) that moves along or above $\gamma$; or in other words, $\widetilde{\gamma} \in L\SO(E)$ over $\gamma \in LM$.  Picking a loop $\sigma$ in $E$ over $\gamma$, for every oriented orthonormal frame $\psi$ moving above $\gamma$, $t \mapsto (\psi(t))^{-1} (\sigma(t))$ is a function $v \colon S^1 \rightarrow \RR^n$.  The inverse $v \mapsto \sigma$ also is straightforward.

Rather than trying to choose a particular moving frame by which to refer analysis, we allow all of them, using the associated bundle construction $L\SO(E) \cross_{L\SO(n)} L\RR^n$.  The $\cross_{L\SO(n)}$ serves to remove the redundancy caused by the multiple possible moving frames.  The group $L\SO(n)$ acts freely and transitively on $L\SO(E)$, so along a given loop $\gamma$, there are as many moving frames as elements of $L\SO(n)$, and considering the mappings by all of them is equivalent to considering the mapping by one and acting on this by precomposing with all elements of $L\SO(n)$.  This is a \Frechet fiber bundle, though we won't use its smooth structure.

We will use similar associated topological bundle constructions to take a standard or model object related to $L\RR^n$ or its Hilbert space completion $L^2(S^1, \RR^n)$, with a continuous action of $L\SO(n)$ on the standard or model space, and make a bundle associated to $L\SO(E)$, thereby related to $LE$.

To answer a possible question, our use of the oriented orthonormal frame bundle of $E$, which has a connected structure group $\SO(n)$ and hence can be looped as mentioned following proposition \ref{p-loop-smth-pb-is-frec-pb}, is the reason for the assumption that $E$ is oriented.  (The alternative condition of that proposition, that $M$ be one-connected, implies that $E$ is orientable.)

Continuing now with the plan, make $\Cl(LE)$ an associated bundle for $L\SO(E)$, with fiber a standard Clifford algebra.  This construction is in chapter \ref{c-calg-bndl}.

\index{Clifford module bundle}
\index{bundle!Clifford module}
\index{Fock space}
\index{Ores@$\Orth_{res}$}
\index{restricted Orthogonal group}
\index{Orthogonal group!restricted}
\index{Lagrres@$\Lagr_{res}$}
\index{standard polarization class}
\index{polarization class!standard}
\index{Y@$Y$}
\index{polarization class bundle}
\index{bundle!polarization class}
Though our spinor bundle, or irreducible Clifford module bundle, is not a bundle of Fock spaces, yet over each $\gamma$ it will be built from Fock spaces that are built upon Lagrangian subspaces of a Hilbert space made from the fiber $LE_{\gamma}$.  This isn't done in those terms, but rather using associated bundles.

To relate different Lagrangian subspaces of a standard Hilbert space $H$ made from the standard \Frechet space for $LE$, chapter \ref{c-res-orth-grp} discusses the restricted orthogonal group.

In chapter \ref{c-pol-clas-bndl} we will create a bundle $Y$ over $LM$, giving a fixed polarization class of Lagrangian subspaces over each point $\gamma \in LM$; i.e., over each loop $\gamma$.  The polarization class bundle $Y$ is again made as a bundle associated to $L\SO(E)$, with fiber $\Lagr_{res}$, the standard polarization class we will fix in definition \ref{d-lagr-res}.

\index{F@$F$}
\index{standard Fock space bundle}
\index{bundle!standard Fock space}
\index{bundle!Fock space!standard}
\index{Fock space!standard bundle}
We then retreat a moment from bundles over $LM$ and create in chapter \ref{c-std-fock-spac-bndl} a standard Fock space bundle $F$ for intermediate use, a bundle of Fock spaces, one over each Lagrangian subspace in the polarization class $\Lagr_{res}$.  This is not created using an associated bundle construction.

\index{FY@$FY$}
\index{Fock space!bundle}
\index{bundle!Fock space}
In chapter \ref{c-fock-spac-bndl} we construct the Fock space bundle $FY$ over the polarization class bundle $Y$ (not over $LM$).  Each fiber of the Fock space bundle $FY$ over each point of the fiber of the polarization class bundle $Y$ over a particular loop $\gamma \in LM$, is a module for the fiber over $\gamma$ of the Clifford algebra bundle $\Cl(LE)$.  It is another bundle associated to $L\SO(E)$, with fiber over each Lagrangian subspace being the Fock space associated to that subspace.

\index{bundle gerbe!continuous}
Since the intertwiners between two equivalent Fock spaces form a $\UU(1)$ torsor, one can speak of the phase difference between two such intertwiners.  After defining continuous bundle gerbes (with ``band $\UU(1)$'') in chapter \ref{c-cont-bndl-gerb} we construct in chapter \ref{c-cnst-bndl-gerb} a particular bundle gerbe, which will eventually allow us to give a natural condition allowing us to relate $\UU(1)$ phases of intertwiners so as to get an irreducible Clifford module bundle, a spinor bundle $S$ over all of $LM$.  This bundle gerbe construction involves three more bundles.  One is a fiber product $Y^{[2]}$ of the polarization class bundle with itself.

\index{T@$T$}
\index{intertwiner bundle!standard}
\index{bundle!standard intertwiner}
\index{standard intertwiner bundle}
Another, the standard intertwiner bundle $T$, is a principal bundle over the cartesian product of the standard polarization class with itself, $\Lagr_{res} \cross \Lagr_{res}$, with total space consisting of pairs of Lagrangian subspaces and intertwiners between Fock spaces corresponding to them.

\index{P@$P$}
The fiber of the third, $P$, over a point in $Y^{[2]}$, is the set of intertwiners between two Fock spaces built from the Lagrangian subspaces that are points of $Y$.  It is constructed as a bundle associated to $L\SO(E)$, with fiber the total space of the standard intertwiner bundle $T$, projecting to the first of the three bundles, $Y^{[2]}$.

That's it for the bundles needed to construct the bundle gerbe.  In chapter \ref{c-clif-alg-modu-bndl} we construct the desired Clifford module bundle $S$ from a trivialization of the bundle gerbe.  We show in chapter \ref{c-bndl-gerb-cnst-func} that the bundle gerbe construction is functorial and that it satisfies a stability property.

\index{Dixmier-Douady class}
\index{Pontryagin class}
Chapter \ref{c-futu-work} briefly presents the idea of starting with a vector bundle that has a spin structure, then building the bundle gerbe from bundles associated to $L\Spin(E)$ rather than $L\SO(E)$, allowing $L\Spin(n)$ to act on the other factors of the associated bundles through $L\SO(n)$.  This should allow the spinor bundle constructed at the end, to be $\ZZ_2$ graded.  Also, it likely would allow the proof of the conjecture that the transgression of the first Pontryagin class of the vector bundle, is plus or minus twice the Dixmier-Douady class of the bundle gerbe, if the conjecture is true for a particular vector bundle that thus could be thought of as the universal case.  The chapter also discusses and presents some results toward a proof of the conjecture in the universal case.

\chapter{CONTEXT}\label{c-con-text}

This background chapter defines and gives notation for the context of the thesis, and presents results on the \Frechet topology the thesis will use for the smooth free loop space and various bundles; in particular looping a smooth manifold to get a \Frechet manifold.  The individual items are here for various reasons, one being direct use in the thesis, another to clarify which common variant definitions or requirements we use, and to present a logical sequence of the most important definitions and results about \Frechet manifolds supporting what we use.

\section{Assumptions}\label{s-assu}

First, a few general assumptions.
\tolerance=300
\begin{ass}\label{a-boun}
\index{operators}
\index{assumptions!operators bounded}
(Linear Operators on Normed Linear Spaces are Bounded).
Unless otherwise specified, terms such as linear transformations, linear operators, or operators on a Banach or Hilbert space mean bounded, or equivalently continuous operators.  For such a space $H$, the set of such is denoted $\B(H)$.
\end{ass}

\begin{ass}\label{a-herm-innr-prod}
\index{Hermitian inner product}
\index{assumptions!Hermitian inner product}
(Hermitian Inner Products).
A Hermitian inner product will be complex-linear in its first argument and antilinear in its second.
\end{ass}

\begin{ass}\label{a-mfld}
\index{manifolds}
\index{assumptions!manifolds}
(Manifolds).
All manifolds are assumed Hausdorff and without boundary.  The \emph{unqualified} term smooth manifold implies second countable and finite-dimensional.  We will define \Frechet manifolds as in \citet[page~85]{Hami82}; they may be infinite-dimensional smooth manifolds. The unqualified term diffeomorphism will imply smooth.
\end{ass}
The second countability of a smooth manifold implies paracompactness, and thus by the Smirnov metrization theorem \citep[page~261]{Munk00}, metrizability.

\index{canonical}
\index{natural}
``Canonical'' generally is used to indicate preferred, natural, made without arbitrary choice, or sometimes well-known or called canonical by others.  ``Natural'' sometimes is used in the technical sense of natural transformation, but other times it may have only the ordinary English meaning.

\section{Bundles}\label{s-bundles}

\begin{ass}\label{a-bndl}
\index{bundles}
\index{assumptions!bundles}
(Bundles).
We assume throughout that smooth principal bundles, smooth vector bundles, and in general smooth fiber bundles are finite-dimensional smooth locally trivial bundles over smooth manifolds.  The projection maps are surjections.  The structure group of a smooth principal bundle is assumed to be a finite-dimensional Lie group.  Without the adjective smooth, the terms refer to continuous bundles involving topological groups and spaces, that are not required to be manifolds.  \Frechet vector bundles, Lie groups, and principal bundles are as defined in \citet{Hami82}.  Isomorphisms of bundles over the same base space are assumed to be over the identity unless otherwise noted, and the same for bundle gerbes; see assumption \ref{a-bndl-gerb}.  We use a constructive definition of pullback of a bundle rather than defining it only up to unique isomorphism using the universal property: given $\pi \colon P \rightarrow B$ and $f \colon X \rightarrow B$, $f^{*} P = \{ (x, p) \st f(x) = \pi (p) \}$.
\end{ass}
To reiterate, our convention is that a smooth bundle is finite-dimensional.  The infinite-dimensional bundles we deal with that could be called smooth if we had a different convention, we call \Frechet.

\begin{defn}\label{d-fb-vb}
\index{fiber bundle}
\index{bundle!fiber}
\index{vector bundle}
\index{bundle!vector}
(Fiber and Vector Bundles).
A topological fiber bundle is a morphism of topological spaces $\pi \colon T \rightarrow B$, with an open covering $\{ U_i \}$ of $B$ and a set of homeomorphisms $\phi_i \colon \pi^{-1}(U_i) \isomto U_i \cross F_i$, where the $F_i$ are topological spaces; such that $\pi_1 \colon \phi_i = \pi_{|U_i}$.  (Thus $\pi$ is surjective.)  We call $T$ the total space, $B$ the base space, $\pi$ the projection, $F_i$ the fibers (not to be confused with the fiber $\pi^{-1}(b)$ for $b \in B$), and either the pairs $(U_i, \phi_s)$ or just the maps $\phi_i$ are called local trivializations, coordinate charts, or charts.  It is said to have standard fiber $F$ if all the $F_i = F$.
A smooth fiber bundle is a topological fiber bundle for which all the spaces are smooth manifolds and the local trivializations are diffeomorphisms.

A topological or smooth vector bundle is a topological or smooth fiber bundle with standard fiber a vector space, and local trivializations that over each point of the base are vector space isomorphisms.  We don't require that the spaces for topological vector bundles be manifolds.

Given $U_i \cap U_j \ne \emptyset$, define the change of coordinates map $\phi_{ji} \colon (U_i \cap U_j) \cross F_i \rightarrow (U_i \cap U_j) \cross F_j$ by $\phi_{ji} = \phi_j \circ \phi_i^{-1}$, and the transition function $\phi_{ji,2} = \pi_2 \circ \phi_{ji} \colon (U_i \cap U_j) \cross F_i \rightarrow F_j$.

Morphisms of fiber bundles are maps of total spaces that are continuous or smooth as appropriate, and that cover respectively continuous or smooth maps of base spaces (i.e. commute with the projections).  Morphisms of vector bundles are also fiberwise linear maps.
\end{defn}
Our topological fiber bundles with standard fiber are the same as the locally trivial bundles with (standard) fiber of \citet[pages~9--11]{HJJS08}.  \citet[pages~1--3,~12]{Poor07} defines smooth fiber and vector bundles.  \citet[page~24]{HJJS08} has a somewhat broken-up definition of topological vector bundle.

\begin{note}\label{n-tran-func-fb-vb}
\index{transition functions}
(Fiber and Vector Bundle Transition Functions).
As a consequence of the definition, the change of coordinates maps and transition functions are continuous or smooth as appropriate. Note that transition functions do not refer to maps $U_i \cap U_j \rightarrow \Homeo(F_i, F_j) \text{ or } \Diff(F_i, F_j)$; continuity or smoothness of these maps can be a different question, depending on circumstances.
\end{note}

\begin{lem}\label{l-fb-cons}
\index{fiber bundle!construction}
\index{bundle!fiber!construction}
(Construction of Standard Fiber, Fiber and Vector Bundles).
Given a standard fiber and set of transition functions suitable for a fiber or vector bundle, satisfying a cocycle condition, there is a functorial construction of a fiber or vector bundle from them, that has them as its transition functions.  Furthermore, there is a naturally constructed bijection between equivalence classes of transition functions and isomorphism classes of bundles. Thus, if one can choose in a natural way, transition functions using the same open covering of the same base space for two fiber or vector bundles with standard fiber, and if the transition functions from the two bundles are equal, then the bundles are isomorphic in a natural way.

Given the elements of a topological fiber or vector bundle except no topology on the total space, and consequently the local trivializations being just bijections, if the change of coordinates maps or equivalently the transition functions are continuous, then a unique topology can be defined on the total space resulting in a topological fiber or vector bundle.  The analogous facts are true for a smooth fiber  or vector bundle, when what is lacking is a smooth structure and possibly also a topology on the total space.
\end{lem}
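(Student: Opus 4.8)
The plan is to build the total space as an explicit quotient and then to verify each asserted property directly; nothing deep is involved, so I describe the construction and indicate where each hypothesis enters.

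First I would form $T = \bigl( \bigsqcup_i U_i \cross F \bigr)/\sim$, where $(i,x,v) \sim (j,x',v')$ precisely when $x = x' \in U_i \cap U_j$ and $v' = \phi_{ji,2}(x,v)$. The cocycle condition, together with $\phi_{ii,2}(x,-) = \ident$, is exactly what makes $\sim$ reflexive, symmetric (from $\phi_{ij}\circ\phi_{ji} = \phi_{ii}$, so $\phi_{ij,2}(x,-)$ inverts $\phi_{ji,2}(x,-)$), and transitive. Equip $T$ with the quotient topology, let $\pi\colon T \to B$ be induced by the projections $U_i \cross F \to U_i$, and let $\phi_i\colon \pi^{-1}(U_i) \to U_i \cross F$ be induced by the inclusion of $U_i \cross F$. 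The routine checks are: each inclusion $U_i \cross F \hookrightarrow \bigsqcup_j U_j \cross F$ descends to an open embedding onto $\pi^{-1}(U_i)$ (continuity of the $\phi_{ji,2}$ is used here), so each $\phi_i$ is a homeomorphism; $\pi_1 \circ \phi_i = \pi$ on $\pi^{-1}(U_i)$ by construction; and the transition functions recovered from the $\phi_i$ are the given ones. In the vector bundle case one additionally notes that the linear structure transported onto a fiber $\pi^{-1}(b)$ through $\phi_i$ is independent of $i$ because each $\phi_{ji,2}(x,-)$ is linear, and that the $\phi_i$ are fiberwise linear by construction. The smooth case is identical with ``homeomorphism'' replaced by ``diffeomorphism'', using smoothness of the $\phi_{ji,2}$.

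For functoriality, a morphism of transition-function data over a base map (a common refinement of the covers together with fiberwise maps intertwining the transition functions) descends to a well-defined map of quotients, and compatibility with composition and identities is immediate from the formulas. For the bijection between equivalence classes of transition functions and isomorphism classes of bundles, one direction is the construction just given; the other extracts transition functions from any trivializing atlas as in Definition \ref{d-fb-vb}, with a change of atlas changing them by a coboundary, so the class is well defined; one then checks the two passages are mutually inverse up to canonical isomorphism. The ``Thus'' clause is then formal: if natural choices of charts produce literally equal transition functions on a common cover of a common base for two bundles, the bijection supplies an isomorphism, and naturality of the bijection makes that isomorphism natural.

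Finally, suppose we are handed all the data of a topological fiber or vector bundle except a topology on $T$, so the $\phi_i$ are mere bijections but the change-of-coordinates maps are continuous. Declare $W \subseteq T$ open iff $\phi_i\bigl(W \cap \pi^{-1}(U_i)\bigr)$ is open in $U_i \cross F$ for every $i$. This is a topology by the set-theoretic behaviour of the $\phi_i$ under unions and finite intersections; the substance is that the $\phi_i$ are then homeomorphisms, which is where continuity of $\phi_{ji} = \phi_j\circ\phi_i^{-1}$ over the overlaps is needed, and this topology is forced, since $\pi$ must be continuous, forcing each $\pi^{-1}(U_i)$ open, and on it $\phi_i$ must be a homeomorphism, which pins down every open set. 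For the smooth case one first produces this topology and then declares the $\phi_i$ a smooth atlas; smoothness of the change-of-coordinates maps is precisely the transition cocycle for that atlas, so $T$ is a smooth manifold and the $\phi_i$ are diffeomorphisms. Hausdorffness, and for the unqualified smooth case second countability and finite-dimensionality, are inherited from $B$ and $F$, the second countability after replacing the cover by a countable subcover. I expect the only real work to be bookkeeping: confirming in full generality --- fiber versus vector, topological versus smooth, all at once --- that the quotient trivializations are homeomorphisms and that the ``pushed-forward opens'' prescription is consistent across charts, both of which rest squarely on continuity (resp.\ smoothness) of the change-of-coordinates maps.
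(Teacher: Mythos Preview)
Your proposal is correct and gives the standard quotient construction; the paper's own proof consists entirely of citations to textbooks (tom Dieck--Kamps, Lee, Michor, Switzer, Poor) rather than spelling out any argument, so in effect you have written out what those references contain. There is nothing to compare methodologically: your construction is exactly the one the cited sources would give.
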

\begin{proof}
For the first paragraph, \citet[pages~78--80]{DK01} address the continuous case, where our fiber bundle is their locally trivial bundle and their definition of transition functions in exercise 57 needs to be changed to ours (see note \ref{n-tran-func-fb-vb}).  Their note that a locally trivial bundle is the same as a fiber bundle with group $\Homeo(F)$ depends on assumptions we do not make.  \citet[page~108--109,~121]{Lee03} (problem 5-4) covers the continuous case for vector bundles.  Although he defines transition functions differently than we do, his proof doesn't use the continuity of his version of them, only that of ours. \citet[page~100--101,~200]{Mich08} discusses the smooth case; the same comment as for the previous reference applies to his discussion of vector bundles.

If the transition functions take values in a topological group, for the first paragraph, \citet[pages~191--195]{Swit75} has a thorough discussion of the equivalence of isomorphism classes of continuous principal bundles and their isomorphism classes of sets of continuous transition functions, and talks about the connection with vector bundles.

For the smooth case of the second paragraph, see \citet[page~3]{Poor07} for the fiber bundle case.
\end{proof}

\begin{defn}\label{d-grp-act}
\index{group!action}
\index{action!group}
(Action of a Topological Group).
Given a topological group $G$ (a group with a Hausdorff topology under which inversion and multiplication are continuous), a continuous left action of $G$ on a topological space $X$ is a map
\begin{align}
\rho \colon G \cross X &\rightarrow X, \text{ sometimes indicated by } \notag \\
(g, x) &\mapsto g x , \notag
\end{align}
such that for $e$ the identity of $G$ and every $g, h \in G$ and $x \in X$,
\begin{align}
\rho (e, x) &= x \notag \\
\rho (g, \rho (h, x)) &= \rho (g h , x). \notag
\end{align}
A continuous right action is defined similarly, with
\begin{align}
\rho \colon X \cross G &\rightarrow X, \text{ sometimes indicated by } \notag \\
(x, g) &\mapsto x g , \notag
\end{align}
such that for $e$ the identity of $G$ and every $g, h \in G$ and $x \in X$,
\begin{align}
\rho (x, e) &= x \notag \\
\rho (\rho (g, x), h) &= \rho (x, g h). \notag
\end{align}
For smooth manifolds $G$ and $X$, $\rho$ is called a smooth action if it is a smooth map.
\end{defn}
For $G$ commutative the notions of right and left actions are equivalent.  To consider only the algebraic structure, without requiring that the map be continuous, we will say "action ignoring continuity".

\begin{defn}\label{d-g-tors}
\index{group!torsor}
\index{torsor!group}
\index{G torsor@$G$ torsor}
($G$ Torsors).
Given a topological group $G$, a (left) $G$ torsor $X$ is a nonempty Hausdorff space $X$ and a continuous, free and transitive action $\rho \colon G \cross X \rightarrow X$, such that for every $x \in X$, the map
\begin{align}
\rho_x \colon G &\rightarrow X \notag \\
\rho_x \colon g &\mapsto \rho(g, x) = g x \notag
\end{align}
is an open map (and thus is a homeomorphism).  Without the topological conditions, $X$ is called a (left) algebraic $G$ torsor.  Corresponding to right actions we have right torsors.
\end{defn}
The torsor $X$ has a particular action, not shown in the notation $X$, in the same way that the topological space has a particular topology, also not shown in the notation.  Whether $X$ means the torsor, the topological space, or the set, is inferred from the context.  $G$ acts on itself via left (respectively right) multiplication and is a left (respectively right) $G$ torsor.

\begin{defn}\label{d-g-equi}
\index{equivariant maps}
(Equivariant Maps).
Given a group $G$, a map of $G$ torsors is called equivariant if it commutes with the $G$ actions.  Corresponding to left and right actions we have left and right equivariance.
\end{defn}

\begin{defn}\label{d-g-tors-isom}
\index{group!torsor!isomorphism}
\index{isomorphism!torsor!group}
\index{G torsor@$G$ torsor!isomorphism}
(Isomorphisms of $G$ Torsors).
Given a topological group $G$, a $G$-equivariant homeomorphism of $G$ torsors is called an isomorphism of $G$ torsors.
\end{defn}

\begin{lem}\label{l-g-tors-home-g}
\index{group!torsor!homeomorphic to group}
\index{torsor!group!homeomorphic}
\index{G torsor@$G$ torsor!homeomorphic to group}
(For Compact $G$, Continuous Hausdorff Algebraic Torsors are Torsors).
Suppose given a compact topological group $G$ acting continuously on the left via $\rho$ on a Hausdorff space $X$, making it an algebraic $G$ torsor.  (In the title, we call the torsor continuous since the action is.)  Picking an arbitrary $x \in X$, the map $\rho_x$ of definition \ref{d-g-tors} is an an isomorphism of $G$ torsors.  Similarly for a right action.
\end{lem}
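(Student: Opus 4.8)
The plan is to verify directly that $\rho_x$ satisfies the three requirements to be an isomorphism of $G$ torsors --- it is $G$-equivariant, it is a bijection, and it is a homeomorphism --- and then to deduce from this, using transitivity of the action, that $X$ together with $\rho$ really is a $G$ torsor in the sense of definition \ref{d-g-tors}, i.e.\ that $\rho_{x'}$ is an open map for \emph{every} $x' \in X$, not merely for the chosen $x$. (We already know $G$ with its left multiplication action is a $G$ torsor, as noted after definition \ref{d-g-tors}, so once $X$ is shown to be one, the phrase ``isomorphism of $G$ torsors'' makes sense.)

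First I would dispose of continuity and the bijection property. Continuity of $\rho_x$ is immediate, since $\rho_x$ is the composite of the continuous inclusion $G \rightarrow G \cross X$, $g \mapsto (g, x)$, with the continuous action $\rho$. Surjectivity of $\rho_x$ is exactly transitivity of the action, and injectivity is exactly freeness: if $g x = h x$ then $(h^{-1} g) x = x$, whence $h^{-1} g = e$. Equivariance, for the left $G$-action on $G$ by multiplication and on $X$ by $\rho$, is the one-line computation $\rho_x(g h) = (g h) x = g (h x) = g\, \rho_x(h)$.

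The key step is to upgrade the continuous bijection $\rho_x$ to a homeomorphism, and this is where compactness of $G$ enters. Since $G$ is compact, every closed subset of $G$ is compact; its image under the continuous map $\rho_x$ is compact; and since $X$ is Hausdorff, that image is closed in $X$. Hence $\rho_x$ is a closed map, and a closed continuous bijection is a homeomorphism (its inverse pulls closed sets back to closed sets). In particular $\rho_x$ is open. Finally I would close the loop by showing $X$ is a $G$ torsor: given an arbitrary $x' \in X$, use transitivity to pick $g_0 \in G$ with $x' = g_0 x$; then for all $g \in G$ we have $\rho_{x'}(g) = g x' = (g g_0) x = \rho_x(g g_0)$, so $\rho_{x'} = \rho_x \circ R_{g_0}$, where $R_{g_0} \colon G \rightarrow G$ is right multiplication by $g_0$, a homeomorphism of the topological group $G$. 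Thus $\rho_{x'}$ is a composite of homeomorphisms, hence open, and $X$ is a $G$ torsor; $\rho_x$ is then an isomorphism of $G$ torsors in the sense of definition \ref{d-g-tors-isom}. The case of a right action is entirely symmetric, with $R_{g_0}$ replaced by left multiplication $L_{g_0}$ throughout. I do not anticipate a serious obstacle; the only point requiring care is logical ordering --- not invoking ``$X$ is a torsor'' before the openness of each $\rho_{x'}$ has actually been established, which is why that verification is carried out last rather than assumed.
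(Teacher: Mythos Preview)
Your proof is correct and follows essentially the same approach as the paper: continuity of $\rho_x$, bijectivity from freeness and transitivity, equivariance, and the closed-map argument using compactness of $G$ and Hausdorffness of $X$. Your final paragraph verifying that $X$ is a $G$ torsor (openness of $\rho_{x'}$ for \emph{every} $x'$) is a point the paper leaves implicit, since its argument is for an arbitrary $x$ and hence already applies to every $x'$ directly; your factorization $\rho_{x'} = \rho_x \circ R_{g_0}$ is a valid alternative but not needed, as the same compact-to-Hausdorff closed-map argument works verbatim with $x'$ in place of $x$.
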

\begin{proof}
$\rho_x$ is a $G$-equivariant continuous map. Since it is from a compact space to a Hausdorff space, it is a closed map.  Since the action $\rho$ is free, $\rho_x$ is injective, and since the action is transitive, $\rho_x$ is surjective.  Thus $\rho_x$ is a $G$-equivariant homeomorphism, an isomorphism of $G$ torsors.
\end{proof}

\begin{cor}\label{co-g-tors-alg-top}
\index{group!torsor!homeomorphic to group}
\index{torsor!group!homeomorphic}
\index{G torsor@$G$ torsor!homeomorphic to group}
(Giving an Algebraic $G$ Torsor a Topology).
Given a topological group $G$ (compact or not) and a nonempty set $X$, if $X$ is an algebraic $G$ torsor, it can be given by choice of equivariant bijection as in lemma \ref{l-g-tors-home-g} a topology making $X$ a $G$ torsor isomorphic to $G$ via that bijection.
\end{cor}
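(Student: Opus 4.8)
The plan is to transport the topology of $G$ across an equivariant bijection, and then check that the resulting topology is independent of the choices made, in the sense that any two such choices yield isomorphic $G$ torsors. First I would fix an arbitrary element $x_0 \in X$. Since $X$ is an algebraic $G$ torsor, the orbit map $\rho_{x_0} \colon G \to X$, $g \mapsto g x_0$, is a bijection of sets (injective by freeness, surjective by transitivity). I then declare a subset $U \subseteq X$ to be open exactly when $\rho_{x_0}^{-1}(U)$ is open in $G$; this is the unique topology making $\rho_{x_0}$ a homeomorphism, and it is Hausdorff because $G$ is. With this topology, the action map $\rho \colon G \times X \to X$ is continuous: transporting along $\ident_G \times \rho_{x_0}^{-1}$ and $\rho_{x_0}^{-1}$, the map $\rho$ becomes the map $G \times G \to G$, $(g,h) \mapsto gh$, which is continuous by the axioms for a topological group. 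Thus $\rho_{x_0}$ is an equivariant homeomorphism from $G$ (acting on itself by left multiplication) to $X$, so $X$ with this topology is a $G$ torsor — the orbit-map openness condition of Definition \ref{d-g-tors} follows since each $\rho_x$ is a composite of $\rho_{x_0}$ with left translation by the appropriate group element, and left translations are homeomorphisms of $G$.

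The remaining point is that this is well defined ``up to the obvious isomorphism.'' I would observe that if $x_1 \in X$ is another choice, then $\rho_{x_1} = \rho_{x_0} \circ \lambda_{g_1}$ where $g_1 \in G$ is the unique element with $g_1 x_0 = x_1$ and $\lambda_{g_1}$ is left translation on $G$; since $\lambda_{g_1}$ is a homeomorphism of $G$, the two transported topologies on $X$ coincide, and both orbit maps are isomorphisms of $G$ torsors onto the same topological torsor. So the choice of basepoint is immaterial, and more generally any equivariant bijection $G \to X$ differs from $\rho_{x_0}$ by precomposition with a left translation, hence induces the same topology. This gives the statement: $X$ can be given, by choice of equivariant bijection as in Lemma \ref{l-g-tors-home-g}, a topology making it a $G$ torsor isomorphic to $G$ via that bijection.

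I do not expect a genuine obstacle here; the one place to be slightly careful is the continuity of the action. One must use the full topological-group structure of $G$ (continuity of multiplication), not merely that $G$ is a topological space, and one must check continuity of $\rho$ as a map on the product, which is exactly what the translation-of-structure argument via $\ident_G \times \rho_{x_0}^{-1}$ delivers. The openness-of-orbit-maps clause of Definition \ref{d-g-tors} is then essentially free, since in $G$ itself each orbit map (left multiplication) is a homeomorphism, and this property is transported along $\rho_{x_0}$. Note that, unlike Lemma \ref{l-g-tors-home-g}, no compactness of $G$ is needed, because here we are \emph{defining} the topology on $X$ rather than starting with a given one and proving the orbit map is a homeomorphism.
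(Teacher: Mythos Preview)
Your proposal is correct and is the natural argument; the paper gives no explicit proof for this corollary, treating it as immediate from the definitions and from the orbit map $\rho_x$ appearing in Lemma~\ref{l-g-tors-home-g}. Your verification that the transported action is continuous (via the topological-group multiplication) and that the orbit maps are open is exactly what is needed, and your additional observation that the resulting topology is independent of the basepoint choice is a pleasant bonus not stated in the paper.
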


\begin{cor}\label{co-g-equi-cont}
\index{group!torsor}
\index{torsor!group}
\index{equivariant map}
\index{G torsor@$G$ torsor}
(Equivariant Maps of Torsors are Isomorphisms).
Given a topological group $G$, torsors $S$, $T$, and a $G$-equivariant map $\phi \colon S \rightarrow T$, then $\phi$ is an isomorphism of $G$ torsors; in particular it is a homeomorphism.
\end{cor}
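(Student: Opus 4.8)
The plan is to reduce everything to the observation already built into Definition~\ref{d-g-tors}: for a $G$ torsor $X$ and any point $x \in X$, the orbit map $\rho_x \colon G \to X$ is an isomorphism of $G$ torsors. Indeed $\rho_x$ is continuous (the action is continuous), open by hypothesis, bijective because the action is free and transitive, and left-equivariant since $\rho_x(gh) = (gh)x = g(hx) = g\,\rho_x(h)$; here $G$ is regarded as a left $G$ torsor under left multiplication. So $\rho_x$ is a $G$-equivariant homeomorphism, i.e.\ an isomorphism of $G$ torsors in the sense of Definition~\ref{d-g-tors-isom}.

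First I would fix an arbitrary point $s \in S$ and set $t = \phi(s) \in T$. Then I would compute, for every $g \in G$,
\begin{align}
(\phi \circ \rho_s)(g) = \phi(g s) = g\,\phi(s) = g t = \rho_t(g), \notag
\end{align}
using the $G$-equivariance of $\phi$ in the middle step. Hence $\phi \circ \rho_s = \rho_t$ as maps $G \to T$.

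Next I would invert: since $\rho_s$ is a bijection (indeed a torsor isomorphism), $\phi = \rho_t \circ \rho_s^{-1}$. The inverse of an isomorphism of $G$ torsors is again one, and the composite of two isomorphisms of $G$ torsors is again one (both facts are immediate: inverses and composites of $G$-equivariant homeomorphisms are $G$-equivariant homeomorphisms). Therefore $\phi$ is an isomorphism of $G$ torsors, and in particular a homeomorphism. The right-action case is identical, replacing left-equivariance by right-equivariance throughout.

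There is essentially no obstacle here; the only point requiring care is bookkeeping about which action makes $\rho_x$ equivariant and the trivial remark that inverses and composites of torsor isomorphisms are torsor isomorphisms. One could also note this corollary is a formal consequence of Lemma~\ref{l-g-tors-home-g} only in spirit: Lemma~\ref{l-g-tors-home-g} needs compactness of $G$ to upgrade an algebraic torsor to a torsor, whereas here both $S$ and $T$ are already assumed to be $G$ torsors, so $\rho_s$ and $\rho_t$ are isomorphisms with no compactness assumption.
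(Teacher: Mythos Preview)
Your proof is correct and follows essentially the same approach as the paper: reduce to $G$ via the orbit maps $\rho_s$ and $\rho_t$, then express $\phi$ as a composite of torsor isomorphisms. Your version is in fact slightly cleaner, since by choosing $t = \phi(s)$ you get $\phi \circ \rho_s = \rho_t$ directly, whereas the paper picks $t \in T$ arbitrarily and must then argue that the resulting equivariant self-map $\widetilde{\phi} = \rho_{T,t}^{-1} \circ \phi \circ \rho_{S,s} \colon G \to G$ is a right translation; your final remark that the openness of $\rho_x$ comes from the torsor definition itself (not from compactness as in Lemma~\ref{l-g-tors-home-g}) is also well taken.
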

\begin{proof}
Name the actions $\rho_S, \rho_T$, pick $s \in S$, $t \in T$, and let $\rho_{S,s}, \rho_{T,t}$ be the isomorphisms of lemma \ref{l-g-tors-home-g}.  Then $\widetilde{\phi} = \rho_{T,t}^{-1} \circ \phi \circ \rho_{S,s} \colon G \rightarrow G$ being equivariant, it is given by right translation by $\widetilde{\phi} (e)$, an automorphism of the topological group.  Thus $\phi$ is a $G$-equivariant homeomorphism, an isomorphism of $G$ torsors.  Since additionally, $\rho_{S,s}$ and $\rho_{T,t}$ are isomorphisms of $G$ torsors, so is $\phi$.
\end{proof}

\begin{defn}\label{d-pb-afb}
\index{principal bundle}
\index{bundle!principal}
\index{associated fiber bundle}
\index{bundle!fiber!associated}
(Principal and Associated Fiber Bundles).
Given a topological or Lie group $G$, a topological \citep[pages~54-55]{tomD87} or smooth principal $G$ bundle is a topological or smooth fiber bundle $P \rightarrow B$ with standard fiber $G$ (definition \ref{d-fb-vb}), together with a continuous or smooth right action (definition \ref{d-grp-act}) of $G$ on $P$ that commutes with the projection map and is free and transitive over each point in $B$, for which the coordinate charts are $G$-equivariant (definition \ref{d-g-equi}).  (Thus the fibers are $G$ torsors.)

Given a continuous or smooth left action of $G$ on a topological space or smooth manifold $F$, the associated fiber bundle with fiber $F$ for the principal $G$ bundle $\pi \colon P \rightarrow B$ is defined by setting its total space to $P \cross_G F = (P \cross F) /\sim$, where $(x, f) \sim (x g, g^{-1} f)$, the equivalence class of $(x, f)$ is denoted $[(x, f)]$ or $[x, f]$, and the projection map $\pi_{P \cross_G F} \colon P \cross_G F \rightarrow B$ is defined by $\pi_{P \cross_G F} ([x, f]) = \pi (x)$.

Morphisms of principal bundles are maps of total spaces that are continuous or smooth as appropriate, that cover respectively continuous or smooth maps of base spaces (i.e. commute with the projections), and that preserve the structure of fibers (are equivariant, i.e., commute with the group action; see definition \ref{d-g-equi}).
\end{defn}
\begin{note}\label{n-vb-afb}
\index{associated fiber bundle}
\index{bundle!fiber!associated}
\index{vector bundle}
\index{bundle!vector}
(Associated Fiber Bundles Are Fiber Bundles.)
Smooth vector bundles, whose transition functions' automatically define continuous maps into topological groups $G$, or continuous vector bundles that happen to satisfy that requirement, are associated smooth or continuous fiber bundles for smooth or continuous principal $G$ bundles with fiber a vector space $X$.
\end{note}
See \citet[pages~25--31]{Poor07} for the smooth case; also note \ref{n-tran-func-pb} and lemma \ref{l-tran-func-pb}.  The continuous case is similar except that for the various flavors of topological bundles, we do not require the spaces to be manifolds.  See also \citet[pages~84--87]{DK01}, and \citet[pages~55--59]{HJJS08} (see lemma \ref{l-pb-tran-func-tors} and note that all our bundles are locally trivial in their terms).

\begin{note}\label{n-tran-func-pb}
\index{transition functions!principal bundle}
(Principal Bundle Transition Functions).
As a consequence of the definition, the change of coordinates maps and transition functions of fiber bundles or vector bundles are continuous or smooth as appropriate.  Note that these transition functions do not refer to maps $U_i \cap U_j \rightarrow \Homeo(F_i, F_j) \text{ or } \Diff(F_i, F_j)$; see definition \ref{d-fb-vb}. Continuity or smoothness of these maps can be a different question, depending on circumstances, but for a principal bundle things are easy, as follows.
\end{note}
\begin{lem}\label{l-tran-func-pb}
\index{transition functions!principal bundle}
(Principal Bundle Transition Functions).
For a principal $G$ bundle, transition functions as in definition \ref{d-fb-vb} give rise to maps $g_{ji} \colon U_i \cap U_j \rightarrow G$, continuous or smooth as the case may be, which are called the transition functions of the principal bundle.
\end{lem}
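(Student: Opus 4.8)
The plan is to extract $g_{ji}$ by evaluating the transition function $\phi_{ji,2}$ at the identity of $G$ and to use equivariance to recover its value on all of $G$. First I would record that each chart $\phi_i \colon \pi^{-1}(U_i) \isomto U_i \cross G$ is $G$-equivariant, where $G$ acts on $U_i \cross G$ through right multiplication on the second factor; hence $\phi_i^{-1}$ is equivariant as well, and therefore the change of coordinates map $\phi_{ji} = \phi_j \circ \phi_i^{-1}$ on $(U_i \cap U_j) \cross G$ is equivariant for this action. Because $\phi_{ji}$ also commutes with the projection to $U_i \cap U_j$ (it is a change of coordinates map of the underlying fiber bundle, definition \ref{d-fb-vb}), it has the form $\phi_{ji}(b, h) = (b, \phi_{ji,2}(b, h))$.

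Next I would define $g_{ji} \colon U_i \cap U_j \rightarrow G$ by $g_{ji}(b) = \phi_{ji,2}(b, e)$, with $e$ the identity of $G$. Equivariance of $\phi_{ji}$ then gives, for any $b \in U_i \cap U_j$ and $h \in G$, that $\phi_{ji,2}(b, h) = \phi_{ji,2}(b, e h) = \phi_{ji,2}(b, e) \cdot h = g_{ji}(b) h$, so the transition function is left multiplication by $g_{ji}(b)$ on each fiber. Uniqueness of $g_{ji}$ with this property is immediate by setting $h = e$.

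For regularity: $g_{ji}$ is the composite of the map $U_i \cap U_j \rightarrow (U_i \cap U_j) \cross G$, $b \mapsto (b, e)$, with $\phi_{ji,2}$. The first map is continuous, and smooth when $G$ is a Lie group; the second is continuous, respectively smooth, by note \ref{n-tran-func-pb}, since it is $\pi_2$ composed with the change of coordinates map $\phi_{ji}$, which is a homeomorphism, respectively diffeomorphism, by definition \ref{d-fb-vb}. Hence $g_{ji}$ is continuous, respectively smooth, as claimed.

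I do not expect a genuine obstacle here; the only point requiring care is the bookkeeping of the right-action convention — that equivariance of the two charts propagates to their composite with no left/right mismatch — and, if desired, checking that the resulting $g_{ji}$ satisfy the cocycle identity $g_{ki} = g_{kj}\, g_{ji}$ on triple overlaps, which again follows immediately from $\phi_{ki} = \phi_{kj} \circ \phi_{ji}$ together with the formula just derived.
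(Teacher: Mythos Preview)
Your proposal is correct and follows essentially the same approach as the paper: both define $g_{ji}(b) = \phi_{ji,2}(b,e)$, use equivariance of the charts to conclude $\phi_{ji,2}(b,h) = g_{ji}(b)\,h$, and read off regularity from that of $\phi_{ji,2}$. The only cosmetic difference is that the paper first introduces $g_{ji}$ via the torsor relation $p_{i,x} = p_{j,x}\,g_{ji}(x)$ before verifying $\phi_{ji}(x,e) = (x, g_{ji}(x))$, and derives continuity by writing $g_{ji}(x) = g_{ji}(x)\,z\,z^{-1}$; your argument via the composite $b \mapsto (b,e) \mapsto \phi_{ji,2}(b,e)$ is a bit more direct.
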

\begin{proof}
To see how this works, use the notation of definition \ref{d-fb-vb}.

Let $e \in G$ be the identity, and for $x \in U_i \cap U_j$ let $p_{i, x} = \phi_i^{-1} (x, e)$ and $p_{j, x} = \phi_j^{-1} (x, e)$.  Define $g_{ji}(x)$ by $p_{i, x} = p_{j, x} g_{ji}(x)$ (see lemma \ref{l-pb-tran-func-tors}).  We have $\phi_{ji}(x,e) = \phi_j \circ \phi_i^{-1} (x,e) = \phi_j (p_{i, x}) = \phi_j (p_{j, x} g_{ji}(x)) = \phi_j (p_{j, x}) g_{ji}(x) = (x, e) g_{ji}(x) = (x, g_{ji}(x))$.  Then by equivariance of the local trivialization maps $\phi_i$, $\phi_j$ and hence of $\phi_{ji}$, $\phi_{ji}(x,z) = (x, g_{ji}(x) z)$; or $\phi_{ji,2} (x,z) = g_{ji} (x) z$.

Since $\phi_{ji,2}$ is jointly continuous with respect to $x$ and $z$, or a smooth function of $x$ and $z$, so is $g_{ji} (x) z$, and thus so is $g_{ji} (x) = g_{ji} (x) z z^{-1}$.
\end{proof}
The context should make clear which definition of transition function is being used.

\begin{lem}\label{l-pb-cons}
\index{principal bundle!construction}
\index{bundle!principal!construction}
(Principal Bundle Constructions).
Given set of transition functions suitable for a principal bundle, satisfying a cocycle condition, there is a functorial construction of a principal bundle from them, that has them as its transition functions.  Furthermore, there is a naturally constructed bijection between equivalence classes of transition functions and isomorphism classes of bundles. Thus, if one can choose in a natural way, transition functions using the same open covering of the same base space for two principal bundles for the same group, and if the transition functions from the two bundles are equal, then the principal bundles are isomorphic in a natural way.

Given the elements of a topological principal bundle except no topology on the total space, and consequently the local trivializations being just bijections, if the change of coordinates maps or equivalently the transition functions are continuous, then a unique topology can be defined on the total space resulting in a topological principal bundle.  The analogous facts are true for a smooth principal bundle, when what is lacking is a smooth structure and possibly also a topology on the total space.

Somewhat more generally, given an open cover of what will become the base space, given a principal bundle over each element of the open cover and a principal bundle isomorphism between each pair of principal bundles, such that the isomorphisms satisfy a cocycle condition, then there is a natural construction of a total space with topology and smooth structure as appropriate, resulting in a principal bundle that when restricted to each open set of the cover, gives a principal bundle canonically isomorphic to the corresponding one of the original principal bundles.

In addition, if from each of the original bundles there is a principal bundle morphism to a fixed bundle, and these morphisms are compatible with the isomorphisms between the original bundles, then there is a naturally constructed morphism from the resulting bundle to the fixed bundle, compatible with the original morphism.
\end{lem}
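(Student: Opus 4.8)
The plan is to bootstrap from Lemma~\ref{l-fb-cons} and Lemma~\ref{l-tran-func-pb}: a principal $G$ bundle is a fiber bundle with standard fiber $G$ carrying the extra datum of a compatible right $G$ action, so most of the statement reduces to applying the fiber-bundle construction and then checking that this one extra piece of structure is present and well behaved; the final ``gluing'' paragraphs will in turn be reduced to the transition-function paragraphs by passing to a common refinement.

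For the first paragraph, given a cocycle of $G$-valued transition functions $g_{ji} \colon U_i \cap U_j \rightarrow G$, I would form the maps $\phi_{ji,2}(x,z) = g_{ji}(x)z$ into $\Homeo(G)$ (resp.\ $\Diff(G)$): these are jointly continuous (smooth) in $x$ and $z$ and satisfy the fiber-bundle cocycle condition precisely because the $g_{ji}$ do, so Lemma~\ref{l-fb-cons} produces a fiber bundle $P \rightarrow B$ with standard fiber $G$ and the $\phi_{ji,2}$ as transition functions. The content specific to principal bundles is the right action: in each chart set $\phi_i^{-1}(x,z) \cdot h = \phi_i^{-1}(x, zh)$, which is independent of $i$ because left translation by $g_{ji}(x)$ commutes with right translation by $h$, so the local formulas agree on overlaps and glue to a global right action. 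One then checks it is continuous (being so in each chart), free and transitive on each fiber, and that the charts are $G$-equivariant by construction, so $P$ is a principal bundle with the prescribed transition functions in the sense of Lemma~\ref{l-tran-func-pb}. Functoriality and the asserted bijection between equivalence classes of transition functions and isomorphism classes of principal bundles follow the standard pattern: an equivalence $g'_{ji} = g_j g_{ji} g_i^{-1}$ gives a fiberwise isomorphism expressed in charts by $[x,z] \mapsto [x, g_i(x)z]$, which is automatically equivariant, and conversely an isomorphism over the identity, read in charts, yields such $g_i$; one verifies these assignments respect composition and identities. This part may also be cited to \citet[pages~191--195]{Swit75} and \citet[pages~25--31]{Poor07}.

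For the second paragraph, I would invoke the corresponding clause of Lemma~\ref{l-fb-cons} to get the unique topology (smooth structure) on the total space making the charts homeomorphisms (diffeomorphisms), then note that the already-given right $G$ action is continuous (smooth) because it is so in each chart and the charts cover, and that equivariance of the charts is part of the hypothesis; hence the result is a topological (smooth) principal bundle. For the ``somewhat more generally'' paragraph, given the open cover, principal bundles $P_i \rightarrow U_i$, and isomorphisms $\psi_{ji}$ over $U_i \cap U_j$ with $\psi_{ii} = \ident$ and the triple-overlap cocycle condition, I would choose a common refinement $\{V_\alpha\}$ trivializing every $P_i$, express each $P_i$ over the $V_\alpha$ by $G$-valued transition functions, transport the $\psi_{ji}$ into these trivializations to obtain a single $G$-valued cocycle on $\{V_\alpha\}$, apply the first paragraph to build $P \rightarrow B$, and check that over each $U_i$ the restriction is canonically isomorphic to $P_i$ via the trivializations, the isomorphism being independent of choices since any two trivializations differ by the transported $g_\alpha$. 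Alternatively one may form $P = \bigl(\coprod_i P_i\bigr)/\!\sim$ with $p \sim \psi_{ji}(p)$, where the cocycle condition makes $\sim$ an equivalence relation each of whose classes meets each $P_i$ in at most one point, the quotient map restricts to an open embedding on each $P_i$, and the local trivializations and $G$ actions descend because the $\psi_{ji}$ are equivariant principal-bundle morphisms. Finally, morphisms $f_i \colon P_i \rightarrow Q$ with $f_j \circ \psi_{ji} = f_i$ agree under the gluing identifications, hence factor through a unique continuous (smooth) map $P \rightarrow Q$ by the universal property of the quotient, and equivariance and commutation with the projections are inherited from the $f_i$.

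The step I expect to be the main obstacle is the last paragraph: making the reduction to a common refinement genuinely canonical, so that the asserted isomorphisms $P|_{U_i} \cong P_i$ do not depend on the auxiliary trivializations, or, if one argues directly with $\coprod_i P_i /\!\sim$, verifying the point-set facts that the relation is open and restricts to an open embedding on each summand, which is what makes local triviality and the descended $G$ action behave correctly. Everything else is a routine translation of Lemma~\ref{l-fb-cons} through the single extra datum of the right $G$ action, whose compatibility across charts is exactly the commutativity of left and right translation on $G$.
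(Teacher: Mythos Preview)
Your proposal is correct and aligns with the paper's approach. The paper's own proof is essentially a list of citations (\citet{Swit75}, \citet{HJJS08}, \citet{Mich08}, \citet{Poor07}) together with the brief remark that for the third and fourth paragraphs ``one could use as the total space of the constructed bundle the disjoint union of the total spaces of the individual bundles, modulo an equivalence relation based on isomorphisms''---which is precisely your alternative construction $\bigl(\coprod_i P_i\bigr)/\!\sim$. Your version is more detailed than the paper's, actually sketching the verification that the right $G$ action glues (via commutativity of left and right translation) and that morphisms descend; the paper simply defers these to the references.
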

\begin{proof}
For the first paragraph, \citet[pages~191--195]{Swit75} has a thorough discussion of the equivalence of isomorphism classes of continuous principal bundles and isomorphism classes of sets of continuous transition functions.  \citet[pages~58--60]{HJJS08} also covers the continuous case, from a different point of view.  \citet[pages~210--211]{Mich08} discusses the smooth case.

For the smooth case in the second paragraph see \citet[page~3]{Poor07}; the group can be ignored for this question.

For the third paragraph, see \citet[pages~58--60]{HJJS08} for a discussion of the continuous case, which is what we use.  For the third and fourth paragraphs, one could use as the total space of the constructed bundle the disjoint union of the total spaces of the individual bundles, modulo an equivalence relation based on isomorphisms, much as in a standard proof of the similar fact when the individual bundles are trivial.
\end{proof}

\begin{lem}\label{l-eqho-tspc-pb-home-bspc}
\index{bundle!principal!homeomorphic}
(Equivariant Homeomorphisms of Total Spaces of Principal Bundles Induce Homeomorphisms of Base Spaces).
Given a topological group $G$, topological principal bundles $\pi_1 \colon P_1 \rightarrow B_1$, $\pi_2 \colon P_2 \rightarrow B_2$, a $G$-equivariant homeomorphism $\psi \colon P_1 \rightarrow P_2$ induces a homeomorphism $\overline{\psi} \colon B_1 \rightarrow B_2$ making $(\psi, \overline{\psi})$ an isomorphism of topological principal $G$ bundles.
\end{lem}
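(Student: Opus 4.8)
The plan is to build $\overline{\psi}$ directly out of the orbit structure of a principal bundle. In a principal $G$ bundle the action is free and transitive over each point of the base (definition \ref{d-pb-afb}), so the fibers $\pi_i^{-1}(b)$ are exactly the $G$-orbits in $P_i$; hence a $G$-equivariant map must carry orbits to orbits, and we may \emph{define}
\[
\overline{\psi} \colon B_1 \rightarrow B_2, \qquad \overline{\psi}(b) = \pi_2\bigl(\psi(p)\bigr) \text{ for any } p \in \pi_1^{-1}(b).
\]
First I would check this is well defined: if $p, p' \in \pi_1^{-1}(b)$ then $p' = p g$ for some $g \in G$ by transitivity over $b$, so $\psi(p') = \psi(p) g$ by equivariance, and $\pi_2(\psi(p) g) = \pi_2(\psi(p))$ because the $G$-action on $P_2$ commutes with $\pi_2$. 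The relation $\pi_2 \circ \psi = \overline{\psi} \circ \pi_1$ then holds by construction, so $(\psi, \overline{\psi})$ commutes with the projections; and since $\pi_1$ is surjective, $\overline{\psi}$ is the unique map with this property.

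Next I would prove continuity of $\overline{\psi}$. Each local trivialization $\phi_1$ of $\pi_1$ over an open set $U \subseteq B_1$ supplies a continuous local section $s \colon U \rightarrow P_1$, for instance $s(b) = \phi_1^{-1}(b, e)$ in the notation of lemma \ref{l-tran-func-pb}, and on $U$ we have $\overline{\psi} = \pi_2 \circ \psi \circ s$, a composite of continuous maps. Since such $U$ cover $B_1$, $\overline{\psi}$ is continuous. The main (and essentially only) point requiring care is precisely this one: one must remember that a principal bundle possesses local sections coming from its local trivializations, which is what upgrades the set-theoretic descent to a continuous map.

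Then I would run the same argument for $\psi^{-1}$. From $\psi(p g) = \psi(p) g$ one deduces $\psi^{-1}(q g) = \psi^{-1}(q) g$, so $\psi^{-1} \colon P_2 \rightarrow P_1$ is a $G$-equivariant homeomorphism and induces, by the above, a continuous $\overline{\psi^{-1}} \colon B_2 \rightarrow B_1$ with $\pi_1 \circ \psi^{-1} = \overline{\psi^{-1}} \circ \pi_2$. For $b \in B_1$ and $p \in \pi_1^{-1}(b)$,
\[
\overline{\psi^{-1}}\bigl(\overline{\psi}(b)\bigr) = \pi_1\bigl(\psi^{-1}(\psi(p))\bigr) = \pi_1(p) = b,
\]
and symmetrically $\overline{\psi} \circ \overline{\psi^{-1}} = \ident_{B_2}$, so $\overline{\psi}$ is a homeomorphism with inverse $\overline{\psi^{-1}}$. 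Finally, unwinding the definition of a morphism of principal bundles in definition \ref{d-pb-afb}: $\psi$ is a continuous map of total spaces covering the continuous map $\overline{\psi}$ of base spaces and is $G$-equivariant, hence $(\psi, \overline{\psi})$ is a morphism of topological principal $G$ bundles; the pair $(\psi^{-1}, \overline{\psi^{-1}})$ is a morphism the other way and is a two-sided inverse to it, so $(\psi, \overline{\psi})$ is an isomorphism of topological principal $G$ bundles.
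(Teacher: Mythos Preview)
Your proof is correct. The paper's own argument is a one-line appeal to the general fact that a continuous $G$-equivariant map descends to a continuous map of orbit spaces (citing tom Dieck), using that for a principal bundle the base is the orbit space $P/G$ with the quotient topology. Your route is slightly different: rather than invoking the quotient topology, you prove continuity of $\overline{\psi}$ directly by composing with local sections coming from local trivializations. The orbit-space argument is shorter but implicitly uses that $\pi_i$ is a quotient map (which itself follows from local triviality), while your argument is more self-contained and makes the role of the bundle structure explicit; both yield the same map and the same conclusion.
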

This is so because a continuous equivariant map descends to a continuous map of the orbit spaces; see \citet[page~4]{tomD87}.

\begin{lem}\label{l-pb-tran-func-tors}
\index{translation function}
\index{torsor}
\index{principal bundle!translation function}
\index{principal bundle!fiber}
\index{principal bundle!torsor}
(Principal Bundle Translation Functions).
Given a topological group $G$ and a topological principal $G$ bundle $\pi \colon P \rightarrow X$, defining $P \cross_{\pi} P = \{ (p_1, p_2) \in P \cross P \st \pi(p_1) = \pi(p_2) \}$, there is a unique translation function $\tau \colon P \cross_{\pi} P \rightarrow G$ such that for all $p \in P$ and $g \in G$, $\tau (p, pg) = g$; and $\tau$ is continuous \citep[page~54--55]{tomD87}.  Stated differently, for all $p_1, p_2 \in P_x$, $p_1 \tau (p_1, p_2) = p_2$.
\end{lem}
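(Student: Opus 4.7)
The plan is to first establish $\tau$ pointwise using the fact that each fiber of $P$ is a $G$-torsor (from definition \ref{d-pb-afb}), and then to verify continuity using local trivializations.

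For the pointwise definition and uniqueness: given $(p_1, p_2) \in P \cross_{\pi} P$, both $p_1, p_2$ lie in the single fiber $P_x$ where $x = \pi(p_1) = \pi(p_2)$. Since the right $G$-action on $P_x$ is free and transitive, there is a unique $g \in G$ with $p_1 g = p_2$, and we set $\tau(p_1, p_2) = g$. The defining identity $\tau(p, pg) = g$ is immediate, and any function satisfying this identity must agree with $\tau$ by freeness of the action, giving uniqueness.

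For continuity, fix $(p_1, p_2) \in P \cross_{\pi} P$ with $\pi(p_1) = \pi(p_2) = x$, choose a local trivialization $\phi \colon \pi^{-1}(U) \isomto U \cross G$ over a neighborhood $U$ of $x$, and let $\phi_2 = \pi_2 \circ \phi$. The equivariance of $\phi$ (built into definition \ref{d-pb-afb}) means $\phi_2(pg) = \phi_2(p) g$ for $p \in \pi^{-1}(U)$, $g \in G$. Applying this to the identity $p_1 \tau(p_1, p_2) = p_2$ gives $\phi_2(p_1)\, \tau(p_1, p_2) = \phi_2(p_2)$, so on the open neighborhood $W = (\pi^{-1}(U) \cross \pi^{-1}(U)) \cap (P \cross_{\pi} P)$ of $(p_1, p_2)$ in $P \cross_{\pi} P$ we have the formula
\begin{equation*}
\tau(q_1, q_2) = \phi_2(q_1)^{-1}\, \phi_2(q_2).
\end{equation*}
Since $\phi_2$ is continuous and inversion and multiplication in $G$ are continuous, this exhibits $\tau$ as continuous on $W$, and continuity at an arbitrary point in $P \cross_{\pi} P$ follows.

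The only step with any real content is the continuity, and there the main thing is to notice that the equivariance built into the definition of local trivialization is exactly what lets the translation function be written locally in terms of the continuous map $\phi_2$ and the continuous group operations; without equivariance one would only get $\tau$ as an abstract set-theoretic map and would need additional work. Since equivariance is part of the hypothesis (definition \ref{d-pb-afb}), there is no substantive obstacle.
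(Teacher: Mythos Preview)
Your proof is correct and is the standard argument; the paper itself does not supply a proof but simply cites \citet[pages~54--55]{tomD87} for this result, and your argument is essentially the one found there.
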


\begin{lem}\label{l-pb-mor-cov-id-isom}
\index{bundle!morphism}
(Morphisms of Principal Bundles Covering the Identity are Isomorphisms).
\citep[pages~54--56]{tomD87} Given a topological group $G$, topological principal bundles $\pi_1 \colon P_1 \rightarrow B$, $\pi_2 \colon P_2 \rightarrow B$, a $G$-equivariant continuous map $\psi \colon P_1 \rightarrow P_2$ covering the identity is a homeomorphism and hence an isomorphism of topological principal $G$ bundles.
\end{lem}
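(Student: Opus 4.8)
The plan is to upgrade the given continuous $G$-equivariant map $\psi$ covering $\ident_B$ to a homeomorphism by establishing, in order, that (i) $\psi$ is a bijection and (ii) $\psi^{-1}$ is continuous; together with the hypotheses already in hand this makes $\psi$ an isomorphism of topological principal $G$ bundles in the sense of definition \ref{d-pb-afb}. For (i), since $\psi$ covers $\ident_B$ it restricts over each $b \in B$ to a $G$-equivariant map of fibers $\psi_b \colon \pi_1^{-1}(b) \rightarrow \pi_2^{-1}(b)$. The fibers of a principal $G$ bundle are $G$ torsors (definition \ref{d-pb-afb}), so corollary \ref{co-g-equi-cont} applies to $\psi_b$; in particular $\psi_b$ is a bijection (which is anyway elementary from freeness and transitivity of the two torsor actions). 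Letting $b$ range over $B$, $\psi$ is a bijection.

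For (ii), I would verify continuity of $\psi^{-1} \colon P_2 \rightarrow P_1$ locally. Fix $b \in B$ and choose an open neighbourhood $U$ of $b$ over which $P_1$ is trivial, hence over which $P_1$ has a continuous section $s \colon U \rightarrow P_1$; then $\psi \circ s \colon U \rightarrow P_2$ is a continuous section of $P_2$ over $U$. Writing $\tau_2 \colon P_2 \cross_{\pi_2} P_2 \rightarrow G$ for the continuous translation function of lemma \ref{l-pb-tran-func-tors}, each $p \in \pi_2^{-1}(U)$ satisfies $p = (\psi \circ s)(\pi_2(p)) \cdot \tau_2\bigl((\psi \circ s)(\pi_2(p)),\, p\bigr)$, so $G$-equivariance of $\psi$ forces
\[
\psi^{-1}(p) = s\bigl(\pi_2(p)\bigr) \cdot \tau_2\bigl((\psi \circ s)(\pi_2(p)),\, p\bigr).
\]
The right-hand side is a composite of the continuous maps $\pi_2$, $s$, $\psi$, $\tau_2$ and the continuous right action of $G$ on $P_1$, hence continuous in $p$; thus $\psi^{-1}$ is continuous on $\pi_2^{-1}(U)$. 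Since the sets $\pi_2^{-1}(U)$, for $U$ in a trivializing cover of $B$ for $P_1$, cover $P_2$, $\psi^{-1}$ is continuous.

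I do not anticipate a genuine obstacle; the only point requiring a little care in step (ii) is that the trivializing covers for $P_1$ and for $P_2$ need not agree, which is why the argument is organized around a local section of $P_1$ and uses only lemma \ref{l-pb-tran-func-tors} on the $P_2$ side rather than a chosen trivialization of $P_2$. Once $\psi^{-1}$ is known continuous, $\psi$ is a $G$-equivariant homeomorphism covering $\ident_B$, hence an isomorphism of topological principal $G$ bundles; alternatively, knowing $\psi$ is a homeomorphism one may invoke lemma \ref{l-eqho-tspc-pb-home-bspc} to record the conclusion.
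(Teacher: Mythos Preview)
Your proof is correct. The paper does not supply its own proof of this lemma; it simply cites \citet[pages~54--56]{tomD87} and moves on. Your argument---bijectivity fiberwise via corollary \ref{co-g-equi-cont}, then continuity of $\psi^{-1}$ locally using a section of $P_1$ and the translation function $\tau_2$ of lemma \ref{l-pb-tran-func-tors}---is precisely the standard proof and is essentially what the cited reference does.
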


\section{The Vector Bundle}\label{s-the-vb}

\begin{ass}\label{a-mfld-vb}
\index{manifold}
\index{M@$M$}
\index{m@$m$}
\index{bundle!vector}
\index{vector bundle}
\index{E@$E$}
\index{n@$n$}
\index{fiberwise inner product!vector bundle}
\index{SO(n)@$\SO(n)$}
\index{SO(E)@$\SO(E)$}
\index{O(n)@$\Orth(n)$}
\index{assumptions!M@$M$}
\index{assumptions!m@$m$}
\index{assumptions!E@$E$}
\index{assumptions!n@$n$}
\index{assumptions!base manifold}
\index{assumptions!vector bundle}
\index{compact}
\index{connected}
\index{orientable}
\index{oriented}
\index{Riemannian metric}
(The Vector Bundle and Base Manifold).
We suppose given a compact connected orientable smooth manifold $M$ of dimension $m$ with Riemannian metric; and an smooth oriented real vector bundle $\pi_E \colon E \rightarrow M$ of even rank $n$, with fiberwise inner product denoted $(,)$, the inner product on fiber $E_x$ depending smoothly on $x \in M$ (``smoothly'' defined using the notions of \citet[pages~58--63]{Lang99}).
\[
\begindc{\commdiag}[5]
\obj(10,20)[objE]{$E^n$}
\obj(10,10)[objM]{$M^m$}
\mor{objE}{objM}{$\pi_E$}
\enddc
\]
\end{ass}
The condition of orientability of $M$ is specified only because \citet{Stac05}, referenced here in proposition \ref{p-smth-free-loop-spac-frec-mfld}, specifies orientability; though with language indicating that the results may be true without that assumption.  The Riemannian metric on $M$ is used in proposition \ref{p-smth-free-loop-spac-frec-mfld} to construct the \Frechet manifold structure for $LM$, and in proposition \ref{p-lm-c-coho-isom}, which references the first proposition.

\begin{ass}\label{a-bndl-orient}
\index{E@$E$}
\index{SO(E)@$\SO(E)$}
\index{SO(n)@$\SO(n)$}
\index{vector bundle!orientation}
\index{assumptions!vector bundle!orientation}
(Orientation for The Vector Bundle).
Let $\Orth(n) \rightarrow \Orth(E) \rightarrow M$ denote the orthogonal frame bundle $\Orth(E)$ over $M$, a principal $\Orth(n)$ bundle.  The first arrow means that the fibers are diffeomorphic to $\Orth(n)$.  We assume given a choice of orientation of $E$; this lets us define the orthonormal frame bundle $\SO(n) \rightarrow \SO(E) \rightarrow M$.  The following diagram of smooth principal bundles commutes, where $\nu$ is the inclusion and where the arrows from fibers to total spaces emulate the convention of \citet[page~29]{Whit78}.  Each upper vertical arrow is a $G$-equivariant homeomorphism from a group $G$ to a standard fiber, a $G$ torsor (see definition \ref{d-g-tors}).

\[
\begindc{\commdiag}[5]
\obj(30,30)[objSOn]{$\SO(n)$}
\obj(30,20)[objSOE]{$\SO(E)$}
\obj(30,10)[objM2]{$M$}
\obj(50,30)[objOn]{$\Orth(n)$}
\obj(50,20)[objOE]{$\Orth(E)$}
\obj(50,10)[objM3]{$M$}
\mor{objSOn}{objSOE}{}
\mor{objSOE}{objM2}{}
\mor{objOn}{objOE}{}
\mor{objOE}{objM3}{}
\mor{objSOn}{objOn}{$\nu$}
\mor{objSOE}{objOE}{$\widehat{\nu}$}
\mor{objM2}{objM3}{$=$}

\enddc
\]
\end{ass}

\section{Loop Spaces}\label{s-loop-spac}

\begin{ass}\label{a-parm-s1-u1}
\index{S1@$S^1$!parametrization}
\index{U1@$\UU(1)$!parametrization}
\index{assumptions!parametrization S1, U(1)@parametrization $S^1$, $\UU(1)$}
(Parametrizations of $S^1$ and $\UU(1)$).
We will identify $S^1$, $\UU(1)$, the unit circle in $\CC$, and $\RR / \ZZ$.  Sometimes we will mix inconsistently the additive notation that would be associated with a general abelian group, in our case $\RR / \ZZ$, and multiplicative notation for $S^1$ or $\UU(1)$, particularly when we say something is $0$ or say it is $1$, meaning the same thing, the identity element of the group.  We will try not to let this confuse us into thinking we are dealing with a ring.

We may use for $S^1$ at times, various intervals of $\RR$ with endpoints identified:  $[0, 2 \pi]$, convenient for writing Fourier series, $[0, 1]$, consistent with $\RR/\ZZ$, and $[-1, 1]$, convenient for suspensions.  The reader should supply affine transformations when needed.
\end{ass}

\begin{ass}\label{a-smooth-loops}
\index{smooth loops}
\index{assumptions!smooth loops}
(Smooth Loops).
We will use smooth free loops, choosing smooth loops because they are used by \citet[page~27]{PS86}, which was an important reference in this investigation, are used by \citet{SW07}, and fall into the context of \citet{Hami82}, whose results we use.
\end{ass}
\begin{note}\label{n-smth-loop}
\index{smooth loops}
\index{loops!smooth}
(Smooth Loops).
A broader class of loops such as Sobolev half-differentiable or absolutely continuous might work.  While they use smooth loops for their book, \citet[pages~26--27,~84]{PS86} discuss larger classes of loops than smooth, mentioning Sobolev half-differentiable functions and the group $L_{\frac{1}{2}}GL_n(C)$ which ``is the largest group for which the crucial central extension can be constructed and the basic representation defined''.  For other more differential geometric approaches to some parts of the thesis, absolutely continuous loops, which are Sobolev half-differentiable and along which parallel translation can be defined, might be useful.
\end{note}

\begin{defn}\label{d-smth-free-loop-spac}
\index{Frechet@\Frechet!topology}
\index{topology!Frechet@\Frechet}
\index{loop space}
(Smooth Free Loop Spaces have the \Frechet Topology).
Given a smooth manifold $X$, the smooth free loop space of $X$ as a set is $LX = C^{\infty} (S^1, X) = \{ \text{smooth }\gamma \colon S^1 \rightarrow X \}$.  It is given the \Frechet manifold topology \citep[page~85]{Hami82}.
\end{defn}
The definition as a set is effective immediately; the topology needs justifying, done in proposition \ref{p-smth-free-loop-spac-frec-mfld}.  $X$ may be, for example, the base manifold $M$ or total space $E$ of the smooth vector bundle, $\RR^n$, or the structure group $\SO(n)$ of the smooth vector bundle.

\begin{defn}\label{d-loop-smth-map}
\index{loop!smooth map}
(Loop of a Smooth Map $f \colon Y \rightarrow X$).
Given two smooth manifolds $X$, $Y$ and a smooth map $f \colon Y \rightarrow X$, define $Lf \colon LY \rightarrow LX$ for $\gamma \in LY$ as $Lf \colon \gamma \mapsto f \circ \gamma$.
\end{defn}
For example, $f \colon Y \rightarrow X$ may be $\pi_E \colon E \rightarrow M$.

\section{\Frechet Spaces and Manifolds}\label{s-frec-spac-mfld}

We will include in this and following sections excerpts from \citet{Hami82} sections I.1 and I.4, which is highly recommended, and \citet{SW07} section 1.3 and Appendix, specialized to our needs.  For general topological questions about loop spaces we will use material from \citet{Stac05}.

\begin{defn}\label{d-frec-spac}
\index{Frechet@\Frechet!space}
\index{space!Frechet@\Frechet}
(\Frechet Spaces).
A \Frechet space is a complete topological vector space whose topology is defined by a countable collection of seminorms $\norm{}_k$ which are all simultaneously $0$ only for the $0$ vector.
\end{defn}

\begin{defn}\label{d-para-cpct}
\index{paracompact}
(Paracompactness).
A topological space is called paracompact when it is Hausdorff and every open covering has a refinement that is locally finite, meaning that each point has a neighborhood that intersects only finitely many open sets in the refinement \citep[pages~81,~162]{Dugu66} \citep[page~21]{Bred97}.
\end{defn}

\begin{lem}\label{l-frec-spac}
\index{Frechet@\Frechet!space}
\index{space!Frechet@\Frechet}
\index{paracompact}
\index{paracompact!hereditarily}
\index{good cover}
(\Frechet Space Properties).
A \Frechet space $F$ is a complete metrizable (thus Hausdorff) locally convex vector space.  The topology is defined by the requirement that a sequence $f_j \rightarrow f  \Leftrightarrow \forall k, \norm{f_j - f}_k \rightarrow 0$.  See section I.1.1 of \citet[page~67]{Hami82}.  By \citet[page~979]{Ston48} every metric space is paracompact, so \Frechet spaces are paracompact, and moreover, hereditarily paracompact; i.e., all their open sets, and hence all their topological subspaces are paracompact.

Since it is locally convex and the vector space operations are continuous, it is locally path-connected.  What is more, every open cover of a \Frechet space can be refined to a good cover, an open cover with all nonempty finite intersections contractible, as follows.  (In fact, even nonempty infinite intersections will be contractible.) Define by \citet[page~11]{Rudi91} a translation-invariant metric on $F$ that induces the \Frechet space topology by choosing a sequence of positive numbers $c_k$ that converges to $0$ - for concreteness let us choose $c_k = 2^{-k}$ - and for $\alpha, \beta \in F$, defining
\[
d(\alpha, \beta) = \max_k \frac{c_k \norm{\alpha - \beta}_k}{1 + \norm{\alpha - \beta}_k}. \notag
\]
Then by \citet[page~29]{Rudi91} the metric is translation invariant, and the open balls $\B_r (0)$ for $r > 0$ form a convex local basis $\{V_k\}$ at $0$ for the \Frechet space topology. Their translations are convex and form a cover of $F$.  Taking a nonempty intersection of these and fixing some point in it, $x_0 \in \cap (V_k + x_k)$, then
\begin{align}
H \colon [0,1] \cross (\bigcap (V_k + x_k)) &\rightarrow \bigcap (V_k + x_k) \notag \\
(s, x) &\mapsto s x_0 + (1 - s) x \notag
\end{align}
is a deformation retraction of $\cap (V_k + x_k)$ onto $x_0$, since intersections of convex sets are convex.  Note that looking ahead to definition \ref{d-frec-deri} and using material from \citet[pages~73--84]{Hami82}, $H$ is smooth.  Since all nonempty intersections are contractible, the set of open balls of this metric form a good cover of $F$.  Since any open set in $F$ is the union of the open balls contained in it, if we have any cover of $F$, we may refine it to a good cover consisting of all the open balls in all the open sets of the cover.
\end{lem}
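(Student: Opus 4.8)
The plan is to derive all the assertions from one construction, namely an explicit translation-invariant metric inducing the \Frechet topology, after which metrizability, Hausdorffness, the sequential characterization, paracompactness, local path-connectedness, and the good cover all follow from standard facts about metric spaces and convex sets. First I would fix a sequence of positive numbers tending to $0$, say $c_k = 2^{-k}$, and define for $\alpha,\beta \in F$
\[
d(\alpha,\beta) = \max_k \frac{c_k \norm{\alpha-\beta}_k}{1+\norm{\alpha-\beta}_k},
\]
following \citet[pages~11,~29]{Rudi91}. Using that $t \mapsto t/(1+t)$ is increasing, bounded above by $1$, and subadditive, one checks that $d$ is a metric, that it is visibly invariant under translation, and that a sequence (equivalently, since the space is first countable, a net) is $d$-Cauchy, respectively $d$-convergent, exactly when it is Cauchy, respectively convergent, with respect to every seminorm $\norm{\cdot}_k$ --- this is both the claimed description of the topology by sequences and the identification of $d$-completeness with the topological-vector-space completeness that is hypothesized. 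Completeness of $F$ is thus part of the definition; metrizability has just been produced and gives Hausdorffness; and local convexity holds because the seminorm balls $\{\, \norm{x}_k < \epsilon \,\}$ and their finite intersections form a convex neighborhood base at $0$, carried to every other point by the continuous vector space operations.

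Next, paracompactness is immediate from \citet[page~979]{Ston48}: every metric space is paracompact, so $F$ is. For hereditary paracompactness I would observe that any topological subspace of a metric space is metric, hence again paracompact, and open subsets are in particular subspaces; so every open set of $F$, and every subspace, is paracompact. Local path-connectedness follows from local convexity, since a convex set is path-connected by the straight-line paths $s \mapsto s x_1 + (1-s) x_0$, which stay inside it and are continuous by continuity of scalar multiplication and addition, and the convex (seminorm) balls form a neighborhood base.

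The substantive point is the good cover. Here I would single out the open balls $\B_r(0) = \{\, d(0,x) < r \,\}$ of the metric above and note that they are \emph{convex}: for fixed $r>0$ the condition $d(0,x) < r$ is the conjunction over all $k$ of $c_k \norm{x}_k/(1+\norm{x}_k) < r$, and because $c_k \to 0$ this inequality is automatic as soon as $c_k \le r$, so only finitely many indices $k$ impose a real constraint, each of which is the seminorm ball $\{\, \norm{x}_k < r/(c_k - r) \,\}$; a finite intersection of convex sets is convex. By translation invariance, balls centered elsewhere are translates of these, hence still convex, and any open set is a union of such balls. Given an arbitrary open cover of $F$, refine it to the family of all metric balls that lie inside some member. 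A nonempty finite intersection --- in fact any nonempty intersection --- of members of this refinement is a nonempty intersection of translates of convex sets, hence convex, hence contractible: picking $x_0$ in it, the map $H(s,x) = s x_0 + (1-s) x$ deformation retracts it onto $x_0$, and by the material on smooth maps between \Frechet spaces in \citet[pages~73--84]{Hami82} (see definition \ref{d-frec-deri}) this $H$ is even smooth. So the refinement is a good cover.

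I expect the main obstacle to be exactly the convexity of the metric balls: for a generic translation-invariant metric on a locally convex space the $d$-balls need not be convex, and the whole good-cover argument rests on the observation that for each fixed radius only finitely many of the seminorm inequalities are active, which is where the choice of a sequence $c_k$ tending to $0$ is indispensable.
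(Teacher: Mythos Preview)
Your proposal is correct and follows essentially the same route as the paper: the same translation-invariant metric, Stone's theorem for paracompactness, convexity of the metric balls, and the straight-line homotopy $H(s,x)=sx_0+(1-s)x$ to contract intersections. The one noteworthy difference is that where the paper simply cites \citet[page~29]{Rudi91} for the fact that the balls $\B_r(0)$ are convex, you supply an explicit argument (only finitely many seminorm constraints are active because $c_k\to 0$, so $\B_r(0)$ is a finite intersection of seminorm balls); this is a genuine clarification, since convexity of the $d$-balls is indeed the crux of the good-cover claim and is not automatic for arbitrary invariant metrics.
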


\begin{eg}\label{e-lrn-l-vect-spac}
\index{Frechet@\Frechet!space!LRn@$L\RR^b$}
\index{LRn@$L\RR^n$}
\index{loop!finite-dimensional vector space}
($L\RR^n$, the Loop of a Finite-Dimensional Vector Space).
The \Frechet space $L\RR^n = C^{\infty} (S^1, \RR^n)$ is the space of smooth loops in $\RR^n$ with topology defined by the collection of seminorms $\norm{f}_k = \max_{t \in S^1}$ $\norm{f^{(k)}(t)}$; that is, for a sequence of loops to converge, the loops themselves and all their derivatives must converge in the uniform norm.  This is implied by a special case of example I.1.1.5 in \citet[page~68]{Hami82} as in our lemma \ref{l-sect-smoo-vb-form-frec-spac}.  This definition of $L\RR^n$ is equivalent to that in \citet[page~9]{Stac05}, who states that $L\RR^n$ is separable.

More generally, if $X$ is a finite-dimensional vector space, then $LX$ is a \Frechet space, with topology defined as for $L\RR^n$.
\end{eg}

\begin{defn}\label{d-frec-deri}
\index{Frechet@\Frechet!derivatives}
(\Frechet Derivatives).
Given \Frechet spaces $F$, $G$, open $U \subset F$, and continuous $P \colon U \rightarrow G$, the derivative of $P$ at the point $f \in U$ in the direction $h \in F$ is defined by
\[
DP(f)h = \lim_{t \to 0} \frac{P(f + th) - P(f)}{t}.
\]
$P$ is differentiable at $f$ in the direction $h$ if the limit exists.  It is continuously differentiable or $C^1$ on $U$ if the limit exists for all $f \in U$ and $h \in F$ and $DP \colon U \cross F \rightarrow G$ is continuous using the product topology for the domain.

The second derivative of $P$ at the point $f$ in $U$ in the directions $h, k \in F$ is defined by
\[
D^2P(f)(h,k) = \lim_{t \to 0} \frac{DP(f + tk)h - DP(f)h}{t}.
\]
$P$ is twice continuously differentiable or $C^2$ on $U$ if it is $C^1$, the limit exists for all $f \in U$ and $h, k \in F$, and $D^2P \colon U \cross F \cross F \rightarrow G$ is continuous using the product topology for the domain.
Higher derivatives are defined similarly.
\end{defn}
Note that the definition of differentiability \citep[page~73]{Hami82} assumes continuity.  Remember that the limits in the definition are in the \Frechet topology.  Note that $DP(f)h$ is linear in h, but $C^1$ does not imply that the induced map $U \rightarrow \{ \text{linear maps }F \rightarrow G \}$ is continuous, whatever that might mean; the set of linear maps may not even be a \Frechet space.

\begin{lem}\label{l-sect-smoo-vb-form-frec-spac}
\index{vector bundle!smooth sections!Frechet space@\Frechet space}
\index{Frechet space@\Frechet space!vector bundle!smooth sections}
(Sections of Smooth Vector Bundle with Compact Base are a \Frechet Space).
Given a smooth vector bundle $V \rightarrow X$, $X$ a compact smooth manifold, choosing a Riemannian metric and connection on $TX$ and smooth fiberwise inner product and connection on $V$, then the set of its smooth sections, $\Gamma(V)$, is a \Frechet space.  The resulting topology does not depend on the Riemannian metrics or connections chosen.
\end{lem}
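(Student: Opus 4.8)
The plan is to verify the \Frechet space claim for the given system of seminorms and then to show that any two admissible choices of auxiliary data yield the same topology. Fix a Riemannian metric and a connection on $TX$, and a fiberwise inner product and a connection $\nabla$ on $V$; for $k \ge 0$ let $\nabla^k s$ be the iterated covariant derivative of $s \in \Gamma(V)$, a smooth section of $(T^*X)^{\tensor k} \tensor V$, and set $\norm{s}_k = \max_{x \in X} \norm{\nabla^k s(x)}$, the pointwise norm being the one induced by the chosen metrics and the maximum being finite since $X$ is compact and $\nabla^k s$ is continuous. Each $\norm{}_k$ is visibly a seminorm, and $\norm{s}_0 = 0$ forces $s = 0$, so the countable family $\{\norm{}_k\}$ is separating; it therefore defines a metrizable, locally convex, Hausdorff vector topology on $\Gamma(V)$, and only completeness remains (this being the special case of example~I.1.1.5 of \citet[page~68]{Hami82}).

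For completeness, let $(s_j)$ be Cauchy with respect to every $\norm{}_k$. Then for each $k$ the sequence $(\nabla^k s_j)_j$ is uniformly Cauchy, hence converges uniformly to a continuous section $t_k$ of $(T^*X)^{\tensor k} \tensor V$, and I must show that $t_0$ is smooth with $\nabla^k t_0 = t_k$. Work in a neighborhood $U$ that is simultaneously a coordinate chart and a trivializing neighborhood for $V$. In $U$, a straightforward induction on the order, using $\nabla_i s^a = \partial_i s^a + \Gamma_{i\,b}^{\,a}\, s^b$ and the analogous formula for differentiating the cotangent factors, shows that each partial derivative $\partial^\alpha s^a$ with $\abs{\alpha} \le k$ is a $C^\infty(U)$-linear combination of the components of the $\nabla^\ell s$ with $\ell \le k$, and conversely; since these coefficients are smooth and hence bounded on compact subsets of $U$, the uniform convergence of $(\nabla^\ell s_j)_j$ for every $\ell$ forces the uniform convergence on compact subsets of $U$ of $(\partial^\alpha s_j^a)_j$ for every multi-index $\alpha$ and component index $a$. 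By the standard fact that uniform convergence of a sequence of functions together with all of their partial derivatives yields a smooth limit whose partials are the limiting functions, $s_\infty := \lim_j s_j$ is a smooth section of $V$; feeding this back through the same expansions gives $\nabla^k s_\infty = t_k$ for every $k$, i.e.\ $s_j \to s_\infty$ in every seminorm. Hence $\Gamma(V)$ is complete, so it is a \Frechet space.

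For independence of the auxiliary data, choose once and for all a finite cover of $X$ by neighborhoods $U_i$ that are both charts and trivializing neighborhoods for $V$, together with compact subsets $K_i \subset U_i$ that still cover $X$, and let $\norm{s}'_k$ be the maximum over $i$ of the supremum over $K_i$ of the $C^k$-norms of the components of $s$ in the chart $U_i$; this family depends on none of the metrics or connections. It suffices to show that the families $\{\norm{}_k\}$ and $\{\norm{}'_k\}$ are mutually dominated --- each $\norm{}_k$ by a constant multiple of $\sum_{\ell \le k}\norm{}'_\ell$ and vice versa --- since then they generate the same topology, and, this being true of every admissible system, any two such systems generate the same topology. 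But this is just the pointwise expansion of the previous paragraph read over each $K_i$: the finitely many coefficient functions that occur (the components of the two connections and the two metrics, the inverse metric, and their derivatives up to order $k$) are smooth on $U_i$, hence bounded on $K_i$ by compactness, which converts the expansions into the desired estimates on each $K_i$; taking the maximum over $i$ yields the estimates between global seminorms.

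The main obstacle is purely the bookkeeping of that triangular expansion --- writing iterated covariant derivatives in terms of ordinary partial derivatives and back, while tracking which products of Christoffel-type symbols and their derivatives enter at each order --- followed by invoking compactness of $X$ in the right places to bound those smooth coefficients (and the inverse metric) uniformly, so that the pointwise linear-algebra estimates become estimates between the global seminorms. Everything else is either a direct check of the seminorm axioms or an appeal to Hamilton and to a standard fact from advanced calculus.
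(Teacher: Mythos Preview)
Your proof is correct and considerably more detailed than the paper's. The paper does not prove this lemma: for the \Frechet space claim it simply cites example I.1.1.5 of \citet[page~68]{Hami82}, and for independence of the auxiliary data it only gestures at ``reasoning similar to that in \citep[pages~123--125]{Pete06}, facts about the difference of two covariant derivatives, and the product rule for covariant differentiation.''

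The one substantive difference is in the independence argument. You pass through an intermediate, connection-free family of seminorms built from $C^k$ norms of components in a fixed finite atlas of trivializing charts, and show that any admissible seminorm family is equivalent to that fixed one. The paper's hint instead suggests comparing two admissible choices directly: the difference of two connections on the same bundle is a tensor, so $\nabla' s - \nabla s = A \cdot s$ for a smooth endomorphism-valued one-form $A$, and iterating with the product rule expresses $(\nabla')^k s$ as $\nabla^k s$ plus lower-order terms with smooth (hence bounded on compact $X$) coefficients. Both routes yield the same triangular estimates; yours has the advantage of being coordinate-explicit and of simultaneously handling the completeness proof with the same local-chart expansion, while the paper's hinted route is a bit more invariant in flavor and avoids choosing an atlas. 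Either way the underlying mechanism --- bounding smooth coefficient functions on a compact base --- is the same.
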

The first statement is example I.1.1.5 in \citet[page~68]{Hami82}.  That the topology is independent of the choices seems to be assumed by \citet[page~85]{Hami82} example I.4.1.2, though it's not explicitly stated.  It follows from reasoning similar to that in \citep[pages~123--125]{Pete06}, facts about the difference of two covariant derivatives, and the product rule for covariant differentiation.

\begin{defn}\label{d-frec-mfld}
\index{Frechet@\Frechet!Manifold}
\index{Manifold!Frechet@\Frechet}
(\Frechet Manifolds).
A \Frechet manifold is a Hausdorff topological space with an atlas of coordinate charts with values in a \Frechet space, such that the change of coordinates maps are all smooth maps.
\end{defn}
\begin{note}\label{n-frec-mfld}
\index{Frechet@\Frechet!Manifold!definitions}
(\Frechet Manifold Definitions).
This is almost the same as definition I.4.1.1 of \citet[page~85]{Hami82} except that he allows different \Frechet spaces for different charts whereas we don't.
\end{note}

\begin{lem}\label{l-sect-fb-form-frec-mfld}
\index{fiber bundle!smooth sections!Frechet manifold@\Frechet manifold}
\index{Frechet manifold@\Frechet manifold!fiber bundle!smooth sections}
(Sections of Smooth Fiber Bundle with Compact Base form a \Frechet Manifold).
Given a smooth fiber bundle $V \rightarrow X$, $X$ compact, the set of its smooth sections, $\Gamma(V)$ is a \Frechet manifold.
\end{lem}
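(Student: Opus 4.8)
The plan is to put an atlas on $\Gamma(V)$ whose charts are indexed by the sections themselves: around each $s \in \Gamma(V)$ a chart modeled on an open subset of the \Frechet space $\Gamma(s^{*}\mathcal{V})$ of smooth sections of a pullback vector bundle over $X$, which is a \Frechet space by lemma \ref{l-sect-smoo-vb-form-frec-spac}. One then checks that the transition maps are smooth in the sense of definition \ref{d-frec-deri} and that the underlying point set is Hausdorff.

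To build a chart around a fixed $s \in \Gamma(V)$, let $\mathcal{V} \rightarrow V$ be the vertical tangent bundle, the kernel of $d\pi_V$; over $v \in V$ lying above $x \in X$, its fiber is the tangent space at $v$ to $\pi_V^{-1}(x)$. A smoothly varying Riemannian metric on the fibers of $V$ yields a fiberwise exponential map, and since $X$ is compact there is an $\epsilon > 0$ for which this exponential, applied on the radius-$\epsilon$ disk subbundle $W_s \subset s^{*}\mathcal{V}$, is a fiber-preserving diffeomorphism $\psi_s \colon W_s \isomto N_s$ onto an open fiber-preserving tubular neighborhood $N_s$ of $s(X)$ in $V$. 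Then $t \mapsto \psi_s^{-1} \circ t$ is a bijection from $U_s = \{ t \in \Gamma(V) \st t(X) \subset N_s \}$ onto $\{ \sigma \in \Gamma(s^{*}\mathcal{V}) \st \sigma(X) \subset W_s \}$, an open subset of $\Gamma(s^{*}\mathcal{V})$ (openness holds already for the $C^{0}$ seminorm since $X$ is compact). As $s \in U_s$, these charts cover $\Gamma(V)$.

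On an overlap $U_s \cap U_{s'}$ the transition map sends $\sigma$ to $\psi_{s'}^{-1} \circ \psi_s \circ \sigma$, so it is postcomposition $\sigma \mapsto f \circ \sigma$ with the smooth fiber-preserving map $f = \psi_{s'}^{-1} \circ \psi_s$ between open subsets of the vector bundles $s^{*}\mathcal{V}$ and $(s')^{*}\mathcal{V}$ over $X$. The crucial input is that such a pushforward operator is infinitely \Frechet-differentiable between the relevant section spaces: its derivative at $\sigma$ in direction $h$ is $x \mapsto D^{\mathrm{fib}}f(\sigma(x))\,h(x)$, built from the fiber derivative of $f$, and iterating this gives all higher derivatives and their continuity. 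This is one of the nonlinear-operator facts underlying \Frechet manifold theory, which I would quote from Part~I of \citet{Hami82}; granting it, the transition maps --- restrictions of such operators --- are smooth, so $\Gamma(V)$ is a \Frechet manifold, recovering example~I.4.1.2 of \citet[page~85]{Hami82}. Hausdorffness is immediate: two distinct sections disagree at some $x_0 \in X$, and in any chart containing both, their images are separated by the value at $x_0$, hence by a $C^{0}$-neighborhood.

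The step I expect to be the main obstacle is exactly the smoothness of $\sigma \mapsto f \circ \sigma$ in the \Frechet sense --- the ``$\Omega$-lemma'' type statement --- which has to be cited carefully from \citet{Hami82} rather than reproved here; the rest is geometric bookkeeping. A secondary point needing care is that the tubular neighborhood of $s(X)$ must be taken inside $V$ fiber by fiber, so that every map involved is fiber-preserving and the chart is genuinely modeled on $\Gamma(s^{*}\mathcal{V})$ and not on sections of a larger bundle.
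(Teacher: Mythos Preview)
Your sketch is correct and is essentially the standard construction that underlies the cited result. The paper, however, does not give a proof of this lemma at all: it simply cites example~I.4.1.2 of \citet[pages~85--86]{Hami82}, noting only that the local triviality assumed here implies Hamilton's hypothesis that $d\pi_V$ is surjective. So you have in fact done more than the paper does --- you have outlined the chart construction (vertical exponential around a section, model space $\Gamma(s^{*}\mathcal{V})$) and correctly isolated the one genuinely analytic step, the smoothness of the pushforward $\sigma \mapsto f \circ \sigma$, which you rightly defer to Hamilton. This is exactly the construction Hamilton gives, and it is also the template the paper later unpacks in detail for the special case $V = S^1 \times X$ in the proof of proposition~\ref{p-smth-free-loop-spac-frec-mfld}.
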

This is example I.4.1.2 in \citet[pages~85--86]{Hami82}; our local triviality condition implies his condition of surjectivity of the derivative of the projection map of the fiber bundle.

Next we will go into some detail about the free loop space as a \Frechet manifold.  This will require a little preparation.

\section{Compact-Open Topology}\label{s-cpct-open}

\begin{defn}\label{d-co-top}
\index{C(X,Y)@$C(X,Y)$}
\index{topology!compact-open}
\index{compact-open topology}
\index{mapping space!compact-open topology}
(The Compact-Open Topology).
\citep[page~257]{Dugu66} Given topological spaces $X$, $Y$, the compact-open topology on $C(X,Y)$, the space of continuous maps $X \rightarrow Y$, is given by a sub-basis consisting of the sets
\begin{align}
(K,U) = \{ \gamma \in C(X,Y) \st \gamma (K) \subset U \}, \notag \\
K \text{ compact } \subset X \text { and } U \text{ open } \subset Y. \notag
\end{align}
When a specific topology for $C(X,Y)$ is not mentioned, it will have the compact-open topology.  The same name, compact-open topology, will be given to the subspace topology for subsets of $C(X,Y)$.
\end{defn}

\begin{lem}\label{d-alt-char-frec-top-lrn}
\index{Frechet@\Frechet!space!LRn@$L\RR^n$}
(An Alternate Characterization of the \Frechet Topology on $L\RR^n$).
\citep[page~9]{Stac05} The \Frechet topology on $L\RR^n$ of example \ref{e-lrn-l-vect-spac} can be defined as the projective or initial topology (i.e. the coarsest topology that makes all the maps continuous) for the maps $L\RR^n \rightarrow C(S^1, \RR^{kn})$ given by
\[
\gamma \mapsto (t \mapsto (\gamma(t), \gamma'(t), \dots, \gamma^{(k-1)}(t))). \notag
\]
\end{lem}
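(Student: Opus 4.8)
The plan is to compare explicit bases for the two topologies. Write $j_k \colon L\RR^n \to C(S^1,\RR^{kn})$ for the map $\gamma \mapsto (t \mapsto (\gamma(t),\gamma'(t),\dots,\gamma^{(k-1)}(t)))$, $k \ge 1$, and equip $\RR^{kn} \cong (\RR^n)^k$ with the norm $\|(v_1,\dots,v_k)\| = \max_{1\le i\le k}|v_i|$; since all norms on a finite-dimensional vector space are equivalent, this choice affects none of the topologies below. Because $S^1$ is compact and $\RR^{kn}$ is a metric space, the compact-open topology on $C(S^1,\RR^{kn})$ coincides with the topology of uniform convergence, induced by the metric $d_k(\phi,\psi) = \max_{t\in S^1}\|\phi(t)-\psi(t)\|$ (a classical fact; see \citet{Dugu66}); in particular the balls $B^{d_k}_\epsilon(\phi)$ are open and form a basis of $C(S^1,\RR^{kn})$. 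On the \Frechet side, the topology of Example \ref{e-lrn-l-vect-spac} is the one given by the seminorms $\|f\|_i = \max_t\|f^{(i)}(t)\|$, and the sets $V_{f,\epsilon,N} = \{\, g \in L\RR^n : \|g-f\|_i < \epsilon \text{ for } 0\le i\le N \,\}$ ($f \in L\RR^n$, $\epsilon > 0$, $N \ge 0$) form a basis for it, since a finite intersection of sub-basic seminorm sets $\{g : \|g-f\|_i < \epsilon_i\}$, $i$ in a finite set $F$, contains $V_{f,\epsilon,N}$ with $\epsilon = \min_{i\in F}\epsilon_i$ and $N = \max F$, which still contains $f$.

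First I would check that the initial topology is contained in the \Frechet topology, i.e.\ that each $j_k$ is \Frechet-continuous. Each $j_k$ is linear, and unwinding the max-norm on $\RR^{kn}$,
\[
j_k^{-1}\bigl(B^{d_k}_\epsilon(0)\bigr) = \{\, g : \|g\|_i < \epsilon \text{ for } 0 \le i \le k-1 \,\} = V_{0,\epsilon,k-1}
\]
is a \Frechet neighborhood of $0$; since a linear map of topological vector spaces that is continuous at $0$ is continuous, $j_k$ is \Frechet-continuous. As the initial topology is the coarsest one making all the $j_k$ continuous, it is contained in the \Frechet topology.

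Next I would prove the reverse inclusion by showing that every basic \Frechet-open set is open in the initial topology. Again unwinding the definitions,
\[
j_{N+1}^{-1}\bigl(B^{d_{N+1}}_\epsilon(j_{N+1}(f))\bigr) = \{\, g : \textstyle\max_t\max_{0\le i\le N}\|g^{(i)}(t) - f^{(i)}(t)\| < \epsilon \,\} = V_{f,\epsilon,N},
\]
so $V_{f,\epsilon,N}$ is the preimage under $j_{N+1}$ of an open subset of $C(S^1,\RR^{(N+1)n})$ and hence lies in the initial topology. Taking unions, every \Frechet-open set is open in the initial topology, and combined with the previous step the two topologies coincide.

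I do not expect a genuine obstacle: the argument is essentially bookkeeping. The two points that need a little care are the classical identification of the compact-open topology with uniform convergence on the compact domain $S^1$ (which is what lets one work with the metric balls $B^{d_k}_\epsilon$ in place of generic compact-open sub-basic sets), and the index shift between "$j_k$ encodes derivatives up to order $k-1$" and "the $i$-th seminorm measures the $i$-th derivative", which is why controlling $\|\cdot\|_0,\dots,\|\cdot\|_N$ simultaneously requires the single map $j_{N+1}$ rather than $j_N$.
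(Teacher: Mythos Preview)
Your argument is correct. The paper does not supply its own proof of this lemma; it simply records the statement with a citation to \citet[page~9]{Stac05}. Your direct verification---identifying the compact-open topology on $C(S^1,\RR^{kn})$ with uniform convergence via compactness of $S^1$, then exhibiting the basic \Frechet neighborhood $V_{f,\epsilon,N}$ as exactly $j_{N+1}^{-1}$ of a metric ball---is a clean and complete proof, and the index bookkeeping you flag (derivatives up to order $N$ requiring $j_{N+1}$) is handled correctly.
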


\begin{lem}\label{l-co-top}
\index{C(X,Y)@$C(X,Y)$}
\index{topology!compact-open}
\index{compact-open topology}
\index{mapping space!compact-open topology}
(Compact-Open Topology Properties).
% ~ removed from citation to cure overfull box
\citep[pages 259--261,~276]{Dugu66} Given topological spaces $X$, $Y$, and $Z$, using the compact-open topologies, for fixed $g \in C(Y, Z)$, the map $g_{\sharp} \colon C(X, Y) \rightarrow C(X, Z)$ given by $g_{\sharp}(f) = g \circ f$ is continuous.  Thus if $g$ is a homeomorphism, so is $g_{\sharp}$, which has inverse $(g^{-1})_{\sharp}$.  Similarly, for fixed $h \in C(X, Y)$, the map $h^{\sharp} \colon C(Y, Z) \rightarrow C(X, Z)$ given by $h^{\sharp}(f) = f \circ h$ is continuous.  Thus if $h$ is a homeomorphism, so is $h_{\sharp}$, which has inverse $(h^{-1})^{\sharp}$.

The natural bijection $C(X, Y \cross Z) \rightarrow C(X, Y) \cross C(X, Z)$ is a homeomorphism.

If $Y$ is a subspace of $Z$, the inclusion $Y \xrightarrow{\incl} Z$ is a continuous map that is a homeomorphism onto its image.  Then if $X$ is compact, $(\incl)_{\sharp} \colon C(X, Y) \rightarrow C(X, Z)$ is a homeomorphism onto its image.

Defining the evaluation map $\ev \colon C(Y, Z) \cross Y \rightarrow Z$ by $\ev (g, y) = g(y)$, for each $y_0 \in Y$, $\ev_{y_0} = \ev (\cdot, y_0)$ is continuous, and if $Y$ is locally compact, $\ev$ is continuous.

Given $\alpha \colon X \cross Y \rightarrow Z$, the adjoint of $\alpha$, $\widehat{\alpha} \colon X \rightarrow C(Y, Z)$, defined by $(\widehat{\alpha} (x)) (y) = \alpha (x, y)$, is continuous.  Conversely, given $\widehat{\alpha}$, the equation defines $\alpha$, which is continuous if $\widehat{\alpha}$ is continuous and $Y$ is locally compact.  Example: $\widehat{\ev} = \ident_{C(Y, Z)}$.
\end{lem}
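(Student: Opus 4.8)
The plan is to derive each clause directly from the sub-basic sets $(K,U) = \{\gamma \in C(X,Z) : \gamma(K) \subset U\}$ of Definition \ref{d-co-top}, using throughout the elementary principle that a map into a compact-open mapping space is continuous as soon as the preimage of every such $(K,U)$ is open.

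First I would handle post- and precomposition. For fixed $g \in C(Y,Z)$ one has $g_\sharp^{-1}((K,U)) = \{f : g(f(K)) \subset U\} = (K, g^{-1}(U))$, which is sub-basic in $C(X,Y)$ since $g$ is continuous; hence $g_\sharp$ is continuous, and since $\sharp$ is covariantly functorial ($(g_1 g_2)_\sharp = (g_1)_\sharp (g_2)_\sharp$, $\ident_\sharp = \ident$), when $g$ is a homeomorphism $(g^{-1})_\sharp$ is a two-sided inverse for $g_\sharp$. Dually $(h^\sharp)^{-1}((K,U)) = (h(K), U)$, and $h(K)$ is compact because $h$ is continuous, so $h^\sharp$ is continuous; contravariant functoriality of $\cdot^\sharp$ gives the homeomorphism claim. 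For the inclusion $\incl \colon Y \hookrightarrow Z$: the map $(\incl)_\sharp$ is continuous by the first computation and injective, and it carries a sub-basic $(K,V) \subset C(X,Y)$, with $V = U \cap Y$ for some $U$ open in $Z$, onto $(K,U) \cap (\incl)_\sharp\bigl(C(X,Y)\bigr)$; injectivity then makes images of arbitrary open sets relatively open, so $(\incl)_\sharp$ is a homeomorphism onto its image. (For this clause I do not actually see where the stated compactness of $X$ is needed; it is harmless.) For the product, the forward map $f \mapsto ((\pi_Y)_\sharp f, (\pi_Z)_\sharp f)$ is continuous by the postcomposition result, while for its inverse $(g,h) \mapsto (x \mapsto (g(x), h(x)))$ I would first record the standard fact that the rectangular sets $(K, U \cross V)$, with $U$ open in $Y$ and $V$ open in $Z$, form a sub-basis for the compact-open topology on $C(X, Y \cross Z)$; granting this, the preimage of $(K, U \cross V)$ under the inverse map is exactly $(K,U) \cross (K,V)$, so the inverse is continuous.

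For the evaluation map, $\ev_{y_0}^{-1}(U) = (\{y_0\}, U)$ with $\{y_0\}$ compact, which settles the pointwise statement. When $Y$ is locally compact, to show $\ev$ continuous at $(g_0,y_0)$ with $g_0(y_0) \in U$ open I would pick a compact neighborhood $K$ of $y_0$ inside the open set $g_0^{-1}(U)$; then $(K,U) \cross \mathrm{int}(K)$ is a neighborhood of $(g_0,y_0)$ on which $\ev$ takes values in $U$. For the adjunction: if $\alpha \colon X \cross Y \to Z$ is continuous then $\widehat{\alpha}^{-1}((K,U)) = \{x : \{x\} \cross K \subset \alpha^{-1}(U)\}$, which is open by the tube lemma (using compactness of $K$), so $\widehat{\alpha}$ is continuous; conversely, if $\widehat{\alpha}$ is continuous and $Y$ is locally compact, then $\alpha = \ev \circ (\widehat{\alpha} \cross \ident_Y)$ is a composite of continuous maps by the preceding clause. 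Finally $\widehat{\ev}(g)(y) = \ev(g,y) = g(y)$, so $\widehat{\ev} = \ident_{C(Y,Z)}$.

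The only steps that go beyond chasing definitions through the sub-basis are the two that require extra hypotheses: the finite-subcover argument on $X$ showing that the rectangular sets suffice for $C(X, Y \cross Z)$, and the appeals to local compactness (choosing compact neighborhoods, together with the tube lemma) in the evaluation and adjunction clauses. I expect the rectangular sub-basis lemma to be the main obstacle, being the one place where one must manipulate an open cover rather than transport a preimage through the definitions. All of this is carried out in \citet[pages~259--261,~276]{Dugu66}, which is the cited reference.
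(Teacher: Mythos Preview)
Your proposal is correct and follows essentially the same route as the paper. The paper defers most clauses (first paragraph, evaluation, adjunction) directly to Dugundji and writes out only the product and subspace clauses; you spell out the sub-basis chase for all of them, but the content is the same. For the product bijection the paper invokes Dugundji's result (page~264) that a sub-basis on the target space induces a sub-basis on the mapping space, applied to the strips $U\times Z$ and $Y\times V$; your rectangular $(K,U\times V)$ sets generate the same topology, so this is the same lemma. One small slip: the finite-subcover argument underlying that lemma runs on the compact set $K\subset X$, not on $X$ itself. Your observation that compactness of $X$ is not actually used in the subspace clause is correct --- the paper's own computation shows the image of $(K,U)$ is $(X,Y)\cap(K,V)$, which is relatively open in the image $(X,Y)=(\incl)_\sharp(C(X,Y))$ regardless of whether $(X,Y)$ is itself open in $C(X,Z)$.
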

\begin{proof}
The assertions in the first paragraph and those about evaluation maps and adjoints come from \citet[pages~259--261,~276]{Dugu66}.

For the second paragraph, use \citet[page~264]{Dugu66} and choose a sub-basis for the cartesian product $Y \cross Z$ consisting of sets of the form $U \cross Z$ and $Y \cross V$.

For the third paragraph, let $U$ be an open subset of $Y$; equivalently, there is some $V$, an open subset of $Z$, such that $U = Y \cap V$.  Then for $K$ a compact set in $X$, the sub-basic open set $(K, U) \subset C(X, Y)$ is carried by $(\incl)_{\sharp}$ (remember that the meaning of $(K, U)$ depends on which space it is in) to $(X, Y) \cap (K, U) = (X, Y) \cap (K, Y \cap V) = (X, Y) \cap (K, Y) \cap (K, V) = (X, Y) \cap (K, V)$, the intersection of $(\incl)_{\sharp} (C(X, Y))$ and the set $(K, V)$ open in $C(X, Z)$.
\end{proof}

\begin{defn}\label{d-lspc-metr-top}
\index{topology!mapping space metric}
\index{metric!mapping space}
\index{mapping space!metric topology}
% ~ removed from citation to cure overfull box
(The Metric Topology on a Mapping Space).
\citep[page 269]{Dugu66} Given $(Z,d)$ a metric space and $Y$ a topological space, letting $C(Y,Z,d)$ be the set of all $d$-bounded continuous maps $Y \rightarrow Z$, define a metric on $C(Y,Z,d)$ by
\[
d^{+}(f,g) = \sup_{y \in Y} (d(f(y), g(y))). \notag
\]
\end{defn}
Another phrase for this metric topology is the topology of uniform convergence.

\begin{lem}\label{l-co-metr-top}
\index{topology!mapping space metric}
\index{metric!mapping space}
\index{mapping space!metric topology}
\index{loop space!compact open topology}
\index{loop space!metric topology}
(With a Compact Domain, the Compact-Open and Metric Topologies are the Same).
By 8.2(3) of \citet[pages~269--271]{Dugu66}, Given $(Z,d)$ a metric space and $Y$ a compact space, $C(Y,Z,d) = C(Y,Z)$, and the metric topology equals the compact-open topology.  Thus the topology is independent of the particular metric chosen.

In particular, if $Z$ is a compact smooth manifold, any Riemannian metric will make it a complete metric space \citep[page~125]{Pete06}, and the induced metric topology on $C(S^1, Z)$ equals the compact-open topology.  Further, by \citet[pages~18--19]{Stac05}, $C(S^1, Z)$ is separable.
\end{lem}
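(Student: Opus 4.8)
The plan is to reduce the whole statement to two elementary facts about a compact domain: (i) for $Y$ compact every continuous map $Y \rightarrow Z$ is automatically $d$-bounded, so that $C(Y,Z,d)$ and $C(Y,Z)$ coincide as sets and $d^{+}$ is a genuine metric on $C(Y,Z)$; and (ii) on this common underlying set the metric topology of $d^{+}$ equals the compact-open topology. For (i), if $f \colon Y \rightarrow Z$ is continuous then $f(Y)$ is compact, hence contained in some $d$-ball, so $f$ is $d$-bounded; and for continuous $f,g$ the function $y \mapsto d(f(y),g(y))$ is a continuous real-valued function on the compact space $Y$, so its supremum $d^{+}(f,g)$ is finite. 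Once (ii) is established, independence of the resulting topology from the chosen metric is immediate, since the compact-open topology is defined without reference to any metric.

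For the inclusion of the compact-open topology in the metric topology I would take a sub-basic set $(K,U)$ with $K \subset Y$ compact and $U \subset Z$ open, and $f \in (K,U)$; if $U = Z$ there is nothing to check, otherwise $f(K)$ is a compact subset of the open set $U$, so $\epsilon := d\bigl(f(K), Z \setminus U\bigr) > 0$, and the open $d^{+}$-ball of radius $\epsilon$ about $f$ lies in $(K,U)$, because $d^{+}(f,g) < \epsilon$ pushes $g(K)$ into the $\epsilon$-neighbourhood of $f(K)$ and hence into $U$. For the reverse inclusion I would fix $f$ and $\epsilon > 0$, use continuity of $f$ to pick for each $y \in Y$ an open $W_y \ni y$ with $f(W_y) \subseteq B_{\epsilon/4}(f(y))$, extract a finite subcover $W_{y_1},\dots,W_{y_n}$ of the compact $Y$, and then pass to a closed (hence compact) refinement $K_1,\dots,K_n$ with $K_i \subseteq W_{y_i}$ still covering $Y$; then $N := \bigcap_{i=1}^{n} \bigl(K_i, B_{\epsilon/2}(f(y_i))\bigr)$ is a compact-open neighbourhood of $f$, and for $g \in N$ and $y \in K_i$ one gets $d(g(y),f(y)) \le d(g(y),f(y_i)) + d(f(y_i),f(y)) < \epsilon/2 + \epsilon/4 < \epsilon$, so $N$ sits inside the $d^{+}$-ball of radius $\epsilon$ about $f$. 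Producing that compact refinement subordinate to the open cover — via normality of the compact Hausdorff space $Y$, or simply by citing 8.2(3) of \citet[pages~269--271]{Dugu66} — is the only point I expect to require a little care; everything else is bookkeeping.

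The two remaining assertions then follow quickly. If $Z$ is a compact smooth manifold, a Riemannian metric on it induces a distance function $d$, and on a compact manifold $(Z,d)$ is complete (here I would cite \citet[page~125]{Pete06}); applying the above with $Y = S^1$, which is compact, gives that the induced metric topology on $C(S^1,Z)$ equals the compact-open topology and is therefore independent of the Riemannian metric chosen. For separability of $C(S^1,Z)$ in this topology I would simply invoke \citet[pages~18--19]{Stac05}; the underlying reason is that $S^1$ is a compact metric space and $Z$ is separable and metrizable, so a countable dense family of loops can be built, for instance from piecewise-geodesic loops whose finitely many breakpoints occur at rational times and lie in a fixed countable dense subset of $Z$.
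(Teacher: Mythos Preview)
The paper does not give a proof of this lemma at all; it simply embeds the relevant citations (Dugundji 8.2(3), Petersen, Stacey) in the statement and moves on. Your proposal therefore does strictly more than the paper: you supply the standard argument behind the Dugundji citation and then invoke the same references for the Riemannian-metric and separability claims. The argument you give for the equivalence of the two topologies is correct and is essentially the textbook proof.

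One small caution: in your reverse inclusion you pass from the finite open cover $W_{y_1},\dots,W_{y_n}$ to a closed (hence compact) refinement $K_1,\dots,K_n$ still covering $Y$. That shrinking step uses normality, which you justify by ``normality of the compact Hausdorff space $Y$'', but the lemma as stated only assumes $Y$ compact, not Hausdorff. You already cover yourself by offering the direct citation of Dugundji as an alternative at exactly this point, so there is no real gap; just be aware that if you want a self-contained argument at the stated generality you should either add a Hausdorff hypothesis or reorganize to avoid the shrinking (for instance, one can work instead with the compact sets $K_i = \{\,y \in Y : d(f(y),f(y_i)) \le \epsilon/3\,\}$, which are closed in $Y$ and hence compact, and whose union is $Y$ once the $y_i$ are chosen so that the open $\epsilon/3$-balls about the $f(y_i)$ cover $f(Y)$).
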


\section{Loop Space \Frechet Manifold}\label{s-loop-spac-frec-mfld}

Although many of the results concerning loop spaces in this chapter and the next do not depend on the space being looped being compact and are stated without that requirement, our use of these results is to loop compact spaces.  In the following, when $X$ is compact, specific choices are made to be used later.
\begin{lem}\label{l-loc-addn}
\index{exponential map}
(The Exponential Map).
Given a smooth manifold $X$ with a Riemannian metric and the Levi-Civita connection, denoting its tangent bundle by $\pi \colon TX \rightarrow X$, there is an open neighborhood $N$ of the image $Z = \zeta (X) \subset TX$ of the zero section $\zeta \in \Gamma (TX)$, an open neighborhood $V$ of the diagonal $\Delta$ in $X \cross X$, and a smooth map $\sigma \colon N \rightarrow X$ given by $\sigma (v) = \exp_{\pi (v)} (v)$ \citep[pages~130--134]{Pete06}, such that $\sigma_{|Z} = \pi_{|Z}$ and $(\pi, \sigma) \colon N \rightarrow V$ is a diffeomorphism.

Furthermore, there is a diffeomorphism $\psi \colon TX \rightarrow N$ such that $\pi \circ \psi = \pi$, $\psi_{|Z} = \ident_Z$, and thus, defining $\eta = \sigma \circ \psi$, $\eta_{|Z} = \pi_{|Z}$, and $(\pi, \eta) \colon TX \rightarrow V$ is a diffeomorphism.

If $X$ is compact, let $\epsilon > 0$ equal half the convexity radius of $X$ \citep[pages~86--87]{GHL04}.  Then choose $N = \bigcup_{x \in X} \B_{\epsilon} (\zeta (x))$, where for each $x \in X$, $\B_{\epsilon} (\zeta (x)) \subset T_x X$ and $\exp_x \colon \B_{\epsilon} (\zeta (x)) \rightarrow \B_{\epsilon} (x)$ is a diffeomorphism.  Let $\psi$ multiply vectors in $TX$ by a function of their lengths, depending on $\epsilon$; then the action of $\psi$ on fibers commutes with parallel translation.
\end{lem}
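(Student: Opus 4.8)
The plan is to handle the lemma's three paragraphs in turn, taking the hard analytic input from \citet[pages~130--134]{Pete06}. For the first paragraph I would begin with the infinitesimal fact that, under the canonical identification $T_{\zeta(x)}(T_x X) \cong T_x X$, one has $d(\exp_x)_{\zeta(x)} = \ident_{T_x X}$. With respect to the connection-induced splitting $T_{\zeta(x)}(TX) \cong T_x X \oplus T_x X$ into horizontal and vertical parts, the derivative $d(\pi,\sigma)_{\zeta(x)}$ is then block-triangular: $d\pi$ maps the horizontal summand isomorphically onto the first factor and $d(\exp_x)_{\zeta(x)} = \ident$ maps the vertical summand onto the second, so $d(\pi,\sigma)_{\zeta(x)}$ is an isomorphism. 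Hence $(\pi,\sigma)$ is a local diffeomorphism on an open neighborhood of $Z$, and since $\sigma(\zeta(x)) = x$ it restricts to $\pi|_Z$ on $Z$. That this local diffeomorphism near $Z$ can be shrunk to an honest diffeomorphism onto an open neighborhood of $\Delta$ is exactly the normal-neighborhood-of-the-diagonal statement of \citet[pages~130--134]{Pete06}.

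Next I would replace the neighborhood just obtained by a ball bundle: since a smooth manifold is paracompact (assumption \ref{a-mfld}), a partition of unity produces a smooth $r \colon X \to (0,\infty)$ with $N := \{ v \in TX \st \norm{v} < r(\pi(v)) \}$ contained in it, so $N$ is an open neighborhood of $Z$ and $(\pi,\sigma)|_N$ is a diffeomorphism onto the open set $V := (\pi,\sigma)(N) \supset \Delta$. To build $\psi$, fix a smooth diffeomorphism $g \colon [0,\infty) \to [0,1)$ with $g(s) = s$ for $s \le \tfrac12$ and $g$ strictly increasing, and set
\[
\psi(v) =
\begin{cases}
\dfrac{r(\pi(v))\, g\!\bigl(\norm{v}/r(\pi(v))\bigr)}{\norm{v}}\, v, & v \notin Z,\\[2ex]
v, & v \in Z.
\end{cases}
\]
On the region $\norm{v} \le r(\pi(v))/2$, a neighborhood of $Z$, this is just $v \mapsto v$, so $\psi$ is smooth there, and it is manifestly smooth elsewhere. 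It is fiber-preserving (so $\pi\circ\psi = \pi$), equals $\ident$ on $Z$, and on each fiber is the radial diffeomorphism $T_x X \to \B_{r(x)}(\zeta(x)) = N_x$ determined by $g$, with radial inverse; hence $\psi \colon TX \to N$ is a diffeomorphism. Then $\eta := \sigma\circ\psi$ is smooth with $\eta|_Z = \pi|_Z\circ\ident_Z = \pi|_Z$, and $(\pi,\eta) = (\pi,\sigma)\circ\psi$, a composite of diffeomorphisms, is a diffeomorphism $TX \to V$.

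For the compact case I would take $\epsilon$ equal to half the convexity radius, which is positive for compact $X$ \citep[pages~86--87]{GHL04}, so that $\exp_x$ restricts to a diffeomorphism $\B_\epsilon(\zeta(x)) \to \B_\epsilon(x)$ for every $x$; then $N := \bigcup_{x} \B_\epsilon(\zeta(x))$ works. Indeed $(\pi,\sigma)$ is a local diffeomorphism on all of $N$ by the same block-triangular computation at an arbitrary $v$, using that $\exp_x$ is an immersion on $\B_\epsilon(\zeta(x))$; and it is injective, since $(\pi(v),\sigma(v)) = (\pi(w),\sigma(w))$ forces $\pi(v) = \pi(w) = x$ and then $\exp_x(v) = \exp_x(w)$, whence $v = w$. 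An injective local diffeomorphism is a diffeomorphism onto its open image $V \supset \Delta$. Here $r \equiv \epsilon$ is constant, so $\psi$ multiplies each $v$ by $\epsilon\, g(\norm{v}/\epsilon)/\norm{v}$, a function of $\norm{v}$ alone. Because the Levi-Civita connection is metric, parallel transport $P$ between fibers is a linear isometry; so writing the fiberwise scaling as $\psi_x \colon v \mapsto \lambda(\norm{v})\,v$, we get $P(\psi_x(v)) = \lambda(\norm{v})\,P(v) = \lambda(\norm{P(v)})\,P(v) = \psi_{x'}(P(v))$, i.e. $\psi$ commutes with parallel translation on fibers.

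The partition-of-unity choice of $r$ and the radial-diffeomorphism bookkeeping for $\psi$ are routine. The one genuinely delicate point, which I would either isolate as a small lemma or simply quote from \citet[pages~130--134]{Pete06}, is the upgrade from ``$(\pi,\sigma)$ is a local diffeomorphism near the zero section'' to ``$(\pi,\sigma)$ is a diffeomorphism onto a neighborhood of the diagonal'': in the compact case this is clean because the convexity radius gives global injectivity on the $\epsilon$-balls, whereas in general it requires the standard but slightly fussy argument that an immersion is injective on a suitable neighborhood of the zero section.
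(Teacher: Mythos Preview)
Your proposal is correct. The paper does not give a proof of this lemma; it simply records in note~\ref{n-loc-addn} that the statement is proposition~3.14 of \citet[page~13]{Stac05} ``with additions for our circumstances,'' and the convexity-radius remark is relegated to note~\ref{n-conv-rad}. Your argument is therefore a genuine fill-in of what the paper defers to references: the block-triangular derivative computation at the zero section, the shrink to a ball-bundle neighborhood via a smooth radius function, the explicit radial $\psi$ built from a diffeomorphism $g\colon[0,\infty)\to[0,1)$ that is the identity near $0$, and the observation that in the compact case $\psi$ depends only on $\norm{v}$ so commutes with the isometric parallel transport. All of this is standard and matches the intended content; your closing remark correctly isolates the one nontrivial step (local diffeomorphism near $Z$ upgrades to a diffeomorphism onto a neighborhood of $\Delta$) and correctly notes that in the compact case the convexity radius makes this clean, which is exactly why the paper singles out that case.
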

\begin{note}\label{n-conv-rad}
\index{convexity radius}
(The Convexity Radius).
The convexity radius of $X$ is a number such that geodesic balls of radius smaller than it are geodesically convex.  It's no larger than the injectivity radius \citep[page~278]{Berg03}, and is used in the proof of lemmas \ref{p-lm-c-coho-isom}, \ref{l-good-cove-c-lm}, and \ref{l-lm-c-nerv-bij-non-empt}.
\end{note}
\begin{note}\label{n-loc-addn}
\index{local addition!exponential map}
\index{exponential map!local addition}
(Local Additions).
The above lemma is proposition 3.14 of \citet[page~13]{Stac05} with additions for our circumstances.  He refers to a ``local addition'', which can be understood by looking at the exponential map when $X$ is a finite-dimensional vector space.
\end{note}

\begin{prop}\label{p-smth-free-loop-spac-frec-mfld}
\index{loop space!Frechet manifold@\Frechet manifold}
\index{Frechet manifold@\Frechet manifold!loop space}
\index{paracompact}
\index{metrizable}
\index{orientable}
\index{good cover}
(A Smooth Free Loop Space is a \Frechet Manifold).
Given an orientable smooth manifold $X$ with Riemannian metric, $LX$ is a \Frechet manifold, with topology defined by charts with domains named $U_{\alpha}$, $\alpha \in LX$, whose definitions depend on the data used in lemma \ref{l-loc-addn}, including choices made in the case when $X$ is compact.  The open sets of $LX$ are defined as sets whose intersections with all $U_{\alpha}$ are open.

By lemma \ref{l-frec-spac}, any covering of $LX$ has a refinement made by refinements of intersections with chart domains, that is a good cover, i.e. for which all nonempty finite intersections are contractible (even nonempty infinite intersections are contractible).  Similarly or as a consequence, $LX$ is locally path-connected.

The inclusion $LX \subset C(S^1, X)$ is continuous, the $U_{\alpha}$ are open in their induced topologies, $LX$ is metrizable, hence hereditarily paracompact.
\end{prop}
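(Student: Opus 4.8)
The plan is to give $LX$ the \Frechet manifold structure that \citet{Stac05} constructs from the local addition of lemma \ref{l-loc-addn}, then read off the rest. For $\alpha \in LX$, pull $TX$ back along $\alpha$ to the smooth vector bundle $\alpha^{*}TX \rightarrow S^1$; it is trivial ($X$ being orientable), so $\Gamma(\alpha^{*}TX) \cong L\RR^n$ is a separable \Frechet space by lemma \ref{l-sect-smoo-vb-form-frec-spac} and example \ref{e-lrn-l-vect-spac}. Set $U_{\alpha} = \{\gamma \in LX \st (\alpha(t),\gamma(t)) \in V \text{ for all } t\}$ with $V$ the diagonal neighbourhood of lemma \ref{l-loc-addn}, and let $\phi_{\alpha} \colon U_{\alpha} \rightarrow \Gamma(\alpha^{*}TX)$ send $\gamma$ to $(\pi,\eta)^{-1} \circ (\alpha,\gamma)$, so $\phi_{\alpha}(\gamma)(t)$ is the unique $v \in T_{\alpha(t)}X$ with $\eta(v) = \gamma(t)$. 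Since $(\pi,\eta) \colon TX \rightarrow V$ is a diffeomorphism defined on all of $TX$, each $\phi_{\alpha}$ is a bijection onto $\Gamma(\alpha^{*}TX)$ with inverse $s \mapsto \eta \circ s$, and a direct computation shows a transition map $\phi_{\beta} \circ \phi_{\alpha}^{-1}$ is \emph{fibrewise}: it carries a section $s$ to $t \mapsto G(t,s(t))$ for a smooth fibre-preserving $G$ over $S^1$ assembled from $\eta$ and $(\pi,\eta)^{-1}$. That composition maps of this shape are smooth between open subsets of \Frechet spaces is the calculus of \citet[pages~73--84]{Hami82}; this is the technical heart, which I would cite from \citet{Stac05}, merely supplying the identifications with the $\Gamma(\alpha^{*}TX)$. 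One then calls $W \subset LX$ open when each $\phi_{\alpha}(W \cap U_{\alpha})$ is open; the transition maps being diffeomorphisms, each $\phi_{\alpha}$ is a homeomorphism onto $\Gamma(\alpha^{*}TX)$ and the $U_{\alpha}$ are open.

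Next I would compare $LX$ with $C(S^1,X)$. The map $\gamma \mapsto (\alpha,\gamma)$ is continuous $C(S^1,X) \rightarrow C(S^1, X \cross X)$ by lemma \ref{l-co-top}, and $U_{\alpha}$ is the preimage of the compact-open subbasic set $(S^1,V)$, so each $U_{\alpha}$ is open in $C(S^1,X)$ --- this is the claim that the $U_{\alpha}$ are open in their induced topologies. Each $U_{\alpha}$ being metrizable, continuity of $\incl \colon LX \rightarrow C(S^1,X)$ on $U_{\alpha}$ follows from its sequential continuity there, which holds since \Frechet convergence of sections in $\Gamma(\alpha^{*}TX)$ forces uniform, hence compact-open, convergence of the loops; as the $U_{\alpha}$ cover $LX$, $\incl$ is continuous, and with $C(S^1,X)$ Hausdorff and $\incl$ injective, $LX$ is Hausdorff. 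Now $C(S^1,X)$ is separable metrizable by lemma \ref{l-co-metr-top}, hence second countable, so $LX$ with the induced topology is Lindel\"of; choose a countable subcover $\{U_{\alpha_n}\}$ of $\{U_{\alpha}\}$. Each $U_{\alpha_n} \cong \Gamma(\alpha_n^{*}TX)$ is second countable, so in its \Frechet manifold topology $LX = \bigcup_n U_{\alpha_n}$ is a countable union of second-countable open sets, hence second countable; being also regular --- closed \Frechet-metric balls inside a chart are closed in $LX$, as a sequence in such a ball cannot converge to a loop leaving the chart --- $LX$ is metrizable by Urysohn's theorem, and a metric space, together with all its subspaces, is paracompact by \citet[page~979]{Ston48}, as recorded in lemma \ref{l-frec-spac}.

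Local path-connectedness is immediate, each $U_{\alpha}$ being homeomorphic to the locally path-connected \Frechet space $\Gamma(\alpha^{*}TX)$ (lemma \ref{l-frec-spac}). For the good cover, given an open cover of $LX$ I would intersect it with the countably many charts $U_{\alpha_n}$ and, inside each $U_{\alpha_n} \cong \Gamma(\alpha_n^{*}TX)$, refine to a cover by convex open balls of a translation-invariant \Frechet metric as in lemma \ref{l-frec-spac}, taking them small enough that --- using that $X$ compact keeps the fibrewise transition maps uniformly close to affine at the convexity-radius scale fixed in lemma \ref{l-loc-addn} --- every such ball $B$, seen in any chart $U_{\alpha_m}$ it meets, is star-shaped about the image of each of its own points. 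The union of all these balls refines the given cover and is a good cover: a family of such balls with a common point $q$ has intersection inside any chosen member $B_0 \subset U_{\alpha_m}$, so in the coordinates $\phi_{\alpha_m}$ each ball of the family, hence the whole (even infinite) intersection, is star-shaped about $\phi_{\alpha_m}(q)$ and therefore contractible. \emph{The main obstacle} is exactly that last uniform estimate --- the part genuinely beyond \citet{Stac05} --- which is what lets the linear convexity available inside one chart survive being viewed from another; the smoothness of the transition maps, though the deepest technical input, is imported wholesale from the \Frechet calculus of \citet{Hami82}.
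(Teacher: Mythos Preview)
Your treatment of the Fr\'echet manifold structure, the continuity of $LX \hookrightarrow C(S^1,X)$, and the metrizability is essentially the paper's argument; your sequential-continuity proof of the inclusion is a cleaner variant of the paper's explicit seminorm estimate, and your Urysohn route reconstructs what the paper imports wholesale from Stacey's corollary 3.23.

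On the good-cover claim you actually go further than the paper, which literally writes ``By lemma \ref{l-frec-spac}'' and stops. That lemma only handles intersections of convex balls lying in a \emph{single} Fr\'echet space; the cross-chart case is never touched. You correctly flag that as the main obstacle, but your proposed fix does not work as stated: asking each small ball to be ``star-shaped about the image of each of its own points'' in every chart it meets is asking for convexity in every chart, and diffeomorphisms do not preserve convexity---not for arbitrarily small balls, and certainly not uniformly. Compactness of $X$ bounds the second derivatives of the finite-dimensional exponential map, but the transition maps $\phi_{\alpha_m}\circ\phi_{\alpha_n}^{-1}$ are between infinite-dimensional section spaces indexed by loops $\alpha_n,\alpha_m$, and no uniform radius at which their nonlinearity is negligible follows from that.

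The thesis does eventually establish a good cover of $LM$ for compact $M$, but elsewhere (lemma \ref{l-good-cove-c-lm}) and by a different mechanism: it contracts any nonempty intersection via the geodesic homotopy $H_\beta(s,\gamma)(t)=c_{\beta(t),\gamma(t)}(s)$ built directly from the local addition of lemma \ref{l-loc-addn}, and verifies continuity in the Fr\'echet topology. That argument exploits the specific geodesic convexity baked into the charts rather than trying to push linear convexity through a general transition map; if you want to close the gap in this proposition rather than defer it, that is the method to transplant here.
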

\begin{proof}
To make $\alpha^{*} TX$ a \Frechet space as in  \citet[page~68]{Hami82} requires a choice of Riemannian metric and connection on $X$, and that source seems to gloss over the question of whether these choices affect the topology or smooth structure; in some contexts it seemed implicitly to assume they do not.  However, we will assume a metric given; it is also used in proposition \ref{p-lm-c-coho-isom} and related lemmas, which refer back to this proof and use the local charts constructed here.  We choose the Levi-Civita connection to remove that possible ambiguity of choice.

That $LX$ is a \Frechet manifold is example I.4.1.3 in \citet[pages~86]{Hami82}, a special case of proposition \ref{l-sect-fb-form-frec-mfld} using the product bundle $S^1 \cross X \rightarrow S^1$.  That example gives almost the same local chart about $\alpha \in LX$ we and \citet[pages~12--21]{Stac05} use, except that the former uses a diffeomorphism from a neighborhood of the zero section of what appears to be $\alpha^{*}TX$, whereas the latter uses a diffeomorphism from the whole space of sections of $\alpha^{*}TX$.  This is an inessential difference; although \citet{Hami82} doesn't say how he chooses that neighborhood, it could be one that is diffeomorphic to the whole space.

That $LX$ is Hausdorff is a consequence of local homeomorphisms to open sets in \Frechet spaces.

For the rest of the point set topological properties, we will use \citet[pages~12--20]{Stac05}.  That paper assumes for its exposition and proofs that $X$ is orientable, though it seems to imply by saying that this assumption is for convenience, not necessity, and not actually used in ``the analysis'', that the results are true even if $X$ is not orientable.  It appears that this assumption is used, for $\alpha \in LX$, to allow identification of $\alpha^{*} TX$ with $L\RR^m$, which has a standard \Frechet space structure.  This is the only place in this thesis this assumption is used.

To help clear up the question of whether more of present proposition might depend on orientability of $X$, and more importantly to make more understandable the local charts used later, we will repeat the proof of lemma 3.22 of \citet[page~19]{Stac05} with modifications, mainly not trivializing $\alpha^{*}TX$.  In this proof we don't use \citet{Stac05} section 3.4 on $LTX$ and we don't need to know the relationship between the smooth structures defined by the two references.

\begin{notat}\label{n-smth-free-loop-spac-frec-mfld}
\index{loop space!Frechet manifold@\Frechet manifold!charts}
(Manifold Charts and Proof Details Used Elsewhere).
\par
In brief, the chart for $U_{\alpha}$ maps a loop near $\alpha$, via the exponential map, to a loop in $TX$ over $\alpha$, and thence to a section of $\alpha^{*} TX$.  If $X$ is compact we choose $\epsilon$ as in lemma \ref{l-loc-addn}, and in any case obtain the neighborhood $V$ of the diagonal in $X \cross X$ and map $\eta$ such that $(\pi, \eta) \colon TX \rightarrow V$ is a diffeomorphism.  Given $\alpha \in LX$, $\alpha^{*} TX = \{ (t, v) \in S^1 \cross TX \st \alpha(t) = \pi(v) \}$, and using the Levi-Civita connection on $TX$, $\Gamma (\alpha^{*} TX)$ is a \Frechet space by lemma \ref{l-sect-smoo-vb-form-frec-spac}.  Define $\theta \colon \Gamma (\alpha^{*} TX) \rightarrow LTX$ as the natural injection, for $\beta \in \Gamma (\alpha^{*} TX)$,  $\beta \mapsto \pi_2 \circ \beta$; i.e. $\theta = L\pi_2$ restricted to $\Gamma (\alpha^{*} TX)$, where $\pi_2 \colon S^1 \cross TX \rightarrow TX$.  Define, using another $\pi_2 \colon X \cross X \rightarrow X$,
\begin{align}
U_{\alpha} &= \{ \gamma \in LX \st (\alpha, \gamma) \in LV \} \notag \\
\Psi_{\alpha} \colon U_{\alpha} &\rightarrow \Gamma(\alpha^{*} TX) \notag \\
\Psi_{\alpha}^{-1} \colon \Gamma(\alpha^{*} TX) &\rightarrow LTX \rightarrow LX \cross LX \rightarrow LX \notag \\
\Psi_{\alpha}^{-1} \colon \beta &\mapsto L\pi_2 (L(\pi, \eta) (\theta(\beta))) = L\eta (\theta(\beta)) = \gamma \notag \\
\Psi_{\alpha} \colon \gamma &\mapsto \theta^{-1} ((L(\pi, \eta))^{-1} (\alpha, \gamma)) = \beta, \notag
\end{align}
where although $\theta$ isn't surjective, $(L(\pi, \eta))^{-1} (\alpha, \gamma)$ is in its range; a loop in $TX$ over $\alpha$.
Thus $\Psi_{\alpha}$ is a bijection.  The change of coordinates functions $\Psi_{\alpha_2} \circ \Psi_{\alpha_1}^{-1}$ are diffeomorphisms, as in lemma 3.15, definition 3.16, and proposition 3.18 of \citet[pages~13--15]{Stac05}, where the metric, the Levi-Civita connection, and hypothesis that $X$ is orientable are used to identify $\Gamma (\alpha^{*} TX)$ with $L\RR^m$.

For a concrete example unravelling the \Frechet manifold chart definition, see the alternative (''Another way'') proof of continuity of $i_3$ in lemma \ref{l-incl-s3-ls4-cont}.
\begin{flushleft}
\emph{End Notation}
\end{flushleft}
\end{notat}

First we show using definition \ref{d-co-top} and lemma \ref{l-co-top} that the $U_{\alpha}$ are open in the topology induced by the inclusion $LX \xrightarrow{\incl} C(S^1,X)$.  Define $U_{\alpha}^0 = \{ \gamma \in C(S^1, X) \st (\alpha, \gamma) \in C(S^1, V) \}$.  We will show that $U_{\alpha}^0$ is an open subset of $C(S^1,X)$, whence the desired statement follows from $U_{\alpha} = U_{\alpha}^0 \cap LX$.  Because $V$ is open in $X \cross X$, $C(S^1, V) = (S^1, V)$ is open in $C(S^1, X \cross X)$.  Thus $(\{ \alpha \} \cross C(S^1, X)) \cap C(S^1, V)$ is an open subset of $\{ \alpha \} \cross C(S^1, X)$, giving the latter the topology of a subspace of $C(S^1, X \cross X)$.  Defining the natural homeomorphism $\Phi \colon C(S^1, X \cross X) \rightarrow C(S^1, X) \cross C(S^1, X)$ and the natural bijection $\Phi_{\alpha} \colon \{ \alpha \} \cross C(S^1, X) \rightarrow C(S^1, X)$, the latter is continuous and open because $\Phi_{\alpha} = \pi_2 \circ \Phi_{|\{\alpha \} \cross C(S^1, X)}$, so is a homeomorphism.  But $U_{\alpha}^0 = \Phi_{\alpha} ((\{ \alpha \} \cross C(S^1, X)) \cap C(S^1, V))$.

Then we show that the inclusion $LX \xrightarrow{\incl} C(S^1,X)$ is continuous.  Given $\alpha \in LX$ and any open neighborhood $U^0$ of $\alpha$ in $C(S^1, X)$, we will find an open neighborhood $W$ of $\alpha$ in $LX$ such that $W = \incl(W) \subset U^0$.  Corresponding to the bijection
\begin{align}
\Psi_{\alpha}^{-1} \colon \Gamma(\alpha^{*} TX) &\xrightarrow{\theta} \theta(\Gamma(\alpha^{*} TX)) \notag \\
 &\xrightarrow{L(\pi, \eta)} (\{ \alpha \} \cross LX) \cap LV \notag \\
 &\xrightarrow{L\pi_2} U_{\alpha}, \notag
\end{align}
defining $\Gamma^0(\alpha^{*}TX) = \{ \beta \in C(S^1, \alpha^{*} TX) \st \pi_1 \circ \beta = \ident_{S^1} \}$, we have the bijection
\begin{align}
{\Psi_{\alpha}^0}^{-1} \colon \Gamma^0(\alpha^{*}TX) &\xrightarrow{\theta^0} \theta^0(C(S^1, \alpha^{*} TX)) \notag \\
 &\xrightarrow{(\pi, \eta)_{\sharp}} (\{ \alpha \} \cross C(S^1, X)) \cap C(S^1, V) \notag \\
 &\xrightarrow{\Phi_{\alpha}} U_{\alpha}^0. \notag
\end{align}
Let us look more closely at $\theta^0$ and the spaces involved with it.  The set $\alpha^{*} TX$ gets its topology as a subspace of $S^1 \cross TX$, so the corresponding inclusion is continuous, a homeomorphism onto its image.  From that inclusion by lemma \ref{l-co-top} we get a continuous map $C(S^1, \alpha^{*} TX) \rightarrow C(S^1, S^1 \cross TX)$ that is a homeomorphism onto its image.  The set $\Gamma^0 (\alpha^{*} TX)$ gets its topology as a subspace of $C(S^1, \alpha^{*} TX)$, so that inclusion also is continuous, a homeomorphism onto its image. Altogether
\begin{align}
\Gamma^0 (\alpha^{*} TX) &\rightarrow C(S^1, \alpha^{*} TX) \rightarrow C(S^1, S^1 \cross TX) \notag \\
 &\cong C(S^1, S^1) \cross C(S^1, TX) \xrightarrow{\pi_2} C(S^1, TX), \notag
\end{align}
a sequence of continuous maps, each of which is a homeomorphism onto its image, except $\pi_2$, which is continuous and open.  We define the composition as $\theta^0$, a homeomorphism onto its image.

That $\Phi_{\alpha}$ is a homeomorphism has been shown already.  Since $(\pi, \eta)$ is a homeomorphism, by lemma \ref{l-co-top} so is $(\pi, \eta)_{\sharp}$ with unrestricted domain and codomain, and hence also restricted as here.  Then the composition ${\Psi_{\alpha}^0}^{-1}$ is a homeomorphism.  (Homeomorphism seemed convenient; only continuity is needed.)

Thus $\Psi_{\alpha}^0 (U^0 \cap U_{\alpha}^0)$ is an open set in $\Gamma^0(\alpha^{*}TX)$.  From the metric on $\alpha^{*} TX$ coming from the Riemannian metric on $TX$ and the standard metric on $S^1$, we have by lemma \ref{l-co-metr-top} a metric $d^0$ on $C(S^1, \alpha^{*} TX)$ compatible with the compact-open topology; write its open balls with $\B^0$.  Thus there is some $\delta' > 0$ such that $\B_{\delta'}^0 (0) \subset \Psi_{\alpha}^0 (U^0 \cap U_{\alpha}^0)$.  The metric $d^0$ in turn is used to define the \Frechet space seminorms $\norm{}_0, \norm{}_1$, etc. on $\Gamma(\alpha^{*} TX)$.  Since \Frechet spaces are metrizable, let $d^F$ denote a translation-invariant metric from lemma \ref{l-frec-spac} that induces the \Frechet topology; write its open balls with $\B^F$.

Now $\Gamma(\alpha^{*} TX) \subset C(S^1, \alpha^{*} TX)$, so $\B_{\delta'}^F (0) \subset \{ \beta \in \Gamma(\alpha^{*} TX) \st \frac{\norm{\beta}_0}{1 + \norm{\beta}_0} \le \norm{\beta}_0 < \delta' \} \subset \B_{\delta'}^0 (0)$.  Since $\B_{\delta'}^F (0)$ is an open set in $\Gamma(\alpha^{*} TX)$, $W = \Psi_{\alpha}^{-1} (\B_{\delta'}^F (0))$ is an open set in $U_{\alpha} \subset LX$, with $\incl(W) \subset U^0$.

As a consequence of the continuity of the inclusion $LX \subset C(S^1,X)$ and the $U_{\alpha}$ being open in the induced topology, corollary 3.23 of \citet[pages~19--20]{Stac05} states that $LX$ is Hausdorff, regular, second countable, paracompact, and hence he concludes it is metrizable.
\end{proof}
\begin{note}\label{n-frec-spac-frec-mfld}
\index{loop of vector space!Frechet manifold@\Frechet manifold}
\index{Frechet manifold@\Frechet manifold!loop of vector space}
(The Loop of a Vector Space as a \Frechet Space or \Frechet Manifold).
If $X$ is a finite-dimensional vector space, then the topology of $LX$ as a \Frechet space (see example \ref{e-lrn-l-vect-spac}) is the same as the topology as a \Frechet manifold in proposition \ref{p-smth-free-loop-spac-frec-mfld}.  To see this, use note \ref{n-loc-addn}.
\end{note}

\section{\Frechet Bundles}\label{s-frec-bndl}

\begin{defn}\label{d-frec-fb}
\index{Frechet@\Frechet!fiber bundle}
\index{bundle!fiber!Frechet@\Frechet}
\index{fiber bundle!Frechet@\Frechet}
(\Frechet Fiber Bundles).
A \Frechet fiber bundle is a topological fiber bundle as in definition \ref{d-fb-vb} where the spaces are \Frechet manifolds and the local trivializations are \Frechet diffeomorphisms.
\end{defn}

\begin{defn}\label{d-frec-vb}
\index{Frechet@\Frechet!vector bundle}
\index{vector bundle!Frechet@\Frechet}
\index{bundle!vector!Frechet@\Frechet}
(\Frechet Vector Bundles With Standard Fiber).
A \Frechet vector bundle $\pi \colon T \rightarrow B$ with standard  fiber the \Frechet space $F$ is a \Frechet fiber bundle with standard  fiber $F$ when each fiber $\pi^{-1}(b)$, $b \in B$, is a \Frechet space, and the local trivializations induce \Frechet space isomorphisms on fibers.
\end{defn}

\begin{defn}\label{d-frec-vb-hami}
\index{Frechet@\Frechet!vector bundle}
\index{vector bundle!Frechet@\Frechet}
\index{bundle!vector!Frechet@\Frechet}
(Hamilton's \Frechet Vector Bundles With Standard Fiber).
Definition I.4.3.1 of \citet[page~88]{Hami82}, re-worded for compatibility and explicitness, altered to require that each manifold has charts in a \Frechet space, and the \Frechet vector bundle has a standard fiber, is as follows.

A \Frechet vector bundle $\pi \colon T \rightarrow B$ consists of \Frechet manifolds $T$, $B$ with charts taking values in \Frechet spaces $C$, $D$ respectively; a standard fiber $F$ that is a \Frechet space, and a surjection $\pi$ such that each fiber $\pi^{-1}(b)$, $b \in B$, is a vector space, is a Hamilton's \Frechet vector bundle with standard fiber if the following holds:

Given any point $b \in B$, there is an open neighborhood $U$ of $b$ and a \Frechet manifold coordinate chart $\phi \colon U \rightarrow \phi(U) \subset D$ such that there is another \Frechet manifold coordinate chart $\psi \colon \pi^{-1}(U) \rightarrow \phi(U) \cross F \subset C$ such that $\psi$ induces a \Frechet space isomorphism on fibers and the following diagram commutes:
\[
\begindc{\commdiag}[5]
\obj(10,25)[objPiInvU]{$\pi^{-1}(U)$}
\obj(25,25)[objPhiUCrossF]{$\phi(U) \cross F$}
\obj(10,13)[objU]{$U$}
\obj(10,0)[objPhiU]{$\phi(U)$}
\mor{objPiInvU}{objU}{$\pi$}[\atright, \solidarrow]
\mor{objU}{objPhiU}{$\phi$}[\atright, \solidarrow]
\mor{objPiInvU}{objPhiUCrossF}{$\psi$}
\mor{objPhiUCrossF}{objPhiU}{$\pi_1$}
\enddc
\]
\end{defn}

\begin{lem}\label{l-frec-vb-def}
\index{Frechet@\Frechet!vector bundle}
\index{vector bundle!Frechet@\Frechet}
\index{bundle!vector!Frechet@\Frechet}
(Equivalence of Definitions of \Frechet Vector Bundle).
Definitions \ref{d-frec-vb} and \ref{d-frec-vb-hami} are equivalent.
\end{lem}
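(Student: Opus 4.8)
The plan is to show that each definition's data can be converted into the other's by composing trivializations with \Frechet manifold charts on the base, taking the ambient model \Frechet space $C$ for the total space in Definition \ref{d-frec-vb-hami} to be $D \cross F$, where $D$ is the model \Frechet space of the base and $F$ the standard fiber. The whole argument is bookkeeping; there is no real analysis beyond facts about \Frechet diffeomorphisms already available from \citet{Hami82}.

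First I would treat the implication Definition \ref{d-frec-vb} $\Rightarrow$ Definition \ref{d-frec-vb-hami}. Starting from a topological fiber bundle $\pi \colon T \rightarrow B$ with \Frechet manifold spaces, \Frechet-diffeomorphism trivializations $\phi_i \colon \pi^{-1}(U_i) \rightarrow U_i \cross F$ that are linear on fibers and satisfy $\pi_1 \circ \phi_i = \pi|_{U_i}$, and with \Frechet space fibers, I would, for each $b \in B$, pick $i$ with $b \in U_i$ and a base chart $\phi \colon U \rightarrow \phi(U) \subseteq D$ with $b \in U \subseteq U_i$, and set $\psi = (\phi \cross \ident_F) \circ \phi_i|_{\pi^{-1}(U)}$. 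Because products and compositions of \Frechet diffeomorphisms onto open sets are again such, $\psi$ is a \Frechet manifold chart of $T$ onto an open subset of $C = D \cross F$; the commuting square of Definition \ref{d-frec-vb-hami} follows from $\pi_1 \circ \psi = \phi \circ \pi_1 \circ \phi_i = \phi \circ \pi$, and fiberwise linearity of $\psi$ from that of $\phi_i$ together with the fact that $\phi \cross \ident_F$ restricts to $\ident_F$ on each slice $\{x\} \cross F$. Each fiber is a \Frechet space, in particular a vector space, so all of Hamilton's requirements hold.

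Next I would do the converse. Given Hamilton's data, I would note first that the chart domains $\pi^{-1}(U)$ are open in $T$ and cover it, and that $\pi = \phi^{-1} \circ \pi_1 \circ \psi$ on each such domain, so $\pi$ is continuous and surjective; then for each $b$ I would set $\phi_b = (\phi^{-1} \cross \ident_F) \circ \psi \colon \pi^{-1}(U) \rightarrow U \cross F$, which is a \Frechet diffeomorphism satisfying $\pi_1 \circ \phi_b = \phi^{-1} \circ \pi_1 \circ \psi = \pi|_{\pi^{-1}(U)}$ and is linear on fibers. The overlap maps $\phi_{b'} \circ \phi_b^{-1}$ are \Frechet diffeomorphisms, so the collection of $(U, \phi_b)$ assembles into a \Frechet fiber bundle with standard fiber $F$ in the sense of Definitions \ref{d-fb-vb} and \ref{d-frec-fb}; and since each fiber $\pi^{-1}(b)$ is, via $\psi$, linearly isomorphic to $F$, it is a \Frechet space and the trivializations induce \Frechet space isomorphisms on fibers, which is exactly Definition \ref{d-frec-vb}.

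The step I expect to require the most care is matching the \emph{manifold} conventions rather than any functional-analytic point: in Definition \ref{d-frec-vb} the total space $T$ carries an a priori \Frechet manifold structure modelled on a single \Frechet space (our convention, note \ref{n-frec-mfld}), and one must check that the charts $\psi$ produced above genuinely belong to \emph{that} structure, not merely to some compatible one — this is precisely why one composes with the given $\phi_i$ and takes $C = D \cross F$, so that $\psi$ is literally a chart of the restricted \Frechet manifold $\pi^{-1}(U_i) \cong U_i \cross F$. Everything else reduces to the routine facts that products of \Frechet manifolds behave as expected and that compositions and products of \Frechet diffeomorphisms are \Frechet diffeomorphisms.
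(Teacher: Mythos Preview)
Your proposal is correct and is precisely the argument the paper has in mind: the paper's own proof is the single sentence ``This can be seen by taking suitable \Frechet manifold charts,'' and your construction $\psi = (\phi \cross \ident_F) \circ \phi_i|_{\pi^{-1}(U)}$ with $C = D \cross F$ is exactly what that sentence is gesturing at. You have simply supplied the details the paper omits.
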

This can be seen by taking suitable \Frechet manifold charts.

% NOTE!  There seems to be a bug in Miktex 2.9 as of 02/02/2011,
% whereby the construct \begin{note}\label{...} \citet[]{} results
% in no "\emph{Note} number" in the compiled document.  My
% workaround is to give titles to notes.  The bug might also affect
% \begin{lem} etc.

\begin{note}\label{n-frec-mfld-tang-bndl-spac}
\index{Frechet@\Frechet!manifold tangent bundle}
(A \Frechet Manifold's Tangent Bundle and Tangent Space).
\citet[page~89]{Hami82} states that the tangent bundle $TX$ of a \Frechet manifold $X$ is a \Frechet vector bundle, and gives its transition functions.  He relates the tangent space $T_x X$ at a point $x \in X$ to parametrized at-least-differentiable curves in $X$ passing through $x$, giving the identification we will see in note \ref{n-loop-spac-tang-spac-bndl}:  for $X$ a smooth manifold and $\gamma \in LX$, we identify $T_{\gamma}LX$ with the space of smooth vector fields along $\gamma$, $\Gamma(\gamma^{*}TX)$.
\end{note}

\begin{prop}\label{p-loop-smth-map-smth}
\index{loop of smooth map}
(The Loop of a Smooth Map $Lf \colon LT \rightarrow LB$ is Smooth).
Given smooth manifolds $T$, $B$, and a smooth map $f \colon T \rightarrow B$, $Lf \colon LT \rightarrow LB$ is smooth.
\end{prop}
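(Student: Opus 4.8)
The plan is to check smoothness of $Lf$ locally, in the \Frechet manifold charts supplied by proposition \ref{p-smth-free-loop-spac-frec-mfld} and notation \ref{n-smth-free-loop-spac-frec-mfld}, and to recognize the chart expression of $Lf$ as post-composition by a fixed smooth map of \emph{finite}-dimensional manifolds, thereby reducing everything to the ``$\Omega$-lemma'' of \Frechet calculus. First, $Lf$ really does take values in $LB$: if $\gamma\in LT$ is smooth then $f\circ\gamma$ is smooth, so $Lf(\gamma)=f\circ\gamma\in LB$ (definition \ref{d-loop-smth-map}). Continuity of $Lf$ I would get from lemma \ref{l-co-top}: writing $f_{\sharp}\colon C(S^1,T)\to C(S^1,B)$ for post-composition, $f_{\sharp}$ is continuous, $Lf$ is $f_{\sharp}$ restricted to $LT$ and corestricted to $LB$, and by proposition \ref{p-smth-free-loop-spac-frec-mfld} the inclusions $LT\subset C(S^1,T)$ and $LB\subset C(S^1,B)$ are continuous with the chart domains $U_{\alpha}$ open in the induced topologies; hence, given $\gamma_0\in LT$ and $\delta_0:=Lf(\gamma_0)=f\circ\gamma_0\in LB$, the set $Lf^{-1}(U_{\delta_0})$ is open in $LT$, so $W:=U_{\gamma_0}\cap Lf^{-1}(U_{\delta_0})$ is an open neighborhood of $\gamma_0$ on which $Lf$ is represented by the composite $\Psi_{\delta_0}\circ Lf\circ\Psi_{\gamma_0}^{-1}$ of charts; smoothness of this composite for every $\gamma_0$ is what remains.

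Next I would unwind that composite. Let $\eta_T\colon TT\to V_T$ and $\eta_B\colon TB\to V_B$ be the maps of lemma \ref{l-loc-addn} for $T$ and $B$, so that $(\pi_T,\eta_T)$ and $(\pi_B,\eta_B)$ are diffeomorphisms onto the neighborhoods $V_T\subset T\cross T$, $V_B\subset B\cross B$ of the diagonals. Set
\[
\Phi:=(\pi_B,\eta_B)^{-1}\circ(f\cross f)\circ(\pi_T,\eta_T),
\]
a smooth map defined on the open set $\mathcal{O}:=\{\,v\in TT\st (f(\pi_T v),f(\eta_T v))\in V_B\,\}\subseteq TT$, with values in $TB$; it covers $f$ (i.e.\ $\pi_B\circ\Phi=f\circ\pi_T$) and carries the zero section of $TT$ to that of $TB$. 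Using the explicit formulas of notation \ref{n-smth-free-loop-spac-frec-mfld} (in which $\gamma=\Psi_{\gamma_0}^{-1}(\beta)$ has $\gamma(t)=\eta_T(\beta(t))$, and $\Psi_{\delta_0}$ rewrites a loop in $TB$ over $\delta_0$ as a section of $\delta_0^{*}TB$), a direct computation shows that $\Psi_{\delta_0}\circ Lf\circ\Psi_{\gamma_0}^{-1}$ sends a section $\beta$ of $\gamma_0^{*}TT$ (lying in the corresponding open subset of the \Frechet space $\Gamma(\gamma_0^{*}TT)$ of lemma \ref{l-sect-smoo-vb-form-frec-spac}) to $\Phi\circ\beta$, which is indeed a section of $\delta_0^{*}TB$ because $\pi_B(\Phi(\beta(t)))=f(\pi_T(\beta(t)))=f(\gamma_0(t))=\delta_0(t)$. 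Thus the proposition is reduced to the single assertion that \emph{post-composition $\beta\mapsto\Phi\circ\beta$ by a fixed smooth map $\Phi$ of finite-dimensional manifolds is a smooth map between the relevant \Frechet spaces of sections.}

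To prove that last assertion I would, as in notation \ref{n-smth-free-loop-spac-frec-mfld}, use (orientability and) trivializations of $\gamma_0^{*}TT$ and $\delta_0^{*}TB$ over $S^1$ to identify the section spaces with spaces $L\RR^{k}$ and $L\RR^{l}$; then the map takes the concrete form $\gamma\mapsto(t\mapsto g(t,\gamma(t)))$ for a smooth $g$, and one verifies directly that it is $C^1$ with $D(\beta)h=(t\mapsto D_2 g(t,\beta(t))\,h(t))$ and, inductively, $C^k$ for all $k$, by expanding $g$ with Taylor's theorem and estimating the remainder in each seminorm $\norm{\cdot}_k$ of example \ref{e-lrn-l-vect-spac}. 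This is exactly the analytic content behind \citet{Stac05} (lemma~3.15 and proposition~3.18, already invoked in notation \ref{n-smth-free-loop-spac-frec-mfld}) in its proof that smooth looping is a functor, and it rests on the nonlinear-operator calculus of \citet[sections~I.1.1 and I.4]{Hami82}; one could, indeed, simply cite those for the whole proposition. The main obstacle is precisely this step: the bookkeeping with charts above is purely formal, but showing that the difference quotients converge and that the iterated derivative maps are jointly continuous on the product \Frechet spaces --- recall from the remark after definition \ref{d-frec-deri} that $C^k$ here is \emph{not} a statement about continuity into spaces of linear maps --- requires the uniform continuity of the derivatives of $g$ on compact sets together with compactness of $S^1$. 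Once that is established, $\Psi_{\delta_0}\circ Lf\circ\Psi_{\gamma_0}^{-1}$ is smooth for every $\gamma_0$, and hence so is $Lf$.
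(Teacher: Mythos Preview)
Your proposal is correct and is, in effect, a detailed unpacking of exactly what the paper invokes: the paper's entire ``proof'' is the citation ``This is the second part of example I.4.4.5 in \citet[page~91]{Hami82},'' and your chart computation reducing $Lf$ to post-composition by a smooth map $\Phi$ between pullback tangent bundles, followed by the $\Omega$-lemma argument, is precisely the content of that example (cf.\ also Hamilton's I.4.4.4). As you yourself note, one could simply cite Hamilton; your write-up is a faithful elaboration of the same route, not a different one.
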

This is the second part of example I.4.4.5 in \citet[page~91]{Hami82}.

\begin{cor}\label{co-smth-loop-func}
\index{looping!functor}
(Smooth Looping is a Functor).
Smooth looping is a functor from the category of orientable smooth manifolds with Riemannian metric and smooth maps, to the category of \Frechet manifolds and smooth maps.
\end{cor}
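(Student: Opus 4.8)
The plan is to assemble the statement from the two preceding propositions together with a routine check of the functor axioms; as a corollary, there is essentially no new content. First I would confirm that $L$ is well defined on objects: given an orientable smooth manifold $X$ with a Riemannian metric, Proposition \ref{p-smth-free-loop-spac-frec-mfld} shows that $LX = C^{\infty}(S^1, X)$ carries a \Frechet manifold structure (with the Levi-Civita connection chosen to remove the ambiguity in the charts), so $LX$ is an object of the target category.

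Next I would check that $L$ is well defined on morphisms. Given a smooth map $f \colon T \to B$ of such manifolds, the composite $f \circ \gamma \colon S^1 \to B$ of smooth maps is again smooth, so $Lf \colon \gamma \mapsto f \circ \gamma$ really does take values in $LB$; and Proposition \ref{p-loop-smth-map-smth} shows that $Lf \colon LT \to LB$ is smooth for the \Frechet manifold structures just described. Hence $Lf$ is a morphism of the target category. Note that $Lg \circ Lf$ is automatically smooth once we know, as we will in the next step, that it equals $L(g \circ f)$, so no separate appeal to a chain rule is needed here.

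Finally I would verify the two functor axioms by direct computation on an arbitrary loop $\gamma$. For identities, $L(\ident_X)(\gamma) = \ident_X \circ \gamma = \gamma$, so $L(\ident_X) = \ident_{LX}$. For composition, given smooth maps $T \xrightarrow{f} B \xrightarrow{g} C$, associativity of composition of maps gives $L(g \circ f)(\gamma) = (g \circ f) \circ \gamma = g \circ (f \circ \gamma) = Lg(Lf(\gamma))$, so $L(g \circ f) = Lg \circ Lf$. Since the source is plainly a category (identities exist and composition of smooth maps is associative), this finishes the argument. There is no real obstacle: the work is entirely carried by Propositions \ref{p-smth-free-loop-spac-frec-mfld} and \ref{p-loop-smth-map-smth}, and the only point to be mildly careful about is that the \Frechet manifold structure on $LX$ depends on the auxiliary data of the metric (and the chosen Levi-Civita connection), which is exactly why the source category is taken to consist of Riemannian manifolds rather than bare smooth manifolds.
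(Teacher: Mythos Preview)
Your proposal is correct and takes essentially the same approach as the paper, which simply cites Propositions \ref{p-smth-free-loop-spac-frec-mfld} and \ref{p-loop-smth-map-smth} together with Definition \ref{d-loop-smth-map}. You have merely made explicit the verification of the functor axioms that the paper leaves to the reader via its reference to the definition.
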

\begin{proof}
This follows from from propositions  \ref{p-smth-free-loop-spac-frec-mfld} and \ref{p-loop-smth-map-smth}, and definition \ref{d-loop-smth-map}.
\end{proof}

\begin{defn}\label{d-frec-lgrp}
\index{Frechet@\Frechet!Lie group}
\index{Lie group!Frechet@\Frechet}
\index{group!Lie!Frechet@\Frechet}
(\Frechet Lie Groups).
A \Frechet Lie group is a \Frechet manifold with group structure such that the multiplication and inverse maps are smooth maps of \Frechet manifolds.
\end{defn}
This is definition I.4.6.1 of \citet[page~98]{Hami82}.

\begin{prop}\label{p-loop-lgrp-frec-lgrp}
\index{loop!Lie group!Frechet Lie group@\Frechet Lie group}
(The Loop of a Lie Group is a \Frechet Lie Group).
Given a Lie group $G$, its loop $LG$ is a \Frechet Lie group.
\end{prop}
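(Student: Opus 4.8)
The plan is to equip $LG$ with the pointwise group structure and to verify the two smoothness conditions of Definition \ref{d-frec-lgrp} by expressing the multiplication and inversion of $LG$ as loops of smooth maps, so that Proposition \ref{p-loop-smth-map-smth} applies.

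First I would record that $LG$ is a \Frechet manifold. A Lie group $G$ is parallelizable by left translations, hence orientable, and it admits a Riemannian metric (for instance a left-invariant one); fixing such a metric $g$, Proposition \ref{p-smth-free-loop-spac-frec-mfld} makes $LG$ a (Hausdorff, metrizable) \Frechet manifold. Writing $m_G \colon G \cross G \rightarrow G$ for multiplication and $\iota_G \colon G \rightarrow G$ for inversion, the pointwise operations $(\gamma_1 \gamma_2)(t) = \gamma_1(t)\,\gamma_2(t)$ and $\gamma^{-1}(t) = (\gamma(t))^{-1}$ are the smooth maps $m_G \circ (\gamma_1, \gamma_2)$ and $\iota_G \circ \gamma$, so they do land in $LG = C^{\infty}(S^1, G)$; the group axioms are inherited pointwise, with identity the constant loop at $e \in G$.

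Inversion is then immediate: $\iota_{LG} = L(\iota_G)$ in the sense of Definition \ref{d-loop-smth-map}, and $\iota_G$ is smooth, so $\iota_{LG} \colon LG \rightarrow LG$ is smooth by Proposition \ref{p-loop-smth-map-smth}. For multiplication I would factor $m_{LG} = L(m_G) \circ j$, where $j \colon LG \cross LG \rightarrow L(G \cross G)$ is the natural bijection $(\gamma_1, \gamma_2) \mapsto \bigl( t \mapsto (\gamma_1(t), \gamma_2(t)) \bigr)$ and $LG \cross LG$ carries the product \Frechet manifold structure. Here $L(m_G)$ is smooth by Proposition \ref{p-loop-smth-map-smth}, so it suffices to show that $j$ is a diffeomorphism of \Frechet manifolds; then $m_{LG}$ is a composite of smooth maps and we are done. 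To establish this I would put the product metric $g \oplus g$ on $G \cross G$: its Levi--Civita connection and exponential map split as products of those of the two factors, so the local-addition data of Lemma \ref{l-loc-addn} for $G \cross G$ can be taken to respect the product (up to a minor point about the radial rescaling $\psi$, noted below). Consequently, for $\alpha = (\alpha_1, \alpha_2) \in L(G \cross G)$ the chart domain $U_{\alpha}$ of Proposition \ref{p-smth-free-loop-spac-frec-mfld} equals $U_{\alpha_1} \cross U_{\alpha_2}$, and under the natural \Frechet space isomorphism $\Gamma(\alpha^{*} T(G \cross G)) \cong \Gamma(\alpha_1^{*} TG) \dirsum \Gamma(\alpha_2^{*} TG)$ the chart map $\Psi_{\alpha}$ becomes $\Psi_{\alpha_1} \cross \Psi_{\alpha_2}$; in these charts $j$ is the identity, hence a diffeomorphism.

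The main obstacle is exactly this last step, namely that the loop-space construction of Proposition \ref{p-smth-free-loop-spac-frec-mfld} is compatible with products. The one genuinely delicate ingredient is the radial rescaling $\psi$ of Lemma \ref{l-loc-addn}, whose definition involves the combined norm $\sqrt{\norm{v_1}^2 + \norm{v_2}^2}$ and so does not literally factor through the two components. I would handle this either by building the local addition on $G \cross G$ out of the unrescaled exponential $\sigma$ on a product neighborhood $N_G \cross N_G$ of the zero section --- which defines the same \Frechet smooth structure, since the change of coordinates between this chart system and the $\eta$-chart system of Proposition \ref{p-smth-free-loop-spac-frec-mfld} is smooth --- or by observing that any two admissible choices of the Lemma \ref{l-loc-addn} data yield the same \Frechet smooth structure on $L(G \cross G)$, so it is enough to exhibit one choice making $j$ smooth. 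The remaining verifications are a routine unwinding of definitions.
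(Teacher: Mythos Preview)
The paper's proof is simply a citation to \citet[pages~27--28]{PS86}; it does not give a self-contained argument. Immediately after the proof environment, the paper remarks that your strategy---looping $m_G$ and $\iota_G$ and invoking Proposition~\ref{p-loop-smth-map-smth}---would work \emph{if} one could show $L(G \cross G)$ is diffeomorphic to $LG \cross LG$, but the paper deliberately leaves that step open (and does so again after Proposition~\ref{p-loop-smth-lgrp-act-frec-lrgp-act}, and settles for a mere homeomorphism in Proposition~\ref{p-loop-fb-is-frec-fb}). So your route is not the paper's proof; it is exactly the alternative the paper flags but declines to carry out.

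Your attempt to close the gap is reasonable and largely sound: with the product metric on $G \cross G$ the Levi--Civita exponential splits, so the chart maps $\Psi_{\alpha}$ of Notation~\ref{n-smth-free-loop-spac-frec-mfld} built from the \emph{unrescaled} exponential $\sigma$ factor through the two components, and in those charts $j$ is the identity. You correctly identify the one genuine wrinkle, namely that the rescaling $\psi$ of Lemma~\ref{l-loc-addn} uses the combined norm and does not split. Your proposed fixes are in the right direction, but note that the paper nowhere proves that different choices of the Lemma~\ref{l-loc-addn} data yield the same smooth structure on $LX$; it is asserted only for the \Frechet \emph{space} topology on $\Gamma(\alpha^{*}TX)$ in Lemma~\ref{l-sect-smoo-vb-form-frec-spac}, and the chart-compatibility argument in Proposition~\ref{p-smth-free-loop-spac-frec-mfld} relies on the specific $\eta = \sigma \circ \psi$. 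To make your argument airtight within this paper's framework you would need to supply that independence lemma (it is in the background of \citet{Stac05}, where any local addition works), or else verify directly that the transition between the $\sigma$-charts and the $\eta$-charts is smooth---which it is, since $\psi$ is a smooth fiberwise diffeomorphism of $TX$, so $L\psi$ gives the smooth change of charts.
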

\begin{proof}
See \citet[pages~27--28]{PS86}.
\end{proof}
By proposition \ref{p-smth-free-loop-spac-frec-mfld}, since $G$ is
orientable \citep[page~92]{Bump04}, $LG$ is a \Frechet manifold. Defining the looped multiplication and inversion maps pointwise, if it were proved that the \Frechet manifold $L(G \cross G)$ is diffeomorphic to $LG \cross LG$, proposition \ref{p-loop-smth-map-smth} would imply that $LG$ is a \Frechet Lie group.

\begin{defn}\label{d-frec-lgrp-act}
\index{Frechet@\Frechet!Lie group action}
(\Frechet Lie Group Actions).
A \Frechet Lie group $G$ acts on a \Frechet manifold $X$ if there is a group action that is a smooth map of \Frechet manifolds (hence by definition is continuous).
\end{defn}
This is definition I.4.6.4 of \citet[page~98]{Hami82}.

\begin{prop}\label{p-loop-smth-lgrp-act-frec-lrgp-act}
\index{loop!Lie group action!Frechet Lie group action@\Frechet Lie group action}
\index{smooth!loop!projection}
(The Loop of a Smooth Lie Group Action is a \Frechet Lie Group Action).
Given a smooth action of a Lie group $G$ on a smooth manifold $P$, the action of $LG$ on $LP$ given pointwise by the action of $G$ on $P$ is a \Frechet Lie group action.
\end{prop}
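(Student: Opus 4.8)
The plan is to check the two ingredients of definition \ref{d-frec-lgrp-act}: that $LG$ is a \Frechet Lie group acting on the \Frechet manifold $LP$, and that the pointwise action
\[
a \colon LG \cross LP \to LP, \qquad a(\widetilde g, \widetilde p)(t) = \rho(\widetilde g(t), \widetilde p(t)),
\]
is a group action which is a smooth map of \Frechet manifolds. That $LG$ is a \Frechet Lie group is proposition \ref{p-loop-lgrp-frec-lgrp}, and that $LP$ and $L(G \cross P)$ are \Frechet manifolds is proposition \ref{p-smth-free-loop-spac-frec-mfld} ($G$, being a Lie group, is orientable, as is $G \cross P$ when $P$ is; and orientability is inessential here, as the proof of that proposition indicates). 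The map $a$ is well defined: $t \mapsto (\widetilde g(t), \widetilde p(t))$ is a smooth map $S^1 \to G \cross P$ and $\rho$ is smooth, so the composite with $\rho$ lies in $LP$; and the group action identities for $a$ hold because they hold pointwise for $\rho$, with $LG$ a group under pointwise multiplication whose identity is the constant loop at $e$.

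The substance is the smoothness of $a$, which I would obtain by factoring it through the loop space of the product manifold. Let $c \colon LG \cross LP \to L(G \cross P)$ be the natural bijection $c(\widetilde g, \widetilde p)(t) = (\widetilde g(t), \widetilde p(t))$, whose inverse is $\gamma \mapsto (L\pi_G(\gamma), L\pi_P(\gamma))$ for $\pi_G, \pi_P$ the projections of $G \cross P$. Then $a = L\rho \circ c$, and $L\rho \colon L(G \cross P) \to LP$ is smooth by proposition \ref{p-loop-smth-map-smth} because $\rho$ is smooth. So it remains to show $c$ is smooth.

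For this I would compare the \Frechet manifold charts of proposition \ref{p-smth-free-loop-spac-frec-mfld} and notation \ref{n-smth-free-loop-spac-frec-mfld}, using compatible data on the three manifolds: choose Riemannian metrics on $G$ and $P$, the product metric on $G \cross P$, and Levi-Civita connections throughout, the \Frechet structures being independent of these choices by lemma \ref{l-sect-smoo-vb-form-frec-spac} and proposition \ref{p-smth-free-loop-spac-frec-mfld}. The exponential map of a Riemannian product is the product of the exponential maps, so the local-addition data $(N, V, \eta)$ of lemma \ref{l-loc-addn} for $G \cross P$ may be chosen to correspond, under the shuffle $(G \cross P) \cross (G \cross P) \cong (G \cross G) \cross (P \cross P)$, to the products of the corresponding data for $G$ and for $P$. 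Fix $\alpha \in LG$ and $\beta \in LP$; the fiberwise splitting $T_{(g,p)}(G \cross P) = T_g G \dirsum T_p P$ gives a \Frechet space isomorphism $\Gamma((\alpha,\beta)^{*} T(G \cross P)) \cong \Gamma(\alpha^{*} TG) \dirsum \Gamma(\beta^{*} TP)$, under which the chart domain $U_{(\alpha,\beta)} \subset L(G \cross P)$ corresponds to $U_\alpha \cross U_\beta \subset LG \cross LP$ and the chart map $\Psi_{(\alpha,\beta)}$ corresponds to $\Psi_\alpha \cross \Psi_\beta$. Read through these charts, $c$ becomes the canonical continuous linear isomorphism $\Gamma(\alpha^{*} TG) \dirsum \Gamma(\beta^{*} TP) \to \Gamma((\alpha,\beta)^{*} T(G \cross P))$, which is smooth with smooth inverse. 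Hence $c$ is a diffeomorphism, $a = L\rho \circ c$ is smooth, and $a$ is a \Frechet Lie group action.

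The main obstacle I anticipate is precisely this last chart computation: arranging the metric and local-addition data on $G$, $P$, and $G \cross P$ so that the product decomposition is visible at the level of charts, and invoking the independence-of-choices results so that making those compatible choices costs no generality. The remaining points — well-definedness, the action axioms, and the reduction of smoothness to proposition \ref{p-loop-smth-map-smth} — are routine.
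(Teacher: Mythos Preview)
Your approach is correct, and in fact the paper itself points to it: the paper's proof is simply a citation to Lemma~1.6 of \citet{SW07}, and immediately afterward the text remarks that the result would also follow from propositions~\ref{p-loop-lgrp-frec-lgrp} and~\ref{p-loop-smth-map-smth} \emph{if} one proved that $L(G \cross P)$ is diffeomorphic to $LG \cross LP$. You have supplied exactly that missing diffeomorphism, via the product-metric chart comparison, and then composed with $L\rho$.

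Your chart argument is sound. The one place to be careful is the claim that the local-addition data for $G \cross P$ ``may be chosen'' to be the product of the data for $G$ and $P$: the exponential map of a Riemannian product is indeed the product of exponential maps, so $\sigma_{G \cross P} = \sigma_G \cross \sigma_P$; but the shrinking diffeomorphism $\psi$ of lemma~\ref{l-loc-addn} in the compact case is specified as scaling by a function of the \emph{norm}, and the norm on $T(G \cross P)$ does not split. You are right that one is free to take instead $\psi_{G \cross P} = \psi_G \cross \psi_P$, which still satisfies the required properties ($\pi \circ \psi = \pi$, identity on the zero section, diffeomorphism onto an open neighborhood), and then invoke the independence-of-choices statement. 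With that understood, the charts split as you describe, $c$ is a diffeomorphism in local models and hence globally, and $a = L\rho \circ c$ is smooth. So you have given a self-contained proof where the paper chose to cite; the tradeoff is that the citation avoids the chart bookkeeping entirely.
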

\begin{proof}
This is Lemma 1.6 of \citet[page~811]{SW07}.
\end{proof}
This would also follow from propositions \ref{p-loop-lgrp-frec-lgrp} and \ref{p-loop-smth-map-smth}, if it were proved that $L(G \cross P)$ is diffeomorphic to $LG \cross LP$.

\begin{defn}\label{d-frec-pb}
\index{Frechet@\Frechet!principal bundle}
\index{bundle!principal!Frechet@\Frechet}
(\Frechet Principal Bundles).
Given a \Frechet Lie group $G$, a \Frechet fiber bundle $\pi \colon P \rightarrow B$ with standard fiber $G$ is a \Frechet principal $G$ bundle if $G$ acts smoothly on $P$, the action commutes with the projection, is free and transitive over each point of $B$, and the local trivialization maps are $G$-equivariant.
\end{defn}
This is modified from but equivalent to definition I.4.6.5 in \citet[pages~98--99]{Hami82}.

\begin{prop}\label{p-frec-pb-assb-frec-fb}
\index{Frechet@\Frechet!associated bundle}
\index{bundle!associated!Frechet@\Frechet}
(\Frechet Principal Bundle Associated Bundles are \Frechet Fiber Bundles).
Given a \Frechet Lie group $G$, \Frechet principal $G$ bundle $P \rightarrow B$, and a left \Frechet Lie group action of $G$ on a \Frechet manifold $F$, the associated bundle $P \cross_G F$ is a \Frechet fiber bundle over $B$.  If $F$ is a \Frechet space, then $P \cross_G F$ is a \Frechet vector bundle.
\end{prop}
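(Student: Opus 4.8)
The plan is to transport the trivializations of $P$ to $P \cross_G F$, compute the resulting change-of-coordinates maps, check that they are \Frechet diffeomorphisms, and then glue. Denote the projection of $P$ by $\pi \colon P \rightarrow B$, and fix an open cover $\{ U_i \}$ of $B$ together with $G$-equivariant \Frechet-diffeomorphism local trivializations $\phi_i \colon \pi^{-1}(U_i) \isomto U_i \cross G$ of $P$. Let $g_{ji} \colon U_i \cap U_j \rightarrow G$, $x \mapsto \pi_2(\phi_j(\phi_i^{-1}(x, e)))$, be the transition functions; these are smooth, being composites of smooth maps and of the smooth section $x \mapsto (x, e)$, and they satisfy $\phi_j \circ \phi_i^{-1}(x, z) = (x, g_{ji}(x) z)$ and the cocycle identity $g_{kj}(x) g_{ji}(x) = g_{ki}(x)$. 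Writing $h_i = \pi_2 \circ \phi_i \colon \pi^{-1}(U_i) \rightarrow G$, equivariance of $\phi_i$ gives $h_i(pg) = h_i(p) g$ and $h_j(p) = g_{ji}(\pi(p)) h_i(p)$.

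First I would define, for each $i$,
\begin{align}
\psi_i \colon \pi_{P \cross_G F}^{-1}(U_i) &\rightarrow U_i \cross F \notag \\
\psi_i \colon [p, f] &\mapsto (\pi(p),\, h_i(p) \cdot f), \notag
\end{align}
and check, using $(p, f) \sim (pg, g^{-1} f)$ together with $h_i(pg) = h_i(p)g$, that $\psi_i$ is a well-defined bijection with inverse $(x, f) \mapsto [\phi_i^{-1}(x, e), f]$, satisfying $\pi_1 \circ \psi_i = \pi_{P \cross_G F}$. A short computation using $h_j(\phi_i^{-1}(x, e)) = g_{ji}(x)$ then gives, over $U_i \cap U_j$,
\[
\psi_j \circ \psi_i^{-1}(x, f) = (x,\, g_{ji}(x) \cdot f),
\]
so the change-of-coordinates maps of the prospective bundle are $(x, f) \mapsto (x, g_{ji}(x) \cdot f)$, and they inherit the cocycle condition from the $g_{ji}$.

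Next I would verify that each such map is a \Frechet diffeomorphism of $(U_i \cap U_j) \cross F$. It factors as
\[
(x, f) \;\longmapsto\; (x,\, g_{ji}(x),\, f) \;\longmapsto\; (x,\, \rho(g_{ji}(x), f)),
\]
where $\rho \colon G \cross F \rightarrow F$ is the given action, smooth by hypothesis (definition \ref{d-frec-lgrp-act}). The first arrow is assembled from $g_{ji}$, the identity, and coordinate projections, hence smooth since a finite product of \Frechet manifolds is again a \Frechet manifold (a product of charts takes values in a product of \Frechet spaces, which is a \Frechet space) and projections of such products are smooth; the second is $\ident \cross \rho$. Smoothness of the composite, and of its inverse (the same form with $g_{ji}$ replaced by $g_{ij}$), then follows from the chain rule in Hamilton's \Frechet calculus (definition \ref{d-frec-deri}, \citet[pages~73--84]{Hami82}). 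Only the fact that \emph{products} of \Frechet manifolds are \Frechet manifolds is used here, not the more delicate question of whether looping commutes with products.

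Finally, having an open cover of $B$, bijections $\psi_i$ onto $U_i \cross F$, and \Frechet-diffeomorphism change-of-coordinates maps satisfying the cocycle condition, I would build the total space directly as $\bigsqcup_i (U_i \cross F)$ modulo the equivalence identifying $(x, f)$ in the $i$-th piece with $(x, g_{ji}(x) \cdot f)$ in the $j$-th, equip it with the quotient topology and the evident \Frechet-manifold charts coming from the $U_i \cross F$ (this is the \Frechet analog of the gluing in lemmas \ref{l-fb-cons} and \ref{l-pb-cons}, with the same proof), and identify it canonically with $P \cross_G F$ via $[p, f] \leftrightarrow [(\pi(p), h_i(p) \cdot f)]$. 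This exhibits $\pi_{P \cross_G F} \colon P \cross_G F \rightarrow B$ as a \Frechet fiber bundle with standard fiber $F$ and local trivializations $\psi_i$; Hausdorffness is local (each chart is $U_i \cross F$, with $U_i$ open in the \Frechet manifold $B$ and $F$ Hausdorff) together with separation of distinct fibers by open sets pulled back from $B$. When in addition $F$ is a \Frechet space and $G$ acts on it by \Frechet-space isomorphisms, each $g_{ji}(x) \cdot (-)$ is a \Frechet-space isomorphism of $F$, so the $\psi_i$ restrict to isomorphisms of \Frechet spaces on fibers and $P \cross_G F$ is a \Frechet vector bundle with standard fiber $F$ by definition \ref{d-frec-vb}. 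The main obstacle is the smoothness verification of the third step: one must keep the composed maps inside Hamilton's class of smooth maps, which takes some care, although it is ultimately routine once the chain rule and the behavior of finite products are in hand.
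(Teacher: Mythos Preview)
Your argument is correct and is precisely the standard associated-bundle construction that the paper has in mind: the paper's own proof is the single sentence ``The proof in \citet[pages~28--29]{Poor07} applies by adding \Frechet appropriately,'' and what you have written is a careful execution of exactly that program, transporting Poor's finite-dimensional argument to the \Frechet setting via Hamilton's calculus. There is nothing materially different between your route and the paper's.
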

The proof in \citet[pages~28--29]{Poor07} applies by adding \Frechet appropriately.

\chapter{LOOPS OF BUNDLES}\label{c-loop-bndl}

This background chapter discusses the loops of smooth principal bundles and associated smooth fiber bundles, in particular vector bundles, and fiberwise inner products on the last.  The results from chapter \ref{c-con-text} apply to the loops of the total space, base space, projection, and structure group, imply that the loop of the projection is a smooth map, and that the loop of a Lie group smooth action is a \Frechet Lie group action.

\begin{prop}\label{p-loop-smth-pb-is-frec-pb}
\index{principal bundle!loop}
\index{loop!principal bundle}
\index{bundle!principal!loop}
(The Loop of a Principal Bundle is a \Frechet Principal Bundle).
Given a smooth principal $G$ bundle $\pi \colon P \rightarrow X$, its loop $L\pi \colon LP \rightarrow L\pi(LP) \subset LX$ is a \Frechet principal $LG$ bundle.  If $X$ is one-connected or $G$ is connected, $L\pi (LP) = LX$.
\end{prop}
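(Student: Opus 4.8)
The plan is to verify the clauses of Definition~\ref{d-frec-pb} in turn, everything but local triviality being a pointwise matter. First, $LX$ and $LP$ are \Frechet manifolds by Proposition~\ref{p-smth-free-loop-spac-frec-mfld}: $X$ is orientable with a Riemannian metric, and $P$ (given any Riemannian metric) is orientable because the vertical subbundle of a principal bundle is trivial, so that $\pi^{*}TX$ is a direct summand of $TP$ with trivial complement and orientability of $X$ passes to $P$. The map $L\pi$ is smooth by Proposition~\ref{p-loop-smth-map-smth}, $LG$ is a \Frechet Lie group by Proposition~\ref{p-loop-lgrp-frec-lgrp}, and the pointwise $LG$-action on $LP$ is a \Frechet Lie group action by Proposition~\ref{p-loop-smth-lgrp-act-frec-lrgp-act}. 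This action commutes with $L\pi$ because the $G$-action commutes with $\pi$ pointwise; it is free because $\sigma g = \sigma$ forces $\sigma(t)g(t) = \sigma(t)$ and hence $g(t) = e$ for every $t$; and it is transitive over any $\gamma$ in the image of $L\pi$: given two lifts $\sigma_1,\sigma_2$ of $\gamma$, the smooth translation function $\tau_P\colon P\cross_{\pi}P\to G$ (the smooth version of Lemma~\ref{l-pb-tran-func-tors}, smooth because in a local trivialization it is $((x,g_1),(x,g_2))\mapsto g_1^{-1}g_2$) gives $g = \tau_P\circ(\sigma_1,\sigma_2)\in LG$ with $\sigma_1 g = \sigma_2$.

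For local triviality, fix a connection on $P\to X$ and, from Lemma~\ref{l-loc-addn}, the diagonal neighborhood $V\subset X\cross X$ together with the short geodesic $c_{x,y}$ from $x$ to $y$ for $(x,y)\in V$. Parallel transport along these geodesics defines a smooth map
\[
\Pi\colon\ \{(p,y)\in P\cross X\ \st\ (\pi(p),y)\in V\}\ \longrightarrow\ P,\qquad \Pi(p,\pi(p)) = p,\quad \Pi(p,y)\in P_y,
\]
where $\Pi(p,y)$ is the parallel transport of $p$ along $c_{\pi(p),y}$; this is smooth because parallel transport solves an ODE depending smoothly on its data and $c_{x,y}$ depends smoothly on $(x,y)$, and its domain is an open submanifold of $P\cross X$. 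Given $\gamma_0\in L\pi(LP)$, choose a lift $\sigma_0\in LP$ and put $\mathcal{U} = \{\gamma\in LX\ \st\ (\gamma_0(t),\gamma(t))\in V\ \text{for all }t\}$, which is the chart domain $U_{\gamma_0}$ of Proposition~\ref{p-smth-free-loop-spac-frec-mfld} and hence an open neighborhood of $\gamma_0$. Define $s\colon\mathcal{U}\to LP$ by $s(\gamma)(t) = \Pi(\sigma_0(t),\gamma(t))$; since $\sigma_0$ and $\gamma$ are smooth loops and $\Pi$ is a smooth map of manifolds, $s(\gamma)$ is again a smooth loop in $P$ (this is fiberwise parallel transport indexed by the loop parameter, with no monodromy), and $L\pi(s(\gamma)) = \gamma$, $s(\gamma_0) = \sigma_0$. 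Smoothness of $s$ as a map of \Frechet manifolds follows from Proposition~\ref{p-loop-smth-map-smth} applied to $\Pi$, once $L$ of an open submanifold of $P\cross X$ is identified with the corresponding open subset of $LP\cross LX$; this identification is topological by Lemma~\ref{l-co-top} and, for the \Frechet structure, follows from the local nature of the chart construction of Proposition~\ref{p-smth-free-loop-spac-frec-mfld}.

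Given such a section $s$, define $\Phi_{\mathcal{U}}\colon\mathcal{U}\cross LG\to (L\pi)^{-1}(\mathcal{U})$ by $\Phi_{\mathcal{U}}(\gamma,g) = s(\gamma)\,g$. It is $LG$-equivariant, covers the identity on $\mathcal{U}$, and is smooth (it is $s\cross\ident$ followed by the smooth action). Its inverse is $\sigma\mapsto(L\pi(\sigma),\ \tau(s(L\pi\sigma),\sigma))$, where $\tau = L\tau_P$ is the looped translation function on $LP\cross_{L\pi}LP$, smooth by Proposition~\ref{p-loop-smth-map-smth} (again using the identification of $L(P\cross_{\pi}P)$ with $LP\cross_{L\pi}LP$), so $\Phi_{\mathcal{U}}$ is a \Frechet-diffeomorphism local trivialization compatible with the $LG$-action. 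This verifies all clauses of Definition~\ref{d-frec-pb} over $\mathcal{U}$, and since each such $\mathcal{U} = U_{\gamma_0}$ lies in $L\pi(LP)$, letting $\gamma_0$ range over $L\pi(LP)$ shows $L\pi(LP)$ is open in $LX$ and that $L\pi\colon LP\to L\pi(LP)$ is a \Frechet principal $LG$ bundle.

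Finally, $L\pi(LP) = LX$ exactly when every $\gamma\in LX$ lifts to a smooth loop in $P$, i.e.\ when $\gamma^{*}P\to S^1$ has a section, i.e.\ when the principal bundle $\gamma^{*}P$ is trivial. If $G$ is connected, every principal $G$-bundle over $S^1$ is trivial; if $X$ is one-connected, then $\gamma$ is homotopic to a constant map, so $\gamma^{*}P$ is isomorphic to the pullback of $P$ along a constant map, which is trivial. Either hypothesis therefore gives $L\pi(LP) = LX$. I expect the real obstacle to be the two claims that $s$ and $\tau$ are smooth maps of \Frechet manifolds, i.e.\ that looping commutes with the relevant open-submanifold and fiber-product constructions; all the group-theoretic content is pointwise.
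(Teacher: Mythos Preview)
Your proof is correct and follows essentially the same approach as the paper's cited source: the paper itself gives no argument, simply referring both statements to \citet[pages~811--812,~838--840]{SW07}, and your parallel-transport construction of local sections via a connection is exactly the method used there (and alluded to again in the paper's proof of the next proposition, \ref{p-loop-fb-is-frec-fb}). Your closing caveat is apt: the genuine technical content is that $s$ and $L\tau_P$ are \Frechet-smooth, which amounts to checking that looping respects open submanifolds and fiber products, and this is precisely what is worked out in the appendix of \citet{SW07}.
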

The first statement is Proposition 1.9 of % ~ removed from citation to cure overfull box
\citet[pages 812, 839--840]{SW07}; the second is in Remark 2 under Proposition 1.8 of \citet[pages~811--812,~838]{SW07}.  Our use will be with $G = \SO(n)$, which is connected.  Also note, using proposition \ref{p-smth-free-loop-spac-frec-mfld}, that the looped group, base, and total space have nice point set topological properties, being metrizable.

\begin{prop}\label{p-loop-fb-is-frec-fb}
\index{fiber bundle!loop}
\index{loop!fiber bundle}
\index{bundle!fiber!loop}
(The Loop of a Smooth Associated Fiber Bundle is a \Frechet Fiber Bundle).
Given a connected Lie group $G$, a smooth principal $G$ bundle $\pi_P \colon P \rightarrow X$, and a smooth manifold $F$ upon which $G$ acts smoothly, the associated bundle construction used in finite dimensions works also for \Frechet Lie groups, \Frechet principal bundles, and \Frechet manifolds, so that $\pi_{LP \cross_{LG} LF} \colon LP \cross_{LG} LF \rightarrow LX$ is a \Frechet fiber bundle.  The loop of $\pi_{P \cross_G F} \colon P \cross_G F \rightarrow X$, $L\pi_{P \cross_G F} \colon L(P \cross_G F) \rightarrow L\pi_{P \cross_G F}(L(P \cross_G F)) = LX$, is a \Frechet fiber bundle.  It and $LP \cross_{LG} LF \rightarrow LX$ are isomorphic as topological fiber bundles. If $F$ is a finite-dimensional vector space so that $P \cross_G F$ is a smooth vector bundle, then $LP \cross_{LG} LF$ is a \Frechet vector bundle, with vector operations on loops defined pointwise.
\end{prop}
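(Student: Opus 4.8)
The plan is to assemble $LP \cross_{LG} LF$ from results already at hand and then to identify it with $L(P \cross_G F)$. For the first assertion: by Proposition~\ref{p-loop-smth-pb-is-frec-pb}, since $G$ is connected, $L\pi_P \colon LP \to LX$ is a \Frechet principal $LG$ bundle; $LG$ is a \Frechet Lie group by Proposition~\ref{p-loop-lgrp-frec-lgrp}; $LF$ is a \Frechet manifold by Lemma~\ref{l-sect-fb-form-frec-mfld} applied to $S^1 \cross F \to S^1$; and the pointwise action of $LG$ on $LF$ induced by the smooth $G$-action on $F$ is a \Frechet Lie group action by Proposition~\ref{p-loop-smth-lgrp-act-frec-lrgp-act}. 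Proposition~\ref{p-frec-pb-assb-frec-fb} then gives at once that $\pi_{LP \cross_{LG} LF} \colon LP \cross_{LG} LF \to LX$ is a \Frechet fiber bundle. When $F$ is a finite-dimensional vector space, $LF = C^{\infty}(S^1, F)$ is a \Frechet space with pointwise vector operations (example~\ref{e-lrn-l-vect-spac}, note~\ref{n-frec-spac-frec-mfld}) on which $LG$ acts fiberwise linearly because the $G$-action on $F$ does, so the last clause of Proposition~\ref{p-frec-pb-assb-frec-fb} makes $LP \cross_{LG} LF$ a \Frechet vector bundle, with the pointwise vector operations on its fibers $\Gamma(\gamma^{*}P) \cross_{LG} LF$.

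For the remaining assertions I would use the map $\Phi \colon LP \cross_{LG} LF \to L(P \cross_G F)$, $\Phi([\tilde p, \tilde f]) = \bigl( t \mapsto [\tilde p(t), \tilde f(t)] \bigr)$, which is well defined since $[\tilde p(t) g(t), g(t)^{-1} \tilde f(t)] = [\tilde p(t), \tilde f(t)]$ for every $t$ and every $\tilde g \in LG$, and which covers $\ident_{LX}$. To see $\Phi$ restricts to a bijection over each $\gamma \in LX$: the fibers over $\gamma$ are $\Gamma(\gamma^{*}P) \cross_{LG} LF$ and $\Gamma(\gamma^{*}P \cross_G F)$ (using $\gamma^{*}(P \cross_G F) \cong \gamma^{*}P \cross_G F$), and $\gamma^{*}P \to S^1$ admits a smooth global section $s$ --- equivalently $\gamma^{*}P$ is trivial, $G$ being connected; this is exactly the content of the equality $L\pi(LP) = LX$ in Proposition~\ref{p-loop-smth-pb-is-frec-pb}, and is the one place the connectedness hypothesis enters. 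Using $s$, every element of $\Gamma(\gamma^{*}P \cross_G F)$ is uniquely $t \mapsto [s(t), f(t)]$ for some $f \in LF$, and every element of $\Gamma(\gamma^{*}P) \cross_{LG} LF$ has a unique representative $(s, f)$ with $f \in LF$ (write any smooth section of $\gamma^{*}P$ as $t \mapsto s(t) g(t)$ for a unique $g \in LG$, by lemma~\ref{l-pb-tran-func-tors}); $\Phi$ carries one parametrization to the other, hence is a fiberwise bijection. In particular it is surjective, which also gives $L\pi_{P \cross_G F}(L(P \cross_G F)) = LX$.

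It remains to promote $\Phi$ to an isomorphism of topological fiber bundles over $LX$; its local inverses then serve as local trivializations showing that $L\pi_{P \cross_G F} \colon L(P \cross_G F) \to LX$ is a \Frechet fiber bundle (a \Frechet vector bundle when $F$ is a vector space, $\Phi$ being fiberwise linear). Continuity of $\Phi$ I would get by writing it as the descent of $Lq \circ j$, where $q \colon P \cross F \to P \cross_G F$ is the smooth quotient, so $Lq$ is continuous by Proposition~\ref{p-loop-smth-map-smth}, and $j \colon LP \cross LF \to L(P \cross F)$ is the natural identification, a homeomorphism because $T(P \cross F) = TP \cross TF$ makes the local charts of Proposition~\ref{p-smth-free-loop-spac-frec-mfld} on the two sides agree. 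For continuity of $\Phi^{-1}$, over an open $W_\alpha \subset LX$ carrying a trivialization $\phi_\alpha \colon LP|_{W_\alpha} \isomto W_\alpha \cross LG$ with smooth section $\sigma_\alpha(\gamma) = \phi_\alpha^{-1}(\gamma, e)$, the map $\Phi$ becomes $(\gamma, \tilde f) \mapsto \bigl( t \mapsto [\sigma_\alpha(\gamma)(t), \tilde f(t)] \bigr)$, whose inverse recovers $f$ pointwise from $\sigma_\alpha(\gamma)(t)$ and the given loop $\tau(t)$ through a smooth auxiliary map built from the translation function and the $G$-action on $F$, hence is continuous by Proposition~\ref{p-loop-smth-map-smth}. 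The step I expect to take the most care, and the only genuinely delicate one, is precisely this last verification --- that the locally defined maps really are homeomorphisms for the \Frechet (not merely the compact-open) topologies, with the transition functions one expects; it runs parallel to the proof of Proposition~\ref{p-loop-smth-pb-is-frec-pb} in \citet{SW07}.
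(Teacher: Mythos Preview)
Your proposal is correct and follows the same overall plan as the paper --- assemble $LP \cross_{LG} LF$ from the ingredients already in hand, then compare with $L(P \cross_G F)$ via the natural map sending $[\tilde p,\tilde f]$ to $t\mapsto[\tilde p(t),\tilde f(t)]$. The difference lies in how the inverse is handled and in the logical order of the two remaining assertions.

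The paper works one level up: it considers your map $j$ (called $\psi$ there), $LP\cross LF\to L(P\cross F)$, $(p,f)\mapsto(t\mapsto(p(t),f(t)))$, and simply writes down its inverse $\sigma\mapsto(\pi_1\circ\sigma,\pi_2\circ\sigma)$, continuous because it is built from loops of smooth projections. Thus $\psi$ is an $LG$-equivariant homeomorphism, and since both vertical maps in the square are quotient maps onto orbit spaces (the right by the associated-bundle construction, the left because $P\cross F\to P\cross_G F$ is itself a principal $G$-bundle with $G$ connected, so Proposition~\ref{p-loop-smth-pb-is-frec-pb} applies to it), $\psi$ and $\psi^{-1}$ each descend to continuous maps, giving the homeomorphism $\overline\psi$ directly. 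This bypasses your local-trivialization argument for $\Phi^{-1}$ entirely --- the step you flagged as the most delicate is unnecessary. The paper also establishes that $L(P\cross_G F)\to LX$ is a \Frechet fiber bundle independently of the isomorphism, by invoking the parallel-transport argument from \citet{SW07} (their Proposition~1.9), rather than transporting the structure across $\Phi$ as you do. Your route is fine and perhaps more self-contained; the paper's is shorter because the explicit inverse of $\psi$ makes the descent automatic.
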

\begin{proof}
Using proposition \ref{p-loop-smth-pb-is-frec-pb}, since $G$ is connected, $L\pi_P (LP) = LX$ and proposition \ref{p-frec-pb-assb-frec-fb} applies.  To show that the loop of the associated bundle is a \Frechet fiber bundle, by propositions \ref{p-smth-free-loop-spac-frec-mfld} and \ref{p-loop-smth-map-smth} the loops of the total space, base space, and projection are respectively \Frechet manifolds and a map thereof.  The map $L\pi_{P \cross_G F}$ is surjective as follows:  given any $\gamma \in LX$, take $\widetilde{\gamma} \in LP$ such that $\pi_P \circ \widetilde{\gamma} = \gamma$.  Then $t \mapsto [(\widetilde{\gamma}(t),0)]$ is an element of $L(P \cross_G F)$ that $L\pi_{P \cross_G F}$ maps to $\gamma$.  The second proof of Proposition 1.9 of \citet[pages~812,~838--840]{SW07} works here also; referring to \citet[pages~290--293]{Poor07} for parallel transport on associated fiber bundles.

To show the isomorphism as topological fiber bundles of the two \Frechet associated fiber bundles, consider the following diagram, where the first row of vertical arrows on the left comes from looping the quotient map and on the right is directly the quotient map, and the second row of vertical on the left comes from looping the projection and on the right is directly the projection.
\[
\begindc{\commdiag}[5]
\obj(10,30)[objLPxF]{$L(P \cross F)$}
\obj(30,30)[objLPxLF]{$LP \cross LF$}
\obj(10,15)[objLPxgF]{$L(P \cross_G F)$}
\obj(30,15)[objLPxLGLF]{$LP \cross_{LG} LF$}
\obj(10,0)[objLX1]{$LX$}
\obj(30,0)[objLX2]{$LX$}
\mor{objLPxLF}{objLPxF}{$\psi$}
\mor{objLPxF}{objLPxgF}{$L\pi_{P \cross F}$}
\mor{objLPxLF}{objLPxLGLF}{$\pi_{LP \cross LF}$}
\mor{objLPxLGLF}{objLPxgF}{$\overline{\psi}$}
\mor{objLPxgF}{objLX1}{$L\pi_{P \cross_G F}$}
\mor{objLPxLGLF}{objLX2}{$\pi_{LP \cross_{LG} LF}$}
\mor{objLX2}{objLX1}{$\ident_{LX}$}
\enddc
\]
Define the $LG$-equivariant continuous map $\psi$ in a natural way by mapping $(p,f) \in LP \cross LF$ to the element of $L(P \cross F)$ defined for all $t \in S^1$ by $\psi (p, f)(t) = (p(t), f(t))$.  Its continuous inverse is $\sigma \mapsto (\pi_1 \circ \sigma, \pi_2 \circ \sigma)$.  The map $\psi$ induces the homeomorphism $\overline{\psi}$ that it covers, because a continuous equivariant map descends to a continuous map of the orbit spaces; see \citet[page~4]{tomD87}.  In turn, $\overline{\psi}$ covers the identity of $LX$ because of the way the projection maps of the associated bundles are defined.  The diagram commutes.  Thus the two associated bundles are isomorphic as topological fiber bundles.
\end{proof}
Note, using proposition \ref{p-smth-free-loop-spac-frec-mfld}, that the looped group, base, and total space have nice point set topological properties, being metrizable.  Also, it seems plausible that $L(P \cross_G F) \rightarrow LX$ and $LP \cross_{LG} LF \rightarrow LX$ are isomorphic as \Frechet fiber bundles via the natural map $\overline{\psi}$, which can also be defined without reference to $\psi$.  However, that kind of isomorphism isn't needed here.

The following is a convenient view of the fiber of a looped smooth vector bundle.
\begin{lem}\label{l-fibr-loop-vb}
\index{fiber!loop!vector bundle}
\index{loop!vector bundle!fiber}
(The Fiber of a Looped Vector Bundle).
Given a smooth vector bundle $\pi \colon V \rightarrow X$ and its loop $L\pi \colon LV \rightarrow LX$, and $\gamma \in LX$, we may identify $LY_{\gamma} = (L\pi)^{-1} (\gamma) = \Gamma(\gamma^{*}V) = \Gamma_{\gamma} V$, the smooth sections of $\pi \colon V \rightarrow X$ along $\gamma$.
\end{lem}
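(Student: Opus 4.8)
The plan is to read off the identification directly from the constructive definition of pullback fixed in assumption \ref{a-bndl}, namely $\gamma^{*}V = \{ (t, v) \in S^1 \cross V \st \gamma(t) = \pi(v) \}$, which is itself a genuine smooth vector bundle over $S^1$ (finite rank, compact base), carrying the subspace smooth structure from $S^1 \cross V$.

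First I would define the map in one direction. An element of $(L\pi)^{-1}(\gamma)$ is by definition a smooth loop $\sigma \colon S^1 \rightarrow V$ with $L\pi(\sigma) = \pi \circ \sigma = \gamma$. Send such a $\sigma$ to $\beta_{\sigma} \colon S^1 \rightarrow \gamma^{*}V$, $t \mapsto (t, \sigma(t))$. This lands in $\gamma^{*}V$ since $\pi(\sigma(t)) = \gamma(t)$; it satisfies $\pi_1 \circ \beta_{\sigma} = \ident_{S^1}$, so it is a section; and it is smooth because $t \mapsto (t,\sigma(t))$ is smooth into $S^1 \cross V$ with values in the embedded submanifold $\gamma^{*}V$. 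Hence $\beta_{\sigma} \in \Gamma(\gamma^{*}V)$. Conversely, given $\beta \in \Gamma(\gamma^{*}V)$, set $\sigma = \pi_2 \circ \beta \colon S^1 \rightarrow V$; this is smooth as a composite of smooth maps, and $\pi(\sigma(t)) = \gamma(t)$ because $(t,\sigma(t)) = \beta(t) \in \gamma^{*}V$, so $\sigma \in (L\pi)^{-1}(\gamma)$. The assignments $\sigma \mapsto \beta_{\sigma}$ and $\beta \mapsto \pi_2 \circ \beta$ are mutually inverse, giving a bijection; in the notation of notation \ref{n-smth-free-loop-spac-frec-mfld}, the first is precisely $\theta = L\pi_2$ restricted appropriately and the second is its inverse on the range of $\theta$. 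Both maps are linear for the pointwise vector space structures, so this is a linear isomorphism, and it is a homeomorphism by the same argument used there to build $\theta^{0}$: since $\gamma^{*}V \hookrightarrow S^1 \cross V$ is an embedding, lemma \ref{l-co-top} gives a homeomorphism onto its image $\Gamma(\gamma^{*}V) \rightarrow C(S^1, S^1 \cross V) \cong C(S^1,S^1) \cross C(S^1, V)$, and composing with $\pi_2$ restricts to a homeomorphism onto $(L\pi)^{-1}(\gamma)$.

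Finally I would note that $\Gamma_{\gamma}V$ is merely shorthand for $\Gamma(\gamma^{*}V)$, so nothing further is needed there, and that lemma \ref{l-sect-smoo-vb-form-frec-spac} additionally exhibits this common space as a \Frechet space. The only point requiring real care — the ``main obstacle'', such as it is — is the smoothness matching: that a loop in $V$ lying over $\gamma$ carries exactly the same data as a smooth section of $\gamma^{*}V$, which rests on $\gamma^{*}V$ being a bona fide smooth vector bundle with the subspace structure so that ``smooth into $\gamma^{*}V$'' and ``smooth into $S^1 \cross V$ with values in $\gamma^{*}V$'' coincide. Once that is granted, everything else is formal.
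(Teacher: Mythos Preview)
Your proof is correct and follows essentially the same approach as the paper: the paper's proof is the one-liner $\widetilde{\gamma} \in (L\pi)^{-1}(\gamma) \Leftrightarrow \pi \circ \widetilde{\gamma} = \gamma \Leftrightarrow (\ident_{S^1}, \widetilde{\gamma}) \in \Gamma(\gamma^{*}V)$, citing Poor for details, which is exactly your bijection $\sigma \leftrightarrow \beta_\sigma$ spelled out. You have simply unpacked the smoothness, linearity, and topological aspects that the paper leaves to the reference.
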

\begin{proof}
$\widetilde{\gamma} \in (L\pi)^{-1} (\gamma) \Leftrightarrow \pi \circ \widetilde{\gamma} = \gamma \Leftrightarrow (\ident_{S^1}, \widetilde{\gamma}) \in \Gamma(\gamma^{*}V) = C^{\infty}(S^1, \gamma^{*}V) \Leftrightarrow \widetilde{\gamma} \in \Gamma_{\gamma} V$.  For more detail, see \citet[pages~36--37]{Poor07}.
\end{proof}

\begin{note}\label{n-loop-spac-tang-spac-bndl}
\index{loop space!tangent bundle}
(The Loop Space's Tangent Bundle).
For $V = E = TM$, we identify $TLM$ with $LTM$:  for $\gamma \in LM$, identify $T_{\gamma}LM$ with the space of smooth vector fields along $\gamma$, $\Gamma(\gamma^{*}TM) = (LTM)_{\gamma}$.  Imagine varying $\gamma$ by varying its value at each point of its domain, in any direction in $TM$.  See \citet[page~89]{Hami82}.
\end{note}

\begin{defn}\label{d-fibr-innr-prod-loop-vb}
\index{loop!fiberwise inner product}
\index{fiberwise inner product!loop}
\index{LEgamma@$LE_{\gamma}$}
(The Loop of a Fiberwise Inner Product for a Vector Bundle).
Given our smooth real vector bundle $\pi_E \colon E \rightarrow M$, and fiberwise inner product $(,)$ on $E$, for $\gamma \in LM$, denote the pullback fiberwise inner product on $\gamma^{*}E$ by $\gamma^{*}(,)$. For every $t \in S^1$, $\gamma^{*}(,)_t \colon ((\gamma^{*}E) \tensor (\gamma^{*}E))_t  = (\gamma^{*}E)_t \tensor (\gamma^{*}E)_t \rightarrow \RR$; for $v_1, v_2 \in E_{\gamma(t)}$, $\gamma^{*}((t, v_1), (t, v_2))_t = (v_1, v_2)$.  We may consider that $\gamma^{*}(,) \colon \Gamma(\gamma^{*}E) \tensor \Gamma(\gamma^{*}E) = LE_{\gamma} \tensor LE_{\gamma} \rightarrow L\RR$.  Using $\int_{S^1} \colon L\RR \rightarrow \RR$, define $(,)_{LE_{\gamma}} \colon LE_{\gamma} \cross LE_{\gamma} \rightarrow \RR$ as $(\int_{S^1}) \circ \gamma^{*}(,)$.  I.e., for $\sigma, \tau \in LE_{\gamma}$, let $(\sigma, \tau)_{LE_{\gamma}} = \int_0^1 (\sigma (t), \tau (t))_{\gamma} dt$.

$\overline{LE_{\gamma}}$ will denote the real Hilbert space completion for this inner product.
\end{defn}
The looped fiberwise inner product can be defined globally in a way that allows talking about its continuity or smoothness, using ideas from \citet[pages~58--63,~173--175]{Lang99}.  Presumably it is continuous and even smooth, but we don't prove this, as it is only used for the more geometric viewpoint that is not used for proofs, of constructions that are actually made using associated bundles.

\begin{defn}\label{d-herm-ext-fibr-innr-prod}
\index{fiberwise inner product!Hermitian extension}
\index{complexification!vector bundle}
\index{vector bundle!complexification}
\index{bundle!vector!complexification}
(The Hermitian Extension of a Fiberwise Inner Product).
Consistent with definition \ref{d-v-h} and assumption \ref{a-herm-innr-prod}, given a fiberwise inner product $(,)$ on a smooth real vector bundle (e.g. a Riemannian metric on a tangent bundle), denote by $\langle , \rangle$ the Hermitian fiberwise inner product on the complexification of the vector bundle, defined by $\langle \alpha x, \beta y \rangle = \alpha \overline{\beta} (x, y)$, for all $x$, $y$ in the same fiber of the total space of the bundle and all $\alpha$, $\beta$ in $\CC$.
\end{defn}

\begin{eg}\label{e-loop-soe}
\index{loop!SO(E)@$\SO(E)$}
\index{SO(E)@$\SO(E)$!loop}
\index{LSO(E)@$L\SO(E)$}
(The Loop of $\SO(E)$).
The group $\SO(n)$ is connected, so $\pi_{L\SO(E)}$ is surjective.  The right-hand isomorphism below the diagrams respects fiberwise inner products, is proved as in special case $E = TM$, $TLM = LTM$ in \citet[pages~828--829]{SW07}, or for topological bundles by looping the left-hand isomorphism.  See section \ref{s-tech-over} of chapter \ref{c-intr} for an introduction to our use of associated bundles.
\[
\begindc{\commdiag}[5]
\obj(10,20)[objSOn]{$\SO(n)$}
\obj(23,20)[objSOE]{$\SO(E)$}
\obj(23,10)[objM]{$M$}
\mor{objSOn}{objSOE}{}
\mor{objSOE}{objM}{$\pi_{\SO(E)}$}

\obj(40,20)[objLSOn]{$L\SO(n)$}
\obj(55,20)[objLSOE]{$L\SO(E)$}
\obj(55,10)[objLM]{$LM$}
\mor{objLSOn}{objLSOE}{}
\mor{objLSOE}{objLM}{$\pi_{L\SO(E)}$}
\enddc
\]
\begin{align}
E \cong \SO(E) \cross_{\SO(n)} \RR^n && LE \cong L\SO(E) \cross_{L\SO(n)} L\RR^n \notag
\end{align}
\end{eg}

\chapter{CLIFFORD ALGEBRAS AND FOCK REPRESENTATIONS}\label{c-clif-alg-fock-rep}

This long background chapter discusses Clifford algebras and some of their representations, which will play a vital role hereafter.  Mixed in are occasional facts from other subjects used in the context of the moment.

The elements of the Fock spaces used to represent Clifford algebras are generalizations of the spinors that Paul Dirac used in about 1927 in the Dirac equation, initially to formulate the relativistic quantum mechanics of a free electron including its spin.  This equation led to the prediction of the positron, and to quantum field theories, in which spinors are used.

In Dirac's theory, the Clifford algebras that act on the Fock spaces, and the Fock spaces themselves, are defined by the pseudo-Riemannian metric of spacetime.  In this thesis, spacetime is replaced by the loop space of a smooth manifold, and the Clifford algebras come from the loop of an even rank smooth vector bundle with fiberwise inner product over the manifold.  We won't be able to define a Fock representation over the whole looped manifold, but will construct an irreducible representation related to Fock spaces.  The algebraic basis of Clifford algebras and Fock spaces that we use, is for a flat infinite-dimensional spacetime.  Since our Clifford algebras and the Fock spaces are over $\CC$, we can start with the standard metric on $\RR^n$ before looping, not worrying with the signature of the metric, as we would with in the Minkowski metric.

We will look at Clifford algebras first purely algebraically, then with a norm, as $C^{*}$-algebras.  We will not use Clifford algebras built on finite-dimensional real inner product spaces, since looping introduces infinite-dimensionality.  Nor will we much need the flexibility for our real inner product space to be incomplete; besides which \citet[page~28]{PR94} give a proposition that the completed $C^{*}$ Clifford algebra built from an incomplete vector space and that built from its completion are naturally isomorphic.  \citet{PR94} do not require separability of the inner product space the Clifford algebra is built from, but our actual use is of a separable real Hilbert space.

Many definitions and results from \citet{PR94} are included, often with changed notation, and with some additions such as proving continuity.  Another general reference for this and other chapters that should be mentioned, because I spent a lot of time trying to understand more precisely some of the host of interesting things in it, though very little is taken directly without proof, is \citet{ PS86}.

\begin{ass}\label{a-alg}
\index{algebras}
\index{assumptions!algebras}
(Algebras).
All algebras in this document are unital, associative, and complex.  The unit will be denoted $1$.
\end{ass}

\begin{ass}\label{a-v-real-hilb-spac}
\index{V@$V$!real Hilbert space}
\index{assumptions!V@$V$ real Hilbert space}
\index{Hilbert space!separable}
\index{assumptions!Hilbert space separable}
($V$ is a Real Hilbert Space).
Henceforth, unless otherwise specified, the real vector space named $V$ with inner product $(,)$ upon which the Clifford algebra $\Cl(V)$ and the Fock spaces on which it acts are built, is a separable infinite-dimensional real Hilbert space.
\end{ass}
Much of our background exposition would work for even-dimensional spaces; but our applications to loop spaces will all be infinite-dimensional.

\section{Clifford Algebras}\label{s-clif-alg}

The Clifford algebra of a real vector space $V$ with inner product $(,)$ allows multiplication of the elements of the vector space, subject to the Clifford relations $v^2 = (v,v) = \norm{v}^2$.  Some authors use $v^2 = - \norm{v}^2$, but with a complex algebra this is not an essential difference.

\begin{defn}\label{d-clif-alg}
\index{Clifford algebra!construction}
(The Clifford Algebra Construction).
\citep[pages~2--4]{PR94} Define the Clifford algebra $\cl(V)$ associated to $V$ as follows:
\begin{align}
T(V) &= \dirsum_{r=0}^{\infty} \tensor_{\CC}^r (\CC \tensor_{\RR} V) \text{, with } 1 \in \CC = \tensor^0 (\CC \tensor V) \text{ as multiplicative identity}, \notag \\
I(V) &= (\{v \tensor v - (v,v)1 \st v \in V \subset \CC \tensor V\}) \text{, a two-sided ideal, and}\notag \\
\cl(V) &= T(V) / I(V). \notag
\end{align}
We may write products in $\cl(V)$ by juxtaposition, as usual for an algebra, omitting the tensor symbols, so that a generic monomial element, using a canonical isomorphism of tensor products, would be $z v_1 \cdots v_k$, with $z \in \CC$ and $v_1 \dots v_k \in V$.
\end{defn}

\begin{lem}\label{l-clif-alg-unio-fini-dim}
\index{Clifford algebra!union finitely generated algebras}
(A Clifford Algebra is a Union of Finitely Generated Algebras).
The Clifford algebra of $V$,
\[
\cl(V) = \bigcup \{\cl(W) \st W \text{ is a finite-dimensional subspace of } V \}. \notag
\]
\end{lem}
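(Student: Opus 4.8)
The plan is to present $\cl(V)$ as the directed union of the images of the canonical maps $j_W \colon \cl(W) \to \cl(V)$ attached to the finite-dimensional subspaces $W \subseteq V$, and to check that each $j_W$ is injective so that these images are genuine copies of $\cl(W)$ inside $\cl(V)$.

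First I would set up the maps. For a finite-dimensional subspace $W \subseteq V$ the inclusion $W \hookrightarrow V$ is an isometry for the restricted inner product, so it induces the evident map $T(W) \to T(V)$ of tensor algebras, which carries the defining ideal $I(W)$ into $I(V)$; hence it descends to an algebra homomorphism $j_W \colon \cl(W) \to \cl(V)$. (This is simply the functoriality of the construction in definition \ref{d-clif-alg}; see \citet{PR94}.) I would then check that $j_W$ is injective. The map $T(W) \to T(V)$ is injective, since over a field the tensor powers of the injection $\CC \tensor_{\RR} W \to \CC \tensor_{\RR} V$ are injective and a direct sum of injections is injective; so it suffices to know $T(W) \cap I(V) = I(W)$ inside $T(V)$. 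The inclusion $\supseteq$ is clear because $I(W)$ is generated by the elements $v \tensor v - (v,v)1$ with $v \in W \subseteq V$; for $\subseteq$ one uses the basis of ordered monomials in an orthonormal basis of $W$, which shows simultaneously that $\cl(W)$ has dimension $2^{\dim W}$ and that these monomials remain linearly independent after enlarging $W$. With all $j_W$ injective we identify $\cl(W)$ with $j_W(\cl(W))$, a subalgebra of $\cl(V)$, and since for finite-dimensional $W_1, W_2$ both $\cl(W_1)$ and $\cl(W_2)$ lie in $\cl(W_1 + W_2)$, the family $\{ \cl(W) \}$ is directed by inclusion.

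That the union exhausts $\cl(V)$ is then immediate from the description in definition \ref{d-clif-alg}. Any $a \in \cl(V) = T(V) / I(V)$ is the class of some $t \in T(V) = \dirsum_{r=0}^{\infty} \tensor_{\CC}^r (\CC \tensor_{\RR} V)$, and such a $t$ is a finite combination of decomposable tensors, so only finitely many vectors $v_1, \dots, v_N \in V$ occur in it. Putting $W = \operatorname{span}_{\RR} \{ v_1, \dots, v_N \}$, a finite-dimensional subspace, the representative $t$ already lies in $T(W) \subseteq T(V)$, whence $a$ lies in the image of $\cl(W) \to \cl(V)$, i.e. $a \in \cl(W)$. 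Thus $\cl(V) \subseteq \bigcup_W \cl(W)$; the reverse inclusion holds because each $\cl(W) \subseteq \cl(V)$. Equivalently, $\cl(V) = \varinjlim_W \cl(W)$.

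The only step with any friction is the injectivity of $j_W$, that is, making sense of the phrase ``$\cl(W)$ is a subspace of $\cl(V)$''; it amounts to the standard fact that an isometric embedding of real inner product spaces induces an injection of Clifford algebras, which for a finite-dimensional source is precisely the linear independence in $\cl(V)$ of the ordered monomials in an orthonormal basis of $W$. If one simply cites that fact from \citet{PR94}, the remainder is the short bookkeeping above.
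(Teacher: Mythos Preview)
Your proof is correct. The paper does not actually prove this lemma; it simply cites it as theorem 1.1.13 of \citet[page~17]{PR94}, so you have supplied more detail than the paper itself.
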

This is theorem 1.1.13 of \citet[page~17]{PR94}.

\section{$C^{*}$ Clifford Algebras}\label{s-c-star-clif-alg}

In general, $C^{*}$-algebras can be thought of as abstractions of closed subalgebras of the algebras of bounded operators on Hilbert spaces, with operator norm.  We will get there in several steps.

\begin{defn}\label{d-star-alg}
\index{*-algebra@${}^{*}$-algebra}
(${}^{*}$-Algebras).
A ${}^{*}$-algebra $A$ is an algebra with an involution ${}^{*}$, a complex-antilinear anti-automorphism; i.e., for $a, b \in A$ and $\alpha \in \CC$:
\begin{align}
(a^{*})^{*}    &= a \notag \\
(a + b)^{*}    &= a^{*} + b^{*} \notag \\
(\alpha a)^{*} &= \overline{\alpha} a^{*} \notag \\
(a b)^{*}      &= b^{*} a^{*}. \notag
\end{align}
\end{defn}

\begin{defn}\label{d-star-alg-z2-grad}
\index{*-algebra@${}^{*}$-algebra!Z2 graded@$\ZZ_2$ graded}
\index{Z2 graded@$\ZZ_2$ graded!*-algebra@${}^{*}$-algebra}
($\ZZ_2$ Graded ${}^{*}$-Algebras).
A $\ZZ_2$ graded ${}^{*}$-algebra $A$ is a ${}^{*}$-algebra that is a direct sum $A = A_0 \dirsum A_1$, with $\partial \colon A_0 \cup A_1 \rightarrow \ZZ_2$ defined to have value $0$ on $A_0$, the set of even elements, and $1$ on $A_1$, the set of odd elements, satisfying for $a, b \in A_0 \cup A_1$:
\begin{align}
\partial a^*   &= \partial a \notag \\
\partial a b   &= \partial a + \partial b. \notag
\end{align}
\end{defn}

\begin{defn}\label{d-clif-alg-invo}
\index{Clifford algebra!involution}
(The Clifford Algebra Involution).
\citep[pages~5--7]{PR94} Define the involution ${}^{*}$ of $\cl(V)$, making it a ${}^{*}$-algebra, as the complex-antilinear antiautomorphism such that for any monomial $a = z v_1 \cdots v_k$, $z \in \CC$, $v_1 \dots v_k \in V$, $a^{*}= \overline{z} v_k \cdots v_1$, the complex conjugate of $a$ with the order of factors reversed.
\end{defn}
\begin{note}\label{d-clif-alg-z2-grad}
\index{Clifford algebra!Z2 graded@$\ZZ_2$ graded}
\index{Z2 graded@$\ZZ_2$ graded!Clifford algebra}
($\ZZ_2$ Graded Clifford Algebras).
The Clifford algebra $\cl(V)$ is $\ZZ_2$ graded, with $\cl(V)_0$ the scalars and products of even numbers of vectors, and $\cl(V)_1$ the products of odd numbers of vectors.
\end{note}

\begin{defn}\label{d-hilb-spac-z2-grad}
\index{Hilbert space!Z2 graded@$\ZZ_2$ graded}
\index{Z2 graded@$\ZZ_2$ graded!Hilbert space}
($\ZZ_2$ Graded Hilbert Spaces).
A $\ZZ_2$ graded Hilbert space is an orthogonal direct sum $H = H_0 \dirsum H_1$, with $\partial \colon H_0 \cup H_1 \rightarrow \ZZ_2$ defined to have value $0$ on $H_0$, the set of even elements, and $1$ on $H_1$, the set of odd elements.
\end{defn}
\index{parity!Z2 grading@$\ZZ_2$ grading!Hilbert space}
Given an orthogonal direct sum $H = H' \dirsum H''$ to be used as a $\ZZ_2$ grading of $H$, we speak of the parity of the grading in connection with the choice of whether $H'$ or $H''$ is defined as $H_0$.

\begin{eg}\label{e-star-alg-boun-op-hilb-spac}
\index{*-algebra@${}^{*}$-algebra!B(H)@$\B(H)$}
(The ${}^{*}$-Algebra of Bounded Operators on Hilbert Space).
$\B(H)$, the bounded operators on a Hilbert space, form a ${}^{*}$-algebra with the Hilbert space adjoint as the involution.

The set of bounded operators $\B(H)$ is a $\ZZ_2$ graded ${}^{*}$-algebra with $\B(H)_0$ the set of operators that map $H_0 \rightarrow H_0$ and $H_1 \rightarrow H_1$, and $\B(H)_1$ those that map $H_0 \rightarrow H_1$ and $H_1 \rightarrow H_0$.
\end{eg}

\begin{defn}\label{d-star-mor}
\index{*-morphism@${}^{*}$-morphism}
\index{*-algebra@${}^{*}$-algebra!morphism}
(${}^{*}$-Morphisms).
A ${}^{*}$-morphism is an algebra morphism of ${}^{*}$-algebras that preserves the involution.  That is, an algebra morphism $\phi \colon A \rightarrow B$ of ${}^{*}$-algebras is a ${}^{*}$-morphism if for all $a \in A$:
\[
\phi(a^{*}) = (\phi(a))^{*}.
\]
\end{defn}
A ($\ZZ_2$ graded) ${}^{*}$-morphism of $\ZZ_2$ graded ${}^{*}$-algebras preserves the gradings.

\begin{defn}\label{d-star-alg-rep}
\index{*-algebra@${}^{*}$-algebra!representation}
\index{representation!C*-algebra@${}^{*}$-algebra}
\index{*-algebra@${}^{*}$-algebra!module}
\index{module!C*-algebra@${}^{*}$-algebra}
(${}^{*}$-Algebra Representations or Modules).
A representation, also called a ${}^{*}$-representation or a ${}^{*}$-module, of a ${}^{*}$-algebra $A$ on a Hilbert space $H$ is a ${}^{*}$-morphism $\rho \colon A \rightarrow \B(H)$.

A representation is irreducible when there are no nontrivial closed subspaces of its Hilbert space that are invariant under it.
\end{defn}
\begin{note}\label{n-star-alg-rep}
\index{representation!vs. module}
\index{module!vs. representation}
(Representation vs. Module Terminology).
The two terms representation and module are used interchangeably for convenience.  We may use representation when thinking more of the ${}^{*}$-morphism, and module when thinking more of the Hilbert space.  Note that a ${}^{*}$-module is not just a module for the algebra as a ring.  If we needed this concept, we could let, say, $R_A$ denote $A$ considered as a ring, ignoring complex multiplication, the involution, and the norm.  Then, given a representation of $A$ on a Hilbert space $H$, speaking of $H$ as an $R_A$ module would imply ignoring the extra structure on $A$ and the inner product structure on $H$.

A $\ZZ_2$ graded representation $\rho$ of a $\ZZ_2$ graded ${}^{*}$-algebra $A = A_0 \dirsum A_1$ on a $\ZZ_2$ graded Hilbert space $H = H_0 \dirsum H_1$ is a $\ZZ_2$ graded ${}^{*}$-morphism to the $\ZZ_2$ graded ${}^{*}$-algebra $\B(H)$.  Working out the details, for $a \in A_0 \cup A_1$ and $v \in H_0 \cup H_1$,
\[
\partial \rho(a) (v) = \partial a + \partial v. \notag
\]
\end{note}

\begin{defn}\label{d-ban-alg}
\index{Banach algebra}
(Banach Algebras).
\citep[pages~7,~128]{Pede89} A Banach algebra $A$ is an algebra, complete as a normed vector space, whose norm satisfies
\[
a, b \in A \Rightarrow \norm{a b} \le \norm{a} \norm{b}.
\]
\end{defn}

\begin{defn}\label{d-c-star-alg}
\index{C*-algebra@$C^{*}$-algebra}
($C^{*}$-Algebras).
\citep[page~2]{Take02} A $C^{*}$-algebra $A$ is a Banach ${}^{*}$-algebra such that:
\[
a \in A \Rightarrow \norm{a^{*} a} = \norm{a}^2.
\]
\end{defn}
\begin{note}\label{n-c-star-alg}
\index{C*-algebra@$C^{*}$-algebra!norm a star@$\norm{a^{*}}$}
($C^{*}$-Algebras).
If $A$ is a $C^{*}$-algebra, $a \in A \Rightarrow \norm{a^{*}} = \norm{a}$.
\end{note}

\begin{defn}\label{d-c-star-alg-z2-grad}
\index{C*-algebra@$C^{*}$-algebra!Z2 graded@$\ZZ_2$ graded}
\index{Z2 graded@$\ZZ_2$ graded!C*-algebra@$C^{*}$-algebra}
($\ZZ_2$ Graded $C^{*}$-Algebras).
A $\ZZ_2$ graded $C^{*}$-algebra $A$ is one that is graded $A = A_0 \dirsum A_1$ as a ${}^{*}$-algebra, with $A_0$ and $A_1$ closed subspaces.
\end{defn}

\begin{eg}\label{e-c-star-alg-boun-op-hilb-spac}
\index{C*-algebra@$C^{*}$-algebra!B(H)@$\B(H)$}
(The $C^{*}$-Algebra of Bounded Operators on Hilbert Space).
$\B(H)$, the bounded operators on a Hilbert space, form a $C^{*}$-algebra with the Hilbert space adjoint as involution, for in the operator norm, $\B(H)$ is complete, and $\norm{a^{*}} = \norm{a}$.

In the $\ZZ_2$ grading of example \ref{e-star-alg-boun-op-hilb-spac}, $\B(H)_0$ and $\B(H)_1$ are closed subspaces, so $\B(H)$ is $\ZZ_2$ graded as a $C^{*}$-algebra.
\end{eg}

\begin{defn}\label{d-c-star-alg-mor}
\index{C*-algebra@$C^{*}$-algebra!morphism}
% (A title for this definition seemed redundant).
A morphism of $C^{*}$-algebras is a ${}^{*}$-morphism of ${}^{*}$-algebras.
\end{defn}
A $\ZZ_2$ graded morphism of $\ZZ_2$ graded $C^{*}$-algebras is defined like one for $\ZZ_2$ graded ${}^{*}$-algebras, the only difference being the requirement that the even and odd subspaces of the algebras be closed.

\begin{lem}\label{l-c-star-alg-mor-cont}
\index{C*-algebra@$C^{*}$-algebra!morphism!continuous}
($C^{*}$-Algebra Morphisms are Continuous).
\citep[pages 21--22]{Take02} Every morphism $f \colon A \rightarrow B$ between two $C^{*}$-algebras is continuous, and $\norm{f} \le 1$.  If $f$ is injective, it is an isometry; i.e., $a \in A \Rightarrow \norm{f(a)} = \norm{a}$.
\end{lem}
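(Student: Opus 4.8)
The plan is to reduce everything to two standard facts from the elementary theory of $C^{*}$-algebras: the spectral-radius formula for self-adjoint elements, namely that $\norm{x} = r(x)$ whenever $x = x^{*}$ (where $r(x) = \lim_n \norm{x^n}^{1/n}$ is the spectral radius), and the observation that a unital $*$-morphism cannot enlarge spectra. Since all algebras here are unital (assumption \ref{a-alg}) and a morphism of $C^{*}$-algebras is by definition an algebra morphism (definition \ref{d-c-star-alg-mor}), I may assume $f(1) = 1$; if one does not build unit preservation into the meaning of algebra morphism, then $e = f(1)$ is a self-adjoint idempotent, $f$ factors through the unital $C^{*}$-algebra $e B e$, and the argument below applies there instead.

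First I would prove $\norm{f} \le 1$, which gives continuity. For self-adjoint $x$, applying the $C^{*}$-identity of definition \ref{d-c-star-alg} repeatedly yields $\norm{x^{2^k}} = \norm{x}^{2^k}$, hence $r(x) = \norm{x}$. For the spectral containment, if $a - \lambda 1$ is invertible in $A$ then $f(a) - \lambda 1 = f(a - \lambda 1)$ is invertible in $B$ with inverse $f\bigl((a - \lambda 1)^{-1}\bigr)$, so $\mathrm{sp}_B(f(a)) \subseteq \mathrm{sp}_A(a)$ and therefore $r(f(a)) \le r(a)$ for every $a \in A$. Now for arbitrary $a$, the element $a^{*}a$ is self-adjoint and so is $f(a^{*}a) = f(a)^{*}f(a)$, so
\[
\norm{f(a)}^2 = \norm{f(a)^{*}f(a)} = r\bigl(f(a^{*}a)\bigr) \le r(a^{*}a) = \norm{a^{*}a} = \norm{a}^2 ,
\]
giving $\norm{f(a)} \le \norm{a}$.

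For the injectivity statement, suppose $f$ is injective but not isometric, so there is $b \in A$ with $\norm{f(b)} < \norm{b}$; then $a = b^{*}b$ is self-adjoint with $f(a)$ self-adjoint and $r(f(a)) = \norm{f(a)}^2 \cdot \norm{b}^{-2}\,\norm{b}^2 < \norm{a} = r(a)$, so $\mathrm{sp}_B(f(a))$ is a \emph{proper} closed subset of the compact set $\mathrm{sp}_A(a) \subset \RR$. Choose a continuous function $g$ on $\mathrm{sp}_A(a)$ that vanishes on $\mathrm{sp}_B(f(a))$ but is not identically zero (e.g.\ the distance to $\mathrm{sp}_B(f(a))$). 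Working inside the unital commutative $C^{*}$-subalgebras generated by $a$ and by $f(a)$, and using spectral permanence so the spectra computed there agree with those in $A$ and $B$, the restriction of $f$ to $C^{*}(a,1)$ is a $*$-morphism which, by the continuity $\norm{f} \le 1$ already proved, commutes with the continuous functional calculus. Hence $g(a) \ne 0$ (since $g \not\equiv 0$ on $\mathrm{sp}_A(a)$) while $f(g(a)) = g(f(a)) = 0$, contradicting injectivity. So $f$ is isometric on self-adjoint elements, and then $\norm{f(a)}^2 = \norm{f(a)^{*}f(a)} = \norm{f(a^{*}a)} = \norm{a^{*}a} = \norm{a}^2$ for all $a$.

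The substantive content is entirely in the $C^{*}$-theoretic prerequisites — non-emptiness and compactness of the spectrum, the spectral-radius formula, the $C^{*}$-identity passing through the continuous functional calculus, and spectral permanence for unital $C^{*}$-subalgebras; the steps above are just the standard bookkeeping around them. Since these prerequisites are exactly what is developed in \citet[pages 21--22]{Take02}, the cleanest route in the text is simply to cite that. The only mild subtleties to watch are the unit-preservation point noted at the start and the need to feed the first assertion ($\norm{f} \le 1$) into the proof of the second.
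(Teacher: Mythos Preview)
The paper gives no proof of its own here; it simply cites \citet[pages~21--22]{Take02} and moves on. Your argument is precisely the standard one found in that reference: spectral containment under a unital $*$-morphism plus the spectral-radius formula for self-adjoint elements gives $\norm{f}\le 1$, and then the continuous functional calculus together with spectral permanence yields the isometry statement for injective $f$. So your write-up is consistent with, and more explicit than, what the paper does.

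One small glitch: the displayed equality ``$r(f(a)) = \norm{f(a)}^2 \cdot \norm{b}^{-2}\,\norm{b}^2$'' is garbled. What you want is simply $r(f(a)) = \norm{f(a)} = \norm{f(b)}^2 < \norm{b}^2 = \norm{a} = r(a)$, using that $f(a)=f(b)^{*}f(b)$ is self-adjoint. The conclusion $r(f(a)) < r(a)$ is correct and the rest of the argument goes through.
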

Thus any representation of a $C^{*}$-algebra is continuous.  Note: unlike \citet{Take02}, we call $f$ an isomorphism only if it is bijective.

\begin{lem}\label{l-clif-alg-rep}
\index{C*-algebra@$C^{*}$-algebra!Clifford}
\index{C*-algebra@$C^{*}$-algebra!Clifford!norm} \index{C*-algebra@$C^{*}$-algebra!Clifford!representation}
(Representations and Completion of the Clifford Algebra $\cl(V)$).
\citep[pages~22--26]{PR94} Nonzero ${}^{*}$-representations $\pi$ of the ${}^{*}$-algebra $\cl(V)$ exist.  For any such $\pi$, $a \in \cl(V) \Rightarrow \norm{a} = \norm{\pi(a)}$ defines a norm on $\cl(V)$, and all such norms are equal.  The completion of $\cl(V)$ with respect to that norm is a $C^{*}$-algebra.
\end{lem}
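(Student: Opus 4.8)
The plan is to establish the three assertions separately: that a nonzero $*$-representation exists; that for any such $\pi$ the assignment $a \mapsto \norm{\pi(a)}$ is a genuine norm whose value does not depend on $\pi$; and that the completion is a $C^{*}$-algebra. For existence I would exhibit the standard Fock representation. Pick a Lagrangian subspace $L$ of $H = \CC \tensor_{\RR} V$ (one whose orthogonal complement equals its complex conjugate; such an $L$ exists for a separable $V$), form the Hilbert space completion $\F(L)$ of the exterior algebra $\Lambda(L)$, and for $v \in V \subset H$ write $v = \ell + \overline{\ell}$ with $\ell$ the $L$-component of $v$, setting $\pi(v)$ equal to a suitably normalized sum of the creation operator (exterior multiplication by $\ell$) and the annihilation operator (contraction against $\ell$). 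Using the anticommutation relations between creation and annihilation operators one checks $\pi(v)^{2} = (v,v)\,\ident$, so the universal property of $\cl(V)$ yields a unital algebra morphism $\pi \colon \cl(V) \to \B(\F(L))$; since the two families of operators are mutually adjoint, $\pi(v)^{*} = \pi(v)$, so $\pi$ is a $*$-morphism, and $\pi(1) = \ident \neq 0$.

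Now let $\pi$ be any nonzero $*$-representation. Then $\pi(1) = \pi(1)^{2} = \pi(1)^{*}$ is a projection, necessarily nonzero (otherwise $\pi(a) = \pi(1)\pi(a) = 0$ for all $a$), so $\pi$ is a unital $*$-morphism into the $C^{*}$-algebra $\B(P\F)$, where $P = \pi(1)$; in particular $a \mapsto \norm{\pi(a)}$ is submultiplicative and satisfies the $C^{*}$-identity. To see it is a norm, I would show $\pi$ is injective. By Lemma~\ref{l-clif-alg-unio-fini-dim} every $a \in \cl(V)$ lies in $\cl(W)$ for some finite-dimensional $W \subset V$, and since $\dim V = \infty$ we may always enlarge $W$ to even dimension; for even-dimensional $W$ the complex Clifford algebra $\cl(W)$ is isomorphic to a full matrix algebra $M_{2^{\dim W/2}}(\CC)$, hence simple. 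Because $\pi \neq 0$, some $\cl(W_{0})$ with $W_{0}$ even-dimensional is not annihilated, so $\pi|_{\cl(W_{0})}$ is injective; and for any other even-dimensional $W$, choosing an even-dimensional $W' \supseteq W_{0} + W$ makes $\pi|_{\cl(W')}$ nonzero and hence injective, so $\pi|_{\cl(W)}$ is injective. As every element of $\cl(V)$ lies in some such $\cl(W)$, $\pi$ is injective, so $\norm{a} := \norm{\pi(a)}$ is a norm.

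For the equality of the norms, fix the Fock representation $\pi_{0}$ above as a reference. For each even-dimensional $W$, $\pi_{0}|_{\cl(W)}$ is injective, so $\cl(W)$ equipped with $\norm{a} := \norm{\pi_{0}(a)}$ is a finite-dimensional $C^{*}$-algebra; given any nonzero $*$-representation $\pi$, the restriction $\pi|_{\cl(W)}$ is an injective morphism of $C^{*}$-algebras, hence an isometry by Lemma~\ref{l-c-star-alg-mor-cont}, so $\norm{\pi(a)} = \norm{\pi_{0}(a)}$ for all $a \in \cl(W)$. Since every $a \in \cl(V)$ lies in such a $\cl(W)$, the two norms coincide on all of $\cl(V)$.

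Finally, on the completion $\Cl(V)$ of $\cl(V)$ in this norm, addition and scalar multiplication extend trivially, the involution extends because it is isometric ($\norm{a^{*}} = \norm{\pi(a)^{*}} = \norm{\pi(a)} = \norm{a}$), and multiplication extends by uniform continuity because it is submultiplicative; the identity $\norm{a^{*}a} = \norm{a}^{2}$ holds on the dense subalgebra $\cl(V)$ and hence on $\Cl(V)$, so $\Cl(V)$ is a $C^{*}$-algebra. I expect the only real obstacle to be the existence step — producing the Fock representation and checking cleanly that the creation and annihilation operators are bounded and satisfy both $\pi(v)^{2} = (v,v)\,\ident$ and $\pi(v)^{*} = \pi(v)$ — while the injectivity, uniqueness, and completion arguments are essentially formal given Lemmas~\ref{l-clif-alg-unio-fini-dim} and~\ref{l-c-star-alg-mor-cont} and the classical structure of complex Clifford algebras of even-dimensional spaces.
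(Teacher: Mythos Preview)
The paper does not prove this lemma; it simply records the statement with a citation to \cite[pages~22--26]{PR94}, deferring the argument to that reference. Your proof is correct and follows essentially the approach of that source (and of the paper's own later development of Fock representations in Proposition~\ref{p-fock-rep}): exhibit a concrete Fock representation for existence, use Lemma~\ref{l-clif-alg-unio-fini-dim} and the simplicity of the finite-dimensional complex Clifford algebras $\cl(W)$ to obtain injectivity, and invoke Lemma~\ref{l-c-star-alg-mor-cont} on the finite-dimensional $C^{*}$-algebras $\cl(W)$ to get norm-independence.

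Two minor remarks. First, once you have $\pi(1) \neq 0$ you know $\pi|_{\cl(W)}$ is nonzero for \emph{every} even-dimensional $W$ (since $1 \in \cl(W)$), so the detour through $W_{0}$ and $W' \supseteq W_{0} + W$ is unnecessary. Second, be careful to use only the universal property of the \emph{uncompleted} algebra $\cl(V)$ when extending the Clifford map to your Fock representation; Proposition~\ref{p-c-star-clif-alg-univ} is about the completed $\Cl(V)$ and logically comes after the present lemma, so citing it here would be circular.
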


\begin{defn}\label{d-c-star-clif-alg}
\index{CL(V)@$\Cl(V)$}
\index{C*-algebra@$C^{*}$-algebra!Clifford}
\index{C*-algebra@$C^{*}$-algebra!Clifford!norm}
(The $C^{*}$ Clifford Algebra $\Cl(V)$).
\citep[page~24]{PR94} Let $\Cl(V)$ denote the completion of $\cl(V)$ with respect to the norm of lemma \ref{l-clif-alg-rep}.
\end{defn}

\begin{defn}\label{d-clif-map-self-adj}
\index{Clifford!map!self-adjoint}
(Self-Adjoint Clifford Maps).
\citep[pages~2,~7]{PR94} A self-adjoint Clifford map on $V$ is a real-linear map $f \colon V \rightarrow B$ into a ${}^{*}$-algebra $B$ such that $v \in V \Rightarrow f(v)^2 = (v,v) 1 \text{ and } f(v)^{*} = f(v)$.  For $B = \Cl(V)$, we define its Clifford map $\phi (v) = v$, considered as an element of $\Cl(V)$.
\end{defn}

\begin{prop}\label{p-c-star-clif-alg}
\index{CL(V)@$\Cl(V)$}
\index{C*-algebra@$C^{*}$-algebra!Clifford}
\index{C*-algebra@$C^{*}$-algebra!Clifford!map}
\index{C*-algebra@$C^{*}$-algebra!Clifford!simple}
(The $C^{*}$ Clifford Algebra $\Cl(V)$ is Simple; the Clifford Map is an Isometry).
\citep[pages~24--26]{PR94} $\Cl(V)$ is simple (has no nontrivial two-sided ideals).  The self-adjoint Clifford map $\phi \colon V \rightarrow \Cl(V)$ is an isometry.
\end{prop}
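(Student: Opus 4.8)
The plan is to treat the two assertions separately, reducing the simplicity of $\Cl(V)$ to the purely algebraic simplicity of the dense subalgebra $\cl(V)$, and deriving the isometry of $\phi$ directly from the $C^{*}$-identity. First I would establish that $\cl(V)$ is algebraically simple. By lemma \ref{l-clif-alg-unio-fini-dim}, $\cl(V) = \bigcup \{ \cl(W) \st W \subset V \text{ finite-dimensional} \}$, and since $V$ is infinite-dimensional every such $W$ lies in one of even dimension $2k$; for such a $W$ the finite-dimensional complex Clifford algebra $\cl(W)$ is well known to be $*$-isomorphic to the full matrix algebra $M_{2^{k}}(\CC)$, hence simple. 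Given a nonzero two-sided ideal $I \subset \cl(V)$, choose $0 \ne a \in I$, pick an even-dimensional $W$ with $a \in \cl(W)$, and note that $I \cap \cl(W)$ is a nonzero two-sided ideal of the simple algebra $\cl(W)$, so it contains $1$; hence $I = \cl(V)$.

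To transfer this to the completion, let $J \subset \Cl(V)$ be a closed two-sided ideal with $J \ne \Cl(V)$. Invoking the standard $C^{*}$-algebra facts that such an ideal is self-adjoint and that $\Cl(V)/J$ is again a $C^{*}$-algebra, let $q \colon \Cl(V) \rightarrow \Cl(V)/J$ be the quotient $*$-morphism, which is nonzero because $q(1) \ne 0$. Its restriction $q|_{\cl(V)}$ is a nonzero $*$-morphism out of the simple algebra $\cl(V)$, hence injective, so $a \mapsto \norm{q(a)}$ is a norm on $\cl(V)$; it satisfies the $C^{*}$-identity because $q$ is a $*$-morphism into a $C^{*}$-algebra, so by the uniqueness of the $C^{*}$-norm on $\cl(V)$ (lemma \ref{l-clif-alg-rep}) it coincides with the given norm. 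Thus $q|_{\cl(V)}$ is isometric, and by density of $\cl(V)$ in $\Cl(V)$ together with continuity of $q$ (lemma \ref{l-c-star-alg-mor-cont}), $q$ is isometric on all of $\Cl(V)$; hence $J = \ker q = 0$, a contradiction, and $\Cl(V)$ has no nontrivial closed two-sided ideals.

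For the Clifford map $\phi \colon V \rightarrow \Cl(V)$, $\phi(v) = v$: by definitions \ref{d-clif-map-self-adj} and \ref{d-clif-alg-invo} we have $\phi(v)^{*} = \phi(v)$ and $\phi(v)^{2} = (v,v)\,1 = \norm{v}^{2}1$ for $v \in V$. The $C^{*}$-identity then gives $\norm{\phi(v)}^{2} = \norm{\phi(v)^{*}\phi(v)} = \norm{\phi(v)^{2}} = \norm{v}^{2}\norm{1}$, and $\norm{1} = 1$ in the nonzero unital $C^{*}$-algebra $\Cl(V)$, so $\norm{\phi(v)} = \norm{v}$; since $\phi$ is real-linear by construction, it is an isometric embedding of $V$ into $\Cl(V)$.

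I expect the main obstacle to be the passage from algebraic simplicity of $\cl(V)$ to $C^{*}$-simplicity of $\Cl(V)$, which rests on the uniqueness of the $C^{*}$-norm on $\cl(V)$ (lemma \ref{l-clif-alg-rep}) together with standard $C^{*}$-algebra facts about closed ideals and quotients (not established in the text), and on the classification of even finite-dimensional complex Clifford algebras as matrix algebras, which should be cited. The norm computation for $\phi$ is then immediate.
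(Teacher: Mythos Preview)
Your argument is correct. The paper does not give its own proof of this proposition; it simply cites \cite[pages~24--26]{PR94}, so there is nothing to compare against beyond noting that your approach is the standard one found in such references: reduce to the algebraic simplicity of $\cl(V)$ via the matrix-algebra structure of even-dimensional Clifford algebras, then use uniqueness of the $C^{*}$-norm to pass to the completion.

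One small point on your invocation of lemma \ref{l-clif-alg-rep}: as stated, that lemma asserts uniqueness of the norm arising from nonzero $*$-representations on Hilbert spaces, whereas $q|_{\cl(V)}$ lands in the abstract $C^{*}$-algebra $\Cl(V)/J$. To close this, compose $q$ with a faithful representation of $\Cl(V)/J$ (which exists by Gelfand--Naimark) to obtain an honest nonzero $*$-representation of $\cl(V)$; since faithful $*$-morphisms of $C^{*}$-algebras are isometric, the resulting norm agrees with $a \mapsto \norm{q(a)}$, and the lemma then applies. Your isometry computation for $\phi$ via the $C^{*}$-identity is clean and exactly right.
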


\begin{defn}\label{d-c-star-clif-alg-rep}
\index{CL(V)@$\Cl(V)$}
\index{Clifford algebra!representation}
\index{representation!Clifford algebra}
\index{Clifford algebra!module}
\index{module!Clifford algebra}
\index{C*-algebra@$C^{*}$-algebra!Clifford!representation}
($C^{*}$ Clifford Algebra $\Cl(V)$ Representations or Modules). \linebreak
\citep[page~35]{Take02} A representation of $\Cl(V)$, or a $\Cl(V)$ module, is a ${}^{*}$-representation of it as a $C^{*}$-algebra.  It may be called a Clifford algebra representation, a Clifford or Clifford algebra module.
\end{defn}

\begin{prop}\label{p-c-star-clif-alg-univ}
\index{CL(V)@$\Cl(V)$}
\index{C*-algebra@$C^{*}$-algebra!Clifford!universal property}
(The $C^{*}$ Clifford Algebra $\Cl(V)$ Universal Property).
The Clifford algebra $\Cl(V)$ over $V$ and its self-adjoint Clifford map $\phi$ satisfy the universal property that if $f \colon V \rightarrow B$ is any self-adjoint Clifford map to a $C^{*}$-algebra $B$, there is a unique morphism of $C^{*}$-algebras (a unique ${}^{*}$-morphism) $F \colon \Cl(V) \rightarrow B$ such that $F \circ \phi = f$:
\[
\begindc{\commdiag}[5]
\obj(10,25)[objV]{$V$}
\obj(25,25)[objClV]{$\Cl(V)$}
\obj(25,10)[objB]{$B$}
\mor{objV}{objClV}{$\phi$}
\mor{objV}{objB}{$f$}
\mor{objClV}{objB}{$F$}
\enddc
\]
$F$ maps each monomial in $\cl(V) \subset \Cl(V)$ to the monomial in $B$ obtained by replacing each vector in the monomial with its image under f.  The resulting map is extended by continuity to $\Cl(V)$.
\end{prop}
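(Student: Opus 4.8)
The plan is to construct $F$ first on the algebraic Clifford algebra $\cl(V)$, then extend it by continuity to the completion $\Cl(V)$, and finally check uniqueness. For the construction on $\cl(V)$: given the self-adjoint Clifford map $f \colon V \to B$, complexify it to the complex-linear map $\CC \tensor_{\RR} V \to B$, $z \tensor v \mapsto z f(v)$, and apply the standard universal property of the tensor algebra to get a unique unital algebra homomorphism $\widetilde F \colon T(V) \to B$. Since $f$ is a Clifford map, $\widetilde F(v \tensor v - (v,v)1) = f(v)^2 - (v,v)1 = 0$ for all $v \in V$, so the generators of the ideal $I(V)$ of Definition \ref{d-clif-alg} lie in $\ker \widetilde F$; hence $I(V) \subseteq \ker \widetilde F$ and $\widetilde F$ descends to an algebra homomorphism $F_0 \colon \cl(V) \to B$ with $F_0 \circ \phi = f$. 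On a monomial, $F_0(z v_1 \cdots v_k) = z f(v_1)\cdots f(v_k)$, which is the claimed ``replace each vector by its image'' description. To see $F_0$ is a ${}^{*}$-morphism, compare on each monomial $F_0((z v_1 \cdots v_k)^{*}) = \overline z\, f(v_k)\cdots f(v_1)$, using Definition \ref{d-clif-alg-invo}, with $(F_0(z v_1 \cdots v_k))^{*} = \overline z\, f(v_k)^{*}\cdots f(v_1)^{*} = \overline z\, f(v_k)\cdots f(v_1)$, using self-adjointness of $f$; since the monomials span $\cl(V)$ and both $a \mapsto F_0(a^{*})$ and $a \mapsto F_0(a)^{*}$ are complex-antilinear, they agree on all of $\cl(V)$.

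The one substantive point is that $F_0$ is contractive for the $C^{*}$-norm on $\cl(V)$ furnished by Lemma \ref{l-clif-alg-rep}, so that it extends continuously to $\Cl(V)$. Here I would use Lemma \ref{l-clif-alg-unio-fini-dim}: any $a \in \cl(V)$ lies in $\cl(W)$ for some finite-dimensional subspace $W \subseteq V$, and $\cl(W)$, being finite-dimensional, is complete, hence a norm-closed ${}^{*}$-subalgebra of $\Cl(V)$, hence itself a $C^{*}$-algebra (its norm being the intrinsic $C^{*}$-norm). Then $F_0|_{\cl(W)} \colon \cl(W) \to B$ is a morphism of $C^{*}$-algebras, so $\norm{F_0(a)}_B \le \norm{a}$ by Lemma \ref{l-c-star-alg-mor-cont}. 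As this bound holds for every $a \in \cl(V)$, and $\cl(V)$ is dense in $\Cl(V)$ by Definition \ref{d-c-star-clif-alg}, $F_0$ extends uniquely to a continuous linear map $F \colon \Cl(V) \to B$; the algebra-homomorphism identity and the identity $F(a^{*}) = F(a)^{*}$, holding on the dense subalgebra $\cl(V)$, pass to $\Cl(V)$ by continuity of multiplication and of the involution. Thus $F$ is a $C^{*}$-morphism with $F \circ \phi = f$, obtained from $F_0$ by continuous extension as asserted.

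For uniqueness, suppose $G \colon \Cl(V) \to B$ is any $C^{*}$-morphism with $G \circ \phi = f$. Then $G$ and $F$ agree on every vector $\phi(v) = v$, hence on every monomial $z v_1 \cdots v_k$ since both are unital algebra homomorphisms, hence on all of $\cl(V)$ since the monomials span it; and both $G$ and $F$ are continuous by Lemma \ref{l-c-star-alg-mor-cont}, so they agree on the dense subspace $\cl(V)$ and therefore on $\Cl(V)$.

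The main obstacle is the boundedness step — deducing the estimate $\norm{F_0(a)} \le \norm{a}$ on the incomplete algebra $\cl(V)$, which is what allows the extension to $\Cl(V)$. Everything else reduces to the tensor-algebra universal property together with routine density and continuity arguments. The finite-dimensional-subalgebra reduction via Lemma \ref{l-clif-alg-unio-fini-dim} is precisely what lets us invoke the $C^{*}$-algebra contractivity fact (Lemma \ref{l-c-star-alg-mor-cont}) even though $\cl(V)$ is not itself complete.
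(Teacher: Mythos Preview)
Your proof is correct and follows exactly the construction outlined in the proposition statement itself; the paper's own proof is simply a citation to \citet[pages~2--7,~26]{PR94}, which carries out the same tensor-algebra-then-complete argument you have supplied. Your reduction to finite-dimensional $\cl(W)$ via Lemma \ref{l-clif-alg-unio-fini-dim} to obtain contractivity is precisely the right device for the boundedness step.
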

\begin{proof}
See \citep[pages~2--7,~26]{PR94}.
\end{proof}

\begin{prop}\label{p-c-star-clif-alg-func}
\index{CL(V)@$\Cl(V)$}
\index{Clifford algebra!functor}
\index{C*-algebra@$C^{*}$-algebra!Clifford!functor}
(The $C^{*}$ Clifford Algebra Functor).
We may define a functor $\Cl$ from the category $\mathcal{V}_{\mathcal{I}}$ whose objects are separable infinite-dimensional real Hilbert spaces whose morphisms are isometric real-linear maps, to the category $\mathcal{C^{*}}$ of $C^{*}$-algebras and $C^{*}$-algebra morphisms (which are isometric), given by $V \mapsto \Cl(V)$ and $g \mapsto \Cl(g) = \theta_g$.  If $g$ is an isomorphism, so is $\theta_g$.  As with proposition \ref{p-c-star-clif-alg-univ}, $\theta_g$ replaces each vector in a monomial with its image under $g$.
\end{prop}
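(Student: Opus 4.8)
The plan is to obtain the morphism part of the functor directly from the universal property of Proposition \ref{p-c-star-clif-alg-univ}, and then to deduce functoriality from the uniqueness clause in that property. I should note at the outset that $\mathcal{V}_{\mathcal{I}}$ really is a category: identity maps are isometric and real-linear, and a composite of isometric real-linear maps between separable infinite-dimensional real Hilbert spaces is again isometric and real-linear.

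First I would fix an isometric real-linear map $g \colon V \to W$ between objects of $\mathcal{V}_{\mathcal{I}}$ and consider the composite $\phi_W \circ g \colon V \to \Cl(W)$, where $\phi_W$ is the self-adjoint Clifford map of $W$. The claim is that $\phi_W \circ g$ is itself a self-adjoint Clifford map in the sense of Definition \ref{d-clif-map-self-adj}: it is real-linear as a composite of real-linear maps; for $v \in V$ we have $(\phi_W(g(v)))^2 = (g(v), g(v)) 1 = (v,v) 1$ because $g$ is isometric and $\phi_W$ is a Clifford map; and $(\phi_W(g(v)))^{*} = \phi_W(g(v))$ since $\phi_W$ is self-adjoint. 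Hence Proposition \ref{p-c-star-clif-alg-univ} supplies a unique $C^{*}$-algebra morphism $\theta_g = \Cl(g) \colon \Cl(V) \to \Cl(W)$ with $\theta_g \circ \phi_V = \phi_W \circ g$, continuous by Lemma \ref{l-c-star-alg-mor-cont}; and the same proposition gives the monomial description, namely that $\theta_g$ sends a monomial $z\, v_1 \cdots v_k \in \cl(V) \subset \Cl(V)$ to $z\, g(v_1) \cdots g(v_k) \in \Cl(W)$, the map being extended by continuity to all of $\Cl(V)$.

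Next I would verify the two functor axioms using uniqueness. For the identity: $\ident_{\Cl(V)}$ is a $C^{*}$-morphism satisfying $\ident_{\Cl(V)} \circ \phi_V = \phi_V = \phi_V \circ \ident_V$, so by the uniqueness clause in Proposition \ref{p-c-star-clif-alg-univ} it must equal $\theta_{\ident_V}$. For composition: given isometric real-linear maps $g \colon V \to W$ and $h \colon W \to U$, the composite $h \circ g$ is again a morphism of $\mathcal{V}_{\mathcal{I}}$, and $\theta_h \circ \theta_g$ is a $C^{*}$-morphism with $(\theta_h \circ \theta_g) \circ \phi_V = \theta_h \circ (\phi_W \circ g) = (\theta_h \circ \phi_W) \circ g = (\phi_U \circ h) \circ g = \phi_U \circ (h \circ g)$, so again by uniqueness $\theta_h \circ \theta_g = \theta_{h \circ g}$.

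Finally, if $g \colon V \to W$ is an isomorphism in $\mathcal{V}_{\mathcal{I}}$, i.e. a bijective isometric real-linear map, then $g^{-1}$ is also isometric and real-linear, so $\theta_g \theta_{g^{-1}} = \theta_{g g^{-1}} = \theta_{\ident_W} = \ident_{\Cl(W)}$ and likewise $\theta_{g^{-1}} \theta_g = \ident_{\Cl(V)}$, whence $\theta_g$ is an isomorphism of $C^{*}$-algebras with inverse $\theta_{g^{-1}}$. I do not expect a serious obstacle: the substance is carried entirely by Proposition \ref{p-c-star-clif-alg-univ}, and the only points requiring care are checking that $\phi_W \circ g$ satisfies the Clifford and self-adjointness identities — which is exactly where isometry of $g$ enters — and confirming that one may legitimately invoke the uniqueness clause in the category of $C^{*}$-algebra morphisms, which is fine because the morphism produced by the universal property is already a $C^{*}$-morphism.
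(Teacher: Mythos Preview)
Your proposal is correct and follows essentially the same approach as the paper: both obtain $\theta_g$ from the universal property of Proposition~\ref{p-c-star-clif-alg-univ} and derive functoriality and the isomorphism statement from its uniqueness clause. The paper's own proof is in fact terser than yours, largely deferring to \cite{PR94}, so your version simply makes explicit the verifications the paper leaves to the reference.
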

\begin{proof}
The basis is proposition \ref{p-c-star-clif-alg-univ}.  See \citep[pages~26--27]{PR94}.  That the functor maps composition of morphisms of $\mathcal{V}_{\mathcal{I}}$ to composition of morphisms of $\mathcal{A}$, and the fact about isomorphisms, follow from uniqueness in their theorem 1.2.5.
\end{proof}

\begin{defn}\label{d-c-star-bogo-auto}
\index{thetag@$\theta_g$}
\index{Bogoliubov automorphism}
\index{Clifford algebra!Bogoliubov automorphism}
\index{Bogoliubov automorphism!C*-algebra@$C^{*}$-algebra}
\index{C*-algebra@$C^{*}$-algebra!Clifford!Bogoliubov automorphism}
\index{Bogoliubov map}
\index{O(V)@$\Orth(V)$}
($C^{*}$ Bogoliubov Automorphisms and the Bogoliubov Map).
\citep[page~27]{PR94} Denote by $\Orth(V)$ the group of real-linear isometric automorphisms of $V$, and by $\Aut(\Cl(V))$ the group of $C^{*}$-algebra automorphisms of the Clifford algebra of $V$.  Then for $g \in \Orth(V)$, the map $\theta_g \in \Aut(\Cl(V))$ from proposition \ref{p-c-star-clif-alg-func} is called the Bogoliubov automorphism corresponding to $g$.  The map
\begin{align}
\theta \colon \Orth(V) &\rightarrow \Aut(\Cl(V)) \notag \\
g \mapsto \theta(g) &= \theta_g, \notag
\end{align}
is a group homomorphism we will call here the Bogoliubov map.  If $\Orth(V)$ is treated as a topological group, it has the operator norm topology unless otherwise stated.
\end{defn}

\begin{lem}\label{l-z2-grad-clif-c-star-alg}
\index{CL(V)@$\Cl(V)$}
\index{C*-algebra@$C^{*}$-algebra!Clifford!Z2 graded@$\ZZ_2$ graded}
\index{Clifford C*-algebra@Clifford $C^{*}$-algebra!Z2 graded@$\ZZ_2$ graded}
\index{Z2 graded@$\ZZ_2$ graded!Clifford C*-algebra@Clifford $C^{*}$-algebra}
(Clifford $C^{*}$-Algebras are $\ZZ_2$ Graded).
\citep[page~27]{PR94} For $V$ a real vector space with inner product, the Clifford $C^{*}$-algebra $\Cl(V)$ is $\ZZ_2$ graded by the Bogoliubov automorphism $\theta_{-1}$ corresponding to $- \ident \in \Orth(V)$; that is, $\Cl(V) = \Cl(V)_0 \dirsum \Cl(V)_1$, where $\Cl(V)_0$ is the $+1$ eigenspace and $\Cl(V)_1$ the $-1$ eigenspace of the operator.  The scalars $\CC$ and products of even numbers of vectors are in $\Cl(V)_0$, the vectors and products of odd numbers of vectors are in $\Cl(V)_1$, and those spaces are the closures in $\Cl(V)$ of the spans of those elements.
\end{lem}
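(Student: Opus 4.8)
The plan is to realize $\theta_{-1}$ as an involutive automorphism of the $C^{*}$-algebra $\Cl(V)$, to take its eigenspace decomposition, and then to check that this decomposition satisfies every clause of definitions \ref{d-star-alg-z2-grad} and \ref{d-c-star-alg-z2-grad}. First I would observe that $-\ident$ is a real-linear isometric automorphism of $V$, i.e.\ $-\ident \in \Orth(V)$, so $\theta_{-1} = \Cl(-\ident)$ is a well-defined $C^{*}$-algebra automorphism by proposition \ref{p-c-star-clif-alg-func} and definition \ref{d-c-star-bogo-auto}. Since $(-\ident)\circ(-\ident) = \ident_V$ and $\Cl$ is a functor, $\theta_{-1}\circ\theta_{-1} = \Cl(\ident_V) = \ident_{\Cl(V)}$, so $\theta_{-1}$ is an involution. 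Working over $\CC$, where $2$ is invertible, the maps $P_0 = \tfrac12(\ident + \theta_{-1})$ and $P_1 = \tfrac12(\ident - \theta_{-1})$ are complementary idempotents with $P_0 + P_1 = \ident$ and $P_0 P_1 = P_1 P_0 = 0$, whose images $\Cl(V)_i := P_i(\Cl(V))$ are respectively the $+1$ and $-1$ eigenspaces of $\theta_{-1}$ and give the internal direct sum $\Cl(V) = \Cl(V)_0 \dirsum \Cl(V)_1$.

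Next I would verify the algebraic grading axioms. Because $\theta_{-1}$ is an algebra homomorphism, for $a \in \Cl(V)_i$ and $b \in \Cl(V)_j$ one has $\theta_{-1}(ab) = \theta_{-1}(a)\,\theta_{-1}(b) = (-1)^{i+j} ab$, so $ab \in \Cl(V)_{i+j \bmod 2}$, giving $\partial(ab) = \partial a + \partial b$. Because a $C^{*}$-algebra morphism is a ${}^{*}$-morphism (definitions \ref{d-c-star-alg-mor} and \ref{d-star-mor}), $\theta_{-1}(a^{*}) = (\theta_{-1}(a))^{*} = ((-1)^i a)^{*} = (-1)^i a^{*}$ for $a \in \Cl(V)_i$, since the scalar $(-1)^i$ is real; hence $a^{*} \in \Cl(V)_i$ and $\partial(a^{*}) = \partial a$. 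For the extra requirement in definition \ref{d-c-star-alg-z2-grad} that $\Cl(V)_0$ and $\Cl(V)_1$ be closed, note that $\theta_{-1}$ is continuous by lemma \ref{l-c-star-alg-mor-cont}, so $P_0$ and $P_1$ are continuous and $\Cl(V)_i = \ker P_{1-i}$ is closed.

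Finally I would identify the two summands with the closures named in the statement. On a monomial $a = z v_1 \cdots v_k$ of $\cl(V)$ (definition \ref{d-clif-alg}), the automorphism $\theta_{-1}$ replaces each $v_\ell$ by $-v_\ell$, so $\theta_{-1}(a) = (-1)^k a$; thus the scalars and the products of an even number of vectors lie in $\Cl(V)_0$ and the products of an odd number of vectors lie in $\Cl(V)_1$. Consequently $P_0$ maps $\cl(V)$ onto the linear span of the even monomials and $P_1$ maps it onto the span of the odd monomials. Since $\cl(V)$ is dense in $\Cl(V)$ (definition \ref{d-c-star-clif-alg}) and $P_i$ is continuous, $\Cl(V)_i = P_i(\Cl(V))$ is contained in the closure of $P_i(\cl(V))$; the reverse inclusion is immediate because $\Cl(V)_i$ is a closed subspace containing that span. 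This shows $\Cl(V)_0$ is the closure of the span of the scalars and even products of vectors and $\Cl(V)_1$ the closure of the span of the odd products of vectors.

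I do not expect a genuine obstacle here; the one place calling for a little care is the passage from the purely algebraic eigenspace picture on $\cl(V)$ to the topological statement on its completion $\Cl(V)$, which should be handled by arguing with the continuous projections $P_0, P_1$ together with the density of $\cl(V)$, rather than by attempting to take a naive eigenspace decomposition on $\cl(V)$ and then ``close it up''.
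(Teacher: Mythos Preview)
Your proof is correct and complete. The paper itself does not provide a proof of this lemma; it simply cites \citet[page~27]{PR94}, so your argument is in fact more detailed than what appears in the paper, and it follows the standard route one would expect from that reference.
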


\begin{defn}\label{d-spin-modu}
\index{module!spinor}
\index{spinor!module}
(Spinor Modules).
\citep[page~106]{BGV04}  A spinor module is an irreducible Clifford algebra module, an irreducible Clifford algebra representation.
\end{defn}
A fiber bundle of spinor modules is called a spinor bundle.

\section{Bogoliubov Map Continuity}\label{s-bogo-map-cont}

\begin{defn}\label{d-stro-op-top}
\index{topology!strong operator}
(The Strong Operator Topology).
\citep[page~169]{Foll99} Given two normed linear spaces $X$, $Y$, the strong operator topology on the linear space $\B(X,Y)$ of bounded operators is given by the seminorms $T \mapsto \norm{Tx}$ for all $x \in X$.  It is also given in terms of convergence by $T_i \rightarrow T \Leftrightarrow \forall x \in X, \text{ } T_i x \rightarrow T x$.

The term ``strong operator topology'' will also be applied to topological subspaces of spaces of bounded operators, with convergence defined the same way, since linear operations on the operators aren't involved.  For example, the term will be used for unitary groups and the group of automorphisms of $C^{*}$-algebras. \end{defn}
\vline

\begin{lem}\label{l-cont-stro-op-top-adj}
\index{topology!strong operator}
(Continuity of a Linear Action vs. Continuity of a Map to $\B(Y)$).
Let $X$ be a topological space and $Y$ a normed linear space.  Give $\B(Y)$ the strong operator topology.  Then the continuity of
\[
f \colon X \cross Y \rightarrow Y \notag
\]
in each variable separately, with $f$ linear in the second variable, is equivalent to continuity of
\[
g \colon X \rightarrow \B(Y), \notag
\]
defining one map from the other by $g(x)(y) = f(x,y)$.

Furthermore, if there is a bound $M$ for the operator norms of all the $g(x)$, then continuity of $g$ implies joint continuity of $f$, and hence, separate continuity of $f$ implies joint continuity of $f$.
\end{lem}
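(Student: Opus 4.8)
The plan is to unwind the definition of the strong operator topology, observe that it lets us test continuity of $g$ one seminorm at a time, and then dispatch the ``furthermore'' clause with a routine $\epsilon/2$ estimate. First I would note that the two sides of the claimed equivalence decompose. If $f$ is separately continuous and linear in the second variable, then each $f(x, \cdot) \colon Y \rightarrow Y$ is linear and continuous, hence bounded, so $g(x) = f(x, \cdot)$ really does define a map $g \colon X \rightarrow \B(Y)$; conversely, if $g$ is given, each $f(x, \cdot) = g(x)$ is bounded and linear, hence continuous, which recovers separate continuity of $f$ in the second variable. So the content to be proved is that ``$x \mapsto f(x, y)$ is continuous for every fixed $y \in Y$'' is equivalent to ``$g \colon X \rightarrow \B(Y)$ is continuous for the strong operator topology''.

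For this I would use that, at any point $T_0 \in \B(Y)$, the strong operator topology has a sub-basis of neighborhoods of the form $\{ T \st \norm{(T - T_0) y} < \epsilon \}$ with $y \in Y$ and $\epsilon > 0$, and that, $X$ being merely a topological space, a finite intersection of neighborhoods of a point $x_0$ is again a neighborhood of $x_0$. Hence $g$ is continuous at $x_0$ if and only if for each single $y \in Y$ and $\epsilon > 0$ there is a neighborhood $U$ of $x_0$ with $\norm{g(x) y - g(x_0) y} < \epsilon$ for all $x \in U$. Since $g(x) y - g(x_0) y = f(x, y) - f(x_0, y)$, this says exactly that $x \mapsto f(x, y)$ is continuous at $x_0$. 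Quantifying over all $x_0 \in X$ and all $y \in Y$ gives the equivalence.

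For the ``furthermore'' clause, assume $\norm{g(x)} \le M$ for all $x$ (we may take $M > 0$, the case $M = 0$ being trivial), let $(x_0, y_0) \in X \cross Y$ and $\epsilon > 0$, and pick via continuity of $g$ at $x_0$ a neighborhood $U$ of $x_0$ with $\norm{g(x) y_0 - g(x_0) y_0} < \epsilon/2$ for $x \in U$. Then for $(x, y) \in U \cross \{ y \st \norm{y - y_0} < \epsilon/(2M) \}$,
\[
\norm{f(x,y) - f(x_0, y_0)} \le \norm{g(x)(y - y_0)} + \norm{g(x) y_0 - g(x_0) y_0} \le M \norm{y - y_0} + \norm{g(x) y_0 - g(x_0) y_0} < \epsilon,
\]
so $f$ is jointly continuous at $(x_0, y_0)$. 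Finally, if $f$ is separately continuous and the bound $M$ holds, the equivalence gives a well-defined continuous $g$, and the estimate above then gives joint continuity of $f$.

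The argument is elementary; the points that need care are that it is linearity together with separate continuity in the second variable (not joint continuity) that makes $g$ land in $\B(Y)$ at all, and that reducing strong-operator continuity of $g$ to one seminorm at a time is legitimate precisely because finite intersections of neighborhoods in $X$ are neighborhoods. The uniform bound $M$ is genuinely required for the last implication: without it the term $\norm{g(x)(y - y_0)}$ cannot be made small uniformly in $x$, so separate continuity alone does not force joint continuity.
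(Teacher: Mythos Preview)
Your proof is correct and follows essentially the same approach as the paper: both identify separate continuity in the second variable plus linearity with $g$ landing in $\B(Y)$, equate strong-operator continuity of $g$ with first-variable continuity of $f$, and then use the same triangle-inequality estimate $\norm{f(x,y)-f(x_0,y_0)} \le M\norm{y-y_0} + \norm{g(x)y_0 - g(x_0)y_0}$ for joint continuity. You supply more detail on the sub-basis description of the strong operator topology where the paper simply invokes the definition, but the argument is the same.
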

\begin{proof}
For the first part, continuity of $g$ is equivalent to continuity of $f$ in its first variable directly from definition \ref{d-stro-op-top}.  Continuity and linearity of $f$ in its second variable is equivalent to each $g(x) \in \B(Y)$. For the second part, suppose $\norm{g(x)} \le M$, $\forall x \in X$, and suppose $g$ is continuous; then $f$ is jointly continuous at any $(x_0, y_0)$:
\begin{align}
\norm{f(x, y) - f(x_0, y_0)} &\le \norm{f(x, y) - f(x, y_0)} + \norm{f(x, y_0) - f(x_0, y_0)} \notag \\
 &\le \norm{g(x)(y - y_0)} + \norm{g(x)(y_0) - g(x_0)(y_0)} \notag \\
 &\le \norm{g(x)} \norm{y - y_0} + \norm{g(x)(y_0) - g(x_0)(y_0)} \notag \\
 &\le M \norm{y - y_0} + \norm{g(x)(y_0) - g(x_0)(y_0)}. \notag
\end{align}
The first term can be made as small as desired by making $y$ close enough to $y_0$; and the second, by making $x$ close enough to $x_0$, by definition \ref{d-stro-op-top}.
\end{proof}

\begin{lem}\label{l-grp-act-sep-join-cont}
\index{Baire space}
\index{group!action!separate and joint continuity}
(The Separate Continuity of a Group Action Implies Joint Continuity).
\citep[page~1044]{CM70} Let $M$ be a metric space, and let $G$ be a Baire space with a group structure in which multiplication is separately continuous.  Let $\pi \colon G \cross M \rightarrow M$ be an action which is separately continuous.  Then $\pi$ is jointly continuous.
\end{lem}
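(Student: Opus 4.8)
The plan is to split the argument into a formal reduction that uses only the group structure, and a Baire category step carried out in $G$ alone, so that no countability hypothesis on $M$ is needed.

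\emph{Reduction.} First I would show that it suffices to exhibit, for each $m_0 \in M$, a single pair $(g_0, m_0)$ at which $\pi$ is jointly continuous. For $a \in G$ the action law gives $\pi(g,m) = \pi\bigl(a, \pi(a^{-1} g, m)\bigr)$; since both $g \mapsto ag$ and $g \mapsto a^{-1} g$ are continuous (separate continuity of multiplication), the map $(g, m) \mapsto (a^{-1} g, m)$ is continuous on $G \cross M$ and sends $(a g_0, m_0)$ to $(g_0, m_0)$. Composing with the continuous map $\pi(a, \cdot) \colon M \to M$ exhibits $\pi$ near $(a g_0, m_0)$ as a composite of maps continuous at the relevant points, so joint continuity of $\pi$ at $(g_0, m_0)$ forces joint continuity at $(a g_0, m_0)$ for every $a \in G$. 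Because $\{a g_0 : a \in G\} = G$, a single joint-continuity point over $m_0$ yields joint continuity at every $(g, m_0)$, and then letting $m_0$ range over $M$ finishes the lemma.

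\emph{A joint-continuity point over $m_0$.} Fix $m_0$. For $k, n \in \NN$ set
\[
A_{k,n} = \bigl\{\, g \in G : d\bigl(\pi(g,m), \pi(g,m_0)\bigr) \le \tfrac{1}{5k} \ \text{ whenever } \ d(m, m_0) \le \tfrac1n \,\bigr\}.
\]
For fixed $k$ each $A_{k,n}$ is closed (an intersection over $m$ of the closed sets $\{ g : d(\pi(g,m), \pi(g,m_0)) \le \tfrac1{5k}\}$, using continuity of $g \mapsto \pi(g,m)$, of $g \mapsto \pi(g,m_0)$, and of $d$), and $\bigcup_n A_{k,n} = G$ by continuity of $m \mapsto \pi(g,m)$ at $m_0$. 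Given any nonempty open $P \subseteq G$, the subspace $P$ is again a Baire space, so some $A_{k,N} \cap P$ has nonempty interior; choosing $g_0$ in that interior and then shrinking, by continuity of $g \mapsto \pi(g,m_0)$ at $g_0$, to an open $U$ with $g_0 \in U \subseteq A_{k,N} \cap P$ and $d(\pi(g,m_0), \pi(g_0,m_0)) < \tfrac1{5k}$ on $U$, a four-term triangle inequality through $\pi(g,m_0)$, $\pi(g_0,m_0)$, $\pi(g',m_0)$ shows $d(\pi(g,m), \pi(g',m')) < \tfrac1k$ for all $g, g' \in U$ and all $m, m'$ with $d(m,m_0) < \tfrac1N$ and $d(m', m_0) < \tfrac1N$. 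Hence $g_0$ lies in the set $O_k$ of those $g$ at which $\pi$ has oscillation $< \tfrac1k$ at $(g, m_0)$; and $O_k$ is open, since any such oscillation witness for $g$ serves for a whole neighbourhood of $g$. As $P$ was arbitrary, $O_k$ is open and dense; by the Baire property $\bigcap_k O_k$ is dense, in particular nonempty, and any $g_0 \in \bigcap_k O_k$ has oscillation $0$ at $(g_0, m_0)$, i.e.\ $\pi$ is jointly continuous there.

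Combining the two steps gives joint continuity of $\pi$ on all of $G \cross M$. \emph{The real obstacle} is the second step, essentially a Namioka-type (Chernoff--Marsden) category argument: one must guess the right closed sets $A_{k,n}$ (the condition being that $g \mapsto \pi(g,\cdot)$ stays uniformly near the constant $\pi(g,m_0)$ on a small ball about $m_0$) and, crucially, run the Baire argument inside an arbitrary open $P$ so as to get \emph{density} of $O_k$ rather than mere nonemptiness. The reduction and the final assembly are routine. If one were willing to assume $M$ separable — which holds in every application in the thesis — the second step could instead be quoted from a standard Namioka-type theorem on separately continuous maps.
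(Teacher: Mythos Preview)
The paper does not give a proof of this lemma at all; it simply quotes the result from \citet[page~1044]{CM70} and moves on. So there is nothing in the paper to compare your argument against line by line.

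That said, your proposal is a correct, self-contained proof and is exactly the kind of Namioka/Chernoff--Marsden argument that lies behind the cited reference. The reduction step is clean and uses only separate continuity of multiplication (left translation by the fixed element $a^{-1}$), never continuity of inversion, which is important since the hypothesis does not give you a topological group. In the Baire step, the two points that actually carry weight --- that the sets $A_{k,n}$ are closed, and that running the category argument inside an \emph{arbitrary} open $P$ (using that open subspaces of Baire spaces are Baire) upgrades nonemptiness of $O_k$ to density --- are both handled correctly. The four-term triangle estimate with the $\tfrac{1}{5k}$ constants is fine. Your remark that separability of $M$ would let one simply quote a standard Namioka theorem is accurate and matches how the lemma is actually used in the thesis (via Corollary~\ref{co-ores-bair-acts-join-cont}, where the relevant $M$ is a Hilbert or Banach space).
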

\begin{note}\label{n-grp-act-sep-join-cont}
\index{Baire space!examples}
(Examples of Baire Spaces).
With the definition of Baire space from \citet[page~249]{Dugu66}, \citet[page~43]{Rudi91} says that a complete metric space, e.g. a closed subset of a Banach or \Frechet space, is a Baire space.
\end{note}

\begin{prop}\label{p-bogo-map-cont}
\index{thetag@$\theta_g$}
\index{Bogoliubov map!continuity}
\index{Bogoliubov automorphism}
\index{Clifford algebra!Bogoliubov automorphism}
\index{Bogoliubov automorphism!C*-algebra@$C^{*}$-algebra}
\index{C*-algebra@$C^{*}$-algebra!Clifford!Bogoliubov automorphism}
(The Continuity of the Bogoliubov Map).
The action of $\Orth(V)$ on $\Cl(V)$ via Bogoliubov automorphisms; i.e. the map $\Orth(V) \cross \Cl(V) \rightarrow \Cl(V)$ defined by $(g,a) \mapsto \theta(g)(a) = \theta_g (a) = g a$ (for notational simplicity) is (jointly) continuous and a fortiori separately continuous, with the operator norm topology on $\Orth(V)$ and $C^{*}$-norm topology on $\Cl(V)$.  Thus by the first part of lemma \ref{l-cont-stro-op-top-adj}, the Bogoliubov map $\theta \colon \Orth(V) \rightarrow \Aut(\Cl(V))$ is continuous, where $\Aut(\Cl(V))$ is given the strong operator topology.
\end{prop}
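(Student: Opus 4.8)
The plan is to deduce the joint-continuity claim from lemma \ref{l-cont-stro-op-top-adj} by reducing it to a \emph{separate}-continuity claim, and then to establish that separate continuity by approximating an arbitrary Clifford algebra element by finite linear combinations of monomials. The key preliminary observation is that each Bogoliubov automorphism $\theta_g$ is a bijective morphism of $C^{*}$-algebras, hence by lemma \ref{l-c-star-alg-mor-cont} an isometry, so $\norm{\theta_g(a)} = \norm{a}$ for all $a$ and in particular $\norm{\theta_g} = 1$. I would apply lemma \ref{l-cont-stro-op-top-adj} with $X = \Orth(V)$ (operator norm topology, as in definition \ref{d-c-star-bogo-auto}), $Y = \Cl(V)$, and $f(g,a) = \theta_g(a)$, which is linear in $a$: since the operator norms $\norm{\theta_g}$ are bounded by $M = 1$, it suffices to prove that $f$ is separately continuous. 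Continuity of $f$ in $a$ for fixed $g$ is immediate because $\theta_g$ is a bounded linear operator. Hence everything reduces to showing that, for each fixed $a \in \Cl(V)$, the map $g \mapsto \theta_g(a)$ is continuous $\Orth(V) \to \Cl(V)$.

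To prove this I would argue at a fixed point $g_0 \in \Orth(V)$ in four stages. First, for $a = \phi(v)$ with $v \in V$, proposition \ref{p-c-star-clif-alg-func} gives $\theta_g(\phi(v)) = \phi(g v)$, and since $\phi$ is an isometry (proposition \ref{p-c-star-clif-alg}) we get $\norm{\theta_g(\phi(v)) - \theta_{g_0}(\phi(v))} = \norm{gv - g_0 v}_V \le \norm{g - g_0}_{\mathrm{op}}\, \norm{v}_V$, which tends to $0$. Second, for a monomial $a = z\,\phi(v_1) \cdots \phi(v_k)$, proposition \ref{p-c-star-clif-alg-func} gives $\theta_g(a) = z\,\phi(g v_1) \cdots \phi(g v_k)$; a telescoping estimate in the Banach algebra $\Cl(V)$, using $\norm{\phi(g v_i)} = \norm{\phi(g_0 v_i)} = \norm{v_i}$, bounds $\norm{\theta_g(a) - \theta_{g_0}(a)}$ by $C\, \norm{g - g_0}_{\mathrm{op}}$ with $C$ depending only on $z$ and the $v_i$. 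Third, a general $a \in \cl(V)$ is a finite $\CC$-linear combination of such monomials (definition \ref{d-clif-alg}), so continuity of $g \mapsto \theta_g(a)$ follows by linearity. Fourth, for arbitrary $a \in \Cl(V)$, use that $\cl(V)$ is dense in $\Cl(V)$ (definition \ref{d-c-star-clif-alg}): given $\varepsilon > 0$ pick $a' \in \cl(V)$ with $\norm{a - a'} < \varepsilon/3$; then the estimate $\norm{\theta_g(a) - \theta_{g_0}(a)} \le \norm{\theta_g(a - a')} + \norm{\theta_g(a') - \theta_{g_0}(a')} + \norm{\theta_{g_0}(a' - a)} = 2\norm{a - a'} + \norm{\theta_g(a') - \theta_{g_0}(a')}$ is $< \varepsilon$ once $g$ is close enough to $g_0$, by the third stage.

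Having shown $g \mapsto \theta_g(a)$ is continuous for each $a$ --- equivalently, that $f$ is continuous in its first variable --- lemma \ref{l-cont-stro-op-top-adj} yields at once that $\theta \colon \Orth(V) \to \Aut(\Cl(V))$ is continuous into the strong operator topology, and together with separate continuity and the bound $M = 1$ it yields joint continuity of $f$, i.e.\ of the action $(g,a) \mapsto \theta_g(a)$; this is the full statement. The argument is essentially routine, and I expect no genuine obstacle: the only point demanding care is the density step, which relies on the uniform bound $\norm{\theta_g} = 1$ (furnished by lemma \ref{l-c-star-alg-mor-cont}) to control $\norm{\theta_g(a - a')}$ uniformly in $g$; the telescoping bound for monomials is the most computational ingredient but is standard for Banach algebras.
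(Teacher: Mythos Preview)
Your proof is correct and uses essentially the same ingredients as the paper's: the isometry of each $\theta_g$, density of $\cl(V)$ in $\Cl(V)$, and a telescoping estimate on monomials using proposition \ref{p-c-star-clif-alg-func} and the Banach-algebra norm inequality. The only organizational difference is that the paper proves joint continuity directly (reducing by the homogeneity $\theta_{g_0}^{-1}$ to $g_0 = \ident$ and estimating $\norm{g a - a_0}$), whereas you route through lemma \ref{l-cont-stro-op-top-adj} to reduce joint continuity to separate continuity plus the uniform bound $M = 1$; both reductions rely on the same isometry property, and the computational core is identical.
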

\begin{proof}
Let $g, g_0 \in \Orth(V)$, $a, a_o \in \Cl(V)$.  Use homogeneity:  since $g a - g_0 a_0 = g_0 (g_0^{-1} g a - a_0)$, and since $\theta_{g_0}$ is continuous (see definition \ref{d-c-star-bogo-auto}), it suffices to consider continuity at $g_0 = \ident \in \Orth(V)$.  Given $\epsilon > 0$ choose $a_0^u$ in $\cl(V)$ such that $\norm{a_0^u - a_0} < \epsilon / 4$ (possible by definition of completion), and require that $\norm{a - a_0} < \epsilon / 4$.  Then recalling that $\theta_g$ is an isometry,
\begin{align}
\norm{g a - a_0} &\le \norm{g (a - a_0)} + \norm{g (a_0 - a_0^u)} + \norm{g a_0^u - a_0^u} + \norm{a_0^u - a_0} \notag \\
          &\le \norm{a - a_0} + \norm{a_0 - a_0^u} + \norm{g a_0^u - a_0^u} + \norm{a_0^u - a_0} \notag \\
          &\le \frac{3}{4}\epsilon + \norm{g a_0^u - a_0^u}. \notag
\end{align}
By lemma \ref{l-clif-alg-unio-fini-dim} there is some finite-dimensional vector subspace $W$ of $V$ such that $a_0^u \in \Cl(W) \subset$ $\cl(V)$.  Letting $d = \dim(W)$, $a_0^u$ can be expressed as a sum of at most $2^d$ monomials, each of which is a product with complex coefficient of some subset of a chosen orthonormal basis of $W$, hence consisting of at most $d$ elements (see definition \ref{d-clif-alg} and \citet[pages~7--10,~23]{PR94}).

Referring to proposition \ref{p-c-star-clif-alg-func}, $g \prod w_j = \theta_g (\prod w_j) = \prod g(w_j)$, where $g (w_j)$ is the application of the orthogonal operator $g$ to $w_j$.  By using the triangle inequality and the Banach algebra norm inequality for $\Cl(V)$, and recalling from proposition \ref{p-c-star-clif-alg} that the $C^{*}$-algebra norm of a vector equals the inner product norm of the vector, we can make $\norm{g a_0^u - a_0^u} \le \epsilon / 4$, if by choosing $g$ close enough to $\ident$ we can make the following quantity arbitrarily small, for any set of $k \le d$ elements of the fixed orthonormal basis of $W$.  The notation assumes the products are in indexed order, and $\norm{}_{*}$, $\norm{}_V$, $\norm{}_{op}$ denote respectively the norms on $\Cl(V)$, $V$, and the operator norm on $\Orth(V)$.
\begin{align}
\norm{g \prod_{j=1}^k w_j - \prod_{j=1}^k w_j}_{*} &\le \sum_{i=1}^k \norm{\prod_{j=1}^{k - i + 1} g(w_j) \prod_{j=k - i + 2}^k w_j - \prod_{j=1}^{k - i} g(w_j) \prod_{j=k - i + 1}^k w_j}_{*} \notag \\
 &= \sum_{i=1}^k \norm{(\prod_{j=1}^{k - i} g(w_j)) (g(w_{k - i + 1}) - w_{k - i + 1}) (\prod_{j=k - i + 2}^k w_j)}_{*} \notag \\
 &\le \sum_{i=1}^k (\prod_{j=1}^{k - i} \norm{g(w_j)}_{*}) (\norm{g(w_{k - i + 1}) - w_{k - i + 1}}_{*}) (\prod_{j=k - i + 2}^k \norm{w_j}_{*}) \notag \\
 &= \sum_{i=1}^k \norm{g(w_{k - i + 1}) - w_{k - i + 1}}_{*} \notag \\
 &= \sum_{i=1}^k \norm{g(w_{k - i + 1}) - w_{k - i + 1}}_V \notag \\
 &= \sum_{i=1}^k \norm{(g - \ident) (w_{k - i + 1})}_V \le d \norm{g - \ident}_{op}, \notag
\end{align}
where products with starting index greater than ending index are taken as $1$.
\end{proof}
$\Aut(\Cl(V))$ is a topological group in the chosen topology, which is also called the point-norm topology, but that's not needed here.

\section{Unitary Structures and Lagrangian Subspaces}\label{s-unit-stru-lagr-subs}

This build-up to our Fock space construction starts with a separable infinite-dimensional real Hilbert space $V$.  In our main application of these concepts we start with $V$ and real-linear operators on it, and we use these a lot.  Essential results rely on analysis relating to real orthogonal operators on $V$.  \citet{PR94} start with $V$, and this approach makes it easier to use their results.  So it seems natural to think of $V$.

Complex numbers come in quickly: $\Cl(V)$ is a complex algebra, and we complexify $V$ to get $H$, where we find eigenspaces of complex-linear extensions of operators on $V$, obtaining Lagrangian subspaces of $H$, used directly in our Fock space construction.  We use these eigenspaces extensively, and they were used even more in initial attempts at proofs.  Their elements are concrete and are eigenvectors of derivative operators, relating to another way, which we shall not explore, of viewing the main constructions of the thesis.  So it also seems natural to think of $H$.

Instead of starting with $V$, we could have started with a combination of $H$ and a real structure $\Sigma$ on $H$, resulting in $V$ as the fixed point set of $\Sigma$.  However, this was not our choice.

In the end we use both $V$ and $H$, resisting the temptation to reduce everything to one or the other.  It takes a little time to become familiar with both, and with ways of relating them, but it isn't difficult, and this section shows how.

\begin{defn}\label{d-v-h}
\index{CL(V)@$\Cl(V)$!contains H@contains $H$}
\index{inner product!Hermitian extension}
\index{H@$H$!complexification of V@complexification of $V$}
\index{H@$H$!subset Cl(V)@$\subset \Cl(V)$!}
($H$ is the Complexification of $V$).
Consistent with assumption \ref{a-herm-innr-prod} and definition \ref{d-herm-ext-fibr-innr-prod}, the complexification of $V$ is $H = \CC \tensor V$, with Hermitian inner product $\langle , \rangle$ given for $\alpha, \beta \in \CC$ and $v, w \in V$ by:
\[
\langle \alpha v, \beta w \rangle = \alpha \overline{\beta} (v,w).
\]
\end{defn}
We identify $V$ with $\{ 1 \tensor v \st v \in V \} = \RR \tensor V \subset \CC \tensor V = H \subset \Cl(V)$.  Since we assume $V$ is a real Hilbert space, $H$ is a complex Hilbert space.

\begin{defn}\label{d-cx-conj}
\index{H@$H$!complex conjugation}
\index{Cl(V)@$\Cl(V)$!complex conjugation}
(Complex Conjugation in $H$ and $\Cl(V)$).
\citep[page~57]{PR94} Denote complex conjugation in $H = \CC \tensor V$ by $\Sigma$, where $\Sigma (z \tensor v) = \overline{z} \tensor v$.  Use $\Sigma$ also for the natural complex conjugation in $\Cl(V)$.
\end{defn}
We might on occasion write, e.g., $\overline{x}$ in place of $\Sigma(x)$.

\begin{defn}\label{d-real-structure}
\index{real structure}
(Real Structures).
\citep[pages~368--369]{Atiy66}
\begin{enumerate}
    \item Given a complex Hilbert space $H$, a real structure is a complex-antilinear isometric involution $\Sigma$; i.e. $\Sigma \colon H \rightarrow H$ such that for $x, y \in H$ and $\alpha \in \CC$:
        \begin{align}
        \Sigma (x + y)    &= \Sigma (x) + \Sigma (y) \notag \\
        \Sigma (\alpha x) &= \overline{\alpha} \Sigma (x) \notag \\
        \Sigma^2          &= \ident_H. \notag
        \end{align}
    \item If the complex Hilbert space $H = \CC \tensor V$, the complexification of a real Hilbert space $V$, we associate to $H$ the particular real structure $\Sigma$ given, for $\alpha \in \CC$ and $v \in V$, by:
        \[
        \Sigma (\alpha \tensor v) = \overline{\alpha} \tensor v.
        \]
\end{enumerate}
\end{defn}
$V$ may be recovered from $H$ as the fixed point set of $\Sigma$.

\begin{lem}\label{l-oph-opv}
\index{Operators on H vs V@Operators on $H$ vs $V$}
(Operators on $H$ vs. Operators on $V$).
Given $H = \CC \tensor V$ with real structure $\Sigma$ whose fixed point set is $V$, real-linear operators on $V$ extend to complex-linear operators on $H$ that commute with $\Sigma$; and conversely, complex-linear operators on $H$ that commute with $\Sigma$ can be restricted to (arise from) real operators on $V$.  The operator norm is unchanged by this extension or restriction.
\end{lem}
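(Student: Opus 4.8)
The plan is to write down the extension and restriction constructions explicitly, verify that they are mutually inverse, and then compute operator norms.

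\emph{Extension and restriction.} Given a (bounded, by assumption \ref{a-boun}) real-linear $T \colon V \to V$, I would let $T_\CC \colon H \to H$ be the unique complex-linear map determined by $T_\CC(\alpha \tensor v) = \alpha \tensor T(v)$ for $\alpha \in \CC$, $v \in V$; it exists and is complex-linear by the universal property of $H = \CC \tensor_\RR V$ (a complex-linear map out of $\CC \tensor_\RR V$ is the same datum as an $\RR$-linear map out of $V$). Since $\Sigma(\alpha \tensor v) = \overline{\alpha} \tensor v$, one checks $T_\CC \Sigma = \Sigma T_\CC$ on generators and extends the identity by $\RR$-linearity. Conversely, given complex-linear $S \colon H \to H$ with $S \Sigma = \Sigma S$: if $v \in V$ then $\Sigma v = v$, so $\Sigma(S v) = S(\Sigma v) = S v$, hence $S v$ lies in the fixed-point set $V$, and $S$ restricts to a real-linear operator $S|_V \colon V \to V$. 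These constructions are mutually inverse: clearly $(T_\CC)|_V = T$, and if $S$ commutes with $\Sigma$ then $S$ and $(S|_V)_\CC$ are complex-linear maps agreeing on $V$, hence on $\RR \tensor V$, and hence (by complex-linearity) on all of $H = \CC \tensor V$, so $(S|_V)_\CC = S$.

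\emph{Norms.} I would first record the orthogonal splitting $H = V \dirsum iV$: every $x \in H$ equals $v + iw$ with $v = \tfrac12(x + \Sigma x) \in V$ and $w = \tfrac1{2i}(x - \Sigma x) \in V$, and for $v, w \in V$ definition \ref{d-v-h} and assumption \ref{a-herm-innr-prod} give $\langle v, iw \rangle + \langle iw, v \rangle = -i(v,w) + i(v,w) = 0$ and $\langle iw, iw \rangle = \norm{w}^2$, whence $\norm{v + iw}^2 = \norm{v}^2 + \norm{w}^2$. Now for bounded real-linear $T$ on $V$: since $(T_\CC)|_V = T$ and the inner product of $H$ restricts to that of $V$, we get $\norm{T_\CC} \ge \norm{T}$; and $T_\CC(v + iw) = Tv + iTw$ gives $\norm{T_\CC x}^2 = \norm{Tv}^2 + \norm{Tw}^2 \le \norm{T}^2(\norm{v}^2 + \norm{w}^2) = \norm{T}^2 \norm{x}^2$, so $\norm{T_\CC} = \norm{T}$; in particular $T_\CC$ is bounded. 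For the restriction direction, if $S$ commutes with $\Sigma$ then $S = (S|_V)_\CC$, so $\norm{S} = \norm{(S|_V)_\CC} = \norm{S|_V}$.

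I do not expect a genuine obstacle here: the only mildly delicate point is confirming that the Hermitian inner product on $H = \CC \tensor V$ of definition \ref{d-v-h} restricts to the given real inner product on $V$ and that $V \perp iV$ — precisely the computation $\langle v, iw \rangle = -i(v,w)$ above — after which the norm identities and the mutual-inverse statement are routine checks on tensor generators together with the fact that a complex-linear endomorphism of $H$ is determined by its restriction to $V$.
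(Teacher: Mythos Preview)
Your proposal is correct and, for the extension/restriction constructions, matches the paper's proof essentially verbatim (define $T_\CC$ on tensor generators, check commutation with $\Sigma$ on generators; for the converse, use $\Sigma$-invariance to see $S$ preserves the fixed-point set $V$). The paper's proof in fact stops there and does not address the operator-norm claim at all, so your argument via the Pythagorean identity $\norm{v+iw}^2 = \norm{v}^2 + \norm{w}^2$ on $H = V \oplus iV$ genuinely completes what the paper leaves unproved.
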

\begin{proof}
Given a real-linear operator $\phi$ on $V$, extend $\phi$ to a complex-linear operator on $H$ by defining for $v \in V$ and $\alpha \in \CC$, $\phi (\alpha \tensor v) = \alpha \tensor \phi (v)$.  Then $\Sigma (\phi (\alpha \tensor v)) = \Sigma (\alpha \tensor \phi (v)) = \overline{\alpha} \tensor \phi (v)$, whereas also $\phi (\Sigma (\alpha \tensor v)) = \phi (\overline{\alpha} \tensor v) = \overline{\alpha} \tensor \phi (v)$; i.e. $\phi$ extended to $H$ commutes with $\Sigma$.

Conversely, given a complex-linear operator $\phi$ on $H$ that commutes with $\Sigma$, and given $v \in V$, $\Sigma (\phi (1 \tensor v)) = \phi (\Sigma (1 \tensor v)) = \phi (1 \tensor v)$, so $\phi$ maps the fixed point set of $\Sigma$, which we identify with $V$, into itself.  Thus we may restrict $\phi$ to $V$.
\end{proof}

\index{polarization}
The Fock space construction starts with a complex Hilbert space that can be thought of as ``half'' of $H$, half of the complexification of $V$, which is also called a ``polarization'' of $H$.  We will be more precise shortly.  If, contrary to our assumption, the real dimension of $V$ were a finite even number $n$, half of $H$ would have complex dimension $\frac{n}{2}$.  We will show two ways of constructing this half, related by proposition \ref{p-vj-hp-isom}.

\begin{defn}\label{d-unit-stru}
\index{unitary structure}
\index{complex structure}
\index{J@$J$}
\index{VJ@$V_J$}
\index{US(V)@$\US(V)$}
\index{O(V)@$\Orth(V)$}
\index{<,>J@$\langle , \rangle_J$}
(A Unitary Structure $J$ and the Corresponding $V_J$).
\citep[pages~55--56]{PR94} A unitary structure, or complex structure, on $V$ is an orthogonal transformation $J \in \Orth(V)$ such that $J^2 = -1$.  Let $\US(V)$ be the space of unitary structures on $V$, with the subspace topology from $\Orth(V)$, which has the operator norm topology.

Denote by $V_J$ the complex Hilbert space equal to $V$ as a set and with the same additive and real multiplicative structure, with complex multiplication induced by $v \in V \Rightarrow i v = J(v)$, and with the Hermitian inner product given by $x, y \in V \Rightarrow \langle x, y \rangle_J = (x, y) + i (x, J(y))$.
\end{defn}
$V_J$ is one form of the ``half'' of $H$ that we will need to construct Fock space.  As a set, $V_J = V$.  Proposition \ref{p-vj-hp-isom} gives an isometric complex isomorphism placing $V_J$ in the set $H$, but differently from the way $V$ is placed in $H$.

\begin{defn}\label{d-uvj}
\index{U(VJ)@$\UU(V_J)$}
Define $\UU(V_J) \subset \Orth(V)$ as the set of elements of $\Orth(V)$ that commute with $J$.
\end{defn}
$\UU(V_J)$ is the unitary group of the complex Hilbert space $V_J$, since as a set, $V_J = V$.

\begin{prop}\label{p-uv-ov}
\index{homogeneous structure}
\index{US(V)@$\US(V)$!homogeneous structure}
\index{phi(O/U(VJ),US)@$\phi_{\Orth/\UU(V_J), \US}$}
\index{pi(O/U(VJ))@$\pi_{\Orth/\UU(V_J)}$}
\index{O(V)@$\Orth(V)$}
($\US(V)$ has Homogeneous Structures).
\citep[pages~56--57]{PR94} Given a unitary structure $J$ on $V$, the topological group $\Orth(V)$ acts transitively on $\US(V)$ by conjugation, and the stabilizer of $J$ under this action is the closed subgroup $\UU(V_J) \subset \Orth(V)$.  Thus there is a continuous equivariant bijection from the homogenous space whose projection is
\begin{align}
\pi_{\Orth/\UU(V_J)} \colon \Orth(V) &\rightarrow \Orth(V) / \UU(V_J), \text{ namely} \notag \\
\phi_{\Orth/\UU(V_J), \US} \colon \Orth(V) / \UU(V_J) &\rightarrow \US(V) \notag \\
g \UU(V_J) &\mapsto K = g J g^{-1} \notag
\end{align}
\citep[page~3]{tomD87}; and $\phi_{\Orth/\UU(V_J), \US} ([\ident_{\Orth(V)}]) = J$.

Suppose that $K \in \US(V)$ is another unitary structure, with $K = k J k^{-1}$ for some $k \in \Orth(V)$.  Then $\UU(V_K) = k \UU(V_J) k^{-1}$.  Thus in general $\Orth(V) / \UU(V_J) \ne \Orth(V) / \UU(V_K)$ as a set, though there is an equivariant homeomorphism \citep[page~5]{tomD87} $\phi_{\Orth/\UU(V_K), \US}^{-1} \circ \phi_{\Orth/\UU(V_J), \US}$ between them that maps $g \UU(V_J) \mapsto g k^{-1} \UU(V_K)$, with inverse $h \UU(V_K) \mapsto h k \UU(V_J)$.
\end{prop}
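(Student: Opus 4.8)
The plan is to split the proposition into three parts: (i) that conjugation is a well-defined, continuous, transitive action of $\Orth(V)$ on $\US(V)$ whose stabilizer at $J$ is the closed subgroup $\UU(V_J)$; (ii) that the orbit--stabilizer map is the asserted continuous equivariant bijection; and (iii) the comparison statement for a second unitary structure $K = kJk^{-1}$. For (i), first note that if $J\in\US(V)$ and $g\in\Orth(V)$ then $gJg^{-1}\in\Orth(V)$ and $(gJg^{-1})^2 = gJ^2g^{-1} = -\ident$, so $gJg^{-1}\in\US(V)$; the action axioms are immediate, and continuity follows since multiplication and inversion in $\Orth(V)$ are continuous in the operator norm topology. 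The stabilizer of $J$ is $\{g : gJg^{-1}=J\} = \{g : gJ = Jg\} = \UU(V_J)$ by definition \ref{d-uvj}, and it is closed, being the preimage of the closed point $\{J\}$ (the ambient $\Orth(V)$, hence $\US(V)$, being a metric space) under the continuous map $g\mapsto gJg^{-1}$. The one substantive point is transitivity: given another unitary structure $K$, form $V_J$ and $V_K$ as in definition \ref{d-unit-stru}; a $\CC$-linear isometry $g\colon V_J\to V_K$ is precisely an $\RR$-linear map $V\to V$ with $g(Jv)=Kg(v)$ for all $v$ (complex linearity) and $(gx,gy) = \Re\langle gx,gy\rangle_K = \Re\langle x,y\rangle_J = (x,y)$ (isometry of the real parts), i.e. an element $g\in\Orth(V)$ with $gJ = Kg$, so $K = gJg^{-1}$. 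Such a $g$ exists because $V_J$ and $V_K$ are both separable (a countable dense subset of the real Hilbert space $V$ of assumption \ref{a-v-real-hilb-spac} stays countable and dense in $V_J$, and likewise in $V_K$, the underlying topology being unchanged) infinite-dimensional complex Hilbert spaces, hence isometrically isomorphic. This gives transitivity; the whole of (i) is essentially \citet[pages~56--57]{PR94}.

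For (ii), by the standard description of transitive actions of topological groups (\citet[page~3]{tomD87}), the rule $g\UU(V_J)\mapsto gJg^{-1}$ is a well-defined continuous equivariant bijection $\Orth(V)/\UU(V_J)\to\US(V)$, which is what $\phi_{\Orth/\UU(V_J),\US}$ names, and $\phi_{\Orth/\UU(V_J),\US}([\ident_{\Orth(V)}]) = \ident\, J\,\ident^{-1} = J$. For (iii), if $K = kJk^{-1}$ then $g$ commutes with $K$ iff $k^{-1}gk$ commutes with $J$, so $\UU(V_K) = k\UU(V_J)k^{-1}$; since conjugate subgroups of $\Orth(V)$ need not coincide, $\Orth(V)/\UU(V_J)$ and $\Orth(V)/\UU(V_K)$ are in general distinct quotient sets. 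Both $\phi_{\Orth/\UU(V_J),\US}$ and $\phi_{\Orth/\UU(V_K),\US}$ are continuous equivariant bijections onto $\US(V)$, so their composite $\phi_{\Orth/\UU(V_K),\US}^{-1}\circ\phi_{\Orth/\UU(V_J),\US}$ is one too; to read off its formula, observe $\phi_{\Orth/\UU(V_J),\US}(g\UU(V_J)) = gJg^{-1} = (gk^{-1})K(gk^{-1})^{-1} = \phi_{\Orth/\UU(V_K),\US}(gk^{-1}\UU(V_K))$, so the composite sends $g\UU(V_J)\mapsto gk^{-1}\UU(V_K)$, with evident inverse $h\UU(V_K)\mapsto hk\UU(V_J)$. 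Both this map and its inverse are induced by a right translation of $\Orth(V)$ followed by the quotient projection, hence both are continuous, so the composite is an equivariant homeomorphism, in agreement with \citet[page~5]{tomD87}.

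The main obstacle is the transitivity step: the rest is formal group theory together with the general topology of homogeneous spaces, but transitivity requires unwinding the definitions to see that conjugating $J$ to $K$ inside $\Orth(V)$ carries exactly the same information as a unitary isomorphism $V_J\cong V_K$, and then invoking the classification of separable Hilbert spaces, with the small care that $V_J$ and $V_K$ inherit separability and infinite-dimensionality from $V$.
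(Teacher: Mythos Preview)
Your proof is correct and follows essentially the same approach as the paper, which in fact does not give a formal proof at all but cites \citet[pages~56--57]{PR94} and \citet[pages~3,~5]{tomD87}, adding only a short remark after the statement sketching the transitivity argument (an isometric complex-linear isomorphism $V_J\to V_K$ is exactly a $g\in\Orth(V)$ with $K=gJg^{-1}$) and the orbit--stabilizer correspondence. Your write-up supplies the details the paper leaves to those references, and your handling of part (iii)---showing the composite is a homeomorphism by observing both directions descend from right translations---matches how the paper later argues the analogous point in lemma~\ref{l-ures-top-not-dep-j}.
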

It may be worth repeating that $\Orth(V)$ acts on $\US(V)$ by conjugation. Given two unitary structures $J$, $K$, and an isometric complex linear isomorphism $g \colon V_J \rightarrow V_K$, the complex linearity implies $K = g J g^{-1}$ and isometry implies $g \in \Orth(V)$.  The equivalence class $g \UU(V_J)$ in the quotient corresponds to the one unitary structure $K = g J g^{-1}$.  Any other $g'$ such that $K = g' J {g'}^{-1}$ is related to $g$ by $g' = g u$ for some $u \in \UU(V_J)$.

\begin{prop}\label{p-uv-ov-cano-g}
\index{US(V)@$\US(V)$!homogeneous space!continuously chosen g}
(For $K$ Near $J$ there is a Canonical $g$ with $K = g J g^{-1}$).
\citep[pages~101--102]{PR94} Given unitary structures $J$ and $K$ on $V$, if $\norm{K - J} < 2$ then there is a canonical choice of $g \in \Orth(V)$ such that $K = g J g^{-1}$ and for $K = J$, $g = \ident$.  This chosen $g$ depends continuously in the operator norm topology on $\Orth(V)$, on $J$ and $K$ in the product topology of subspace of operator norm topologies on $\US(V) \cross \US(V)$.
\end{prop}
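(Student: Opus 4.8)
The plan is to build the canonical $g$ as a canonical square root of the orthogonal operator $u := -KJ$. First I would note that, since $J^{2} = K^{2} = -\ident$ forces $J^{-1} = -J$ and $K^{-1} = -K$, one has $K - J = (u - \ident)J$, so $\norm{K - J} = \norm{u - \ident}$ because $J$ is an isometry. Thus the hypothesis $\norm{K - J} < 2$ says exactly $\norm{u - \ident} < 2$; as $u$ is unitary (orthogonal) this is equivalent to $-1 \notin \sigma(u)$, so $\sigma(u)$ lies in a compact arc of the unit circle disjoint from $(-\infty, 0]$. I would therefore let $f$ be the principal branch of the square root, holomorphic on $\CC \setminus (-\infty, 0]$, and set $g := f(u)$, using a Cauchy-integral contour symmetric under $z \mapsto \bar z$ so that $g$ commutes with the real structure $\Sigma$ and hence, by lemma \ref{l-oph-opv}, arises from a real operator on $V$ of the same norm; equivalently, $g$ is the unique norm-limit of real polynomials in $u$ with $g^{2} = u$ and $\sigma(g)$ contained in $\{\, \Re z > 0 \,\}$. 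This is a canonical choice, and when $K = J$ we get $u = -J^{2} = \ident$, hence $g = f(\ident) = \ident$, as required.

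Next I would verify $g \in \Orth(V)$ and $g J g^{-1} = K$. Orthogonality: $u^{*} = u^{-1}$, and on $\sigma(u)$ the principal square root satisfies $f(\bar z) = \overline{f(z)}$ and $f(z^{-1}) = f(z)^{-1}$, so functional calculus gives $g^{*} = f(u^{*}) = f(u^{-1}) = f(u)^{-1} = g^{-1}$. For the conjugation identity, a direct computation gives $J u J^{-1} = J(-KJ)J^{-1} = -JK$, while $u^{-1} = (-KJ)^{-1} = -J^{-1}K^{-1} = -(-J)(-K) = -JK$; hence $J$ conjugates $u$ to $u^{-1}$. By naturality of the holomorphic functional calculus under the automorphism $x \mapsto J x J^{-1}$ together with the branch identity $f(u^{-1}) = f(u)^{-1}$, this yields $J g J^{-1} = g^{-1}$, i.e.\ $g J g = J$. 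Finally, since $g^{2} = u = -KJ$ and $K^{2} = -\ident$, we have $K g^{2} = K(-KJ) = J$; so $g J g = J$ is equivalent to $g J g = K g^{2}$, i.e.\ to $g J = K g$, i.e.\ to $g J g^{-1} = K$, the desired conclusion.

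For the continuity statement, I would note that $(J, K) \mapsto u = -KJ$ is norm-continuous, so it suffices that $u \mapsto f(u)$ be norm-continuous on the open set $\{\, \norm{u - \ident} < 2 \,\}$ of orthogonal operators. Near any such $u_{0}$, with $\norm{u_{0} - \ident} = r < 2$, all $u$ in a small neighborhood satisfy $\norm{u - \ident} \le r' < 2$, so every spectrum $\sigma(u)$ lies in one fixed compact arc avoiding $(-\infty, 0]$; fixing once and for all a contour $\Gamma$ around that arc inside the domain of $f$, the map $u \mapsto f(u) = \frac{1}{2\pi i}\oint_{\Gamma} f(z)(z - u)^{-1}\, dz$ is norm-continuous there, since $u \mapsto (z - u)^{-1}$ is norm-continuous uniformly for $z \in \Gamma$. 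Composing gives the asserted joint continuity in the product of operator-norm topologies.

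The main obstacle, routine but demanding care, is the functional-calculus bookkeeping: confirming that the chosen branch produces a genuine real orthogonal operator, that it interacts correctly with the adjoint and with conjugation by $J$, and that the Cauchy contour can be taken locally uniformly so as to yield norm-continuity. All of this rests on the single quantitative input that $\norm{K - J} < 2$ is equivalent to $-1 \notin \sigma(-KJ)$, which keeps the relevant spectra uniformly clear of the branch cut.
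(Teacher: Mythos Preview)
Your proof is correct, and in fact your $g$ coincides with the paper's: the paper defines $g$ as the unitary part of the polar decomposition $\ident - KJ = g\,\abs{\ident - KJ}$, and for a normal invertible operator $\ident + u$ with $u = -KJ$ unitary, the unitary factor is exactly the principal square root $u^{1/2}$ (since $1 + e^{i\theta} = 2\cos(\theta/2)\,e^{i\theta/2}$ for $\theta \in (-\pi,\pi)$). So the two constructions agree.

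The routes differ in packaging. The paper's proof is terse: it cites \citet{PR94} for the fact that the polar part satisfies $K = gJg^{-1}$, then argues continuity via the continuity of $A \mapsto \abs{A}$ and of inversion, giving $g = (\ident - KJ)\abs{\ident - KJ}^{-1}$. Your proof is self-contained: you verify directly that $g^{2} = -KJ$ together with $JgJ^{-1} = g^{-1}$ (from $JuJ^{-1} = u^{-1}$ and the branch identity $f(z^{-1}) = f(z)^{-1}$ on the relevant arc) forces $gJg^{-1} = K$, and you handle realness and orthogonality via the symmetries $\bar f = f$ and $f(\bar z) = \overline{f(z)}$ of the principal branch. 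Your continuity argument via a locally uniform Cauchy contour is essentially the same idea as the paper's, phrased in functional-calculus rather than polar-decomposition language. Either approach works; yours has the advantage of not deferring the conjugation identity to the reference.
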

\begin{proof}
The reference proves the existence of a canonical $g$ by obtaining it as the unitary part of the polar decomposition $\ident - K J = g \abs{\ident - K J}$, where $\ident - K J$ is invertible since $\norm{K - J} < 2$.  But $\ident - K J$ is a continuous function of $J$ and $K$, inversion in a Banach algebra is continuous \citep[page~224]{LS68}, and $\abs{}$ is a continuous function of its argument \citep[page~197]{RS80}.
\end{proof}

\begin{lem}\label{l-ext-ov}
\index{O(V)@$\Orth(V)$}
\index{Sigma@$\Sigma$}
\index{J@$J$!extends to H@extends to $H$}
\index{J@$J$!commutes with Sigma@commutes with $\Sigma$}
\index{J@$J$!in U(H)@$\in \UU(H)$}
($\Orth(V)$, $\UU(H)$, $J$, and $\Sigma$; $J$ is in $\UU(H)$).
\citep[pages~56--58,~102,~109]{PR94} Given $V$'s complexification $H$ with real structure $\Sigma$, elements of the group $\Orth(V)$ extended by complex linearity to act on $H$ lie in $\UU(H)$ and commute with $\Sigma$; and conversely, all elements of $\UU(H)$ commuting with $\Sigma$ arise from $\Orth(V)$ by complexification.

Given a unitary structure $J$ on $V$, $J$ extends uniquely to a complex-linear automorphism on $H$, also denoted by $J$.  This has the properties $J^2 = - 1$, $J \Sigma = \Sigma J$, and $J \in \UU(H)$.
\end{lem}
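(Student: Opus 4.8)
The plan is to obtain the whole statement as a short corollary of Lemma~\ref{l-oph-opv} together with the defining formula for $\langle, \rangle$ on $H$ (Definition~\ref{d-v-h}), so there is very little to do beyond bookkeeping.

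For the first assertion I would start with $g \in \Orth(V)$ and its complex-linear extension $g(\alpha \tensor v) = \alpha \tensor g(v)$, which by Lemma~\ref{l-oph-opv} is already well defined, bounded with the same operator norm, invertible with inverse the extension of $g^{-1}$, and commuting with $\Sigma$. The only new point is unitarity: on the dense subspace $\CC \tensor V$ one computes directly from Definition~\ref{d-v-h} that $\langle \alpha \tensor g(v), \beta \tensor g(w)\rangle = \alpha\overline{\beta}(g(v), g(w)) = \alpha\overline{\beta}(v, w) = \langle \alpha \tensor v, \beta \tensor w\rangle$, using orthogonality of $g$ on $V$; extend by sesquilinearity to all of $\CC \tensor V$ and then by continuity to $H$, so $g \in \UU(H)$.

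For the converse, given $u \in \UU(H)$ with $u\Sigma = \Sigma u$, rearranging this to $\Sigma u^{-1} = u^{-1}\Sigma$ shows $u^{-1}$ also commutes with $\Sigma$, so by Lemma~\ref{l-oph-opv} both $u$ and $u^{-1}$ restrict to real-linear operators on the fixed-point set $V$ of $\Sigma$, mutually inverse there. Since $\langle, \rangle$ restricted to $V$ coincides with $(,)$ by Definition~\ref{d-v-h} and $u$ preserves $\langle, \rangle$, the restriction $u|_V$ lies in $\Orth(V)$; its complexification agrees with $u$ on the dense subspace $\CC \tensor V$ by complex-linearity, hence equals $u$ on $H$.

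For the second assertion, a unitary structure $J$ is in particular a real-linear operator, so Lemma~\ref{l-oph-opv} furnishes a unique complex-linear extension to $H$ commuting with $\Sigma$ (uniqueness being forced by $J(\alpha \tensor v) = \alpha \tensor J(v)$). Evaluating on generators gives $J^2(\alpha \tensor v) = \alpha \tensor J^2(v) = -(\alpha \tensor v)$, so $J^2 = -\ident_H$; in particular $J$ is an automorphism with $J^{-1} = -J$. Finally $J \in \UU(H)$ is the first assertion applied to $g = J$, and $J\Sigma = \Sigma J$ is already contained in Lemma~\ref{l-oph-opv}. I do not anticipate any genuine obstacle; the only points needing care are the density/continuity passage from $\CC \tensor V$ to $H$ and the observation that $u^{-1}$ inherits commutation with $\Sigma$.
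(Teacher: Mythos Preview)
Your proposal is correct and takes essentially the same approach as the paper: both defer the extension/restriction correspondence and commutation with $\Sigma$ to Lemma~\ref{l-oph-opv}, and both verify unitarity by the direct computation $\langle \alpha \tensor g(v), \beta \tensor g(w)\rangle = \alpha\overline{\beta}(g(v),g(w)) = \alpha\overline{\beta}(v,w)$ from Definition~\ref{d-v-h}. The only organizational difference is that the paper writes out this computation only for $J$ and leaves the general $g \in \Orth(V)$ case (and the converse) to the cited reference, whereas you do the general case first and then specialize to $J$; your version is more self-contained but otherwise identical in substance.
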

\begin{proof}
Aside from $\UU(H)$, see \ref{l-oph-opv}.  To see that $J \in \UU(H)$, recall $J \in \Orth(V)$, and then for $x, y \in V$ and $\mu, \nu \in \CC$,
\begin{align}
\langle J(\mu \tensor x), J(\nu \tensor y) \rangle &= \langle \mu \tensor J(x), \nu \tensor J(y) \rangle \notag \\
&= \mu \overline{\nu} (J(x), J(y)) \notag \\
&= \mu \overline{\nu} (x, y) = \langle \mu \tensor x, \nu \tensor y \rangle. \notag
\end{align}
\end{proof}
\begin{note}\label{n-ext-ov-uvj}
\index{U(VJ)@$\UU(V_J)$!interpretation}
(An Interpretation of $\UU(V_J)$).
We may interpret $\UU(V_J)$ as the subset of the complexified $\Orth(V)$, whose elements commute with $J$; or as the subset of $\UU(H)$, whose elements commute with $\Sigma$ and with $J$.
\end{note}

The idea of Lagrangian subspace gives our other way of defining ``half'' of $H$.
\begin{defn}\label{d-lagr-subs}
\index{L@$L$}
\index{Lagrangian subspace}
(Lagrangian Subspaces).
Given a complex Hilbert space $H$ with real structure $\Sigma$, a Lagrangian subspace $L \subset H$ is a (complex) subspace of $H$ for which $L^{\perp} = \Sigma (L)$.
\end{defn}

\begin{defn}\label{d-lagh}
\index{Lagrangian subspace!set of}
\index{Lagr(H)@$\Lagr(H, \Sigma)$}
(The Set of Lagrangian Subspaces $\Lagr(H, \Sigma)$).
Given a complex Hilbert space $H$ with real structure $\Sigma$, let $\Lagr(H, \Sigma)$ be the set of Lagrangian subspaces of $H$.
\end{defn}

\begin{lem}\label{l-lagr-subs}
\index{L@$L$}
\index{Lagrangian subspace!orthogonal decomposition of H@orthogonal decomposition of $H$}
(The Lagrangian Subspace Orthogonal Decomposition of $H$).
If $L$ is a Lagrangian subspace of a complex Hilbert space $H$ with real structure $\Sigma$, $L = \Sigma (L^{\perp})$ is closed, and $H = L \dirsum \Sigma (L)$.
\end{lem}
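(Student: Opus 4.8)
The plan is to extract everything from two standard facts: orthogonal complements in a Hilbert space are closed subspaces with $H = K \oplus K^{\perp}$ for $K$ closed, and $\Sigma$, being an isometric involution (definition \ref{d-real-structure}), is a homeomorphism of $H$ with $\Sigma^{-1} = \Sigma$.

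First I would establish the identity $L = \Sigma(L^{\perp})$. By the definition of Lagrangian subspace (definition \ref{d-lagr-subs}), $L^{\perp} = \Sigma(L)$. Applying $\Sigma$ to both sides and using $\Sigma^{2} = \ident_H$ gives $\Sigma(L^{\perp}) = \Sigma(\Sigma(L)) = L$. Next I would observe that $L^{\perp}$ is a closed subspace of $H$ (orthogonal complements always are), and since $\Sigma$ is a homeomorphism, $\Sigma(L^{\perp})$ is closed as well; because $\Sigma$ is additive and antilinear it carries the subspace $L^{\perp}$ to a (complex) subspace. Hence $L = \Sigma(L^{\perp})$ is a closed subspace of $H$, even though closedness was not assumed in definition \ref{d-lagr-subs}.

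Finally, with $L$ now known to be a closed subspace, the Hilbert space orthogonal decomposition theorem gives $H = L \oplus L^{\perp}$. Substituting $L^{\perp} = \Sigma(L)$ yields $H = L \oplus \Sigma(L)$, which is the remaining assertion.

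I do not expect a genuine obstacle here; the only point requiring a moment's care is that $L$ is \emph{a priori} just a complex subspace, so its closedness must be deduced (from the identity $L = \Sigma(L^{\perp})$ together with closedness of orthogonal complements and continuity of $\Sigma$) rather than assumed, before invoking the orthogonal decomposition.
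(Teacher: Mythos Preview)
Your proof is correct; the paper states this lemma without proof, treating it as routine, and your argument is exactly the standard one that the omitted justification would consist of.
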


\begin{defn}\label{d-eige-spac}
\index{HTpmi@$H_{T,\pm i}$}
(Notation for Eigenspaces).
% removed ~ to cure overfull box
\citet[page 58]{PR94} Given an operator $T$ on a complex Hilbert space $H$, denote the eigenspace of $T$ in $H$ with eigenvalue $\lambda$, by $H_{T, \lambda}$.  In particular, if $T^2 = -1$, denote the $+i$ eigenspace by $H_{T,+i}$ and the $-i$ eigenspace by $H_{T,-i}$.
\end{defn}

\begin{lem}\label{l-h-split}
\index{HJpmi@$H_{J,\pm i}$}
\index{unitary structure!orthogonal decomposition of H@orthogonal decomposition of $H$}
(The Unitary Structure Orthogonal Decomposition of $H$).
\citet[page~58]{PR94} Given $V$'s complexification $H$ with real structure $\Sigma$, and a unitary structure $J$ on $V$, complex linearly extended to $H$, $H = H_{J,+i} \dirsum H_{J,-i}$, an orthogonal direct sum of Lagrangian subspaces.
\end{lem}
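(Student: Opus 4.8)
The plan is to produce the orthogonal decomposition explicitly from spectral projections for $J$, and then to read off the Lagrangian property from the relations $J^{2} = -1$, $J^{*} = -J$, and $J\Sigma = \Sigma J$ furnished by lemma~\ref{l-ext-ov}.

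First I would set $P_{\pm} = \tfrac12(\ident \mp iJ) \in \B(H)$. Using only $J^{2} = -1$, one checks the algebraic identities $P_{+} + P_{-} = \ident$, $P_{\pm}^{2} = P_{\pm}$, and $P_{+}P_{-} = P_{-}P_{+} = 0$, so $P_{\pm}$ are complementary idempotents; since $J$ is unitary on $H$ with $J^{2} = -1$ we have $J^{*} = J^{-1} = -J$, hence $(iJ)^{*} = iJ$ and each $P_{\pm}$ is self-adjoint, so an orthogonal projection. In particular $\im P_{\pm}$ is closed. A one-line computation (again only from $J^{2} = -1$) gives $JP_{\pm} = \pm i\,P_{\pm}$, so $\im P_{+} \subseteq H_{J,+i}$ and $\im P_{-} \subseteq H_{J,-i}$; and since $P_{\pm}$ acts as the identity on $H_{J,\pm i}$, in fact $\im P_{+} = H_{J,+i}$ and $\im P_{-} = H_{J,-i}$. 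Because $P_{+} + P_{-} = \ident$ with $P_{\pm}$ orthogonal projections annihilating each other, this yields the orthogonal direct sum $H = H_{J,+i} \dirsum H_{J,-i}$; in particular $H_{J,+i}^{\perp} = H_{J,-i}$ and $H_{J,-i}^{\perp} = H_{J,+i}$.

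Next I would establish $\Sigma(H_{J,+i}) = H_{J,-i}$, and symmetrically $\Sigma(H_{J,-i}) = H_{J,+i}$. If $v \in H_{J,+i}$, so $Jv = iv$, then since $\Sigma$ is antilinear and commutes with $J$ we get $J\Sigma v = \Sigma(Jv) = \Sigma(iv) = -i\,\Sigma v$, i.e. $\Sigma v \in H_{J,-i}$; thus $\Sigma(H_{J,+i}) \subseteq H_{J,-i}$, and applying $\Sigma$ once more, using $\Sigma^{2} = \ident$, gives equality. Combining the two paragraphs, $H_{J,+i}^{\perp} = H_{J,-i} = \Sigma(H_{J,+i})$ and $H_{J,-i}^{\perp} = H_{J,+i} = \Sigma(H_{J,-i})$, which is precisely the condition of definition~\ref{d-lagr-subs}: both summands are Lagrangian.

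There is no genuine difficulty here; the one place to be careful is the bookkeeping of eigenvalues under the antilinear map $\Sigma$ (it swaps the $+i$ and $-i$ eigenspaces rather than preserving them), together with the observation that self-adjointness of $P_{\pm}$ --- the step upgrading ``complementary idempotents'' to an honest orthogonal decomposition into closed subspaces --- uses the unitarity of the complexified $J$ from lemma~\ref{l-ext-ov}, not merely $J^{2} = -1$.
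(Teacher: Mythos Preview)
Your proof is correct and takes essentially the same approach as the paper: both produce the decomposition via $w = \tfrac12(w - iJw) + \tfrac12(w + iJw)$ and verify the Lagrangian property by showing $\Sigma$ swaps the $\pm i$ eigenspaces. The only cosmetic difference is that you deduce orthogonality from the self-adjointness of the projections $P_{\pm}$ (via $J^{*} = -J$), whereas the paper verifies $H_{J,+i} \perp H_{J,-i}$ by a direct inner-product computation using unitarity of $J$; these are equivalent formulations of the same fact.
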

\begin{proof}
If $w \in H$, $w = \frac{1}{2} (w - i J w) + \frac{1}{2} (w + i J w)$, where $\frac{1}{2} (w \mp i J w) \in H_{J,\pm i}$. The subspaces $H_{J,\pm i}$ are closed because $J$ is bounded, and $\Sigma (H_{J,\pm i}) = H_{J,\mp i}$.  Letting $w_1, w_2 \in H_{J,+i},$ since by lemma \ref{l-ext-ov} $J \in \UU(H)$, $\langle w_1, \Sigma (w_2) \rangle = \langle J(w_1), J(\Sigma (w_2)) \rangle = \langle i w_1, -i \Sigma (w_2) \rangle =  i \langle w_1, \Sigma (w_2) \rangle = 0$, and so $H_{J,+i} \perp H_{J,-i}$.
\end{proof}

Definition \ref{d-eige-spac} and lemma \ref{l-h-split} give from a unitary structure $J$ on $V$, a Lagrangian subspace of $H$.
\begin{defn}\label{d-lj}
\index{LJ@$L_J$}
(The Lagrangian Subspace Corresponding to a Unitary Structure).
Given a unitary structure $J$ on $V$, let $L_J = H_{J,+i} \subset H$ denote the Lagrangian subspace corresponding to $J$; its $+i$ eigenspace.
\end{defn}
Many times, we will refer just to a unitary structure $J$ and its corresponding $+i$ eigenspace $L$, without the subscript.

Conversely, given a Lagrangian subspace $L$ of $H$, the space $V$ can be recovered as $V = \{ w + \Sigma(w) \st w \in L \}$; and $J$ can be recovered also.
\begin{lem}\label{l-lagr-give-unit-stru}
\index{Lagrangian subspace!unitary structure}
(The Unitary Structure Corresponding to a Lagrangian Subspace).
\citet[page~59]{PR94} Given $V$'s complexification $H$ with real structure $\Sigma$, a Lagrangian subspace $L \subset H$ defines a unitary structure $J$ on $V$, as follows.  Define the complex-linear automorphism $J$ on $H$ as multiplication by $i$ on $L$ and by $-i$ on $\Sigma (L)$.  Then $J$ restricts to a unitary structure on $V$. Also, $H_{J,+i} = L$ and $H_{J,-i} = \Sigma (L)$.
\end{lem}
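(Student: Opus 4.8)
The plan is to use the orthogonal decomposition $H = L \dirsum \Sigma(L)$ furnished by lemma \ref{l-lagr-subs} to \emph{define} $J$ unambiguously on all of $H$, then to check in turn that $J$ is unitary, that it commutes with $\Sigma$ (hence restricts to $V$), that the restriction is a unitary structure on $V$, and finally that the eigenspaces come out as claimed. Nothing deep is involved; the work is organizing the verifications.

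First I would note that since $L$ is Lagrangian, lemma \ref{l-lagr-subs} says $L$ is closed and $H = L \dirsum \Sigma(L)$ is an orthogonal direct sum, so every $w \in H$ has a unique decomposition $w = w_1 + w_2$ with $w_1 \in L$, $w_2 \in \Sigma(L)$. Setting $J(w) = i w_1 - i w_2$ then gives a well-defined complex-linear map with $J^2 = -\ident_H$, so $J$ is invertible with $J^{-1} = -J$; and because the summands are orthogonal, $\langle J(w), J(w) \rangle = \langle i w_1, i w_1 \rangle + \langle -i w_2, -i w_2 \rangle = \langle w_1, w_1 \rangle + \langle w_2, w_2 \rangle = \langle w, w \rangle$, so $J \in \UU(H)$.

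Next I would verify that $J$ commutes with $\Sigma$. Since $\Sigma$ is antilinear and interchanges $L$ and $\Sigma(L)$, for $w_1 \in L$ one gets $\Sigma(J(w_1)) = \Sigma(i w_1) = -i \Sigma(w_1) = J(\Sigma(w_1))$, using $\Sigma(w_1) \in \Sigma(L)$; the same computation on $\Sigma(L)$ handles the other summand, and linearity finishes it. By lemma \ref{l-oph-opv} (or lemma \ref{l-ext-ov}), $J$ therefore restricts to a real-linear operator on the fixed-point set $V$ of $\Sigma$, with operator norm preserved, and the restriction still satisfies $(J|_V)^2 = -\ident$. Restricting $\langle J(x), J(y) \rangle = \langle x, y \rangle$ to $x, y \in V$, where $\langle , \rangle$ agrees with $(,)$, shows $J|_V$ is orthogonal, i.e. $J|_V \in \Orth(V)$; together with $(J|_V)^2 = -\ident$ this is exactly the statement that $J|_V$ is a unitary structure on $V$.

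Finally, for the eigenspaces: $L \subseteq H_{J,+i}$ and $\Sigma(L) \subseteq H_{J,-i}$ by construction, and conversely if $w = w_1 + w_2 \in H_{J,+i}$ then $i w_1 - i w_2 = i w_1 + i w_2$ forces $w_2 = 0$, so $H_{J,+i} = L$, and symmetrically $H_{J,-i} = \Sigma(L)$; alternatively one simply observes that this $J$ inverts the construction $J \mapsto L_J = H_{J,+i}$ of definition \ref{d-lj}, making the eigenspace identities immediate. The one point that needs genuine care — and hence the ``main obstacle'', such as it is — is the well-definedness and boundedness of $J$, which is exactly why the closedness of $L$ and the orthogonal splitting of lemma \ref{l-lagr-subs} must be invoked at the outset; after that, every assertion is a routine check.
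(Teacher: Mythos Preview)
Your proof is correct and follows essentially the same approach as the paper: both use the orthogonal splitting $H = L \dirsum \Sigma(L)$ to define $J$, verify $J^2 = -\ident$, check that $J$ commutes with $\Sigma$ (hence restricts to $V$ via lemma \ref{l-oph-opv}), and use orthogonality of the summands to see $J \in \UU(H)$ so that the restriction lies in $\Orth(V)$. Your version is simply more explicit, adding direct computations where the paper's proof is terse, and you also spell out the eigenspace verification that the paper leaves implicit.
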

\begin{proof}
$J^2 = -1$.  Since $H_{J,\pm i}$ are conjugate, $J$ commutes with $\Sigma$ and so by lemma \ref{l-oph-opv} restricts to an automorphism of $V$, also called $J$. Since $H_{J,\pm i}$ are orthogonal, by lemma \ref{l-ext-ov} $J$ on $H$ is in $\UU(H)$; hence the restriction of $J$ is in $\Orth(V)$.
\end{proof}

\begin{prop}\label{p-bij-usv-lagh}
\index{Lagrangian subspace!unitary structure}
(The Bijection between Unitary Structures and Lagrangian Subspaces).
Given $V$'s complexification $H$ with real structure $\Sigma$,
there is a naturally given bijection $\US(V) \leftrightarrow \Lagr(H, \Sigma)$, a unitary structure $J$ corresponding to its $+i$ eigenspace $L_J$.
\end{prop}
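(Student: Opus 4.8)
The plan is to exhibit two mutually inverse maps and check that each composite is the identity. Definition \ref{d-lj} together with lemma \ref{l-h-split} already gives a well-defined map $\alpha \colon \US(V) \rightarrow \Lagr(H, \Sigma)$ sending a unitary structure $J$ to its $+i$ eigenspace $L_J = H_{J,+i}$; the orthogonal decomposition $H = H_{J,+i} \dirsum H_{J,-i}$ of that lemma is exactly what shows $L_J$ is Lagrangian, so $\alpha$ lands where claimed. In the other direction, lemma \ref{l-lagr-give-unit-stru} gives a well-defined map $\beta \colon \Lagr(H, \Sigma) \rightarrow \US(V)$ sending a Lagrangian subspace $L$ to the unitary structure $J_L$ acting as multiplication by $i$ on $L$ and by $-i$ on $\Sigma(L)$; here one uses lemma \ref{l-lagr-subs}, that $L$ is closed and $H = L \dirsum \Sigma(L)$, so that $J_L$ is a genuine bounded operator on $H$ that, as lemma \ref{l-lagr-give-unit-stru} records, restricts to a unitary structure on $V$.

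First I would verify $\beta \circ \alpha = \ident_{\US(V)}$. Fix $J \in \US(V)$ and set $L = L_J = H_{J,+i}$. By construction $J_L$ is multiplication by $i$ on $L = H_{J,+i}$ and by $-i$ on $\Sigma(L) = \Sigma(H_{J,+i}) = H_{J,-i}$, using the identity $\Sigma(H_{J,\pm i}) = H_{J,\mp i}$ from the proof of lemma \ref{l-h-split}. Since $H = H_{J,+i} \dirsum H_{J,-i}$ and $J$ itself, by the very definition of eigenspace, is multiplication by $\pm i$ on those two summands, $J_L$ and $J$ agree on $H$, hence on $V$, so $\beta(\alpha(J)) = J$.

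Then I would verify $\alpha \circ \beta = \ident_{\Lagr(H, \Sigma)}$. Fix $L \in \Lagr(H, \Sigma)$ and let $J_L$ be as above. The final assertion of lemma \ref{l-lagr-give-unit-stru} is precisely that $H_{J_L,+i} = L$, which says exactly $\alpha(\beta(L)) = L$.

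Since $\alpha$ and $\beta$ are two-sided inverses, $\alpha$ is the asserted bijection $\US(V) \leftrightarrow \Lagr(H, \Sigma)$, $J \leftrightarrow L_J$. I expect essentially no obstacle: all the analytic content is already packaged into Definition \ref{d-lj} and lemmas \ref{l-h-split} and \ref{l-lagr-give-unit-stru}, and the only thing to watch is that the two constructions are genuinely inverse rather than circular — which is transparent, since one direction passes through eigenspaces while the other prescribes an operator by its action on the summands $L$ and $\Sigma(L)$. That the bijection is \emph{natural} is simply the observation that neither construction involves any arbitrary choice.
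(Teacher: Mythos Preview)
Your proposal is correct and takes essentially the same approach as the paper, which in fact offers no explicit proof at all: the proposition is stated as an immediate consequence of definition \ref{d-lj}, lemma \ref{l-h-split}, and lemma \ref{l-lagr-give-unit-stru}. You have simply written out the verification that those two constructions are mutual inverses, which is exactly what the paper leaves implicit.
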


\begin{lem}\label{l-ov-acts-lagr}
\index{Lagrangian subspace!action of Orth(V) on@action of $\Orth(V)$ on}
(The Action of $\Orth(V)$ on $\Lagr(H, \Sigma)$).
Given $V$'s complexification $H$ with real structure $\Sigma$, $g \in \Orth(V)$, and two unitary structures $J$, $K$ on $V$ with corresponding Lagrangian subspaces $L_J, L_K \in \Lagr(H, \Sigma)$, identifying $g$ with its complex-linear extension to $H$, $L_K = g L_J \Leftrightarrow K = g J g^{-1}$.
\end{lem}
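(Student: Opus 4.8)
The plan is to translate everything into statements about the $\pm i$ eigenspaces of $J$ and $K$, using the correspondence $L_J = H_{J,+i}$, $L_K = H_{K,+i}$ of definition \ref{d-lj}, and then to exploit the fact that $g$, extended complex-linearly to $H$, lies in $\UU(H)$ and commutes with $\Sigma$ (lemmas \ref{l-oph-opv} and \ref{l-ext-ov}); in particular $g$ carries Lagrangian subspaces to Lagrangian subspaces and satisfies $g\,\Sigma(L) = \Sigma(g L)$.

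For the direction $K = gJg^{-1} \Rightarrow L_K = g L_J$, I would simply observe that conjugation by $g$ carries the $\lambda$-eigenspace of $J$ to the $\lambda$-eigenspace of $gJg^{-1}$: if $Jw = \lambda w$ then $(gJg^{-1})(gw) = gJw = \lambda\,gw$, and applying the same argument to $g^{-1}$ in place of $g$ gives the reverse inclusion, so $g\,H_{J,\lambda} = H_{gJg^{-1},\lambda}$. Taking $\lambda = +i$ and using the hypothesis $K = gJg^{-1}$ gives $g L_J = g\,H_{J,+i} = H_{gJg^{-1},+i} = H_{K,+i} = L_K$.

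For the converse $L_K = g L_J \Rightarrow K = gJg^{-1}$, I would start from $g\,H_{J,+i} = H_{K,+i}$, apply $\Sigma$, and use $g\Sigma = \Sigma g$ together with $\Sigma\,H_{J,+i} = H_{J,-i}$ and $\Sigma\,H_{K,+i} = H_{K,-i}$ (lemma \ref{l-h-split}) to get $g\,H_{J,-i} = H_{K,-i}$ as well. Then both $K$ and $gJg^{-1}$ are complex-linear operators on $H$ that act as multiplication by $+i$ on $H_{K,+i}$ and by $-i$ on $H_{K,-i}$: indeed $J$ is $\pm i$ on $H_{J,\pm i}$ by definition, so $gJg^{-1}$ is $\pm i$ on $g\,H_{J,\pm i} = H_{K,\pm i}$, while $K$ is $\pm i$ on $H_{K,\pm i}$ by lemma \ref{l-lagr-give-unit-stru}. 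Since $H = H_{K,+i}\dirsum H_{K,-i}$ (lemma \ref{l-lagr-subs}), these two operators agree on all of $H$, hence $K = gJg^{-1}$ as operators on $H$ and therefore, restricting to the fixed-point set $V$ of $\Sigma$, as operators on $V$.

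There is no serious obstacle here; the only point that needs care is to invoke, at the right moment, that $g$ commutes with $\Sigma$, so that it intertwines complex conjugation and thus sends $H_{J,-i}$ to $H_{K,-i}$ once it is known to send $H_{J,+i}$ to $H_{K,+i}$ — without this one could not conclude that $gJg^{-1}$ and $K$ agree on the $-i$ summand, and hence on all of $H$. Everything else is a direct computation with eigenvectors.
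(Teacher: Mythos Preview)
Your proof is correct and follows exactly the approach the paper indicates: the paper's proof is the single sentence ``This can be seen using the fact that the Lagrangian subspaces are the $+i$ eigenspaces of the corresponding unitary structures,'' and you have simply spelled out that eigenspace argument in full, including the point that $g$ commutes with $\Sigma$ so that the $-i$ eigenspaces match as well.
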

\begin{proof}
This can be seen using the fact that the Lagrangian subspaces are the $+i$ eigenspaces of the corresponding unitary structures.
\end{proof}

The following proposition connects the two ways we have given of finding ''half'' of $H$.  Some results we will use from \citet{PR94} are in terms of $V_J$, but we most often will think in terms of $L_J$; thus it is necessary to connect the two ways of thinking.
\begin{prop}\label{p-vj-hp-isom}
\index{VJ@$V_J$!isomorphic to LJ@isomorphic to $L_J$}
\index{LJ@$L_J$!isomorphic to VJ@isomorphic to $V_J$}
($V_J$ is Isomorphic to $L_J$).
\citet[page~60]{PR94} Given $V$'s complexification $H$ with real structure $\Sigma$, and a unitary structure $J$ on $V$, there is a naturally given isomorphism $\phi$ of the complex Hilbert spaces $V_J$ and $L_J$, where $L_J$, the $+i$ eigenspace of $J$, obtains its Hilbert space structure as a closed subspace of $H$:
\begin{align}
\phi \colon V_J &\rightarrow L_J \notag \\
\phi \colon v &\mapsto \frac{1}{\sqrt{2}} (v - i J v) \notag \\
\phi^{-1} \colon w &\mapsto \frac{1}{\sqrt{2}} (w + \Sigma(w)). \notag
\end{align}
\end{prop}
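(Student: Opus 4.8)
The plan is to verify the four properties that together make $\phi$ an isomorphism of complex Hilbert spaces, all by direct computation from the definitions: that $\phi$ actually maps $V$ into $L_J = H_{J,+i}$; that it is complex-linear when $V$ carries the complex structure of $V_J$ (where multiplication by $i$ is application of $J$); that it is isometric; and that the stated formula $w \mapsto \tfrac{1}{\sqrt{2}}(w + \Sigma(w))$ is a two-sided inverse. The phrase ``naturally given'' just records that $\phi$ is written down without auxiliary choices, hence is canonical.

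First I would check that $\phi(v) \in H_{J,+i}$: since $J^2 = -1$, $J(v - iJv) = Jv - iJ^2 v = Jv + iv = i(v - iJv)$, so $v - iJv$ lies in the $+i$ eigenspace of the complex-linear extension of $J$, which is $L_J$ by definition \ref{d-lj}. Next, $\phi$ is manifestly real-linear, and it respects the complex structure of $V_J$ because $\phi(Jv) = \tfrac{1}{\sqrt 2}(Jv - iJ^2v) = \tfrac{1}{\sqrt 2}(Jv + iv) = \tfrac{i}{\sqrt 2}(v - iJv) = i\,\phi(v)$; since multiplication by $i$ in $V_J$ is by definition application of $J$ (definition \ref{d-unit-stru}), $\phi$ is complex-linear. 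For isometry I would expand $\langle \phi(v), \phi(v)\rangle$ in $H$ using the conventions of assumption \ref{a-herm-innr-prod}, noting $v, Jv \in V \subset H$ so that $\langle\cdot,\cdot\rangle$ restricts to $(\cdot,\cdot)$ on them: the two cross terms vanish because $(v, Jv) = 0$ --- which follows from $J \in \Orth(V)$ with $J^2 = -1$, since then $(v,Jv) = (Jv, J^2 v) = -(Jv,v) = -(v,Jv)$ --- while $\langle -iJv, -iJv\rangle = (Jv,Jv) = (v,v)$, so $\langle \phi(v),\phi(v)\rangle = \tfrac{1}{2}\bigl((v,v) + (v,v)\bigr) = (v,v) = \langle v, v\rangle_J$. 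Polarization then upgrades this to preservation of the full Hermitian inner product, so $\phi$ is an isometric complex-linear injection.

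Finally I would establish surjectivity by exhibiting the inverse. Put $\psi(w) = \tfrac{1}{\sqrt 2}(w + \Sigma(w))$; since $\Sigma^2 = \ident_H$, one has $\Sigma(w + \Sigma(w)) = w + \Sigma(w)$, so $\psi(w)$ is $\Sigma$-fixed and hence lies in $V$. For $w \in L_J = H_{J,+i}$ we have $Jw = iw$, and since $\Sigma(H_{J,+i}) = H_{J,-i}$ (established in the proof of lemma \ref{l-h-split}), also $J\Sigma(w) = -i\Sigma(w)$; combining these with the antilinearity of $\Sigma$, a short computation gives $\phi(\psi(w)) = w$. Conversely, for $v \in V$, using that $v$ and $Jv$ are $\Sigma$-fixed (so $\Sigma(\phi(v)) = \tfrac{1}{\sqrt 2}(v + iJv)$), we get $\psi(\phi(v)) = \tfrac{1}{2}\bigl((v - iJv) + (v + iJv)\bigr) = v$. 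Hence $\phi$ is a bijective complex-linear isometry, i.e. an isomorphism of the complex Hilbert spaces $V_J$ and $L_J$, with the asserted inverse. I do not expect a real obstacle here: the argument is a chain of elementary identities, and the only thing demanding care is consistent bookkeeping of conventions --- the complex structure $iv = Jv$ on $V_J$, the linear-in-the-first-slot convention for $\langle\cdot,\cdot\rangle$, the vanishing of $(v,Jv)$, and the sign of $J$ on $H_{J,-i}$.
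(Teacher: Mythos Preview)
Your proof is correct and complete. The paper does not supply its own proof of this proposition, instead citing \cite{PR94}; your direct verification of the four required properties (range, complex linearity, isometry, and the explicit inverse) is the standard approach and is exactly what one would expect here.
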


\begin{prop}\label{p-vj-hp-isom-op}
\index{VJ@$V_J$!isomorphic to LJ@isomorphic to $L_J$}
\index{LJ@$L_J$!isomorphic to VJ@isomorphic to $V_J$}
(The Relation of Operators on $V_J$ to Operators on $L_J$).
Given $Z_H \in \UU(H)$ that commutes with $\Sigma$ and commutes with $J$, the corresponding $V_J$ complex-linear map $Z_V = \phi^{-1} Z_H \circ \phi \in \Orth(V)$ is also given by $Z_V = {Z_H}_{|V}$.

Given $Z_H \in \UU(H)$ that commutes with $\Sigma$ but anticommutes with $J$, the corresponding $V_J$ complex-antilinear map $Z_V = \phi^{-1} \circ \Sigma \circ Z_H \circ \phi \in \Orth(V)$ is also given by $Z_V = {Z_H}_{|V}$.

Conversely, given $Z_V \in \UU(V_J) \subset \Orth(V)$, i.e. that commutes with $J$, $\phi \circ Z_V \circ \phi^{-1}$ equals the restriction to $L_J$ of the complex-linear extension of $Z_V$ to an element $Z_H \in \UU(H)$ that commutes with $\Sigma$ and $J$.

Again conversely, given $Z_V \in \Orth(V)$ that anticommutes with $J$, $\Sigma \circ \phi \circ Z_V \circ \phi^{-1}$ equals the restriction to $L_J$ of the complex-linear extension of $Z_V$ to an element $Z_H \in \UU(H)$ that commutes with $\Sigma$ and anticommutes with $J$.
\end{prop}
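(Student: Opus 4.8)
The plan is to verify all four assertions by direct computation from the explicit formulas $\phi(v) = \tfrac{1}{\sqrt{2}}(v - iJv)$ and $\phi^{-1}(w) = \tfrac{1}{\sqrt{2}}(w + \Sigma(w))$ of proposition \ref{p-vj-hp-isom}, using three facts already available: that $V$ is exactly the fixed-point set of $\Sigma$, so an operator commuting with $\Sigma$ preserves $V$; that $J$ commutes with $\Sigma$ on $H$ (lemma \ref{l-ext-ov}); and that an operator on $H$ commuting with $J$ preserves each eigenspace $H_{J,\pm i}$ while one anticommuting with $J$ interchanges them.

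For the first assertion I would take $v \in V$ and compute $\phi^{-1}(Z_H(\phi(v)))$. Complex-linearity of $Z_H$ turns $\phi(v)$ into $\tfrac{1}{\sqrt{2}}(Z_H v - i Z_H J v)$; commutativity of $Z_H$ with $J$ rewrites this as $\tfrac{1}{\sqrt{2}}(Z_H v - i J Z_H v)$; and since $Z_H$ commutes with $\Sigma$, $Z_H v \in V$, so this last expression is precisely $\phi(Z_H v)$. Applying $\phi^{-1}$ gives $Z_H v = Z_H|_V(v)$, and $Z_H|_V \in \Orth(V)$ by lemma \ref{l-oph-opv} together with unitarity of $Z_H$. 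The second assertion is the same computation with an extra $\Sigma$: anticommutativity of $Z_H$ with $J$ flips the sign to $\tfrac{1}{\sqrt{2}}(Z_H v + i J Z_H v)$ before $\Sigma$ is applied, and antilinearity of $\Sigma$ together with $\Sigma Z_H = Z_H \Sigma$, $\Sigma J = J \Sigma$, and $\Sigma v = v$ turns $\Sigma(\tfrac{1}{\sqrt{2}}(Z_H v + i J Z_H v))$ back into $\phi(Z_H v)$; one then checks $Z_V(Jv) = -J Z_V(v)$, so $Z_V$ is complex-antilinear as a map on $V_J$.

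For the two converse assertions, given $Z_V \in \Orth(V)$ commuting (resp.\ anticommuting) with $J$, I extend it complex-linearly to $Z_H$ on $H = \CC \tensor V$; by lemma \ref{l-ext-ov} $Z_H \in \UU(H)$ and commutes with $\Sigma$, and the relation $Z_V J = \pm J Z_V$ on $V$ propagates to $Z_H J = \pm J Z_H$ on $H$ by complex-linearity. In the commuting case $Z_H$ preserves $L_J = H_{J,+i}$, so $Z_H|_{L_J}$ is defined; the already-proven first assertion gives $\phi^{-1} Z_H \phi = Z_V$, hence $\phi Z_V \phi^{-1} = Z_H|_{L_J}$. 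In the anticommuting case $Z_H$ sends $L_J$ into $H_{J,-i} = \Sigma(L_J)$; using the second assertion, $Z_V = \phi^{-1}\Sigma Z_H \phi$, and peeling off $\phi$ and using $\Sigma^2 = \ident$ yields $\Sigma \phi Z_V \phi^{-1} = Z_H|_{L_J}$, as claimed.

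None of the individual steps is deep; the main point to watch is the bookkeeping of domains and codomains in the antilinear case — tracking that $\phi^{-1}$ is defined only on $L_J$, that $\Sigma$ interchanges $L_J$ and $\Sigma(L_J)$, and that in the anticommuting case $Z_H$ lands in $\Sigma(L_J)$ rather than $L_J$ — so that the compositions asserted in the statement are well-defined and the cancellations $\phi\phi^{-1} = \ident_{L_J}$ and $\Sigma^2 = \ident$ are applied on the correct spaces.
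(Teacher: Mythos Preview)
Your proposal is correct and takes essentially the same approach as the paper: direct computation from the explicit formula $\phi(v) = \tfrac{1}{\sqrt{2}}(v - iJv)$ together with the commutation relations among $Z_H$, $J$, and $\Sigma$. The paper's own proof is much terser (it simply asserts ``calculations show that $Z_H \circ \phi = \phi \circ Z_H$'' in the commuting case and the analogous identity with $\Sigma$ inserted in the anticommuting case, then says the converses follow from commutation properties and the values of $J$ on $L_J$ and $\overline{L_J}$), so your writeup in effect fills in exactly the calculations the paper elides.
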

\begin{proof}
If $Z_H$ commutes with $J$, then $Z_H \colon L_J \rightarrow L_J$.  Calculations show that $Z_H \circ \phi = \phi \circ Z_H$.  If $Z_H$ anticommutes with $J$, then $Z_H \colon L_J \rightarrow \Sigma (L_J)$, and then that $\Sigma \circ Z_H \circ \phi = \phi \circ Z_H$.  Calculations using commutation properties and values of $J$ on $L_J$ and $\overline{L_J}$, also show the converses.
\end{proof}

\begin{lem}\label{l-hp-proj}
\index{LJ@$L_J$!projection}
(Orthogonal Projection onto $L_J$).
\citet[pages~59--60]{PR94} Given $V$'s complexification $H$ with real structure $\Sigma$, and a unitary structure $J$ on $V$, another way to specify $L_J$ is by the orthogonal projection operator on $H$ whose image it is:
\[
P_J = \frac{1}{2} (\ident - i J) \Rightarrow
P_J^2 = \ident \text{, } P^{*} = P \text{, } P_J + P_{-J} = P_J + \Sigma P_J \Sigma = \ident  \text{, and } P_J P_{-J} = 0 \notag
\]
Using $L = L_J$, we also call these $P_L = P_J$ and $P_{\overline{L}} = P_{-J}$.
\end{lem}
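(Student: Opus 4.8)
The plan is to separate the statement into two independent pieces: the purely algebraic identities satisfied by $P_J = \tfrac12(\ident - iJ)$, and the geometric identification of its image with $L_J$. (I read the displayed ``$P_J^2 = \ident$'' and ``$P^{*}=P$'' as the intended $P_J^2 = P_J$ and $P_J^{*} = P_J$, which is what I will verify.) The only facts about $J$ I will use are that, after complex-linear extension to $H$, one has $J^2 = -1$, $J\Sigma = \Sigma J$, and $J \in \UU(H)$ (all from lemma \ref{l-ext-ov}); in particular, since $J$ is unitary with $J^2 = -1$, $J^{*} = J^{-1} = -J$.

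First I would dispatch the algebraic identities by direct computation. Idempotency: $(\ident - iJ)^2 = \ident - 2iJ + i^2 J^2 = \ident - 2iJ + \ident = 2(\ident - iJ)$, so $P_J^2 = \tfrac14\cdot 2(\ident - iJ) = P_J$. Self-adjointness: taking adjoints and using $\overline{(-i)} = i$ together with $J^{*} = -J$ gives $P_J^{*} = \tfrac12(\ident + i J^{*}) = \tfrac12(\ident - iJ) = P_J$. Next, $P_{-J} = \tfrac12(\ident - i(-J)) = \tfrac12(\ident + iJ)$, whence $P_J + P_{-J} = \tfrac12(\ident - iJ) + \tfrac12(\ident + iJ) = \ident$ and $P_J P_{-J} = \tfrac14(\ident - iJ)(\ident + iJ) = \tfrac14\bigl(\ident - (iJ)^2\bigr) = \tfrac14(\ident - i^2 J^2) = \tfrac14(\ident - \ident) = 0$. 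For $\Sigma P_J \Sigma = P_{-J}$, I would use antilinearity of $\Sigma$ and $\Sigma J = J\Sigma$: for $w \in H$, $\Sigma(iJ)\Sigma(w) = \Sigma(iJ\Sigma w) = \overline{i}\,\Sigma(J\Sigma w) = -i\,J\Sigma^2 w = -iJw$, so $\Sigma(iJ)\Sigma = -iJ$ and hence $\Sigma P_J \Sigma = \tfrac12(\ident + iJ) = P_{-J}$.

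It remains to identify $\im P_J$. Since $P_J$ is a self-adjoint idempotent, it is the orthogonal projection onto $\im P_J$, with $\ker P_J = (\im P_J)^{\perp}$. In the proof of lemma \ref{l-h-split} it is shown that each $w \in H$ decomposes as $w = \tfrac12(w - iJw) + \tfrac12(w + iJw)$ with $\tfrac12(w - iJw) \in H_{J,+i}$, $\tfrac12(w + iJw) \in H_{J,-i}$, and $H = H_{J,+i} \dirsum H_{J,-i}$; but $P_J w = \tfrac12(w - iJw)$ is exactly the first summand. Therefore $P_J$ is precisely the orthogonal projection of $H$ onto $H_{J,+i} = L_J$ (and $P_{-J}$ the projection onto $H_{J,-i} = \Sigma(L_J)$, consistent with the identities above). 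Equivalently, one checks directly that $P_J$ acts as the identity on $H_{J,+i}$ and annihilates $H_{J,-i}$, and concludes via the same orthogonal splitting.

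I do not expect any genuine obstacle here; everything is a short computation once $J^{*} = -J$ and $J\Sigma = \Sigma J$ are in hand. The only place demanding a little care is the bookkeeping of signs when conjugating the scalar $-i$ past the antilinear map $\Sigma$ in the verification that $\Sigma P_J \Sigma = P_{-J}$.
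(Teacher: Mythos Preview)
Your proposal is correct; the paper provides no proof of this lemma beyond the citation to \citet[pages~59--60]{PR94}, so your direct verification of the idempotent, self-adjoint, complementarity, and $\Sigma$-conjugation identities, together with the identification $\im P_J = H_{J,+i}$ via the decomposition already established in lemma \ref{l-h-split}, is exactly what is needed. You are also right that the displayed ``$P_J^2 = \ident$'' is a typo for $P_J^2 = P_J$.
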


\section{Fock Representations and Their Equivalence}\label{s-fock-rep-equi}

Given a Lagrangian subspace $L \subset H$, we will represent elements of the $C^{*}$-Clifford algebra $\Cl(V)$ as bounded operators on the Fock space $\F(L)$, the exterior algebra of $L$ completed as a Hilbert space.
\begin{defn}\label{d-fock-spac}
\index{L@$L$}
\index{F(L)@$\F(L)$}
\index{CL(V)@$\Cl(V)$}
\index{Fock space}
(Fock Spaces).
\citep[pages~61--62]{PR94}
\begin{enumerate}
    \item Define the inner product on $\Lambda^0 L = \CC$ by $z_1, z_2 \in \CC \Rightarrow \langle z_1, z_2 \rangle = z_1 \overline{z_2}$.
    \item For $k > 0$ define the inner product on $\Lambda^k L$ on decomposables by \\
        $x_1, y_1, \dots, x_k, y_k \in L \Rightarrow \langle x_1 \wedge \cdots \wedge x_k, y_1 \wedge \cdots \wedge y_k \rangle = \Det([\langle x_i, y_j \rangle]_{1 \le i, j \le k})$. \label{it-gram-det}
    \item Let the Fock space $\F(L) = \dirsum_{k=0}^{\infty} \Lambda^k L$, where each summand is the Hilbert space completion of the algebraic wedge product, the inner product of elements in $\Lambda^{k_1}$ and $\Lambda^{k_2}$ is defined as $0$ when $k_1 \neq k_2$, and the whole sum is the Hilbert space completion of the algebraic direct sum.
\end{enumerate}
\end{defn}
\begin{note}\label{n-gram-det}
\index{Grammian determinant}
(Grammian determinants).
The determinant in item \ref{it-gram-det} is called a Grammian determinant, and can be thought of as the determinant of the product of the $k \cross \infty$ matrix whose rows are the vectors $x_i$, and the $\infty \cross k$ matrix whose columns are the vectors $y_j$.
\end{note}

\begin{prop}\label{p-fock-rep}
\index{Fock representation}
(Fock Representations).
% ~ removed from citation to cure overfull box
\citep[pages 75--76,~157]{PR94} Given a Lagrangian subspace $L \subset H$, we define as follows a self-adjoint Clifford map $\pi_L \colon V \rightarrow \B(\F(L))$.

For $v \in V \subset H$, write $v = l + \overline{l} = P_L (v) + P_{\overline{L}} (v)$, recalling lemma \ref{l-hp-proj}.

Let $l \in L$ operate on $\Lambda^{*} L$ by $\sqrt{2}$ times wedging with $l$ on the left, to get the creator $c \colon V \rightarrow \B(\F(L))$, $c (v) (w) = \sqrt{2} \thinspace P_L(v) \wedge w$, for $w \in \F(L)$.

Let $l$ operate on $\Lambda^{*} L$ by $\sqrt{2}$ times the contraction with $l$ on the left, to get the annihilator.  Define contraction, for a decomposable $l_1 \wedge \cdots l_k \in \Lambda^k L$, as $0$ for $k = 0$, and otherwise by:
\[
l \contract l_1 \wedge \cdots l_k = \sum_{r=1}^k (-1)^{r - 1} \langle l_r , l \rangle l_1 \wedge \cdots \wedge \widehat{l_r} \wedge \cdots \wedge l_k
\]
where $\widehat{l_r}$ indicates omission of that factor.  This gives rise to the annihilator $a \colon V \rightarrow \B(\F(L))$, $a (v) (w) = \sqrt{2} \thinspace P_L(v) \contract w$, for $w \in \F(L)$.

Let $V$ act on $\Lambda^{*} L$ as the sum of the creator and the annihilator.  The resulting self-adjoint Clifford map $\pi_L = c + a \colon V \rightarrow \B(\F(L))$ extends by proposition \ref{p-c-star-clif-alg-univ} to an isometric representation of $\Cl(V)$; i.e., $v \in V \Rightarrow \norm{\pi_L (v)} = \norm{v}$).

This is called a Fock representation, the Fock representation defined by $L$.  It is irreducible, viewed as a $\Cl(V)$ module; $\F(L)$ has no nontrivial submodules.  Equivalently the bounded operators on $\F(L)$ that commute with the image of $\pi_L$ are precisely the complex scalars.  As a consequence of irreducibility, any nonzero $w \in \F(L)$ is cyclic; that is, $\pi_L (\Cl(V)) w = \F(L)$.

$\F(L)$ is a spinor module for $\Cl(V)$.  It is $\ZZ_2$ graded as $\F(L) = \F(L)_0 \dirsum \F(L)_1$, where $\F(L)_0$, $\F(L)_1$ are the closures in $\F(L)$ of $\dirsum_{k=0}^{\infty} \Lambda^{2 k} L$, $\dirsum_{k=0}^{\infty} \Lambda^{2 k + 1} L$, respectively.  For $v$ in $\Lambda^k L$, by which is meant the Hilbert space completion as in definition \ref{d-fock-spac}, and $w$ in $\Lambda^l L$, $v \wedge w \in \Lambda^{k + l} L$ as in \citet[page~62]{PR94}, and so
\[
\partial (v \wedge w) = \partial v + \partial w. \notag
\]
\end{prop}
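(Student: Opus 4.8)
The plan is to verify that $\pi_L = c + a$ is a self-adjoint Clifford map on the dense algebraic exterior algebra $\dirsum_k \Lambda^k L \subset \F(L)$, extend it by the universal property of $\Cl(V)$, deduce the isometry from simplicity, establish irreducibility via a commutant computation, and finally record the $\ZZ_2$-grading; alternatively all of this can be cited from \citet[pages~75--76,~157]{PR94}, but I would fill in the verifications.

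First I would check the algebraic identities. Boundedness of $c(v)$ and $a(v)$ follows from $\norm{c(v)w}^2 + \norm{a(v)w}^2 = (v,v)\norm{w}^2$, which comes from the adjoint relation $c(v)^{*} = a(v)$; the latter is the statement $\langle l \wedge w_1, w_2 \rangle = \langle w_1, l \contract w_2 \rangle$ on the Grammian inner product, proved by expanding the $k \cross k$ Grammian determinant along its first row. Then $c(v)^2 = 0$ since $l \wedge l = 0$, $a(v)^2 = 0$ since repeated contraction with the same vector vanishes, and $\{c(v), a(v)\} = (v,v)\ident$ from the pointwise identity $l \contract (l \wedge w) + l \wedge (l \contract w) = \langle l, l \rangle w$ together with $\norm{P_L(v)}^2 = \tfrac{1}{2}(v,v)$ (because $v = P_L(v) + \Sigma(P_L(v))$ with $L \perp \Sigma(L)$ by lemma~\ref{l-lagr-subs} and $\Sigma$ isometric). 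Hence $\pi_L(v)^2 = (v,v)\ident$ and $\pi_L(v)^{*} = \pi_L(v)$, so $\pi_L$ is a self-adjoint Clifford map; proposition~\ref{p-c-star-clif-alg-univ} extends it to a ${}^{*}$-morphism $\pi_L \colon \Cl(V) \rightarrow \B(\F(L))$, with $\norm{\pi_L(v)} = \norm{v}$ from $\norm{\pi_L(v)}^2 = \norm{\pi_L(v)^2} = (v,v)$. Since $\pi_L(v)\Omega = \sqrt{2}\,P_L(v) \ne 0$ for $v \ne 0$ (using $\norm{P_L(v)}^2 = \tfrac12(v,v)$), $\pi_L$ is nonzero, and as $\Cl(V)$ is simple (proposition~\ref{p-c-star-clif-alg}) its kernel is trivial, so it is an isometry by lemma~\ref{l-c-star-alg-mor-cont}.

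For irreducibility I would first show $\Omega = 1 \in \Lambda^0 L$ is cyclic: writing $L = L_J$ for the unitary structure $J$ it determines (lemma~\ref{l-lagr-give-unit-stru}), induction on $k$ gives $\pi_L(v_1)\cdots\pi_L(v_k)\Omega = 2^{k/2}\,P_L(v_1) \wedge \cdots \wedge P_L(v_k) + (\text{terms of degree} < k)$, and since $P_L(V) = L$ by proposition~\ref{p-vj-hp-isom} (as $P_L = P_J = \frac12(\ident - iJ)$ differs from the isomorphism $\phi \colon V_J \to L_J$ only by a scalar), the span of these vectors is the algebraic exterior algebra, which is dense. Then I would compute the commutant of $\pi_L(\Cl(V))$: an operator $T$ commuting with it commutes with $c(v)$ and $a(v)$ separately, since $c(v) = \frac12(\pi_L(v) - i\,\pi_L(Jv))$ and $a(v) = \frac12(\pi_L(v) + i\,\pi_L(Jv))$ with $Jv \in V$ (using $P_L(Jv) = i\,P_L(v)$). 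Because $a(v)\Omega = 0$ for all $v$, the vector $T\Omega$ is annihilated by every $a(v)$, i.e. $l \contract T\Omega = 0$ for all $l \in L$; as contraction is non-degenerate on $\Lambda^{\ge 1}L$, this forces $T\Omega = \lambda\Omega$, and then $T\pi_L(a)\Omega = \lambda\pi_L(a)\Omega$ for all $a$, so by cyclicity $T = \lambda\ident$. Thus the commutant is $\CC\ident$ and $\pi_L$ is irreducible; irreducibility at once gives that every nonzero $w \in \F(L)$ is cyclic, and makes $\F(L)$ a spinor module in the sense of definition~\ref{d-spin-modu}.

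Finally, $\F(L)_0$ and $\F(L)_1$, the closures of the even and odd exterior powers, are closed orthogonal subspaces with $\F(L) = \F(L)_0 \dirsum \F(L)_1$, a $\ZZ_2$-graded Hilbert space; $c(v)$ raises exterior degree by one and $a(v)$ lowers it by one, so $\pi_L(v)$ is an odd operator for $v \in V$ and $\pi_L$ is a $\ZZ_2$-graded ${}^{*}$-morphism compatible with the $\theta_{-1}$-grading of $\Cl(V)$ from lemma~\ref{l-z2-grad-clif-c-star-alg}. The relation $\partial(v \wedge w) = \partial v + \partial w$ is then immediate from $\Lambda^k L \wedge \Lambda^l L \subseteq \Lambda^{k+l} L$ (\citet[page~62]{PR94}) and the definition of $\partial$ on $\F(L)$. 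I expect the main obstacle to be the irreducibility step, and within it the commutant computation, which rests on cyclicity of the vacuum and the non-degeneracy of contraction on $\Lambda^{\ge 1}L$.
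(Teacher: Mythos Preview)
Your proposal is correct and considerably more complete than the paper's own treatment. The paper gives only a brief sketch of the self-adjoint Clifford map verification---noting that $c(v)^{*}=a(v)$ (proved as in \citet[page~69]{PR94} via the Grammian determinant), that contraction twice with the same $l$ vanishes, and that $\pi_L(v)^2 = 2\langle l,l\rangle\,\ident = \norm{v}^2\ident$---and then defers irreducibility, cyclicity, and the grading entirely to \citet[pages~61--62,~68--69,~75--76,~84--85]{PR94}, with the adaptation to the $L$-picture handled later via the intertwiner of note~\ref{n-fock-spac-vj-l-trans}. Your Clifford-map verification matches this sketch exactly, with the details filled in.

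Where you go beyond the paper is in actually proving irreducibility via the commutant. Your key observation $c(v)=\tfrac12(\pi_L(v)-i\,\pi_L(Jv))$, $a(v)=\tfrac12(\pi_L(v)+i\,\pi_L(Jv))$ (using $P_L(Jv)=iP_L(v)$ and the antilinearity of contraction in its first slot) is correct and is the standard device for showing that any $T$ in the commutant annihilates $T\Omega$ under all $a(v)$, hence $T\Omega\in\CC\Omega$; combined with cyclicity of $\Omega$ this gives $T\in\CC\ident$. This is essentially the argument one finds in the cited reference, so you are not taking a different route so much as supplying what the paper outsources. The one step you flag as the main obstacle---non-degeneracy of contraction on $\Lambda^{\ge 1}L$ in the Hilbert-space completion---does hold (expand in an orthonormal basis $\{e_i\}$ of $L$ and recover each coefficient by contracting with the appropriate $e_j$), but it would be worth a sentence of justification.
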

\begin{proof}
Sketch of proof for the self-adjoint Clifford map.  Contraction twice with the same $\overline{l}$ gives $0$, which can be seen by direct calculation.  Since the creator and annihilator are adjoints of each other, proved the same way as in \citet[page~69]{PR94}, their sum is self-adjoint.  Note that $\norm{v} = \sqrt{2} \norm{l}$.  To see that the sum is a Clifford map, calculate that operating twice with $v$ on $\Lambda^{*} L$ equals applying $2 \langle l, l \rangle \ident_{\Lambda^{*} L} = \norm{v}^2 \ident_{\Lambda^{*} L}$.
\end{proof}
Some of these things can be seen differently, as in \citet[page~232]{PS86}, mutatis mutandis.  For the statements not proved, see \citet[pages~61--62,~68--69,~75--76,~84--85]{PR94}, adapted to our definitions as will follow using the concept of equivalence of Fock representations.

The question of when two Fock representations are equivalent is vital to this thesis.  We can choose continuously a Clifford algebra and a polarization class of Lagrangian subspaces over each point of $LM$, but are unable to choose continuously a Lagrangian subspace from that polarization class, to construct a Fock representation.  It is the structure of equivalences that will allow us nonetheless to construct some continuously chosen (not Fock) representation.
\begin{defn}\label{d-equi-rep}
\index{Fock representation!equivalence}
\index{intertwiner}
\index{intertwiner!set of}
\index{T@$T$}
(Equivalence of Fock Representations).
\citep[page~91]{PR94} Given two Lagrangian subspaces $L$ and $K$ of $H$, the two Fock representations $\pi_L$ and $\pi_K$ are called unitarily equivalent when there is a unitary isomorphism $T \colon \F(L) \isomto \F(K)$ such that the following diagram commutes for every $a \in \Cl(V)$.  The isometric isomorphism $T$ is said to intertwine $\pi_L$ and $\pi_K$, and is called a unitary intertwiner or just intertwiner.  We may consider $\F(L)$ and $\F(K)$ as $\Cl(V)$ modules, and from this viewpoint $T$ is a $\Cl(V)$ linear, we will say Clifford linear, unitary isomorphism.  Denote by $T(L, K)$ the set of all such $T$.
\[
\begindc{\commdiag}[5]
\obj(10,25)[objFL]{$\F(L)$}
\obj(25,25)[objFK]{$\F(K)$}
\obj(10,10)[objFL2]{$\F(L)$}
\obj(25,10)[objFK2]{$\F(K)$}
\mor{objFL}{objFK}{$T$}
\mor{objFL2}{objFK2}{$T$}
\mor{objFL}{objFL2}{$\pi_L(a)$}
\mor{objFK}{objFK2}{$\pi_K(a)$}
\enddc
\]

$\pi_L$ and $\pi_K$ are equivalent as $\ZZ_2$ graded representations if in addition, $T$ preserves the grading:
\begin{align}
\forall v \in \F(L)_0 \cup \F(L)_1, \text{ } \partial T(v) &= \partial v. \notag
\end{align}
\end{defn}
\begin{note}\label{n-equi-rep-z2-grad}
\index{Fock representation!Z2 graded@$\ZZ_2$ graded!equivalence}
(Equivalence of Representations that are $\ZZ_2$ Graded).
From \citet[pages~116--117]{PR94}, which might make more sense after reading the rest of this chapter, equivalence in the ordinary sense implies that either $T$ gives an equivalence as $\ZZ_2$ graded representations by this definition, or that $T$ reverses the grading; i.e. $\forall v \in \F(L)_0 \cup \F(L)_1, \text{ } \partial T(v) = 1 + \partial v$.
\end{note}

We apply the concept of equivalence of Fock representations to connect our definition with that of our primary reference for this purpose.
\begin{note}\label{n-fock-spac-vj-l-trans}
\index{Fock space!VJ-L translation@$V_J$-$L$ translation}
\index{VJ@$V_J$!L translation@$L$ translation}
\index{L@$L$!VJ translation@$V_J$ translation}
(The Fock Space $V_J$-$L$ translation).
\citet[pages~61--62,~68--69,~75--76]{PR94} define an action of $\Cl(V)$ on a Hilbert space we might call $\F(V_J)$, defined just like $\F(L)$ except using $V_J$, equal to $V$ as a set, in place of $L$.  Their representation, call it $\pi_J$, and ours, $\pi_L$, are equivalent (see definition \ref{d-equi-rep}), using the isomorphism $\phi \colon V_J \isomto L$ of proposition \ref{p-vj-hp-isom}, as follows.

They take $v \in V$, and using the equality of sets $V_J = V$, let $v$ act on elements of $\Lambda^{*} V_J$.  We express $v = l + \overline{l}$, $l \in L$ and $\norm{l} = \frac{1}{\sqrt{2}} \norm{v}$, and let $l$ act on elements of $\Lambda^{*} L$ in the same way they let $v$ act on elements of $\Lambda^{*} V_J$, except we multiply the result by $\sqrt(2)$.  Now, $\phi(v) = \phi(l + \overline{l}) = \sqrt(2) l$, so what we do is to let $\phi(v)$ act on elements of $\Lambda^{*} L$ in exactly the same way they let $v$ act on elements of $\Lambda^{*} V_J$.

There's another Clifford algebra we haven't mentioned, that will help show that $\phi$ is the intertwiner we say it is.  The isometric isomorphism $\phi$ induces further isometric isomorphisms we will give the same name, $\phi \colon V \rightarrow L$ of real Hilbert spaces, ignoring the complex structures, with inner product $\Re (\langle , \rangle_J)$ for $V_J$ corresponding to $\Re (\langle , \rangle)$ for $L$; and $\phi \colon \F(V_J) \rightarrow \F(L)$.  Thus we can use $\phi$ to move the whole apparatus, real Hilbert space, Clifford algebra made from it, complex Hilbert space, and Fock space made from that, from $V$ to $L$.

Let $\pi_{L_{\RR}} \colon L \rightarrow \B(\F(L))$ be the map $\phi$ moves over to $L$ from $V$, starting from the self-adjoint Clifford map $\pi_{V_J} \colon V \rightarrow \B(\F(V_J))$.  For $l \in L$, $w \in \F(L)$, $\pi_{L_{\RR}} (l) (w) = \phi (\pi_{V_J} (\phi^{-1} (l)) (\phi^{-1} (w)))$; $\pi_{L_{\RR}}$ is a self-adjoint Clifford map because $\pi_{V_J}$ is.  It gives a Fock representation we call $\pi_{L_{\RR}}$ of $\Cl(L)$ on $\F(L)$ defined just as is the Fock representation of $\Cl(V)$ on $\F(V_J)$.

Our self-adjoint Clifford map is the composition $\pi_L \colon V \xrightarrow{\phi} L \xrightarrow{\pi_{L_{\RR}}} \B(\F(L))$; it's a Clifford map because $\phi$ induces a isometry of real Hilbert spaces as we noted.  Thus $\pi_L (v) (w) = \pi_{L_{\RR}} (\phi(v)) (w) = \phi (\pi_{V_J} (v) (\phi^{-1} (w)))$, and $\phi$ is a naturally given unitary equivalence between the representations $\pi_L$ and $\pi_{V_J}$.
\end{note}

We will use repeatedly, that if two Fock representations are equivalent, then any two intertwiners (in one direction) differ only by a $\UU(1)$ factor.  Moreover:
\begin{prop}\label{p-set-intw-u1-tors}
\index{intertwiner!set of!U(1) torsor@$\UU(1)$ torsor}
\index{T@$T$!U(1) torsor@$\UU(1)$ torsor}
(The Set of Intertwiners is a $\UU(1)$ torsor).
Let $T(L, K)$ be the set of unitary intertwiners as in definition \ref{d-equi-rep}, between two equivalent Fock representations corresponding to Lagrangian subspaces $L$, $K$.  Then $T(L, K)$ is a $\UU(1)$ torsor.  The topology $T(L, K)$ has as a $\UU(1)$ torsor, is the same as it has as a topological subspace of $\UU(\F(L), \F(K))$ with the operator norm topology or the strong operator topology.

The $\UU(1)$ torsors $T(L, L)$ and $T(K, K)$ can be identified canonically with $\UU(1)$, using $\ident \mapsto 1$.
\end{prop}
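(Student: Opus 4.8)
The plan is to first produce the \emph{algebraic} $\UU(1)$ torsor structure on $T(L,K)$ by a Schur's-lemma argument, and then upgrade it to a topological torsor by exploiting compactness of $\UU(1)$ together with lemma \ref{l-g-tors-home-g}.

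For the algebraic structure, fix any $T \in T(L,K)$; such a $T$ exists precisely because $\pi_L$ and $\pi_K$ are assumed equivalent. Given another $T' \in T(L,K)$, the operator $T^{-1}T' \colon \F(L) \to \F(L)$ intertwines $\pi_L$ with itself, hence commutes with $\pi_L(\Cl(V))$; by the irreducibility statement in proposition \ref{p-fock-rep} (whose commutant assertion says that the only bounded operators commuting with $\pi_L(\Cl(V))$ are the complex scalars) we get $T^{-1}T' = \lambda\,\ident_{\F(L)}$ for some $\lambda \in \CC$, and unitarity forces $\abs{\lambda}=1$, so $T' = \lambda T$ with $\lambda \in \UU(1)$. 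Conversely, for any $\lambda \in \UU(1)$ the operator $\lambda T$ is unitary and still intertwines (scalars are central in $\B(\F(K))$), so $\lambda T \in T(L,K)$. Thus $(\lambda, T') \mapsto \lambda T'$ is an action of $\UU(1)$ on $T(L,K)$ that is transitive by the above and free because $T' \neq 0$ forces $\lambda = 1$ whenever $\lambda T' = T'$; hence $T(L,K)$ is an algebraic $\UU(1)$ torsor.

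For the topology, note that this action is continuous for the operator-norm subspace topology, since $\norm{\lambda_1 T_1' - \lambda_2 T_2'} \le \norm{T_1' - T_2'} + \abs{\lambda_1 - \lambda_2}$ (using $\norm{T_2'}=1$), and also for the strong operator subspace topology, since $(\lambda, T') \mapsto \lambda(T'v)$ is continuous for each $v \in \F(L)$. In either topology $T(L,K)$ is Hausdorff, being a subspace of $\UU(\F(L),\F(K))$. Applying lemma \ref{l-g-tors-home-g} with the compact group $\UU(1)$ acting continuously on the Hausdorff algebraic torsor $T(L,K)$, each orbit map $\rho_{T'} \colon \UU(1) \to T(L,K)$, $\lambda \mapsto \lambda T'$, is an isomorphism of $\UU(1)$ torsors, in particular a homeomorphism; so each subspace topology makes $T(L,K)$ a $\UU(1)$ torsor. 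Moreover $\rho_{T'}$ is a continuous bijection from the compact $\UU(1)$ onto $T(L,K)$ with \emph{either} topology, hence a homeomorphism in each case, so the two subspace topologies both coincide with the topology transported from $\UU(1)$ along $\rho_{T'}$, and therefore with each other and with the torsor topology. (One may instead invoke corollary \ref{co-g-tors-alg-top}.) Finally, for $T(L,L)$: its elements are exactly the Clifford-linear unitary automorphisms of $\F(L)$, which by the same commutant statement in proposition \ref{p-fock-rep} are precisely the $\lambda\,\ident_{\F(L)}$ with $\lambda \in \UU(1)$; thus $\lambda \mapsto \lambda\,\ident$ is a bijection $\UU(1) \to T(L,L)$ with $1 \mapsto \ident$, it is a torsor isomorphism since composition corresponds to multiplication of scalars, and a homeomorphism by the previous step. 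The argument for $T(K,K)$ is identical.

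I expect the only mildly delicate bookkeeping to be verifying that the operator-norm and strong-operator subspace topologies on $T(L,K)$ genuinely agree; everything else reduces to a direct application of Schur's lemma via proposition \ref{p-fock-rep} and of the compact-group torsor lemma \ref{l-g-tors-home-g}.
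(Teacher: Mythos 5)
Your proof is correct, and the Schur's-lemma step producing the algebraic torsor is essentially the same as the paper's. Where you diverge is in the topological bookkeeping. The paper proceeds by first endowing $T(L,K)$ with the torsor topology via corollary \ref{co-g-tors-alg-top}, then hand-constructing the inclusion $\theta \colon T(L,K) \to \UU(\F(L),\F(K))$ together with its inverse $\theta^{-1}(\xi) = \psi^{-1}(\xi\phi_0^{-1}\psi(\phi_0))$, verifying continuity of each in the operator-norm topology, and finally handling the strong operator topology by a separate observation (strong convergence of scalar multiples of a fixed unitary forces norm convergence). You instead verify joint continuity of the $\UU(1)$ action in each subspace topology and then invoke lemma \ref{l-g-tors-home-g}, so that compactness of $\UU(1)$ together with the Hausdorff property of the operator space does the rest: the orbit map is a continuous bijection from a compact space to a Hausdorff space, hence a homeomorphism, and since this holds for both subspace topologies simultaneously, all three topologies are identified via the same orbit map. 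Your route treats the two subspace topologies symmetrically and avoids the explicit construction of $\theta$ and $\theta^{-1}$, at the price of needing joint continuity of the action rather than just translation-continuity; the paper's route requires only the latter but must then argue separately for the strong operator topology. Your final sentence, anticipating the strong-vs-norm agreement as the only delicate point, is in retrospect unnecessary, since the compactness argument makes that comparison automatic. Both your argument and the paper's land at the same place, and your treatment of $T(L,L)$ agrees with the paper's.
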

\begin{proof}
\index{Schur's Lemma!Fock representations}
If $\phi_a$, $\phi_b$ $\in T(L, K)$, then $\phi_b^{-1} \circ \phi_a \colon \F(L) \rightarrow \F(L)$ commutes with the representation $\pi_L$ and thus is given by multiplication by some $z \in \UU(1)$, by Schur's Lemma (\citet[page~68]{Fabe00} Proposition II.11(b)), since $\F(L)$ is an irreducible $\Cl(V)$ module.  Multiplication by $z$ gives a free, and because of Schur's Lemma, transitive action of $\UU(1)$ on $T(L, K)$.  Give $T(L, K)$ by corollary \ref{co-g-tors-alg-top} the topology making it a $\UU(1)$ torsor, $\UU(1)$-equivariantly homeomorphic to $\UU(1) = S^1$ by some map $\psi \colon T(L, K) \rightarrow \UU(1)$.  Pick some $\phi_0 \in T(L, K)$ and define a $\UU(1)$-equivariant map $\theta \colon T(L, K) \rightarrow \UU(\F(L), \F(K))$ by $\theta(\phi) = \psi(\phi) (\psi(\phi_0))^{-1} \phi_0$, which is the inclusion as a map of sets.  The map $\theta$ is continuous since $\psi$ is continuous and multiplication by elements of $\UU(1)$ is continuous in $\UU(\F(L), \F(K))$.  Similarly and because composition of operators is continuous in the operator norm topology, $\theta^{-1} \colon \xi \mapsto \psi^{-1}(\xi \phi_0^{-1} \psi(\phi_0))$ is continuous, and thus $\theta$ is a homeomorphism onto its image.  Convergence in the strong operator topology implies convergence in the operator norm topology by evaluation at a fixed element of the domain, since all elements of $T(L, K)$ are $\UU(1)$ multiples of each other.

Considering just $L$, the elements of $T(L, L)$ are again by Schur's Lemma, simply multiplication by elements of $\UU(1)$.  The identity map is multiplication by $1$.  Thus the $\UU(1)$ torsor $T(L, L)$ can be identified canonically with $\UU(1)$.
\end{proof}
Keep in mind also lemma \ref{l-g-tors-home-g}, whereby any $\UU(1)$ torsor is equivariantly homeomorphic to $\UU(1)$.

The following term vacuum vector comes from quantum field theory in physics, and is closely tied to the Fock representations.
\begin{defn}\label{d-j-vac-vec}
\index{Fock space!vacuum vector}
\index{vacuum vector}
\index{Omega@$\Omega$}
($L$- or $J$-Vacuum Vectors for $\pi$).
\citep[pages~79--80]{PR94} Given Lagrangian subspace $L$ of $H$ with Fock representation $\pi_L$ on Fock space $\F(L)$, and in addition some ${}^{*}$-representation $\pi \colon \Cl(V) \rightarrow \B(F)$ for some Hilbert space $F$, then a nonzero vector $\Omega \in F$ is called a $L$-vacuum vector for $\pi$ when it satisfies the first following condition, called the $L$-vacuum condition.  If $L$ is the Lagrangian subspace corresponding to the unitary structure $J$, the second condition, called the $J$-vacuum condition, is equivalent by proposition \ref{p-vj-hp-isom} to the first, and $\Omega$ can be called a $J$-vacuum vector.
\begin{align}
x \in L^{\perp} &\Rightarrow \pi(x) \Omega = 0 \text{, or equivalently} \notag \\
v \in V &\Rightarrow \pi(v + i J v) \Omega = 0.  \notag
\end{align}
\end{defn}
A special case is of the Fock representation for another Lagrangian subspace $K$ of $H$, where $\pi = \pi_K$ is a ${}^{*}$-representation on $F = \F(K)$.

\begin{prop}\label{p-j-vac-vec-unit-equi}
\index{Fock representation!equivalence!vacuum vector}
\index{intertwiner!vacuum vector}
\index{vacuum vector!equivalent representations}
(The Existence of a Cyclic $L$-Vacuum Vector Implies Equivalence).
\citep[pages~79--80]{PR94} Note that ``equivalence'' here is between a Fock representation and some other, not necessarily Fock, ${}^{*}$-representation of $\Cl(V)$ on some Hilbert space.

Given a Lagrangian subspace $L$ of $H$ with corresponding Fock representation $\pi_L$ on Fock space $\F(L)$, and in addition some ${}^{*}$-representation $\pi \colon \Cl(V) \rightarrow \B(F)$ for some Hilbert space $F$, if there exists a unit $L$-vacuum vector $\Omega \in F$ for $\pi$, then $\pi_L$ and $\pi$ are unitarily equivalent, and there is a unique unitary intertwiner $T \colon \F(L) \rightarrow F$ such that $T(\Omega_L) = \Omega$, where $\Omega_L$ (a.k.a. $1$) is the standard unit or Fock vacuum vector for $\F(L)$.

Furthermore, if $F$ is fixed, but $\Omega = \Omega_w$ and $\pi = \pi_w \in \B(\Cl(V), \B(F))$ depend continuously on a parameter $w$ in some first countable (so that continuity can be defined by sequential convergence) topological space $W$, the dependence of $T = T_w \in T(\pi_L, \pi) \subset \UU(\F(L), F)$ on $w$ is also continuous, where we let $T(\pi_L, \pi)$ denote the set of unitary intertwiners between the two representations, and we use the strong operator topologies on both $\B(\Cl(V), \B(F))$ and $\UU(\F(L), F)$.

That is, taking $w_i \rightarrow w \in W$, if in addition to $\Omega_{w_i} \rightarrow \Omega_w$ we have that for every $a \in \Cl(V)$, $w_i \rightarrow w \Rightarrow \pi_{w_i} (a) \rightarrow \pi_w (a)$ in the operator norm topology for $\B(F)$, then for every $x \in \F(L)$, $T_{w_i}(x) \rightarrow T_w(x)$.
\end{prop}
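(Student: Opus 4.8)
The plan is to build the intertwiner $T$ from the vacuum by transport of structure, deduce uniqueness formally, and then establish continuity by a short direct estimate; the first two parts follow \citet[pages~79--80]{PR94}, and the continuity --- the new content --- is what I would write out in detail. The one substantive input is that the vacuum expectation functional is \emph{determined by $L$ alone}: if $\Omega$ is a unit $L$-vacuum vector for a ${}^{*}$-representation $\pi \colon \Cl(V) \to \B(F)$, then $\omega_\pi(a) := \langle \pi(a)\,\Omega, \Omega \rangle$ is a state on $\Cl(V)$ (positive since $\omega_\pi(a^{*}a) = \norm{\pi(a)\Omega}^{2}$, hence bounded with $\norm{\omega_\pi} = \omega_\pi(1) = 1$). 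Since $\pi(x)\Omega = 0$ for $x \in L^{\perp} = \Sigma(L)$, one has $\omega_\pi(b\,x) = 0$ for all $b \in \Cl(V)$ and $x \in \Sigma(L)$, and taking adjoints, $\omega_\pi(y\,b) = 0$ for all $y \in L$ and $b \in \Cl(V)$; that is, $\omega_\pi$ vanishes on $\Cl(V)\cdot\Sigma(L)$ and on $L\cdot\Cl(V)$. Normal-ordering a monomial $v_1\cdots v_k \in \cl(V)$ --- split each $v_j = P_L(v_j) + \Sigma(P_L(v_j))$ and push the $\Sigma(P_L(v_j))$-parts to the right past the remaining factors using the Clifford relations of proposition \ref{p-c-star-clif-alg-univ}, generating scalars --- shows $\cl(V) = \CC\,1 + L\cdot\cl(V) + \cl(V)\cdot\Sigma(L)$, so the two vanishing properties together with $\omega_\pi(1)=1$ pin $\omega_\pi$ down on the dense subalgebra $\cl(V)$, hence everywhere by boundedness. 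In particular $\omega_\pi$ equals the analogous functional $\omega_{\pi_L}$ of the Fock representation $\pi_L$ with its Fock vacuum $\Omega_L$, which is itself a unit $L$-vacuum vector for $\pi_L$.

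Granting this, define $T \colon \F(L) \to F$ on $\pi_L(\Cl(V))\,\Omega_L = \F(L)$ (the equality being the cyclicity in proposition \ref{p-fock-rep}) by $T\big(\pi_L(a)\,\Omega_L\big) := \pi(a)\,\Omega$. Applying $\omega_\pi = \omega_{\pi_L}$ to $b^{*}b$ gives $\norm{\pi(b)\Omega} = \norm{\pi_L(b)\Omega_L}$ for all $b$, which shows both that $\pi(a)\Omega$ depends only on $\pi_L(a)\Omega_L$ --- if $\pi_L(a)\Omega_L = \pi_L(a')\Omega_L$ then $\norm{\pi(a)\Omega - \pi(a')\Omega} = \norm{\pi_L(a-a')\Omega_L} = 0$ --- so $T$ is a well-defined linear isometry, and that its range $\pi(\Cl(V))\Omega$ is closed; it is all of $F$ when $\Omega$ is cyclic for $\pi$ (as the title presumes; automatic if $\pi$ is irreducible), making $T$ unitary. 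It intertwines, since $T\pi_L(c)\big(\pi_L(a)\Omega_L\big) = T\big(\pi_L(ca)\Omega_L\big) = \pi(ca)\Omega = \pi(c)\,T\big(\pi_L(a)\Omega_L\big)$ for all $c$, so $T\pi_L(c) = \pi(c)\,T$ on $\F(L)$; and $T\Omega_L = \pi(1)\Omega = \Omega$. Uniqueness is immediate: an intertwiner $T'$ with $T'\Omega_L = \Omega$ has $T'\big(\pi_L(a)\Omega_L\big) = \pi(a)\,T'\Omega_L = \pi(a)\Omega = T\big(\pi_L(a)\Omega_L\big)$, hence $T' = T$.

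For the continuity statement, the first part gives, for each $w$, the unique intertwiner $T_w$ with $T_w\Omega_L = \Omega_w$, satisfying $T_w\big(\pi_L(a)\Omega_L\big) = \pi_w(a)\Omega_w$; since $W$ is first countable it is enough to prove $T_{w_i}(x) \to T_w(x)$ for each $x \in \F(L)$ whenever $w_i \to w$. Fix such an $x$ and, by proposition \ref{p-fock-rep}, choose $a \in \Cl(V)$ with $\pi_L(a)\Omega_L = x$; then $T_{w_i}(x) = \pi_{w_i}(a)\,\Omega_{w_i}$ and $T_w(x) = \pi_w(a)\,\Omega_w$, so
\begin{align}
\norm{T_{w_i}(x) - T_w(x)}
 &= \norm{\pi_{w_i}(a)\,\Omega_{w_i} - \pi_w(a)\,\Omega_w} \notag \\
 &\le \norm{\pi_{w_i}(a)}\,\norm{\Omega_{w_i} - \Omega_w} + \norm{(\pi_{w_i}(a) - \pi_w(a))\,\Omega_w} \notag \\
 &\le \norm{a}\,\norm{\Omega_{w_i} - \Omega_w} + \norm{(\pi_{w_i}(a) - \pi_w(a))\,\Omega_w}, \notag
\end{align}
where the last step uses $\norm{\pi_{w_i}(a)} \le \norm{a}$ from lemma \ref{l-c-star-alg-mor-cont}. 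As $i \to \infty$ the first term tends to $0$ because $\Omega_{w_i} \to \Omega_w$, and the second because $\pi_{w_i}(a) \to \pi_w(a)$ in operator norm --- both by hypothesis --- so $T_{w_i}(x) \to T_w(x)$, the asserted continuity in the strong operator topology.

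I expect the main obstacle to be the one substantive input of the first paragraph: the normal-ordering identity $\cl(V) = \CC\,1 + L\cdot\cl(V) + \cl(V)\cdot\Sigma(L)$ and, with it, the representation-independence of the Fock state. This is exactly the Wick-type combinatorics underlying the uniqueness of the Fock representation, and is where \citet{PR94} does the real work; I would lean on their treatment. Everything downstream is formal: the construction and uniqueness of $T$ are algebra, and the continuity is the routine three-term estimate above, whose only delicate points are the uniform bound $\norm{\pi_{w_i}(a)} \le \norm{a}$ (so the error splits cleanly) and the appeal to first countability of $W$ to reduce continuity to sequential continuity.
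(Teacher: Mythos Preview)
Your proof is correct and takes essentially the same approach as the paper: the paper defers the construction and uniqueness of $T$ entirely to \citet[pages~79--80]{PR94} (which you instead sketch in your first two paragraphs), and for the continuity part gives the same triangle-inequality estimate, only inserting the intermediate term $\pi_w(a)\Omega_{w_i}$ rather than your $\pi_{w_i}(a)\Omega_w$, an inessential variation. Your explicit invocation of lemma~\ref{l-c-star-alg-mor-cont} for the bound $\norm{\pi_{w_i}(a)} \le \norm{a}$ matches the paper's use of $\norm{\pi_w(a)} \le \norm{a}$.
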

\begin{proof}
Only the statements about continuity are not proved in the reference. The ``that is'' consists of two applications of definition \ref{d-stro-op-top}.  To show that the statement is true, start with the definition of $T_w$ by
\[
T_w(\pi_L(a) \Omega_L) = \pi_w(a) \Omega_w \notag
\]
for every fixed $a \in \Cl(V)$.  Replacing $x$ with $\pi_L(a) \Omega_L$, we want the following to go to $0$:
\begin{align}
\norm {\pi_w (a) \Omega_w - \pi_{w_i} (a) \Omega_{w_i}} &\le \norm {\pi_w (a) \Omega_w - \pi_w (a) \Omega_{w_i}} + \norm {\pi_w (a) \Omega_{w_i} - \pi_{w_i} (a) \Omega_{w_i}} \notag \\
 &\le \norm {\pi_w (a) (\Omega_w - \Omega_{w_i})} + \norm {(\pi_w (a) - \pi_{w_i} (a)) \Omega_{w_i}} \notag \\
 &\le \norm {\pi_w (a)} \norm{(\Omega_w - \Omega_{w_i})} + \norm {(\pi_w (a) - \pi_{w_i} (a))} \norm{\Omega_{w_i}} \notag \\
 &\le \norm {a} \norm{(\Omega_w - \Omega_{w_i})} + \norm {(\pi_w (a) - \pi_{w_i} (a))} \norm{\Omega_{w_i}} \notag
\end{align}
where the second factor of the first term and the first factor of the second term go to $0$, and the second factor of the second term is bounded because $\Omega_{w_i}$ converges.
\end{proof}
\begin{note}\label{n-j-vac-vec-unit-equi}
\index{vacuum vector!unitary equivalence}
(Vacuum Vectors and Unitary Equivalence).
In our applications, the parameter space $W$ will have a metric and hence be first countable.

Note that if $\pi = \pi_L$, $\Omega = \Omega_L$, and $F = \F(L)$, then $T = \ident_{\F(L)}$.  Note the special case of another Lagrangian subspace $K$ of $H$ with Fock representation $\pi = \pi_K$ on $F = \F(K)$.  Here, $K = L$ implies that $\Omega$ can be chosen equal to $\Omega_L$, in which case $T = \ident_{\F(L)}$.  Also note the case of some $g \in \Orth(V)$, $\pi = \pi_L \circ \theta_g$, $F = \F(L)$; we use this to obtain unitary implementers, to be defined soon.  Here, $g = \ident$ implies that $\Omega$ can be chosen equal to $\Omega_L$, in which case $T = \ident_{\F(L)}$.

The continuity argument here would work just as well for the intertwiner defined for uniqueness of the general Gelfand-Naimark-Segal construction \citep[pages~39--41]{Take02}.
\end{note}

\section{Intertwiners and Bogoliubov Automorphism Implementers}\label{s-impl-bogo-auto}

\citet{PR94} solve the question of equivalence of Fock representations on different Fock spaces, by solving another question, that of ``implementation'', which amounts to finding an intertwiner between a Fock representation and another related representation on the same Fock space, that involves a Bogoliubov automorphism.  There are other possible paths to the result on equivalence, but this is the one we use.  We will use implementers not only to get the result on equivalence, but on occasion when proving some sort of continuity.
\begin{defn}\label{d-impl-rep}
\index{Bogoliubov automorphism!implementation}
\index{implementer}
(Bogoliubov Automorphism Implementers).
(See \citet[page~91]{PR94}).  Given a Lagrangian subspace $L$ of $H$, and some $g \in \Orth(V)$, the Bogoliubov automorphism $\theta_g$ of $\Cl(V)$ is said to be unitarily implemented in the Fock representation $\pi_L$ when there is a unitary isomorphism $U$ on $\F(L)$ such that the following diagram commutes for every $a \in \Cl(V)$.  The unitary isomorphism $U$ is said to implement $\theta_g$ in $\pi_L$, and is called a unitary implementer.  From the viewpoint of modules, $U$ is a $\Cl(V)$ linear unitary operator from $\F(L)$ with $\Cl(V)$ multiplication given by $\pi_L$, to $\F(L)$ with $\Cl(V)$ multiplication given by $\pi_L \circ \theta_g$.
\[
\begindc{\commdiag}[5]
\obj(10,25)[objFL]{$\F(L)$}
\obj(25,25)[objFL2]{$\F(L)$}
\obj(10,10)[objFL3]{$\F(L)$}
\obj(25,10)[objFL4]{$\F(L)$}
\mor{objFL}{objFL2}{$U$}
\mor{objFL3}{objFL4}{$U$}
\mor{objFL}{objFL3}{$\pi_L(a)$}
\mor{objFL2}{objFL4}{$\pi_L \circ \theta_g (a)$}
\enddc
\]
\end{defn}

When intertwiners and implementers are related, a third kind of map comes into play: $\Lambda$, given by functoriality of the exterior algebra construction.
\begin{prop}\label{p-bij-intw-impl}
\index{bijection!intertwiners and implementers}
\index{implementer!bijection with intertwiner}
\index{intertwiner!bijection with implementer}
\index{Lambda@$\Lambda$}
(The Bijection between the Set of Intertwiners and the Set of Implementers).
\citep[page~101]{PR94} Given a Lagrangian subspace $L$ of $H$, and some $g \in \Orth(V)$, $K = g L$ also is a Lagrangian subspace of $H$.  Letting $\Lambda_g \in \B(\F(L), \F(K))$ denote the unitary isomorphism induced by $g$ (see definition \ref{d-fock-spac} to see that it's an isometry), the equation
\[
T \circ U = \Lambda_g
\]
gives a bijection between the set of unitary intertwiners $T \colon \F(L) \rightarrow \F(K)$ and the set of unitary implementers $U \colon \F(L) \rightarrow \F(L)$ of $\theta_g$ in $\pi_L$ (which may be empty).  For fixed $g$, this bijection is in fact a $\UU(1)$-equivariant homeomorphism, an isomorphism of $\UU(1)$ torsors, with the operator norm topologies on $\UU(\F(L), \F(K))$ and $\UU(\F(L))$.  In the future, we may use the symbol $\Lambda_{g, L}$ to indicate $\F(L)$.
\end{prop}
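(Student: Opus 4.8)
The plan is to realize $\Lambda_g$ as an intertwiner between $\pi_L$ and the representation $\pi_K\circ\theta_g$, and then obtain the stated correspondence purely formally by composing. First I would dispose of the preliminaries. By lemma \ref{l-ext-ov}, $g$ extends complex-linearly to an element of $\UU(H)$ commuting with $\Sigma$; hence $g(L^{\perp})=(gL)^{\perp}$ while $g(L^{\perp})=g(\Sigma(L))=\Sigma(gL)$, so $K=gL$ satisfies $K^{\perp}=\Sigma(K)$ and is Lagrangian. The restriction $g|_L\colon L\isomto K$ is a complex-linear isometric isomorphism of Hilbert spaces, so by functoriality of the exterior powers it induces isometric isomorphisms $\Lambda^k(L)\isomto\Lambda^k(K)$ (the Grammian determinants of definition \ref{d-fock-spac} are preserved because $g$ preserves $\langle,\rangle$); taking the orthogonal direct sum and completing yields the unitary $\Lambda_g\colon\F(L)\isomto\F(K)$.

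The key step is the claim
\[
\Lambda_g\,\pi_L(a)=\pi_K(\theta_g(a))\,\Lambda_g\qquad\text{for all }a\in\Cl(V).
\]
Since $\theta_g$ is a $C^{*}$-automorphism, $\pi_K$ a representation, and conjugation by the unitary $\Lambda_g$ a ${}^{*}$-isomorphism $\B(\F(K))\to\B(\F(L))$, both $a\mapsto\pi_L(a)$ and $a\mapsto\Lambda_g^{-1}\pi_K(\theta_g(a))\Lambda_g$ are ${}^{*}$-representations of $\Cl(V)$ on $\F(L)$, so by the uniqueness in proposition \ref{p-c-star-clif-alg-univ} it is enough to check they agree on the generators $v\in V$. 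For this I would compute separately on creators and annihilators, using that $g$ commutes with $\Sigma$ to get $g(P_L(v))=P_K(g(v))=P_K(\theta_g(v))$, that $g$ preserves $\langle,\rangle$ in the contraction formula of proposition \ref{p-fock-rep}, and that $\Lambda_g(l_1\wedge\cdots\wedge l_k)=g(l_1)\wedge\cdots\wedge g(l_k)$; the two terms then transform exactly into the creator and annihilator of $\pi_K(\theta_g(v))$ on $\F(K)$. In particular the set of unitaries $X\colon\F(L)\to\F(K)$ with $X\pi_L(a)=\pi_K(\theta_g(a))X$ is nonempty.

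The bijection is then formal. Given a unitary implementer $U$ of $\theta_g$ in $\pi_L$, rewrite $U\pi_L(a)=\pi_L(\theta_g(a))U$ as $U^{-1}\pi_L(a)=\pi_L(\theta_g^{-1}(a))U^{-1}$; setting $T=\Lambda_g\circ U^{-1}$ and applying the key step gives $T\pi_L(a)=\pi_K(a)T$, so $T\in T(L,K)$ and $T\circ U=\Lambda_g$. Conversely, given $T\in T(L,K)$, set $U=T^{-1}\circ\Lambda_g$; since $T^{-1}$ intertwines $\pi_K$ with $\pi_L$, the key step gives $U\pi_L(a)=\pi_L(\theta_g(a))U$, so $U$ is an implementer, again with $T\circ U=\Lambda_g$. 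These assignments are visibly mutually inverse and are characterized by $T\circ U=\Lambda_g$; in particular one of the two sets is empty precisely when the other is, so the ``may be empty'' clause is harmless (and one recovers that $\pi_L\sim\pi_K$ iff $\theta_g$ is implementable in $\pi_L$).

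Finally, for the topological and torsor statements: the set of implementers of $\theta_g$ in $\pi_L$, when nonempty, is a $\UU(1)$ torsor by Schur's lemma exactly as $T(L,K)$ is in proposition \ref{p-set-intw-u1-tors} (two implementers differ by an operator commuting with $\pi_L(\Cl(V))$, hence by a scalar of modulus one), and in both cases this torsor topology is the subspace topology from the operator norm. The maps $U\mapsto\Lambda_g\circ U^{-1}$ and $T\mapsto T^{-1}\circ\Lambda_g$ are continuous in the operator norm, since $X\mapsto X^{-1}=X^{*}$ is an isometry on unitaries and composition with the fixed unitary $\Lambda_g$ is an isometry, so the bijection is a homeomorphism; and scaling $U$ by $z\in\UU(1)$ scales $T=\Lambda_g U^{-1}$ by $\overline{z}$, so it intertwines the $\UU(1)$-actions up to the conjugation automorphism of $\UU(1)$ (immaterial since $\UU(1)$ is abelian, or removed by fixing the torsor structures compatibly), giving the asserted isomorphism of $\UU(1)$ torsors. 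I expect the main obstacle to be the key step itself — the creator/annihilator computation showing $\Lambda_g$ intertwines $\pi_L$ with $\pi_K\circ\theta_g$ — where one must track $g$, $\Sigma$, the projections $P_L$ versus $P_K$, and the contraction formula, and correctly place $\theta_g$ versus $\theta_{g^{-1}}$ (which in turn pins down the equivariance convention); everything afterward is routine bookkeeping with the universal property and with continuity of composition and inversion.
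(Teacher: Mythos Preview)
Your approach is essentially the same as the paper's: establish the ``key step'' that $\Lambda_g$ intertwines $\pi_L$ with $\pi_K\circ\theta_g$ (the paper calls this functoriality and cites \citet[pages~99--100]{PR94} rather than sketching the creator/annihilator computation as you do), then read off the bijection from the resulting two-square diagram. Your treatment of the torsor and topological claims is more thorough than the paper's brief aside, and you correctly flag that $U\mapsto \Lambda_g U^{-1}$ is anti-equivariant rather than equivariant; the paper does not comment on this.
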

\begin{proof}
In the following diagram, the outermost four edges form a commutative square by functoriality and the way the action of the Clifford algebra on the Fock space is defined \citep[page~99--100]{PR94}.  Given $U$ for which the left hand smaller square commutes, we can use it to define $T$ such that the right hand smaller square commutes; and vice-versa.
\[
\begindc{\commdiag}[5]
\obj(10,25)[objFL]{$\F(L)$}
\obj(30,25)[objFL2]{$\F(L)$}
\obj(10,10)[objFL3]{$\F(L)$}
\obj(30,10)[objFL4]{$\F(L)$}
\obj(55,25)[objFK]{$\F(K) = \F(g L)$}
\obj(55,10)[objFK2]{$\F(K) = \F(g L)$}
\mor{objFL}{objFL2}{$U$}
\mor{objFL3}{objFL4}{$U$}
\mor{objFL}{objFL3}{$\pi_L(a)$}
\mor{objFL2}{objFL4}{$\pi_L (\theta_g (a))$}
\mor{objFL2}{objFK}{$T$}
\mor{objFL4}{objFK2}{$T$}
\mor{objFK}{objFK2}{$\pi_K (\theta_g (a)) = \pi_{g L} (\theta_g(a))$}
\cmor((12,27)(30,33)(53,27)) \pright(30,35){$\Lambda_g$}
\cmor((12,8)(30,2)(53,8)) \pright(30,0){$\Lambda_g$}
\enddc
\]
\end{proof}
If $K = L$, which is to say $g \in \UU(V_J)$ where $J$ is the unitary structure corresponding to $L$, then $\pi_K = \pi_L$, we may take $T = \ident$, and then $U = \Lambda_g$.
\begin{cor}\label{co-uvj-cano-impl}
\index{implementer!canonical}
\index{U(VJ)@$\UU(V_J)$!canonical implementation}
(The Canonical Implementer for $g \in \UU(V_J)$).
\citep[page~102]{PR94} Given a Lagrangian subspace $L$ of $H$, with corresponding unitary structure $J$ on $V$, if $g \in \UU(V_J) \subset \Orth(V)$, then $g$ is canonically implemented in $\pi_L$ by the unitary automorphism $\Lambda_g$ of $\F(L)$.
\end{cor}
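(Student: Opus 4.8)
The plan is to reduce the statement to Proposition \ref{p-bij-intw-impl} by observing that the hypothesis $g \in \UU(V_J)$ forces $gL = L$, so that the bijection there between intertwiners and implementers identifies $\Lambda_g$ itself as the implementer.

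First I would record that $g \in \UU(V_J)$ means, by Definition \ref{d-uvj}, that $g$ commutes with $J$, equivalently $gJg^{-1} = J$. Applying Lemma \ref{l-ov-acts-lagr} with both unitary structures taken equal to $J$ (so both Lagrangian subspaces equal to $L = L_J$), we get $gL = L$. Hence $K := gL$ is literally $L$, so $\F(gL) = \F(L)$, and the Fock representation $\pi_{gL}$ coincides with $\pi_L$.

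Next I would invoke Proposition \ref{p-bij-intw-impl} for this $g$ and $L$: it provides the bijection $T \circ U = \Lambda_g$ between unitary intertwiners $T \colon \F(L) \to \F(gL) = \F(L)$ of $\pi_L$ with $\pi_{gL} = \pi_L$, and unitary implementers $U$ of $\theta_g$ in $\pi_L$. Since $\pi_{gL} = \pi_L$, the identity $T = \ident_{\F(L)}$ is such an intertwiner (the square of Definition \ref{d-equi-rep} degenerates to $\pi_L(a) = \pi_L(a)$). Substituting $T = \ident_{\F(L)}$ into $T \circ U = \Lambda_g$ yields $U = \Lambda_g$, so $\Lambda_g$ implements $\theta_g$ in $\pi_L$; it is unitary by Definition \ref{d-fock-spac}.

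Finally I would note that this implementer is canonical in the paper's sense: $\Lambda_g$ arises purely from functoriality of the exterior-algebra (Fock space) construction applied to the orthogonal map $g$, with no choice of phase, and is exactly the map denoted $\Lambda_{g,L}$ in Proposition \ref{p-bij-intw-impl}. As a sanity check one can verify directly, bypassing the bijection, that the implementer diagram of Definition \ref{d-impl-rep} with $U = \Lambda_g$ is precisely the outer commutative square appearing in the proof of Proposition \ref{p-bij-intw-impl} in the case $K = L$. I do not anticipate a genuine obstacle here; the only step requiring a moment's care is deducing $gL = L$ from $g \in \UU(V_J)$, which is immediate from Lemma \ref{l-ov-acts-lagr}.
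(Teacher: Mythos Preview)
Your proposal is correct and follows exactly the paper's approach: the remark immediately after Proposition~\ref{p-bij-intw-impl} notes that when $g \in \UU(V_J)$ one has $K = gL = L$, so $\pi_K = \pi_L$, one may take $T = \ident$, and then $U = \Lambda_g$. You have simply spelled out the step $gL = L$ via Lemma~\ref{l-ov-acts-lagr} in slightly more detail than the paper does.
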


The following fact will be useful later.
\begin{lem}\label{l-lamb-conj-intw}
\index{intertwiner!conjugation with Lambda@conjugation with $\Lambda$}
(The Conjugation of an Intertwiner with $\Lambda$ is an Intertwiner).
Given Lagrangian subspaces $L$ and $K$ of $H$, some $g \in \Orth(V)$, and an intertwiner $T \in T(L, K)$, $\Lambda_{g, K} \circ T \circ \Lambda_{g, L}^{*} \in T(g L, g K)$.
\end{lem}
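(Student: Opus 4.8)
The plan is to verify directly that $S := \Lambda_{g,K} \circ T \circ \Lambda_{g,L}^{*}$ is a unitary isomorphism $\F(gL) \to \F(gK)$ intertwining $\pi_{gL}$ with $\pi_{gK}$, so that $S \in T(gL,gK)$ by definition~\ref{d-equi-rep}. First I would note that $gL$ and $gK$ are again Lagrangian subspaces (proposition~\ref{p-bij-intw-impl}, or lemma~\ref{l-ov-acts-lagr} together with lemma~\ref{l-ext-ov}), so the Fock representations $\pi_{gL}$, $\pi_{gK}$ are defined; and that $\Lambda_{g,L} \colon \F(L) \to \F(gL)$ and $\Lambda_{g,K} \colon \F(K) \to \F(gK)$ are the unitary isomorphisms induced by $g$ (proposition~\ref{p-bij-intw-impl}), so $\Lambda_{g,L}^{*} = \Lambda_{g,L}^{-1}$, the composite $S$ makes sense as a map $\F(gL) \to \F(gK)$, and $S$ is unitary, being a composite of unitaries.

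The key input is the outer commuting square from the proof of proposition~\ref{p-bij-intw-impl}: for every $a \in \Cl(V)$ and every Lagrangian subspace $M$,
\[
\Lambda_{g,M} \circ \pi_M(a) = \pi_{gM}(\theta_g(a)) \circ \Lambda_{g,M} .
\]
Since the Bogoliubov map is a group homomorphism (definition~\ref{d-c-star-bogo-auto}, proposition~\ref{p-c-star-clif-alg-func}), $\theta_g^{-1} = \theta_{g^{-1}}$ and $\theta_g \circ \theta_{g^{-1}} = \ident$; substituting $\theta_{g^{-1}}(a)$ for $a$ and conjugating by $\Lambda_{g,M}$ yields
\[
\pi_{gM}(a) = \Lambda_{g,M} \circ \pi_M(\theta_{g^{-1}}(a)) \circ \Lambda_{g,M}^{*},
\]
which I would record in the two forms $\Lambda_{g,L}^{*}\Lambda_{g,L}=\ident_{\F(L)}$, $\pi_{gL}(a) = \Lambda_{g,L}\circ\pi_L(\theta_{g^{-1}}(a))\circ\Lambda_{g,L}^{*}$ and $\Lambda_{g,K} \circ \pi_K(\theta_{g^{-1}}(a)) = \pi_{gK}(a) \circ \Lambda_{g,K}$.

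Then I would compute, for $a \in \Cl(V)$,
\begin{align*}
S \circ \pi_{gL}(a)
 &= \Lambda_{g,K} \circ T \circ \Lambda_{g,L}^{*} \circ \Lambda_{g,L} \circ \pi_L(\theta_{g^{-1}}(a)) \circ \Lambda_{g,L}^{*} \\
 &= \Lambda_{g,K} \circ T \circ \pi_L(\theta_{g^{-1}}(a)) \circ \Lambda_{g,L}^{*} \\
 &= \Lambda_{g,K} \circ \pi_K(\theta_{g^{-1}}(a)) \circ T \circ \Lambda_{g,L}^{*} \\
 &= \pi_{gK}(a) \circ \Lambda_{g,K} \circ T \circ \Lambda_{g,L}^{*}
  = \pi_{gK}(a) \circ S ,
\end{align*}
where the second equality uses $\Lambda_{g,L}^{*}\Lambda_{g,L} = \ident_{\F(L)}$, the third uses $T \in T(L,K)$, and the fourth uses the $M=K$ identity above. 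Together with unitarity of $S$, this shows $S \in T(gL, gK)$. I do not expect a substantive obstacle here: the argument is a diagram chase on facts already established, and the only thing requiring care is bookkeeping of the sources and targets of the maps $\Lambda_{g,\cdot}$ (each runs $\F(M) \to \F(gM)$, so its adjoint is its inverse in the opposite direction) and keeping $\theta_g$ versus $\theta_{g^{-1}}$ on the correct side of each commuting square.
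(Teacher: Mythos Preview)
Your proof is correct and follows essentially the same approach as the paper's: both use the functoriality identity $\Lambda_{g,M}\circ\pi_M(a)=\pi_{gM}(\theta_g(a))\circ\Lambda_{g,M}$ from the outer square of proposition~\ref{p-bij-intw-impl} together with the intertwining property of $T$. The paper packages the argument as a cube-diagram chase and then observes that $\theta_g$ being an automorphism lets one strip it from the right face, whereas you equivalently substitute $\theta_{g^{-1}}(a)$ for $a$ up front and compute linearly; these are the same reasoning in different dress.
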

\begin{proof}
\[
\begindc{\commdiag}[5]
\obj(35,40)[objFL]{$\F(L)$}
\obj(65,40)[objgFL]{$\F(g L)$}
\obj(10,34)[objFL2]{$\F(L)$}
\obj(40,34)[objgFL2]{$\F(g L)$}
\mor{objFL}{objgFL}{$\Lambda_{g, L}$}
\mor{objFL2}{objgFL2}{$\Lambda_{g, L}$}[\atright,\solidarrow]
\mor{objFL}{objFL2}{$\pi_L(a)$}[\atright,\solidarrow]
\mor{objgFL}{objgFL2}{$\pi_{gL} \circ \theta_g (a)$}
\obj(35,16)[objFK]{$\F(K)$}
\obj(65,16)[objgFK]{$\F(g K)$}
\obj(10,10)[objFK2]{$\F(K)$}
\obj(40,10)[objgFK2]{$\F(g K)$}
\mor{objFK}{objgFK}{$\Lambda_{g, K}$}
\mor{objFK2}{objgFK2}{$\Lambda_{g, K}$}[\atright,\solidarrow]
\mor{objFK}{objFK2}{$\pi_K(a)$}[\atright,\solidarrow]
\mor{objgFK}{objgFK2}{$\pi_{gK} \circ \theta_g (a)$}
\mor{objFL}{objFK}{$T$}
\mor{objgFL}{objgFK}{$\Lambda_{g, K} \circ T \circ \Lambda_{g, L}^{*}$}
\mor{objFL2}{objFK2}{$T$}
\mor{objgFL2}{objgFK2}{$\Lambda_{g, K} \circ T \circ \Lambda_{g, L}^{*}$}
\enddc
\]
The left face commutes by definition \ref{d-equi-rep}, and the top and bottom faces commute by functoriality as in the diagram for the proof of proposition \ref{p-bij-intw-impl}.  The front and back faces commute by definition.  The commutativity of the right face then follows from commutativity of the other faces.  Since $\theta_g$ is an automorphism, if the composition with $\theta_g$ is deleted from both maps in the right face, the diagram still commutes for all $a$, and by definition \ref{d-equi-rep}, $\Lambda_{g, K} \circ T \circ \Lambda_{g, L}^{*} \in T(g L, g K)$.
\end{proof}

Implementers are maps on one Fock space.  The following shows how implementers on one Fock space relate to implementers on another.
\begin{lem}\label{l-impl-ch-base}
\index{implementer!change of base}
(Implementer Change of Base).
Given Lagrangian subspaces $L$ and $K$ of $H$, some $g \in \Orth(V)$, an implementer $U_{g,L}$ in $\pi_L$ of $\theta_g$, and an intertwiner $T_{LK} \in T(L, K)$, $U_{g,K} = T_{LK} \circ U_{g,L} \circ T_{LK}^{*}$ is an implementer in $\pi_K$ of $\theta_g$, the same for any $T_{LK} \in T(L, K)$.
\end{lem}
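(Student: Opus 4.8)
The plan is a direct diagram chase from the defining relations, using that $T_{LK}$ is unitary so $T_{LK}^{*} = T_{LK}^{-1}$. First I would record the relations I need: reading off the square in definition~\ref{d-impl-rep}, an implementer satisfies $(\pi_L \circ \theta_g(a)) \circ U_{g,L} = U_{g,L} \circ \pi_L(a)$ for all $a \in \Cl(V)$; reading off the square in definition~\ref{d-equi-rep}, an intertwiner satisfies $\pi_K(b) \circ T_{LK} = T_{LK} \circ \pi_L(b)$ for all $b \in \Cl(V)$, which I will use in the two forms $T_{LK}^{*} \circ \pi_K(a) = \pi_L(a) \circ T_{LK}^{*}$ (case $b = a$, after conjugating by $T_{LK}$) and $T_{LK} \circ \pi_L(\theta_g(a)) = \pi_K(\theta_g(a)) \circ T_{LK}$ (case $b = \theta_g(a)$).

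Then I would compute, starting from $U_{g,K} \circ \pi_K(a) = T_{LK} \circ U_{g,L} \circ T_{LK}^{*} \circ \pi_K(a)$: push $T_{LK}^{*}$ through $\pi_K(a)$ to get $\pi_L(a) \circ T_{LK}^{*}$, apply the implementer relation to replace $U_{g,L} \circ \pi_L(a)$ by $\pi_L(\theta_g(a)) \circ U_{g,L}$, and finally push $T_{LK}$ through $\pi_L(\theta_g(a))$ to obtain $\pi_K(\theta_g(a)) \circ T_{LK}$. The net result is $U_{g,K} \circ \pi_K(a) = (\pi_K \circ \theta_g(a)) \circ U_{g,K}$, which is exactly the square defining an implementer of $\theta_g$ in $\pi_K$; since $U_{g,K}$ is a composite of unitaries it is unitary, so it is indeed such an implementer. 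Alternatively this step can be packaged as a commuting cube in the style of the proof of lemma~\ref{l-lamb-conj-intw}, with the front and back faces built from $T_{LK}$, the top and bottom from $\pi_L$ and $\pi_K$, and the left and right from the implementer squares.

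For the final clause, that $U_{g,K}$ is the same for every choice of $T_{LK} \in T(L,K)$, I would appeal to proposition~\ref{p-set-intw-u1-tors}: any other intertwiner equals $z\,T_{LK}$ for some $z \in \UU(1)$, and then $(z T_{LK}) \circ U_{g,L} \circ (z T_{LK})^{*} = z \overline{z}\,(T_{LK} \circ U_{g,L} \circ T_{LK}^{*}) = T_{LK} \circ U_{g,L} \circ T_{LK}^{*}$, so the conjugation kills the phase. I do not expect a genuine obstacle here; the only point requiring care is getting the side on which each operator is composed correct by reading the arrow directions in the two reference squares, which is precisely the bookkeeping that the cube diagram makes automatic.
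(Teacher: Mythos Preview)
Your proof is correct and matches the paper's approach essentially verbatim: the paper carries out the same conjugation computation (phrased as $\pi_K(\theta_g(a)) = U_{g,K}\,\pi_K(a)\,U_{g,K}^{*}$ rather than your equivalent commutation form), presents the same cube diagram you allude to, and appeals to proposition~\ref{p-set-intw-u1-tors} for the phase-cancellation clause exactly as you do.
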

\begin{proof}
``The same'' follows from proposition \ref{p-set-intw-u1-tors}.  The top face of the following diagram commutes from definition \ref{d-impl-rep}, the left and right faces commute from definition \ref{d-equi-rep}, and the front and back faces commute by definition of $U_{g,K}$.  The bottom face commutes, i.e. $U_{g,K}$ is an implementer as claimed, by following the maps in the diagram or because
\begin{align}
\pi_K \circ \theta_g (a) &= T_{LK} \circ (\pi_L \circ \theta_g (a)) \circ T_{LK}^{*} \notag \\
&= T_{LK} \circ U_{g,L} \circ (\pi_L (a)) \circ U_{g,L}^{*} \circ T_{LK}^{*} \notag \\
&= T_{LK} \circ U_{g,L} \circ T_{LK}^{*} \circ (\pi_K (a)) \circ T_{LK} \circ U_{g,L}^{*} \circ T_{LK}^{*} \notag \\
&= U_{g,K} \circ (\pi_K (a)) \circ U_{g,K}^{*}. \notag
\end{align}
\[
\begindc{\commdiag}[5]
\obj(35,50)[objFL]{$\F(L)$}
\obj(65,50)[objFL2]{$\F(L)$}
\obj(10,40)[objFL3]{$\F(L)$}
\obj(40,40)[objFL4]{$\F(L)$}
\mor{objFL}{objFL2}{$U_{g,L}$}
\mor{objFL3}{objFL4}{$U_{g,L}$}[\atright,\solidarrow]
\mor{objFL}{objFL3}{$\pi_L(a)$}[\atright,\solidarrow]
\mor{objFL2}{objFL4}{$\pi_L \circ \theta_g (a)$}
\obj(35,20)[objFK]{$\F(K)$}
\obj(65,20)[objFK2]{$\F(K)$}
\obj(10,10)[objFK3]{$\F(K)$}
\obj(40,10)[objFK4]{$\F(K)$}
\mor{objFK}{objFK2}{$U_{g,K}$}
\mor{objFK3}{objFK4}{$U_{g,K}$}[\atright,\solidarrow]
\mor{objFK}{objFK3}{$\pi_K(a)$}[\atright,\solidarrow]
\mor{objFK2}{objFK4}{$\pi_K \circ \theta_g (a)$}
\mor{objFL}{objFK}{$T_{LK}$}
\mor{objFL2}{objFK2}{$T_{LK}$}
\mor{objFL3}{objFK3}{$T_{LK}$}
\mor{objFL4}{objFK4}{$T_{LK}$}
\enddc
\]
\end{proof}

\begin{cor}\label{co-set-impl-u1-tors}
\index{implementer!set of!U(1) torsor@$\UU(1)$ torsor}
(The Set of Implementers is a $\UU(1)$ torsor).
Given a Lagrangian subspace $L$ of $H$ and some $g \in \Orth(V)$, the set of unitary implementers $U \colon \F(L) \rightarrow \F(L)$ of $\theta_g$ in $\pi_L$ is a $\UU(1)$ torsor with the same topology it would have as a subspace of $\UU(\F(L))$.  If $g \in \UU(V_J)$, this torsor can be identified with $\UU(1)$, by letting $\Lambda_g$ correspond to $1$.
\end{cor}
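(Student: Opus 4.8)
The plan is to deduce this directly from Proposition \ref{p-bij-intw-impl} together with Proposition \ref{p-set-intw-u1-tors}, transporting the torsor structure across the bijection between implementers and intertwiners. Set $K = gL$, which by Proposition \ref{p-bij-intw-impl} is again a Lagrangian subspace of $H$. The set of unitary implementers of $\theta_g$ in $\pi_L$ is nonempty exactly when $\pi_L$ and $\pi_K$ are unitarily equivalent; this nonemptiness is implicit in asserting that the set is a torsor, since by definition \ref{d-g-tors} a torsor is nonempty, so I would carry out the argument under that hypothesis.

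First I would invoke Proposition \ref{p-set-intw-u1-tors} to get that $T(L, K)$ is a $\UU(1)$ torsor whose torsor topology agrees with the subspace topology it inherits from $\UU(\F(L), \F(K))$ in either the operator norm or the strong operator topology. Next, Proposition \ref{p-bij-intw-impl} supplies a $\UU(1)$-equivariant homeomorphism, for the fixed $g$, between the set of implementers $U \colon \F(L) \rightarrow \F(L)$ of $\theta_g$ in $\pi_L$ (topologized as a subspace of $\UU(\F(L))$ in the operator norm) and $T(L, K)$, realized by the relation $T \circ U = \Lambda_g$. Pulling back the $\UU(1)$-torsor structure of $T(L, K)$ along this homeomorphism makes the set of implementers a $\UU(1)$ torsor, and since the map is a homeomorphism onto $T(L, K)$ with its torsor topology, the transported torsor topology on the set of implementers coincides with its operator norm subspace topology; the strong operator subspace topology agrees with these as well, because all implementers of $\theta_g$ are $\UU(1)$ multiples of one another, exactly as in the proof of Proposition \ref{p-set-intw-u1-tors}.

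For the final assertion, suppose $g \in \UU(V_J)$ with $J$ the unitary structure corresponding to $L$. Then $gL = L$ by lemma \ref{l-ov-acts-lagr}, so $\pi_K = \pi_L$, and by corollary \ref{co-uvj-cano-impl} the map $\Lambda_g$ is a unitary implementer of $\theta_g$ in $\pi_L$; hence the set of implementers is nonempty and contains $\Lambda_g$. By freeness and transitivity of the $\UU(1)$ action, every implementer is $z \Lambda_g$ for a unique $z \in \UU(1)$ (equivalently, $\Lambda_g^{-1} \circ U$ commutes with $\pi_L$ and so is a scalar by Schur's Lemma), and $z \Lambda_g \leftrightarrow z$ is the claimed $\UU(1)$-equivariant homeomorphism sending $\Lambda_g \mapsto 1$, the exact analogue of the identification of $T(L, L)$ with $\UU(1)$ via $\ident \mapsto 1$.

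I do not expect a genuine obstacle here: the substance is already packaged in Propositions \ref{p-bij-intw-impl} and \ref{p-set-intw-u1-tors}. The only points requiring care are bookkeeping ones — making explicit that nonemptiness, i.e. implementability of $\theta_g$ in $\pi_L$, is what licenses the word ``torsor'', and verifying that the operator norm and strong operator subspace topologies both coincide with the torsor topology, which holds because every implementer is a $\UU(1)$ multiple of a fixed one.
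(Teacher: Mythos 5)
Your proof is correct and follows exactly the route the paper takes: the paper's proof is a single sentence citing propositions \ref{p-set-intw-u1-tors} and \ref{p-bij-intw-impl} and corollary \ref{co-uvj-cano-impl}, and your argument is a careful unpacking of precisely that chain, transporting the torsor structure and topology across the $\UU(1)$-equivariant homeomorphism from proposition \ref{p-bij-intw-impl}. The one thing you add that the paper leaves implicit, correctly flagged as bookkeeping, is that nonemptiness (implementability of $\theta_g$ in $\pi_L$) must be assumed for the word ``torsor'' to apply.
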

\begin{proof}
This is a consequence of propositions \ref{p-set-intw-u1-tors} and \ref{p-bij-intw-impl}, and corollary \ref{co-uvj-cano-impl}.
\end{proof}

The correspondence of orthogonal maps to their implementers is used later.
\begin{lem}\label{l-impl-homo}
\index{implementer!homomorphism}
($\Orth(V) \rightarrow \{ \text{ implementers } \}$ is a Homomorphism Up to a $\UU(1)$ Factor).
Suppose given a Lagrangian subspace $L$ of $H$, and $g_1, g_2 \in \Orth(V)$ that are implemented in $\pi_L$ by $U_{g_1}$, $U_{g_2}$, respectively.  Then $g_1 g_2$ is implemented in $\pi_L$ by $U_{g_1} U_{g_2}$.  Thus if $U_{g_1 g_2}$ is any implementer of $g_1 g_2$ in $\pi_L$, $U_{g_1} U_{g_2} = z U_{g_1 g_2}$ for some $z \in \UU(1)$.
\end{lem}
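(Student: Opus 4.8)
The plan is to verify the implementer condition of definition \ref{d-impl-rep} for the composite $U_{g_1} U_{g_2}$ directly, using that the Bogoliubov map is a group homomorphism. Recall that $U_{g_i}$ implementing $\theta_{g_i}$ in $\pi_L$ means $U_{g_i}\,\pi_L(a) = \pi_L(\theta_{g_i}(a))\,U_{g_i}$ for every $a \in \Cl(V)$, equivalently $U_{g_i}\,\pi_L(a)\,U_{g_i}^{*} = \pi_L(\theta_{g_i}(a))$ since $U_{g_i}$ is unitary.

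First I would compute, for arbitrary $a \in \Cl(V)$,
\[
(U_{g_1} U_{g_2})\,\pi_L(a)\,(U_{g_1} U_{g_2})^{*} = U_{g_1}\bigl(U_{g_2}\,\pi_L(a)\,U_{g_2}^{*}\bigr)U_{g_1}^{*} = U_{g_1}\,\pi_L(\theta_{g_2}(a))\,U_{g_1}^{*} = \pi_L\bigl(\theta_{g_1}(\theta_{g_2}(a))\bigr).
\]
Then I would invoke that $\theta$ is a group homomorphism (definition \ref{d-c-star-bogo-auto}, resting on the functoriality in proposition \ref{p-c-star-clif-alg-func}: on a monomial both $\theta_{g_1}\circ\theta_{g_2}$ and $\theta_{g_1 g_2}$ replace each vector $v$ by $(g_1 g_2)(v)$, and both are extended by continuity), so $\theta_{g_1}\circ\theta_{g_2} = \theta_{g_1 g_2}$ and the right-hand side equals $\pi_L(\theta_{g_1 g_2}(a))$. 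Since $U_{g_1} U_{g_2}$ is a composition of unitary isomorphisms of $\F(L)$, hence itself a unitary isomorphism, this shows it implements $\theta_{g_1 g_2}$ in $\pi_L$. Equivalently, one can phrase the same verification diagrammatically as pasting the commuting square of definition \ref{d-impl-rep} for $g_2$ on top of the one for $g_1$ to obtain a single commuting square for $g_1 g_2$ with vertical arrows $\pi_L(a)$ and $\pi_L \circ \theta_{g_1 g_2}(a)$ and horizontal arrows $U_{g_1} U_{g_2}$.

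Finally, for the last assertion I would appeal to corollary \ref{co-set-impl-u1-tors}: the set of unitary implementers of $\theta_{g_1 g_2}$ in $\pi_L$ is a $\UU(1)$ torsor, so any two of its elements differ by multiplication by a single $z \in \UU(1)$. Applying this to the two implementers $U_{g_1} U_{g_2}$ and $U_{g_1 g_2}$ yields $U_{g_1} U_{g_2} = z\, U_{g_1 g_2}$ for some $z \in \UU(1)$. I do not expect a genuine obstacle here; the only points needing care are keeping the composition order straight in the homomorphism property of $\theta$ and remembering that $U^{*} = U^{-1}$ for unitaries, so that the conjugation identities used above are legitimate.
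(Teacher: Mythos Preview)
Your proposal is correct and is essentially the paper's own proof, carried out in equational rather than diagrammatic form; the paper simply draws the two commuting squares for $U_{g_2}$ and $U_{g_1}$ side by side (exactly the pasting you describe) and labels the composite horizontal arrow $U_{g_1}U_{g_2}=zU_{g_1g_2}$. Your explicit appeal to corollary~\ref{co-set-impl-u1-tors} for the final $\UU(1)$ factor is appropriate and matches what the paper leaves implicit.
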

\begin{proof}
The loops in the following diagram are commutative by definition.

\[
\begindc{\commdiag}[5]
\obj(10,25)[objFL]{$\F(L)$}
\obj(30,25)[objFL2]{$\F(L)$}
\obj(10,10)[objFL3]{$\F(L)$}
\obj(30,10)[objFL4]{$\F(L)$}
\obj(55,25)[objFL5]{$\F(L)$}
\obj(55,10)[objFL6]{$\F(L)$}
\mor{objFL}{objFL2}{$U_{g_2}$}
\mor{objFL3}{objFL4}{$U_{g_2}$}
\mor{objFL}{objFL3}{$\pi_L(a)$}
\mor{objFL2}{objFL4}{$\pi_L (\theta_{g_2} (a))$}
\mor{objFL2}{objFL5}{$U_{g_1}$}
\mor{objFL4}{objFL6}{$U_{g_1}$}
\mor{objFL5}{objFL6}{$\pi_L (\theta_{g_1} \circ \theta_{g_2} (a)) = \pi_L (\theta_{g_1 g_2} (a))$}
\cmor((12,27)(30,33)(53,27)) \pright(30,35){$U_{g_1} U_{g_2} = z U_{g_1 g_2}$}
\cmor((12,8)(30,2)(53,8)) \pright(30,0){$U_{g_1} U_{g_2} = z U_{g_1 g_2}$}
\enddc
\]
\end{proof}

\begin{lem}\label{l-lamb-homo}
\index{Lambda@$\Lambda$!homomorphism}
(The $\Orth(V) \rightarrow \{ \Lambda \}$ Homomorphism).
Suppose given Lagrangian subspaces $L_1, L_2, L_3$, of $H$, and $g_1, g_2 \in \Orth(V)$ such that $L_2 = g_1 L_1$, $L_3 = g_2 L_2 = (g_2 g_1) L_1$. Let $\Lambda_{g_1}, \Lambda_{g_2}, \Lambda_{g_2 g_1}$ be the isometric isomorphisms induced by $g_1, g_2, g_2 g_1$.  Then $\Lambda_{g_2} \circ \Lambda_{g_1} = \Lambda_{g_2 g_1}$.

\[
\begindc{\commdiag}[5]
\obj(10,25)[objFL1T]{$\F(L_1)$}
\obj(30,25)[objFL2T]{$\F(L_2)$}
\obj(55,25)[objFL3T]{$\F(L_3)$}
\obj(10,10)[objFL1B]{$\F(L_1)$}
\obj(30,10)[objFL2B]{$\F(L_2)$}
\obj(55,10)[objFL3B]{$\F(L_3)$}
\mor{objFL1T}{objFL2T}{$\Lambda_{g_1}$}
\mor{objFL2T}{objFL3T}{$\Lambda_{g_2}$}
\mor{objFL1B}{objFL2B}{$\Lambda_{g_1}$}
\mor{objFL2B}{objFL3B}{$\Lambda_{g_2}$}
\mor{objFL1T}{objFL1B}{$\pi_{L_1}(a)$}
\mor{objFL2T}{objFL2B}{$\pi_{L_2} (\theta_{g_1} (a))$}
\mor{objFL3T}{objFL3B}{$\pi_{L_3} (\theta_{g_2} \circ \theta_{g_1} (a)) = \pi_{L_3} (\theta_{g_2 g_1} (a))$}
\cmor((12,27)(30,33)(53,27)) \pright(30,35){$\Lambda_{g_2 g_1} = \Lambda_{g_2} \Lambda_{g_1}$}
\cmor((12,8)(30,2)(53,8)) \pright(30,0){$\Lambda_{g_2 g_1} = \Lambda_{g_2} \Lambda_{g_1}$}
\enddc
\]
\end{lem}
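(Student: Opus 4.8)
The plan is to exploit the fact that $\Lambda_g$ is nothing more than the functorial image of $g$ under the exterior--algebra construction, so that the claimed identity reduces to functoriality together with the obvious equality $(g_2 g_1)|_{L_1} = g_2|_{L_2} \circ g_1|_{L_1}$ of isometries $L_1 \to L_3$. First I would recall precisely how $\Lambda_g$ is built (cf.\ Definition~\ref{d-fock-spac} and the proof of Proposition~\ref{p-bij-intw-impl}): for $g \in \Orth(V)$ and a Lagrangian subspace $L$, the restriction $g|_L$ is an isometric complex-linear isomorphism $L \isomto gL$; it induces isometric isomorphisms $\Lambda^k(g|_L) \colon \Lambda^k L \isomto \Lambda^k(gL)$ on each exterior power, acting on decomposables by $l_1 \wedge \cdots \wedge l_k \mapsto g(l_1) \wedge \cdots \wedge g(l_k)$ (compatible with the Grammian inner product of Definition~\ref{d-fock-spac} because $g|_L$ is an isometry), and $\Lambda_g = \dirsum_k \Lambda^k(g|_L)$ extended to the Hilbert-space completion $\F(L) = \dirsum_{k=0}^{\infty} \Lambda^k L$.

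Since $L_2 = g_1 L_1$ and $L_3 = g_2 L_2$, we have $g_2|_{L_2} \circ g_1|_{L_1} = (g_2 g_1)|_{L_1}$ as maps $L_1 \to L_3$ by associativity of composition; applying the functor $\Lambda^k$ gives $\Lambda^k(g_2|_{L_2}) \circ \Lambda^k(g_1|_{L_1}) = \Lambda^k\bigl((g_2 g_1)|_{L_1}\bigr)$ on each summand, and taking direct sums and completing yields $\Lambda_{g_2} \circ \Lambda_{g_1} = \Lambda_{g_2 g_1}$. Concretely, both composites send a decomposable $l_1 \wedge \cdots \wedge l_k \in \Lambda^k L_1$ to $(g_2 g_1)(l_1) \wedge \cdots \wedge (g_2 g_1)(l_k)$; since finite sums of decomposables span a dense subspace of $\F(L_1)$ and all three maps are bounded (indeed unitary), the two composites agree everywhere.

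Alternatively, and matching the diagram in the statement, one can argue without a direct computation: by the commuting square in the proof of Proposition~\ref{p-bij-intw-impl}, $\Lambda_g$ intertwines $\pi_L$ with $\pi_{gL} \circ \theta_g$, so $\Lambda_{g_2} \circ \Lambda_{g_1}$ and $\Lambda_{g_2 g_1}$ both intertwine $\pi_{L_1}$ with $\pi_{L_3} \circ \theta_{g_2} \circ \theta_{g_1} = \pi_{L_3} \circ \theta_{g_2 g_1}$, using the functoriality $\theta_{g_2} \circ \theta_{g_1} = \theta_{g_2 g_1}$ of Proposition~\ref{p-c-star-clif-alg-func}. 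The set of such unitary intertwiners is a $\UU(1)$ torsor by the same Schur's-Lemma reasoning as in Proposition~\ref{p-set-intw-u1-tors}, hence $\Lambda_{g_2} \circ \Lambda_{g_1} = z \, \Lambda_{g_2 g_1}$ for some $z \in \UU(1)$; evaluating on the Fock vacuum $\Omega_{L_1} = 1 \in \Lambda^0 L_1$, which each map carries to $1 \in \Lambda^0 L_3$ since $\Lambda^0$ of any linear map is $\ident_{\CC}$, forces $z = 1$. There is no real obstacle here: the only point needing a line of care is confirming that $\Lambda_g$ is induced by $g|_L$ alone, so that the exterior-algebra functor applies, and that passing to the Hilbert completion preserves the composition law — both routine. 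I would present the functoriality argument as the main proof and note the torsor/vacuum argument as a remark, since it makes the diagram in the statement transparent.
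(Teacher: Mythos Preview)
Your proposal is correct and in fact more detailed than the paper itself, which offers no proof beyond the diagram in the statement; the result is treated as immediate from functoriality of the exterior-algebra construction, exactly your main argument. Your alternative torsor/vacuum argument is a nice supplement but unnecessary here, since the direct functoriality computation on decomposables already gives the equality on the nose rather than only up to a phase.
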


It's important later that the correspondence of orthogonal transformation and $\Lambda$ is continuous, in a sense defined now.
\begin{lem}\label{l-lamb-homo-cont}
\index{Lambda@$\Lambda$!homomorphism!continuous}
(The $\UU(V_J) \rightarrow \{ \Lambda \}$ Homomorphism is Continuous).
Suppose given a Lagrangian subspace $L$ of $H$, with corresponding unitary structure $J$ on $V$.  Then the map $\Lambda \colon \UU(V_J) \rightarrow \UU(\F(L))$ is continuous, with the operator norm topology on $\UU(V_J)$ and the strong operator topology on $\UU(\F(L))$.
\end{lem}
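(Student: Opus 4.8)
The plan is to reduce the continuity of $\Lambda \colon \UU(V_J) \to \UU(\F(L))$ in the operator norm / strong operator topologies to a computation on a dense subspace of $\F(L)$, exploiting that every $\Lambda_g$ is an isometry and that the algebraic exterior algebra $\bigwedge^{*}_{\text{alg}} L = \bigoplus_{k} \Lambda^k_{\text{alg}} L$ is dense in $\F(L)$. Recall $\Lambda_g$ is the isometric isomorphism induced by functoriality of the exterior algebra applied to $g|_L \colon L \to L$ (this makes sense since $g \in \UU(V_J)$ commutes with $J$, hence preserves $L = H_{J,+i}$ by lemma \ref{l-ov-acts-lagr}), and on a decomposable it acts by $\Lambda_g(l_1 \wedge \cdots \wedge l_k) = g(l_1) \wedge \cdots \wedge g(l_k)$. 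By definition \ref{d-stro-op-top}, to show $g \mapsto \Lambda_g$ is continuous into the strong operator topology it suffices to show that for each fixed $\xi \in \F(L)$ the map $g \mapsto \Lambda_g \xi$ is continuous; and since multiplication is separately continuous it suffices to check continuity at a fixed $g_0 \in \UU(V_J)$, where by the homomorphism property (lemma \ref{l-lamb-homo}) $\Lambda_g = \Lambda_{g g_0^{-1}} \Lambda_{g_0}$, so it is enough to treat $g_0 = \ident$.

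**The core estimate.**

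First I would handle $\xi$ in the dense subspace: write $\xi = \sum_{k=0}^{N} \xi_k$ with $\xi_k \in \Lambda^k_{\text{alg}} L$, and since $\Lambda_g$ preserves each graded piece it is enough to handle a single decomposable $\xi = l_1 \wedge \cdots \wedge l_k$. Then I would telescope, exactly as in the proof of proposition \ref{p-bogo-map-cont}:
\begin{align}
\Lambda_g \xi - \xi &= \sum_{i=1}^{k} \bigl( g(l_1) \wedge \cdots \wedge g(l_i) \wedge l_{i+1} \wedge \cdots \wedge l_k - g(l_1) \wedge \cdots \wedge g(l_{i-1}) \wedge l_i \wedge \cdots \wedge l_k \bigr) \notag \\
&= \sum_{i=1}^{k} g(l_1) \wedge \cdots \wedge g(l_{i-1}) \wedge (g - \ident)(l_i) \wedge l_{i+1} \wedge \cdots \wedge l_k. \notag
\end{align}
Now I need a bound on $\norm{w_1 \wedge \cdots \wedge w_k}_{\F(L)}$ in terms of $\prod_j \norm{w_j}_L$; this follows from the Grammian-determinant formula of definition \ref{d-fock-spac} item \ref{it-gram-det} together with Hadamard's inequality (the Gram matrix $[\langle w_i, w_j\rangle]$ has determinant at most $\prod_j \norm{w_j}^2$). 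Applying this and using $\norm{g(l_j)}_L = \norm{l_j}_L$ (as $g$ is an isometry) gives $\norm{\Lambda_g \xi - \xi}_{\F(L)} \le \norm{g - \ident}_{\text{op}} \sum_{i=1}^{k} \prod_{j \ne i} \norm{l_j}_L$, which $\to 0$ as $g \to \ident$ in operator norm. This proves $g \mapsto \Lambda_g \xi$ is continuous at $\ident$ for every $\xi$ in the dense algebraic exterior algebra.

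**Passing to the whole Fock space.**

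Finally, to pass from the dense subspace to all of $\F(L)$ I would use that the operators $\Lambda_g$ are uniformly bounded — indeed each is an isometry, so $\norm{\Lambda_g} = 1$ for all $g$. Given arbitrary $\xi \in \F(L)$ and $\varepsilon > 0$, pick $\xi'$ in the algebraic exterior algebra with $\norm{\xi - \xi'} < \varepsilon/3$; then for $g$ close to $\ident$,
\begin{align}
\norm{\Lambda_g \xi - \xi} &\le \norm{\Lambda_g(\xi - \xi')} + \norm{\Lambda_g \xi' - \xi'} + \norm{\xi' - \xi} \notag \\
&\le \norm{\xi - \xi'} + \norm{\Lambda_g \xi' - \xi'} + \norm{\xi - \xi'} < \varepsilon. \notag
\end{align}
This standard $3\varepsilon$ argument, together with the reductions above, yields continuity of $\Lambda$ into $\UU(\F(L))$ with the strong operator topology. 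I expect the main obstacle to be the norm estimate on wedge products of the $g(l_j)$ — getting the clean Hadamard-type bound from the Grammian determinant and making sure the constants only depend on $k$ and the $\norm{l_j}$; everything else is the routine telescoping and density machinery already used verbatim in proposition \ref{p-bogo-map-cont}.
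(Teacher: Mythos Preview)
Your proof is correct and follows the same overall strategy as the paper: approximate the target vector by an element of the algebraic exterior algebra via a $3\varepsilon$ argument using the uniform bound $\norm{\Lambda_g} = 1$, then estimate on decomposables. The execution on decomposables differs. You telescope $\Lambda_g \xi - \xi$ into $k$ wedge products each containing one factor $(g-\ident)(l_i)$, and bound each via Hadamard's inequality for Gram determinants. The paper instead computes $\norm{(\Lambda_g - \Lambda_{g_0}) w_2}^2$ directly as a sum of Gram determinants and bounds them with the cruder inequality $\abs{\det(a_{ij})} \le \prod_i \sum_j \abs{a_{ij}}$; to control the size of these matrices it inserts a second approximation step, replacing a finite sum of decomposables by one whose vectors all lie in a fixed finite-dimensional subspace of $L$. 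Your telescoping makes that second approximation unnecessary and gives a cleaner bound. One small slip: each telescoped term carries a factor $\norm{(g-\ident)(l_i)} \le \norm{g-\ident}_{\text{op}}\,\norm{l_i}$, so the final estimate should read $\norm{\Lambda_g \xi - \xi} \le k\,\norm{g-\ident}_{\text{op}}\prod_j \norm{l_j}$, but this does not affect the conclusion.
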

\begin{proof}
Lemma \ref{l-lamb-homo} showed that $\Lambda$ is an algebraic homomorphism.  We want to show that for $g, g_0 \in \UU(V_J)$ and fixed $w \in \F(L)$, $g \rightarrow g_0 \Rightarrow \norm{(g - g_0) w} \rightarrow 0$.  Reducing to a finite-dimensional computation similarly to what was done in the proof of proposition \ref{p-bogo-map-cont}, given $\epsilon > 0$ we may choose $w_1$ the sum of decomposables of degree at most $d_1$ such that $\norm{w - w_1} < \epsilon / 6$, whence because $\norm{g - g_0} \le 2$, $\norm{(g - g_0) (w - w_1)} < \epsilon / 3$, and may choose $w_2$ the sum of decomposables of degree at most $d_1$, and each decomposable consisting of vectors from the same subspace of $L$ of finite dimension $d_2$, such that $\norm{w_1 - w_2} < \epsilon / 6$, whence similarly $\norm{(g - g_0) (w_1 - w_2)} < \epsilon / 3$.  Then it suffices to show that by making $\norm{g - g_0}$ small enough, we can make $\norm{(g - g_0) w_2} < \epsilon / 3$.  Now, $\norm{(g - g_0) w_2}^2$ is the sum of no more than some natural number $N$ determinants of matrices of size at most $d_2 \cross d_2$, one for each decomposable in $w_2$.  The entries of each matrix are inner products of $g - g_0$ applied to two of the vectors in the decomposable.  Let $W$ be the largest norm of any vector in any of the decomposables; then each determinant of a matrix of size $k \cross k$ will be no larger than $(k \norm{(g - g_0) w_2}^2 W^2)^k$ (using the determinantal inequality $\abs{(a_{ij})} \le \prod_{i=1}^k \sum_{j=1}^k \abs{a_{ij}}$).  Thus the total is no more than
\[
N \max_{k=1}^{d_2} (k \norm{(g - g_0) w_2}^2 W^2)^k, \notag
\]
(since $g$ and $g_0$ don't affect degree $0$ decomposables), which can be made as small as desired.
\end{proof}
$\UU(V_J)$ is a topological group in its operator norm topology, and $\UU(\F(L))$ is a topological group in its strong operator topology; but we didn't need those facts.

\section{Solution to the Questions of Implementation and Equivalence}\label{s-soln-ques-impl-equi}

To understand conditions for existence of unitary implementers, or equivalently by proposition \ref{p-bij-intw-impl} conditions for two Fock representations to be equivalent, we need the concept of Hilbert-Schmidt operators.
\begin{lem}\label{l-hsop}
\index{Hilbert-Schmidt!operator}
\index{Hilbert-Schmidt!inner product}
\index{Hilbert-Schmidt!norm}
(Hilbert-Schmidt Operators).
For any real (respectively complex) Hilbert space $W$ with inner product $\langle , \rangle$ and norm $\norm{}$, the set $\B_2(W) \subset \B(W)$ of Hilbert-Schmidt operators, i.e. those $A \in \B(W)$ for which, given any orthonormal basis $\{ e_i \}$ of $W$, $\norm{A}_2 = \sum \norm{A e_i}^2 < \infty$, is a two-sided self-adjoint ideal.  With inner product defined for $A, B \in \B_2(W)$ as $\langle A, B \rangle_2 = \sum \langle A e_i, B e_i \rangle_2$, $\B_2(W)$ is a real (respectively complex) Hilbert space.  The definitions don't depend on the basis.  The following norm properties hold for any $A, B \in \B_2(W)$, any $T \in \B(W)$, and any $U \in \Orth(W)$:
\begin{align}
\norm{A^{*}}_2 &= \norm{A}_2 \notag \\
\norm{U A}_2 = \norm{A U}_2 &= \norm{A}_2 \notag \\
\norm{T A}_2 &\le \norm{T} \norm{A}_2 \notag \\
\norm{A T}_2 &\le \norm{A}_2 \norm{T} \notag \\
\norm{A} &\le \norm{A}_2 \notag \\
\norm{U A U^{*} - A}_2 &\le 2 \norm{A}_2. \notag
\end{align}
Defining $\B_2(V,W)$ analogously, where $V$ and $W$ are real (respectively complex) Hilbert spaces, analogous properties hold.
\end{lem}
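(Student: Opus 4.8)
The plan is to prove the assertions roughly in the order in which they are stated: first basis-independence (which comes packaged with $\norm{A^{*}}_2 = \norm{A}_2$), then the elementary norm inequalities together with the ideal and self-adjointness statements, then that $\langle\,,\,\rangle_2$ is a genuine inner product, and finally completeness; the claims about $\B_2(V,W)$ will follow from the identical computations. Throughout, $\norm{A}_2^2 = \sum_i \norm{Ae_i}^2$ for an orthonormal basis $\{e_i\}$, and the only facts used are Parseval's identity, the scalar Cauchy--Schwarz and Minkowski inequalities, and a Fatou-type estimate.

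First I would show that for any two orthonormal bases $\{e_i\}$ and $\{f_j\}$ of $W$ one has $\sum_i \norm{Ae_i}^2 = \sum_j \norm{A^{*}f_j}^2$. Expanding by Parseval, $\norm{Ae_i}^2 = \sum_j \abs{\langle Ae_i, f_j \rangle}^2 = \sum_j \abs{\langle e_i, A^{*}f_j \rangle}^2$; since every term is nonnegative, Tonelli's theorem justifies interchanging the two summations, which gives the identity. Taking $\{f_j\} = \{e_i\}$ yields $\norm{A}_2 = \norm{A^{*}}_2$, and feeding this back in shows that $\norm{A}_2$ does not depend on the chosen basis, so both $\norm{}_2$ and membership in $\B_2(W)$ are well defined. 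In the complex case no extra care is needed, because only the nonnegative quantities $\abs{\langle \cdot, \cdot \rangle}^2$ are being rearranged.

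Next come the norm inequalities, all of them termwise. For $U \in \Orth(W)$ (an isometry), $\norm{UAe_i} = \norm{Ae_i}$ gives $\norm{UA}_2 = \norm{A}_2$, and $\norm{AU}_2 = \norm{(AU)^{*}}_2 = \norm{U^{*}A^{*}}_2 = \norm{A^{*}}_2 = \norm{A}_2$. For $T \in \B(W)$, summing $\norm{TAe_i}^2 \le \norm{T}^2 \norm{Ae_i}^2$ gives $\norm{TA}_2 \le \norm{T}\norm{A}_2$, and then $\norm{AT}_2 = \norm{T^{*}A^{*}}_2 \le \norm{T}\norm{A^{*}}_2 = \norm{T}\norm{A}_2$. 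In particular $\B_2(W)$ is stable under left and right multiplication by bounded operators and under the adjoint; combined with closure under sums (Minkowski applied to the $\ell^2$ sequence $(\norm{Ae_i})$) this is exactly the statement that $\B_2(W)$ is a two-sided self-adjoint ideal in $\B(W)$. The estimate $\norm{A} \le \norm{A}_2$ follows by embedding any unit vector into an orthonormal basis, and the final inequality is just the triangle inequality together with $\norm{UAU^{*}}_2 = \norm{A}_2$: $\norm{UAU^{*} - A}_2 \le \norm{UAU^{*}}_2 + \norm{A}_2 = 2\norm{A}_2$.

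It then remains to verify that $\langle A, B \rangle_2 = \sum_i \langle Ae_i, Be_i \rangle$ is a well-defined inner product inducing $\norm{}_2$, and that $\B_2(W)$ is complete. Absolute convergence of the defining series and the bound $\abs{\langle A, B \rangle_2} \le \norm{A}_2 \norm{B}_2$ follow from the scalar Cauchy--Schwarz inequality applied first termwise and then to the $\ell^2$ sequences $(\norm{Ae_i})$, $(\norm{Be_i})$; sesquilinearity and positive-definiteness are immediate; and basis-independence follows by polarization from the basis-independence of $\norm{}_2$ already proved. For completeness, given a $\norm{}_2$-Cauchy sequence $(A_n)$, the inequality $\norm{A_n - A_m} \le \norm{A_n - A_m}_2$ makes it operator-norm Cauchy, hence $A_n \to A$ in $\B(W)$; then for each finite index set $F$, $\sum_{i \in F}\norm{(A - A_m)e_i}^2 = \lim_n \sum_{i \in F}\norm{(A_n - A_m)e_i}^2 \le \limsup_n \norm{A_n - A_m}_2^2$, and taking the supremum over all finite $F$ and letting $m \to \infty$ shows $A \in \B_2(W)$ and $A_n \to A$ in $\norm{}_2$. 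The $\B_2(V,W)$ statements are obtained by the same computations, with $A \mapsto A^{*}$ now an isometry $\B_2(V,W) \to \B_2(W,V)$. The whole argument is routine; the only places calling for care are this interchange of limit and infinite sum in the completeness step and ensuring that all the basis rearrangements are of series with nonnegative terms, which is why in the complex case I would always route through the quantities $\abs{\langle Ae_i, f_j \rangle}^2$ rather than through $\langle Ae_i, f_j \rangle$ directly.
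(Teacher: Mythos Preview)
Your proof is correct and self-contained; it is the standard textbook argument. The paper itself does not give a direct proof but cites Conway and Pedersen for the ideal, inner-product, and completeness statements, so your argument is essentially what those references contain. The one place the paper is explicit is the final inequality, where instead of the triangle inequality it observes $\norm{UAU^{*} - A}_2 = \norm{UA - AU}_2$ (right-multiply by $U$); both routes give the same bound $2\norm{A}_2$.
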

\begin{proof}
That $\B_2(W)$ is a two-sided ideal, is self-adjoint, and that the various norm properties hold, follow from \citet[page~267]{Conw90} exercise 19 (a), (b), (c), (d), as does the verification that $\langle , \rangle_2$ is an inner product.  At that point in the book, scalars are assumed complex, but these things don't depend on that.  The last norm property can be seen from $\norm{U A U^{*} - A}_2 = \norm{U A - A U}_2$.  That $\B_2(W)$ is complete and hence is a real (respectively complex) Hilbert space can be proved as in \citet[pages~118--119]{Pede89}.  The proof works for real scalars, although the book uses the polarization identity to prove some other things nearby.

The analogous definition of $B_2(V,W)$ and its properties are proved analogously, taking care that $T$ and $U$ operate on $V$ or $W$ depending on whether their composition with $A$ is to the right or left.  In the case of complex Hilbert spaces, the definition and properties are consequences of \citet[pages~265--268]{Fabe00}.
\end{proof}

The concept of Hilbert-Schmidt operators lets us define several related notions whose uses permeate the thesis, including letting us state the solutions to the implementation and equivalence problems, which we will do shortly.  We used unitary structures to introduce Lagrangian subspaces, and recently have cast things more in terms of the latter; but now again use unitary structures to define something important.  We will state what seems logical here and needed to give solutions to our two related problems; then these subjects will be taken up again in chapter \ref{c-res-orth-grp}.

The condition $J_2 - J_1$ Hilbert-Schmidt is an equivalence relation for unitary structures (see lemma \ref{l-hsop}); it partitions $\US(V)$ into equivalence classes.  By proposition \ref{p-bij-usv-lagh}, $\US(V)$ and $\Lagr(H,\Sigma)$ are in bijection, so the partition of the former gives us a partition of the latter.  Recall that Lagrangian subspaces are called polarizations of $H$, as in the comment before definition \ref{d-unit-stru}.
\begin{defn}\label{d-pol-clas}
\index{polarization class}
(Polarization Classes).
The equivalence classes of $\Lagr(H,\Sigma)$ corresponding under the bijection of proposition \ref{p-bij-usv-lagh} to the equivalence classes of $\US(V)$ by the relation $J_2 - J_1$ Hilbert-Schmidt, $J_1, J_2 \in \US(V)$, are called polarization classes.  We may also refer to the equivalence classes of $\US(V)$ as polarization classes.  For $L \in \Lagr(H,\Sigma)$ and $J \in \US(V)$, denote by $[L]$, $[J]$ the polarization classes containing $L$, $J$ respectively.
\end{defn}

By proposition \ref{p-uv-ov}, $\US(V)$ is in bijection with a homogeneous space of $\Orth(V)$.  That proposition's continuous equivariant bijection depends on a chosen unitary structure $J$, or equivalently, a chosen corresponding Lagrangian subspace, the choice of which lets us use the bijection and the quotient map to partition $\Orth(V)$ into equivalence classes corresponding to the equivalence classes for $\US(V)$.  There is a preferred equivalence class of elements of $\Orth(V)$, the one that contains the identity, and this class we will call $\Orth_{res, J}$.
\begin{defn}\label{d-ores-ures-lagr-res}
\index{OresJ@$\Orth_{res, J}$}
\index{UresJ@$\US_{res, J}$}
\index{LagrresJ@$\Lagr_{res, J}$}
\index{Hilbert-Schmidt!operator}
\index{restricted group!orthogonal}
\index{orthogonal group!restricted}
($\Orth_{res, J}$, $\US_{res, J}$, $\Lagr_{res, J}$).
Partition $\US(V)$ into equivalence classes as in definition \ref{d-pol-clas}.  Given a unitary structure $J$ on $V$, use inverse images via the equivariant bijection and quotient map of proposition \ref{p-uv-ov} to partition $\Orth(V)$ into equivalence classes.  Denote by $\Orth_{res, J}$, the restricted orthogonal group for $J$, the equivalence class containing $\ident_{\Orth(V)}$.  More explicitly, $\Orth_{res, J} = \{ g \in \Orth(V) \st g J g^{-1} \text{ is in the same polarization class as } J \}$.

Denote by $\US_{res, J}$ the corresponding equivalence class in $\US(V)$, or equivalently the equivalence class containing $J$; and denote by $\Lagr_{res, J}$ the corresponding polarization class in $\Lagr(H,\Sigma)$ under the bijection of proposition \ref{p-bij-usv-lagh}.
\end{defn}
In the preceding definition or the succeeding propositions, the choice of a unitary structure can be made, equivalently, by the choice of a corresponding Lagrangian subspace, since the bijection between unitary structures and Lagrangian subspaces is canonical.

Although the definition has the choice of a unitary structure or equivalently a Lagrangian subspace, it only depends on the choice of a polarization class.
\begin{prop}\label{p-ores-dep-pol-clas}
\index{OresJ@$\Orth_{res, J}$}
\index{Hilbert-Schmidt!operator}
\index{restricted group!orthogonal}
\index{orthogonal group!restricted}
($\Orth_{res, J}$ Depends Only on the Polarization Class of $J$).  Given a unitary structure $J$ on $V$, we can write $\Orth_{res, [L]} = \Orth_{res, [J]} = \Orth_{res, J}$, and similarly for $\US_{res, [J]}$ and $\Lagr_{res, [J]}$.  We have $\US_{res, [J]} = [J]$.  If $L$ is the Lagrangian subspace of $H$ corresponding to $J$, $\Lagr_{res, [J]} = [L]$, the polarization class of $L$; $\Lagr_{res, [L]} = [L]$.
\end{prop}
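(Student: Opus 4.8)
The plan is to work entirely from the explicit description
\[
\Orth_{res, J} = \{ g \in \Orth(V) \st g J g^{-1} \in [J] \}
\]
recorded at the end of definition \ref{d-ores-ures-lagr-res}, together with two facts from lemma \ref{l-hsop}: the Hilbert--Schmidt operators on $V$ form a linear subspace (so a sum of Hilbert--Schmidt operators is Hilbert--Schmidt), and they form a two-sided ideal, so that conjugation by any $g \in \Orth(V)$ carries Hilbert--Schmidt operators to Hilbert--Schmidt operators. I also use that "$J_2 - J_1$ Hilbert--Schmidt" is an equivalence relation on $\US(V)$, as noted before definition \ref{d-pol-clas}.

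The core step is: if $J_1$ and $J_2$ are unitary structures on $V$ with $[J_1] = [J_2]$, then $\Orth_{res, J_1} = \Orth_{res, J_2}$. Since the hypothesis is symmetric it suffices to prove one inclusion, so let $g \in \Orth_{res, J_1}$, i.e. $g J_1 g^{-1} - J_1$ is Hilbert--Schmidt. From
\[
g J_2 g^{-1} - J_2 = g(J_2 - J_1)g^{-1} + (g J_1 g^{-1} - J_1) + (J_1 - J_2)
\]
each of the three summands is Hilbert--Schmidt: the first because $J_2 - J_1$ is and conjugation by $g$ preserves this, the second by the choice of $g$, the third because $[J_1] = [J_2]$. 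Hence the left side is Hilbert--Schmidt, i.e. $g J_2 g^{-1} \in [J_2]$, so $g \in \Orth_{res, J_2}$. Thus $\Orth_{res, J}$ depends only on $[J]$, legitimizing the notation $\Orth_{res, [J]}$; and since by proposition \ref{p-bij-usv-lagh} and definition \ref{d-pol-clas} the polarization class $[J]$ of $\US(V)$ corresponds to the polarization class $[L]$ of $\Lagr(H,\Sigma)$ when $L = L_J$, one may equally write $\Orth_{res, [L]}$, giving $\Orth_{res, [L]} = \Orth_{res, [J]} = \Orth_{res, J}$.

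Finally I would unwind $\US_{res, J}$ and $\Lagr_{res, J}$. In definition \ref{d-ores-ures-lagr-res}, $\Orth_{res, J}$ is the inverse image under $\phi_{\Orth/\UU(V_J), \US} \circ \pi_{\Orth/\UU(V_J)}$, that is under $g \mapsto g J g^{-1}$, of a single polarization class of $\US(V)$; since $\ident \in \Orth_{res, J}$ maps to $J$, that class is $[J]$, so $\US_{res, J} = [J]$, whence $\US_{res, [J]} = [J]$. Transporting along the canonical bijection $\US(V) \leftrightarrow \Lagr(H, \Sigma)$ of proposition \ref{p-bij-usv-lagh}, which by definition \ref{d-pol-clas} sends the polarization class $[J]$ to the polarization class $[L_J] = [L]$, gives $\Lagr_{res, J} = [L]$, hence $\Lagr_{res, [J]} = [L]$ and $\Lagr_{res, [L]} = [L]$.

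I do not expect a genuine obstacle here: the only care needed is bookkeeping between the three parametrizations $\Orth(V)$, $\US(V)$, $\Lagr(H,\Sigma)$, and checking that the stability facts invoked for Hilbert--Schmidt operators are exactly those recorded in lemma \ref{l-hsop}. The heart of the argument is the one-line decomposition of $g J_2 g^{-1} - J_2$ displayed above.
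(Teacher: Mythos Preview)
Your proof is correct and follows essentially the same approach as the paper: both argue that $gJ_2g^{-1}-J_2$ is Hilbert--Schmidt by decomposing it into a sum of three Hilbert--Schmidt pieces via the triangle inequality and the ideal property. The paper parametrizes the second unitary structure as $g_0 J g_0^{-1}$ (invoking the transitive conjugation action) and works with norm inequalities, whereas your identity $gJ_2g^{-1}-J_2 = g(J_2-J_1)g^{-1} + (gJ_1g^{-1}-J_1) + (J_1-J_2)$ is a bit more direct and avoids that detour, but the content is the same.
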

\begin{proof}
Suppose $g_0 J g_0^{-1}$, for some $g_0 \in \Orth(V)$, is another element of the polarization class containing $J$; that is, $\norm{g_0 J g_0^{-1} - J}_2 < \infty$.  For $g \in \Orth(V)$, $g \in \Orth_{res, J} \Leftrightarrow g \in \Orth_{res, g_0 J g_0^{-1}}$ is equivalent to $\norm{g J g^{-1} - J}_2 < \infty \Leftrightarrow \norm{g g_0 J g_0^{-1} g^{-1} - g_0 J g_0^{-1}}_2 < \infty$.  Note that for unitary operators $A, B$, $\norm{A B A^{-1} - B}_2 = \norm{A^{-1} B A - B}_2$.

``$\Rightarrow$'': Suppose $\norm{g J g^{-1} - J}_2 < \infty$.  Then
\begin{align}
\norm{g g_0 J g_0^{-1} g^{-1} - g_0 J g_0^{-1}}_2 &\le \norm{g g_0 J g_0^{-1} g^{-1} - J}_2 + \norm{J - g_0 J g_0^{-1}}_2 \notag \\
 &= \norm{g_0 J g_0^{-1} - g^{-1} J g}_2 + \norm{J - g_0 J g_0^{-1}}_2 \notag \\
 &\le \norm{g_0 J g_0^{-1} - J}_2 + \norm{J - g^{-1} J g}_2 + \norm{J - g_0 J g_0^{-1}}_2 < \infty. \notag
\end{align}
``$\Leftarrow$'': Suppose $\norm{g g_0 J g_0^{-1} g^{-1} - g_0 J g_0^{-1}}_2 < \infty$.  Then
\begin{align}
\norm{g J g^{-1} - J}_2 &\le \norm{g J g^{-1} - g_0 J g_0^{-1}}_2 + \norm{g_0 J g_0^{-1} - J}_2 \notag \\
 &= \norm{J - g^{-1} g_0 J g_0^{-1} g}_2 + \norm{g_0 J g_0^{-1} - J}_2 \notag \\
 &= \norm{J - g_0 J g_0^{-1}}_2 + \norm{g_0 J g_0^{-1} - g^{-1} g_0 J g_0^{-1} g}_2 + \norm{g_0 J g_0^{-1} - J}_2 < \infty. \notag
\end{align}
\end{proof}

\begin{prop}\label{p-ojv-ores}
\index{OJ(V)@$\Orth_J(V)$}
\index{OresJ@$\Orth_{res, J}$}
\index{Hilbert-Schmidt!norm}
\index{Hilbert-Schmidt!operator}
\index{restricted group!orthogonal}
\index{orthogonal group!restricted}
(Alternative Definition of $\Orth_{res, J}$ and $\US_{res, J}$).
\citep[page~109]{PR94} Given a unitary structure $J$ on $V$, define
\[
\Orth_J(V) = \{ g \in \Orth(V) \st \norm{A_g}_2 = \norm{\frac{J}{2}[g, J]}_2 = \frac{1}{2} \norm{[g, J]}_2 < \infty \},
\]
$J$ being implicit in the symbol $A_g$, referring ahead to definition \ref{d-g-cgag} for its definition as shown in the second equality, and to lemma \ref{l-hsop} for the Hilbert-Schmidt norm $\norm{}_2$.  The last equality follows from $J$ being orthogonal.  Then $\Orth_{res, J} = \Orth_J(V)$.
\end{prop}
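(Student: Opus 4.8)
The plan is to reduce the whole statement to the single identity $\norm{g J g^{-1} - J}_2 = \norm{[g, J]}_2$ and then read off both the displayed chain of equalities and the set equality from the Hilbert--Schmidt norm properties collected in lemma \ref{l-hsop}.

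First I would unwind the definitions. Combining the explicit description of $\Orth_{res, J}$ in definition \ref{d-ores-ures-lagr-res} with proposition \ref{p-uv-ov} and definition \ref{d-pol-clas}, an element $g \in \Orth(V)$ lies in $\Orth_{res, J}$ precisely when $g J g^{-1}$ lies in the polarization class of $J$. Here $g J g^{-1}$ is itself a unitary structure, since $g J g^{-1} \in \Orth(V)$ and $(g J g^{-1})^2 = g J^2 g^{-1} = -\ident$, so membership in a polarization class makes sense, and by definition \ref{d-pol-clas} it amounts to $\norm{g J g^{-1} - J}_2 < \infty$.

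Next comes the key computation. Multiplying $g J g^{-1} - J$ on the right by $g$ yields $g J - J g = [g, J]$, hence $g J g^{-1} - J = [g, J] g^{-1}$. Since $g^{-1} \in \Orth(V)$, the invariance $\norm{A U}_2 = \norm{A}_2$ for orthogonal $U$ from lemma \ref{l-hsop} gives $\norm{g J g^{-1} - J}_2 = \norm{[g, J]}_2$, one side being finite exactly when the other is; so $g \in \Orth_{res, J}$ if and only if $\norm{[g, J]}_2 < \infty$. It remains to match this with the definition of $\Orth_J(V)$: the first equality there, $\norm{A_g}_2 = \norm{\tfrac{J}{2}[g, J]}_2$, is just the definition of $A_g$ in definition \ref{d-g-cgag}, and $\norm{\tfrac{J}{2}[g, J]}_2 = \tfrac{1}{2}\norm{J [g, J]}_2 = \tfrac{1}{2}\norm{[g, J]}_2$ since $J \in \Orth(V)$ and lemma \ref{l-hsop} gives $\norm{J B}_2 = \norm{B}_2$. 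Therefore $g \in \Orth_J(V)$ if and only if $\norm{[g, J]}_2 < \infty$ if and only if $g \in \Orth_{res, J}$, as claimed; the corresponding statement for $\US_{res, J}$ then follows at once, as $\US_{res, J}$ is the image of $\Orth_{res, J} = \Orth_J(V)$ under $g \mapsto g J g^{-1}$ by proposition \ref{p-uv-ov}.

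I do not expect a genuine obstacle. The only points requiring a little care are that $g J g^{-1}$ is really a unitary structure (so the phrase ``in the polarization class of $J$'' is meaningful), that the Hilbert--Schmidt norm is independent of the orthonormal basis and invariant under left or right composition with orthogonal operators --- both from lemma \ref{l-hsop} --- and that the reference to definition \ref{d-g-cgag} is invoked only for the bookkeeping identity $A_g = \tfrac{J}{2}[g, J]$. If anything here is ``hard'', it is merely confirming that $A_g$ is defined in definition \ref{d-g-cgag} exactly as the proposition's first equality asserts.
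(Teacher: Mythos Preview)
Your proposal is correct and follows essentially the same route as the paper: both reduce the set equality to the identity $\norm{g J g^{-1} - J}_2 = \norm{[g, J]}_2$, obtained by writing $g J g^{-1} - J = [g, J] g^{-1}$ and invoking orthogonal invariance of the Hilbert--Schmidt norm from lemma \ref{l-hsop}. The paper's proof is slightly terser, chaining $\norm{J [g, J]}_2 = \norm{[g, J]}_2 = \norm{[g, J] g^{-1}}_2 = \norm{g J g^{-1} - J}_2$ in one line, but the content is identical.
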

\begin{proof}
Referring to definition \ref{d-pol-clas} and proposition \ref{p-ores-dep-pol-clas}, it suffices to show that $\norm{\frac{J}{2}[g, J]}_2 < \infty \Leftrightarrow \norm{g J g^{-1} - J}_2 < \infty$.  But $\norm{J [g, J]}_2 = \norm{[g, J]}_2 = \norm{[g, J] g^{-1}}_2 = \norm{g J g^{-1}- J}_2$, since $J, g \in \Orth(V)$.
\end{proof}
(\citet[page~109]{PR94} use $\Orth_J(V)$, but we use $\Orth_{res, J} = \Orth_{res, L}$ since we use Lagrangian subspaces more than they do. \citet[page~244]{PS86} uses $\Orth_{res} (H_{\CC})$, depending on $J$ without indicating that in the notation, from which we take the first part, leaving implicit the Hilbert space, but additionally showing $J$.  When, in chapter \ref{c-res-orth-grp}, we fix $V$ and a polarization class $\Lagr_{res}$ of $H$, we will then use $\Orth_{res}$ alone, and $\US_{res}$.)

\begin{thm}\label{t-impl-soln}
\index{implementer!existence}
(A Condition for the Existence of Unitary Implementers). \linebreak
\citep[page~108--111]{PR94} Given a unitary structure $J$ on $V$, with corresponding Lagrangian subspace $L$ of $H$ and Fock representation $\pi_L$, and given any $g \in \Orth(V)$, extended complex linearly to $H$:
\begin{align}
g \in \Orth_{res, J} &\Leftrightarrow g \text{ has Hilbert-Schmidt off-diagonals for } H = L \dirsum \Sigma (L) \notag \\
 &\Leftrightarrow \theta_g \text{ is unitarily implemented in } \pi_L \notag
\end{align}
\end{thm}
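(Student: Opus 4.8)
The plan is to treat the two equivalences of the statement separately. The first, $g \in \Orth_{res, J} \Leftrightarrow g$ has Hilbert--Schmidt off-diagonals for $H = L \dirsum \Sigma(L)$, I expect to be a short block computation. The second, (Hilbert--Schmidt off-diagonals) $\Leftrightarrow$ ($\theta_g$ unitarily implemented in $\pi_L$), is the Shale--Stinespring implementability theorem, for which I would follow \citet[pages~108--111]{PR94}, translating their $V_J$-conventions into ours.

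For the first equivalence: by proposition \ref{p-ojv-ores} we have $g \in \Orth_{res, J} = \Orth_J(V)$ exactly when $\norm{[g, J]}_2 < \infty$. Extend $g$ and $J$ complex-linearly to $H$ and write them in block form with respect to the orthogonal decomposition $H = H_{J,+i} \dirsum H_{J,-i} = L \dirsum \Sigma(L)$ of lemma \ref{l-h-split}. Since $J$ acts as $+i$ on $L$ and as $-i$ on $\Sigma(L)$ it is block-diagonal, so writing $g = \begin{pmatrix} a & b \\ c & d \end{pmatrix}$ one finds $[g,J]$ has vanishing diagonal blocks and off-diagonal blocks equal to $\mp 2i$ times the off-diagonal blocks $b$ and $c$ of $g$; hence $\norm{[g,J]}_2^2 = 4(\norm{b}_2^2 + \norm{c}_2^2)$, which is finite iff both off-diagonal blocks of $g$ are Hilbert--Schmidt. (This also identifies the operator $A_g$ of proposition \ref{p-ojv-ores} with the anti-diagonal part of $g$.) Because $g$ commutes with $\Sigma$ by lemma \ref{l-ext-ov} and $\Sigma$ interchanges $L$ and $\Sigma(L)$, the two off-diagonal blocks are carried to each other by $\Sigma$, which is antiunitary and so preserves the Hilbert--Schmidt norm; thus the criterion does not depend on which off-diagonal block one uses.

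For the second equivalence --- the Shale--Stinespring theorem --- I would pass from the $V_J$-pictures of \citet{PR94} to ours via the isomorphism $\phi \colon V_J \isomto L_J$ of proposition \ref{p-vj-hp-isom}, note \ref{n-fock-spac-vj-l-trans}, and proposition \ref{p-vj-hp-isom-op}, under which their hypothesis ($A_g$ Hilbert--Schmidt) is exactly the middle condition above. Sufficiency ($\Rightarrow$): by proposition \ref{p-bij-intw-impl} it suffices to produce a unitary intertwiner $\F(L) \to \F(gL)$, i.e.\ to show $\pi_L$ and $\pi_{gL}$ are unitarily equivalent, and by proposition \ref{p-j-vac-vec-unit-equi} it suffices to exhibit a cyclic unit $L$-vacuum vector in $\F(gL)$. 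One describes $gL$ as a graph over $L$ (with a standard reduction handling the case where $P_L|_{gL}$ is not invertible) by an operator $Z$ that is Hilbert--Schmidt precisely by the hypothesis and the first equivalence, and then takes the $L$-vacuum vector to be the explicit ``squeezed'' state obtained by exponentiating a quadratic expression in $Z$ and applying it to $\Omega_{gL}$; the Hilbert--Schmidt property of $Z$ is exactly what makes that exponential series norm-convergent in $\F(gL)$, via the Grammian (Hadamard) determinant bounds underlying definition \ref{d-fock-spac} and note \ref{n-gram-det}, and cyclicity is automatic from irreducibility (proposition \ref{p-fock-rep}). Necessity ($\Leftarrow$): an implementer yields, via proposition \ref{p-bij-intw-impl} and then proposition \ref{p-j-vac-vec-unit-equi}, a unit cyclic $L$-vacuum vector in $\F(gL)$; expanding it along the graded pieces $\Lambda^k(gL)$ and using the Fock relations forces $Z$ to be Hilbert--Schmidt, since otherwise the candidate vector is not square-summable. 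Equivalently, in $C^{*}$-language, implementability of $\theta_g$ amounts to equivalence of the quasi-free states $\omega_L$ and $\omega_L \circ \theta_g$, and two quasi-free states are equivalent iff the difference of their covariances is Hilbert--Schmidt.

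I expect the main obstacle to be this Shale--Stinespring equivalence, and inside it the two analytic cores: the norm-convergence estimate for the squeezed vacuum vector (sufficiency) and the non-normalizability argument forcing the Hilbert--Schmidt condition (necessity). Since \citet{PR94} carry both out in full, the genuinely new work here is the elementary first equivalence together with the bookkeeping needed to see that their hypotheses agree with ours under the $V_J \leftrightarrow L_J$ dictionary; I would also remark, using corollary \ref{co-set-impl-u1-tors}, that when the set of implementers is nonempty it is a $\UU(1)$ torsor, so the phrase ``$\theta_g$ is unitarily implemented'' is unambiguous.
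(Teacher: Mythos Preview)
The paper does not give its own proof of this theorem: it is stated with a citation to \citet[pages~108--111]{PR94} and no further argument, so there is nothing substantive to compare against beyond checking that your outline matches the cited source. Your plan is sound and tracks the PR94 argument closely; the first equivalence is exactly the block computation you describe (and is essentially proposition \ref{p-ojv-ores} rephrased in terms of the $L \dirsum \Sigma(L)$ decomposition), and the second is Shale--Stinespring as in PR94.

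One small routing difference worth noting: for sufficiency you go via proposition \ref{p-bij-intw-impl} to reduce to finding an $L$-vacuum vector in $\F(gL)$, whereas the thesis (lemma \ref{l-vac-vec-exp-bkg}, following PR94 directly) instead constructs a cyclic $L$-vacuum vector for $\pi_L \circ \theta_g$ \emph{inside} $\F(L)$ itself, namely $\exp(\tau^{-1}(Z_g))$ with $Z_g = -A_g C_g^{-1}$, and then invokes proposition \ref{p-j-vac-vec-unit-equi} to get the implementer directly. These routes are equivalent (they differ by one application of $\Lambda_g$), but the latter avoids ever leaving $\F(L)$ and is what the thesis later relies on for continuity statements. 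Also, your phrase ``applying it to $\Omega_{gL}$'' is slightly off: the quadratic exponential $\exp(\zeta)$ \emph{is} the vacuum vector, not an operator applied to one.
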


Now, the condition for equivalence of two Fock representations of $\Cl(V)$ can be stated easily:  they are equivalent if and only if the Lagrangian subspaces are in the same polarization class.
\begin{cor}\label{co-equi-soln}
\index{intertwiner!existence}
(A Condition for Unitary Equivalence).
\citep[page~111--112]{PR94} Given two unitary structures $J_1$ and $J_2 = g J_1 g ^{-1}$ on $V$, $g \in \Orth(V)$, with corresponding Lagrangian subspaces $L_1, L_2$ and Fock representations $\pi_1, \pi_2$:
\begin{align}
g \in \Orth_{res, J_1} = \Orth_{res, J_2} &\Leftrightarrow J_2 - J_1 \text{ is Hilbert-Schmidt } \notag \\
 &\Leftrightarrow L_1, L_2 \text{ are in the same polarization class} \notag \\
 &\Leftrightarrow \text{Fock representations } \pi_1,\text{ }\pi_2 \text{ unitarily equivalent.} \notag
\end{align}
\end{cor}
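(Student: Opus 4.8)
The plan is to obtain the corollary by chaining together the earlier characterizations; essentially no new analysis is required, since the hard work is already packaged in Theorem \ref{t-impl-soln}. First I would observe that, by hypothesis $J_2 = g J_1 g^{-1}$, so by the definition of $\Orth_{res, J_1}$ in definition \ref{d-ores-ures-lagr-res} (as spelled out explicitly there, and compatibly with proposition \ref{p-ojv-ores}), the assertion $g \in \Orth_{res, J_1}$ is literally the assertion that $g J_1 g^{-1} = J_2$ lies in the same polarization class as $J_1$, i.e. that $\norm{J_2 - J_1}_2 < \infty$. Once this holds, proposition \ref{p-ores-dep-pol-clas} gives $\Orth_{res, J_1} = \Orth_{res, J_2}$ automatically, so the ``$= \Orth_{res, J_2}$'' in the statement is not an extra hypothesis. (There is no ambiguity hidden in the choice of $g$: two elements of $\Orth(V)$ conjugating $J_1$ to $J_2$ differ by an element of $\UU(V_{J_1})$, which lies in $\Orth_{res, J_1}$ because it commutes with $J_1$; hence membership of a conjugator in $\Orth_{res, J_1}$ is independent of that choice.)

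Next, the passage between ``$J_2 - J_1$ Hilbert--Schmidt'' and ``$L_1$, $L_2$ in the same polarization class'' is immediate from definition \ref{d-pol-clas}: by proposition \ref{p-bij-usv-lagh} the canonical bijection $\US(V) \leftrightarrow \Lagr(H, \Sigma)$ sends $J_i$ to $L_i$, and polarization classes of $\Lagr(H, \Sigma)$ are defined precisely as the images under this bijection of the $J_2 - J_1$-Hilbert--Schmidt equivalence classes of $\US(V)$.

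The remaining equivalence, with unitary equivalence of the Fock representations, is where the implementation results enter. By Theorem \ref{t-impl-soln}, $g \in \Orth_{res, J_1}$ if and only if $\theta_g$ is unitarily implemented in $\pi_{L_1} = \pi_1$. Since $L_2 = g L_1$ by lemma \ref{l-ov-acts-lagr}, proposition \ref{p-bij-intw-impl} furnishes a bijection (indeed a $\UU(1)$-torsor isomorphism) between the set of unitary implementers of $\theta_g$ in $\pi_{L_1}$ and the set $T(L_1, L_2)$ of unitary intertwiners $\F(L_1) \rightarrow \F(L_2)$, via $T \circ U = \Lambda_g$; in particular one of these sets is nonempty exactly when the other is. By definition \ref{d-equi-rep}, $T(L_1, L_2) \ne \emptyset$ is exactly the assertion that $\pi_1$ and $\pi_2$ are unitarily equivalent. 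Stringing the three equivalences together closes the cycle.

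I do not expect a genuine obstacle here, as this is a corollary assembled from quoted results; the only points requiring care are bookkeeping, namely (i) verifying that ``polarization class'' of a unitary structure and of its associated Lagrangian subspace are transported correctly by proposition \ref{p-bij-usv-lagh}, and (ii) noting that the bijection of proposition \ref{p-bij-intw-impl} translates nonemptiness of the implementer set into existence of an intertwiner even when, a priori, both sets could be empty. Everything else is a direct appeal to Theorem \ref{t-impl-soln} and the surrounding lemmas.
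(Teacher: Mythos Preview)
Your proposal is correct and follows the approach implicit in the paper: the corollary is presented there without its own proof, simply as a direct consequence of Theorem \ref{t-impl-soln} via proposition \ref{p-bij-intw-impl}, together with the definitions \ref{d-pol-clas}, \ref{d-ores-ures-lagr-res} and proposition \ref{p-ores-dep-pol-clas} that you cite. Your care about the independence of the choice of $g$ and the bookkeeping between unitary structures and Lagrangian subspaces is a welcome addition that the paper leaves tacit.
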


\chapter{THE CLIFFORD ALGEBRA BUNDLE}\label{c-calg-bndl}

For those who have just read through the lengthy background chapters, it might be a good idea to review the technical overview of the rest of the thesis, in section \ref{s-tech-over} of chapter \ref{c-intr}, which may make more sense now.  For example, it has an introduction to our use of associated bundles.

This chapter uses an associated bundle construction to get the Clifford algebra bundle, a topological fiber bundle associated to the \Frechet principal bundle $L\SO(E) \rightarrow LM$ via a continuous action of $L\SO(n)$ on the standard or model $C^{*}$ Clifford algebra $\Cl(L\RR^n)$, on which we do not define a smooth structure.  This bundle is denoted $\Cl(LE) \rightarrow LM$, with $\Cl(LE)_{\gamma} \cong \Cl(LE_{\gamma})$, defined more precisely using associated bundles.  For the geometric viewpoint, we can take $LE_{\gamma}$ as the uncompleted real inner product space of lemma \ref{l-fibr-loop-vb} and definition \ref{d-fibr-innr-prod-loop-vb}, but by \citet[page~28]{PR94}, an isomorphic Clifford algebra results from using the completion, $\overline{LE_{\gamma}}$; but again the actual definition of $\Cl(LE)$ is as an associated bundle.  We begin by defining a real Hilbert space structure on $L\RR^n$, used to define its Clifford algebra and Fock spaces.  Other than for this purpose and as otherwise noted, $L\RR^n$ will still denote, as in example \ref{e-lrn-l-vect-spac}, the \Frechet space.

\begin{defn}\label{d-lrn-preh}
\index{loop space!inner product}
\index{LRn@$L\RR^n$}
\index{Cl(LRn)@$\Cl(L\RR^n)$}
\index{Cl(L2S1Rn)@$\Cl(L\RR^n)$}
($L\RR^n$, $\Cl(L\RR^n)$).
Define the inner product for $L\RR^n$ by $(\alpha, \beta) = \int_{S^1} (\alpha(t), \beta(t)) dt$ using the standard inner product on $\RR^n$.  The completion of $L\RR^n$ for this inner product is $L^2(S^1, \RR^n)$.  Use definition \ref{d-c-star-clif-alg} to define the model Clifford $C^{*}$-algebra over $L^2(S^1, \RR^n)$, $\Cl(L\RR^n) = \Cl(L^2(S^1, \RR^n))$.

Notice that the symbol $L\RR^n$ will still indicate the \Frechet topology, not the one from the inner product just defined, except in a few cases when noted otherwise.
\end{defn}
This is consistent with definition \ref{d-fibr-innr-prod-loop-vb}, taking $L\RR^n$ as the smooth sections of the trivial smooth vector bundle $S^1 \cross \RR^n \rightarrow S^1$ with fiberwise inner product from the standard inner product on $\RR^n$.

\begin{lem}\label{l-id-lrn-lrnh-cont}
\index{LRn@$L\RR^n$!identity map to L2S1Rn@identity map to $L^2(S^1, \RR^n)$}
($L\RR^n \xrightarrow{\ident} L^2(S^1, \RR^n)$ is continuous).
\end{lem}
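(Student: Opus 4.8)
The plan is to show that the $L^2$ norm on $L\RR^n$ is dominated by the zeroth \Frechet seminorm, so that the identity map is a bounded linear map from the \Frechet space of example \ref{e-lrn-l-vect-spac} to the normed space $L^2(S^1,\RR^n)$ of definition \ref{d-lrn-preh}.

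First I would recall that the \Frechet topology on $L\RR^n$ is given by the seminorms $\norm{f}_k = \max_{t \in S^1} \norm{f^{(k)}(t)}$ (example \ref{e-lrn-l-vect-spac}), and that the inner product norm satisfies $\norm{f}_{L^2}^2 = \int_{S^1} \norm{f(t)}^2\,dt$ with $S^1 = [0,1]$ carrying total measure $1$. The key estimate is then immediate: for any $f \in L\RR^n$,
\[
\norm{f}_{L^2}^2 = \int_0^1 \norm{f(t)}^2\,dt \le \int_0^1 \Big(\max_{s \in S^1} \norm{f(s)}\Big)^2 dt = \norm{f}_0^2,
\]
so $\norm{f}_{L^2} \le \norm{f}_0$.

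To conclude continuity, I would use that $L\RR^n$ is metrizable (lemma \ref{l-frec-spac}), so it suffices to check sequential continuity: if $f_j \to f$ in the \Frechet topology then in particular $\norm{f_j - f}_0 \to 0$, whence $\norm{f_j - f}_{L^2} \le \norm{f_j - f}_0 \to 0$. Equivalently, one may simply observe that a linear map from a \Frechet space into a normed space is continuous whenever it is bounded by a continuous seminorm on the domain, and $\norm{\cdot}_0$ is such a seminorm.

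There is no genuine obstacle in this lemma; the only point requiring a moment's care is the normalization of the measure on $S^1$, which is taken to have total mass $1$ so that the supremum norm literally dominates the $L^2$ norm — with a different normalization one would merely pick up a harmless constant factor in the estimate above.
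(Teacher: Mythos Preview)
Your proof is correct and matches the paper's approach essentially line for line: the paper also bounds $\norm{\sigma}_{L^2}^2 = \int_{S^1} (\sigma(t),\sigma(t))\,dt \le \max_{t\in S^1}(\sigma(t),\sigma(t)) = \norm{\sigma}_0^2$, using $S^1 = \RR/\ZZ$ with total measure $1$. Your additional remarks on why the seminorm bound yields continuity make explicit what the paper leaves implicit.
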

\begin{proof}
Given any $\sigma \in L\RR^n$,
\[
\norm{\sigma}^2 = \int_{S^1} (\sigma(t), \sigma(t)) dt \le \max_{t \in S^1} (\sigma(t), \sigma(t)) = \norm{\sigma}_0^2, \notag
\]
where we consider $S^1 = \RR / \ZZ$ and integrate from $0$ to $1$, the norm on the left hand side being the norm for $L^2(S^1, \RR^n)$, the inner product being the standard one for $\RR^n$, and $\norm{\sigma}_0$, the 0-th seminorm for the \Frechet space $L\RR^n$.
\end{proof}

We use the \Frechet manifold topology for the loop of a smooth principal bundle and on the left hand side of associated bundle constructions, but when we consider the right hand side, the action of $L\SO(n)$ on some space, factored through the action of an orthogonal group on a Hilbert space, we will use the first two seminorms of the subspace \Frechet topology.  This will support our use of Fourier series for elements of $L\SO(n)$ considered as elements of $L\B(\RR^n)$, where $\B(R^n)$ is the space of bounded operators on $\RR^n$.

\begin{lem}\label{l-lson-two-top}
\index{LSOn@$L\SO(n)$!two \Frechet topologies}
(The Relation Between Two \Frechet-related topologies for $L\SO(n)$).
One topology for $L\SO(n)$ is that of a \Frechet manifold from proposition \ref{p-loop-lgrp-frec-lgrp}.  It also has a topology as a subspace of the \Frechet space $L\B(\RR^n)$ (see lemma \ref{e-lrn-l-vect-spac}). Convergence in the \Frechet manifold topology implies convergence in the $0$-th and $1$-st seminorms, $\norm{}_0$, $\norm{}_1$, of the subspace of \Frechet space topology.
\end{lem}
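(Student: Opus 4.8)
The plan is to deduce the statement from functoriality of smooth looping, rather than by a hands-on Fourier computation. First I would note that $\B(\RR^n)$, being a finite-dimensional real vector space, is an orientable smooth manifold (and its \Frechet manifold structure on looping coincides with its \Frechet space structure of example \ref{e-lrn-l-vect-spac} by note \ref{n-frec-spac-frec-mfld}), while $\SO(n)$ is a compact, orientable Lie group, embedded in $\B(\RR^n)$ as the orthogonal matrices of determinant $1$; in particular the inclusion $\iota \colon \SO(n) \hookrightarrow \B(\RR^n)$ is a smooth map of smooth manifolds. Applying proposition \ref{p-loop-smth-map-smth} (with proposition \ref{p-smth-free-loop-spac-frec-mfld} supplying the \Frechet manifold structures, and $\SO(n)$ orientable as in \citet[page~92]{Bump04} and used via proposition \ref{p-loop-lgrp-frec-lgrp}), the looped map $L\iota \colon L\SO(n) \to L\B(\RR^n)$ is a smooth, hence continuous, map of \Frechet manifolds.

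The second step is to recognize $L\iota$ as the set inclusion: for $\gamma \in L\SO(n)$ we have $L\iota(\gamma) = \iota \circ \gamma = \gamma$ viewed as a loop in $\B(\RR^n)$, so $L\iota$ identifies $L\SO(n)$ with its image $L\SO(n) \subset L\B(\RR^n)$ and continuity of $L\iota$ says precisely that the \Frechet manifold topology on $L\SO(n)$ is finer than (contains) the subspace topology inherited from $L\B(\RR^n)$. Consequently, if $g_j \to g$ in the \Frechet manifold topology of $L\SO(n)$, then $g_j \to g$ in the subspace topology, i.e. $g_j \to g$ in the \Frechet space $L\B(\RR^n)$; since $g \in L\SO(n) \subset L\B(\RR^n)$ and $L\B(\RR^n)$ is a vector space, this is by lemma \ref{l-frec-spac} equivalent to $\norm{g_j - g}_k \to 0$ for every $k$, in particular for $k = 0$ and $k = 1$, which is the assertion.

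The only thing requiring care is the bookkeeping that the \Frechet manifold structure on $L\SO(n)$ named in proposition \ref{p-loop-lgrp-frec-lgrp} (looping the Lie group $\SO(n)$) is the same one for which proposition \ref{p-loop-smth-map-smth} is stated (looping the orientable manifold $\SO(n)$), and likewise that the looped vector space $L\B(\RR^n)$ carries a single well-defined topology in its two guises; the text following proposition \ref{p-loop-lgrp-frec-lgrp} and note \ref{n-frec-spac-frec-mfld} already record these compatibilities, and once they are in place the argument above is immediate. (One could alternatively give a direct proof by unravelling the loop-space chart about a loop in $\SO(n)$ via the exponential map as in notation \ref{n-smth-free-loop-spac-frec-mfld} and estimating the first two seminorms of the resulting matrix-valued loop, but the functorial argument avoids those estimates entirely.)
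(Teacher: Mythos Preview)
Your argument is correct and takes a genuinely different route from the paper's own proof. The paper argues the two seminorms separately: for $\norm{\,}_0$ it factors through the continuous inclusion $L\SO(n) \hookrightarrow C(S^1,\SO(n))$ from proposition \ref{p-smth-free-loop-spac-frec-mfld} together with the identification of the compact-open topology with uniform convergence in operator norm; for $\norm{\,}_1$ it invokes Stacey's projective-topology characterization of $LX$ via the derivative maps $LX \to C(S^1, T^{(k)} X)$, then pushes through the embedding $T\SO(n) \hookrightarrow T\B(\RR^n)$. Your functorial argument, by contrast, loops the smooth inclusion $\iota$ once via proposition \ref{p-loop-smth-map-smth} and then reads off convergence in \emph{all} seminorms at once from note \ref{n-frec-spac-frec-mfld}; this is shorter, avoids the Stacey citation, and in fact yields a stronger conclusion than the lemma states (the paper only needs $k=0,1$ for proposition \ref{p-lson-in-ores}, which is presumably why it stops there). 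The paper's approach does have the modest advantage of making explicit the mechanism behind the $k=1$ case, namely that one is controlling the derivative loop $\alpha \mapsto \alpha'$, which is conceptually relevant later when Fourier coefficients of $\sigma'$ enter the Hilbert--Schmidt estimate in proposition \ref{p-lson-in-ores}.
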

\begin{proof}
We have not already specified a topology for $\SO(n)$, but from \citet[page~22]{Hall03}, which says that every matrix Lie group is a smooth embedded real submanifold of $\B(\CC^n)$, its manifold topology is the same as the one from its realization as a matrix group; i.e., since $\SO(n)$ consists of real matrices, as a subspace of $\B(\RR^n)$, a finite-dimensional vector space so that all norms on it are equivalent.  We can use the operator norm, with its associated metric that induces the compact-open topology on $C(S^1, \SO(n))$.  By proposition \ref{p-smth-free-loop-spac-frec-mfld}, the inclusion $L\SO(n) \rightarrow C(S^1, \SO(n))$ is continuous.  Thus convergence in $L\SO(n)$ in the \Frechet manifold topology implies convergence in the topology induced by inclusion into $C(S^1, \SO(n))$; i.e. uniform convergence in the operator norm; that is to say, convergence in the $0$-th seminorm of the \Frechet space topology for $L\B(\RR^n)$.

The statement about the $1$-st seminorm follows from proposition 3.25 of \citet[page~21]{Stac05}, that the topology on $LX$ is the projective topology for the family of maps $LX \rightarrow C(S^1, T^{(k)} X)$ defined by taking successive derivatives of loops.  I.e., for $k = 1$, he defines a section $\tau \colon S^1 \rightarrow TS^1$, $\tau \colon t \mapsto (t, \frac{\partial}{\partial t})$, which induces $\widehat{\tau} \colon LX \rightarrow LTX$, $\widehat{\tau}(\alpha) = d\alpha \circ \tau$, and notes that composition with the inclusion of smooth loops into continuous ones results in continuity of the first map in the stated family.  The background for his proof assumes that $X$ is orientable, and though that may not be a necessary assumption, as discussed in the proof of our proposition \ref{p-smth-free-loop-spac-frec-mfld}, it is satisfied for $X = \SO(n)$, since all Lie groups are orientable \citep[page~92]{Bump04}.

$X = \SO(n) \subset \B(\RR^n)$ is an embedding, a homeomorphism onto its image that induces smooth bundle map $TX \rightarrow T\B(\RR^n) \cong \B(\RR^n) \cross \B(\RR^n)$.  That map, in turn, induces by lemma \ref{l-co-top} a continuous map $C(S^1, TX) \rightarrow C(S^1, \B(\RR^n))$ (the second factor, made up of the tangent spaces).  Composing with the continuous inclusion $LTX \subset C(S^1, TX)$, $LX \rightarrow LTX \rightarrow C(S^1, TX) \rightarrow C(S^1, \B(\RR^n))$ is continuous, and amounts to $\alpha \mapsto \alpha'$.
\end{proof}

\begin{defn}\label{d-actn-lson-clrn}
\index{Cl(LRn)@$\Cl(L\RR^n)$!action of LSO(n) on@action of $L\SO(n)$ on!}
\index{LSO(n) action on Cl(LRn)$L\SO(n)$ @ action on $\Cl(L\RR^n)$}
(The Action of $L\SO(n)$ on $\Cl(L\RR^n)$).
Define the action of $L\SO(n)$ on $L^2(S^1, \RR^n)$ pointwise over $S^1$.
\end{defn}

\begin{defn}\label{l-actn-lson-clrn}
\index{Cl(LRn)@$\Cl(L\RR^n)$!action of LSO(n) on@action of $L\SO(n)$ on!}
\index{LSO(n) action on Cl(LRn)$L\SO(n)$ @ action on $\Cl(L\RR^n)$}
(Properties of the Action of $L\SO(n)$ on $\Cl(L\RR^n)$).
Since $\SO(n)$ preserves the inner product on $\RR^n$, an element of $L\SO(n)$ preserves the inner product at each point $t \in S^1$, and hence the integral over $S^1$.  Thus $L\SO(n) \subset \Orth(L^2(S^1, \RR^n))$, and so $g \in L\SO(n)$ acts on $\Cl(L\RR^n)$ via the Bogoliubov automorphism $\theta_g$ of $\Cl(L\RR^n)$.
\end{defn}

\begin{lem}\label{l-actn-lson-clrn-cont}
\index{Cl(LRn)@$\Cl(L\RR^n)$!action of LSO(n) on@action of $L\SO(n)$ on!continuous}
\index{LSO(n) action on Cl(LRn)$L\SO(n)$ @ action on $\Cl(L\RR^n)$!continuous}
\index{LSOn@$L\SO(n)$!inclusion in O(L2S1Rn)continuous@inclusion in $\Orth(L^2(S^1, \RR^n))$ continuous}
The inclusion $L\SO(n) \hookrightarrow \Orth(L^2(S^1, \RR^n))$ is continuous, as is the action of $L\SO(n)$ on $\Cl(L\RR^n)$.
\end{lem}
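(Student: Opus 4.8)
The plan is to prove the two assertions in turn, deriving the second from the first together with proposition \ref{p-bogo-map-cont}. Throughout, $\Orth(L^2(S^1,\RR^n))$ carries the operator norm topology (definition \ref{d-c-star-bogo-auto}) and $L\SO(n)$ the \Frechet manifold topology of proposition \ref{p-loop-lgrp-frec-lgrp}, which is metrizable (proposition \ref{p-loop-smth-pb-is-frec-pb}); so for the first assertion it is enough to check sequential continuity of the inclusion at an arbitrary point.

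First I would bound the operator norm on $L^2(S^1,\RR^n)$ by the $0$-th \Frechet seminorm. Since each $g \in L\SO(n)$ acts pointwise over $S^1$ (definition \ref{d-actn-lson-clrn}) and each $g(t) \in \SO(n)$ is an isometry of $\RR^n$, for $g, g_0 \in L\SO(n)$ and $\sigma \in L^2(S^1,\RR^n)$ we get
\begin{align}
\norm{(g - g_0)\sigma}^2 &= \int_{S^1} \norm{(g(t) - g_0(t))(\sigma(t))}^2 \, dt \notag \\
 &\le \Bigl(\sup_{t \in S^1} \norm{g(t) - g_0(t)}_{op}\Bigr)^2 \int_{S^1} \norm{\sigma(t)}^2 \, dt = \norm{g - g_0}_0^2 \, \norm{\sigma}^2, \notag
\end{align}
so that $\norm{g - g_0}_{op} \le \norm{g - g_0}_0$, the right-hand norm being the $0$-th seminorm of $L\B(\RR^n)$ restricted to $L\SO(n)$ in the sense of lemma \ref{l-lson-two-top}. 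By that lemma, \Frechet-manifold convergence in $L\SO(n)$ implies $\norm{}_0$-convergence; combined with the estimate, $g_i \to g_0$ in $L\SO(n)$ forces $g_i \to g_0$ in $\Orth(L^2(S^1,\RR^n))$, giving continuity of the inclusion (and, incidentally, re-confirming that each $g$ is orthogonal, as recorded in definition \ref{l-actn-lson-clrn}).

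For the action on $\Cl(L\RR^n)$ I would factor the map $L\SO(n) \cross \Cl(L\RR^n) \to \Cl(L\RR^n)$, $(g,a) \mapsto \theta_g(a)$, through $\Orth(L^2(S^1,\RR^n)) \cross \Cl(L\RR^n)$: it is the composite of $\incl \cross \ident_{\Cl(L\RR^n)}$, continuous by the previous paragraph, with the Bogoliubov action map $\Orth(L^2(S^1,\RR^n)) \cross \Cl(L\RR^n) \to \Cl(L\RR^n)$, which is jointly continuous by proposition \ref{p-bogo-map-cont} for the operator norm topology on $\Orth$ and the $C^{*}$-norm topology on $\Cl$. A composite of continuous maps is continuous, which is the claim.

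There is no genuine obstacle. The two points needing a little care are using the operator norm topology (rather than the strong operator topology) on $\Orth(L^2(S^1,\RR^n))$, so that the pointwise estimate above really controls the relevant topology, and invoking metrizability of $L\SO(n)$ so that sequential continuity suffices. The single inequality $\norm{g - g_0}_{op} \le \norm{g - g_0}_0$ carries the argument, exactly as the analogous bound did in the proof of lemma \ref{l-id-lrn-lrnh-cont}.
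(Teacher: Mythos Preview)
Your proof is correct and follows essentially the same route as the paper: both establish the inequality $\norm{g - g_0}_{op} \le \norm{g - g_0}_0$ by the same pointwise estimate, invoke lemma \ref{l-lson-two-top} (the paper implicitly) to pass from \Frechet-manifold convergence to $\norm{}_0$-convergence, and then factor the Clifford action through $\Orth(L^2(S^1,\RR^n))$ to apply proposition \ref{p-bogo-map-cont}. Your explicit mention of metrizability to justify sequential continuity is a harmless extra; the paper simply argues directly with the inequality.
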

\begin{proof}
We will see that $L\SO(n) \hookrightarrow \Orth(L^2(S^1, \RR^n))$ is continuous for the operator norm topology on $\Orth(L^2(S^1, \RR^n))$.  For $g, g_0 \in L\SO(n)$, $\sigma \in L^2(S^1, \RR^n)$,
\begin{align}
\norm{g - g_0}^2 &= \sup_{\norm{\sigma} = 1} (\norm{(g - g_0) \sigma}^2) \notag \\
 &= \sup_{\norm{\sigma} = 1} (\int_{S^1} \norm{(g - g_0)(t) \sigma(t)}^2 dt) \notag \\
 &\le \sup_{\norm{\sigma} = 1} (\int_{S^1} \norm{(g - g_0)(t)}^2 \norm{\sigma (t)}^2 dt) \notag \\
 &\le \sup_{\norm{\sigma} = 1} (\max_{S^1} (\norm{(g - g_0)(t)}^2) \int_{S^1} \norm{\sigma (t)}^2) \notag \\
 &= \sup_{\norm{\sigma} = 1} (\norm{g - g_0}_0^2 \norm{\sigma}^2) = \norm{g - g_0}_0^2. \notag
\end{align}
So, by making $g$ close enough to $g_0$ in the \Frechet topology, $\norm{g - g_0}$ can be made as small as desired, and the inclusion is continuous.  Then since proposition \ref{p-bogo-map-cont} says that the action of $\Orth(L^2(S^1, \RR^n))$ with operator norm on $\Cl(L\RR^n)$ is continuous, the action of $L\SO(n)$ factoring through the action of $\Orth(L^2(S^1, \RR^n))$ on $\Cl(L\RR^n)$ is continuous.
\end{proof}

\begin{defn}\label{d-clif-alg-bndl}
\index{Cl(LE)@$\Cl(LE)$}
\index{Clifford algebra!bundle}
(The Clifford Algebra Bundle).
\begin{align}
\Cl(LE) = L\SO(E) \cross_{L\SO(n)} \Cl(L\RR^n) \xrightarrow{\pi_{\Cl(LE)}} LM \notag
\end{align}
\end{defn}
\begin{note}\label{n-clif-alg-bndl}
\index{associated bundle!geometric viewpoint}
(Associated Bundle vs. Geometric Viewpoints; Smoothness vs. Completeness).
To aid in intuition, one may use the correspondence
\begin{align}
\Cl(LE) &= \{ [(\widetilde{\gamma}, a)] \st \widetilde{\gamma} \in L\SO(E) \text{ and } a \in \Cl(L\RR^n) \} \notag \\
 &\leftrightarrow \{ (\gamma, a_{\gamma}) \st a_{\gamma} \in \Cl(LE_{\gamma}) \}, \notag
\end{align}
where $\gamma = \pi_{L\SO(E)} (\widetilde{\gamma})$ and $LE_{\gamma} = \Gamma(\gamma^{*} E)$.

We are saved from questions of the smooth loops in $\Cl(L\RR^n)$ versus the completeness of $L^2(S^1, \RR^n)$ because the Clifford algebra construction gives the same complete $C^{*}$-algebra regardless of whether the vector space it builds upon is complete (see the introduction to chapter \ref{c-clif-alg-fock-rep}).
\end{note}

\chapter{THE RESTRICTED ORTHOGONAL GROUP}\label{c-res-orth-grp}

On the way to our goal of creating a Clifford module bundle over $LM$, over each point $\gamma$ of $LM$ we pick a fixed polarization class of Lagrangian subspaces of the complexification of $LE_{\gamma}$, any of which would give rise to an equivalent representation of the Clifford algebra $\Cl(LE_{\gamma})$ over that point.  This can be done consistently so as to form a bundle.  However, we can't then choose consistently one of these Lagrangian subspaces over each point, to use in a Fock space construction.  To deal with this problem, we consider $\Orth(LE_{\gamma})$.  Our chosen polarization class of Lagrangian subspaces corresponds to a restricted orthogonal group, which as a set is a subgroup of $\Orth(LE_{\gamma})$.

To facilitate proofs, we use associated bundle constructions to refer this analysis to a standard real Hilbert space, a standard polarization class of Lagrangian subspaces of its complexification, and the standard restricted orthogonal group corresponding to that polarization class.

This chapter discusses the restricted Orthogonal group of automorphisms of the real Hilbert space $V = L^2 (S^1, \RR^n)$, $\Orth_{res}$ = $\Orth_{res, \Lagr_{res}}$, whose definition depends on a standard polarization class $\Lagr_{res}$ of Lagrangian subspaces of $H = \CC \tensor_{\RR} V$.  Recall from section \ref{s-soln-ques-impl-equi}, definitions \ref{d-pol-clas}, \ref{d-ores-ures-lagr-res} and propositions \ref{p-ores-dep-pol-clas}, \ref{p-ojv-ores}.

We will also discuss in this chapter a little about topological group representations.  We will give a projective representation of $\Orth_{res}$.  Although the thesis doesn't use the representation, work that leads to it will be needed later.

\index{multiplication operator}
The structure group of the smooth vector bundle we start with is $\SO(n)$, and the loop of that group, $L\SO(n)$, as a ``multiplication operator'' acting pointwise for each point on $S^1$, is included in the group $\Orth(L\RR^n)$ defined using the inner product of definition \ref{d-lrn-preh}, and by continuous extension of the operators to act on the completion of the pre-Hilbert space, $\Orth (V) = \Orth(L^2 (S^1, \RR^n))$.  The inclusion of $L\SO(n)$ actually lies in $\Orth_{res}$ for the $[L]$ we will define, which is good news for forming a bundle of polarization classes associated to the principal $L\SO(n)$ bundle $L\SO(E)$.

\section{Restricted Orthogonal Group and Homogeneous Space}\label{s-ores-homo-spac}

In order to define $\Orth_{res}$ we fix the polarization class $\Lagr_{res}$ of Lagrangian subspaces of $H$, and thus the corresponding polarization class $\US_{res}$ of unitary structures on $V$.
\begin{defn}\label{d-lagr-res}
\index{Lagrres@$\Lagr_{res}$}
\index{USres@$\US_{res}$}
\index{polarization class!standard}
(The Standard Polarization Class: for Lagrangian Subspaces, $\Lagr_{res}$; for Unitary Structures, $\US_{res}$).
Let $\Lagr_{res}$ be the polarization class of Lagrangian subspaces that contains the set of Lagrangian subspaces $L$ of the form specified by choosing any Lagrangian subspace $L_{finite} \subset \CC^n$ (recall that $n$, the rank of $E$, is even), and setting $L \subset H$ to the direct sum of the constant functions with values in $L_{finite}$, and the functions with only positive frequency Fourier series terms.

Let $\US_{res} = \US_{res, \Lagr_{res}}$.
\end{defn}
A calculation shows that $L^{\perp} = \Sigma (L)$, so $L$ is a Lagrangian subspace of $H$, and $J$ restricted to $V$ is a unitary structure on $V$.  Any two such $L_{finite, 1}, L_{finite, 2}$ give $L_1, L_2$ in the same polarization class by corollary \ref{co-equi-soln}, since the difference of the corresponding $J_1, J_2$ is zero except on a finite-dimensional subspace, and hence is Hilbert-Schmidt.  Thus all $L$ of the form specified are in the same polarization class, and so the definition is valid.  Note that $\Lagr_{res}$ does contain other $L$ not of the form specified.

For each $L \in \Lagr_{res}$, the corresponding $J \in \US_{res}$ is given by multiplication by $i$ on $L$, and multiplication by $-i$ on $L^{\perp}$.

The Fourier series here is the eigenspace decomposition of the derivative operator, which relates to the covariant derivative operator on vector fields along a loop, used in initial attempts at proving results in this thesis.

\begin{ass}\label{a-res-grp}
\index{assumptions!restricted group related}
\index{restricted group}
\index{L2@$L^2$}
\index{V@$V$}
\index{H@$H$}
\index{Lagrres@$\Lagr_{res}$}
\index{Ores@$\Orth_{res}$}
\index{orthogonal group!restricted}
(Fix $V$, $H$, $\Lagr_{res}$, $\US_{res}$, and $\Orth_{res}$).
Henceforth we let $V = L^2 (S^1, \RR^n)$. Then $H = \CC \tensor V = L^2 (S^1, \CC^n)$. We assume given the standard polarization class $\Lagr_{res}$ of Lagrangian subspaces of $H$, and from proposition \ref{p-bij-usv-lagh} get the corresponding polarization class $\US_{res}$ of unitary structures on $V$.  When needed, we can choose a particular $L_{finite}$ as in the definition, obtaining a particular $L \in \Lagr_{res}$ of the form specified, with corresponding $J \in \US_{res}$.  Using proposition \ref{p-ores-dep-pol-clas} we define the standard restricted orthogonal group of $V$, $\Orth_{res} = \Orth_{res, \Lagr_{res}}$.
\end{ass}

\begin{prop}\label{p-ores-top-grp}
\index{BresJ@$\B_{res, J}$!Banach algebra}
\index{Ores@$\Orth_{res}$!topological group}
\index{restricted group!orthogonal!topology}
($\Orth_{res}$ is a Topological Group).
\citep[pages~80,~244]{PS86} Given any $J \in \US_{res}$, $\Orth_{res}$ is a closed topological subspace of the real Banach algebra $\B_{res, J} = \{ A \in \B(V) \st \norm{[J,A]}_2 < \infty \}$ with norm $\norm{A}_J = \norm{A} + \norm{[J,A]}_2$, with $\norm{}_2$ as in lemma \ref{l-hsop}.  Only the norm of $\B_{res, J}$ depends on the specific $J$, and all such norms are equivalent, giving the same topology.

Further, $\Orth_{res}$ is a topological group, and its topology is the subspace topology inherited from any $\B_{res, J}$.  Its subgroup $\UU(V_J)$ is closed in it.
\end{prop}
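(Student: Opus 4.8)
The plan is to deduce everything from the Hilbert-Schmidt ideal estimates of lemma \ref{l-hsop} together with the characterization $\Orth_{res} = \Orth_J(V) = \{ g \in \Orth(V) \st \norm{[J,g]}_2 < \infty \}$ of proposition \ref{p-ojv-ores} (note $[J,g] = -[g,J]$, so the two commutator norms agree). First I would check that $(\B_{res,J}, \norm{}_J)$ is a real Banach algebra. It is visibly a real subspace of $\B(V)$; submultiplicativity follows from $[J,AB] = [J,A]B + A[J,B]$ and the inequalities $\norm{TA}_2 \le \norm{T}\,\norm{A}_2$, $\norm{AT}_2 \le \norm{A}_2\,\norm{T}$ of lemma \ref{l-hsop}:
\begin{align}
\norm{AB}_J &= \norm{AB} + \norm{[J,AB]}_2 \notag \\
 &\le \norm{A}\,\norm{B} + \norm{[J,A]}_2\,\norm{B} + \norm{A}\,\norm{[J,B]}_2 \notag \\
 &\le \norm{A}_J\,\norm{B}_J. \notag
\end{align}
For completeness, a $\norm{}_J$-Cauchy sequence $(A_k)$ is Cauchy in operator norm, hence converges to some $A \in \B(V)$, and its commutators $[J,A_k]$ are Cauchy in the Hilbert-Schmidt norm, hence converge in $\B_2(V)$, which is complete by lemma \ref{l-hsop}; since $[J,A_k] \to [J,A]$ in operator norm, the two limits coincide, so $[J,A] \in \B_2(V)$ and $A_k \to A$ in $\norm{}_J$.

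Next I would show $\Orth_{res}$ is a closed subspace of $\B_{res,J}$. The inclusion is exactly proposition \ref{p-ojv-ores}. For closedness, if $g_k \in \Orth_{res}$ and $g_k \to A$ in $\norm{}_J$, then $g_k \to A$ in operator norm; since adjunction is an operator-norm isometry and multiplication is jointly operator-norm continuous, the relations $g_k^{*} g_k = g_k g_k^{*} = \ident$ pass to the limit, so $A \in \Orth(V)$; and $[J,A] = \lim [J,g_k]$ lies in $\B_2(V)$, so $A \in \Orth_J(V) = \Orth_{res}$. The independence of the underlying set of $\B_{res,J}$ from $J \in \US_{res}$, and the equivalence of the norms, come from the fact that $\US_{res}$ is a single polarization class (definition \ref{d-pol-clas}, assumption \ref{a-res-grp}): if $J, J' \in \US_{res}$ then $J - J'$ is Hilbert-Schmidt, so for any $A \in \B(V)$, $[J,A] - [J',A] = [J - J', A]$ has $\norm{[J-J',A]}_2 \le 2\norm{J-J'}_2\,\norm{A}$, whence $\norm{A}_{J'} \le (1 + 2\norm{J-J'}_2)\,\norm{A}_J$, and symmetrically. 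Thus $\B_{res,J} = \B_{res,J'}$ as sets with uniformly equivalent norms, so the subspace topology on $\Orth_{res}$ is the same for every $J \in \US_{res}$.

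It remains to verify the topological-group axioms and the closedness of $\UU(V_J)$. Multiplication on $\Orth_{res}$ is the restriction of Banach-algebra multiplication, hence jointly continuous by submultiplicativity; that the product and inverse of elements of $\Orth_{res}$ stay in $\Orth_{res}$ follows from $\norm{[J,gh]}_2 \le \norm{[J,g]}_2 + \norm{[J,h]}_2 < \infty$ and (see below) $\norm{[J,g^{-1}]}_2 = \norm{[J,g]}_2 < \infty$. Inversion is $g \mapsto g^{*} = g^{-1}$; since $J$ is orthogonal with $J^2 = -\ident$ one has $J^{*} = J^{-1} = -J$, giving $[J,g^{*}] = [J,g]^{*}$, and therefore, using the adjoint-invariance of both the operator norm and the Hilbert-Schmidt norm, $\norm{g^{*} - g_0^{*}}_J = \norm{(g - g_0)^{*}}_J = \norm{g - g_0}_J$; so inversion is an isometry and in particular continuous. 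Finally, by definition \ref{d-uvj} the elements of $\UU(V_J)$ are exactly the $g \in \Orth(V)$ with $[J,g] = 0$; these lie in $\Orth_J(V) = \Orth_{res}$, and $\UU(V_J)$ is precisely the preimage of $\{0\}$ under the $1$-Lipschitz map $\Orth_{res} \to \B_2(V)$, $g \mapsto [J,g]$, hence closed in $\Orth_{res}$.

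I expect the only step requiring genuine care is the completeness of $\B_{res,J}$, namely identifying the Hilbert-Schmidt limit of the commutators $[J,A_k]$ with the commutator $[J,A]$ of the operator-norm limit; the rest is bookkeeping with the norm estimates already recorded in lemma \ref{l-hsop} and the characterization of proposition \ref{p-ojv-ores}.
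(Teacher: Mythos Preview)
Your proposal is correct and follows essentially the same route as the paper's own proof: the same derivation of submultiplicativity from $[J,AB] = [J,A]B + A[J,B]$, the same completeness argument matching the Hilbert-Schmidt limit of $[J,A_k]$ with $[J,A]$, the same norm-equivalence estimate $\norm{A}_{J'} \le (1 + 2\norm{J-J'}_2)\norm{A}_J$, and the same use of $J^{*} = -J$ to get $[J,g^{*}] = [J,g]^{*}$ for continuity of inversion. One small slip: your inequality $\norm{[J,gh]}_2 \le \norm{[J,g]}_2 + \norm{[J,h]}_2$ as written omits the operator-norm factors, but since $g,h \in \Orth(V)$ have $\norm{g} = \norm{h} = 1$ the conclusion is unaffected.
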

\begin{proof}
Given $A, B \in \B_{res, J}$, $A B \in \B_{res, J}$ and $\norm{A B}_J \le \norm{A}_J \norm{B}_J$, because
\begin{align}
\norm{[J, A B]}_2 &= \norm{J A B - A B J}_2 \notag \\
 &\le \norm{J A B - A J B}_2 + \norm{A J B - A B J}_2 \notag \\
 &\le \norm{[J, A]}_2 \norm{B} + \norm{A} \norm{[J, B]}_2 < \infty \text{ and } \notag \\
\norm{A B} + \norm{[J, A B]}_2 &\le \norm{A} \norm{B} + \norm{[J, A]}_2 \norm{B} + \norm{A} \norm{[J, B]}_2 \notag \\
 &\le (\norm{A} + \norm{[J, A]}_2 ) (\norm{B} + \norm{[J, B]}_2 ). \notag
\end{align}

$\B_{res, J}$ is complete as follows.  If $A_i \in \B_{res, J}$ is a Cauchy sequence for $\norm{}_J$, then it's also a Cauchy sequence in $\B(V)$ for $\norm{}$.  Since $\B(V)$ with the operator norm is a Banach algebra, there is some $A \in \B(V)$ such that $\norm{A_i - A} \rightarrow 0$.  Again, $[J, A_i]$ is a Cauchy sequence in the Hilbert space (see lemma \ref{l-hsop}) of Hilbert-Schmidt operators on $V$, an ideal in $\B(V)$, so there is some Hilbert-Schmidt operator $B \in \B(V)$ such that $\norm{[J, A_i] - B}_2 \rightarrow 0$.  Then $\norm{[J, A_i] - B} \le \norm{[J, A_i] - B}_2 \Rightarrow \norm{[J, A_i] - B} \rightarrow 0$.  But $\norm{[J, A_i] - [J, A]} \rightarrow 0$, so $B = [J, A]$, $\norm{[J,A]}_2 = \norm{B}_2 < \infty$, $A \in \B_{res, J}$, and $\norm{A_i - A}_J \rightarrow 0$.  Thus $\B_{res, J}$ is a Banach algebra.

Suppose $J_1, J_2 \in \US_{res}$; i.e., $\norm{J_2 - J_1}_2 < \infty$.  Then
\begin{align}
\norm{[J_2, A]}_2 &= \norm{J_2 A - A J_2}_2 \notag \\
 &\le \norm{J_2 A - J_1 A}_2 + \norm{J_1 A - A J_1}_2 + \norm{A J_1 - A J_2}_2 \notag \\
 &\le 2 \norm{A} \norm{J_2 - J_1}_2 + \norm{[J_1, A]}_2, \text{ so} \notag \\
\norm{A}_{J_2} &\le (1 + 2 \norm{J_2 - J_1}_2) \norm{A}_{J_1}. \notag
\end{align}
The roles of $J_1, J_2$ may be reversed.  Thus $\norm{[J_1, A]}_2 < \infty \Leftrightarrow \norm{[J_2, A]}_2 < \infty$, so the sets $\B_{res, J_1}, \B_{res, J_2}$ are equal.  The norms of $\B_{res, J_1}, \B_{res, J_2}$ are equivalent.

$\Orth_{res}$ is a closed subset of $\B_{res, J}$, for if $x_i \in \Orth_{res}$ and $x_i \rightarrow x \in \B_{res, J}$, using the operator norm part of $\norm{}_J$ we have $x_i \rightarrow x \in \B(V)$.  Since $\Orth(V)$ is closed in $\B(V)$, $x \in \Orth(V)$ and hence $x \in \Orth_{res}$.

Since $J^{*} = -J$, for $A \in \B_{res, J}$, $\norm{[J, A^{*}]}_2 = \norm{[J, A]^{*}}_2 = \norm{[J, A]}_2$ by lemma \ref{l-hsop}, and for $A \in \Orth(V)$, $A^{-1} = A^{*}$, the inverse operation in the group $\Orth_{res}$ is continuous.  The multiplication operation also is continuous, as it is in any Banach algebra.  Thus $\Orth_{res}$ is a topological group.

Referring to definition \ref{d-ores-ures-lagr-res}, since for $g \in \UU(V_J)$, $[g, J] = 0$, $\UU(V_J) \subset \Orth_{res}$, and is closed by reasoning similar to that showing $\Orth_{res}$ closed in $\B_{res, J}$.
\end{proof}

Note that the subspace topology of $\UU(V_J) \subset \Orth_{res}$ itself is the same as its operator norm topology, since its elements commute with $J$.

\begin{cor}\label{co-ores-bair-acts-join-cont}
\index{Baire space!Ores@$\Orth_{res}$}
\index{restricted group!orthogonal!topology}
(The $\Orth_{res}$ Action Separate Continuity implies Joint Continuity).
Separately continuous actions of $\Orth_{res}$ on Hilbert, Banach, \Frechet, or other metric spaces (e.g. $\F(L)$) are jointly continuous by lemma \ref{l-grp-act-sep-join-cont}, since $\Orth_{res}$ is a closed subset of the Banach algebra $\B_{res, J}$.
\end{cor}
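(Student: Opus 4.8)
The plan is simply to check that the hypotheses of lemma \ref{l-grp-act-sep-join-cont} are met and then invoke it. That lemma requires three things: a metric space $M$, a Baire space $G$ carrying a group structure whose multiplication is separately continuous, and a separately continuous action $G \cross M \rightarrow M$. Here $G = \Orth_{res}$ and $M$ is whichever of the listed target spaces (Hilbert, Banach, \Frechet, or general metric, in particular $\F(L)$) is under consideration.

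First I would verify the Baire condition for $\Orth_{res}$. By proposition \ref{p-ores-top-grp}, $\Orth_{res}$ is a closed topological subspace of the real Banach algebra $\B_{res, J}$ for any fixed $J \in \US_{res}$, with the subspace topology independent of that choice. A closed subset of a Banach space is a complete metric space, and by note \ref{n-grp-act-sep-join-cont} every complete metric space is a Baire space; hence $\Orth_{res}$ is a Baire space. Proposition \ref{p-ores-top-grp} also states that $\Orth_{res}$ is a topological group, so its multiplication map is jointly continuous, \emph{a fortiori} separately continuous. That disposes of the conditions on $G$.

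Next I would observe that each target space is metric: Hilbert and Banach spaces are normed, hence metric; \Frechet spaces are metrizable by lemma \ref{l-frec-spac}; and the Fock space $\F(L)$ is in particular a Hilbert space (definition \ref{d-fock-spac}), so it falls under the Hilbert case. Therefore, given any separately continuous action of $\Orth_{res}$ on such a space, all hypotheses of lemma \ref{l-grp-act-sep-join-cont} hold and the lemma yields joint continuity of the action.

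I expect essentially no obstacle; the statement is a direct corollary. The only points needing any comment are the two bookkeeping facts used above — that a closed subspace of a Banach space is a Baire space (immediate from completeness together with note \ref{n-grp-act-sep-join-cont}) and that \Frechet spaces are metrizable (already recorded in lemma \ref{l-frec-spac}). Everything else is citation of proposition \ref{p-ores-top-grp} and lemma \ref{l-grp-act-sep-join-cont}.
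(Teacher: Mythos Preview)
Your proposal is correct and follows exactly the approach the paper takes: the corollary's proof is already contained in its statement, invoking lemma \ref{l-grp-act-sep-join-cont} together with the fact from proposition \ref{p-ores-top-grp} that $\Orth_{res}$ is closed in the Banach algebra $\B_{res, J}$ (hence a complete metric space, hence Baire by note \ref{n-grp-act-sep-join-cont}). You have simply unpacked these citations with a bit more care about the metric hypothesis on the target, which is fine.
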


We will give $\US_{res}$ a topology different from what it would have as a subspace of $\US$, deriving instead from the topology of $\Orth_{res}$ in proposition \ref{p-ores-top-grp}.  Then we will induce from the new topology on $\US_{res}$, the topology of $\Lagr_{res}$, by requiring that the restriction of the bijection of proposition \ref{p-bij-usv-lagh} be a homeomorphism.  In the following, the restrictions of bijections have images as implied, because of the use of the same bijections in definition \ref{d-ores-ures-lagr-res}.
\begin{defn}\label{d-ores-uvj-homo}
\index{homogeneous space}
\index{Ores/UVJ@$\Orth_{res}/\UU(V_J)$}
\index{USres@$\US_{res}$!homogeneous space}
\index{Lagrres@$\Lagr_{res}$!homogeneous space}
($\Orth_{res}/\UU(V_J)$ Homogeneous Space Topologies for $\Lagr_{res}$, $\US_{res}$).
Suppose given $J \in \US_{res}$.  Denote by $\pi_{\Orth_{res}/\UU(V_J)} \colon \Orth_{res} \rightarrow \Orth_{res}/\UU(V_J)$ the quotient map, with codomain given the quotient topology.  Since the codomain, as a set, is contained in $\Orth(V) / \UU(V_J)$, restrict to it the equivariant bijection of proposition \ref{p-uv-ov}, considered just as a map of sets, to obtain the map
\[
\phi_{\Orth_{res}/\UU(V_J), \US_{res}} \colon \Orth_{res}/\UU(V_J) \rightarrow \US_{res}, \notag
\]
which for $g \in \Orth_{res}$, maps $g \UU(V_J) \mapsto g J g^{-1} \in \US_{res}$.  Give $\US_{res}$ the topology that makes this map a homeomorphism.

Denote by $\phi_{\US_{res}, \Lagr_{res}} \colon \US_{res} \rightarrow \Lagr_{res}$ the restriction of the bijection of proposition \ref{p-bij-usv-lagh}, which maps $J_1 \mapsto +i \text{ eigenspace of } J_1$, not depending on $J$.  Give $\Lagr_{res}$ the topology that makes this a homeomorphism.

Summarizing for convenient reference, with notation for the composition:
\begin{align}
\pi_{\Orth_{res}/\UU(V_J)} \colon \Orth_{res} &\rightarrow \Orth_{res}/\UU(V_J) \notag \\
\phi_{\Orth_{res}/\UU(V_J), \US_{res}} \colon \Orth_{res}/\UU(V_J) &\rightarrow \US_{res} \notag \\
g \UU(V_J) &\mapsto g J g^{-1} \notag \\
\phi_{\US_{res}, \Lagr_{res}} \colon \US_{res} &\rightarrow \Lagr_{res} \notag \\
J_1 &\mapsto +i \text{ eigenspace of } J_1 \notag \\
\phi_{\Orth_{res}/\UU(V_J), \Lagr_{res}} &= \phi_{\US_{res}, \Lagr_{res}} \circ \phi_{\Orth_{res}/\UU(V_J), \US_{res}}. \notag
\end{align}
\end{defn}
Some facts that will be used frequently later are as follows, given $J \in \US_{res}$ with corresponding $L \in \Lagr_{res}$.  For $J_1 \in \US(V)$, corresponding $L_1 \in \Lagr(H, \Sigma)$, $J_1 \in \US_{res}$ $\Leftrightarrow$ $L_1 \in \Lagr_{res}$ $\Leftrightarrow$ $g \in \Orth(V)$ such that $L_1 = g L$, is in $\Orth_{res}$ $\Leftrightarrow$ the Fock representation on $\F(L_1)$ is unitarily equivalent to that on $\F(L)$, by corollary \ref{co-equi-soln}.

The definition might lead one to expect that the choice of $J$ might influence the topology on $\US_{res}$ defined by that of $\Orth_{res}/\UU(V_J)$, and hence the topology on $\Lagr_{res}$; but this is not so.
\begin{lem}\label{l-ures-top-not-dep-j}
\index{USres@$\US_{res}$!homogeneous space}
(The Topologies of $\Lagr_{res}$, $\US_{res}$ Depend Only on Polarization Class).
Given $J, K \in \US_{res}(V)$, the composition of bijections
\[
\phi_{\Orth_{res}/\UU(V_K), \US_{res}}^{-1} \circ \phi_{\Orth_{res}/\UU(V_J), \US_{res}} \colon \Orth_{res}/\UU(V_J) \rightarrow \Orth_{res}/\UU(V_K) \notag
\]
is a homeomorphism.
\end{lem}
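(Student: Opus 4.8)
The plan is to reduce the statement to two elementary facts: in the topological group $\Orth_{res}$ (proposition \ref{p-ores-top-grp}) right translation by a fixed element is a homeomorphism, and a continuous map that is constant on cosets of a subgroup factors continuously through the quotient topology. First I would fix a conjugating element inside $\Orth_{res}$. Using the restriction of the equivariant bijection of proposition \ref{p-uv-ov} together with proposition \ref{p-ores-dep-pol-clas}, the class $\US_{res}$ is exactly the conjugation orbit $\{\,gJg^{-1} \st g\in\Orth_{res}\,\}$: indeed $g\in\Orth(V)$ lies in $\Orth_{res}$ precisely when $gJg^{-1}-J$ is Hilbert--Schmidt, i.e. precisely when $gJg^{-1}$ lies in the equivalence class $\US_{res}$ of $J$, and transitivity of the conjugation action of $\Orth(V)$ on $\US(V)$ (proposition \ref{p-uv-ov}) supplies such a $g$ for every element of $\US_{res}$. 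Since $K\in\US_{res}$, this lets me choose $k\in\Orth_{res}$ with $K=kJk^{-1}$; proposition \ref{p-uv-ov} then gives $\UU(V_K)=k\,\UU(V_J)\,k^{-1}$, a closed subgroup of $\Orth_{res}$ by proposition \ref{p-ores-top-grp}.

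Next I would identify the composition explicitly. The map $\phi_{\Orth_{res}/\UU(V_J),\US_{res}}$ sends $g\UU(V_J)\mapsto gJg^{-1}$, and writing $gJg^{-1}=(gk^{-1})K(gk^{-1})^{-1}$ with $gk^{-1}\in\Orth_{res}$, applying $\phi_{\Orth_{res}/\UU(V_K),\US_{res}}^{-1}$ returns $gk^{-1}\UU(V_K)$. Thus the composition is the descended right translation $g\UU(V_J)\mapsto gk^{-1}\UU(V_K)$. I would check it is well defined: if $g_2=g_1u$ with $u\in\UU(V_J)$, then $g_2k^{-1}=g_1k^{-1}(kuk^{-1})$ and $kuk^{-1}\in k\UU(V_J)k^{-1}=\UU(V_K)$. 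Its set-theoretic inverse is visibly the symmetric map $h\UU(V_K)\mapsto hk\UU(V_J)$, well defined by the same argument using $\UU(V_J)=k^{-1}\UU(V_K)k$.

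For continuity I would argue as follows. Since $\Orth_{res}$ is a topological group, right translation $g\mapsto gk^{-1}$ is continuous $\Orth_{res}\to\Orth_{res}$; composing with the quotient map $\pi_{\Orth_{res}/\UU(V_K)}$ gives a continuous map $\Orth_{res}\to\Orth_{res}/\UU(V_K)$ that is constant on each $\UU(V_J)$-coset (by the well-definedness computation), hence by the universal property of the quotient topology on $\Orth_{res}/\UU(V_J)$ it factors through a continuous map, which is exactly the composition of bijections in question. Running the same argument with the roles of $(J,k)$ and $(K,k^{-1})$ interchanged shows the inverse is continuous, so the composition is a homeomorphism.

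I do not expect a genuine obstacle; the statement is essentially bookkeeping. The only points requiring care are making sure the conjugating element $k$ can be taken in $\Orth_{res}$ rather than merely in $\Orth(V)$ (handled in the first step), and keeping straight that $\UU(V_J)$ and $\UU(V_K)$ carry the subspace topology from $\Orth_{res}$ as in definition \ref{d-ores-uvj-homo}, so that the quotient topologies appearing are the intended ones — this is automatic since both are closed subgroups of $\Orth_{res}$.
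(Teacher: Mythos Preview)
Your proposal is correct and takes essentially the same approach as the paper: the paper's proof merely notes that this is the same bijection as in the second part of proposition \ref{p-uv-ov}, that each direction descends from a right translation of $\Orth_{res}$, and cites \citet[page~5]{tomD87} for the fact that such translations descend to homeomorphisms of the quotients. You have simply spelled out in full the details the paper leaves implicit---the existence of $k\in\Orth_{res}$ with $K=kJk^{-1}$, the explicit identification of the composite as $g\UU(V_J)\mapsto gk^{-1}\UU(V_K)$, and the quotient-topology argument for continuity in both directions.
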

\begin{proof}
This is the same bijection as in the second part of proposition \ref{p-uv-ov}, and can be justified as there, by \citet[page~5]{tomD87}.  Each direction descends from a right translation of $\Orth_{res}$ with certain properties.
\end{proof}

The action of a topological group on its homogeneous space is continuous (see \citet[pages~2,3]{tomD87}).  For future reference, we record the following facts related to definition \ref{d-ores-uvj-homo}.
\begin{lem}\label{l-ores-uvj-homo}
\index{homogeneous space}
\index{Ores/UVJ@$\Orth_{res}/\UU(V_J)$}
\index{USres@$\US_{res}$!homogeneous space}
\index{Lagrres@$\Lagr_{res}$!homogeneous space}
(The Action of $\Orth_{res}$ on $\Orth_{res}/\UU(V_J)$ is Continuous).
The action of $\Orth_{res}$ by left translation on $\Orth_{res}/\UU(V_J)$ is continuous. The homeomorphisms between $\Orth_{res}/\UU(V_J)$, $\Lagr_{res}$, and $\US_{res}$ are $\Orth_{res}$-equivariant, with actions by left translation for the first two, and by conjugation for the third.
\end{lem}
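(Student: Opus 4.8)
The plan is to obtain the continuity of the action from the general principle, quoted above from \citet[pages~2--3]{tomD87}, that a topological group acts continuously by left translation on its quotient by a closed subgroup, and then to verify the three equivariance assertions by a short computation with the explicit formulas recorded in definition \ref{d-ores-uvj-homo}. By proposition \ref{p-ores-top-grp}, $\Orth_{res}$ is a topological group and $\UU(V_J)$ is a closed subgroup, so $\Orth_{res}/\UU(V_J)$ carries the quotient topology and the projection $\pi_{\Orth_{res}/\UU(V_J)}$ is open. Writing $m$ for multiplication in $\Orth_{res}$ and $a$ for the left translation action, one has $\pi_{\Orth_{res}/\UU(V_J)} \circ m = a \circ (\ident_{\Orth_{res}} \cross \pi_{\Orth_{res}/\UU(V_J)})$; since $\ident_{\Orth_{res}} \cross \pi_{\Orth_{res}/\UU(V_J)}$ is open and surjective, hence a quotient map, and the left-hand composite is continuous, $a$ is continuous. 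Left translation on a coset space is automatically well defined, so nothing further is needed there.

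Next I would treat the equivariance. Fix $h \in \Orth_{res}$. Applying $\phi_{\Orth_{res}/\UU(V_J), \US_{res}}$ to $hg\UU(V_J)$ gives $(hg)J(hg)^{-1} = h(gJg^{-1})h^{-1}$, the conjugate by $h$ of the image $gJg^{-1}$ of $g\UU(V_J)$; since conjugation by an element of $\Orth_{res}$ carries $\US_{res}$ into itself (proposition \ref{p-ores-dep-pol-clas} applied to $h \in \Orth_{res}$), this exhibits $\phi_{\Orth_{res}/\UU(V_J), \US_{res}}$ as intertwining left translation on $\Orth_{res}/\UU(V_J)$ with conjugation on $\US_{res}$. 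For $\phi_{\US_{res}, \Lagr_{res}}$, which sends a unitary structure to its $+i$ eigenspace, lemma \ref{l-ov-acts-lagr} yields $L_{hJ_1h^{-1}} = h L_{J_1}$, so this bijection intertwines conjugation on $\US_{res}$ with the linear left translation action of $\Orth_{res}$ on $\Lagr_{res}$ (which again preserves $\Lagr_{res}$ by the remarks following definition \ref{d-ores-uvj-homo}). Composing the two shows that $\phi_{\Orth_{res}/\UU(V_J), \Lagr_{res}} \colon g\UU(V_J) \mapsto gL$ intertwines left translation with the linear action on $\Lagr_{res}$. Since all three $\phi$ maps are homeomorphisms by definition \ref{d-ores-uvj-homo}, each is an equivariant homeomorphism, and in particular the actions on $\US_{res}$ and $\Lagr_{res}$ are themselves continuous.

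There is no serious obstacle here; the only points requiring a moment's attention are the bookkeeping of which action corresponds to which under each $\phi$ (conjugation in the middle, left translation on the two ends), the well-definedness of those actions on $\US_{res}$ and $\Lagr_{res}$ (guaranteed because $\Orth_{res}$-conjugates of $J$ stay in the polarization class, proposition \ref{p-ores-dep-pol-clas}), and the standard point that $\ident_{\Orth_{res}} \cross \pi_{\Orth_{res}/\UU(V_J)}$ is a quotient map because $\pi_{\Orth_{res}/\UU(V_J)}$ is open.
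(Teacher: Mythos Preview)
Your proposal is correct and takes essentially the same approach as the paper: the paper gives no explicit proof for this lemma, simply prefacing it with the remark that ``the action of a topological group on its homogeneous space is continuous (see \citet[pages~2,3]{tomD87})'' and stating the equivariance as recorded facts following from definition~\ref{d-ores-uvj-homo}. Your write-up supplies exactly the details the paper leaves implicit---the quotient-map argument for continuity and the short computations $(hg)J(hg)^{-1}=h(gJg^{-1})h^{-1}$ and $L_{hJ_1h^{-1}}=hL_{J_1}$ for equivariance---and nothing more is needed.
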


We will use later the local section of the projection $\pi_{\Orth_{res}/\UU(V_J)}$, more precisely given instead with domain $\US_{res}$, a section of $\phi_{\Orth_{res}/\UU(V_J), \US_{res}} \circ \pi_{\Orth_{res}/\UU(V_J)}$.
\begin{lem}\label{l-uv-ov-sect-res}
\index{USres@$\US_{res}$!homogeneous space!local sections}
(Given $J$, there is a Canonical $g$ for $J_1$ Near $J \in \US_{res}$).
\citep[page~102]{PR94} Suppose given $J \in \US_{res}$. There is an open neighborhood of $J$, $V_1 \subset \US_{res}$, and a continuous function $\xi \colon V_1 \rightarrow \Orth_{res}$, such that for every $J_1 \in V_1$, $J_1 = \xi(J_1) J (\xi(J_1))^{-1}$, and $\xi(J) = \ident$.  For $J_1 \in V_1$, $\norm{J_1 - J} < 2$ in the operator norm on $\US(V)$, considering $\US_{res} \subset \US(V)$ as a set.
\end{lem}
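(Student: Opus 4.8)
The plan is to take for $\xi$ the canonical intertwining orthogonal transformation furnished by Proposition \ref{p-uv-ov-cano-g}, and then to verify the two things beyond that cited statement which the lemma actually needs: that this $\xi(J_1)$ lands in $\Orth_{res}$ (not merely in $\Orth(V)$), and that $\xi$ is continuous when its codomain carries the finer topology of $\Orth_{res}$ from Proposition \ref{p-ores-top-grp} rather than the operator-norm topology of $\Orth(V)$.

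First I would fix the neighborhood. Choose a basic homogeneous-space neighborhood of $J$ in $\US_{res}$; by Lemma \ref{l-ores-uvj-homo} and openness of the quotient map $\pi_{\Orth_{res}/\UU(V_J)}$ it has the form $V_1 = \{\, gJg^{-1} \st g \in \Orth_{res},\ \norm{g-\ident}_J < \epsilon \,\}$, and (using Lemma \ref{l-hsop}) for $J_1 = gJg^{-1} \in V_1$ one has $\norm{J_1 - J} = \norm{[g,J]} \le \norm{g-\ident}_J < \epsilon$ and likewise $\norm{J_1 - J}_2 = \norm{[g,J]}_2 \le \norm{g-\ident}_J < \epsilon$. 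Taking $\epsilon < 2$ this in particular gives $\norm{J_1 - J} < 2$, so Proposition \ref{p-uv-ov-cano-g} applies on $V_1$ and produces a canonical $g = \xi(J_1) \in \Orth(V)$ with $J_1 = \xi(J_1)\,J\,\xi(J_1)^{-1}$, $\xi(J) = \ident$, obtained as the unitary part of the polar decomposition $\ident - J_1 J = \xi(J_1)\,|\ident - J_1 J|$, depending continuously on $J_1$ in the operator norm. That $\xi(J_1) \in \Orth_{res}$ is then immediate: $J_1$ lies in the same polarization class as $J$, so by Definition \ref{d-ores-ures-lagr-res} (equivalently Corollary \ref{co-equi-soln} or Proposition \ref{p-ores-dep-pol-clas}) $\xi(J_1) \in \Orth_{res,J} = \Orth_{res}$.

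The substantive step is upgrading continuity to the $\norm{\cdot}_J$-topology, i.e. controlling $\norm{[J,\xi(J_1)]}_2$. I would rerun the polar-decomposition construction inside the Banach $*$-algebra $\B_{res,J}$ of Proposition \ref{p-ores-top-grp}. Writing $D = J_1 - J$ (Hilbert--Schmidt, since $J_1 \in \US_{res}$) one computes $\ident - J_1 J = 2\ident - DJ$ and $|\ident - J_1 J|^2 = 4\ident - S$ with $S = DJ + JD$; both $DJ$ and $S$ lie in the Hilbert--Schmidt ideal, hence in $\B_{res,J}$. For $\norm{S}_J < 4$ the binomial series $|\ident - J_1 J|^{-1} = \tfrac12(\ident - S/4)^{-1/2} = \tfrac12\ident + \tfrac12\sum_{k\ge 1}\binom{-1/2}{k}(-S/4)^k$ converges in $\norm{\cdot}_J$, its tail being a sum of Hilbert--Schmidt operators whose $\norm{\cdot}_J$ tends to $0$ as $\norm{S}_J \to 0$. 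Hence $\xi(J_1) = (2\ident - DJ)\,|\ident - J_1 J|^{-1}$ lies in $\B_{res,J}$, equals $\ident$ at $J_1 = J$, and $\xi(J_1) - \ident$ is a finite sum of products each containing a factor of $D$ or of the series tail; since on $V_1$ both $\norm{D}$ and $\norm{D}_2$ (hence $\norm{DJ}_J$ and $\norm{S}_J$) are small, $\norm{\xi(J_1) - \ident}_J \to 0$ as $J_1 \to J$. A translation argument ($J_1 \mapsto \xi(J_1)\xi(J_1')^{-1}$-style, as in Proposition \ref{p-uv-ov}) reduces continuity at an arbitrary point of $V_1$ to continuity at $J$, giving continuity of $\xi \colon V_1 \to \Orth_{res}$ throughout; shrinking $V_1$ if needed keeps $\norm{S}_J < 4$.

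The main obstacle is precisely this last step: the reference only asserts continuity in the operator norm, so one must check that the polar-decomposition recipe stays within, and is continuous for, the restricted topology --- in particular that the inverse positive square root of a positive invertible element of $\B_{res,J}$ is again in $\B_{res,J}$ and varies continuously in $\norm{\cdot}_J$. Expanding around $4\ident$ is what makes this tractable, using only that $\B_{res,J}$ is a Banach algebra closed under $*$ with $\norm{A^*}_J = \norm{A}_J$ (Proposition \ref{p-ores-top-grp}) and that the Hilbert--Schmidt operators form a two-sided ideal, so that every higher-order term of the series is Hilbert--Schmidt with norm tending to $0$. Everything else is the routine Banach-algebra continuity of multiplication and inversion.
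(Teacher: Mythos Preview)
Your first step matches the paper's proof exactly: pick the neighborhood $V_1$ as the image of a $\norm{\cdot}_J$-ball in $\Orth_{res}$ under the open quotient map, check that $\norm{J_1 - J} < 2$ there, and apply Proposition \ref{p-uv-ov-cano-g}. The paper's proof in fact stops at that point; it records only the operator-norm continuity furnished by that proposition and does not explicitly upgrade to the $\norm{\cdot}_J$-topology on $\Orth_{res}$. So your second and third paragraphs supply genuine additional content rather than reproducing something in the text.

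Your Banach-algebra argument is sound. Writing $D = J_1 - J$, the identities $\ident - J_1 J = 2\ident - DJ$ and $|\ident - J_1 J|^2 = 4\ident - S$ with $S = JD + DJ$ are correct, $DJ$ and $S$ are Hilbert--Schmidt and hence lie in $\B_{res,J}$, and the binomial series for $(\ident - S/4)^{-1/2}$ converges in $\norm{\cdot}_J$ once $\norm{S}_J < 4$. This places $\xi(J_1)$ in $\B_{res,J}$ and makes it a $\norm{\cdot}_J$-continuous function of $D$, hence of $J_1$ once you know $J_1 \mapsto D$ is continuous from $\US_{res}$ into $\B_2(V) \hookrightarrow \B_{res,J}$.

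The one weak point is the translation argument. The canonical $\xi$ from the polar decomposition is tied to the fixed reference $J$ and has no simple covariance law under conjugation, so ``$\xi(J_1)\xi(J_1')^{-1}$-style'' does not straightforwardly reduce continuity at $J_1'$ to continuity at $J$. You do not need it: by Lemma \ref{l-ures-top-not-dep-j} the topology on $\US_{res}$ is equally described using $J_1'$ as base point, so for $J_1$ in a basic neighborhood of $J_1'$ one has $\norm{J_1 - J_1'}_2$ small by the same estimate you used at $J$. This shows $J_1 \mapsto D$ is continuous on all of $V_1$, and then every subsequent step (multiplication, adjoint, the convergent power series) is continuous in the Banach algebra $\B_{res,J}$, giving continuity of $\xi$ on all of $V_1$ directly. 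With that fix your proof is complete, and indeed more detailed on this point than the paper's.
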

\begin{proof}
We will use proposition \ref{p-uv-ov-cano-g}, which requires $\norm{J_1 - J} < 2$ using the operator norm on $\US(V)$.  However, the desired neighborhood is in the topology $\US_{res}$ has as a quotient of $\Orth_{res}$.  Looking at the following chain of relations, for $J, J_1 \in \US_{res} \subset \US(V)$, with $J_1 = g J g^{-1}$ for some $g \in \Orth_{res}$,
\begin{align}
\norm{J_1 - J} &= \norm{g J g^{-1} - J} = \norm{(g J - J g)g^{-1}} = \norm{g J - J g} \le \norm{[g, J]}_2 \notag \\
 &\le \norm{g - \ident} + \norm{[g, J]}_2 = \norm{g - \ident} + \norm{[J, g - \ident]}_2 = \norm{g - \ident}_J, \text{ so} \notag \\
g \in \B_{2} (\ident) &\Rightarrow \norm{J_1 - J} < 2, \notag
\notag
\end{align}
where $\B_{2} (\ident) \subset \Orth_{res}$ is defined with $\norm{}_J$.  Since the quotient map of a homogeneous space is open \citep[page~22]{tomD87}, $V_1 = \pi_{\Orth_{res}/\UU(V_J)} (B_2 (\ident))$ is open.  It's true that $\pi_{\Orth_{res}/\UU(V_J)}^{-1} (V_1) = B_2 (\ident) \UU(V_J)$, but for $u \in \UU(V_J)$, $\norm{(g u) J (gu)^{-1} - J} = \norm{g J g^{-1} - J}$, since $u$ commutes with $J$. Thus $J_1 \in V_1 \Rightarrow \norm{J_1 - J} < 2$.  Now apply proposition \ref{p-uv-ov-cano-g} to obtain a canonical $g \in \Orth(V)$ depending continuously on $J_1$, such that $J_1 = g J g^{-1}$.
\end{proof}

\section{Skew Symmetry, Quadratic Exponentials, Operator Decomposition}\label{s-skew-symm-quad-exp-op-dcmp}

Now we will explore a connection between orthogonal and skew symmetric operators.  Lemma \ref{l-map-lagr-symv} has uses later, as does the following preparation for it.
\begin{defn}\label{d-sl}
\index{anti-skew}
\index{skew symmetric!space}
\index{SSL@$\Sym(\Sigma(L))$}
($\Sym(\Sigma(L))$).
Given $L \in \Lagr(H)$, with corresponding $J \in \US(V)$, let $\Sym(\Sigma(L))$ denote the subset of $\B_2(H)$, the Hilbert-Schmidt operators on $H$, that satisfy the following properties, for $Z \in \Sym(\Sigma(L))$:
\begin{align}
\Sigma Z &= Z \Sigma \notag \\
Z^{*} &= -Z \notag \\
J Z &= - Z J. \notag
\end{align}
\end{defn}
\begin{note}\label{n-anti-skew}
\index{anti-skew}
(An Alternative Definition of $\Sym(\Sigma(L))$).
For $Z \in \B(H)$, the third property implies that $Z \colon \Sigma(L) \rightarrow L$.  We could define $\Sym(\Sigma(L))$ as a space of Hilbert-Schmidt operators $\Sigma(L) \rightarrow L$, touching on the tensor product $L \tensor L$ as in \citet[page~269]{Fabe00}, though it's more convenient for us later on to consider operators $H \rightarrow H$.
\end{note}

The following relates inner products and the norms of operators in $\Sym(\Sigma(L))$ to whether they are considered as operators on $H$, $V$, or $\Sigma(L)$.
\begin{lem}\label{l-ssl-cx-bana}
\index{Hilbert-Schmidt!operator}
\index{Hilbert-Schmidt!inner product}
\index{Hilbert-Schmidt!norm}
(Inner Products and Norms on $\Sym(\Sigma(L))$).
Given $L \in \Lagr(H)$, for $Z, Z_1, Z_2 \in \B(H)$ satisfying the properties of definition \ref{d-sl}, $\langle Z_1, Z_2 \rangle_2 = \langle (Z_1)_{|V}, (Z_2)_{|V} \rangle_2 = 2 \thinspace \Re( \langle (Z_1)_{|\Sigma(L)}, (Z_2)_{|\Sigma(L)} \rangle_2)$.  Thus $\norm{Z}_2 = \norm{Z_{|V}}_2 = \sqrt{2} \thinspace \norm{Z_{|\Sigma{L}}}_2$; and $Z \in \B_2(H) \Leftrightarrow Z_{|V} \in \B_2(V) \Leftrightarrow Z_{|\Sigma(L)} \in B_2(\Sigma(L), L)$.
\end{lem}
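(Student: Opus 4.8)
The plan is to compute all three Hilbert--Schmidt inner products by choosing orthonormal bases adapted to the orthogonal decomposition $H = L \dirsum \Sigma(L)$ of lemma~\ref{l-lagr-subs}, using that the three defining properties of $\Sym(\Sigma(L))$ force each operator $Z$ to interchange the summands $L$ and $\Sigma(L)$ (from $JZ=-ZJ$, as $Z$ then carries $H_{J,+i}$ to $H_{J,-i}$ and conversely) and to commute with $\Sigma$.

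First I would fix a complex orthonormal basis $\{l_i\}$ of $L$. Then $\{\Sigma(l_i)\}$ is a complex orthonormal basis of $\Sigma(L)$ (since $\Sigma$ is an antilinear isometric involution, $\langle \Sigma x,\Sigma y\rangle=\overline{\langle x,y\rangle}$), so $\{l_i\}\cup\{\Sigma(l_i)\}$ is an orthonormal basis of $H$; and the vectors $e_i=\tfrac{1}{\sqrt2}(l_i+\Sigma(l_i))$ together with $Je_i=\tfrac{i}{\sqrt2}(l_i-\Sigma(l_i))$ form a real orthonormal basis of $V$ — a short check using $L\perp\Sigma(L)$ and the conjugation identity gives orthonormality, and the description $V=\{\sum a_i l_i+\overline{a_i}\,\Sigma(l_i)\}$ of the fixed-point set of $\Sigma$ gives completeness. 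Since $Z_k l_i\in\Sigma(L)$ while $\Sigma Z_k l_i\in L$, expanding $Z_k e_i=\tfrac{1}{\sqrt2}(Z_k l_i+\Sigma Z_k l_i)$ kills the cross terms, and since $J$ is unitary and anticommutes with $Z$ the $e_i$ and $Je_i$ contributions coincide, yielding
\[
\langle (Z_1)_{|V},(Z_2)_{|V}\rangle_2 = 2\Re\sum_i\langle Z_1 l_i, Z_2 l_i\rangle = 2\Re\langle (Z_1)_{|L},(Z_2)_{|L}\rangle_2 .
\]
Separately, the block decomposition gives $\langle Z_1,Z_2\rangle_2=\langle (Z_1)_{|L},(Z_2)_{|L}\rangle_2+\langle (Z_1)_{|\Sigma(L)},(Z_2)_{|\Sigma(L)}\rangle_2$, and evaluating on the basis $\{\Sigma(l_i)\}$ together with $\Sigma Z_k=Z_k\Sigma$ shows $\langle (Z_1)_{|\Sigma(L)},(Z_2)_{|\Sigma(L)}\rangle_2=\overline{\langle (Z_1)_{|L},(Z_2)_{|L}\rangle_2}$, so this sum equals $2\Re\langle (Z_1)_{|L},(Z_2)_{|L}\rangle_2$ too; likewise $2\Re\langle (Z_1)_{|\Sigma(L)},(Z_2)_{|\Sigma(L)}\rangle_2$ is the same number. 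This establishes the three displayed equalities, and putting $Z_1=Z_2=Z$ and taking square roots (the relevant inner products being real and nonnegative) gives $\norm{Z}_2=\norm{Z_{|V}}_2=\sqrt2\,\norm{Z_{|\Sigma(L)}}_2$ and hence the equivalence of the three $\B_2$-membership conditions.

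Since the lemma does not presume $Z,Z_1,Z_2$ are Hilbert--Schmidt at the outset, the cleanest order is to run the above computation first for partial sums over finite index sets — these are exact algebraic identities — to get the norm equalities in $[0,\infty]$ and thus the membership equivalences, and only then, assuming the operators lie in the relevant $\B_2$-spaces, to pass the partial-sum identities to the limit (or apply the polarization identity in $\B_2$) for the inner-product statements. I do not expect a real obstacle; the only mild care needed is with the real/complex bookkeeping — that $\langle\,,\,\rangle_2$ on $\B_2(V)$ for the real Hilbert space $V$ is real-valued, that $\langle\Sigma x,\Sigma y\rangle=\overline{\langle x,y\rangle}$, that $(v,w)=\langle v,w\rangle$ for $v,w\in V\subset H$ by definition~\ref{d-v-h} — and with verifying that $\{e_i\}\cup\{Je_i\}$ is genuinely a real orthonormal basis of $V$.
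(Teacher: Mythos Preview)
Your approach is essentially the same as the paper's: choose an orthonormal basis $\{l_i\}$ of $L$, use $\{l_i\}\cup\{\Sigma(l_i)\}$ for $H$ and the corresponding real vectors for $V$, and compute term by term using $L\perp\Sigma(L)$ together with $\Sigma Z=Z\Sigma$ (which gives $\langle Z_1 l_k, Z_2 l_k\rangle=\overline{\langle Z_1\Sigma(l_k),Z_2\Sigma(l_k)\rangle}$). You are in fact more careful than the paper on one point: the paper asserts that $\{\tfrac{1}{\sqrt2}(l_k+\Sigma(l_k))\}$ is an orthonormal basis of $V$, whereas you correctly note that one needs both $\{e_i\}$ and $\{Je_i\}$ and that the $Je_i$ contributions duplicate the $e_i$ ones by unitarity of $J$ and $JZ=-ZJ$; your closing remark about running the identities on finite partial sums first is also a tidy way to handle the membership equivalence before knowing the sums converge.
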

\begin{proof}
Orthonormal bases $\{ l_k \}$ for $L$ and $\{ \Sigma(l_k) \}$ for $\Sigma(L)$ together give an orthonormal basis $\{ \frac{1}{\sqrt{2}} (l_k + \Sigma(l_k)) \}$ for $V$ and an orthonormal basis $\{ l_k \} \cup \{ \Sigma{l_k} \}$ for $H$.  Then $\langle Z_1 (l_k + \overline{l_k}), Z_2 (l_k + \overline{l_k}) \rangle = \langle Z_1 l_k, Z_2 l_k \rangle + \langle Z_1 \overline{l_k}, Z_2 \overline{l_k} \rangle + \langle Z_1 \overline{l_k}, Z_2 l_k \rangle + \langle Z_1 l_k, Z_2 \overline{l_k} \rangle = \langle Z_1 l_k, Z_2 l_k \rangle + \langle Z_1 \overline{l_k}, Z_2 \overline{l_k} \rangle = 2 \thinspace \Re( \langle Z_1 \overline{l_k}, Z_2 \overline{l_k} \rangle )$, since $L \perp \overline L$.  Thus the inner product of $Z_1$ and $Z_2$ considered as operators on $V$, equals their inner product considered as operators on $H$, and is is twice the real part of their inner product considered as operators on $\Sigma(L)$.  The statements about norms follow.
\end{proof}

\begin{lem}\label{l-sl-alt}
\index{anti-skew}
\index{skew symmetric!space}
\index{skew symmetric!operators}
\index{SSL@$\Sym(\Sigma(L))$}
(An Alternative Characterization of $\Sym(\Sigma(L))$).
Given $L \in \Lagr(H)$, the restriction to $\Sigma(L)$ of $Z \in \Sym(\Sigma(L))$ satisfies the following properties:
\begin{align}
Z &\colon \Sigma(L) \rightarrow L \notag \\
Z &\in \B_2 (\Sigma(L), L) \notag \\
\forall x, y \in L, \quad &\langle Z \Sigma x, y \rangle = - \langle Z \Sigma y, x \rangle. \notag
\end{align}
Conversely, given $Z \colon \Sigma(L) \rightarrow L$ satisfying these properties, its complex-linear extension to $H$ by extending it to $L$ as $\Sigma Z \Sigma$, is in $\Sym(\Sigma(L))$.
\end{lem}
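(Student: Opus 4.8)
The plan is to prove the two implications separately, in each case reducing everything to the orthogonal decomposition $H = L \dirsum \Sigma(L)$ of lemma \ref{l-h-split}, in which $L = H_{J,+i}$ and $\Sigma(L) = H_{J,-i}$ for the unitary structure $J$ corresponding to $L$ (definition \ref{d-lj}), together with the antilinear isometry identity $\langle \Sigma a, \Sigma b \rangle = \langle b, a \rangle$ for the real structure $\Sigma$.

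For the forward direction, let $Z \in \Sym(\Sigma(L))$, so $Z$ satisfies the three identities of definition \ref{d-sl}. First I would use $JZ = -ZJ$: if $w \in \Sigma(L) = H_{J,-i}$, then $J(Zw) = -Z(Jw) = i(Zw)$, so $Zw \in H_{J,+i} = L$; hence $Z$ restricts to a map $\Sigma(L) \to L$ (and symmetrically $L \to \Sigma(L)$). Since $Z \in \B_2(H)$ by hypothesis, lemma \ref{l-ssl-cx-bana} immediately gives $Z_{|\Sigma(L)} \in \B_2(\Sigma(L), L)$. For the skew-symmetry identity, I would chain $Z^{*} = -Z$, $\Sigma Z = Z\Sigma$, and $\Sigma^2 = \ident$ with the $\Sigma$-isometry identity: for $x, y \in L$,
\[
\langle Z\Sigma x, y\rangle = -\langle \Sigma x, Z y\rangle = -\langle \Sigma x, \Sigma(\Sigma Z y)\rangle = -\langle \Sigma Z y, x\rangle = -\langle Z\Sigma y, x\rangle .
\]

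For the converse, given a complex-linear $Z \colon \Sigma(L) \to L$ with the three listed properties, I would define its extension $\widetilde Z$ on $H = L \dirsum \Sigma(L)$ by $\widetilde Z_{|\Sigma(L)} = Z$ and $\widetilde Z_{|L} = \Sigma Z \Sigma_{|L}$ (a composite of two antilinear maps with one linear one, hence complex-linear, and landing in $\Sigma(L)$ since $\Sigma$ interchanges $L$ and $\Sigma(L)$). Then I would verify the three defining conditions of $\Sym(\Sigma(L))$ block by block: (i) $\widetilde Z \in \B_2(H)$, since $\Sigma$ is isometric so $\widetilde Z_{|L}$ is Hilbert-Schmidt iff $Z$ is, and Hilbert-Schmidt norms add over the two summands (lemma \ref{l-ssl-cx-bana}); (ii) $\Sigma \widetilde Z = \widetilde Z \Sigma$, checked separately on $L$ and on $\Sigma(L)$ using $\Sigma^2 = \ident$; (iii) $J\widetilde Z = -\widetilde Z J$, because $\widetilde Z$ interchanges the $\pm i$-eigenspaces of $J$, so $J\widetilde Z w = \mp i\,\widetilde Z w = -\widetilde Z(\pm i\,w) = -\widetilde Z J w$; and (iv) $\widetilde Z^{*} = -\widetilde Z$, where the diagonal blocks $L \to L$ and $\Sigma(L) \to \Sigma(L)$ of both sides vanish by orthogonality, so it suffices to check $\langle \widetilde Z x, \Sigma y\rangle = -\langle x, \widetilde Z\Sigma y\rangle$ for $x, y \in L$; substituting $\widetilde Z x = \Sigma Z\Sigma x$ and $\widetilde Z \Sigma y = Z\Sigma y$ and applying the $\Sigma$-isometry identity reduces this, after taking complex conjugates, to exactly the given identity $\langle Z\Sigma x, y\rangle = -\langle Z\Sigma y, x\rangle$.

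I expect the only real care needed to be the bookkeeping of the complex conjugations introduced by the antilinearity of $\Sigma$ when translating between the operator identity $Z^{*} = -Z$ and the scalar identity $\langle Z\Sigma x, y\rangle = -\langle Z\Sigma y, x\rangle$; there is no analytic obstacle, as Hilbert-Schmidt membership is transported for free by lemma \ref{l-ssl-cx-bana} and the isometry of $\Sigma$.
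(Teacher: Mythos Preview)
Your proposal is correct and follows essentially the same route as the paper: both directions hinge on the block decomposition $H = L \dirsum \Sigma(L)$, the anticommutation $JZ = -ZJ$ forcing $Z$ to swap the summands, and the antilinear isometry identity $\langle \Sigma a, \Sigma b\rangle = \langle b, a\rangle$ to translate between $Z^{*} = -Z$ and the scalar skew-symmetry relation. The paper's proof is terser---it only writes out the $Z^{*} = -Z$ verification in the converse and leaves the $\Sigma$-commutation, $J$-anticommutation, and Hilbert--Schmidt conditions implicit---but your more explicit block-by-block check is the same argument spelled out.
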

\begin{proof}
If $Z \in \Sym(\Sigma(L))$, the third property holds because, for $x, y \in L$, $\langle Z \Sigma x, y \rangle = \langle \Sigma x, -Z y \rangle = -\langle Z \Sigma y, x \rangle$.  If $Z \colon \Sigma(L) \rightarrow L$ satisfies these properties, its given complex-linear extension to $H$, which we will still call $Z$, satisfies $Z^{*} = -Z$ because $\langle Z \Sigma x, y \rangle = -\langle Z \Sigma y, x \rangle = \langle \Sigma x, -Z y \rangle$, and similarly for $x, y \in \Sigma(L)$.
\end{proof}

\citep[page~65]{PR94} define $\Sym(V_J)$ somewhat differently, requiring that elements of $V_J$ be Hilbert-Schmidt as operators on the real vector space $V$, and use the complex Hilbert space $V_J$ with Hermitian inner product $\langle , \rangle_J$.  The maps in our $\Sym(\Sigma(L))$ are simply complex-linear extensions to $H$ of theirs, though the subsequent restrictions to maps $\Sigma(L) \rightarrow L$ can be constructed from $\Sigma$ and the isometric isomorphism $\phi \colon V_J \rightarrow L$ of proposition \ref{p-vj-hp-isom}.  Their $\norm{Z}_{HS}$ equals twice our $\norm{Z_{\Sigma(L)}}_2$, since their basis for the real vector space $V$ is a real basis $\{ v_k \} \cup \{ J v_k\}$ where the first set by itself is a complex basis of $V_J$ and so corresponds via $\phi$ to a complex basis such as $\{ l_k \}$ of $L$ (similarly for $\Sigma(L)$), so that the sum for their Hilbert-Schmidt norm has two summands, equal since $J$ is unitary and anticommutes with elements of $\Sym(V_J)$, for every one of our $\norm{Z_{\Sigma(L)}}_2$.  Thus their $\norm{Z}_{HS} = \norm{Z_{|V}}_2$.

The following result and proof follow \citet[pages~64--66]{PR94} somewhat but go a slightly further in showing that $\Sym(\Sigma(L))$ can be made a complex Hilbert space, isometrically isomorphic to $\Lambda^2 V$.
\begin{prop}\label{p-ssl-cx-bana}
\index{skew symmetric!space!Hilbert space}
\index{quadratic exponentials!skew symmetric space}
\index{SSL@$\Sym(\Sigma(L))$!Hilbert space}
(Properties of $\Sym(\Sigma(L))$).
Given $L \in \Lagr(H)$, with corresponding $J \in \US(V)$,
$\Sym(\Sigma(L))$ is a complex Hilbert space, using the alternative characterization of lemma \ref{l-sl-alt}, with scalar multiplication induced by that on $\Sigma(L)$, and inner product the Hilbert-Schmidt inner product for maps $\Sigma(L) \rightarrow L$.  For $\zeta \in \Lambda^2 L$, $x, y \in L$, the map
\begin{align}
\tau &\colon \Lambda^2 L \rightarrow \Sym(\Sigma(L)) \notag \\
\tau &\colon \zeta \mapsto \frac{1}{\sqrt{2}} Z_{\zeta} \text{ where } Z_{\zeta} \text{ is defined by} \notag \\
\langle \zeta, x \wedge y \rangle &= \langle Z_{\zeta} \Sigma(x), y \rangle, \notag
\end{align}
is an isometric isomorphism of complex Hilbert spaces, using the alternative characterization's inner product on $\Sym(\Sigma(L))$.  The notation $\Lambda^2 L$ indicates the Hilbert space completion.
\end{prop}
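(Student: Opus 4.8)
The plan is to work throughout in the guise of lemma \ref{l-sl-alt}, so that $\Sym(\Sigma(L))$ is literally a set of Hilbert--Schmidt maps $\Sigma(L)\to L$, carrying the ordinary complex structure on maps between complex Hilbert spaces (this is the scalar multiplication ``induced by that on $\Sigma(L)$'' referred to in the statement) and the Hilbert--Schmidt inner product of $\B_2(\Sigma(L),L)$; lemma \ref{l-ssl-cx-bana} provides the dictionary to the $H$-operator picture if ever needed. First I would prove the standalone claim that $\Sym(\Sigma(L))$ is a closed $\CC$-linear subspace of $\B_2(\Sigma(L),L)$. The defining relation $\langle Z\Sigma x, y\rangle = -\langle Z\Sigma y, x\rangle$ is, for each fixed pair $x,y\in L$, a complex-linear constraint on $Z$ that is continuous for the operator norm and hence for $\norm{}_2$; intersecting over all pairs gives a closed complex subspace. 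Since $L$ and $\Sigma(L)$ are closed subspaces of $H$, hence complex Hilbert spaces, $\B_2(\Sigma(L),L)$ is a complex Hilbert space by lemma \ref{l-hsop}, and a closed subspace of it is again a complex Hilbert space. This settles the first assertion.

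Next, fix an orthonormal basis $\{l_k\}$ of $L$; then $\{\Sigma l_k\}$ is an orthonormal basis of $\Sigma(L)$ and, from the Grammian-determinant formula of definition \ref{d-fock-spac}, $\{l_i\wedge l_j\}_{i<j}$ is an orthonormal basis of the completed $\Lambda^2 L$. For $\zeta\in\Lambda^2 L$ and fixed $x\in L$, the functional $y\mapsto\langle\zeta, x\wedge y\rangle$ is antilinear on $L$ and bounded (boundedness from $\norm{x\wedge y}\le\norm{x}\norm{y}$, again a Grammian computation), so by the Riesz theorem there is a unique vector, call it $Z_\zeta\Sigma x\in L$, with $\langle Z_\zeta\Sigma x, y\rangle = \langle\zeta, x\wedge y\rangle$ for all $y$. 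Tracking the convention of assumption \ref{a-herm-innr-prod} shows $x\mapsto Z_\zeta\Sigma x$ is $\CC$-linear on $\Sigma(L)$, so $Z_\zeta$ is a well-defined complex-linear map $\Sigma(L)\to L$, and that $\zeta\mapsto Z_\zeta$ is $\CC$-linear. Antisymmetry of the wedge gives $\langle Z_\zeta\Sigma x, y\rangle = -\langle Z_\zeta\Sigma y, x\rangle$, so $Z_\zeta$ satisfies the skew condition of lemma \ref{l-sl-alt} as soon as it is shown Hilbert--Schmidt.

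The Hilbert--Schmidt computation is the heart of the matter and simultaneously gives the isometry. Using Parseval in $L$ and then in $\Lambda^2 L$, together with $\langle\zeta, l_i\wedge l_i\rangle = 0$ and $\langle\zeta, l_j\wedge l_i\rangle = -\langle\zeta, l_i\wedge l_j\rangle$, I would obtain, for $\zeta_1,\zeta_2\in\Lambda^2 L$,
\begin{align}
\langle \tau\zeta_1, \tau\zeta_2\rangle_2
  &= \tfrac12\sum_i \langle Z_{\zeta_1}\Sigma l_i, Z_{\zeta_2}\Sigma l_i\rangle
   = \tfrac12\sum_{i,j}\langle\zeta_1, l_i\wedge l_j\rangle\,\overline{\langle\zeta_2, l_i\wedge l_j\rangle} \notag\\
  &= \sum_{i<j}\langle\zeta_1, l_i\wedge l_j\rangle\,\overline{\langle\zeta_2, l_i\wedge l_j\rangle}
   = \langle\zeta_1,\zeta_2\rangle, \notag
\end{align}
so in particular $\norm{Z_\zeta}_2 = \sqrt2\,\norm{\zeta}<\infty$; hence every $Z_\zeta$ is Hilbert--Schmidt, $\tau(\zeta) = \tfrac1{\sqrt2}Z_\zeta\in\Sym(\Sigma(L))$, and $\tau$ is a $\CC$-linear isometric embedding $\Lambda^2 L\to\Sym(\Sigma(L))$. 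Surjectivity: given $Z\in\Sym(\Sigma(L))$, its skew condition forces $\langle Z\Sigma l_i, l_i\rangle = 0$ and $\langle Z\Sigma l_j, l_i\rangle = -\langle Z\Sigma l_i, l_j\rangle$, so $\zeta := \sqrt2\sum_{i<j}\langle Z\Sigma l_i, l_j\rangle\, l_i\wedge l_j$ has $\norm{\zeta}^2 = \sum_{i\ne j}|\langle Z\Sigma l_i, l_j\rangle|^2 = \norm{Z}_2^2<\infty$, hence lies in $\Lambda^2 L$; comparing $\langle\tau(\zeta)\Sigma l_i, l_j\rangle$ with $\langle Z\Sigma l_i, l_j\rangle$ over all pairs $(i,j)$ (the two skew identities handle $i>j$) gives $\tau(\zeta) = Z$. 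Thus $\tau$ is an isometric isomorphism of complex Hilbert spaces, as claimed; this follows, and sharpens, \citet[pages~64--66]{PR94}.

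I expect the only real obstacle to be bookkeeping: keeping the linear/antilinear conventions of assumption \ref{a-herm-innr-prod} straight, so as to confirm that $Z_\zeta$ and $\tau$ come out $\CC$-linear rather than conjugate-linear, and making sure to record that $\{l_i\wedge l_j\}_{i<j}$ is an orthonormal basis of the \emph{completed} $\Lambda^2 L$ (so that the Parseval steps are legitimate), which is immediate from the Grammian-determinant definition \ref{d-fock-spac}. Everything else is routine.
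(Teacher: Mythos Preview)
Your proposal is correct and follows essentially the same route as the paper: both establish $\Sym(\Sigma(L))$ as a complex Hilbert subspace of $\B_2(\Sigma(L),L)$, construct $Z_\zeta$ from the defining relation (you via Riesz, the paper via the bounded-sesquilinear-form lemma in \citet[page~31]{Conw90}, which amounts to the same thing), run the identical Parseval computation $\tfrac12\sum_{i,j}\langle\zeta_1,l_i\wedge l_j\rangle\overline{\langle\zeta_2,l_i\wedge l_j\rangle}=\langle\zeta_1,\zeta_2\rangle$ to get the isometry, and invert by the same explicit series $\sqrt{2}\sum_{i<j}\langle Z\Sigma l_i,l_j\rangle\,l_i\wedge l_j$. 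Your closedness argument for $\Sym(\Sigma(L))$ via continuous linear constraints is slightly more explicit than the paper's, which is a small improvement.
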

\begin{proof}
The symmetry property of lemma \ref{l-sl-alt} is unaffected by multiplying an operator on $\Sigma(L)$ by $i$ (for operators on $H$ we would need to multiply by $J$ instead).  By lemma \ref{l-ssl-cx-bana}, $\Sym(\Sigma(L))$ is a complex Hilbert space using the Hilbert-Schmidt inner product $\langle (Z_1)_{|\Sigma(L)}, (Z_2)_{|\Sigma(L)} \rangle_2$.

The equation defining $\tau$ has on the right side a complex bounded sesquilinear form considered as a function of $(\Sigma x, y) \in \Sigma(L) \cross L$, so by \citet[page~31]{Conw90}, there is a unique $Z_{\zeta} \in B(\Sigma(L), L)$ satisfying the defining equation for all $(\Sigma(x), y) \in \Sigma(L) \cross L$, or equivalently all $(x, y) \in L \cross L$.  Since the right hand side changes sign when $x$ and $y$ are transposed, so does the left.  Thus, if we can show that $Z_{\zeta} \in B_2(\Sigma(L), L)$, it will follow that $Z_{\zeta} \in \Sym(\Sigma(L))$.  We will do this by showing that $\tau$, which is a complex-linear function, is an isometry.

To see that $\tau$ is an isometry, suppose $Z_{\zeta_1}, Z_{\zeta_2} \in \Sym(\Sigma(L))$ correspond to $\zeta_1, \zeta_2 \in \Lambda^2 L$, and $\{ l_k \}$ is an orthonormal basis of $L$.  Using the defining equation twice,
\begin{align}
Z_{\zeta_2} \overline{l_j} &= \sum_p \langle Z_{\zeta_2} \overline{l_j}, l_p \rangle l_p \notag \\
 &= \sum_p \langle \zeta_2, l_j \wedge l_p \rangle l_p, \text{ so} \notag \\
\langle \tau(\zeta_1), \tau(\zeta_2) \rangle = \frac{1}{2} \langle Z_{\zeta_1}, Z_{\zeta_2} \rangle &= \frac{1}{2} \sum_j \langle Z_1 \overline{l_j}, Z_2 \overline{l_j} \rangle \rangle \notag \\
 &= \frac{1}{2} \sum_j \langle \zeta_1, l_j \wedge Z_{\zeta_2} \overline{l_j} \rangle \notag \\
 &= \frac{1}{2} \sum_j \langle \zeta_1, l_j \wedge \sum_p \langle \zeta_2, l_j \wedge l_p \rangle l_p \rangle \notag \\
 &= \frac{1}{2} \sum_{j,p} \langle \zeta_1, l_j \wedge l_p \rangle \overline{\langle \zeta_2, l_j \wedge l_p \rangle} \notag \\
 &= \langle \zeta_1, \zeta_2 \rangle, \notag
\end{align}
since $\{ l_j \wedge l_p \st j < p \}$ is an orthonormal basis for $\Lambda^2 L$.  In the last sum, terms with $j = p$ are zero, and terms with $j > p$ equal those with $j$ and $p$ transposed.  Thus $\tau$ is a complex-linear isometry into $\Sym(\Sigma(L))$.

For surjectivity of $\tau$, suppose given $Z \in \Sym(\Sigma(L))$; then each finite sum in $\sqrt{2} \sum_{j < p} \langle Z \overline{l_j}, l_p \rangle l_j \wedge l_p$ has norm squared equal to the corresponding finite sum in $2 \sum_{j < p} \langle Z \overline{l_j}, l_p \rangle \overline{\langle Z \overline{l_j}, l_p \rangle} = \sum_{j,p} \langle Z \overline{l_j}, l_p \rangle \langle l_p, Z \overline{l_j} \rangle = \sum_j \langle Z \overline{l_j}, Z \overline{l_j} \rangle$, which is bounded by the finite $\norm{Z}_2^2$.  Since we have taken $\Lambda^2 L$ to be complete, the first sum converges to some $\zeta \in \Lambda^2 L$, and by continuity the image under $\tau$ of the first sum converges to $\tau(\zeta) \in \Sym(\Sigma(L))$.  By definition of $\tau$, for each $j$, $p$, $\langle \sqrt{2} \tau(\zeta) \overline{l_j}, l_p \rangle = \langle \zeta, l_j \wedge l_p \rangle = \sqrt{2} \langle Z \overline{l_j}, l_p \rangle$, so $\tau(\zeta) = Z$ and $\tau$ is surjective.
\end{proof}

\begin{prop}\label{p-quad-exp}
\index{quadratic exponentials}
(Quadratic Exponentials).
\citep[page~68]{PR94} Given a Lagrangian subspace $L$ of $H$, if $\zeta \in \Lambda^2 L \subset \F(L)$, then the following ``quadratic exponential'' converges and satisfies the inequality:
\[
\exp (\zeta) = \sum_{k \ge 0} \frac{1}{k!} \zeta^k \Rightarrow \norm{\exp(\zeta)}^2 \le \exp(\norm{\zeta}^2). \notag
\]
\end{prop}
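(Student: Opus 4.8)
The plan is to reduce the whole statement to a single estimate on the homogeneous components of the series. Write $\zeta^k$ for the $k$-fold wedge $\zeta \wedge \cdots \wedge \zeta$, which lies in $\Lambda^{2k} L \subset \F(L)$. Since components in distinct exterior powers are orthogonal in $\F(L)$ (definition \ref{d-fock-spac}), once convergence is established we have $\norm{\exp(\zeta)}^2 = \sum_{k \ge 0} \frac{1}{(k!)^2}\norm{\zeta^k}^2$, whereas $\exp(\norm{\zeta}^2) = \sum_{k\ge 0}\frac{1}{k!}\norm{\zeta}^{2k}$; so it suffices to prove the term-by-term bound
\begin{align}
\norm{\zeta^k}^2 &\le k!\,\norm{\zeta}^{2k} \qquad (k \ge 0). \notag
\end{align}
Summing this over $k$ yields the asserted inequality, and since the majorant $\sum_k \frac{1}{k!}\norm{\zeta}^{2k}$ is finite, it also shows that the partial sums of $\sum_k \frac{1}{k!}\zeta^k$ form a Cauchy sequence in the Hilbert space $\F(L)$, hence that $\exp(\zeta)$ converges.

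To prove the term-by-term bound I would first bring $\zeta$ into normal form. Under the isometric isomorphism $\tau \colon \Lambda^2 L \to \Sym(\Sigma(L))$ of proposition \ref{p-ssl-cx-bana}, $\zeta$ corresponds to a Hilbert--Schmidt operator having the skew-symmetry recorded in lemma \ref{l-sl-alt}; applying the spectral theorem to this compact operator (and using the skew-symmetry to group its eigenvectors in pairs) produces an orthonormal system $e_1, e_2, \dots$ in $L$ and reals $\lambda_1 \ge \lambda_2 \ge \cdots \ge 0$ with
\begin{align}
\zeta &= \sum_{j \ge 1} \lambda_j\, e_{2j-1} \wedge e_{2j}, \qquad \norm{\zeta}^2 = \sum_{j \ge 1} \lambda_j^2 < \infty, \notag
\end{align}
where the value of $\norm{\zeta}^2$ comes from the fact that the vectors $e_{2j-1}\wedge e_{2j}$ form an orthonormal set in $\Lambda^2 L$ by the Grammian-determinant formula of definition \ref{d-fock-spac}.

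Writing $\omega_j = e_{2j-1}\wedge e_{2j}$, each $\omega_j$ has even degree, so the $\omega_j$ commute, and $\omega_j \wedge \omega_j = 0$. Expanding $(\sum_j \lambda_j \omega_j)^k$, every monomial with a repeated index vanishes and the surviving ones are independent of the order of the factors, so
\begin{align}
\zeta^k &= k! \sum_{j_1 < \cdots < j_k} \lambda_{j_1}\cdots\lambda_{j_k}\; e_{2j_1-1}\wedge e_{2j_1}\wedge \cdots \wedge e_{2j_k-1}\wedge e_{2j_k}. \notag
\end{align}
Each wedge on the right is a wedge of $2k$ distinct members of the orthonormal system $\{e_i\}$, hence has norm $1$, and wedges indexed by distinct $k$-element index sets are mutually orthogonal (again by definition \ref{d-fock-spac}); consequently $\norm{\zeta^k}^2 = (k!)^2 \sum_{j_1 < \cdots < j_k} \lambda_{j_1}^2\cdots\lambda_{j_k}^2$. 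On the other hand, since all $\lambda_j^2 \ge 0$, expanding the power and keeping only the tuples with pairwise distinct entries gives $\norm{\zeta}^{2k} = \bigl(\sum_j \lambda_j^2\bigr)^k \ge k! \sum_{j_1 < \cdots < j_k} \lambda_{j_1}^2\cdots\lambda_{j_k}^2$. Combining the two displays, $\norm{\zeta^k}^2 \le (k!)^2 \cdot \tfrac{1}{k!}\norm{\zeta}^{2k} = k!\,\norm{\zeta}^{2k}$, which is the required estimate.

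The step I expect to cost the most care is the passage to a normal form for $\zeta$ in the \emph{completed} space $\Lambda^2 L$: in finite dimensions this is the classical Pfaffian-type decomposition of an alternating form, whereas in general one must apply the spectral theorem to the compact Hilbert--Schmidt operator $\tau(\zeta)$ of proposition \ref{p-ssl-cx-bana} and exploit its skew-symmetry to produce the pairs $e_{2j-1}, e_{2j}$ — one should be mindful here that a naive continuity argument based on $\norm{\alpha \wedge \beta} \le \norm{\alpha}\,\norm{\beta}$ is \emph{not} available, since that inequality is false in $\F(L)$ (e.g.\ $\norm{\alpha \wedge \alpha} > \norm{\alpha}^2$ for $\alpha = \tfrac{1}{\sqrt3}\sum_{j=1}^{3} e_{2j-1}\wedge e_{2j}$). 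Once the normal form is in hand, everything else is the elementary combinatorial computation above.
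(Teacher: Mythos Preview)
The paper does not supply its own proof here; it simply cites \citet[page~68]{PR94}, whose argument is precisely the one you give: pass to the normal form $\zeta=\sum_j \lambda_j\, e_{2j-1}\wedge e_{2j}$ via the structure theorem for skew-symmetric Hilbert--Schmidt operators (the step you correctly flag as the one requiring care), then do the combinatorial norm computation to get $\norm{\zeta^k}^2 = (k!)^2\sum_{j_1<\cdots<j_k}\lambda_{j_1}^2\cdots\lambda_{j_k}^2 \le k!\,\norm{\zeta}^{2k}$. Your proposal is correct and matches the reference.
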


\sloppy
\begin{defn}\label{d-g-cgag}
\index{unitary structure!decomposition of an endomorphism}
\index{Cg@$C_g$}
\index{Ag@$A_g$}
(The Unitary Structure Decomposition of an Endomorphism).
\citep[page~92]{PR94} Given a unitary structure $J \in \US(V)$, and any $g \in \B(V)$, let
\begin{align}
C_g   &= \frac{1}{2} (g - J g J) = \frac{J}{2} \{ g, J \} \in \B(V) \notag \\
A_g   &= \frac{1}{2} (g + J g J) = \frac{J}{2} [g, J] \in \B(V), \notag
\end{align}
$J$ implicit in the symbols $C_g$ and $A_g$, where $\{a,b\} = ab + ba$ is the anticommutator and $[a,b] = ab - ba$ the commutator.  We call $A_g$ the antisymmetric and $C_g$ the symmetric parts of $g$, the parts that anticommute and commute with $J$, and give the same names to the complex-linear extensions to $H$ of $g$, $A_g$, and $C_g$.
\end{defn}
\fussy

\begin{lem}\label{l-g-cgag}
\index{unitary structure!decomposition of an endomorphism}
\index{Cg@$C_g$}
\index{Ag@$A_g$}
(Properties of the Unitary Structure Decomposition of an Endomorphism).
\citep[pages~92--94]{PR94} Given a unitary structure $J \in \US(V)$, any $g \in \B(V)$, optionally complex linearly extended to $H$, in which case we consider $A_g$ and $C_g$ also as their complex-linear extensions to $H$, and $u \in \UU(V_J)$,
\begin{align}
g       &= C_g + A_g \notag \\
C_g J   &=   J C_g \notag \\
A_g J   &= - J A_g \notag \\
C_{g h} &= C_g C_h + A_g A_h \notag \\
A_{g h} &= C_g A_h + A_g C_h \notag \\
C_{g u} &= C_g \notag \\
A_{g u} &= A_g \notag \\
C_g^{*} &= C_{g^{*}} \notag \\
A_g^{*} &= A_{g^{*}}. \notag
\end{align}
If $g \in \Aut(V)$,
\begin{align}
\ident &= C_{g^{-1}} C_g + A_{g^{-1}} A_g  \notag \\
0      &= C_{g^{-1}} A_g + A_{g^{-1}} C_g.  \notag
\end{align}
$g \in \Aut(V)$ is in $\Orth(V)$ if and only if both of the conditions below are true:
\begin{align}
\ident &= C_g^{*} C_g + A_g^{*} A_g \notag \\
0      &= C_g^{*} A_g + A_g^{*} C_g. \notag
\end{align}
If $g \in \Orth(V)$, then $C_g^{*} = C_{g^{-1}}$ and $A_g^{*} = A_{g^{-1}}$, where the adjoints are with respect to the inner product on $V$ or its Hermitian extension to $H$.
\end{lem}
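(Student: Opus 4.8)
The plan is to verify the identities in the order listed, reducing everything to the defining formulas $C_g = \tfrac12(g - JgJ)$, $A_g = \tfrac12(g + JgJ)$ and the single relation $J^2 = -1$ (equivalently $J^{-1} = -J$, and since $J \in \Orth(V)$ also $J^{*} = -J$). First I would record the two structural facts that make the decomposition useful: $g = C_g + A_g$ is immediate by addition, and a one-line computation gives $C_g J = J C_g$ and $A_g J = -J A_g$; e.g. $C_g J = \tfrac12(gJ - JgJ^{2}) = \tfrac12(gJ + Jg) = \tfrac12(Jg + gJ) = J C_g$, and likewise for $A_g$. Next I would prove the uniqueness statement that an operator cannot be written in two ways as (a part commuting with $J$) plus (a part anticommuting with $J$): if $x$ commutes and $y$ anticommutes with $J$ then $J(x+y)J^{-1} = x - y$, so $x + y = 0$ forces $x = y = 0$. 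Thus $C_g$ is exactly the $J$-commuting part of $g$ and $A_g$ the $J$-anticommuting part.

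With uniqueness in hand, the composition formulas $C_{gh} = C_g C_h + A_g A_h$ and $A_{gh} = C_g A_h + A_g C_h$ fall out by expanding $gh = (C_g + A_g)(C_h + A_h)$ and sorting the four terms: $C_g C_h$ and $A_g A_h$ commute with $J$, while $C_g A_h$ and $A_g C_h$ anticommute, so by uniqueness the $J$-commuting part of $gh$ is $C_g C_h + A_g A_h$ and its $J$-anticommuting part is $C_g A_h + A_g C_h$. The identities involving $u \in \UU(V_J)$ use only $[u,J]=0$, whence $J(gu)J = (JgJ)u$ and the decomposition of $g$ transports through in the stated way by direct computation. The adjoint formulas $C_g^{*} = C_{g^{*}}$ and $A_g^{*} = A_{g^{*}}$ follow by taking adjoints in the definitions and using $J^{*} = -J$ (so that $J^{*} g^{*} J^{*} = J g^{*} J$). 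Specializing the composition formulas to $h = g^{-1}$ (and symmetrically to $gh = g^{-1}g = \ident$), together with $C_{\ident} = \ident$ and $A_{\ident} = 0$, yields $\ident = C_{g^{-1}} C_g + A_{g^{-1}} A_g$ and $0 = C_{g^{-1}} A_g + A_{g^{-1}} C_g$.

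Finally, for the orthogonality characterization: for $g \in \Aut(V)$, applying the composition formulas to the product $g^{*} g$ and using $C_{g^{*}} = C_g^{*}$, $A_{g^{*}} = A_g^{*}$ gives $g^{*} g = (C_g^{*} C_g + A_g^{*} A_g) + (C_g^{*} A_g + A_g^{*} C_g)$, with the first bracket $J$-commuting and the second $J$-anticommuting. By uniqueness of the decomposition, $g^{*} g = \ident$ holds if and only if both $C_g^{*} C_g + A_g^{*} A_g = \ident$ and $C_g^{*} A_g + A_g^{*} C_g = 0$; and for an invertible bounded operator, $g^{*} g = \ident$ is equivalent to $g^{*} = g^{-1}$, i.e. to $g \in \Orth(V)$. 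The remaining assertion $C_g^{*} = C_{g^{-1}}$, $A_g^{*} = A_{g^{-1}}$ for orthogonal $g$ is then simply $C_g^{*} = C_{g^{*}} = C_{g^{-1}}$. All the computations are routine; the one place that needs genuine care is this last step, where one must invoke the uniqueness of the $J$-grading to split the single operator equation $g^{*} g = \ident$ into its two components, and must use the standing hypothesis $g \in \Aut(V)$ to upgrade the isometry identity $g^{*} g = \ident$ to full orthogonality rather than merely to $g$ being an isometry.
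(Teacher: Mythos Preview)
Your proof is correct and follows essentially the same line as the paper's. The paper's written proof is very brief (most of the identities are cited to \citet{PR94}) and addresses only the orthogonality characterization: it observes that the two displayed conditions are exactly $C_{g^{*}g} = \ident$ and $A_{g^{*}g} = 0$, adds them to get $g^{*}g = \ident$ for sufficiency, and invokes $C_{\ident} = \ident$, $A_{\ident} = 0$ for necessity. Your route via an explicit uniqueness-of-decomposition lemma is the same argument organized slightly differently, and it makes the ``if and only if'' cleaner since uniqueness handles both directions at once.
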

\begin{proof}
$g \in \Orth(V) \Leftrightarrow g^{*} = g^{-1}$.  The necessary and sufficient condition for $g \in \Aut(V)$ to be in $\Orth(V)$ is equivalent to $C_{g^{*} g} = \ident$ and $A_{g^{*} g} = 0$, which add together to give $g^{*} g = \ident$, so the condition is sufficient.  That it is necessary follows from $C_{\ident} = \ident$, $A_{\ident} = 0$.
\end{proof}

\section{Restricted Orthogonal Neighborhood to Skew Symmetric Operators}\label{s-ores-nhbd-skew-symm-op}

\begin{lem}\label{l-map-lagr-symv}
\index{skew symmetric!operators}
\index{Lagrangian subspace!set of!map to SSL@map to $\Sym(\Sigma(L))$}
\index{SSL@$\Sym(\Sigma(L))$!set of Lagrangian subspaces mapped to}
(Maps from Neighborhoods of $\ident \in \Orth_{res}$ and $L \in \Lagr_{res}$ to $\Sym(\Sigma(L))$).
Given $L \in \Lagr_{res}$ with corresponding $J \in \US_{res}$, there is an open neighborhood $\widetilde{V_2}$ of $\ident \in \Orth_{res}$ on which the map $\widetilde{\sigma} \colon \widetilde{V_2} \rightarrow \Sym(\Sigma(L))$ given by $g \mapsto Z_g = - A_g C_g^{-1}$ is continuous.  For $g_1, g_2 \in \widetilde{V_2}$, $\widetilde{\sigma}(g_1) = \widetilde{\sigma}(g_2) \Leftrightarrow g_1^{-1} g_2 \in \UU(V_J)$.

Furthermore, recalling definition \ref{d-ores-uvj-homo} and letting $V_2$ denote the corresponding neighborhood of $L \in \Lagr_{res}$, there is a homeomorphism $\sigma \colon V_2 \rightarrow \widetilde{\sigma} (\widetilde{V_2}))$ such that the triangle in the following diagram commutes.
\[
\begindc{\commdiag}[5]
\obj(0,25)[objOres]{$\Orth_{res}$}
\obj(10,25)[objWTV2]{$\widetilde{V_2}$}
\obj(25,25)[objImsigma]{$\widetilde{\sigma} (\widetilde{V_2})$}
\obj(40,25)[objSSL]{$\Sym(\Sigma(L))$}
\obj(0,10)[objLagrres]{$\Lagr_{res}$}
\obj(10,10)[objV2]{$V_2$}
\mor{objWTV2}{objOres}{$\incl$}
\mor{objWTV2}{objImsigma}{$\widetilde{\sigma}$}
\mor{objWTV2}{objV2}{}[\atright, \solidarrow]
\mor{objV2}{objLagrres}{$\incl$}
\mor{objV2}{objImsigma}{$\sigma$}[\atleft,\solidarrow]
\mor{objImsigma}{objSSL}{$\incl$}
\cmor((25,28)(18,31)(10,28)) \pleft(18,32){$\widetilde{\sigma}_{r}$}
\enddc
\]
There is a continuous right inverse of $\widetilde{\sigma}$; i.e., $\widetilde{\sigma} \circ \widetilde{\sigma}_r = \ident$, given by
\begin{align}
\widetilde{\sigma}_r \colon \widetilde{\sigma} (\widetilde{V_2}) &\rightarrow \Orth_{res} \notag \\
 z &\mapsto (\ident - z) (\ident + z^{*} z)^{-\frac{1}{2}} \notag
\end{align}
\end{lem}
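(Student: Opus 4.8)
The plan is to take $\widetilde{V_2} \subset \Orth_{res}$ to be a small open neighborhood of $\ident$, chosen so that (i) the symmetric part $C_g$ (relative to the fixed $J \in \US_{res}$ whose $+i$-eigenspace is $L$) is invertible on it, (ii) its image in $\Lagr_{res}$ lies in the domain of the section $\xi$ of lemma \ref{l-uv-ov-sect-res}, and (iii) the map $\widetilde{\sigma}_r$ below carries $\widetilde{\sigma}(\widetilde{V_2})$ back into $\widetilde{V_2}$; each of (i), (ii), (iii) is an open condition holding at $\ident$, since $C_{\ident} = \ident$ and $g \mapsto C_g = \frac{1}{2}(g - JgJ)$, $\xi$, and $\widetilde{\sigma}_r$ are continuous with $\widetilde{\sigma}_r(0) = \ident$. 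On $\widetilde{V_2}$ the formula $Z_g = -A_g C_g^{-1}$ is meaningful, and I would first check $Z_g \in \Sym(\Sigma(L))$. Hilbert-Schmidtness is immediate: $g \in \Orth_{res}$ forces $A_g$ Hilbert-Schmidt by proposition \ref{p-ojv-ores}, and $\B_2$ is a two-sided ideal (lemma \ref{l-hsop}), so $A_g C_g^{-1} \in \B_2$. The relations $\Sigma Z_g = Z_g \Sigma$ and $J Z_g = - Z_g J$ follow from lemma \ref{l-g-cgag} together with $g$ commuting with $\Sigma$ (lemma \ref{l-oph-opv}) and $J$ commuting with $\Sigma$ (lemma \ref{l-ext-ov}): $C_g$ commutes with $\Sigma$ and $J$, while $A_g$ commutes with $\Sigma$ and anticommutes with $J$. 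The skew-adjointness $Z_g^{*} = - Z_g$ is the one place orthogonality is essential: using $C_g^{*} = C_{g^{-1}}$, $A_g^{*} = A_{g^{-1}}$ from lemma \ref{l-g-cgag}, the equation $Z_g = - Z_g^{*}$ rearranges to $C_{g^{-1}} A_g + A_{g^{-1}} C_g = 0$, precisely one of the identities of lemma \ref{l-g-cgag}. Continuity of $\widetilde{\sigma} \colon g \mapsto Z_g$ into $\Sym(\Sigma(L))$ with its Hilbert-space (Hilbert-Schmidt) topology, as identified by proposition \ref{p-ssl-cx-bana}, follows from $g \mapsto A_g$ being $\norm{}_J$-to-$\norm{}_2$ continuous, $g \mapsto C_g^{-1}$ being operator-norm continuous, and joint continuity of multiplication $\B_2 \times \B(H) \to \B_2$.

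For the fibers, $C_{gu} = C_g$ and $A_{gu} = A_g$ for $u \in \UU(V_J)$ (lemma \ref{l-g-cgag}) give $\widetilde{\sigma}(g_1 u) = \widetilde{\sigma}(g_1)$, which is ``$\Leftarrow$''. For ``$\Rightarrow$'', put $u = g_1^{-1} g_2$ and expand $A_u = C_{g_1^{-1}} A_{g_2} + A_{g_1^{-1}} C_{g_2}$; writing $C_{g_1^{-1}} = C_{g_1}^{*}$, $A_{g_1^{-1}} = A_{g_1}^{*}$ and inserting the hypothesis $A_{g_1} = A_{g_2} C_{g_2}^{-1} C_{g_1}$ and its adjoint, $A_u$ collapses to $0$ via the orthogonality identity $C_{g_2}^{*} A_{g_2} + A_{g_2}^{*} C_{g_2} = 0$, so $u \in \UU(V_J)$. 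Now $\widetilde{V_2}$ is invariant under right multiplication by $\UU(V_J)$ (as $C_{gu} = C_g$), hence saturated, so $\pi_{\Orth_{res}/\UU(V_J)}(\widetilde{V_2})$ is open and, through the homeomorphisms of definition \ref{d-ores-uvj-homo}, gives the open neighborhood $V_2 \ni L$; by (ii), $\xi$ is defined on $V_2$ and, again by $\UU(V_J)$-invariance, maps $V_2$ into $\widetilde{V_2}$, and I set $\sigma = \widetilde{\sigma} \circ \xi$. The triangle commutes because the unlabelled map $\widetilde{V_2} \to V_2$ is $g \mapsto g L$ and $\xi(g J g^{-1})$ differs from $g$ by a right $\UU(V_J)$-factor on which $\widetilde{\sigma}$ is constant; those same facts make $\sigma$ a continuous bijection onto $\widetilde{\sigma}(\widetilde{V_2})$, with inverse $z \mapsto \widetilde{\sigma}_r(z) L$, continuous once $\widetilde{\sigma}_r$ is.

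For the right inverse, I would verify the formula directly. For $z \in \Sym(\Sigma(L))$ set $m = (\ident + z^{*} z)^{-\frac{1}{2}}$. Since $z^{*} = - z$, we get $z^{*} z = - z^2$, and $\ident \pm z$ commute with $\ident + z^{*} z = \ident - z^2$, so $\widetilde{\sigma}_r(z)^{*} \widetilde{\sigma}_r(z) = m(\ident + z)(\ident - z) m = m(\ident + z^{*} z) m = \ident$ and likewise $\widetilde{\sigma}_r(z) \widetilde{\sigma}_r(z)^{*} = \ident$; as $z$, $z^{*} z$, $m$ all commute with $\Sigma$, lemma \ref{l-oph-opv} gives $\widetilde{\sigma}_r(z) \in \Orth(V)$. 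Since $z$ anticommutes with $J$ while $z^{*} z$ and $m$ commute with $J$, a short computation gives $[J, \widetilde{\sigma}_r(z)] = 2 z J m$, so $\norm{[J, \widetilde{\sigma}_r(z)]}_2 \le 2 \norm{z}_2 < \infty$ and $\widetilde{\sigma}_r(z) \in \B_{res, J}$, i.e. $\widetilde{\sigma}_r(z) \in \Orth_{res}$; the same expression plus continuity of the functional calculus for $m$ shows $\widetilde{\sigma}_r$ is $\norm{}_J$-continuous. Finally, with $g = \widetilde{\sigma}_r(z) = m - z m$ and $J m J = - m$, $J(z m) J = z m$, one finds $C_g = m$ (invertible, confirming (iii)) and $A_g = - z m$, so $- A_g C_g^{-1} = z$, i.e. $\widetilde{\sigma} \circ \widetilde{\sigma}_r = \ident$.

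All the algebraic identities come from lemma \ref{l-g-cgag} and the Hilbert-space identification of $\Sym(\Sigma(L))$ in proposition \ref{p-ssl-cx-bana}, so the real content is selecting the right relation at the two spots where orthogonality is used, and the functional-calculus estimates for $(\ident + z^{*} z)^{-\frac{1}{2}}$. I expect the main obstacle to be coordinating the three nested neighborhoods --- $\widetilde{V_2} \subset \Orth_{res}$, $V_2 \subset \Lagr_{res}$, and the domain of $\xi$ --- so that every composite in the diagram is defined and the triangle literally commutes.
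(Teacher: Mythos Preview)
Your argument is correct, and for the ``$\Rightarrow$'' direction it is cleaner than the paper's. The paper proceeds by computing $Z_{gh}$ for $h = g_1^{-1} g_2$ and showing that $Z_g = Z_{gh}$ forces $Z_h(\ident - (C_g^{-1}A_g)^2) = 0$; it then needs $\ident - (C_g^{-1}A_g)^2$ to be invertible, which is arranged by a two-stage choice of radii $\delta < \delta_0$ guaranteeing $\norm{C_g^{-1}A_g}_J < \tfrac12$ and that products $g_1^{-1}g_2$ stay in the larger ball. Your computation of $A_{g_1^{-1}g_2}$ directly, collapsing it to zero via $C_{g_2}^{*}A_{g_2} + A_{g_2}^{*}C_{g_2} = 0$, uses only invertibility of $C_{g_1}, C_{g_2}$ and bypasses all of that bookkeeping. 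This also makes your condition (iii) automatic: for $z = Z_g$ with $g \in \widetilde{V_2}$, once you know $\widetilde{\sigma}_r(z) \in \Orth_{res}$ and $C_{\widetilde{\sigma}_r(z)} = m$ is invertible, your ``$\Rightarrow$'' argument applied to the pair $g, \widetilde{\sigma}_r(z)$ gives $\widetilde{\sigma}_r(z) \in g\,\UU(V_J) \subset \widetilde{V_2}$. So (iii) is not an independent open condition to be imposed but a consequence, and your parenthetical ``confirming (iii)'' should really read ``confirming (i) for $\widetilde{\sigma}_r(z)$, whence (iii) follows.''

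One small correction: the identities you quote from lemma~\ref{l-g-cgag} as ``$C_{gu} = C_g$, $A_{gu} = A_g$'' are misstated there; the correct relations (computed in the paper's own proof of the present lemma) are $C_{gu} = C_g u$ and $A_{gu} = A_g u$ for $u \in \UU(V_J)$. This does not affect your argument --- $Z_{gu} = -A_g u\,(C_g u)^{-1} = Z_g$ and ``$C_g$ invertible'' is still a right-$\UU(V_J)$-invariant condition --- but the reason you give for $\widetilde{V_2}$ being saturated should be that invertibility of $C_g u$ is equivalent to that of $C_g$, not that they are equal.
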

\begin{proof}
Let $\widetilde{V_2} = \B_{\delta} (\ident) \UU(V_J)$, and let $V_2 = \phi_{\Orth_{res}/\UU(V_J), \Lagr_{res}} \circ \pi_{\Orth_{res}/\UU(V_J)} (\widetilde{V_2})$, an open neighborhood of $L \in \Lagr_{res}$.  Then $\pi_{\Orth_{res}/\UU(V_J)}^{-1} \circ \phi_{\Orth_{res}/\UU(V_J), \Lagr_{res}}^{-1} (V_2) = \widetilde{V_2}$.

To define $\delta > 0$, note that in any Banach algebra $B$, $b \in B$ and $\norm{1 - b} < \beta < 1$ imply that $b$ is invertible, $\norm{b^{-1}} < \frac{1}{1 - \beta}$, and $\norm{1 - b^{-1}} < \frac{\beta}{1 - \beta}$ (see \citet[page~224]{LS68} or \citet[page~250]{Rudi91}).  For all $g \in \Orth_{res} \subset \B_{res, J}$, $C_g  = \frac{1}{2} (g - J g J)$ and $A_g  = \frac{1}{2} (g + J g J)$ are continuous as functions of $g$, since of course $J \in \B_{res, J}$.  Recall that $C_{\ident} = \ident$ and $A_{\ident} = 0$.  Choose $\delta_0 > 0$ such that
\begin{align}
\delta_0 &< 1 \notag \\
g \in \B_{\delta_0} (\ident) &\Rightarrow \norm{\ident - C_g}_J < \frac{1}{2} \text{ and } \norm{A_g}_J < \frac{1}{4}, \text{ and define} \notag \\
\widetilde{V_{2,0}} &= \B_{\delta_0} (\ident) \UU(V_J) \notag \\
V_{2,0} &= \phi_{\Orth_{res}/\UU(V_J), \Lagr_{res}} \circ \pi_{\Orth_{res}/\UU(V_J)} (\widetilde{V_{2,0}}), \notag
\end{align}
whence for such $g$, $C_g$ is invertible and $\norm{C_g^{-1}}_J < 2$.

Take a right translation of $g$ by $u \in \UU(V_J)$.  Decomposing $g u = A_{g u} + C_{g u}$, using the fact that $u \in \UU(V_J)$ commutes with $J$,
\begin{align}
C_{g u} &= \frac{1}{2} (g u - J g u J) \notag \\
 &= \frac{1}{2} (g u - J g J u) \notag \\
 &= \frac{1}{2} (g - J g J) u \notag \\
 &= C_g u, \notag
\end{align}
so
\begin{align}
\norm{C_{gu}}_J &= \norm{C_g u} + \norm{J C_g u - C_g u J}_2 \notag \\
 &= \norm{C_g u} + \norm{J C_g u - C_g J u}_2 \notag \\
 &= \norm{C_g u} + \norm{(J C_g - C_g J) u}_2 \notag \\
 &= \norm{C_g}_J. \notag
\end{align}
Since $C_g u$ is invertible when $C_g$ is, $C_g$ is invertible for all $g \in \widetilde{V_{2,0}}$.  As in any Banach algebra, when $C_g$ is invertible, $C_g^{-1}$ is a continuous function of $C_g$, and thence of $g$.  For all $g \in \widetilde{V_{2,0}}$, $\norm{C_g^{-1}}_J < 2$.

Similarly $A_{g u} = A_g u$, $\norm{A_{g u}}_J = \norm{A_g}_J$, and $\norm{A_g}_J < \frac{1}{4}$ for all $g \in \widetilde{V_{2,0}}$.

The facts given in the proof so far for right translations of $g$ by $u \in \UU(V_J)$ are also true for left translations and both simultaneously.  Thus the phrase ``for all $g \in \widetilde{V_{2,0}}$'' in the last two paragraphs could be replaced by ``for all $g \in \UU(V_J) \widetilde{V_{2,0}}$''.

% ~ removed from citation to cure overfull box
Letting $Z_g = -A_g C_g^{-1}$, a calculation \citep[pages 93, 105]{PR94} shows that $Z_g \in \Sym(\Sigma(L))$.  To wit, for $x, y \in H$, using $A_g^{*} = A_{g^{-1}}$, $C_g^{*} = C_{g^{-1}}$ and another of the statements of lemma \ref{l-g-cgag}, that we will transform to start with:
\begin{align}
0 &= C_{g^{-1}} A_g + A_{g^{-1}} C_g \Rightarrow \notag \\
0 &= A_g C_g^{-1} + C_{g^{-1}}^{-1} A_{g^{-1}}, \text{ so} \notag \\
\langle Z_g x, y \rangle &= -\langle A_g C_g^{-1} x, y \rangle \notag \\
 &= -\langle x, C_{g^{-1}}^{-1} A_{g^{-1}} y \rangle \notag \\
 &=  \langle x, A_g C_g^{-1} y \rangle \notag \\
 &= -\langle x, Z_g y \rangle. \notag
\end{align}

$Z_g$ is continuous using $\norm{}_2$, as a function of $g$ using norm $\norm{}_J$, for fixing $g \in \widetilde{V_{2,0}}$ for the moment and supposing $g' \in \widetilde{V_{2,0}}$,
\begin{align}
\norm{Z_{g'} - Z_g}_2 &= \norm{ -A_{g'} C_{g'}^{-1} + A_g C_g^{-1}}_2 \notag \\
 &\le \norm{ -A_{g'} C_{g'}^{-1} + A_{g'} C_g^{-1}}_2 + \norm{ -A_{g'} C_g^{-1} + A_g C_g^{-1}}_2 \notag \\
 &\le \norm{A_{g'}}_2 \norm{-C_{g'}^{-1} + C_g^{-1}} + \norm{-A_{g'} + A_g}_2 \norm{C_g^{-1}}. \notag
\end{align}
By making $\norm{g' - g}_J$ small enough, because $C_g^{-1}$ and $A_g$ are continuous functions of $g$, we can ensure that $\norm{-C_{g'}^{-1} + C_g^{-1}} \le \norm{-C_{g'}^{-1} + C_g^{-1}}_J$ is as small as desired, $\norm{A_g'}_2 \le \frac{1}{2} \norm{g'}_J$ is bounded, $\norm{-A_{g'} + A_g}_2 \le \frac{1}{2} \norm{-g' + g}_J$ is as small as desired; and $\norm{C_g^{-1}}$ is constant.  Thus $Z_g$ is a continuous function of $g$.

To prove the next item in the statement, if we now supposed that $g_1, g_2 \in \widetilde{V_{2,0}}$ and $\widetilde{\sigma}(g_1) = \widetilde{\sigma}(g_2)$, we would run into a problem. To avoid excessive subscripts let $g = g_1$, $g' = g_2$, and $h = g^{-1} g'$, so that $g' = g h$.  Then looking ahead,
\[
f, f' \in \B_{\delta_0}(\ident) \Rightarrow f^{-1} f' \in \B_{\frac{2 \delta_0}{1 - \delta_0}}(\ident) \notag
\]
because supposing $f = \ident + f_{\delta_0}$ and $f' = \ident + {f'}_{\delta_0}$, with $\norm{f_{\delta_0}}_J < \delta_0$ and $\norm{{f'}_{\delta_0}}_J < \delta_0$, then $f^{-1} = \ident + (f^{-1})_{\frac{\delta_0}{1 - \delta_0}}$ with $\norm{(f^{-1})_{\frac{\delta_0}{1 - \delta_0}}}_J < \frac{\delta_0}{1 - \delta_0}$, and
\[
\norm{\ident - f^{-1} f'}_J = \norm{\ident - (\ident + (f^{-1})_{\frac{\delta_0}{1 - \delta_0}}) (\ident + {f'}_{\delta_0})}_J \le \delta_0 + \frac{\delta_0}{1 - \delta_0} + \frac{\delta_0^2}{1 - \delta_0} = \frac{2 \delta_0}{1 - \delta_0}. \notag
\]
We will want $\delta$ small enough that products and inverses of elements of $\B_{\delta} (\ident)$ will be in $B_{\delta_0}(\ident)$, so set
\[
\delta < \frac{\delta_0}{2 + \delta_0}, \text{ whence } \frac{2 \delta}{1 - \delta} < \delta_0, \notag
\]
and define $\widetilde{V_2}$, $V_2$ as in the lemma statement.  Now suppose that $g, g' \in \widetilde{V_2}$, and $\widetilde{\sigma}(g) = \widetilde{\sigma}(g')$, or in other words, $Z_{g} = Z_{g'}$.  Take $f \in (g \UU(V_J)) \cap \B_{\delta}(\ident)$ and $f' \in (g' \UU(V_J)) \cap \B_{\delta}(\ident)$, so that $g = f u$ and $g' = f' u'$ for some $u, u' \in \UU(V_J)$.  Then
\[
h = g^{-1}g' = (f u)^{-1} (f' u') = u^{-1} f^{-1} f u' \in \UU(V_J) B_{\delta_0}(\ident) \UU(V_J) \notag
\]
and $Z_h$ exists.  To answer when $Z_{g h}$ equals $Z_g$, using lemma \ref{l-g-cgag} we have
\begin{align}
Z_{g h} &= -(C_g A_h + A_g C_h) (C_g C_h + A_g A_h)^{-1} \notag \\
 &= -(C_g A_h + A_g C_h) (C_h^{-1} C_g^{-1}) (\ident + A_g A_h C_h^{-1} C_g^{-1})^{-1} \notag \\
 &= (C_g Z_h C_g^{-1} + Z_g) (\ident - A_g Z_h C_g^{-1})^{-1}, \notag
\end{align}
so
\begin{align}
Z_g = Z_{g h} &\Leftrightarrow Z_g (\ident - A_g Z_h C_g^{-1}) = C_g Z_h C_g^{-1} + Z_g \notag \\
 &\Leftrightarrow - Z_g A_g Z_h C_g^{-1} = C_g Z_h C_g^{-1} \notag \\
 &\Leftrightarrow Z_h = - C_g^{-1} Z_g A_g Z_h \notag \\
 &\Leftrightarrow 0 = Z_h (\ident + C_g^{-1} Z_g A_g) = Z_h (\ident - (C_g^{-1} A_g)^2). \notag
\end{align}
For $g \in \widetilde{V_2}$, because $\B_{res, J}$ is a Banach algebra, $\norm{C_g^{-1} A_g}_J < \frac{1}{2}$.  Thus $\ident - (C_g^{-1} A_g)^2$ is invertible, and $Z_{g h} = Z_g \Leftrightarrow Z_h = 0 \Leftrightarrow g^{-1} g' = h \in \UU(V_J)$.

Continuing now to prove the second paragraph of the statement, $\pi_{\Orth_{res}/\UU(V_J)}$ is an open map, $V_2$ is open, and it contains $L$, which corresponds to $\ident_{\Orth_{res}}$.
Given $g \in \widetilde{V_2}$, $K = g L$, we define $\sigma(K)$ by choosing any $g' \in \Orth_{res}$ such that $K = g' L$, and letting $\sigma(K) = \widetilde{\sigma}(g') = Z_{g'}$.  Since $g' L = K = g L$, $g' = g u$ for some $u \in \UU(V_J)$, and $Z_{g'} = -A_g u (C_g u)^{-1} = Z_g$.  Since $\widetilde{\sigma}$ is $\UU(V_J)$-equivariant for the right action on $\Orth_{res}$ and the trivial action on its image, and a continuous equivariant map descends to a continuous map of the orbit spaces \citep[page~4]{tomD87}, $Z_g$ is a continuous function of $[g]$, and hence of $K$.

Since $\widetilde{\sigma}(g_1) = \widetilde{\sigma}(g_2) \Leftrightarrow g_1^{-1} g_2 \in \UU(V_J) \Leftrightarrow g_1 L = g_2 L$, it follows that $\sigma$ is a bijection onto its image, which because of the definition of $\sigma$ from $\widetilde{\sigma}$, equals the image of the latter.  The facts about $\widetilde{\sigma}_r$ will help show that the inverse of $\sigma$ is continuous.

Using \citet[page~92]{Pede89}, which shows the existence of the square root of a positive self-adjoint operator on a real (or complex) Hilbert space, noting that the limit of a uniformly convergent sequence of continuous functions is continuous, $c \colon \B(V) \rightarrow \B(V)$ defined as $c \colon z \mapsto (\ident - z) (\ident + z^{*} z)^{-\frac{1}{2}}$ is continuous    in the operator norm.  Since $\norm{z} \le \norm{z}_2$, $c$ is also continuous from $\Sym(V)$ with $\norm{}_2$, to $\B(V)$ with $\norm{}$.  The values of $c$ are in $\B_{res, J}$ because $J$ anticommutes with $z$ and both are skew-adjoint, so $J$ commutes with $z^{*} z$ and hence with $(\ident + z^{*} z)^{-\frac{1}{2}}$, and thus
\begin{align}
\norm{[J, \widetilde{\sigma}_r (z)]}_2 &= \norm{[J, (\ident - z) (\ident + z^{*} z)^{-\frac{1}{2}}]}_2 \notag \\
 &= \norm{[J, (\ident - z)] (\ident + z^{*} z)^{-\frac{1}{2}}}_2 \notag \\
 &= \norm{[J, z] (\ident + z^{*} z)^{-\frac{1}{2}}}_2 \notag \\
 &= \norm{2 J z (\ident + z^{*} z)^{-\frac{1}{2}}}_2 \notag \\
 &\le 2 \norm{z}_2 \norm{(\ident + z^{*} z)^{-\frac{1}{2}}}. \label{e-norm-j-sigma-2}
\end{align}
When restricted to $\widetilde{\sigma} (\widetilde{V_2})$ to give $\widetilde{\sigma}_r$, the values of $c$ are in $\Orth_{res}$, as follows. With the restriction of domain, $z = Z_g = - A_g C_g^{-1}$ for some $g \in \widetilde{V_2}$.  Let $w = (\ident - z) (\ident + z^{*} z)^{-\frac{1}{2}} \in \B_{res, J}$.  Since $(\ident + z^{*} z)^{\frac{1}{2}}$ commutes with $J$ it's in $\B_{res, J}$, and thus $w$ is invertible in $\B_{res, J}$ when $\ident - z$ is, which will be true if $\norm{z}_J < 1$; but the definition of $\delta$ ensures that $\norm{z}_J = \norm{A_g C_g^{-1}}_J < \frac{1}{2}$.  Thus we can apply lemma \ref{l-g-cgag}:
\begin{align}
C_w &= (\ident + z^{*} z)^{-\frac{1}{2}} \notag \\
A_w &= - z (\ident + z^{*} z)^{-\frac{1}{2}} \notag \\
C_w^{*} C_w + A_w^{*} A_w &= (\ident + z^{*} z)^{-\frac{1}{2}} (\ident + (z^{*} z)) (\ident + z^{*} z)^{-\frac{1}{2}} = \ident \notag \\
C_w^{*} A_w + A_w^{*} C_w &= - (\ident + z^{*} z)^{-\frac{1}{2}} (z + z^{*}) (\ident + z^{*} z)^{-\frac{1}{2}} = 0, \notag
\end{align}
whence $w \in \Orth_{res}$.  For $z \in \widetilde{\sigma}(\widetilde{V_2})$, $Z_w = - A_w C_w^{-1} = z$, so $\widetilde{\sigma} (\widetilde{\sigma}_r (z)) = z$.

$c$ is continuous from $\Sym(V)$ to $\B_{res, J}$: estimating as in \ref{e-norm-j-sigma-2}, for fixed $z_1$, and $z_2$ in a bounded neighborhood of $z_1$, there are constants $c_1, c_2 \ge 0$ such that
\begin{align}
\norm{[J, \widetilde{\sigma}_r (z_2) - \widetilde{\sigma}_r (z_1)]}_2 \le &2 \norm{z_2}_2 \norm{(1 + z_2^{*} z_2)^{-\frac{1}{2}} - (1 + z_1^{*} z_1)^{-\frac{1}{2}}} \notag \\
 + &2 \norm{z_2 - z_1}_2 \norm{(1 + z_1^{*} z_1)^{-\frac{1}{2}}} \notag \\
 \le &c_2 \norm{(1 + z_2^{*} z_2)^{-\frac{1}{2}} - (1 + z_1^{*} z_1)^{-\frac{1}{2}}} + c_1 \norm{z_2 - z_1}_2. \notag
\end{align}

$\sigma^{-1}$ is continuous, as the composition of continuous maps $\phi_{\Orth_{res}/\UU(V_J), \Lagr_{res}} \circ \pi_{\Orth_{res}/\UU(V_J) | \widetilde{V_2}} \circ \widetilde{\sigma}_{r | \widetilde{\sigma} (\widetilde{V_2})}$ (see the diagram in the lemma statement).
\end{proof}
\begin{note}
\index{skew symmetric!operators!graphs}
(Notes on $\sigma$ and on Proof).
The thesis doesn't use $\sigma$, but it may help in understanding the situation of the diagram for the lemma, since its inverse is a restriction of the correspondence from skew-symmetric operators mapping $L \rightarrow \Sigma(L)$, to their graphs thought of as subsets of $L \dirsum \Sigma(L)$, which are Lagrangian subspaces of $H$.

It might be possible to choose a larger $\delta$ so as not to ''waste'' as much of the $V_1$ of lemma \ref{l-uv-ov-sect-res} as we did.
\end{note}

\section{Restricted Orthogonal Group Projective Representation}\label{s-ores-grou-rep}

Although our goal in the thesis is concerned with representations of algebras, we also make essential use of some facts related to group representations, because of $\Orth_{res}$.  This section is written as though the end were the projective representation of $\Orth_{res}$, although that representation itself is not used elsewhere in the thesis; only the two lemmas leading up to it are used.
\begin{defn}\label{d-prj-unit-rep}
\index{representation!projective}
\index{group!projective representation}
\index{projective representation}
(Projective Unitary Representations of Groups).
\citep[page~833]{FD88b} A projective unitary representation of a topological group $G$ on a Hilbert space $K$ is a continuous homomorphism $\rho \colon G \rightarrow \PU(K)$, with the strong operator topology on $\PU(K)$.  The topological group $\PU(K)$ is defined as the quotient of $\UU(K)$ with its strong operator topology, by its closed subgroup $\UU(1)$, denoting the projection by $\pi_{\PU(K)} \colon \UU(K) \rightarrow \UU(K) / \UU(1)$.
\end{defn}

Now, given $L \in \Lagr_{res}$, consider how we might construct a projective representation of $\Orth_{res}$ on $\F(L)$.  You can get an idea where it could come from by considering the Bogoliubov map that goes from orthogonal transformations to automorphisms of the Clifford algebra, which when composed with the Clifford algebra representation, are implemented by unitary automorphisms of $\F(L)$, and recalling lemma \ref{l-impl-homo}, which says that the Bogoliubov map is a homomorphism up to $\UU(1)$ multiples.  Thus we will get into the details of construction of these implementers.

First, some more background on vacuum vectors.
\begin{lem}\label{l-vac-vec-exp-bkg}
\index{creator}
\index{annihilator}
\index{vacuum vector}
\index{quadratic exponential}
\index{exponential construction!vacuum vector}
\index{vacuum vector!exponential construction}
(Background for the Vacuum Vector Construction).
Suppose given $L \in \Lagr_{res}$ with corresponding $J \in \US_{res}$, and $g \in \Orth(V)$ such that $C_g$ is invertible and $A_g$ is Hilbert-Schmidt.  Then $\exp(\tau^{-1}(Z_g))$ is a cyclic $L$-vacuum vector for $\pi_L \circ \theta_g$, where $\tau$ is the isometric isomorphism of proposition \ref{p-ssl-cx-bana}.
\end{lem}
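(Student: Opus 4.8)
The plan is to verify the two assertions of the statement separately: that $\Omega := \exp(\tau^{-1}(Z_g))$ satisfies the $L$-vacuum condition of definition \ref{d-j-vac-vec} for the representation $\pi_L\circ\theta_g$, and that $\Omega$ is a cyclic vector. First I would record that $\Omega$ is well defined. By lemma \ref{l-g-cgag}, since $g\in\Orth(V)$ we have $C_g^{*}=C_{g^{-1}}$, $A_g^{*}=A_{g^{-1}}$, and $C_{g^{-1}}^{-1}A_{g^{-1}}=-A_gC_g^{-1}$; combined with the hypotheses that $C_g$ is invertible (so that $C_{g^{-1}}=C_g^{*}$ is invertible too) and that $A_g$ is Hilbert--Schmidt, this shows $Z_g=-A_gC_g^{-1}$ lies in $\Sym(\Sigma(L))$ of definition \ref{d-sl}: it is Hilbert--Schmidt, it commutes with $\Sigma$ and anticommutes with $J$ since $A_g,C_g$ do (lemma \ref{l-ext-ov}, lemma \ref{l-g-cgag}), and it is skew-adjoint by the relation just quoted. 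Hence $\tau^{-1}(Z_g)$ is a genuine element of $\Lambda^2 L\subset\F(L)$ by the isometric isomorphism of proposition \ref{p-ssl-cx-bana}, and the quadratic exponential $\Omega=\exp(\tau^{-1}(Z_g))$ converges, with $\norm{\Omega}^2\le\exp(\norm{\tau^{-1}(Z_g)}^2)$, by proposition \ref{p-quad-exp}.

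For the vacuum condition, note that $\theta_g$ acts on a Clifford generator $x\in H\subset\Cl(V)$ as the complex-linear extension of $g$ (proposition \ref{p-c-star-clif-alg-func}), and that by proposition \ref{p-vj-hp-isom} the $L$-vacuum and $J$-vacuum conditions of definition \ref{d-j-vac-vec} coincide; so it suffices to show $\pi_L(gx)\,\Omega=0$ for every $x\in L^{\perp}=\Sigma(L)$. Writing $x=\Sigma(l)$ with $l\in L$ and decomposing $g=C_g+A_g$ as in definition \ref{d-g-cgag}, lemmas \ref{l-ext-ov} and \ref{l-g-cgag} give that $C_g$ commutes with the extended $J$ and with $\Sigma$ while $A_g$ anticommutes with $J$ and commutes with $\Sigma$; hence $A_gx=P_L(gx)\in L$ is the ``creation part'' of the generator $gx$ and $C_gx=P_{\Sigma(L)}(gx)\in\Sigma(L)$ the ``annihilation part'', with $\Sigma(C_gx)=C_gl\in L$.

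Now I would unwind $\pi_L(gx)$ through the creator and annihilator of proposition \ref{p-fock-rep} and apply it to $\Omega$. The creation term is a fixed multiple of $P_L(gx)\wedge\Omega=(A_gx)\wedge\Omega$. For the annihilation term, the antiderivation identity $m\contract\exp\zeta=(m\contract\zeta)\wedge\exp\zeta$ (valid for $m\in L$ and $\zeta\in\Lambda^2 L$ because $\zeta$ has even degree), together with the defining relation of $\tau$ in proposition \ref{p-ssl-cx-bana} expressing $m\contract\tau^{-1}(W)$ through $W$ applied to $\Sigma(m)$, turns the annihilation term into a fixed multiple of $\bigl(Z_g(C_gx)\bigr)\wedge\Omega=\bigl(-A_gC_g^{-1}C_gx\bigr)\wedge\Omega=(-A_gx)\wedge\Omega$; the essential algebraic input here is precisely the relation $0=C_{g^{-1}}A_g+A_{g^{-1}}C_g$ of lemma \ref{l-g-cgag}, i.e. $Z_gC_g=-A_g$ on $\Sigma(L)$. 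Both terms are thus multiples of $(A_gx)\wedge\Omega$, and once the $\sqrt{2}$'s of the creator/annihilator of proposition \ref{p-fock-rep} and the normalising factor in $\tau$ of proposition \ref{p-ssl-cx-bana} are tracked --- these being calibrated for exactly this purpose --- the two multiples are opposite, so $\pi_L(gx)\,\Omega=0$ for all $x\in\Sigma(L)$. I expect this last step, the bookkeeping of the wedge/contraction calculus and of the several $\sqrt{2}$ factors (including the complex-linear versus complex-antilinear dependence of the annihilator on its argument when $\pi_L$ is extended from $V$ to $H$), to be the main obstacle; the rest is formal.

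Finally, for cyclicity, $\Omega\ne 0$ because its degree-zero component in $\Lambda^0 L=\CC$ is the scalar $1$ and the homogeneous summands of $\F(L)$ are mutually orthogonal (definition \ref{d-fock-spac}). The representation $\pi_L\circ\theta_g$ is irreducible: $\theta_g\in\Aut(\Cl(V))$ is onto, so $(\pi_L\circ\theta_g)(\Cl(V))=\pi_L(\Cl(V))$ has exactly the closed invariant subspaces of $\pi_L$, namely $\{0\}$ and $\F(L)$ by proposition \ref{p-fock-rep}. Hence $\overline{(\pi_L\circ\theta_g)(\Cl(V))\,\Omega}$ is a nonzero closed invariant subspace and therefore equals $\F(L)$; that is, $\Omega$ is cyclic, so it is a cyclic $L$-vacuum vector for $\pi_L\circ\theta_g$, which is the input required for proposition \ref{p-j-vac-vec-unit-equi}.
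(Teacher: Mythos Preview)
Your approach is genuinely different from the paper's. The paper does not compute anything: it observes that $Z_g$ and the quadratic exponential are exactly the objects of \citet[page~105]{PR94} in the $V_J$-picture, and then transports their conclusion to the $L$-picture through the unitary intertwiner $\phi$ of proposition~\ref{p-vj-hp-isom} as explained in note~\ref{n-fock-spac-vj-l-trans}. Since $\phi$ intertwines $\pi_{V_J}$ with $\pi_L$, a cyclic $J$-vacuum vector for $\pi_{V_J}\circ\theta_g$ is carried to a cyclic $L$-vacuum vector for $\pi_L\circ\theta_g$, and that is the whole proof.

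Your direct verification is the right shape --- the antiderivation identity $m\contract\exp\zeta=(m\contract\zeta)\wedge\exp\zeta$ together with the relation $Z_gC_g=-A_g$ on $\Sigma(L)$ is exactly what drives the cancellation --- but you should not assume the constants are ``calibrated for exactly this purpose.'' If you track them with the paper's normalisation $\tau(\zeta)=\tfrac{1}{\sqrt 2}Z_\zeta$ from proposition~\ref{p-ssl-cx-bana}, the cancellation $(C_gl)\contract\zeta=-A_gx$ requires $Z_\zeta=Z_g$, whereas $\zeta=\tau^{-1}(Z_g)$ yields $Z_\zeta=\sqrt 2\,Z_g$; so the honest execution of your sketch forces you back through the $V_J$--$L$ dictionary of note~\ref{n-fock-spac-vj-l-trans} (where several competing $\sqrt 2$'s live) to reconcile the conventions, which is precisely what the paper avoids by citing the reference and transferring via $\phi$. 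Your cyclicity argument, via irreducibility of $\pi_L$ together with surjectivity of $\theta_g$, is correct and cleaner than invoking the reference for that part.
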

\begin{proof}
Looking at \citet[page~105]{PR94}, their $Z_g$ is the same map as ours considered as an operator on $V$, so their quadratic exponential, transferred by the map induced by the $\phi$ of proposition \ref{p-vj-hp-isom} as in note \ref{n-fock-spac-vj-l-trans} to $\F(L)$, equals ours.  Since $\phi$ also is the intertwiner between their representation and ours, the fact that their quadratic exponential is a cyclic $J$-vacuum vector for their $\pi_J \circ \theta_g$, implies that our quadratic exponential is a cyclic $L$-vacuum vector for $\pi_L \circ \theta_g$.
\end{proof}

\begin{lem}\label{l-vac-vec-exp}
\index{exponential construction!vacuum vector}
\index{vacuum vector!exponential construction}
(The Continuous Exponential Construction of a Vacuum Vector).
Suppose given $L \in \Lagr_{res}$ with corresponding $J \in \US_{res}$.  For every $g \in \widetilde{V_2}$, the open neighborhood of $\ident \in \Orth_{res}$ of lemma \ref{l-map-lagr-symv}, there is a cyclic $L$-vacuum vector for $\pi_L \circ \theta_g$, chosen by a continuous function of $g$.  When $g \in \UU(V_J)$, e.g. $g = \ident$, the vector is $\Omega_L = 1$.
\end{lem}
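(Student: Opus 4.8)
The plan is to take the vacuum vector to be the quadratic exponential of the operator $Z_g$ produced by lemma \ref{l-map-lagr-symv}. Concretely, for $g \in \widetilde{V_2}$ set $Z_g = -A_g C_g^{-1} = \widetilde{\sigma}(g) \in \Sym(\Sigma(L))$, let $\tau \colon \Lambda^2 L \to \Sym(\Sigma(L))$ be the isometric isomorphism of proposition \ref{p-ssl-cx-bana}, and put $\Omega_g = \exp(\tau^{-1}(Z_g)) \in \F(L)$, where $\exp$ is the quadratic exponential of proposition \ref{p-quad-exp}. This is well defined: since $\widetilde{V_2} \subset \Orth_{res}$, each $g \in \widetilde{V_2}$ has $A_g$ Hilbert--Schmidt by proposition \ref{p-ojv-ores}, and the proof of lemma \ref{l-map-lagr-symv} shows $C_g$ is invertible with $\norm{C_g^{-1}}_J < 2$; hence $Z_g \in \Sym(\Sigma(L))$, $\tau^{-1}(Z_g) \in \Lambda^2 L \subset \F(L)$, and $\exp(\tau^{-1}(Z_g))$ converges in $\F(L)$ by proposition \ref{p-quad-exp}.

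That each $\Omega_g$ is a cyclic $L$-vacuum vector for $\pi_L \circ \theta_g$ is then exactly lemma \ref{l-vac-vec-exp-bkg}, whose hypotheses ($C_g$ invertible, $A_g$ Hilbert--Schmidt) are precisely what was just verified. When $g \in \UU(V_J)$ --- in particular when $g = \ident$, and note $\UU(V_J) \subset \widetilde{V_2}$ --- the operator $g$ commutes with $J$, so by definition \ref{d-g-cgag}, $A_g = \tfrac{1}{2}(g + JgJ) = \tfrac{1}{2}(g - g) = 0$; thus $Z_g = 0$, $\tau^{-1}(Z_g) = 0$, and $\Omega_g = \exp(0) = \Omega_L = 1$.

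It remains to prove that $g \mapsto \Omega_g$ is continuous from $\widetilde{V_2}$, with the topology of $\Orth_{res}$ from proposition \ref{p-ores-top-grp}, to $\F(L)$. By lemma \ref{l-map-lagr-symv} the map $g \mapsto Z_g$ is continuous into $\Sym(\Sigma(L))$ with its Hilbert--Schmidt norm, and $\tau^{-1}$ is a linear isometry by proposition \ref{p-ssl-cx-bana}, so $g \mapsto \zeta_g := \tau^{-1}(Z_g)$ is continuous into $\Lambda^2 L$. Moreover, on $\widetilde{V_2}$ the bounds $\norm{A_g}_2 \le \tfrac{1}{2}\norm{A_g}_J < \tfrac{1}{8}$ and $\norm{C_g^{-1}} \le \norm{C_g^{-1}}_J < 2$ (from the proof of lemma \ref{l-map-lagr-symv}) give $\norm{Z_g}_2 < \tfrac{1}{4}$, so all the $\zeta_g$ lie in a fixed ball $B_R = \{\zeta \in \Lambda^2 L : \norm{\zeta} \le R\}$. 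Hence it suffices to show $\exp$ is continuous on $B_R$. Split $\exp(\zeta) = P_N(\zeta) + R_N(\zeta)$ with $P_N(\zeta) = \sum_{k < N}\tfrac{1}{k!}\zeta^k$. Each map $\zeta \mapsto \zeta^k$ is continuous because the wedge product $\Lambda^{j}L \times \Lambda^{k}L \to \Lambda^{j+k}L$ is a bounded bilinear map on the Fock space (the wedge of elements of norms $a$ and $b$ has norm at most $\binom{j+k}{j}^{1/2}ab$, as one checks on decomposables in an orthonormal basis), so $P_N$ is continuous. For the tail, applying proposition \ref{p-quad-exp} to $t\zeta$ for every $t > 0$, and using that the $\zeta^k$ lie in the pairwise orthogonal summands $\Lambda^{2k}L$ of $\F(L)$ (definition \ref{d-fock-spac}), gives $\tfrac{t^{2k}}{(k!)^2}\norm{\zeta^k}^2 \le \norm{\exp(t\zeta)}^2 \le \exp(t^2\norm{\zeta}^2)$; optimizing over $t$ yields $\norm{\zeta^k}^2 \le (k!)^2(eR^2/k)^k$ on $B_R$, so $\norm{R_N(\zeta)}^2 \le \sum_{k \ge N}(eR^2/k)^k \to 0$ uniformly on $B_R$ as $N \to \infty$. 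Combining, $\exp$ is continuous on $B_R$, and therefore $g \mapsto \Omega_g = \exp(\zeta_g)$ is continuous, completing the proof.

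The step I expect to be the real work is this last one: establishing the boundedness of the wedge product on the completed Fock space and the uniform control of the exponential tail over a ball; everything else is assembling results already in hand.
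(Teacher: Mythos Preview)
Your proof is correct and follows essentially the same construction as the paper: define the vacuum vector as $\exp(\tau^{-1}(\widetilde{\sigma}(g)))$, invoke lemma~\ref{l-vac-vec-exp-bkg} for the vacuum property, observe $Z_g = 0$ when $g \in \UU(V_J)$, and check continuity of the composite $\exp \circ \tau^{-1} \circ \widetilde{\sigma}$.

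The one place you diverge is in justifying continuity of $\exp \colon \Lambda^2 L \to \F(L)$. The paper asserts this in one line, saying it ``is continuous as a consequence of the inequality'' $\norm{\exp(\zeta)}^2 \le \exp(\norm{\zeta}^2)$ of proposition~\ref{p-quad-exp}, without further detail. You supply the missing argument: orthogonality of the graded pieces, continuity of the wedge product to handle partial sums, and the optimization over $t$ in $\norm{\exp(t\zeta)}^2 \le \exp(t^2\norm{\zeta}^2)$ to get uniform tail decay on bounded balls. This is exactly the work hidden behind the paper's claim, and you were right to flag it as the substantive step. Your uniform bound $\norm{Z_g}_2 < \tfrac{1}{4}$ on all of $\widetilde{V_2}$ is correct but not strictly needed, since continuity of $\exp$ on each bounded ball already suffices for continuity of the composite.
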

\begin{proof}
We use $\widetilde{V_2} = \B_{\delta} (\ident) \UU(V_J)$ and the map $\widetilde{\sigma}$ of lemma \ref{l-map-lagr-symv}, the maps $\tau$ and $\exp$ of propositions \ref{p-ssl-cx-bana} and \ref{p-quad-exp}.

We want to show that the map $\exp \circ \tau^{-1} \circ \widetilde{\sigma} \colon \widetilde{V_2} \rightarrow \F(L)$ is continuous, that for every $g \in \widetilde{V_2}$, $\exp (\tau^{-1}(\widetilde{\sigma}(g)))$ is a cyclic $L$-vacuum vector for $\pi_L \circ \theta_{g}$, and that this vector is $\Omega_L = 1$ for $g \in \UU(V_J)$.

Since $\widetilde{\sigma}(g) = Z_g$ is a continuous function of $g$, the isometric isomorphism of complex Banach spaces $\tau \colon \Lambda^2 L \rightarrow \Sym(\Sigma(L))$ of proposition \ref{p-ssl-cx-bana} gives $\tau^{-1}(\widetilde{\sigma}(g)) = \tau^{-1}(Z_g)$ as a continuous function of $g$.  The exponential map $\exp \colon \Lambda^2 L \rightarrow \F(L)$ of proposition \ref{p-quad-exp} is continuous as a consequence of the inequality in that proposition.  Our continuity follows.

Lemma \ref{l-vac-vec-exp-bkg} shows that $\exp(\tau^{-1}(Z_g))$ is a cyclic $L$-vacuum vector for $\pi_L \circ \theta_g$.  We have $g \in \UU(V_J) \Rightarrow Z_g = 0 \Rightarrow \tau^{-1}(Z_g) = 0 \Rightarrow \exp(\tau^{-1}(Z_g)) = 1$.
\end{proof}
Though not used in this thesis, it's possible to do the analogous thing for every $K$ in the open neighborhood $V_2$ of $L \in \Lagr_{res}$ and every $g \in \Orth_{res}$ such that $K = g L$, by using the neighborhood $V_2$ of $L \in \Lagr_{res}$ and the map $\sigma$ of lemma \ref{l-map-lagr-symv}.

The following lemma is important for future use, when we will be given a Lagrangian subspace instead of an orthogonal transformation, as well as to construct the projective representation.
\begin{lem}\label{l-loc-impl}
\index{implementer!continuous choice near idOres(V)@continuous choice near $\ident_{\Orth_{res}}$}
\index{implementer!continuous choice near chosen L@continuous choice near chosen $L$}
(Given $L \in \Lagr_{res}$, there is a Continuous Local Choice of Implementer).
Suppose given $L \in \Lagr_{res}$ with corresponding $J \in \US_{res}$.  There is an open neighborhood $\widetilde{V_2} \subset \Orth_{res}$ of $\ident$, and a function $\widetilde{\eta} \colon \widetilde{V_2} \rightarrow \UU(\F(L))$, continuous with respect to the strong operator topology on $\UU(\F(L))$, such that $U_g = \widetilde{\eta}(g)$ is a unitary implementer of $\theta_{g}$ in $\pi_L$, with $\widetilde{\eta}(\ident) = \ident$.

Further, there is an open neighborhood $V$ of $L$ and a continuous function $\xi \colon V \rightarrow \widetilde{V_2}$, equal to $\ident$ for $K = L$.  Then $\widetilde{\eta} \circ \xi$ gives $U_g$ depending on $K$, with properties as before.
\end{lem}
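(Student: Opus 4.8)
The plan is to produce $\widetilde{\eta}$ by converting the continuously chosen vacuum vectors of lemma \ref{l-vac-vec-exp} into implementers via proposition \ref{p-j-vac-vec-unit-equi}, applied to the ${}^{*}$-representation $\pi = \pi_L \circ \theta_g$ on $F = \F(L)$. The first observation is that for this choice of $\pi$ and $F$ the commuting square of definition \ref{d-impl-rep} (a unitary implementer of $\theta_g$ in $\pi_L$) is literally the commuting square of definition \ref{d-equi-rep} with $K = L$; so "unitary implementer of $\theta_g$ in $\pi_L$" and "unitary intertwiner $\F(L) \to \F(L)$ between $\pi_L$ and $\pi_L \circ \theta_g$" mean the same thing, and it suffices to build the latter.

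For $g$ in the neighborhood $\widetilde{V_2}$ of $\ident \in \Orth_{res}$ furnished by lemma \ref{l-map-lagr-symv}, I would set $\Omega_g = \exp(\tau^{-1}(\widetilde{\sigma}(g)))$. By lemma \ref{l-vac-vec-exp} this is a cyclic (hence nonzero) $L$-vacuum vector for $\pi_L \circ \theta_g$, it depends continuously on $g$, and it equals $\Omega_L = 1$ whenever $g \in \UU(V_J)$. Since proposition \ref{p-j-vac-vec-unit-equi} requires a \emph{unit} vacuum vector, I would replace $\Omega_g$ by $\Omega_g / \norm{\Omega_g}$; this is still a cyclic $L$-vacuum vector (the vacuum condition is linear and cyclicity is scale invariant), the normalization is continuous in $g$ because $g \mapsto \norm{\Omega_g}$ is continuous and bounded below by $1$, and it changes nothing on $\UU(V_J)$. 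Then $\widetilde{\eta}(g) = U_g$ is defined to be the unique unitary intertwiner $\F(L) \to \F(L)$ with $U_g(\Omega_L) = \Omega_g / \norm{\Omega_g}$, which exists by proposition \ref{p-j-vac-vec-unit-equi}. By the previous paragraph $U_g$ is a unitary implementer of $\theta_g$ in $\pi_L$; and for $g = \ident \in \UU(V_J)$ the normalized vacuum vector is $\Omega_L$ itself, so uniqueness forces $\widetilde{\eta}(\ident) = \ident$ (compare note \ref{n-j-vac-vec-unit-equi}).

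Continuity of $\widetilde{\eta}$ into $\UU(\F(L))$ with the strong operator topology will come from the continuity clause of proposition \ref{p-j-vac-vec-unit-equi}, taking parameter space $W = \widetilde{V_2}$, which is first countable because it is a subset of the Banach algebra $\B_{res, J}$ (proposition \ref{p-ores-top-grp}). Given $g_i \to g$ in $\widetilde{V_2}$, the normalized vacuum vectors converge by the preceding paragraph; and for each fixed $a \in \Cl(V)$, convergence in $\Orth_{res}$ forces operator-norm convergence $g_i \to g$ (the operator norm is dominated by $\norm{}_J$), so $\theta_{g_i}(a) \to \theta_g(a)$ in the $C^{*}$-norm of $\Cl(V)$ by proposition \ref{p-bogo-map-cont}, whence $\pi_L(\theta_{g_i}(a)) \to \pi_L(\theta_g(a))$ in the operator norm of $\B(\F(L))$ since $\pi_L$ is norm-decreasing (lemma \ref{l-c-star-alg-mor-cont}). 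These are exactly the hypotheses of the continuity clause, which then yields $U_{g_i}(x) \to U_g(x)$ for every $x \in \F(L)$; as $W$ is metrizable this gives the stated continuity.

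For the second paragraph I would take the local section $\xi$ of lemma \ref{l-uv-ov-sect-res}, a continuous map from a neighborhood of $J$ in $\US_{res}$ to $\Orth_{res}$ with $\xi(J_1)\,J\,\xi(J_1)^{-1} = J_1$ and $\xi(J) = \ident$, transport it along the homeomorphism $\Lagr_{res} \cong \US_{res}$ of definition \ref{d-ores-uvj-homo} to obtain a continuous map defined near $L$, and shrink its domain to $V = \xi^{-1}(\widetilde{V_2})$ so the image lies in $\widetilde{V_2}$. Then $\widetilde{\eta} \circ \xi$ is continuous, equals $\ident$ at $K = L$ (since $\xi(L) = \ident$ and $\widetilde{\eta}(\ident) = \ident$), and for $K \in V$ with $K = g L$ (lemma \ref{l-ov-acts-lagr}) it takes the value $U_g$, a unitary implementer of $\theta_g$ in $\pi_L$, as required. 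The main obstacle I anticipate is the topology bookkeeping: making sure the $\Orth_{res}$-topology on $\widetilde{V_2}$ (coming from $\norm{}_J$, not the operator norm) is strong enough to feed the operator-norm hypotheses that propositions \ref{p-bogo-map-cont} and \ref{p-j-vac-vec-unit-equi} demand, together with the small but essential point that the quadratic-exponential vacuum vectors must be normalized before proposition \ref{p-j-vac-vec-unit-equi} can be invoked.
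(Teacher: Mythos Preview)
Your proposal is correct and follows essentially the same route as the paper: take the neighborhood $\widetilde{V_2}$ from lemma~\ref{l-map-lagr-symv}, feed the continuously chosen vacuum vectors of lemma~\ref{l-vac-vec-exp} into proposition~\ref{p-j-vac-vec-unit-equi} (applied with $F=\F(L)$ and $\pi=\pi_L\circ\theta_g$), verify the continuity hypothesis of that proposition via proposition~\ref{p-bogo-map-cont} and the fact that the $\Orth_{res}$-topology dominates the operator norm, and then compose with the section $\xi$ of lemma~\ref{l-uv-ov-sect-res} for the second paragraph. You are in fact slightly more careful than the paper on two points: you explicitly normalize the quadratic-exponential vacuum vector before invoking proposition~\ref{p-j-vac-vec-unit-equi} (which does require a unit vector), and your choice $V=\xi^{-1}(\widetilde{V_2})$ makes it transparent that $\xi$ lands in the domain of $\widetilde{\eta}$, whereas the paper's $V=V_2\cap\phi_{\US_{res},\Lagr_{res}}(V_1)$ relies implicitly on continuity of $\xi$ at $L$ for the same conclusion.
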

\begin{proof}
Let $\widetilde{V_2}$ be the neighborhood from lemma \ref{l-map-lagr-symv}.  Then using lemma \ref{l-vac-vec-exp}, there is a cyclic $L$-vacuum vector, depending continuously on $g \in \widetilde{V_2}$, for $\pi_L \circ \theta_{g}$.  In addition, $\pi_L \circ \theta_{g}$ depends continuously on $g$ in the strong operator topology on $\B(\Cl(V), \B(\F(L)))$:  since the topology on $\Orth_{res}$ is even stronger than the operator norm topology, we can factor its action through the action of $\Orth(V)$ with operator norm topology on $\Cl(V)$, to conclude by proposition \ref{p-bogo-map-cont} that for any fixed $a \in \Cl(V)$, $\theta_{g} (a)$, and hence since $\pi_L$ is a ${}^{*}$-morphism, $\pi_L \circ \theta_{g} (a)$, is a continuous function of $g$.  Then we use proposition \ref{p-j-vac-vec-unit-equi} to obtain a unitary implementer of $\theta_{g}$ in $\pi_L$, $U_g \colon \F(L) \rightarrow \F(L)$; i.e., $\pi_L \circ \theta_{g} (a) = U_g \pi_L (a) U_g^{*}$ for every $a \in \Cl(V)$; and $U_g$ depends continuously on $g$ in the strong operator topology for $\UU(\F(L))$.  By note \ref{n-j-vac-vec-unit-equi}, since if $g = \ident$, the vacuum vector is $\Omega_L = 1$ and $\theta_g = \ident$, $U_g = \ident$. Define $\widetilde{\eta}(g) = U_g$.

Let $V = V_2 \cap \phi_{\US_{res}, \Lagr_{res}}(V_1) \subset \Lagr_{res}$, where $V_2 = \phi_{\Orth_{res}/\UU(V_J), \Lagr_{res}} \circ \pi_{\Orth_{res}/\UU(V_J)} (\widetilde{V_2})$ is the open neighborhood of $L$ of lemma \ref{l-map-lagr-symv}, and $V_1$ is the open neighborhood of $J$ of lemma \ref{l-uv-ov-sect-res}.  Let $\xi$ be the restriction of the $\xi$ of lemma \ref{l-uv-ov-sect-res}.
\end{proof}

\begin{prop}\label{p-pu-rep-ores}
\index{representation!projective!Ores@$\Orth_{res}$}
\index{projective representation!Ores@$\Orth_{res}$}
\index{Ores@$\Orth_{res}$!projective representation}
(A Projective Representation of $\Orth_{res}$ on $\F(L)$).
Suppose given $L \in \Lagr_{res}$.  There is a projective unitary representation $\sigma \colon \Orth_{res} \rightarrow \PU(\F(L))$; for $g \in \Orth_{res}$, $\sigma (g) = \pi_{\PU(\F(L)) (U_g)}$, with $U_g$ an implementer in $\pi_L$ of $\theta_g$.
\end{prop}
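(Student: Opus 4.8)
The plan is to define $\sigma$ directly from the implementers and then verify the three requirements of definition \ref{d-prj-unit-rep}: that $\sigma$ is well defined as a map into $\PU(\F(L))$, that it is a group homomorphism, and that it is continuous for the strong operator topology.

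First I would settle well-definedness. For each $g \in \Orth_{res}$, theorem \ref{t-impl-soln} guarantees that $\theta_g$ is unitarily implemented in $\pi_L$, so the set of implementers is nonempty, and by corollary \ref{co-set-impl-u1-tors} it is a $\UU(1)$ torsor. Hence any two implementers $U_g$, $U_g'$ of $\theta_g$ in $\pi_L$ satisfy $U_g' = z U_g$ for some $z \in \UU(1)$, so $\pi_{\PU(\F(L))}(U_g) = \pi_{\PU(\F(L))}(U_g')$; therefore $\sigma(g) := \pi_{\PU(\F(L))}(U_g)$ is independent of the choice of implementer. For the homomorphism property, given $g_1, g_2 \in \Orth_{res}$, lemma \ref{l-impl-homo} shows that $U_{g_1} U_{g_2}$ is an implementer of $\theta_{g_1} \circ \theta_{g_2} = \theta_{g_1 g_2}$ in $\pi_L$, hence $U_{g_1} U_{g_2} = z U_{g_1 g_2}$ for some $z \in \UU(1)$; applying the quotient homomorphism $\pi_{\PU(\F(L))}$ gives $\sigma(g_1)\sigma(g_2) = \sigma(g_1 g_2)$. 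In particular, by corollary \ref{co-uvj-cano-impl} one may take $U_{\ident} = \ident$, so $\sigma(\ident) = \ident$.

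The substantive point is continuity, and here I would reduce to continuity at the identity. The space $\PU(\F(L)) = \UU(\F(L))/\UU(1)$ is, by definition \ref{d-prj-unit-rep}, the quotient of the unitary group (with its strong operator topology, on which multiplication and inversion are continuous) by the closed normal subgroup $\UU(1)$, hence is itself a topological group, and $\pi_{\PU(\F(L))}$ is continuous. Consequently it suffices to show $\sigma$ is continuous at $\ident \in \Orth_{res}$: for an arbitrary $g_0 \in \Orth_{res}$ and $g \to g_0$, proposition \ref{p-ores-top-grp} gives $g_0^{-1} g \to \ident$ in $\Orth_{res}$, so $\sigma(g) = \sigma(g_0)\,\sigma(g_0^{-1} g) \to \sigma(g_0)\cdot \ident = \sigma(g_0)$ using the homomorphism property and joint continuity of multiplication in $\PU(\F(L))$. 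Continuity at $\ident$ is supplied by lemma \ref{l-loc-impl}: there is an open neighborhood $\widetilde{V_2} \subset \Orth_{res}$ of $\ident$ and a map $\widetilde{\eta}\colon \widetilde{V_2} \to \UU(\F(L))$, continuous for the strong operator topology, with $\widetilde{\eta}(g)$ an implementer of $\theta_g$ in $\pi_L$ and $\widetilde{\eta}(\ident) = \ident$. On $\widetilde{V_2}$ we then have $\sigma = \pi_{\PU(\F(L))} \circ \widetilde{\eta}$, a composition of continuous maps, so $\sigma$ is continuous at $\ident$, and hence everywhere.

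I expect the only real obstacle to be the one already packaged in lemma \ref{l-loc-impl} (which rests on the quadratic-exponential vacuum-vector construction of lemmas \ref{l-vac-vec-exp-bkg} and \ref{l-vac-vec-exp} and the continuity statement in proposition \ref{p-j-vac-vec-unit-equi}); given that lemma, what remains is the routine "homomorphism continuous at the identity is continuous" argument. The one subtlety worth stating carefully in the writeup is that $\PU(\F(L))$ with the strong operator topology really is a topological group, so that both the quotient map and the translation reduction used above are legitimate; this is why the continuity hypothesis in definition \ref{d-prj-unit-rep} is imposed only at the level of the quotient and why the local section $\widetilde{\eta}$, rather than a global one, is enough.
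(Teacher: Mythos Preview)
Your proposal is correct and follows essentially the same route as the paper: well-definedness via theorem \ref{t-impl-soln} and corollary \ref{co-set-impl-u1-tors}, the homomorphism property via lemma \ref{l-impl-homo}, and continuity via the local lift $\widetilde{\eta}$ of lemma \ref{l-loc-impl}. The only cosmetic difference is that you reduce continuity to the identity using the topological group structure of $\PU(\F(L))$, whereas the paper checks continuity at an arbitrary $g$ directly by translating the local lift to $\widehat{\sigma}(g') = U_g\,\widetilde{\eta}(g^{-1}g')$; these are two phrasings of the same argument.
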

\begin{proof}
Given $g \in \Orth_{res}$, by theorem \ref{t-impl-soln} let $U_g$ be an implementer in $\pi_L$ of $\theta_g$; by corollary \ref{co-set-impl-u1-tors} $U_g$ is well-defined up to a $\UU(1)$ factor.  By lemma \ref{l-impl-homo} the correspondence $g \mapsto U_g$ is a homomorphism up to $\UU(1)$ factors, from $\Orth_{res} \subset \Orth(V)$ to $\UU(\F(L))$, and if the induced map $\sigma \colon g \mapsto U_g \UU(1)$ is continuous, it will be a projective unitary representation of $\Orth(V)$ (see definition \ref{d-prj-unit-rep}).

$\PU(\F(L))$ has the quotient (by $\UU(1)$) topology of the strong operator topology on $\UU(\F(L))$.  We will show continuity at any $g \in \Orth_{res}$.  Choose some implementer $U_g$.  For $g' \in g \widetilde{V_2}$ of lemma \ref{l-loc-impl}, $\widetilde{\eta} (g^{-1} g')$ is an implementer of $g^{-1} g'$, and is a continuous function of $g'$, with respect to the strong operator topology on $\UU(\F(L))$.  Define a continuous function $\widehat{\sigma} \colon g \widetilde{V_2} \rightarrow \UU(\F(L))$ by $\widehat{\sigma} (g') = U_g \widetilde{\eta} (g^{-1} g')$, which is an implementer of $g'$, and equals $U_g$ when $g' = g$.

Suppose given an open neighborhood $W$ of $\sigma (g) = U_g \UU_1$, and let $V = \pi_{\PU(\F(L))}^{-1} (W)  = U_g \UU_1$ with another meaning.  Then for $g' \in \widehat{\sigma}^{-1} (V)$, an open neighborhood of $g$, any implementer $U_{g'}$ of $g'$ is an element of $\widehat{\sigma}(g') \UU(1) \subset V$, and hence $\sigma (g') = U_{g'} \UU(1) = \widehat{\sigma}(g') \UU(1) \in W$.  That is, $\sigma$ is continuous.
\end{proof}

\section{$L$SO($n$) Continuous Inclusion into Restricted Orthogonal Group}\label{s-lson-cont-incl-ores}

\begin{prop}\label{p-lson-in-ores}
\index{LSOn Ores Continuous@$L\SO(n) \hookrightarrow \Orth_{res}$ Continuous}
($L\SO(n) \rightarrow \Orth_{res}$ is Continuous).
The inclusion $L\SO(n) \hookrightarrow \Orth_{res}$ is continuous; the \Frechet topology is finer than the subspace topology of its image in $\Orth_{res}$.
\end{prop}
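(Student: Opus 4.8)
The plan is to reduce the statement to a claim about metric spaces and then settle that claim by a Fourier-series estimate. By proposition \ref{p-ores-top-grp} fix once and for all a $J \in \US_{res}$ arising from a Lagrangian subspace $L \in \Lagr_{res}$ of the standard form of definition \ref{d-lagr-res}; then $\Orth_{res}$ carries the norm $\norm{A}_J = \norm{A} + \norm{[J,A]}_2$, and its topology does not depend on this choice. Both $L\SO(n)$ (a \Frechet manifold, hence metrizable by proposition \ref{p-smth-free-loop-spac-frec-mfld}, $\SO(n)$ being orientable) and $\Orth_{res}$ (a topological subspace of the Banach algebra $\B_{res,J}$) are metrizable, so it suffices to prove sequential continuity: if $g_i \to g_0$ in the \Frechet topology of $L\SO(n)$, then $\norm{g_i - g_0}_J \to 0$. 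The operator-norm summand is already controlled, since lemma \ref{l-actn-lson-clrn-cont} gives $\norm{g_i - g_0} \le \norm{g_i - g_0}_0$ and $\norm{g_i - g_0}_0 \to 0$ by lemma \ref{l-lson-two-top}; so the real content is to show $\norm{[J, g_i - g_0]}_2 \to 0$. (This also shows ``the \Frechet topology is finer than the subspace topology of the image'' is just a restatement of continuity.)

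For that I would work in the Fourier basis $\{e^{2\pi i k t}\epsilon_j : k \in \ZZ,\ 1 \le j \le n\}$ of $H = L^2(S^1,\CC^n)$, with $\{\epsilon_j\}$ the standard basis of $\RR^n$. Writing $h = g_i - g_0$, which acts on $H$ as pointwise multiplication by the smooth loop $t \mapsto h(t) \in \B(\RR^n)$, the matrix of $h$ in this basis has the block $\widehat h(k'-k)$ (the $(k'-k)$-th Fourier coefficient) sending the mode-$k$ subspace to the mode-$k'$ subspace. For the standard $J$, $J$ is multiplication by $i$ on positive Fourier modes, by $-i$ on negative Fourier modes, and by a fixed $J_0 \in \B(\CC^n)$ with $J_0^2 = -1$ on the zero mode (the only mode where $L$ contributes merely $L_{finite}$). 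Hence the block of $[J,h]$ from mode $k$ to mode $k'$ is $J_{k'}\widehat h(k'-k) - \widehat h(k'-k)J_k$, which vanishes unless $k$ and $k'$ lie on opposite sides of $0$ or one of them equals $0$. A direct count shows that for each fixed $m \in \ZZ$ at most $|m|+2$ pairs $(k',k)$ with $k'-k = m$ can give a nonzero block, and each such block has Hilbert--Schmidt norm at most $2\,\norm{\widehat h(m)}_{HS}$ (using that $J_{k'},J_k$ are unitary together with lemma \ref{l-hsop}). Summing blocks,
\[
\norm{[J,h]}_2^2 \;\le\; 4\sum_{m\in\ZZ}(|m|+2)\,\norm{\widehat h(m)}_{HS}^2 .
\]

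The final step bounds the right-hand side by \Frechet seminorms. By Parseval, $\sum_m \norm{\widehat h(m)}_{HS}^2 = \int_0^1 \norm{h(t)}_{HS}^2\,dt \le \norm{h}_0^2$, and since $\widehat{h'}(m) = 2\pi i m\,\widehat h(m)$, Cauchy--Schwarz gives $\sum_m |m|\,\norm{\widehat h(m)}_{HS}^2 \le \frac{1}{2\pi}\norm{h'}_{L^2}\norm{h}_{L^2} \le \frac{1}{2\pi}\norm{h}_0\norm{h}_1$; all norms on the finite-dimensional space $\B(\RR^n)$ being equivalent, the constants are harmless. Combining, $\norm{[J,g_i-g_0]}_2^2 \le \frac{2}{\pi}\norm{g_i-g_0}_0\norm{g_i-g_0}_1 + 8\norm{g_i-g_0}_0^2$. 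By lemma \ref{l-lson-two-top}, \Frechet convergence $g_i \to g_0$ forces $\norm{g_i-g_0}_0 \to 0$ and $\norm{g_i-g_0}_1 \to 0$, so the bound tends to $0$; with the operator-norm estimate this yields $\norm{g_i-g_0}_J \to 0$. The same estimate with $h = g$ shows $\norm{[J,g]}_2 < \infty$ for every $g \in L\SO(n)$, re-confirming $L\SO(n) \subseteq \Orth_{res}$ (compare proposition \ref{p-ojv-ores}).

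The only real obstacle is step two: recognizing that $[J,h]$ is supported, in the Fourier block decomposition, on a band of width $O(|m|)$ about the anti-diagonal, so that $\norm{[J,h]}_2^2$ picks up exactly one extra power of $|m|$ over $\norm{h}_{L^2}^2$ — precisely the power that a single derivative of $h$ absorbs. Everything after that is routine Parseval and Cauchy--Schwarz bookkeeping, and everything before it is the metrizability reduction already implicit in the sequential phrasing of lemmas \ref{l-actn-lson-clrn-cont} and \ref{l-lson-two-top}.
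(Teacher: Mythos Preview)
Your proof is correct and follows essentially the same approach as the paper: both identify the Fourier block structure of $[J,h]$, observe that the nonvanishing blocks for a given Fourier shift $m$ number $O(|m|)$, and conclude that $\norm{[J,h]}_2^2$ is bounded by a half-derivative-type sum $\sum_m (|m|+c)\norm{\widehat h(m)}^2$, which is then controlled by the seminorms $\norm{\cdot}_0$ and $\norm{\cdot}_1$ via lemma \ref{l-lson-two-top}. The only cosmetic difference is in the last step: the paper uses the crude $|m| \le m^2$ to bound $\sum_m |m|\norm{\widehat h(m)}^2$ directly by $\norm{h}_1^2$, whereas you use Cauchy--Schwarz to get $\norm{h}_0\norm{h}_1$; both reach the same conclusion.
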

\begin{proof}
We have from lemma \ref{l-actn-lson-clrn} the inclusion of $L\SO(n)$ into $\Orth(V)$.  The inclusion into $\Orth_{res}$ is proved briefly in \citet[pages~82--84]{PS86} by considering a particular $L \in \Lagr_{res}$.  We prove it in more detail, following the plan of the first proof of their proposition 6.3.1, using our proposition \ref{p-ojv-ores}, and go on to show that the inclusion is continuous.

To calculate, we use the unitary structure $J \in \US_{res}$ corresponding to a chosen Lagrangian subspace $L \in \Lagr_{res}$ of the form specified in our definition \ref{d-lagr-res}.  Since neither $L\SO(n)$ nor $\Orth_{res}$ (see proposition \ref{p-ores-dep-pol-clas}) nor the inclusion map depend on the choice of $L$, the result obtained using $L$ is independent of that choice.

To see that $\sigma \in L\SO(n)$ is in $\Orth_{res}$, identifying $S^1$ here with $[0, 2 \pi]$ with endpoints identified so that the exponentials in Fourier series have the simplest form, $v \in H$ may be written
\[
v(t) = \sum_{j \in \ZZ} v_j e^{i j t} = \sum_{j \in \ZZ} \sum_{b = 1, 2} v_{j b} e^{i j t}
\]
with $v_j \in \CC^n$, $v \in V \Leftrightarrow \overline{v_j} = v_{-j}$ for all $j$, and $v_j = v_{j 1} + v_{j 2}$ with respect to the decomposition $\CC^n = L_{finite} \dirsum \overline{L_{finite}}$, $b = 1$ corresponding to the $L_{finite}$ summand.  Consider any
\[
\sigma(t) = \sum_{k \in \ZZ} \sigma_k e^{i k t}
\]
where the $\sigma_k$ are $n \cross n$ complex matrices, given in block form as
\begin{align}
\sigma_k =
\left[
\sigma_{k a b}
\right] =
\left[
\begin{matrix}
 \sigma_{k 1 1} & \sigma_{k 1 2} \\
 \sigma_{k 2 1} & \sigma_{k 2 2}
\end{matrix}
\right], \notag
\end{align}
the $\sigma_{k a b}$ being $\frac{n}{2} \cross \frac{n}{2}$ complex matrices, $a, b = 1$ corresponding to $L_{finite}$ and $a, b = 2$ to $\overline{L_{finite}}$.  Then $\sigma$ acts on $v$ as
\[
\sigma (v) (t) = \sum_{k \in \ZZ} \sum_{j \in \ZZ} \sigma_k v_j e^{i (k + j) t} = \sum_{k \in \ZZ} \sum_{j \in \ZZ} \sum_{a = 1, 2} \sum_{b = 1, 2} \sigma_{k a b} v_{j b} e^{i (k + j) t}.
\]
$\sigma \in L\GL(n) \Leftrightarrow \overline{\sigma_k} = \sigma_{-k}$ for all $k$.  To compute $\norm{[J, \sigma]}_2$, to avoid writing needless exponentials, use $L^2 (S^1, \CC^n) \cong l^2 (\CC^n)$ to represent $v$ as a $\ZZ$ dimensional column vector with entries in $\CC^n$.  Represent $\sigma$ as a $\ZZ \cross \ZZ$ matrix $M_{pq}$ whose entries are $n \cross n$ complex matrices, with $M_{p q} = \sigma_{p - q}$, since $p = k + j$ and $q = j$, whence $k = p - q$.  We will refer, e.g., to the $p$-th component of the column vector representing $\sigma v$, meaning the coefficient of $e^{i p t}$ in the Fourier series for $\sigma v$, as $(\sigma v)_p$, rather than using $W$ to name the column vector for $v$ and writing $(M W)_p$.  Represent $J$ also as a $\ZZ \cross \ZZ$ matrix $N = (J_{p q})$ whose entries are $n \cross n$ complex matrices in block form as follows:  $J_{p q} = 0$ for $p \ne q$, and
\begin{align}
J_{p p} =
\left[
\begin{matrix}
 i & 0 \\
 0 & i
\end{matrix}
\right] p > 0,
\left[
\begin{matrix}
 i &  0 \\
 0 & -i
\end{matrix}
\right] p = 0, \text{ and }
\left[
\begin{matrix}
 -i &  0 \\
  0 & -i
\end{matrix}
\right] p < 0. \notag
\end{align}
In the case of $J$ there are no exponentials implicitly tagging along (though it operates on them), as with $\sigma$.  Then
\begin{align}
(J \sigma - \sigma J)_{p r} &= \sum_q J_{p q} \sigma_{q - r} - \sum_q \sigma_{p - q} J_{q r} \notag \\
 &= J_{p p} \sigma_{p - r} - \sigma_{p - r} J_{r r}, \notag
\end{align}
which is $0$ for $p r > 0$, $2 i \sign(p) \sigma_{p - r}$ for $p r < 0$, $i (J_{0 0} - \sign(r) \ident) \sigma_{-r}$ for $p = 0, r \ne 0$, $i \sigma_{p} (\sign(p) \ident - J_{0 0})$ for $p \ne 0, r = 0$, and $J_{0 0} \sigma_0 - \sigma_0 J_{0 0}$ for $p = r = 0$.

\begin{align}
J_{0 0} - \sign(r) \ident &=
\left[
\begin{matrix}
 0 &   0 \\
 0 & -2i
\end{matrix}
\right] r > 0,
\left[
\begin{matrix}
 2i & 0 \\
  0 & 0
\end{matrix}
\right] r < 0, \notag \\
\sign(p) \ident - J_{0 0} &=
\left[
\begin{matrix}
 2i & 0 \\
  0 & 0
\end{matrix}
\right] p > 0,
\left[
\begin{matrix}
 0 &   0 \\
 0 & -2i
\end{matrix}
\right] p < 0. \notag
\end{align}
To show that $\norm{[J, \sigma]}_2$ is finite, we can use the fact that all norms on a finite-dimensional vector space ($\B(\CC^n)$ here) are equivalent; in particular, $\norm{B}_2 \le c \norm{B}$, for some positive constant $c$ and any operator $B$ on $\CC^n$.  Let $e_j$ be the standard basis of $\CC^n$.  Let $A \in \B(H)$ be represented as a $\ZZ \cross \ZZ$ matrix $(A_{p q})$ of $n \cross n$ matrices.  Then $\norm{A}_2^2 = \sum_{p, q \in \ZZ} \sum_j \norm{A_{p q} e_j}^2 = \sum_{p, q \in \ZZ} \norm{A_{p q}}_2^2 \le c \sum_{p, q \in \ZZ} \norm{A_{p q}}^2$.  Applying this to the results of the computations above,
\begin{align}
\norm{[J, \sigma]}_2^2 &\le 2 \sum_{p r < 0} \norm{\sigma_{p - r}}^2 + 2 \sum_{p = 0, r \ne 0} \norm{\sigma_{-r}}^2 + 2 \sum_{p \ne 0, r = 0} \norm{\sigma_p}^2 + 2 \norm{\sigma_0}^2 \notag \\
 &= 2 \sum_{(q + r)(r) < 0} \norm{\sigma_{q}}^2 + 4 \sum_{p \ne 0} \norm{\sigma_p}^2 + 2 \norm{\sigma_0}^2 \notag \\
 &= 2 \sum_{\abs{q} > 0} (\abs{q} - 1) \norm{\sigma_{q}}^2 + 4 \sum_{\abs{q} > 0} \norm{\sigma_q}^2 + 2 \norm{\sigma_0}^2 \notag \\
 &= 2 \sum_{q \in \ZZ} (\abs{q} + 1) \norm{\sigma_{q}}^2 \notag \\
 &\le 2 \sum_{q \in \ZZ} \norm{q \sigma_{q}}^2 + 2 \sum_{q \in \ZZ} \norm{\sigma_{q}}^2, \notag
\end{align}
which is finite because since $\sigma$ is smooth, the Fourier series for its derivative and for $\sigma$ itself converge in the $L^2$ norm, by which we mean to treat $L^2(S^1, \B(\CC^n))$ as $L^2(S^1, \CC^{n^2})$, and make use again of equivalence of norms on finite-dimensional spaces.

Thus we have shown the inclusion of $L\SO(n)$ into $\Orth_{res}$.

Since for functions on a compact set, $\norm{}_{L_2} \le \norm{}_{L^{\infty}}$, and $\abs{q} \le \abs{q}^2$ for $q \in \ZZ$, it follows that $\norm{[J, \sigma]}_2^2$ is bounded by $2 (\norm{\sigma}_0^2 + \norm{\sigma}_1^2)$, referring to the $0$th and $1$st seminorms of the \Frechet space $L\B(\RR^n)$; and the first term of $\norm{\sigma}_J = \norm{\sigma} + \norm{[J, \sigma]}_2$ is bounded by $\norm{\sigma}_0$.  Thus by lemma \ref{l-lson-two-top} the inclusion $L\SO(n) \rightarrow \Orth_{res}$ is continuous.

Let $M = \max_{t \in S^1} \norm{\sigma(t)}$, $t_0$ some $t$ at which the maximum is attained, $w_k \in \RR^n$ a sequence such that $\norm{w_k} = 1$ for each $k$ and $\sigma(t_0)(w_k) \rightarrow M$, $d_k \in L\RR$ a sequence such that $d_k(t) \ge 0$ for all $k$ and $t$, $d_k(t) = 0$ for all $t \in S^1 \setminus \B_{1/(k+1)} (t_0)$, and $\norm{d_k}_{L^2} = 1$, then defining $v_k = d_k w_k \in L\RR^n$ and using the inner product of definition \ref{d-lrn-preh},
\begin{align}
\int_{S^1} \norm{\sigma(t) v_k(t)}^2 dt &\rightarrow \sup_{\int_{S^1} \norm{v(t)}^2 dt = 1} \int_{S^1} \norm{\sigma(t) v(t)}^2 dt = \norm{\sigma}, \notag
\end{align}
so it can be seen that $\norm{\sigma} = \max_{t \in S^1} \norm{\sigma(t)} = \norm{\sigma}_0$, the $0$-th seminorm of the \Frechet space $L\B(\RR^n)$.  Thus the statement about the subspace topology is true.
\end{proof}
\begin{note}\label{n-top-imag-lson-ores}
\index{LSOn Ores topology@$L\SO(n) \hookrightarrow \Orth_{res}$ topology}
(Notes on Proof).
The topology of the image of $L\SO(n)$ in $\Orth_{res}$ is given by a norm equal to the sum of the uniform norm (over $S^1$, of the operator norm of matrices) and the Sobolev norm for $\frac{1}{2}$-differentiable functions $S^1 \rightarrow \B(\CC^n)$.  We adapt to our functions the definition of the Sobolev norm in \citet[page~181]{LL01}.

Note that smoothness was not necessary, as the penultimate expression bounding $\norm{[J, \sigma]}_2^2$ is twice the square of the norm of $\sigma$ in the Sobolev space of half differentiable functions on $S^1$.  See note \ref{n-smth-loop}.
\end{note}

\chapter{THE POLARIZATION CLASS BUNDLE}\label{c-pol-clas-bndl}

This chapter will again use an associated bundle construction, to define $Y \rightarrow LM$.  We will suggest some details for an alternative, more geometric way to look at fibers of $Y$, and a way to use $Y_{\gamma}$ to define a fixed polarization class of Lagrangian subspaces of $\CC \tensor \overline{LE_{\gamma}}$, where $\overline{LE_{\gamma}}$ is the Hilbert space completion of $LE_{\gamma}$, as in definition \ref{d-fibr-innr-prod-loop-vb}.
\begin{defn}\label{d-pol-clas-bndl}
\index{polarization class bundle}
\index{bundle!polarization class}
\index{Y@$Y$}
(The Polarization Class Bundle).
\[
Y = L\SO(E) \cross_{L\SO(n)} \Lagr_{res} \xrightarrow{\pi} LM.
\]
where the continuous action of $L\SO(n)$ on $\Lagr_{res}$ is given by definition \ref{d-ores-uvj-homo}, lemma \ref{l-ores-uvj-homo}, and proposition \ref{p-lson-in-ores}.  See example \ref{e-loop-soe} and section \ref{s-tech-over} of chapter \ref{c-intr} for context for this construction and an introduction to our use of associated bundles.
\end{defn}
For a more geometric viewpoint, we could write for the fiber of $Y$, $[(\widetilde{\gamma}, K)] \leftrightarrow (\gamma, K_{\gamma})$, where $K_{\gamma}$ is a Lagrangian subspace of $\CC \tensor \overline{LE_{\gamma}}$ that depends on $[(\widetilde{\gamma}, K)]$. Suggestions for how to do this follow.  See also \citet[page~829]{SW07}.

As a set, $Y = \{ [(\widetilde{\gamma}, K)] \st \widetilde{\gamma} \in L\SO(E) \text{ and } K \in \Lagr_{res} \}$.  The set $\Lagr_{res} \subset \Lagr(H)$, the set of Lagrangian subspaces of $H = \CC \tensor_{\RR} V = L^2 (S^1, \CC^n)$, and $\Lagr(H) \subset \Gr(H)$, the set of all subspaces of $H$.  Define the set $YY = \{ [(\widetilde{\gamma}, K)] \st \widetilde{\gamma} \in L\SO(E) \text{ and } K \in \Gr(H) \}$, possible since there is an action ignoring continuity, of $L\SO(n)$ on $\Gr(H)$.  Also, as a set, $\Lagr(\CC \tensor  \overline{LE_{\gamma}}) \subset \Gr(\CC \tensor  \overline{LE_{\gamma}})$.  Define the set $\Gr(\CC \tensor  \overline{LE}) = \{ (\gamma, K) \st \gamma \in LM \text{ and } K \in \Gr(\CC \tensor  \overline{LE_{\gamma}}) \}$.

Define the map of sets $\Upsilon \colon YY \rightarrow \Gr(\CC \tensor  \overline{LE})$ by $\Upsilon \colon [(\widetilde{\gamma}, K)] \mapsto (\gamma, \{ \widetilde{\gamma} (k) \st k \in K \})$, where $\gamma = \pi_{L\SO(E)} (\widetilde{\gamma})$, and the complex linear extension of an element of $L\SO(E)$ over $\gamma$, $\widetilde{\gamma} \colon \CC \tensor L\RR^n \rightarrow \CC \tensor LE_{\gamma}$, using on $L\RR^n$ the inner product of definition \ref{d-lrn-preh}, is a bounded map of pre-Hilbert spaces that extends by continuity to a map of their completions, $H \rightarrow \CC \tensor  \overline{LE_{\gamma}}$.

The map $\Upsilon$ respects the equivalence relation forming its domain, since if $g \in L\SO(n)$, $[(\widetilde{\gamma} g, g^{-1} K)] \mapsto (\gamma, \{ \widetilde{\gamma} \circ g (g^{-1} (k)) \st k \in K \}) = (\gamma, \{ \widetilde{\gamma} (k) \st k \in K \})$.  Thus we can write $\Upsilon \colon [(\widetilde{\gamma}, K)] \mapsto (\gamma, K_{\gamma})$, where $K_{\gamma}$ is a subspace of $\CC \tensor \overline{LE_{\gamma}}$ that depends on $[(\widetilde{\gamma}, K)]$.

$\Upsilon$ is injective because it is so on each fiber.  To see that $\Upsilon$ is injective over any fixed $\gamma$, suppose $(\gamma, \{ \widetilde{\gamma}' (k') \st k' \in K' \}) = (\gamma, \{ \widetilde{\gamma} (k) \st k \in K \})$, where $k', k \in H$.  Since $L\SO(n)$ acts transitively on the fibers of $L\SO(E)$ by proposition \ref{p-loop-smth-pb-is-frec-pb}, there is some $g \in L\SO(n)$ such that $\widetilde{\gamma}' = \widetilde{\gamma} g$.  Since $\widetilde{\gamma}' (k') = \widetilde{\gamma}'  \circ g  (g^{-1} (k')) = \widetilde{\gamma}  (g^{-1} (k'))$, the equality of values of $\Upsilon$ implies that $g^{-1} K' = K$, whence $[(\widetilde{\gamma}', K')] = [(\widetilde{\gamma}, K)]$.

To see that $\Upsilon$ is surjective, take any $(\gamma, S) \in \Gr(\CC \tensor  \overline{LE})$, and any $s \in S \subset \CC \tensor  \overline{LE_{\gamma}})$.  Since smooth functions are dense in the $L^2$ completion, let $s_i$ be a sequence in $\CC \tensor LE_{\gamma}$ converging in $L^2$ to $s$, and choose $\widetilde{\gamma} \in L\SO(E)$ such that $s_i = \widetilde{\gamma} (k_i)$ for every $i$, for some sequence $k_i$ of smooth loops in $\CC^n$.  Since $\widetilde{\gamma}$ is an isometric isomorphism of Hilbert spaces, the $k_i$ converge in $L^2$ to some $k \in H$, and $s = \widetilde{\gamma} (k)$.

We claim but don't prove that $\Upsilon$ preserves subspaces, complex conjugation, orthogonality, and orthogonal complement relations between subspaces in each fiber, because it's a bijection and because of how it uses $(L\SO(E))_{\gamma} \subset \Orth(LE_{\gamma})$.  Thus $\Upsilon$ can be used to define, from the standard polarization class $\Lagr_{res}$ of $H$, a fixed polarization class of $\CC \tensor LE_{\gamma}$ for each $\gamma$.

\chapter{THE STANDARD FOCK SPACE BUNDLE}\label{c-std-fock-spac-bndl}

Before we return to associated bundles in the next chapter, this chapter discusses a standard bundle constructed by other means over a standard space.  The standard Fock space bundle $F \rightarrow \Lagr_{res}$ will be used in the construction of the (not standard) Fock space bundle $FY$ over a different space in chapter \ref{c-fock-spac-bndl}, and the local trivializations of $F$ will be used to construct the local trivializations of the intertwiner bundle $T$ in proposition \ref{p-bg-t}.

Given the infinite-dimensional real inner product space $V$ and a chosen Lagrangian subspace $L$ of the Hilbert space $H$ defined as the completion (if needed) of $V \tensor \CC$, as in assumption \ref{a-res-grp}, one can construct the Fock space $\F(L)$ as in definition \ref{d-fock-spac}, a complex Hilbert space upon which the Clifford algebra $\Cl(V)$, a $C^{*}$-algebra, acts via a ${}^{*}$-representation as in proposition \ref{p-fock-rep}; thus $\F(L)$ is a Clifford module.

Trying to do the analogous thing over each $\gamma \in LM$, although as in chapter \ref{c-pol-clas-bndl}, we can choose continuously, a polarization class of Lagrangian subspaces $L_{\gamma}$ of $\CC \tensor \overline{LE_{\gamma}}$, for each of which $\F(L_{\gamma})$ is a Clifford module bundle over $LE$, it's not possible in general (\citet{Stol12} gave a proof outline) to choose continuously one Lagrangian subspace of the polarization class for each $\gamma$.

In this chapter, however, we will come from a different angle and do another more limited thing that will prove useful later:  show that there is a Clifford module bundle $F \rightarrow \Lagr_{res}$, the base space of which, $\Lagr_{res}$, is the standard polarization class of our standard Hilbert space $H = L^2(S^1, \CC^n)$.  The fiber of this bundle over a Lagrangian subspace $K$ is $F_K = \F(K)$.

Although $F$ is defined as a set without choosing any particular $L \in \Lagr_{res}$, the local bundle trivializations used to define the topology of the total space, do depend on a choice of $L$.  They use maps that are Clifford linear on fibers, to standard fiber $\F(L)$.  We will show that the topology of the total space and the Clifford module structure on the fibers are independent of the choice of $L$.

\begin{prop}\label{p-std-fock-spac-bndl}
\index{Fock space!standard bundle}
\index{bundle!standard!Fock space}
\index{F@$F$}
(The Standard Fock Space Bundle).
Let $F$ be the set of pairs $(K, w)$ such that $K \in \Lagr_{res}$ and $w \in \F(K)$, with the projection $\pi_F \colon F \rightarrow \Lagr_{res}$, $(K, w) \mapsto K$.  Suppose given $L \in \Lagr_{res}$; then we can define fiber bundle local trivializations with standard fiber $\F(L)$ as follows, that restricted to fibers are $\Cl(V)$ linear unitary isomorphisms, i.e. intertwiners.  These local trivializations induce a topology on $F$ making it a continuous fiber bundle.  Local trivializations defined by another choice of $L$ are compatible via homeomorphisms that are fiberwise Clifford linear unitary isomorphisms; in particular, the topology doesn't depend on the choice of $L$.
\end{prop}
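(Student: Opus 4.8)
The plan is to build the local trivializations of $F$ using the continuous local choices of implementers and intertwiners already constructed for the restricted orthogonal group, and then to verify the cocycle and independence statements by appeal to the $\UU(1)$-torsor structure of the set of intertwiners. First I would fix $L \in \Lagr_{res}$ with corresponding $J \in \US_{res}$. For each $K \in \Lagr_{res}$, lemma \ref{l-loc-impl} gives, on an open neighborhood $V$ of $L$, a continuous function $K \mapsto U_K = (\widetilde{\eta} \circ \xi)(K)$ whose value is an implementer in $\pi_L$ of $\theta_{g(K)}$, where $g(K) = \xi(K) \in \Orth_{res}$ satisfies $K = g(K) L$ and $g(L) = \ident$. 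Via proposition \ref{p-bij-intw-impl}, $T_K := \Lambda_{g(K)} \circ U_K^{-1} \colon \F(L) \to \F(K)$ is then a continuous (in the strong operator topology) choice of unitary intertwiner, with $T_L = \ident_{\F(L)}$. I would define the local trivialization over $V$ by $\phi_V \colon \pi_F^{-1}(V) \to V \times \F(L)$, $(K, w) \mapsto (K, T_K^{-1}(w))$. To get trivializations over a neighborhood of an arbitrary $K_0 \in \Lagr_{res}$, translate by some $g_0 \in \Orth_{res}$ with $K_0 = g_0 L$, using $\Orth_{res}$-equivariance of $\Lagr_{res}$ (lemma \ref{l-ores-uvj-homo}) and of the $\Lambda$ construction, so that the $V$'s cover $\Lagr_{res}$.

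Next I would invoke lemma \ref{l-fb-cons}: it suffices to exhibit, for each choice of $L$, a covering of $\Lagr_{res}$ by such charts with continuous change-of-coordinates maps, and then the topology and bundle structure on $F$ are determined. On an overlap of two charts coming from the \emph{same} $L$ (say over neighborhoods $V_1, V_2$ with intertwiner families $T_K^{(1)}, T_K^{(2)}$), the transition map sends $(K, v) \mapsto (K, T_K^{(2)} {}^{-1} \circ T_K^{(1)}(v))$; since both $T_K^{(i)} \in T(L, K)$ and $T(L, K)$ is a $\UU(1)$ torsor (proposition \ref{p-set-intw-u1-tors}), the composite $T_K^{(2)} {}^{-1} \circ T_K^{(1)}$ is multiplication by a scalar $z(K) \in \UU(1)$; continuity of $K \mapsto z(K)$ follows from continuity of the two intertwiner families in the strong operator topology, by evaluating at any nonzero vector of $\F(L)$ (as in the torsor-topology argument in the proof of proposition \ref{p-set-intw-u1-tors}). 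The transition maps are therefore continuous and fiberwise Clifford-linear unitary, so $F$ is a continuous fiber bundle whose fibers $\F(K)$ carry their intrinsic $\Cl(V)$-module structure, transported isomorphically from $\F(L)$ by the intertwiners.

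For the independence statement, suppose $L'$ is another choice with its own intertwiner families $T_K' \in T(L', K)$ over charts covering $\Lagr_{res}$. I would compare the two bundle atlases directly: over a common base open set, the map relating the two trivializations is $(K, v') \mapsto (K, v)$ with $v = (T_K)^{-1} \circ T_K' \circ S(v')$, where $S \colon \F(L') \to \F(L)$ is any fixed intertwiner (which exists because $L, L'$ are in the same polarization class, by corollary \ref{co-equi-soln}), and then $(T_K)^{-1} \circ T_K' \circ S$ is again, fiber over fiber, a unitary element of the torsor $T(L, K)$, hence a $\UU(1)$ multiple of the identity depending continuously on $K$. Thus the two atlases are related by fiberwise Clifford-linear unitary homeomorphisms, so they define the same topology on $F$ and the same bundle; by lemma \ref{l-fb-cons} (the naturality/uniqueness clause) the identification is canonical once $S$ is fixed, and different $S$ differ only by an overall constant in $\UU(1)$ which does not affect the topology or the Clifford module structure.

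The main obstacle I expect is establishing continuity of the intertwiner (or implementer) families in the first place, i.e. making sure lemma \ref{l-loc-impl} and the constructions behind it (lemma \ref{l-vac-vec-exp}, proposition \ref{p-j-vac-vec-unit-equi}, lemma \ref{l-lamb-homo-cont}) really deliver strong-operator continuity of $K \mapsto T_K$ on an honest open neighborhood in the quotient topology of $\Lagr_{res}$, together with the bookkeeping needed to cover \emph{all} of $\Lagr_{res}$ by translating a single such neighborhood around via $\Orth_{res}$ — here one must check that the translated charts are mutually compatible, which again reduces to the $\UU(1)$-torsor argument but requires care that the translating elements $g_0$ and the equivariance isomorphisms $\Lambda_{g_0}$ are handled consistently. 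Once continuity of the intertwiner families is in hand, the cocycle condition and the $L$-independence are essentially formal consequences of $T(L,K)$ being a $\UU(1)$ torsor.
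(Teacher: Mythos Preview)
Your overall architecture matches the paper's: build charts around each $K_0$ by translating a single chart near $L$ via some $g_0$, use the local implementers from lemma \ref{l-loc-impl} to produce intertwiners $T_K = \Lambda_{g(K)} U_K^{-1}$, and check transition maps. But there is a genuine gap in your continuity argument, and a composition slip in the independence step.

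\textbf{The continuity gap.} You argue that on an overlap $T_K^{(2)\,-1}\circ T_K^{(1)}$ is multiplication by $z(K)\in\UU(1)$, and that continuity of $z(K)$ ``follows from continuity of the two intertwiner families in the strong operator topology, by evaluating at any nonzero vector.'' But $T_K^{(i)}\colon\F(L)\to\F(K)$ lands in a \emph{varying} Hilbert space, so ``continuity of the intertwiner family in the strong operator topology'' has no meaning until you have a topology on $\bigsqcup_K\F(K)=F$ --- which is exactly what you are trying to construct. The torsor observation tells you the transition is a scalar, but it does not by itself give continuity of that scalar. What the paper does (and what you have the ingredients for but did not carry out) is to keep everything on the fixed space $\F(L)$: writing $T_{K}^{(i)} = \Lambda_{g^{(i)}(K)}\,(U^{(i)}_K)^{-1}$, one gets
\[
T_K^{(2)\,-1}\circ T_K^{(1)} \;=\; U^{(2)}_K\;\Lambda_{\,g^{(2)}(K)^{-1}g^{(1)}(K)}\;(U^{(1)}_K)^{*}\,,
\]
with all three factors in $\UU(\F(L))$. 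The outer implementers are strong-operator continuous in $K$ by construction, and since $g^{(2)}(K)^{-1}g^{(1)}(K)\in\UU(V_J)$ (both send $L$ to $K$), lemma \ref{l-lamb-homo-cont} gives continuity of the middle $\Lambda$. Then lemma \ref{l-cont-stro-op-top-adj} upgrades separate to joint continuity. You mention \ref{l-lamb-homo-cont} in your ``obstacles'' paragraph, but it is needed \emph{inside} the argument, not merely as background for lemma \ref{l-loc-impl}.

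\textbf{The independence slip.} In your $L$-versus-$L'$ comparison you write $(T_K)^{-1}\circ T_K'\circ S$ with $S\colon\F(L')\to\F(L)$, but then $T_K'\circ S$ does not compose (domain mismatch), and the result is certainly not in $T(L,K)$. The correct transition is $(T_K)^{-1}\circ T_K'\colon\F(L')\to\F(L)$, an element of $T(L',L)$; fixing $S\in T(L',L)$ one has $(T_K)^{-1}\circ T_K' = z(K)\,S$. Continuity of $z(K)$ again needs the same explicit decomposition as above (now with a fixed $g_{L',L}$ inserted to bring the middle $\Lambda$ back to $\UU(V_J)$, exactly as the paper does at the end of its proof).
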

\begin{proof}
\begin{notat}\label{n-std-fock-spac-bndl}
\index{Fock space!standard bundle!local trivializations}
Given a choice of $L$, each $K \in \Lagr_{res}$ has an open neighborhood $V_K = V_{K, L}$ and local trivialization $\Theta_K = \Theta_{K, L}$ as in the commutative diagram:
\[
\begindc{\commdiag}[5]
\obj(10,10)[objLR]{$\Lagr_{res}$}
\obj(10,20)[objF]{$F$}
\obj(30,10)[objVKL]{$V_K$}
\obj(30,20)[objPVKL]{$\pi_F^{-1} (V_K)$}
\obj(55,20)[objVKLFL]{$V_K \cross \F(L)$}
\mor{objVKL}{objLR}{$\incl$}
\mor{objPVKL}{objF}{$\incl$}
\mor{objF}{objLR}{$\pi_F$}
\mor{objPVKL}{objVKL}{$\pi_F$}
\mor{objPVKL}{objVKLFL}{$\Theta_K$}
\mor{objVKLFL}{objVKL}{$\pi_1$}
\enddc
\]
For $K' \in V_K$, $\pi_2 \circ (\Theta_K)_{|K'} = T_{K', L} \in T(K', L)$ such that $T_{K', L} = U_{g_{K'}} \Lambda_{g_{K'}, L}^{*}$, where $g_{K'}$ depends continuously on $K'$, $\Lambda_{g_{K'}, L} \colon \F(L) \rightarrow \F(K')$, and $U_{g_{K'}}$ depends continuously in the $\UU(\F(L))$ strong operator topology, on $g_{K'}$ and hence on $K'$.
\begin{flushleft}
\emph{End Notation}
\end{flushleft}
\end{notat}
Define the topology on $F$ that makes the following system of local fiber bundle trivializations homeomorphisms.  Suppose $L$ given and omit it from the notation for $\Theta$ until the proof that the topology doesn't depend on $L$.  Recall definition \ref{d-ores-uvj-homo}.

Let $V$ be the open neighborhood of $L$ from the second paragraph of lemma \ref{l-loc-impl}, and let
\[
W = \pi_{\Orth_{res}/\UU(V_J)}^{-1} \circ \phi_{\Orth_{res}/\UU(V_J), \Lagr_{res}}^{-1} (V). \notag
\]

Given $K \in \Lagr_{res}$, construct a local trivialization over a neighborhood about $K$ as follows.  Choose a fixed $g_K \in \Orth_{res}$ such that $K = g_K L$, and by theorem \ref{t-impl-soln} let $U_{g_K} \in \UU(\F(L))$ be a fixed implementer of $\theta_{g_K}$ in $\pi_L$.  Define the open neighborhood $V_K$ of $K$ by
\begin{align}
W_K &= g_K W \notag \\
V_K &= \phi_{\Orth_{res}/\UU(V_J), \Lagr_{res}} \circ \pi_{\Orth_{res}/\UU(V_J)} (W_K) = g_K V. \notag
\end{align}
For any $K' \in V_K$, define $L' = g_K^{-1} K' \in V$, and let $g' = \xi(L') \in \Orth_{res}$ be the canonical element from lemma \ref{l-uv-ov-sect-res} such that $L' = g' L$, depending continuously on $L'$ and thus on $K'$.
\begin{notat}\label{n-std-fock-spac-bndl-g-prim}
\index{Fock space!standard bundle!choices}
Alternatively, for later use, in addition to free choice of any $g_K \in \Orth_{res}$ such that $K = g_K L$, any open neighborhood $V$ of $L$, throughout which $g'$ can be found, and any $g' \in \Orth_{res}$ such that $L' = g' L$, depending continuously on $L' \in V$, will do as well as the choices set forth above.  Other properties of the $V$ of lemma \ref{l-loc-impl} and the $g' = \xi(L')$ of lemma \ref{l-uv-ov-sect-res} are not used.

For specialized use later - not when constructing a topology as in this proposition - it is in fact not necessary that $V$ be open, only that it suit the need for which the local trivializations are constructed.
\begin{flushleft}
\emph{End Notation}
\end{flushleft}
\end{notat}
Letting $g_{K'} = g_K g'$, also a continuous function of $K'$, $K' = g_{K'} L$.
\[
\begindc{\commdiag}[5]
\obj(10,8)[objID]{$\ident$}
\obj(30,8)[objGK]{$g_K$}
\obj(10,14)[objW]{$W$}
\obj(30,14)[objW_K]{$W_K$}
\obj(20,20)[objORES]{$\Orth_{res}$}
\obj(60,0)[objLP]{$L'$}
\obj(80,0)[objKP]{$K'$}
\obj(60,8)[objL]{$L$}
\obj(80,8)[objK]{$K$}
\obj(60,14)[objV]{$V$}
\obj(80,14)[objV_K]{$V_K$}
\obj(70,20)[objLRES]{$\Lagr_{res}$}
\mor{objW}{objW_K}{$g_K \cdot$}
\mor{objID}{objGK}{$g_K \cdot$}[\atright, \aplicationarrow]
\mor{objV}{objV_K}{$g_K \cdot$}
\mor{objL}{objK}{$g_K \cdot$}[\atright, \aplicationarrow]
\mor{objLP}{objKP}{$g_K \cdot$}[\atright, \aplicationarrow]
\mor{objL}{objLP}{$g' \cdot$}[\atright, \aplicationarrow]
\mor{objL}{objKP}{$g_{K'} \cdot$}[\atright, \aplicationarrow]
\enddc
\]
Let $U_{g'} \in \UU(\F(L))$ be the implementer of $\theta_{g'}$ in $\pi_L$ from lemma \ref{l-loc-impl}, depending continuously on $K'$, in the strong operator topology on $\UU(\F(L))$.  Referring to lemma \ref{l-impl-homo}, define $U_{g_{K'}} = U_{g_K} U_{g'}$, an implementer for $g_{K'} = g_K g'$ in $\pi_L$, depending continuously on $K'$; and referring to
proposition \ref{p-bij-intw-impl} and lemma \ref{l-lamb-homo}, intertwiner $T_{L, K'} \colon \F(L) \rightarrow \F(K')$, $T_{L, K'} = \Lambda_{g_{K'}} U_{g_{K'}}^{*}$, and its inverse, $T_{K', L} = T_{L, K'}^{*} = U_{g_{K'}} \Lambda_{g_{K'}}^{*}$.

Use this intertwiner to define the local fiber bundle trivialization
\begin{align}
\Theta_K \colon \pi_F^{-1} (V_K) &\rightarrow V_K \cross \F(L) \notag \\
\Theta_K \colon (K', x) &\mapsto (K', T_{K', L} (x)). \notag
\end{align}
Being built from an intertwiner, which is a bijection, the local trivialization is bijective on fibers, and hence bijective.
If $V_{K_1} \cap V_{K_2} \ne \emptyset$, then the change of coordinates map is
\begin{align}
\Theta_{K_2} \circ \Theta_{K_1}^{-1} \colon V_{K_1} \cap V_{K_2} \cross \F(L) &\rightarrow V_{K_1} \cap V_{K_2} \cross \F(L) \notag \\
\Theta_{K_2} \circ \Theta_{K_1}^{-1} \colon (K',y) &\mapsto (K', T_{2, K', L} \circ T_{1, K', L}^{-1} (y)) \notag
\end{align}
where the $T$'s are dependent on $K'$, and the transition function that needs to be continuous as a function of $K'$ and $y$, in order for us to be able to define a topology on the total space is:
\begin{align}
T_{K', L, 2} \circ T_{K', L, 1}^{-1} = U_{g_{K',2}} \Lambda_{g_{K',2}}^{*} \Lambda_{g_{K',1}} U_{g_{K',1}}^{*} = U_{g_{K',2}} \Lambda_{g_{K',2}^{-1} g_{K',1}} U_{g_{K',1}}^{*}, \notag
\end{align}
referring to lemma \ref{l-lamb-homo}.

What we need to show is that $T_{K', L, 2} \circ T_{K', L, 1}^{-1} (y)$ is a jointly continuous function of $K'$ and $y$.  It is separately continuous in $y$ since the linear operators are bounded.

We know that in the strong operator topology on $\UU(\F(L))$, $U_{g_{K',2}}$ and $U_{g_{K',1}}^{*}$ (adjoint for unitary operators is continuous in the strong operator topology) were constructed as continuous functions of $K'$, and $\Lambda_{g_{K',2}^{-1} g_{K',1}} \colon \F(L) \rightarrow \F(L)$ is a continuous function of $K'$ by lemma \ref{l-lamb-homo-cont}, because $g_{K',2}^{-1} g_{K',1}$, which is in $\Orth_{res}$ and maps $L \rightarrow L$, hence is in $\UU(V_J)$, depends continuously on $K'$.  Although in general, operator multiplication (composition) is not continuous in the strong operator topology, all these operators are unitary and hence have norm $1$, so as in \citet[page~171]{Pede89}, their product $T_{K', L, 2} \circ T_{K', L, 1}^{-1}$ is a continuous function of $K'$ in the strong operator topology, and so $T_{K', L, 2} \circ T_{K', L, 1}^{-1} (y)$ is separately continuous in $K'$. Finally, by lemma \ref{l-cont-stro-op-top-adj}, $T_{K', L, 2} \circ T_{K', L, 1}^{-1} (y)$ is jointly continuous in $K'$ and $y$.

That a system of local trivializations defined using another choice $L' \in \Lagr_{res}$ (don't confuse this with $L'$ in the proof above) in place of $L$, is compatible with those defined using $L$, can be seen as follows, adapting the notation of the foregoing.  Given open neighborhoods $V_{K_1, L}, V_{K_2, L'}$ with nonempty intersection $V$, we need to show that $\Theta_{K_2, L'} \circ \Theta_{K_1, L}^{-1} \colon V \cross \F(L) \rightarrow V \cross \F(L')$ is continuous.  Its first component is the identity, and its second is $T_{K_2, L'} \circ T_{K_1, L}^{-1}$, which for each point $K' \in V$ is a Clifford linear unitary isomorphism, given almost as before but with added notation to track $L'$ as well as $L$, by
\begin{align}
T_{K', L', 2} \circ T_{K', L, 1}^{-1} = U_{g_{K',L',2}} \Lambda_{g_{K',L',2}}^{*} \Lambda_{g_{K',L,1}} U_{g_{K',L,1}}^{*} = U_{g_{K',L',2}} \Lambda_{g_{K',L',2}^{-1} g_{K',L,1}} U_{g_{K',L,1}}^{*}. \notag
\end{align}
The same reasoning for continuity holds as before, with one noticeable difference:  $g_{K',L',2}^{-1} g_{K',L,1} \colon L \rightarrow L'$, not $L \rightarrow L$.  However, taking any fixed $g_{L',L} \in \Orth_{res}$ such that $g_{L',L} L' = L$, $g_{L',L} g_{K',L',2}^{-1} g_{K',L,1} \colon L \rightarrow L$ and hence is a continuous function of $K'$ as before.  Then, since $g_{L',L}$ is constant and hence continuous with respect to $K'$, and is a unitary operator, composing it on the left with the triple product gives a continuous function, $g_{K',L',2}^{-1} g_{K',L,1}$, of $K'$.
\end{proof}

\begin{lem}\label{l-clif-act-std-fock-spac-bndl}
\index{Fock space!standard bundle!Clifford algebra action}
\index{bundle!standard Fock space!Clifford algebra action}
\index{Cl(LRn)@$\Cl(L\RR^n)$}
\index{F@$F$}
(The Clifford Algebra Action on the Standard Fock Space Bundle).
The fiberwise action of $\Cl(L\RR^n)$ defines a continuous map $\Cl(L\RR^n) \cross F \rightarrow F$.
\end{lem}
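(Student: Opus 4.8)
The plan is to reduce the claim to the (elementary) joint continuity of a single Fock representation, exploiting that the local trivializations of $F$ constructed in proposition~\ref{p-std-fock-spac-bndl} are fibrewise intertwiners. First observe that the fibrewise action is the map $\Cl(L\RR^n)\cross F\to F$ sending $(a,(K,w))$ to $(K,\pi_K(a)w)$, where $\pi_K$ is the Fock representation of $\Cl(L\RR^n)=\Cl(V)$ on $\F(K)$ from proposition~\ref{p-fock-rep}; this is well defined (it preserves fibres), so only continuity is at issue. Since $\{\pi_F^{-1}(V_K)\}_{K\in\Lagr_{res}}$ is an open cover of $F$ and the action carries $\Cl(L\RR^n)\cross\pi_F^{-1}(V_K)$ into $\pi_F^{-1}(V_K)$, it suffices to check continuity of each restriction, which we may transport through the homeomorphism $\Theta_K\colon\pi_F^{-1}(V_K)\isomto V_K\cross\F(L)$ for a fixed auxiliary $L\in\Lagr_{res}$.

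The key step: for $K'\in V_K$ the map $\Theta_K$ identifies the fibre over $K'$ with $\F(L)$ via the intertwiner $T_{K',L}\in T(K',L)$, so the transported action sends $(a,(K',y))$ to $\bigl(K',\,T_{K',L}\,\pi_{K'}(a)\,T_{K',L}^{-1}(y)\bigr)$. Because $T_{K',L}$ intertwines $\pi_{K'}$ and $\pi_L$ in the sense of definition~\ref{d-equi-rep}, we have $T_{K',L}\,\pi_{K'}(a)\,T_{K',L}^{-1}=\pi_L(a)$ for every $a\in\Cl(L\RR^n)$, so the transported action is simply $(a,(K',y))\mapsto(K',\pi_L(a)y)$. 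The point worth emphasizing is that although $T_{K',L}$ depends on $K'$ only continuously for the strong operator topology — which would be delicate to handle under composition — the conjugated representation does not depend on $K'$ at all, so no fine continuity of $K'\mapsto T_{K',L}$ enters.

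It remains to see that $(a,y)\mapsto\pi_L(a)y$ is jointly continuous on $\Cl(L\RR^n)\cross\F(L)$. By lemma~\ref{l-c-star-alg-mor-cont} the Fock representation $\pi_L\colon\Cl(L\RR^n)\to\B(\F(L))$ is a morphism of $C^{*}$-algebras, so $\norm{\pi_L(a)}\le\norm{a}$; hence at any point $(a_0,y_0)$ one has $\norm{\pi_L(a)y-\pi_L(a_0)y_0}\le\norm{\pi_L(a)}\,\norm{y-y_0}+\norm{\pi_L(a-a_0)}\,\norm{y_0}\le\norm{a}\,\norm{y-y_0}+\norm{a-a_0}\,\norm{y_0}$, which tends to $0$ as $(a,y)\to(a_0,y_0)$. (Equivalently, on a bounded neighbourhood of $a_0$ this is the second part of lemma~\ref{l-cont-stro-op-top-adj}.) Composing with $\Theta_K^{-1}$ and with the open inclusion $\pi_F^{-1}(V_K)\hookrightarrow F$ gives continuity of the action on each $\pi_F^{-1}(V_K)$, and hence globally.

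There is no substantial obstacle here; the things to get right are the cancellation of the $K'$-dependence in the second paragraph, the verification that the cover $\{\pi_F^{-1}(V_K)\}$ is open and the action fibre-preserving so that a local check suffices, and the remark that the auxiliary $L$ used for trivialization may be chosen independently of the test point — precisely the $L$-independence of the topology on $F$ established in proposition~\ref{p-std-fock-spac-bndl}.
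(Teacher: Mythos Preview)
Your proof is correct and follows essentially the same approach as the paper: transport the action through the local trivialization $\Theta_K$, use the intertwining property (definition~\ref{d-equi-rep}) to see that the conjugated representation collapses to the $K'$-independent $\pi_L$, and then invoke the norm bound on the $C^{*}$-representation together with lemma~\ref{l-cont-stro-op-top-adj} for joint continuity. Your writeup is in fact more explicit than the paper's, spelling out the triangle-inequality estimate and the reason the $K'$-dependence of $T_{K',L}$ drops out.
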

\begin{proof}
To show continuity at $(a, (K, w))$, use local trivialization $\Theta_K$ as in proposition \ref{p-std-fock-spac-bndl} notation \ref{n-std-fock-spac-bndl}.  Let $a'$ lie in some ball about $a$ and let $K'$ be some element of $V_K$. Via the local trivialization, $w' \in \F(K')$ corresponds to $\widehat{w'} = T_{K', L} (w') \in \F(L)$.  Then continuity of the action is implied by the continuity of the following function of $a'$, $K'$, and $\widehat{w'}$, which is a consequence of definition \ref{d-equi-rep}, proposition \ref{p-fock-rep}, and lemma \ref{l-cont-stro-op-top-adj}.  To get the bound on norms needed to apply the second part of the lemma, note that $C^{*}$-algebra representations are isometric algebra morphisms.
\[
T_{K', L} (\pi_{K'} (a') (T_{K', L}^{*} (\widehat{w'}))) = \pi_L (a') (\widehat{w'}). \notag
\]
\end{proof}

\chapter{THE FOCK SPACE BUNDLE}\label{c-fock-spac-bndl}

We construct the Fock space bundle $FY$ over the polarization class bundle $Y$.  The fiber of $Y$ over a loop $\gamma \in LM$ is a fixed polarization class, a set of Lagrangian subspaces $L_{\gamma}$ of $LE_{\gamma}$ with unitarily equivalent Fock representations.  The fiber of $FY$ over each $L_{\gamma}$ is the Fock space of that representation; the standard fiber for our associated bundle construction is the standard Fock space bundle $F$.

\begin{defn}\label{d-fock-spac-bndl}
\index{bundle!Fock space}
\index{Fock space!bundle}
\index{FY@$FY$}
(The Fock space bundle $FY \xrightarrow{\pi_{FY}} Y$).
Define an action of $\Orth_{res}$, and hence of $L\SO(n)$ on $F$, and then define $FY$ via associated bundle construction:
\begin{align}
&g \in \Orth_{res}, K \in \Lagr_{res}, w \in \F(K) \Rightarrow (g, (K, w)) \mapsto (g K, \Lambda_g (w)) \notag \\
&FY = L\SO(E) \cross_{L\SO(n)} F, \text{ with } \pi_{FY} \colon [(\widetilde{\gamma}, (K, w))] \mapsto [(\widetilde{\gamma}, K)]. \notag
\end{align}
\end{defn}
\begin{note}\label{n-fock-spac-bndl}
\index{associated bundle!geometric viewpoint}
(Associated Bundle vs. Geometric Viewpoints).
To aid in intuition, one may use the correspondence
\begin{align}
FY &= \{ [(\widetilde{\gamma}, (K, w))] & &\st \widetilde{\gamma} \in L\SO(E) \text{, } K \in \Lagr_{res} \text{ and } w \in \F(K) \} \notag \\
&\leftrightarrow \{ (\gamma, (L_{\gamma}, w_{\gamma})) & &\st L_{\gamma} \text{ is in the fixed polarization class of } \CC \tensor \overline{LE_{\gamma}} \notag \\
& & &\text{ and } w_{\gamma} \in \F(L_{\gamma})\}, \notag
\end{align}
$\gamma = \pi_{L\SO(E)} (\widetilde{\gamma})$.  In a sense, $\gamma$ may be unnecessary in the notation $(\gamma, (L_{\gamma}, w_{\gamma})))$, since elements of the subspace $L_{\gamma}$ are $L^2$ loops, including continuous and even smooth loops that via $L\pi_E$ project to $\gamma$.
\end{note}

\begin{lem}\label{l-clle-act-fy}
\index{Fock space bundle}
\index{bundle!Fock space}
\index{Cl(LE) module@$\Cl(LE)$ module}
\index{FY@$FY$}
(The Definition of $FY$ is Valid, and $FY$ is a $\pi^{*} \Cl(LE)$ module).
Here, $\pi$ is the projection $Y \rightarrow LM$ (see definition \ref{d-pol-clas-bndl}).  See the end of the proof for the definition of the Clifford action.
\end{lem}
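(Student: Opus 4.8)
\emph{Plan of proof.} The argument has two halves: first, that $FY$ as defined is a genuine topological fiber bundle (over $LM$, hence over $Y$); second, that the fiberwise Fock action assembles into a continuous $\pi^{*}\Cl(LE)$ action making each fiber an (irreducible) Clifford module. Both halves amount to organizing results already in hand.

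For the first half, I would check that
\[
g \in \Orth_{res},\ K \in \Lagr_{res},\ w \in \F(K)\ \mapsto\ (gK,\ \Lambda_g w)
\]
is a well-defined continuous left action of $\Orth_{res}$ on $F$. Well-definedness: $gK \in \Lagr_{res}$ because $\Orth_{res}$ acts on $\Lagr_{res}$ (lemma \ref{l-ores-uvj-homo}), and $\Lambda_g \colon \F(K)\to\F(gK)$ is a unitary isomorphism by functoriality of the exterior-algebra construction (definition \ref{d-fock-spac}, proposition \ref{p-bij-intw-impl}); the action axioms $\Lambda_{\ident}=\ident$ and $\Lambda_{g_2 g_1}=\Lambda_{g_2}\Lambda_{g_1}$ are lemma \ref{l-lamb-homo}. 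For continuity I would prove separate continuity in each variable — in the $F$ variable via the local trivializations of $F$ from proposition \ref{p-std-fock-spac-bndl} together with lemma \ref{l-lamb-homo-cont}, and in the $\Orth_{res}$ variable via continuity of the $\Orth_{res}$ action on $\Lagr_{res}$ (lemma \ref{l-ores-uvj-homo}) and again lemma \ref{l-lamb-homo-cont} — and then invoke corollary \ref{co-ores-bair-acts-join-cont} to upgrade to joint continuity. Restricting along the continuous inclusion $L\SO(n)\hookrightarrow\Orth_{res}$ of proposition \ref{p-lson-in-ores} gives a continuous $L\SO(n)$ action on $F$, so $FY = L\SO(E)\cross_{L\SO(n)}F$ is a topological fiber bundle over $LM$ by the same associated-bundle construction used for $\Cl(LE)$ in definition \ref{d-clif-alg-bndl} and $Y$ in definition \ref{d-pol-clas-bndl}. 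The projection $\pi_{FY}([(\widetilde\gamma,(K,w))])=[(\widetilde\gamma,K)]$ is well-defined since replacing $(\widetilde\gamma,(K,w))$ by the equivalent $(\widetilde\gamma g, g^{-1}K, \Lambda_{g^{-1}}w)$ sends $[(\widetilde\gamma,K)]$ to $[(\widetilde\gamma g, g^{-1}K)]=[(\widetilde\gamma,K)]$; and it is a fiber bundle projection because over a local section $s\colon U\to L\SO(E)$ the three bundles $Y$, $FY$, $\Cl(LE)$ trivialize as $U\cross\Lagr_{res}$, $U\cross F$, $U\cross\Cl(L\RR^n)$ with $\pi_{FY}$ becoming $\ident_U\cross\pi_F$, a fiber bundle projection by proposition \ref{p-std-fock-spac-bndl}.

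For the second half, over a point $[(\widetilde\gamma,K)]\in Y$ the fiber of $\pi^{*}\Cl(LE)$ is the fiber $\Cl(LE)_\gamma$ over $\gamma=\pi_{L\SO(E)}(\widetilde\gamma)$, represented via $\widetilde\gamma$ by $\Cl(L\RR^n)=\Cl(V)$, while the fiber of $FY$ is $\F(K)$. I would define the action by
\[
[(\widetilde\gamma,a)]\cdot[(\widetilde\gamma,(K,w))] \;=\; [(\widetilde\gamma,(K,\pi_K(a)(w)))],
\]
with $\pi_K$ the Fock representation of proposition \ref{p-fock-rep}; using a common representative $\widetilde\gamma$ is legitimate since $L\SO(n)$ acts transitively on the fibers of $L\SO(E)$ (proposition \ref{p-loop-smth-pb-is-frec-pb}). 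This is well-defined: passing to $\widetilde\gamma g$, $g\in L\SO(n)\subset\Orth_{res}$, replaces $a$ by $\theta_{g^{-1}}(a)$ and $(K,w)$ by $(g^{-1}K,\Lambda_{g^{-1}}w)$, so one needs $\Lambda_{g^{-1}}(\pi_K(a)w)=\pi_{g^{-1}K}(\theta_{g^{-1}}(a))(\Lambda_{g^{-1}}w)$, which is exactly the commuting square (functoriality of $\Lambda$ against the Clifford action) displayed in the proof of proposition \ref{p-bij-intw-impl}, with $h=g^{-1}$ and $L=K$. Since $\F(K)$ is an irreducible $\Cl(V)$ module (proposition \ref{p-fock-rep}) and the identifications are Clifford-linear, each fiber of $FY$ is an (irreducible) module over the corresponding fiber of $\pi^{*}\Cl(LE)$.

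Finally, for continuity of the total action map: in the local trivializations above it becomes $\ident_U$ times the fiberwise action $\Cl(L\RR^n)\cross F\to F$, which is continuous by lemma \ref{l-clif-act-std-fock-spac-bndl}; hence the action is continuous, and $FY$ is a $\pi^{*}\Cl(LE)$ module. I expect the main obstacle to be purely organizational — keeping the two actions on $F$ straight (the $\Lambda_g$ action used to form the associated bundle versus the Clifford action on the standard fibers) while verifying the two well-definedness compatibilities, each of which reduces to an identity already recorded (lemma \ref{l-lamb-homo} for the first, the intertwining square of proposition \ref{p-bij-intw-impl} for the second); there is no analytic difficulty beyond those already resolved.
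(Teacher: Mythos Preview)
Your proposal is correct and follows essentially the same approach as the paper: verify continuity of the $\Orth_{res}$ action on $F$ via local trivializations and then upgrade separate to joint continuity through corollary \ref{co-ores-bair-acts-join-cont}; define the Clifford action by the same formula and check well-definedness via the $\Lambda_g$/$\theta_g$ intertwining square; and obtain continuity of the Clifford action from lemma \ref{l-clif-act-std-fock-spac-bndl}. The paper differs only in presentation: it writes out the action map explicitly in the local trivializations of proposition \ref{p-std-fock-spac-bndl} (as a composite of $U$'s and $\Lambda$'s in $\UU(\F(L))$) before invoking the Baire-space corollary, cites lemma \ref{l-lamb-homo} rather than proposition \ref{p-bij-intw-impl} for the intertwining identity, and deduces continuity of the Clifford action by descent of an equivariant map to orbit spaces rather than via local sections of $L\SO(E)$---but these are organizational variants, not substantive differences.
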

\begin{proof}
The continuity of the action of $\Orth_{res}$ on $F$ can be seen by choosing $L \in \Lagr_{res}$ for local trivializations in proposition \ref{p-std-fock-spac-bndl} notation \ref{n-std-fock-spac-bndl}, using continuity of the action of $\Orth_{res}$ on $\Lagr_{res}$ as in lemma \ref{l-ores-uvj-homo} by using definition \ref{d-ores-uvj-homo}.  Fix $K \in \Lagr_{res}$ and let $(g', K') \in W_g \cross W_K$, where open neighborhoods $W_g$ of $g \in \Orth_{res}$ and $W_K$ of $K \in V_K$, the open neighborhood of $K$ for the local trivialization $\Theta_K \colon \pi_F^{-1} (V_K) \rightarrow V_K \cross \F(L)$, are chosen so that $g' K' \in V_{g K}$, the open neighborhood of $g K$ for the local trivialization $\Theta_{g K}$.  The continuity of the action at $(g, (K, w)) \in \Orth_{res} \cross F$ is then equivalent to that of the following map, where $\widehat{w'} \in \F(L)$.
\begin{align}
(g', (K', \widehat{w'})) &\mapsto \Theta_{g K} (g' \pi_1, \Lambda_{g'} \circ \pi_2) (\Theta_K^{-1} (K', \widehat{w'})), \text{ which since} \notag \\
\Theta_{g K} (g' K', x) &= (g' K', U_{g_{g' K'}} \circ \Lambda_{g_{g' K'}, L}^{*} (x)) \text{ and} \notag \\
\Theta_K^{-1} (K', \widehat{w'}) &= (K', \Lambda_{g_{K'}, L} \circ U_{g_{K'}}^{*} (\widehat{w'})), \text{ is given by} \notag \\
(g', (K', \widehat{w'})) &\mapsto (g' K', U_{g_{g' K'}} \circ \Lambda_{g_{g' K'}, L}^{*} \circ \Lambda_{g', K'} \circ \Lambda_{g_{K'}, L} \circ U_{g_{K'}}^{*} (\widehat{w'})) \notag \\
&= (g' K', U_{g_{g' K'}} \circ \Lambda_{g_{g' K'}^{-1} g' g_{K'}, L} \circ U_{g_{K'}}^{*} (\widehat{w'})), \notag
\end{align}
where the symbol $\Lambda$ with two subscripts indicates the group element that induces it and the Lagrangian subspace for the Fock space that is its domain.  Now $g' K'$ is a continuous function of $g'$ and $K'$ jointly, so are $U_{g_{g' K'}} \in \UU(\F(L))$, $\Lambda_{g_{g' K'}^{-1} g' g_{K'}, L} \in \UU(\F(L))$, $U_{g_{K'}}^{*} \in \UU(\F(L))$, and hence their composition is also, in the strong operator topology on $\UU(\F(L))$.  Also the operators applied to $\widehat{w'}$ are bounded.  Holding $g'$ fixed for the moment, reasoning as near the end of the proof of proposition \ref{p-std-fock-spac-bndl}, our map is separately continuous in $K'$ and $\widehat{w'}$ and hence in them jointly.  Holding $(K', \widehat{w'})$ fixed, our map is continuous in $g'$.  Thus since $\Lagr_{res}$ is metrizable and $\F(L)$ has the norm metric, by corollary \ref{co-ores-bair-acts-join-cont} the action is jointly continuous, and the associated bundle definition is possible.

We haven't given $FY$ local trivializations that are Clifford linear, but don't need to.  For each $y \in Y$, $FY_{y}$ is an irreducible $\Cl(LE)_{\pi(y)}$ module, or equivalently, $(\pi^{*} \Cl(LE))_y$ module, as can be seen by comparing the definitions
\begin{align}
F &= \{ (K, \F(K)) \st K \in \Lagr_{res} \} \xrightarrow{\pi_F} \Lagr_{res} \notag \\
Y &= L\SO(E) \cross_{L\SO(n)} \Lagr_{res} \xrightarrow{\pi} LM \notag \\
FY &= L\SO(E) \cross_{L\SO(n)} F \xrightarrow{\pi_{FY}} Y \notag \\
\Cl(LE) &= L\SO(E) \cross_{L\SO(n)} \Cl(L\RR^n) \xrightarrow{\pi_{\Cl(LE)}} LM. \notag
\end{align}
Let $y = [(\widetilde{\gamma}, K)]$, $\widetilde{\gamma} \in L\SO(E)$ lying over $\gamma \in LM$, $K \in \Lagr_{res}$, $[(\widetilde{\gamma}, (K, w))] \in FY_y$, $w \in \F(K)$, $[(\widetilde{\gamma}, a)] \in \Cl(LE)_{\pi(y)}$, $a \in \Cl(L\RR^n)$.  The Clifford action is defined using equal first components of the pairs in the equivalence classes and the continuous map $\Cl(L\RR^n) \cross F \rightarrow F$ defined in proposition \ref{l-clif-act-std-fock-spac-bndl}, $g \in L\SO(n)$:
\begin{align}
([(\widetilde{\gamma}, a)], [(\widetilde{\gamma}, (K, w))]) &\mapsto [(\widetilde{\gamma}, (K, \pi_K (a) (w)))], \notag
\end{align}
which is well-defined because
\begin{align}
([(\widetilde{\gamma} g^{-1}, \theta_g(a))], [(\widetilde{\gamma} g^{-1}, (g K, \Lambda_g (w)))]) &\mapsto [(\widetilde{\gamma} g^{-1}, (g K, \pi_{g K} (\theta_g(a)) (\Lambda_g(w))))] \notag \\
&= [(\widetilde{\gamma}, (K, \Lambda_{g^{-1}}(\pi_{g K} (\theta_g(a)) (\Lambda_g(w)))))] \notag \\
&= [(\widetilde{\gamma}, (K, \pi_K (a) (w)))], \text{ by lemma \ref{l-lamb-homo}}. \notag
\end{align}
The continuity of the action follows from proposition \ref{l-clif-act-std-fock-spac-bndl}; that is, we define a continuous map on the cross products from which the associated bundles are formed, which because it respects the equivalence relation, descends to a continuous map of the quotients.  That is, a continuous equivariant map descends to a continuous map of the orbit spaces \citep[page~4]{tomD87}.
\end{proof}

\chapter{CONTINUOUS BUNDLE GERBES}\label{c-cont-bndl-gerb}

This chapter defines ``continuous'' bundle gerbes, their Dixmier-Douady classes and standard constructions we will use.  Our references assume smoothness, which we don't need.  Recall from assumption \ref{a-bndl} that unless bundles are called smooth or \Frechet, they are only continuous or topological.

\begin{ass}\label{a-cont-bg}
\index{continuous bundle gerbes}
\index{bundle gerbes!continuous}
(Continuous Bundle Gerbes).
The term bundle gerbe will refer to a continuous bundle gerbe unless otherwise noted.
\end{ass}

Our bundle gerbes have ``band'' $\UU(1)$; part of their definition is a principal $\UU(1)$ bundle.  Other abelian Lie groups could be used for other purposes.  Bundle gerbes with band $\UU(1)$ are useful to us because the intertwiners for two equivalent Fock representations form a $\UU(1)$ torsor (recall definition \ref{d-g-tors}).  Fibers of a principal $\UU(1)$ bundle are $\UU(1)$ torsors, and their being in a bundle reflects their being continuously related to each other.

Bundle gerbes with band $\UU(1)$ have been defined using principal $\UU(1)$ bundles in \citet[page~243]{Murr07}.  Equivalent definitions have been made with Hermitian line bundles \citep[page~3]{Wald07}, complex line bundles \citep[page~280]{HJJS08}, and $\CC^{\cross}$ principal bundles \citep[page~18]{Stev00}.

We now give some details about $\UU(1)$ torsors and principal bundles; those familiar with this material can skim through it or skip to section \ref{s-cont-bg-def}.

\section{U($1$) Torsors}\label{s-u1-tors}

\begin{defn}\label{d-u1t-tens}
\index{U(1) torsor@$\UU(1)$ torsor!tensor product}
\index{tensor product!U(1) torsors@$\UU(1)$ torsors}
\index{torsor}
(Tensor Products of $\UU(1)$ Torsors).
Given two $\UU(1)$ torsors $T_1$, $T_2$ with respective actions $\rho_1, \rho_2$, define a $\UU(1)$ action on $T_1 \cross T_2$ by
\begin{align}
\rho_Q \colon \UU(1) \cross (T_1 \cross T_2) &\rightarrow (T_1 \cross T_2) \notag \\
(z, (t_1, t_2)) &\mapsto (z t_1, z^{-1} t_2), \notag
\end{align}
and define the tensor product of $T_1$ and $T_2$, $T_1 \tensor T_2$, as the corresponding quotient, $(T_1 \cross T_2) / \UU(1)$ \citep[page~2]{tomD87}.  Define the map
\begin{align}
\widetilde{\rho_1 \tensor \rho_2} \colon \UU(1) \cross (T_1 \cross T_2) &\rightarrow (T_1 \cross T_2) \notag \\
(z, (t_1, t_2)) &\mapsto (z t_1, t_2). \notag
\end{align}
This is $\UU(1)$-equivariant with respect to the trivial action of $\UU(1)$ on itself and the action $\rho_Q$ on $T_1 \cross T_2$.  Define $\rho_1 \tensor \rho_2 \colon \UU(1) \cross (T_1 \tensor T_2) \rightarrow T_1 \tensor T_2$ as the continuous map induced by $\widetilde{\rho_1 \tensor \rho_2}$ on the quotients; a continuous equivariant map descends to a continuous map of the orbit spaces \citep[page~4]{tomD87}.  Let $t_1 \tensor t_2$ denote $[(t_1, t_2)]$.

This definition is written for a left action, but, because $\UU(1)$ is commutative, may be used mutatis mutandis for a right action also.
\end{defn}

\begin{lem}\label{l-u1t-tens}
\index{U(1) torsor@$\UU(1)$ torsor!tensor product}
\index{tensor product!U(1) torsors@$\UU(1)$ torsors}
\index{torsor}
(Tensor Products of $\UU(1)$ Torsors).
Given $\UU(1)$ torsors $T_1$, $T_2$, the tensor product $T_1 \tensor T_2$ of definition \ref{d-u1t-tens} is a $\UU(1)$ torsor.
\end{lem}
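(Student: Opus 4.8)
The plan is to verify, one by one, the conditions of definition \ref{d-g-tors} for $T_1 \tensor T_2$ equipped with the action $\rho_1 \tensor \rho_2$ built in definition \ref{d-u1t-tens}: nonemptiness, Hausdorffness, continuity and freeness and transitivity of the action, and openness of the orbit maps. Two of these are essentially free: the action $\rho_1 \tensor \rho_2$ was already noted to be continuous in definition \ref{d-u1t-tens}, and $T_1 \tensor T_2$ is nonempty since $T_1$, $T_2$ are nonempty, so $t_1 \tensor t_2$ exists for any $t_1 \in T_1$, $t_2 \in T_2$. The one algebraic fact I would isolate at the outset is the identity $(z t_1) \tensor t_2 = t_1 \tensor (z t_2)$ for $z \in \UU(1)$; this is immediate from the definition of $(T_1 \cross T_2)/\UU(1)$ by $\rho_Q$ applied with parameter $z$, and it uses that $\UU(1)$ is abelian. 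Freeness and transitivity of $\rho_1 \tensor \rho_2$ then follow by short computations from this identity together with freeness and transitivity of $\rho_1$ and $\rho_2$.

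Rather than confront Hausdorffness and the openness of orbit maps directly at the level of the quotient, I would produce an explicit equivariant homeomorphism and transport the torsor structure. Fix $t_1^0 \in T_1$ and set $\psi \colon T_2 \to T_1 \tensor T_2$, $\psi(t_2) = t_1^0 \tensor t_2$. Transitivity of $\rho_1$ plus freeness of $\rho_1$ and $\rho_2$ (and the identity above) make $\psi$ a bijection, and it is continuous as the composite of $t_2 \mapsto (t_1^0, t_2)$ with the quotient map $T_1 \cross T_2 \to T_1 \tensor T_2$. For the inverse, since $T_1$ is a torsor the orbit map $\rho_{1, t_1^0} \colon \UU(1) \to T_1$ is a homeomorphism, so $(t_1, t_2) \mapsto \rho_{1, t_1^0}^{-1}(t_1)\cdot t_2 \in T_2$ is continuous, is constant on $\rho_Q$-orbits (again by commutativity of $\UU(1)$), and hence descends through the quotient map to a continuous map $T_1 \tensor T_2 \to T_2$ which one checks is two-sided inverse to $\psi$. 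Thus $\psi$ is a homeomorphism, and $\psi(z t_2) = (z t_1^0)\tensor t_2 = z\,\psi(t_2)$ shows it is $\UU(1)$-equivariant for $\rho_2$ on $T_2$ and $\rho_1 \tensor \rho_2$ on $T_1 \tensor T_2$.

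The conclusion is then formal: $T_1 \tensor T_2$ is Hausdorff because it is homeomorphic to the Hausdorff space $T_2$, and freeness, transitivity, and openness of the orbit maps of $\rho_1 \tensor \rho_2$ are carried across the equivariant homeomorphism $\psi$ from the corresponding properties of $\rho_2$ (each orbit map of $\rho_1\tensor\rho_2$ is $\psi$ composed with an orbit map of $\rho_2$, hence a homeomorphism); with the continuity already in hand, this exhausts the axioms of definition \ref{d-g-tors}. (Alternatively, since $\UU(1)$ is compact one could check the induced action is free and transitive, invoke that the quotient of the Hausdorff space $T_1 \cross T_2$ by the compact group $\UU(1)$ is Hausdorff, and then apply lemma \ref{l-g-tors-home-g} to get the orbit maps; but the explicit $\psi$ seems cleanest.) The main obstacle is exactly this point-set step --- ruling out pathology of the quotient topology, i.e.\ confirming Hausdorffness and that the orbit maps are genuinely open and not merely continuous bijections --- and identifying $T_1 \tensor T_2$ with $T_2$ (or symmetrically with $T_1$) via an equivariant homeomorphism is what removes it.
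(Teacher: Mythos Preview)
Your proof is correct, and in fact your parenthetical alternative at the end is essentially what the paper does: the paper cites tom Dieck for the fact that a compact group acting on a Hausdorff space has Hausdorff orbit space, then checks freeness and transitivity of $\rho_1\tensor\rho_2$ directly on representatives, and stops there (implicitly relying on compactness of $\UU(1)$, as in lemma~\ref{l-g-tors-home-g}, to get the openness of the orbit maps).

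Your main argument, by contrast, avoids the external reference entirely by producing the explicit equivariant homeomorphism $\psi\colon T_2\to T_1\tensor T_2$, $t_2\mapsto t_1^0\tensor t_2$, and transporting the torsor structure across it. This is a genuinely different route: it is more self-contained (no appeal to general facts about compact group actions on Hausdorff spaces), it makes the openness of orbit maps completely explicit rather than implicit, and as a bonus it already exhibits the canonical-up-to-choice identification $T_1\tensor T_2\cong T_2$ that foreshadows the isomorphism $\UU(1)\tensor T\cong T$ in lemma~\ref{l-u1t-cano-isom}. The paper's approach is shorter and more direct once one is willing to quote the Hausdorffness fact, but yours is cleaner as a standalone argument and makes visible exactly which torsor axioms for $T_1$ and $T_2$ are doing the work at each step.
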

\begin{proof}
By \citet[pages~22--23]{tomD87}, since $\UU(1)$ is compact and $T_1$ and $T_2$, hence $T_1 \cross T_2$ are Hausdorff, $T_1 \tensor T_2$ is Hausdorff.

Let the actions for $T_1$, $T_2$ be respectively $\rho_1$, $\rho_2$. $\rho_1 \tensor \rho_2$ is given by $\rho_1 \tensor \rho_2 (z, [(t_1, t_2)]) = [(z t_1, t_2)] = [(z^{-1} z t_1, z t_2)] = [(t_1, z t_2)]$ using the quotient equivalence relation from the action $\rho$.  This satisfies the algebraic properties of a group action as follows because $\rho_1$ is a group action: $[(z_1 (z_2 t_1), t_2)] = [((z_1 z_2) t_1, t_2)]$ and $[(1 t_1, t_2)] = [(t_1, t_2)]$.

$\rho_1 \tensor \rho_2$ is free because $[(z t_1, t_2)] = [(t_1, t_2)] \Leftrightarrow (z t_1, t_2) = (w t_1, w^{-1} t_2)$ for some $w \in \UU(1)$, which implies since $\rho_2$ is free that $w = 1$, and hence because $\rho_1$ is free, $z = 1$.  It is transitive because fixing some $[(t_{1 0}, t_{2 0})] \in T_1 \tensor T_2$, given any $[(t_1, t_2)]$, there is some $w \in \UU(1)$ such that $t_2 = w t_{2 0}$ because $\rho_2$ is transitive, and there is some $z \in \UU(1)$ such that $t_1 = z (w^{-1} t_{1 0})$ because $\rho_1$ is transitive, whence $[(z t_{1 0}, t_{2 0})] = [(w^{-1} z t_{1 0}, w t_{2 0})] = [(t_1, t_2)]$.  Thus $T_1 \tensor T_2$ with the action $\rho_1 \tensor \rho_2$ is a $\UU(1)$ torsor.
\end{proof}

\begin{defn}\label{d-u1t-dual}
\index{U(1) torsor@$\UU(1)$ torsor!dual}
\index{dual!U(1) torsor@$\UU(1)$ torsor}
\index{torsor}
(The Dual of a $\UU(1)$ Torsor).
Given a $\UU(1)$ torsor $T$ with action $\rho$, define its dual $T^{*}$, also called its inverse $T^{-1}$, as the same topological space and set, so that as topological spaces and sets we can write $T^{*} = T$.  For $t \in T$, let $t^{*} \in T^{*}$ denote the same element $t$.  That is, as an element of the topological space for $T$, which is also the topological space for $T^{*}$, $t^{*} = t$.

The action for $T^{*}$ is $\rho^{*}$, given for $z \in \UU(1)$, $t \in T = T^{*}$ by $z t^{*} = \rho^{*} (z, t^{*}) = (\rho (\overline{z}, t))^{*} = (\overline{z} t)^{*}$, where the first equality $T = T^{*}$ is of sets, the second and fourth are previously defined notational equalities of elements of torsors, the third is the present definitional equality of elements of torsors; and $\overline{z} = z^{-1}$ on $\UU(1)$.

Among its other meanings, let ${}^{*} \colon T \rightarrow T^{*}$ denote the identity map on the topological spaces, written ``exponentially'' as $t \mapsto t^{*}$.  When the argument of this map isn't a single symbol, use parentheses to enclose the argument, as in $(\overline{z} t)^{*}$, where $t \in T$ as a torsor, which implies that $\overline{z} t = \rho (\overline{z}, t) \in T$.

When working with duals of $\UU(1)$ torsors we use $\overline{z}$ to denote the complex conjugate of $z$; we do not use $z^{*}$ to denote the complex conjugate of $Z$ or equivalently the adjoint of $z \in \UU(1)$ as a unitary operator on $\CC$.  Great care should be taken with this to avoid a conflict with the definition, for $z \in \UU(1)$, of $z^{*} \in \UU(1)^{*}$ as the same set element $z^{*} = z$.

This definition is written for a left action, but, because $\UU(1)$ is commutative, may be used mutatis mutandis for a right action also.
\end{defn}
The definition could be rewritten to make explicit in the notations the distinction between the $\UU(1)$ torsor, and the topological space and set, $T$, but this would be more cumbersome in practical use later.  Often, ${}^{*}$ applied to an element will be effectively a sneaky way to remind us when to interpret the result as being in the dual torsor to the torsor in which lies the element of the unadorned symbol; in particular to remind us which action to use.

The possible confusion between ${}^{*}$ as torsor dual and ${}^{*}$ as complex conjugation or operator adjoint appears to be the price for going with this common notation, which otherwise seems useful.  Alternatives would be some otherwise unused symbolic device, or ordinary functional notation.

\begin{lem}\label{l-u1t-dual}
\index{U(1) torsor@$\UU(1)$ torsor!dual}
\index{dual!U(1) torsor@$\UU(1)$ torsor}
\index{torsor}
(The Dual of a $\UU(1)$ Torsor).
Given a $\UU(1)$ torsor $T$, the dual $T^{*}$ of definition \ref{d-u1t-dual} is a $\UU(1)$ torsor.
\end{lem}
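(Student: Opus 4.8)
The plan is to verify directly the defining conditions of Definition \ref{d-g-tors} for the pair $(T^{*}, \rho^{*})$, exploiting the fact that $T^{*}$ and $T$ coincide as topological spaces and as sets. First I would observe that $T^{*}$ is nonempty and Hausdorff, since these properties carry over verbatim from $T$. Next I would check the algebraic axioms of a group action for $\rho^{*}$: one has $\rho^{*}(1, t^{*}) = (\overline{1}\, t)^{*} = t^{*}$, and $\rho^{*}(z_1, \rho^{*}(z_2, t^{*})) = (\overline{z_1}\,\overline{z_2}\, t)^{*} = (\overline{z_1 z_2}\, t)^{*} = \rho^{*}(z_1 z_2, t^{*})$, using only that $\rho$ is an action and that $z \mapsto \overline{z}$ is the (multiplicative) group inverse on $\UU(1)$.

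For continuity, I would factor $\rho^{*}$ as $\UU(1) \cross T^{*} \rightarrow \UU(1) \cross T \xrightarrow{\rho} T \rightarrow T^{*}$, where the first map is $(z, t^{*}) \mapsto (\overline{z}, t)$, continuous as the product of the inversion map on $\UU(1)$ with the identity on the common space $T = T^{*}$; the middle map is the continuous action $\rho$; and the last map is again the identity of that common space. Hence $\rho^{*}$ is continuous. Freeness follows because $\rho^{*}(z, t^{*}) = t^{*}$ unwinds to $\overline{z}\, t = t$ in $T$, forcing $\overline{z} = 1$ since $\rho$ is free, hence $z = 1$. Transitivity follows because, given $t_1^{*}, t_2^{*}$, transitivity of $\rho$ provides $w \in \UU(1)$ with $w t_1 = t_2$, and then $\rho^{*}(\overline{w}, t_1^{*}) = (w t_1)^{*} = t_2^{*}$.

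To conclude there are two options. I could verify the last condition of Definition \ref{d-g-tors} by hand: $\rho^{*}_{t^{*}} \colon \UU(1) \rightarrow T^{*}$ is the composite of the inversion homeomorphism of $\UU(1)$, the open map $\rho_t$ (a homeomorphism by the torsor hypothesis on $T$), and the identity $T \rightarrow T^{*}$, so it is open. Alternatively, and more cleanly, since $\UU(1)$ is compact and $T^{*}$ is Hausdorff, Lemma \ref{l-g-tors-home-g} applies directly to the continuous, free, transitive action $\rho^{*}$ and yields that $T^{*}$ is a $\UU(1)$ torsor; the right-action version is handled mutatis mutandis, as in Definition \ref{d-u1t-dual}. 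I do not expect a real obstacle here; the only point requiring care is the bookkeeping with the symbol ${}^{*}$, keeping straight when it denotes the identity map onto the dual space and when it merely records that the action $\rho^{*}$ (rather than $\rho$) is the one in force on a given element.
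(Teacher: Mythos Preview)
Your proposal is correct and follows essentially the same approach as the paper: factor $\rho^{*}$ through inversion on $\UU(1)$ composed with $\rho$, and deduce freeness and transitivity from the corresponding properties of $\rho$ together with bijectivity of inversion. You are simply more explicit than the paper, which omits the algebraic check of the action axioms and glosses over the open-map condition (implicitly relying on Lemma~\ref{l-g-tors-home-g}, exactly as your second option suggests).
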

\begin{proof}
The same topological space as $T$, $T^{*}$ is Hausdorff.  Let the action for $T$ be $\rho$.  The action $\rho^{*} \colon \UU(1) \cross T^{*} \rightarrow T^{*}$ is continuous, since as a map of topological spaces it is the composition $\UU(1) \cross T \xrightarrow{inv \cross \ident} \UU(1) \cross T \xrightarrow{\rho} T$ of continuous maps, where $inv \colon z \mapsto z^{-1}$.  The action $\rho^{*}$ is free and transitive because $\rho$ is free and transitive and $inv$ is a bijection.  Thus $T^{*}$ is a $\UU(1)$ torsor.
\end{proof}

\begin{lem}\label{l-u1t-dual-cano-doub-alt}
\index{U(1) torsor@$\UU(1)$ torsor!dual}
\index{dual!U(1) torsor@$\UU(1)$ torsor}
\index{omega@$\omega$}
\index{torsor}
(Uses of ${}^{*}$; the Double Dual; an Alternative Dual of a $\UU(1)$ Torsor).
We denote the map $({}^{*})^{-1}$ also by ${}^{*}$.  Then given a $\UU(1)$ torsor $T$, $T^{**} = T$.

Given $z \in \UU(1)$ and $t \in T$,
\[
\overline{z} t^{*} = \rho^{*} (\overline{z}, t^{*}) = (\rho (z, t))^{*} = (z t)^{*}, \notag
\]
as elements of torsors; ${}^{*}$ is an inverse-equivariant homeomorphism of the topological spaces underlying the $\UU(1)$ torsors.

An alternative dual to $T^{*}$ is $\Hom(T, \UU(1))$:  the continuous bi-equivariant pairing $ T \cross \Hom(T, \UU(1)) \rightarrow \UU(1)$, given for $\sigma \in \Hom(T, \UU(1))$ by $(\sigma, t) \mapsto \sigma(t)$, induces a natural isomorphism of $\UU(1)$ torsors, $\omega \colon \Hom(T, \UU(1)) \rightarrow T^{*}$, $\sigma \mapsto t^{*}$ where $\sigma(t) = 1$, or $\sigma ((\omega(\sigma))^{*}) = 1$.
\end{lem}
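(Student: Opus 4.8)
The plan is to treat the three assertions in turn, each time unwinding Definition \ref{d-u1t-dual} and using that the underlying set and topological space of a dual torsor are literally those of the original. For the double dual, since as sets and spaces $T^{**} = (T^{*})^{*} = T^{*} = T$, only the actions need comparing. Write $\rho, \rho^{*}, \rho^{**}$ for the actions on $T, T^{*}, T^{**}$; Definition \ref{d-u1t-dual} says $\rho^{*}(\overline{z}, t^{*}) = (\rho(z,t))^{*}$ for $z \in \UU(1)$, $t \in T$. The displayed chain $\overline{z}\,t^{*} = \rho^{*}(\overline{z}, t^{*}) = (\rho(\overline{\overline{z}}, t))^{*} = (\rho(z,t))^{*} = (z t)^{*}$ is then exactly this identity together with the exponential notation for the two actions and $\overline{\overline{z}} = z$. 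Applying the identity twice gives $\rho^{**}(z, t^{**}) = (\rho^{*}(\overline{z}, t^{*}))^{*} = ((\rho(z,t))^{*})^{*} = \rho(z,t)$, using the convention that $({}^{*})^{-1}$ is again written ${}^{*}$, so $t^{**} = t$; hence $T^{**} = T$ as $\UU(1)$ torsors. Finally ${}^{*}\colon T \to T^{*}$ is the identity on the common underlying space, hence a homeomorphism, and ${}^{*}(z t) = (z t)^{*} = \overline{z}\,{}^{*}(t)$ shows it intertwines $\rho$ and $\rho^{*}$ along the inversion $z \mapsto \overline{z} = z^{-1}$, i.e. it is an inverse-equivariant homeomorphism.

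For the alternative dual, I would interpret $\Hom(T, \UU(1))$ as the set of continuous $\UU(1)$-equivariant maps $\sigma\colon T \to \UU(1)$, with $\UU(1)$ acting on itself by multiplication, so $\sigma(z t) = z\,\sigma(t)$. Since the action on $T$ is free and transitive and each map $g \mapsto g t_0$ is a homeomorphism, such a $\sigma$ is determined by, and may have prescribed arbitrarily, its value at one point; therefore $(w\sigma)(t) := w\,\sigma(t)$ defines a free transitive $\UU(1)$ action on $\Hom(T, \UU(1))$, making it an algebraic torsor, which by Corollary \ref{co-g-tors-alg-top} we topologize as a $\UU(1)$ torsor. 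The pairing $T \cross \Hom(T, \UU(1)) \to \UU(1)$, $(t,\sigma)\mapsto \sigma(t)$, is continuous and bi-equivariant: equivariance in the $T$-slot is the defining property of elements of $\Hom(T,\UU(1))$, and in the other slot it is the action just defined.

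I would then define $\omega$ by $\omega(\sigma) = t^{*}$, where $t$ is the unique element of $T$ with $\sigma(t) = 1$ --- existence and uniqueness following from equivariance of $\sigma$ together with transitivity and freeness of the action on $T$; this is precisely the stated characterization $\sigma((\omega(\sigma))^{*}) = \sigma((t^{*})^{*}) = \sigma(t) = 1$. If $\sigma(t) = 1$ then $(w\sigma)(\overline{w}\,t) = w\,\sigma(\overline{w}\,t) = w\overline{w}\,\sigma(t) = 1$, so $\omega(w\sigma) = (\overline{w}\,t)^{*} = w\,t^{*} = w\,\omega(\sigma)$ by the inverse-equivariance of ${}^{*}$; thus $\omega$ is $\UU(1)$-equivariant. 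Being a $\UU(1)$-equivariant map between $\UU(1)$ torsors, $\omega$ is automatically an isomorphism of $\UU(1)$ torsors by Corollary \ref{co-g-equi-cont}, and it is visibly natural since it was defined with no auxiliary choices.

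The main obstacle is purely the bookkeeping in the last two paragraphs: keeping the overloaded superscript ${}^{*}$ (torsor dual) apart from ordinary inversion/conjugation on $\UU(1)$, and fixing the equivariance convention on $\Hom(T,\UU(1))$ so that $\omega$ comes out equivariant rather than anti-equivariant. Once that convention is pinned down, Corollary \ref{co-g-equi-cont} upgrades the equivariant bijection $\omega$ to a homeomorphism for free, so no delicate point-set argument about the topology of $\Hom(T,\UU(1))$ is needed.
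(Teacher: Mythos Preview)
Your proof is correct and follows essentially the same approach as the paper: you define $\omega$ by the condition $\sigma(t)=1$, verify $\UU(1)$-equivariance, and invoke Corollary~\ref{co-g-equi-cont} to conclude it is a torsor isomorphism. The paper additionally writes out injectivity and surjectivity of $\omega$ explicitly, but since Corollary~\ref{co-g-equi-cont} already delivers those, your shortcut is fine.

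One small divergence worth flagging: you topologize $\Hom(T,\UU(1))$ via Corollary~\ref{co-g-tors-alg-top}, i.e.\ by transporting the topology of $\UU(1)$ along a chosen equivariant bijection, whereas the paper (in Definition~\ref{d-u1t-hom}, which in fact appears just after this lemma) equips $\Hom(T_1,T_2)$ with the compact-open topology and proves directly that this makes it a torsor. The paper's proof of continuity of the pairing $(t,\sigma)\mapsto\sigma(t)$ uses the metric $d(\sigma_1,\sigma_2)$ coming from that compact-open picture. Your route sidesteps this estimate at the cost of a topology that a priori depends on a choice; of course for $\UU(1)$ torsors any two such choices differ by an equivariant homeomorphism, so the topologies agree, and ultimately both coincide with the compact-open one. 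It would tighten your argument to note this, or simply to adopt the compact-open topology from the start as the paper does.
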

\begin{proof}
That the natural pairing is bi-equivariant follows from the definitions.  That it is continuous follows from $\abs{\sigma_1(t_1) - \sigma_2(t_2)} \le \abs{\sigma_1(t_1) - \sigma_1(t_2)} + \abs{\sigma_1(t_2) - \sigma_2(t_2)} \le \abs{\sigma_1(t_1) - \sigma_1(t_2)} + d(\sigma_1, \sigma_2)$ and continuity of $\sigma_1$.

To reason about $\omega$, denote the action for $T$ by $\rho$ and take any $t_0 \in T$; then $\sigma ((\sigma(t_0))^{-1} t_0) = 1$, and we can let $t = (\sigma(t_0))^{-1} t_0$.  Well-definedness of $\omega$ follows because elements of $\Hom(T, \UU(1))$ are injective.  Injectivity of $\omega$ holds because, for $\sigma_1, \sigma_2 \in \Hom(T, \UU(1))$, $\sigma_1(t) = \sigma_2(t) \Rightarrow \sigma_2^{-1} \circ \sigma_1(t) = t$, and being equivariant, $\sigma_2^{-1} \circ \sigma_1$ is the identity.  Surjectivity of $\omega$ holds because given $t$, choose any $\sigma_0 \in \Hom(T, \UU(1))$; then $((\sigma_0(t))^{-1} \sigma_0)(t) = 1$.  Since $\omega$ by using its defining equation is $\UU(1)$-equivariant, by corollary \ref{co-g-equi-cont} it is an isomorphism of $\UU(1)$ torsors.
\end{proof}

\begin{defn}\label{d-u1t-hom}
\index{U(1) torsor@$\UU(1)$ torsor!Hom}
\index{Hom@$\Hom$!U(1) torsors@$\UU(1)$ torsors}
\index{torsor}
(The $\Hom$ of $\UU(1)$ Torsors).
Given two $\UU(1)$ torsors $T_1$, $T_2$ with respective actions $\rho_1, \rho_2$, define
\[
\Hom(T_1, T_2) = \{ \phi \colon T_1 \rightarrow T_2 \st \phi \text{ is } \UU(1)\text{-equivariant} \}, \notag
\]
with the compact-open topology.  Use the natural action $\rho_H \colon \UU(1) \cross \Hom(T_1, T_2) \rightarrow \Hom(T_1, T_2)$ given for $z \in \UU(1)$, $t_1 \in T_1$ by $(z, \phi) \mapsto (t_1 \mapsto \rho_2(z, \phi(t_1)))$.

This definition is written for a left action, but, because $\UU(1)$ is commutative, may be used mutatis mutandis for a right action also.
\end{defn}

\begin{lem}\label{l-u1t-hom}
\index{U(1) torsor@$\UU(1)$ torsor!Hom@$\Hom$}
\index{Hom@$\Hom$!U(1) torsors@$\UU(1)$ torsors}
\index{torsor}
(The $\Hom$ of $\UU(1)$ Torsors).
Given $\UU(1)$ torsors $T_1$, $T_2$, $\Hom(T_1, T_2)$ of definition \ref{d-u1t-hom} is a $\UU(1)$ torsor.
\end{lem}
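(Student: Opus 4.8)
The plan is to verify the three torsor axioms — Hausdorffness, free and transitive action — for $\Hom(T_1, T_2)$ with the action $\rho_H$ of definition \ref{d-u1t-hom}, plus the openness condition on $\rho_{H,\phi}$. The cleanest route is to reduce everything to the already-established facts about $T_1 \tensor T_2$ and $T^{*}$ via the natural identification $\Hom(T_1, T_2) \cong T_1^{*} \tensor T_2$, sending $\phi$ to $t_1^{*} \tensor \phi(t_1)$ (independent of the choice of $t_1 \in T_1$ by equivariance of $\phi$). But since the compact-open topology on $\Hom(T_1, T_2)$ is in play, and a priori not obviously the same as the torsor topology, I would actually argue more directly, as follows.

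First I would check that $\rho_H$ is a well-defined action ignoring continuity: for $z \in \UU(1)$ and $\phi \in \Hom(T_1, T_2)$, the map $t_1 \mapsto \rho_2(z, \phi(t_1))$ is again $\UU(1)$-equivariant (immediate from $\UU(1)$ being abelian and $\rho_2$ an action), and the group-action identities $\rho_H(z_1, \rho_H(z_2, \phi)) = \rho_H(z_1 z_2, \phi)$ and $\rho_H(1, \phi) = \phi$ hold pointwise. Next, continuity of $\rho_H$: since $\UU(1)$ acts continuously on $T_2$ and composition with $\rho_2$ and the product $\UU(1) \cross \Hom(T_1,T_2) \cross T_1 \to T_2$ evaluated via $\ev$ is continuous (using lemma \ref{l-co-top}, with $T_1$ compact-ness not needed here but the relevant adjoint/evaluation statements applied in the appropriate direction), the adjoint map $\UU(1) \cross \Hom(T_1, T_2) \to \Hom(T_1, T_2)$ is continuous; I would cite the adjoint and evaluation parts of lemma \ref{l-co-top}. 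Then $\Hom(T_1, T_2)$ is nonempty: pick $t_1 \in T_1$, $t_2 \in T_2$, and define $\phi$ by $\phi(z t_1) = z t_2$; this is well-defined since the action on $T_1$ is free, is equivariant by construction, and is continuous because by lemma \ref{l-g-tors-home-g} the orbit maps $\rho_{1,t_1}, \rho_{2,t_2}$ are homeomorphisms $\UU(1) \to T_1, T_2$, so $\phi = \rho_{2,t_2} \circ \rho_{1,t_1}^{-1}$ is a composite of continuous maps.

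For freeness: if $\rho_H(z, \phi) = \phi$ then $\rho_2(z, \phi(t_1)) = \phi(t_1)$ for some $t_1$, and since $\rho_2$ is free, $z = 1$. For transitivity: given $\phi_1, \phi_2 \in \Hom(T_1, T_2)$, fix $t_1 \in T_1$; since $\rho_2$ is transitive there is a unique $z$ with $\phi_2(t_1) = \rho_2(z, \phi_1(t_1))$, and then equivariance of both $\phi_1, \phi_2$ forces $\phi_2 = \rho_H(z, \phi_1)$ everywhere. Finally, for the openness requirement in definition \ref{d-g-tors}: $\Hom(T_1, T_2)$ is Hausdorff as a subspace of the mapping space $C(T_1, T_2)$ with the compact-open topology (which is Hausdorff since $T_2$ is); the action is continuous, free, transitive; and Hausdorff. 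The slick way to finish is to invoke lemma \ref{l-g-tors-home-g} in reverse — but that needs the orbit map already known to be a homeomorphism, which is exactly what openness would give. Instead I would observe that the map $\psi \colon \Hom(T_1, T_2) \to \UU(1)$, $\psi(\phi) = z$ where $\phi(t_1) = \rho_2(z, \phi_1(t_1))$ for a fixed reference $\phi_1$ and fixed $t_1$, is continuous (it factors through evaluation at $t_1$ followed by $\rho_{2,\phi_1(t_1)}^{-1}$), bijective, and $\UU(1)$-equivariant, hence by corollary \ref{co-g-equi-cont} — applied once we know the codomain $\UU(1)$ is a torsor and $\psi$ is equivariant — $\psi$ is a homeomorphism; this makes the orbit maps $\rho_{H,\phi}$ homeomorphisms and in particular open, completing the verification.

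The main obstacle I anticipate is purely topological bookkeeping: confirming that the compact-open topology on $\Hom(T_1, T_2)$ genuinely matches the $\UU(1)$-torsor topology, i.e. that $\psi$ above is a homeomorphism and not merely a continuous bijection. This hinges on having continuity of evaluation $\ev \colon \Hom(T_1, T_2) \cross T_1 \to T_2$, which by lemma \ref{l-co-top} requires $T_1$ to be locally compact — and $T_1 \cong \UU(1)$ is compact, so this is fine. Once that point is nailed down, everything else is a routine transcription of the torsor-axiom checks already carried out for tensor products and duals in lemmas \ref{l-u1t-tens} and \ref{l-u1t-dual}.
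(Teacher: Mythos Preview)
Your argument is essentially sound, but there is a small circularity at the end: corollary \ref{co-g-equi-cont} requires \emph{both} domain and codomain to already be torsors, so you cannot invoke it on $\psi \colon \Hom(T_1,T_2) \to \UU(1)$ to conclude $\psi$ is a homeomorphism before you know $\Hom(T_1,T_2)$ is a torsor. The fix is immediate: the inverse $\psi^{-1}(z) = \rho_H(z,\phi_1)$ is exactly the orbit map $\rho_{H,\phi_1}$, and you have already shown $\rho_H$ is continuous, so $\psi^{-1}$ is continuous and $\psi$ is a homeomorphism directly. Even more simply, once you have Hausdorffness plus a continuous, free, transitive action, lemma \ref{l-g-tors-home-g} (compactness of $\UU(1)$) gives openness of the orbit maps for free --- which is also how the paper's proof, which never addresses openness explicitly, must be read.

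On continuity of $\rho_H$, your adjoint argument via lemma \ref{l-co-top} is a legitimate alternative to the paper's route. The paper instead transports the metric from $\UU(1)$ to $T_2$ via any torsor isomorphism, identifies the compact-open topology on $\Hom(T_1,T_2)$ with the uniform-convergence metric topology, and bounds $d(z_1\sigma_1, z_2\sigma_2) \le d(\sigma_1,\sigma_2) + \abs{z_1 - z_2}$ by a triangle inequality. Your approach avoids choosing an auxiliary metric and is arguably cleaner; the paper's gains a concrete metric description that is occasionally reused.
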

\begin{proof}
By corollary \ref{co-g-equi-cont}, elements of $\Hom(T_1, T_2)$ are isomorphisms of $\UU(1)$ torsors, and in particular, continuous.  By \citet[page~258]{Dugu66}, since $T_2$ is Hausdorff and $\Hom(T_1, T_2)$ has the compact-open topology, $\Hom(T_1, T_2)$ is Hausdorff.  Giving $\Hom(T_1, T_2)$ the compact-open topology is equivalent, using any isomorphism of $\UU(1)$ torsors $T_2 \rightarrow \UU(1)$ and the metric on $T_2$ thus induced by the absolute value on $\CC$, to the topology of uniform convergence.  Given $z_1, z_2 \in \UU(1)$, $\sigma_1, \sigma_2 \in \Hom(T_1, T_2)$, and choosing some $t_0 \in T$, we have $d(z_1 \sigma_1, z_2 \sigma_2) = \max_{t \in T} (\abs{z_1 \sigma_1(t) - z_2 \sigma_2(t)}) \le \max_{t \in T} (\abs{z_1 \sigma_1(t) - z_1 \sigma_2(t)}) + \max_{t \in T} (\abs{z_1 \sigma_2(t) - z_2 \sigma_2(t)}) = \max_{t \in T} (\abs{\sigma_1(t) - \sigma_2(t)}) + \abs{z_1 - z_2} = d(\sigma_1, \sigma_2) + \abs{z_1 - z_2}$, whence $\rho_H$ is continuous.

Freeness of the action $\rho_H$ follows from freeness of the action for $T_2$ by evaluating elements of $\Hom(T_1, T_2)$ at points in $T_1$.  Transitivity is a result of transitivity of the action for $T_2$ and the fact that an element of $\Hom(T_1, T_2)$, being $\UU(1)$-equivariant, is determined by its value at one element of $T_1$.

Note that by equivariance of $\phi$, $\rho_{H} (z, \phi) (t_1) =  \rho_2(z, \phi(t_1)) = \phi(\rho_1(z, t_1))$ so the action $\rho_1$ has not been neglected in the definition.
\end{proof}

\begin{lem}\label{l-u1t-cano-isom}
\index{U(1) torsor@$\UU(1)$ torsor!canonical isomorphisms}
\index{canonical isomorphisms!U(1) torsors@$\UU(1)$ torsors}
\index{isomorphisms!U(1) torsors@$\UU(1)$ torsors}
\index{torsor}
(Canonical Isomorphisms of $\UU(1)$ Torsors).
Given $\UU(1)$ torsors $T, T_1, T_2, T_3$, and $\UU(1)$ the trivial torsor with action given by multiplication, there are natural isomorphisms of $\UU(1)$ torsors, for $z, z_{st} \in \UU(1)$, $s, t \in T$, $t_1 \in T_1$, $t_2 \in T_2$, $\sigma \in \Hom(T, \UU(1))$, $\sigma_1 \in \Hom(T_1, \UU(1))$, $\phi_{12} \in \Hom(T_1, T_2)$, $\phi_{23} \in \Hom(T_2, T_3)$, defining the natural pairing $\langle , \rangle$:
\begin{align}
T_1 \tensor (T_2 \tensor T_3) &\cong (T_1 \tensor T_2) \tensor T_3 \notag \\
T_1 \tensor T_2 &\cong T_2 \tensor T_1, \text{ } t_1 \tensor t_2 \mapsto t_2 \tensor t_1 \notag \\
\UU(1) \tensor T &\cong T, \text{ } z \tensor t \mapsto z t \notag \\
T^{*} \tensor T &\cong \UU(1), \text{ } s^{*} \tensor t \mapsto z_{st} \text{ where } t = z_{st} s = \langle t, s \rangle s \notag \\
T &\cong T^{**} \text{ (they are equal), } t \mapsto t \notag \\
\UU(1)^{*} &\cong \UU(1), \text{ } z^{*} \text{ (} = z \text{ as a set element) }\mapsto \overline{z} \notag \\
(T_1 \tensor T_2)^{*} &\cong T_1^{*} \tensor T_2^{*}, \text{ } (t_1 \tensor t_2)^{*} \mapsto t_1^{*} \tensor t_2^{*} \notag \\
\Hom(T_1, T_2) \tensor \Hom(T_2, T_3) &\cong \Hom(T_1, T_3), \text{ } \phi_{12} \tensor \phi_{23} \mapsto \phi_{23} \circ \phi_{12} \notag \\
\Hom(T, \UU(1)) &\cong T^{*}, \text{ } \sigma \mapsto \omega(\sigma) \notag \\
\Hom(\UU(1), T) &\overset{\sim}{\underset{\ev}\rightarrow} T, \text{ } \sigma^{-1} \mapsto \sigma^{-1}(1) \notag \\
T_1^{*} \tensor T_2 &\cong \Hom(T_1, T_2), \text{ } \sigma_1 \tensor t_2 \mapsto \ev^{-1} (t_2) \circ \omega^{-1} (\sigma_1) \notag \\
\Hom (T, T) &\cong \UU(1), \text{ } \phi \mapsto t^{-1} \phi(t) \notag.
\end{align}
We will generally treat the first two isomorphisms, for associativity and commutativity of tensor products, as identifications.
\end{lem}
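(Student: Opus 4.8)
The plan is to dispatch all of these isomorphisms at once by a single uniform device. By Corollary \ref{co-g-equi-cont}, any $\UU(1)$-equivariant map between two $\UU(1)$ torsors is automatically an isomorphism of torsors (in particular bijective and a homeomorphism with continuous inverse); moreover, as its proof shows, continuity of the given map is not even needed as a hypothesis, since after composing with the canonical homeomorphisms $\rho_x$ of Definition \ref{d-g-tors} an equivariant map of torsors becomes a translation of the group. Consequently, for each entry in the list it suffices to verify only two things: (i) the displayed formula specifies a well-defined map, i.e. it respects the equivalence relation in every quotient $(\,\cdot\,\cross\,\cdot\,)/\UU(1)$ occurring in a tensor product (Definition \ref{d-u1t-tens}) or in a $\Hom$ (Definition \ref{d-u1t-hom}), and is unambiguous wherever it refers to ``the unique $z \in \UU(1)$ such that $\dots$'' — such a $z$ exists and is unique precisely because the target is a torsor, the action being free and transitive; and (ii) the formula is $\UU(1)$-equivariant for the actions fixed in Definitions \ref{d-u1t-tens}, \ref{d-u1t-dual}, \ref{d-u1t-hom}. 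Throughout, the computations hinge on the dual-torsor convention $z t^{*} = (\overline{z} t)^{*}$ of Definition \ref{d-u1t-dual} together with $\overline{z} = z^{-1}$ on $\UU(1)$; the fact that $T_1 \tensor T_2$, $T^{*}$, and $\Hom(T_1, T_2)$ really are $\UU(1)$ torsors, which Corollary \ref{co-g-equi-cont} needs of both source and target, was established in Lemmas \ref{l-u1t-tens}, \ref{l-u1t-dual}, \ref{l-u1t-hom}.

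Next I would run through the entries in turn. The associativity isomorphism $T_1 \tensor (T_2 \tensor T_3) \cong (T_1 \tensor T_2) \tensor T_3$, the commutativity isomorphism $t_1 \tensor t_2 \mapsto t_2 \tensor t_1$, the unit isomorphism $\UU(1) \tensor T \cong T$ given by $z \tensor t \mapsto zt$, and $\Hom(\UU(1), T) \xrightarrow{\ev} T$ given by $\phi \mapsto \phi(1)$, all read off immediately from the definitions of the relevant actions. For $\UU(1)^{*} \cong \UU(1)$, $z^{*} \mapsto \overline{z}$, and $(T_1 \tensor T_2)^{*} \cong T_1^{*} \tensor T_2^{*}$, $(t_1 \tensor t_2)^{*} \mapsto t_1^{*} \tensor t_2^{*}$, a single line each — using $z t^{*} = (\overline{z} t)^{*}$ and the equivalence relation defining $T_1^{*} \tensor T_2^{*}$ — gives both well-definedness and equivariance. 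For $T \cong T^{**}$ there is nothing to prove beyond noting that the underlying sets, and even the actions, coincide: $\rho^{**}(z, t) = (\rho^{*}(\overline{z}, t^{*}))^{*} = ((\rho(z,t))^{*})^{*} = \rho(z,t)$, using $t^{**} = t$, so the identity map works. For the pairing $T^{*} \tensor T \cong \UU(1)$, $s^{*} \tensor t \mapsto z_{st}$ with $t = z_{st} s$, existence and uniqueness of $z_{st}$ is the torsor property of $T$, and independence of the chosen representative $(s^{*}, t)$ follows from $\UU(1)$ being abelian together with $z t^{*} = (\overline{z} t)^{*}$. For $\Hom(T_1, T_2) \tensor \Hom(T_2, T_3) \cong \Hom(T_1, T_3)$, $\phi_{12} \tensor \phi_{23} \mapsto \phi_{23} \circ \phi_{12}$, well-definedness is the computation $(z^{-1}\phi_{23}) \circ (z\phi_{12}) = \phi_{23}\circ\phi_{12}$, valid by equivariance of $\phi_{23}$; and $\Hom(T,T) \cong \UU(1)$ is $\phi \mapsto z_{\phi}$, the unique $z$ with $\phi(t) = zt$, which by equivariance of $\phi$ is independent of $t$. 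The identification $\Hom(T, \UU(1)) \cong T^{*}$, $\sigma \mapsto \omega(\sigma)$, is exactly the isomorphism $\omega$ already produced in Lemma \ref{l-u1t-dual-cano-doub-alt}; and the remaining entry $T_1^{*} \tensor T_2 \cong \Hom(T_1, T_2)$, $\sigma_1 \tensor t_2 \mapsto \ev^{-1}(t_2) \circ \omega^{-1}(\sigma_1)$, I would obtain as the composite of isomorphisms already in hand, $T_1^{*} \tensor T_2 \cong \Hom(T_1, \UU(1)) \tensor \Hom(\UU(1), T_2) \cong \Hom(T_1, T_2)$, using $\omega^{-1}$, $\ev^{-1}$, and the composition isomorphism; this also settles its well-definedness and equivariance for free.

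What little difficulty there is lies entirely in the bookkeeping for the starred (dual) torsors: one must consistently apply $z t^{*} = (\overline{z}t)^{*}$ and keep track of which tensor factor the action $\rho_1 \tensor \rho_2$ acts on. There is no analytic or point-set content — continuity and continuity of inverses are supplied once and for all by Corollary \ref{co-g-equi-cont} — so the main obstacle is simply avoiding a sign (conjugation) error. Finally, I would handle the naturality assertion in the same spirit: since each map is built out of the torsor operations by an explicit formula, chasing representatives shows it commutes on the nose with any isomorphisms of the torsors occurring as arguments, using at each step that an isomorphism of $\UU(1)$ torsors is in particular $\UU(1)$-equivariant.
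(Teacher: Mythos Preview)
Your proposal is correct and close in spirit to the paper's proof, but you apply the device of Corollary \ref{co-g-equi-cont} more uniformly than the paper does. The paper invokes that corollary only for the $\Hom$-related entries; for the others it works more by hand. Associativity and commutativity it obtains from a universal property for the tensor product of torsors (analogous to modules, citing Lang) rather than by direct representative-chasing. For $\UU(1)\tensor T\cong T$ the paper explicitly checks well-definedness, equivariance, injectivity, surjectivity, and continuity in both directions (using that the quotient map is open). For $T^{*}\tensor T\cong\UU(1)$ it appeals to the continuous translation function of Lemma \ref{l-pb-tran-func-tors} (specialized to a one-point base) and descent of a bi-equivariant map to the quotient, again getting continuity explicitly rather than from the corollary.

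Your route is arguably tidier: once one observes, as you do, that the proof of Corollary \ref{co-g-equi-cont} never uses continuity of the given map---an equivariant set map between torsors composes with the $\rho_x$ to a translation of $\UU(1)$, which is automatically a homeomorphism---all the point-set work the paper does for the non-$\Hom$ entries becomes redundant, and only well-definedness and equivariance need to be checked. What the paper's more varied treatment buys is mainly self-containedness of each entry and an explicit universal-property framework for tensor products that it can later reuse. Either approach is fine here.
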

\begin{proof}
The tensor product of $\UU(1)$ torsors satisfies a universal property analogous to that for modules over commutative rings, from which follow associativity, commutativity and ``functoriality'' (we will use the notation of tensor products of maps without other comment) as in \citet[pages~601--607]{Lang93}, mutatis mutandis, especially replacing linearity with equivariance.

To prove $\UU(1) \tensor T \cong T$, define $\phi \colon \UU(1) \tensor T \rightarrow T$ by $\phi ([(z, t)]) = \rho (z, t) = z t$.  It is well-defined because $\rho (z_1 z, z_1^{-1} t) = \rho (z_1 z, \rho (z_1^{-1}, t)) = \rho (z_1 z z_1^{-1}, t) = \rho (z, t)$, and it is similarly equivariant.  Set $t$ to any fixed element of $T$ to see that $\phi$ is surjective, and it's injective because if $z_1 t_1 = z_2 t_2$, then defining $w \in \UU(1)$ by $w t_1 = t_2$ or $t_1 = w^{-1} t_2$, we have $z_1 t_1 = z_2 t_2 = w z_2 w^{-1} t_2 = w z_2 t_1$, whence $z_1 = w z_2$, and $[(z_1, t_1)] = [(w z_2, w^{-1} t_2)] = [(z_2, t_2)]$.  To see that $\phi$ is continuous at an arbitrary point $[(z_0, t_0)]$, let $U$ be any open neighborhood of $\phi([(z_0, t_0)]) = \rho(z_0, t_0)$.  Since $\rho$ is continuous, there are open neighborhoods $V$ of $z_0$ and $W$ of $t_0$ such that $\rho (V \cross W) \subset U$.  Let $\pi_Q \colon T \cross \UU(1) \rightarrow T \tensor \UU(1)$ be the quotient map of definition \ref{d-u1t-tens}; then $\phi^{-1} (U) = \pi_Q (\{ 1 \} \cross U) \supset \pi_Q (\{ 1 \} \cross \rho (V \cross W)) = \pi_Q (V \cross W)$, which contains $[(z_0, t_0)]$, and is open because $\pi_Q$ is open \citep[page~22]{tomD87}.  To see that $\phi^{-1}$ is continuous, note that $\phi^{-1} (t) = [(t, 1)]$, which is the composition $\pi_Q \circ (t \mapsto (t, 1))$.

To prove that $T^{*} \tensor T \cong \UU(1)$, note that the special case of lemma \ref{l-pb-tran-func-tors} where the base space is a point, implies that there is a continuous unique function $\tau \colon T \cross T \rightarrow \UU(1)$ such that $\tau(s, t) s = t$, whence the $z_{st}$ of the statement is $\tau(s, t)$.  It is inverse-equivariant with respect to the first factor and equivariant with respect to the second.  Then $\tau \circ (({}^{*})^{-1} \cross \ident) \colon T^{*} \cross T \rightarrow \UU(1)$ is a bi-equivariant map that descends to the continuous equivariant map of of the statement, which is thus an isomorphism of $\UU(1)$ torsors.

To see that $\UU(1)^{*} \cong \UU(1)$, note that complex conjugation is a homeomorphism and the resulting map here is equivariant:  for $w, z \in \UU(1)$, $z w^{*} = (\overline{z} w)^{*} \mapsto \overline{\overline{z} w} = z \overline{w}$.  We have not tried to define functors to show naturality in the technical sense elsewhere, but to allay concerns of unnaturality when identifying this torsor and its dual, note that it is natural in the technical sense, between two constant functors with respective values $\UU(1)^{*}$ and $\UU(1)$.

To see that $(T_1 \tensor T_2)^{*} \cong T_1^{*} \tensor T_2^{*}$, define the related map in the reverse direction, $T_1^{*} \cross T_2^{*} \rightarrow (T_1 \tensor T_2)^{*}$, by $t_1^{*} \cross t_2^{*} \mapsto (t_1 \tensor t_2)^{*}$, which being bi-equivariant, descends to the tensor product.

To check the isomorphisms relating to $\Hom$, note that they are $\UU(1)$-equivariant, and hence by corollary \ref{co-g-equi-cont} are isomorphisms of $\UU(1)$ torsors.  The definition of the last one doesn't depend on the element $t \in T$.
\end{proof}

\begin{note}\label{n-u1t-tens-dual-hom}
\index{U(1) torsors@$\UU(1)$ torsors!category}
\index{U(1) torsors@$\UU(1)$ torsors!identifications}
(Categorical Notes on $\UU(1)$ Torsors).
We could think of an abelian group structure up to canonical isomorphism for $\UU(1)$ torsors, with the tensor product as the group operation, $\UU(1)$ as the identity element, and the inverse given by $T^{-1} = T^{*}$.  Various properties that one would expect would follow from this structure.  To deal with elements of the torsors we might still need to know the specific canonical isomorphisms.

This could be made more precise.  We've gone some way in proving that $\UU(1)$ torsors and their morphisms form a closed symmetric monoidal category \citep[pages~162--163,~184,~251--252]{MacL00}, since our canonical isomorphisms are natural transformations of certain functors.  Our category has canonical isomorphisms $T^{**} \cong T$ (in fact, equality), so \citet[pages~301--304]{Solo91} gives the result that all the allowable diagrams commute, that are made from allowable natural transformations of allowable functors.  He says that allowable functors are those obtained from the following by composition:  the identity functor, the constant functor with value on objects $\UU(1)$ and on morphisms $\ident_{\UU(1)}$, the bifunctors $\tensor$ and $\Hom$; and allowable natural transformations are those that can be obtained by applications of $\tensor$, $\Hom$, $\pi$, $\pi^{-1}$, where $\pi \colon \Hom(T_1 \tensor T_2, T_3) \isomto \Hom(T_1, \Hom(T_2, T_3))$, and composition including with allowable functors, from the natural transformations $a, b, c, a^{-1}, b^{-1}$, where $a \colon (T_1 \tensor T_2) \tensor T_3 \rightarrow T_1 \tensor (T_2 \tensor T_3)$, $b \colon T \tensor \UU(1) \rightarrow T$, and $c \colon T_1 \tensor T_2 \isomto T_2 \tensor T_1$.
\end{note}
However, we leave this at a more informal level.

\section{Principal U($1$) Bundles}\label{s-u1-pb}

\begin{defn}\label{d-u1pb-tens}
\index{principal U(1) bundle@principal $\UU(1)$ bundle!tensor product}
\index{tensor product!principal U(1) bundles@principal $\UU(1)$ bundles}
\index{principal bundle}
(Tensor Products of Principal $\UU(1)$ Bundles).
Given two principal $\UU(1)$ bundles $\pi_1 \colon P_1 \rightarrow B$, $\pi_2 \colon P_2 \rightarrow B$ over the same base, with respective actions $\rho_1, \rho_2$, define a $\UU(1)$ action on $P_1 \cross_B P_2$ by
\begin{align}
\rho_Q \colon (P_1 \cross_B P_2) \cross \UU(1) &\rightarrow (P_1 \cross_B P_2) \notag \\
((p_1, p_2), z) &\mapsto (p_1 z, p_2 z^{-1}), \notag
\end{align}
and define the tensor product of $P_1$ and $P_2$, $P_1 \tensor P_2$, as the corresponding quotient, $(P_1 \cross_B P_2) / \UU(1)$ \citep[page~2]{tomD87}.  Define the map
\begin{align}
\widetilde{\rho_1 \tensor \rho_2} \colon (P_1 \cross_B P_2) \cross \UU(1) &\rightarrow (P_1 \cross_B P_2) \notag \\
((p_1, p_2), z) &\mapsto (p_1, p_2 z). \notag
\end{align}
This is $\UU(1)$-equivariant with respect to the action $\rho_Q$ on $P_1 \cross_B P_2$ and the trivial action of $\UU(1)$ on itself.  Define $\rho_1 \tensor \rho_2 \colon (P_1 \tensor P_2) \cross \UU(1) \rightarrow P_1 \tensor P_2$ as the continuous map induced by $\widetilde{\rho_1 \tensor \rho_2}$ on the quotients; a continuous equivariant map descends to a continuous map of the orbit spaces \citep[page~4]{tomD87}.  Let $p_1 \tensor p_2$ denote $[(p_1, p_2)]$.

Define $P_1 \tensor P_2$ as the total space, $\rho_1 \tensor \rho_2$ as the $\UU(1)$ action, and $\pi_{P_1 \tensor P_2} \colon P_1 \tensor P_2 \rightarrow B$ as the map that descends from the map on the fiber product, $(p_1, p_2) \mapsto \pi_1 (p_1)$, as the projection of a principal $\UU(1)$ bundle.
\end{defn}

\begin{lem}\label{l-u1pb-tens}
\index{principal U(1) bundle@principal $\UU(1)$ bundle!tensor product}
\index{tensor product!principal U(1) bundles@principal $\UU(1)$ bundles}
\index{principal bundle}
(Tensor Products of Principal $\UU(1)$ Bundles).
Given principal $\UU(1)$ bundles $\pi_1 \colon P_1 \rightarrow B$, $\pi_2 \colon P_2 \rightarrow B$ over the same base, their tensor product $\pi_{P_1 \tensor P_2} \colon P_1 \tensor P_2 \rightarrow B$ as in definition \ref{d-u1pb-tens}, is a principal $\UU(1)$ bundle.
\end{lem}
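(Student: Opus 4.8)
The plan is to verify the four conditions of Definition \ref{d-pb-afb} for $\pi_{P_1 \tensor P_2} \colon P_1 \tensor P_2 \rightarrow B$: that it is a topological fiber bundle with standard fiber $\UU(1)$, that $\rho_1 \tensor \rho_2$ commutes with the projection, that it is free and transitive over each point, and that suitable coordinate charts are $\UU(1)$-equivariant. Continuity of $\rho_1 \tensor \rho_2$ and of $\pi_{P_1 \tensor P_2}$ is already built into Definition \ref{d-u1pb-tens} (both are maps that descend from continuous maps through a quotient). The algebraic content is fiberwise: for each $b \in B$ the set $\pi_{P_1 \tensor P_2}^{-1}(b)$, with its $\rho_1 \tensor \rho_2$ action, is exactly the tensor product of the $\UU(1)$ torsors $(\pi_1)^{-1}(b)$ and $(\pi_2)^{-1}(b)$ in the sense of Definition \ref{d-u1t-tens} (with the right-action variant noted there), so Lemma \ref{l-u1t-tens} gives that $\rho_1 \tensor \rho_2$ is free and transitive over each point; and it commutes with $\pi_{P_1 \tensor P_2}$ directly from the way that projection was defined to descend from $(p_1, p_2) \mapsto \pi_1(p_1)$.

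For local triviality I would pass to a common trivializing cover. Given $b \in B$, choose an open $U \ni b$ over which $P_1$ and $P_2$ are both trivial, with $\UU(1)$-equivariant trivializations $\phi_i \colon \pi_i^{-1}(U) \rightarrow U \cross \UU(1)$. These induce a homeomorphism $\pi_1^{-1}(U) \cross_B \pi_2^{-1}(U) \cong U \cross \UU(1) \cross \UU(1)$ carrying $\rho_Q$ to $(b, z, w) \cdot u = (b, zu, wu^{-1})$. The multiplication map $U \cross \UU(1) \cross \UU(1) \rightarrow U \cross \UU(1)$, $(b, z, w) \mapsto (b, zw)$, is continuous, open, surjective, and constant on the $\rho_Q$-orbits, so it factors through the quotient to give a continuous bijection $\Phi_U \colon \pi_{P_1 \tensor P_2}^{-1}(U) \rightarrow U \cross \UU(1)$. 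Since the quotient map $q \colon P_1 \cross_B P_2 \rightarrow P_1 \tensor P_2$ is open (a quotient by a compact group action, \citet[page~22]{tomD87}), and restricts to an open map onto $\pi_{P_1 \tensor P_2}^{-1}(U)$, the identity $\Phi_U \circ q = (\text{multiplication map})$ forces $\Phi_U$ to be open, hence a homeomorphism. A short computation shows $\Phi_U$ intertwines $\rho_1 \tensor \rho_2$ with right translation on $U \cross \UU(1)$ and satisfies $\pi_1 \circ \Phi_U = \pi_{P_1 \tensor P_2}$ on its domain, so it is a $\UU(1)$-equivariant local trivialization; continuity of the change-of-coordinates maps is then automatic (Note \ref{n-tran-func-fb-vb}).

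Finally I would note that the total space is Hausdorff: $P_1 \cross_B P_2$ is a subspace of $P_1 \cross P_2$ and hence Hausdorff, and its quotient by the compact group $\UU(1)$ is again Hausdorff by \citet[pages~22--23]{tomD87}, exactly as used for Lemma \ref{l-u1t-tens}. Combining Hausdorffness, the local trivializations $\{\Phi_U\}$, equivariance of the charts, and the fiberwise freeness and transitivity yields that $\pi_{P_1 \tensor P_2}$ is a principal $\UU(1)$ bundle. I expect the only delicate point to be the middle paragraph's topological identification — confirming that the quotient topology on $P_1 \tensor P_2$ makes $\Phi_U$ a homeomorphism — which is handled cleanly by openness of $q$; everything else is either immediate from Definition \ref{d-u1pb-tens} and Lemma \ref{l-u1t-tens} or a routine equivariance check.
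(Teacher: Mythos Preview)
Your proof is correct and follows essentially the same approach as the paper: both pass to a common trivializing cover, form the product trivialization of $P_1 \cross_B P_2$, and descend it through the $\rho_Q$-quotient to obtain equivariant local trivializations, with the fiberwise free-and-transitive action handled by the torsor lemma. The only minor difference is bookkeeping: the paper first descends to $U \cross (\UU(1) \tensor \UU(1))$ via the fact that an equivariant homeomorphism induces a homeomorphism of orbit spaces \citep[page~4]{tomD87}, then composes with the canonical $\UU(1) \tensor \UU(1) \cong \UU(1)$, whereas you go directly to $U \cross \UU(1)$ using the multiplication map and argue via openness of $q$; your added Hausdorffness remark is not in the paper's proof but is harmless.
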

\begin{proof}
Use the notation of the definition.  As sets, definition \ref{d-u1pb-tens} on each fiber, and definition \ref{d-u1t-tens} give the same torsor, and continuity of the action in the bundle definition implies continuity of its restriction to each fiber.  Over each $b \in B$, the map $\rho_1 \tensor \rho_2$ is a free and transitive $\UU(1)$ action by reasoning analogous to that in lemma \ref{l-u1t-tens}.

Let the projections of the original bundles be $\pi_1, \pi_2$, and let $\{ U_i \}$ be an open cover of $B$ giving equivariant local trivializations for both original bundles: $\phi_{i1} \colon \pi_1^{-1} (U_i) \rightarrow U_i \cross \UU(1)$, $\phi_{i2} \colon \pi_2^{-1} (U_i) \rightarrow U_i \cross \UU(1)$.  Then $P_1 \cross_B P_2$ has local trivializations $\phi_{i1} \cross_B \phi_{i2} \colon (\pi_1 \cross_B \pi_2)^{-1} (U_i) \isomto U_i \cross \UU(1) \cross \UU(1)$, homeomorphisms that are equivariant for each of the fiber product factors.  Given $b \in U_i$, $\phi_{i1} \cross_B \phi_{i2} \colon (P_1 \cross_B P_2)_b \isomto \{ b \} \cross \UU(1) \cross \UU(1)$.

The equivariance and fiber-preserving properties of the homeomorphism $\phi_{i1} \cross_B \phi_{i2}$ imply that it carries (in particular is equivariant for) the bundle action $\rho_Q$ on $(\pi_1 \cross_B \pi_2)^{-1} (U_i)$ to the product of the trivial action on $U_i$ and the torsor action $\rho_Q$ on $\UU(1) \cross \UU(1)$.  Thus the map $(\phi_{i1} \cross_B \phi_{i2})_{\rho_Q} \colon \pi_{P_1 \tensor P_2}^{-1} (U_i) \rightarrow U_i \cross \UU(1) \tensor \UU(1)$ that it descends to, a bijection, is a homeomorphism of the quotients \citep[page~4]{tomD87}, and in particular gives a homeomorphism $(P_1 \tensor P_2)_b \isomto \{ b \} \cross \UU(1) \cross \UU(1)$.

Again because of the equivariance properties of $\phi_{i1} \cross_B \phi_{i2}$, $(\phi_{i1} \cross_B \phi_{i2})_{\rho_Q}$ is $\UU(1)$-equivariant with respect to the bundle action $\rho_1 \tensor \rho_2$ on $\pi_{P_1 \tensor P_2}^{-1} (U_i)$ and the product of the trivial action on $U_i$ and the torsor action $\rho_1 \tensor \rho_2$ on $\UU(1) \tensor \UU(1)$.  Restricting, the bundle action on $(P_1 \tensor P_2)_b$ corresponds exactly to the torsor action on $\UU(1) \tensor \UU(1)$.

We would like the final local trivialization to have standard fiber $\UU(1)$.  Compose the canonical torsor isomorphism $\UU(1) \tensor \UU(1) \cong \UU(1)$ with the second component of $(\phi_{i1} \cross_B \phi_{i2})_{\rho_Q}$ to get $\phi_{i1} \tensor \phi_{i2} \colon \pi_{P_1 \tensor P_2}^{-1} (U_i) \rightarrow U_i \cross \UU(1)$, which is $\UU(1)$-equivariant for the bundle action $\rho_1 \tensor \rho_2$ on $\pi_{P_1 \tensor P_2}^{-1} (U_i)$ and the product of the trivial action on $U_i$ and the torsor action $\rho_1 \tensor \rho_2$ on $\UU(1) \tensor \UU(1)$.
\end{proof}
\begin{note}\label{n-u1pb-tens}
\index{tensor product!principal U(1) bundles@principal $\UU(1)$ bundles!transition functions}
(Example Alternative Proof Using Transition Functions).
Another viewpoint that can be useful is that of transition functions.  Here is an indication of how specifying the topology on the total space of the tensor product could be approached by taking as the transition functions for the tensor product of the bundles, the product in $\UU(1)$ of transition functions for the two tensor product factors.

Use the preceding notation, adding $U_j$ such that $U_i \cap U_j \ne \emptyset$, denote the transition functions by $g_{j i q} \colon (U_j \cap U_i) \cross \UU(1) \rightarrow \UU(1)$, $q = 1, 2$. Let $b \in U_i \cap U_j$; then $(P_1)_b$, $(P_2)_b$, and $(P_1 \tensor P_2)_b = (P_1)_b \tensor (P_2)_b$ are $\UU(1)$ torsors homeomorphic to $\UU(1)$.  Consider the following commutative diagram of $\UU(1)$ torsors, which omits the $\{b\} \cross$'s of the local trivializations, and omits the $j$ trivializations:
\[
\begindc{\commdiag}[5]
\obj(10,25)[objP1b]{$(P_1)_b$}
\obj(18,25)[objCtop]{$\cross_B$}
\obj(26,25)[objP2b]{$(P_2)_b$}
\obj(46,25)[objP1cP2b]{$(P_1 \cross_B P_2)_b$}
\obj(66,25)[objP1tP2b]{$(P_1 \tensor P_2)_b$}
\obj(10,10)[objU1b]{$\UU(1)$}
\obj(18,10)[objCbot]{$\cross$}
\obj(26,10)[objU2b]{$\UU(1)$}
\obj(46,10)[objU1cU2b]{$\UU(1) \cross \UU(1)$}
\obj(66,10)[objU1tU2b]{$\UU(1) \tensor \UU(1)$}
\obj(86,10)[objUb]{$\UU(1)$}
\mor{objP1b}{objU1b}{$\phi_{i1}$}
\mor{objP2b}{objU2b}{$\phi_{i2}$}
\mor{objP1cP2b}{objU1cU2b}{$\phi_{i1} \cross_B \phi_{i2}$}
\mor{objP1tP2b}{objU1tU2b}{$\phi_{i1} \tensor \phi_{i2}$}
\mor(30,25)(40,25){}[\atleft,\dashArrow]
\mor{objP1cP2b}{objP1tP2b}{$\pi$}
\mor(30,10)(40,10){}[\atleft,\dashArrow]
\mor{objU1cU2b}{objU1tU2b}{$\pi$}
\mor{objU1tU2b}{objUb}{$\cong$}
\cmor((7,9)(6,8)(5,6)(5,4)(6,2)(7,1)(9,0)(10,0)(11,0)(13,1)(14,2)(15,4)(15,6)(14,8)(13,9)) \pup(10,-2){$g_{j i 1}$}
\cmor((23,9)(22,8)(21,6)(21,4)(22,2)(23,1)(25,0)(26,0)(27,0)(29,1)(30,2)(31,4)(31,6)(30,8)(29,9)) \pup(26,-2){$g_{j i 2}$}
\cmor((43,9)(42,8)(41,6)(41,4)(42,2)(43,1)(45,0)(46,0)(47,0)(49,1)(50,2)(51,4)(51,6)(50,8)(49,9)) \pup(46,-2){$g_{j i 1} \cross g_{j i 2}$}
\cmor((63,9)(62,8)(61,6)(61,4)(62,2)(63,1)(65,0)(66,0)(67,0)(69,1)(70,2)(71,4)(71,6)(70,8)(69,9)) \pup(66,-2){$g_{j i 1} \tensor g_{j i 2}$}
\cmor((83,9)(82,8)(81,6)(81,4)(82,2)(83,1)(85,0)(86,0)(87,0)(89,1)(90,2)(91,4)(91,6)(90,8)(89,9)) \pup(86,-2){$g_{j i 1} g_{j i 2}$}
\enddc
\]
The diagram pretty much tells the story of constructing the local trivializations, using the shorthand notation of fiber and tensor products of local trivializations, when what are meant are fiber and tensor products of the $\UU(1)$ portions of them.  That the resulting change of coordinates functions for the tensor product of the bundles are continuous, follows from continuity of the transition functions, which follows from their formulae.  Thus we could conclude that despite the fiberwise definition, the gluing of the fibers is all right and the trivializations define the topology of the total space.
\end{note}

\begin{defn}\label{d-u1pb-dual}
\index{principal U(1) bundle@principal $\UU(1)$ bundle!dual}
\index{dual!principal U(1) bundle@principal $\UU(1)$ bundle}
\index{principal bundle}
(The Dual of a Principal $\UU(1)$ Bundle).
Given a principal $\UU(1)$ bundle $\pi \colon P \rightarrow B$ with action $\rho \colon P \cross \UU(1) \rightarrow P$, define its dual $P^{*}$, also called its inverse $P^{-1}$, as the same topological space and set, so that as topological spaces and sets we can write $P^{*} = P$.  For $p \in P$, let $p^{*} \in P^{*}$ denote the same element $p$.  That is, as an element of the topological space for $P$, which is also the topological space for $P^{*}$, $p^{*} = p$.

The action for $P^{*}$ is $\rho^{*}$, given for $z \in \UU(1)$, $p \in P = P^{*}$ by $p^{*} z= \rho^{*} (p^{*}, z) = (\rho (p, \overline{z}))^{*} = (p \overline{z})^{*}$, where the first equality $P = P^{*}$ is of sets, the second and fourth are previously defined notational equalities of elements of principal bundles, the third is the present definitional equality of elements of principal bundles; and $\overline{z} = z^{-1}$ on $\UU(1)$.  Denote the projection map for $P^{*}$ by $\pi^{*}$, equal to $\pi$ as a map of topological spaces.

Among its other meanings, let ${}^{*} \colon P \rightarrow P^{*}$ denote the identity map on the topological spaces, written ``exponentially'' as $p \mapsto p^{*}$.  When the argument of this map isn't a single symbol, use parentheses to enclose the argument, as in $(p \overline{z})^{*}$, with $p \in P$ as a principal bundle, which implies that $p \overline{z}= \rho (p, \overline{z}) \in P$.
\end{defn}
As with the $\UU(1)$ torsor definition, this  could be rewritten to make explicit in the notations the distinction between the principal $\UU(1)$ bundle, and the topological space and set, $P$, but this would be more cumbersome in practical use later.  Often, ${}^{*}$ applied to an element will be effectively a sneaky way to remind us when to interpret the result as being in the dual principal bundle to the principal bundle in which lies the element of the unadorned symbol; in particular to remind us which action to use.

\begin{lem}\label{l-u1pb-dual}
\index{principal U(1) bundle@principal $\UU(1)$ bundle!dual}
\index{dual!principal U(1) bundle@principal $\UU(1)$ bundle}
\index{principal bundle}
(The Dual of a Principal $\UU(1)$ Bundle).
Given a principal $\UU(1)$ bundle $\pi \colon P \rightarrow B$, its dual $\pi^{*} \colon P^{*} \rightarrow B$ as in definition \ref{d-u1pb-dual}, is a principal $\UU(1)$ bundle.
\end{lem}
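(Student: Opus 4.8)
The statement is a verification that the construction in Definition \ref{d-u1pb-dual} satisfies the defining conditions of a principal $\UU(1)$ bundle (Definition \ref{d-pb-afb}): we are handed a topological fiber bundle $\pi^{*}\colon P^{*}\to B$ with standard fiber $\UU(1)$ together with a continuous right action $\rho^{*}$, and we must check that $\rho^{*}$ commutes with $\pi^{*}$, is free and transitive over each point of $B$, and that the coordinate charts can be chosen $\UU(1)$-equivariant. Most of this is formal, since $P^{*}$ \emph{equals} $P$ as a topological space and $\rho^{*}$ differs from $\rho$ only by precomposing with inversion on $\UU(1)$; the work is bookkeeping to see that nothing breaks.

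First I would record that $\pi^{*}=\pi$ as maps of topological spaces, so $\pi^{*}$ is a continuous surjection onto $B$ and the underlying topological space $P^{*}$ is unchanged; hence $P^{*}\to B$ is a topological fiber bundle with standard fiber $\UU(1)$ (its local trivializations as a space are literally those of $P$). Next I would check continuity of $\rho^{*}$: by Definition \ref{d-u1pb-dual}, $\rho^{*}$ is the composition $P\cross\UU(1)\xrightarrow{\ident\cross\inv}P\cross\UU(1)\xrightarrow{\rho}P$ (after swapping the order of factors to match the right-action convention), a composition of continuous maps since $\inv$ is continuous on the topological group $\UU(1)$ and $\rho$ is continuous by hypothesis. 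The group-action axioms of Definition \ref{d-grp-act} for $\rho^{*}$ follow from those for $\rho$ together with the fact that $\inv$ is an anti-automorphism of the abelian group $\UU(1)$: $p^{*}(z_1 z_2)=(p\,\overline{z_1 z_2})^{*}=(p\,\overline{z_2}\,\overline{z_1})^{*}=((p\,\overline{z_1})\,\overline{z_2}\cdot\text{(reorder)})^{*}$, and commutativity of $\UU(1)$ lets the factors be reordered; $p^{*}\cdot 1=(p\,\overline 1)^{*}=p^{*}$. (This is exactly the pattern already used in the proof of Lemma \ref{l-u1t-dual} for $\UU(1)$ torsors, specialized here fiberwise to a bundle.) Freeness and transitivity of $\rho^{*}$ over each $b\in B$ are immediate: $\inv$ is a bijection of $\UU(1)$, so $p^{*}\,z=p^{*}$ forces $p\,\overline z=p$, hence $\overline z=1$ by freeness of $\rho$ on the fiber $P_b$, hence $z=1$; and given $p^{*},q^{*}\in P^{*}_b=P_b$, transitivity of $\rho$ gives $w\in\UU(1)$ with $p\,w=q$, whence $p^{*}\,\overline w=(p\,w)^{*}=q^{*}$.

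Finally, for the equivariance of coordinate charts: take an equivariant local trivialization $\phi\colon\pi^{-1}(U)\isomto U\cross\UU(1)$ for $P$, and compose its $\UU(1)$-component with $\inv\colon\UU(1)\to\UU(1)$ (equivalently, with the canonical isomorphism $\UU(1)^{*}\cong\UU(1)$, $z^{*}\mapsto\overline z$, from Lemma \ref{l-u1t-cano-isom}) to obtain $\phi^{*}\colon(\pi^{*})^{-1}(U)\to U\cross\UU(1)$; this is a homeomorphism since $\inv$ is, and it is $\UU(1)$-equivariant for $\rho^{*}$ on the source and the standard right translation on the target, precisely because intertwining the two actions is what the inversion is designed to do. Applying Lemma \ref{l-pb-mor-cov-id-isom} or simply citing Definition \ref{d-pb-afb} then concludes that $\pi^{*}\colon P^{*}\to B$ is a principal $\UU(1)$ bundle. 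There is no real obstacle here; the only thing one must be careful about is the convention-juggling between the right-action order of arguments in Definition \ref{d-pb-afb}/\ref{d-u1pb-dual} and the left-action statements of the torsor lemmas, and the standing warning in Definition \ref{d-u1pb-dual} not to confuse the torsor-dual superscript ${}^{*}$ with complex conjugation — so I would state once, explicitly, that throughout this proof $z^{*}$ means the set element $z$ viewed in $\UU(1)^{*}$ and $\overline z$ means the complex conjugate, and then proceed.
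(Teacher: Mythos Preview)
Your proof is correct and follows essentially the same approach as the paper: verify that $\rho^{*}$ is a continuous free transitive action fiberwise by reducing to the torsor case (Lemma \ref{l-u1t-dual}) via composition with inversion on $\UU(1)$, then obtain equivariant local trivializations for $P^{*}$ from those of $P$ by composing the $\UU(1)$-component with the canonical isomorphism $\UU(1)^{*}\cong\UU(1)$, i.e.\ inversion. The paper writes out the equivariance check for the local trivializations a bit more explicitly, but the argument is the same.
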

\begin{proof}
Using the notation of the definition, since $\UU(1)$ is commutative and $P^{*} = P$ as a topological space, the definition's construction of $\rho^{*}$ on $P^{*}$ from that on $P$ gives an action, with continuity following from that of $\rho$ and complex conjugation.  Over each $b \in B$, the map $\rho^{*}$ is a free and transitive $\UU(1)$ action by reasoning analogous to that in lemma \ref{l-u1t-dual}.

As sets, definition \ref{d-u1pb-dual} on each fiber, and definition \ref{d-u1t-dual} give the same construction.  The continuity of $\rho^{*}$ implies continuity of its restrictions to fibers.   Over each $b \in B$, $\rho^{*}$ coincides with the action of the torsor dual of the fiber.

Let the projection of the original bundle be $\pi$, and let $\{ U_i \}$ be an open cover of $B$ giving equivariant local trivializations for the original bundle: $\phi_i \colon \pi^{-1} (U_i) \rightarrow U_i \cross \UU(1)$.  Then the same local trivializations, as maps of topological spaces, work for $P^{*}$, since it is the same topological space as $P$.  Let us re-label them $\psi^{*}_i \colon (\pi^{*})^{-1} (U_i) \rightarrow U_i \cross \UU(1)^{*}$ for use with $P^{*}$; $\psi^{*}_i (p^{*}) = (\phi_i(p))^{*}$, where we identify $(U_i \cross \UU(1))^{*}$ with $U_i \cross \UU(1)^{*}$.

Letting $b = \pi(p)$, $\phi_i(p) = (b, z_{p,i})$ with $z_{p,i} \in \UU(1)$, and equivariance of $\phi$ is formulated as $\phi_i(p z) = \phi_i(p) z = (b, z_{p,i} z)$.  Equivariance of $\psi^{*}_i$ follows: $\psi^{*}_i (p^{*} z) = \psi^{*}_i ((p \overline{z})^{*}) = (\phi_i (p \overline{z}))^{*} = ((b, z_{p,i}) \overline{z})^{*} = (b, (z_{p,i} \overline{z})^{*}) = (b, z_{p,i}^{*} z)$.

We would like the final local trivialization to have standard fiber $\UU(1)$.  Compose the canonical torsor isomorphism $\UU(1)^{*} \cong \UU(1)$ with the second component of $\psi^{*}_i$ to get what we call $\phi^{*}_i \colon (\pi^{*})^{-1} (U_i) \rightarrow U_i \cross \UU(1)$, which is $\UU(1)$-equivariant for the bundle action $\rho^{*}$ on $(\pi^{*})^{-1} (U_i)$ and the product of the trivial action on $U_i$ and the action of $\UU(1)^{*}$.
\end{proof}
If the original bundle's transition functions are $g_{j i}$, then those of the dual are $\overline{g_{j i}}$.

\begin{lem}\label{l-u1pb-dual-cano-doub-alt}
\index{principal U(1) bundle@principal $\UU(1)$ bundle!dual}
\index{dual!principal U(1) bundle@principal $\UU(1)$ bundle}
\index{principal bundle}
\index{Omega@$\Omega$}
(Uses of ${}^{*}$; the Double Dual of a Principal $\UU(1)$ Bundle).
We denote the map $({}^{*})^{-1}$ also by ${}^{*}$.  Then given a principal $\UU(1)$ bundle $P$, $P^{**} = P$.

Given $z \in \UU(1)$ and $p \in P$, $p^{*} \overline{z} = \rho^{*} (p^{*}, \overline{z}) = (\rho (p, z))^{*} = (p z)^{*}$, as elements of principal bundles; ${}^{*}$ is an inverse-equivariant homeomorphism of the topological spaces that are the total spaces of the principal $\UU(1)$ bundles.
\end{lem}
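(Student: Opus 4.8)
The plan is to reduce everything to the definitional identifications set up in definition \ref{d-u1pb-dual}, exactly as lemma \ref{l-u1t-dual-cano-doub-alt} does in the torsor case; all of the structure in definition \ref{d-u1pb-dual} is, fiberwise, the torsor dual of definition \ref{d-u1t-dual}, so this is essentially a transcription. First I would note that $P^{*}$ and $P$ are, by definition \ref{d-u1pb-dual}, the same topological space, the same set, and have the same projection map $\pi^{*} = \pi$; hence iterating the construction, $P^{**}$ is again that same topological space with projection $\pi$. So for the equality $P^{**} = P$ it remains only to check that the doubly-dualized action $\rho^{**}$ coincides with $\rho$. Writing ${}^{*}$ also for $({}^{*})^{-1}$ as announced, I compute, for $z \in \UU(1)$ and $p \in P$,
\[
\rho^{**}(p^{**}, z) = \bigl(\rho^{*}(p^{*}, \overline{z})\bigr)^{*} = \bigl((\rho(p, z))^{*}\bigr)^{*} = \rho(p, z),
\]
where the first equality is definition \ref{d-u1pb-dual} applied to the bundle $P^{*}$, the second is definition \ref{d-u1pb-dual} applied to $P$ together with $\overline{\overline{z}} = z$, and the third is the identification $({}^{*})^{-1} = {}^{*}$. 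Thus $\rho^{**} = \rho$ and $P^{**} = P$ as principal $\UU(1)$ bundles.

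For the displayed chain of equalities I would simply read off definition \ref{d-u1pb-dual} with $\overline{z}$ substituted for its ``$z$'': since $\overline{\overline{z}} = z$, the defining relation $p^{*} z = \rho^{*}(p^{*}, z) = (\rho(p, \overline{z}))^{*} = (p \overline{z})^{*}$ becomes $p^{*} \overline{z} = \rho^{*}(p^{*}, \overline{z}) = (\rho(p, z))^{*} = (p z)^{*}$, which is the asserted identity.

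Finally, for the claim that ${}^{*} \colon P \to P^{*}$ is an inverse-equivariant homeomorphism: by definition it is the identity map on the underlying topological space $P = P^{*}$, hence trivially a homeomorphism, and it commutes with the projections since $\pi^{*} = \pi$. Inverse-equivariance is precisely the relation $(p z)^{*} = p^{*} \overline{z}$ just obtained, i.e. ${}^{*}$ carries the $\UU(1)$-action $\rho$ on $P$ to the action $\rho^{*}$ on $P^{*}$ after replacing each group element by its inverse, using $\overline{z} = z^{-1}$ in $\UU(1)$.

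I expect no real obstacle here; the only point requiring care is keeping straight the two uses of the symbol ${}^{*}$ — the identity map $P \to P^{*}$ and its inverse $P^{*} \to P$ — and the bookkeeping of complex conjugation, exactly as flagged in the cautionary remarks following definition \ref{d-u1pb-dual}. Everything else is an immediate consequence of the fiberwise agreement with the torsor-level lemma \ref{l-u1t-dual-cano-doub-alt}.
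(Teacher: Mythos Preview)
Your proposal is correct and follows essentially the same approach as the paper: reduce everything to the definitional identifications of definition \ref{d-u1pb-dual} and the torsor-level lemma \ref{l-u1t-dual-cano-doub-alt}. Your argument is in fact more complete and explicit than the paper's own proof, which is a terse two-sentence reference to the definitions and the torsor case (and whose mention of a ``natural pairing'' appears to be a vestige of the torsor lemma's extra paragraph that was dropped in the bundle version).
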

\begin{proof}
That the natural pairing is bi-equivariant follows from the definitions.  That it is continuous follows by using local trivializations and lemma \ref{l-u1t-dual-cano-doub-alt}, the result for torsors analogous to this one.
\end{proof}

We won't define $\Hom$ for principal bundles.  The situation is not as simple as for torsors since the topology of the base space comes into play.

\begin{lem}\label{l-u1pb-cano-isom}
\index{principal U(1) bundle@principal $\UU(1)$ bundle!canonical isomorphisms}
\index{canonical isomorphisms!principal U(1) bundles@principal $\UU(1)$ bundles}
\index{isomorphisms!principal U(1) bundles@principal $\UU(1)$ bundles}
\index{principal bundle}
(Canonical Isomorphisms of Principal $\UU(1)$ Bundles).
Given principal $\UU(1)$ bundles $P, P_1, P_2, P_3$, and $B \cross \UU(1)$ the trivial principal bundle with action given by multiplication, all over the same base $B$, there are canonical natural isomorphisms of principal $\UU(1)$ bundles, given on fibers by the canonical isomorphisms for $\UU(1)$ torsors of lemma \ref{l-u1t-cano-isom}:
\begin{align}
P_1 \tensor (P_2 \tensor P_3) &\cong (P_1 \tensor P_2) \tensor P_3 \notag \\
P_1 \tensor P_2 &\cong P_2 \tensor P_1 \notag \\
(B \cross \UU(1)) \tensor P &\cong P \notag \\
P^{*} \tensor P &\cong B \cross \UU(1) \notag \\
P &\cong P^{**} \text{ (by } \ident \text{; they are equal)} \notag \\
(P_1 \tensor P_2)^{*} &\cong P_1^{*} \tensor P_2^{*}. \notag
\end{align}
We will generally treat the first two isomorphisms, for associativity and commutativity of tensor products, as identifications.
\end{lem}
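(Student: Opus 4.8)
The plan is to define each of the six maps fiberwise, over each point $b \in B$, by the corresponding canonical torsor isomorphism of lemma \ref{l-u1t-cano-isom} applied to the fibers, and then to check only one thing: that the resulting map of total spaces is continuous. Once continuity is known, the map covers $\ident_B$ and is $\UU(1)$-equivariant by construction (it is so on each fiber), so lemma \ref{l-pb-mor-cov-id-isom} immediately yields that it is a homeomorphism and hence an isomorphism of principal $\UU(1)$ bundles. Thus the entire burden of the proof is continuity of the assembled map, which is where local trivializations come in.

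For continuity I would first fix a single open cover $\{U_i\}$ of $B$ over which $P$, $P_1$, $P_2$, $P_3$ and $B \cross \UU(1)$ are all simultaneously trivial, with equivariant local trivializations $\phi_{i,k}\colon \pi_k^{-1}(U_i) \isomto U_i \cross \UU(1)$. For the bundles built by $\tensor$ and ${}^{*}$, lemmas \ref{l-u1pb-tens} and \ref{l-u1pb-dual} already supply compatible local trivializations $\phi_{i,1}\tensor\phi_{i,2}$ and $\phi^{*}_i$ whose construction incorporates the canonical torsor identifications $\UU(1)\tensor\UU(1)\cong\UU(1)$ and $\UU(1)^{*}\cong\UU(1)$. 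Reading each of the six maps through these trivializations, it becomes $\ident_{U_i}$ times a map of standard fibers that is a composite of canonical torsor isomorphisms among $\UU(1)$, $\UU(1)\tensor\UU(1)$, $\UU(1)^{*}$, $\UU(1)^{**}$, etc.; these are homeomorphisms by lemma \ref{l-u1t-cano-isom}, so the map is continuous on each $\pi^{-1}(U_i)$ in the source and therefore continuous. For the special case $P^{*}\tensor P \cong B \cross \UU(1)$ one can even bypass trivializations: the map sends $p^{*}\tensor q$ to $(\pi(p),\tau(p,q))$, where $\tau\colon P \cross_{\pi} P \rightarrow \UU(1)$ is the continuous translation function of lemma \ref{l-pb-tran-func-tors}, so continuity is manifest; a similar direct argument handles $(B\cross\UU(1))\tensor P \cong P$.

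Naturality — that these isomorphisms commute with morphisms of principal $\UU(1)$ bundles over $B$, and with the $\tensor$ and ${}^{*}$ operations on such morphisms — follows from the corresponding naturality statements for $\UU(1)$ torsors recorded in lemma \ref{l-u1t-cano-isom}, checked fiberwise, since a morphism of principal bundles restricts on each fiber to an equivariant map of torsors.

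The main obstacle I expect is bookkeeping rather than anything conceptual: for the associativity isomorphism one must thread three-fold tensor products of trivializations through two applications of $\UU(1)\tensor\UU(1)\cong\UU(1)$ and confirm the two bracketings agree on the nose in local coordinates, and for $(P_1\tensor P_2)^{*}\cong P_1^{*}\tensor P_2^{*}$ one must verify that dualizing a tensor-product trivialization and tensoring two dual trivializations yield the same local chart up to the canonical scalar identifications. Since every ingredient of these checks is one of the fixed, explicitly continuous maps on $\UU(1)$ and its tensor and dual powers, no estimates are required and each verification reduces to a finite diagram chase.
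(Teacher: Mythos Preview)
Your proposal is correct and follows essentially the same approach as the paper: define the maps fiberwise via the torsor isomorphisms of lemma \ref{l-u1t-cano-isom}, verify continuity using local trivializations (and the canonical identifications of $\UU(1)\tensor\UU(1)$ and $\UU(1)^{*}$ with $\UU(1)$), and then invoke lemma \ref{l-pb-mor-cov-id-isom} to conclude each map is a principal bundle isomorphism. The paper's proof is more terse, but your additional details---such as the direct argument for $P^{*}\tensor P \cong B\cross\UU(1)$ via the translation function---are entirely compatible with it.
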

\begin{proof}
Associativity and commutativity follow from a universal property of principal $\UU(1)$ bundles; see the proof of lemma \ref{l-u1t-cano-isom} for more words and a reference.

These isomorphisms follow directly from the fiberwise canonical isomorphisms; each one is a $\UU(1)$-equivariant map covering the identity.  Further, using local trivializations and canonical isomorphisms for various expressions in the trivial torsor $\UU(1)$, each one is continuous.  Then by lemma \ref{l-pb-mor-cov-id-isom} it's a principal $\UU(1)$ bundle isomorphism.
\end{proof}
The existence of the isomorphisms can also be obtained by looking at transition functions.  For example, if $g_{ji}$ is the transition function for $P$ over $U_i \cap U_j$, then $g_{ji}^{*}$, its complex conjugate, is that for $P^{*}$, whence that for $P^{*} \tensor P$ is their product, which is the constant function $1$, corresponding to the trivial bundle.
\begin{note}\label{n-u1pb-tens-dual}
\index{principal U(1) bundles@principal $\UU(1)$ bundles!identifications}
(Properties and Categorical Notes for Principal $\UU(1)$ Bundles).
The analogs of the general statements not involving $\Hom$ for torsors in note \ref{n-u1t-tens-dual-hom} are valid for principal bundles.  (The more precise category theoretical statements in that note involve $\Hom$.)  In addition, up to canonical isomorphisms that we will treat as identifications, dual and tensor product commute with pullbacks.  We will also treat as identifications the canonical isomorphisms of pullbacks via compositions of maps.  None of these isomorphisms, whether for associativity or commutativity of tensor products or related to pullbacks, introduces $\UU(1)$ phase factors.
\end{note}

\section{Continuous Bundle Gerbe Definition}\label{s-cont-bg-def}

\begin{defn}\label{d-y-n}
\index{Y[n]@$Y^{[n]}$}
\index{fiber product}
(Fiber Product Powers of a Space).
Denote by $\pi_i$ the projections from the $r$-fold cartesian product of topological spaces to the $i$-th factor, by $\widehat{\pi}_i$ the projection to the $(r-1)$-fold cartesian product omitting the $i$-th factor, and by $\pi_{i_1 \dots i_p}$ the projections onto the $(r-p)$-fold cartesian product of factors $i_1 \dots i_p$, that is, omitting all factors except $i_1 \dots i_p$.

For $Y$ a topological space, $p < q \in \NN$, and a sequence $i_1 \dots i_q$ of $q$ integers each in the range $[1,p]$, define an extended version of the diagonal map,
\begin{align}
\Delta_p^{i_1 \dots i_q} \colon Y^p &\rightarrow Y^q \notag \\
     (y_1, \dots, y_p) &\mapsto (y_{i_1}, \dots, y_{i_q}). \notag
\end{align}

Given a continuous map $\pi \colon Y \rightarrow X$, for $r \in \NN$, the $r$-fold fiber product of $Y$, denoted $Y^{[r]}$, is the subspace of $r$-tuples $(y_1, \dots, y_r)$ in the $r$-fold cartesian product of $Y$ such that $\pi (y_i) = \pi (y_j)$ for all $i, j$.  Let $\pi_i$, $\widehat{\pi}_i$, $\pi_{i_1 \dots i_r}$, and the $\Delta_p^{i_1 \dots i_q}$ denote also the maps induced for fiber products.
\end{defn}
Note that the names for the projections are duplicated for different domains; e.g. $\pi_1 \colon Y^{[2]} \rightarrow Y$, but also $\pi_1 \colon Y^{[3]} \rightarrow Y^{[2]}$, so the context must be kept in mind.

\begin{defn}\label{d-cont-bndl-gerb}
\index{bundle gerbe!continous}
\index{paracompact}
\index{good cover}
(Continuous Bundle Gerbes).
Suppose $X$ and $Y$ are topological spaces, $X$ paracompact, and such that each open cover of $X$ has a refinement that is a good cover, meaning an open cover with all nonempty finite intersections contractible.  The object denoted variously $(P, Y)$, $(P, Y, X)$, $(P, p, Y, \pi, X)$, $(m, P, p, Y, \pi, X)$, or
\[
\begindc{\commdiag}[5]
\obj(10,30)[objP]{$P$}
\obj(10,20)[objYY]{$Y^{[2]}$}
\obj(20,20)[objY]{$Y$}
\obj(20,10)[objX]{$X$}
\mor{objP}{objYY}{$p$}
\mor(10,21)(20,21){$\pi_1$}
\mor(10,19)(20,19){$\pi_2$}
\mor{objY}{objX}{$\pi$}
\enddc
\]
is a ``continuous'' bundle gerbe over $X$ if
\begin{enumerate}
    \item $Y \rightarrow X$ is a continuous surjection with local sections.  Here, a local section on an open neighborhood $U$ of $x \in X$ doesn't need to start at any particular given point in $Y$ above $x$; it is just some continuous map $\sigma \colon U \rightarrow Y$ such that $\pi \circ \sigma = \ident_U$.
    \item $P \rightarrow Y^{[2]}$ is a continuous principal $\UU(1)$ bundle
    \item There is an associative bundle gerbe multiplication $m$, an isomorphism of continuous principal $\UU(1)$ bundles over $Y^{[3]}$,
        \[
        m \colon \pi_{12}^{*} P \tensor \pi_{23}^{*} P \isomto \pi_{13}^{*} P
        \]
        such that the following diagram of continuous principal $\UU(1)$ bundles over $Y^{[4]}$ commutes up to canonical natural isomorphism that we treat as identifications:
        \[
        \begindc{\commdiag}[5]
        \obj(10,30)[obj1234]{$\pi_{12}^{*} P \tensor \pi_{23}^{*} P \tensor \pi_{34}^{*} P$}
        \obj(50,30)[obj134]{$\pi_{13}^{*} P \tensor \pi_{34}^{*} P$}
        \obj(10,10)[obj124]{$\pi_{12}^{*} P \tensor \pi_{24}^{*} P$}
        \obj(50,10)[obj14]{$\pi_{14}^{*} P$}
        \mor{obj1234}{obj134}{$\pi_{123}^{*} m \tensor \ident$}
        \mor{obj1234}{obj124}{$\ident \tensor \pi_{234}^{*} m$}
        \mor{obj134}{obj14}{$\pi_{134}^{*} m$}
        \mor{obj124}{obj14}{$\pi_{124}^{*} m$}
        \enddc
        \]
        where the isomorphisms not shown are for associativity of tensor products and, in the construction of the pullbacks of $m$, composition of pullbacks; as indicated by long equal signs in the diagram of \citet[page~18]{Stev00}.
\end{enumerate}
\end{defn}
\begin{note}\label{n-bndl-gerb-mult-fibe}
\index{bundle gerbe!multiplication}
(Bundle Gerbe Multiplication on Fibers).
In terms of fibers $P_{y_i, y_j}$, which are $\UU(1)$ torsors, the bundle gerbe multiplication defines for any $(y_1, y_2, y_3) \in Y^{[3]}$, an isomorphism of $\UU(1)$ torsors, by abuse of notation also called $m$,
\[
m \colon P_{y_1, y_2} \tensor P_{y_2, y_3} \isomto P_{y_1, y_3}, \notag
\]
and similarly the associativity diagram of bundles leads, for any $(y_1, y_2, y_3, y_4) \in Y^{[4]}$, to a commutative diagram of isomorphisms of $\UU(1)$ torsors.

We will also abuse notation frequently by calling pullbacks of $m$, $m$.

On a practical note for proofs, it often seems easier to deal with bundle gerbe multiplication on fibers rather than using pullbacks to get bundle maps.  Reasoning about fibers can give the values of maps, but to verify continuity across fibers, it's necessary, whether explicitly or not, to use bundle maps in order to take advantage of the fact that bundle gerbe multiplication is one.
\end{note}

To understand the concept of a bundle gerbe it may help to suppose for a moment that $X$ consists of one point, $X = \{x\}$.  The picture to follow will also work for the part of any bundle gerbe that lies above a particular point $x \in X$.  For a larger set $X$, the ``fibers'' are glued together by the topology of $Y$.  What lies above $x$ can be viewed as a particular kind of groupoid, here a small category for which the set of objects can be identified with $Y_x$, such that for every pair of objects $(y_1, y_2) \in (Y^{[2]})_x = (Y_x)^2$ there is a set of morphisms $P_{y_1, y_2}$ - all isomorphisms - that is a $\UU(1)$ torsor (see definition \ref{d-g-tors}).  What lies above $x$ has been termed a groupoid with band $\UU(1)$.

The $\UU(1)$ torsors over some pairs of points of $(Y^{[2]})_x$, those pairs of the form $((y_1, y_2), (y_2, y_3))$, are related together by the bundle gerbe multiplication, which allows us to compose morphisms $P_{y_1, y_2}$ with those in $P_{y_2, y_3}$. \textit{If} we were to take morphisms in $P_{y_1, y_2}$ as mapping $y_1 \leftarrow y_2$, in the way we have defined subscripts for bundle transition functions, the notation for bundle gerbe multiplication and the usual notation for composition of morphisms would be in the same order.  However, sticking with the other notation, more closely allied to European languages than Arabic, we take $\phi_{12} \in P_{y_1, y_2}$ as an arrow $y_1 \rightarrow y_2$ and similarly with $\phi_{23} \in P_{y_2, y_3}$, and so $m(\phi_{12} \tensor \phi_{23}) \in P_{y_1, y_3}$ corresponds to $\phi_{23} \circ \phi_{12}$. Associativity of bundle gerbe multiplication translates to associativity of composition of morphisms.  The canonical isomorphisms in lemma \ref{l-bg-mult-cano-isom} to come, $P_{y_1, y_2} \cong P_{y_2, y_1}^{*}$ and $P_{y, y} \cong \UU(1)$, make sense in terms of inverses of isomorphisms and the identity map.  For the latter, if the morphisms between the pair of objects $(y,y)$ were actual maps, since the identity morphism would be distinguished, there would be a canonical identification of the $\UU(1)$ torsor with $\UU(1)$.

As applied in this thesis, the $\UU(1)$ torsors of the groupoid will be sets of unitary intertwiners between two Fock representations, and the bundle gerbe multiplication will be composition of unitary operators.  It will be useful to know not only if there is an isomorphism of a torsor with $\UU(1)$, but whether that isomorphism maps the identity intertwiner to $1$.  To allow stating this for general continuous bundle gerbes, we use the property $m(\iota \tensor \iota) = \iota$, which for an invertible (in particular, unitary) operator on a Hilbert space is satisfied only by the identity.  Let us make more formal some of the connections with groupoids just discussed.

\section{Multiplicative Identity, Inverse, and Related Isomorphisms}\label{s-bg-id-inv-isom}

A bundle gerbe's multiplication is related to the abelian group structures up to canonical isomorphism on $\UU(1)$ torsors and principal bundles mentioned in notes \ref{n-u1t-tens-dual-hom} and \ref{n-u1pb-tens-dual}.  This confirms the groupoid viewpoint discussed at the end of section \ref{s-cont-bg-def}.  Given a bundle gerbe $(P, Y, X)$, there are bundle gerbe multiplicative identity elements in each fiber $P_{y,y}$, and inverse elements in $P_{y_2, y_1}$ for elements in $P_{y_1, y_2}$, and continuity in a sense related to sections of suitable pullbacks of $P$.  We start with relevant torsor and bundle canonical isomorphisms.
\begin{lem}\label{l-bg-mult-cano-isom}
\index{bundle gerbe!multiplication related isomorphisms}
(Canonical Multiplication Related Isomorphisms).
Given a bundle gerbe $(P, Y, X)$, and defining $\tau \colon Y^{[2]} \rightarrow Y^{[2]}$ by $\tau(y_1, y_2) = \Delta_2^{21} (y_1, y_2) = (y_2, y_1)$,
\begin{align}
P_{y, y} &\cong \UU(1) \label{eq-u1t-yy-triv} \\
P_{y_2, y_1} &\cong P_{y_1, y_2}^{*} \label{eq-u1t-21-12-star} \\
(\Delta_2^{11})^{*} P &\cong Y^{[2]} \cross \UU(1) \label{eq-u1pb-yy-triv} \\
(\Delta_2^{22})^{*} P &\cong Y^{[2]} \cross \UU(1) \notag \\
\tau^{*} P &\cong P^{*}. \label{eq-u1pb-tau-star}
\end{align}
\end{lem}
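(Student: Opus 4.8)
The plan is to treat the first two isomorphisms fiberwise using the $\UU(1)$-torsor canonical isomorphisms of lemma \ref{l-u1t-cano-isom}, and then promote them to principal-bundle isomorphisms over $Y^{[2]}$ (respectively $Y^{[3]}$ pulled back appropriately) using lemma \ref{l-pb-mor-cov-id-isom}, as was done for the isomorphisms in lemma \ref{l-u1pb-cano-isom}. For \eqref{eq-u1t-yy-triv}, the key point is to use the bundle gerbe multiplication: over the diagonal, $m$ restricted to $(y,y,y) \in Y^{[3]}$ gives an isomorphism $P_{y,y} \tensor P_{y,y} \isomto P_{y,y}$ of $\UU(1)$ torsors, which (as noted at the end of section \ref{s-cont-bg-def} and in note \ref{n-bndl-gerb-mult-fibe}) forces a distinguished element $\iota \in P_{y,y}$ with $m(\iota \tensor \iota) = \iota$; sending $\iota \mapsto 1$ gives the canonical identification $P_{y,y} \cong \UU(1)$. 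First I would make this precise: show that the set of $z \in \UU(1)$ with $z\iota = \iota$ is trivial since the torsor action is free, so $\iota$ is unique, hence the identification is canonical and depends continuously on $y$; the continuity across fibers comes from the fact that $\iota$ is the image of the identity section under a bundle map, namely the canonical section of $(\Delta_2^{11})^{*}P$ determined by $m$. This last observation simultaneously proves \eqref{eq-u1pb-yy-triv} (and its $(\Delta_2^{22})^{*}$ analogue): the distinguished section trivializes the pullback principal bundle.

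For \eqref{eq-u1t-21-12-star}, the approach is again to use $m$ on the triple $(y_1, y_2, y_1)$: $m \colon P_{y_1,y_2} \tensor P_{y_2,y_1} \isomto P_{y_1,y_1} \cong \UU(1)$, where the last step uses \eqref{eq-u1t-yy-triv} just proved. Composing with the canonical isomorphism $\UU(1) \cong P_{y_2,y_1}^{*} \tensor P_{y_2,y_1}$ (from lemma \ref{l-u1t-cano-isom}) and cancelling the common $P_{y_2,y_1}$ factor yields $P_{y_1,y_2} \cong P_{y_2,y_1}^{*}$. I would phrase this as: the pairing $P_{y_1,y_2} \cross P_{y_2,y_1} \to \UU(1)$, $(\phi_{12}, \phi_{21}) \mapsto m(\phi_{12} \tensor \phi_{21})$ (using the trivialization of $P_{y_1,y_1}$), is bi-equivariant and nondegenerate, hence induces an isomorphism $P_{y_1,y_2} \isomto P_{y_2,y_1}^{*}$ by the same reasoning as in lemma \ref{l-u1t-dual-cano-doub-alt}. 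Continuity across $Y^{[2]}$ follows because this pairing is the composite of $m$ (a bundle map over $Y^{[3]}$), the diagonal $\Delta_2^{121}$, and the trivialization of $(\Delta^{11}_2)^{*}P$; then lemma \ref{l-pb-mor-cov-id-isom} upgrades the fiberwise isomorphism to a principal $\UU(1)$ bundle isomorphism over $Y^{[2]}$.

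Finally \eqref{eq-u1pb-tau-star} is just the bundle-level restatement of \eqref{eq-u1t-21-12-star}: since $\tau(y_1,y_2) = (y_2,y_1)$, the fiber of $\tau^{*}P$ over $(y_1,y_2)$ is $P_{y_2,y_1}$, which by \eqref{eq-u1t-21-12-star} is canonically $P_{y_1,y_2}^{*}$, the fiber of $P^{*}$ over $(y_1,y_2)$; the identification is equivariant and covers the identity, so lemma \ref{l-pb-mor-cov-id-isom} gives $\tau^{*}P \cong P^{*}$ as principal $\UU(1)$ bundles. I expect the main obstacle to be the continuity arguments — specifically, making sure at each stage that the fiberwise isomorphism assembles into an honest principal bundle morphism over $Y^{[2]}$ (or $Y^{[3]}$). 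The cleanest way to handle this is to avoid local trivializations entirely and instead always exhibit the map in question as a composite of already-known bundle maps ($m$, pullbacks along the various $\pi_{i\cdots}$ and $\Delta_p^{\cdots}$, and the canonical torsor/bundle isomorphisms of lemmas \ref{l-u1t-cano-isom} and \ref{l-u1pb-cano-isom}), so that continuity is automatic and only equivariance and the covering-the-identity property need checking, after which lemma \ref{l-pb-mor-cov-id-isom} does the rest. The one genuinely new input beyond bookkeeping is the uniqueness of the multiplicative identity $\iota \in P_{y,y}$, which rests on freeness of the $\UU(1)$ action together with the associativity constraint on $m$.
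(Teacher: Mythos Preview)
Your proposal is correct and, in its final ``cleanest way'' paragraph, converges to exactly the paper's approach: exhibit each isomorphism as a composite of pullbacks of $m$ along the maps $\Delta_2^{111}$, $\Delta_2^{121}$ and the canonical torsor/bundle isomorphisms of lemmas~\ref{l-u1t-cano-isom} and~\ref{l-u1pb-cano-isom}. Concretely, the paper argues $P_{y,y}\tensor P_{y,y}\xrightarrow{m}P_{y,y}$, tensors both sides with $P_{y,y}^{*}$, and reads off $P_{y,y}\cong\UU(1)$; the bundle version is obtained by pulling $m$ back along $\Delta_2^{111}$ and tensoring with $((\Delta_2^{11})^{*}P)^{*}$; and \eqref{eq-u1pb-tau-star} by pulling back along $\Delta_2^{121}$.

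Where you differ is in foregrounding the multiplicative identity $\iota\in P_{y,y}$. The paper deliberately does \emph{not} use $\iota$ here: the identity element and section are only introduced afterwards in definition~\ref{d-bg-mult-id-inv} and lemma~\ref{l-bg-mult-id-inv}, and the paper explicitly remarks (before corollary~\ref{co-bg-mult-cano-isom-elem}) that this ordering is chosen to avoid circularity, since the continuity of the identity section is proved there \emph{using} the trivializability established in the present lemma. Your route---prove $\iota$ exists, is unique, and varies continuously, then use it to trivialize $(\Delta_2^{11})^{*}P$---can be made to work, but the continuity argument you give (``$\iota$ is the image of the identity section under a bundle map, namely the canonical section of $(\Delta_2^{11})^{*}P$ determined by $m$'') is circular as stated; you would instead need to take an arbitrary local section $\eta$, write $m(\eta\tensor\eta)=\zeta\eta$ for continuous $\zeta$, and set $\iota=\zeta^{-1}\eta$ locally, then invoke uniqueness to glue. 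Also a minor point: uniqueness of $\iota$ follows from freeness of the $\UU(1)$ action and $\UU(1)$-equivariance of $m$ alone---associativity is not needed. The paper's tensor-with-dual argument sidesteps all of this by never constructing a section at this stage.
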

\begin{proof}
From multiplication and lemma \ref{l-u1t-cano-isom}, we have for isomorphism \ref{eq-u1t-yy-triv}
\begin{align}
P_{y, y} \tensor P_{y, y} &\xrightarrow{m} P_{y, y} \notag \\
(P_{y, y} \tensor P_{y, y}) \tensor P_{y, y}^{*} &\cong P_{y, y} \tensor P_{y, y}^{*} \notag \\
P_{y, y} \tensor \UU(1) &\cong \UU(1) \notag \\
P_{y, y} &\cong \UU(1),  \notag
\end{align}
and for isomorphism \ref{eq-u1t-21-12-star},
\begin{align}
P_{y_1, y_2} \tensor P_{y_2, y_1} &\xrightarrow{m} P_{y_1, y_1} \cong \UU(1) \notag \\
P_{y_1, y_2}^{*} \tensor (P_{y_1, y_2} \tensor P_{y_2, y_1}) &\cong P_{y_1, y_2}^{*} \tensor \UU(1) \notag \\
P_{y_2, y_1} &\cong P_{y_1, y_2}^{*}. \notag
\end{align}
To deal with the bundles, consider what spaces they are over.  Both sides of the bundle ``equations'' in the lemma statement are principal $\UU(1)$ bundles over $Y^{[2]}$.  Since we use bundle gerbe multiplication, we will need to consider bundles over $Y^{[3]}$, but we won't need bundle gerbe multiplication associativity and $Y^{[4]}$.  Otherwise we will follow the corresponding proofs for the torsors.

Using the method of proof for \ref{eq-u1t-yy-triv} as a model for \ref{eq-u1pb-yy-triv}, we see that it should start by using bundle gerbe multiplication, which involves pullbacks of $P$ to $Y^{[3]}$, using it for fibers with all the $y$'s equal, but giving a result about bundles over $Y^{[2]}$.  As we will do many times, we will pull back $m$ and still call it $m$.  Pull back the bundle multiplication isomorphism by $\Delta_2^{111} \colon Y^{[2]} \rightarrow Y^{[3]}$:
\begin{align}
{\Delta_2^{111}}^{*} \pi_{12}^{*} P \tensor {\Delta_2^{111}}^{*} \pi_{23}^{*} P &\cong {\Delta_2^{111}}^{*} \pi_{13}^{*} P, \text{ or} \notag \\
(\Delta_2^{11})^{*} P \tensor (\Delta_2^{11})^{*} P &\cong (\Delta_2^{11})^{*} P, \notag
\end{align}
since $\pi_{12} \circ \Delta_2^{111} = \pi_{23} \circ \Delta_2^{111} = \pi_{13} \circ \Delta_2^{111} = \Delta_2^{11}$.  Using lemma \ref{l-u1pb-cano-isom}:
\begin{align}
((\Delta_2^{11})^{*} P \tensor (\Delta_2^{11})^{*} P) \tensor ((\Delta_2^{11})^{*} P)^{*} &\cong (\Delta_2^{11})^{*} P \tensor ((\Delta_2^{11})^{*} P)^{*} \notag \\
(\Delta_2^{11})^{*} P &\cong Y^{[2]} \cross \UU(1). \notag
\end{align}
The isomorphism for $\Delta_2^{22}$ is exactly similar.  To use \ref{eq-u1t-21-12-star} as a model for \ref{eq-u1pb-tau-star}, pull back the multiplication isomorphism by $\Delta_2^{121}$ and note that $\pi_{12} \circ \Delta_2^{121} = \ident_{Y^{[2]}}$, $\pi_{23} \circ \Delta_2^{121} = \tau$, and $\pi_{13} \circ \Delta_2^{121} = \Delta_2^{11}$; the rest is analogous to the case of torsors:
\begin{align}
{\Delta_2^{121}}^{*} \pi_{12}^{*} P \tensor {\Delta_2^{121}}^{*} \pi_{23}^{*} P &\cong {\Delta_2^{121}}^{*} \pi_{13}^{*} P, \text{ or} \notag \\
P \tensor \tau^{*} P &\cong (\Delta_2^{11})^{*} P \cong Y^{[2]} \cross \UU(1), \notag \\
P^{*} \tensor P \tensor \tau^{*} P &\cong P^{*} \tensor (Y^{[2]} \cross \UU(1)), \notag \\
\tau^{*} P &\cong P^{*}. \notag
\end{align}
\end{proof}

It turns out that the ideas of multiplicative identities and inverses, part of the conceptual groupoid viewpoint, are also useful in calculations in proofs later, providing some actual equations amidst many canonical isomorphisms.
\begin{defn}\label{d-bg-mult-id-inv}
\index{bundle gerbe!multiplicative identity, inverse}
\index{identity!element}
\index{inverse!element}
\index{identity!section}
\index{inverse!section}
\index{iota@$\iota$}
(Bundle Gerbe Multiplicative Identity and Inverse Elements and Sections).
Given a bundle gerbe $(P, Y, X)$ and $y \in Y$, call an element $\iota_{y,y} \in P_{y, y}$ such that $m(\iota_{y,y} \tensor \iota_{y,y}) = \iota_{y,y}$ a (multiplicative) identity element of $P_{y,y}$.

Call a continuous section $\iota \colon Y^{[2]} \rightarrow (\Delta_2^{11})^{*} P$ a (multiplicative) identity section of $(\Delta_2^{11})^{*} P$ if for every $y \in Y$, $\iota(y_1, y_2)$ is an identity element of $P_{\Delta_2^{11} (y_1, y_2)} = P_{y_1, y_1}$.  Similarly for $(\Delta_2^{22})^{*} P$.

Given a continuous map $f \colon W \rightarrow Y^{[2]}$, call a continuous section $\iota_{f} \colon W \rightarrow f^{*} (\Delta_2^{11})^{*} P$ a (multiplicative) identity section of $f^{*} (\Delta_2^{11})^{*} P$ if for every $w \in W$, $\iota_f (w)$, or more precisely its second component since $(f^{*} (\Delta_2^{11})^{*} P)_w = \{ w \} \cross P_{\Delta_2^{11} \circ f (w)}$, is an identity element of $P_{\Delta_2^{11} \circ f (w)}$.  Similarly for $f^{*} (\Delta_2^{22})^{*} P$.

Given $(y_1, y_2) \in Y^{[2]}$ and $p_{12} \in P_{y_1, y_2}$, call $p_{21} \in P_{y_2, y_1}$ a (multiplicative) inverse (element) of $p_{12}$ if $m(p_{12} \tensor p_{21}) = \iota_{11}$, with $\iota_{11}$ an identity element of $P_{y_1, y_1}$.  If so, we may write $p_{21} = p_{12}^{-1}$.

Define $\tau \colon Y^{[2]} \rightarrow Y^{[2]}$ by $\tau(y_1, y_2) = \Delta_2^{21} (y_1, y_2) = (y_2, y_1)$.  Given a continuous map $f \colon W \rightarrow Y^{[2]}$ and a continuous section $\alpha \colon W \rightarrow f^{*} P$, call a section $\beta \colon W \rightarrow f^{*} \tau^{*} P$ a (multiplicative) inverse (section) of $\alpha$ if $m(\alpha \tensor \beta) = \iota_f$, for $\iota_f$ a (multiplicative) identity section of $f^{*} (\Delta_2^{11})^{*} P$.  If so, we may write $\beta = \alpha^{-1}$.
\end{defn}
\begin{note}\label{n-bg-mult-fibe}
\index{bundle gerbe!multiplication}
(The Fiberwise Viewpoint is Helpful with Bundle Gerbe Multiplication).
It can be helpful to recall what points in $Y^{[2]}$ are involved in use of bundle gerbe multiplication; to look at a situation fiberwise at first.
\end{note}

The following innocuous or perhaps trivial facts were very helpful in constructing proofs of facts relating to the Dixmier-Douady class.
\begin{lem}\label{l-bg-mult-id-inv}
\index{bundle gerbe!multiplicative identity, inverse}
\index{identity!element}
\index{inverse!element}
\index{identity!section}
\index{inverse!section}
\index{iota@$\iota$}
(Bundle Gerbe Multiplicative Identity and Inverse Elements and Sections).
Given a bundle gerbe $(P, Y, X)$, for each $y \in Y$ there is a unique identity element $\iota_{y,y} \in P_{y, y}$.  There is a continuous unique identity section $\iota$ of $(\Delta_2^{11})^{*} P$, $\iota(y_1, y_2) = ((y_1, y_2), \iota_{\Delta_2^{11} (y_1, y_2)}) = (y_1, y_2, \iota_{y_1, y_1})$, obtained by multiplying any given section by a continuous $\UU(1)$-valued function.  Similarly for the continuous unique identity section $\kappa$ of $(\Delta_2^{22})^{*} P$.

Given $(y_1, y_2) \in Y^{[2]}$ and $p_{12} \in P_{y_1, y_2}$, $m(\iota_{11} \tensor p_{12}) = p_{12}$ and $m(p_{12} \tensor \iota_{22}) = p_{12}$.

Given a continuous map $f \colon W \rightarrow Y^{[2]}$, there is a unique identity section $\iota_f$ of $f^{*} (\Delta_2^{11})^{*} P$, and $\iota_f = \ident_W \cross (\iota \circ f)$.  Given a continuous section $\alpha \colon W \rightarrow f^{*} P$, $m(\iota_f \tensor \alpha) = \alpha$.  Similarly there is a unique identity section $\kappa_f = \ident_W \cross (\kappa \circ f)$ of $f^{*} (\Delta_2^{22})^{*} P$, and $m(\alpha \tensor \kappa) = \alpha$.

Given $(y_1, y_2) \in Y^{[2]}$ and $p_{12} \in P_{y_1, y_2}$, there is a unique multiplicative inverse $p_{12}^{-1} = p_{21} \in P_{y_2, y_1}$ of $p_{12}$, which satisfies as well as the defining property, $m(p_{21} \tensor p_{12}) = \iota_{22}$.  Similarly, there is a unique inverse section $\alpha^{-1} = \beta$ corresponding to $\alpha$, and as well as the defining property $m(\alpha, \beta) = \iota_f$, we have also $m(\beta, \alpha) = \kappa_f$.

Given a continuous $\zeta \colon W \rightarrow \UU(1)$, $(\zeta \alpha)^{-1} = \zeta^{-1} \alpha^{-1}$.

Suppose $f \colon W \rightarrow Y^{[3]}$ is a continuous map and $\alpha_{12}$, $\alpha_{23}$ are continuous sections of $f^{*} \pi_{12}^{*} P$, $f^{*} \pi_{23}^{*} P$ respectively.  Then $m(\alpha_{23}^{-1} \tensor \alpha_{12}^{-1}) = (m(\alpha_{12} \tensor \alpha_{23}))^{-1}$.
\end{lem}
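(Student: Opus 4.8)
The plan is to reduce the statement to a pointwise identity in $\UU(1)$ torsors and then quote the uniqueness of multiplicative inverses already established in the earlier parts of this lemma. First I would observe that both sides of the asserted equation are continuous sections over $W$ of one and the same principal $\UU(1)$ bundle $f^{*}\pi_{31}^{*}P$, where $\pi_{31}\colon Y^{[3]}\rightarrow Y^{[2]}$ sends $(y_1,y_2,y_3)$ to $(y_3,y_1)$: the left-hand side because $\alpha_{12}^{-1}$ is a continuous section of $f^{*}\pi_{12}^{*}\tau^{*}P=f^{*}\pi_{21}^{*}P$ and $\alpha_{23}^{-1}$ a continuous section of $f^{*}\pi_{23}^{*}\tau^{*}P=f^{*}\pi_{32}^{*}P$, so that the appropriate pullback of the bundle-gerbe multiplication followed by $\tensor$ yields a continuous section of $f^{*}\pi_{31}^{*}P$; the right-hand side because, by Definition \ref{d-bg-mult-id-inv}, it is the inverse section of the section $m(\alpha_{12}\tensor\alpha_{23})$ of $f^{*}\pi_{13}^{*}P$, hence a section of $f^{*}\pi_{13}^{*}\tau^{*}P=f^{*}\pi_{31}^{*}P$. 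Since two sections of a principal $\UU(1)$ bundle coincide as soon as they agree at every point, it suffices to check the identity fiber by fiber.

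For the fiberwise check I would fix $w\in W$, set $(y_1,y_2,y_3)=f(w)$ (so $y_1,y_2,y_3$ all lie over one point of $X$, and every tuple built from them lies in the appropriate fiber power of $Y$), and write $p_{12}=\alpha_{12}(w)$, $p_{23}=\alpha_{23}(w)$, $p_{21}=\alpha_{12}^{-1}(w)$, $p_{32}=\alpha_{23}^{-1}(w)$; by the defining property of inverse sections and the uniqueness of the fiberwise inverse (this lemma), $p_{21}$ is the multiplicative inverse of $p_{12}$ in $P_{y_2,y_1}$ and $p_{32}$ that of $p_{23}$ in $P_{y_3,y_2}$. Then, using associativity of bundle-gerbe multiplication to reassociate the fourfold product freely and the identity and inverse relations from Definition \ref{d-bg-mult-id-inv}, one obtains
\begin{align}
m\big(m(p_{12}\tensor p_{23})\tensor m(p_{32}\tensor p_{21})\big)
 &= m\big(p_{12}\tensor m\big(m(p_{23}\tensor p_{32})\tensor p_{21}\big)\big)\notag\\
 &= m\big(p_{12}\tensor m(\iota_{y_2,y_2}\tensor p_{21})\big)\notag\\
 &= m(p_{12}\tensor p_{21})=\iota_{y_1,y_1},\notag
\end{align}
using $m(p_{23}\tensor p_{32})=\iota_{y_2,y_2}$, then $m(\iota_{y_2,y_2}\tensor p_{21})=p_{21}$, then $m(p_{12}\tensor p_{21})=\iota_{y_1,y_1}$. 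Hence $m(p_{32}\tensor p_{21})\in P_{y_3,y_1}$ is a multiplicative inverse of $m(p_{12}\tensor p_{23})$, so by uniqueness of the inverse in that torsor it equals $(m(p_{12}\tensor p_{23}))^{-1}$; that is, the two sections agree at $w$. As $w$ was arbitrary, the two sections are equal, which is the claim.

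The one step needing care is the free reassociation of the fourfold torsor product in the first displayed equality. It is licensed by the associativity coherence square of Definition \ref{d-cont-bndl-gerb} together with our convention (notes \ref{n-u1t-tens-dual-hom}, \ref{n-u1pb-tens-dual}) that the associativity and commutativity isomorphisms of $\tensor$ are identifications, which together imply that all bracketings of a finite product of $\UU(1)$ torsor elements give the same element; I would either invoke this coherence directly or, since only four factors occur, spell out the two applications of the associativity isomorphism explicitly. Everything else is bookkeeping of which principal $\UU(1)$ bundle over which fiber power of $Y$ each expression lives over, and quoting the identity and inverse facts proved in the preceding parts of this lemma.
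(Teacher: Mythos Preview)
Your proposal is correct and follows essentially the same route as the paper: both verify that $m(m(\alpha_{12}\tensor\alpha_{23})\tensor m(\alpha_{23}^{-1}\tensor\alpha_{12}^{-1}))$ collapses to the identity via associativity and the earlier identity/inverse relations, then appeal to uniqueness of inverses. The paper states this at the section level in one line and remarks that the required associativity can be seen by pulling back via something like $\Delta_3^{12321}$; you carry out the same computation fiberwise with the intermediate steps written out, which is a bit more explicit but not a different argument.
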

\begin{proof}
We use the facts that $P$ is a principal $\UU(1)$ bundle and $P_{y, y}$ is a $\UU(1)$ torsor, properties of such torsors and bundles as in lemmas \ref{l-pb-tran-func-tors}, \ref{l-u1t-cano-isom} and \ref{l-u1pb-cano-isom}, the fact that $m$ is an isomorphism of such torsors or bundles as the case may be, and lemma \ref{l-bg-mult-cano-isom}.  First we prove the results for torsors.

Take any $p_y \in P_{y, y}$; then $m(p_y \tensor p_y) = z p_y$ for some $z \in \UU(1)$.  Hence $\iota_{y,y} = z^{-1} p_y$ is a multiplicative identity in $P_{y,y }$.  Conversely, since $m((v \iota_{y,y}) \tensor (v \iota_{y,y})) = v (v \iota_{y,y})$, $\iota_{y,y}$ is the only multiplicative identity in $P_{y, y}$.

Let $m(\iota_{11} \tensor p_{12}) = z p_{12}$; then $z p_{12} = m(\iota_{11} \tensor p_{12}) = m(m(\iota_{11} \tensor \iota_{11}) \tensor p_{12}) = m(\iota_{11} \tensor m(\iota_{11} \tensor p_{12})) = m(\iota_{11} \tensor (z p_12)) = z m(\iota_{11} \tensor p_{12}) = z^2 p_{12}$, whence $z = 1$ and $m(\iota_{11} \tensor p_{12}) = p_{12}$.  Similarly $m(p_{12} \tensor \iota_{22}) = p_{12}$.

Take any $p_{21} \in P_{y_2, y_1}$; then $m(p_{12} \tensor p_{21}) = z \iota_{11}$ for some $z \in \UU(1)$, whence $z^{-1} p_{21}$ is a multiplicative inverse of $p_{12}$.  Conversely, since $m(p_{12} \tensor (v (z^{-1} p_{21}))) = v \iota_{11}$, $z^{-1} p_{21}$ is the only multiplicative inverse of $p_{12}$.

To see that if $p_{21}$ is a (right) inverse for $p_{12}$, it is also a left inverse of $p_{12}$, note that $m(p_{21} \tensor p_{12}) = z \iota_{22}$ for some $z \in \UU(1)$.  Then $p_{12} = m(\iota_1 \tensor p_{12}) = m(m(p_{12} \tensor p_{21}) \tensor p_{12}) = m(p_{12} \tensor m(p_{21} \tensor p_{12})) = m(p_{12} \tensor (z \iota_{22})) = z p_{12}$, whence $z = 1$ and $m(p_{21} \tensor p_{12}) = \iota_{22}$.

The bundle statements can be proved somewhat similarly, replacing $z, v \in \UU(1)$ with continuous $\UU(1)$-valued functions $\zeta, \upsilon$, or by relying on the torsor statements.

The existence and uniqueness of $\iota$ can be seen using the method of proof of isomorphism \ref{eq-u1pb-yy-triv} of lemma \ref{l-bg-mult-cano-isom} by noting that  since $\pi_{ij} \circ \Delta_2^{111} = \Delta_2^{11}$ for $ij \in \{ 12, 23, 13 \}$ we can pull back the bundle gerbe multiplication isomorphism by $\Delta_2^{111}$ to obtain the following isomorphism of principal $\UU(1)$ bundles:
\[
(\Delta_2^{11})^{*} P \tensor (\Delta_2^{11})^{*} P \xrightarrow{m} (\Delta_2^{11})^{*} P . \notag
\]
Take any section $\eta$ of $(\Delta_2^{11})^{*} P$, which has global sections since it is a trivializable principal $\UU(1)$ bundle by lemma \ref{l-bg-mult-cano-isom}.  Since $m(\eta \tensor \eta)$ and $\eta$ are sections of the same principal $\UU(1)$ bundle $(\Delta_2^{11})^{*} P$, they differ by a factor of a continuous function $\zeta \colon Y^{[2]} \rightarrow \UU(1)$, whence by the reasoning used for torsors, $\iota = \zeta^{-1} \eta$ (here $\zeta^{-1} (y_1, y_2)$ means $(\zeta (y_1, y_2))^{-1}$) is an identity section, in fact the unique one.

For $\iota_f$, first we verify that for every $w \in W$, the rightmost component (the one that is an element of $P$) of $\iota_f (w) = (w, (\iota \circ f) (w))$ is an identity element of $P_{\Delta_2^{11} \circ f (w)}$.  Unraveling the definitions this is true since $\iota$ is an identity section of $(\Delta_2^{11})^{*} P$.  Furthermore, since the identity element of each $P_{\Delta_2^{11} \circ f (w)} = P_{\pi_1 \circ f(w), \pi_1 \circ f(w)}$ is unique and the other components of $\iota_f$ must be what they are, $\iota_f$ is unique.  Its continuity follows from its formula.

Then we check that $m(\iota_f, \alpha) = \alpha$.  First pull back the multiplication isomorphism $\pi_{12}^{*} P \tensor \pi_{23}^{*} P \xrightarrow{m} \pi_{13}^{*} P$ by $\Delta_2^{112}$ to obtain, since $\pi_{23} \circ \Delta_2^{112} = \ident_{Y^{[2]}} = \pi_{13} \circ \Delta_2^{112}$, $(\Delta_2^{11})^{*} P \tensor P \xrightarrow{m} P$, then pull that back by $f$ to get $(f^{*} (\Delta_2^{11})^{*} P) \tensor (f^{*} P) \xrightarrow{m} (f^{*} P)$.  We have that $\iota_f$ is a section of the first factor on the left, and $\alpha$ of the second.  Since $f^{*} P$ has a section, it is trivializable, and so the right hand side is some $\zeta \colon W \rightarrow \UU(1)$ times $\alpha$.  As in previous argument, $\zeta \alpha = m(\iota_f \tensor \alpha) = m(m(\iota_f \tensor \iota_f) \tensor \alpha)  = m(\iota_f \tensor m(\iota_f \tensor \alpha)) = m(\iota_f \tensor (\zeta \alpha)) = \zeta^2 \alpha$, where the second equality follows by evaluating at $w \in W$ from the fact that $\iota_f (w)$ is an identity element.  Thus $\zeta = \underline{1}$.  The proof for $\kappa_f$ is similar.

For $\beta$, pull back the bundle gerbe multiplication isomorphism by $\Delta_2^{121}$ to obtain $P \tensor (\tau^{*} P) \xrightarrow{m} (\Delta_2{11})^{*} P$, and pull that back by $f$ to get $(f^{*} P) \tensor (f^{*} \tau^{*} P) \xrightarrow{m} (f^{*} \Delta_2{11})^{*} P$.  We have that $\alpha$ is a section of the first factor on the left, $\iota_f$ is a section of the right, and $\beta$ will be a section of the second factor on the left.  Proceed as in previous argument, letting $\widetilde{\beta}$ be any section of $f^{*} \tau^{*} P$, which has sections because $f^{*} P$ does; because $f^{*} \tau^{*} P \cong f^{*} P^{*} \cong (f^{*} P)^{*}$ using isomorphism \ref{eq-u1pb-tau-star} of lemma \ref{l-bg-mult-cano-isom} and note \ref{n-u1pb-tens-dual}.  Then $m(\alpha, \widetilde{\beta}) = \zeta \iota_f$ for some continuous $\zeta \colon W \rightarrow \UU(1)$; and we set $\beta = \zeta^{-1} \widetilde{\beta}$, where the inverse is of the value, not an inverse map.

Note: $\alpha$ itself gives rise via the canonical isomorphisms in the last paragraph to a section $\widetilde{\beta}$, which looking to the action on elements of isomorphism \ref{eq-u1pb-tau-star} as discussed in corollary \ref{co-bg-mult-cano-isom-elem} to come, in fact is equal to $\beta$; we have a sort of formula for the inverse of $\alpha$, namely a canonical isomorphism composed with $\alpha^{*}$.

The statement about $\beta$ and $\kappa$ follows from reasoning like that used for torsors, pulling back the bundle gerbe multiplication associativity diagram by $\Delta_2^{1212}$ for the multiplications one would write.

The statement about $\zeta \alpha$ follows from evaluation at $w \in W$.

For the inverse of a product, the result is equivalent to $m(m(\alpha_{12} \tensor \alpha_{23}) \tensor m(\alpha_{23}^{-1} \tensor \alpha_{12}^{-1})) = m(m(\alpha_{12} \tensor m(\alpha_{23} \tensor \alpha_{23}^{-1})) \tensor \alpha_{12}^{-1}) = \iota_{\pi_{12} \circ f}$.  For those with patience, the validity of the multiplication and associativity could be seen by pulling back an appropriately constructed diagram by something like $\Delta_3^{12321}$.
\end{proof}

The following results would naturally appear in lemma \ref{l-bg-mult-cano-isom}.  It is to avoid the appearance of circular reasoning that they are here, since their proof uses parts of lemma \ref{l-bg-mult-id-inv}, and part of that lemma in turn depends on part of lemma \ref{l-bg-mult-cano-isom}.
\begin{cor}\label{co-bg-mult-cano-isom-elem}
\index{bundle gerbe!multiplicative identity, inverse}
(Multiplication Related Isomorphisms on Elements)
Isomorphism \ref{eq-u1t-yy-triv} of lemma \ref{l-bg-mult-cano-isom}, $P_{y, y} \cong \UU(1)$, is given on elements by $\iota_{y,y} \mapsto 1$.  Isomorphism \ref{eq-u1t-21-12-star}, $P_{y_2, y_1} \cong P_{y_1, y_2}^{*}$, is given on elements as follows.  For $p_{21} \in P_{y_2, y_1}$ let $p_{12} = p_{21}^{-1} \in P_{y_1, y_2}$ denote its multiplicative inverse element.  Then $p_{21} \mapsto p_{12}^{*} = (p_{21}^{-1})^{*}$.  On fibers isomorphism \ref{eq-u1pb-yy-triv} acts like \ref{eq-u1t-yy-triv}, and \ref{eq-u1pb-tau-star} like \ref{eq-u1t-21-12-star}.
\end{cor}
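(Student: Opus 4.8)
The plan is to unwind, element by element, the chains of canonical torsor isomorphisms by which the proof of Lemma~\ref{l-bg-mult-cano-isom} builds \ref{eq-u1t-yy-triv} and \ref{eq-u1t-21-12-star}, using the explicit element formulas for the canonical isomorphisms recorded in Lemma~\ref{l-u1t-cano-isom} together with the identity- and inverse-element facts of Lemma~\ref{l-bg-mult-id-inv}. Since this corollary is placed after Lemma~\ref{l-bg-mult-id-inv}, there is no circularity in invoking the latter. Throughout, one treats the associativity and commutativity isomorphisms of tensor products as identifications; as noted after Lemma~\ref{l-u1pb-cano-isom}, none of these introduces a $\UU(1)$ phase, so the order in which they are applied does not affect the computation.

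For \ref{eq-u1t-yy-triv}, recall the isomorphism $P_{y,y} \cong \UU(1)$ is gotten from $m \colon P_{y,y} \tensor P_{y,y} \isomto P_{y,y}$ by tensoring on the right with $P_{y,y}^{*}$ and applying, on the source, the identifications $P_{y,y} \cong P_{y,y} \tensor \UU(1) \cong P_{y,y} \tensor (P_{y,y} \tensor P_{y,y}^{*}) \cong (P_{y,y} \tensor P_{y,y}) \tensor P_{y,y}^{*}$, and on the target $P_{y,y} \tensor P_{y,y}^{*} \cong \UU(1)$. I would track $\iota_{y,y}$ along this: by Lemma~\ref{l-u1t-cano-isom} it goes to $\iota_{y,y} \tensor 1$, then (since $1 \in \UU(1)$ corresponds to $\iota_{y,y} \tensor \iota_{y,y}^{*}$ under $T \tensor T^{*} \cong \UU(1)$) to $(\iota_{y,y} \tensor \iota_{y,y}) \tensor \iota_{y,y}^{*}$; applying $m \tensor \ident$ and using $m(\iota_{y,y} \tensor \iota_{y,y}) = \iota_{y,y}$ from Definition~\ref{d-bg-mult-id-inv} gives $\iota_{y,y} \tensor \iota_{y,y}^{*}$, which maps to $1$ under $P_{y,y} \tensor P_{y,y}^{*} \cong \UU(1)$. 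Hence $\iota_{y,y} \mapsto 1$.

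For \ref{eq-u1t-21-12-star}, the isomorphism $P_{y_2,y_1} \cong P_{y_1,y_2}^{*}$ comes from $m \colon P_{y_1,y_2} \tensor P_{y_2,y_1} \isomto P_{y_1,y_1}$, followed by the identification $P_{y_1,y_1} \cong \UU(1)$ just analyzed, by tensoring on the left with $P_{y_1,y_2}^{*}$ and reassociating: $P_{y_2,y_1} \cong \UU(1) \tensor P_{y_2,y_1} \cong (P_{y_1,y_2}^{*} \tensor P_{y_1,y_2}) \tensor P_{y_2,y_1} \cong P_{y_1,y_2}^{*} \tensor (P_{y_1,y_2} \tensor P_{y_2,y_1})$, then $\ident \tensor (\text{multiply})$, then $P_{y_1,y_2}^{*} \tensor \UU(1) \cong P_{y_1,y_2}^{*}$. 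Given $p_{21} \in P_{y_2,y_1}$, set $p_{12} = p_{21}^{-1} \in P_{y_1,y_2}$, so that $m(p_{12} \tensor p_{21}) = \iota_{y_1,y_1}$ by Lemma~\ref{l-bg-mult-id-inv}. Then $p_{21} \mapsto 1 \tensor p_{21} \mapsto (p_{12}^{*} \tensor p_{12}) \tensor p_{21} \mapsto p_{12}^{*} \tensor (p_{12} \tensor p_{21}) \mapsto p_{12}^{*} \tensor \iota_{y_1,y_1}$, and applying $P_{y_1,y_1} \cong \UU(1)$ (which sends $\iota_{y_1,y_1} \mapsto 1$ by the previous paragraph) followed by $P_{y_1,y_2}^{*} \tensor \UU(1) \cong P_{y_1,y_2}^{*}$ yields $p_{12}^{*} = (p_{21}^{-1})^{*}$.

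Finally, the bundle isomorphisms \ref{eq-u1pb-yy-triv} and \ref{eq-u1pb-tau-star} were built in Lemma~\ref{l-bg-mult-cano-isom} by the very same chains, now with $P$, $m$, and the torsor isomorphisms replaced by their pullbacks along the appropriate $\Delta$-maps and by the canonical \emph{bundle} isomorphisms of Lemma~\ref{l-u1pb-cano-isom}. Each of those bundle isomorphisms restricts on every fiber to the corresponding torsor isomorphism, and $m$ restricts to $m$ fiberwise (Note~\ref{n-bndl-gerb-mult-fibe}); moreover the fiber of $(\Delta_2^{11})^{*} P$ over $(y_1,y_2) \in Y^{[2]}$ is $P_{y_1,y_1}$, the fiber of $\tau^{*} P$ over $(y_1,y_2)$ is $P_{y_2,y_1}$, and the fiber of $P^{*}$ there is $P_{y_1,y_2}^{*}$. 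So over each point \ref{eq-u1pb-yy-triv} restricts to \ref{eq-u1t-yy-triv} and \ref{eq-u1pb-tau-star} restricts to \ref{eq-u1t-21-12-star}, giving the claimed fiberwise descriptions (in particular the identity section $\iota$ of $(\Delta_2^{11})^{*} P$ corresponds to the constant section $\underline{1}$, and the construction of the inverse section of $\alpha$ noted in the proof of Lemma~\ref{l-bg-mult-id-inv} is exactly the composite of $\alpha^{*}$ with this fiberwise isomorphism). The only point demanding care is the bookkeeping of tensor-factor order across the reassociations; since these are phase-free, the element-level computations above settle the matter, so I expect no real obstacle beyond notational diligence.
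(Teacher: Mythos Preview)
Your proposal is correct and follows essentially the same approach as the paper: you unwind the chains of canonical torsor isomorphisms from Lemma~\ref{l-bg-mult-cano-isom} element by element, tracking $\iota_{y,y}$ through \ref{eq-u1t-yy-triv} and $p_{21}$ through \ref{eq-u1t-21-12-star} exactly as the paper does, and your fiberwise argument for the bundle statements matches the paper's one-line observation that the bundle proofs paralleled the torsor proofs.
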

\begin{proof}
Recalling the proof of isomorphism \ref{eq-u1t-yy-triv},
\begin{align}
P_{y, y} \tensor P_{y, y} &\xrightarrow{m} P_{y, y} \notag \\
(P_{y, y} \tensor P_{y, y}) \tensor P_{y, y}^{*} &\cong P_{y, y} \tensor P_{y, y}^{*} \notag \\
P_{y, y} \tensor \UU(1) &\cong \UU(1) \notag \\
P_{y, y} &\cong \UU(1), \text{ that is,} \notag \\
p_{y,y} &\mapsto p_{y,y} \tensor 1 \mapsto p_{y,y} \tensor (p_{y,y} \tensor p_{y,y}^{*}) \mapsto m(p_{y,y}, p_{y,y}) \tensor p_{y,y}^{*}, \notag
\end{align}
following the left side up to the second line, then across. Substituting $\iota_{y,y}$ for $p_{y,y}$, and going down on the right side, $\iota_{y,y} \mapsto m(\iota_{y,y}, \iota_{y,y}) \tensor \iota_{y,y}^{*} = \iota_{y,y} \tensor \iota_{y,y}^{*} \mapsto 1$, referring to maps on elements of lemma \ref{l-u1t-cano-isom}.

Recalling the proof of isomorphism \ref{eq-u1t-21-12-star},
\begin{align}
P_{y_1, y_2} \tensor P_{y_2, y_1} &\xrightarrow{m} P_{y_1, y_1} \cong \UU(1) \notag \\
P_{y_1, y_2}^{*} \tensor (P_{y_1, y_2} \tensor P_{y_2, y_1}) &\cong P_{y_1, y_2}^{*} \tensor \UU(1) \notag \\
P_{y_2, y_1} &\cong P_{y_1, y_2}^{*}, \notag
\end{align}
the statement about elements can be seen by following the left side up from bottom to middle line in two steps, then across and down the right side: $p_{21} \mapsto 1 \tensor p_{21} \mapsto p_{12}^{*} \tensor p_{12} \tensor p_{21} \mapsto p_{12}^{*} \tensor \iota_{11} \mapsto p_{12}^{*} \tensor 1 \mapsto p_{12}^{*}$, using the first part of the corollary statement.

The statements about bundles follow from the statements about torsors, since the proofs of the bundle isomorphisms in lemma \ref{l-bg-mult-cano-isom} were done by applying to bundles the reasoning used there for torsors.
\end{proof}

\begin{note}\label{n-bg-triv-fore}
\index{bundle gerbe!triviality}
(Motivation for the Definition of Triviality).
Next we would like to define triviality of a bundle gerbe.  We reason by analogy with triviality of a principal $\UU(1)$ bundle, but don't define a canonically trivial bundle gerbe as a product bundle gerbe.

A principal $\UU(1)$ bundle is isomorphic to the canonical trivial bundle, the product bundle, if and only if it has a global section, which gives rise by the local trivializations on open sets $U_i$ to functions $h_i \colon U_i \rightarrow \UU(1)$ that give a simple form for the transition functions $g_{j i} = h_j h_i^{-1}$ on nonempty intersections of two open sets.  These transition functions are a \v{C}ech $1$-cocycle, in fact for a good cover are the first Chern cohomology class; and the simple form says that this class is zero, i.e. that the cocycle is a coboundary.  Conversely such a set of $h_i$ gives rise to a global section.

A canonical trivial bundle gerbe is one in which the $P$ bundle is of the form $\delta(R)$, for $R$ a principal $\UU(1)$ bundle over $Y$, the coboundary map $\delta$ of definition \ref{d-bg-delt}, and with bundle gerbe multiplication defined in a natural way.  That a bundle gerbe is isomorphic to a canonical trivial bundle gerbe with the same base and $Y$ space, is equivalent to the existence of a principal $\UU(1)$ bundle isomorphism between $P$ and $\delta(R)$ that respects bundle gerbe multiplication.  Bundle gerbe multiplication and local sections of $Y$ give rise to a \v{C}ech $2$-cocycle, defined on nonempty intersections of three open sets, which is used to define the bundle gerbe's Dixmier-Douady cohomology class, the vanishing of which is equivalent to triviality.  We will be more precise; but first, to prepare for using \v{C}ech cohomology, an interlude of sheaf theory.
\end{note}

\section{Direct Limits, Category and Sheaf Theory}\label{s-dire-limi-cate-shea-theo}

The reader is not expected to know much about categories apart from what categories, functors, natural transformations, initial and terminal objects are, nor anything about sheaves.  The definitions and results we use are collected here.  However, these are just the bare bones needed for the small part of \v{C}ech cohomology needed in the thesis.  Read in the references for better understanding.  On the other hand, this section can be skipped and referred to later if desired.

References differ on the definition of the direct limit used in the definition of \v{C}ech cohomology, though in cases examined, the results appear equivalent.  A set theoretic problem arises from using indexed covers of a space, the collection of which, without some sort of limitation, is a proper class.  References deal with this in various ways, including ignoring it.  Some references require antisymmetry in the order relation on the index sets, and others don't.  There is the question of whether empty intersections are allowed in the definition of the cochains; we need the possibility because of using the inverse image, or pullback, of a cover.  The cochains may be alternating functions of index tuples, or not.  To suit our needs, we combine aspects of more than one reference's definition.  We start here by defining the kind of order relation we require, direct systems, their morphisms, and direct limits.  Then after material on category theory, presheaves and sheaves, the next section will define \v{C}ech cohomology, with coefficients in presheaves and hence in sheaves.

\begin{defn}\label{d-preo}
\index{pre-order}
(Pre-Orders).
\citep[page~3]{Tenn75} A pre-order $\le$ on a set $\Lambda$ is a relation satisfying:
\begin{enumerate}
  \item for every $\alpha \in \Lambda$, $\alpha \le \alpha$ (reflexivity)
  \item for $\alpha, \beta, \gamma \in \Lambda$, if $\alpha \le \beta$ and $\beta \le \gamma$, then $\alpha \le \gamma$ (transitivity).
\end{enumerate}
We will also view a set $\Lambda$ with a pre-order as an abstract category with one object per element of $\Lambda$, and for $\alpha, \beta \in \Lambda$, exactly one morphism $\alpha \rightarrow \beta$ when $\alpha \le \beta$.

\citep[page~212]{ES52} $\Lambda' \subset \Lambda$ is a cofinal subset when given any $\alpha \in \Lambda$, there is a $\beta \in \Lambda'$ such that $\alpha \le \beta$.
\end{defn}
We do not require antisymmetry.

\begin{defn}\label{d-dire-set}
\index{directed set}
(Directed Sets).
\citep[page~3]{Tenn75} A directed set $\Lambda$ is a set with a pre-order for which given any $\alpha, \beta \in \Lambda$, there is a $\gamma \in \Lambda$ such that $\alpha \le \gamma$ and $\beta \le \gamma$.

Viewing $\Lambda$ as a category as in definition \ref{d-preo}, the additional requirement for a directed set is that for any objects $\alpha, \beta$, there is an object $\gamma$ such that there are morphisms $\alpha \rightarrow \gamma$, $\beta \rightarrow \gamma$.

$\Lambda' \subset \Lambda$ is a cofinal subset when it is a cofinal subset as a set with a pre-order.
\end{defn}
This implies that $\Lambda'$ also, is a directed set.

\begin{defn}\label{d-dire-syst}
\index{direct system}
(Direct Systems of Abelian Groups).
% removed ~ from citation to cure overfull box
\citep[pages 3, 7]{Tenn75} A direct system $(A, \rho)$ of abelian groups over, or indexed by a directed set $\Lambda$ is a family $\{ A_\alpha \st \alpha \in \Lambda \}$ of abelian groups together with, for each $\alpha \le \beta$ in $\Lambda$, a homomorphism $\rho_{\alpha, \beta} \colon A_{\alpha} \rightarrow A_{\beta}$, satisfying
\begin{enumerate}
  \item for every $\alpha \in \Lambda$, $\rho_{\alpha, \alpha} = \ident$
  \item for $\alpha, \beta, \gamma \in \Lambda$, if $\alpha \le \beta$ and $\beta \le \gamma$, then $\rho_{\alpha, \gamma} = \rho_{\beta, \gamma} \circ \rho_{\alpha, \beta}$.
\end{enumerate}
We may indicate the directed set just by $A$, or also by $(\Lambda, A, \rho)$.

\citep[page~237]{Rotm09} Viewing $\Lambda$ as a category, a direct system of abelian groups indexed by a directed set $\Lambda$ is a covariant functor $A \colon \Lambda \rightarrow Ab$, the category of abelian groups; $A (\alpha) = A_{\alpha}$ and $A (\alpha \rightarrow \beta) = \rho_{\alpha, \beta}$.

\citep[page~214]{ES52} \citep[page~255]{Rotm09} If $\Lambda' \subset \Lambda$ is a cofinal set, the abelian groups and and homomorphisms corresponding to elements and relations in $\Lambda'$ form a direct system $(\Lambda', A', \rho')$, called a subsystem of $(\Lambda, A, \rho)$.  If $\Lambda'$ is cofinal in $\Lambda$, the subsystem is called cofinal.
\end{defn}

\begin{defn}\label{d-dire-syst-morp}
\index{direct system!morphism}
\index{morphism!direct system}
(Morphisms of Direct Systems of Abelian Groups).
\citep[pages~212--214]{ES52} A morphism $(\Psi, \psi)$ of direct systems $(\Lambda, A, \rho)$, $(\Upsilon, B, \sigma)$ of abelian groups consists of
\begin{enumerate}
  \item a function $\Psi \colon \Lambda \rightarrow \Upsilon$ such that for $\alpha, \beta \in \Lambda$, $\alpha \le \beta \Rightarrow \Psi(\alpha) \le \Psi(\beta)$
  \item a collection of homomorphisms $\{ \psi_{\alpha} \colon A_{\alpha} \rightarrow B_{\Psi (\alpha)} \st \alpha \in \Lambda \}$ such that for $\alpha, \beta \in \Lambda$ and $\alpha \le \beta$, $\sigma_{\Psi (\alpha), \Psi (\beta)} \circ \psi_{\alpha} = \psi_{\beta} \circ \rho_{\alpha, \beta}$.
\end{enumerate}

\citep[pages~246--247]{Rotm09} From the categorical viewpoint, a morphism from the direct system of abelian groups $(\Lambda, A, \rho)$, denoted here by $F \colon \Lambda \rightarrow Ab$, to the direct system of abelian groups $(\Upsilon, B, \sigma)$ or $G \colon \Upsilon \rightarrow Ab$, consists of a functor $\Psi \colon \Lambda \rightarrow \Upsilon$ and a natural transformation $\psi$ from $F$ to $G \circ \Psi$.

The composition of morphisms of direct systems of abelian groups is defined by the compositions of the functors and the natural transformations.  More precisely, given direct systems of abelian groups $F_1 \colon \Lambda_1 \rightarrow Ab$, $F_2 \colon \Lambda_2 \rightarrow Ab$, $F_3 \colon \Lambda_3 \rightarrow Ab$, and morphisms $(\Psi_{12}, \psi_{12}) \colon F_1 \rightarrow F_2$ and $(\Psi_{23}, \psi_{23}) \colon F_2 \rightarrow F_3$, define
\[
(\Psi_{23}, \psi_{23}) \circ (\Psi_{12}, \psi_{12}) = (\Psi_{23} \circ \Psi_{12}, (\psi_{23})_{\Psi_{12} \circ} \circ \psi_{12}), \notag
\]
where the notation $(\psi_{23})_{\Psi_{12} \circ}$ means that for an object $\alpha \in \Lambda_1$, we use the homomorphism $F_2 (\Psi_{12} (\alpha)) \rightarrow F_2 \circ \Psi_{23} (\Psi_{12} (\alpha))$ of the natural transformation $\psi_{23}$.

\citep[page~214]{ES52} Given a subsystem of a direct system of abelian groups, the inclusion $\Lambda' \rightarrow \Lambda$ and identity maps on the abelian groups, constitute a morphism of direct systems called the injection.
\end{defn}
\begin{note}\label{n-dire-syst-morp}
\index{direct system!abelian group}
(Abelian Groups can be Made Into Direct Systems).
Any abelian group $B$ can be made into a direct system indexed by a set with one element.  Then a morphism from a direct system $(\Lambda, A, \rho)$ into the abelian group amounts to a collection of homomorphisms $\psi_{\alpha} \colon A_{\alpha} \rightarrow B$ for $\alpha \in \Lambda$, such that $\psi_{\alpha} = \psi_{\beta} \circ \rho_{\alpha, \beta}$ for $\alpha \le \beta$ in $\Lambda$.
\end{note}

\begin{defn}\label{d-dire-syst-targ}
\index{direct system!target}
\index{target!direct system}
(Targets of a Direct System of Abelian Groups).
\citep[pages~4,~7]{Tenn75} Given a direct system $(A, \rho)$ of abelian groups, a target $(B, \sigma)$ for the system is an abelian group $B$ and a collection of homomorphisms $\{ \sigma_{\alpha} \colon A_{\alpha} \rightarrow B \st \alpha \in \Lambda \}$ such that for $\alpha, \beta \in \Lambda$ with $\alpha \le \beta$, $\sigma_{\beta} = \sigma_{\alpha} \circ \rho_{\alpha, \beta}$.

\citep[page~238]{Rotm09} Viewing $\Lambda$ as a category and the direct system as a functor $A$, a target is a constant functor $C_B \colon \Lambda \rightarrow Ab$, value on objects $B$ and on morphisms $\ident$, together with a natural transformation $\sigma$ from $A$ to $C_B$.
\end{defn}

For concreteness, rather than using its universal property to define the direct limit of a direct system of abelian groups only up to unique isomorphism, we define it by its construction and give the universal property as a consequence.
\begin{defn}\label{d-dire-limi}
\index{direct limit}
\index{limit!direct}
(The Direct Limit of a Direct System of Abelian Groups). \linebreak
\citep[pages~6,~8]{Tenn75} Suppose given a direct system $(A, \rho)$ of abelian groups.  Define the direct limit of the system, $(\varinjlim_{\alpha \in \Lambda} A_{\alpha}, \tau)$, usually referred to as just $\varinjlim_{\alpha \in \Lambda} A_{\alpha}$, as the target for the direct system, given by
\begin{enumerate}
   \item for the abelian group, the quotient of $\dirsum_{\alpha \in \Lambda} A_{\alpha}$ (with natural injections for each $\beta \in \Lambda$, $\incl_{\beta} \colon A_{\beta} \rightarrow \dirsum_{\alpha \in \Lambda} A_{\alpha}$) by the subgroup of the direct sum generated by $\incl_{\alpha} (g) - \incl_{\beta} \circ \rho_{\alpha, \beta} (g)$ for all $\alpha \le \beta$ and all $g \in A_{\alpha}$
   \item the collection of insertion homomorphisms, $\tau_{\beta} = \pi \circ \incl_{\beta}
       \colon A_{\beta} \rightarrow \varinjlim_{\alpha \in \Lambda} A_{\alpha}$, for $\beta \in \Lambda$, where $\pi$ is the quotient homomorphism.
\end{enumerate}
\end{defn}
\begin{note}\label{n-dire-limi}
\index{direct limit}
\index{limit!direct}
(Direct Limit Properties).
\citep[pages~4,~7]{Tenn75} The direct limit satisfies a universal property that could be used to define it up to unique isomorphism: given a target $(B, \sigma)$ for the direct system, there is a unique homomorphism $\mu \colon \varinjlim_{\alpha \in \Lambda} A_{\alpha} \rightarrow B$ such that for every $\alpha \in \Lambda$, $\sigma_{\alpha} = \mu \circ \tau_{\alpha}$.

\citep[pages~5--8]{Tenn75} Each element of the direct limit has a representative in a single $A_\alpha$.  The image in the direct limit of an element of $A_\alpha$ is zero exactly when that element is mapped to zero by some $\rho_{\alpha, \beta}$ for $\beta \ge \alpha$.

\citep[pages~222--223]{ES52} A morphism of direct systems induces a homomorphism between the resulting direct limits, compatible with the homomorphisms of the morphism and the insertion homomorphisms of the direct limits.  That is, a morphism $(\Psi, \psi)$ from $(\Lambda, A, \rho)$ to $(\Upsilon, B, \sigma)$ induces a homomorphism $\mu \colon (\varinjlim_{\alpha \in \Lambda} A_{\alpha}, \sigma) \rightarrow (\varinjlim_{\beta \in \Upsilon} B_{\beta}, \tau)$ such that $\mu \circ \sigma_{\alpha} = \tau_{\Psi(\alpha)} \circ \psi_{\alpha}$.  This homomorphism is called the direct limit of the morphism.

The operation of assigning the direct limits to each direct system and to each morphism of direct systems is a covariant functor from the category of direct systems and morphisms, to that of abelian groups.

If $\Lambda'$ is cofinal in $\Lambda$, the injection morphism induces an isomorphism between the direct limits.  In the case of note \ref{n-dire-syst-morp}, this recovers the universal property of direct limits.

Suppose given a morphism of direct systems $(\Psi, \psi)$ from $(\Lambda, A, \rho)$ to $(\Upsilon, B, \sigma)$.  If there exists a directed set $\Delta \subset \Lambda$ that is cofinal in $\Lambda$, such that $\Psi(\Delta)$ is cofinal in $\Upsilon$, and $\psi_{\alpha}$ is an isomorphism for each $\alpha \in \Lambda$, then the direct limit of the morphism is an isomorphism.

\citep[pages~246--247]{Rotm09} \citep[pages~224--225]{ES52} Defining a direct system of sequences of homomorphisms of abelian groups as a collection of sequences, one sequence for each element of a directed set, each position of each sequence belonging to a direct system of abelian groups over the directed set, with the sequence and direct system homomorphisms commuting, then if each sequence is a (co)chain complex, or exact, so is the direct limit sequence.
\end{note}

\begin{note}\label{n-dire-clas}
\index{directed class}
(Directed Classes, Cofinality, and Set Theoretical Issues).
It's possible to define a directed class, a cofinal directed class, and a direct system over a directed class, analogously to the cases with directed sets, as in \citet[pages~390--391]{Rotm09}.  Without defining the direct limit of a direct system over a directed class, one still has the following result.

Given a directed class $\Lambda$ and a direct system $(\Lambda, A, \rho)$, if $\Delta_1$ and $\Delta_2$ are cofinal in $\Lambda$ and both $\Delta_1$ and $\Delta_2$ are sets, then $\varinjlim_{\delta \in \Delta_1} A_{\delta} \cong \varinjlim_{\delta \in \Delta_2} A_{\delta}$.

This idea allows the comparison of various schemes, each of which makes \v{C}ech cohomology well defined by reducing what one might at first come up with as a directed class, to a directed set.  Thus we can reconcile various definitions of \v{C}ech cohomology that address a set theoretic problem.
\end{note}

\begin{defn}\label{d-mono-epim}
\index{monomorphism}
\index{epimorphism}
(Monomorphisms and Epimorphisms).
% removed ~ to cure overfull box
\citep[pages 304--305]{Rotm09} Given objects $E, F$ of a category $C$, a morphism $u \colon E \rightarrow F$ is a monomorphism when it can be canceled from the left; that is, when for all objects $D$ and morphisms $d, e \colon D \rightarrow E$, $u d = u e \Rightarrow d = e$.

Given objects $E, F$ of a category $C$, a morphism $v \colon E \rightarrow F$ is an epimorphism when it can be canceled from the right; that is, when for all objects $G$ and morphisms $f, g \colon F \rightarrow G$, $f v = g v \Rightarrow f = g$.
\end{defn}

\begin{defn}\label{d-subo-quot}
\index{subobject}
\index{quotient object}
(Subobjects and Quotient Objects).
\citep[pages 306--307]{Rotm09} Given an object $E$ of a category $C$, a subobject of $E$ is an equivalence class of pairs $(D, d)$, $D$ an object of $C$ and $d \colon D \rightarrow E$ a monomorphism, where $(D, d) \sim (D', d')$ when there is an isomorphism $\phi \colon D \rightarrow D'$ with $d = d' \phi$.

A quotient object of $E$ is an equivalence class of pairs $(G, f)$, $G$ an object of $C$ and $f \colon F \rightarrow G$ an epimorphism, where $(G, f) \sim (G', f')$ when there is an isomorphism $\psi \colon G' \rightarrow G$ with $f = \psi f'$.
\end{defn}

\begin{defn}\label{d-add-cat}
\index{category!additive}
\index{additive category}
(Additive Categories).
\citep[page~303]{Rotm09} A category $C$ is additive if
\begin{enumerate}
 \item for objects $A, B$ of $C$, $\Hom(A, B)$ is an additive abelian group
 \item for objects $A, B, X, Y$ of $C$, given morphisms
    \[
    \begindc{\commdiag}[5]
    \obj(10,10)[objX]{$X$}
    \obj(20,10)[objA]{$A$}
    \obj(30,10)[objB]{$B$}
    \obj(40,10)[objY]{$Y$}
    \mor{objX}{objA}{$a$}
    \mor(20,11)(30,11){$f$}[\atleft,\solidarrow]
    \mor(20,9)(30,9){$g$}[\atright,\solidarrow]
    \mor{objB}{objY}{$b$}
    \enddc
    \]
    \[
    b (f + g) = b f + b g \text{ and } (f + g) a = f a  + g a. \notag
    \]
 \item $C$ has a zero object $0$ (an object that is both initial and terminal)
 \item $C$ has finite products and finite coproducts \citep[pages~220--221]{Rotm09}
\end{enumerate}
\end{defn}

\begin{defn}\label{d-ker-coke}
\index{kernel}
\index{cokernel}
(Kernels and Cokernels).
\citep[page~305]{Rotm09} Given objects $E, F$ of an additive category $C$ and a morphism $u \colon E \rightarrow F$, its kernel $\ker(u)$ is (an object $D$ and) a morphism $i \colon D \rightarrow E$ that satisfies the following universal mapping property: $u i = 0$ and, for every object $X$ and morphism $e \colon X \rightarrow E$ with $u e = 0$, there is a unique morphism $\theta \colon X \rightarrow D$ with $i \theta = e$.
\[
\begindc{\commdiag}[5]
\obj(10,30)[objX]{$X$}
\obj(10,15)[objD]{$D$}
\obj(20,15)[objE]{$E$}
\obj(30,15)[objF]{$F$}
\mor{objD}{objE}{$i$}[\atright,\solidarrow]
\mor{objE}{objF}{$u$}[\atright,\solidarrow]
\mor{objX}{objD}{$\theta$}[\atright,\dashArrow]
\mor{objX}{objE}{$e$}[\atright,\solidarrow]
\mor{objX}{objF}{$0$}[\atleft,\solidarrow]

\obj(45,30)[objE]{$E$}
\obj(55,30)[objF]{$F$}
\obj(65,30)[objG]{$G$}
\obj(65,15)[objY]{$Y$}
\mor{objE}{objF}{$u$}[\atleft,\solidarrow]
\mor{objF}{objG}{$\pi$}[\atleft,\solidarrow]
\mor{objG}{objY}{$\theta$}[\atleft,\dashArrow]
\mor{objF}{objY}{$f$}[\atleft,\solidarrow]
\mor{objE}{objY}{$0$}[\atright,\solidarrow]
\enddc
\]
Given objects $E, F$ of an additive category $C$ and a morphism $u \colon E \rightarrow F$, its cokernel $\coker(u)$ is (an object $G$ and) a morphism $\pi \colon F \rightarrow G$ that satisfies the following universal mapping property: $\pi u = 0$ and, for every object $Y$ and morphism $f \colon F \rightarrow Y$ with $f u = 0$, there is a unique morphism $\theta \colon G \rightarrow Y$ with $\theta \pi = f$.
\end{defn}

\begin{defn}\label{d-abel-cat}
\index{category!abelian}
\index{abelian category}
(Abelian Categories).
\citep[pages~307]{Rotm09} An additive category $C$ is an abelian category if every morphism has a kernel and a cokernel, every monomorphism is a kernel, and every epimorphism is a cokernel.

Given objects $D, E$ of $C$ and a morphism $D \xrightarrow{d} E$, define the image of $d$ as the subobject of $E$ defined by $\im (d) = \ker (\coker (d))$.  A sequence $D \xrightarrow{d} E \xrightarrow{e} F$ in $C$ is exact when
\[
\im (d) = \ker (\coker (d)) = \ker (e), \notag
\]
where equality is of subobjects, and the quotient object $\coker(d)$ also is an equivalence class.

\citep[page~51]{Tenn75} A functor $G$ between abelian categories $C, C'$, is called exact when it maps exact sequences to exact sequences, or equivalently, for $D, E, F \in C$,
\[
0 \rightarrow D \rightarrow E \rightarrow F \rightarrow 0 \text{ exact } \Rightarrow \text{ } 0 \rightarrow GD \rightarrow GE \rightarrow GF \rightarrow 0 \text{ exact.} \notag
\]
It is called left exact when the implication is only
\[
0 \rightarrow D \rightarrow E \rightarrow F \rightarrow 0 \text{ exact } \Rightarrow \text{ } 0 \rightarrow GD \rightarrow GE \rightarrow GF \text{ exact.} \notag
\]
\end{defn}
The category of abelian groups is an abelian category.  If $J$ is a category and $A$ is an abelian category, then the category of functors from $J$ to $A$ is an abelian category.  In an abelian category, a morphism is an isomorphism when it is both a monomorphism and an epimorphism. \citep[page~199]{MacL00}

\begin{defn}\label{d-exac-delt-func}
\index{delta functor@$\partial$-functor}
\index{exact delta functor@exact $\partial$-functor}
(Exact $\partial$-Functors).
\citep[page~124]{Tenn75} Given abelian categories $K, K'$, a $\partial$-functor $T \colon K \rightarrow K'$ is a sequence of functors $T^p \colon K \rightarrow K'$ for integers $p \ge 0$ together with an assignment to each short exact sequence
\[
0 \rightarrow A \rightarrow B \rightarrow C \rightarrow 0 \notag
\]
in $K$ of a collection of morphisms $\partial = \partial_T^p \colon T^p C \rightarrow T^{p+1} A$, such that the associated long sequence
\[
0 \rightarrow T^0 A \rightarrow T^0 B \rightarrow T^0 C \xrightarrow{\partial} T^1 A \rightarrow \dots \rightarrow T^p C \xrightarrow{\partial} T^{p+1} A \rightarrow \dots \notag
\]
is exact, and whenever
\[
\begindc{\commdiag}[5]
\obj(10,30)[obj0UL]{$0$}
\obj(20,30)[objA]{$A$}
\obj(30,30)[objB]{$B$}
\obj(40,30)[objC]{$C$}
\obj(50,30)[obj0UR]{$0$}
\obj(10,20)[obj0LL]{$0$}
\obj(20,20)[objAP]{$A'$}
\obj(30,20)[objBP]{$B'$}
\obj(40,20)[objCP]{$C'$}
\obj(50,20)[obj0LR]{$0$}
\mor{obj0UL}{objA}{}
\mor{objA}{objB}{}
\mor{objB}{objC}{}
\mor{objC}{obj0UR}{}
\mor{obj0LL}{objAP}{}
\mor{objAP}{objBP}{}
\mor{objBP}{objCP}{}
\mor{objCP}{obj0LR}{}
\mor{objA}{objAP}{$f$}
\mor{objB}{objBP}{$g$}
\mor{objC}{objCP}{$h$}
\enddc\]
commutes in $K$ and has exact rows, then the corresponding diagrams
\[
\begindc{\commdiag}[5]
\obj(0,30)[objTBL]{$T^p B$}
\obj(0,15)[objTBPL]{$T^p B'$}
\obj(15,30)[objTC]{$T^p C$}
\obj(15,15)[objTCP]{$T^p C'$}
\obj(30,30)[objTA]{$T^{p+1} A$}
\obj(30,15)[objTAP]{$T^{p+1} A'$}
\obj(45,30)[objTBR]{$T^{p+1} B$}
\obj(45,15)[objTBPR]{$T^{p+1} B'$}
\mor{objTBL}{objTBPL}{$T^p g$}
\mor{objTC}{objTCP}{$T^p h$}
\mor{objTA}{objTAP}{$T^{p+1} f$}
\mor{objTBR}{objTBPR}{$T^{p+1} g$}
\mor{objTBL}{objTC}{}
\mor{objTBPL}{objTCP}{}
\mor{objTC}{objTA}{$\partial$}
\mor{objTCP}{objTAP}{$\partial$}
\mor{objTA}{objTBR}{}
\mor{objTAP}{objTBPR}{}
\enddc
\]
commute; i.e. $\partial$ is natural.
\end{defn}

\begin{defn}\label{d-prsh}
\index{presheaf}
\index{morphism!presheaves}
\index{morphism!Presh/X@$Presh/X$}
(Presheaves of Abelian Groups).
\citep[page~1]{Tenn75} A presheaf $A$ of abelian groups over a topological space $X$ is a contravariant functor from the category of open subsets of $X$ and inclusions to the category of abelian groups.  The value of $A$ on an open set $U \subset X$, $A(U)$, is an abelian group whose elements $s \in A(U)$ are called sections of $A$ over $U$.  The value of $A$ on an inclusion $V \subset U$ of open sets in $X$, $A(V \subset U) = r_{V,U} \colon A(U) \rightarrow A(V)$, is a group homomorphism called a restriction map, and for $s \in A(U)$, $r_{V,U} (s)$ is called the restriction of the section $s$ to $V$.  The elements of $A(X)$ are called global sections.

\citep[pages~9--10]{Tenn75} Given presheaves $A, B$ of abelian groups over $X$, denoting the corresponding restriction maps by $r, s$, a morphism of presheaves $\phi \colon A \rightarrow B$ over $X$ is a natural transformation of the presheaves (functors).  The definition of natural transformation here is that to each open $U \subset X$ corresponds a group homomorphism $\phi_U \colon A(U) \rightarrow B(U)$ such that whenever $V \subset U$, the following diagram commutes:
\[
\begindc{\commdiag}[5]
\obj(10,25)[objAU]{$A(U)$}
\obj(25,25)[objBU]{$B(U)$}
\obj(10,10)[objAV]{$A(V)$}
\obj(25,10)[objBV]{$B(V)$}
\mor{objAU}{objBU}{$\phi_U$}
\mor{objAV}{objBV}{$\phi_V$}
\mor{objAU}{objAV}{$r_{V, U}$}
\mor{objBU}{objBV}{$s_{V, U}$}
\enddc
\]
\citep[pages~8--9]{Tenn75} Given a presheaf $A$ of abelian groups over $X$, and a point $x \in X$, define a direct system ordering open sets of $X$ in the opposite order of inclusion (i.e. $V \subset U \Leftrightarrow U \le V$), with abelian groups $A(U)$ and homomorphisms $\rho_{U,V} = r_{V,U}  \colon A(U) \rightarrow A(V)$.  Define the stalk of $A$ at $x$, $A_x = \varinjlim_{U \st x \in U} A(U)$, and denote the insertion homomorphisms of the direct limit for the stalk at $x$, $A(U) \rightarrow A_x$, on elements by $s \mapsto s_x$.
\end{defn}
\citep[page~2]{Tenn75} We will use presheaves of continuous functions (into $\ZZ$, $\RR$, and $\UU(1)$) defined on open sets of a topological space given by the context; then restriction has the usual meaning, and is given by composition on the right with the inclusion mapping, $f \mapsto f \circ \incl$.

\begin{defn}\label{d-shea}
\index{sheaf}
(Sheaves of Abelian Groups).
\citep[pages~14--15]{Tenn75} A sheaf $A$ of abelian groups over a topological space $X$ is a presheaf of abelian groups over $X$ that satisfies two additional conditions, given an open $U \subset X$, and an open cover $\{ U_{\lambda} \}_{\lambda \in \Lambda}$ of $U$:
\begin{enumerate}
 \item given sections $s_1, s_2 \in A(U)$ such that for every $\lambda \in \Lambda$, $r_{U_{\lambda}, U} (s_1) = r_{U_{\lambda}, U} (s_2)$, then $s_1 = s_2$ (with this condition alone $A$ is called a monopresheaf)
 \item given a family $\{ s_{\lambda} \}_{\lambda \in \Lambda}$ of sections with $s_{\lambda} \in A(U_{\lambda})$ for every $\lambda \in \Lambda$, such that for every $\lambda, \mu \in \Lambda$, $r_{U_{\lambda} \cap U_{\mu}, U_{\lambda}} (s_{\lambda}) = r_{U_{\lambda} \cap U_{\mu}, U_{\mu}} (s_{\mu})$, then there is an $s \in A(U)$ such that for every $\lambda \in \Lambda$, $r_{U_{\lambda}, U} (s) = s_{\lambda}$.
\end{enumerate}
In other words, with both conditions, if sections are given on the open sets of a covering of $U$ that are consistent on all the overlaps, then there is exactly one section on all of $U$ from which they arise by restriction.

Since a sheaf over $X$ is a presheaf over $X$, it inherits the definitions of morphism of presheaves over $X$, and stalk, from definition \ref{d-prsh}.
\end{defn}

\begin{eg}\label{e-sheaves}
(Examples of Sheaves).
\citep[pages~2,~17]{Tenn75} Examples of sheaves of abelian groups over a topological space $X$ are presheaves of continuous functions. We denote by $\underline{\UU(1)}$, $\underline{\ZZ}$, and $\underline{\RR}$, the continuous $\UU(1)$, $\ZZ$, and $\RR$ valued functions defined on open subsets of $X$, using the discrete topology for $\ZZ$ and the usual topologies for $\UU(1)$ and $\RR$.  The sheaf $\underline{\UU(1)}$ will be written multiplicatively and its identity section can be written $\underline{1}$.
\end{eg}

\begin{lem}\label{l-pres-shea}
\index{sheaf}
\index{presheaf}
(Properties of Presheaves and Sheaves).
\citep[page~49]{Tenn75} Presheaves of abelian groups over a given topological space $X$ form an abelian category $Presh/X$.  Likewise sheaves of abelian groups over $X$ form an abelian category $Shv/X$, which is a full subcategory of $Presh/X$ \citep[pages~309]{Rotm09}.  The inclusion functor $Shv/X \xrightarrow{\incl} Presh/X$ preserves kernels; is left exact.

\citep[pages~22--23,~34--35,~52]{Tenn75} There is a ``sheafification'' functor $\Gamma L \colon Presh/X \rightarrow Shv/X$, with $(\Gamma L)_{|Shv/X}$ naturally equivalent to $\ident_{Shv/X}$.  There is a natural transformation $n$ from $\ident_{Presh/X}$ to $\Gamma L$, and each morphism of presheaves over $X$ from a presheaf $F$ to a sheaf $G$, $f \colon F \rightarrow G$, factors uniquely through $n_F \colon F \rightarrow \Gamma L F$.  The ``sheafify'' functor $\Gamma L$ is exact.

\citep[pages~37--38,~41--43,~49--51]{Tenn75} Given a morphism of presheaves over $X$, $f \colon F \rightarrow G$, its kernel $\ker(f)$, cokernel $\pcoker(f)$, and image $\pim(f)$, the P's to distinguish between the $Presh/X$ and $Shv/X$ categories, are presheaves over $X$ constructed by applying the usual constructions for the abelian groups the presheaves produce for every open set in of $X$.  Likewise, injectivity and surjectivity, equivalently the properties of being a monomorphism, respectively epimorphism, are equivalent to the same properties for the abelian groups for every open set.  A sequence of morphisms of presheaves over $X$ is exact when the corresponding sequence of morphisms of abelian groups is exact, for every open set of $X$.

Given a morphism of sheaves over$X$, $f \colon F \rightarrow G$, the sheaf cokernel of $f$, $\scoker(f)$, or more precisely the object that is the domain of the morphism for the cokernel, is the sheafification $\Gamma L \pcoker(f)$, and there is a natural morphism of presheaves (and hence sheaves) over $X$, $G \rightarrow \scoker(f)$, given by $G \rightarrow \pcoker(f) \xrightarrow{n_{\pcoker(f)}} \scoker(f)$, such that the composite $F \rightarrow G \rightarrow \scoker(f)$ is zero.
\end{lem}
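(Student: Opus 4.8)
The plan is to obtain every assertion from the cited pages of \citet{Tenn75}, reproducing only the constructions that are actually used later in the thesis. First I would note that $Presh/X$ is the functor category from the poset of open subsets of $X$ (with inclusions) into $Ab$; since $Ab$ is abelian and a functor category into an abelian category is again abelian \citep[page~199]{MacL00}, $Presh/X$ is abelian, and kernels, cokernels, images, and exactness are all computed ``openwise'' — that is, $(\ker f)(U) = \ker(f_U)$, $(\pcoker f)(U) = \coker(f_U)$, $(\pim f)(U) = \pim(f_U)$ — because limits and colimits in a functor category are formed objectwise. This yields the last two paragraphs of the statement at once, including the characterizations of monomorphism, epimorphism, and of exactness of a sequence of presheaf morphisms in terms of the corresponding sequences of abelian groups.

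Next I would handle $Shv/X$. A sheaf is a presheaf satisfying the two gluing axioms of definition~\ref{d-shea}, and by definition a morphism of sheaves is a morphism of the underlying presheaves, so $Shv/X$ is a full subcategory of $Presh/X$; this is \citet[page~49]{Tenn75} (cf.\ \citep[page~309]{Rotm09}). That the inclusion $Shv/X \xrightarrow{\incl} Presh/X$ preserves kernels — hence is left exact — I would prove directly: the presheaf kernel of a morphism of sheaves, being a subpresheaf cut out by an equalizer condition, inherits both the monopresheaf axiom and the gluing axiom, so it is already a sheaf, and it has the kernel universal property in $Shv/X$ because $Shv/X$ is full in $Presh/X$. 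The substantive input is the sheafification functor $\Gamma L \colon Presh/X \to Shv/X$, which I would take from \citet[pages~22--23,~34--35,~52]{Tenn75}: $L$ produces the presheaf of compatible germ families (equivalently one builds the \'etal\'e space and lets $\Gamma$ take its sheaf of sections), the composite $\Gamma L$ lands in $Shv/X$, restricts to $\ident_{Shv/X}$ up to natural equivalence, and comes equipped with a natural transformation $n \colon \ident_{Presh/X} \to \Gamma L$ through which every presheaf morphism into a sheaf factors uniquely.

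Exactness of $\Gamma L$ is the one genuinely non-formal point, and I would argue it on stalks. The stalk functor $F \mapsto F_x$ on $Presh/X$ is exact, since $F_x$ is the filtered colimit $\varinjlim_{x \in U} F(U)$ and a filtered colimit of exact sequences of abelian groups is exact (note~\ref{n-dire-limi}); the map $n_F$ induces isomorphisms $F_x \xrightarrow{\sim} (\Gamma L F)_x$ for every $x$; and a sequence of sheaf morphisms is exact iff it is exact on every stalk. Combining these three facts shows $\Gamma L$ carries exact presheaf sequences to exact sheaf sequences. Finally, for the sheaf cokernel of a sheaf morphism $f \colon F \to G$: the object $\Gamma L\,\pcoker(f)$ has the cokernel universal property in $Shv/X$, because the presheaf-cokernel universal property of $\pcoker(f)$ together with the unique-factorization property of $n$ transfers verbatim to sheaves; the canonical morphism $G \to \scoker(f)$ is the composite $G \to \pcoker(f) \xrightarrow{n_{\pcoker(f)}} \Gamma L\,\pcoker(f)$, and $F \to G \to \scoker(f)$ is zero because $F \to G \to \pcoker(f)$ is already zero in $Presh/X$. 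This is exactly \citet[pages~37--38,~41--43,~49--51]{Tenn75}.

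The hard part — really the only step needing more than a line of bookkeeping — is the stalkwise proof that $\Gamma L$ is exact, which I would reduce, as above, to exactness of filtered colimits of abelian groups and to $n$ being a stalkwise isomorphism; everything else is either a direct translation of the functor-category description of $Presh/X$ or a routine unwinding of the sheaf axioms, and for the full details in each case I would simply defer to the cited pages of \citet{Tenn75}.
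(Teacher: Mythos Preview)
Your proposal is correct and in fact more detailed than what the paper does: the paper provides no proof at all for this lemma, treating it as a collection of cited facts from \citet{Tenn75} and \citet{Rotm09}. Your outline --- recognizing $Presh/X$ as a functor category into $Ab$ so that limits and colimits are computed openwise, checking directly that the presheaf kernel of a sheaf morphism is a sheaf, and deducing exactness of $\Gamma L$ stalkwise from exactness of filtered colimits together with $n$ being a stalkwise isomorphism --- is exactly the standard development in Tennison, so you are essentially reproducing the cited source rather than diverging from the paper.
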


\begin{defn}\label{d-morp-over}
\index{morphism!presheaves over a map}
\index{morphism!over}
\index{morphism!Presh@$Presh$}
(Morphisms of Presheaves Over a Map of Spaces).
\citep[pages~55--56]{Tenn75} Given a continuous map of topological spaces $f \colon X \rightarrow Y$ and presheaves of abelian groups $A$ over $X$ and $B$ over $Y$, a morphism over $f$, or $f$-morphism $\overline{f} \colon B \rightarrow A$ is a collection of abelian group homomorphisms $\overline{f} (U, V)$, one for each open $V \subset Y$ and each open $U \subset f^{-1} (V)$, such that if $V' \subset V$, $U' \subset U$, and $U' \subset f^{-1} (V')$, then the following diagram commutes:
\[
\begindc{\commdiag}[5]
\obj(10,25)[objAU]{$A(U)$}
\obj(30,25)[objBV]{$B(V)$}
\obj(10,10)[objAUP]{$A(U')$}
\obj(30,10)[objBVP]{$B(V')$}
\mor{objBV}{objAU}{$\overline{f} (U, V)$}
\mor{objBVP}{objAUP}{$\overline{f} (U', V')$}
\mor{objBV}{objBVP}{$r_{V', V}$}
\mor{objAU}{objAUP}{$r_{U', U}$}
\enddc
\]

For another viewpoint, we define the category of presheaves $Presh$ (not over a particular topological space $X$) with objects the objects of $Presh/X$ for all topological spaces $X$, and for $A \in Presh/X$ and $B \in Presh/Y$, morphisms $B \rightarrow A$ in $Presh$ consisting of the morphisms over continuous maps $X \rightarrow Y$.
\end{defn}
\begin{note}\label{n-morp-over}
\index{morphism!over}
(Notes on Morphisms Over).
An $f$-morphism is determined by the homomorphisms $\overline{f} (f^{-1} (V), V)$, using the diagram with $U = f^{-1} (V)$ and $V' = V$.  Given a sequence of two maps of topological spaces, $X \xrightarrow{f} Y \xrightarrow{g} Z$, morphisms $\overline{f}$ over $f$ and $\overline{g}$ over $g$, define $(\overline{f} \circ \overline{g}) ((g \circ f)^{-1}W, W) = \overline{f} (f^{-1} g^{-1} W, g^{-1} W) \circ \overline{g} (g^{-1} W, W)$ for all open $W \subset Z$, requiring compatibility with restrictions, to get a morphism over $g \circ f$.

If $Y = X$ and $f = \ident_X$, then a morphism over $f$ is just a morphism of $Presh/X$.
\end{note}

\section{\v{C}ech Cohomology}\label{s-cech-coho}

\begin{defn}\label{d-inde-cove}
\index{cover!indexed}
\index{indexed cover}
(Indexed Covers).
An indexed cover $\{ U_i \st i \in I \}$ for a topological space $X$ with topology $T$, is a function $I \rightarrow T$, with value on $i \in I$ written $U_i$, such that $X \subset \bigcup_{i \in I} U_i$.
\end{defn}

\begin{defn}\label{d-inde-cove-refi}
\index{cover!indexed!refinement}
\index{index cover refinement}
\index{refinement!indexed cover}
(Refinements of Indexed Covers).
\citep[page~141]{Tenn75} An indexed cover $\{ V_j \st j \in J \}$ for a topological space $X$ with topology $T$ refines another indexed cover $\{ U_i \st i \in I \}$ for $X$ with topology $T$ when there is a function $\phi \colon J \rightarrow I$ such that for every $j \in J$, $V_j \subset U_{\phi(j)}$.
\end{defn}

\begin{defn}\label{d-orde-tupl}
\index{tuple!ordered}
\index{ordered tuple}
(Ordered Tuples).
Given a set $I$ and an integer $p \ge 0$, an ordered $(p + 1)$-tuple $\sigma$ of elements of $I$ is a function $\sigma \colon [0, p] = \{ q \in \ZZ \st 0 \le q \le p \} \rightarrow I$.  It may also be written by listing, in order, $i_0 = \sigma(0), \dots. i_p = \sigma(p)$, so that we may write $\sigma = (i_0, \dots, i_p)$.  Denote the set of $(p+1)$-tuples of elements of $I$ by $I_p$.
\end{defn}
There is no order relation implied on $I$.

\begin{defn}\label{d-orde-cech-coch}
\index{Cech cochain@\v{C}ech cochain}
\index{cochain!Cech@\v{C}ech}
\index{alternating!cochains}
(Alternating \v{C}ech Cochains).
\citep[pages~140,~153]{Tenn75} Suppose given an indexed open cover $\{ U_i \st i \in I \}$ for a topological space $X$, and a presheaf $A$ of abelian groups on $X$.  For each $(p+1)$-tuple $\sigma \in I_p$, let $U_{\sigma} = \bigcap_{q \in [0, p]} U_{\sigma(q)}.$ Using the following notation, define the abelian group $\check{C}^p (U, A)$ of alternating $p$-cochains for $U$ with coefficients in $A$, letting $S_{p+1}$ denote the group of permutations of $[0,p]$, and for $\tau \in S_{p+1}$, letting $\epsilon(\tau)$ denote the sign of the permutation:
\begin{align}
s &= (s (\sigma)) \in \prod_{\sigma \in I_p} A(U_{\sigma}) \notag \\
s (\sigma) &\in A(U_{\sigma}) \notag \\
\check{C}^p (U, A) &= \{ s \text{ satisfying the following:} \} \notag \\
\tau \in S_{p+1} &\Rightarrow s (\sigma \circ \tau) = \epsilon (\tau) s (\sigma) \notag \\
\sigma \text{ not injective} &\Rightarrow s (\sigma) = 0. \notag
\end{align}
\end{defn}
Some references define \v{C}ech cohomology with cochains that aren't necessarily alternating.  However, \citet[page~153]{Tenn75}, \citet[page~218]{Hart77} and, for \v{C}ech cohomology with constant coefficients, \citet[pages~162--176]{ES52} imply that an equivalent theories result.  In some places, e.g. the proof of proposition \ref{p-dd-bg-susp-c1-pb}, this thesis depends on alternating cochains defining the Dixmier-Douady class, but it might work just as well not to require them in the definition, and instead just to make certain choices in places where it matters, to ensure that they're alternating.

Likewise, some references only define cochains to have values for index tuples corresponding to nonempty intersections of open sets, but since we use inverse images of covers, empty intersections in one space may arise naturally from nonempty intersections in another.  For us, $U_{\sigma} = \emptyset \Rightarrow s (\sigma) \in A(\emptyset) = \{ 0 \}$.

\begin{defn}\label{d-cech-coho}
\index{Cech cohomology@\v{C}ech cohomology}
\index{cohomology!Cech@\v{C}ech}
(\v{C}ech Cohomology).
\citep[pages~140--142,~153]{Tenn75} Given a topological space $X$, an indexed open cover $U = \{ U_i \}_{i \in I}$ of $X$, and a presheaf $A$ of abelian groups over $X$, define the coboundary operator
\begin{align}
\delta \colon \check{C}^p(U;A) &\rightarrow \check{C}^{p+1}(U;A) \text{ by} \notag \\
\delta c(i_0,\dots,i_p) &= \sum_{k=0}^{p+1} (-1)^k r_{U_{i_0,\dots,i_{p+1}}, U_{i_0,\dots,\widehat{i_k},\dots,i_{p+1}}} (c(i_0,\dots,\widehat{i_k},\dots,i_{p+1})), \notag
\end{align}
where the $r$ are the restriction homomorphisms of $A$.  Denote the cohomology groups from the cochain complex defined by these cochains and coboundary operators by $\CH^p(U;A)$; we will call them \v{C}ech cohomology groups for the (indexed) cover $U$.

Define the directed class $\Lambda$ of indexed covers of $X$, ordered by $U \preceq V$ when $V$ refines $U$, which induces a cochain map and resulting refinement homomorphism  $h_{VU} \colon \CH^p(U;A) \rightarrow \CH^p(V;A)$.  Define the directed set $\Lambda_T \subset 2^T$, of covers indexed by subsets of $T$, the topology of $X$.  Define the $p$-th \v{C}ech cohomology group of the space $X$ as the direct limit
\[
\CH^p(X;A) = \varinjlim_{U \in \Lambda_T} \CH^p(U;A) \notag
\]
and denote the insertion homomorphisms of the direct limit by $h_{XU} \colon \CH^p(U;A) \rightarrow \CH^p(X;A)$; the same symbols are used for the homomorphisms for any $p$.
\end{defn}
\begin{note}\label{n-cech-coho}
\index{Cech cohomology@\v{C}ech cohomology!notes}
(\v{C}ech Cohomology).
From the properties of direct limits, given three covers $U \preceq V \preceq W$, $h_{WU} = h_{WV} \circ h_{VU}$; also $h_{XU} = h_{XV} \circ h_{VU}$.

The definition applies to sheaves as well as presheaves.  All the steps of these constructions are functorial in the presheaf $A$.

$\Lambda_T$ is a directed set, since any two indexed covers have a common refinement, an indexed cover that refines both of them, that is an indexed cover without repetitions, indexed by itself; i.e. the index of each open set is that open set.  It is a cofinal class of $\Lambda$ \citep[pages~390--391]{Rotm09}.  Given any indexed cover $U = \{ U_i \st i \in I \}$ of $X$, let $V = U$; that is, let $V$ be the set of values $U_i$, a subset of $T$.  Index $V$ by $J = V$, a subset of $T$; i.e. $V_j = j$.  Then define a refinement map $\phi \colon J \rightarrow I$ by $\phi(j) = \text{ some } i \text{ for which } V_j = U_i$, using the axiom of choice.

Two refinement maps from a given indexed cover to another may produce different cochain maps, but the choice of refinement map disappears in cohomology: \citet[page~142]{Tenn75} shows that they induce the same homomorphism in \v{C}ech cohomology of covers.

A similar argument with a cofinal directed set lets us use results involving constant coefficients from \citet{Dowk50}, which does not appear to index covers, since a cover without repetitions can be indexed by itself to satisfy our definition.

Again, in understanding why the alternative restriction of \citet[page~143]{Tenn75} to covers indexed by $X$, with $x \in U_x$ for all $x \in X$, results in a cofinal directed set in the class of all indexed covers, one can refine any indexed cover to one of these by refining it using Zorn's Lemma to a inverse-inclusion maximal subset that is still a cover, which will have the property that no element is contained in the union of other elements, and so is a cover of the first type.
\end{note}

Although sheaves are presheaves so that the definition of \v{C}ech cohomology applies to them, and \v{C}ech cohomology for presheaves over a topological space is an exact $\partial$-functor, it requires a condition on the space to ensure that the same is true for sheaves, because exact sequences of sheaves over $X$  aren't necessarily exact in the category of presheaves over $X$.
\begin{prop}\label{p-cech-coho-pres-shea}
\index{Cech cohomology@\v{C}ech cohomology!presheaf}
\index{cohomology!Cech@\v{C}ech!presheaf}
\index{Cech cohomology@\v{C}ech cohomology!sheaf}
\index{cohomology!Cech@\v{C}ech!sheaf}
(\v{C}ech Cohomology of Presheaves and Sheaves).
\citep[pages~143--145]{Tenn75} \v{C}ech cohomology of presheaves is an exact $\partial$-functor (as in definition \ref{d-exac-delt-func}) from presheaves over a topological space $X$, $Presh/X$, to the category of abelian groups.

\citep[pages~144--147]{Tenn75} If $X$ is paracompact, the same is true of \v{C}ech cohomology of sheaves, the salient difference being that the exact sequences in definition \ref{d-exac-delt-func} are now exact in the category $Shv/X$ rather than $Presh/X$.
\end{prop}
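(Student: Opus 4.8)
The plan is to prove the presheaf statement first, by a standard homological-algebra argument carried out at the level of a single cover and then pushed through the direct limit, and then to reduce the sheaf statement to the presheaf statement plus one vanishing lemma, which is where paracompactness enters.

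For the presheaf case, let $0 \rightarrow A \rightarrow B \rightarrow C \rightarrow 0$ be a short exact sequence in $Presh/X$, meaning (lemma \ref{l-pres-shea}) that $0 \rightarrow A(W) \rightarrow B(W) \rightarrow C(W) \rightarrow 0$ is exact for every open $W \subset X$. First I would fix an indexed open cover $U = \{U_i\}_{i \in I}$ and note that, since each $U_\sigma$ is open and a product of short exact sequences of abelian groups is exact, the Čech construction of definition \ref{d-cech-coho} yields a short exact sequence of cochain complexes
\[
0 \rightarrow \check{C}^*(U;A) \rightarrow \check{C}^*(U;B) \rightarrow \check{C}^*(U;C) \rightarrow 0,
\]
because the alternating and vanishing-on-non-injective-tuples conditions of definition \ref{d-orde-cech-coch} are preserved by all three maps and the coboundary $\delta$ commutes with them (being built from the presheaf restriction maps). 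The long exact cohomology sequence of this short exact sequence of complexes gives, for each $U$, a long exact sequence in $\CH^p(U;-)$ with connecting maps $\partial$, and the naturality axiom of definition \ref{d-exac-delt-func} for a morphism of short exact sequences of presheaves follows cover-by-cover from the usual naturality of the connecting homomorphism (the snake lemma). Then I would pass to the direct limit over the directed set $\Lambda_T$ of covers: the long exact sequences form a direct system of exact sequences, the refinement chain maps of definition \ref{d-cech-coho} being compatible with all the maps (including $\partial$, up to the indeterminacy that vanishes in cohomology), so by exactness of filtered direct limits (note \ref{n-dire-limi}) the limit sequence for $\CH^p(X;-)$ is exact, with connecting maps the direct limits of the $\partial$'s, and naturality is inherited.

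For the sheaf case, let $0 \rightarrow A \rightarrow B \rightarrow C \rightarrow 0$ be exact in $Shv/X$. Left-exactness of the inclusion $Shv/X \hookrightarrow Presh/X$ (lemma \ref{l-pres-shea}) shows $0 \rightarrow A \rightarrow B \rightarrow C$ is still exact as presheaves; only surjectivity of $B \rightarrow C$ on sections can fail. The plan is to reduce to the presheaf case. Put $C' = \pcoker(A \rightarrow B)$, so $0 \rightarrow A \rightarrow B \rightarrow C' \rightarrow 0$ is exact in $Presh/X$ and the presheaf case applies to it. The universal property of $C'$ gives a natural presheaf morphism $C' \rightarrow C$ (the composite $A \rightarrow B \rightarrow C$ being zero), and since $\Gamma L C' = \scoker(A \rightarrow B) = C$ (lemma \ref{l-pres-shea}) this morphism is the sheafification map $n_{C'}$, hence an isomorphism on all stalks. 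Therefore its presheaf kernel $\ker(n_{C'})$ and presheaf cokernel $\pcoker(n_{C'})$ have all stalks zero. Splitting $n_{C'}$ as $0 \rightarrow \ker(n_{C'}) \rightarrow C' \rightarrow \pim(n_{C'}) \rightarrow 0$ and $0 \rightarrow \pim(n_{C'}) \rightarrow C \rightarrow \pcoker(n_{C'}) \rightarrow 0$ and feeding these into the presheaf long exact sequences, it remains only to know that $\CH^p(X;F) = 0$ whenever $F$ has zero stalks; granting this, $\CH^p(X;C') \cong \CH^p(X;C)$ naturally, and substituting into the long exact sequence of $0 \rightarrow A \rightarrow B \rightarrow C' \rightarrow 0$ yields the required long exact sequence of the sheaf sequence, with natural connecting maps.

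Thus everything hinges on the lemma: for $X$ paracompact and $F$ any presheaf over $X$ with all stalks zero, $\CH^p(X;F) = 0$ for all $p$. This is the step I expect to be the main obstacle, since the rest is formal. The idea: given a cover $U$ and a cocycle $s \in \check{C}^p(U;F)$, for every $p$-tuple $\sigma$ and every $x \in U_\sigma$ the germ $s(\sigma)_x$ is zero, so $s(\sigma)$ restricts to $0$ on some open neighbourhood of $x$; using paracompactness to pass to a suitable (star-)refinement $V$ of $U$, one arranges that every nonempty intersection $V_{j_0} \cap \cdots \cap V_{j_p}$ lies inside one of these neighbourhoods, so that the image of $s$ in $\check{C}^p(V;F)$ is the zero cochain and hence $h_{VU}[s] = 0$. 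Since this holds for a cofinal family of $V$, the class $[s]$ dies in $\CH^p(X;F) = \varinjlim_U \CH^p(U;F)$. Carrying out the star-refinement bookkeeping carefully is the only genuinely non-formal ingredient, and for it I would follow \citet[pages~144--147]{Tenn75}; everything else is the snake lemma together with exactness of filtered direct limits.
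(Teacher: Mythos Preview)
Your proposal is correct and follows essentially the same route as the paper: both treat the presheaf case by the standard argument (short exact sequence of presheaves $\Rightarrow$ short exact sequence of \v{C}ech complexes for each cover $\Rightarrow$ long exact sequence via the snake lemma $\Rightarrow$ pass to the direct limit), and both reduce the sheaf case to the presheaf case via the presheaf cokernel $C' = \pcoker(f)$ together with the fact that the sheafification map $n_{C'} \colon C' \rightarrow C$ induces an isomorphism on \v{C}ech cohomology when $X$ is paracompact.

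The difference is one of emphasis. You spell out the mechanism behind that cohomology isomorphism (splitting $n_{C'}$ through its presheaf image, reducing to the vanishing lemma for presheaves with zero stalks, and sketching the star-refinement argument for that lemma), whereas the paper simply cites Tennison for that step. Conversely, the paper's own contribution is the naturality of $\partial$ in the sheaf case, which it verifies explicitly by constructing a morphism $\widetilde{c} \colon \pcoker(f) \rightarrow \pcoker(f')$ compatible with the sheafification maps and then defining $\partial_{Shv} = \partial \circ (n_{\pcoker(f)}^{*})^{-1}$; you assert naturality but do not unpack it. Your assertion is justifiable (all the constructions involved---presheaf cokernel, sheafification, the two splitting short exact sequences---are functorial), but since the paper singles out this point as the one Tennison omits, you might at least name the functoriality that makes your ``naturally'' go through.
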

\begin{proof}
Most of the proof is in \citet[pages~144--145]{Tenn75}, who starts by saying that a short exact sequence of presheaves gives, for any cover, a short exact sequence of \v{C}ech cochain complexes for that cover.  Then lemma 2.12 of \citet[page~126]{Tenn75} gives from the short exact sequence of cochain complexes of abelian groups, a long exact sequence in homology with connecting homomorphisms that commute with homomorphisms induced from a morphism of sequences.

Given a short exact sequence of sheaves over $X$, $0 \rightarrow A \xrightarrow{f} B \xrightarrow{g} C \rightarrow 0$, use the factorization of $g$ through $\pcoker(f)$ to obtain the following, where $n_{\pcoker (f)}$ is the natural map from sheafification (see lemma \ref{l-pres-shea}) and we abuse notation to call $\pcoker(f)$, of lemma \ref{l-pres-shea}, a presheaf object:
\[
\begindc{\commdiag}[5]
\obj(10,10)[obj0L]{$0$}
\obj(20,10)[objA]{$A$}
\obj(30,10)[objB]{$B$}
\obj(50,10)[objC]{$C$}
\obj(50,25)[objPCK]{$\pcoker(f)$}
\obj(65,25)[obj0RU]{$0$}
\obj(65,10)[obj0RL]{$0$}
\mor{obj0L}{objA}{}
\mor{objA}{objB}{}
\mor{objB}{objPCK}{}
\mor{objB}{objC}{}
\mor{objPCK}{objC}{$n_{\pcoker (f)}$}
\mor{objPCK}{obj0RU}{}
\mor{objC}{obj0RL}{}
\enddc
\]
yielding the long exact sequence in cohomology for the short exact sequence of presheaves in the row through the top branch, with natural isomorphisms $\CH^p (X; \pcoker(f)) \xrightarrow{n_{\pcoker (f)}^*} \CH^p (X; C)$ for all $p \ge 0$.

\citep[pages~145--147]{Tenn75} explains that if $X$ is paracompact, then the map from sheafification, $n_{\pcoker (f)}$, induces an isomorphism in cohomology.  However, he doesn't show why, whenever
\[
\begindc{\commdiag}[5]
\obj(10,30)[obj0UL]{$0$}
\obj(20,30)[objA]{$A$}
\obj(30,30)[objB]{$B$}
\obj(40,30)[objC]{$C$}
\obj(50,30)[obj0UR]{$0$}
\obj(10,20)[obj0LL]{$0$}
\obj(20,20)[objAP]{$A'$}
\obj(30,20)[objBP]{$B'$}
\obj(40,20)[objCP]{$C'$}
\obj(50,20)[obj0LR]{$0$}
\mor{obj0UL}{objA}{}
\mor{objA}{objB}{$f$}
\mor{objB}{objC}{$g$}
\mor{objC}{obj0UR}{}
\mor{obj0LL}{objAP}{}
\mor{objAP}{objBP}{$f'$}[\atright,\solidarrow]
\mor{objBP}{objCP}{$g'$}[\atright,\solidarrow]
\mor{objCP}{obj0LR}{}
\mor{objA}{objAP}{$a$}
\mor{objB}{objBP}{$b$}
\mor{objC}{objCP}{$c$}
\enddc\]
commutes and has exact rows in $Shv/X$, then the corresponding diagrams
\[
\begindc{\commdiag}[5]
\obj(15,30)[objCHC]{$\CH^p (X; C)$}
\obj(15,15)[objCHCP]{$\CH^p (X; C')$}
\obj(35,30)[objCHA]{$\CH^{p+1} (X; A)$}
\obj(35,15)[objCHAP]{$\CH^{p+1} (X; A')$}
\mor{objCHC}{objCHCP}{$\CH^p (c) = c^*$}[\atright,\solidarrow]
\mor{objCHA}{objCHAP}{$a^* = \CH^{p+1} (a)$}
\mor{objCHC}{objCHA}{$\partial$}
\mor{objCHCP}{objCHAP}{$\partial'$}
\enddc
\]
for all $p \ge 0$ commute; i.e. $\partial$ is natural.  We will construct a morphism
\[
\widetilde{c} \colon \pcoker(f) \rightarrow \pcoker(f') \notag
\]
to make the diagram below commute, then define $\partial$ for the long sequence using sheaves $C$ and $C'$ so that the sequence is exact and we get the desired naturality, following from naturality of $\partial$ for the long exact sequence using the presheaf cokernels, and from functoriality of the \v{C}ech cohomology functor.
\[
\begindc{\commdiag}[5]
\obj(10,30)[obj0UL]{$0$}
\obj(20,30)[objA]{$A$}
\obj(30,30)[objB]{$B$}
\obj(40,30)[objC]{$C$}
\obj(50,40)[objPCK]{$\pcoker(f)$}
\obj(60,40)[obj0PCK]{$0$}
\obj(60,30)[obj0UR]{$0$}
\obj(10,20)[obj0LL]{$0$}
\obj(20,20)[objAP]{$A'$}
\obj(30,20)[objBP]{$B'$}
\obj(40,20)[objCP]{$C'$}
\obj(50,10)[objPCKP]{$\pcoker(f')$}
\obj(60,10)[obj0PCKP]{$0$}
\obj(60,20)[obj0LR]{$0$}
\mor{obj0UL}{objA}{}
\mor{objA}{objB}{$f$}[\atright,\solidarrow]
\mor{objB}{objC}{$g$}[\atright,\solidarrow]
\mor{objB}{objPCK}{$\pi$}
\mor{objPCK}{objC}{$n_{\pcoker(f)}$}
\mor{objPCK}{obj0PCK}{}
\mor{objC}{obj0UR}{}
\mor{obj0LL}{objAP}{}
\mor{objAP}{objBP}{$f'$}
\mor{objBP}{objCP}{$g'$}
\mor{objBP}{objPCKP}{$\pi'$}[\atright,\solidarrow]
\mor{objPCKP}{objCP}{$n_{\pcoker(f')}$}[\atright,\solidarrow]
\mor{objPCKP}{obj0PCKP}{}
\mor{objCP}{obj0LR}{}
\mor{objA}{objAP}{$a$}
\mor{objB}{objBP}{$b$}
\mor{objC}{objCP}{$c$}
\mor{objPCK}{objPCKP}{$\widetilde{c}$}[\atleft,\dashArrow]
\enddc\]
Take any open set $U \subset X$ and any element $p \in \pcoker(f) (U)$, which since $\pi \colon B(U) \rightarrow \pcoker(f)(U)$ is surjective (see lemma \ref{l-pres-shea}), is given by $p = \pi (\beta)$ for some $\beta \in B(U)$. Define $\widetilde{c} (p) = \pi' b (\beta)$.

$\widetilde{c}$ is a homomorphism, and is well defined, for suppose $\pi (\beta') = \pi (\beta)$, then $\beta' = \beta + f(\alpha)$ for some $\alpha \in A(U)$, and
\begin{align}
\pi' b (\beta') &= \pi' b (\beta) + \pi' b f(\alpha) \notag \\
&= \pi' b (\beta) + \pi' f' a (\alpha) \notag \\
&= \pi' b (\beta). \notag
\end{align}
With $\widetilde{c}$ added to the diagram, it still commutes, because
\begin{align}
n_{\pcoker(f')} \widetilde{c} (p) &= n_{\pcoker(f')} \widetilde{c} \pi (\beta) \notag \\
&= n_{\pcoker(f')} \pi' b (\beta) \notag \\
&= g' b (\beta) \notag \\
&= c g (\beta) \notag \\
&= c n_{\pcoker(f)} \pi (\beta) \notag \\
&= c n_{\pcoker(f)} (p). \notag
\end{align}

Since this is true for any open $U$, $\widetilde{c}$ is a morphism of presheaves.  By functoriality of \v{C}ech cohomology for presheaves, the commutative diagram of presheaves induces one in cohomology.  Using $\partial$ from the short exact sequence of presheaves to define $\partial_{Shv} = \partial (n_{\pcoker(f)}^*)^{-1}$ and likewise $\partial_{Shv}' = \partial' (n_{\pcoker(f')}^*)^{-1}$, we have
\begin{align}
a^* \partial_{Shv} &= a^* \partial (n_{\pcoker(f)}^*)^{-1} \notag \\
&= \partial' \widetilde{c}^* (n_{\pcoker(f)}^*)^{-1} \notag \\
&= \partial_{Shv}' n_{\pcoker(f')}^* \widetilde{c}^* (n_{\pcoker(f)}^*)^{-1} \notag \\
&= \partial_{Shv}' c^*. \notag
\end{align}
The exactness of the long sequence using $C, C'$ follows from that of the long sequence using the presheaf cokernels, because
\begin{align}
\partial_{Shv} &= \partial (n_{\pcoker(f)}^*)^{-1} \notag \\
g &= \pi (n_{\pcoker(f)}^*)^{-1}, \notag
\end{align}
and similarly for $\partial_{Shv}', g'$.
\end{proof}

\begin{lem}\label{l-exp-exac-seq}
\index{exponential exact sequence}
\index{sheaf!exponential exact sequence}
(An Exponential Exact Sequence of Sheaves).
% removed ~ to cure overfull box
\citep[pages 174, 467]{Bred97} Given a topological space $X$, the following sequence of sheaves over $X$ is exact:
\[
0 \rightarrow \underline{\ZZ} \rightarrow \underline{\RR} \rightarrow \underline{\UU(1)} \rightarrow 0 \notag
\]
If $X$ is paracompact this yields the following isomorphism for $p > 0$, namely the connecting homomorphism for the corresponding long exact sequence:
\begin{align}
\CH^p(X;\underline{\UU(1)}) &\isomto \CH^{p+1}(X;\underline{\ZZ}). \notag
\end{align}
\end{lem}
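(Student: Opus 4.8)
The plan is to verify exactness of the sheaf sequence stalkwise and then feed it into the \v{C}ech long exact sequence supplied by proposition \ref{p-cech-coho-pres-shea}. First I would fix the maps: $\iota \colon \underline{\ZZ} \to \underline{\RR}$ the inclusion, and $e \colon \underline{\RR} \to \underline{\UU(1)}$ defined on each open $U$ by $(e_U f)(x) = e^{2 \pi i f(x)}$, which is continuous since $\exp$ is. Injectivity of $\iota$ is immediate. Since the inclusion functor $Shv/X \xrightarrow{\incl} Presh/X$ preserves kernels (lemma \ref{l-pres-shea}), the kernel of $e$ may be computed open-set by open-set: $e_U f = \underline{1}$ forces $f$ to be integer-valued, hence, being continuous, locally constant, so $\ker(e) = \underline{\ZZ}$ and the sequence is exact at $\underline{\RR}$. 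For exactness at $\underline{\UU(1)}$ I would check that $e$ is an epimorphism of sheaves, i.e. surjective on stalks: a germ at $x$ of a continuous $\UU(1)$-valued function is represented on some neighborhood $V$ of $x$ on which the function takes values in a proper open arc $A$ of $\UU(1)$ (not all of $\UU(1)$), and composing with the continuous branch of $\tfrac{1}{2\pi i}\log$ defined on $A$ produces a continuous real lift on $V$; this is just the covering-space property of $\exp \colon \RR \to \UU(1)$. Thus $0 \to \underline{\ZZ} \to \underline{\RR} \to \underline{\UU(1)} \to 0$ is a short exact sequence in $Shv/X$.

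Next, assuming $X$ paracompact, I would invoke proposition \ref{p-cech-coho-pres-shea}: \v{C}ech cohomology of sheaves over $X$ is an exact $\partial$-functor, so the short exact sequence produces a long exact sequence $\cdots \to \CH^p(X;\underline{\RR}) \to \CH^p(X;\underline{\UU(1)}) \xrightarrow{\partial} \CH^{p+1}(X;\underline{\ZZ}) \to \CH^{p+1}(X;\underline{\RR}) \to \cdots$ with $\partial$ the connecting homomorphism. The remaining input is that $\CH^q(X;\underline{\RR}) = 0$ for $q > 0$: the sheaf $\underline{\RR}$ of continuous real-valued functions is a module over the sheaf of rings of continuous real functions, which admits partitions of unity subordinate to any open cover of the paracompact space $X$, so $\underline{\RR}$ is fine, hence soft, and its \v{C}ech cohomology vanishes in positive degrees; this is exactly what the cited pages of \citet{Bred97} establish. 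Substituting $\CH^p(X;\underline{\RR}) = 0 = \CH^{p+1}(X;\underline{\RR})$ (both degrees being positive when $p > 0$) into the long exact sequence squeezes $\partial \colon \CH^p(X;\underline{\UU(1)}) \to \CH^{p+1}(X;\underline{\ZZ})$ between zero groups, so it is an isomorphism, as claimed.

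The main obstacle is the acyclicity $\CH^q(X;\underline{\RR}) = 0$ for $q > 0$. The \v{C}ech machinery assembled so far (direct limits over refinements, the exact $\partial$-functor of proposition \ref{p-cech-coho-pres-shea}) does not by itself deliver this; one needs the extra structure of partitions of unity, i.e. the fineness or softness of $\underline{\RR}$ over a paracompact base (or, equivalently, a comparison of \v{C}ech with sheaf cohomology together with the standard vanishing theorem for soft sheaves), and this is the point at which the argument leans on the external reference rather than on previously developed material. Everything else — the stalkwise exactness of the exponential sequence and the identification of the resulting isomorphism with the connecting homomorphism furnished by proposition \ref{p-cech-coho-pres-shea} — is routine.
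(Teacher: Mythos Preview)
Your proposal is correct and follows essentially the same route as the paper: establish exactness of the exponential sheaf sequence, invoke proposition \ref{p-cech-coho-pres-shea} for the long exact sequence over a paracompact base, and use softness of $\underline{\RR}$ to kill $\CH^q(X;\underline{\RR})$ for $q>0$, squeezing the connecting map into an isomorphism. The paper is terser---it outsources exactness of the sequence and softness of $\underline{\RR}$ to the cited reference, then passes through sheaf cohomology (via \citet{Tayl02}) to get the \v{C}ech vanishing---whereas you spell out the stalkwise exactness and invoke fineness directly; but the skeleton is the same, and your identification of the acyclicity of $\underline{\RR}$ as the one step requiring an external input matches exactly how the paper handles it.
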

\begin{proof}
The reference states that the sequence is an exact sequence of sheaves, and assuming that $X$ is paracompact, that $\underline{\RR}$ is a soft sheaf, whence by \citet[page~165,~130,~179]{Tayl02} it is acyclic in terms of sheaf cohomology, meaning its sheaf cohomology groups of positive degree vanish, which again assuming that $X$ is paracompact, implies the same of its \v{C}ech cohomology groups: $\CH^p (X; \underline{\RR}) = 0$ for $p > 0$.  Then use the long exact sequence of proposition \ref{p-cech-coho-pres-shea}.
\end{proof}
In general, this short sequence is not exact as a sequence of presheaves.

\begin{lem}\label{l-cech-coho-indu-map-morp-over}
\index{Cech cohomology@\v{C}ech cohomology!morphism over!induced map}
\index{induced map!morphism over}
\index{morphism over!induced map}
\index{morphism!Presh@$Presh$!induced map}
(The Induced Map in \v{C}ech Cohomology from a Morphism in $Presh$).
Given a continuous map $f \colon X \rightarrow Y$ of topological spaces, presheaves $A$ over $X$, $B$ over $Y$, and a morphism $\overline{f} \colon B \rightarrow A$ over $f$, there is an induced map in cohomology for every $p$, $\overline{f}_* \colon \CH^p (Y; B) \rightarrow \CH^p (X; A)$.

Given a sequence of continuous maps $X \xrightarrow{f} Y \xrightarrow{g} Z$, presheaves $A$ over $X$, $B$ over $Y$, $C$ over $Z$, morphisms $\overline{f} \colon B \rightarrow A$ over $f$ and $\overline{g} \colon C \rightarrow B$ over $g$, the induced map of the composition is the composition of the induced maps: $(\overline{f} \circ \overline{g})_* = \overline{f}_* \circ \overline{g}_*$.

Given an exact sequence of presheaves over $X$, $0 \rightarrow A_X \xrightarrow{f_X} B_X \xrightarrow{g_X} C_X \rightarrow 0$, an exact sequence of presheaves over $Y$, $0 \rightarrow A_Y \xrightarrow{f_Y} B_Y \xrightarrow{g_Y} C_Y \rightarrow 0$, a continuous map $\phi \colon X \rightarrow Y$, morphisms over $\phi$, $\overline{a} \colon A_Y \rightarrow A_X$, $\overline{b} \colon B_Y \rightarrow B_X$, $\overline{c} \colon C_Y \rightarrow C_X$, suppose that the following diagram commutes:
\[
\begindc{\commdiag}[5]
\obj(10,30)[obj0UL]{$0$}
\obj(20,30)[objAY]{$A_Y$}
\obj(30,30)[objBY]{$B_Y$}
\obj(40,30)[objCY]{$C_Y$}
\obj(50,30)[obj0UR]{$0$}
\obj(10,20)[obj0LL]{$0$}
\obj(20,20)[objAX]{$A_X$}
\obj(30,20)[objBX]{$B_X$}
\obj(40,20)[objCX]{$C_X$}
\obj(50,20)[obj0LR]{$0$}
\mor{obj0UL}{objAY}{}
\mor{objAY}{objBY}{$f_Y$}
\mor{objBY}{objCY}{$g_Y$}
\mor{objCY}{obj0UR}{}
\mor{obj0LL}{objAX}{}
\mor{objAX}{objBX}{$f_X$}[\atright,\solidarrow]
\mor{objBX}{objCX}{$g_X$}[\atright,\solidarrow]
\mor{objCX}{obj0LR}{}
\mor{objAY}{objAX}{$\overline{a}$}
\mor{objBY}{objBX}{$\overline{b}$}
\mor{objCY}{objCX}{$\overline{c}$}
\enddc\]
Then the corresponding diagrams
\[
\begindc{\commdiag}[5]
\obj(15,30)[objCHCY]{$\CH^p (Y; C_Y)$}
\obj(15,15)[objCHCX]{$\CH^p (X; C_X)$}
\obj(35,30)[objCHAY]{$\CH^{p+1} (Y; A_Y)$}
\obj(35,15)[objCHAX]{$\CH^{p+1} (X; A_X)$}
\mor{objCHCY}{objCHCX}{$\overline{c}*$}[\atright,\solidarrow]
\mor{objCHAY}{objCHAX}{$\overline{a}^*$}
\mor{objCHCY}{objCHAY}{$\partial_Y$}
\mor{objCHCX}{objCHAX}{$\partial_X$}
\enddc
\]
for all $p \ge 0$ commute; i.e., $\partial$ is natural.

If $X$ and $Y$ are paracompact, the same is true given exact sequences of sheaves.
\end{lem}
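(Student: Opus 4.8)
The plan is to construct everything first at the level of a single indexed cover, check compatibility with refinements, and then pass to direct limits using the machinery of Section \ref{s-dire-limi-cate-shea-theo}. Given a continuous $f\colon X\to Y$ and an indexed open cover $U=\{U_i\}_{i\in I}$ of $Y$, the inverse image $f^{-1}U=\{f^{-1}(U_i)\}_{i\in I}$ is an indexed open cover of $X$ with the same index set, and $f^{-1}(U_\sigma)=(f^{-1}U)_\sigma$ for every tuple $\sigma\in I_p$. The $f$-morphism $\overline{f}$ supplies homomorphisms $\overline{f}(f^{-1}(U_\sigma),U_\sigma)\colon B(U_\sigma)\to A(f^{-1}(U_\sigma))$, and applying these componentwise defines a map $\check{C}^p(U;B)\to\check{C}^p(f^{-1}U;A)$ that preserves the alternating conditions of definition \ref{d-orde-cech-coch} and, by the compatibility of $\overline{f}$ with restrictions, commutes with the coboundary operators of definition \ref{d-cech-coho}; hence it induces $\CH^p(U;B)\to\CH^p(f^{-1}U;A)$. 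If $V$ refines $U$ via $\phi\colon J\to I$ then $f^{-1}V$ refines $f^{-1}U$ via the same $\phi$, so these cover-level maps are compatible with the refinement homomorphisms $h_{VU}$. Composing with the insertion homomorphism $h_{X,f^{-1}U}\colon\CH^p(f^{-1}U;A)\to\CH^p(X;A)$ — legitimate because $\Lambda_{T_X}$ is cofinal in the directed class of all indexed covers of $X$ (note \ref{n-dire-clas}, note \ref{n-cech-coho}) — produces a compatible family of homomorphisms out of the $\CH^p(U;B)$, and the universal property of $\CH^p(Y;B)=\varinjlim_U\CH^p(U;B)$ (note \ref{n-dire-limi}) yields the required $\overline{f}_*\colon\CH^p(Y;B)\to\CH^p(X;A)$; independence of the auxiliary choices follows since distinct refinement maps induce the same map in cohomology of covers. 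Functoriality $(\overline{f}\circ\overline{g})_*=\overline{f}_*\circ\overline{g}_*$ is then immediate from $(g\circ f)^{-1}W=f^{-1}(g^{-1}W)$, the description of $\overline{f}\circ\overline{g}$ in note \ref{n-morp-over}, and the fact that the direct limit of a composite of morphisms of direct systems is the composite of the direct limits (note \ref{n-dire-limi}).

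For the naturality of $\partial$ in the presheaf case I would again work cover by cover. For each indexed cover $U$ of $Y$, the given commutative diagram of short exact sequences of presheaves yields a commutative diagram of short exact sequences of \v{C}ech cochain complexes for $U$ over $Y$ mapping, via the $\phi$-morphisms $\overline{a},\overline{b},\overline{c}$, to the analogous one for $\phi^{-1}U$ over $X$ (exactness of the rows at cochain level is the statement used in the proof of proposition \ref{p-cech-coho-pres-shea}). By the naturality of the connecting homomorphism of the long exact homology sequence (lemma~2.12 of \citet{Tenn75}, as invoked there), the squares relating $\partial$ for $U$ and for $\phi^{-1}U$ commute, compatibly with refinement; passing to direct limits and using that the $\partial$ of proposition \ref{p-cech-coho-pres-shea} is precisely this limit gives the asserted commuting squares.

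The sheaf case reduces to the presheaf case by the device already used in proposition \ref{p-cech-coho-pres-shea}. Factor $g_Y$ through $\pcoker(f_Y)$ and $g_X$ through $\pcoker(f_X)$; paracompactness of $Y$ and of $X$ makes $n_{\pcoker(f_Y)}$ and $n_{\pcoker(f_X)}$ induce isomorphisms in cohomology (lemma \ref{l-pres-shea}, proposition \ref{p-cech-coho-pres-shea}), and one sets $\partial_{Shv}=\partial\circ(n_{\pcoker(f_Y)}^*)^{-1}$, $\partial_{Shv}'=\partial'\circ(n_{\pcoker(f_X)}^*)^{-1}$. Exactly as in that proof I would produce a morphism $\widetilde{c}\colon\pcoker(f_Y)\to\pcoker(f_X)$ over $\phi$, defined for open $V\subset Y$ and open $U\subset\phi^{-1}(V)$ by $\widetilde{c}(U,V)(\pi_Y\beta)=\pi_X(\overline{b}(U,V)(\beta))$ for $\beta\in B_Y(V)$, well-defined because a change of lift differs by $f_Y(\alpha)$ and $\pi_X\overline{b}f_Y(\alpha)=\pi_X f_X\overline{a}(\alpha)=0$, and making the two-layer diagram commute. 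Applying the presheaf naturality just proved to the presheaf short exact sequences $0\to A_Y\to B_Y\to\pcoker(f_Y)\to 0$ and $0\to A_X\to B_X\to\pcoker(f_X)\to 0$ with vertical morphisms $\overline{a},\overline{b},\widetilde{c}$ over $\phi$, and chasing through the $n_{\pcoker}$ isomorphisms as in the computation ending the proof of proposition \ref{p-cech-coho-pres-shea}, yields $\overline{a}^*\partial_{Shv}=\partial_{Shv}'\,\overline{c}^*$.

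The step I expect to be the main obstacle is the first one: keeping the indexing honest. Because $f^{-1}U$ is indexed by a subset of the topology of $Y$ rather than of $X$, it does not lie in $\Lambda_{T_X}$, so one must route carefully through the cofinality statements of notes \ref{n-dire-clas} and \ref{n-cech-coho} to give $h_{X,f^{-1}U}$ a meaning and to check that $\overline{f}_*$ is independent of the refinement used to descend into $\Lambda_{T_X}$ and of the chosen refinement maps. Everything after that — functoriality, the cover-wise naturality of $\partial$, and the sheaf-cokernel diagram chase — is a faithful repetition of arguments already carried out in the proof of proposition \ref{p-cech-coho-pres-shea}.
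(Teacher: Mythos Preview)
Your proposal is correct and follows essentially the same approach as the paper's proof: define $\overline{f}_\#$ on cochains for a fixed cover, check compatibility with refinements and coboundaries, pass to direct limits, and then handle the $\partial$-naturality and sheaf cases exactly as in proposition~\ref{p-cech-coho-pres-shea} (including constructing $\widetilde{c}$ as a $\phi$-morphism on presheaf cokernels). The only cosmetic difference is that where you invoke the universal property of the direct limit after composing with $h_{X,f^{-1}U}$ (justified via cofinality of $\Lambda_{T_X}$ in the class of all indexed covers), the paper instead packages the same data as an explicit morphism of direct systems $(\Psi,\psi)$, with $\Psi$ sending each inverse-image cover to its self-indexed refinement in $\Lambda_{T_X}$; these are two phrasings of the same construction.
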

\begin{proof}
First, the existence of the induced map.  Let $V$ be a cover of $Y$ indexed by $I \subset T_Y$, the topology of $Y$; then its inverse image or pullback, which we will denote $f^* V$, is a cover of $X$ also indexed by $I$.  If $W$ is a cover of $Y$ indexed by $J$, refining $V$, then $f^* W$ is a cover of $X$, also indexed by $J$, refining $f^* V$, and the following diagram of \v{C}ech cochain complexes commutes.  Details of the maps follow.  Hence after further justification, the diagram of \v{C}ech cohomology groups of covers also commutes, for all $p \ge 0$.

For $k \ge 0$ define $\overline{f}_{\#} \colon \check{C}^k (V; B) \rightarrow \check{C}^k (f^* V; A)$ as follows.  Take $s \in \check{C}^k (V; B)$; on any $(k + 1)$-tuple $\sigma$ of elements of $I$, $s(\sigma) \in B (V_{\sigma})$.  Then let $\overline{f}_{\#} (s) (\sigma) = \overline{f} (f^{-1} (V_{\sigma}), V_{\sigma}) (s(\sigma)) \in A (f^{-1} (V_{\sigma}))$, defining $\overline{f}_{\#} (s) \in \check{C}^k (f^* V; A)$.

Use $r$, subscripts pairs of open sets, to denote the restriction homomorphisms for $A$ and $B$.  They induce cochain maps we call $r$ again, subscripts pairs of covers, that depend on the choice of a refining map $\phi \colon J \rightarrow I$.  For example, start with $s \in \check{C}^k (V; B)$, and take $\tau$ any $(k + 1)$-tuple of elements of $J$.  Then $\phi \circ \tau$ is a $(k + 1)$-tuple of elements of $I$, and $s (\phi \circ \tau) \in B (V_{\phi \circ \tau})$.  Referring to definition \ref{d-inde-cove-refi}, since for every $j \in J$, $W_j \subset V_{\phi (j)}$, $W_{\tau} \subset V_{\phi \circ \tau}$, we may let $r_{W,V} (s) (\tau) = r_{W_{\tau}, V_{\phi \circ \tau}} (s (\phi \circ \tau)) \in \check{C}^k (W; B)$, defining $r_{W,V} (s)$.  This is the same definition as would be used if $\overline{f}$ were a morphism of presheaves over one space as in part of the proof of proposition \ref{p-cech-coho-pres-shea}, and no doubt as in that proof's reference if it had given a definition.

The facts that $r$ and $\overline{f}_{\#}$ are cochain maps and that they commute, follow from definitions \ref{d-morp-over}, \ref{d-cech-coho}, and the functorial composition of the restriction homomorphisms $r$ of a presheaf.  That $\overline{f}_{\#}$ preserves the alternating and non-injective zero requirement for a cochain element follows from the definitions, as does the same fact for $r$. For $r$, if $\tau \in J_k$ is not injective, that is true also of $\phi \circ \tau$, so that $s(\phi \circ \tau) = 0$.  If $g \in S_{k + 1}$, $s (\phi \circ (\tau \circ g)) = s ((\phi \circ \tau) \circ g) = \epsilon (g) s (\phi \circ \tau)$.

$h$ denotes the refinement homomorphisms from definition \ref{d-cech-coho}, derived from the cochain maps $r$ by taking homology of cochain complexes, which is a functor; that also explains why the cohomology diagram commutes.  As in note \ref{n-cech-coho}, although the cochain maps $r$ depend on the choice of refinement maps for the covers, the maps $h$ in cohomology do not.
\[
\begindc{\commdiag}[5]
\obj(15,30)[objCCBV]{$\check{C}^* (V; B)$}
\obj(35,30)[objCCAV]{$\check{C}^* (f^* V; A)$}
\obj(15,15)[objCCBW]{$\check{C}^* (W; B)$}
\obj(35,15)[objCCAW]{$\check{C}^* (f^* W; A)$}
\mor{objCCBV}{objCCBW}{$r_{W,V}$}[\atright,\solidarrow]
\mor{objCCAV}{objCCAW}{$r_{f^* W,f^* V}$}
\mor{objCCBV}{objCCAV}{$\overline{f}_{\#}$}
\mor{objCCBW}{objCCAW}{$\overline{f}_{\#}$}

\obj(57,30)[objCHBV]{$\CH^p (V; B)$}
\obj(77,30)[objCHAV]{$\CH^p (f^* V; A)$}
\obj(57,15)[objCHBW]{$\CH^p (W; B)$}
\obj(77,15)[objCHAW]{$\CH^p (f^* W; A)$}
\mor{objCHBV}{objCHBW}{$h_{V,W}$}[\atright,\solidarrow]
\mor{objCHAV}{objCHAW}{$h_{f^* V,f^* W}$}
\mor{objCHBV}{objCHAV}{$\overline{f}_*$}
\mor{objCHBW}{objCHAW}{$\overline{f}_*$}
\enddc
\]
We can take the direct limit of the cohomology groups for $B$ over covers of $Y$ indexed by subsets of $T_Y$ to obtain the \v{C}ech cohomology groups of the space $Y$.  However, if we take the direct limit of the cohomology groups for $A$ over the inverse image covers, which are indexed by subsets of $T_Y$ rather than of $T_X$, the topology of $X$, and even ignoring indexing may not include all covers of $X$, we will not in general get the cohomology groups of the space $X$.  But we don't expect isomorphisms anyway, only homomorphisms of cohomology groups induced by $\overline{f}$, and can get this from morphism of direct systems, which induce direct limit homomorphisms.

Denote by $(\Lambda, \CH^p (V_{(\cdot)}; B), h_{(\cdot, \cdot)})$ the direct system that will be the domain of the morphism, with $\Lambda \subset 2^{T_Y}$.  Since we will refine the inverse image covers by covers with no repetitions, indexed by themselves, subsets of $T_X$, somewhat as for $\Lambda_T$ in note \ref{n-cech-coho}, let $\Lambda' \subset 2^{T_X}$ be the directed set indexing the covers $U$ of $X$, and let $\Psi \colon \Lambda \rightarrow \Lambda'$ take the index for a given cover of $Y$ and hence the corresponding inverse image cover of $X$, to its self-indexed refinement with no repetitions.  That is, from $\alpha \in \Lambda$ we get $f^{-1} (V_{\alpha})$, then considering that just as a set of open sets of $X$, call it $\beta \in \Lambda'$, and set $\Psi(\alpha) = \beta$.

Denote by $(\Lambda', \CH^p (U_{(\cdot)}; A), h_{(\cdot, \cdot)})$ the direct system that will be the codomain of the morphism.  To construct the natural transformation $\psi$ for the morphism of direct systems, $\overline{f}_* \colon \CH^p (V_{(\cdot)}; B) \rightarrow \CH^p (f^{-1} (V_{(\cdot)}); A)$ gets us halfway there and has the correct compatibility with refinement homomorphisms.

For any $\alpha \in \Lambda$, $U_{\beta} = U_{\Psi{\alpha}}$ refines $f^{-1} (V_{\alpha})$, since for every $j \in \beta$, $j$ being an element of the self-indexed cover $U_{\beta}$, there is some $i \in \alpha$, recalling that $V_{\alpha}$ also is self-indexed, such that $(U_{\beta})_j = f^{-1} ((V_{\alpha})_i)$.  Thus we get the rest of the way there with the refinement homomorphism $h_{U_{\beta}, f^{-1} (V_{\alpha})} \colon \CH^p (f^{-1} (V_{\alpha}); A) \rightarrow \CH^p (U_{\beta}; A)$.  That is, let $\psi$ for this $\alpha$ be the composition
\[
\CH^p (V_{\alpha}; B) \xrightarrow{\overline{f}_*} \CH^p (f^{-1} (V_{\alpha}); A) \xrightarrow{h_{U_{\beta}, f^{-1} (V_{\alpha})}} \CH^p (U_{\Psi(\alpha)}; A). \notag
\]
Recall that the refinement homomorphism in cohomology does not depend on the choice of refinement map for covers.  Then by note \ref{n-dire-limi} we get the existence of the induced map:
\[
\begindc{\commdiag}[5]
\obj(57,30)[objCHBV]{$\CH^p (Y; B)$}
\obj(77,30)[objCHAV]{$\CH^p (X; A)$}
\mor{objCHBV}{objCHAV}{$\overline{f}_*$}
\enddc
\]

Second, the assertion about composition of induced maps follows by similar reasoning.  Let $W$ be a cover of $Z$; then $g^* W$ is a cover of $Y$, and $f^* g^* W$ is a cover of $X$.  To start with, the morphisms over $f$ and $g$ induce cochain maps
\[
\check{C}^* (W; C) \xrightarrow{\overline{g}_{\#}} \check{C}^* (g^* W; B) \xrightarrow{\overline{f}_{\#}} \check{C}^* (f^* g^* W; A). \notag
\]
The cochain map induced by $\overline{f} \circ \overline{g}$, $(\overline{f} \circ \overline{g})_{\#} = \overline{f}_{\#} \circ \overline{g}_{\#}$.  The cochain maps induce:
\[
\CH^p (W; C) \xrightarrow{\overline{g}_*} \CH^p (g^* W; B) \xrightarrow{\overline{f}_*} \CH^p (f^* g^* W; A), \notag
\]
which compose according to the functoriality of taking homology of a cochain complex.  Constructing two successive morphisms of direct systems as before, for example choosing the refinement map for covers for going from beginning to end as the composition of the successive refinement maps, but noting again that the refinement homomorphism in cohomology doesn't depend on the particular refinement map chosen for covers, the composition of the morphisms of the direct systems is the morphism that we would construct to go from beginning to end. Then the composition property for induced maps in the statement follows from the functoriality of direct limits as in note \ref{n-dire-limi}.

Third, the entire top row short exact sequence of presheaves is over one space $Y$; and all of the bottom row is over another space $X$.  So we obtain everything except for naturality of $\partial$ directly from proposition \ref{p-cech-coho-pres-shea}.  However, as seen in this proof already, from morphisms of $Presh$ we obtain morphisms of cochain complexes of abelian groups.  At this point it no longer matters whether the morphisms of cochain complexes came from morphisms of $Presh$ or morphisms of $Presh/X$, except for the different covers for the top to bottom parts of the diagram.  Just as in the proof of proposition \ref{p-cech-coho-pres-shea}, we get exact rows of cochain complexes of abelian groups and a morphism of cochain complexes between them, from the morphisms of $Presh$, so that using lemma 2.12 of \citet[page~126]{Tenn75}, we get the naturality of $\partial$ for the long exact sequences of \v{C}ech cohomology for covers $V$ of $Y$ and $\phi^* V$ for $X$, for all $p \ge 0$:
\[
\begindc{\commdiag}[5]
\obj(15,30)[objCHCV]{$\CH^p (V; C_Y)$}
\obj(15,15)[objCHCPM1V]{$\CH^p (\phi^* V; C_X)$}
\obj(40,30)[objCHAV]{$\CH^{p+1} (V; A_Y)$}
\obj(40,15)[objCHAPM1V]{$\CH^{p+1} (\phi^* V; A_X)$}
\mor{objCHCV}{objCHCPM1V}{$\overline{c}*$}[\atright,\solidarrow]
\mor{objCHAV}{objCHAPM1V}{$\overline{a}^*$}
\mor{objCHCV}{objCHAV}{$\partial_Y$}
\mor{objCHCPM1V}{objCHAPM1V}{$\partial_X$}
\enddc
\]
Either passing to the direct limits first for the right side and then the left, using the universal property for the direct limits on the left, or by constructing once again morphisms of direct systems, we obtain the diagram for naturality of $\partial$ for \v{C}ech cohomology of the spaces.

The result for exact sequences of sheaves follows from that for presheaves by almost identical reasoning to that used in corresponding proof in proposition \ref{p-cech-coho-pres-shea}.  Referring to that proof, for the top set of presheaves we evaluate on an open set $V$, and for the bottom, on $\phi^{-1} (V)$; though we still have the homomorphisms of abelian groups as in that proof.  Then $\widetilde{c}$ is a morphism of $Presh$ instead of a morphism of $Presh/X$, but it still induces a map in cohomology, and by the composition property for maps in cohomology induced by morphisms of $Presh$ (of which morphisms of $Shv$ are a special case), we obtain from the commutative diagram of presheaves, one in cohomology.
\end{proof}

\begin{cor}\label{co-cech-coho-indu-map-cont-func}
\index{Cech cohomology@\v{C}ech cohomology!induced map}
\index{cohomology!Cech@\v{C}ech!induced map}
\index{induced map!Cech cohomology@\v{C}ech cohomology}
(The Induced Map in \v{C}ech Cohomology for Sheaves of Continuous Functions).
(Generalized from \citet[page~239]{ES52}) Given an abelian topological group $G$ and a topological space $X$, let $G_X$ denote the presheaf of continuous functions with values in $G$; i.e. for open $U \subset X$, let $G_X (U)$ be the abelian group of continuous functions $U \rightarrow G$.  As in example \ref{e-sheaves}, $G_X$ is a sheaf.

Given a continuous map $\phi \colon X \rightarrow Y$ of topological spaces, define a morphism $\overline{\phi}$ over $\phi$ as in definition \ref{d-morp-over} as follows, where $V \subset Y$ ranges over all open sets:
\begin{align}
\overline{\phi}(\phi^{-1} (V), V) \colon G_Y (V) &\rightarrow G_X(\phi^{-1} (V)) \notag \\
\overline{\phi}(\phi^{-1} (V), V) \colon s &\mapsto s \circ \phi_{|\phi^{-1} (V)}. \notag
\end{align}
Composition of continuous maps is reflected in composition of these morphisms of $Presh$.

By lemma \ref{l-cech-coho-indu-map-morp-over}, $\overline{\phi}$ induces homomorphisms in \v{C}ech cohomology,
\[
\phi^* = \overline{\phi}_* \colon \CH^p (Y; G_Y) \rightarrow \CH^p (X; G_X), \notag
\]
composition of morphisms of $Presh$ are reflected in composition of induced maps in cohomology, and the induced maps in cohomology commute with connecting homomorphisms.

Given abelian topological groups $A$, $B$, $C$, suppose that the rows in the following commutative diagram are exact in categories of presheaves, or if one or both rows are exact in categories of sheaves, then the corresponding spaces are paracompact:
\[
\begindc{\commdiag}[5]
\obj(10,30)[obj0UL]{$0$}
\obj(20,30)[objAY]{$A_Y$}
\obj(30,30)[objBY]{$B_Y$}
\obj(40,30)[objCY]{$C_Y$}
\obj(50,30)[obj0UR]{$0$}
\obj(10,20)[obj0LL]{$0$}
\obj(20,20)[objAX]{$A_X$}
\obj(30,20)[objBX]{$B_X$}
\obj(40,20)[objCX]{$C_X$}
\obj(50,20)[obj0LR]{$0$}
\mor{obj0UL}{objAY}{}
\mor{objAY}{objBY}{$f_Y$}
\mor{objBY}{objCY}{$g_Y$}
\mor{objCY}{obj0UR}{}
\mor{obj0LL}{objAX}{}
\mor{objAX}{objBX}{$f_X$}[\atright,\solidarrow]
\mor{objBX}{objCX}{$g_X$}[\atright,\solidarrow]
\mor{objCX}{obj0LR}{}
\mor{objAY}{objAX}{$\overline{\phi}$}
\mor{objBY}{objBX}{$\overline{\phi}$}
\mor{objCY}{objCX}{$\overline{\phi}$}
\enddc\]
Then the corresponding diagrams
\[
\begindc{\commdiag}[5]
\obj(15,30)[objCHCY]{$\CH^p (Y; C_Y)$}
\obj(15,15)[objCHCX]{$\CH^p (X; C_X)$}
\obj(35,30)[objCHAY]{$\CH^{p+1} (Y; A_Y)$}
\obj(35,15)[objCHAX]{$\CH^{p+1} (X; A_X)$}
\mor{objCHCY}{objCHCX}{$\phi^*$}[\atright,\solidarrow]
\mor{objCHAY}{objCHAX}{$\phi^*$}
\mor{objCHCY}{objCHAY}{$\partial_Y$}
\mor{objCHCX}{objCHAX}{$\partial_X$}
\enddc
\]
for all $p \ge 0$ commute; in particular, $\partial$ is natural.
\end{cor}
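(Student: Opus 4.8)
The plan is to derive the whole corollary from lemma \ref{l-cech-coho-indu-map-morp-over} once two pieces of bookkeeping are in place: that the homomorphisms $\overline{\phi}(\phi^{-1}(V), V)$ genuinely assemble into a morphism over $\phi$ in the sense of definition \ref{d-morp-over}, and that such morphisms compose in the way note \ref{n-morp-over} prescribes. For the first point I would use that restriction in each sheaf $G_X$ of continuous $G$-valued functions is just composition on the right with an inclusion. Extending $\overline{\phi}$ to an arbitrary pair $(U, V)$ with $U \subset \phi^{-1}(V)$ open by $\overline{\phi}(U, V) = r_{U, \phi^{-1}(V)} \circ \overline{\phi}(\phi^{-1}(V), V)$, i.e. $s \mapsto (s \circ \phi)|_U$, the compatibility square of definition \ref{d-morp-over} for $V' \subset V$ and $U' \subset U \cap \phi^{-1}(V')$ commutes on the nose, since both composites send a section $s \in G_Y(V)$ to $(s \circ \phi)|_{U'}$. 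Hence $\overline{\phi}$ is a morphism over $\phi$, and because $G_X$ is a sheaf (example \ref{e-sheaves}) it is in particular a morphism in $Presh$.

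For composition, given $X \xrightarrow{\phi} Y \xrightarrow{\psi} Z$, the composite over $\psi \circ \phi$ defined in note \ref{n-morp-over} has, on an open $W \subset Z$, the homomorphism $\overline{\phi}(\phi^{-1}\psi^{-1}(W), \psi^{-1}(W)) \circ \overline{\psi}(\psi^{-1}(W), W)$, which sends $s \in G_Z(W)$ first to $(s \circ \psi)|_{\psi^{-1}(W)}$ and then to $\bigl((s \circ \psi)|_{\psi^{-1}(W)}\bigr) \circ \phi|_{\phi^{-1}\psi^{-1}(W)} = s \circ (\psi \circ \phi)|_{(\psi \circ \phi)^{-1}(W)}$. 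This is exactly $\overline{\psi \circ \phi}((\psi \circ \phi)^{-1}(W), W)(s)$, so $\overline{\phi} \circ \overline{\psi} = \overline{\psi \circ \phi}$; composition of continuous maps is reflected in composition of the morphisms over them.

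Now I would invoke lemma \ref{l-cech-coho-indu-map-morp-over}. Its first assertion produces the induced maps $\phi^{*} = \overline{\phi}_{*} \colon \CH^p(Y; G_Y) \to \CH^p(X; G_X)$ for every $p$; its second assertion, combined with the previous paragraph, gives $(\psi \circ \phi)^{*} = \phi^{*} \circ \psi^{*}$. The third assertion of the lemma, applied with the three morphisms over $\phi$ being the specializations of $\overline{\phi}$ to the sheaves of continuous functions valued in $A$, $B$, $C$ and with the commuting diagram of exact rows supplied as the corollary's hypothesis, yields directly the naturality square in the statement; taking the two rows to come from a single abelian-group-level short exact sequence $0 \to A \to B \to C \to 0$ specializes this to the assertion that $\phi^{*}$ commutes with connecting homomorphisms. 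The paracompactness proviso for exact sequences of sheaves transfers verbatim from the corresponding clause of the lemma, which already distinguishes the presheaf and sheaf cases.

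I do not expect a genuine obstacle here: all the substantive work — the pullback-of-cover construction $f^{*}V$, the self-indexed refinements, the morphisms of direct systems — lives in lemma \ref{l-cech-coho-indu-map-morp-over}. If I had to single out the delicate point it is the one addressed in the first two paragraphs: being scrupulous that $\overline{\phi}$, presented only on pairs of the form $(\phi^{-1}(V), V)$, is a bona fide morphism over $\phi$ and that its composites agree with those defined in note \ref{n-morp-over}, so that the hypotheses of the lemma are matched exactly; once that is done the corollary is essentially a reading-off.
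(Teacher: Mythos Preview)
Your proposal is correct and takes essentially the same approach as the paper: verify that $\overline{\phi}$ is a morphism over $\phi$ in the sense of definition \ref{d-morp-over} by checking the restriction compatibility, then invoke lemma \ref{l-cech-coho-indu-map-morp-over} for everything else. The paper's proof is even terser than yours---it checks only the restriction square on pairs of the form $(\phi^{-1}(V'),V')\subset(\phi^{-1}(V),V)$, citing note \ref{n-morp-over} to avoid treating general $U\subset\phi^{-1}(V)$, and leaves the composition claim implicit---so your version is, if anything, a bit more complete.
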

\begin{proof}
All that's needed is to show that the $\overline{\phi}$ are maps over $\phi$; that they commute with restriction homomorphisms.  Take open $V' \subset V \subset Y$, then $\phi^{-1} (V') \subset \phi^{-1} (V) \subset X$.  Then for example, for $s \in C_Y(V)$, referring to definition \ref{d-morp-over} and noting that since we define $\overline{\phi}$ on pairs of open sets of the form $(\phi^{-1} (V), V)$ by requiring compatibility with further restrictions to open $U \subset \phi^{-1} (V)$ that may not be inverse images of open sets in $Y$, we don't need to check those:
\begin{align}
r_{\phi^{-1} (V'), \phi^{-1} (V)} \circ \overline{\phi}(\phi^{-1} (V), V) (s) &= r_{\phi^{-1} (V'), \phi^{-1} (V)} (s \circ \phi_{|\phi^{-1} (V)}) \notag \\
&= s \circ \phi_{|\phi^{-1} (V')}, \text{ and } \notag \\
\overline{\phi} (\phi^{-1} (V'), V') \circ r_{V', V} (s) &= s \circ \phi_{|\phi^{-1} (V')}. \notag
\end{align}
\end{proof}

\begin{cor}\label{co-u1-z-diag}
\index{cohomology!Cech@\v{C}ech!induced map U(1) vs Z@induced map $\underline{\UU(1)}$ vs $\underline{\ZZ}$}
\index{induced map U(1) vs Z@induced map $\underline{\UU(1)}$ vs $\underline{\ZZ}$!Cech cohomology@\v{C}ech cohomology}
(The Induced Map in \v{C}ech Cohomology with $\underline{\UU(1)}$ vs. $\underline{\ZZ}$ Coefficients).
Given a continuous map $\phi \colon X \rightarrow Y$ of paracompact topological spaces, the following diagram, where $\partial_Y$ and $\partial_X$ are isomorphisms, commutes for $p > 0$:
\[
\begindc{\commdiag}[5]
\obj(15,30)[objCHCY]{$\CH^p (Y; \underline{\UU(1)}_Y)$}
\obj(15,15)[objCHCX]{$\CH^p (X; \underline{\UU(1)}_X)$}
\obj(35,30)[objCHAY]{$\CH^{p+1} (Y; \underline{\ZZ}_Y)$}
\obj(35,15)[objCHAX]{$\CH^{p+1} (X; \underline{\ZZ}_X)$}
\mor{objCHCY}{objCHCX}{$\phi^*$}[\atright,\solidarrow]
\mor{objCHAY}{objCHAX}{$\phi^*$}
\mor{objCHCY}{objCHAY}{$\partial_Y$}
\mor{objCHCX}{objCHAX}{$\partial_X$}
\enddc
\]
\end{cor}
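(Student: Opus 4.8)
The plan is to deduce the corollary directly from Corollary \ref{co-cech-coho-indu-map-cont-func} applied to the exponential exact sequence of sheaves of Lemma \ref{l-exp-exac-seq}, taken over $Y$ and over $X$ simultaneously and related by the morphism over $\phi$ given by precomposition with $\phi$.

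First I would write down the two short exact sequences of sheaves of continuous functions, $0 \rightarrow \underline{\ZZ}_Y \rightarrow \underline{\RR}_Y \rightarrow \underline{\UU(1)}_Y \rightarrow 0$ and $0 \rightarrow \underline{\ZZ}_X \rightarrow \underline{\RR}_X \rightarrow \underline{\UU(1)}_X \rightarrow 0$, both exact in the respective sheaf categories by Lemma \ref{l-exp-exac-seq}; the paracompactness hypotheses on $X$ and $Y$ are exactly what that lemma needs. Here, on each open set, the sheaf maps are postcomposition with the fixed continuous group homomorphisms $\ZZ \hookrightarrow \RR$ and the exponential $\RR \rightarrow \UU(1)$ (the latter written multiplicatively as in example \ref{e-sheaves}), so in particular $\partial_Y$ and $\partial_X$ are the connecting homomorphisms of the associated long exact sequences and, by Lemma \ref{l-exp-exac-seq}, are isomorphisms for $p > 0$.

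Next I would produce the vertical maps. By Corollary \ref{co-cech-coho-indu-map-cont-func}, for each of $G = \ZZ, \RR, \UU(1)$ there is a morphism $\overline{\phi} \colon G_Y \rightarrow G_X$ over $\phi$, given on an open $V \subset Y$ by $s \mapsto s \circ \phi_{|\phi^{-1}(V)}$, and these morphisms compose correctly. The one thing that genuinely has to be checked is that the resulting ladder commutes: for each nontrivial arrow $\lambda \colon G \rightarrow G'$ of the exponential sequence (i.e. $\ZZ \hookrightarrow \RR$ or $\RR \rightarrow \UU(1)$), an open $V \subset Y$, and a continuous section $s \colon V \rightarrow G$, one needs $(\lambda \circ s) \circ \phi_{|\phi^{-1}(V)} = \lambda \circ (s \circ \phi_{|\phi^{-1}(V)})$, which is just associativity of composition. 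Thus the diagram of exact sequences in the hypothesis of the last part of Corollary \ref{co-cech-coho-indu-map-cont-func} commutes, with $\overline{\phi}$ as the vertical morphisms over $\phi$.

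Finally I would invoke that last part of Corollary \ref{co-cech-coho-indu-map-cont-func}: since the ladder of short exact sequences of sheaves commutes and all the spaces are paracompact, the connecting homomorphisms are natural, so for every $p \ge 0$ the square relating $\CH^p(\,\cdot\,;\underline{\UU(1)})$ and $\CH^{p+1}(\,\cdot\,;\underline{\ZZ})$ through $\phi^{*}$ and the $\partial$'s commutes. Restricting to $p > 0$ and recording that $\partial_Y, \partial_X$ are isomorphisms there (Lemma \ref{l-exp-exac-seq}) gives the corollary in the stated form. I do not anticipate a real obstacle: all the machinery lives in Corollary \ref{co-cech-coho-indu-map-cont-func} and Lemma \ref{l-exp-exac-seq}, and the only point needing attention is the utterly routine verification that the exponential-sequence sheaf maps, being postcomposition with fixed group homomorphisms, commute with the precomposition-with-$\phi$ morphisms over $\phi$.
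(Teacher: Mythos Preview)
Your proposal is correct and is exactly the argument the paper uses: the paper's proof is the single line ``True by lemma \ref{l-exp-exac-seq} and corollary \ref{co-cech-coho-indu-map-cont-func},'' and you have simply unpacked what that means, including the routine check that the exponential-sequence sheaf maps commute with the $\overline{\phi}$ morphisms.
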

\begin{proof}
True by lemma \ref{l-exp-exac-seq} and corollary \ref{co-cech-coho-indu-map-cont-func}.
\end{proof}

\begin{lem}\label{l-cech-sing-coho}
\index{Cech cohomology@\v{C}ech cohomology!singular cohomology}
\index{cohomology!Cech@\v{C}ech!singular}
\index{singular cohomology!Cech cohomology@\v{C}ech cohomology}
\index{cohomology!singular!Cech@\v{C}ech}
(When \v{C}ech Cohomology is Isomorphic to Other Cohomology).
Given a locally connected topological space $X$, true if every open cover has a refinement that is a good cover, $\CH^p(X;\underline{\ZZ})$ is naturally isomorphic to the \v{C}ech cohomology group $\CH^p(X;\ZZ)$ with constant $\ZZ$ coefficients defined in \citet[pages~233--237]{ES52} or \citet{Dowk50}, for all $p \ge 0$.  The isomorphism is natural with respect to maps in cohomology induced by maps between spaces.

Given a paracompact topological space $X$ with the property that every open cover has a refinement that is a good cover, there is a natural isomorphism from \v{C}ech cohomology with constant $\ZZ$ coefficients to singular cohomology, $\CH^p(X;\ZZ) \rightarrow \HH^p(X;\ZZ)$ for all $p \ge 0$.  As before, the isomorphism commutes with induced maps.
\end{lem}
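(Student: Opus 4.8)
The plan is to treat the two isomorphisms separately: the first is essentially formal, obtained by passing to the cofinal subsystem of good covers, on which the constant-coefficient \v{C}ech cochain complex of \citet{ES52}, \citet{Dowk50} and the cochain complex with coefficients in the locally constant sheaf $\underline{\ZZ}$ coincide; the second is the nerve lemma, comparing the simplicial cohomology of nerves of good covers with singular cohomology, and this is where the real content sits.

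For the first statement, I would begin with the natural comparison at the cochain level. For an open cover $U = \{ U_i \}$ of $X$ and a tuple $\sigma$ with $U_\sigma \ne \emptyset$, sending $n \in \ZZ$ to the constant function $U_\sigma \to \ZZ$ with value $n$ gives an inclusion $\ZZ \hookrightarrow \underline{\ZZ}(U_\sigma)$ (recall from Example \ref{e-sheaves} that $\underline{\ZZ}$ is the sheaf of continuous, that is, locally constant, $\ZZ$-valued functions); these inclusions commute with restrictions and assemble into a cochain map $\check{C}^p(U;\ZZ) \to \check{C}^p(U;\underline{\ZZ})$ compatible with refinement homomorphisms, hence into a natural homomorphism $\CH^p(X;\ZZ) \to \CH^p(X;\underline{\ZZ})$ in the direct limit. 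By the hypothesis on $X$, together with the self-indexing remarks of Note \ref{n-cech-coho}, the good covers indexed by themselves form a cofinal subsystem of all covers. On a good cover every nonempty $U_\sigma$ is contractible, hence connected, so a locally constant function on $U_\sigma$ is constant and $\ZZ \to \underline{\ZZ}(U_\sigma)$ is an isomorphism; thus on each good cover the comparison cochain map is an isomorphism of cochain complexes, so it induces an isomorphism on the cohomology of that cover. By Note \ref{n-dire-limi} (cofinal subsystems induce isomorphisms of direct limits, and direct limits are functorial) the comparison map in the limit is an isomorphism. For naturality with respect to $\phi \colon X \to Y$ I would use that the induced map in \v{C}ech cohomology --- for constant coefficients as in \citet{ES52} and for $\underline{\ZZ}$ as in Lemma \ref{l-cech-coho-indu-map-morp-over} and Corollary \ref{co-cech-coho-indu-map-cont-func} --- is computed on cochains by pulling a cover $V$ of $Y$ back to $\phi^{*}V$ and copying values on the same tuples, and pulling a constant function back along $\phi$ yields a constant function, so the comparison squares commute at the cochain level, hence in cohomology and in the limit.

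For the second statement, I would again restrict to the cofinal family of self-indexed good covers $U$ of $X$: for such a $U$ the constant-coefficient group $\check{H}^p(U;\ZZ)$ is by construction the simplicial cohomology $H^p(N(U);\ZZ)$ of the nerve $N(U)$. The key input is the nerve lemma for paracompact spaces: since $X$ is paracompact there is a partition of unity subordinate to $U$, which yields a canonical map $\kappa_U \colon X \to |N(U)|$, and because $U$ is a good cover $\kappa_U$ induces an isomorphism $H^p(|N(U)|;\ZZ) \to \HH^p(X;\ZZ)$ in singular cohomology. For a refinement $U \preceq U'$ the simplicial map $N(U') \to N(U)$ induced by a refinement function has the property that $|N(U')| \to |N(U)|$ composed with $\kappa_{U'}$ is homotopic to $\kappa_U$ (a straight-line homotopy inside each simplex of $|N(U)|$, using that two partitions of unity subordinate to $U$ are joined by convex combination), so the isomorphisms $H^p(N(U);\ZZ) \to \HH^p(X;\ZZ)$ are compatible with the refinement homomorphisms and descend to an isomorphism $\CH^p(X;\ZZ) = \varinjlim_U H^p(N(U);\ZZ) \to \HH^p(X;\ZZ)$. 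Naturality for $\phi \colon X \to Y$ follows by refining $\phi^{*}V$ (for $V$ a good cover of $Y$) to a good cover $U$ of $X$ and using the simplicial maps $N(U) \to N(\phi^{*}V) \to N(V)$ together with the homotopy-commutativity of the square relating $\phi$, $\kappa_U$, and $\kappa_V$, then passing to cohomology and the direct limit.

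I expect the main obstacle to be the second statement, specifically the nerve lemma in the stated generality (paracompact base, a genuinely \emph{good} --- not merely numerable --- cover) together with the coherence of the maps $\kappa_U$ under refinement; the rest is bookkeeping with cofinal subsystems of direct limits. In practice I would cite a standard source for the nerve lemma rather than reprove it, and I would invoke the good-cover refinement hypothesis (equivalently, the local contractibility it forces, as noted parenthetically in the statement) precisely to guarantee that good covers are cofinal, so that both \v{C}ech limits are computed over them.
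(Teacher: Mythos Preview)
Your proposal is correct. For the first isomorphism, your argument and the paper's are essentially the same: pass to a cofinal subsystem of covers on which $\underline{\ZZ}$-valued cochains coincide with constant-$\ZZ$ cochains. You take good covers (contractible, hence connected, intersections), while the paper refines only to covers by \emph{connected} open sets, using local connectedness to ensure such covers are cofinal. For the second isomorphism the approaches genuinely diverge. You run the nerve theorem directly: build $\kappa_U \colon X \to |N(U)|$ from a partition of unity, invoke the nerve lemma to get an isomorphism for each good cover, and check coherence under refinement and under continuous maps. The paper does none of this; instead it cites a theorem of Marde\v{s}i\'{c} (1959) to the effect that for paracompact $X$ in which every point has arbitrarily small neighborhoods whose inclusion induces zero in singular homology, \v{C}ech and singular cohomology are naturally isomorphic --- and then verifies Marde\v{s}i\'{c}'s local-acyclicity hypothesis by refining $\{U, X\setminus\{x\}\}$ to a good cover and extracting a contractible $V\subset U$. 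A further citation to Dowker (1952) reconciles Marde\v{s}i\'{c}'s Spanier--Dowker \v{C}ech groups with the ES52/Dowker definition. Your route is more constructive and self-contained in spirit (closer to the Bott--Tu argument), at the cost of exactly the coherence bookkeeping you flagged; the paper's route outsources all of that to the literature, which fits the expository style of that section but makes the proof dependent on tracking down several older references.
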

\begin{proof}
Local connectedness follows from the requirement on covers because contractible sets are path connected, hence connected.  Since $X$ is locally connected, the connected components of its open sets are open sets \citep[page~113]{Dugu66}.

Thus any indexed open cover used in the definition of $\CH^p(X;\underline{\ZZ})$ can be refined as in note \ref{n-cech-coho} by replacing each member with that member's connected components, indexing the resulting subset of the topology of $X$ by itself.  Thus the \v{C}ech cohomology groups of coverings by connected open sets form a cofinal subset of those without the connected restriction, and so the direct limits using connected open sets are naturally isomorphic to those without the restriction by note \ref{n-dire-limi}.

Since on connected sets, elements of $\underline{\ZZ}$ are constant, there is an isomorphism of direct systems between ours with $\underline{\ZZ}$ coefficients but restricted to connected open sets, and the sources', which are defined using the integral cohomology of the nerve of each covering, resulting in $\ZZ$ coefficients constant on each set of a cover.  Thus we get a natural isomorphism between the corresponding direct limits.

For the relation with singular cohomology groups, the hypothesis of \citet[page~153]{Mard59}, $X$ paracompact, will be satisfied if for every $p \ge 0$, $x \in X$, and open neighborhood $U$ of $x$, there is an open neighborhood $V \subset U$ of $x$ such that the map in $p$-th singular homology induced by the inclusion $V \subset U$ is zero, using reduced singular homology groups for $p = 0$.  Then his result follows, that the singular and \v{C}ech cohomology groups of $X$ are naturally isomorphic, the isomorphism commuting with maps induced in cohomology by continuous maps of spaces.

To meet the hypothesis of \citet[page~153]{Mard59} in our situation, $\{ U, X \setminus \{ x \} \}$ is an open cover of $X$ and hence has a refinement of contractible open sets including some neighborhood $V \subset U$ of $x$.  Since $V$ is contractible, its relevant singular homology (reduced singular homology) groups are all $0$.  Thus we can apply his result.

He identifies his \v{C}ech cohomology groups as following a Spanier-Dowker definition rather than the ``usual'' one, but states that \citet[page~91]{Dowk52}, which calls the Spanier-Dowker groups the Alexander groups and calls the usual ones the \v{Cech} groups, shows that they are isomorphic, and that the isomorphism makes the induced maps correspond also.  \citet{Dowk50} shows that his \v{C}ech groups satisfy the axioms of \citet[pages~13--15]{ES52}.  The definitions in these papers of Dowker, as mentioned in note \ref{n-cech-coho}, and aside from possible set theoretic issues also the definition in \citet[pages~233--237]{ES52}, seem consistent with our definition of \v{C}ech cohomology, when as earlier, we use covers consisting of connected open sets.
\end{proof}
\begin{note}\label{n-cech-sing-coho}
\index{cohomology!Cech@\v{C}ech!diagrams U(1) vs Z@diagrams $\underline{\UU(1)}$ vs $\underline{\ZZ}$}
(Diagrams for \v{C}ech to Singular Cohomology).
For use in the following corollary, here are the diagrams for the naturality conclusions of the lemma for $p \ge 0$, given a continuous map $\phi \colon X \rightarrow Y$ of paracompact topological spaces that have the property that every open cover has a refinement that is a good cover:
\[
\begindc{\commdiag}[5]
\obj(15,30)[objCHCY]{$\CH^p (Y; \underline{\ZZ}_Y)$}
\obj(15,15)[objCHCX]{$\CH^p (X; \underline{\ZZ}_X)$}
\obj(35,30)[objCHAY]{$\CH^p (Y; \ZZ)$}
\obj(35,15)[objCHAX]{$\CH^p (X; \ZZ)$}
\mor{objCHCY}{objCHCX}{$\phi^*$}[\atright,\solidarrow]
\mor{objCHAY}{objCHAX}{$\phi^*$}
\mor{objCHCY}{objCHAY}{$\sim$}
\mor{objCHCX}{objCHAX}{$\sim$}
\obj(55,30)[objCHBY]{$\CH^p (Y; \ZZ)$}
\obj(55,15)[objCHBX]{$\CH^p (X; \ZZ)$}
\obj(75,30)[objHHY]{$\HH^p (Y; \ZZ)$}
\obj(75,15)[objHHX]{$\HH^p (X; \ZZ)$}
\mor{objCHBY}{objCHBX}{$\phi^*$}[\atright,\solidarrow]
\mor{objHHY}{objHHX}{$\phi^*$}
\mor{objCHBY}{objHHY}{$\sim$}
\mor{objCHBX}{objHHX}{$\sim$}
\enddc
\]
\end{note}

\begin{cor}\label{co-u1-z-sing-diag}
\index{cohomology!Cech@\v{C}ech!singular}
(The Induced Map in \v{C}ech Cohomology with $\underline{\UU(1)}$ Coefficients vs. Singular Cohomology with $\ZZ$ coefficients).
Given a continuous map $\phi \colon X \rightarrow Y$ of paracompact topological spaces such that each open cover has a refinement that is a good cover, the following diagram commutes for $p > 0$.  The horizontal maps are isomorphisms.
\[
\begindc{\commdiag}[5]
\obj(15,30)[objCHCY]{$\CH^p (Y; \underline{\UU(1)}_Y)$}
\obj(15,15)[objCHCX]{$\CH^p (X; \underline{\UU(1)}_X)$}
\obj(35,30)[objCHAY]{$\HH^{p+1} (Y; \ZZ)$}
\obj(35,15)[objCHAX]{$\HH^{p+1} (X; \ZZ)$}
\mor{objCHCY}{objCHCX}{$\phi^*$}[\atright,\solidarrow]
\mor{objCHAY}{objCHAX}{$\phi^*$}
\mor{objCHCY}{objCHAY}{$\sim$}
\mor{objCHCX}{objCHAX}{$\sim$}
\enddc
\]
\end{cor}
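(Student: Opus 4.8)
The plan is to obtain the corollary's square by horizontally concatenating three commutative squares that are already in hand, and then to observe that the concatenated left-to-right arrow is a composition of isomorphisms.

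First I would invoke corollary \ref{co-u1-z-diag} with the given continuous map $\phi \colon X \to Y$ of paracompact spaces. Since $p > 0$, lemma \ref{l-exp-exac-seq} applies and produces the commutative square whose left-to-right arrows are the connecting isomorphisms $\partial_Y \colon \CH^p(Y;\underline{\UU(1)}_Y) \isomto \CH^{p+1}(Y;\underline{\ZZ}_Y)$ and $\partial_X \colon \CH^p(X;\underline{\UU(1)}_X) \isomto \CH^{p+1}(X;\underline{\ZZ}_X)$, and whose top-to-bottom arrows are the maps $\phi^*$ of corollary \ref{co-cech-coho-indu-map-cont-func}.

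Next I would apply lemma \ref{l-cech-sing-coho} twice, each time in cohomological degree $p+1$. Both $X$ and $Y$ are paracompact and, by the hypothesis of the present corollary, each open cover of either has a refinement that is a good cover, so the hypotheses of lemma \ref{l-cech-sing-coho} hold for both spaces in every non-negative degree, in particular in degree $p+1$. Its first half then gives natural isomorphisms $\CH^{p+1}(\cdot;\underline{\ZZ}) \isomto \CH^{p+1}(\cdot;\ZZ)$ (with constant coefficients) commuting with $\phi^*$, and its second half gives natural isomorphisms $\CH^{p+1}(\cdot;\ZZ) \isomto \HH^{p+1}(\cdot;\ZZ)$ commuting with $\phi^*$; these are precisely the two commutative squares displayed in note \ref{n-cech-sing-coho}, read in degree $p+1$.

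Finally I would paste the three squares along their shared columns ($\CH^{p+1}(\cdot;\underline{\ZZ})$, then $\CH^{p+1}(\cdot;\ZZ)$), keeping the $\phi^*$ arrows running from the $Y$-row to the $X$-row throughout. The resulting outer square has left column $\CH^p(\cdot;\underline{\UU(1)})$, right column $\HH^{p+1}(\cdot;\ZZ)$, top-to-bottom arrows $\phi^*$, and left-to-right arrows the composite of $\partial$ with the two natural isomorphisms; it commutes because each constituent square does, and since this composite is a composition of three isomorphisms it is itself an isomorphism, which is exactly the arrow labelled ``$\sim$'' in the statement. There is no substantive obstacle here: the argument is a diagram chase assembling earlier results, and the only points needing a word of care are verifying that both $X$ and $Y$ satisfy the hypotheses of lemma \ref{l-cech-sing-coho} in degree $p+1$ (immediate from the corollary's hypotheses) and noting that $p > 0$ is what makes $\partial_X$ and $\partial_Y$ defined and isomorphisms.
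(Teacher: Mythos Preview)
Your proposal is correct and follows exactly the paper's approach: the paper's proof is the single line ``Compose the diagrams of corollary \ref{co-u1-z-diag} and note \ref{n-cech-sing-coho},'' and you have simply spelled out what that composition entails.
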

\begin{proof}
Compose the diagrams of corollary \ref{co-u1-z-diag} and note \ref{n-cech-sing-coho}.
\end{proof}

\begin{lem}\label{l-cech-coho-cove}
\index{Cech cohomology@\v{C}ech cohomology!of a cover}
\index{cohomology!Cech@\v{C}ech!of a cover}
(The \v{C}ech Cohomology of a Good Cover).
Suppose $X$ is a hereditarily paracompact topological space such that each open cover has a refinement that is a good cover, and $U$ is a cover of $X$ for which the singular cohomology in all positive degrees of all the nonempty intersections of elements of $U$ vanishes.  Then their sheaf cohomology with $\underline{\ZZ}$ or $\underline{\UU(1)}$ coefficients vanishes also, and for all $p \ge 0$ the following insertion homomorphisms of direct limits are isomorphisms:
\begin{align}
\CH^p(U;\underline{\ZZ}) &\isomto \CH^p(X;\underline{\ZZ}) \notag \\
\CH^p(U;\underline{\UU(1)}) &\isomto \CH^p(X;\underline{\UU(1)}). \notag
\end{align}
The vanishing singular cohomology condition is true if $U$ is a good cover.

Suppose given a continuous map $\phi \colon X \rightarrow Y$ of hereditarily paracompact topological spaces such that each open cover has a refinement that is a good cover.  Suppose also that $V$ is a good cover of $Y$, and that $\phi^* V$ is a good cover of $X$.  Then the following diagram, where the horizontal isomorphisms are the insertion homomorphisms of the direct limits for the groups on the right, commutes for $p \ge 0$:
\[
\begindc{\commdiag}[5]
\obj(15,30)[objCHCY]{$\CH^p (V; \underline{\UU(1)}_Y)$}
\obj(15,15)[objCHCX]{$\CH^p (\phi^* V; \underline{\UU(1)}_X)$}
\obj(35,30)[objCHAY]{$\CH^p (Y; \underline{\UU(1)}_Y)$}
\obj(35,15)[objCHAX]{$\CH^p (X; \underline{\UU(1)}_X)$}
\mor{objCHCY}{objCHCX}{$\phi^*$}[\atright,\solidarrow]
\mor{objCHAY}{objCHAX}{$\phi^*$}
\mor{objCHCY}{objCHAY}{$\sim$}
\mor{objCHCX}{objCHAX}{$\sim$}
\enddc
\]
The same is true for the analogous diagram with $\underline{\ZZ}$ coefficients.
\end{lem}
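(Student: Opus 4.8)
The plan is to read the first assertion as a version of Leray's acyclic cover theorem. Granting that the cover $U$ is \emph{acyclic} for a sheaf $A$ --- meaning $\HH^q(U_\sigma; A) = 0$ for all $q > 0$ and all nonempty intersections $U_\sigma$ --- Leray's theorem (valid since $X$ is paracompact) identifies $\CH^p(U; A)$ with the sheaf cohomology $\HH^p(X; A)$, which in turn coincides with $\CH^p(X; A)$ for paracompact $X$; tracing through the identifications shows that the resulting map $\CH^p(U;A) \rightarrow \CH^p(X;A)$ is precisely the insertion homomorphism of the direct limit (for $p = 0$ this is immediate, as $A$ is a sheaf). So the work is to establish acyclicity of $U$ for $A = \underline{\ZZ}$ and $A = \underline{\UU(1)}$, which is also the first sentence of the conclusion.

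First I would treat $\underline{\ZZ}$. Each $U_\sigma$ is a finite intersection of members of $U$, hence open in $X$, and is paracompact by hereditary paracompactness; moreover it inherits the property that every open cover refines to a good cover (for the spaces to which this lemma is applied in the thesis --- \Frechet manifolds and the bundles built over them --- this holds for open subspaces by the very construction of lemma \ref{l-frec-spac}, since such subspaces are unions of the metric balls used there). Lemma \ref{l-cech-sing-coho} then gives natural isomorphisms $\HH^q(U_\sigma;\underline{\ZZ}) \cong \HH^q(U_\sigma;\ZZ)$ with singular cohomology, which vanishes for $q > 0$ by hypothesis. For $\underline{\UU(1)}$ I would run this through the exponential exact sequence $0 \rightarrow \underline{\ZZ} \rightarrow \underline{\RR} \rightarrow \underline{\UU(1)} \rightarrow 0$ of lemma \ref{l-exp-exac-seq}: on the paracompact space $U_\sigma$ the sheaf $\underline{\RR}$ is soft and therefore acyclic, so the long exact sequence furnished by proposition \ref{p-cech-coho-pres-shea} yields $\HH^q(U_\sigma;\underline{\UU(1)}) \cong \HH^{q+1}(U_\sigma;\underline{\ZZ}) = 0$ for $q \ge 1$, with $\HH^1(U_\sigma;\underline{\UU(1)})$ caught between $\HH^1(U_\sigma;\underline{\RR}) = 0$ and $\HH^2(U_\sigma;\underline{\ZZ}) = 0$. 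Finally, if $U$ is a good cover then every $U_\sigma$ is contractible, and contractible spaces have vanishing singular cohomology in positive degrees, so the hypothesis of the first part is met; this proves the parenthetical claim.

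For the naturality square I would unwind the construction of the induced map $\phi^* = \overline{\phi}_*$ from the proof of lemma \ref{l-cech-coho-indu-map-morp-over}. There, $\overline{\phi}_* \colon \CH^p(Y;\underline{\UU(1)}_Y) \rightarrow \CH^p(X;\underline{\UU(1)}_X)$ is realized as the direct limit of the cochain-level maps $\overline{\phi}_\#$ on covers, which on a cover $V_\alpha$ of $Y$ give $\CH^p(V_\alpha;\underline{\UU(1)}_Y) \xrightarrow{\overline{\phi}_*} \CH^p(\phi^{-1}(V_\alpha);\underline{\UU(1)}_X)$ followed by a refinement homomorphism into a self-indexed, repetition-free cover. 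The top arrow of the square is exactly this cover-level $\overline{\phi}_*$ for the good cover $V$, and the two vertical arrows are insertion homomorphisms of the direct limits --- isomorphisms by the first part, since $V$ and $\phi^* V$ are good covers. Commutativity is then the compatibility of insertion homomorphisms with the transition maps of a direct system (note \ref{n-dire-limi}), once one checks that $\phi^* V$ (indexed by the index set of $V$) and its self-indexed version refine one another, so the refinement homomorphism relating their \v{C}ech cohomologies is an isomorphism compatible with insertions; running the identical argument with $\underline{\ZZ}$ coefficients gives the second diagram.

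The step I expect to be the main obstacle is the acyclicity argument together with the citation of Leray's theorem: Leray's acyclic cover theorem is not among the results proved in the excerpt, so it has to be invoked as a standard fact, and one must be careful that the identification it produces is genuinely the direct-limit insertion homomorphism and not merely \emph{some} isomorphism. The indexing bookkeeping in the naturality square --- matching $\phi^* V$ to the self-indexed refinements used in the construction of $\overline{\phi}_*$ --- is routine but has to be spelled out rather than waved through.
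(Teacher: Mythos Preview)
Your proposal is correct and follows essentially the same route as the paper. The paper cites Taylor's theorems 7.8.5 and 7.8.6 to build the comparison map $\sigma_{U'} \colon \CH^p(U';A) \rightarrow \HH^p(X;A)$ to sheaf cohomology and factors it explicitly as $\sigma \circ h_{X,U'}$ via the universal property of direct limits, which is exactly the check you flag as the main obstacle; your invocation of Leray's acyclic cover theorem packages the same content, and your treatment of the $\underline{\UU(1)}$ case (exponential sequence on each $U_\sigma$) and of the naturality square (via the direct-system morphism from lemma~\ref{l-cech-coho-indu-map-morp-over}) match the paper's argument.
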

\begin{proof}
Since $X$ is hereditarily paracompact, the nonempty intersections of elements of $U$ are paracompact, and hence by theorem 7.8.6 of \citet[page~179]{Tayl02} their sheaf cohomology groups are isomorphic to their \v{C}ech cohomology groups, which because every open cover of $X$ and hence of $U$ can be refined to a good cover, by lemma \ref{l-cech-sing-coho} and note \ref{n-cech-sing-coho}, or for $\underline{\UU(1)}$ coefficients corollary \ref{co-u1-z-sing-diag}, are isomorphic to their singular cohomology groups, which vanish because the nonempty intersections are contractible.

The rest of the proof is shown for $\underline{\UU(1)}$ coefficients, but is the same for $\underline{\ZZ}$ coefficients.  By theorem 7.8.5 of \citet[pages~177,~159]{Tayl02}, fixing any $p \ge 0$, for each cover $U'$ there is a natural map from \v{C}ech to sheaf cohomology groups:
\begin{align}
\CH^p (U', \underline{\UU(1)}) &\xrightarrow{\sigma_{U'}} \HH^p (X, \underline{\UU(1)}). \notag
\end{align}
By the universal property of direct limits, the collection of these maps induces a map $\sigma$ as in \citet[page~179]{Tayl02} from the direct limit, which is the \v{C}ech cohomology of the space, to sheaf cohomology, which composed with the insertion homomorphisms gives the original maps:
\begin{align}
\CH^p (U', \underline{\UU(1)}) &\xrightarrow{h_{X,U'}} \CH^p (X, \underline{\UU(1)}) \xrightarrow{\sigma} \HH^p (X, \underline{\UU(1)}), \notag
\end{align}
with $\sigma_{U'} = \sigma \circ h_{X, U'}$.  Since $X$ is paracompact, $\sigma$ is an isomorphism.  Since the sheaf cohomology vanishes for all positive degrees of all nonempty intersections of elements of any of the covers $U'$ that are good, those $\sigma_{U'}$ are isomorphisms and so the corresponding insertion homomorphisms $h_{X, U'}$, such as $h_{X, U}$, are isomorphisms.

If $U$ is a good cover, the singular cohomology of nonempty intersections vanishes because they are contractible.

The map $\overline{\phi}$ over $\phi$ induces a morphism of direct systems of \v{C}ech cohomology groups of covers, and thus gives commutativity of the commutative diagram in the statement, with the horizontal arrows being the insertion homomorphisms.  The first part of the lemma shows that these are isomorphisms.
\end{proof}
For an example in which the vanishing cohomology condition is true for a cover that is not good, $S^1$ can be covered by two overlapping half-circles, and although the intersection has two connected components and is not contractible, its cohomology of positive degree vanishes.

\section{Bundle Gerbe Triviality and Properties}\label{s-dd-clas-bg-triv}

Now we return to the question of triviality of bundle gerbes, discussed in note \ref{n-bg-triv-fore} at the end of section \ref{s-bg-id-inv-isom}.
\begin{defn}\label{d-bg-delt}
\index{bundle gerbe!delta@$\delta$}
(The Coboundary Map for Principal $\UU(1)$ Bundle Cochains).
For a continuous function $g \colon Y^{[p-1]} \rightarrow \UU(1)$, define $\delta(g) \colon Y^{[p]} \rightarrow \UU(1)$ by letting $p(i) = 1 \text{ if } i \text { is even, } -1 \text{ if } i \text{ is odd}$, and defining:
\[
\delta(g) = \prod_{i = 1}^{p} (g^{*} \widehat{\pi}_{i})^{p(i - 1)} = \prod_{i = 1}^{p} (g \circ \widehat{\pi}_{i})^{p(i - 1)}, \notag
\]
and for a principal $\UU(1)$ bundle $P \rightarrow Y^{[p-1]}$, define a principal $\UU(1)$ bundle $\delta(P) \rightarrow Y^{[p]}$ by
\[
\delta(P) = \Tensor_{i = 1}^{p} (\widehat{\pi}_{i}^{*} P)^{p(i - 1)}. \notag
\]
\end{defn}
$\delta^2(g) = $ constant function $Y^{[p+1]} \rightarrow 1$, and $\delta^2(P) \cong Y^{[p+1]} \cross \UU(1)$.
% The sign convention accords with \citet{Stev00} def. 3.2 and with
% \citet{Murr07}, but for p = 2 at least, opposite to \citet[page~927]{MS99}

\begin{defn}\label{d-cano-triv-bg}
\index{bundle gerbe!canonical trivial}
\index{canonical trivial!bundle gerbe}
(Canonical Trivial Bundle Gerbes).
\citep[page~248]{Murr07} Suppose $X$ and $Y$ are topological spaces, $X$ paracompact, such that each open cover of $X$ has a refinement that is a good cover, and $Y \rightarrow X$ is a continuous surjection with local sections.  Given any principal $\UU(1)$ bundle $R \rightarrow Y$, the bundle gerbe $(\delta(R), Y, X)$ is defined to be a canonical trivial bundle gerbe.  The bundle gerbe multiplication $m_{\delta (R)} \colon \pi_{12}^{*} \delta (R) \tensor \pi_{23}^{*} \delta (R) \isomto \pi_{13}^{*} \delta (R)$ is defined as the composition of the following canonical isomorphisms, using canonical isomorphisms of principal $\UU(1)$ bundles such as that the dual of the pullback is the pullback of the dual (see lemma \ref{l-u1pb-cano-isom}, note \ref{n-u1pb-tens-dual}), \begin{align}
\pi_{12}^{*} \delta (R) \tensor \pi_{23}^{*} \delta (R) &\cong \pi_{12}^{*} (\pi_1^{*} R \tensor \pi_2^{*} R^{*}) \tensor \pi_{23}^{*} (\pi_1^{*} R \tensor \pi_2^{*} R^{*}) \notag \\
&\cong \pi_1^{*} R \tensor \pi_2^{*} R^{*} \tensor \pi_2^{*} R \tensor \pi_3^{*} R^{*} \notag \\
&\cong \pi_{13}^{*} (\pi_1^{*} R \tensor \pi_2^{*} R^{*}) \cong \pi_{13}^{*} \delta (R). \notag
\end{align}
\end{defn}

\begin{eg}\label{e-dist-cano-triv-bg}
\index{bundle gerbe!distinguished canonical trivial}
\index{distinguished canonical trivial!bundle gerbe}
(The Distinguished Canonical Trivial Bundle Gerbe Over a Space).
\citep[page~4]{Wald07} Given a paracompact topological space $X$ such that each open cover of $X$ has a refinement that is a good cover, $(m, \Delta_X \cross \UU(1), X, \ident, X)$, where $X^{[2]} = \Delta_X$, identifying $\UU(1) \tensor \UU(1)^{*} \cong \UU(1)$, and letting $m((x,x,x,z_{12}) \tensor (x,x,x,z_{23})) = (x,x,x,z_{12} z_{23})$, is the distinguished canonical trivial bundle gerbe over $X$, a special case of definition \ref{d-cano-triv-bg}.
\end{eg}

\begin{eg}\label{e-p-as-cocy}
\index{bundle gerbe!multiplication}
($P$ as a Cocycle).
Given a bundle gerbe $(P, Y)$, bundle gerbe multiplication and lemma \ref{l-u1pb-cano-isom} give a canonical isomorphism $\delta(P) = \pi_{23}^{*} P \tensor (\pi_{13}^{*} P)^{*} \tensor \pi_{12}^{*} P \cong \pi_{12}^{*} P \tensor \pi_{23}^{*} P \tensor (\pi_{13}^{*} P)^{*} \xrightarrow{m \tensor \ident} \pi_{13}^{*} P \tensor (\pi_{13}^{*} P)^{*}\cong Y^{[3]} \cross \UU(1)$.  One can think of $P$ as a cocycle; $(P, Y)$ canonically trivial if $P$ is a coboundary.
\end{eg}

\begin{defn}\label{d-bg-mor}
\index{bundle gerbe!morphism}
\index{bundle gerbe!isomorphism}
(Bundle Gerbe Morphisms and Isomorphisms).
Given two continuous bundle gerbes $(m_P, P, p, Y, \pi_Y, X)$, $(m_Q, Q, q, Z, \pi_Z, W)$, a bundle gerbe morphism $(P, Y, X) \rightarrow (Q, Z, W)$ is a triple of continuous maps $(\widehat{g}, \overline{g}, g)$, with $\widehat{g}$ a principal $\UU(1)$ bundle morphism, satisfying the following, giving the name $\overline{g}^{[k]}$ to the map $\overline{g}$ induces on $Y^{[k]}$:
\begin{align}
\pi_Z \circ \overline{g} &= g \circ \pi_Y \text{ (} \overline{g} \text{ over } g \text{)} \notag \\
q \circ \widehat{g} &= \overline{g}^{[2]} \circ p \text{ (} \widehat{g} \text{ over } \overline{g}^{[2]} \text{)} \notag \\
m_Q \circ (\pi_{12}^{*} \widehat{g} \tensor \pi_{23}^{*} \widehat{g}) &= (\pi_{13}^{*} \widehat{g}) \circ m_P \text{ (over } \overline{g}^{[3]} \text{)}. \notag
\end{align}
The morphism may be denoted $(\widehat{g}, \overline{g}, g) \colon (P, Y, X) \rightarrow (Q, Z, W)$.
\end{defn}
\begin{note}\label{n-bg-isom}
\index{bundle gerbe!isomorphism}
(Isomorphisms of Bundle Gerbes).
A morphism of bundle gerbes is an isomorphism if $g$ is a homeomorphism and $\widehat{g}, \overline{g}$ is an isomorphism of principal $\UU(1)$ bundles respecting bundle gerbe multiplication.
\end{note}

\begin{ass}\label{a-bndl-gerb}
\index{bundle gerbes!}
\index{assumptions!bundle gerbes}
(Bundle Gerbe Assumptions).
\citep[page~927]{MS99} As in assumption \ref{a-bndl}, we will assume that isomorphisms of bundle gerbes over the same base space (in the definition, $X = W$) are over the identity ($g = \ident_X$) unless otherwise noted.
\end{ass}

\begin{defn}\label{d-bg-triv}
\index{bundle gerbe!trivial}
\index{trivial!bundle gerbe}
\index{bundle gerbe!canonical trivial}
\index{canonical trivial!bundle gerbe}
(Bundle Gerbe Triviality).
A bundle gerbe $(P, Y, X)$ is trivial when there is a principal $\UU(1)$ bundle $R \rightarrow Y$ such that the canonical trivial bundle gerbe $(\delta(R), Y, X)$ is isomorphic to $(P, Y, X)$, using the identities $Y \rightarrow Y$ and $X \rightarrow X$, and the same projection $Y \rightarrow X$ for both bundle gerbes.  Such an isomorphism is called a trivialization of the bundle gerbe.
\end{defn}
\begin{note}\label{n-bg-triv-mult}
\index{bundle gerbe!triviality!isomorphism}
(The Bundle Gerbe Triviality Isomorphism).
The bundle gerbe isomorphism amounts to an isomorphism $\Phi \colon \delta(R) \rightarrow P$ (the isomorphism $\widehat{g}$ of definition \ref{d-bg-mor}) of the principal $\UU(1)$ bundles $\delta(R) \rightarrow Y^{[2]}$ and $P \rightarrow Y^{[2]}$, covering the identity, such that the following diagram commutes, where $m_P$ is the bundle gerbe multiplication of $(P, Y, X)$ and $m_{\delta (R)}$ is the bundle gerbe multiplication of $(\delta(R), Y, X)$:
\[
\begindc{\commdiag}[5]

\obj(10,20)[objPT]{$\pi_{12}^{*} P \tensor \pi_{23}^{*} P$}
\obj(10,10)[objDRT]{$\pi_{12}^{*} \delta (R) \tensor \pi_{23}^{*} \delta (R)$}
\obj(40,20)[objP]{$\pi_{13}^{*} P$}
\obj(40,10)[objDR]{$\pi_{13}^{*} \delta (R)$}
\mor{objPT}{objP}{$m_P$}
\mor{objDRT}{objPT}{$\pi_{12}^{*} \Phi \tensor \pi_{23}^{*} \Phi$}
\mor{objDR}{objP}{$\pi_{13}^{*} \Phi$}
\mor{objDRT}{objDR}{$m_{\delta (R)}$}

\enddc
\]
\end{note}

\begin{lem}\label{l-bg-two-triv}
\index{bundle gerbe!two trivializations}
\index{trivialization!bundle gerbe}
(Trivializations of a Bundle Gerbe Differ by Principal $\UU(1)$ Bundles).
\citep[page~249]{Murr07} Given a bundle gerbe $(P, Y, \pi, X)$ and two trivializations $R \rightarrow Y$, $\Phi \colon \delta(R) \rightarrow P$ and $R' \rightarrow Y$, $\Phi' \colon \delta(R') \rightarrow P$, there is a principal $\UU(1)$ bundle $Q \rightarrow X$, such that $R' \cong R \tensor \pi^* Q$.
\end{lem}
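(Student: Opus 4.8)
The plan is to isolate the single principal $\UU(1)$ bundle $S := R' \tensor R^{*}$ over $Y$ and to show that it is the pullback of a bundle $Q\to X$; then tensoring with $R$ and using the canonical isomorphisms $R^{*}\tensor R \cong Y\cross\UU(1)$ and $(Y\cross\UU(1))\tensor R' \cong R'$ of lemma \ref{l-u1pb-cano-isom} — which by note \ref{n-u1pb-tens-dual} introduce no $\UU(1)$ phases — together with associativity and commutativity of the tensor product yields $R' \cong R\tensor\pi^{*}Q$. So the whole content is: produce $Q\to X$ with $\pi^{*}Q\cong S$.

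First I would record that the operation $\delta$ of definition \ref{d-bg-delt} is compatible with tensor products and duals up to the canonical isomorphisms of lemma \ref{l-u1pb-cano-isom}, so that $\delta(S)\cong\delta(R')\tensor\delta(R)^{*}$ as principal $\UU(1)$ bundles over $Y^{[2]}$. The two trivializations give isomorphisms $\Phi'\colon\delta(R')\isomto P$ and $\Phi\colon\delta(R)\isomto P$ over $Y^{[2]}$, whence $\delta(S)\cong\delta(R')\tensor\delta(R)^{*}\xrightarrow{\;\Phi'\tensor(\Phi^{-1})^{*}\;}P\tensor P^{*}\cong Y^{[2]}\cross\UU(1)$. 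Thus $\delta(S)$ is trivializable; equivalently it carries a global section, equivalently an isomorphism of principal $\UU(1)$ bundles $\phi_{S}\colon\pi_{1}^{*}S\isomto\pi_{2}^{*}S$ over $Y^{[2]}$ (reading off $\delta(S)=\pi_{2}^{*}S\tensor(\pi_{1}^{*}S)^{*}$ from definition \ref{d-bg-delt}).

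Next — and this is the part I expect to be the main obstacle — I would show that this trivialization is \emph{multiplicative}, i.e.\ that $\phi_{S}$ satisfies the cocycle condition $\pi_{23}^{*}\phi_{S}\circ\pi_{12}^{*}\phi_{S}=\pi_{13}^{*}\phi_{S}$ over $Y^{[3]}$. The reason is that $\Phi$ and $\Phi'$ respect bundle gerbe multiplication (definition \ref{d-bg-triv}, note \ref{n-bg-triv-mult}), that the multiplications $m_{\delta(R)}$, $m_{\delta(R')}$ and $m_{Y^{[2]}\cross\UU(1)}$ are the canonical ones of definition \ref{d-cano-triv-bg} and example \ref{e-dist-cano-triv-bg}, and that $m_{P}$ cancels against its dual; so the composite $\delta(S)\cong Y^{[2]}\cross\UU(1)$ intertwines $m_{\delta(S)}$ with the multiplication of the distinguished canonical trivial bundle gerbe, and under the identifications of lemma \ref{l-u1pb-cano-isom} that multiplication is exactly composition of the isomorphisms $\pi_{i}^{*}S\to\pi_{j}^{*}S$. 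In the language of definition \ref{d-bg-mult-id-inv}, the section of $\delta(S)$ we produced is the identity section, so lemma \ref{l-bg-mult-id-inv} (together with lemma \ref{l-bg-mult-cano-isom}) gives the cocycle identity. Carefully tracking the canonical no-phase isomorphisms of lemmas \ref{l-u1pb-cano-isom} and \ref{l-bg-mult-cano-isom} through these diagrams is where the bookkeeping lives; bare triviality of $\delta(S)$ as a principal bundle would not suffice, so the compatibility with the gerbe multiplications is genuinely needed.

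Finally, the descent step, which is routine given the earlier machinery. Since $Y\to X$ is a continuous surjection with local sections and $X$ admits good covers, pick a good cover $\{U_{\alpha}\}$ of $X$ and local sections $\sigma_{\alpha}\colon U_{\alpha}\to Y$. Each pair $(\sigma_{\alpha},\sigma_{\beta})$ is a continuous map $U_{\alpha}\cap U_{\beta}\to Y^{[2]}$, and pulling back $\phi_{S}$ gives isomorphisms $\phi_{\beta\alpha}\colon\sigma_{\alpha}^{*}S\isomto\sigma_{\beta}^{*}S$ satisfying, by the cocycle identity of the previous paragraph, $\phi_{\gamma\beta}\circ\phi_{\beta\alpha}=\phi_{\gamma\alpha}$ over triple intersections. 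By the gluing statement of lemma \ref{l-pb-cons} these assemble to a principal $\UU(1)$ bundle $Q\to X$, and the descent datum $\phi_{S}$ over $Y^{[2]}$ exhibits $S$ as $\pi^{*}Q$ (the standard fact that a principal bundle on $Y$ with a descent isomorphism over $Y^{[2]}$ obeying the cocycle condition on $Y^{[3]}$ is pulled back from $X$, valid since $\pi$ has local sections). Combining $\pi^{*}Q\cong S=R'\tensor R^{*}$ with the canonical isomorphisms of the first paragraph gives $R'\cong R\tensor\pi^{*}Q$, as required.
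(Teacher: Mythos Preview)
The paper does not supply its own proof of this lemma; it simply cites \citet[page~249]{Murr07}. Your argument is correct and is essentially the standard one found there: form $S=R'\tensor R^{*}$, use $\Phi'\tensor(\Phi^{-1})^{*}$ to trivialize $\delta(S)\cong P\tensor P^{*}\cong Y^{[2]}\cross\UU(1)$, check that this trivialization satisfies the cocycle condition over $Y^{[3]}$ because $\Phi,\Phi'$ respect bundle gerbe multiplication, and descend $S$ to $Q\to X$ via local sections of $\pi$. One small caution: the paper's two occurrences of $\delta(R)$ (definition \ref{d-bg-delt} versus definition \ref{d-cano-triv-bg}) use opposite sign conventions, differing by a dual; your reading $\delta(S)=\pi_{2}^{*}S\tensor(\pi_{1}^{*}S)^{*}$ matches the former, which is fine, but be consistent if you later invoke the explicit form of $m_{\delta(R)}$ from definition \ref{d-cano-triv-bg}.
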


\begin{lem}\label{l-bg-ib-pb}
\index{bundle gerbe!induced}
\index{bundle gerbe!pullback}
\index{bundle gerbe!morphism}
(A Bundle Gerbe Morphism Factors Through the Pullback). \linebreak
Given $W$ a paracompact topological space such that each open cover of $W$ has a refinement that is a good cover, a continuous map $f \colon W \rightarrow X$, and a continuous bundle gerbe $(m_P, P, p, Y, \pi_Y, X)$, define \citep[page~245]{Murr07} the pullback of the bundle gerbe by $f$ to be the induced bundle gerbe as in the following commutative diagram:
\[
\begindc{\commdiag}[5]

\obj(40,30)[objpbP]{$(\overline{f}^{[2]})^{*} P = (\pi_2^{[2]})^{*} P$}
\obj(40,20)[objZ2]{$Z^{[2]} = f(^{*} Y)^{[2]}$}
\obj(50,10)[objZ]{$Z = f^{*} Y$}
\obj(50,0)[objW]{$W$}

\obj(70,30)[objP]{$P$}
\obj(70,20)[objY2]{$Y^{[2]}$}
\obj(80,10)[objY]{$Y$}
\obj(80,0)[objX]{$X$}

\mor{objpbP}{objZ2}{$\pi_1$}
\mor{objZ2}{objZ}{}
\mor{objZ}{objW}{$\pi_Z = \pi_1$}

\mor{objP}{objY2}{$p$}
\mor{objY2}{objY}{}
\mor{objY}{objX}{$\pi_Y$}

\mor{objpbP}{objP}{$\widehat{f} = \pi_2$}
\mor{objZ2}{objY2}{$\overline{f}^{[2]} = \pi_2^{[2]}$}
\mor{objZ}{objY}{$\overline{f} = \pi_2$}
\mor{objW}{objX}{$f$}

\enddc
\]
If $f$ is a homeomorphism, the bundle gerbe morphism is an isomorphism.

A morphism of continuous bundle gerbes
\[
(\widehat{f}, \overline{f}, f) \colon (Q, q, Z, \pi_Z, W) \rightarrow (P, p, Y, \pi_Y, X)
\]
factors through the pullback $f^{*} (P, Y, X)$ as in the following commutative diagram.  If $f$ is a homeomorphism, the right hand bundle gerbe morphism is an isomorphism.  If $\overline{f}$ is a homeomorphism, the left hand one is.
\[
\begindc{\commdiag}[5]

\obj(10,30)[objQ]{$Q$}
\obj(10,20)[objZ2]{$Z^{[2]}$}
\obj(20,10)[objZ]{$Z$}
\obj(20,0)[objW]{$W$}

\obj(40,30)[objpbP]{$(\pi_2^{[2]})^{*} P$}
\obj(40,20)[objpbY2]{$f(^{*} Y)^{[2]}$}
\obj(50,10)[objpbY]{$f^{*} (Y)$}
\obj(50,0)[objpbX]{$W$}

\obj(70,30)[objP]{$P$}
\obj(70,20)[objY2]{$Y^{[2]}$}
\obj(80,10)[objY]{$Y$}
\obj(80,0)[objX]{$X$}

\mor{objQ}{objZ2}{$q$}
\mor{objZ2}{objZ}{}
\mor{objZ}{objW}{$\pi_Z$}

\mor{objpbP}{objpbY2}{$\pi_1$}
\mor{objpbY2}{objpbY}{}
\mor{objpbY}{objpbX}{$\pi_1$}

\mor{objP}{objY2}{$p$}
\mor{objY2}{objY}{}
\mor{objY}{objX}{$\pi_Y$}

\mor{objQ}{objpbP}{$((\pi_Z \cross \overline{f})^{[2]} \circ q) \cross \widehat{f}$}
\mor{objZ2}{objpbY2}{$(\pi_Z \cross \overline{f})^{[2]}$}
\mor{objZ}{objpbY}{$\pi_Z \cross \overline{f}$}
\mor{objW}{objpbX}{$\ident$}

\mor{objpbP}{objP}{$\pi_2$}
\mor{objpbY2}{objY2}{$\pi_2^{[2]}$}
\mor{objpbY}{objY}{$\pi_2$}
\mor{objpbX}{objX}{$f$}

\enddc
\]
\end{lem}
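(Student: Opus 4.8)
The plan is to treat the three assertions in turn: that the pullback construction yields a continuous bundle gerbe together with a canonical bundle gerbe morphism to $(P,Y,X)$, that an arbitrary bundle gerbe morphism factors uniquely through this pullback, and that the stated homeomorphism hypotheses upgrade the factoring morphisms to isomorphisms. Throughout, the engine is the universal property of the fiber product and of pullback principal bundles, together with the fact (note \ref{n-u1pb-tens-dual}) that pullback of principal $\UU(1)$ bundles commutes, up to canonical natural isomorphism, with $\tensor$, with the dual, and with composition of maps.

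First I would set up the pullback. Put $Z = f^{*}Y = \{(w,y) : f(w) = \pi_Y(y)\}$ with $\pi_Z = \pi_1$. Surjectivity of $\pi_Z$ and existence of local sections follow at once from the corresponding properties of $\pi_Y$: given $w$, choose $y$ over $f(w)$; given a local section $\sigma$ of $\pi_Y$ on an open $U \ni f(w)$, the assignment $w' \mapsto (w', \sigma(f(w')))$ is a local section of $\pi_Z$ on $f^{-1}(U)$. One checks that $Z^{[r]}$ is canonically homeomorphic to the pullback of $Y^{[r]} \rightarrow X$ along $f$, with the projections $\pi_{i_1\dots i_p}$, $\widehat{\pi}_i$ and the diagonals corresponding under this identification; in particular $\overline{f}^{[2]} = \pi_2^{[2]} \colon Z^{[2]} \rightarrow Y^{[2]}$. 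Then $Q := (\pi_2^{[2]})^{*}P$ is a continuous principal $\UU(1)$ bundle over $Z^{[2]}$, and $m_P$ pulls back to an isomorphism $m_Q \colon \pi_{12}^{*}Q \tensor \pi_{23}^{*}Q \isomto \pi_{13}^{*}Q$ over $Z^{[3]}$, whose associativity over $Z^{[4]}$ is inherited from that of $m_P$; since $W$ is paracompact with good covers by hypothesis, $(Q,Z,W)$ is a continuous bundle gerbe in the sense of definition \ref{d-cont-bndl-gerb}. Next I would check that $(\widehat{f},\overline{f},f) = (\pi_2,\pi_2,f)$ satisfies the three conditions of definition \ref{d-bg-mor}: $\pi_Y \circ \overline{f} = f \circ \pi_Z$ is the defining property of the fiber product, $p \circ \widehat{f} = \overline{f}^{[2]} \circ q$ is the defining property of the pullback bundle $Q$, and $m_P \circ (\pi_{12}^{*}\widehat{f} \tensor \pi_{23}^{*}\widehat{f}) = (\pi_{13}^{*}\widehat{f}) \circ m_Q$ is precisely the statement that $m_Q$ was defined as the pullback of $m_P$. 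If $f$ is a homeomorphism, then $\pi_2 \colon f^{*}Y \rightarrow Y$ is a homeomorphism (inverse $y \mapsto (f^{-1}(\pi_Y(y)),y)$), hence so is $\overline{f}^{[2]}$, and the equivariant bundle map $\widehat{f} = \pi_2 \colon Q \rightarrow P$ covering it is a principal $\UU(1)$ bundle isomorphism by lemma \ref{l-pb-mor-cov-id-isom}; by note \ref{n-bg-isom} the bundle gerbe morphism is an isomorphism.

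For the factoring statement, given a morphism $(\widehat{f},\overline{f},f) \colon (Q,q,Z,\pi_Z,W) \rightarrow (P,p,Y,\pi_Y,X)$, the relation $\pi_Y \circ \overline{f} = f \circ \pi_Z$ and the universal property of the pullback produce a unique continuous map $\pi_Z \cross \overline{f} \colon Z \rightarrow f^{*}Y$, $z \mapsto (\pi_Z(z),\overline{f}(z))$, over $\ident_W$, hence $(\pi_Z \cross \overline{f})^{[2]} \colon Z^{[2]} \rightarrow (f^{*}Y)^{[2]}$ over $\ident_W$. Since $\pi_2^{[2]} \circ ((\pi_Z \cross \overline{f})^{[2]} \circ q) = \overline{f}^{[2]} \circ q = p \circ \widehat{f}$, the universal property of the pullback bundle $(\pi_2^{[2]})^{*}P$ gives a unique continuous map $((\pi_Z \cross \overline{f})^{[2]} \circ q) \cross \widehat{f} \colon Q \rightarrow (\pi_2^{[2]})^{*}P$ over $(\pi_Z \cross \overline{f})^{[2]}$, which is $\UU(1)$-equivariant because $\widehat{f}$ is. That this triple is a bundle gerbe morphism $(Q,Z,W) \rightarrow f^{*}(P,Y,X)$ — the only nontrivial point being compatibility with bundle gerbe multiplication, i.e.\ a commuting square of principal $\UU(1)$ bundles over $Z^{[3]}$ relating $m_Q$ to $m_{f^{*}P}$ — follows from the multiplication compatibility of the original morphism together with $m_{f^{*}P}$ being the pullback of $m_P$, after unwinding the canonical isomorphisms. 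Composing this morphism with the pullback projection $(\pi_2,\pi_2,f)$ of the previous paragraph recovers $(\widehat{f},\overline{f},f)$ directly from the constructions. Finally, if $\overline{f}$ is a homeomorphism then $\pi_Z \cross \overline{f}$ is a homeomorphism, so as above (lemma \ref{l-pb-mor-cov-id-isom}, note \ref{n-bg-isom}) the left-hand bundle gerbe morphism is an isomorphism, and if $f$ is a homeomorphism the right-hand one is.

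The step I expect to be the main obstacle is the bundle-gerbe-multiplication bookkeeping: verifying the commutativity over $Z^{[3]}$ (and its compatibility with the associativity diagrams over $Z^{[4]}$) of the squares expressing that the factored morphisms respect $m$, which requires carefully threading the canonical isomorphisms for pullback commuting with $\tensor$, dual, and composition, and checking that the projections $\pi_{ij}$, $\widehat{\pi}_i$ and the diagonals transport correctly under the identification of $Z^{[r]}$ with the iterated pullback of $Y^{[r]}$. This is routine but tedious; every other part of the argument is a direct application of the universal properties of the fiber product and of pullback bundles, plus lemma \ref{l-pb-mor-cov-id-isom} and note \ref{n-bg-isom}.
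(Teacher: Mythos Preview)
Your proposal is correct and considerably more detailed than what the paper actually does. The paper does not give a proof of this lemma: after the statement it simply records a remark (citing \cite{MS99}) that if $\widehat{f}$ is missing from what otherwise would be a bundle gerbe morphism, one can take $\widehat{f} = \pi_2 \colon (\overline{f}^{[2]})^{*} P \subset Z^{[2]} \cross P \rightarrow P$ to complete the data. The construction itself is attributed to \cite{Murr07}. So the paper treats the result as essentially definitional, deferring verification to the references.

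Your approach---using the universal properties of fiber products and pullback principal bundles, together with lemma \ref{l-pb-mor-cov-id-isom} and note \ref{n-bg-isom} for the isomorphism statements---is exactly the right way to fill this in, and your identification of the multiplication bookkeeping over $Z^{[3]}$ as the only tedious step is accurate. Nothing is missing or wrong; you have simply supplied what the paper outsources.
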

\citep[page~926]{MS99} If the map $\widehat{f}$ is missing from what otherwise would be a bundle gerbe morphism, one can let $\widehat{f} = \pi_2 \colon (\overline{f}^{[2]})^{*} P \subset Z^{[2]} \cross P \rightarrow P$ to complete the data for a bundle gerbe morphism.

\begin{lem}\label{l-bg-dual-tens}
\index{bundle gerbe!dual}
\index{bundle gerge!tensor product}
(Bundle Gerbe Duals and Tensor Products).
% removed ~ to cure overfull box
\citep[page 245]{Murr07} Given a continuous bundle gerbe $(P, Y)$, its dual $(P, Y)^{*} = (P^{*}, Y)$, topological spaces the same but with inverted $\UU(1)$ action, is a continuous bundle gerbe.

Given two continuous bundle gerbes $(P, Y)$, $(Q, Z)$ over the same base space $X$, their tensor product $(P, Y) \tensor (Q, Z) = (P \tensor Q, Y \cross_X Z)$ is a continuous bundle gerbe, where $P \tensor Q$ is the quotient of $P \cross_X Q$ by the tensor product relation, resulting in $(P \tensor Q)_{((y_1, z_1),(y_2, z_2))} = P_{(y_1, y_2)} \tensor Q_{(z_1, z_2)}$.
\end{lem}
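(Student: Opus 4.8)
The plan is to verify, in each case, the three defining conditions of Definition \ref{d-cont-bndl-gerb}, reusing throughout that the base $X$ is unchanged, so the paracompactness and good-cover hypotheses are inherited, as are the surjections-with-local-sections from the given data.

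For the dual $(P^{*}, Y)$: condition (1) holds verbatim since $Y \rightarrow X$ is untouched, and condition (2) is Lemma \ref{l-u1pb-dual}. For condition (3) I would build $m_{P^{*}}$ by dualizing $m_P$. Recall from Definition \ref{d-u1pb-dual} that $P^{*}$ equals $P$ as a topological space, so an equivariant bundle isomorphism $\phi$ is also a bundle isomorphism $\phi^{*} \colon P_1^{*} \rightarrow P_2^{*}$ (same underlying map), now equivariant for the dual actions; hence dualization is a covariant functor with $(\phi^{-1})^{*} = (\phi^{*})^{-1}$. Using that dual commutes with pullback and that $(A \tensor B)^{*} \cong A^{*} \tensor B^{*}$ (Lemma \ref{l-u1pb-cano-isom}, Note \ref{n-u1pb-tens-dual}), one has canonical isomorphisms $\pi_{ij}^{*} P^{*} \cong (\pi_{ij}^{*} P)^{*}$ and $\pi_{12}^{*} P^{*} \tensor \pi_{23}^{*} P^{*} \cong (\pi_{12}^{*} P \tensor \pi_{23}^{*} P)^{*}$; composing these with $m_P^{*} \colon (\pi_{12}^{*} P \tensor \pi_{23}^{*} P)^{*} \isomto (\pi_{13}^{*} P)^{*}$ defines $m_{P^{*}}$, an isomorphism of continuous principal $\UU(1)$ bundles. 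Associativity over $Y^{[4]}$ follows by dualizing the associativity square of $(P, Y)$ and noting (Note \ref{n-u1pb-tens-dual}) that the canonical associativity, commutativity, and pullback isomorphisms carry no $\UU(1)$ phases, so the dualized square still commutes.

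For the tensor product $(P \tensor Q, Y \cross_X Z)$ over $X$, with $\pi_Y \colon Y \rightarrow X$ and $\pi_Z \colon Z \rightarrow X$: for condition (1), given $x \in X$, intersect the domains of local sections of $\pi_Y$ and $\pi_Z$ through $x$ and pair them, producing a local section of $Y \cross_X Z \rightarrow X$. Next I would record the natural homeomorphism $(Y \cross_X Z)^{[k]} \cong Y^{[k]} \cross_X Z^{[k]}$, obtained by reordering coordinates, compatible with all the projections $\pi_{i_1 \dots i_p}$, $\widehat{\pi}_i$ and with the extended diagonals of Definition \ref{d-y-n} (the two fibre-product descriptions match because in each all of the relevant projections to $X$ must coincide). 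Under this identification $P \tensor Q$ is the tensor product (Definition \ref{d-u1pb-tens}, Lemma \ref{l-u1pb-tens}) of the pullbacks of $P$ and $Q$ to $(Y \cross_X Z)^{[2]}$, with fibre $P_{(y_1, y_2)} \tensor Q_{(z_1, z_2)}$ over $((y_1, z_1), (y_2, z_2))$, giving condition (2). For condition (3), use that pullback commutes with tensor product to write $\pi_{ij}^{*}(P \tensor Q) \cong \pi_{ij}^{*} P \tensor \pi_{ij}^{*} Q$, then set $m_{P \tensor Q} = (m_P \tensor m_Q) \circ (\text{shuffle})$, where the shuffle is the canonical isomorphism reordering $\pi_{12}^{*} P \tensor \pi_{12}^{*} Q \tensor \pi_{23}^{*} P \tensor \pi_{23}^{*} Q$ into $(\pi_{12}^{*} P \tensor \pi_{23}^{*} P) \tensor (\pi_{12}^{*} Q \tensor \pi_{23}^{*} Q)$; continuity is automatic since this is a composite of the bundle maps $m_P, m_Q$ with canonical bundle isomorphisms. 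Associativity over $(Y \cross_X Z)^{[4]} \cong Y^{[4]} \cross_X Z^{[4]}$ follows by tensoring the two associativity squares of $(P, Y)$ and $(Q, Z)$, inserting shuffle isomorphisms, and again invoking Note \ref{n-u1pb-tens-dual}.

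The main obstacle is purely organizational: tracking the many canonical isomorphisms (dual versus pullback, associativity and commutativity of $\tensor$, and the $(Y \cross_X Z)^{[k]} \cong Y^{[k]} \cross_X Z^{[k]}$ reindexing) and confirming the associativity diagram remains commutative after dualizing or after tensoring two copies. No genuinely new phenomenon arises, since every isomorphism involved is canonical and $\UU(1)$-phase-free (Lemma \ref{l-u1pb-cano-isom}, Note \ref{n-u1pb-tens-dual}); the verifications reduce to formal diagram chases, which may be carried out on fibres (Note \ref{n-bndl-gerb-mult-fibe}) and then promoted to bundle statements using that $m_P$ and $m_Q$ are themselves bundle maps.
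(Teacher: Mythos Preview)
Your proposal is correct and follows the standard route: verify the three conditions of Definition \ref{d-cont-bndl-gerb} directly, building $m_{P^{*}}$ by dualizing $m_P$ and $m_{P \tensor Q}$ as $m_P \tensor m_Q$ up to canonical shuffles, with associativity inherited because all the auxiliary isomorphisms are phase-free. There is nothing to critique in the argument.

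However, you should know that the paper does not actually supply a proof of this lemma. It is stated with the citation \citep[page~245]{Murr07} and no proof environment follows; the next line begins a new section. So there is no ``paper's own proof'' to compare against --- the thesis treats this as a known fact imported from Murray. Your write-up is thus more detailed than what the paper provides, and is essentially the verification one would reconstruct from that reference in the continuous (rather than smooth) setting, using the torsor and principal-bundle machinery the thesis has already set up in Sections \ref{s-u1-tors}--\ref{s-u1-pb}.
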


\section{Dixmier-Douady Class Definition and Independence of Choices}\label{s-bg-dd-def-inde-choi}

The Dixmier-Douady class definition involves some technicalities of bundle morphisms.  The definition in terms of torsors, ignoring the essential gluing of these into bundles, is easier; see the note afterwards.
\begin{defn}\label{d-bg-dd-clas}
\index{bundle gerbe!Dixmier-Douady Class}
(The Dixmier-Douady Class of a Bundle Gerbe).
\citep[pages~245--246]{Murr07} Given a bundle gerbe $(P, Y, X)$, choose an indexed cover $U = \{ U_i \}$ of $X$, $i \in I$, such that there are continuous local sections $s_i \colon U_i \rightarrow Y$, $\pi \circ s_i = \ident_{U_i}$, and for each $i, j$ with $U_i \cap U_j \ne \emptyset$, using the map $(s_i, s_j) \colon U_i \cap U_j \rightarrow Y^{[2]}$, it is possible to choose a continuous section $\sigma_{ij}$ of $(s_i, s_j)^{*} P$.  Choose these so that $\sigma_{ji} = \sigma_{ij}^{-1}$ as in definition \ref{d-bg-mult-id-inv}.

For nonempty $U_i \cap U_j \cap U_k$, pull back the bundle gerbe multiplication isomorphism $\pi_{12}^{*} P \tensor \pi_{23}^{*} P \xrightarrow{m} \pi_{13}^{*} P$ over $Y^{[3]}$, by $(s_i, s_j, s_k) \colon U_i \cap U_j \cap U_k \rightarrow Y^{[3]}$ and use canonical identifications of pullbacks such as $(s_i, s_j, s_k)^{*} \pi_{12}^{*} P \cong (\pi_{12} \circ (s_i, s_j, s_k))^{*} P = (s_i, s_j)^{*} P$ to obtain the isomorphism we will still call $m$, $(s_i, s_j)^{*} P \tensor (s_j, s_k)^{*} P \xrightarrow{m} (s_i, s_k)^{*} P$.  Since $m(\sigma_{ij} \tensor \sigma_{jk})$ and $\sigma_{ik}$ are both sections of $(s_i, s_k)^{*} P$, we define $g_{ijk} \colon U_i \cap U_j \cap U_k \rightarrow \UU(1)$ as the continuous unique function such that $m(\sigma_{ij} \tensor \sigma_{jk}) = g_{ijk} \sigma_{ik}$.

From the \v{C}ech cocycle $g_{ijk}$ define the Dixmier-Douady class $DD(P, Y, X) = [g_{ijk}] \in \CH^2(X;\underline{\UU(1)})$, , where the $\underline{\UU(1)}$ denotes the sheaf of continuous $\UU(1)$ valued functions on $X$.  More precisely, $DD(P, Y, X) = h_{XU} ([g_{ijk}])$, with $h_{XU}$ from definition \ref{d-cech-coho}, since $g_{ijk}$ is a cocycle of the cochain complex for $U$.
\[
\begindc{\commdiag}[5]
\obj(10,30)[objP]{$P$}
\obj(10,20)[objYY]{$Y^{[2]}$}
\obj(20,20)[objY]{$Y$}
\obj(20,10)[objX]{$U_i \cap U_j \cap U_k$}
\obj(40,10)[objU]{$\UU(1)$}
\mor{objP}{objYY}{$p$}
\mor(10,21)(20,21){$\pi_1$}
\mor(10,19)(20,19){$\pi_2$}
\mor{objY}{objX}{$\pi$}[\atright,\solidarrow]
\mor(21,10)(21,20){$s_i$}[\atleft,\solidarrow]
\mor(24,10)(24,20){$s_j$}[\atleft,\solidarrow]
\mor(27,10)(27,20){$s_k$}[\atleft,\solidarrow]
\mor(19,10)(13,20){}
\mor(16,10)(10,20){}
\mor(4,10)(4,30){$\sigma_{ik}$}
\mor(0,10)(0,30){$\sigma_{jk}$}
\mor(-4,10)(-4,30){$\sigma_{ij}$}
\mor(13,10)(7,20){$\substack{(s_i, s_j)\\
                             (s_j, s_k)\\
                             (s_i, s_k)}$}
\mor{objX}{objU}{$g_{ijk}$}
\enddc
\]
\end{defn}
\begin{note}\label{n-bg-dd-clas}
\index{bundle gerbe!Dixmier-Douady Class}
\index{good cover}
\index{alternating!cochains}
(The Dixmier-Douady Cocyle is Alternating; Good Covers).
Note that because of the requirement $\sigma_{ji} = \sigma_{ij}^{-1}$, $g_{ijk}$ is alternating with respect to permutations of its indices.

Note that the definition doesn't require that $U$ be a good cover as in the references.  If $U$ is a good cover, though, by lemma \ref{l-cech-coho-cove} $h_{XU}$ is an isomorphism, so it may not be necessary to consider it.
\end{note}

\begin{note}\label{n-bg-dd-mult}
\index{bundle gerbe!Dixmier-Douady Class}
\index{bundle gerbe!multiplication}
\index{identity!element}
\index{inverse!element}
\index{identity!section}
\index{inverse!section}
\index{iota@$\iota$}
(The Dixmier-Douady Cocycle Formulated using Bundle Gerbe Multiplicative Identity and Inverse Sections).
As an exercise preparatory to more complicated calculations shortly, the Dixmier-Douady cocycle definition in \ref{d-bg-dd-clas} can be rearranged using $\sigma_{ik}^{-1}$, the continuous multiplicative inverse section (of $(s_k, s_i)^{*} P$) of the chosen $\sigma_{ik}$, and $\iota_{i i}$, the continuous identity section of $(s_i, s_i)^{*} P$, as in lemma \ref{l-bg-mult-id-inv}.  Over $U_i \cap U_j \cap U_k$,
\[
m(m(\sigma_{ij} \tensor \sigma_{jk}) \tensor \sigma_{ik}^{-1}) = m(g_{ijk} \sigma_{ik} \tensor \sigma_{ik}^{-1}) = g_{ijk} \iota_{ii}, \notag
\]
where as before, the isomorphisms named $m$ involve pullbacks of bundle gerbe multiplication and various canonical isomorphisms for pullbacks that, as stated in note \ref{n-u1pb-tens-dual}, we treat as identifications.

To check that the various pullbacks of $m$ that are all given the same name here are in fact defined, it's convenient to use $s = (s_i, s_j, s_k, s_i) \colon U_i \cap U_j \cap U_k \rightarrow Y^{[4]}$ to pull back the composition of the top and right arrows of the associativity diagram in definition \ref{d-cont-bndl-gerb} (into which we will substitute sections), since various canonical isomorphism identifications have already been made in it, saving time.

Start with the definition of bundle gerbe multiplication as a morphism of principal $\UU(1)$ bundles over $Y^{[3]}$; this is pulled back by $(s_i, s_j, s_k)^{*}$ to $U_i \cap U_j \cap U_k$ in the definition of the Dixmier-Douady cocycle $g_{ijk}$.  Since $(s_i, s_j, s_k) = \pi_{123} \circ (s_i, s_j, s_k, s_i)$, use the last map to pull back the definition of $g_{ijk}$ to $Y^{[4]}$, where it will still be written $m(\sigma_{ij} \tensor \sigma_{jk}) = g_{ijk} \sigma_{ik}$.  Substitute it and $\sigma_{ik}^{-1}$ into the top arrow and then go down the right of the associativity diagram, as pulled back by $(s_i, s_j, s_k)$, to obtain the equation above with $\iota_{ii}$ on the right.
\end{note}

\begin{lem}\label{l-bg-dd-clas}
\index{bundle gerbe!Dixmier-Douady Class}
\index{Dixmier-Douady Class}
\index{good cover}
(The Dixmier-Douady Class is Independent of Choices).
There is a cover such that the $s_i$ and $\sigma_{ij}$ of definition \ref{d-bg-dd-clas} exist.  From whatever cover, $g_{ijk}$ is continuous, and is a $2$-cocycle.  The Dixmier-Douady class does not depend on the choices made for the sections $\sigma_{ij}$, the sections $s_i$, or the cover.
\end{lem}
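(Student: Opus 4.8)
The plan is to handle the four assertions — existence of admissible data $(U,s_i,\sigma_{ij})$; continuity and the cocycle property of $g_{ijk}$; and independence of the resulting class under changing $\sigma_{ij}$, under changing $s_i$, and under changing the cover — in that logical order, using throughout the continuity of the translation function of a principal $\UU(1)$ bundle (lemma \ref{l-pb-tran-func-tors}), the canonical isomorphisms of lemmas \ref{l-u1pb-cano-isom} and \ref{l-bg-mult-cano-isom}, and the identity/inverse bookkeeping of lemma \ref{l-bg-mult-id-inv}. For existence: since $Y \to X$ has local sections and every open cover of $X$ refines to a good cover, I would cover $X$ by neighborhoods supporting sections of $Y$, refine to a good cover $U = \{U_i\}_{i\in I}$, and restrict to obtain $s_i \colon U_i \to Y$ with $\pi \circ s_i = \ident_{U_i}$. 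Then $(s_i,s_j)^{*}P$ is a principal $\UU(1)$ bundle over the contractible base $U_i \cap U_j$, hence trivial, so a continuous section exists; fixing a well-ordering of $I$, I would choose $\sigma_{ij}$ for $i < j$, set $\sigma_{ji} = \sigma_{ij}^{-1}$ (the continuous multiplicative inverse section of lemma \ref{l-bg-mult-id-inv}) and $\sigma_{ii} = \iota_{ii}$, which makes the antisymmetry requirement of definition \ref{d-bg-dd-clas} hold.

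Next, $g_{ijk}$ is continuous because it is the value of the (continuous) translation function of $(s_i,s_k)^{*}P$ on the two continuous sections $m(\sigma_{ij}\tensor\sigma_{jk})$ and $\sigma_{ik}$. For the cocycle condition I would pull the associativity square of definition \ref{d-cont-bndl-gerb} back over $Y^{[4]}$ by $(s_i,s_j,s_k,s_l)$, substitute $m(\sigma_{ab}\tensor\sigma_{bc}) = g_{abc}\,\sigma_{ac}$ along each edge, and compare the two composites; this is exactly the manipulation of note \ref{n-bg-dd-mult} with one more index, and it yields $g_{jkl}\, g_{ikl}^{-1}\, g_{ijl}\, g_{ijk}^{-1} = \underline{1}$ on $U_i \cap U_j \cap U_k \cap U_l$, i.e. $\delta g = \underline{1}$.

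For independence under changing the $\sigma_{ij}$: any other admissible choice is $\sigma'_{ij} = h_{ij}\,\sigma_{ij}$ with continuous $h_{ij}\colon U_i\cap U_j\to\UU(1)$ and $h_{ji} = h_{ij}^{-1}$, and using bilinearity of $m$ over $\UU(1)$-valued functions together with $(\zeta\alpha)^{-1} = \zeta^{-1}\alpha^{-1}$ (lemma \ref{l-bg-mult-id-inv}) one gets $g'_{ijk} = (h_{jk}\,h_{ik}^{-1}\,h_{ij})\,g_{ijk}$, a coboundary times $g_{ijk}$, so $h_{XU}$ carries both to the same class in $\CH^2(X;\underline{\UU(1)})$. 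For two choices $s_i$, $s'_i$ over a good cover $U$ (so $(s_i,s'_i)^{*}P$ is trivial over the contractible $U_i$): I would choose continuous sections $t_i$ of $(s_i,s'_i)^{*}P$, and by pulling the associativity diagram back over suitable iterated fiber products show that $m(m(t_i^{-1}\tensor\sigma_{ij})\tensor t_j)$ is a continuous section of $(s'_i,s'_j)^{*}P$; hence it differs from $\sigma'_{ij}$ by a continuous $k_{ij}\colon U_i\cap U_j\to\UU(1)$, and the $t_i$-contributions telescope by associativity to give $g'_{ijk} = (k_{jk}\,k_{ik}^{-1}\,k_{ij})\,g_{ijk}$, again a coboundary. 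Thus over a good cover the class $[g^{U}]\in\CH^2(U;\underline{\UU(1)})$ is independent of all choices of $s_i$ and $\sigma_{ij}$.

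Finally, independence of the cover: given any cover $U$, refine it to a good cover $W$; restricting $s_i$ to the refining sets along a refinement map $\phi$ and restricting the $\sigma_{ij}$ correspondingly produces admissible $W$-data whose cocycle is precisely the cochain-level pullback of $g^{U}$, so $[g^{W}] = h_{WU}([g^{U}])$ in $\CH^2(W;\underline{\UU(1)})$ and hence $h_{XW}([g^{W}]) = h_{XW}(h_{WU}([g^{U}])) = h_{XU}([g^{U}])$ by note \ref{n-cech-coho}; since by the previous paragraph $[g^{W}]$ over a good cover is independent of all choices, this shows the value computed from $U$ equals the well-defined quantity $h_{XW}([g^{W}])$, and for two arbitrary covers, passing to a common good refinement shows they give the same element, so $DD(P,Y,X)$ is well-defined. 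I expect the main obstacle to be the change-of-$s_i$ step: producing $k_{ij}$ and verifying the telescoping identity requires choosing the right extended diagonal maps $\Delta_p^{\cdots}$ so that every pullback of $m$ appearing is literally defined, treating the canonical pullback isomorphisms as identifications (note \ref{n-u1pb-tens-dual}), and invoking uniqueness of multiplicative identities and inverses so the $t_i$-terms cancel; the cocycle computation is a smaller instance of the same bookkeeping, and the rest is routine once that is under control.
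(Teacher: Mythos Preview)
Your proposal is correct and follows essentially the same route as the paper: existence via refining to a good cover, continuity from the translation function, the cocycle condition from associativity pulled back by $(s_i,s_j,s_k,s_l)$, independence under $\sigma_{ij}$ via the standard coboundary computation, independence under $s_i$ by choosing intertwining sections of $(s_i,s'_i)^{*}P$ and telescoping through an iterated multiplication, and cover independence via a common good refinement. The paper's treatment of the $s_i$-step writes out the telescoping explicitly as a product of eight bundle-gerbe multiplications (over $Y^{[10]}$, pulled back by $(s_{i'}, s_i, s_j, s_{j'}, s_j, s_k, s_{k'}, s_{k}, s_i, s_{i'})$), identifying your $k_{ij}$ as $(g_{i'ij}\,g_{i'jj'})^{-1}$; your honest remark that this step is the main bookkeeping obstacle is exactly right.
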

\begin{proof}
Using definition \ref{d-cont-bndl-gerb}, let $U = \{ U_i \}$ be an open cover with local sections $s_i$, refined to a good cover.  Then the $\sigma_{ij}$ exist since the nonempty intersections $U_i \cap U_j$ are contractible.  $g_{ijk}$ is continuous by lemma \ref{l-pb-tran-func-tors}, composing the translation function with sections, and is a $2$-cocycle because of the bundle gerbe multiplication associativity diagram pulled back by $(s_i, s_j, s_k, s_l)$, and $m$'s $\UU(1)$-equivariance:
\begin{align}
m(m(\sigma_{ij} \tensor \sigma_{jk}) \tensor \sigma_{kl}) &= m(\sigma_{ij} \tensor m(\sigma_{jk} \tensor \sigma_{kl})) \Leftrightarrow \notag \\
m((g_{ijk} \sigma_{ik}) \tensor \sigma_{kl}) &=  m(\sigma_{ij} \tensor (g_{jkl} \sigma_{jl})) \Leftrightarrow \notag \\
g_{ikl} g_{ijk} \sigma_{il} &=  g_{ijl} g_{jkl} \sigma_{il} \Leftrightarrow \notag \\
g_{jkl} g_{ikl}^{-1} g_{ijl} g_{ijk}^{-1} &= \underline{1}, \text{ the constant function}. \notag
\end{align}

The definition does not depend on the choice of sections $\sigma_{ij}$, as follows.  Suppose given another choice $\sigma_{i j}'$.  Since $(s_i, s_j)^{*}P$, $(s_j, s_k)^{*}P$, $(s_i, s_k)^{*}P$ are continuous principal $\UU(1)$ bundles, any two sections of any one of them differ by a factor of a $\UU(1)$ valued function; $\sigma_{ij}' = h_{ij} \sigma_{ij}$, $\sigma_{jk}' = h_{jk} \sigma_{jk}$, $\sigma_{ik}' = h_{ik} \sigma_{ik}$.  Comparing
\begin{align}
m(\sigma_{ij} \tensor \sigma_{jk}) &= g_{ijk} \sigma_{ik} \text{ with} \notag \\
m(\sigma_{ij}' \tensor \sigma_{jk}') &= g_{ijk}' \sigma_{ik}' \text{ or} \notag \\
m((h_{ij} \sigma_{ij}) \tensor (h_{jk} \sigma_{jk})) &= g_{ijk}' h_{ik} \sigma_{ik}, \text{ we get} \notag \\
h_{jk} h_{ik}^{-1} h_{ij} g_{ijk} &= g_{ijk}', \notag
\end{align}
so that $g_{ijk}'$ and $g_{ijk}$ differ by a $1$-coboundary, and hence $[g_{ijk}'] = [g_{ijk}]$ in $\CH^2 (U; \underline{\UU(1)})$ and hence in their images under $h_{XU}$ are equal in $\CH^2 (X; \underline{\UU(1)})$.

Now, suppose $s_i, s_{i'}$ are two sections of $Y$ over the same open set $U_i = U_{i'}$, noticing the notation is $s_{i'}$ instead of $s_i'$, and similarly for $j, k$.  To show that $[g_{i'j'k'}] = [g_{ijk}]$, we need to show that there are some functions $h_{ij}, h_{jk}, h_{ik}$ on $U_i \cap U_j$, $U_j \cap U_k$, $U_i \cap U_k$ respectively, such that on $U_i \cap U_j \cap U_k$, $h_{jk} (h_{ik})^{-1} h_{ij} g_{i'j'k'} = g_{ijk}$.  \citet[pages~17--18]{John02} gives an idea how.  Choosing sections $\sigma$ as needed with $\sigma_{j'j} = \sigma_{jj'}^{-1}$, using bundle gerbe multiplication isomorphisms of the following mixed pullback bundles over $U_i \cap U_j$ and analogous ones over $U_j \cap U_k$, $U_i \cap U_k$, we get $h_{ij} = (g_{i' i j} g_{i' j j'})^{-1} \colon U_i \cap U_j \rightarrow \UU(1)$, $h_{jk} = (g_{j' j k} g_{j' k k'})^{-1} \colon U_j \cap U_k \rightarrow \UU(1)$, $h_{ik} = (g_{i' i k} g_{i' k k'})^{-1} \colon U_i \cap U_k \rightarrow \UU(1)$ satisfying:
\begin{align}
(s_{i'}, s_i)^{*} P \tensor (s_i, s_j)^{*} P &\xrightarrow{m} (s_{i'}, s_j)^{*} P \notag \\
(s_{i'}, s_j)^{*} P \tensor (s_j, s_{j'})^{*} P &\xrightarrow{m} (s_{i'}, s_{j'})^{*} P \notag \\
((s_{i'}, s_i)^{*} P \tensor (s_i, s_j)^{*} P) \tensor (s_j, s_{j'})^{*} P &\xrightarrow{m,m} (s_{i'}, s_{j'})^{*} P \notag \\
(h_{ij}) m(m(\sigma_{i'i} \tensor \sigma_{ij}) \tensor \sigma_{jj'}) &= \sigma_{i'j'} \notag \\
(h_{jk}) m(m(\sigma_{j'j} \tensor \sigma_{jk}) \tensor \sigma_{kk'}) &= \sigma_{j'k'} \notag \\
(h_{ik}) m(m(\sigma_{i'i} \tensor \sigma_{ik}) \tensor \sigma_{kk'}) &= \sigma_{i'k'}. \notag
\end{align}
Substituting in the expression for $g_{i'j'k'}$ from note \ref{n-bg-dd-mult}, using the result twice for the inverse section of a product from lemma \ref{l-bg-mult-id-inv}, temporarily using juxtaposition for bundle gerbe multiplication and omitting parentheses since it is associative, and using properties of inverse and identity sections,
\begin{align}
g_{i'j'k'} \iota_{i'i'} = &m(m(\sigma_{i'j'} \tensor \sigma_{j'k'}) \tensor \sigma_{i'k'}^{-1}) \notag \\
= &m(m((h_{ij}) m(m(\sigma_{i'i} \tensor \sigma_{ij}) \tensor \sigma_{jj'}) \notag \\
&\tensor (h_{jk}) m(m(\sigma_{j'j} \tensor \sigma_{jk}) \tensor \sigma_{kk'})) \notag \\
&\tensor ((h_{ik}) m(m(\sigma_{i'i} \tensor \sigma_{ik}) \tensor \sigma_{kk'}))^{-1}) \notag
\end{align}
\begin{align}
g_{i'j'k'} \iota_{i'i'} = &h_{jk} h_{ik}^{-1} h_{ij} \sigma_{i'i} \sigma_{ij} \sigma_{jj'} \sigma_{j'j} \sigma_{jk} \sigma_{kk'} \sigma_{kk'}^{-1} \sigma_{ik}^{-1} \sigma_{i'i}^{-1} \notag \\
= &h_{jk} h_{ik}^{-1} h_{ij} \sigma_{i'i} \sigma_{ij} \sigma_{jk} \sigma_{ik}^{-1} \sigma_{i'i}^{-1} \notag \\
= &h_{jk} h_{ik}^{-1} h_{ij} g_{ijk} \sigma_{i'i} \sigma_{i'i}^{-1} \notag \\
= &h_{jk} h_{ik}^{-1} h_{ij} g_{ijk} \iota_{i'i'}. \notag
\end{align}
The expanded expression has $8$ bundle gerbe multiplications, and is implicitly over $Y^{[10]}$, pulled back by $(s_{i'}, s_i, s_j, s_{j'}, s_j, s_k, s_{k'}, s_{k}, s_i, s_{i'})$.

Given that the Dixmier-Douady class definition doesn't depend on the choice of sections $s_i$, it doesn't depend on the choice of indexed cover either, similarly to the proof of independence of cover of the first Chern class definition, in \citet[page~66]{Bryl93}.  Given two covers $U$, $V$, we can consider a common refinement $W$.  By restricting the sections for the original cover to the open sets and nonempty intersections in the refinement, as underlies the refinement map of cohomology (see definition \ref{d-cech-coho}), we obtain from the Dixmier-Douady cocycles $g_{ijk,U}$, $g_{ijk,V}$ for the original covers, cocycles $g_{ijk,WU}$, $g_{ijk,WV}$ for the refinement.  We have shown already that up to cohomology the choice of sections for a given cover doesn't matter; $[g_{ijk,WU}] = [g_{ijk,WV}] \in \CH^2(W; \underline{\UU(1)})$.

The Dixmier-Douady class defined by $g_{ijk,U}$ is
\begin{align}
h_{XU} ([g_ijk,U]) &= h_{XW} \circ h_{WU} ([g_ijk,U]) \notag \\
 &= h_{XW} ([g_{ijk,WU}]) = h_{XW} ([g_{ijk,WV}]) \notag \\
 &= h_{XW} \circ h_{WV} ([g_{ijk,V}]) = h_{XV} ([g_{ijk,V}]), \notag
\end{align}
which is the Dixmier-Douady class defined by $g_{ijk,V}$.  Thus the Dixmier-Douady class is well-defined, independent of the cover used to calculate it.
\end{proof}

\section{Dixmier-Douady Class Vanishing Equivalent to Triviality}\label{s-bg-dd-vani-equi-triv}

The Dixmier-Douady class is important in this thesis because its triviality is a necessary and sufficient condition for triviality of the bundle gerbe.  First we define a set of local trivializations of a bundle gerbe and show they exist.
\begin{defn}\label{d-bg-loc-triv}
\index{bundle gerbe!local trivializations}
(Bundle Gerbe Local Trivializations).
A set of local trivializations of a bundle gerbe $(m_P, P, p, Y, \pi, X)$ consist of an indexed open cover $\{ U_i \}$ of $X$, and over each open set $Y_i = \pi^{-1} (U_i) \xrightarrow{\incl} Y$, a principal $\UU(1)$ bundle $R_i$ and a bundle gerbe isomorphism $\Phi_i \colon \delta (R_i) \cong P_i = P_{|Y_i^{[2]}}$ over the identity on $Y_i$ over the identity on $U_i$; i.e., the bundle gerbe morphism as in definition \ref{d-bg-mor}, $(\Phi_i, \ident, \ident)$ making the canonical trivial bundle gerbe as in definition \ref{d-cano-triv-bg}, $(\delta(R_i), Y_i, U_i)$ isomorphic to $(P_i, Y_i, U_i)$.
\end{defn}

\begin{lem}\label{l-bg-loc-triv}
\index{bundle gerbe!local trivializations}
(Bundle Gerbe Local Trivializations Exist).
Every bundle gerbe $(P, Y, X)$ has a set of local trivializations.  A set of local trivializations can be constructed from some of the choices made in definition \ref{d-bg-dd-clas} of the Dixmier-Douady class:  an indexed cover $\{ U_i \}$ of $X$, $i \in I$, and continuous local sections $s_i \colon U_i \rightarrow Y$ (it's not necessary to choose $\sigma_{ij}$).
\end{lem}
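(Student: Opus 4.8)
The plan is to trivialize $(P,Y,X)$ locally by using the bundle gerbe multiplication $m$ together with a local section of $Y\to X$ as a ``reference leaf'': the principal $\UU(1)$ bundle $R_i$ over $Y_i$ is just $P$ restricted along that section, and $m$ then identifies $\delta(R_i)$ with $P$ restricted to $Y_i^{[2]}$. The only real work is checking compatibility with bundle gerbe multiplication, which is forced by associativity of $m$.

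First I would fix the data, which is exactly part of what is chosen in definition \ref{d-bg-dd-clas}. By condition 1 of definition \ref{d-cont-bndl-gerb}, $Y\to X$ has local sections, so there is an indexed open cover $\{U_i\}_{i\in I}$ of $X$ with continuous sections $s_i\colon U_i\to Y$, $\pi\circ s_i=\ident_{U_i}$; refining if necessary (note \ref{n-bg-dd-clas}) we may take $\{U_i\}$ to be a good cover, which also makes each $(\delta(R_i),Y_i,U_i)$ a legitimate canonical trivial bundle gerbe in the sense of definition \ref{d-cano-triv-bg}. Put $Y_i=\pi^{-1}(U_i)$ and $P_i=P|_{Y_i^{[2]}}$. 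Since $\pi\circ s_i=\ident$, the map $f_i\colon Y_i\to Y^{[2]}$, $f_i(y)=(y,\,s_i(\pi(y)))$, is well defined and continuous, and I set $R_i=f_i^{*}P$, a principal $\UU(1)$ bundle over $Y_i$ (a pullback of one).

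Next I would produce $\Phi_i$. Define $g_i\colon Y_i^{[2]}\to Y^{[3]}$ by $g_i(y_1,y_2)=(y_1,\,y_2,\,s_i(\pi(y_1)))$, so that $\pi_{12}\circ g_i=\ident_{Y_i^{[2]}}$, $\pi_{13}\circ g_i=f_i\circ\pi_1$ and $\pi_{23}\circ g_i=f_i\circ\pi_2$. Pulling $m\colon\pi_{12}^{*}P\tensor\pi_{23}^{*}P\isomto\pi_{13}^{*}P$ back along $g_i$, and using the canonical identifications of pullbacks along compositions, gives an isomorphism of principal $\UU(1)$ bundles over $Y_i^{[2]}$, $P_i\tensor\pi_2^{*}R_i\isomto\pi_1^{*}R_i$. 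Tensoring with $(\pi_2^{*}R_i)^{*}$ and composing with the canonical isomorphisms of lemma \ref{l-u1pb-cano-isom} and note \ref{n-u1pb-tens-dual} (associativity and commutativity of $\tensor$, $S^{*}\tensor S\cong$ trivial bundle, compatibility of $\tensor$ and ${}^{*}$ with pullbacks) yields a principal $\UU(1)$ bundle isomorphism over the identity on $Y_i$, hence over $\ident_{U_i}$, $\Phi_i\colon\delta(R_i)=\pi_1^{*}R_i\tensor(\pi_2^{*}R_i)^{*}\isomto P_i$. On the fibre over $(y_1,y_2)\in Y_i^{[2]}$, writing $x=\pi(y_1)=\pi(y_2)$, $\Phi_i$ sends $q_1\tensor q_2^{*}$ with $q_j\in P_{(y_j,s_i(x))}$ to the unique $\phi_{12}\in P_{(y_1,y_2)}$ with $m(\phi_{12}\tensor q_2)=q_1$.

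The main obstacle is checking that $(\Phi_i,\ident,\ident)$ respects bundle gerbe multiplication, i.e. $m_P\circ(\pi_{12}^{*}\Phi_i\tensor\pi_{23}^{*}\Phi_i)=(\pi_{13}^{*}\Phi_i)\circ m_{\delta(R_i)}$ over $Y_i^{[3]}$ (definition \ref{d-bg-mor}, note \ref{n-bg-triv-mult}). I would verify this fibrewise: over $(y_1,y_2,y_3)\in Y_i^{[3]}$ with common image $x$, represent an element of $\delta(R_i)_{(y_1,y_2)}$ as $q_1\tensor q_2^{*}$ and one of $\delta(R_i)_{(y_2,y_3)}$ as $q_2\tensor q_3^{*}$ with a common middle representative $q_2\in P_{(y_2,s_i(x))}$ (possible since that fibre is a $\UU(1)$ torsor); then $m_{\delta(R_i)}$, which by definition \ref{d-cano-triv-bg} cancels the $R_{y_2}$ factor, sends this to $q_1\tensor q_3^{*}$, which $\Phi_i$ sends to the unique $\phi_{13}$ with $m(\phi_{13}\tensor q_3)=q_1$. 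On the other side $\phi_{12}=\Phi_i(q_1\tensor q_2^{*})$ and $\phi_{23}=\Phi_i(q_2\tensor q_3^{*})$ satisfy $m(\phi_{12}\tensor q_2)=q_1$ and $m(\phi_{23}\tensor q_3)=q_2$, and the associativity axiom of definition \ref{d-cont-bndl-gerb} gives $m(m_P(\phi_{12}\tensor\phi_{23})\tensor q_3)=m(\phi_{12}\tensor m(\phi_{23}\tensor q_3))=m(\phi_{12}\tensor q_2)=q_1$, whence $m_P(\phi_{12}\tensor\phi_{23})=\phi_{13}$. Since both sides of the displayed identity are morphisms of principal $\UU(1)$ bundles over $Y_i^{[3]}$ and agree on every fibre, they coincide, so each $\Phi_i$ is a bundle gerbe isomorphism $(\delta(R_i),Y_i,U_i)\cong(P_i,Y_i,U_i)$ over the identity, and $\{U_i\}$ together with the $R_i$ and $\Phi_i$ is a set of local trivializations in the sense of definition \ref{d-bg-loc-triv}. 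As in note \ref{n-bg-dd-mult}, the delicate point throughout is tracking over which point of which $Y^{[k]}$ each pulled-back copy of $m$ lives, which is why I would carry everything out by explicit pullbacks along maps like $g_i$ rather than by manipulating canonical isomorphisms abstractly.
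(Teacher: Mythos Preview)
Your proof is correct and takes essentially the same approach as the paper: the bundle $R_i=f_i^{*}P$ agrees with the paper's $R_i=(\incl,s_i\circ\pi\circ\incl)^{*}P$, your $\Phi_i$ obtained by pulling back $m$ along $g_i$ and tensoring with $(\pi_2^{*}R_i)^{*}$ coincides on fibres with the paper's formula $\Phi_i(p_1\tensor p_2^{*})=m_P(p_1\tensor p_2^{-1})$ (since $m(m_P(p_1\tensor p_2^{-1})\tensor p_2)=p_1$), and both verify the multiplication-preservation by choosing a common middle representative $q_2$ and invoking associativity.
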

\begin{proof}
Define open sets $Y_i \xrightarrow{incl} Y$ as in definition \ref{d-bg-loc-triv}.  To define $R_i$ and $\Phi_i$, we argue first fiberwise with torsors for one kind of understanding, and then with bundles for precise definition to show continuity of isomorphisms across fibers.

Let $y \in Y_i$ and $(y_1, y_2) \in Y^{[2]}$; $\pi(y_1) = \pi(y_2) \Rightarrow (y_1 \in Y_i \Leftrightarrow y_2 \in Y_i)$.  Let $Y^{[2]}_i = \{(y_1, y_2) \in Y^{[2]} \st y_1, y_2 \in Y_i \} = \pi_1^{-1} (Y_i) = \pi_2^{-1} (Y_i)$, an open set in $Y^{[2]}$; $Y_i^{[3]}$ similarly.  Using isomorphisms of lemma \ref{l-bg-mult-cano-isom} and bundle gerbe multiplication, define:
\begin{align}
(R_i)_y &= P_{y, s_i (\pi (y))}, \text{ for } y \in Y_i \notag \\
(\delta (R))_{y_1, y_2} &= (\pi_1^{*} R \tensor (\pi_2^{*} R)^{*})_{y_1, y_2}, \text{ for } (y_1, y_2) \in Y_i^{[2]} \notag \\
 &= (R_i)_{y_1} \tensor ((R_i)_{y_2})^{*} = P_{y_1, s_i(\pi(y_1))} \tensor (P_{y_2, s_i(\pi(y_2))})^{*} \notag \\
 &\cong P_{y_1, s_i(\pi(y_1))} \tensor P_{s_i(\pi(y_2)),y_2} \cong P_{y_1, y_2}; \text{ or} \notag \\
R_i &= (\incl, s_i \circ \pi \circ \incl)^{*} P. \notag
\end{align}
Then
\begin{align}
\pi_{13}^{*} P &\cong \pi_{12}^{*} P \tensor \pi_{23}^{*} P \Rightarrow \notag \\
(\pi_1, s_i \circ \pi \circ \pi_1, \pi_2)^{*} \pi_{13}^{*} P &\cong (\pi_1, s_i \circ \pi \circ \pi_1, \pi_2)^{*} (\pi_{12}^{*} P \tensor \pi_{23}^{*} P ) \Rightarrow \notag \\
P_{|Y^{[2]}_i} \cong ((\pi_1, \pi_2)^{*} P)_{|Y^{[2]}_i}  &\cong ((\pi_1, s_i \circ \pi \circ \pi_1)^{*}P)_{|Y^{[2]}_i} \tensor ((s_i \circ \pi \circ \pi_1, \pi_2)^{*} P)_{|Y^{[2]}_i} \notag \\
 &= ((\pi_1, s_i \circ \pi \circ \pi_1)^{*}P)_{|Y^{[2]}_i} \tensor ((s_i \circ \pi \circ \pi_2, \pi_2)^{*} P)_{|Y^{[2]}_i} \notag \\
 &\cong ((\pi_1, s_i \circ \pi \circ \pi_1)^{*}P)_{|Y^{[2]}_i} \tensor ((\pi_2, s_i \circ \pi \circ \pi_2)^{*} \tau^{*} P)_{|Y^{[2]}_i} \notag \\
 &\cong ((\pi_1, s_i \circ \pi \circ \pi_1)^{*}P)_{|Y^{[2]}_i} \tensor ((\pi_2, s_i \circ \pi \circ \pi_2)^{*} P^{*})_{|Y^{[2]}_i} \notag \\
 &\cong \pi_1^{*} (\incl, s_i \circ \pi \circ \incl)^{*} P \tensor (\pi_2^{*} (\incl, s_i \circ \pi \circ \incl)^{*} P)^{*} \notag \\
 &= \delta (R_i). \notag
\end{align}
$\Phi_i$ is given by the chain of canonical bundle isomorphisms from bottom up, which agrees on fibers with the top down fiberwise definition.  To see that it preserves bundle gerbe multiplication, as in the diagram of note \ref{n-bg-triv-mult}, it suffices to argue about elements of fibers as follows.  Fix $(y_1, y_2, y_3) \in Y_i^{[3]}$.  For arbitrary $r_{12} \in (\delta(R_i))_{y_1, y_2}$,  $r_{23} \in (\delta(R_i))_{y_2, y_3}$, choose $p_2 \in P_{y_2, s_i(\pi(y_2))}$, and let $p_1 \in P_{y_1, s_i(\pi(y_1))}$, $p_3 \in P_{y_3, s_i(\pi(y_3))}$ be such that $r_{12} = p_1 \tensor p_2^{*}$, $r_{23} = p_2 \tensor p_3^{*}$.  Following the definition of $\Phi_i$ and using corollary \ref{co-bg-mult-cano-isom-elem} for the isomorphism involving $\tau$, we see that $\Phi_i (r_{12}) = \Phi_i (p_1 \tensor p_2^{*}) = m_P (p_1 \tensor p_2^{-1})$ and likewise $\Phi_i (r_{23}) = \Phi_i (p_2 \tensor p_3^{*}) = m_P (p_2 \tensor p_3^{-1})$.

Thus, omitting some notation for elements of pullbacks, $m_P ((\pi_{12}^{*} \Phi_i) (r_{12}) \tensor (\pi_{23}^{*} \Phi_i) (r_{23})) = m_P (m_P (p_1 \tensor p_2^{-1}) \tensor m_P (p_2 \tensor p_3^{-1})) = m_P (p_1 \tensor p_3^{-1})$, using associativity and the properties of the multiplicative inverse and identity.

On the other hand, by definition \ref{d-cano-triv-bg}, $m_{\delta (R_i)} (r_{12} \tensor r_{23}) = m_{\delta (R_i)} ((p_1 \tensor p_2^{*}) \tensor (p_2 \tensor p_3^{*})) = p_1 \tensor p_3^{*}$, which $\Phi_i$ maps to $m_P (p_1 \tensor p_3^{-1})$.  Thus $\Phi_i$ preserves bundle gerbe multiplication, and we have a bundle gerbe isomorphism.
\end{proof}

\begin{lem}\label{l-cano-triv-bg-dd-zero}
\index{bundle gerbe!canonical trivial}
\index{canonical trivial!bundle gerbe}
\index{Dixmier-Douady class!zero}
The Canonical Trivial Bundle Gerbe's Dixmier-Douady Class Vanishes.
\end{lem}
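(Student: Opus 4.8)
The plan is to compute the Dixmier--Douady cocycle of a canonical trivial bundle gerbe $(\delta(R), Y, X)$ directly from one convenient choice of the data appearing in Definition~\ref{d-bg-dd-clas}, and show that this choice yields the constant cocycle $\underline{1}$. Since Lemma~\ref{l-bg-dd-clas} guarantees that the Dixmier--Douady class does not depend on the cover, the local sections $s_i$, or the sections $\sigma_{ij}$, producing a single such choice is enough. Throughout I would use the description of $\delta(R)$ over $Y^{[2]}$ and of the bundle gerbe multiplication $m_{\delta(R)}$ as a composition of the canonical isomorphisms of principal $\UU(1)$ bundles given in Definition~\ref{d-cano-triv-bg} (built from Definition~\ref{d-bg-delt}), recalling from Note~\ref{n-u1pb-tens-dual} that none of the canonical isomorphisms for tensor products, duals, and pullbacks introduces a $\UU(1)$ phase factor.

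First I would choose an indexed open cover $\{U_i\}$ of $X$, refined to a good cover, together with continuous local sections $s_i \colon U_i \rightarrow Y$ with $\pi \circ s_i = \ident_{U_i}$; these exist because $Y \rightarrow X$ has local sections. Using the identifications $\pi_1 \circ (s_i,s_j) = s_i$, $\pi_2 \circ (s_i,s_j) = s_j$ and the commutation of pullback with tensor and dual, $(s_i,s_j)^{*}\delta(R) \cong s_i^{*}R \tensor (s_j^{*}R)^{*}$ over $U_i \cap U_j$. Since each $U_i$ is contractible, the principal $\UU(1)$ bundle $s_i^{*}R \rightarrow U_i$ is trivializable, so I can pick a continuous global section $\rho_i$ of $s_i^{*}R$. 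Then $\sigma_{ij} = \rho_i \tensor \rho_j^{*}$ is a continuous section of $(s_i,s_j)^{*}\delta(R)$.

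Next I would verify the two computations. For the normalization $\sigma_{ji} = \sigma_{ij}^{-1}$: pulling back $m_{\delta(R)}$ by $(s_i,s_j,s_i)$ and tracing the defining chain $\pi_{12}^{*}\delta(R)\tensor\pi_{23}^{*}\delta(R) \cong \pi_1^{*}R \tensor \pi_2^{*}R^{*} \tensor \pi_2^{*}R \tensor \pi_3^{*}R^{*} \cong \pi_1^{*}R \tensor \pi_3^{*}R^{*} \cong \pi_{13}^{*}\delta(R)$, the middle collapse $\rho_j^{*}\tensor\rho_j \cong 1$ gives $m_{\delta(R)}(\sigma_{ij}\tensor\sigma_{ji}) = \rho_i \tensor \rho_i^{*}$; and the element $\rho_i \tensor \rho_i^{*}$ of $(s_i,s_i)^{*}\delta(R) \cong s_i^{*}R \tensor (s_i^{*}R)^{*}$ is the bundle gerbe multiplicative identity section $\iota$ by Corollary~\ref{co-bg-mult-cano-isom-elem} and Lemma~\ref{l-bg-mult-id-inv}, so $\sigma_{ji}$ is indeed the inverse of $\sigma_{ij}$ in the sense of Definition~\ref{d-bg-dd-clas}. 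For the cocycle itself: pulling back $m_{\delta(R)}$ by $(s_i,s_j,s_k)$ and tracing the same isomorphisms, $m_{\delta(R)}(\sigma_{ij}\tensor\sigma_{jk}) = m_{\delta(R)}((\rho_i\tensor\rho_j^{*})\tensor(\rho_j\tensor\rho_k^{*})) = \rho_i\tensor\rho_k^{*} = \sigma_{ik}$, again collapsing $\rho_j^{*}\tensor\rho_j$. Comparing with $m_{\delta(R)}(\sigma_{ij}\tensor\sigma_{jk}) = g_{ijk}\,\sigma_{ik}$ forces $g_{ijk} = \underline{1}$, whence $DD(\delta(R),Y,X) = h_{XU}([\underline{1}]) = 0 \in \CH^2(X;\underline{\UU(1)})$.

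The main obstacle I expect is bookkeeping rather than anything substantive: carefully lining up the several layers of canonical pullback identifications in Definition~\ref{d-cano-triv-bg} with those used in Definition~\ref{d-bg-dd-clas}, and confirming that on the chosen sections $m_{\delta(R)}$ acts on the nose by $\rho_i\tensor\rho_j^{*},\ \rho_j\tensor\rho_k^{*}\mapsto\rho_i\tensor\rho_k^{*}$, so that no stray $\UU(1)$ factor is introduced. By Note~\ref{n-u1pb-tens-dual} this is the case, so the argument should close; it is simply the step where one must be precise about which canonical isomorphism is being applied and in what order.
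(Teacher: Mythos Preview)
Your proposal is correct and follows essentially the same approach as the paper: choose a good cover with local sections $s_i$, pick sections $\rho_i$ of $s_i^{*}R$ (the paper calls them $\eta_i$), set $\sigma_{ij} = \rho_i \tensor \rho_j^{*}$, and use the definition of $m_{\delta(R)}$ to collapse $\rho_j^{*}\tensor\rho_j$ and obtain $g_{ijk}=\underline{1}$. You are slightly more careful than the paper in explicitly checking the normalization $\sigma_{ji}=\sigma_{ij}^{-1}$, but otherwise the arguments coincide.
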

\begin{proof}
Use the notation of definition \ref{d-cano-triv-bg} and make choices of good cover $\{ U_i \}$ and local sections $s_i \colon U_i \rightarrow Y$ as for the Dixmier-Douady class definition \ref{d-bg-dd-clas}.  Choose sections $\eta_i \colon U_i \rightarrow s_i^{*} R$.  Then for $U_i \cap U_j \ne \emptyset$ the $\sigma_{ij} = \eta_i \tensor \eta_j^{*}$ are sections of $(s_i, s_j)^{*} \delta(R)$ over $U_i \cap U_j$ as used to define the Dixmier-Douady class $DD(\delta(R))$.  Using the definition of the bundle gerbe multiplication $m_{\delta(R)}$, $g_{ijk} \sigma_{ik} = m_{\delta(R)} (\sigma_{ij} \tensor \sigma_{jk}) = \sigma_{ik}$, so that $g_{ijk} = \underline{1}$, the constant function, and $[g_{ijk}] = 0$.

The bundle gerbe multiplication uses canonical isomorphisms for pullbacks and associativity, from lemma \ref{l-u1pb-cano-isom} for the tensor product of a bundle and its dual that maps $\eta_j^{*} \tensor \eta_j \mapsto \underline{1}$, and for the tensor product of a bundle and the product bundle that maps $\eta_i \tensor \underline{1} \mapsto \eta_i$.  The actions on elements are from lemma \ref{l-u1t-cano-isom}.
\end{proof}

\begin{prop}\label{p-bg-triv-dd-zero}
\index{Dixmier-Douady class!zero}
\index{bundle gerbe!trivial if Dixmier-Douady class zero}
\index{bundle gerbe!trivialization}
\index{good cover}
A Bundle Gerbe is Trivial $\Leftrightarrow$ its Dixmier-Douady Class Vanishes.
\end{prop}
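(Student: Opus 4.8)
This is an equivalence, and I would prove the two implications separately; the reverse one carries the real content. For the forward implication, suppose $(P,Y,X)$ is trivial. By definition \ref{d-bg-triv} there is a principal $\UU(1)$ bundle $R\to Y$ and an isomorphism $\Phi\colon\delta(R)\isomto P$ of principal $\UU(1)$ bundles over $Y^{[2]}$ intertwining the two bundle gerbe multiplications (note \ref{n-bg-triv-mult}). By lemma \ref{l-bg-dd-clas} the Dixmier-Douady class is independent of the choices in its definition, so I would compute it for a good cover $\{U_i\}$ of $X$ with local sections $s_i\colon U_i\to Y$ using the transported sections $\sigma_{ij}:=\bigl((s_i,s_j)^{*}\Phi\bigr)(\sigma^{\delta}_{ij})$, where $\sigma^{\delta}_{ij}=\eta_i\tensor\eta_j^{*}$ for a section $\eta_i$ of $s_i^{*}R$. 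These $\sigma^{\delta}_{ij}$ are exactly the sections used for $\delta(R)$ in lemma \ref{l-cano-triv-bg-dd-zero}; they satisfy $\sigma^{\delta}_{ji}=(\sigma^{\delta}_{ij})^{-1}$ and produce the Dixmier-Douady cocycle $\underline{1}$. Since $\Phi$ carries identity and inverse sections to identity and inverse sections and intertwines the multiplications, the $\sigma_{ij}$ also satisfy $\sigma_{ji}=\sigma_{ij}^{-1}$ and produce the cocycle $\underline{1}$. Hence $DD(P,Y,X)=0$.

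For the reverse implication, assume $DD(P,Y,X)=0$. Using the standing hypothesis on $X$, fix a good cover $\{U_i\}_{i\in I}$ of $X$ admitting continuous local sections $s_i\colon U_i\to Y$ (refine a cover with local sections --- one exists since $\pi$ has local sections --- to a good cover and restrict the sections), and on nonempty intersections, which are contractible, choose sections $\sigma_{ij}$ of $(s_i,s_j)^{*}P$ with $\sigma_{ji}=\sigma_{ij}^{-1}$ and $\sigma_{ii}=\iota_{ii}$, giving the Dixmier-Douady cocycle $g_{ijk}$ via $m(\sigma_{ij}\tensor\sigma_{jk})=g_{ijk}\,\sigma_{ik}$. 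Since $DD(P,Y,X)=h_{XU}([g_{ijk}])=0$, refining $\{U_i\}$ further if necessary (note \ref{n-dire-limi} on the vanishing of an element in a direct limit, or lemma \ref{l-cech-coho-cove} when $X$ is hereditarily paracompact) I may assume $[g_{ijk}]=0$ already in $\CH^{2}(U;\underline{\UU(1)})$, so that $g_{ijk}=h_{jk}\,h_{ik}^{-1}\,h_{ij}$ for continuous $h_{ij}\colon U_i\cap U_j\to\UU(1)$ with $h_{ii}=\underline{1}$, by the coboundary formula of definition \ref{d-cech-coho}. From lemma \ref{l-bg-loc-triv}, applied to these $\{U_i\}$ and $s_i$, I obtain over $Y_i:=\pi^{-1}(U_i)$ the principal $\UU(1)$ bundles $R_i=(\incl,\,s_i\circ\pi\circ\incl)^{*}P$, with $(R_i)_y=P_{y,\,s_i(\pi(y))}$, and bundle gerbe isomorphisms $\Phi_i\colon\delta(R_i)\isomto P|_{Y_i^{[2]}}$ acting fiberwise by $p_1\tensor p_2^{*}\mapsto m(p_1\tensor p_2^{-1})$.

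Over $Y_{ij}:=\pi^{-1}(U_i\cap U_j)$ I would define $\psi_{ij}\colon R_j|_{Y_{ij}}\to R_i|_{Y_{ij}}$ fiberwise by $p\mapsto m\bigl(p\tensor\sigma_{ji}(\pi(\cdot))\bigr)$, which is a continuous principal bundle isomorphism because $m$ is a bundle morphism and $\sigma_{ji}$ is continuous. Associativity of $m$, the defining relation $m(\sigma_{kj}\tensor\sigma_{ji})=g_{kji}\,\sigma_{ki}$, and the alternating identity $g_{kji}=g_{ijk}^{-1}$ give $\psi_{ij}\circ\psi_{jk}=(g_{ijk}^{-1}\circ\pi)\,\psi_{ik}$ on $Y_{ijk}$, with $\psi_{ii}=\ident$; hence $\widetilde\psi_{ij}:=(h_{ij}\circ\pi)\,\psi_{ij}$ satisfies $\widetilde\psi_{ij}\circ\widetilde\psi_{jk}=\widetilde\psi_{ik}$ on $Y_{ijk}$ (using commutativity of $\UU(1)$ and $g_{ijk}=h_{ij}h_{jk}h_{ik}^{-1}$) and $\widetilde\psi_{ii}=\ident$, so $\{\widetilde\psi_{ij}\}$ is gluing data for $\{R_i\}$ over the cover $\{Y_i\}$ of $Y$, and lemma \ref{l-pb-cons} produces a principal $\UU(1)$ bundle $R\to Y$ restricting canonically to each $R_i$. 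It remains to exhibit a bundle gerbe isomorphism $\delta(R)\isomto P$. On $Y_{ij}^{[2]}$ the transition for $\delta(R)$ is $\delta(\widetilde\psi_{ij})=\delta(\psi_{ij})$, the scalar $h_{ij}\circ\pi$ cancelling in $\delta$ because $\pi\circ\pi_1=\pi\circ\pi_2$ on $Y^{[2]}$; and a fiber computation using $\pi(y_1)=\pi(y_2)=:x$, $\sigma_{ij}=\sigma_{ji}^{-1}$, the inverse-of-a-product rule of lemma \ref{l-bg-mult-id-inv} and corollary \ref{co-bg-mult-cano-isom-elem} gives $\Phi_i\circ\delta(\psi_{ij})(q_1\tensor q_2^{*})=m\bigl(q_1\tensor m(\sigma_{ji}(x)\tensor\sigma_{ij}(x))\tensor q_2^{-1}\bigr)=m(q_1\tensor q_2^{-1})=\Phi_j(q_1\tensor q_2^{*})$. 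Therefore the $\Phi_i$ glue to an isomorphism $\Phi\colon\delta(R)\isomto P$, and it intertwines the bundle gerbe multiplications because each $\Phi_i$ does. So $(P,Y,X)$ is trivial.

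The step I expect to be the main obstacle is the final compatibility check --- establishing on the nose that $\delta(\psi_{ij})$ equals the comparison $\Phi_i^{-1}\Phi_j$ of local trivializations --- since it requires unwinding the chain of canonical pullback identifications defining $\Phi_i$ in lemma \ref{l-bg-loc-triv} and invoking corollary \ref{co-bg-mult-cano-isom-elem}, while keeping the alternating-cocycle convention of definition \ref{d-bg-dd-clas} and the coboundary convention of definition \ref{d-cech-coho} consistent through the computation $\psi_{ij}\circ\psi_{jk}=(g_{ijk}^{-1}\circ\pi)\,\psi_{ik}$. Everything else is formal calculus with the canonical isomorphisms of lemmas \ref{l-u1pb-cano-isom} and \ref{l-bg-mult-cano-isom} and the identity/inverse section identities of lemma \ref{l-bg-mult-id-inv}, together with the direct-limit bookkeeping of note \ref{n-dire-limi}.
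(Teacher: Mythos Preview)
Your proof is correct and follows essentially the same route as the paper. The only cosmetic differences are that the paper handles the forward direction by citing lemma \ref{l-cano-triv-bg-dd-zero} together with naturality of $DD$ under bundle gerbe isomorphism (lemma \ref{l-bg-dd-prop}) rather than transporting sections through $\Phi$, and in the reverse direction it absorbs the coboundary into the $\sigma_{ij}$ at the outset (replacing $\sigma_{ij}$ by $\sigma_{ij}h_{ij}^{-1}$ so that $g_{ijk}=\underline{1}$) rather than carrying $h_{ij}$ into a modified gluing map $\widetilde\psi_{ij}$; the ensuing computations with $\chi_{ij}$, $\delta(\chi_{ij})$, and $\Phi_i$ are otherwise the same as yours.
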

\begin{proof}
``$\Rightarrow$''.  \citep[page~248]{Murr07} From lemma \ref{l-cano-triv-bg-dd-zero}, using the notation of definition \ref{d-bg-triv}, $DD(\delta(R)) = 0$.  Since our bundle gerbe $(P, Y, X)$ is isomorphic to $(\delta(R), Y, X)$ using the identities on $Y$ and $X$, by lemma \ref{l-bg-dd-prop} (which does not depend on the current result) the Dixmier-Douady classes of the bundle gerbes are equal, and $DD(P) = 0$.

``$\Leftarrow$''.  \citep[pages~248--249]{Murr07} Construct a set of local trivializations as in lemma \ref{l-bg-loc-triv}; use its notation and that of its proof.  Use its choices of indexed cover $\{ U_i \}$, refined to a good cover, and local sections $s_i \colon U_i \rightarrow Y$.  Choose sections $\sigma_{ij}$ of $(s_i, s_j)^{*} P$ to define the Dixmier-Douady cocycle $g_{ijk}$ by $m_P (\sigma_{ij} \tensor \sigma_{jk}) = g_{ijk} \sigma_{ik}$ over $U_i \cap U_j \cap U_k$.  Suppose $[g_{ijk}]= 0$; then $g_{ijk} = h_{jk} h_{ik}^{-1} h_{ij}$.  Replace $\sigma_{ij}$ with $\sigma_{ij} h_{ij}^{-1}$ to get $g_{ijk} = \underline{1}$.  The fact that the cover is good both allows the choice of the $\sigma_{ij}$ and ensures that the vanishing of the Dixmier-Douady class implies the vanishing of the class of the cocycle $g_{ijk}$ in the cohomology of the cover.

To see that the $R_i = (\incl, s_i \circ \pi \circ \incl)^{*} P$ fit together to form a principal $\UU(1)$ bundle $R \rightarrow Y$, we first argue fiberwise, then with bundle maps.  For $y \in Y_i \cap Y_j$, construct an isomorphism $\chi_{ij,y} \colon (R_i)_y \rightarrow (R_j)_y$ by noting that bundle gerbe multiplication for $P$ gives the isomorphism of $\UU(1)$ torsors
\[
P_{y, s_i(\pi(y))} \tensor P_{s_i(\pi(y)), s_j(\pi(y))} \xrightarrow{m_P} P_{y, s_j(\pi(y))}, \notag
\]
that $\sigma_{ij}(\pi(y)) \in P_{s_i(\pi(y)), s_j(\pi(y))}$, that bundle gerbe multiplication for $P$ gives $m_P(\sigma_{ij} \tensor \sigma_{jk}) = g_{ijk} \sigma_{ik}$, and for $q \in (R_i)_y$ defining
\begin{align}
\chi_{ij,y}(q) &= m_P(q \tensor \sigma_{ij} (\pi(\pi_{R_i}(q)))), \notag
\end{align}
\begin{align}
\chi_{jk,y} \circ \chi_{ij,y} (q) &= m_P(m_P(q \tensor \sigma_{ij} (\pi(\pi_{R_i}(q)))) \notag \\
&\tensor \sigma_{jk} (\pi(\pi_{R_j}(m_P(q \tensor \sigma_{ij} (\pi(\pi_{R_i}(q)))))))) \notag \\
 &= m_P(m_P(q \tensor \sigma_{ij} (\pi(\pi_{R_i}(q)))) \tensor \sigma_{jk} (\pi(\pi_{R_i}(q)))) \notag \\
 &= m_P(q \tensor m_P(\sigma_{ij} (\pi(\pi_{R_i}(q))) \tensor \sigma_{jk} (\pi(\pi_{R_i}(q))))) \notag \\
 &= g_{ijk} m_P(q \tensor \sigma_{ik} (\pi(\pi_{R_i}(q)))) = m_P(q \tensor \sigma_{ik} (\pi(\pi_{R_i}(q)))) = \chi_{ik,y} (q), \notag
\end{align}
and hence $\chi_{jk,y} \circ \chi_{ij,y} = \chi_{ik,y}$, the necessary cocycle condition to fit together the $R_i$ using the $\chi_{ij}$, given fiberwise.  It is only in the last line of this calculation that the hypothesis $g_{ijk} = \underline{1}$ is used.

The $Y_i$ form an open cover of $Y$, and so the $Y_i^{[2]}$ form one of $Y^{[2]}$.  Thus, if we knew that the $\chi_{ij,y}$ joined together to form isomorphisms of principal bundles, by lemma \ref{l-pb-cons}, we would have a principal bundle $R$ agreeing with each $R_i$ over $Y_i$.  To see this, we work with principal $\UU(1)$ bundles instead of fibers.  Define $\chi_{ij}$ as follows, consistent with the preceding fiberwise calculations.  Starting from bundle gerbe multiplication for $P$, $\pi_{12}^{*} P \tensor \pi_{23}^{*} P \xrightarrow{m_P} \pi_{13}^{*} P$, we pull back to get the isomorphism of principal $\UU(1)$ bundles
\begin{align}
  &(\incl, s_i \circ \pi \circ \incl)^{*} P \tensor (s_i \circ \pi \circ \incl, s_j \circ \pi \circ \incl)^{*} P \notag \\
= &(\incl, s_i \circ \pi \circ \incl, s_j \circ \pi \circ \incl)^{*} \pi_{12}^{*} P \tensor (\incl, s_i \circ \pi \circ \incl, s_j \circ \pi \circ \incl)^{*} \pi_{23}^{*} P \notag \\
\xrightarrow{m_P} &(\incl, s_i \circ \pi \circ \incl, s_j \circ \pi \circ \incl)^{*} \pi_{13}^{*} P \notag \\
= &(\incl, s_j \circ \pi \circ \incl)^{*} P, \notag
\end{align}
then for $q \in R_i$ such that $\pi_{R_i} (q) \in Y_i \cap Y_j$, define
\[
\chi_{ij} (q) = m_P (q \tensor \sigma_{ij} (\pi (\pi_{R_i} (q)))). \notag
 \]
$\chi_{ij}$ is $\UU(1)$-equivariant and covers the identity.  Now, $m_P$ is a continuous function, and the second factor of the tensor product is a continuous function of $q$.  Thus $\chi_{ij}$ will be continuous if we show that the tensor product of elements in two principal $\UU(1)$ bundles is a continuous function of the elements; but locally we may refer this question to the same one for the product $\UU(1)$ bundle, for which it is true.  Thus $\chi_{ij}$ is a morphism, and hence an isomorphism of principal $\UU(1)$ bundles.  Since for every $y \in Y_i$, $\chi_{ij,y} = (\chi_{ij})_{|(R_i)_y}$ and likewise for $jk, ik$, and we know that $\chi_{jk,y} \circ \chi_{ij,y} = \chi_{ik,y}$, therefore $\chi_{jk} \circ \chi_{ij} = \chi_{ik}$; no need to translate that proof into bundle isomorphisms.  Thus we have a principal $\UU(1)$ bundle $R \rightarrow Y$.

As in lemma \ref{l-pb-cons}, if we can show that $\Phi_j \circ \delta(\chi_{ij}) = \Phi_i$, where $\delta(\chi_{ij}) \colon \delta(R_i) \rightarrow \delta(R_j)$ is induced by $\chi_{ij}$, the local $\Phi_i$ will fit together to form a global $\Phi$, which since the $\Phi_i$ are over the identity on $Y^{[2]}$ and respect bundle gerbe multiplication, $\Phi$ also will respect it; we will have the bundle gerbe isomorphism.

To see that $\Phi_j \circ \delta(\chi_{ij}) = \Phi_i$, fix $(y_1, y_2) \in Y_i^{[2]}$.  For arbitrary $r_{12} \in (\delta(R_i))_{y_1, y_2}$, choose $p_2 \in P_{y_2, s_i(\pi(y_2))}$, and let $p_1 \in P_{y_1, s_i(\pi(y_1))}$ be such that $r_{12} = p_1 \tensor p_2^{*}$.  Then $\Phi_i (r_{12}) = \Phi_i (p_1 \tensor p_2^{*}) = m_P (p_1 \tensor p_2^{-1})$.  Substituting $p_1, p_2$ in turn for $q$ above,
\begin{align}
\Phi_j (\delta(\chi_{ij}) (r_{12})) &= \Phi_j (m_P(p_1 \tensor \sigma_{ij} (\pi (\pi_{R_i} (p_1)))) \tensor m_P(p_2 \tensor \sigma_{ij} (\pi (\pi_{R_i} (p_2))))^{*}) \notag \\
&= m_P (m_P(p_1 \tensor \sigma_{ij} (\pi (\pi_{R_i} (p_1)))) \tensor m_P(p_2 \tensor \sigma_{ij} (\pi (\pi_{R_i} (p_2))))^{-1}) \notag \\
&= m_P (m_P(p_1 \tensor \sigma_{ij} (\pi (\pi_{R_i} (p_1)))) \tensor m_P(\sigma_{ij} (\pi (\pi_{R_i} (p_2)))^{-1} \tensor p_2^{-1})) \notag \\
&= m_P(p_1 \tensor p_2^{-1}) = \Phi_i(r_{12}), \notag
\end{align}
since $\pi(\pi_{R_i}(p_1)) = \pi(\pi_{R_i}(p_2))$, using associativity and properties of inverse and identity sections of bundle gerbe multiplication as in lemma \ref{l-bg-mult-id-inv}.
\end{proof}

\section{Dixmier-Douady Class Properties; Stable Isomorphisms}\label{s-bg-dd-clas-prop-stab-isom}

\begin{lem}\label{l-bg-dd-prop}
\index{bundle gerbe!Dixmier-Douady Class}
\index{Dixmier-Douady Class}
(Dixmier-Douady Class Properties).
\citep[page~246]{Murr07} \citep[page~927]{MS99} Given continuous bundle gerbes $(P, Y, X)$, $(Q, Z, X)$ over the same base space,
\begin{align}
DD((P, Y, X)^{*}) &= -DD(P, Y, X) \notag \\
DD((P, Y, X) \tensor (Q, Z, X)) &= DD(P, Y, X) + DD(Q, Z, W). \notag
\end{align}
Given $W$ a paracompact topological space such that each open cover of $W$ has a refinement that is a good cover, a continuous map $f \colon W \rightarrow X$, and a continuous bundle gerbe $(m_P, P, p, Y, \pi_Y, X)$ to be pulled back by $f$; or respectively a morphism of continuous bundle gerbes $(\widehat{f}, \overline{f}, f) \colon (Q, Z, W) \rightarrow (P, Y, X)$:
\begin{align}
DD(f^{*}(P, Y, X)) &= f^{*}(DD(P, Y, X)) \notag \\
DD(Q, Z, W) &= f^{*} (DD(P, Y, X)). \notag
\end{align}
\end{lem}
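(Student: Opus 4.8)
The plan is to prove all four identities by the same device used in lemma \ref{l-bg-dd-clas}: since $DD$ is independent of the choices of cover, of the local sections $s_i$, and of the sections $\sigma_{ij}$, one is free to make convenient compatible choices for the bundle gerbes in question and then read off the \v{C}ech cocycles directly. Throughout, fix a good cover $\{ U_i \}_{i \in I}$ of the base (passing to a common refinement when two bundle gerbes or a pullback are involved), local sections $s_i$ of the relevant $Y$-space, and sections $\sigma_{ij}$ of $(s_i, s_j)^{*} P$ with $\sigma_{ji} = \sigma_{ij}^{-1}$, producing the cocycle $g_{ijk}$ defined by $m(\sigma_{ij} \tensor \sigma_{jk}) = g_{ijk} \sigma_{ik}$ as in definition \ref{d-bg-dd-clas}.

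For the dual, use the same $\{ U_i \}$ and sections $s_i$, and take as sections of $(s_i, s_j)^{*} P^{*}$ the elements $\sigma_{ij}^{*}$ (the same underlying set elements, viewed in the dual torsors; $\sigma_{ji}^{*} = (\sigma_{ij}^{*})^{-1}$ is a routine check from lemma \ref{l-bg-mult-id-inv}). Since the multiplication of $(P, Y)^{*}$ is the dual of $m_P$ (lemma \ref{l-bg-dual-tens}), applying it to $\sigma_{ij}^{*} \tensor \sigma_{jk}^{*}$ and using $(z t)^{*} = \overline{z}\, t^{*}$ from lemma \ref{l-u1t-dual-cano-doub-alt} gives $m_{P^{*}}(\sigma_{ij}^{*} \tensor \sigma_{jk}^{*}) = \overline{g_{ijk}}\, \sigma_{ik}^{*}$. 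Thus the cocycle of the dual is $\overline{g_{ijk}} = g_{ijk}^{-1}$, which in the additive notation for $\CH^2(X; \underline{\UU(1)})$ is $-DD(P, Y, X)$. For the tensor product, pass to a common refinement of covers trivializing both bundle gerbes, with local sections $(s_i, t_i) \colon U_i \to Y \cross_X Z$, and take $\sigma_{ij} \tensor \tau_{ij}$ as sections of $(s_i,s_j)^{*} P \tensor (t_i, t_j)^{*} Q$. By the definition of the tensor product bundle gerbe (lemma \ref{l-bg-dual-tens}) its multiplication is the tensor product of $m_P$ and $m_Q$ on matching fibers, so $m_{P \tensor Q}((\sigma_{ij}\tensor\tau_{ij}) \tensor (\sigma_{jk}\tensor\tau_{jk})) = (g_{ijk} h_{ijk})(\sigma_{ik}\tensor\tau_{ik})$, giving cocycle $g_{ijk} h_{ijk}$, i.e. $DD(P, Y, X) + DD(Q, Z, X)$.

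For the pullback, use the description $f^{*}(P, Y, X) = ((\overline{f}^{[2]})^{*} P,\ f^{*} Y,\ W)$ from lemma \ref{l-bg-ib-pb}. Choose a good cover $\{ U_i \}$ of $X$ with sections $s_i$ and $\sigma_{ij}$; then $\{ f^{-1}(U_i) \}$ (refined to a good cover of $W$ if necessary, invoking independence of choices) carries lifted sections $\widetilde{s}_i \colon w \mapsto (w, s_i(f(w)))$ of $f^{*} Y$, and since $(\overline{f}^{[2]})^{*} P$ restricted over the relevant fibers is canonically $((s_i, s_j)^{*} P)_{f(w)}$, the sections $\sigma_{ij} \circ f|_{f^{-1}(U_i \cap U_j)}$ serve as the required $\widetilde{\sigma}_{ij}$. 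Because the multiplication of $f^{*}(P,Y,X)$ is the pullback of $m_P$, the resulting cocycle is $g_{ijk} \circ f$, and by corollary \ref{co-cech-coho-indu-map-cont-func} (applied to the sheaves $\underline{\UU(1)}$ and the morphism over $f$) together with the description of the induced map in lemma \ref{l-cech-coho-indu-map-morp-over}, the class $[g_{ijk} \circ f]$ for $\{ f^{-1}(U_i) \}$ equals $f^{*}[g_{ijk}]$, i.e. $DD(f^{*}(P,Y,X)) = f^{*}(DD(P,Y,X))$. Finally, for the morphism statement, first factor $(\widehat{f}, \overline{f}, f)\colon (Q,Z,W) \to (P,Y,X)$ through the pullback as in lemma \ref{l-bg-ib-pb}, obtaining a bundle gerbe morphism $(Q, Z, W) \to (f^{*}P, f^{*}Y, W)$ over the identity on $W$; by the pullback case it suffices to show a bundle gerbe morphism $(\widehat{g}, \overline{g}, \ident)\colon (Q, Z, W) \to (R, T, W)$ over the identity on the base preserves $DD$. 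For that, pick a good cover $\{ U_i \}$ of $W$ with sections $s_i$ of $Z$ and $\sigma_{ij}$ of $(s_i,s_j)^{*}Q$; then $\overline{g}\circ s_i$ are sections of $T$, and using $q \circ \widehat{g} = \overline{g}^{[2]}\circ p$ the elements $\widehat{g}\circ\sigma_{ij}$ are sections of $(\overline{g}\circ s_i, \overline{g}\circ s_j)^{*} R$; since $\widehat{g}$ is $\UU(1)$-equivariant and $m_R \circ (\pi_{12}^{*}\widehat{g} \tensor \pi_{23}^{*}\widehat{g}) = (\pi_{13}^{*}\widehat{g})\circ m_Q$, applying $m_R$ to $(\widehat{g}\circ\sigma_{ij}) \tensor (\widehat{g}\circ\sigma_{jk})$ reproduces $g_{ijk}(\widehat{g}\circ\sigma_{ik})$, so the cocycle of $(R,T,W)$ computed with these choices is the same $g_{ijk}$ and the Dixmier-Douady classes agree.

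The step I expect to be the main obstacle is the pullback case: carefully matching the cover $\{ f^{-1}(U_i) \}$ and the pulled-back sections with the induced-map machinery of corollary \ref{co-cech-coho-indu-map-cont-func} and lemma \ref{l-cech-coho-indu-map-morp-over} — in particular verifying that $[g_{ijk}\circ f]$ is literally the image of $[g_{ijk}]$ under $f^{*}$ at the level of covers, and that refining $\{ f^{-1}(U_i) \}$ to a good cover (when $f^{-1}(U_i)$ fails to be good) does not disturb this, via the insertion homomorphisms — together with the bookkeeping in lemma \ref{l-bg-ib-pb} that makes the factorization and the morphism-over-identity reduction legitimate. The other three cases are essentially direct cocycle computations once the compatible choices are in place.
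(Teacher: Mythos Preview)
Your proposal is correct and follows essentially the same route as the paper: compatible choices of sections for dual, tensor product, pullback, and the factor-through-pullback reduction for a morphism over the identity, reading off the cocycle in each case. The one simplification the paper makes that dissolves your ``main obstacle'' is that definition \ref{d-bg-dd-clas} (see note \ref{n-bg-dd-clas}) does \emph{not} require a good cover, only that the sections $s_i$ and $\sigma_{ij}$ exist; hence $\{ f^{-1}(U_i) \}$ with the pulled-back sections computes $DD$ of the pullback directly, and $g_{ijk}\circ f$ is by construction the image of $g_{ijk}$ under the cochain map of corollary \ref{co-cech-coho-indu-map-cont-func}, with no refinement needed.
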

\begin{proof}
Refer to definitions \ref{d-bg-dd-clas} and \ref{l-bg-dual-tens}.  Regarding the dual, using definition \ref{d-u1pb-dual}, $m_P (\sigma_{ij} \tensor \sigma_{jk}) = g_{ijk} \sigma_{ik}$ translates to $m_{P^{*}} (\sigma_{ij}^{*} \tensor \sigma_{jk}^{*}) = (g_{ijk} \sigma_{ik})^{*} = g_{ijk}^{-1} \sigma_{ik}^{*}$.  The statement about the tensor product follows from tensoring the sections $\sigma_{ij, P}$, $\sigma_{ij, Q}$ of the two bundles; then $(g_{ijk,P} \sigma_{ik,P}) \tensor (g_{ijk,Q} \sigma_{ik,Q}) = g_{ijk,P} (g_{ijk,Q} \sigma_{ik,P} \tensor \sigma_{ik,Q})$.

To consider the case of the pullback via $f$, as in the first diagram of lemma \ref{l-bg-ib-pb}, start with the Dixmier-Douady cocycle for $(P, Y, X)$. Use $f$ to pull back the indexed cover $U = \{ U_i \}$ of $X$ to the cover $f^* U = \{ f^{-1} (U_i) \}$ of $W$.  Pull back the local sections $s_i$ of $Y \rightarrow X$ to local sections $t_i = (\ident \cross (s_i \circ f))$ to $f^{*}(Y) \rightarrow W$.  Similarly, sections $\sigma_{ij}$ of $(s_i, s_j)^{*} P$ pull back to sections $\tau_{ij}$ of $(t_i, t_j)^{*} (\pi_2^{[2]})^{*} P$.  The defining equation for the Dixmier-Douady cocycle for the pullback gives the same function, but pulled back to $f^{-1} (U_i \cap U_j \cap U_k)$, that it gives for the original bundle gerbe on $U_i \cap U_j \cap U_k$.  That is, $g_{ijk,pb} = g_{ijk,orig} \circ f$.  Although $f^* U$ may not be a good cover, since it gives the needed sections, it calculates the Dixmier-Douady class of the pullback.  The map used for the cocycle is the map used in the proof of corollary \ref{co-cech-coho-indu-map-cont-func} to define the induced map $f^{*} \colon \CH^{2} (X; \underline{\UU(1)}) \rightarrow \CH^{2} (W; \underline{\UU(1)})$.

The case of the bundle gerbe morphism factors through the pullback.  Referring to the left side of the last diagram of lemma \ref{l-bg-ib-pb}, we can start with an indexed cover, local sections, and sections for calculating the Dixmier-Douady class of $(Q, Z, W)$ and move everything forward to the pullback since the map on the base space is a homeomorphism (the identity).  Since $\widehat{f}$ is equivariant, the calculations give the same cocycle for the pullback as for $(Q, Z, W)$.
\end{proof}

\begin{defn}\label{d-stab-isom}
\index{bundle gerbe!stable isomorphism}
\index{stable isomorphism!bundle gerbe}
\index{isomorphism!stable}
(When Bundle Gerbes are Stably Isomorphic).
\citep[page~249]{Murr07} Two continuous bundle gerbes $(P, Y)$ and $(Q, Z)$ over the same base space are stably isomorphic when the bundle gerbe $(P, Y)^{*} \tensor (Q, Z)$ is trivial.
\end{defn}
\begin{note}\label{n-stab-isom}
\index{bundle gerbe!stable isomorphism}
(Properties of Bundle Gerbes being Stably Isomorphic).
By \citet[page~249]{Murr07}, two bundle gerbes over the same base space are stably isomorphic if and only if their Dixmier-Douady classes are equal.  In particular, isomorphic bundle gerbes are stably isomorphic.  Stable isomorphism is an equivalence relation.
\end{note}

\begin{lem}\label{l-indu-bg-stab-isom}
\index{bundle gerbe!stable isomorphism}
\index{stable isomorphism!bundle gerbe}
\index{isomorphism!stable}
\index{induced map}
(The Induced Bundle Gerbe from a Map of $Y$ Spaces is Stably Isomorphic to the Original Bundle Gerbe).
\citep[page~928]{MS99}  Given a bundle gerbe $(P, Y, \pi_Y, X)$, a topological space $Z$ with continuous surjection $\pi_Z \colon Z \rightarrow X$ that has local sections, and a continuous map $\psi \colon Z \rightarrow Y$ over the identity on $X$, i.e. $\pi_Y \circ \psi = \pi_Z$, the bundle gerbe $((\psi^{[2]})^* P, Z, X)$ is stably isomorphic to $(P, Y, X)$.
\end{lem}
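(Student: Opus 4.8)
The plan is to produce a bundle gerbe morphism from $((\psi^{[2]})^{*}P, Z, X)$ to $(P, Y, X)$ covering the identity on $X$, and then to read off equality of Dixmier--Douady classes and hence stable isomorphism. First I would fix the bundle gerbe structure on $((\psi^{[2]})^{*}P, Z, X)$. Condition (1) of Definition \ref{d-cont-bndl-gerb} holds because $\pi_Z \colon Z \rightarrow X$ is by hypothesis a continuous surjection with local sections, and $X$ already satisfies the paracompactness and good cover refinement requirements since it is the base of $(P, Y, X)$. The principal $\UU(1)$ bundle $(\psi^{[2]})^{*}P \rightarrow Z^{[2]}$ is a pullback of a principal $\UU(1)$ bundle along $\psi^{[2]} \colon Z^{[2]} \rightarrow Y^{[2]}$, hence is one. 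The bundle gerbe multiplication $m'$ over $Z^{[3]}$ is defined to be the pullback $(\psi^{[3]})^{*}m_P$, using the canonical identifications $(\psi^{[3]})^{*}\pi_{ij}^{*}P \cong \pi_{ij}^{*}(\psi^{[2]})^{*}P$ coming from $\pi_{ij}\circ\psi^{[3]} = \psi^{[2]}\circ\pi_{ij}$ together with the fact (note \ref{n-u1pb-tens-dual}) that pullback commutes with tensor product and dual and introduces no $\UU(1)$ phases; associativity of $m'$ over $Z^{[4]}$ is obtained by pulling back the associativity diagram for $m_P$ by $\psi^{[4]}$. (Equivalently, this is precisely the induced bundle gerbe of the second diagram of Lemma \ref{l-bg-ib-pb}, applied to $\psi$ viewed as a map of $Y$-spaces over $\ident_X$.)

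Next I would check that $(\widehat{\psi}, \psi, \ident_X)$, with $\widehat{\psi} = \pi_2 \colon (\psi^{[2]})^{*}P \rightarrow P$ the second projection of the pullback, is a bundle gerbe morphism in the sense of Definition \ref{d-bg-mor}. The condition $\pi_Y\circ\psi = \ident_X\circ\pi_Z$ is just the hypothesis $\pi_Y\circ\psi = \pi_Z$; the constructive definition of pullback (assumption \ref{a-bndl}) gives $p\circ\pi_2 = \psi^{[2]}\circ\pi_1$, i.e. $\widehat{\psi}$ is over $\psi^{[2]}$; and $m_P\circ(\pi_{12}^{*}\widehat{\psi}\tensor\pi_{23}^{*}\widehat{\psi}) = (\pi_{13}^{*}\widehat{\psi})\circ m'$ holds precisely because $m'$ was defined as the pullback of $m_P$ along $\psi^{[3]}$. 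Applying the bundle gerbe morphism case of Lemma \ref{l-bg-dd-prop} to this morphism yields $DD((\psi^{[2]})^{*}P, Z, X) = \ident_X^{*}(DD(P, Y, X)) = DD(P, Y, X)$.

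Finally, since $((\psi^{[2]})^{*}P, Z, X)$ and $(P, Y, X)$ have the same base space $X$ and equal Dixmier--Douady classes, note \ref{n-stab-isom} gives that they are stably isomorphic. (Alternatively: by Lemma \ref{l-bg-dd-prop} the bundle gerbe $((\psi^{[2]})^{*}P, Z, X)^{*}\tensor(P, Y, X)$, which is a bundle gerbe by Lemma \ref{l-bg-dual-tens}, has Dixmier--Douady class $-DD(P,Y,X) + DD(P,Y,X) = 0$, so it is trivial by Proposition \ref{p-bg-triv-dd-zero}, which is exactly Definition \ref{d-stab-isom}.)

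The main obstacle is the bookkeeping in the first two steps: verifying that the pulled-back multiplication $m'$ is genuinely an isomorphism of principal $\UU(1)$ bundles over $Z^{[3]}$, that its associativity diagram over $Z^{[4]}$ commutes, and that the multiplication-compatibility identity for the morphism holds. None of this is conceptually deep, but it requires threading the canonical identifications of pullbacks (pullback commutes with tensor product, dual, and composition, with no $\UU(1)$ phases, per note \ref{n-u1pb-tens-dual}) carefully; the write-up can be shortened by citing the pullback construction already established in Lemma \ref{l-bg-ib-pb}. As a fully explicit alternative avoiding Dixmier--Douady classes, one could instead directly trivialize $((\psi^{[2]})^{*}P, Z, X)^{*}\tensor(P, Y, X)$ by the principal $\UU(1)$ bundle $R = (\psi\circ\pi_1,\ \pi_2)^{*}P \rightarrow Z\cross_X Y$, where $\pi_1 \colon Z\cross_X Y\rightarrow Z$ and $\pi_2 \colon Z\cross_X Y\rightarrow Y$ are the projections, so that $R_{(z,y)} = P_{\psi(z), y}$; bundle gerbe multiplication on $P$ then furnishes a principal $\UU(1)$ bundle isomorphism $\delta(R) \cong ((\psi^{[2]})^{*}P)^{*}\tensor P$ over $(Z\cross_X Y)^{[2]}$ respecting bundle gerbe multiplication, giving a trivialization directly, but at the cost of the same diagram chasing.
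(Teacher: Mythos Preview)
The paper does not supply its own proof of this lemma; it simply records the statement with a citation to \cite[page~928]{MS99}. Your argument is correct and is in fact the natural way to prove the result using the machinery the paper has already assembled: you build the bundle gerbe morphism $(\pi_2, \psi, \ident_X)$ from $((\psi^{[2]})^{*}P, Z, X)$ to $(P, Y, X)$, invoke the morphism clause of Lemma~\ref{l-bg-dd-prop} to get equality of Dixmier--Douady classes, and finish with note~\ref{n-stab-isom}. The bookkeeping you flag (that $m' = (\psi^{[3]})^{*}m_P$ is a well-defined associative multiplication, and that the morphism respects multiplication) is exactly the content of the induced-bundle-gerbe construction already handled in Lemma~\ref{l-bg-ib-pb}, so citing that lemma is appropriate and shortens the write-up.

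Your sketched alternative---directly trivializing $((\psi^{[2]})^{*}P)^{*}\tensor P$ over $Z\cross_X Y$ via $R_{(z,y)} = P_{\psi(z),y}$ and the multiplication on $P$---is essentially the argument in the cited reference and avoids passing through the Dixmier--Douady class, at the cost of the diagram chase you describe. Either route is fine; the first is cleaner given what the paper has already proved.
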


\chapter{CONSTRUCTION OF THE BUNDLE GERBE}\label{c-cnst-bndl-gerb}

This chapter constructs the bundle gerbe that is at the heart of the thesis.  The bundle gerbe's $Y$ space is the polarization class bundle of definition \ref{d-pol-clas-bndl}, straightforwardly encoding, over every loop $\gamma \in LM$, all Lagrangian subspaces within a chosen polarization class of the Hilbert space built from the fiber $LE_{\gamma}$ above that loop.  The bundle gerbe's $P$ space encodes, over every $\gamma$, all Clifford-linear unitary isomorphisms between pairs of Fock spaces corresponding to pairs of those Lagrangian subspaces.

\section{The Standard Intertwiner Bundle}\label{s-inte-bndl}

Following the established pattern, we define the bundle gerbe's $P$ bundle as a bundle associated to $L\SO(E)$ with fiber the standard intertwiner bundle $T$ total space as follows.  Note that the symbol $T$ is used for other purposes when suitably adorned, and outside of this chapter and theorem \ref{t-bg-triv-clif-alg-mod} is used for operators or torsors also; the context should make things clear.
\begin{prop}\label{p-bg-t}
\index{bundle gerbe!construction!T@$T$}
\index{bundle!standard!intertwiner}
\index{intertwiner!standard bundle}
\index{T@$T$}
(The Standard Intertwiner Bundle Construction).
Suppose given $L \in \Lagr_{res}$.  There is a principal $\UU(1)$ bundle $t \colon T \rightarrow \Lagr_{res} \cross \Lagr_{res}$ with total space $T = \{ ((L_1, L_2), \phi) \st L_1 \text{, } L_2 \in \Lagr_{res} \text{, } \phi \in T(L_1, L_2) \}$, standard fiber $T(L, L)$, and left $L\SO(n)$ action for which $t$ is equivariant, given for $g \in L\SO(n)$ by $(g, (L_1, L_2, \phi)) \mapsto (g L_1, g L_2, \Lambda_g \circ \phi \circ \Lambda_g^{*})$.

$T$ depends on $L$ only for local trivializations.  Another choice $L' \in \Lagr_{res}$ results in local trivializations that are compatible with those for $L$, via $\UU(1)$-equivariant homeomorphisms.  In particular, the topology of $T$ doesn't depend on the choice of $L$.
\end{prop}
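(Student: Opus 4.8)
The plan is to mirror, almost verbatim, the construction of the standard Fock space bundle in proposition \ref{p-std-fock-spac-bndl}, replacing post-composition by a single intertwiner with conjugation by a pair of intertwiners. Fix $L \in \Lagr_{res}$ and identify the would-be standard fiber $T(L,L)$ with $\UU(1)$ via $\ident \mapsto 1$ as in proposition \ref{p-set-intw-u1-tors}; give $T$ the $\UU(1)$-action $z \cdot ((L_1,L_2),\phi) = ((L_1,L_2), z\phi)$, which makes each fibre $T(L_1,L_2)$ a $\UU(1)$-torsor (proposition \ref{p-set-intw-u1-tors}). For each $(K_1,K_2)$ I would take the neighborhoods $V_{K_1}$, $V_{K_2}$ and the continuously-varying intertwiners $T_{K_i',L} = U_{g_{K_i'}}\Lambda_{g_{K_i'},L}^{*} \in T(K_i',L)$ furnished by notation \ref{n-std-fock-spac-bndl} (built from lemmas \ref{l-loc-impl}, \ref{l-uv-ov-sect-res}, proposition \ref{p-bij-intw-impl} and lemma \ref{l-lamb-homo}), and define the local trivialization over $V_{K_1}\cross V_{K_2}$ by $((K_1',K_2'),\phi)\mapsto((K_1',K_2'), T_{K_2',L}\circ\phi\circ T_{K_1',L}^{-1})$. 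For each $(K_1',K_2')$ this is a $\UU(1)$-equivariant bijection of $T(K_1',K_2')$ onto $T(L,L)$, so it is a candidate coordinate chart.

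The existence part then reduces to continuity of the change-of-coordinates maps. On an overlap the transition map has the form
\[
\psi \;\longmapsto\; \bigl(T_{K_2',L,2}\circ T_{K_2',L,1}^{-1}\bigr)\circ\psi\circ\bigl(T_{K_1',L,1}\circ T_{K_1',L,2}^{-1}\bigr),
\]
and writing each factor in the form $U_g\Lambda_g^{*}$ and collapsing with lemma \ref{l-lamb-homo} turns the two bracketed operators into $U_{g_{K_2',2}}\Lambda_{g_{K_2',2}^{-1}g_{K_2',1}}U_{g_{K_2',1}}^{*}$ and its $K_1'$-analog, where $g_{K_2',2}^{-1}g_{K_2',1}$ and $g_{K_1',1}^{-1}g_{K_1',2}$ lie in $\UU(V_J)$ and depend continuously on $K_2'$, $K_1'$ in the $\Orth_{res}$ topology. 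Hence $\Lambda$ of them is strongly continuous by lemma \ref{l-lamb-homo-cont}, the $U$'s are strongly continuous by construction, products of unitaries are strongly continuous as in \citet[page~171]{Pede89} (used exactly this way in proposition \ref{p-std-fock-spac-bndl}), so both bracketed operators — and hence the full conjugation — are separately continuous in $(K_1',K_2')$ and in $\psi$, therefore jointly continuous by lemma \ref{l-cont-stro-op-top-adj} after evaluating at the Fock vacuum to pass between the strong and operator-norm topologies; equivalently, under the identification $T(L,L)\cong\UU(1)$ the transition map is multiplication by a continuous $\UU(1)$-valued function. The cocycle condition is immediate, so lemma \ref{l-pb-cons} (and lemma \ref{l-pb-mor-cov-id-isom} for the coordinate changes being isomorphisms over the identity) produces a principal $\UU(1)$ bundle $t\colon T\to\Lagr_{res}\cross\Lagr_{res}$ whose total space, via the charts, is the set in the statement, and $t$ is $\UU(1)$-equivariant.

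For the $L\SO(n)$-action, well-definedness is lemma \ref{l-lamb-conj-intw} ($\Lambda_g\circ\phi\circ\Lambda_g^{*}\in T(gL_1,gL_2)$, with $gL_i\in\Lagr_{res}$ since $L\SO(n)\subset\Orth_{res}$ by proposition \ref{p-lson-in-ores}); it is a left action because $\Lambda_{g_1g_2}=\Lambda_{g_1}\Lambda_{g_2}$ (lemma \ref{l-lamb-homo}) and the $\Lambda$'s are unitary; equivariance of $t$ over the diagonal $L\SO(n)$-action on $\Lagr_{res}\cross\Lagr_{res}$ (continuous by lemma \ref{l-ores-uvj-homo}) is read off the formula. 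Continuity of the action is proved through the charts exactly as in lemmas \ref{l-clle-act-fy} and \ref{l-clif-act-std-fock-spac-bndl}: the action becomes a composition of unitaries depending strongly continuously on the group element and on $K_1',K_2'$, hence separately continuous, hence jointly continuous by corollary \ref{co-ores-bair-acts-join-cont} (using metrizability of $\Lagr_{res}$ and the norm metric on $T(L,L)$) together with lemma \ref{l-cont-stro-op-top-adj}.

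Finally, independence of $L$: given $L'\in\Lagr_{res}$ with its system $\Theta'$, the map $\Theta'_{2}\circ\Theta_{1}^{-1}$ sends $((K_1',K_2'),\psi)$ to $((K_1',K_2'),A'\psi B')$ with $A'=T'_{K_2',L',2}\circ T_{K_2',L,1}^{-1}\in T(L,L')$ and $B'=T_{K_1',L,1}\circ(T'_{K_1',L',2})^{-1}\in T(L',L)$, so $A'\psi B'\in T(L',L')$. Writing the four intertwiners in $U_g\Lambda_g^{*}$ form and collapsing with lemma \ref{l-lamb-homo} reduces the $\Lambda$ factors of $A'$ (resp. $B'$) to $\Lambda_{h,L}$ with $h$ mapping $L$ to $L'$ (resp. $L'$ to $L$); composing on one side with a fixed $h_{L,L'}\in\Orth_{res}$ taking $L$ to $L'$ makes these $\Lambda$ of $\UU(V_J)$-valued continuous functions, to which lemma \ref{l-lamb-homo-cont} applies — precisely the device used at the end of the proof of proposition \ref{p-std-fock-spac-bndl}. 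Then $A'B'$ depends strongly continuously on $(K_1',K_2')$ and $\Theta'_{2}\circ\Theta_{1}^{-1}$ is continuous, so the two systems of charts are compatible and the topology on $T$ is independent of $L$. The main obstacle throughout is the bookkeeping — tracking which Fock space each intertwiner acts on, and reducing every $\Lambda$ that appears to one with argument in $\UU(V_J)$ so that lemma \ref{l-lamb-homo-cont} applies — but the underlying analysis is entirely contained in proposition \ref{p-std-fock-spac-bndl}.
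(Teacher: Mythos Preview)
Your proposal is correct and follows essentially the same route as the paper. The paper builds the local trivializations $\Psi_{(L_1,L_2)}$ from the standard Fock space bundle charts $\Theta_{L_i}$ via exactly your conjugation formula $\widehat{\phi}=(\pi_2\circ\Theta_{L_2}(L_2',\cdot))\circ\phi\circ(\pi_2\circ\Theta_{L_1}(L_1',\cdot))^{-1}$, reduces the transition maps to multiplication by continuous $\UU(1)$-valued functions (via Schur's lemma, equivalently your $U\Lambda^{*}$ decomposition), proves continuity of the $L\SO(n)$-action through charts using the same $\Lambda_{g_{g'K'}^{-1}g'g_{K'}}$ collapse into $\UU(V_J)$ plus corollary \ref{co-ores-bair-acts-join-cont}, and handles $L$-independence with the fixed-$g_{L',L}$ trick you describe.
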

\begin{proof}
To define local trivializations of $T$, given $L_1, L_2 \in \Lagr_{res}$, use box neighborhoods of points $(L_1, L_2) \in V_{L_1} \cross V_{L_2} \subset \Lagr_{res} \cross \Lagr_{res}$.  The natural standard fiber is $T(L, L)$, so we will define
\begin{align}
\Psi_{(L_1, L_2)} \colon T_{V_{L_1} \cross V_{L_2}} &\isomto (V_{L_1} \cross V_{L_2}) \cross T(L, L) \label{eq-t-psi}
\end{align}
with $\pi_1 \circ \Psi_{(L_1, L_2)} = t$ and $\pi_2 \circ \Psi_{(L_1, L_2)}$ $\UU(1)$-equivariant when $\UU(1)$ acts on the intertwiner components $\phi$ of the fibers of $P$ and on the intertwiners $T(L, L)$.  Recall from proposition \ref{p-set-intw-u1-tors} that $T(L, L) \cong \UU(1)$, with the $\UU(1)$ torsor topology equal to the operator norm and strong operator topology.  Provided the transition functions (those of definition \ref{d-fb-vb}) are continuous, the local trivializations will induce a topology on $T$.

For $((L_1', L_2'), \phi_{12}) \in T_{V_{L_1} \cross V_{L_2}}$, that is, for $L_1' \in V_{L_1}$, $L_2' \in V_{L_2}$, and $\phi \in T(L_1', L_2')$, we want $\pi_2 \circ \Psi_{(L_1, L_2)} (((L_1', L_2'), \phi)) \in T(L, L)$. The following diagram suggests using for the left and right vertical maps, local trivializations about $L_1$ and $L_2$, $\Theta_{L_1} \colon \pi_F^{-1} (V_{L_1}) \rightarrow V_{L_1} \cross \F(L)$ and $\Theta_{L_2} \colon \pi_F^{-1} (V_{L_2}) \rightarrow V_{L_2} \cross \F(L)$, from proposition \ref{p-std-fock-spac-bndl} notation \ref{n-std-fock-spac-bndl} on the standard Fock space bundle.  This allows us to construct the bottom arrow $\widehat{\phi}$ from the top arrow $\phi$, and thus to define $\Psi_{(L_1, L_2)}$.
\begin{align}
\begindc{\commdiag}[5]
\obj(10,20)[objFL1]{$\F(L_1')$}
\obj(55,20)[objFL2]{$\F(L_2')$}
\obj(10,10)[objFL]{$\F(L)$}
\obj(55,10)[objFLA]{$\F(L)$}
\mor{objFL1}{objFL2}{$\phi$}
\mor{objFL}{objFLA}{$\widehat{\phi} = \pi_2 \circ \Psi_{(L_1, L_2)}(((L_1', L_2'), \phi))$}
\mor{objFL}{objFL1}{$(\pi_2 \circ \Theta_{L_1} (L_1', \cdot))^{-1}$}
\mor{objFL2}{objFLA}{$\pi_2 \circ \Theta_{L_2} (L_2', \cdot)$}
\enddc \label{eq-t-psi-thet}
\end{align}
$\Psi_{(L_1, L_2)}$ is bijective on fibers since it is conjugation with interwiners, thus bijective.  The transition functions can be constructed by stacking two of these diagrams on top of each other.  To distinguish objects from the two diagrams, ``$a$'' is used for the diagram on top and ``$b$'' for that on the bottom.  As for any transition function, the points for the two local trivializations, however named, are the same point.  Thus, for $(L_{1a}', L_{2a}') = (L_{1b}', L_{2b}') \in (V_{L_{1a}} \cross V_{L_{2a}}) \cap (V_{L_{1b}} \cross V_{L_{2b}})$:
\begin{align}
\begindc{\commdiag}[5]
\obj(10,10)[objFL]{$\F(L)$}
\obj(55,10)[objFLA]{$\F(L)$}
\obj(10,25)[objFL1]{$\F(L_{1b}') = \F(L_{1a}')$}
\obj(55,25)[objFL2]{$\F(L_{2b}') = \F(L_{2a}')$}
\obj(10,40)[objFLb]{$\F(L)$}
\obj(55,40)[objFLAb]{$\F(L)$}
\mor{objFL}{objFLA}{$\widehat{\phi}^{b} = \pi_2 \circ \Psi_{(L_{1b}, L_{2b})}(((L_{1b}', L_{2b}'), \phi))$}
\mor{objFL}{objFL1}{$(\pi_2 \circ \Theta_{L_{1b}} (L_{1b}', \cdot))^{-1}$}
\mor{objFL2}{objFLA}{$\pi_2 \circ \Theta_{L_{2b}} (L_{2b}', \cdot)$}
\mor{objFL1}{objFL2}{$\phi$}
\mor{objFLb}{objFL1}{$(\pi_2 \circ \Theta_{L_{1a}} (L_{1b}', \cdot))^{-1}$}[\atright, \solidarrow]
\mor{objFL2}{objFLAb}{$\pi_2 \circ \Theta_{L_{2a}} (L_{2b}', \cdot)$}[\atright, \solidarrow]
\mor{objFLb}{objFLAb}{$\widehat{\phi}^{a} = \pi_2 \circ \Psi_{(L_{1a}, L_{2a})}(((L_{1b}', L_{2b}'), \phi))$}[\atright, \solidarrow]
\enddc \label{eq-t-psi-thet-tran}
\end{align}
\begin{align}
\widehat{\phi}^{a} &= \pi_2 \circ \Psi_{(L_{1a}, L_{2a})} \circ \Psi_{(L_{1b}, L_{2b})}^{-1} (((L_{1b}', L_{2b}'), \widehat{\phi}^{b})) \notag \\
&= (\pi_2 \circ \Theta_{L_{2a}} (L_{2b}', \cdot)) \circ (\pi_2 \circ \Theta_{L_{2b}} (L_{2b}', \cdot))^{-1} \circ \widehat{\phi}^{b} \notag \\
&\circ (\pi_2 \circ \Theta_{L_{1b}} (L_{1b}', \cdot)) \circ (\pi_2 \circ \Theta_{L_{1a}} (L_{1b}', \cdot))^{-1} \notag \\
&= (\pi_2 \circ (\Theta_{L_{2a}} \circ \Theta_{L_{2b}}^{-1} (L_{2b}', \cdot))) \circ \widehat{\phi}^{b} \circ (\pi_2 \circ (\Theta_{L_{1b}} \circ \Theta_{L_{1a}}^{-1} (L_{1b}', \cdot))). \notag
\end{align}
Now $\pi_2 \circ (\Theta_{L_{2a}} \circ \Theta_{L_{2b}}^{-1}) \colon (V_{L_{2b}} \cap V_{L_{2a}}) \cross \F(L) \rightarrow \F(L)$ and $\pi_2 \circ (\Theta_{L_{1b}} \circ \Theta_{L_{1a}}^{-1}) \colon (V_{L_{1b}} \cap V_{L_{1a}}) \cross \F(L) \rightarrow \F(L)$ are continuous transition functions from the proof of proposition \ref{p-std-fock-spac-bndl}, and fixing the first argument of either determines a corresponding element of $T(L, L)$, or equivalently by Schur's Lemma as in the proof of proposition \ref{p-set-intw-u1-tors}, an element of $\UU(1)$ by which to multiply the second argument.  That this element of $\UU(1)$ is a continuous function of the first argument, follows from fixing any nonzero second argument and using the continuity of the transition function.  Thus, since $\widehat{\phi}^{b}$ also is an element of $T(L, L)$, we can write $\widehat{\phi}^{a} = z_{2}(L_{2b}') z_{1}(L_{1b}') \widehat{\phi}^{b}$, where $z_{2} \colon V_{L_{2b}} \cap V_{L_{2a}} \rightarrow \UU(1)$ and $z_{1} \colon V_{L_{1b}} \cap V_{L_{1a}} \rightarrow \UU(1)$ are continuous functions, and conclude that $\widehat{\phi}^{a}$ is a continuous function of $((L_{1b}', L_{2b}'), \widehat{\phi}^{b})$.  Thus the transition functions of $T$ are continuous, and the topology of the total space is defined by the local trivializations.

Now to establish the left action of $L\SO(n)$ on the total and base spaces, with the projection equivariant.  By proposition \ref{p-lson-in-ores}, the inclusion $L\SO(n) \rightarrow \Orth_{res}$ is continuous, so it suffices to find such an action of $\Orth_{res}$.  On the base space, for $g \in \Orth_{res}$, $L_1, L_2 \in \Lagr_{res}$, $g \cdot (L_1, L_2) = (g L_1, g L_2)$ is a continuous action because of definition \ref{d-ores-uvj-homo} and lemma \ref{l-ores-uvj-homo}.  The same action will work for the first component, $(L_1, L_2)$, of elements of the total space; and this will make the projection equivariant.  Since $\phi \in T(L_1, L_2)$, we need $g \cdot \phi \in T(g L_1, g L_2)$.  Since $\phi$ is a map of Fock spaces, thinking of the Fock space construction as a functor (see e.g. lemma \ref{l-lamb-homo}), the natural $\Orth_{res}$ action on Fock spaces is via $g \mapsto \Lambda_g$, which was also used in definition \ref{d-fock-spac-bndl}.  To be compatible with that, the natural action on $\phi$ is $g \cdot \phi = \Lambda_g \phi \Lambda_g^{*} = \Lambda_{g,L_1} \phi \Lambda_{g,L_2}^{*}$ (the decorations of the $\Lambda_g$ showing their domains), which is in $T(g L_1, g L_2)$ by lemma \ref{l-lamb-conj-intw}.

For continuity, use local trivializations that don't vary with the action.  Fix $g \in \Orth_{res}$ and $(L_1, L_2) \in \Lagr_{res} \cross \Lagr_{res}$.  Let $g' \in \Orth_{res}$ vary in an open neighborhood of $g$ and $(L_1', L_2')$ in an open neighborhood of $(L_1, L_2)$, a subset of $V_{L_1} \cross V_{L_2}$ such that $(g' L_1', g' L_2')$ stays in $V_{g L_1} \cross V_{g L_2}$.  Suppose $\widehat{\phi} \in T(L, L)$.  We will show continuity with respect to jointly varying $g', ((L_1', L_2'), \widehat{\phi})$ of
\begin{align}
&\Orth_{res} \cross ((\Lagr_{res} \cross \Lagr_{res}) \cross T(L, L)) \notag \\
&\rightarrow (\Lagr_{res} \cross \Lagr_{res}) \cross T(L, L) \text{ given by} \notag \\
&((L_1', L_2'), \widehat{\phi}) \mapsto ((g' L_1', g' L_2'), \widehat{\phi}^{g'}) \notag \\
&=\Psi_{(g L_1, g L_2)} (((g' L_1', g' L_2'), \Lambda_{g', L_2'} \circ (\pi_2 \circ \Psi_{(L_1, L_2)}^{-1} (((L_1', L_2'), \widehat{\phi}))) \circ \Lambda_{g', L_1'}^{*})). \notag
\end{align}
The symbol $\Lambda$ with two subscripts indicates the group element that induces it and the Lagrangian subspace for the Fock space that is its domain.  As mentioned before, definition \ref{d-ores-uvj-homo} and lemma \ref{l-ores-uvj-homo} imply continuity of the first component, $(g' L_1', g' L_2')$.  The second component is as follows.  The main point is to get the final expression of maps in $\UU(\F(L))$, to make it straightforward to define and prove continuity; whereas the initial expression involves maps between spaces that vary, making even the definition of continuity more complicated.
\begin{align}
\widehat{\phi}^{g'} &= (\pi_2 \circ \Theta_{g L_2} (g' L_2', \cdot)) \circ \Lambda_{g', L_2'} \circ ((\pi_2 \circ \Theta_{L_2} (L_2', \cdot))^{*} \circ \widehat{\phi} \notag \\
&\circ (\pi_2 \circ \Theta_{L_1} (L_1', \cdot))) \circ \Lambda_{g', L_1'}^{*} \circ (\pi_2 \circ \Theta_{g L_1} (g' L_1', \cdot))^{*} \notag \\
&= T_{g' L_2', L} \circ \Lambda_{g', L_2'}  \circ T_{L_2', L}^{*} \circ \widehat{\phi} \circ T_{L_1', L} \circ \Lambda_{g', L_1'}^{*} \circ T_{g' L_1', L}^{*} \notag \\
&= U_{g_{g' L_2'}} \Lambda_{g_{g' L_2'}, L}^{*} \circ \Lambda_{g', L_2'}  \circ (U_{g_{L_2'}} \Lambda_{g_{L_2'}, L}^{*})^{*} \circ \widehat{\phi} \notag \\
&\circ U_{g_{L_1'}} \Lambda_{g_{L_1'}, L}^{*} \circ \Lambda_{g', L_1'}^{*} \circ (U_{g_{g' L_1'}} \Lambda_{g_{g' L_1'}, L}^{*})^{*} \notag \\
&= U_{g_{g' L_2'}} \Lambda_{g_{g' L_2'}^{-1} g' g_{L_2'}} U_{g_{L_2'}}^{*} \circ \widehat{\phi} \circ U_{g_{L_1'}} \Lambda_{g_{L_1'}^{-1} (g')^{-1} g_{g' L_1'}} U_{g_{g' L_1'}}^{*} \notag
\end{align}
where the  $\Theta$'s and $T_{*, L}$ are from the standard Fock space bundle local trivializations of proposition \ref{p-std-fock-spac-bndl} notation \ref{n-std-fock-spac-bndl}, the $U$'s are all in $\UU(\F(L))$.  We have that $T_{L_1', L} = U_{g_{L_1'}} \Lambda_{g_{L_1'}, L}^{*}$, where its first factor is an implementer in $\pi_L$ of $\theta_{g_{L_1'}}$, depending continuously on $g_{L_1'}$, which depends continuously on $L_1'$, similarly for $L_2'$, $g' L_1'$, and $g' L_2'$, and lemma \ref{l-lamb-homo} was used to coalesce the $\Lambda$'s.

Since $g_{g' L_2'}^{-1} g' g_{L_2'}$, reading right to left, maps $L \leftarrow g' L_2' \leftarrow L_2' \leftarrow L$, and $g_{L_1'}^{-1} (g')^{-1} g_{g' L_1'}$ maps $L \leftarrow L_1' \leftarrow g' L_1' \leftarrow L$, the last line $\Lambda$'s, in $\UU(\F(L))$, are by lemma \ref{l-lamb-homo-cont} continuous functions of the group elements, in $\UU(V_J)$, which are continuous functions of $g'$, $L_1'$, and $L_2'$.  The $U$'s are continuous functions of $L_1'$, $L_2'$, $g' L_1'$, and $g' L_2'$  by their construction, and the last two of these are continuous functions of $g'$ and $L_1'$ respectively $L_2'$, jointly, by definition \ref{d-ores-uvj-homo} and lemma \ref{l-ores-uvj-homo}.  Recall that composition and the adjoint are continuous for unitary operators in the strong operator topology on $\F(L)$.  Thus $(g', ((L_1', L_2'), \widehat{\phi})) \mapsto ((g' L_1', g' L_2'), \widehat{\phi}^{g'})$ is a continuous function using the strong operator topology on $\UU(\F(L))$.  As mentioned in proposition \ref{p-set-intw-u1-tors}, the strong operator, operator norm, and $\UU(1)$ torsor topologies on $T(L, L)$ are the same.

Now, suppose we start with some $L' \in \Lagr_{res}$, and use that instead of $L$ for local trivializations $\Theta_K$ etc. of $F$ as in notation \ref{n-std-fock-spac-bndl} of proposition \ref{p-std-fock-spac-bndl}, using the $\Theta_K$ as in \ref{eq-t-psi-thet}, to construct local trivializations $\Psi_{L_1, L_2}'$ as in \ref{eq-t-psi}, with standard fiber $T(L', L')$.  Then these $\Psi_{L_1, L_2}'$  are compatible with the $\Psi_{L_1, L_2}$ constructed starting with $L$, as can be seen using a diagram very like \ref{eq-t-psi-thet-tran}, which is for transition functions between two different $\Psi_{L_1, L_2}$.  Modify the diagram to have $\F(L)$ say at the top, and $\F(L')$ at the bottom; that is, ``$a$'' corresponds to $L$ and ``$b$'' corresponds to $L'$.  This doesn't affect ${}'$ markings other than for $L'$ at the bottom.  Then the same argument that follows the diagram still works, in this case to show that the local trivializations $\Psi_{L_1, L_2}$ and $\Psi_{L_1, L_2}'$ are compatible, because proposition \ref{p-std-fock-spac-bndl} states that its local trivializations, the $\Theta_K$ here, made from $L'$ are compatible with those made from $L$.
\end{proof}

\section{The Identification $\Xi$ for a Fiber Product of an Associated Bundle}\label{s-ident-xi}

We would like to construct the bundle gerbe's $P$ bundle as a bundle associated to $L\SO(E)$, in the same way the $Y$ space is.  This will allow functoriality of the construction.  However, the base of the $P$ bundle needs to be $Y^{[2]}$, which brings two copies of $L\SO(E)$ into the picture.  That is, an element of $Y$ is of the form $[\widetilde{\gamma}, L]$, for some $\widetilde{\gamma} \in L\SO(E)$ and some $L \in \Lagr_{res}$; and so, an element of $Y^{[2]}$ is of the form $([\widetilde{\gamma}_1, L_1], [\widetilde{\gamma}_2, L_2])$, where both $\widetilde{\gamma}_1$ and $\widetilde{\gamma}_2$ lie above the same $\gamma$.

Because the action of $L\SO(n)$ is transitive on the fibers of $L\SO(E)$, it's possible to construct an isomorphism $\Xi$ that gets rid of one of the copies of $L\SO(E)$.  It is a special case of the isomorphism $\Xi$ in the following lemma.  After pointing out some instances where the use of it and its relatives would make definitions more precise, we will generally treat them as identifications.

\begin{lem}\label{l-iden-xi}
\index{Xi@$\Xi$}
(The Identification $\Xi$).
Suppose $G$ is a topological group, $Q \rightarrow X$ is a topological principal $G$ bundle, and $W$ is a topological space on which there is a continuous left $G$ action.  Let
\[
Y = Q \cross_G W \rightarrow X \notag
\]
denote the associated fiber bundle, and $Y^{[2]}$ the fiber product as in definition \ref{d-y-n}.  Given $q_1, q_2 \in Q$ over $\gamma \in X$, $w_1, w_2 \in W$, define
\begin{align}
\Xi \colon Y^{[2]} &\rightarrow Q \cross_G (W \cross W) \notag \\
\Xi ([q_1, w_1], [q_2, w_2]) &= \Xi ([q_1, w_1], [q_1 \tau (q_1, q_2), w_2]) \notag \\
 &= \Xi ([q_1, w_1], [q_1, \tau (q_1, q_2) w_2]) \notag \\
 &= [q_1, w_1, \tau (q_1, q_2) w_2], \notag
\end{align}
where $\tau$ is the continuous translation function of lemma \ref{l-pb-tran-func-tors} for Q.

Then $\Xi$ is an isomorphism of continuous fiber bundles, a homeomorphism over the identity on $X$.
\end{lem}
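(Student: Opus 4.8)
The plan is to check, in order, that $\Xi$ is well-defined, that it has a continuous two-sided inverse, and that it covers the identity; the fiber-bundle-isomorphism claim then follows. The last point is immediate: a pair $([q_1,w_1],[q_2,w_2]) \in Y^{[2]}$ lies over $\gamma = \pi_Q(q_1) = \pi_Q(q_2)$, and $\Xi$ sends it to $[q_1, w_1, \tau(q_1,q_2)w_2]$, whose base point is again $\pi_Q(q_1) = \gamma$, so $\Xi$ commutes with the projections to $X$.

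First I would verify well-definedness. The representatives of an element of $Y^{[2]}$ lying over $\gamma$ can only be changed by replacing $(q_i, w_i)$ with $(q_i g_i, g_i^{-1} w_i)$ for $g_i \in G$. The one nontrivial ingredient is the transformation law of the translation function of lemma \ref{l-pb-tran-func-tors}: from $q_1 \tau(q_1,q_2) = q_2$ and the uniqueness of $\tau$ one reads off $\tau(q_1 g_1, q_2 g_2) = g_1^{-1} \tau(q_1,q_2) g_2$. Feeding this into the definition and collapsing via the equivalence relation on $Q \cross_G (W \cross W)$ gives $[q_1 g_1,\, g_1^{-1} w_1,\, \tau(q_1 g_1, q_2 g_2)(g_2^{-1}w_2)] = [q_1 g_1,\, g_1^{-1}w_1,\, g_1^{-1}\tau(q_1,q_2)w_2] = [q_1, w_1, \tau(q_1,q_2)w_2]$, so the value is independent of all choices. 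The candidate inverse $\Xi^{-1}[q, w_1, w_2] = ([q,w_1],[q,w_2])$ is well-defined by an easier version of the same computation, lands in $Y^{[2]}$ because both classes project to $\pi_Q(q)$, and is a genuine two-sided inverse of $\Xi$ using $\tau(q,q) = e$ and $q_1 \tau(q_1,q_2) = q_2$.

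For continuity of $\Xi$ and $\Xi^{-1}$ I would argue locally. Over an open $U \subset X$ admitting a continuous section $s \colon U \to Q$, the assignments $(\gamma, w) \mapsto [s(\gamma), w]$, $(\gamma, w_1, w_2) \mapsto ([s(\gamma), w_1], [s(\gamma), w_2])$ and $(\gamma, w_1, w_2) \mapsto [s(\gamma), w_1, w_2]$ are homeomorphisms identifying $Y|_U$, $Y^{[2]}|_U$ and $(Q \cross_G (W \cross W))|_U$ with $U \cross W$, $U \cross W \cross W$ and $U \cross W \cross W$ respectively; here one uses that $G$ acts freely and transitively on the fibers of $Q$, so each $Y$-class over $\gamma$ has a unique representative with first entry $s(\gamma)$, and continuity of these identifications comes from continuity of the quotient maps and of $s$. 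In these coordinates $\Xi$ becomes the identity on $U \cross W \cross W$, since $\tau(s(\gamma), s(\gamma)) = e$ forces $\Xi([s(\gamma),w_1],[s(\gamma),w_2]) = [s(\gamma), w_1, w_2]$. As such sets $U$ cover $X$ (local sections exist because $Q \to X$ is locally trivial), $\Xi$ is a homeomorphism, and combined with the previous paragraph this exhibits $\Xi$ as an isomorphism of topological fiber bundles over $\ident_X$.

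The only step requiring real care is the translation-function transformation law $\tau(q_1 g_1, q_2 g_2) = g_1^{-1}\tau(q_1,q_2)g_2$ and the attendant bookkeeping of the equivalence relation defining the associated bundle; once that is in hand, well-definedness is routine, and the choice of a common local section $s$ of $Q$ trivializes all three bundles simultaneously and makes $\Xi$ literally the identity, so continuity is automatic. I do not anticipate a genuine obstacle beyond this.
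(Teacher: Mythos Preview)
Your proof is correct. The well-definedness computation and the formula for the inverse match the paper's proof essentially verbatim, including the key identity $\tau(q_1 g_1, q_2 g_2) = g_1^{-1}\tau(q_1,q_2)g_2$. The continuity argument differs: the paper lifts $\Xi$ to a map $(Q \times W)\times_X (Q \times W) \to Q \times_G (W \times W)$, observes that this lift is continuous (being built from $\tau$, the group action, and projections) and $G \times G$-invariant, and then invokes the general fact that a continuous equivariant map descends to a continuous map of orbit spaces; likewise for $\Xi^{-1}$. Your route via a local section $s$ of $Q$, trivializing all three bundles at once so that $\Xi$ becomes the identity on $U \times W \times W$, is equally valid and arguably more elementary, though it requires implicitly knowing that the map $(\gamma,w)\mapsto [s(\gamma),w]$ is a homeomorphism onto $Y|_U$ (i.e., that associated bundles are locally trivial with these as trivializations). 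The paper's descent argument avoids choosing sections and works globally in one step; your argument makes the local picture explicit.
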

\begin{proof}
The reason that $\Xi$ exists is that only one copy of $Q$ is needed, because $G$ acts transitively on it, hence different elements of $Q$ lying over the same loop in $X$ can be reflected instead in different elements of $W$.

First define a map $(Q \cross W) \cross_{X} (Q \cross W) \rightarrow Q \cross (W \cross W)$ respecting equivalence relations, each factor of $G \cross G$ acting on the respective factor of the domain, and the first factor of $G \cross G$ acting on the codomain.  Informally, for $g_1, g_2 \in G$,
\begin{align}
((q_1 g_1, g_1^{-1} w_1), (q_2 g_2, g_2^{-1} w_2)) &\mapsto [q_1 g_1, g_1^{-1} w_1, \tau (q_1 g_1, q_2 g_2) g_2^{-1} w_2] \notag \\
 &= [q_1 g_1, g_1^{-1} w_1, g_1^{-1} \tau (q_1, q_2) g_2 g_2^{-1} w_2] \notag \\
 &= [q_1, w_1, \tau (q_1, q_2) w_2] \notag
\end{align}
shows that $\Xi$ is well-defined, and continuity follows because a continuous equivariant map descends to a continuous map of the orbit spaces \citep[page~4]{tomD87}.  Well-definedness and continuity of the inverse defined by
\[
\Xi^{-1} ([q_1, w_1, w_2]) = ([q_1, w_1], [q_1, w_2]) \notag
\]
are similar but easier.
\end{proof}
We can also define, for instance, the analogous canonical homeomorphism $Y^{[3]} \rightarrow Q \cross_{G} (W \cross W \cross W)$.

\section{The Bundle Gerbe $P$ Space Construction}\label{s-bndl-gerb-p-spac-cons}

\begin{prop}\label{p-bg-p}
\index{bundle gerbe!construction!P@$P$}
\index{P@$P$}
(The Bundle Gerbe $P$ Space Construction).
Suppose given $L \in \Lagr_{res}$.  Define the $P$ bundle for the bundle gerbe as follows, omitting $\Xi$:
\[
\begindc{\commdiag}[5]
\obj(30,20)[objPp]{$P = L\SO(E) \cross_{L\SO(n)} T$}
\obj(30,10)[objYY]{$Y^{[2]} = L\SO(E) \cross_{L\SO(n)} (\Lagr_{res} \cross \Lagr_{res})$}
\mor{objPp}{objYY}{}
\enddc
\]
or for once showing $\Xi$ from lemma \ref{l-iden-xi}:
\[
\begindc{\commdiag}[5]
\obj(0,20)[objP]{$P = \Xi^{*} P'$}
\obj(0,10)[objY2]{$Y^{[2]}$}
\obj(30,20)[objPp]{$P' = L\SO(E) \cross_{L\SO(n)} T$}
\obj(30,10)[objYY]{$L\SO(E) \cross_{L\SO(n)} (\Lagr_{res} \cross \Lagr_{res})$}
\mor{objPp}{objYY}{}
\mor{objY2}{objYY}{$\Xi$}
\mor{objP}{objY2}{}
\enddc
\]
The choice of $L$ only affects local trivializations; a different choice results in compatible ones, as in proposition \ref{p-bg-t}. The fibers of $P$ may be identified as follows:
\begin{align}
P_{([\widetilde{\gamma}, L_1], [\widetilde{\gamma}, L_2])} &= P_{[\widetilde{\gamma}, L_1, L_2]} \notag \\
 &= \{ [(\widetilde{\gamma}, L_1, L_2, \phi)] \st \phi \in T(L_1, L_2) \} \notag \\
&\text{or using the bijection of definition \ref{d-pol-clas-bndl},} \notag \\
P_{(\gamma, L_{\gamma, 1}, L_{\gamma, 2})} &= \{ (\gamma, L_{\gamma, 1}, L_{\gamma, 2}, \phi) \st \phi \in T(L_{\gamma, 1}, L_{\gamma, 2}) \}, \notag
\end{align}
where $\widetilde{\gamma} \in L\SO(E)$ lies over $\gamma \in LM$, $L_1$, $L_2$ $\in \Lagr_{res}$, and $L_{\gamma, 1}$, $L_{\gamma, 2}$ are in the chosen polarization class of $\CC \tensor \overline{LE_{\gamma}}$.

Define $p \colon P \rightarrow Y^{[2]}$ by $[\widetilde{\gamma}, L_1, L_2, \phi] \mapsto ([\widetilde{\gamma}, L_1], [\widetilde{\gamma}, L_2])$.  Since $T$ is a principal $\UU(1)$ bundle, $P$ is also; the right $\UU(1)$ action on the right factor of the associated bundle survives that construction.

The bundle gerbe multiplication is given fiberwise by composition of Clifford-linear unitary isomorphisms; that is, intertwiners.  Using relatives of $\Xi$ to omit subscript redundancies, define $\widetilde{m} \colon P_{[(\widetilde{\gamma}, L_1, L_2)]} \cross P_{[(\widetilde{\gamma}, L_2, L_3)]} \rightarrow P_{[(\widetilde{\gamma}, L_1, L_3)]}$ by
\[
\widetilde{m} ([(\widetilde{\gamma}, L_1, L_2, \phi_{12})], [(\widetilde{\gamma}, L_2, L_3, \phi_{23})]) = [(\widetilde{\gamma}, L_1, L_3, \phi_{23} \circ \phi_{12})]. \notag
\]
This uses equivalence class representatives with equal first elements $\widetilde{\gamma}$, similarly to what is done with $\Xi$.  The map $\widetilde{m}$ is $\UU(1)$-bi-equivariant and induces an isomorphism of $\UU(1)$ torsors, $m \colon P_{[(\widetilde{\gamma}, L_1, L_2)]} \tensor P_{[(\widetilde{\gamma}, L_2, L_3)]} \rightarrow P_{[(\widetilde{\gamma}, L_1, L_3)]}$, which defines a principal $\UU(1)$ bundle isomorphism $m \colon \pi_{12}^{*} P \tensor \pi_{23}^{*} P \rightarrow \pi_{13}^{*} P$ that has the required associativity, and induces the bundle gerbe multiplication.
\end{prop}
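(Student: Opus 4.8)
The plan is to assemble $P$ from pieces already constructed and then to verify the bundle-gerbe axioms fiberwise, pushing all continuity questions onto the associated-bundle and local-trivialization machinery. First I would note that the left $L\SO(n)$ action on the total space $T$ of the standard intertwiner bundle, $g\cdot((L_1,L_2),\phi)=(gL_1,gL_2,\Lambda_g\phi\Lambda_g^{*})$ of proposition \ref{p-bg-t}, is continuous and commutes with the right $\UU(1)$ action on the intertwiner component (the $\UU(1)$ factor is scalar, hence passes through $\Lambda_g(\cdot)\Lambda_g^{*}$), and that $t\colon T\to\Lagr_{res}\cross\Lagr_{res}$ is $L\SO(n)$-equivariant. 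Hence $P'=L\SO(E)\cross_{L\SO(n)}T$ is a topological fiber bundle over $L\SO(E)\cross_{L\SO(n)}(\Lagr_{res}\cross\Lagr_{res})$, and, since locally over $LM$ the map $P'\to L\SO(E)\cross_{L\SO(n)}(\Lagr_{res}\cross\Lagr_{res})$ is $\ident\cross t$ and the right $\UU(1)$ action survives the associated-bundle quotient, $P'$ is a principal $\UU(1)$ bundle (using proposition \ref{p-loop-fb-is-frec-fb} and lemma \ref{l-pb-cons}). Pulling back along the canonical homeomorphism $\Xi\colon Y^{[2]}\to L\SO(E)\cross_{L\SO(n)}(\Lagr_{res}\cross\Lagr_{res})$ of lemma \ref{l-iden-xi} (with $Q=L\SO(E)$, $W=\Lagr_{res}$, recalling $Y=L\SO(E)\cross_{L\SO(n)}\Lagr_{res}$ from definition \ref{d-pol-clas-bndl}) gives $P=\Xi^{*}P'\to Y^{[2]}$, again a principal $\UU(1)$ bundle. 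The fiber description is obtained by picking, in each class over $([\widetilde\gamma,L_1],[\widetilde\gamma,L_2])$, the representative whose $L\SO(E)$-component equals the common $\widetilde\gamma$; the identifications with $T(L_1,L_2)$ and, via definition \ref{d-pol-clas-bndl}, with $T(L_{\gamma,1},L_{\gamma,2})$ then follow. The choice of $L$ enters only through the local trivializations of $T$; since the associated-bundle charts of $P$ combine those with $L$-independent local trivializations of $L\SO(E)\to LM$ and with $\Xi$, compatibility of different choices of $L$ is inherited verbatim from proposition \ref{p-bg-t}.

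Next I would build the multiplication at the level of $T$. Over $\Lagr_{res}^{3}$ there is a fiberwise map $\widetilde m_T\colon\pi_{12}^{*}T\tensor\pi_{23}^{*}T\to\pi_{13}^{*}T$, $((L_1,L_2),\phi_{12})\tensor((L_2,L_3),\phi_{23})\mapsto((L_1,L_3),\phi_{23}\circ\phi_{12})$, landing in $T(L_1,L_3)$ because a composition of intertwiners is an intertwiner (definition \ref{d-equi-rep}), $\UU(1)$-bi-equivariant because the $\UU(1)$ factors are scalar, and $L\SO(n)$-equivariant because $\Lambda_g\phi_{23}\Lambda_g^{*}\circ\Lambda_g\phi_{12}\Lambda_g^{*}=\Lambda_g(\phi_{23}\circ\phi_{12})\Lambda_g^{*}$. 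Bi-equivariance lets $\widetilde m_T$ descend to the tensor-product bundles, and $L\SO(n)$-equivariance lets it descend further along the associated-bundle construction (a continuous equivariant map of total spaces descends to a continuous map of orbit spaces, \citep[page~4]{tomD87}) to $m\colon\pi_{12}^{*}P\tensor\pi_{23}^{*}P\to\pi_{13}^{*}P$ over $Y^{[3]}$, which the analog of $\Xi$ for triple fiber products places over $Y^{[3]}\cong L\SO(E)\cross_{L\SO(n)}\Lagr_{res}^{3}$. The map $\widetilde m$ of the statement is the fiberwise incarnation of $m$; it is well defined since replacing $\widetilde\gamma$ by $\widetilde\gamma g$ conjugates the intertwiners by $\Lambda_g$, which is compatible with composition. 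Being an $L\SO(n)$-equivariant bundle morphism covering the identity on $Y^{[3]}$, $m$ is an isomorphism of principal $\UU(1)$ bundles by lemma \ref{l-pb-mor-cov-id-isom}; associativity over $Y^{[4]}$ holds because fiberwise it reduces to $(\phi_{34}\circ\phi_{23})\circ\phi_{12}=\phi_{34}\circ(\phi_{23}\circ\phi_{12})$, the unlabeled isomorphisms in the associativity diagram being the phase-free canonical ones for tensor-product associativity and composition of pullbacks (note \ref{n-u1pb-tens-dual}), so the two composites are bundle maps agreeing on every fiber, hence equal.

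The main obstacle, exactly as flagged in note \ref{n-bndl-gerb-mult-fibe}, is proving that $\widetilde m_T$ (and hence $m$) is continuous \emph{across} fibers, not merely fiberwise defined. I would dispose of it by computing $\widetilde m_T$ in the local trivializations of $T$ from proposition \ref{p-bg-t}: in the charts built from a choice $L\in\Lagr_{res}$ and the standard Fock space bundle trivializations $\Theta_{L_i}$ of proposition \ref{p-std-fock-spac-bndl}, an intertwiner $\phi_{12}\in T(L_1',L_2')$ corresponds to $\widehat{\phi_{12}}=(\pi_2\circ\Theta_{L_2}(L_2',\cdot))\circ\phi_{12}\circ(\pi_2\circ\Theta_{L_1}(L_1',\cdot))^{-1}\in T(L,L)$, and a short cancellation gives $\widehat{\phi_{23}\circ\phi_{12}}=\widehat{\phi_{23}}\circ\widehat{\phi_{12}}$; under the canonical identification $T(L,L)\cong\UU(1)$ of proposition \ref{p-set-intw-u1-tors} this is just multiplication in $\UU(1)$, manifestly jointly continuous. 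Hence $\widetilde m_T$ is a morphism of principal $\UU(1)$ bundles, and everything above goes through. What remains are only bookkeeping checks — that $p$ is well defined and $\UU(1)$-equivariant, and that the various $\Xi$-type identifications over $Y^{[2]}$, $Y^{[3]}$, $Y^{[4]}$ are mutually compatible — together with the observation that the base-space hypotheses of definition \ref{d-cont-bndl-gerb} and the existence of local sections of $Y\to LM$ hold because $LM$ is a paracompact metrizable Fréchet manifold admitting good covers (proposition \ref{p-smth-free-loop-spac-frec-mfld}) and $Y\to LM$ is a fiber bundle.
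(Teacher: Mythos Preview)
Your proposal is correct and follows essentially the same route as the paper: the well-definedness of $\widetilde m$ via the $L\SO(n)$-equivariance identity $\Lambda_g\phi_{23}\Lambda_g^{*}\circ\Lambda_g\phi_{12}\Lambda_g^{*}=\Lambda_g(\phi_{23}\circ\phi_{12})\Lambda_g^{*}$, the reduction of continuity to the local-trivialization computation $\widehat{\phi_{23}\circ\phi_{12}}=\widehat{\phi_{23}}\circ\widehat{\phi_{12}}$ in $T(L,L)$, and the appeal to lemma \ref{l-pb-mor-cov-id-isom} for the isomorphism are all exactly what the paper does. The only organizational difference is that you begin at the standard-fiber level with $\widetilde m_T$ over $\Lagr_{res}^{3}$ and then associate, whereas the paper defines $\widetilde{\widetilde m}$ on the cartesian products $L\SO(E)\cross T$ and descends; the substance is the same. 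Your closing remarks on the base-space hypotheses and local sections of $Y\to LM$ belong to proposition \ref{p-bg} rather than here, but that is harmless overreach.
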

\begin{proof}
To see why what $\widetilde{m}$ does with the $L_i, L_j$ components is well-defined, note that $[(\widetilde{\gamma}g, K)] = [(\widetilde{\gamma}, K')] \Leftrightarrow K' = g K$, so when equivalence class representatives are chosen with equal first elements $\widetilde{\gamma}$, the second and third elements, in $\Lagr_{res}$, are well-defined.

What $\widetilde{m}$ does with the $\phi_{ij}$ is well-defined because, for $g \in L\SO(n)$, $L_i \in \Lagr_{res}$, $\phi_{ij} \in T(L_i, L_j)$,
\begin{align}
g \cdot ((L_i, L_j), \phi_{ij}) &= ((g L_i, g L_j), \Lambda_{g, L_j} \circ \phi \circ \Lambda_{g, L_i}^{*}) \notag \\
g \cdot \phi_{13} &= g \cdot (\phi_{23} \circ \phi_{12}) \notag \\
&= \Lambda_{g, L_3} \circ \phi_{23} \circ \phi_{12} \circ \Lambda_{g, L_1}^{*} \notag \\
&= \Lambda_{g, L_3} \circ \phi_{23} \circ \Lambda_{g, L_2}^{*} \circ \Lambda_{g, L_2} \circ \phi_{12} \circ \Lambda_{g, L_1}^{*} \notag \\
&= (g \cdot \phi_{23}) \circ (g \cdot \phi_{12}). \notag
\end{align}
The symbol $\Lambda$ with two subscripts indicates the group element that induces it and the Lagrangian subspace for the Fock space that is its domain.  $\widetilde{m}$ is $\UU(1)$-bi-equivariant because the elements of these torsors are linear operators, so $m$ is $\UU(1)$-equivariant, and by corollary \ref{co-g-equi-cont}, is an isomorphism of $\UU(1)$ torsors.  Recall proposition \ref{p-set-intw-u1-tors}; the topology on each of $T(L_1, L_3)$, $T(L_2, L_3)$, and $T(L_1, L_3)$ given by its $\UU(1)$ torsor topology and operator norm topology are the same.  Thus $m$ is a homeomorphism on fibers; i.e. for fixed $L_i$.

The fiberwise $m$ induces a $\UU(1)$-equivariant bijection also named $m$ from the total space of $\pi_{12}^{*} P \tensor \pi_{23}^{*} P$ to that of $\pi_{13}^{*} P$, covering the identity on the base, and associativity follows from the fiberwise definition.  To show that $m$ is a principal $\UU(1)$ bundle isomorphism, by lemma \ref{l-pb-mor-cov-id-isom} it suffices to show that it is continuous.

The continuity of the principal $\UU(1)$ bundle map $m$ follows from continuity of the principal $\UU(1)$ bundle map induced by $\widetilde{m}$, which in turn is induced by and its continuity follows from that of the following map $\widetilde{\widetilde{m}}$, which is $L\SO(n)$-equivariant in the way needed to make it descend (a continuous equivariant map descends to a continuous map of the orbit spaces by \citet[page~4]{tomD87}) to $\widetilde{m}$ by taking associated bundle quotients of the cartesian products $L\SO(E) \cross T$.  The map $\widetilde{\widetilde{m}}$ is also $\UU(1)$-bi-equivariant, acting on the $T$'s.  To define $\widetilde{\widetilde{m}}$ properly, insert symbols $(\widetilde{\gamma}, (L_1, L_2, L_3))$ appropriately.
\begin{align}
\pi_{12}^{*} (L\SO(E) \cross T) \cross \pi_{23}^{*} (L\SO(E) \cross T) &\xrightarrow{\widetilde{\widetilde{m}}} \pi_{13}^{*} (L\SO(E) \cross T) \notag \\
((L_1, L_2, \phi_{12}), (L_2, L_3, \phi_{23})) &\mapsto ((L_1, L_3, \phi_{12} \circ \phi_{23})). \notag
\end{align}
$\widetilde{\widetilde{m}}$ respects the equivalence relations for the associated bundle $L\SO(n)$ quotients, as can be seen from the argument given that $\widetilde{m}$ is well-defined, and also respects the equivalence relations for the tensor product quotient that will be applied to $\widetilde{m}$.  The continuity of $\widetilde{\widetilde{m}}$ can be seen locally as follows.

Because in the definition of $\widetilde{\widetilde{m}}$, $\widetilde{\gamma} \in L\SO(E)$ is unchanged, and neither what happens to the other components nor the local trivializations that will be used for them depend on $\widetilde{\gamma}$, we omit it along with triples of $L_i$ for pullbacks to $Y^{[3]}$ from the argument until the end.  Let $V_{L_1}$, $V_{L_2}$, $V_{L_3}$ be open neighborhoods of $L_1$, $L_2$, $L_3$ respectively, giving box neighborhood domains of local trivializations as in \ref{eq-t-psi}  in the proof of proposition \ref{p-bg-t}, $\Psi_{L_1, L_2}$, $\Psi_{L_2, L_3}$, and $\Psi_{L_1, L_3}$, of $T$.

In these local trivializations, $\widetilde{\widetilde{m}}$ is again given by composition, as follows, for $L_1' \in V_{L_1}$, $L_2' \in V_{L_2}$, $L_3' \in V_{L_3}$, $\phi_{12} \in T(L_1', L_2')$, $\phi_{23} \in T(L_2', L_3')$, $\phi_{13} \in T(L_1', L_3')$.  First define $\widehat{\phi_{ij}}$, the intertwiners in the local trivializations corresponding to the intertwiners $\phi_{ij}$, and show what the multiplication by composition of the $\phi_{ij}$ becomes, in
terms of the $\widehat{\phi_{ij}}$ and $\Psi_{L_i, L_j}$.
\begin{align}
\widehat{\phi_{ij}} &= \pi_2 \circ \Psi_{L_i, L_j} (L_i', L_j', \phi_{ij}) \notag \\
\widehat{\phi_{13}} &= \pi_2 \circ \Psi_{L_1, L_3} \notag \\
 &(L_1', L_3', (\pi_2 \circ (\Psi_{L_1, L_2})^{-1} (L_1', L_2', \widehat{\phi_{12}})) \circ (\pi_2 \circ (\Psi_{L_2, L_3})^{-1} (L_2', L_3', \widehat{\phi_{23}}))). \notag
\end{align}
Then express these formulas, that are in terms of the local trivializations $\Psi_{L_i, L_j}$ of $T$, in terms of the local trivializations $\Theta_{L_i}$ of $F$, as in \ref{eq-t-psi-thet} in the proof of proposition \ref{p-bg-t}.
\begin{align}
\widehat{\phi_{ij}} &= (\pi_2 \circ \Theta_{L_j} (L_j', \cdot)) \circ \phi_{ij} \circ (\pi_2 \circ \Theta_{L_i} (L_i', \cdot))^{-1}  \label{eq-pp-bg-mult} \\
\phi_{ij} &= (\pi_2 \circ \Theta_{L_j} (L_j', \cdot))^{-1} \circ \widehat{\phi_{ij}} \circ (\pi_2 \circ \Theta_{L_i} (L_i', \cdot)) \notag \\
\widehat{\phi_{13}} &= (\pi_2 \circ \Theta_{L_3} (L_3', \cdot)) \notag \\
 &\circ (\pi_2 \circ \Theta_{L_3} (L_3', \cdot))^{-1} \circ \widehat{\phi_{23}} \circ (\pi_2 \circ \Theta_{L_2} (L_2', \cdot)) \notag \\
 &\circ (\pi_2 \circ \Theta_{L_2} (L_2', \cdot))^{-1} \circ \widehat{\phi_{12}} \circ (\pi_2 \circ \Theta_{L_1} (L_1', \cdot)) \notag \\
 &\circ (\pi_2 \circ \Theta_{L_1} (L_1', \cdot))^{-1} \notag \\
 &= \widehat{\phi_{23}} \circ \widehat{\phi_{12}}. \notag
\end{align}
Since the $\widehat{\phi_{ij}}$ are in $T(L, L)$ and composition is continuous in the various equivalent topologies on that set, $\widetilde{\widetilde{m}}$ is locally continuous - the other components such as $\widetilde{\gamma}$ just tag along unchanged - and hence is continuous.
\end{proof}

\section{The Bundle Gerbe Construction}\label{s-bndl-gerb-cons}

With all the pieces in hand, we now construct the bundle gerbe.
\begin{prop}\label{p-bg}
\index{bundle gerbe!construction}
\index{paracompact}
\index{good cover}
(The Bundle Gerbe Construction).
The object given by
\[
\begindc{\commdiag}[5]
\obj(10,30)[objP]{$P$}
\obj(10,20)[objYY]{$Y^{[2]}$}
\obj(20,20)[objY]{$Y$}
\obj(20,10)[objLM]{$LM$}
\mor{objP}{objYY}{$p$}
\mor(10,21)(20,21){$\pi_1$}
\mor(10,19)(20,19){$\pi_2$}
\mor{objY}{objLM}{$\pi$}
\enddc
\]
is a continuous bundle gerbe, where $Y \rightarrow LM$ is the polarization class bundle of definition \ref{d-pol-clas-bndl}, $P$, $p$, and bundle gerbe multiplication come from proposition \ref{p-bg-p}.
\end{prop}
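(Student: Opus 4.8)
The plan is to verify, in turn, the three numbered requirements of definition \ref{d-cont-bndl-gerb} together with its standing hypotheses on the base, pulling each ingredient from a proposition already proved; essentially the statement is the assembly of propositions \ref{p-smth-free-loop-spac-frec-mfld}, \ref{p-loop-fb-is-frec-fb}, \ref{p-bg-t}, and \ref{p-bg-p}. First, the ambient conditions on $X = LM$: we need $LM$ paracompact and such that every open cover has a refinement that is a good cover. Both are furnished by proposition \ref{p-smth-free-loop-spac-frec-mfld}, which shows $LM$ is a \Frechet manifold, hence metrizable and hereditarily paracompact, and — using lemma \ref{l-frec-spac} applied to the chart domains $U_\alpha$ — that any open cover of $LM$ refines to a good cover. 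So here I would simply cite that proposition.

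Next, condition (1): $Y \to LM$ is a continuous surjection with local sections. By definition \ref{d-pol-clas-bndl}, $Y = L\SO(E) \cross_{L\SO(n)} \Lagr_{res}$ is the bundle associated to the \Frechet principal bundle $L\SO(E) \to LM$ (example \ref{e-loop-soe}, proposition \ref{p-loop-smth-pb-is-frec-pb}) via the continuous $L\SO(n)$-action of proposition \ref{p-lson-in-ores}; by proposition \ref{p-loop-fb-is-frec-fb} it is a \Frechet, in particular topological, fiber bundle over $LM$. A locally trivial fiber bundle is a continuous surjection admitting local sections — compose a local trivialization with $x \mapsto (x, w_0)$ for a fixed $w_0 \in \Lagr_{res}$ — and surjectivity onto all of $LM$ was already noted in example \ref{e-loop-soe} since $\SO(n)$ is connected.

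Then conditions (2) and (3). That $P \to Y^{[2]}$ is a continuous principal $\UU(1)$ bundle is exactly proposition \ref{p-bg-p}: $P = \Xi^{*}(L\SO(E) \cross_{L\SO(n)} T)$, where $T \to \Lagr_{res} \cross \Lagr_{res}$ is the principal $\UU(1)$ bundle of proposition \ref{p-bg-t}, the associated bundle construction preserves the residual right $\UU(1)$-action on the $T$-factor, and $\Xi$ (lemma \ref{l-iden-xi}) is a homeomorphism over the identity on $LM$, so pulling back along it again yields a principal $\UU(1)$ bundle. Proposition \ref{p-bg-p} also constructs the bundle gerbe multiplication $m \colon \pi_{12}^{*} P \tensor \pi_{23}^{*} P \isomto \pi_{13}^{*} P$ as a principal $\UU(1)$ bundle isomorphism over $Y^{[3]}$; what remains is the commuting associativity diagram of definition \ref{d-cont-bndl-gerb} over $Y^{[4]}$. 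Since $m$ is defined fiberwise by composition of intertwiners, $\phi_{12} \mapsto \phi_{23} \circ \phi_{12}$, and composition of Clifford-linear unitary isomorphisms is associative, both composites around that square send a triple $(\phi_{12}, \phi_{23}, \phi_{34})$ fiberwise to $\phi_{34} \circ \phi_{23} \circ \phi_{12}$; they therefore agree on fibers, and since $m$ and all its pullbacks are bundle isomorphisms they agree as maps of principal $\UU(1)$ bundles, up to the canonical associativity and pullback isomorphisms of note \ref{n-u1pb-tens-dual} that the definition permits us to treat as identities. This establishes all three conditions.

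The main obstacle, such as it is, is bookkeeping rather than mathematics: one must arrange that $\Xi$ together with its $Y^{[3]}$- and $Y^{[4]}$-analogues, and the canonical tensor/pullback identifications, are composed compatibly so the associativity diagram literally closes up. Because every such identification is canonical and natural, this is routine; no genuinely new estimate or construction enters here.
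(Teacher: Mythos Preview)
Your proposal is correct and follows essentially the same approach as the paper: cite proposition \ref{p-smth-free-loop-spac-frec-mfld} for the paracompactness and good-cover refinement properties of $LM$, observe that $Y \to LM$ is a locally trivial fiber bundle and hence admits local sections, and defer the principal $\UU(1)$ bundle structure of $P$ and the associative multiplication to proposition \ref{p-bg-p}. The paper's own proof is even terser than yours---two sentences---since the statement of proposition \ref{p-bg-p} already asserts that $m$ ``has the required associativity,'' so your explicit unwinding of associativity via composition of intertwiners, while correct, is redundant with what was already packaged there.
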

\begin{proof}
Since $M$ is a smooth manifold, by proposition \ref{p-smth-free-loop-spac-frec-mfld} $LM$ is paracompact, and any cover has a refinement that is a good cover. Since $Y \rightarrow LM$ is a locally trivial fiber bundle, it has local sections.
\end{proof}

\chapter{THE BUNDLE GERBE CONSTRUCTION FUNCTOR}\label{c-bndl-gerb-cnst-func}

This chapter shows the functoriality of the bundle gerbe construction, which implies that it is independent of the choice of fiberwise inner product on the vector bundle with which it starts.  Then it gives a stability property of the construction.

\section{Bundle Gerbe Construction Functoriality}\label{s-bndl-gerb-cnst-func}

The basis of the functoriality of the bundle gerbe construction is that the $Y$ and $P$ spaces of the bundle gerbe are bundles associated to $L\SO(E)$.  A vector bundle morphism $E \rightarrow F$ becomes post-composition of elements of $L\SO(E)$ (see section \ref{s-tech-over}) with the morphism, not disturbing the right-hand factors of the associated bundles.  One reason the functoriality is interesting is for use in chapter \ref{c-futu-work}, where, if the original smooth vector bundle $E \rightarrow M$ has a spin structure, the case of a particular universal vector bundle might imply the conjecture generally, that the transgression of the first Pontryagin class of the vector bundle equals plus or minus twice the Dixmier-Douady class of the bundle gerbe.

\begin{defn}\label{d-bg-func}
\index{bundle gerbe!construction!functor}
(The Bundle Gerbe Construction Functor).
Denote the bundle gerbe construction functor by $G \colon \mathcal{V} \rightarrow \mathcal{B}$.  Define the objects of category $\mathcal{V}$ as oriented smooth vector bundles of even rank with fiberwise inner product, over compact connected orientable smooth manifolds with Riemannian metric.  Let the morphisms of $\mathcal{V}$ be smooth orientation-preserving vector bundle morphisms that are isometric linear isomorphisms on the fibers, written as pairs of maps, $(\overline{f}, f)$, the first component being the map on the total spaces, and the second, the map on the base spaces.  However, we will also refer to the pair by the shorthand $f$.  Let the category $\mathcal{B}$ consist of continuous bundle gerbes over topological spaces, and their morphisms.  The action of $G$ on $\Obj(\mathcal{V})$ is as in proposition \ref{p-bg}. Given a morphism $(\overline{f}, f) \colon (E, M) \rightarrow (F, N)$ of $\mathcal{V}$, $G$ produces $(\widehat{G(f)}, \overline{G(f)}, G(f)) \colon (P, Y, LM) = G(E, M) \rightarrow G(F, N) = (Q, Z, LN)$:
\[
\begindc{\commdiag}[5]

\obj(10,20)[objE]{$E$}
\obj(10,10)[objM]{$M$}
\obj(20,20)[objF]{$F$}
\obj(20,10)[objN]{$N$}
\mor{objE}{objM}{}
\mor{objE}{objF}{$\overline{f}$}
\mor{objM}{objN}{$f$}
\mor{objF}{objN}{}

\obj(28,20)[objGs]{}
\obj(38,20)[objGt]{}
\mor{objGs}{objGt}{$G$}[\atleft,\aplicationarrow]

\obj(46,30)[objP]{$P$}
\obj(46,20)[objYY]{$Y^{[2]}$}
\obj(56,20)[objY]{$Y$}
\obj(56,10)[objLM]{$LM$}
\mor{objP}{objYY}{}
\mor(47,21)(56,21){}
\mor(47,19)(56,19){}
\mor{objY}{objLM}{}

\obj(86,30)[objQ]{$Q$}
\obj(86,20)[objf]{$Z^{[2]}$}
\obj(76,20)[objZ]{$Z$}
\obj(76,10)[objLN]{$LN$}
\mor{objQ}{objf}{}
\mor(86,21)(76,21){}
\mor(86,19)(76,19){}
\mor{objZ}{objLN}{}

\mor{objP}{objQ}{$\widehat{G(f)}$}
\mor{objY}{objZ}{$\overline{G(f)}$}
\mor{objLM}{objLN}{$G(f)$}

\enddc
\]
The assumption of orientability the base manifolds of the vector bundles may not be needed; see discussion in the proof of proposition \ref{p-smth-free-loop-spac-frec-mfld}.

Define $G(f) = L(f)$; i.e., $G(f)(\gamma) = L(f) (\gamma) = (f  \circ \gamma)$.

Define $\overline{G(f)}$ as the map $Y \rightarrow Z$ induced, using the associated bundle definitions of $Y$ and $Z$, by the right-$L\SO(n)$-equivariant map $L\SO(E) \rightarrow L\SO(F)$ defined by post-composition with $\overline{f} \in \SO(E, F)$, where $\SO(E, F)$ denotes the smooth orientation-preserving vector bundle morphisms $E \rightarrow F$ that are isometric linear isomorphisms on fibers.  In other words, $\widetilde{\gamma} \in L\SO(E) \mapsto (Lf) (\widetilde{\gamma}) = \overline{f} \circ \widetilde{\gamma} \in L\SO(F)$.

Define $\widehat{G(f)} \colon P \rightarrow Q$ by the same means using the associated bundle definitions of $P$ and $Q$.
\end{defn}

\begin{prop}\label{p-bg-func}
\index{bundle gerbe!construction!functor}
(The Bundle Gerbe Construction Functor).
$G$ is a functor.
\end{prop}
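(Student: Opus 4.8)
To prove $G$ is a functor I must verify three things: (i) for each morphism $(\overline{f},f)\colon(E,M)\to(F,N)$ of $\mathcal{V}$ the triple $(\widehat{G(f)},\overline{G(f)},G(f))$ of definition \ref{d-bg-func} is a well-defined morphism of continuous bundle gerbes $G(E,M)\to G(F,N)$ in the sense of definition \ref{d-bg-mor}; (ii) $G(\ident_{(E,M)})=\ident_{G(E,M)}$; and (iii) $G(f_2\circ f_1)=G(f_2)\circ G(f_1)$. The governing observation, which makes all of this essentially formal, is that the bundle gerbe produced by proposition \ref{p-bg} has $Y$ and $P$ built as associated bundles $L\SO(E)\cross_{L\SO(n)}(-)$, with all of the bundle gerbe structure (the $\UU(1)$ action, the $\UU(1)$ bundle projection, the bundle gerbe multiplication, the local trivializations) living on the right-hand factors $\Lagr_{res}$ and $T$; whereas $G$ acts on a morphism by post-composition with $\overline{f}$, which affects only the left-hand $L\SO(E)$ factor. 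Since post-composition commutes with everything done on the right-hand factors, compatibility is immediate once well-definedness is checked.

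\textbf{Well-definedness and continuity of the three maps.} First, $G(f)=Lf\colon LM\to LN$ is continuous (indeed smooth) by corollary \ref{co-smth-loop-func}. Next, define $\psi_f\colon L\SO(E)\to L\SO(F)$ by $\widetilde{\gamma}\mapsto\overline{f}\circ\widetilde{\gamma}$. This is well-defined because, by hypothesis on $\mathcal{V}$-morphisms, $\overline{f}$ restricts on each fiber to an orientation-preserving isometric isomorphism $E_x\to F_{f(x)}$, hence carries oriented orthonormal frames to oriented orthonormal frames; $\psi_f$ covers $Lf$; it is right-$L\SO(n)$-equivariant since $(\overline{f}\circ\widetilde{\gamma})\circ g=\overline{f}\circ(\widetilde{\gamma}\circ g)$; and it is continuous, being $L$ applied to the smooth bundle map $\SO(E)\to\SO(F)$. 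Forming $\psi_f\cross_{L\SO(n)}\ident_{\Lagr_{res}}$ and $\psi_f\cross_{L\SO(n)}\ident_{T}$ and using that a continuous equivariant map descends to a continuous map of orbit spaces \citep[page~4]{tomD87} yields $\overline{G(f)}\colon Y\to Z$ and $\widehat{G(f)}\colon P\to Q$; the former covers $Lf$, and $\widehat{G(f)}$ is $\UU(1)$-equivariant because the $\UU(1)$ action sits on the untouched $T$ factor. The identifications $\Xi$ of lemma \ref{l-iden-xi} that trim a redundant copy of $L\SO(E)$ from $Y^{[2]}$ (and its analogue for $Y^{[3]}$) are natural with respect to these maps: $\Xi$ is built solely from the translation function $\tau$, and $\psi_f$ being $L\SO(n)$-equivariant forces $\tau(\psi_f(q_1),\psi_f(q_2))=\tau(q_1,q_2)$, so post-composition with $\overline{f}$ commutes with $\Xi$.

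\textbf{The bundle gerbe morphism conditions and functoriality.} The first two conditions of definition \ref{d-bg-mor}, namely that $\overline{G(f)}$ is over $G(f)$ and $\widehat{G(f)}$ is over $\overline{G(f)}^{[2]}$, are immediate from the associated-bundle descriptions of $p$, $\pi$, and $\pi_1,\pi_2$, together with the $\Xi$-naturality just noted. The third, compatibility with bundle gerbe multiplication, follows because by proposition \ref{p-bg-p} the multiplication $m$ is induced from the map $\widetilde{\widetilde m}$ that composes the $T$-components while the $L\SO(E)$ component ``tags along unchanged''; post-composition with $\overline{f}$ likewise changes only that $L\SO(E)$ component and leaves the $T$-components fixed, so the two operations commute and the square $m_Q\circ(\pi_{12}^{*}\widehat{G(f)}\tensor\pi_{23}^{*}\widehat{G(f)})=(\pi_{13}^{*}\widehat{G(f)})\circ m_P$ commutes. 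For (ii), when $\overline{f}=\ident_E$ post-composition is the identity, so $\psi_{\ident}=\ident_{L\SO(E)}$, $Lf=\ident_{LM}$, and hence $\overline{G(f)}=\ident_Y$, $\widehat{G(f)}=\ident_P$. For (iii), $\overline{f_2\circ f_1}=\overline{f_2}\circ\overline{f_1}$ gives $\psi_{f_2\circ f_1}=\psi_{f_2}\circ\psi_{f_1}$, and $L(f_2\circ f_1)=Lf_2\circ Lf_1$ by corollary \ref{co-smth-loop-func}; passing to associated bundles preserves composition, so $G(f_2\circ f_1)=G(f_2)\circ G(f_1)$ at each of the three levels.

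\textbf{Expected main obstacle.} Conceptually there is nothing hard here: the proof is the slogan ``the gerbe data are on the right factor, the functorial action is on the left factor, and they commute.'' The real work is bookkeeping: making precise the $\Xi$-identifications and the canonical isomorphisms of pullbacks to $Y^{[2]}$ and $Y^{[3]}$ that are treated as equalities, and checking that the continuity of $\widehat{G(f)}$ and the commuting of the multiplication square survive those identifications. I expect that verifying the multiplication-compatibility square at the level of the pulled-back $\UU(1)$ bundles over $Y^{[3]}$ (rather than merely fiberwise) will be the most delicate point, handled exactly as in the continuity argument of proposition \ref{p-bg-p} by working with the equivariant map $\widetilde{\widetilde m}$ on the cross products before descending.
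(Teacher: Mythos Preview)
Your proposal is correct and follows essentially the same approach as the paper: both arguments exploit that $G$ acts on morphisms by post-composition with $\overline{f}$ on the $L\SO(E)$ factor while all bundle gerbe structure (the $\UU(1)$ action, the projection $p$, the multiplication $m$) lives on the untouched right-hand factors $\Lagr_{res}$ and $T$, so compatibility is automatic once equivariance and descent are checked. The paper carries out the covering and multiplication-compatibility checks by explicit element-level computations, whereas you phrase them more structurally via the ``left factor / right factor'' slogan and $\Xi$-naturality, but the content is the same.
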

\begin{proof}
$G(f)$ is continuous since $L(f) \colon LM \rightarrow LN$ is smooth by proposition \ref{p-loop-smth-map-smth} and hence continuous by definition \ref{d-frec-deri} of \Frechet differentiability, extended in the usual way to \Frechet manifolds.  For the same reason, $L\overline{f} \colon LE \rightarrow LF$ is continuous.

Since elements of $L\SO(E)$ are maps $\widetilde{\gamma} \colon LE \leftarrow L\RR^n$, $(L\overline{f}) (\widetilde{\gamma}) \in L\SO(F)$.  Post-composition with $\overline{f}$ being equivariant with respect to pre-composition with elements of $L\SO(n)$, the continuous maps $L\overline{f}$ induces of cartesian products with left factor $L\SO(E)$ to those with left factor $L\SO(F)$, descend to continuous maps of the corresponding associated bundles, because a continuous equivariant map descends to a continuous map of the orbit spaces \citep[page~4]{tomD87}.  Thus $\overline{G(f)}$ and $\widehat{G(f)}$ are continuous.  The latter is a principal $\UU(1)$ bundle morphism because post-composition with $\overline{f}$ is $\UU(1)$-equivariant.  We have that $\overline{G(f)}$ covers $G(f)$ and $\widehat{G(f)}$ covers $\overline{G(f)}$:
\begin{align}
(\pi_Z \circ \overline{G(f)}) ([(\widetilde{\gamma}, \dots)]) &= \pi_Z ([((L\overline{f}) (\widetilde{\gamma}), \dots)]) = (L\pi_F) ((L\overline{f}) (\widetilde{\gamma})) \notag \\
&= \pi_F \circ \overline{f} \circ \widetilde{\gamma} \notag \\
&= f \circ \pi_E \circ \widetilde{\gamma} \text{ (} = f \circ \gamma \text{)} \notag \\
&= f \circ L\pi_E (\widetilde{\gamma}) = (f \circ \pi_Y) ([(\widetilde{\gamma}, \dots)]) \notag \\
&= (G(f) \circ \pi_Y) ([(\widetilde{\gamma}, \dots)]). \notag \\
(q \circ \widehat{G(f)}) ([(\widetilde{\gamma}, \dots')]) &= q ([((L\overline{f}) (\widetilde{\gamma}), \dots')]) = ([((L\overline{f}) (\widetilde{\gamma}), \dots'')]) \notag \\
&= \overline{G(f)}_2 ([(\widetilde{\gamma}, \dots'')]) \notag \\
&= (\overline{G(f)}_2 \circ p) ([(\widetilde{\gamma}, \dots')]) \notag.
\end{align}

$\widehat{G(f)}$ commutes with bundle gerbe multiplication over $\overline{G(f)}_3$ because that multiplication is defined by composition of intertwiners that are elements of the right hand factor of the associated bundle construction of $P$ and $Q$, whereas $\widehat{G(f)}$ acts on the left on the left hand factor, covering $\overline{G(f)}_2$.  That is, writing $\widehat{G(f)}$ rather than the more precise $\pi_{13}^{*} \widehat{G(f)}$ that takes into account which fiber product of $Y$ the bundles are over, and similarly for $12, 23$,
\begin{align}
\widehat{G(f)} (m_P ([(\widetilde{\gamma}, \dots, \phi_{12})] \tensor [(\widetilde{\gamma}, \dots, \phi_{23})])) &= \widehat{G(f)} ([(\widetilde{\gamma}, \dots, \phi_{23} \circ \phi_{12})]) \notag \\
&= [(\overline{f} \circ \widetilde{\gamma}, \dots, \phi_{23} \circ \phi_{12})], \notag \\
m_Q ((\widehat{G(f)} ([(\widetilde{\gamma}, \dots, \phi_{12})])) \tensor (\widehat{G(f)} ([(\widetilde{\gamma}, \dots, \phi_{23})]))) &= m_Q ([(\overline{f} \circ \widetilde{\gamma}, \dots, \phi_{12})] \notag \\
&\tensor [(\overline{f} \circ \widetilde{\gamma}, \dots, \phi_{23})]) \notag \\
&= [(\overline{f} \circ \widetilde{\gamma}, \dots, \phi_{23} \circ \phi_{12})]. \notag
\end{align}
Thus $\overline{G(f)}$ is a bundle gerbe morphism.  To conclude that $G$ is a functor, it only remains to note that it maps identity morphisms in $\mathcal{V}$ to identity morphisms in $\mathcal{B}$, and that it preserves composition of morphisms because it more or less maps a morphism to composition on the left by that morphism.
\end{proof}

Now we can show that the bundle gerbe constructed, up to stable isomorphism, doesn't depend on the fiberwise inner product chosen for the vector bundle.  First, a lemma for inner products on a vector space.
\begin{lem}\label{l-hilb-spac-inne-prod}
\index{Hilbert space!two inner products}
(Obtaining an Isometric Linear Isomorphism from Two Inner Products on a Vector Space).
Given a finite-dimensional real or complex Hilbert space $V$ with Hermitian inner product $\langle , \rangle$, there is a bijection between Hermitian inner products $\langle , \rangle_2$ on $V$, and self-adjoint strictly positive linear automorphisms $S$ of $V$, which thus have positive eigenvalues and preserve orientation, such that for every $v, w \in V$,
\[
\langle v, w \rangle_2 = \langle S (v), S (w) \rangle. \notag
\]
By strictly positive is meant that for every $0 \ne v \in V$, $\langle S (v), v \rangle > 0$.

Considering the Hermitian inner products on $V$ as a subset $Inn(V)$ of $Sesq(V)$, the Banach space of bounded sesquilinear forms on $V$, with norm $\norm{\langle , \rangle_2} = \sup_{\norm{v}, \norm{w} \le 1} \abs{\langle v, w \rangle_2}$. Then $Inn(V)$ is an open set in $Sesq(V)$, and the map $Inn(V) \rightarrow \Hom (V)$, the set of bounded operators on $V$, given by $\langle , \rangle_2 \mapsto S$, is smooth as defined using the Banach space structures of $Sesq(V)$ and $\Hom(V)$.
\end{lem}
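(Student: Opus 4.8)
The plan is to treat the statement in three independent parts: the set-theoretic bijection together with the formula, the openness of $Inn(V)$ in $Sesq(V)$, and the smoothness of $\langle , \rangle_2 \mapsto S$.

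First I would set up the bijection. Given $\langle , \rangle_2 \in Inn(V)$, it is in particular a bounded sesquilinear form, so by the standard representation of sesquilinear forms on a Hilbert space (\citet[page~31]{Conw90}, as already used in proposition \ref{p-ssl-cx-bana}) there is a unique $T \in \Hom(V)$ with $\langle v, w \rangle_2 = \langle T(v), w \rangle$ for all $v, w \in V$. The Hermitian symmetry of $\langle , \rangle_2$ forces $T^{*} = T$, and its positive-definiteness forces $\langle T(v), v \rangle > 0$ for $0 \ne v \in V$, so $T$ is self-adjoint and strictly positive; in particular $T$ is an automorphism with positive (real) spectrum. Let $S = T^{\frac{1}{2}}$ be the unique strictly positive self-adjoint square root of $T$ (existence as used in lemma \ref{l-map-lagr-symv}, citing \citet[page~92]{Pede89}). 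Then $S$ is a self-adjoint strictly positive automorphism, its eigenvalues are positive so $\Det(S) > 0$ and it preserves orientation (in the complex case via $\Det_{\RR} S = \abs{\Det_{\CC} S}^2 > 0$), and $\langle S(v), S(w) \rangle = \langle S^2(v), w \rangle = \langle T(v), w \rangle = \langle v, w \rangle_2$. Conversely, for any self-adjoint strictly positive automorphism $S$, the assignment $(v, w) \mapsto \langle S(v), S(w) \rangle$ is a Hermitian inner product on $V$; and since the positive square root of a strictly positive self-adjoint operator is unique, these two constructions are mutually inverse, giving the claimed bijection and the displayed formula.

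Next I would check openness. Here $Sesq(V)$ is the real Banach space of Hermitian (conjugate-symmetric) bounded sesquilinear forms, on which the representation $\langle , \rangle_2 \mapsto T$ of the previous paragraph is a bounded real-linear isomorphism onto the self-adjoint operators in $\Hom(V)$ (bounded because $\norm{\langle , \rangle_2} = \sup_{\norm{v}, \norm{w} \le 1} \abs{\langle T(v), w \rangle} = \norm{T}$). Membership in $Inn(V)$ then amounts to $T = T^{*}$ having smallest eigenvalue $\lambda_{\min}(T) > 0$. Since $V$ is finite-dimensional, $T \mapsto \lambda_{\min}(T)$ is continuous, so $\{ \lambda_{\min} > 0 \}$ is open, whence $Inn(V)$ is open in $Sesq(V)$.

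Finally, for smoothness of $\langle , \rangle_2 \mapsto S$, I would write this map as the composition of the bounded linear isomorphism $\langle , \rangle_2 \mapsto T$ (smooth as a map of Banach spaces by definition \ref{d-frec-deri} applied in the Banach setting) with the operator square root $T \mapsto T^{\frac{1}{2}}$ on the open set $\mathcal{P}$ of strictly positive self-adjoint operators in $\Hom(V)$; so it suffices to show the latter is smooth on $\mathcal{P}$. I would apply the inverse function theorem to the squaring map $q \colon \mathcal{P} \rightarrow \mathcal{P}$, $q(S) = S^2$, which is smooth (polynomial) with derivative $D q(S) H = S H + H S$. For $S$ strictly positive self-adjoint this Lyapunov operator on $\Hom(V)$ is injective ($S H = - H S$ with $S$ positive definite forces $H = 0$, e.g. by diagonalizing $S$), hence invertible since $\Hom(V)$ is finite-dimensional; so $q$ is a local diffeomorphism near each point of $\mathcal{P}$, and being a bijection of $\mathcal{P}$ onto itself (uniqueness of positive square roots), its global inverse $T \mapsto T^{\frac{1}{2}}$ is smooth on $\mathcal{P}$. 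Composing gives smoothness of $\langle , \rangle_2 \mapsto S$.

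The main obstacle is the last step: establishing smoothness of the operator square root, i.e. the invertibility of the Lyapunov derivative $H \mapsto S H + H S$ and the appeal to the inverse function theorem; the rest is routine. An alternative to the inverse-function-theorem argument would be the holomorphic functional calculus representation $T^{\frac{1}{2}} = \frac{1}{2 \pi i} \oint_{\Gamma} z^{\frac{1}{2}} (z - T)^{-1} \, dz$ with $\Gamma$ a contour enclosing the positive real spectrum, differentiated under the integral sign; but in finite dimensions the inverse-function-theorem route is cleaner.
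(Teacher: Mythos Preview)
Your proposal is correct, and the construction of the bijection via $T$ and $S = T^{1/2}$ is essentially the same as the paper's. The paper is terser about openness of $Inn(V)$, simply asserting it after noting that $\phi \colon Sesq(V) \rightarrow \Hom(V)$ is a linear isometry; your argument via continuity of $\lambda_{\min}$ is a reasonable way to make this explicit.

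The genuine difference is in the smoothness of the square root. The paper argues via analyticity: it invokes the Taylor series for $z \mapsto \sqrt{z}$ about $1 \in \CC$, rescales to cover the spectrum of $T$, and cites \citet{Mich46} (and an unpublished thesis of Martin for the complex case) for smoothness of the resulting operator-valued power series. Your route through the inverse function theorem applied to $q(S) = S^2$, with derivative the Lyapunov operator $H \mapsto SH + HS$, is a cleaner and more self-contained alternative in finite dimensions: it avoids the external references and the scaling bookkeeping, at the cost of checking injectivity of the Lyapunov operator (which you do). One small point to tidy: since you want the inverse to be the \emph{positive self-adjoint} square root, it is cleanest to view $q$ as a map on the open set $\mathcal{P}$ inside the real Banach space of self-adjoint operators (where the Lyapunov operator also preserves self-adjointness and is still invertible), rather than on all of $\Hom(V)$; this makes the application of the inverse function theorem literal. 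Your holomorphic-functional-calculus alternative is close in spirit to what the paper actually does.
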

\begin{proof}
Being finite-dimensional, all Hermitian inner products on $V$ give rise to equivalent norms, all linear operators and sesquilinear forms are bounded.  Lemma 3.2.2 of \citet[page~89]{Pede89} gives an isometric isomorphism $\phi \colon Sesq(V) \rightarrow \Hom(V)$, mapping $b \in Sesq(V)$ to $T$ such that for every $x, y \in V$, $\langle x, y \rangle_2 = \langle T (x), y \rangle$.  Being a linear operator of finite-dimensional normed linear spaces, $\phi$ is smooth.  The set $Inn(V)$ is open in $Sesq(V)$, so the restriction $\phi_{|Inn(V)}$ is smooth.

Given $b \in Inn(V)$, $T = \phi(b)$ is self adjoint, since
\begin{align}
\langle v, T^* (w) \rangle &= \langle T (v), w \rangle \notag \\
 &= \langle v, w \rangle_2 \notag \\
 &= \overline{\langle w, v \rangle_2} \notag \\
 &= \overline{\langle T (w), v \rangle} \notag \\
 &= \langle v, T (w) \rangle \notag
\end{align}
for every $v, w \in V$.  $T$ is positive because $\langle T (v), v \rangle = \langle v, v \rangle_2 \ge 0$; in fact, $\langle T (v), v \rangle > 0$ for $v \ne 0$.  Thus \citep[page~92]{Pede89} there is a unique self-adjoint positive square root $S$ of $T$, and
\begin{align}
\langle S (v), S (w) \rangle &= \langle S^* S (v), w \rangle \notag \\
 &= \langle S^2 (v), w \rangle \notag \\
 &= \langle T (v), w \rangle \notag \\
 &= \langle v, w \rangle_2 \notag
\end{align}
for every $v, w \in V$.

We have given a map $\langle , \rangle_2 \mapsto S$, and now show that it's a bijection from Hermitian inner products to self-adjoint strictly positive operators.  Injectivity is because if two such inner products differed, there would be a pair of vectors $v, w$ for which they differed, resulting in different operators $T$ and hence $S$.  Surjectivity is because given such an $S$, $\langle S^2 (\cdot), \cdot \rangle = \langle S (\cdot), S (\cdot) \rangle$ is a Hermitian inner product, from which the construction again obtains $S$.

Since the Taylor series for $z \mapsto \sqrt{z}$ about $1 \in \CC$ has radius of convergence $1$, scaling as needed and applying \citet[page~59]{Mich46} for $\RR$ or his quotation of the unpublished thesis \citet{Mart32} for $\CC$, $S$ is a smooth function of $T$.
\end{proof}
If the roles of $\langle , \rangle$ and $\langle , \rangle_2$ are reversed, $T$ and $S$ are replaced by their inverses.

Next, a lemma for fiberwise inner products on a vector bundle.
\begin{lem}\label{l-fibe-inne-prod}
\index{fiberwise inner product!vector bundle}
(Obtaining an Isometric Vector Bundle Isomorphism from Two Fiberwise Inner Products on a Vector Bundle).
Let $E$ be a smooth vector bundle with fiberwise inner product, a smooth manifold $M$.  Denote by $E_2$ the same vector bundle with a possibly different fiberwise inner product.  Then there is a vector bundle isomorphism $E \rightarrow E_2$ preserving inner products, over the identity on $M$.  If $E$ is oriented, the isomorphism preserves the orientation.
\end{lem}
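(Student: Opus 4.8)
The plan is to apply Lemma \ref{l-hilb-spac-inne-prod} fiberwise and then verify that the resulting family of operators varies smoothly, so that it assembles into a vector bundle isomorphism. Over each $x \in M$ the fiber $E_x$ is a finite-dimensional real vector space carrying two inner products: $(,)|_{E_x}$ coming from $E$, and $({,})_2|_{E_x}$ coming from $E_2$. Taking $({,})_2|_{E_x}$ as the reference inner product $\langle,\rangle$ of Lemma \ref{l-hilb-spac-inne-prod} and $(,)|_{E_x}$ as the other inner product $\langle,\rangle_2$, the lemma produces a unique $({,})_2|_{E_x}$-self-adjoint, strictly positive automorphism $S_x$ of $E_x$ with $(v,w) = (S_x v, S_x w)_2$ for all $v,w \in E_x$. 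I would then define $\phi\colon E \to E_2$ by $\phi(v) = S_{\pi_E(v)}(v)$: this is linear and invertible on each fiber, covers $\ident_M$, and carries $(,)$ to $({,})_2$ by construction. Since $S_x$ depends only on the two inner products on the single space $E_x$ (by the uniqueness clause of Lemma \ref{l-hilb-spac-inne-prod}), it is independent of any trivialization, so the local descriptions of $\phi$ patch together.

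The substantive point is smoothness of $x \mapsto S_x$, which, being local, I would check over the domain $U$ of a local trivialization $E|_U \cong U \times V$ (which also trivializes $E_2|_U$, the underlying bundle being the same). Under the trivialization the two fiberwise inner products become smooth maps $h, h_2 \colon U \to \mathrm{Inn}(V) \subset \mathrm{Sesq}(V)$; smoothness of these is exactly the "depends smoothly on $x$" clause of assumption \ref{a-mfld-vb}, applied to $E$ and to $E_2$. Fixing once and for all the standard inner product $\langle,\rangle_0$ on $V$, Lemma \ref{l-hilb-spac-inne-prod} yields smooth maps $x \mapsto A_x$ and $x \mapsto B_x$ into $\mathrm{Hom}(V)$ with $h(x) = \langle A_x\cdot, A_x\cdot\rangle_0$ and $h_2(x) = \langle B_x\cdot, B_x\cdot\rangle_0$. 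A direct computation then identifies $S_x = B_x^{-1}\bigl((A_xB_x^{-1})^{*}(A_xB_x^{-1})\bigr)^{1/2}B_x$, where the square root is the ordinary $\langle,\rangle_0$-positive square root of a $\langle,\rangle_0$-self-adjoint positive operator; since composition, adjoint, and inversion of (finite-dimensional, hence bounded) operators are smooth, and the positive square root is a smooth function of its argument by the analyticity argument already invoked in Lemma \ref{l-hilb-spac-inne-prod} (Michal, Martin), $x \mapsto S_x$ is smooth on $U$. Thus $\phi$ is a smooth map of total spaces, and being a fiberwise-linear fiberwise-isomorphism over $\ident_M$ it is a vector bundle isomorphism (its fiberwise inverse is again smooth, inversion being smooth), as in definition \ref{d-fb-vb}.

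For orientation: each $S_x$ is strictly positive, hence has positive eigenvalues by Lemma \ref{l-hilb-spac-inne-prod}, hence positive determinant in every basis of $E_x$; so $\phi$ restricted to each fiber carries positively oriented frames to positively oriented frames. Since an orientation of the underlying vector bundle does not depend on the choice of fiberwise inner product, an orientation of $E$ is also one of $E_2$, and $\phi$ is orientation-preserving for these orientations.

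The main obstacle, modest as it is, is the smoothness bookkeeping in the second step: Lemma \ref{l-hilb-spac-inne-prod} is stated with one \emph{fixed} reference inner product, whereas the natural reference here, $({,})_2$ on $E_x$, varies with $x$. The remedy is the factorization above — pass through a genuinely fixed reference $\langle,\rangle_0$ on the model space $V$, and reconstitute $S_x$ from $A_x$ and $B_x$ by finitely many smooth algebraic operations together with one smooth positive square root. Everything else — fiberwise linearity, invertibility, the inner-product identity, covering the identity, and the orientation statement — is immediate from the construction.
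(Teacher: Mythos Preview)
Your proof is correct and follows the same plan as the paper: apply Lemma \ref{l-hilb-spac-inne-prod} fiberwise and verify smoothness in a local trivialization. The paper streamlines the smoothness check by choosing the trivialization to be orthonormal for the first inner product, so that only the second inner product varies and Lemma \ref{l-hilb-spac-inne-prod} applies directly with the fixed standard reference on $\RR^n$; your factorization through the auxiliary operators $A_x$, $B_x$ and the explicit formula $S_x = B_x^{-1}\bigl((A_xB_x^{-1})^{*}(A_xB_x^{-1})\bigr)^{1/2}B_x$ achieves the same end with a bit more algebra but without needing to produce an orthonormal frame.
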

\begin{proof}
Denote the fiberwise inner product, for $v, w$ in the fiber of $E$ over $x \in M$, by $\langle v, w \rangle_x$.  Correspondingly, for $E_2$, denote the fiberwise inner product by $\langle v, w \rangle_{x, 2}$.  Apply the construction of lemma \ref{l-hilb-spac-inne-prod} to get for each $x \in M$ a self adjoint positive linear automorphism $S_x \colon E_x \rightarrow (E_2)_x$, resulting in a map $S \colon E \rightarrow E_2$ over $\ident_M$.  To see that $S$ is smooth, choose a vector bundle local trivialization over an open $U \subset M$, of $\pi \colon E \rightarrow M$:
\[
\pi^{-1} (U) \xrightarrow{\phi} U \cross \RR^n, \notag
\]
with the diffeomorphism $\phi$ isometric for each $x$ with respect to $\langle , \rangle_x$ and the standard inner product $\langle , \rangle_{\RR^n}$ on $\RR^n$.  Use $\phi$ to induce from $\langle , \rangle_{x, 2}$ another inner product $\langle , \rangle_{\RR^n, x, 2}$ on $\RR^n$, a smooth function of $x \in U$.  Let $S_{\RR^n, x} \in \Hom(\RR^n)$ be the automorphism of lemma \ref{l-hilb-spac-inne-prod}, a smooth function of $\langle , \rangle_{\RR^n, x, 2}$ and hence of $x$.

Choose any $x \in U$ and $v, w \in E_x$, let $\pi_2 \colon U \cross \RR^n \rightarrow \RR^n$ be the projection, and then
\begin{align}
\langle \pi_2 \circ \phi (S_x^2 (v)), \pi_2 \circ \phi (w) \rangle_{\RR^n} &= \langle S_x^2 (v), w \rangle_x \notag \\
 &= \langle v, w \rangle_{x, 2} \notag \\
 &= \langle \pi_2 \circ \phi (v), \pi_2 \circ \phi (w) \rangle_{\RR^n, x, 2} \notag \\
 &= \langle S_{\RR^n, x}^2 (\pi_2 \circ \phi (v)), \pi_2 \circ \phi (w) \rangle_{\RR^n}, \text{ whence} \notag \\
\pi_2 \circ \phi \circ S_x^2 &= S_{\RR^n, x}^2 \circ \pi_2 \circ \phi, \text{ so} \notag \\
\phi^{-1}(x, \pi_2 \circ \phi \circ S_x^2) &= \phi^{-1} (x, S_{\RR^n, x}^2 \circ \pi_2 \circ \phi), \text{ and} \notag \\
S_x^2 &= \phi^{-1} (x, S_{\RR^n, x}^2 \circ \pi_2 \circ \phi). \notag
\end{align}
Thus $S_x$ is a smooth function of $x$, and hence $S$ is a smooth function.
\end{proof}

\begin{cor}\label{co-inde-fibe-inne-prod}
\index{bundle gerbe!vector bundle!fiberwise inner product}
\index{fiberwise inner product!vector bundle}
(The Bundle Gerbe is Independent of the Fiberwise Inner Product).
Let $E$ be an oriented smooth vector bundle of even rank with fiberwise inner product, over a compact connected orientable smooth manifold $M$ with Riemannian metric.  Denote by $E_2$ the same vector bundle with a possibly different fiberwise inner product.  Then the vector bundle isomorphism $E \rightarrow E_2$ over the identity on $M$ from lemma \ref{l-fibe-inne-prod}, which preserves inner products, induces a bundle gerbe isomorphism $G(E) \rightarrow G(E_2)$ over the identity on $LM$.
\end{cor}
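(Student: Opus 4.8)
The plan is to deduce the statement immediately from the functoriality of the bundle gerbe construction established in Proposition \ref{p-bg-func}, by observing that ``the same vector bundle with a different fiberwise inner product'' is an isomorphism in the category $\mathcal{V}$ of Definition \ref{d-bg-func}. First I would invoke Lemma \ref{l-fibe-inne-prod} to produce the vector bundle isomorphism $\overline{f} \colon E \to E_2$ over $\ident_M$ that preserves inner products; since $E$ is oriented, $\overline{f}$ is orientation-preserving, and by the smoothness assertion in that lemma $\overline{f}$ is smooth. Both $(E, M)$ and $(E_2, M)$ are objects of $\mathcal{V}$ (same even rank $n$, same base manifold with its Riemannian metric, differing only in the choice of fiberwise inner product), and the pair $(\overline{f}, \ident_M)$ is then a morphism $(E, M) \to (E_2, M)$ in $\mathcal{V}$: it is a smooth orientation-preserving vector bundle morphism that is an isometric linear isomorphism on each fiber.

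Next I would apply the functor $G$ of Proposition \ref{p-bg-func} to this morphism, obtaining a bundle gerbe morphism $(\widehat{G(f)}, \overline{G(f)}, G(f)) \colon G(E, M) \to G(E_2, M)$. By Definition \ref{d-bg-func}, $G(f) = L(\ident_M) = \ident_{LM}$, so this morphism is over the identity on $LM$. To see that it is an isomorphism of bundle gerbes in the sense of Note \ref{n-bg-isom}, I would use that $\overline{f}$ is a vector bundle isomorphism, so $\overline{f}^{-1} \colon E_2 \to E$ is again an isometric orientation-preserving smooth vector bundle isomorphism over $\ident_M$, i.e.\ $(\overline{f}^{-1}, \ident_M)$ is the two-sided inverse of $(\overline{f}, \ident_M)$ in $\mathcal{V}$. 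By functoriality of $G$, the morphism $G(\overline{f}^{-1}, \ident_M)$ is a two-sided inverse of $(\widehat{G(f)}, \overline{G(f)}, \ident_{LM})$; hence $\widehat{G(f)}$ and $\overline{G(f)}$ are bijective, and being continuous with continuous inverses and, by the proof of Proposition \ref{p-bg-func}, a principal $\UU(1)$ bundle morphism respectively a fiber bundle morphism that is compatible with bundle gerbe multiplication, they constitute an isomorphism of bundle gerbes. By Assumption \ref{a-bndl-gerb} this is an isomorphism over the identity, giving $G(E) \cong G(E_2)$ over $\ident_{LM}$ as claimed.

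The only point requiring a moment's care — and the closest thing to an obstacle — is the unwinding that $G$ does indeed send a $\mathcal{V}$-morphism over $\ident_M$ to a bundle gerbe morphism over $\ident_{LM}$, which amounts to checking that post-composition with $\overline{f}$ carries $L\SO(E)$ homeomorphically onto $L\SO(E_2)$. This holds because $\overline{f}$ is fiberwise an isometry from $(E_x, (\,,\,))$ to $((E_2)_x, (\,,\,)_2)$ and orientation-preserving, so it sends an oriented orthonormal frame $\RR^n \to E_x$ for the first inner product to an oriented orthonormal frame $\RR^n \to (E_2)_x$ for the second; consequently the maps it induces on the associated bundles $Y$, $Y^{[2]}$ and $P$ (which leave the right-hand associated-bundle factors $\Lagr_{res}$, $\Lagr_{res} \cross \Lagr_{res}$, $T$ untouched) are homeomorphisms. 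Everything else is already contained in Proposition \ref{p-bg-func}; in particular one need not re-examine any of the Clifford-algebra or Fock-space data, since the construction is packaged entirely through $L\SO(E)$. If a weaker conclusion sufficed, one could instead note that $G(E)$ and $G(E_2)$ have equal Dixmier-Douady classes and hence are stably isomorphic by Note \ref{n-stab-isom}, but the functorial argument gives the stronger honest isomorphism asserted.
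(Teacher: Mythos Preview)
Your proof is correct and takes essentially the same approach as the paper: apply the functor $G$ of Proposition \ref{p-bg-func} to the isometric vector bundle isomorphism supplied by Lemma \ref{l-fibe-inne-prod}, and use that functors carry isomorphisms to isomorphisms. The paper's proof is a single sentence to this effect; your additional unpacking of why $(\overline{f}, \ident_M)$ is a morphism of $\mathcal{V}$ and why $G(\ident_M) = \ident_{LM}$ is sound but not strictly needed.
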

\begin{proof}
Applying the functor of proposition \ref{p-bg-func} to the isometric isomorphism of vector bundles, as with any functor we obtain from the isomorphism an isomorphism, of bundle gerbes.
\end{proof}
\begin{note}\label{n-inde-riem-metr}
\index{bundle gerbe!vector bundle!base manifold assumptions note}
(The Bundle Gerbe may be Independent of the Riemannian Metric).
As noted in the proof of proposition \ref{p-smth-free-loop-spac-frec-mfld}, the Riemannian metric on $M$ is used only to define the \Frechet manifold structure on $LM$, which may not depend on the choice of metric.  As also noted, it may not be necessary that $M$ be orientable.
\end{note}

\section{Bundle Gerbe Construction Stability}\label{s-bndl-gerb-cnst-stab}

Given an oriented smooth vector bundle $E$ of even rank with fiberwise inner product, over a compact connected orientable smooth manifold with Riemannian metric, the bundle gerbe constructed from its Whitney sum $E \dirsum I_{2k}$ with an even rank trivial bundle, is stably isomorphic to the bundle gerbe constructed from $E$.  Thus, if two such vector bundles are stably equivalent \citep[page~117]{Huse94}, so are the bundle gerbes constructed from them.
\begin{lem}\label{l-fock-spac-sum-lagr}
\index{Fock space}
(The Fock Space of a Sum of Lagrangian Subspaces).
Given real Hilbert spaces $V_1$, $V_2$ and Lagrangian subspaces $L_1 \subset H_1$, $L_2 \subset H_2$ of their complexifications, $L_1 \dirsum L_2$ is a Lagrangian subspace of $H_1 \dirsum H_2$, where the sums are orthogonal direct sums, the Hilbert space inner product \citep[page~103]{Pede89} $\F(L_1) \tensor \F(L_2)$ is $\ZZ_2$ graded, as defined on tensor product decomposables by $\partial (w_1 \tensor w_2) = \partial w_1 + \partial w_2$, for $w_1 \in \F(L_1)$, $w_2 \in \F(L_2)$, and there is a $\ZZ_2$ grade preserving $\Cl(V_1 \dirsum V_2)$ linear unitary isomorphism
\begin{align}
\F(L_1) \tensor \F(L_2) &\xrightarrow{\psi} \F(L_1 \dirsum L_2) \notag \\
w_1 \tensor w_2 &\mapsto w_1 \wedge w_2, \notag
\end{align}
with the $\Cl(V_1 \dirsum V_2)$ action on $\F(L_1) \tensor \F(L_2)$ defined for $v_1 \in V_1$, $v_2 \in V_2$ by the self-adjoint Clifford map $\phi_{\tensor} \colon V_1 \dirsum V_2 \rightarrow \F(L_1) \tensor \F(L_2)$, defined using the self-adjoint Clifford maps $\phi_1 \colon V_1 \rightarrow \B(\F(L_1))$, $\phi_2 \colon V_2 \rightarrow \B(\F(L_2))$ for the Fock representations on $\F(L_1)$, $\F(L_2)$, by
\[
\phi_{\tensor} (v_1 + v_2) (w_1 \tensor w_2) = (\phi_1 (v_1) (w_1)) \tensor w_2 + (-1) ^{\partial w_1} w_1 \tensor \phi_2 (v_2) (w_2). \notag
\]
\end{lem}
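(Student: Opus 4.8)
The plan is to prove the four assertions in turn --- that $L_1 \dirsum L_2$ is Lagrangian, that $\psi$ is a well-defined unitary, that it preserves the $\ZZ_2$-grading, and that it intertwines the Clifford actions --- by first proving everything on the algebraic exterior algebras (on decomposables) and then extending by continuity to the Hilbert space completions. First I would check that $L_1 \dirsum L_2$ is a Lagrangian subspace of $H_1 \dirsum H_2$: the real structure on the latter is $\Sigma_1 \dirsum \Sigma_2$, and since the orthogonal complement of an orthogonal direct sum of closed subspaces is the direct sum of the complements, $(L_1 \dirsum L_2)^{\perp} = L_1^{\perp} \dirsum L_2^{\perp} = \Sigma_1(L_1) \dirsum \Sigma_2(L_2) = (\Sigma_1 \dirsum \Sigma_2)(L_1 \dirsum L_2)$, which is exactly Definition \ref{d-lagr-subs}. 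The corresponding unitary structure is $J_1 \dirsum J_2$, whose restriction to $H_1$ is $J_1$ and to $H_2$ is $J_2$; hence by Lemma \ref{l-hp-proj} the orthogonal projection $P_{L_1 \dirsum L_2}$ restricts on $H_k$ to $P_{L_k}$. This fact will be used repeatedly.

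Second, for unitarity of $\psi$: on the algebraic exterior algebras the assignment $w_1 \tensor w_2 \mapsto w_1 \wedge w_2$ is the standard isomorphism $\Lambda^{*}L_1 \tensor \Lambda^{*}L_2 \cong \Lambda^{*}(L_1 \dirsum L_2)$, and choosing orthonormal bases of $L_1$ and $L_2$ whose union is an orthonormal basis of $L_1 \dirsum L_2$ shows it is a linear bijection of dense subspaces. For the isometry I would compute the Grammian determinant of Definition \ref{d-fock-spac}: for decomposables with factors $x_i \in L_1$ and $y_j \in L_2$, since $L_1 \perp L_2$ the Gram matrix pairing $x_1 \wedge \cdots \wedge x_p \wedge y_1 \wedge \cdots \wedge y_q$ against $x_1' \wedge \cdots \wedge x_{p'}' \wedge y_1' \wedge \cdots \wedge y_{q'}'$ has all mixed entries $\langle x_i, y_j'\rangle$ and $\langle y_i, x_j'\rangle$ equal to zero; a short permutation-support argument forces the determinant to vanish unless $p=p'$ and $q=q'$, in which case the matrix is block diagonal and the determinant factors into the two smaller Grammians. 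Thus $\langle w_1 \wedge w_2, w_1' \wedge w_2'\rangle = \langle w_1, w_1'\rangle\langle w_2, w_2'\rangle$, which is the inner product of $w_1 \tensor w_2$ against $w_1' \tensor w_2'$ in the Hilbert space tensor product, so $\psi$ extends to a unitary of completions. The grading statement is then immediate, since for homogeneous $w_k$ the element $w_1 \wedge w_2$ has exterior degree $\partial w_1 + \partial w_2$, matching $\partial(w_1 \tensor w_2) = \partial w_1 + \partial w_2$.

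Third, I would verify that $\phi_{\tensor}$ is a self-adjoint Clifford map on $V_1 \dirsum V_2$ and that $\psi$ intertwines the actions. Writing $\phi_{\tensor}(v_1 + v_2) = A_1 + B_2$ with $A_1 = \phi_1(v_1)\tensor\ident$ and $B_2 = P\tensor\phi_2(v_2)$, where $P$ is the self-adjoint parity operator $(-1)^{\partial}$ on $\F(L_1)$, both summands are self-adjoint; $A_1^2 = (v_1,v_1)\ident$ and $B_2^2 = (v_2,v_2)\ident$ (the two parity signs cancel because $\phi_2$ does not change $\partial w_1$), and $A_1 B_2 + B_2 A_1 = 0$ because $\phi_1(v_1)$ reverses the parity of $w_1$, so the two orderings contribute opposite signs $(-1)^{\partial w_1}$ and $(-1)^{\partial w_1 + 1}$. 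Since $(v_1+v_2, v_1+v_2) = (v_1,v_1) + (v_2,v_2)$ in $V_1\dirsum V_2$, $\phi_{\tensor}$ is a self-adjoint Clifford map and extends by Proposition \ref{p-c-star-clif-alg-univ} to a representation of $\Cl(V_1 \dirsum V_2)$. It then suffices to check $\psi \circ \phi_{\tensor}(v) = \pi_{L_1\dirsum L_2}(v)\circ\psi$ on the generators $v_1 \in V_1$ and $v_2 \in V_2$ and on homogeneous decomposables. For $v_1$, write $v_1 = l_1 + \overline{l_1}$ with $l_1 = P_{L_1\dirsum L_2}(v_1) = P_{L_1}(v_1) \in L_1$; using that $\pi_{L_1\dirsum L_2}(v_1)$ is $\sqrt2$ times wedging with $l_1$ plus $\sqrt2$ times contraction with $l_1$, that left wedging satisfies $l_1 \wedge (w_1 \wedge w_2) = (l_1 \wedge w_1)\wedge w_2$, and that contraction is an odd derivation whose $L_2$-term vanishes since $l_1 \perp L_2$, one gets $\pi_{L_1\dirsum L_2}(v_1)(w_1 \wedge w_2) = (\pi_{L_1}(v_1)w_1)\wedge w_2 = \psi(\phi_{\tensor}(v_1)(w_1\tensor w_2))$. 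For $v_2$ the identical computation applies after moving $l_2 = P_{L_2}(v_2)$ past $w_1$, which produces precisely the sign $(-1)^{\partial w_1}$ built into $\phi_{\tensor}$. As $\psi$ is unitary and intertwines a generating set, it intertwines all of $\Cl(V_1\dirsum V_2)$, completing the proof.

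The main obstacle I anticipate is not conceptual but the consistent sign bookkeeping: lining up the Koszul signs in the odd-derivation property of $\contract$, in transposing a vector past a homogeneous element of the exterior algebra, and in the anticommutation $A_1 B_2 + B_2 A_1 = 0$, so that they agree exactly with the $(-1)^{\partial w_1}$ in the definition of $\phi_{\tensor}$, while also tracking the $\sqrt2$ normalizations from Proposition \ref{p-fock-rep}. The secondary point needing care is the passage from the algebraic exterior algebras to their completions: one must confirm density of the span of decomposables and use continuity of bounded operators to promote each identity proved on decomposables to the full Fock spaces.
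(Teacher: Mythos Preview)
Your proposal is correct and follows essentially the same approach as the paper: the block-diagonal Grammian argument for unitarity and the direct expansion via creation/annihilation (wedge and contraction) for Clifford linearity are exactly what the paper does. You are in fact more thorough than the paper's sketch, which omits the explicit verifications that $L_1 \dirsum L_2$ is Lagrangian and that $\phi_{\tensor}$ is a self-adjoint Clifford map.
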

\begin{proof}
To show that $\psi$ is an isometry, it suffices to evaluate it on tensor product and exterior algebra decomposables made from wedge products of $w_{1 i} \in L_1$ and $w_{2 j} \in L_2$.  Let $w_1 = w_{1 1} \wedge \dots \wedge w_{1 p}$, $w_2 = w_{2 1} \wedge \dots \wedge w_{2 q}$, and similarly for $w_1' \in \Lambda^p L_1$, $w_2' \in \Lambda^q L_2$. Then
\begin{align}
\langle w_1 \tensor w_2 , w_1' \tensor w_2' \rangle &= \langle w_1, w_1' \rangle \langle w_2, w_2' \rangle, \text{ and} \notag \\
\langle w_1 \wedge w_2 , w_1' \wedge w_2' \rangle &= \Det \left[ \langle (w_1 \wedge w_2)_k, (w_1 \wedge w_2)_l \rangle \right] \notag \\
&=
\left[
\begin{matrix}
 \langle w_{1 i}, w_{1 i'}' \rangle & 0 \\
 0 & \langle w_{2 j}, w_{2 j'}' \rangle
\end{matrix}
\right] \notag \\
&= \langle w_1, w_1' \rangle \langle w_2, w_2' \rangle, \notag
\end{align}
since $L_1 \perp L_2$.  It's injective on decomposables because $w_1 \wedge w_2 \Rightarrow (w_1 = 0 \text{ or } w_2 = 0) \Rightarrow w_1 \tensor w_2 = 0$.  It's surjective on decomposables because it has an inverse, defined on decomposables of $\F(L_1 \dirsum L_2)$ ordered so that elements of $L_1$ precede those of $L_2$ in the wedge product, as $w_1 \wedge w_2 \mapsto w_1 \tensor w_2$.  Being a unitary isomorphism on decomposables, by continuity it's a unitary isomorphism.

For Clifford linearity, denote the self-adjoint Clifford map for the Fock representation on $\F(L_1 \dirsum L_2)$ by $\phi_{\dirsum}$.  Then
\begin{align}
\psi (\phi_{\tensor} (v_1 + v_2) (w_1 \tensor w_2)) &= \psi ((\phi_1 (v_1) (w_1)) \tensor w_2 + (-1) ^{\partial w_1} w_1 \tensor \phi_2 (v_2) (w_2)) \notag \\
&= (\phi_1 (v_1) (w_1)) \wedge w_2 + (-1) ^{\partial w_1} w_1 \wedge \phi_2 (v_2) (w_2) \notag
\end{align}
\begin{align}
\phi_{\dirsum} (v_1 + v_2) \psi (w_1 \tensor w_2) &= \phi_{\dirsum} (v_1 + v_2) (w_1 \wedge w_2) \notag \\
&= \phi_{\dirsum} (v_1) (w_1 \wedge w_2) + \phi_{\dirsum} (v_2) (w_1 \wedge w_2) \notag \\
&= \sqrt{2} \thinspace P_{L_1 \dirsum L_2} (v_1) \wedge (w_1 \wedge w_2) \notag \\
 & + \sqrt{2} \thinspace P_{L_1 \dirsum L_2} (v_1) \contract (w_1 \wedge w_2) \notag \\
 & + \sqrt{2} \thinspace P_{L_1 \dirsum L_2} (v_2) \wedge (w_1 \wedge w_2) \notag \\
 & + \sqrt{2} \thinspace P_{L_1 \dirsum L_2} (v_2) \contract (w_1 \wedge w_2) \notag \\
&= (\sqrt{2} \thinspace P_{L_1} (v_1) \wedge w_1) \wedge w_2 + (\sqrt{2} \thinspace P_{L_1} (v_1) \contract w_1) \wedge w_2 \notag \\
 & + (-1)^{\partial w_1} w_1 \wedge (\sqrt{2} \thinspace P_{L_2} (v_2) \wedge w_2) \notag \\
 & + (-1)^{\partial w_1} w_1 \wedge (\sqrt{2} \thinspace P_{L_2} (v_2) \contract w_2) \notag \\
&= (\phi_1 (v_1) (w_1)) \wedge w_2 + (-1) ^{\partial w_1} w_1 \wedge \phi_2 (v_2) (w_2). \notag
\end{align}
\end{proof}

\begin{lem}\label{l-fock-spac-id-tens-intw}
\index{intertwiner!tensor product with identity}
(The Tensor Product of an Intertwiner with the Identity).
Given real Hilbert spaces $V_1$, $V_2$ and Lagrangian subspaces $L_1 \subset H_1$, $L_2, L_2' \subset H_2$ of their complexifications, and a $\Cl(V_2)$ linear unitary isomorphism $\theta \colon \F(L_2) \rightarrow \F(L_2')$, the map $\ident \tensor \theta \colon \F(L_1) \tensor \F(L_2) \rightarrow \F(L_1) \tensor \F(L_2')$ is a $\Cl(V_1 \dirsum V_2)$ linear unitary isomorphism.  If $\theta$ preserves or reverses the $\ZZ_2$ grading, so does $\ident \tensor \theta$.  If $\theta$ is grade preserving, the analogous lemma is true for $\theta \tensor \ident$.
\end{lem}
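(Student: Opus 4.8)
The plan is to reduce everything to the explicit description of the Clifford action on a tensor product of Fock spaces provided by lemma \ref{l-fock-spac-sum-lagr}, and then to a short bookkeeping check of the Koszul sign. First I would note that $\ident \tensor \theta$ is a well-defined unitary isomorphism of the Hilbert space tensor products, since the tensor product of unitary operators is unitary; so the only substantive content of the lemma is $\Cl(V_1 \dirsum V_2)$-linearity together with the behaviour on the $\ZZ_2$ gradings.

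For Clifford linearity, since $\Cl(V_1 \dirsum V_2)$ is (the closure of the algebra) generated by $V_1 \dirsum V_2$, and both representations in play are the extensions of the self-adjoint Clifford maps $\phi_{\tensor}$, $\phi_{\tensor}'$ (on $\F(L_1) \tensor \F(L_2)$ and $\F(L_1) \tensor \F(L_2')$) via the universal property, proposition \ref{p-c-star-clif-alg-univ}, and since $\ident \tensor \theta$ is bounded while $C^{*}$-algebra representations are continuous (lemma \ref{l-c-star-alg-mor-cont}), it suffices to check that $\ident \tensor \theta$ intertwines $\phi_{\tensor}$ and $\phi_{\tensor}'$ from lemma \ref{l-fock-spac-sum-lagr}. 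Writing a typical element of $\F(L_1) \tensor \F(L_2)$ as $w_1 \tensor w_2$ with $w_1$ homogeneous (such elements spanning a dense subspace, and all maps in sight being linear and continuous), apply $\ident \tensor \theta$ to
\[
\phi_{\tensor}(v_1 + v_2)(w_1 \tensor w_2) = (\phi_1(v_1)(w_1)) \tensor w_2 + (-1)^{\partial w_1} w_1 \tensor \phi_2(v_2)(w_2),
\]
and use that $\theta$ is $\Cl(V_2)$-linear, so $\theta(\phi_2(v_2)(w_2)) = \phi_2'(v_2)(\theta(w_2))$, together with the fact that the sign $(-1)^{\partial w_1}$ depends only on $w_1$, which is untouched by $\ident \tensor \theta$. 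The result is exactly $\phi_{\tensor}'(v_1 + v_2)\big((\ident \tensor \theta)(w_1 \tensor w_2)\big)$, and no hypothesis on how $\theta$ treats the grading is needed, precisely because the Koszul sign in lemma \ref{l-fock-spac-sum-lagr} reads off the first tensor factor only.

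For the grading claims I would compute on homogeneous decomposables: $\partial\big((\ident \tensor \theta)(w_1 \tensor w_2)\big) = \partial w_1 + \partial(\theta(w_2))$, which equals $\partial w_1 + \partial w_2$ when $\theta$ preserves the grading and $\partial w_1 + \partial w_2 + 1$ when it reverses it; extending by continuity gives the stated conclusion. Finally, for $\theta \tensor \ident$ the same argument runs, except that now applying $\theta \tensor \ident$ to the second summand above yields $(-1)^{\partial w_1}\,\theta(w_1) \tensor \phi_2(v_2)(w_2)$, whereas $\phi_{\tensor}'$ evaluated on $\theta(w_1) \tensor w_2$ carries the sign $(-1)^{\partial \theta(w_1)}$; these agree exactly when $\theta$ is grade preserving, which is why that hypothesis is imposed there. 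The only real obstacle is keeping the Koszul signs straight and making sure the reductions to homogeneous decomposables are legitimate — but since every map involved is linear and bounded and the homogeneous decomposables are dense, this is routine once the sign bookkeeping is organized around lemma \ref{l-fock-spac-sum-lagr}.
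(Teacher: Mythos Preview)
Your proposal is correct and follows essentially the same approach as the paper: both verify Clifford linearity on homogeneous decomposables using the explicit formula for $\phi_{\tensor}$ from lemma \ref{l-fock-spac-sum-lagr}, both observe that the Koszul sign $(-1)^{\partial w_1}$ depends only on the first factor, and both compute the grading behaviour directly on decomposables. Your treatment is in fact slightly more thorough than the paper's, since you spell out explicitly why the grade-preserving hypothesis is needed for the $\theta \tensor \ident$ case (the paper states this in the lemma but does not argue it in the proof), and you are more careful to note that reduction to generators is justified by the universal property and continuity.
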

\begin{proof}
On decomposables $w \tensor x, y \tensor z \in \F(L_1) \tensor \F(L_2)$, since $\theta$ is unitary,
\begin{align}
\langle (\ident \tensor \theta) (w \tensor x), (\ident \tensor \theta) (y \tensor z) \rangle &= \langle x \tensor \theta (w) , z \tensor \theta (y) \rangle \notag \\
&= \langle x, z \rangle \langle \theta (w), \theta (y) \rangle \notag \\
&= \langle x, z \rangle \langle w, y \rangle \notag \\
&= \langle x \tensor w, z \tensor y \rangle, \notag
\end{align}
so $\ident \tensor \theta$ is unitary.  It is an isomorphism on decomposables since $\theta$ is, and thus by continuity is an isomorphism.  To see that it preserves or reverses the $\ZZ_2$ grading as $\theta$ does, let $k \in \ZZ_2$ be such that for $w_2 \in \F(L_2)$, $\partial (\theta (w_2)) = \partial w_2 + k$.  For $w_1 \in \F(L_1)$, $w_2 \in \F(L_2)$:
\begin{align}
\partial ((\ident \tensor \theta) (w_1 \tensor w_2)) &= \partial (w_1 \tensor \theta (w_2)) \notag \\
&= \partial (w_1) + \partial (\theta (w_2)) \notag \\
&= \partial w_1 + \partial w_2 + k \notag \\
&= \partial (w_1 \tensor w_2) + k. \notag
\end{align}
The Clifford linearity of $\ident \tensor \theta$ follows from that of $\theta$: for $v_1 \in V_1$, $v_2 \in V_2$, $w_1 \in \F(L_1)$, $w_2 \in \F(L_2)$, using the definition and notation in \ref{l-fock-spac-sum-lagr}, with analogous notation for ${}'$, in which $\phi_1' = \phi_1$,
\begin{align}
(\ident \tensor \theta) (\phi_{\tensor} (v_1 + v_2) (w_1 \tensor w_2)) &= (\ident \tensor \theta) ((\phi_1 (v_1) (w_1)) \tensor w_2 \notag \\
& + (-1) ^{\partial w_1} w_1 \tensor \phi_2 (v_2) (w_2)) \notag \\
&= \phi_1 (v_1) (w_1) \tensor \theta (w_2) + (-1) ^{\partial w_1} w_1 \tensor \theta (\phi_2 (v_2) (w_2)) \notag \\
&= \phi_1' (v_1) (w_1) \tensor \theta (w_2) + (-1) ^{\partial w_1} w_1 \tensor \phi_2' (v_2) (\theta (w_2)) \notag \\
&= \phi_{\tensor}' (v_1 + v_2) ((\ident \tensor \theta) (w_1 \tensor w_2)). \notag
\end{align}
\end{proof}

\begin{prop}\label{p-bg-cons-stab}
\index{bundle gerbe!construction!stability}
(The Bundle Gerbe Construction is Stable).
Given an oriented smooth vector bundle $E \rightarrow M$ of even rank with fiberwise inner product, $M$ a compact connected orientable smooth manifold with Riemannian metric, as in definition \ref{d-bg-func}, letting $I_2 = M \cross \RR^2 \rightarrow M$ be the trivial real vector bundle of rank 2 over $M$, with standard orientation and fiberwise inner product, the bundle gerbe $G(E \dirsum I_2)$ is stably isomorphic to the bundle gerbe $G(E)$.  By induction, the analogous statement for $I_{2k}$, $k \in \NN$, also is true.
\end{prop}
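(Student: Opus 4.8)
The plan is to prove the $I_2$ case by exhibiting a map of $Y$ spaces and invoking lemma \ref{l-indu-bg-stab-isom}, then to get the $I_{2k}$ case by induction. First I would reduce to showing $G(I_2 \dirsum E)$ stably isomorphic to $G(E)$: the fiberwise block swap $E \dirsum I_2 \isomto I_2 \dirsum E$ is a morphism of $\mathcal{V}$ --- it is an isometric linear isomorphism on fibers over $\ident_M$, and it preserves orientation since transposing a rank-$n$ block past a rank-$2$ block contributes sign $(-1)^{2n} = 1$ --- so proposition \ref{p-bg-func} gives an isomorphism $G(E \dirsum I_2) \cong G(I_2 \dirsum E)$, and since isomorphic bundle gerbes are stably isomorphic and stable isomorphism is an equivalence relation (note \ref{n-stab-isom}), it suffices to treat $I_2 \dirsum E$. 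I would fix compatible standard data: a standard Lagrangian $L_{2,0} \in \Lagr_{res}(\CC^2)$ of the form of definition \ref{d-lagr-res}, and for the Clifford--Fock setup of $I_2 \dirsum E$ on $L^2(S^1,\CC^{2+n})$ the finite Lagrangian $L_{finite,2} \dirsum L_{finite}$, so that $L_{2,0} \dirsum K \in \Lagr_{res}(\CC^{2+n})$ for every $K \in \Lagr_{res}(\CC^n)$ (the unitary structures then differ from the standard one by a Hilbert--Schmidt operator supported on the $\CC^n$ summand).

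Next I would build the map of $Y$ spaces. The block inclusion $\SO(E) \to \SO(I_2 \dirsum E)$, $\phi \mapsto \ident_{I_2} \dirsum \phi$, is a smooth principal bundle morphism over $\ident_M$, equivariant for $\SO(n) \to \SO(2+n)$; looping it (proposition \ref{p-loop-smth-map-smth}) gives a continuous $\iota \colon L\SO(E) \to L\SO(I_2 \dirsum E)$, equivariant for $L\SO(n) \to L\SO(2+n)$. Since $g \mapsto \ident \dirsum g$ is isometric for the $\B_{res,J}$-norms (the commutator with $J_{2,0} \dirsum J$ is $0 \dirsum [J,g]$, and orthogonal operators have operator norm $1$), it descends to a continuous map $\Lagr_{res}(\CC^n) \to \Lagr_{res}(\CC^{2+n})$, $K \mapsto L_{2,0} \dirsum K$. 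Let $j \colon Y \to Y'$, with $Y, Y'$ the polarization class bundles of $G(E)$ and $G(I_2 \dirsum E)$, be $[\widetilde{\gamma},K] \mapsto [\iota(\widetilde{\gamma}), L_{2,0}\dirsum K]$; well-definedness is the standard associated-bundle equivariance check, continuity follows from the above, and $j$ covers $\ident_{LM}$ because $\iota$ is over $\ident_M$.

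Then I would identify $(j^{[2]})^{*}P'$ with the $P$ bundle of $G(E)$ as bundle gerbes over $LM$. Over $([\widetilde{\gamma},K_1],[\widetilde{\gamma},K_2]) \in Y^{[2]}$ the fiber of $P$ is $T(K_1,K_2)$ and that of $(j^{[2]})^{*}P'$ is $T(L_{2,0}\dirsum K_1, L_{2,0}\dirsum K_2)$ (proposition \ref{p-bg-p}). Using the Clifford-linear unitary isomorphisms $\psi_i \colon \F(L_{2,0})\tensor\F(K_i) \isomto \F(L_{2,0}\dirsum K_i)$ of lemma \ref{l-fock-spac-sum-lagr}, and the fact (lemma \ref{l-fock-spac-id-tens-intw}) that $\ident_{\F(L_{2,0})}\tensor\theta$ is a $\Cl(L^2(S^1,\RR^{2+n}))$-linear unitary for every $\theta \in T(K_1,K_2)$ --- this is exactly why it pays to put the trivial summand first, since the $\ident \tensor (-)$ case of that lemma allows $\theta$ to preserve \emph{or} reverse the $\ZZ_2$ grading --- I would send $\theta \mapsto \psi_2 \circ (\ident_{\F(L_{2,0})}\tensor\theta) \circ \psi_1^{-1} \in T(L_{2,0}\dirsum K_1, L_{2,0}\dirsum K_2)$. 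This map is $\UU(1)$-equivariant, hence a torsor isomorphism on fibers by corollary \ref{co-g-equi-cont}; it respects bundle gerbe multiplication since both multiplications are composition of intertwiners and $(\ident\tensor\theta')\circ(\ident\tensor\theta) = \ident\tensor(\theta'\circ\theta)$ with the $\psi_i$ telescoping; and continuity across fibers I would check via local trivializations exactly as in the proofs of propositions \ref{p-bg-t} and \ref{p-bg-p}, reducing it to continuity of $\theta \mapsto \ident\tensor\theta$ and of composition of unitaries in the strong operator topology (lemmas \ref{l-cont-stro-op-top-adj} and \ref{l-lamb-homo-cont}), then invoking lemma \ref{l-pb-mor-cov-id-isom} to promote the fiberwise isomorphism to a principal $\UU(1)$ bundle isomorphism over the identity. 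Granting this, $((j^{[2]})^{*}P', Y, LM) \cong G(E)$ as bundle gerbes, while lemma \ref{l-indu-bg-stab-isom} gives $((j^{[2]})^{*}P', Y, LM)$ stably isomorphic to $(P',Y',LM) = G(I_2\dirsum E)$; chaining with note \ref{n-stab-isom} and the first step yields $G(E) \sim G(E\dirsum I_2)$.

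The $I_{2k}$ statement then follows by induction: $E\dirsum I_{2(k-1)}$ is again an object of $\mathcal{V}$ (even rank, oriented, with the direct-sum metric), there is a canonical $\mathcal{V}$-isomorphism $E\dirsum I_{2k} \cong (E\dirsum I_{2(k-1)})\dirsum I_2$, and applying the $I_2$ case to $E\dirsum I_{2(k-1)}$ together with transitivity of stable isomorphism gives $G(E\dirsum I_{2k}) \sim G(E\dirsum I_{2(k-1)}) \sim \cdots \sim G(E)$. I expect the third step to be the main obstacle: as in propositions \ref{p-bg-t} and \ref{p-bg-p}, one must unwind the associated-bundle local trivializations of $P$ and $P'$, carry along the various $\Lambda_g$ and implementers $U_g$ (and the induced swap of standard fibers $T(L,L) \to T(L_{2,0}\dirsum L, L_{2,0}\dirsum L)$), and verify joint continuity in the strong operator topology; by contrast, the grading-parity bookkeeping is a minor matter once the trivial summand is placed first so that lemma \ref{l-fock-spac-id-tens-intw} applies in its $\ident \tensor (-)$ form.
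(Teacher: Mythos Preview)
Your proposal is correct and takes essentially the same approach as the paper: build a $Y$-space map from the block inclusion on frames and $K \mapsto L_2 \dirsum K$ on Lagrangians, invoke lemma \ref{l-indu-bg-stab-isom}, and identify the pullback $P'$ bundle with $P$ via $\theta \mapsto \psi_2 \circ (\ident \tensor \theta) \circ \psi_1^{-1}$ using lemmas \ref{l-fock-spac-sum-lagr} and \ref{l-fock-spac-id-tens-intw}. The only differences are cosmetic: the paper keeps $E \dirsum I_2$ and swaps summands at the Fock-space level ($\F(L_n \dirsum L_2) = \F(L_2 \dirsum L_n)$) rather than at the vector-bundle level, and it handles the continuity check you flag as the main obstacle by choosing the rank-$(n+2)$ local-trivialization data as $g_K \dirsum \ident$, $g' \dirsum \ident$ (via notation \ref{n-std-fock-spac-bndl-g-prim}) so that the comparison reduces to a $\UU(1)$-equivariant map of fixed torsors.
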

\begin{proof}
The proof is in four main stages, described by what they concentrate on:
\begin{enumerate}
 \item We demonstrate the continuity of the inclusion $L\SO(E) \rightarrow L\SO(E \dirsum I_2)$. \label{it-lsoe-incl-cont}
 \item We demonstrate the continuity of the inclusion $\Lagr_{res, n} \rightarrow \Lagr_{res, n + 2}$ induced by a choice of $L_2 \in \Lagr_{res, 2}$. \label{it-lagr-incl-cont}
 \item We demonstrate the continuity of the corresponding inclusion $\zeta \colon Y_n \rightarrow Y_{n + 2}$ and reduction of what is to be proved to the fact that $((\zeta^{[2]})^* P_{n+2}, Y_n, LM)$ is isomorphic to $(P_n, Y_n, LM) = G(E)$. \label{it-y-incl-cont-redu-prob}
 \item To demonstrate that fact, we define a map from $\theta \in T(L_{n, 1}, L_{n, 2})$ to $\phi \in T(L_{n, 1} \dirsum L_2, L_{n, 2} \dirsum L_2)$, and show that it is continuous, giving an isomorphism of principal $\UU(1)$ bundles. \label{it-thet-to-phi}
\end{enumerate}
\textbf{Proof Part \ref{it-lsoe-incl-cont}.}
For this proof we consider the bundle gerbe construction functor $G$ from vector bundles of varying even rank.  We take $n$ as the rank of the vector bundle $E$ that we start with, and look first at the inclusion
\begin{align}
\SO(E) \cross_M (M \cross \SO(2)) &\rightarrow \SO(E \dirsum I_2) \notag \\
(\rho, (x, \tau)) &\mapsto \rho \dirsum \tau, \notag
\end{align}
for $x \in M$, $\rho \colon \RR^n \rightarrow E_x$ an orientation-preserving linear isometry, $\tau \in \SO(2)$ hence $\tau \colon \RR^2 \rightarrow \RR^2$, $\rho \dirsum \tau \colon \RR^{n+2} \cong \RR^n \dirsum \RR^2 \rightarrow E_x \dirsum \RR^2$.  This inclusion is right $\SO(n)$-equivariant, acting trivially on the product bundle of the domain, and for the codomain using the continuous map $\SO(n) \cross \SO(2) \rightarrow \SO(n+2)$, $(\sigma, \tau) \mapsto \sigma \dirsum \tau$.  The displayed inclusion is continuous as may be seen from \citet[page~17]{Poor07}, using local trivializations of $E \dirsum I_2$ built from those for $E$ and $I_2$, local trivializations of $\SO(E)$ built from those of $E$, of $\SO(E \dirsum I_2)$ from $E \dirsum I_2$, and local trivializations for the fiber product built from those for the factors.  In these terms, continuity of the inclusion reduces to continuity of the map $\SO(n) \cross \SO(2) \rightarrow \SO(n + 2)$. Continuity of the following inclusion can be seen similarly but more easily, resulting from continuity of the map $\SO(n) \rightarrow \SO(n) \cross \SO(2)$ given by setting the second component to $\ident$:
\begin{align}
\SO(E) &\rightarrow \SO(E) \cross_M (M \cross \SO(2)) \notag \\
\rho &\mapsto (\rho, (x, \ident)); \notag
\end{align}
which, also, is right $\SO(n)$-equivariant.  Composing the inclusions and looping gives the is right $L\SO(n)$-equivariant continuous inclusion
\[
L\SO(E) \rightarrow L\SO(E \dirsum I_2).
\]

\textbf{Proof Part \ref{it-lagr-incl-cont}.}
For even $m \in \NN$, let $\Lagr_m = \{ K \subset L^2 (S^1, \CC^m) \st \overline{K} = K^{\perp} \}$, and let $\Lagr_{res, m} \subset \Lagr_m$ be the polarization class of definition \ref{d-lagr-res}, replacing that definition's $n$ with the $m$ of this context.  Given a choice of $L_2 \in \Lagr_{res, 2}$, there is a left $L\SO(n)$-equivariant inclusion, for $L_n \in \Lagr_{res, n}$:
\begin{align}
\Lagr_{res, n} &\rightarrow \Lagr_{res, n+2} \notag \\
L_n & \mapsto L_n \dirsum L_2, \text{ equivalent to} \notag \\
\US_{res, n} &\rightarrow \US_{res, n+2} \notag \\
J_n & \mapsto J_n \dirsum J_2. \notag
\end{align}
Continuity of each map is equivalent to that of the other, since the topology for each $\Lagr_{res}$ comes from that of the corresponding $\US_{res}$ (see definition \ref{d-ores-uvj-homo}).
Each $\US_{res}$ gets its topology from the corresponding $\Orth_{res} / \UU(V_J)$, and thus ultimately from $\Orth_{res}$. Going through these last two steps, since the topology of each $\US_{res}$ is independent of the particular $J$ used to define it (see lemma \ref{l-ures-top-not-dep-j}), we choose $J$ convenient for showing continuity at $J_n$:  for $\US_{res, n}$, choose $J_n$, and for $\US_{res, n+2}$, choose $J_n \dirsum J_2$, obtaining the following equivariant homeomorphisms (see definition \ref{d-ores-ures-lagr-res} and proposition \ref{p-uv-ov}):
\begin{align}
\US_{res, n} &\cong \Orth_{res, n} / \UU(V_{J_n}) \notag \\
J_n' = g_n J_n g_n^{-1} &\mapsto g_n \UU(V_{J_n}) \notag \\
\US_{res, n + 2} &\cong \Orth_{res, n + 2} / \UU(V_{J_n \dirsum J_2}) \notag \\
J_{n + 2}' = g_{n + 2} (J_n \dirsum J_2) g_{n + 2}^{-1} &\mapsto g_{n + 2} \UU(V_{J_n \dirsum J_2}). \notag
\end{align}
Continuity of $J_n' \mapsto J_n' \dirsum J_2$ at $J_n$ is thus equivalent to continuity of $g_n \UU(V_{J_n}) \rightarrow (g_n \dirsum \ident) \UU(V_{J_n \dirsum J_2})$ at $\ident \in \UU(V_{J_n})$, and continuity of that map in turn follows from continuity of the following $\Orth_{res, n}$-equivariant map that maps $\UU(V_{J_n}) \rightarrow \UU(V_{J_n \dirsum J_2})$:
\begin{align}
\Orth_{res, n} &\rightarrow \Orth_{res, n + 2} \notag \\
g_n &\mapsto g_n \dirsum \ident, \notag
\end{align}
This is continuous using the norms $\norm{}_J$ of proposition \ref{p-ores-top-grp}, with temporarily added subscripts $n$, $n + 2$.  To start with,
\begin{align}
\norm{(g_n \dirsum \ident) - (\ident \dirsum \ident)}_{J_n \dirsum J_2} &= \norm{(g_n - \ident) \dirsum 0}_{n + 2} + \norm{[J_n \dirsum J_2, (g_n - \ident) \dirsum 0]}_{2, n + 2}. \notag
\end{align}
Since for $v_n \in V_n$ and $v_2 \in V_2$, $((g_n - \ident) \dirsum 0) (v_n + v_2) = (g_n - \ident) (v_n) \dirsum 0$, and $V_{n + 2} = V_n \dirsum V_2$ is an orthogonal direct sum, $\norm{(g_n - \ident) \dirsum 0}_{n + 2} = \norm{g_n - \ident}_n$.  Letting $v_n$, $v_2$ represent elements in orthonormal bases for $V_n$, $V_2$, as in the definition in lemma \ref{l-hsop} of $\norm{}_{2, n + 2}$, similar reasoning gives $\norm{[J_n \dirsum J_2, (g_n - \ident) \dirsum 0]}_{2, n + 2} = \norm{[J_n, g_n - \ident]}_{2, n}$.  Thus by making $g_n$ close to $\ident$ in $\Orth_{res, n}$, we can make $g_n \dirsum \ident$ close to $\ident \dirsum \ident$ in $\Orth_{res, n + 2}$.  Thus our $\Lagr_{res, n} \rightarrow \Lagr_{res, n + 2}$ is continuous.

\textbf{Proof Part \ref{it-y-incl-cont-redu-prob}.}
The therefore continuous inclusion
\begin{align}
L\SO(E) \cross \Lagr_{res, n} &\rightarrow L\SO(E \dirsum I_2) \cross \Lagr_{res, n+2} \text{ descends to} \notag \\
Y_n = L\SO(E) \cross_{L\SO(n)} \Lagr_{res, n} &\rightarrow L\SO(E \dirsum I_2) \cross_{L\SO(n)} \Lagr_{res, n+2}, \notag
\end{align}
a continuous map since a continuous equivariant map descends to a continuous map of the orbit spaces \citep[page~4]{tomD87}.  Using the same fact, considering the equivariance of the associated product quotient projection
\[
L\SO(E \dirsum I_2) \cross \Lagr_{res, n+2} \rightarrow L\SO(E \dirsum I_2) \cross_{L\SO(n+2)} \Lagr_{res, n+2}
\]
with respect to the $L\SO(n)$ action on its domain and the trivial action on its codomain, we have a continuous map
\begin{align}
L\SO(E \dirsum I_2) \cross_{L\SO(n)} \Lagr_{res, n+2} &\rightarrow L\SO(E \dirsum I_2) \cross_{L\SO(n+2)} \Lagr_{res, n+2} \notag \\
 &= Y_{n+2}, \text{ that when composed with} \notag \\
Y_n = L\SO(E) \cross_{L\SO(n)} \Lagr_{res, n} &\rightarrow L\SO(E \dirsum I_2) \cross_{L\SO(n)} \Lagr_{res, n+2} \text{ yields} \notag \\
\zeta \colon Y_n &\rightarrow Y_{n+2}, \notag
\end{align}
a continuous map over the identity of $LM$.

Thus we get the following diagram, where, as in definition \ref{d-bg-mor}, $\zeta^{[2]}$ denotes the map induced by $\zeta$ on the second fiber power of the spaces, and in which the leftmost bundle gerbe is $G(E)$ and the rightmost is $G(E \dirsum I_2)$:
\[
\begindc{\commdiag}[5]

\obj(16,30)[objPn]{$P_n$}
\obj(16,20)[objYYn]{$(Y_n)^{[2]}$}
\obj(36,30)[objpbP]{$(\zeta^{[2]})^{*} P_{n+2}$}
\obj(36,20)[objYYnc2]{$(Y_n)^{[2]}$}
\obj(51,20)[objYn]{$Y_n$}
\obj(51,10)[objLM]{$LM$}
\mor{objPn}{objYYn}{}
\mor{objpbP}{objYYnc2}{}
\mor{objYYn}{objYYnc2}{$=$}
\mor(39,21)(50,21){}
\mor(39,19)(50,19){}
\mor{objYn}{objLM}{}

\obj(86,30)[objPnp2]{$P_{n+2}$}
\obj(86,20)[objYYnp2]{$(Y_{n+2})^{[2]}$}
\obj(66,20)[objYnp2]{$Y_{n+2}$}
\obj(66,10)[objLMc2]{$LM$}
\mor{objPnp2}{objYYnp2}{}
\mor(81,21)(69,21){}
\mor(81,19)(69,19){}
\mor{objYnp2}{objLMc2}{}

\mor{objYn}{objYnp2}{$\zeta$}
\mor{objLM}{objLMc2}{$=$}

\enddc
\]
By lemma \ref{l-indu-bg-stab-isom}, the bundle gerbes $((\zeta^{[2]})^* P_{n+2}, Y_n, LM)$ and $G(E \dirsum I_2)$ are stably isomorphic.

We will now show that $((\zeta^{[2]})^* P_{n+2}, Y_n, LM)$, is isomorphic to $(P_n, Y_n, LM) = G(E)$, since the principal $\UU(1)$ bundles are isomorphic over the identity on $(Y_n)^{[2]}$ in a way that respects bundle gerbe multiplication.  Then by note \ref{n-stab-isom}, $G(E \dirsum I_2)$ and $G(E)$ will be stably isomorphic.

\textbf{Proof Part \ref{it-thet-to-phi}.}
An element $[p] \in P_n$ over $[\widetilde{\gamma}, L_{n, 1}, L_{n, 2}] \in (Y (n))^{[2]}$ over $\gamma \in LM$, identifing $(Y_n)^{[2]}$ with $L\SO(E) \cross_{L\SO(n)} (\Lagr_{res, n} \cross \Lagr_{res, n})$ as in proposition \ref{p-bg-p}, is
\[
[p] = [\widetilde{\gamma}, L_{n, 1}, L_{n, 2}, \theta], \notag
\]
with $\theta \in T(L_{n, 1}, L_{n, 2})$.  An element $[q] \in (\zeta^{[2]})^* P_{n+2}$ is
\[
[q] = ([\widetilde{\gamma}, L_{n, 1}, L_{n, 2}], [\widetilde{\gamma} \cross \ident, L_{n, 1} \dirsum L_2, L_{n, 2} \dirsum L_2, \phi]), \notag
\]
with $\phi \in T(L_{n, 1} \dirsum L_2, L_{n, 2} \dirsum L_2)$.  To construct a principal bundle isomorphism, construct $\phi$ from $\theta$ by obtaining intertwiners $\psi_{\phi, 1}$, $\psi_{\phi, 2}$ from lemma \ref{l-fock-spac-sum-lagr}, intertwiner $\ident \tensor \theta$ from lemma \ref{l-fock-spac-id-tens-intw}, and requiring that the following diagram commute.  The intertwiner arrows in the diagram are with respect to $\Cl(V_1 \dirsum V_2)$.
\begin{align}
\begindc{\commdiag}[5]
\obj(10,30)[objFL1]{$\F(L_{n, 1} \dirsum L_2) = \F(L_2 \dirsum L_{n, 1})$}
\obj(50,30)[objFL2]{$\F(L_2 \dirsum L_{n, 2}) = \F(L_{n, 2} \dirsum L_2)$}
\obj(10,10)[objFL]{$\F(L_2) \tensor \F(L_{n, 1})$}
\obj(50,10)[objFLA]{$\F(L_2) \tensor \F(L_{n, 1})$}
\mor{objFL1}{objFL2}{$\phi$}
\mor{objFL}{objFLA}{$\ident \tensor \theta$}
\mor{objFL1}{objFL}{$\psi_{\phi, 1}^{-1}$}[\atright, \solidarrow]
\mor{objFLA}{objFL2}{$\psi_{\phi, 2}$}[\atright, \solidarrow]
\enddc \label{eq-phi-def-via-thet}
\end{align}

Defining $\phi$ in terms of $\theta$ this way defines a $\UU(1)$-equivariant map $P_n \rightarrow P_{n+2}$ over the identity:
\[
[p] = [\widetilde{\gamma}, L_{n, 1}, L_{n, 2}, \theta] \mapsto [\widetilde{\gamma} \cross \ident, L_{n, 1} \dirsum L_2, L_{n, 2} \dirsum L_2, \phi] = [q], \notag
\]
well defined and continuous because it descends from the continuous map that follows, the cartesian product of two $L\SO(n)$-equivariant continuous maps:
\begin{align}
L\SO(E) \cross T_n &\rightarrow L\SO(E \dirsum I_2) \cross T_{n+2} \notag \\
p = (\widetilde{\gamma}, L_{n, 1}, L_{n, 2}, \theta) &\mapsto (\widetilde{\gamma} \cross \ident, L_{n, 1} \dirsum L_2, L_{n, 2} \dirsum L_2, \phi) = q, \notag
\end{align}
where continuity of all components of $q$ but the last, $\phi$, has already been shown.

To see the continuity of the last component of $q$, a function of the last three components of $p$, use local trivializations of proposition \ref{p-bg-t}, specialized.  Proposition \ref{p-std-fock-spac-bndl} allows us to choose for the local trivializations of the standard Fock space bundle $F$, from which local trivializations of the standard intertwiner bundle $T$ are built, the standard fiber $\F(L)$, for any choice of $L \in \Lagr_{res}$.  Let us choose $L_{n, 1}$ for $F(n)$ for $T(n)$, and $L_{n, 1} \dirsum L_2$ for $F(n + 2)$ for $T(n + 2)$.

As in \ref{eq-t-psi} and \ref{eq-t-psi-thet} of proposition \ref{p-bg-t}, we have box neighborhoods $V_{L_{n, 1}} \cross V_{L_{n, 2}}$ of $(L_{n, 1}, L_{n, 2}) \in \Lagr_{res, n} \cross \Lagr_{res, n}$, and $V_{L_{n, 1} \dirsum L_2} \cross V_{L_{n, 2} \dirsum L_2}$ of $(L_{n, 1} \dirsum L_2, L_{n, 2} \dirsum L_2) \in \Lagr_{res, n + 2} \cross \Lagr_{res, n + 2}$, and local trivializations of $T(n)$, $T(n + 2)$
\begin{align}
\Psi_n \colon T_{V_{L_{n, 1}} \cross V_{L_{n, 2}}} &\isomto (V_{L_{n, 1}} \cross V_{L_{n, 2}}) \cross T(L_{n, 1}, L_{n, 1}) \notag \\
\Psi_{n + 2} \colon T_{V_{L_{n, 1} \dirsum L_2} \cross V_{L_{n, 2} \dirsum L_2}} &\isomto (V_{L_{n, 1} \dirsum L_2} \cross V_{L_{n, 2} \dirsum L_2}) \cross T(L_{n, 1} \dirsum L_2, L_{n, 1} \dirsum L_2). \notag
\end{align}
\begin{align}
\begindc{\commdiag}[5]
\obj(10,30)[objFL1]{$\F(L_{n, 1}')$}
\obj(80,30)[objFL2]{$\F(L_{n, 2}')$}
\obj(10,10)[objFL]{$\F(L_{n, 1})$}
\obj(80,10)[objFLA]{$\F(L_{n, 1})$}
\mor{objFL1}{objFL2}{$\theta$}
\mor{objFL}{objFLA}{$\widehat{\theta} = \pi_2 \circ \Psi_n (((L_{n, 1}', L_{n, 2}'), \theta))$}
\mor{objFL}{objFL1}{$\Theta_{n,1'}^{-1} = (\pi_2 \circ \Theta_{L_{n, 1}} (L_{n, 1}', \cdot))^{-1}$}[\atright, \solidarrow]
\mor{objFL2}{objFLA}{$\Theta_{n,2'} = \pi_2 \circ \Theta_{L_{n, 2}} (L_{n, 2}', \cdot)$}[\atright, \solidarrow]
\enddc \notag
\end{align}
\begin{align}
\begindc{\commdiag}[5]
\obj(10,30)[objFL1]{$\F(L_{n, 1}' \dirsum L_2)$}
\obj(80,30)[objFL2]{$\F(L_{n, 2}' \dirsum L_2)$}
\obj(10,10)[objFL]{$\F(L_{n, 1} \dirsum L_2)$}
\obj(80,10)[objFLA]{$\F(L_{n, 1} \dirsum L_2)$}
\mor{objFL1}{objFL2}{$\phi$}
\mor{objFL}{objFLA}{$\widehat{\phi} = \pi_2 \circ \Psi_{n + 2} (((L_{n, 1}' \dirsum L_2, L_{n, 2}' \dirsum L_2), \phi))$}
%\mor{objFL}{objFL1}{$\Theta_{n+2,1'} = (\pi_2 \circ \Theta_{L_{n, 1} %\dirsum L_2} (L_{n, 1}' \dirsum L_2, \cdot))^{-1}$}[\atright, %\solidarrow]
%\mor{objFL2}{objFLA}{$\Theta_{n+2,2'} = \pi_2 \circ \Theta_{L_{n, 2} %\dirsum L_2} (L_{n, 2}' \dirsum L_2, \cdot)$}[\atright, %\solidarrow]
\mor(5,10)(5,30){$\Theta_{n+2,1'}^{-1} = (\pi_2 \circ \Theta_{L_{n, 1} \dirsum L_2} (L_{n, 1}' \dirsum L_2, \cdot))^{-1}$}[\atright, \solidarrow]
\mor(85,30)(85,10){$\Theta_{n+2,2'} = \pi_2 \circ \Theta_{L_{n, 2} \dirsum L_2} (L_{n, 2}' \dirsum L_2, \cdot)$}[\atright, \solidarrow]
\enddc \notag
\end{align}
The long-named $\Theta$, which give Clifford linear unitary isomorphisms, are as in notation \ref{n-std-fock-spac-bndl}; refer now to the proof of proposition \ref{p-std-fock-spac-bndl}.  We specialize them further.

Given the choices of $g_K = g_{L_{n, 1}}$, $g' = g_{L_{n, 1}'}$ for $\Theta_{n, 1'}$ and $g_K = g_{L_{n, 2}}$, $g' = g_{L_{n, 2}'}$ for $\Theta_{n, 2'}$, use the freedom of choice for $g_K$ and $g'$ as afforded by notation \ref{n-std-fock-spac-bndl-g-prim}, to choose $g_K = g_{L_{n, 1}} \dirsum \ident$, $g' = g_{L_{n, 1}'} \dirsum \ident$ for $\Theta_{n+2, 1'}$, and $g_K = g_{L_{n, 2}} \dirsum \ident$, $g' = g_{L_{n, 2}'} \dirsum \ident$ for $\Theta_{n+2, 2'}$.

Given that $g' = g_{L_{n, 1}'}$ for $\Theta_{n, 1'}$ is a continuous function of $L_{n, 1}' \in V \subset \Lagr_{res}$ with $V$ an open neighborhood of $L_{n, 1}$, so is $g' = g_{L_{n, 1}'} \dirsum \ident$ for $\Theta_{n+2, 1'}$ also a continuous function of $L_{n, 1}'$.  It is not necessary for the present purpose to define $\Theta_{L_{n, 1} \dirsum L_2}$ on an open set in $\Lagr_{res, n + 2}$ for its first argument; the use we will make of it goes through the present $V \dirsum L_2$.  The local trivializations we use for the domain of the function $\theta \mapsto \phi$ that we are showing continuous, are defined on open sets, but that's not necessary for those for the codomain.  The analogous statements hold for $\Theta_{n+2, 2'}$.

These choices have the effect that the intertwiners $\Theta_{n+2, 1'}$, $\Theta_{n+2, 2'}$ act as the identity on the $L_2$ portions of the Fock spaces.  Under the inverses of isomorphisms from lemma \ref{l-fock-spac-sum-lagr}, these local trivialization intertwiners for $n + 2$ become $\ident_{L_2}$ tensored with the corresponding intertwiners of the local trivializations for $n$.  That is, the following two diagrams commute:
\[
\begindc{\commdiag}[5]
\obj(10,30)[objFL1]{$\F(L_2 \dirsum L_{n, 1}') = \F(L_{n, 1}' \dirsum L_2)$}
\obj(60,30)[objFL2]{$\F(L_{n, 1} \dirsum L_2) = \F(L_2 \dirsum L_{n, 1})$}
\obj(10,10)[objFL]{$\F(L_2) \tensor \F(L_{n, 1}')$}
\obj(60,10)[objFLA]{$\F(L_2) \tensor \F(L_{n, 1})$}
\mor{objFL1}{objFL2}{$\Theta_{n+2, 1'}$}
\mor{objFL}{objFLA}{$\ident \tensor \Theta_{n, 1'}$}
\mor{objFL1}{objFL}{$\psi_{n + 2, 1'}^{-1}$}[\atright, \solidarrow]
\mor{objFLA}{objFL2}{$\psi_{n + 2, 1}$}[\atright, \solidarrow]
\enddc \notag
\]
\[
\begindc{\commdiag}[5]
\obj(10,30)[objFL1]{$\F(L_2 \dirsum L_{n, 2}') = \F(L_{n, 2}' \dirsum L_2)$}
\obj(60,30)[objFL2]{$\F(L_{n, 1} \dirsum L_2) = \F(L_2 \dirsum L_{n, 1})$}
\obj(10,10)[objFL]{$\F(L_2) \tensor \F(L_{n, 2}')$}
\obj(60,10)[objFLA]{$\F(L_2) \tensor \F(L_{n, 1})$}
\mor{objFL1}{objFL2}{$\Theta_{n+2, 2'}$}
\mor{objFL}{objFLA}{$\ident \tensor \Theta_{n, 2'}$}
\mor{objFL1}{objFL}{$\psi_{n + 2, 2'}^{-1}$}[\atright, \solidarrow]
\mor{objFLA}{objFL2}{$\psi_{n + 2, 1}$}[\atright, \solidarrow]
\enddc \notag
\]
From these diagrams and \ref{eq-phi-def-via-thet} defining $\phi$ from $\theta$, we have the following commutative diagram with leftmost and rightmost vertices equal:
\begin{align}
\begindc{\commdiag}[5]
\obj(10,30)[objNP21R]{$\F(L_{n,1} \dirsum L_2)$}
\obj(32,30)[objNP21P]{$\F(L_{n,1}' \dirsum L_2)$}
\obj(53,30)[objNP22P]{$\F(L_{n,2}' \dirsum L_2)$}
\obj(75,30)[objNP21L]{$\F(L_{n,1} \dirsum L_2)$}
\obj(10,10)[objTNP21R]{$\F(L_2) \tensor \F(L_{n,1})$}
\obj(32,10)[objTNP21P]{$\F(L_2) \tensor \F(L_{n,1}')$}
\obj(53,10)[objTNP22P]{$\F(L_2) \tensor \F(L_{n,2}')$}
\obj(75,10)[objTNP21L]{$\F(L_2) \tensor \F(L_{n,1})$}
\mor{objNP21R}{objTNP21R}{$\psi_{n+2,1}^{-1}$}
\mor{objNP21P}{objTNP21P}{$\psi_{\phi,1}^{-1}$}
\mor{objTNP22P}{objNP22P}{$\psi_{\phi,2}$}
\mor{objTNP21L}{objNP21L}{$\psi_{n+2,1}$}
\mor{objNP21R}{objNP21P}{$\Theta_{n+2,1'}^{-1}$}
\mor{objNP21P}{objNP22P}{$\phi$}
\mor{objNP22P}{objNP21L}{$\Theta_{n+2,2'}$}
\mor{objTNP21R}{objTNP21P}{$\ident \tensor \Theta_{n,1'}^{-1}$}[\atright, \solidarrow]
\mor{objTNP21P}{objTNP22P}{$\ident \tensor \theta$}[\atright, \solidarrow]
\mor{objTNP22P}{objTNP21L}{$\ident \tensor \Theta_{n,2'}$}[\atright, \solidarrow]
\enddc \notag
\end{align}
The bottom edge is $\ident \tensor \widehat{\theta}$ and the top edge is $\widehat{\phi}$.  The maps $\widehat{\theta} \colon \F(L_{n, 1}) \rightarrow \F(L_{n, 1})$, $\ident \tensor \widehat{\theta} \colon \F(L_2) \tensor \F(L_{n, 1}) \rightarrow \F(L_2) \tensor \F(L_{n, 1})$, and $\widehat{\phi} \colon F(L_{n, 1} \dirsum L_2) \rightarrow F(L_{n, 1} \dirsum L_2)$ are all Clifford linear unitary intertwiners, elements of $\UU(1)$ torsors.  The map of fixed $\UU(1)$ torsors $\widehat{\theta} \mapsto \ident \tensor \widehat{\theta} \mapsto \widehat{\phi}$ is $\UU(1)$ equivariant and thus is continuous.  (In fact, in terms of the canonical isomorphisms of these torsors with $\UU(1)$, it maps $1 \mapsto 1 \mapsto 1$, and hence is the identity.)

The principal bundle isomorphism is thus continuous.  It respects bundle gerbe multiplication because that multiplication is just composition of intertwiners.  Composition of two $\theta$'s before applying the isomorphism and composition of two $\phi$'s after applying the isomorphism give equal results.
\end{proof}

\chapter{THE CLIFFORD ALGEBRA MODULE BUNDLE}\label{c-clif-alg-modu-bndl}

We have a Clifford algebra bundle $\Cl(LE) \rightarrow LM$, a polarization class bundle $Y \xrightarrow{\pi} LM$, and a Fock space bundle $FY \rightarrow Y$ that is a $\pi^{*} \Cl(LE)$ module bundle.  Given our bundle gerbe $(P, Y, LM)$ and a trivialization of it via a principal $\UU(1)$ bundle $R \rightarrow Y$ with $\delta(R)$ isomorphic to $P$, we construct from $FY$ and $R$ an irreducible $\Cl(LE)$ module bundle, a spinor bundle $S$ over $LM$.

Before that are preliminary sections on things used in the proof: one on a kind of tensor product such as $FY \tensor_{\UU(1)} R$, and one on inverse limits.

\section{${}^{*}$-Representation Tensor Products}\label{s-star-rep-tens-prod}

This section is used for tensor products of Fock spaces, which are Hilbert spaces, with $\UU(1)$ torsors, and tensor products of Fock space bundles with principal $\UU(1)$ bundles.  A Fock representation of a Clifford algebra on a Fock space gives rise to a unitarily equivalent representation of the Clifford algebra on the tensor product of the Fock space with a $\UU(1)$ torsor.  Thus we speak of tensor products of ${}^{*}$-representations or of ${}^{*}$-modules, with $\UU(1)$ torsors.

It does no harm to consider general ${}^{*}$-representations of Clifford algebras on Hilbert spaces to start with, though the application will be to Fock representations.  Linguistically, we shift from the term representations to the term modules when going from torsors to bundles, but we have defined both equivalently, as in definition \ref{d-c-star-clif-alg-rep}, just as we say that a map is an intertwiner or equivalently a Clifford linear unitary isomorphism, as in definition \ref{d-equi-rep}.

First let us make precise what is meant by the tensor product of a Hilbert space with a $\UU(1)$ torsor.  Before the definition, let us say that generally speaking, it is a Hilbert space isomorphic, though there isn't a preferred isomorphism, with the original Hilbert space.  The torsor doesn't so much affect the Hilbert space as it affects mappings between that Hilbert space and other Hilbert spaces.  A more specific note is that unlike with the tensor product of two modules, there is no need for sums of decomposables.  The most general element is one decomposable, with addition coming mostly from addition in the Hilbert space factor, since there is no addition on the torsor (written multiplicatively), which is only a group, not a ring.

\begin{defn}\label{d-clif-alg-rep-tens-tors}
\index{Clifford algebra!representation!tensor with U(1) torsor@tensor with $\UU(1)$ torsor}
(The Tensor Product of a Clifford Algebra Representation and a $\UU(1)$ Torsor).
Suppose $\pi \colon \Cl(V) \rightarrow \B(F)$ is a ${}^{*}$-representation on a Hilbert space $F$, and $T$ is a $\UU(1)$ torsor.  Letting $\UU(1)$ act on $F$ by $z f = z(1) f$, for $z \in \UU(1)$, $f \in F$, and $1 \in \CC$, define the Hilbert space
\[
F \tensor T = F \tensor_{\UU(1)} T = \{ (f, t) \in F \cross T \} / ((f,t) \sim (z f, z^{-1} t)) \text{ for } z \in \UU(1) \notag
\]
as a set, with scalar multiplication and addition given by:
\begin{align}
\alpha (f \tensor t) &= (\alpha f) \tensor t \notag \\
(f \tensor t) + (g \tensor u) &= (f + w g) \tensor t \notag \\
w &= \langle u, t \rangle \in \UU(1) \text{ such that } u = w t, \notag
\end{align}
for $\alpha \in \CC$, $f \tensor t, g \tensor u \in F \tensor T$, using the natural pairing of lemma \ref{l-u1t-cano-isom}. Give $F \tensor T$ the topology from the inner product defined by
\[
\langle f \tensor t, g \tensor u \rangle = \langle t, u \rangle (\langle f, g \rangle), \notag
\]
where $\langle t, u \rangle \in \UU(1)$ acts on $\langle f, g \rangle \in \CC$.  Using the identification $\UU(1) \subset \CC$, we can also write
\[
\langle f \tensor t, g \tensor u \rangle = \langle f, g \rangle \langle t, u \rangle, \notag
\]
using the product in $\CC$.  Then define
\begin{align}
\Psi \colon \phi \in \B(F) &\mapsto \phi_{\tensor} = \phi \tensor_{\UU(1)} \ident_T \in \B(F \tensor_{\UU(1)} T) \notag \\
\pi_{\tensor} = \Psi \circ \pi \colon \Cl(V) &\rightarrow \B(F \tensor T) \notag \\
\pi_{\tensor}(a)(f \tensor t) &= (\pi(a) (f)) \tensor t. \notag
\end{align}
Call $\pi_{\tensor}$ the tensor product representation.

Analogously, define the tensor product of a representation on the right and a $\UU(1)$ torsor on the left.
\end{defn}
If $T = \UU(1)$, $\langle t, u \rangle = t \overline{u}$, and in general $\langle t, u \rangle = \tau(u, t)$, where $\tau$ is the continuous translation function of lemma \ref{l-pb-tran-func-tors}.

\begin{lem}\label{l-clif-alg-rep-tens-tors}
\index{Clifford algebra!representation!tensor with U(1) torsor@tensor with $\UU(1)$ torsor}
($\pi_{\tensor}$ is a ${}^{*}$-Representation of $\Cl(V)$ on $F \tensor T$).
Suppose $\pi \colon \Cl(V) \rightarrow \B(F)$ is a ${}^{*}$-representation on a Hilbert space $F$, and $T$ is a $\UU(1)$ torsor.  The objects and maps in definition \ref{d-clif-alg-rep-tens-tors} are well-defined.  For $f \in F$, $t \in T$, $\norm{f \tensor t} = \norm{f}$.  The map $\Psi$ is a ${}^{*}$-morphism.  Given $t_0 \in T$, the map
\begin{align}
\psi_{t_0} \colon F &\rightarrow F \tensor_{\UU(1)} T \notag \\
\psi_{t_0} (f) &= f \tensor t_0, \notag
\end{align}
is an isomorphism of Hilbert spaces that intertwines with $\Psi$; i.e., for $\phi \in \B(F)$, $\Psi(\phi) \circ \psi_{t_0} = \psi_{t_0} \circ \phi$.  Suppose $\pi \colon \Cl(V) \rightarrow \B(F)$ is a ${}^{*}$-representation.  Then $\pi_{\tensor}$ is a ${}^{*}$-representation isomorphic to $\pi$ via intertwiner $\psi_{t_0}$, though the choice of intertwiner depends on the choice of $t_0$.

If $\pi$ is $\ZZ_2$ graded, so is $\pi_{\tensor}$, and the intertwiner preserves the grading.  Put simply, the torsor doesn't affect the grading.
\end{lem}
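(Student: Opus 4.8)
�The plan is to verify each of the asserted properties of Definition~\ref{d-clif-alg-rep-tens-tors} in turn, mostly by elementary checks, and to package the ``isomorphic representation'' claim through the concrete intertwiner $\psi_{t_0}$. First I would confirm well-definedness: the relation $(f,t)\sim(zf,z^{-1}t)$ on $F\cross T$ is the orbit relation for a free $\UU(1)$ action, so $F\tensor T$ is a well-defined quotient set; the formulas for scalar multiplication, addition, and the inner product must be checked to respect the relation, which is a routine computation using $\langle zf,zg\rangle=\langle f,g\rangle$ on $\CC$ (here $z\overline z=1$) and the equivariance of the pairing $\langle\,,\,\rangle$ of Lemma~\ref{l-u1t-cano-isom}. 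Positive-definiteness of $\langle\,,\,\rangle$ on $F\tensor T$ follows from $\langle f\tensor t,f\tensor t\rangle=\langle t,t\rangle\langle f,f\rangle=\langle f,f\rangle$, which also gives $\norm{f\tensor t}=\norm f$; completeness of $F\tensor T$ then follows since $\psi_{t_0}$ (defined next) is a surjective isometry from the complete space $F$.

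Next I would analyze $\psi_{t_0}\colon f\mapsto f\tensor t_0$. It is additive and $\CC$-linear directly from the defined operations (the ``$w$'' factor in the addition formula is $1$ when both torsor components equal $t_0$), it is isometric by the norm computation above, and it is surjective because any $f\tensor t$ equals $(\langle t_0,t\rangle^{-1}f)\tensor t_0$ using the relation. Hence $\psi_{t_0}$ is a Hilbert space isomorphism. The intertwining identity $\Psi(\phi)\circ\psi_{t_0}=\psi_{t_0}\circ\phi$ is immediate from $\pi_{\tensor}(a)(f\tensor t)=(\pi(a)f)\tensor t$, i.e.\ $\Psi(\phi)(f\tensor t_0)=(\phi f)\tensor t_0$. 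From this one reads off that $\Psi$ is multiplicative, unital, $\CC$-linear, and $*$-preserving: each algebraic identity for $\Psi(\phi)$ is transported via the isomorphism $\psi_{t_0}$ from the corresponding identity for $\phi\in\B(F)$, and $\Psi(\phi)^*=\Psi(\phi^*)$ because $\psi_{t_0}$ is unitary so adjoints correspond. In particular $\Psi$ is a $*$-morphism $\B(F)\to\B(F\tensor T)$, and $\pi_{\tensor}=\Psi\circ\pi$ is a composition of $*$-morphisms, hence a $*$-representation of $\Cl(V)$; $\psi_{t_0}$ is then by construction a unitary intertwiner between $\pi$ and $\pi_{\tensor}$, and its dependence on $t_0$ is clear since a different choice $t_1=zt_0$ changes $\psi_{t_0}$ by the scalar $z$.

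For the $\ZZ_2$-graded statement, given $F=F_0\dirsum F_1$ I would set $(F\tensor T)_k=F_k\tensor T$, noting this is a well-defined orthogonal decomposition since the $\UU(1)$ action preserves each $F_k$ and $\langle f\tensor t,g\tensor u\rangle=0$ whenever $\langle f,g\rangle=0$. Then $\psi_{t_0}$ carries $F_k$ onto $(F\tensor T)_k$, so it is grade-preserving, and $\pi_{\tensor}(a)=\Psi(\pi(a))$ has the same parity as $\pi(a)$ because $\Psi(\phi)$ maps $F_k\tensor T$ the way $\phi$ maps $F_k$; thus $\pi_{\tensor}$ is a $\ZZ_2$-graded $*$-representation. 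I do not anticipate a genuinely hard step here — the only place demanding care is checking that the addition and inner product on $F\tensor T$ are independent of the chosen representatives $(f,t)$, $(g,u)$, since the ``transport factor'' $w=\langle u,t\rangle$ must be tracked consistently; once well-definedness is in hand, everything else is transported along the isometry $\psi_{t_0}$ and is essentially formal.
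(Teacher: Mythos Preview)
Your proposal is correct and follows essentially the same approach as the paper: verify well-definedness of the operations on representatives, establish the inner product and norm formula, show $\psi_{t_0}$ is a unitary isomorphism intertwining $\Psi$, and conclude that $\pi_{\tensor}$ is a ${}^{*}$-representation equivalent to $\pi$. The only minor difference is packaging: you leverage $\psi_{t_0}$ earlier to obtain completeness and the ${}^{*}$-preserving property of $\Psi$ by transport along a unitary, whereas the paper checks completeness via a direct Cauchy-sequence argument and verifies $\Psi(\phi)^{*}=\Psi(\phi^{*})$ by an explicit inner-product computation; both routes are equally elementary.
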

\begin{proof}
Scalar multiplication and addition are well-defined and addition is commutative because, for $\alpha \in \CC$, $w, z, z_1, z_2 \in \UU(1)$, $f, g \in F$, $t, u \in T$,
\begin{align}
\alpha ((z f) \tensor (z^{-1} t)) = (\alpha z f) \tensor (z^{-1} t) &= (z \alpha f) \tensor (z^{-1} t) = (\alpha f) \tensor t \notag \\
u = w t &\Rightarrow z_2^{-1} u = z_1 z_2^{-1} w z_1^{-1}t \text{ and} \notag \\
(z_1 f) \tensor (z_1^{-1}t) + (z_2 g) \tensor (z_2^{-1} u) &= (z_1 f + z_1 z_2^{-1} w z_2 g) \tensor (z_1^{-1} t) \notag \\
&= (z_1 (f + w g)) \tensor (z_1^{-1} t) \notag \\
&= (f + w g) \tensor t = (f \tensor t) + (g \tensor u); \notag \\
(f \tensor t) + (g \tensor u) &= (f + w g) \tensor t \notag \\
(g \tensor u) + (f \tensor t) &= (g + w^{-1} f) \tensor u \notag \\
&= (w (g + w^{-1} f)) \tensor (w^{-1} u) \notag \\
&= (w g + f) \tensor t. \notag
\end{align}
Addition is associative since $(h, v) \in F \cross T$, $x \in \UU(1)$, $v = x t$ $\Rightarrow$ $v = x w^{-1} u$:
\begin{align}
((f \tensor t) + (g \tensor u))+ (h \tensor v) &= ((f + w g) \tensor t) + (h \tensor v) \notag \\
&= (f + w g + x h) \tensor t \notag \\
(f \tensor t) + ((g \tensor u) + (h \tensor v)) &= (f \tensor t) + ((g + x w^{-1} h) \tensor u) \notag \\
&= (f + w g + w x w^{-1} h) \tensor t \notag \\
&= (f + w g + x h) \tensor t. \notag
\end{align}
$0 \tensor t$ is the unit for addition, and the other properties needed for a vector space hold.  We have
\begin{align}
\langle \alpha (f \tensor t), g \tensor u \rangle &= \langle (\alpha f) \tensor t, g \tensor u \rangle \notag \\
&= \alpha \langle f \tensor t, g \tensor u \rangle \notag
\end{align}
\begin{align}
\langle (f \tensor t) + (g \tensor u), h \tensor v \rangle &= \langle (f + w g) \tensor t, h \tensor v \rangle \notag \\
&= \langle f + w g, h \rangle \langle t, v \rangle \notag \\
&= (\langle f, h \rangle + \langle w g, h \rangle) \langle t, v \rangle \notag \\
&= \langle f, h \rangle \langle t, v \rangle + \langle w g, h \rangle \langle t, v \rangle \notag \\
&= \langle f \tensor t,  h \tensor v \rangle + \langle (w g) \tensor t, h \tensor v \rangle \notag \\
&= \langle f \tensor t,  h \tensor v \rangle + \langle  g \tensor u, h \tensor v \rangle \notag \\
\langle
\overline{\langle f \tensor t, g \tensor u \rangle} &= \overline{\langle f, g \rangle} \overline{\langle t, u \rangle} \notag \\
\langle f \tensor t, f \tensor t \rangle &= \langle f, f \rangle \ge 0, \text{} = 0 \text{ only when } f \tensor t = 0. \notag
\end{align}
The Hermitian property holds because $\langle, \rangle$ on $F$ is a Hermitian inner product, and for $z \in \UU(1)$, $t = z u \Leftrightarrow u = \overline{z} t$ so that $\overline{\langle t, u \rangle} = \overline{z} = \langle u, t \rangle$.

Completeness of $F \tensor_{\UU(1)} T$ follows from that of $F$ as follows.  Suppose $f_i \tensor t_i$ is a Cauchy sequence.  Each $f_i \tensor t_i = (f_i u_i) \tensor t_1$ for some $u_i \in \UU(1)$.  Since $f_i \tensor t_i - f_j \tensor t_j = (f_i u_i - f_j u_j) \tensor t_1$, we obtain a Cauchy sequence $f_i u_i$ in $F$, which converges to some $f \in F$.  Then $f_i \tensor t_i = (f_i u_i) \tensor t_1$ converges to $f \tensor t_1$.

$\Psi$ is a morphism of algebras.  For adjoints, look at
\begin{align}
\langle \phi_{\tensor} (x \tensor t), y \tensor u \rangle &= \langle \phi (x) \tensor t, y \tensor u \rangle \notag \\
 &= \langle \phi (x), y \rangle \langle t, u \rangle \notag \\
 &= \langle x, \phi^{*} y \rangle \langle t, u \rangle \notag \\
 &= \langle x \tensor t, \phi^{*} y \tensor u \rangle \notag \\
 &= \langle x \tensor t, (\phi^{*})_{\tensor} (y \tensor u) \rangle . \notag
\end{align}
Comparing with the definition of $(\phi_{\tensor})^{*}$, $\Psi$ is a ${}^{*}$-morphism, and thus $\pi_{\tensor} = \Psi \circ \pi$ is a ${}^{*}$-representation.  Since $t_0 = 1 \cdot t_0$, $\langle t_0, t_0 \rangle = 1$, $\psi_{t_0}$ preserves inner products and is thus injective.  It is also surjective, hence an isomorphism of Hilbert spaces, because each $f \tensor t = (f u) \tensor t_0$ for some $u \in \UU(1)$. The map $\psi_{t_0}$ is an intertwiner between $\pi$ and $\pi_{\tensor}$ because it intertwines with $\Psi$ and $\pi_{\tensor} = \Psi \circ \pi$; i.e., for $a \in \Cl(V)$, $f \in F$,
\[
(\Psi(\pi(a)) \circ \psi_{t_0}) (f) = (\pi(a)(f)) \tensor t_0 = (\psi_{t_0} \circ \pi(a)) (f).
\]
\end{proof}

\begin{note}\label{n-clif-alg-rep-tens-tors}
\index{Clifford algebra!representation!tensor with U(1) torsor@tensor with $\UU(1)$ torsor}
(Properties of the Tensor Product of a Representation and a Torsor).
Suppose $\pi \colon \Cl(V) \rightarrow \B(F)$ is a ${}^{*}$-representation on a Hilbert space $F$, and $T_1$, $T_2$ are $\UU(1)$ torsors.  Then $(F \tensor T_1) \tensor T_2$ is canonically isomorphic to $F \tensor (T_1 \tensor T_2)$, where the tensor products are of a representation and a torsor, except that $T_1 \tensor T_2$ is the tensor product of two torsors.  The isomorphism is a unitary intertwiner of representations, or in module language, Clifford linear and unitary.  We will generally identify the two resulting representations.

A tensor product with representation on the left and torsor on the right, is canonically isomorphic to the product in the reverse order.
\end{note}

\begin{lem}\label{l-clif-alg-rep-tens-tors-top}
\index{Clifford algebra!representation!tensor with U(1) torsor@tensor with $\UU(1)$ torsor}
(The Quotient and Inner Product Topologies on $F \tensor T$ are the Same).
Suppose given a Hilbert space $F$ and a $\UU(1)$ torsor $T$.  Then the quotient topology on the set on $F \tensor T$, not used in definition \ref{d-clif-alg-rep-tens-tors}, and the inner product topology, which is, are the same.
\end{lem}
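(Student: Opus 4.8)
The plan is to compare, on the underlying set of $F \tensor T = (F \cross T)/\sim$, the quotient topology $\mathcal{T}_q$ coming from the quotient map $q \colon F \cross T \rightarrow F \tensor T$, $(f,t) \mapsto f \tensor t$, with the inner product topology $\mathcal{T}$ of definition \ref{d-clif-alg-rep-tens-tors}. Since $\mathcal{T}_q$ is by construction the finest topology on $F \tensor T$ making $q$ continuous, it suffices to prove (i) that $q$ is continuous when $F \tensor T$ carries $\mathcal{T}$, giving $\mathcal{T} \subseteq \mathcal{T}_q$, and (ii) that $q^{-1}(U)$ open in $F \cross T$ implies $U \in \mathcal{T}$, giving the reverse inclusion.

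For (i) I would use the addition rule verified in lemma \ref{l-clif-alg-rep-tens-tors}: for $(f,t), (f_0,t_0) \in F \cross T$ one has $f \tensor t - f_0 \tensor t_0 = (f - w f_0) \tensor t$, where $w = \langle t_0, t \rangle \in \UU(1)$ is the unique element with $t_0 = w t$. Hence, since $\langle t, t \rangle = 1$, one gets $\norm{f \tensor t - f_0 \tensor t_0} = \norm{f - w f_0} \le \norm{f - f_0} + \abs{1 - w}\,\norm{f_0}$. Now $w = \langle t_0, t \rangle = \tau(t, t_0)$ is a continuous function of $(t, t_0)$ by lemma \ref{l-pb-tran-func-tors} applied to the translation function of a principal $\UU(1)$ bundle over a point, and $\tau(t_0, t_0) = 1$, so as $(f,t) \rightarrow (f_0, t_0)$ both summands tend to $0$. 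Thus $q$ is continuous into $(F \tensor T, \mathcal{T})$.

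For (ii), fix any $t_0 \in T$. By lemma \ref{l-clif-alg-rep-tens-tors} the map $\psi_{t_0} \colon F \rightarrow F \tensor T$, $f \mapsto f \tensor t_0$, is a surjective isometry, hence a homeomorphism onto $(F \tensor T, \mathcal{T})$. Given $U \subseteq F \tensor T$ with $q^{-1}(U)$ open in $F \cross T$, I would note that $\psi_{t_0}^{-1}(U) = \{ f \in F \st (f, t_0) \in q^{-1}(U) \}$ is the preimage of the open set $q^{-1}(U)$ under the continuous inclusion $f \mapsto (f, t_0)$, so it is open in $F$; as $\psi_{t_0}$ is a homeomorphism for $\mathcal{T}$, $U \in \mathcal{T}$. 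This gives $\mathcal{T}_q \subseteq \mathcal{T}$, and with (i) the two topologies coincide.

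I do not expect a serious obstacle; the only step requiring any care is the norm identity of (i) and the invocation of continuity of the translation function $\tau$ from lemma \ref{l-pb-tran-func-tors}, both of which are already essentially available from the proof of lemma \ref{l-clif-alg-rep-tens-tors}. The argument does not need $q$ to be open, although that follows at once from $q^{-1}(q(W)) = \bigcup_{z \in \UU(1)} z \cdot W$ for open $W$, and could be used to phrase (ii) instead via uniqueness of the quotient topology.
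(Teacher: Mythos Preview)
Your proof is correct. Part (ii) is essentially the paper's argument: the paper also shows that for $V$ open in the quotient topology, the slice $\{f : (f, t_0) \in q^{-1}(V)\}$ is open in $F$, and then pushes this through $\psi_{t_0}$.

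Part (i) differs. Rather than your direct norm estimate, the paper defines $\widetilde{\phi_{t_0}} \colon F \cross T \rightarrow F$ by $(f,t) \mapsto \tau(t_0, t) f$, observes it is continuous and $\UU(1)$-equivariant for the tensor action on the domain and the trivial action on $F$, and invokes the descent lemma from \citet{tomD87} to get a continuous map $\phi_{t_0} \colon (F \tensor T)_{\sim} \rightarrow F$ that is the set-theoretic inverse of $\psi_{t_0}$; continuity of $\psi_{t_0} \circ \phi_{t_0} = \ident$ then gives $\mathcal{T} \subseteq \mathcal{T}_q$. Your route avoids the descent machinery and is more self-contained, at the cost of redoing a bit of the translation-function work; the paper's route reuses a tool it has already set up for associated-bundle constructions elsewhere. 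Both are short and either is adequate here.
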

\begin{proof}
Let $(F \tensor T)_{\sim}$ denote the space with the quotient topology and $F \tensor T$ the same set with the inner product topology, as already defined.  For $t_0 \in T$, define $\widetilde{\phi_{t_0}} \colon F \cross T \rightarrow F$, for $f \in F$, $t \in T$, by $\widetilde{\phi_{t_0}} (f, t) = \tau(t_0, t) f$, where $\tau$ is the continuous translation function of lemma \ref{l-pb-tran-func-tors}, with $\tau(t_0, t) = \langle t, t_0 \rangle$.  This continuous map is $\UU(1)$-equivariant for the tensor product action on the right and the trivial action on the left, and thus descends to a continuous map of the orbit spaces \citep[page~4]{tomD87}, $\phi_{t_0} \colon (F \tensor T)_{\sim} \rightarrow F$.  Then  as maps of sets, $\phi_{t_0}$ and $\psi_{t_0}$ are inverses.  Further, $\psi_{t_0} \circ \phi_{t_0} = \ident \colon (F \tensor T_{\sim} \rightarrow F \tensor T$, as the composition of continuous maps, is continuous.

Its inverse, $\phi_{t_0}^{-1} \circ \psi_{t_0}^{-1} = \ident \colon F \tensor T \rightarrow (F \tensor T)_{\sim}$, also is continuous, as follows.  Since $\psi$ is a homeomorphism, this is equivalent to continuity of $\phi_{t_0}^{-1}$, or openness of $\phi_{t_0}$; that given $V$ open in $(F \tensor T)_{\sim}$, $\phi_{t_0} (V)$ is open in $F$.  It suffices to take any $f \in \phi_{t_0} (V)$ and show that there is some open neighborhood $N$ of $f$ such that $N \subset \phi_{t_0} (V)$, or equivalently since $\phi_{t_0}$ is a bijection, $N \tensor t_0 \subset V$.

Since $V$ being open in $(F \tensor T)_{\sim}$ is equivalent to its inverse image under the quotient projection $\pi$ being open in $F \cross T$, there are open neighborhoods $N_f$ of $f$, $N_{t_0}$ of $t_0$ such that $N_f \cross N_{t_0} \subset \pi^{-1} (V)$, and in particular, $N_f \cross {t_0} \subset \pi^{-1} (V)$.  Applying $\pi$, $N_f \tensor t_0 \subset V$, so take $N = N_f$.
\end{proof}

Rather than speaking of bundles of representations, we talk about bundles of Clifford modules.
\begin{defn}\label{d-clif-alg-rep-tens-bndl}
\index{Clifford algebra!representation!tensor with principal U(1) bundle@tensor with principal $\UU(1)$ bundle}
(The Tensor Product of a Clifford Algebra Module bundle and a Principal $\UU(1)$ Bundle).
Suppose $CM \rightarrow X$ is a Clifford module bundle with standard fiber a Hilbert space, for the Clifford algebra bundle $CA \rightarrow X$, so that there is fiber bundle map $\Pi \colon CA \cross_X CM \rightarrow CM$ that on the fiber over each $x \in X$ is a ${}^{*}$-representation.  Also suppose that $PB \rightarrow X$ is a principal $\UU(1)$ bundle.

Define the Clifford module bundle $CM \tensor_{\UU(1)} PB = CM \tensor PB \rightarrow X$ as the quotient of the topological space $CM \cross_X PB$ by the same $\UU(1)$ action as in definition \ref{d-clif-alg-rep-tens-tors}, using the quotient topology.  Define the Hilbert space structure and Clifford algebra representation on each fiber as in that definition.

Analogously, define the tensor product of a Clifford algebra module bundle on the right and a principal $\UU(1)$ bundle on the left.
\end{defn}

\begin{lem}\label{l-clif-alg-rep-tens-bndl}
\index{Clifford algebra!representation!tensor with principal U(1) bundle@tensor with principal $\UU(1)$ bundle}
(The Tensor Product of a Clifford Algebra Module Bundle and a Principal $\UU(1)$ Bundle).
The objects and maps of definition \ref{d-clif-alg-rep-tens-bndl} are well-defined, the fibers of $CM \tensor PB$ have the same topology, Hilbert space structure and Clifford algebra representation as definition \ref{d-clif-alg-rep-tens-tors} would give them fiberwise, and their Clifford algebra representations cohere into $\Pi_{\tensor} \colon CA \cross_X (CM \tensor PB) \rightarrow CM \tensor PB$.
\end{lem}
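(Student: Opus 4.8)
The plan is to reduce everything to the fiberwise statements already proved in lemma \ref{l-clif-alg-rep-tens-tors} and the topological identification of lemma \ref{l-clif-alg-rep-tens-tors-top}, and then to glue over a common trivializing cover. First I would pick an open cover $\{U_i\}$ of $X$ over which both $CM \rightarrow X$ and $PB \rightarrow X$ are trivial, with local trivializations $\phi_i$ of $CM_{|U_i}$ with fiber the standard Hilbert space $F$ (linear on fibers, and isometric in the cases arising in the thesis; otherwise one transports the inner product) and $\UU(1)$-equivariant local trivializations $\psi_i$ of $PB_{|U_i}$ with fiber $\UU(1)$. Together $\phi_i \cross \psi_i$ trivializes $CM \cross_X PB$ over $U_i$ with fiber $F \cross \UU(1)$, and this descends (a continuous equivariant map descends to a continuous map of orbit spaces, \citet[page~4]{tomD87}) to a bijection of $(CM \tensor PB)_{|U_i}$ with $U_i \cross (F \tensor \UU(1))$; composing with the canonical Hilbert space isomorphism $\psi_{t_0} \colon F \isomto F \tensor \UU(1)$ of lemma \ref{l-clif-alg-rep-tens-tors} (using, say, $t_0 = 1$) yields candidate local trivializations $\theta_i \colon (CM \tensor PB)_{|U_i} \isomto U_i \cross F$, which on fibers are $\UU(1)$-torsor-free Hilbert space isomorphisms $(CM \tensor PB)_x = CM_x \tensor PB_x \isomto F$.

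Next I would verify that the change of coordinates maps $\theta_j \circ \theta_i^{-1}$ are continuous and fiberwise linear automorphisms of $F$. They are composites of the transition functions of $CM$ (continuous by note \ref{n-tran-func-fb-vb}), the $\UU(1)$-valued transition functions of $PB$ (continuous by lemma \ref{l-tran-func-pb}), and the canonical torsor isomorphisms; concretely, on the fibre over $x$ they are multiplication of the $CM$ transition operator by the $\UU(1)$-valued $PB$ transition function, and joint continuity follows from continuity of scalar multiplication on $F$. Invoking the construction lemma \ref{l-fb-cons} then produces a fiber bundle $CM \tensor PB \rightarrow X$ carrying these transition functions, whose total-space topology is exactly the one induced by the $\theta_i$. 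That this coincides with the quotient topology of definition \ref{d-clif-alg-rep-tens-bndl} is seen by applying the descent principle again to the quotient projection $CM \cross_X PB \rightarrow CM \tensor PB$ and, fiberwise, lemma \ref{l-clif-alg-rep-tens-tors-top}, which says the quotient and inner-product topologies on $F \tensor \UU(1)$ agree. By construction each fiber is $CM_x \tensor PB_x$ with precisely the Hilbert space structure and ${}^{*}$-representation of definition \ref{d-clif-alg-rep-tens-tors}, and the vector-space operations and inner product are continuous on the total space because they are continuous in each chart $U_i \cross F$.

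Finally, to produce $\Pi_{\tensor}$ I would start from the continuous map $\widetilde{\Pi} \colon CA \cross_X (CM \cross_X PB) \rightarrow CM \cross_X PB$ given by $\widetilde{\Pi}(a,(m,p)) = (\Pi(a,m),p)$; it is $\UU(1)$-equivariant for the trivial action on $CA$ and the tensor-product action (in the $CM$ factor) on $CM \cross_X PB$, because $\Pi(a,\cdot)$ is $\CC$-linear and hence commutes with the scalar action of $\UU(1)$ on the fibres. Since $CA \cross_X (CM \tensor PB)$ is the $\UU(1)$-orbit space of $CA \cross_X (CM \cross_X PB)$ (the action on $CA$ being trivial), $\widetilde{\Pi}$ descends to a continuous $\Pi_{\tensor} \colon CA \cross_X (CM \tensor PB) \rightarrow CM \tensor PB$ by \citet[page~4]{tomD87}, and by construction its restriction to each fiber is the map $\pi_{\tensor} = \Psi \circ \pi$ of lemma \ref{l-clif-alg-rep-tens-tors}, hence a ${}^{*}$-representation; if $\Pi$ is $\ZZ_2$ graded, so is $\Pi_{\tensor}$ fiberwise by the last part of that lemma.

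I expect the main obstacle to be the bookkeeping of continuity across fibers rather than any conceptual difficulty: specifically, confirming that the quotient topology on $CM \tensor PB$ really agrees with the locally trivialized topology, and that the fiberwise-defined inner product, addition, and Clifford action assemble into genuinely continuous bundle maps uniformly in the base point. This is exactly where lemma \ref{l-clif-alg-rep-tens-tors-top} and the descent principle must be applied, and the care lies in doing so in families rather than pointwise; the algebraic content is entirely contained in lemma \ref{l-clif-alg-rep-tens-tors}.
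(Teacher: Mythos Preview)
Your proposal is correct and follows essentially the same approach as the paper: both argue that local trivializations of $CM \cross_X PB$ descend to the tensor product, invoke lemma \ref{l-clif-alg-rep-tens-tors-top} to identify the quotient and inner-product topologies on fibers, and obtain $\Pi_{\tensor}$ by descent of the $\UU(1)$-equivariant map $(a,(m,p)) \mapsto (\Pi(a,m),p)$ via \citet[page~4]{tomD87}. Your version is more explicit about the transition functions and the use of lemma \ref{l-fb-cons}, but the structure is the same.
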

\begin{proof}
Since the action for the tensor product quotient respects fibers, and lemma \ref{l-clif-alg-rep-tens-tors-top} implies that the quotient topology for each fiber is the same as that for the inner product, the results for Hilbert spaces and torsors carry through to the fibers here.  Also, the fiber product inherits respectively linear and $\UU(1)$-equivariant local trivializations from its factors, resulting in $\UU(1)$-equivariant local trivializations of the fiber product, that descend to the tensor product.  Likewise, the continuity of $\Pi$ implies continuity of the Clifford algebra action defined on the fiber product that acts trivially on the principal bundle factor.  This Clifford action, which is by linear maps on the Hilbert spaces, is equivariant for the tensor quotient $\UU(1)$ action, and so descends to the continuous fiber bundle map $\Pi_{\tensor}$, with fiberwise action on the tensor product.
\end{proof}

\begin{note}\label{n-clif-alg-rep-tens-bndl}
\index{Clifford algebra!representation!tensor with U(1) torsor@tensor with $\UU(1)$ torsor}
(Properties of the Tensor Product of a Representation and a Principal Bundle).
Analogously to note \ref{n-clif-alg-rep-tens-tors}, using the notation of definition \ref{d-clif-alg-rep-tens-bndl} but with two principal $\UU(1)$ bundles $PB_1$ and $PB_2$, $(CM \tensor PB_1) \tensor PB_2$ and $CM \tensor (PB_1 \tensor PB_2)$ are canonically isomorphic via an isomorphism that is Clifford linear and unitary on fibers.  We will generally identify the two resulting module bundles.

A tensor product with the Clifford algebra module bundle on the left and the principal $\UU(1)$ bundle on the right, is canonically isomorphic to the product in the reverse order.

The tensor product of a Clifford algebra module bundle with the product bundle base space $\cross \UU(1)$, is canonically isomorphic to the original Clifford algebra module bundle.
\end{note}

\section{Inverse Limit of a Functor}\label{s-inv-lim-func}

Since we will use an inverse limit, here is a definition and a lemma.

\begin{defn}\label{d-inv-lim}
\index{inverse limit}
\index{limit!inverse}
(The Inverse Limit of a Functor).
\citep[pages~75--78]{Frey64} Let $D \colon A \rightarrow B$ be a functor; then an inverse limit of $D$ is a pair $(I, \alpha_I)$ consisting of an object $I \in B$ and a natural transformation $\alpha_{I} \colon C_{I} \rightarrow D$, where $C_{I} \colon A \rightarrow B$ is the constant functor that maps every object to $I$, such that the following property holds.  (Recall that a constant functor maps every object to one object, and every morphism to the identity.)  For any constant functor $C_b \colon A \rightarrow B$ and natural transformation $\alpha_b \colon C_b \rightarrow D$, there is a unique natural transformation $\beta \colon C_b \rightarrow C_{I}$ such that $\alpha_{I} \circ \beta = \alpha_b$.
\end{defn}
An illustrative commutative diagram follows. Since $\beta(a_1) = \beta(a_2)$, we abuse notation to call it $\beta \in \Mor(b, I)$.  We only use the special case after the illustration.
\[
\begindc{\commdiag}[5]
\obj(-10,25)[obja1]{$a_1$}
\obj(-10,10)[obja2]{$a_2$}
\obj(5,25)[objCba1]{$b = C_b(a_1)$}
\obj(5,10)[objCba2]{$b = C_b(a_2)$}
\obj(30,25)[objCIa1]{$I = C_I(a_1)$}
\obj(30,10)[objCIa2]{$I = C_I(a_2)$}
\obj(50,25)[objDa1]{$D(a_1)$}
\obj(50,10)[objDa2]{$D(a_2)$}
\mor{obja1}{obja2}{$\phi$}
\mor{objCba1}{objCIa1}{$\beta = \beta(a_1)$}[\atleft,\dashArrow]
\mor{objCba2}{objCIa2}{$\beta = \beta(a_2)$}[\atleft,\dashArrow]
\mor{objCba1}{objCba2}{$\ident = C_b(\phi)$}
\mor{objCIa1}{objCIa2}{$\ident = C_I(\phi)$}
\mor{objCIa1}{objDa1}{$\alpha_I(a_1)$}
\mor{objCIa2}{objDa2}{$\alpha_I(a_2)$}
\mor{objDa1}{objDa2}{$D(\phi)$}
\cmor((7,27)(30,33)(48,27)) \pright(30,35){$\alpha_b (a_1)$}
\cmor((7,8)(30,2)(48,8)) \pright(30,0){$\alpha_b (a_2)$}
\enddc
\]

\begin{lem}\label{l-inv-lim-spec}
\index{inverse limit}
\index{limit!inverse}
(A Concrete Realization of the Inverse Limit of a Special Kind of Functor).
Suppose $A$ is a category in which for every $a_1, a_2 \in \Obj(A)$ there is exactly one morphism $m_{a_1, a_2} \in \Mor(a_1, a_2)$, that $B$ is the category $Set$, and that $D \colon A \rightarrow B$ is a functor.  Then using the convention that $x_{\alpha} \in D(\alpha)$, defining
\begin{align}
I &= \varprojlim_{a \in A} D(a) \notag \\
 &= \{ (x_a) \in \Pi_{a \in A} D(a) \st \forall a_1, a_2 \in A, x_{a_2} = D(m_{a_1, a_2}) (x_{a_1}) \} \notag \\
\alpha_{I} (a) &= (\pi_a)_{|I}, \text{ or just } \pi_a \text{ } \forall a \in A, \notag
\end{align}
$(I, \alpha_{I})$ is an inverse limit of $D$, constructed without arbitrary choice, and all the $m_{a_1, a_2}$, $D(m_{a_1, a_2})$, and $\alpha_I (a)$ are isomorphisms.
\end{lem}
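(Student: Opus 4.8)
The plan is to begin by unpacking the hypothesis on $A$. Since there is exactly one morphism $a_1 \to a_2$ and exactly one $a_2 \to a_1$, the two composites $m_{a_2,a_1}\circ m_{a_1,a_2}$ and $m_{a_1,a_2}\circ m_{a_2,a_1}$ must be the unique self-morphisms of $a_1$ and $a_2$, which are $\ident_{a_1}$ and $\ident_{a_2}$; hence every $m_{a_1,a_2}$ is an isomorphism, and, functors preserving isomorphisms, every $D(m_{a_1,a_2})$ is a bijection of sets. I would also record here the two consequences used repeatedly below: by uniqueness of morphisms, $m_{b_1,b_2}\circ m_{a,b_1}=m_{a,b_2}$, so $D(m_{b_1,b_2})\circ D(m_{a,b_1})=D(m_{a,b_2})$; and $m_{a,a}=\ident_a$, so $D(m_{a,a})=\ident_{D(a)}$.

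Next I would verify that $(I,\alpha_I)$ satisfies Definition \ref{d-inv-lim}. First, $\alpha_I$ is a natural transformation $C_I\to D$: naturality at $m_{a_1,a_2}$ reads $\alpha_I(a_2)\circ C_I(m_{a_1,a_2})=D(m_{a_1,a_2})\circ\alpha_I(a_1)$, i.e. $(\pi_{a_2})_{|I}=D(m_{a_1,a_2})\circ(\pi_{a_1})_{|I}$, which is precisely the defining condition cutting $I$ out of $\prod_{a\in A}D(a)$. Now let $(C_b,\alpha_b)$ be any cone; since $C_b$ sends every morphism to $\ident_b$, naturality of $\alpha_b$ says exactly that the set maps $\alpha_b(a)\colon b\to D(a)$ satisfy $\alpha_b(a_2)=D(m_{a_1,a_2})\circ\alpha_b(a_1)$ for all $a_1,a_2$. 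Define $\beta\colon b\to I$ by $\beta(y)=(\alpha_b(a)(y))_{a\in A}$; the compatibility just noted is exactly the statement that this tuple lies in $I$, so $\beta$ is well defined, and viewing $\beta$ as the (trivially natural, all components equal) transformation $C_b\to C_I$ we get $\pi_a\circ\beta=\alpha_b(a)$ for all $a$, hence $\alpha_I\circ\beta=\alpha_b$. Uniqueness is immediate: any $\beta'$ with $\alpha_I\circ\beta'=\alpha_b$ must satisfy $\pi_a\circ\beta'=\alpha_b(a)$ for every $a$, and the family $(\pi_a)_{a\in A}$ is jointly injective on the product, so $\beta'=\beta$. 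Since the formula for $I$, $\alpha_I$ and $\beta$ involves no choices, this also gives the "constructed without arbitrary choice" claim.

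Finally I would show each $\alpha_I(a)=(\pi_a)_{|I}$ is a bijection. Injectivity: if $(x_c),(x'_c)\in I$ agree at $a$, then for any $b$ we have $x_b=D(m_{a,b})(x_a)=D(m_{a,b})(x'_a)=x'_b$. Surjectivity: given $z\in D(a)$, put $x_b=D(m_{a,b})(z)$; using $D(m_{b_1,b_2})\circ D(m_{a,b_1})=D(m_{a,b_2})$ from the first step one checks the tuple $(x_b)_{b\in A}$ satisfies the defining condition of $I$, and $x_a=D(m_{a,a})(z)=z$. Combined with the first step, this shows all of $m_{a_1,a_2}$, $D(m_{a_1,a_2})$ and $\alpha_I(a)$ are isomorphisms. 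I expect no real obstacle here — the lemma is essentially formal once one notices that the hypothesis makes $A$ a codiscrete (chaotic) category; the only caveat worth a sentence is set-theoretic, namely that $\prod_{a\in A}D(a)$ is formed under the tacit assumption that $A$ is (essentially) small, which holds in the intended application where all the $D(a)$ are canonically bijective to one fixed set.
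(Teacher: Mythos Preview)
Your proof is correct and follows essentially the same approach as the paper: verifying naturality of $\alpha_I$ from the defining condition of $I$, factoring an arbitrary cone through $I$ via the product, and proving each $\alpha_I(a)$ is a bijection by direct injectivity/surjectivity arguments. The only cosmetic differences are that the paper proves the isomorphism of the $m_{a_1,a_2}$ after the universal property rather than before, and phrases surjectivity of $\alpha_I(a)$ as the existence of a right inverse obtained by applying the universal property to the cone $\alpha_b(a_2)=D(m_{a_1,a_2})$, which unwinds to exactly your explicit preimage $(D(m_{a,b})(z))_b$.
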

\begin{proof}
$\alpha_{I}$ is a natural transformation since for every $a_1, a_2 \in A$, $D(m_{a_1, a_2}) \circ \alpha_{I} (a_1) = \alpha_{I} (a_2) \circ \ident_{I}$; i.e., $D(m_{a_1, a_2}) \circ \pi_{a_1} = \pi_{a_2}$, or $x_{a_2} = D(m_{a_1, a_2}) (x_{a_1})$ for every $x_{a_1} \in \pi_{a_1} (I)$, true by definition of $I$.

Suppose given any constant functor $C_b$, which maps every object of $A$ to $b \in B$ and maps every morphism of $A$ to $\ident_b$, and any natural transformation $\alpha_b$, a collection of morphisms indexed by $a \in A$, $\alpha_b(a) \colon C_b(a) = b \rightarrow D(a) \in B$, such that for $a_1, a_2 \in A$, $D(m_{a_1, a_2}) \circ \alpha_b (a_1) = \alpha_b (a_2) \circ \ident_b$.  Given such a constant functor and natural transformation, by the universal property for products, there is a unique map $\beta \colon b \rightarrow \Pi_{a \in A} D(a)$ such that for every $a \in A$, $\alpha_I (a) \circ \beta = \pi_a \circ \beta = \alpha_b (a)$, so that $\beta$ is the one morphism of the desired natural transformation, provided we show that the image of $\beta$ is in $I$, which is to say, for every $a_1, a_2 \in A$,  $\pi_{a_2} \circ \beta = D(m_{a_1, a_2}) \circ \pi_{a_1} \circ \beta$.  But that is equivalent to $\alpha_I (a_2) \circ \beta = \alpha_b (a_2) = D(m_{a_1, a_2}) \circ \alpha_b (a_1) = D(m_{a_1, a_2}) \circ \alpha_I (a_1) \circ \beta$, with the middle equality due to the fact that $\alpha_b$ is a natural transformation.  Thus $(I, \alpha_I)$ is an inverse limit of $D$.

For any objects $a_1, a_2$ in $A$, there is exactly one morphism $\ident_{a_1, a_1} \in \Mor (a_1, a_1)$ and exactly one morphism $\ident_{a_2, a_2} \in \Mor (a_2, a_2)$.  Thus, given $m_{a_1, a_2} \in \Mor(a_1, a_2)$ and $m_{a_2, a_1} \in \Mor(a_2, a_1)$, necessarily $m_{a_1, a_2} \circ m_{a_2, a_1} = \ident_{a_1, a_1}$ and $m_{a_2, a_1} \circ m_{a_1, a_2} = \ident_{a_2, a_2}$; hence both $m_{a_1, a_2}$ and $m_{a_2, a_1}$ are isomorphisms.  Since a functor carries isomorphisms to isomorphisms, all the $D(m_{a_1, a_2})$ are isomorphisms.

To see that $\alpha_I (a_1)$ has a right inverse, define the constant functor $C_b$ by $b = D(a_1)$, the natural transformation $\alpha_b$ by $\alpha_b (a_1) = \ident$, and for every $a_2$, since in $A$ there is a morphism $m_{a_1, a_2}$, $\alpha_b (a_2) = D(m_{a_1, a_2})$.  Then reading off the top composition in the diagram, $\alpha_I (a_1) \circ \beta = \ident_{D(a_1)}$.  Thus also, $\alpha_I (a_1)$ is surjective.

To see that $\alpha_I (a_1)$ is injective, suppose $(x_{\alpha}), (y_{\alpha}) \in I$ and $x_{a_1} = \alpha_I (a_1) ((x_{\alpha})) = \alpha_I (a_1) ((y_{\alpha})) = y_{a_1}$.  Then by definition of $A$, for every $a_2$ there is a morphism $m_{a_1, a_2}$, and by definition of $I$, $x_{a_2} = D(m_{a_1, a_2}) (x_{a_1}) = D(m_{a_1, a_2}) (y_{a_1}) = y_{a_2}$.  Thus $(x_{\alpha}) = (y_{\alpha})$.

Since $\alpha_I (a_1)$ is a bijection, it is an isomorphism in $B = Set$.
\end{proof}

\section{The Clifford Module Bundle Construction}\label{s-clif-modu-bndl-cons}

Definition \ref{d-clif-alg-bndl} gives Clifford algebra bundle $\Cl(LE) \rightarrow LM$, \ref{d-pol-clas-bndl} a polarization class bundle $Y \xrightarrow{\pi} LM$, and \ref{d-fock-spac-bndl} a Fock space bundle $FY \rightarrow Y$ that is a $\pi^{*} \Cl(LE)$ module bundle as in lemma \ref{l-clle-act-fy}.  Proposition \ref{p-bg} gives the bundle gerbe $(P, Y, LM)$, and we are given as in definition \ref{d-bg-triv} a trivialization of it:  a principal $\UU(1)$ bundle $R \rightarrow Y$ and an isomorphism $\Phi \colon \delta(R) \rightarrow P$ that respects bundle gerbe multiplication.

We want an irreducible $\Cl(LE)$ module bundle, a spinor bundle $S$ over $LM$.  For a given $\gamma \in LM$, each $y \in Y_{\gamma}$ gives the module $FY_y$ for $\pi^{*} \Cl(LE)_y$, which we can identify with $(\Cl(LE))_{\gamma}$; but in general, we can't choose continuously one such $y$ for each $\gamma$.

Now, $FY \tensor R$ as in definition \ref{d-clif-alg-rep-tens-bndl}, like $FY$, is a $\pi^{*} \Cl(LE)$ module bundle over $Y$, but it differs from $FY$ in that $R$, because $\delta(R)$ is isomorphic to $P$, contains information related to Clifford linear unitary isomorphisms between fibers of $FY$.

We take advantage of the difference, not by choosing one $y \in Y$ and thence one fiber $(FY \tensor R)_y$ as the Clifford algebra module, but by using the information from $R$ to obtain a canonical isomorphism between any two fibers $(FY \tensor R)_{y_1}$ and $(FY \tensor R)_{y_2}$ with $y_1, y_2$ over $\gamma$.  We identify all these fibers over $\gamma$ via an inverse limit to get one Clifford algebra module $S_{\gamma}$ for $(\Cl(LE))_{\gamma}$; and all the $S_{\gamma}$ fit together into a fiber bundle over $LM$.

\begin{thm}\label{t-bg-triv-clif-alg-mod}
\index{bundle gerbe!triviality!Clifford module bundle}
\index{Clifford module bundle}
\index{Clifford algebra!module bundle}
(A Trivialization of the Bundle Gerbe Gives a Clifford Module Bundle).
From a trivialization $R \rightarrow Y$ with isomorphism $\Phi \colon \delta(R) \rightarrow P$, of the bundle gerbe $(P, p, Y, \pi, LM)$ of proposition \ref{p-bg}, we construct a fiber bundle $S \rightarrow LM$, each fiber of which is an irreducible module for the corresponding fiber of the Clifford algebra bundle $\Cl(LE)$ of definition \ref{d-clif-alg-bndl}, resulting in a continuous map $\Cl(LE) \cross_{LM} S \rightarrow S$.  That is, $S$ is a bundle of irreducible Clifford algebra modules over $LM$; and it is a Hilbert space bundle with local trivializations that are Clifford linear unitary isomorphisms on fibers.

The construction of $S$ is made without arbitrary choices.

Given another trivialization $R' \rightarrow Y$, $\Phi' \colon \delta(R') \rightarrow P$, there is a principal $\UU(1)$ bundle $Q \rightarrow LM$ such that $R' \cong R \tensor \pi^* Q$, and the Clifford module bundle $S'$ constructed using $R'$ differs from $S$ by $Q$; that is, $S' \cong S \tensor Q$, a homeomorphism over the identity that is a Clifford linear unitary isomorphism on fibers.
\end{thm}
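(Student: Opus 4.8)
The plan is to construct the fiber bundle $S$ by gluing together the fibers of $FY \tensor R$ that lie over a common loop $\gamma \in LM$, using the information encoded in the bundle gerbe trivialization to provide canonical identifications. First I would fix $\gamma \in LM$ and consider the subcategory $A_\gamma$ whose objects are the points $y \in Y_\gamma$ and which has exactly one morphism between any ordered pair of objects (so lemma \ref{l-inv-lim-spec} applies). The assignment $y \mapsto (FY \tensor R)_y$ should be made into a functor $D_\gamma \colon A_\gamma \rightarrow Set$ (landing in Hilbert spaces, in fact): for $y_1, y_2 \in Y_\gamma$, the morphism $y_1 \to y_2$ must go to a Clifford-linear unitary isomorphism $(FY \tensor R)_{y_1} \isomto (FY \tensor R)_{y_2}$. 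This is where the trivialization enters. The fiber $P_{(y_1,y_2)}$ is a $\UU(1)$ torsor of intertwiners $\F(L_{\gamma,1}) \to \F(L_{\gamma,2})$ (proposition \ref{p-bg-p}), and $\delta(R)_{(y_1,y_2)} \cong R_{y_1}^* \tensor R_{y_2}$, so $\Phi$ gives a torsor isomorphism $R_{y_1}^* \tensor R_{y_2} \cong P_{(y_1,y_2)}$, i.e.\ an element of $\Hom(R_{y_1}, R_{y_2} \tensor P_{(y_1,y_2)})$ in a natural way. Tensoring the tautological intertwiner data, one gets the canonical Clifford-linear unitary map $FY_{y_1} \tensor R_{y_1} \to FY_{y_2} \tensor R_{y_2}$; the bundle gerbe multiplication compatibility of $\Phi$ (note \ref{n-bg-triv-mult}) is exactly the functoriality condition $D_\gamma(m_{y_2,y_3}) \circ D_\gamma(m_{y_1,y_2}) = D_\gamma(m_{y_1,y_3})$. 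Then set $S_\gamma = \varprojlim_{y \in A_\gamma} D_\gamma(y)$ as in lemma \ref{l-inv-lim-spec}; each insertion map $\alpha_{S_\gamma}(y) \colon S_\gamma \isomto (FY \tensor R)_y$ is an isomorphism, so $S_\gamma$ inherits a Hilbert space structure and, via $FY$ being a $\pi^*\Cl(LE)$ module (lemma \ref{l-clle-act-fy}) together with the Clifford-linearity of the transition maps, an irreducible $(\Cl(LE))_\gamma$ module structure, well-defined independently of $y$.

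Next I would topologize $S = \bigsqcup_\gamma S_\gamma$ and give it the structure of a fiber bundle over $LM$. The plan is to use local sections: for each $\gamma_0$ pick an open $U \subset LM$ and a continuous local section $s \colon U \to Y$ (which exists since $Y \to LM$ is locally trivial, proposition \ref{p-bg}), so that over $U$ the composite $\alpha_{S_\gamma}(s(\gamma))$ identifies $S|_U$ with $(s^* (FY \tensor R))|_U$, which is a genuine continuous Hilbert space bundle. Declare these identifications to be homeomorphisms; the work is to check that on overlaps the resulting transition maps are continuous. Given two local sections $s, s'$ over $U \cap U'$, the transition function is $\gamma \mapsto D_\gamma(m_{s(\gamma), s'(\gamma)})$, which is the tensor of the tautological $FY$ transition data (continuous because $FY$ is a continuous bundle) with the isomorphism $R_{s(\gamma)}^* \tensor R_{s'(\gamma)} \cong P_{(s(\gamma),s'(\gamma))}$ induced by $\Phi$ (continuous because $\Phi$ is a bundle map and $(s,s')^* P$, $(s,s')^*\delta(R)$ are continuous bundles); the tensor-product topology computations are controlled by lemma \ref{l-clif-alg-rep-tens-bndl} and lemma \ref{l-clif-alg-rep-tens-tors-top}. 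Continuity of the Clifford action $\Cl(LE) \cross_{LM} S \to S$ then follows fiberwise-locally from that of $FY$'s action, pushed through the identifications, using the standard fact that a continuous equivariant map descends to the quotient \citep[page~4]{tomD87}. That local trivializations are Clifford-linear unitary on fibers is immediate from the construction.

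For the last clause, I would invoke lemma \ref{l-bg-two-triv}: given another trivialization $R' \to Y$, $\Phi' \colon \delta(R') \to P$, there is a principal $\UU(1)$ bundle $Q \to LM$ with $R' \cong R \tensor \pi^* Q$. Running the construction with $R'$ in place of $R$, the functor $D'_\gamma$ sends $y$ to $(FY \tensor R')_y \cong (FY \tensor R)_y \tensor Q_\gamma$ (using note \ref{n-clif-alg-rep-tens-bndl} and that $\pi^* Q$ restricted over $Y_\gamma$ is the constant bundle $Y_\gamma \times Q_\gamma$), and the transition maps of $D'_\gamma$ agree with those of $D_\gamma$ tensored with $\ident_{Q_\gamma}$ --- because the extra $\pi^*Q$ factor, being pulled back from $LM$, is untouched by the comparison isomorphism $\delta(R')\cong\delta(R)\tensor\pi^*\delta_{\text{triv}}(Q)$ and the two trivializations differ only by a cocycle from $LM$. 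Hence $\varprojlim D'_\gamma \cong (\varprojlim D_\gamma) \tensor Q_\gamma = S_\gamma \tensor Q_\gamma$, and assembling over $\gamma$ with compatible local sections gives $S' \cong S \tensor Q$ over the identity, Clifford-linear and unitary on fibers.

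The main obstacle I expect is verifying continuity of the gluing across fibers: the per-$\gamma$ construction via an inverse limit is purely set-theoretic (lemma \ref{l-inv-lim-spec}), so one must carefully repackage it as a pullback $s^*(FY \tensor R)$ over each chart and show the change-of-section maps are continuous bundle isomorphisms. This requires threading the continuity of $\Phi$ as a map of the principal $\UU(1)$ bundles $\delta(R)$ and $P$ over $Y^{[2]}$ through the tensor-product-with-a-torsor constructions of Section \ref{s-star-rep-tens-prod}, and keeping track of the various canonical identifications (associativity and commutativity of $\tensor$, $\Xi$ of lemma \ref{l-iden-xi}, note \ref{n-u1pb-tens-dual}) without introducing spurious $\UU(1)$ phases --- exactly the kind of bookkeeping that the multiplicative identity and inverse machinery of lemma \ref{l-bg-mult-id-inv} and corollary \ref{co-bg-mult-cano-isom-elem} was developed to make manageable.
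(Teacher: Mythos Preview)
Your proposal is correct and follows essentially the same route as the paper: define the fiberwise gluing isomorphisms $(FY\tensor R)_{y_1}\isomto(FY\tensor R)_{y_2}$ from the trivialization $\Phi$, check the cocycle condition via the multiplication-compatibility of $\Phi$, take the inverse limit of lemma~\ref{l-inv-lim-spec} to get $S_\gamma$, topologize via local sections $s\colon U\to Y$, and finish the $R'$ clause with lemma~\ref{l-bg-two-triv}.

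Two small points. First, a sign convention: in this paper $\delta(R)_{(y_1,y_2)}=R_{y_1}\tensor R_{y_2}^*$ (see definition~\ref{d-cano-triv-bg}), not $R_{y_1}^*\tensor R_{y_2}$ as you wrote; correspondingly the paper's gluing map is $w_1\tensor r_1\mapsto \Phi(r_1\tensor r_2^*)(w_1)\tensor r_2$. Second, the paper organizes the continuity argument by first building the gluing as a \emph{global} bundle isomorphism $\nu\colon \pi_1^*(FY\tensor R)\to\pi_2^*(FY\tensor R)$ over $Y^{[2]}$ (assembled from canonical torsor isomorphisms, $\Phi$, and an evaluation map $\ev\colon P\tensor\pi_1^*FY\to\pi_2^*FY$ whose properties are isolated in a separate lemma), and then observes that the change-of-section transition is literally $(s_i,s_j)^*\nu$, hence continuous. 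Your per-fiber-then-check-continuity plan is the same content, but the global-$\nu$ packaging is what makes the bookkeeping you flag in your last paragraph tractable: all the phase-tracking and canonical identifications are absorbed once into the construction of $\nu$ and $\ev$, rather than reappearing on every overlap.
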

\begin{proof}
Most of the work is in defining and proving properties of the canonical isomorphism we'll construct below, $\nu \colon \pi_1^{*} (FY \tensor R) \rightarrow \pi_2^{*} (FY \tensor R)$, where $\pi_1, \pi_2 \colon Y^{[2]} \rightarrow Y$ are the projections.  As in the following lemma \ref{l-nu-ev}, omitting irrelevant data for pullbacks and associated bundle constructions, for $(y_1, y_2) = ([\widetilde{\gamma}, L_1] ,[\widetilde{\gamma}, L_2]) \in Y^{[2]}$, $w_1 \in \F(L_1)$, $r_1 \in R_{y_1}$, and arbitrarily chosen $r_2 \in R_{y_2}$,
\begin{align}
(\pi_1^{*} (FY \tensor R))_{y_1, y_2} &\xrightarrow{\nu} (\pi_2^{*} (FY \tensor R))_{y_1, y_2} \notag \\
FY_{y_1} \tensor R_{y_1} &\rightarrow FY_{y_2} \tensor R_{y_2} \notag \\
w_1 \tensor r_1 &\mapsto \Phi(r_1 \tensor r_2^{*}) (w_1) \tensor r_2. \notag
\end{align}

To use bundle maps to define $\nu$, the following lemma \ref{l-nu-ev} defines one of the main pieces of $\nu$, $\ev \colon P \tensor \pi_1^{*} FY \rightarrow \pi_2^{*} FY$.  Other maps involved are two inverses of the canonical isomorphisms we will use from lemma \ref{l-u1pb-cano-isom}, temporarily named $\epsilon$ and $\eta$, which do the following on elements, given a principal $\UU(1)$ bundle $Q \rightarrow B$:
\begin{align}
\epsilon \colon Q &\rightarrow (B \cross \UU(1)) \tensor Q \notag \\
 q &\mapsto (b, 1) \tensor q, \text{ for all } q \in Q_b \notag \\
\eta \colon B \cross \UU(1) &\rightarrow Q \tensor Q^{*} \notag \\
(b, 1) &\mapsto q^{*} \tensor q, \text{ choosing any } q \in Q_b. \notag
\end{align}
We use the same name $\eta$ for the analogous canonical isomorphism with $Q$ a Clifford module bundle.  Other canonical isomorphisms, including these maps in their forward direction, are used without being named.  $\pi_1, \pi_2 \colon Y^{[2]} \rightarrow Y$ are the projections.  Then define $\nu$ as the composition:
\begin{align}
\pi_1^{*} (FY \tensor R) &\xrightarrow{\epsilon} (Y^{[2]} \cross \UU(1)) \tensor \pi_1^{*} (FY \tensor R) \notag \\
&\xrightarrow{\eta \tensor \ident} \pi_2^{*} R \tensor \pi_2^{*} R^{*} \tensor \pi_1^{*} (FY \tensor R) \notag \\
&\xrightarrow{canon.} \pi_1^{*} R \tensor \pi_2^{*} R^{*} \tensor \pi_1^{*} FY \tensor \pi_2^{*} R \notag \\
&\xrightarrow{\Phi \tensor \ident} P \tensor \pi_1^{*} FY \tensor \pi_2^{*} R \notag \\
&\xrightarrow{\ev \tensor \ident} \pi_2^{*} FY \tensor \pi_2^{*} R \notag \\
&\xrightarrow{canon.} \pi_2^{*} (FY \tensor R). \notag
\end{align}

All the maps composing $\nu$ are isomorphisms of fiber bundles, Clifford linear unitary isomorphisms on fibers, either mentioned in note \ref{n-clif-alg-rep-tens-bndl}, being principal $\UU(1)$ bundle isomorphisms tensored with $\ident$ on the Clifford module bundle part, or in the case of $\ev$, due to the following lemma \ref{l-nu-ev}, which we use now but state and prove later, after the end of this, the main line of the proof.

To fill in more steps on elements, but coalescing some of the bundle maps, omitting irrelevant pullback and associated bundle data,
\begin{align}
w_1 \tensor r_1 &\mapsto (y_1, y_2, 1) \tensor w_1 \tensor r_1 \notag \\
&\mapsto r_1 \tensor r_2^{*} \tensor w_1 \tensor r_2 \notag \\
&\mapsto \Phi(r_1 \tensor r_2^{*}) \tensor w_1 \tensor r_2 \notag \\
&\mapsto \Phi(r_1 \tensor r_2^{*}) (w_1) \tensor r_2. \notag
\end{align}

Now, $\nu$ gives a canonical Clifford linear unitary isomorphism between any two fibers of $FY \tensor R$ over $\gamma \in LM$, but to make an equivalence relation from this we need transitivity and symmetry of the relation:  over $Y^{[3]}$, the cocycle condition $\pi_{23}^{*} \nu \circ \pi_{12}^{*} \nu = \pi_{13}^{*} \nu$, and over $Y^{[2]}$, $\Delta_2^{11} \nu = \ident_{\pi_1^{*} (FY \tensor R)}$.  These combine to give $\pi_{21}^{*} \nu \circ \pi_{12}^{*} \nu = \pi_{11} \nu = \ident$.

For the cocycle condition, for $y_3 = [\widetilde{\gamma}, L_3]$, choosing arbitrarily some $r_3 \in R_{y_3}$, substituting the expression $\nu (w_1 \tensor r_1) = \Phi(r_1 \tensor r_2^{*}) (w_1) \tensor r_2$ into the expression $\nu (w_2 \tensor r_3) = \Phi(r_2 \tensor r_3^{*}) (w_2) \tensor r_3$,
\begin{align}
\pi_{23}^{*} \nu \circ \pi_{12}^{*} \nu (w_1 \tensor r_1) &= \Phi(r_2 \tensor r_3^{*}) (\Phi(r_1 \tensor r_2^{*}) (w_1)) \tensor r_3 \notag \\
&= \Phi(r_2 \tensor r_3^{*}) \circ \Phi(r_1 \tensor r_2^{*}) (w_1) \tensor r_3 \notag \\
&= \Phi(r_1 \tensor r_3^{*}) (w_1) \tensor r_3 \notag \\
&= \pi_{13}^{*} \nu (w_1 \tensor r_1), \notag
\end{align}
as before omitting irrelevant pullback and associated bundle data.  The penultimate equality shows the composition of two applications of $\ev$ turning into one application.  Letting $\Phi(r_1 \tensor r_2^{*})$ give $\phi_{12} \in T(L_1, L_2)$, $\Phi(r_2 \tensor r_3^{*})$ give $\phi_{23} \in T(L_2, L_3)$,  and $\Phi(r_1 \tensor r_3^{*})$ give $\phi_{13} \in T(L_1, L_3)$, the composition of applications of $\ev$ amounts to one application with the composition of intertwiners $\phi_{23} \circ \phi_{12}$.  Because $m_{\delta (R)} ((r_1 \tensor r_2^{*}) \tensor (r_2 \tensor r_3^{*})) = r_1 \tensor r_3^{*}$ by definition \ref{d-cano-triv-bg}, because $\Phi$ preserves bundle gerbe multiplication, and the multiplication $m_P$ for our bundle gerbe $(P, Y, LM)$ is given by composition of intertwiners of Fock spaces in reverse order, by note \ref{n-bg-triv-mult} and proposition \ref{p-bg-p}, $\phi_{23} \circ \phi_{12} = \phi_{13}$.  Thus the cocycle condition holds.

To get $\Delta_2^{11} \nu = \ident$, look at the expression for evaluation at a point and take the arbitrary element of $R_{y_1}$ as $r_1$.  It follows from the definition of multiplication for $\delta(R)$, which gives $m_{\delta} ((r_1 \tensor r_1)^{*} \tensor (r_1 \tensor r_1)^{*}) = (r_1 \tensor r_1)^{*}$, $\Phi$'s preserving the multiplication on the bundle gerbes it relates, definition \ref{d-bg-mult-id-inv} and lemma \ref{l-bg-mult-id-inv} concerning the uniqueness of the identity, and the fact that $\ident \in T(L_1, L_1)$ is an identity for composition and hence gives an identity for $m_P$, that $\phi_{11} = \ident$.

Arriving at this point with the properties of $\nu$ in hand, we use $\nu$ to give for every $(y_1, y_2)$ over $\gamma$, a unique Clifford linear unitary isomorphism $\nu_{12} \colon (FY \tensor R)_{y_1} \rightarrow (FY \tensor R)_{y_2}$, then use these to identify via an inverse limit, all the $(FY \tensor R)_y$ for $y$ over $\gamma$, giving one Clifford algebra module over $\gamma$.

To use lemma \ref{l-inv-lim-spec} let $A$ be the (abstract) category with $\Obj(A) = Y_{\gamma}$, the points $y$ over $\gamma$, and exactly one morphism $m_{12}$ for every pair of objects $(a_1, a_2) = (y_1, y_2)$.  Let the category $B = Set$, the category of sets.  The objects and morphisms we will actually use in $B$ come from forgetting $C^{*}$-Clifford algebra and Hilbert space structure.

Let the functor $D \colon A \rightarrow B$ on objects $y \in Y_{\gamma}$ be $y \mapsto (FY \tensor R)_y$ and on morphisms $m_{12} \mapsto \nu_{12}$.  The cocycle condition on the $\nu$'s is equivalent to one requirement for a functor, to preserve composition of morphisms.  The other requirement, that $D(\ident) = \ident$, is true since $\nu_{11} = \ident$.  Then define $S_{\gamma} = I$, a particular concrete realization of $\varprojlim_{a \in A} D(a)$.  The natural transformation $\alpha_I$ from $C_I$ to $D$ consists of the inverse limit projections, which are the projections $\pi_{\gamma, y} \colon S_{\gamma} \rightarrow (FY \tensor R)_y$ of the direct product of which the inverse limit of the lemma is a subset.  As a consequence of the lemma, the $\pi_{\gamma, y}$ are bijections.

We now put back the forgotten structure.  Fix some $y_1 \in Y_{\gamma}$ and define the Hilbert space structure and action of $\Cl(LE)_{\gamma}$ on $S_{\gamma}$ by requiring that the bijection $\pi_{\gamma, y_1}$ be a $\Cl(LE)_{\gamma}$ linear unitary isomorphism.  Then because the $\nu_{12}$ are $\Cl(LE)_{\gamma}$ linear unitary isomorphisms, from commutativity of the right hand square of definition \ref{d-inv-lim}, all the $\pi_{\gamma, y_2}$ are $\Cl(LE)_{\gamma}$ linear unitary isomorphisms.

The result doesn't depend on arbitrary choices such as that of $y_1$.  The inverse limit set from \ref{l-inv-lim-spec} doesn't depend on arbitrary choice.  The added structure doesn't either, for taking any $y_2 \in Y_{\gamma}$, the following diagram commutes, where all the maps are bijections and the map $\nu_{12} \colon (FY \tensor R)_{y_1} \rightarrow (FY \tensor R)_{y_2}$ is a Clifford linear unitary isomorphism.
\[
\begindc{\commdiag}[5]
\obj(10,20)[objS]{$S_{\gamma}$}
\obj(25,30)[objFYR1]{$(FY \tensor R)_{y_1}$}
\obj(25,10)[objFYR2]{$(FY \tensor R)_{y_2}$}
\mor{objS}{objFYR1}{$\pi_{\gamma, y_1}$}
\mor{objS}{objFYR2}{$\pi_{\gamma, y_2}$}
\mor{objFYR1}{objFYR2}{$\nu_{12}$}
\enddc
\]
The diagram shows that the choice of $y_1$ has no effect on the structures defined by it.  For instance, to find the sum of $u, v \in S_{\gamma}$:
\begin{align}
\pi_{\gamma, y_1}^{-1} (\pi_{\gamma, y_1} (u) + \pi_{\gamma, y_1} (v)) &= \pi_{\gamma, y_1}^{-1} \nu_{12} (\nu_{12}^{-1} \pi_{\gamma, y_2} (u) + \nu_{12}^{-1} \pi_{\gamma, y_2} (v)) \notag \\
 &= \pi_{\gamma, y_2}^{-1} (\pi_{\gamma, y_2} (u) + \pi_{\gamma, y_2} (v)). \notag
\end{align}

Because Fock representations are irreducible (see proposition \ref{p-fock-rep}) and the tensor product representations are isomorphic to Fock representations (see lemma \ref{l-clif-alg-rep-tens-tors}), our representation or $\Cl(LE)_{\gamma}$ module $S_{\gamma}$ is irreducible.

Now let the total space of our $\Cl(LE)$ module be the disjoint union $S = \amalg_{\gamma \in LM} S_{\gamma}$ as a set.  Let the projection $\pi_S$ be defined by $S_{\gamma} \rightarrow \{ \gamma \}$.  Considering $S = \amalg_{\gamma \in LM} S_{\gamma} \subset LM \cross \bigcup_{\gamma \in LM} S_{\gamma}$, $\pi_S$ is the projection on the first factor.

We have all of $S$ except a topology.  For this, let $\{ U_i \}$ be an indexed, good cover of $LM$ with local sections $s_i \colon U_i \rightarrow Y$ with $\pi \circ s_i = \ident_{U_i}$.  Define
\begin{align}
\psi_i \colon S_{|U_i} &\rightarrow s_i^{*} (FY \tensor R) \notag \\
s &\mapsto (\pi_S (s), \pi_{\pi_S (s), s_i \circ \pi_S (s)} (s)) = (\gamma, \pi_{\gamma, s_i (\gamma)} (s)), \notag
\end{align}
for $s \in S$ over $\gamma$.  By construction, the $\psi_i$ on fibers are Clifford linear unitary isomorphisms.  For $U_i \cap U_j \ne \emptyset$, suppose $s \in S_{|U_i \cap U_j}$, $\psi_i (s) = (\gamma, f) \in (s_i^{*} (FY \tensor R))_{\pi_S (s)} = (s_i^{*} (FY \tensor R))_{\gamma}$.  Then
\begin{align}
s &= \pi_{\gamma, s_i (\gamma)}^{-1} (\gamma, f), \text{ and} \notag \\
\psi_j \circ \psi_i^{-1} \colon (s_i^{*} (FY \tensor R))_{|U_i \cap U_j} &\rightarrow (s_j^{*} (FY \tensor R)_{|U_i \cap U_j} \text{ is} \notag \\
(\gamma, f) &\mapsto (\gamma, \pi_{\gamma, s_j (\gamma)} \circ \pi_{\gamma, s_i (\gamma)}^{-1} (f)) \notag \\
&= ((s_i, s_j)^{*} \nu) (\gamma, f), \notag
\end{align}
so that
\begin{align}
\psi_j \circ \psi_i^{-1} &= (s_i, s_j)^{*} \nu. \notag
\end{align}
Since $\nu$ is continuous, and a Clifford linear unitary isomorphism on fibers of $\pi_1^{*} (FY \tensor R)$, using the same Clifford algebra fiber for all $(y_1, y_2)$ above $\gamma$, so is $\psi_j \circ \psi_i^{-1}$ continuous and is a Clifford linear unitary isomorphism on fibers (above each $\gamma$) of $s_i^{*} (FY \tensor R)$.

Since the $s_i^{*} (FY \tensor R)$ are fiber bundles with standard fiber a Hilbert space over the contractible $U_i$, they have local trivializations that are unitary isomorphisms on fibers.  For all $i$, compose these with the respective $\psi_i$ to get bijections that satisfy a cocycle condition and determine a topology on $S$ that makes the compositions local trivializations of $S$ that are unitary isomorphisms on fibers.  Since the local trivializations of the $s_i^{*} (FY \tensor R)$ are already homeomorphisms, with the new topology so are the $\psi_i$.

By construction, each fiber $S_{\gamma}$ is a $(\Cl(LE))_{\gamma}$ module.  What remains is to show that the action $\Cl(LE) \cross_{LM} S \rightarrow S$ is continuous. Using the newly constructed local trivializations, because the homeomorhisms $\psi_i$ are Clifford linear unitary isomorphisms on fibers and the Clifford action on $FY$ (see lemma \ref{l-clle-act-fy}), hence $FY \tensor R$, hence $s_i^{*} (FY \tensor R)$ is continuous, the Clifford action on each $S_{U_i}$ and thus on $S$ is continuous.

If different local trivializations of $s_i^{*} (FY \tensor R)$ are chosen, their compatibility with the original choices implies the compatibility of the resulting local trivializations of $S$, resulting in the same topology for $S$.

Suppose given another set of local sections $s_i' \colon U_i \rightarrow Y$.  By adding duplicate $U_i$ to the indexed cover, the reasoning above, starting with the choice of good cover and local sections, comparing $s_i$ with $s_j'$ instead of with $s_j$, shows that the topology of $S$ is the same, regardless of the choice of local sections.

Further, given a different open cover $\{ U_i' \}$, there is a good cover of $LM$ that is a a refinement of both $\{ U_i \}$ and $\{ U_i' \}$, and reasoning in the same vein shows that the topology of $S$ is the same, regardless of the choices of open cover of $LM$ and local sections of $Y$.

For the third statement of the theorem, lemma \ref{l-bg-two-triv} gives from the two trivializations $R$, $R'$, a principal $\UU(1)$ bundle $Q \rightarrow LM$, such that $R' \cong R \tensor \pi^* Q$.  For each $\gamma \in LM$ and $y \in Y_{\gamma}$, since $(\pi^* Q)_y = \{ y \} \cross Q_{\gamma}$,
\begin{align}
S_{\gamma}' &\cong \varprojlim_{y \in \pi^{-1} (\gamma)} (FY \tensor R')_y \notag \\
 &\cong \varprojlim_{y \in \pi^{-1} (\gamma)} (FY \tensor (R \tensor \pi^* Q))_y \notag \\
 &\cong \varprojlim_{y \in \pi^{-1} (\gamma)} ((FY \tensor R)_y \tensor (\pi^* Q)_y) \notag \\
 &\cong \big(\varprojlim_{y \in \pi^{-1} (\gamma)} (FY \tensor R)_y \quad \big) \tensor Q_{\gamma}, \text{ so that} \notag \\
S' &\cong S \tensor Q. \notag
\end{align}
The isomorphism is a Clifford linear unitary isomorphism on fibers as in note \ref{n-clif-alg-rep-tens-tors}.  Above an open set in $LM$ small enough for all three bundles to be trivializable, the isomorphism is realized by multiplication by a $\UU(1)$ valued function, and thus is a homeomorphism.
\end{proof}

Having finished the main line of the proof, we come back to state and prove the lemma mentioned and used in it.
\begin{lem}\label{l-nu-ev}
\index{nu@$\nu$}
\index{ev@$\ev$}
($\ev$ is an Isomorphism).
The map $\ev$, part of the map $\nu$ of theorem \ref{t-bg-triv-clif-alg-mod}, is an isomorphism of fiber bundles, on fibers a Clifford linear unitary isomorphism.
\end{lem}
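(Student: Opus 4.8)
The plan is: write down $\ev$ explicitly on fibers, check that it respects the tensor--product equivalence, recognise it fibrewise as a composite of maps already known to be Clifford linear unitary isomorphisms, and finally obtain continuity (and hence that $\ev$ is a bundle isomorphism) by passing to the standard local trivializations, in which $\ev$ becomes, up to canonical identifications, the identity.

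First I would describe $\ev$. Using the associated--bundle presentations of proposition \ref{p-bg-p} and the identification $\Xi$ of lemma \ref{l-iden-xi}, over $(y_1, y_2) = ([\widetilde\gamma, L_1], [\widetilde\gamma, L_2]) \in Y^{[2]}$ one has $P_{(y_1, y_2)} = T(L_1, L_2)$, $(\pi_1^{*} FY)_{(y_1, y_2)} = FY_{y_1} = \F(L_1)$ and $(\pi_2^{*} FY)_{(y_1, y_2)} = FY_{y_2} = \F(L_2)$, and $\ev$ sends $\phi \tensor w_1$ to $\phi(w_1)$. By proposition \ref{p-set-intw-u1-tors} the $\UU(1)$ action on $T(L_1, L_2)$ is scalar multiplication of operators, so for $z \in \UU(1)$ and complex--linear $\phi$ we have $(z\phi)(z^{-1} w_1) = \phi(w_1)$; hence $\ev$ is well defined on the tensor product of definition \ref{d-clif-alg-rep-tens-bndl}. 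It lands in the correct fibre because $\phi \in T(L_1, L_2)$ intertwines the $\Cl(LE)_{\gamma}$-actions on $FY_{y_1}$ and $FY_{y_2}$ set up in lemma \ref{l-clle-act-fy} and proposition \ref{p-bg-p}.

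Next, fibrewise structure. Since $L_1, L_2 \in \Lagr_{res}$, corollary \ref{co-equi-soln} gives $T(L_1, L_2) \ne \emptyset$; fix any $\phi_0 \in T(L_1, L_2)$. Arguing as in lemma \ref{l-clif-alg-rep-tens-tors} (with note \ref{n-clif-alg-rep-tens-tors} to reorder the factors), the map $\psi_{\phi_0} \colon w \mapsto \phi_0 \tensor w$ is a Clifford linear unitary isomorphism $\F(L_1) \isomto P_{(y_1,y_2)} \tensor \F(L_1)$, where one uses that the torsor pairing $\langle \phi_0, \phi_0 \rangle$ equals the translation--function value $\tau(\phi_0, \phi_0) = 1$ (lemma \ref{l-pb-tran-func-tors}). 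Since $\ev \circ \psi_{\phi_0} = \phi_0$, we get $\ev|_{(y_1, y_2)} = \phi_0 \circ \psi_{\phi_0}^{-1}$, a composite of Clifford linear unitary isomorphisms (recall $\phi_0$ is one by definition \ref{d-equi-rep}); changing $\phi_0$ only changes the representative of an element of the tensor product, so this is canonical.

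It remains to prove continuity of $\ev$ and of its inverse. I would fix $L \in \Lagr_{res}$ and use the local trivializations of the standard Fock space bundle $F$ (proposition \ref{p-std-fock-spac-bndl} notation \ref{n-std-fock-spac-bndl}) together with the compatibly constructed local trivializations of the standard intertwiner bundle $T$ (proposition \ref{p-bg-t}), which yield, via the associated--bundle constructions, local trivializations of $FY$, of $P$, and hence of $P \tensor \pi_1^{*} FY$ and $\pi_2^{*} FY$. Because $T$'s trivializations were built precisely by conjugating with $F$'s, in these coordinates an intertwiner $\phi \in T(L_1', L_2')$ becomes an element $\widehat\phi \in T(L,L) \cong \UU(1)$, the class $\phi \tensor w_1$ becomes $\widehat\phi \tensor \widehat{w_1} \in \UU(1) \tensor \F(L) \cong \F(L)$, and $\phi(w_1)$ becomes $\widehat\phi(\widehat{w_1}) = \widehat\phi \cdot \widehat{w_1}$; thus $\ev$ is in these coordinates the identity of the model bundle (or at worst a manifestly jointly continuous map, cf.\ lemma \ref{l-cont-stro-op-top-adj}), and likewise for its inverse. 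Consequently $\ev$ descends from a continuous $L\SO(n)$-equivariant map of the associated--bundle cross products (a continuous equivariant map descends to a continuous map of orbit spaces, \citet[page~4]{tomD87}) to a continuous bundle morphism over $\ident_{Y^{[2]}}$ with continuous inverse, which being fibrewise a Clifford linear unitary isomorphism is an isomorphism of fiber bundles of the asserted kind. The main obstacle is purely the bookkeeping: matching up the several identifications --- the associated--bundle presentations, the $\Xi$-identification of $Y^{[2]}$, the reversal of tensor factors, and the $\UU(1) \tensor \F(L) \cong \F(L)$ reduction --- so that $\ev$ really does collapse to the identity in the chosen coordinates; once that is arranged the continuity is immediate.
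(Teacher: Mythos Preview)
Your proposal is correct and follows essentially the same route as the paper: define $\ev$ on fibers as evaluation of intertwiners on Fock vectors, descend from the standard model via $L\SO(n)$-equivariance, and prove continuity by passing to the local trivializations of $T$ and $F$ where the map becomes $(\widehat\phi,\widehat w)\mapsto\widehat\phi\cdot\widehat w$. Your fibrewise argument is slightly slicker than the paper's---factoring $\ev|_{(y_1,y_2)}=\phi_0\circ\psi_{\phi_0}^{-1}$ gives Clifford linearity and unitarity in one stroke from lemma \ref{l-clif-alg-rep-tens-tors} and definition \ref{d-equi-rep}, whereas the paper checks Clifford linearity via an equivariance chain through $\widetilde{\widetilde{\ev}}$ and then verifies unitarity by a direct inner product computation---but the substance and the continuity argument are the same.
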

\begin{proof}
This lemma is not made to stand alone; we adopt wholesale the context of the preceding theorem and of its proof at the point it references this lemma.

To define $\ev$, note that both $P$ and $\pi_1^{*} FY$ are bundles associated to $L\SO(E)$:  $P$ is identified with $L\SO(E) \cross_{L\SO(n)} T$ (see proposition \ref{p-bg-p}), and $\pi_1^{*} FY = \pi_1^{*} (L\SO(E) \cross_{L\SO(n)} F)$.  Let $[\widetilde{\gamma}, L_1, L_2, \phi_{12}]$ denote an element of $P$ over $(y_1, y_2) = ([\widetilde{\gamma}, L_1], [\widetilde{\gamma}, L_2]) \in Y^{[2]}$, where $\widetilde{\gamma} \in L\SO(E)$, $L_1, L_2 \in \Lagr_{res}$, and $\phi_{12} \in T(L_1, L_2)$; $\phi_{12} \colon \F(L_1) \rightarrow \F(L_2)$ is a $\Cl(V)$ linear unitary isomorphism.  Let $[\widetilde{\gamma}, L_1, L_2, w]$ denote an element of $\pi_1^{*} FY$ over $(y_1, y_2)$, where $w \in F(L_1)$.  Define
\[
\ev \colon [\widetilde{\gamma}, L_1, L_2, \phi_{12}] \tensor [\widetilde{\gamma}, L_1, L_2, w] \mapsto [\widetilde{\gamma}, L_1, L_2, \phi_{12}(w)],
\]
with $\phi_{12}(w) \in \F(L_2)$.  That $\ev$ is well defined and continuous follows from the fact that the corresponding $\widetilde{\ev} \colon P \cross_{Y^{[2]}} \pi_1^{*} FY \rightarrow \pi_2^{*} FY$ is well defined, continuous, and $\UU(1)$-equivariant.  Those properties of $\widetilde{\ev}$ follow from the fact that the corresponding $\widetilde{\widetilde{\ev}} \colon T \cross_{(\Lagr_{res} \cross \Lagr_{res})} \pi_1^{*} F \rightarrow \pi_2^{*} F$, $((L_1, L_2, \phi_{12}), ((L_1, L_2), (L_1, w))) \mapsto ((L_1, L_2), (L_2, \phi_{12} (w)))$, where we have written in the pullback details, has the same properties, also is $L\SO(n)$-equivariant, and that the action of $L\SO(n)$ is $\UU(1)$-equivariant.  To summarize the domains and codomains of the maps,
\begin{align}
P \tensor \pi_1^{*} FY &\xrightarrow{\ev} \pi_2^{*} FY \notag \\
P \cross_{Y^{[2]}} \pi_1^{*} FY &\xrightarrow{\widetilde{\ev}} \pi_2^{*} FY \notag \\
T \cross_{(\Lagr_{res} \cross \Lagr_{res})} \pi_1^{*} F &\xrightarrow{\widetilde{\widetilde{\ev}}} \pi_2^{*} F. \notag
\end{align}

The $L\SO(n)$-equivariance of $\widetilde{\widetilde{\ev}}$ can be seen by taking $g \in L\SO(n)$; then referring to definition \ref{d-fock-spac-bndl} for the actions of $L\SO(n)$ on $\Lagr_{res}$ and $\F(L_1)$, and to proposition \ref{p-bg-t} for the action of $L\SO(n)$ on $T(L_1, L_2)$, $(g \cdot \phi_{12}) (g \cdot w) = (\Lambda_g \circ \phi_{12} \circ \Lambda_g^{*}) (\Lambda_g (w)) = \Lambda_g (\phi_{12} (g)) = g \cdot (\phi_{12} (g))$.

To obtain continuity of $\widetilde{\widetilde{\ev}}$ we use local trivializations of the bundles $T$ and $F$, as in the proof of continuity of bundle gerbe multiplication surrounding \ref{eq-pp-bg-mult}, specifically continuity of $\widetilde{\widetilde{m}}$ in the proof of proposition \ref{p-bg-p}, except that in this proof we have not just gone from associated bundle to cartesian product, but have removed $L\SO(E)$ entirely from the definition of $\widetilde{\widetilde{\ev}}$.  We show continuity of $\widetilde{\widetilde{\ev}}$ at $(L_1, L_2, \phi_{12}, w)$.

Let $V_{L_1}, V_{L_2} \subset \Lagr_{res}$ be open neighborhoods of $L_1$ and $L_2$ respectively, such that over $V_{L_1} \cross V_{L_2}$ is the domain of a local trivialization of the standard intertwiner principal $\UU(1)$ bundle $T$ as in \ref{eq-t-psi}, \ref{eq-t-psi-thet} of the proof of proposition \ref{p-bg-t}, and such that over $V_{L_1}$ is the domain of a local trivialization of the standard Fock space bundle $F$ as in note \ref{n-std-fock-spac-bndl}.

For $L_1' \in V_{L_1}$, $L_2' \in V_{L_2}$, $\phi \in T(L_1', L_2')$, $w' \in \F(L_1')$, omitting redundant or irrelevant symbols $L_1'$, $L_2'$ appropriately, with $\widehat{\phi_{12}}, \widehat{\phi_{12}'} \in T(L, L)$, the $\UU(1)$ torsor of intertwiners of the Fock space $\F(L)$ of the standard Lagrangian subspace $L$, and $\widehat{w}, \widehat{w'} \in \F(L)$, the local trivializations map as follows:
\begin{align}
w &\mapsto \widehat{w} = \pi_2 \circ \Theta_{L_1} (L_1', w) \notag \\
w' &\mapsto \widehat{w'} = \pi_2 \circ \Theta_{L_1} (L_1', w') \notag \\
\phi_{12} &\mapsto \widehat{\phi_{12}} = (\pi_2 \circ \Theta_{L_2} (L_2', \cdot)) \circ \phi_{12} \circ (\pi_2 \circ \Theta_{L_1} (L_1', \cdot))^{-1} \notag \\
\phi_{12}' &\mapsto \widehat{\phi_{12}'} = (\pi_2 \circ \Theta_{L_2} (L_2', \cdot)) \circ \phi_{12}' \circ (\pi_2 \circ \Theta_{L_1} (L_1', \cdot))^{-1} \notag
\end{align}
\begin{align}
\phi_{12}(w) &\mapsto \pi_2 \circ \Theta_{L_2} (L_2', \phi_{12}(w)) \notag \\
&= (\pi_2 \circ \Theta_{L_2} (L_2', \cdot)) \circ \phi_{12} \circ (\pi_2 \circ \Theta_{L_1} (L_1', \cdot))^{-1} (\pi_2 \circ \Theta_{L_1} (L_1', w)) \notag \\
&= \widehat{\phi_{12}} (\widehat{w}) \text{;} \notag \\
\phi_{12}'(w') &\mapsto \pi_2 \circ \Theta_{L_2} (L_2', \phi_{12}'(w')) \notag \\
&= (\pi_2 \circ \Theta_{L_2} (L_2', \cdot)) \circ \phi_{12}' \circ (\pi_2 \circ \Theta_{L_1} (L_1', \cdot))^{-1} (\pi_2 \circ \Theta_{L_1} (L_1', w')) \notag \\
&= \widehat{\phi_{12}'} (\widehat{w'}) \text{;} \notag
\end{align}
that is, the salient results of the map in the local trivializations are $\widehat{\phi_{12}} \cross \widehat{w} \mapsto \widehat{\phi_{12}} (\widehat{w})$ and $\widehat{\phi_{12}'} \cross \widehat{w'} \mapsto \widehat{\phi_{12}'} (\widehat{w'})$.

Recall that for $T(L, L)$ the operator norm and strong operator topologies are equal.  Consider continuity at $\widehat{\phi_{12}}$ and $\widehat{w}$, fixing those for the moment.  In $\F(L)$ we have $\widehat{\phi_{12}}(\widehat{w}) - \widehat{\phi_{12}'}(\widehat{w'}) = (\widehat{\phi_{12}} - \widehat{\phi_{12}'}) (\widehat{w}) + \widehat{\phi_{12}'} (\widehat{w} - \widehat{w'})$.  The norm in $\F(L)$ of the first term goes to $0$ as $\widehat{\phi_{12}'} \rightarrow \widehat{\phi_{12}}$ with either topology, and the norm of the second term goes to $0$ as $\widehat{w'} \rightarrow \widehat{w}$ since the norm of $\widehat{\phi_{12}'}$ is bounded by $1$.

Thus $\widetilde{\widetilde{\ev}}$, and hence $\widetilde{\ev}$ and $\ev$ are continuous.

That $\ev$ is Clifford linear on each fiber follows from the same property of $\widetilde{\widetilde{\ev}}$.  On its domain $T \cross_{(\Lagr_{res} \cross \Lagr_{res})} \pi_1^{*} F$, define the action of $\pi_1^{*} (\Lagr_{res} \cross \Cl(L\RR^n))$ trivially on the first factor and, decoding the pullbacks, by the action of lemma \ref{l-clif-act-std-fock-spac-bndl} on the second factor.  On the codomain of $\widetilde{\widetilde{\ev}}$ also, let $\pi_2^{*} (\Lagr_{res} \cross \Cl(L\RR^n))$ act by the lemma.  Then $\widetilde{\widetilde{\ev}}$ is Clifford linear by definition \ref{d-equi-rep} applied to $\phi_{12}$.

Thence from $L\SO(n)$-equivariance of $\widetilde{\widetilde{\ev}}$, taking the associated product with $L\SO(E)$ for domain and codomain of $\widetilde{\widetilde{\ev}}$ and for the Clifford algebra $\Cl(L\RR^n)$, follows the Clifford linearity of $\widetilde{\ev}$ with respect to the Clifford actions of $\pi_1^{*} \pi^{*} \Cl(LE)$ on the domain and $\pi_2^{*} \pi^{*} \Cl(LE)$ on the codomain.  These are canonically isomorphic Clifford algebra bundles; the algebra part only changes when $\gamma$ does.  Finally, because in definition \ref{d-clif-alg-rep-tens-bndl} the Clifford action is defined by descent from the action on the associated product defined as for $\widetilde{\ev}$, $\ev$ itself is Clifford linear on fibers.

To see that $\ev$ is a unitary isomorphism on fibers we argue directly for $\ev$, having not thought of a sensible Hilbert space structure for the associated or cartesian products for $\widetilde{\ev}$ or $\widetilde{\widetilde{\ev}}$.  Suppose $\phi_{12a}, \phi_{12b} = \overline{z} \phi_{12a} \in T(L_1, L_2)$ for some $z \in \UU(1)$, and $w_{1a}, w_{1b} \in \F(L_1)$.  Then before applying $\ev$, omitting irrelevant data for the pullbacks and referring all the associated product equivalence classes to the same $\widetilde{\gamma}$, $\langle \phi_{12a} \tensor w_{1a}, \phi_{12b} \tensor w_{1b} \rangle =  \langle \phi_{12a}, \phi_{12b} \rangle \langle w_{1a}, w_{1b} \rangle = z \langle w_{1a}, w_{1b} \rangle$.  On the other hand, after applying $\ev$, $\langle \phi_{12a} (w_{1a}), \phi_{12b} (w_{1b}) \rangle = z \langle \phi_{12a} (w_{1a}), \phi_{12a} (w_{1b}) \rangle = z \langle w_{1a}, w_{1b} \rangle$, since $\phi_{12a}$ is unitary; the same after as before $\ev$.
\end{proof}

\chapter{FUTURE WORK}\label{c-futu-work}
\index{transgression!Pontryagin class!Dixmier-Douady class}
\index{Pontryagin class!transgression!Dixmier-Douady class}
\index{Dixmier-Douady class!transgression!Pontryagin class}

\section{Introduction to Future Work}\label{s-intr-futu-work}

Apart from proposition \ref{p-dd-bg-susp-c1-pb} about a bundle gerbe over a suspension, which could stand alone, this chapter is mainly concerned with three ways the work in the thesis could be continued.  Two are described very briefly, and one in some detail, with supporting results, which also give examples (e.g. lemma \ref{l-eq-bg}) of working concretely with a bundle gerbe constructed from a particular vector bundle.

The first subject for future work can be described in one paragraph.
\begin{note}\label{n-spin-stru}
\index{spin structure}
(Spin Structures for the Vector Bundles).
Note \ref{n-equi-rep-z2-grad} implies that a Clifford linear unitary isomorphism of Fock representations either preserves or reverses the natural $\ZZ_2$ grading.  It seems likely possible, in the case that the vector bundle $E$ has a spin structure, to alter the bundle gerbe construction to use instead of $\Lagr_{res}$, one connected component of that space (see proposition 1.2 (i), (ii) and the start of section 2.1 of \citet[pages~808,~814]{SW07}).  Instead of $L\SO(E)$, use $L\Spin(E)$; and make the associated products now over $L\SO(n)$, over $L\Spin(n)$, which would act on the left via the projection to $L\SO(n)$, by which it covers only the connected component of the identity, preserving the connected components of $\Lagr_{res}$.  Then the intertwiners in the bundle gerbe would preserve the gradings, so the spinor bundle $S$ would be $\ZZ_2$ graded as a bundle.

Part of this is similar to the situation in lemma \ref{l-eq-bg}, where there is a reduction of the structure group of the vector bundle from $\SO(n)$ to $\Symp(n)$, and the latter is used to construct the bundle gerbe.
\end{note}

The second subject for future work will make up the bulk of the chapter, and concerns the conjecture that the transgression of the first Pontryagin class of the original smooth vector bundle is plus or minus twice the Dixmier-Douady class of the constructed bundle gerbe.  If $E$ has a spin structure so that $\frac{p_1}{2} (E)$ exists, it may be that if it vanishes, so does $DD(G(E))$, whence using local trivializations of the polarization class bundle $Y$ and the bundle gerbe principal bundle $P$, proposition \ref{p-bg-triv-dd-zero} constructs a trivialization of the bundle gerbe.  We will see some results that could possibly be part of a proof.  It turns out that the conjecture first stated may depend on a the existence of a spin structure on the vector bundle, and thus on the work discussed in note \ref{n-spin-stru}.

The third subject for future work, again, can be described in one paragraph.  It is to consider a bundle two-gerbe associated to the first Pontryagin class of the original smooth vector bundle, perhaps from the associated frame bundle, constructing the bundle two-gerbe as in \citet{Stev00} Proposition 9.3 (see also Proposition 11.4).  From it could perhaps be obtained a bundle gerbe stably isomorphic to the bundle gerbe constructed here from the vector bundle.  The bundle two-gerbe would be trivial if the Pontryagin class were trivial as in the reference's Proposition 12.2, and it would be nice if it were possible to construct from a trivialization of the bundle two-gerbe, a trivialization of the bundle gerbe.

Returning to the subject for future work that will be described at length, section \ref{s-tran-h4m-h3lm} defines the transgression $\tau \colon \HH^4(M;\ZZ) \rightarrow \HH^3(LM;\ZZ)$, and shows that the transgression $\tau \colon \HH^4(S^4;\ZZ) \rightarrow \HH^3(LS^4;\ZZ)$ is an isomorphism.

Section \ref{s-quat-line-bndl} defines the particular vector bundle for what will be called, hopefully, the universal case of the conjecture:  the real vector bundle underlying the tautological quaternionic line bundle $E_Q \rightarrow \QH P^1 \cong S^4$.

Section \ref{s-gene-univ} shows that given a doubtful hypothesis concerning cohomology of a classifying space, the particular case of $E_Q$ implies the general case of the conjecture; that is, that $\pm 2 DD (G(E_Q)) = \tau (p_1(E_Q)) \Rightarrow \pm 2 DD (G(E)) = \tau (p_1(E))$ for all vector bundles $E$ satisfying our hypotheses.  If the vector bundles have spin structures and the work described in note \ref{n-spin-stru} turns out as expected, then it seems likely that the analogous cohomological hypothesis would be true, and that the particular, thus universal, case would imply the general case.  Further, since then $p_1 (E)$ is divisible by $2$, it's possible that $DD (G(E)) = \tau (\frac{p_1}{2} (E_Q))$ could then be implied by the same statement for $E_Q$.

Section \ref{s-dixm-doua-quat} gives an overview of an approach to proving the conjecture that the universal case of $E_Q$ is true.  The following sections after \ref{s-dixm-doua-quat} flesh this out.

Section \ref{s-dd-clas-bg-susp} defines a particular suspension isomorphism, and to relate the Dixmier-Douady class of any continuous bundle gerbe over the unreduced suspension of a given locally connected metric space $X$, to the first Chern class of a constructed principal bundle over $X$.  This was written originally to aid in the proof concerning $G(E_Q)$, which can be pulled back to $S^3$, the suspension of $S^2$, but the isomorphism could be of more general use.

Section \ref{s-map-s3-ls4-ster-proj-homo-coor} discuss the map used for the pullback of $G(E_Q)$ from $L\QH P^1 \cong LS^4$ to $S^3$, records isomorphisms related to stereoscopic projection for $S^4$ and homogeneous coordinates for $\QH P^1$.

Section \ref{s-bg-cons-pb-loc-sect-pb} constructs the bundle gerbe $G(E_Q)$, pulls it back to $S^3$, and defines local sections of the pullback.  Also it applies the suspension result of section \ref{s-dd-clas-bg-susp} to the bundle gerbe over $S^3$, obtaining a principal bundle over $S^2$.

In addition to these three lines of work that might give further results, it also might be possible to strengthen and simplify some of the arguments in the thesis, particularly those concerning the \Frechet manifold topology on loop spaces.  Perhaps the proof of proposition \ref{p-smth-free-loop-spac-frec-mfld} could be simplified by clarifying the relation between definitions in \citet{Hami82} and \citet{Stac05}.  In the proof of proposition \ref{p-loop-fb-is-frec-fb}, where only the existence of a topological isomorphism is shown between $L(P \cross_G F)$ and $LP \cross_{LG} LF$, it might be possible to show there is a diffeomorphism.  Another example is in the proof of proposition \ref{p-lm-c-coho-isom}, which would be simpler if it could be shown that $LM$ and $C(S^1, M)$ were homotopy equivalent through comparing definitions in \citet{Hami82} and \citet{Hirs76}.  Then it might be possible also to simplify some things this proof depends on.

\begin{ass}\label{a-sing-tran}
\index{assumptions!tau@$\tau$}
\index{assumptions!singular cohomology}
\index{transgression}
\index{tau@$\tau$}
Throughout this chapter we will use singular cohomology with $\ZZ$ coefficients except as otherwise noted, though sometimes $\ZZ$ will be made explicit.  Throughout this chapter $\tau$ will denote a transgression $\HH^{k+1}(M) \rightarrow \HH^k(LM)$, usually for $k = 3$, except in the proof of proposition \ref{p-dd-bg-susp-c1-pb}, where $\tau$ is used for another purpose.
\end{ass}

% Using $\HH^4(S^4)$, etc., didn't work; hence the approximation.
\section{Transgression Isomorphism $\text{H}^4(S^4)$ to $\text{H}^3(LS^4)$}\label{s-tran-h4m-h3lm}
First we define the transgression $\tau \colon \HH^{k+1}(M;\ZZ) \rightarrow \HH^k(LM;\ZZ)$ and show that it is an isomorphism in the case of $M = S^4$ and $k = 3$.  Along the way we show generally that the inclusion $LM \rightarrow C(S^1, M)$ induces isomorphisms in cohomology, by finding corresponding indexed good covers of the two spaces for use in \v{C}ech cohomology.

\begin{defn}\label{d-tran-hkp1-m-to-hk-lm}
\index{transgression!H{k+1}(M) to Hk(LM)@$\HH^{k+1}(M;\ZZ) \rightarrow \HH^k(LM;\ZZ)$}
\index{evaluation map}
\index{transgression}
\index{tau@$\tau$}
(The Transgression $\HH^{k+1}(M) \rightarrow \HH^k(LM)$).
Let $\ev \colon C(S^1, M) \cross S^1 \rightarrow M$ be the evaluation map $\ev(\gamma, t) = \gamma(t)$, and let $\ev^{*} \colon \HH^{k+1}(M) \rightarrow \HH^{k+1}(C(S^1, M) \cross S^1)$ denote the induced map in cohomology.  Let $/$ denote the slant product \citep[page~287]{Span66} and let $s \colon \HH^{k+1} (C(S^1, M) \cross S^1) \rightarrow \HH^k (C(S^1, M))$ be defined by $s \colon h \mapsto h / [S^1]$, where $[S^1]$ is the fundamental class of $S^1$ corresponding to the standard orientation \citep[pages~303--304]{Span66}.  Let $i_{LM} \colon LM \rightarrow C(S^1, M)$ be the inclusion, and $i_{LM}^{*} \colon \HH^k(C(S^1, M)) \rightarrow \HH^k(LM)$ be the induced map in cohomology.  Then define the transgression maps $\tau_C$, $\tau$ to $\HH^k (C(S^1, M))$ and $\HH^k (LM)$ respectively:
\begin{align}
\tau_C \colon \HH^{k+1}(M;\ZZ) &\rightarrow \HH^k (C(S^1, M);\ZZ) \notag \\
\tau_C = s \circ \ev^{*} \colon c &\mapsto (\ev^{*}(c)) / [S^1] \notag \\
\tau \colon \HH^{k+1}(M;\ZZ) &\rightarrow \HH^k(LM;\ZZ) \notag \\
\tau = i_{LM}^{*} \circ \tau_C \colon c &\mapsto i_{LM}^{*} ((\ev^{*}(c)) / [S^1]). \notag
\end{align}
\end{defn}

\begin{prop}\label{p-tran-hkp1-m-to-hk-lm}
\index{transgression!H{k+1}(M) to Hk(LM)@$\HH^{k+1}(M;\ZZ) \rightarrow \HH^k(LM;\ZZ)$}
\index{evaluation map}
\index{transgression!naturality}
(The Transgression $\HH^{k+1}(M;\ZZ) \rightarrow \HH^k(LM;\ZZ)$ is Well-Defined and Natural).
$\ev$ and $i_{LM}$ are continuous, and so $\tau_C$ and $\tau$ are well-defined.  The transgression $\tau$ is a natural transformation of degree $-1$ from the degree $k+1$ singular cohomology functor on topological spaces to the degree $k$ singular cohomology functor composed with the smooth loop functor (see corollary \ref{co-smth-loop-func}) of topological spaces.  That is, given a continuous map $f \colon M' \rightarrow M$ it follows that $(Lf)^{*} \circ \tau = \tau \circ f^{*} \colon \HH^{k+1}(M;\ZZ) \rightarrow \HH^k(LM';\ZZ)$.
\end{prop}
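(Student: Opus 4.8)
The plan is to establish well-definedness first, then naturality, both by reducing everything to known functoriality of the three building blocks: the evaluation map, the slant product, and the inclusion $i_{LM}$.

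For well-definedness, I would observe that $\ev \colon C(S^1, M) \cross S^1 \rightarrow M$ is continuous because $S^1$ is locally compact, so evaluation on a mapping space with the compact-open topology is jointly continuous (this is in lemma \ref{l-co-top}). The inclusion $i_{LM} \colon LM \rightarrow C(S^1, M)$ is continuous by proposition \ref{p-smth-free-loop-spac-frec-mfld}. Each of $\ev^{*}$, the slant product with the fixed fundamental class $[S^1]$, and $i_{LM}^{*}$ is then a well-defined homomorphism in singular cohomology, so their composites $\tau_C$ and $\tau$ are well-defined.

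For naturality, let $f \colon M' \rightarrow M$ be continuous. The key diagram to chase is
\[
\begindc{\commdiag}[5]
\obj(10,25)[objCM]{$C(S^1, M') \cross S^1$}
\obj(40,25)[objM]{$M'$}
\obj(10,10)[objCMp]{$C(S^1, M) \cross S^1$}
\obj(40,10)[objMp]{$M$}
\mor{objCM}{objM}{$\ev'$}
\mor{objCMp}{objMp}{$\ev$}
\mor{objCM}{objCMp}{$(Lf) \cross \ident$}
\mor{objM}{objMp}{$f$}
\enddc
\]
which commutes on the nose: $(\ev \circ ((Lf) \cross \ident))(\gamma, t) = \ev(f \circ \gamma, t) = f(\gamma(t)) = (f \circ \ev')(\gamma, t)$. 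Here I am writing $Lf = C(S^1, f)$ for post-composition; since $LM' \subset C(S^1, M')$ is invariant under post-composition with smooth $f$, the smooth-loop functor $Lf$ of corollary \ref{co-smth-loop-func} restricts from this map. Applying the cohomology functor gives $((Lf) \cross \ident)^{*} \circ \ev^{*} = (\ev')^{*} \circ f^{*}$. Next I would use naturality of the slant product with respect to maps in the first factor (the fundamental class $[S^1]$ is fixed): for $g \colon X \rightarrow Y$, one has $(g^{*} h) / [S^1] = g^{*}(h / [S^1])$ where on the left $g$ really means $g \cross \ident_{S^1}$ and on the right $g$ acts on $\HH^{k}(Y)$; this is the standard slant-product naturality in \citet[page~287]{Span66}. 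Combining, $(Lf)^{*} \circ \tau_C = \tau_C \circ f^{*}$ as maps $\HH^{k+1}(M;\ZZ) \rightarrow \HH^k(C(S^1, M');\ZZ)$. Finally, $i_{LM}$ is natural: $i_{LM'} \circ Lf|_{LM'} = (Lf) \circ i_{LM}$ as maps $LM' \rightarrow C(S^1, M)$ (both send a smooth loop $\gamma$ to $f \circ \gamma$), so $(Lf)^{*} \circ i_{LM}^{*} = i_{LM'}^{*} \circ (Lf)^{*}$ in cohomology. Composing this with the previous identity yields $(Lf)^{*} \circ \tau = (Lf)^{*} \circ i_{LM}^{*} \circ \tau_C = i_{LM'}^{*} \circ (Lf)^{*} \circ \tau_C = i_{LM'}^{*} \circ \tau_C \circ f^{*} = \tau \circ f^{*}$, which is exactly the asserted naturality. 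The degree shift $-1$ is immediate since the slant product with a $1$-dimensional class lowers cohomological degree by one.

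The main obstacle I anticipate is not conceptual but bookkeeping: pinning down the precise variance conventions for the slant product (some sources define $/$ so that it pairs a cohomology class on a product with a homology class on a factor, producing a cohomology class on the other factor) and making sure the naturality statement I invoke is the one actually proved in \citet{Span66}, with the map acting only on the surviving factor and the fundamental class genuinely fixed. A secondary subtlety is confirming that $Lf$ really does restrict the post-composition map $C(S^1, f)$ to $LM'$ — this needs $f$ smooth, which holds since morphisms here are smooth, but it is worth stating explicitly so that $\tau$ is being compared with the smooth-loop functor rather than with $C(S^1, -)$. Neither of these should require more than careful citation and a one-line check.
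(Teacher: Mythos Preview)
Your proposal is correct and follows essentially the same route as the paper: well-definedness via lemma \ref{l-co-top} and proposition \ref{p-smth-free-loop-spac-frec-mfld}, and naturality by checking three commuting squares (evaluation, slant product via \citet[page~287]{Span66}, and the inclusion $i_{LM}$) and composing them. The paper presents these squares as a single stacked diagram rather than in prose, but the content is identical, and your remark about needing $f$ smooth for $Lf$ to land in $LM$ is a valid observation about the statement itself.
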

\begin{proof}
$S^1$ is compact, hence by lemma \ref{l-co-top} the evaluation map $\ev \colon C(S^1, M) \cross S^1 \rightarrow M$ is continuous.  The inclusion $i_{LM} \colon LM \rightarrow C(S^1, M)$ is continuous by proposition \ref{p-smth-free-loop-spac-frec-mfld}.

From definitions, $\ev \circ (Lf \cross \ident) = f \circ \ev$, so $(Lf \cross \ident)^{*} \circ \ev^{*} = \ev^{*} \circ f^{*}$; i.e., the bottom square in the following diagram commutes.  The middle square commutes by naturality of the slant product as in property $1$, \cite[page~287]{Span66}; his $f$, our $Lf$; and his $g$, our $\ident$.  The top square commutes because the square of maps of spaces that induces it commutes.  The compositions forming the sides of the rectangle each equal $\tau$ for the respective spaces.
\[
\begindc{\commdiag}[5]
\obj(10,10)[objHkp1M]{$\HH^{k+1}(M; \ZZ)$}
\obj(10,30)[objCSMxS]{$\HH^{k+1}(C(S^1,M) \cross S^1; \ZZ)$}
\obj(10,50)[objCSM]{$\HH^k(C(S^1,M); \ZZ)$}
\obj(10,70)[objLM]{$\HH^k(LM; \ZZ)$}
\obj(50,10)[objHkp1Mp]{$\HH^{k+1}(M'; \ZZ)$}
\obj(50,30)[objCSMpxS]{$\HH^{k+1}(C(S^1,M') \cross S^1; \ZZ)$}
\obj(50,50)[objCSMp]{$\HH^k(C(S^1,M'); \ZZ)$}
\obj(50,70)[objLMp]{$\HH^k(LM'; \ZZ)$}
\mor{objHkp1M}{objCSMxS}{$\ev^{*}$}
\mor{objCSMxS}{objCSM}{$s$}
\mor{objCSM}{objLM}{$i_{LM}^{*}$}
\mor{objHkp1Mp}{objCSMpxS}{$\ev^{*}$}
\mor{objCSMpxS}{objCSMp}{$s$}
\mor{objCSMp}{objLMp}{$i_{LM'}^{*}$}
\mor{objHkp1M}{objHkp1Mp}{$f^{*}$}
\mor{objCSMxS}{objCSMpxS}{$(Lf \cross \ident)^{*}$}
\mor{objCSM}{objCSMp}{$(Lf)^{*}$}
\mor{objLM}{objLMp}{$(Lf)^{*}$}
\enddc
\]
\end{proof}

\begin{lem}\label{l-s4-oc-grps}
\index{cohomology!C(S1,S4), Omega S4@$C(S^1, S^4)$, $\Omega S^4$}
\index{homotopy!C(S1,S4), Omega S4@$C(S^1, S^4)$, $\Omega S^4$}
(Various Facts about $\Omega S^4$ and $C(S^1, S^4)$).
Let $0$ be the base point of $S^1$, $*$ the base point of topological spaces $X$ for which we don't wish to choose an explicit base point, and define as usual $\Omega X = \{ \gamma \in C(S^1, X) \st \gamma(0) = * \}$.
\begin{enumerate}
    \item $\Hot_k (\Omega S^4) \cong 0, 0, 0, \ZZ, \ZZ / 2 \ZZ$, $k = 0, 1, 2, 3, 4$, because $\Hot_k (\Omega X) = \Hot_{k+1} (X)$, by \citet[pages~437,~445]{ Bred93}, $S^1$ being locally compact Hausdorff, and using the table in \citet[page~339]{Hatc02} for $k = 4$.
    \item $\HH_k (\Omega S^4) \cong \ZZ, 0, 0, \ZZ, 0$, $k = 0, 1, 2, 3, 4$, viewing $S^4$ as the reduced suspension $S S^3$, weak homotopy equivalent to the James reduced product $J(S^3)$ \citep[page~471]{Hatc02}, which has homology given by the tensor algebra over $\ZZ$ generated by $\widetilde{\HH}_{*} (S^3; \ZZ)$ \citep[page~288]{Hatc02}.
    \item $\HH^k (\Omega S^4) \cong \ZZ, 0, 0, \ZZ, 0$, $k = 0, 1, 2, 3, 4$, by the universal coefficient theorem for singular cohomology \citep[page~282]{Bred93}.
    \item $\Hot_k (C(S^1, S^4)) \cong 0, 0, 0, \ZZ$, $k = 0, 1, 2, 3$, by the homotopy exact sequence \citep[page~453]{Bred93} of the fibration \citep[pages~393--394]{Dugu66} $\Omega S^4 \rightarrow C(S^1, S^4) \rightarrow S^4$.
    \item $\HH_k (C(S^1, S^4)) \cong \ZZ, 0, 0, \ZZ$, $k = 0, 1, 2, 3$, using the Hurewicz homomorphism $\Hot_k (C(S^1, S^4)) \isomto \widetilde{\HH}_k (C(S^1, S^4))$ \citep[page~185]{Swit75}, an isomorphism for $k \le 3$.
    \item $\HH^k (C(S^1, S^4)) \cong \ZZ, 0, 0, \ZZ$, $k = 0, 1, 2, 3$, reasoning as for $\Omega S^4$.
\end{enumerate}
\end{lem}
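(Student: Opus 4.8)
The plan is to establish the six items in order, each reducing to a standard tool of algebraic topology, with the later items built on the earlier ones.

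First I would handle the homotopy groups of $\Omega S^4$ (item 1). Since $S^1$ is locally compact Hausdorff, the exponential law gives a natural bijection $\Hot_k(\Omega S^4) \cong \Hot_{k+1}(S^4)$, so the claim follows immediately from the known values $\Hot_k(S^4) = 0$ for $k \le 3$, $\Hot_4(S^4) = \ZZ$, $\Hot_5(S^4) = \ZZ/2\ZZ$ (the last from the standard table of homotopy groups of spheres). Next, for the homology of $\Omega S^4$ (item 2), I would write $S^4 = \Sigma S^3 = S S^3$ and invoke the James splitting: $\Omega \Sigma S^3$ is weak homotopy equivalent to the James reduced product $J(S^3)$, and a weak homotopy equivalence induces isomorphisms on singular homology. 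The homology of $J(S^3)$ is the tensor algebra over $\ZZ$ on $\widetilde{\HH}_*(S^3) = \ZZ\langle x\rangle$ with $\deg x = 3$, which is $\ZZ$ concentrated in degrees $0, 3, 6, \dots$; truncating to degrees $\le 4$ gives $\ZZ, 0, 0, \ZZ, 0$. The cohomology $\HH^k(\Omega S^4)$ (item 3) then follows from the universal coefficient theorem: since all the homology groups in range are free, $\HH^k \cong \Hom(\HH_k, \ZZ)$, giving the same list.

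For $C(S^1, S^4)$, the key observation is that evaluation at the basepoint $\ev_0 \colon C(S^1, S^4) \to S^4$ is a fibration with fiber $\Omega S^4$, and it admits a section (constant loops), so the long exact homotopy sequence splits into $\Hot_k(C(S^1,S^4)) \cong \Hot_k(\Omega S^4) \oplus \Hot_k(S^4)$. Combining item 1 with $\Hot_k(S^4) = 0$ for $k \le 3$ gives $\Hot_k(C(S^1,S^4)) = 0$ for $k = 0,1,2$ and $\cong \ZZ$ for $k = 3$ (item 4). In particular $C(S^1,S^4)$ is simply connected with $\Hot_2 = 0$, so its first nonvanishing reduced homotopy group is $\Hot_3 = \ZZ$; the Hurewicz theorem then forces $\widetilde{\HH}_k = 0$ for $k \le 2$ and $\HH_3 \cong \Hot_3 = \ZZ$ (item 5), and universal coefficients again transfer this to cohomology (item 6).

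I do not expect a genuine obstacle here — the content is a chain of textbook computations — but the step requiring the most care is item 2: one must be precise that the James equivalence $\Omega\Sigma S^3 \simeq J(S^3)$ is only a weak homotopy equivalence (adequate for homology), and that the tensor-algebra description of $\HH_*(J(S^3))$ is being read off correctly so that no spurious class appears in degrees $\le 4$. The only other point worth a sentence is confirming that the free loop space evaluation is a Hurewicz fibration and that the splitting via the constant-loop section is compatible with the basepoint conventions used for $\Omega S^4$.
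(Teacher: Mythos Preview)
Your proposal is correct and follows essentially the same route as the paper: items 1--3 and 5--6 are handled identically, and for item 4 both you and the paper use the fibration $\Omega S^4 \to C(S^1,S^4) \to S^4$ together with the constant-loop section. The only cosmetic difference is that you phrase the conclusion as a direct-sum splitting $\Hot_k(C(S^1,S^4)) \cong \Hot_k(\Omega S^4) \oplus \Hot_k(S^4)$, while the paper observes that the section forces the connecting homomorphisms to vanish and then reads off that $\Hot_k(\Omega S^4) \to \Hot_k(C(S^1,S^4))$ is an isomorphism for $k \le 3$; these are equivalent.
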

\begin{proof}
Regarding $\Hot_k (C(S^1, S^4))$, $0 \xrightarrow{\incl} S^1$ is a closed cofibration by \citet[page~435]{Bred93}, hence, defining $\ev_0 \colon C(S^1, S^4) \rightarrow S^4$ as the restriction map $C(S^1, S^4) \rightarrow C(\{ 0 \}, S^4)$ followed by the identification $C(\{ 0 \}, S^4) \cong S^4$, $\ev_0$ is a fibration by \citet[page~455]{Bred93}, with fiber $\ev_0^{-1} (*) = \Omega S^4$.  Then the homotopy exact sequence of the fibration \citep[page~453]{Bred93} gives for $k \ge 0$
\[
\Hot_{k + 1} (S^4) \rightarrow \Hot_k(\Omega S^4) \rightarrow \Hot_k(C(S^1, S^4)) \rightarrow \Hot_k (S^4). \notag
\]
Since $\ev_0$ has a section $s \colon S^4 \rightarrow C(S^1, S^4)$ mapping each point in $S^4$ to the constant loop at that point, $\ev_0 \circ s = \ident_{S^4}$, $(\ev_0)_{\sharp} \circ s_{\sharp} = \ident_{\pi_k (S^4)}$, and the consequent surjectivity of $(\ev_0)_{\sharp}$ implies that the subsequent connecting homomorphisms are zero.  Thus the maps $\Hot_k(\Omega S^4) \rightarrow \Hot_k(C(S^1, S^4))$ induced by the inclusion are isomorphisms for $k \le 3$, and since $\Hot_k(\Omega S^4) \cong \Hot_{k + 1}(S^4)$, we obtain $\Hot_k (C(S^1, S^4)) \cong 0, 0, 0, \ZZ$, $k = 0, 1, 2, 3$.
\end{proof}

\begin{lem}\label{l-ev-star-h4s4-h4os}
\index{evaluation map!Omega(S4)xS1->S4@$\Omega S^4 \cross S^1 \rightarrow S^4$}
% (A title for this lemma seemed redundant).
The evaluation map $\Omega S^4 \cross S^1 \rightarrow S^4$ induces an isomorphism in $\HH^4$.
\end{lem}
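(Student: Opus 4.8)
The plan is to realize $\Omega S^4$ (and its relevant cohomology) through the James reduced product model, pick an explicit generator of $\HH^3(\Omega S^4;\ZZ)\cong\ZZ$, and compute where the evaluation-induced map sends the generator of $\HH^4(S^4;\ZZ)$ by pairing against a $4$-cycle in $\Omega S^4\cross S^1$.

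First I would set up notation. Write $S^4 = S S^3$ as a reduced suspension, so $\Omega S^4 \simeq J(S^3)$ with $\HH_*(\Omega S^4;\ZZ)$ the tensor algebra on $\widetilde{\HH}_*(S^3;\ZZ)$ (all facts from lemma \ref{l-s4-oc-grps}); in particular $\HH_3(\Omega S^4;\ZZ)\cong\ZZ$ is generated by the class $[\iota]$ of the inclusion $\jmath\colon S^3 = \Sigma^{-1}S^4 \hookrightarrow \Omega S^4$ (the adjoint of $\ident_{S^4}$), and dually $\HH^3(\Omega S^4;\ZZ)\cong\ZZ$. The evaluation map $\ev\colon \Omega S^4 \cross S^1 \rightarrow S^4$ is, by adjointness, exactly the map whose restriction to $\{\gamma\}\cross S^1$ is $\gamma$ and whose restriction to $\Omega S^4 \cross \{0\}$ is constant at the basepoint $*$. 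The key point is that $\ev$ is the standard ``counit'' map, and its composite with $\jmath\cross\ident\colon S^3\cross S^1 \rightarrow \Omega S^4 \cross S^1$ is the canonical map $S^3\cross S^1 \rightarrow S^3 \wedge S^1 = \Sigma S^3 = S^4$ (collapsing $S^3\vee S^1$), which is a degree-one map on the top cell.

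Then I would compute as follows. Let $a\in\HH^4(S^4;\ZZ)$ be a generator. By the Künneth theorem, $\HH_4(\Omega S^4 \cross S^1;\ZZ)$ has a summand $\HH_3(\Omega S^4;\ZZ)\tensor \HH_1(S^1;\ZZ)$, generated by $[\iota]\times[S^1]$, i.e.\ the image of a fundamental class of $S^3\cross S^1$ under $\jmath\cross\ident$. Evaluating: $\langle \ev^*(a),\, (\jmath\cross\ident)_*[S^3\cross S^1]\rangle = \langle a,\, \ev_*(\jmath\cross\ident)_*[S^3\cross S^1]\rangle = \langle a,\, (\text{canonical }S^3\cross S^1\to S^4)_*[S^3\cross S^1]\rangle = \pm1$, since that canonical map has degree one. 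Hence $\ev^*(a)$ is a generator of the summand $\HH^3(\Omega S^4;\ZZ)\tensor \HH^1(S^1;\ZZ)$ of $\HH^4(\Omega S^4\cross S^1;\ZZ)$; since $\HH^4(\Omega S^4;\ZZ)=0$ and $\HH^3(S^1;\ZZ)=0$, the Künneth formula shows $\HH^4(\Omega S^4\cross S^1;\ZZ)$ is exactly that one summand $\HH^3(\Omega S^4;\ZZ)\cong\ZZ$, so $\ev^*$ carries a generator to a generator and is therefore an isomorphism. (One should double-check there are no $\Tor$ or other Künneth summands obstructing this: $\HH_*(\Omega S^4;\ZZ)$ is free and concentrated in degrees $0,3,6,\dots$, and $\HH_*(S^1;\ZZ)$ is free in degrees $0,1$, so $\HH_4$ of the product is free of rank one and dually so is $\HH^4$.)

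The main obstacle I expect is pinning down precisely that $\ev\circ(\jmath\cross\ident)$ is the degree-one collapse map $S^3\cross S^1\to S^4$, rather than something null or of another degree — this is the one place where the James model and the adjunction have to be reconciled carefully. The cleanest route is: $\jmath$ is by definition the adjoint under $[\Sigma^{-1}(-),-]$-adjunction of $\ident_{S^4}$, so $\ev\circ(\jmath\cross\ident)$ is (by naturality of the evaluation/counit of the loop-suspension adjunction) the counit $\Sigma\Omega S^4\to S^4$ precomposed with $\Sigma\jmath$, and $\Sigma\jmath$ composed with that counit is $\ident_{S^4}$; identifying $\Sigma S^3$ with the smash $S^3\wedge S^1$ and that with the quotient of $S^3\cross S^1$ gives the claim. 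The acknowledgements credit Daniel Cibotaru with ideas for this lemma's proof, which suggests the authors may instead argue via a fibration/spectral sequence comparison ($\Omega S^4\to PS^4\to S^4$ versus $\Omega S^4 \to C(S^1,S^4)\to S^4$) with the evaluation map relating them; that is a viable alternative, but the James-model computation above is self-contained given lemma \ref{l-s4-oc-grps}.
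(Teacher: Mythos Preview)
Your proof is correct and shares the paper's key idea: use the adjoint inclusion $\jmath\colon S^3\to\Omega S^4$ (the paper's $\widetilde{\phi}$) and the fact that $\ev\circ(\jmath\times\ident)$ is the degree-one collapse $S^3\times S^1\to S^3\wedge S^1\cong S^4$. Where you differ is in the endgame. The paper first passes to the smash product, shows that the descended map $\widetilde{\ev_\Omega}\colon\Omega S^4\wedge S^1\to S^4$ induces an isomorphism in $\HH^4$ via the factorization $\phi=\widetilde{\ev_\Omega}\circ(\widetilde{\phi}\wedge\ident)$, and then separately proves that the quotient $\pi\colon\Omega S^4\times S^1\to\Omega S^4\wedge S^1$ induces an $\HH^4$-isomorphism using the long exact sequence of the pair $(\Omega S^4\times S^1,\,\Omega S^4\vee S^1)$ and a K\"unneth computation. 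You instead work directly on the cross product: K\"unneth identifies $\HH_4(\Omega S^4\times S^1)\cong\ZZ$ with generator $\jmath_*[S^3]\times[S^1]$, and the pairing $\langle\ev^*(a),\,\jmath_*[S^3]\times[S^1]\rangle=\pm1$ finishes the argument in one step. Your route is a bit more economical; the paper's is more modular, separating the counit-is-an-iso fact on the smash product from the smash-versus-cross comparison.
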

\begin{proof}
The discussion we will use from \citet{Bred93} in this proof assumes that all the spaces are Hausdorff; we know this for $S^1$, $S^3$, and $S^4$, and by \citet[pages~258--259]{Dugu66} or otherwise, since $S^4$ is Hausdorff, so is $\Omega S^4$.  Let the base point of $\Omega S^4$ be $\gamma_{*}$, the constant loop with value $* \in S^4$.  Letting $\ev_{\Omega}$ denote the restriction to $\Omega S^4 \cross S^1$ of the previously defined $\ev$, then $\Omega S^4 \cross S^1 \xrightarrow{\ev_{\Omega}} S^4$ is a continuous map of pointed spaces.  Consider the smash product $\Omega S^4 \wedge S^1 = (\Omega S^4 \cross S^1) / (\Omega S^4 \vee S^1)$.  Since $\ev_{\Omega}$ maps $\Omega S^4 \vee S^1 = (\{ \gamma_{*} \} \cross S^1) \cup (\Omega S^4 \cross \{ 0 \})$ to $*$, the base point of $S^4$, $\ev_{\Omega}$ descends to a continuous \citep[pages~40,~437--439]{Bred93} map of pointed spaces, $\Omega S^4 \wedge S^1 \xrightarrow{\widetilde{\ev_{\Omega}}} S^4$.  This evaluation map is closely tied to an ``exponential law'' or adjoint relation for maps from a smash product, and to the relation between spheres of successive dimensions via the reduced suspension or smash product with $S^1$, as we will see.

Let $S^3 \wedge S^1 \xrightarrow{\phi} S^4$ be a homeomorphism of pointed spaces \citep[page~435]{Bred93}.  By \citet[pages~437--439]{Bred93}, since $S^1$, $S^3$ are locally compact pointed spaces, the map $\widetilde{\phi} \colon S^3 \rightarrow \Omega S^4$ defined by $\widetilde{\phi} (x) (t) = \phi (x \wedge t)$, i.e. the map induced by the corresponding adjoint map for cartesian products, is a continuous map of pointed spaces, where $x \wedge t$ is a shorthand for the image of $(x, t)$ under the projection from the cartesian product to its quotient, the smash product.  Let $\widetilde{\phi} \wedge \ident \colon S^3 \wedge S^1 \rightarrow \Omega S^4 \wedge S^1$ denote the continuous map of pointed spaces induced by the corresponding map $(x, t) \mapsto (\widetilde{\phi}(x), t)$ for cartesian products \citep[page~68]{Jame84}.  Then $(\widetilde{\ev_{\Omega}} \circ (\widetilde{\phi} \wedge \ident)) (x \wedge t) = \widetilde{\ev_{\Omega}} ((\widetilde{\phi} (x)) \wedge t) = \widetilde{\phi} (x) (t) = \phi (x \wedge t)$, and so the following diagram of continuous maps of pointed spaces commutes:
\[
\begindc{\commdiag}[5]
\obj(10,10)[objS1S3]{$S^3 \wedge S^1$}
\obj(30,10)[objS1OS4]{$\Omega S^4 \wedge S^1$}
\obj(55,10)[objS4]{$S^4$}
\mor{objS1S3}{objS1OS4}{$\widetilde{\phi} \wedge \ident$}
\mor{objS1OS4}{objS4}{$\widetilde{\ev_{\Omega}}$}
\cmor((10,8)(30,2)(53,8)) \pright(30,0){$\phi$}
\enddc
\]
The homeomorphism $\phi$ induces an isomorphism in cohomology.  Because $\HH^4 (S^4) \cong \ZZ$, or for other reasons, $\HH^4 (S^3 \wedge S^1) \cong \ZZ$.  Because of the homotopy equivalence between the unreduced and reduced suspension of a well-pointed space (one for which the inclusion of the base point is a cofibration) in \citet[page~436]{Bred93}, and because of the (unreduced) suspension isomorphism of problem 1 in \citet[page~190]{Bred93}, $\HH^4 (\Omega S^4 \wedge S^1) = \HH^4 (S \Omega S^4) \cong  \HH^4 (\Sigma \Omega S^4) \cong \HH^3 (\Omega S^4) \cong \ZZ$; we noted above that $\{ 0 \} \xrightarrow{\incl} S^4$ is a cofibration, and thus $\gamma_{*} \xrightarrow{\incl} \Omega S^4$ is also, by exercise 7 on \citet[page~457]{Bred93}.  The composition of the maps in $4$-th cohomology induced by the maps in the top of the diagram gives an isomorphism $\ZZ \rightarrow \ZZ$ that factors through another copy of $\ZZ$, whence both factors of the composition must be isomorphisms, and so $\widetilde{\ev_{\Omega}}^{*}$ is an isomorphism in $4$-th cohomology.

Consider now the commutative diagrams
\[
\begindc{\commdiag}[5]
\obj(10,20)[objS1xOS4]{$\Omega S^4 \cross S^1$}
\obj(10,0)[objS1wOS4]{$\Omega S^4 \wedge S^1$}
\obj(35,10)[objS4]{$S^4$}
\mor{objS1xOS4}{objS1wOS4}{$\pi$}
\mor{objS1xOS4}{objS4}{$\ev_{\Omega}$}
\mor{objS1wOS4}{objS4}{$\widetilde{\ev_{\Omega}}$}
\obj(50,20)[objHS1xOS4]{$\HH^4 (\Omega S^4 \cross S^1)$}
\obj(50,0)[objHS1wOS4]{$\HH^4 (\Omega S^4 \wedge S^1)$}
\obj(75,10)[objHS4]{$\HH^4 (S^4)$}
\mor{objHS1wOS4}{objHS1xOS4}{$\pi^{*}$}
\mor{objHS4}{objHS1xOS4}{$\ev_{\Omega}^{*}$}
\mor{objHS4}{objHS1wOS4}{$\widetilde{\ev_{\Omega}}^{*}$}
\enddc
\]
where $\pi$ is the smash product projection or quotient map.  To show that $\ev_{\Omega}^{*}$ is an isomorphism in $4$-th cohomology, is equivalent to showing the same for $\pi^{*}$.  Consider the long exact sequence of the pair $(X, A)$, where $A = \Omega S^4 \vee S^1 = (\{ \gamma_{*} \} \cross S^1) \cup (\Omega S^4 \cross \{ 0 \}) \xrightarrow{\incl} \Omega S^4 \cross S^1 = X$ is a closed cofibration by \citet[page~114]{tomD08}, so by \citet[page~434]{Bred93}, the map $\pi_{pair} \colon (X, A) \rightarrow (X / A, *)$ induced by $\pi$ induces in cohomology $\HH^4 (X, A) \isomfrom \HH^4 (X / A, *) \cong \HH^4 (X / A) = \HH^4 (\Omega S^4 \wedge S^1) \cong \ZZ$ since $\widetilde{\ev_{\Omega}}^{*}$ is an isomorphism in $4$-th cohomology.

By the K\"{u}nneth theorem \citet[page~219]{Hatc02}, $\HH^4 (X) = \HH^4 (\Omega S^4 \cross S^1) \cong \HH^4 (\Omega S^4) \tensor \HH^0 (S^1) \dirsum \HH^3 (\Omega S^4) \tensor \HH^1 (S^1) \cong \HH^3 (\Omega S^4) \cong \ZZ$ as abelian groups.  By \citet[page~~180]{Swit75}, $\HH^4 (A) = \HH^4 (\Omega S^4 \vee S^1) \cong \HH^4 (\Omega S^4) \cross \HH^4 (S^1) \cong 0 \cross 0 \cong 0$.  Thus from $(A, \emptyset) \xrightarrow{i_A} (X, \emptyset) \xrightarrow{j_A} (X, A)$,
\begin{align}
0 \cong \HH^4 (A) \xleftarrow{i_A^{*}} \ZZ \cong \HH^4 (X) \xleftarrow{j_A^{*}} \HH^4 (X, A) \cong \ZZ \notag
\end{align}
is exact, whence $j_A^{*}$, equivalent to a surjective homomorphism $\ZZ \leftarrow \ZZ$, is an isomorphism.  Since all the maps in $4$-th cohomology induced by the maps other than $\pi$ in the following commutative diagram are isomorphisms, $\pi^{*}$ is also.

\[
\begindc{\commdiag}[5]
\obj(10,20)[objXA]{$(X, A)$}
\obj(10,10)[objXE]{$(X, \emptyset)$}
\obj(30,20)[objXAS]{$(X / A, *)$}
\obj(30,10)[objXAE]{$(X / A, \emptyset)$}
\mor{objXE}{objXA}{$j_A$}
\mor{objXAE}{objXAS}{$j_{X/A}$}
\mor{objXA}{objXAS}{$\pi_{pair}$}
\mor{objXE}{objXAE}{$\pi$}
\enddc
\]
\end{proof}

\begin{prop}\label{p-tran-h4s4-to-h3c}
\index{transgression!H4(S4) to H3(C(S1,S4))@$\HH^4(S^4;\ZZ) \rightarrow \HH^3(C(S^1, S^4);\ZZ)$}
\index{transgression!isomorphism}
% (A title for this lemma seemed redundant).
The transgression $\tau_C \colon \HH^4(S^4;\ZZ) \isomto \HH^3(C(S^1, S^4);\ZZ)$ is an isomorphism.
\end{prop}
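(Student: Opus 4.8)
The plan is to combine the definition of $\tau_C$ as the composition $s \circ \ev^{*}$ with the cohomology computations of lemma \ref{l-s4-oc-grps} and the isomorphism of lemma \ref{l-ev-star-h4s4-h4os}, tracking everything through the K\"unneth decomposition of $\HH^{*}(C(S^1,S^4) \cross S^1)$. First I would record that $\HH^4(S^4;\ZZ) \cong \ZZ$ and, by lemma \ref{l-s4-oc-grps}, $\HH^3(C(S^1,S^4);\ZZ) \cong \ZZ$, so it suffices to show $\tau_C$ is surjective (a surjective homomorphism $\ZZ \to \ZZ$ is an isomorphism), or alternatively that it carries a generator to a generator.

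Next I would analyze the slant-product map $s \colon \HH^4(C(S^1,S^4) \cross S^1) \to \HH^3(C(S^1,S^4))$. By the K\"unneth theorem, $\HH^4(C(S^1,S^4) \cross S^1) \cong \bigl(\HH^4(C(S^1,S^4)) \tensor \HH^0(S^1)\bigr) \dirsum \bigl(\HH^3(C(S^1,S^4)) \tensor \HH^1(S^1)\bigr)$, which by lemma \ref{l-s4-oc-grps} is $0 \dirsum (\ZZ \tensor \ZZ) \cong \ZZ$; the slant product with the fundamental class $[S^1]$ kills the first summand and is an isomorphism on the second, i.e. $s \colon \HH^4(C(S^1,S^4)\cross S^1) \isomto \HH^3(C(S^1,S^4))$, essentially by the standard property that slanting with $[S^1]$ inverts the cross product with a generator of $\HH^1(S^1)$. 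So the remaining task is to show $\ev^{*} \colon \HH^4(S^4) \to \HH^4(C(S^1,S^4)\cross S^1)$ is an isomorphism onto the relevant $\ZZ$.

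Then I would deduce this from lemma \ref{l-ev-star-h4s4-h4os} via the section $s_0 \colon S^4 \to C(S^1,S^4)$ sending a point to the constant loop there, which splits $\ev_0$ and was already used in lemma \ref{l-s4-oc-grps} to show the inclusion $i \colon \Omega S^4 \to C(S^1,S^4)$ induces isomorphisms in $\HH^k$ for $k \le 3$. Consider the commutative diagram of spaces $\Omega S^4 \cross S^1 \to C(S^1,S^4) \cross S^1 \to S^4$, where the first map is $i \cross \ident$ and the composite is $\ev_{\Omega}$; in $\HH^4$, lemma \ref{l-ev-star-h4s4-h4os} says $\ev_{\Omega}^{*}$ is an isomorphism, and $(i \cross \ident)^{*}$ is an isomorphism on the K\"unneth summand $\HH^3(-) \tensor \HH^1(S^1)$ because $i^{*}$ is an isomorphism in degree $3$; since the other K\"unneth summand of $\HH^4(C(S^1,S^4) \cross S^1)$ vanishes, $(i\cross\ident)^{*}$ is an isomorphism on all of $\HH^4$, and therefore $\ev^{*} = \bigl((i\cross\ident)^{*}\bigr)^{-1} \circ \ev_{\Omega}^{*}$ composed appropriately is an isomorphism $\HH^4(S^4) \isomto \HH^4(C(S^1,S^4)\cross S^1)$. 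Composing with the isomorphism $s$ gives that $\tau_C = s \circ \ev^{*}$ is an isomorphism.

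The main obstacle I anticipate is bookkeeping rather than conceptual: I need to be careful that the evaluation map $\ev$ on $C(S^1,S^4) \cross S^1$ restricts compatibly (through the inclusion $i \cross \ident$) to the evaluation map $\ev_{\Omega}$ on $\Omega S^4 \cross S^1$ used in lemma \ref{l-ev-star-h4s4-h4os}, and that the K\"unneth splitting is natural with respect to $i \cross \ident$ so that ``isomorphism on the $\HH^3 \tensor \HH^1$ summand'' genuinely implies ``isomorphism on $\HH^4$''; both hold because the vanishing $\HH^4(C(S^1,S^4)) = 0 = \HH^1(C(S^1,S^4))$ (and likewise for $\Omega S^4$, $\HH^4(\Omega S^4)=0$) collapses the K\"unneth sum to a single term, but I should state this cleanly. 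A secondary point to verify carefully is the sign/normalization convention for the slant product so that $s$ is genuinely an isomorphism of $\ZZ$'s and not multiplication by $\pm 1$ times a non-unit; with the fundamental class $[S^1]$ of the standard orientation this is the standard ``suspension via slant product'' isomorphism and causes no trouble.
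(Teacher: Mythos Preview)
Your overall strategy matches the paper's, but there is a genuine error in the bookkeeping: you assert that $\HH^4(C(S^1,S^4);\ZZ)=0$, and this is false. The constant-loop section $s_0 \colon S^4 \to C(S^1,S^4)$ that you yourself invoke satisfies $\ev_0 \circ s_0 = \ident_{S^4}$, so $\ev_0^{*} \colon \HH^4(S^4) \to \HH^4(C(S^1,S^4))$ is split injective; hence $\HH^4(C(S^1,S^4))$ contains a copy of $\ZZ$. (Lemma \ref{l-s4-oc-grps} only computes $\HH^k$ for $k \le 3$.) Consequently the K\"unneth decomposition gives $\HH^4(C(S^1,S^4)\times S^1) \cong \ZZ \oplus \ZZ$, not $\ZZ$, so neither $s$ nor $(i\times\ident)^{*}$ is an isomorphism in degree $4$: both have the summand $\HH^4(C(S^1,S^4))\otimes\HH^0(S^1)$ in their kernel. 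In particular your formula $\ev^{*} = \bigl((i\times\ident)^{*}\bigr)^{-1}\circ\ev_\Omega^{*}$ does not make sense.

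The fix is small and is exactly what the paper does: rather than trying to show $s$ and $\ev^{*}$ are isomorphisms separately, post-compose $\tau_C = s\circ\ev^{*}$ with $i_\Omega^{*}$ and use the commutative square (naturality of the slant product plus $\ev_\Omega = \ev\circ(i_\Omega\times\ident)$) to get
\[
i_\Omega^{*}\circ\tau_C \;=\; i_\Omega^{*}\circ s\circ\ev^{*} \;=\; s_\Omega\circ(i_\Omega\times\ident)^{*}\circ\ev^{*} \;=\; s_\Omega\circ\ev_\Omega^{*}.
\]
Now you are on the $\Omega S^4$ side, where $\HH^4(\Omega S^4)=0$ \emph{is} available from lemma \ref{l-s4-oc-grps}, so the K\"unneth collapse goes through and $s_\Omega$ is an isomorphism; together with lemma \ref{l-ev-star-h4s4-h4os} this makes the right-hand side an isomorphism. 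Since $i_\Omega^{*}$ is an isomorphism in degree $3$, $\tau_C$ is an isomorphism. Equivalently, if you prefer to keep your summand-by-summand analysis, note that $s$ and $(i_\Omega\times\ident)^{*}$ kill the \emph{same} K\"unneth summand, so $s\circ\ev^{*}$ depends only on the $\HH^3\otimes\HH^1$-component of $\ev^{*}$, which is detected isomorphically by $(i_\Omega\times\ident)^{*}$.
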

\begin{proof}
Denote the evaluation map on $\Omega S^4 \cross S^1$ by $\ev_{\Omega}$, and the inclusion $\Omega S^4 \rightarrow C(S^1, S^4)$ by $i_{\Omega}$.  Denote the evaluation map on $C(S^1, S^4) \cross S^1$ by $\ev$.  In the following diagrams, commutativity of the left triangle implies commutativity of the right triangles.  The maps $s$ and $s_{\Omega}^{*}$ are the slant product with $[S^1]$.  Naturality of the slant product as in the proof of proposition \ref{p-tran-hkp1-m-to-hk-lm} implies commutativity of the left square.
\[
\begindc{\commdiag}[5]
\obj(10,60)[objS1xOS4]{$\Omega S^4 \cross S^1$}
\obj(10,40)[objS1xCS4]{$C(S^1, S^4) \cross S^1$}
\obj(35,50)[objS4]{$S^4$}
\mor{objS1xOS4}{objS1xCS4}{$i_{\Omega} \cross \ident$}
\mor{objS1xOS4}{objS4}{$\ev_{\Omega}$}
\mor{objS1xCS4}{objS4}{$\ev$}
\obj(50,60)[objHS1xOS4]{$\HH^4 (\Omega S^4 \cross S^1)$}
\obj(50,40)[objHS1xCS4]{$\HH^4 (C(S^1, S^4) \cross S^1)$}
\obj(75,50)[objHS4]{$\HH^4 (S^4)$}
\mor{objHS1xCS4}{objHS1xOS4}{$(i_{\Omega} \cross \ident)^{*}$}
\mor{objHS4}{objHS1xOS4}{$\ev_{\Omega}^{*}$}
\mor{objHS4}{objHS1xCS4}{$\ev^{*}$}
\obj(25,30)[objHOS4]{$\HH^3 (\Omega S^4)$}
\obj(25,10)[objHCS4]{$\HH^3 (C(S^1, S^4))$}
\obj(50,30)[objHS1xOS4a]{$\HH^4 (\Omega S^4 \cross S^1)$}
\obj(50,10)[objHS1xCS4a]{$\HH^4 (C(S^1, S^4) \cross S^1)$}
\obj(75,20)[objHS4a]{$\HH^4 (S^4)$}
\mor{objHCS4}{objHOS4}{$i_{\Omega}^{*}$}
\mor{objHS1xCS4a}{objHS1xOS4a}{$(i_{\Omega} \cross \ident)^{*}$}
\mor{objHS1xOS4a}{objHOS4}{$s_{\Omega}^{*}$}
\mor{objHS1xCS4a}{objHCS4}{$s$}
\mor{objHS4a}{objHS1xOS4a}{$\ev_{\Omega}^{*}$}
\mor{objHS4a}{objHS1xCS4a}{$\ev^{*}$}
\enddc
\]
Lemma \ref{l-ev-star-h4s4-h4os} showed that $\ev_{\Omega}^{*}$ is an isomorphism, so that both $\HH^4 (S^4)$ and $\HH^4 (\Omega S^4 \cross S^1)$ are isomorphic to $\ZZ$.  As in the proof of lemma \ref{l-s4-oc-grps}, $i_{\Omega}^{*}$ is an isomorphism.  By the K\"{u}nneth theorem,
\begin{align}
\HH^4 (\Omega S^4 \cross S^1) &\cong \HH^4 (\Omega S^4) \tensor \HH^0 (S^1) \dirsum \HH^3 (\Omega S^4) \tensor \HH^1 (S^1) \notag \\
&= \HH^3 (\Omega S^4) \tensor \HH^1 (S^1), \notag
\end{align}
so using property $2$ of the slant product in \cite[page~287]{Span66}, $s_{\Omega}^{*}$ is an isomorphism.  The proposition follows by commutativity.
\end{proof}

\begin{prop}\label{p-lm-c-coho-isom}
\index{cohomology!smooth vs. smooth free loop space}
\index{exponential map}
% (A title for this lemma seemed redundant).
The inclusion $i_{LM} \colon LM \rightarrow C(S^1, M)$ induces isomorphisms in cohomology.
\end{prop}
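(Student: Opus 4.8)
The plan is to show that $i_{LM}\colon LM \to C(S^1,M)$ is a weak homotopy equivalence (or at least induces isomorphisms on cohomology) by comparing suitable indexed open covers of the two spaces and invoking \v{C}ech cohomology. The key observation is that both spaces have been equipped, in this thesis, with good covers: $LM$ by proposition \ref{p-smth-free-loop-spac-frec-mfld} and lemma \ref{l-frec-spac}, and $C(S^1,M)$ by the compact-open topology together with the fact that $M$ is a smooth, hence paracompact, manifold with a good cover. Since $i_{LM}$ is continuous with $LM$ open (or at least a subspace) in $C(S^1,M)$, and since $S^1$ is compact so that the smooth and continuous loop spaces have matching local structure via the exponential map of lemma \ref{l-loc-addn}, I would build, for each chart neighborhood $U_\alpha^0 \subset C(S^1,M)$ of definition-style $\{\gamma : (\alpha,\gamma) \in C(S^1,V)\}$, the corresponding $U_\alpha = U_\alpha^0 \cap LM$, and check that $i_{LM}^{-1}$ of a good cover of $C(S^1,M)$ is a good cover of $LM$.

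First I would recall, from the proof of proposition \ref{p-smth-free-loop-spac-frec-mfld}, the explicit charts $\Psi_\alpha\colon U_\alpha \to \Gamma(\alpha^*TM)$ and their continuous-loop analogues ${\Psi_\alpha^0}^{-1}$ with domain $\Gamma^0(\alpha^*TM) = \{\beta \in C(S^1,\alpha^*TM) : \pi_1 \circ \beta = \ident_{S^1}\}$; there it is already shown that $U_\alpha = U_\alpha^0 \cap LM$ and that both $U_\alpha$ and $U_\alpha^0$ are contractible, indeed that $\Gamma(\alpha^*TM) \hookrightarrow \Gamma^0(\alpha^*TM)$ is a dense inclusion of a \Frechet space into a topological vector space, hence a (weak) homotopy equivalence, and similarly for finite intersections, since intersections of the $U_\alpha$ correspond to intersections of these convex-section spaces. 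Second, I would invoke lemma \ref{l-cech-coho-cove}: a good cover computes \v{C}ech cohomology with $\underline{\ZZ}$ coefficients, and by lemma \ref{l-cech-sing-coho} and its note, \v{C}ech agrees with singular cohomology on such spaces. Third, I would apply the commutative square of lemma \ref{l-cech-coho-cove} for the map $i_{LM}$, with $V$ a good cover of $C(S^1,M)$ and $i_{LM}^*V = \{U_\alpha\}$ a good cover of $LM$: the top horizontal map $\CH^p(V;\underline{\ZZ}_{C(S^1,M)}) \to \CH^p(i_{LM}^*V;\underline{\ZZ}_{LM})$ is an isomorphism because on each finite intersection it is the map induced by the homotopy equivalence $\Gamma \hookrightarrow \Gamma^0$, which is the identity on constant $\ZZ$-valued functions; the vertical maps are the insertion isomorphisms. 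Finally, I would chase through the natural isomorphisms to singular cohomology of note \ref{n-cech-sing-coho} to conclude $i_{LM}^*\colon \HH^p(C(S^1,M);\ZZ) \to \HH^p(LM;\ZZ)$ is an isomorphism for all $p$.

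The main obstacle, I expect, is the cover-matching step: one must verify that starting from an arbitrary good cover $V$ of $C(S^1,M)$, its pullback $i_{LM}^*V$ is still a good cover of $LM$, i.e. that finite intersections of the $U_\alpha^0 \cap LM$ remain contractible. This does not follow from contractibility of the $U_\alpha^0$ alone; rather one must exploit that the charts are compatible and that the convexity used in lemma \ref{l-frec-spac} and in the construction of $LM$'s charts transfers, via $\Psi_\alpha$, to intersections. The cleanest route is probably not to pull back an \emph{arbitrary} good cover but to use a specific good cover of $C(S^1,M)$ adapted to the chart system—namely refine the balls-in-section-spaces construction of lemma \ref{l-frec-spac} applied to the (metrizable, by lemma \ref{l-co-metr-top}) space $\Gamma^0(\alpha^*TM)$—so that each nonempty finite intersection is a convex subset of some $\Gamma^0(\alpha^*TM)$ and its preimage under $i_{LM}$ is the corresponding convex subset of $\Gamma(\alpha^*TM)$, both contractible by the straight-line homotopy. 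One then only needs that this particular good cover still computes $\HH^p(C(S^1,M);\ZZ)$, which is exactly lemma \ref{l-cech-coho-cove}. A secondary, more routine point is checking that $C(S^1,M)$ is paracompact and has the good-cover property, which follows from $M$ being a (metrizable, by the remark after assumption \ref{a-mfld}) manifold and lemma \ref{l-co-metr-top} making $C(S^1,M)$ metrizable, hence paracompact by Stone's theorem as in lemma \ref{l-frec-spac}.
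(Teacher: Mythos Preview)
Your strategy is essentially the one the paper uses: build matching indexed good covers of $C(S^1,M)$ and $LM$, identify their nerves, and conclude via \v{C}ech cohomology with $\underline{\ZZ}$ coefficients and lemma \ref{l-cech-coho-cove}. However, your execution of the key step has a real gap.

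The problem is your claim that by refining to balls in $\Gamma^0(\alpha^*TM)$, ``each nonempty finite intersection is a convex subset of some $\Gamma^0(\alpha^*TM)$.'' The cover necessarily involves balls coming from \emph{different} centers $\alpha$; an intersection $B_\alpha \cap B_\beta$ (with $B_\alpha$ a ball in the $\alpha$-chart and $B_\beta$ a ball in the $\beta$-chart), viewed in either $\Gamma^0(\alpha^*TM)$ or $\Gamma^0(\beta^*TM)$, is in general not convex, because the transition maps $\Psi_\beta^0 \circ (\Psi_\alpha^0)^{-1}$ are nonlinear. So your straight-line homotopy does not obviously stay in the intersection. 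The paper avoids this by working with metric balls $\B_\delta(\alpha)$ in $C(S^1,M)$ of radius $\delta$ equal to \emph{half} the $\epsilon$ of lemma \ref{l-loc-addn} (so less than half the convexity radius of $M$), and contracting via the \emph{intrinsic} homotopy $H_\beta(s,\gamma)(t) = c_{\beta(t),\gamma(t)}(s)$ along geodesics in $M$. This contraction is chart-independent and maps each metric ball, hence each finite intersection, into itself; and since any $\beta$ in an intersection of radius-$\delta$ balls is within $2\delta = \epsilon$ of every other point, the whole intersection lies in the chart domain $U_\beta$. In the $\beta$-chart the contraction becomes scalar multiplication by $s$ (see lemma \ref{l-good-cove-c-lm}), which is the step you were reaching for but could not justify in your setup.

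Two further points the paper addresses that you pass over. First, on the $LM$ side one must check that $H_\beta$ takes smooth loops to smooth loops and is continuous in the \Frechet manifold topology, not just the compact-open one; this does not come for free and is the content of the second half of lemma \ref{l-good-cove-c-lm}. Second, the isomorphism of \v{C}ech cochain complexes requires not only that both covers are good but that they have the \emph{same nerve}: a $\sigma$ with $V_\sigma \neq \emptyset$ must have $i_{LM}^{-1}(V_\sigma) \neq \emptyset$. This is exactly the density of $LM$ in $C(S^1,M)$ (Hirsch), used in lemma \ref{l-lm-c-nerv-bij-non-empt}; you should state it rather than leave it implicit in the phrase ``identity on constant $\ZZ$-valued functions.'' Note also that the paper's good cover of $C(S^1,M)$ takes balls centered only at \emph{smooth} loops, precisely so that both covers are indexed by the same set $LM$.
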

\begin{proof}
We will use \v{C}ech cohomology with constant integral coefficients, because open sets are prominent in its definition and we are concerned with topologies.  In fact, integral \v{C}ech cohomology is defined depending only on nerves of indexed covers in \citet[pages~233--237]{ES52} and \citet[page~282]{Dowk50}, aside from the ordering of covers by refinement; and by the first part of lemma \ref{l-cech-sing-coho}, this kind of cohomology is naturally isomorphic to our \v{C}ech cohomology with $\underline{\ZZ}$ coefficients.  We calculate \v{C}ech cohomology with $\underline{\ZZ}$ coefficients of $LM$ and $C(S^1, M)$ using the nerve of particular good covers; this is justified by appeal to lemma \ref{l-cech-coho-cove}, which implies that if the inclusion $i_{LM}$ induces an isomorphism in the cohomologies of the covers, then it induces an isomorphism in the cohomologies of the spaces.  Then again using lemma \ref{l-cech-sing-coho}, both parts this time, after showing that the two loop spaces have the desired point set topological properties, we show that fact that the inclusion $i_{LM}$ induces an isomorphism in \v{C}ech cohomology with $\underline{\ZZ}$ coefficients, implies the same in integral singular cohomology.

There are two spaces of loops of interest here.  As a shorthand in this proof, let's name them $L$ (not to be confused with the standard Lagrangian subspace $L$) for $LM$ with the \Frechet manifold topology on $LM$, hence $i_L$ for $i_{LM}$, and $C$ for $C(S^1, M)$ with the compact-open topology.

We know by proposition \ref{p-smth-free-loop-spac-frec-mfld} that $L = LM$ is metrizable, hence hereditarily paracompact, any open cover has a refinement that is a good cover (actually all nonempty intersections are contractible), and $i_L$ is continuous.

$C = C(S^1, M)$ is naturally a metric space choosing some Riemannian metric on $M$ \citep[pages~121--123]{Pete06}, and we will use the same Riemannian metric used for the definition of $L$.  Denote by $d_M$ the metric on $M$ corresponding to its Riemannian metric, and by $d_M^{+}$ the metric on $C$ made from $d_M$ by definition \ref{d-lspc-metr-top}.  The metric space topology agrees with the compact-open topology by lemma \ref{l-co-metr-top}.  Thus $C$ also is hereditarily paracompact \citep[page~979]{Ston48}.  Next we will see that $C$ is locally contractible, and in fact has a good cover.

By \citet[pages~85--87]{GHL04} and \citet[page~134]{Pete06} (also see \citet[page~278]{Berg03}), for any $0 < \delta$ less than the convexity radius of $M$, a number such that geodesic balls of smaller radius are geodesically convex, which is positive since $M$ is compact, for every $x \in M$ and $0 < \delta_1 \le \delta$, $\exp_x$ gives a diffeomorphism between $\B_{\delta_1} (0_{T_x M})$ and $\B_{\delta_1} (x)$, and for any $y, z \in \B_{\delta} (x)$, there is a unique $v \in T_y M$ with $\exp_y (v) = z$ such that $c_{y, z} \colon [0,1] \rightarrow \B_{\delta} (x)$, $c_{y, z} (s) = \exp_y (s v)$ is the unique geodesic of length (equal to the distance given by $d$) less than $2 \delta$ from $y$ to $z$, depending smoothly on $y$, $z$, and $s$, not depending on $x$ or $\delta$ except for its domain of definition.

This allows us to contract any open ball of radius $\delta$ in $C$ to any point in the ball. Given $\gamma_0 \in C$, the center of the ball that will be the domain of definition of the contraction, and $\gamma_1 \in \B_{\delta} (\gamma_0)$ to which we want to contract the ball, there is a homotopy $H_{\gamma_1} \colon [0, 1] \cross \B_{\delta} (\gamma_0) \rightarrow \B_{\delta} (\gamma_0)$, that for $\gamma \in \B_{\delta} (\gamma_0)$ has the properties $H_{ \gamma_1} (1, \gamma) = \gamma$, $H_{\gamma_1} (0, \gamma) = \gamma_1$, $H_{\gamma_1} (s, \gamma_1) = \gamma_1$, given by $H_{ \gamma_1} (s, \gamma) (t) = c_{\gamma_1 (t), \gamma (t)} (s)$.  Considered as a map to $C$, $H_{\gamma_1}$ is continuous because $\gamma_1 (t)$ and $\gamma (t)$ are continuous functions of $t$, $c_{y, z} (s)$ is a continuous function of $y$, $z$, $s$, so $H_{\gamma_1} (\cdot, \cdot) (\cdot) \colon [0, 1] \cross \B_{\delta} (\gamma_0) \cross S^1 \rightarrow M$ is continuous, and by \citet[page~261]{Dugu66} its adjoint $H_{\gamma_1} (\cdot, \cdot) \colon [0, 1] \cross \B_{\delta} (\gamma_0) \rightarrow C(S^1,M)$ is continuous.  Because for each $t \in S^1$, $H_{\gamma_1} (s, \gamma) (t) \in \B_{\delta} (\gamma_0 (t))$, $H_{\gamma_1} (s, \gamma) \in \B_{\delta} (\gamma_0)$.  Thus $\B_{\delta} (\gamma_0)$ is contractible to $\gamma_1$. Notice that $H_{\gamma_1}$ is defined independently of $\gamma_0$ and $\delta$ except to define its domain, and that for $0 < \delta_1 \le \delta$, it restricts to a deformation retraction of $\B_{\delta_1} (\gamma_0)$.  Note also that analogous results hold for open balls of any smaller radius.

We can also contract any intersection of open balls in $C$ to any point in the intersection.  Suppose $\gamma_1 \in \bigcap_{k = 2}^r \B_{\delta_k} (\gamma_k)$.  Since the intersection is contained in all the $\B_{\delta_k} (\gamma_k)$, $H_{\gamma_1}$ defined using any of the $\B_{\delta_k} (\gamma_k)$ restricts on their intersection to a function to their intersection, which is thus contractible.

Since the $\B_{\delta_1} (x)$, $0 < \delta_1 \le \delta$, $x \in M$ form a basis for the topology of $C$, any open cover of that space can be refined to a cover consisting of open balls; all of which, and all of their nonempty finite intersections, are contractible; a good cover. Thus $C$ has the property that any open cover can be refined to a good cover.

The two following lemmas will finish the proof.  Lemma \ref{l-good-cove-c-lm} will construct an indexed good cover $U_C$ of $C$, and a corresponding good cover $U_L$ of $L$, indexed by the same set.  Lemma \ref{l-lm-c-nerv-bij-non-empt} will show that the bijection between the covers due to their being indexed by the same set, takes nonempty intersections to nonempty intersections; i.e. is a bijection between the nerves of the two indexed covers.

For \v{C}ech cohomology with $\underline{\ZZ}$ coefficients, a cochain is a constant function on any nonempty intersection of good cover elements, since the intersection is contractible and thus connected.  Referring to definition \ref{d-orde-cech-coch}, for each injective $k + 1$ tuple $\sigma$ of indices corresponding to a nonempty intersection, the corresponding factor of the product $\CH^k (U_C, \underline{\ZZ})$ is $\ZZ$; for other $\sigma$, it is $\{ 0 \}$.  Similarly for $U_L$.  Thus, the bijection between covers gives an isomorphism of \v{C}ech cochain complexes, hence isomorphisms of \v{C}ech cohomologies of the respective covers, hence isomorphisms of the \v{C}ech cohomologies of the respective spaces, and thence of their singular cohomologies.
\end{proof}

\begin{lem}\label{l-good-cove-c-lm}
\index{good cover}
\index{loop spaces}
(Constructing of Corresponding Indexed Good Covers of $C(S^1, M)$ and $LM$).
Let $\delta = \epsilon / 2$, where $\epsilon$, less than the convexity radius of $M$, is as used in lemma \ref{l-loc-addn} by proposition \ref{p-smth-free-loop-spac-frec-mfld} (see notation \ref{n-smth-free-loop-spac-frec-mfld}) to define the \Frechet manifold structure on $LM$. Define $U_C = \{ \B_{\delta} (\alpha) \st \alpha \in i_{LM} (LM) \}$; this is an indexed good cover for $C(S^1, M)$.  Define
\begin{align}
U_L &= i_{LM}^* U_C = \{ i_{LM}^{-1} (U) \st U \in U_C \} = \{ U_{\alpha, 2} \st \alpha \in LM \}, \text{ where} \notag \\
U_{\alpha, 2} &= i_{LM}^{-1} (\B_{\delta} (\alpha)) \subset U_{\alpha} = i_{LM}^{-1} (\B_{\epsilon} (\alpha)), \notag
\end{align}
a chart domain for $LM$.  The set $U_L$ is an indexed good cover for $LM$.
\end{lem}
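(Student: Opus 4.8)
The plan is to verify directly that $U_C$ is an indexed good cover of $C(S^1,M)$ and then to transfer this to $U_L$ via $i_{LM}$. First I would check that $U_C$ is a cover: given any $\gamma \in C(S^1,M)$, since smooth loops are dense in $C(S^1,M)$ in the metric $d_M^+$ (this density is standard; it follows from smoothing a continuous loop, and is used implicitly in lemma \ref{l-co-metr-top} territory and in proposition \ref{p-smth-free-loop-spac-frec-mfld}), there is some $\alpha \in i_{LM}(LM)$ with $d_M^+(\gamma,\alpha) < \delta$, so $\gamma \in \B_\delta(\alpha) \in U_C$. Next, in the body of the proof of proposition \ref{p-lm-c-coho-isom} it was already established that every open ball of radius at most $\delta$ in $C(S^1,M)$ is contractible (indeed the explicit homotopy $H_{\gamma_1}$ built from the geodesic contraction $c_{y,z}$ works, using $\delta$ less than the convexity radius, and $\delta = \epsilon/2$ is such since $\epsilon$ is half the convexity radius in lemma \ref{l-loc-addn}), and that any nonempty finite intersection of such balls is contractible (restrict $H_{\gamma_1}$ to the intersection, picking $\gamma_1$ in it). Hence $U_C$ is a good cover, and it is indexed by $LM$ by construction.

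For $U_L$, I would first note that $U_{\alpha,2} = i_{LM}^{-1}(\B_\delta(\alpha))$ and that $i_{LM}$ is continuous by proposition \ref{p-smth-free-loop-spac-frec-mfld}, so each $U_{\alpha,2}$ is open in $LM$; and $U_L$ covers $LM$ because $U_C$ covers $C(S^1,M)$ and every point of $LM$ maps into some member of $U_C$. The inclusion $U_{\alpha,2} \subset U_\alpha$ claimed in the statement is the assertion that $i_{LM}^{-1}(\B_\delta(\alpha)) \subset i_{LM}^{-1}(\B_\epsilon(\alpha))$, which is immediate from $\delta = \epsilon/2 < \epsilon$ and monotonicity of preimages; that $U_\alpha$ is a chart domain for $LM$ is exactly the construction in notation \ref{n-smth-free-loop-spac-frec-mfld}, with the chart $\Psi_\alpha$ there built from the neighborhood $\B_\epsilon(\alpha)$ (compare the definition $U_\alpha = \{\gamma \in LX : (\alpha,\gamma) \in LV\}$ and the fact that $V$ was chosen from $\epsilon$ via lemma \ref{l-loc-addn}). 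The remaining point is that $U_L$ is a \emph{good} cover. For this I would argue that $i_{LM}$ restricted to $U_{\alpha,2}$ identifies it, via the chart $\Psi_\alpha$ (or directly), with an open subset of the \Frechet space $\Gamma(\alpha^* TM)$, and more to the point that the geodesic-contraction homotopy $H_{\gamma_1}$ used for $C(S^1,M)$ preserves smoothness: if $\gamma_1,\gamma$ are smooth loops into a geodesically convex ball, then $t \mapsto c_{\gamma_1(t),\gamma(t)}(s)$ is smooth in $t$ (smooth dependence of the geodesic on its endpoints and parameter), so $H_{\gamma_1}$ restricts to a contraction of $U_{\alpha,2}$ \emph{inside $LM$}, and likewise of any nonempty finite intersection of the $U_{\alpha,2}$. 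Thus all nonempty finite intersections in $U_L$ are contractible in $LM$, so $U_L$ is a good cover.

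The main obstacle I expect is the last point: one must check carefully that the contraction $H_{\gamma_1}$, which was built as a map into $C(S^1,M)$, actually lands in $LM$ when applied to smooth loops, i.e. that $H_{\gamma_1}(s,\gamma)$ is smooth as a loop and depends smoothly (in the \Frechet sense of definition \ref{d-frec-deri}) on $s$ and $\gamma$. This is where the choice $\delta = \epsilon/2$ with $\epsilon$ half the convexity radius is doing real work: it guarantees that for $\gamma \in U_{\alpha,2}$ and $\gamma_1$ in the intersection, all relevant points stay inside a single geodesically convex ball where $\exp$ and its inverse, and hence $c_{y,z}(s)$, are smooth functions of all arguments, so that composition with the smooth loops $\gamma,\gamma_1$ yields a smooth loop depending smoothly on the data. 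Everything else — that $U_C$ covers, that balls of radius $\le\delta$ and their intersections are contractible, that preimages of open sets are open, and the elementary inclusion $U_{\alpha,2}\subset U_\alpha$ — is routine once the smoothness of the contraction is in hand, and much of it is already recorded in the surrounding proof of proposition \ref{p-lm-c-coho-isom} and in notation \ref{n-smth-free-loop-spac-frec-mfld}.
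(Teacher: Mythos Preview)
Your overall strategy matches the paper's exactly: establish $U_C$ is a good cover using density of smooth loops and the geodesic contraction $H_{\gamma_1}$ already built in proposition \ref{p-lm-c-coho-isom}; then pull back by $i_{LM}$ and argue the same contraction works in $LM$. You also correctly isolate the real issue, namely that $H_{\gamma_1}$ must be continuous in the \Frechet manifold topology, not merely the compact-open one.

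However, your proposed resolution of that issue has a gap. Appealing to ``smooth dependence of the geodesic on its endpoints and parameter'' shows that $H_{\gamma_1}(s,\gamma)$ is a smooth loop, but does not by itself give continuity of $(s,\gamma)\mapsto H_{\gamma_1}(s,\gamma)$ for the \Frechet topology on $LM$; that topology controls all derivatives simultaneously, and you have not said how to control them. You also invoke the chart $\Psi_\alpha$, but the paper's point is that one should use the chart $\Psi_\beta$ centered at the contraction point $\beta=\gamma_1$. The role of $\delta=\epsilon/2$ is precisely that any point in $\bigcap_k U_{\alpha_k,2}$ is within $2\delta=\epsilon$ of $\beta$, so the whole intersection lies inside the chart domain $U_\beta$. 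In that chart, because $(\pi,\sigma)^{-1}(\beta(t),\exp_{\beta(t)}(sv))=sv$, the homotopy is literally scalar multiplication by $s$ on the \Frechet space $\Gamma(\beta^*TM)$, and scalar multiplication is jointly continuous there. This is the concrete mechanism that makes the \Frechet continuity transparent; your argument about staying in a geodesically convex ball in $M$ addresses smoothness of individual loops but not this continuity. (Also, you only need continuity of $H_{\gamma_1}$, not \Frechet smoothness as you suggest.)
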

\begin{proof}
Use the notations $C = C(S^1, M)$, $L = LM$, and $i_L = i_{LM}$, and facts about the convexity radius, from the proof of proposition \ref{p-lm-c-coho-isom}.  The set $U_C$ is a cover for $C$ because $i_L (L)$ is dense in $C$ by \citet[pages~34--35,~49]{Hirs76}, and is a good cover because proposition \ref{p-lm-c-coho-isom} showed that nonempty finite intersections of open balls in $C$ are contractible.  Each $U_{\alpha, 2}$ is open since $i_L$ is continuous by proposition \ref{p-smth-free-loop-spac-frec-mfld} (see notation \ref{n-smth-free-loop-spac-frec-mfld}), and they cover $L$.

It remains to show that nonempty finite intersections $\bigcap_{k = 1}^r U_{\alpha_k, 2}$ are contractible.  This is true for the same basic reasons given in proposition \ref{p-lm-c-coho-isom} for elements of $U_C$.  There, the exponential map from the neighborhood of the zero section of $TM$ to the neighborhood of the diagonal in $M \cross M$ took open balls in $M$ that were time slices of open balls in $C$, to open balls in fibers of $TM$, where there is a linear structure that can be used in a homotopy.  The same exponential maps are used for both loop spaces, but we must show that the deformation retraction map yields smooth loops and is continuous in the \Frechet manifold topology on $L$.  The topology of $C$ was given, but the topology of $L$ is defined by using something like the loop of the exponential map as part of the chart map from what is defined to be an open set in $L$ (a $U_{\alpha}$) to a \Frechet space related to $TM$.

The deformation retraction is constructed in the same way as for $C$. Start by considering one $U_{\alpha, 2}$. Given some $\beta \in U_{\alpha, 2}$ to contract to, with the idea that when we consider multiple $U_{\alpha_k, 2}$, $\beta$ will be in their intersection, there is a homotopy $H_{\beta} \colon [0, 1] \cross U_{\alpha, 2} \rightarrow U_{\alpha, 2}$, given by $H_{\beta} (s, \gamma) (t) = c_{\beta (t), \gamma (t)} (s)$, for $\gamma \in U_{\alpha, 2}$.  We have that $H_{\beta} (1, \gamma) = \gamma$, $H_{\beta} (0, \gamma) = \beta$, $H_{\beta} (s, \beta) = \beta$.  For a given $s$ and $\gamma$, $H_{\beta} (s, \gamma)$ is a smooth loop (here we mean smooth as a map $S^1 \rightarrow M$, no \Frechet smooth concept involved) because $\beta (t)$ and $\gamma (t)$ are smooth functions of $t$, $c_{y, z} (s)$ is a smooth function of $y$, $z$, $s$, and so $H_{\beta} (s, \gamma) (t) = c_{\beta(t), \gamma(t)} (s)$ is a smooth function of $t$.  Because for each $t \in S^1$, $H_{\beta} (s, \gamma) (t) \in \B_{\delta} (\alpha (t))$, $H_{\beta} (s, \gamma) \in U_{\alpha, 2}$ .

We need to check that $H_{\beta} (\cdot, \cdot) \colon [0, 1] \cross U_{\alpha, 2} \rightarrow U_{\alpha, 2}$ is continuous, using the \Frechet manifold topology on $L$.  We have defined $H_{ \beta}$ in a way that makes the chart map for $U_{\beta}$ natural, since both use, for $t \in S^1$, $\exp_{\beta(t)}$.  Since $\beta$ is in the intersection of open balls of radius $\delta$, the distance from it to any other point in the intersection is less than $2 \delta = \epsilon$, the radius of $U_{\beta}$; hence the entire intersection, within which the contraction takes place, is in the domain of the chart map for $U_{\beta}$.

Let us review chart maps and transition functions for $L$.  For $\gamma \in U_{\beta}$, the first two steps in the chart map of proposition \ref{p-smth-free-loop-spac-frec-mfld} (see notation \ref{n-smth-free-loop-spac-frec-mfld}) for $U_{\beta}$ take $\gamma$ to $L(\pi, \eta)^{-1} (\beta, \gamma) \in LTM$, where $\eta = \sigma \circ \psi$, $(\pi, \sigma) \colon N \rightarrow V \subset M \cross M$ being the diffeomorphism given by the exponential map, from an $\epsilon$-neighborhood of the zero section of $TM$ to the corresponding neighborhood $V$ of the diagonal of $M \cross M$.  The map $\pi \colon TM \rightarrow M$ is the projection.  For $v \in N$, $\sigma (v) = \exp_{\pi(v)} (V)$.  The map $\psi \colon TM \rightarrow N$ is a diffeomorphism such that $\pi \circ \psi = \pi$, $\psi_{|Z} = \ident_Z$.  Thus
\begin{align}
L(\pi, \eta) &= L(\pi, \sigma) \circ \psi, \notag \\
(L(\pi, \eta))^{-1} &= \psi^{-1} \circ L(\pi, \sigma)^{-1}, \text{ and} \notag \\
L(\pi, \sigma)^{-1} ((\beta, \gamma)) &= (\pi, \sigma)^{-1} \circ (\beta, \gamma) \notag
\end{align}
in the chart map for $U_{\beta}$.

Checking what happens when contracting to $\beta$ an arbitrary $\gamma \in U_{\beta}$, substituting $H_{\beta} (s, \gamma)$ for $\gamma$ in the chart map expression, recalling that $v$ is defined by $\exp_{\beta (t)} (v) = \gamma (t)$,
\begin{align}
(\pi, \sigma)^{-1} ((\beta, H_{\beta} (s, \gamma)) (t)) &= (\pi, \sigma)^{-1} (\beta (t), \exp_{\beta (t)} (sv)) \notag \\
&= sv \notag \\
&= s (\pi, \sigma)^{-1} ((\beta, \gamma) (t)), \text{ so} \notag \\
(\pi, \sigma)^{-1} \circ (\beta, H_{\beta} (s, \gamma)) &= s (\pi, \sigma)^{-1} \circ (\beta, \gamma), \text{ or}  \notag \\
L(\pi, \sigma)^{-1} ((\beta, H_{\beta} (s, \gamma))) &= s L(\pi, \sigma)^{-1} ((\beta, \gamma)). \notag
\end{align}
Recalling that $(\pi, \sigma)^{-1} (\beta, \beta) = 0$, the homotopy contracts $\gamma$ to $\beta$ by multiplying the loop in $TM$ corresponding to $\gamma$ by the homotopy parameter $s$.  Scalar multiplication being a (jointly) continuous map of $LTM = TLM$, just as it is in a \Frechet space, and $L(\pi, \sigma)^{-1}$ being continuous by definition on chart domains, the homotopy $H_{ \beta}$ is continuous in the \Frechet manifold topology.  As in the similar proof for $C$, the homotopy doesn't depend on $\alpha$ except for its domain; it is defined in terms of $\beta$

Thus individual $U_{\alpha, 2}$ are contractible to any $\beta \in U_{\alpha, 2}$.  Any $\beta \in \bigcap_{k = 1}^r U_{\alpha_k, 2}$ has the property that $\bigcap_{k = 1}^r U_{\alpha_k, 2} \subset U_{\beta}$.  Since the intersection is contained in all the $U_{\alpha_k}$, $H_{\beta}$ defined using any of the $\alpha_k$, say $\alpha_1$, restricted to the intersection, is the same function as defined using any other of the $\alpha_k$, and maps the intersection to the intersection, which is thus contractible to $\beta$.
\end{proof}

\begin{lem}\label{l-lm-c-nerv-bij-non-empt}
\index{nerve}
\index{good cover}
\index{loop spaces}
(The Bijection between Corresponding Good Covers of $C(S^1, M)$ and $LM$).
Define $\delta$, $U_C$, and $U_L$ as in lemma \ref{l-good-cove-c-lm}.  Since the indexed good covers $U_C$, $U_L$ are indexed by the same set $LM$ (see definition \ref{d-inde-cove}), there is a natural bijection between them.  Under this bijection, injective $k + 1$ tuples of indices corresponding to nonempty intersections of elements of $U_C$ correspond also to nonempty intersections of elements of $U_L$.
\end{lem}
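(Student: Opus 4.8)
The plan is to show that the natural bijection $U_C \leftrightarrow U_L$ (both indexed by $LM$ via $\alpha \mapsto \B_\delta(\alpha)$, respectively $\alpha \mapsto U_{\alpha,2} = i_{LM}^{-1}(\B_\delta(\alpha))$) carries nonempty intersections to nonempty intersections in both directions; injectivity of a $(k{+}1)$-tuple depends only on the index set, which is shared, so there is nothing to prove there. The key observation is that $i_{LM}(LM)$ is dense in $C(S^1,M)$ (by \citet{Hirs76}, as used in lemma \ref{l-good-cove-c-lm}) and that $i_{LM}$ is continuous, so that for any finite family $\alpha_1,\dots,\alpha_r \in LM$, one has $U_{\alpha_1,2} \cap \cdots \cap U_{\alpha_r,2} = i_{LM}^{-1}(\B_\delta(\alpha_1) \cap \cdots \cap \B_\delta(\alpha_r))$.

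First I would dispose of the easy direction: if $\bigcap_{j=1}^r U_{\alpha_j,2} \neq \emptyset$, pick $\gamma$ in it; then $i_{LM}(\gamma) \in \bigcap_{j=1}^r \B_\delta(\alpha_j)$, so the corresponding intersection of elements of $U_C$ is nonempty. The substantive direction is the converse: suppose $W := \bigcap_{j=1}^r \B_\delta(\alpha_j)$ is a nonempty open subset of $C(S^1,M)$; we must produce a \emph{smooth} loop in $W$, equivalently a point of $i_{LM}(LM) \cap W$. Density of $i_{LM}(LM)$ in $C(S^1,M)$, together with $W$ being open and nonempty, gives exactly this: $i_{LM}(LM) \cap W \neq \emptyset$, hence $\bigcap_{j=1}^r U_{\alpha_j,2} = i_{LM}^{-1}(W) \neq \emptyset$. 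Thus the bijection restricts to a bijection of nerves.

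I would then note two small points to make the argument airtight. One: the identity $U_{\alpha_1,2} \cap \cdots \cap U_{\alpha_r,2} = i_{LM}^{-1}(W)$ is immediate from $U_{\alpha_j,2} = i_{LM}^{-1}(\B_\delta(\alpha_j))$ and the fact that preimages commute with finite intersections, so no compatibility of topologies beyond continuity of $i_{LM}$ is needed. Two: the metric balls $\B_\delta(\alpha)$ are genuinely open in the compact-open topology on $C(S^1,M)$ because that topology agrees with the $d_M^+$-metric topology (lemma \ref{l-co-metr-top}), which was already recorded in the proof of proposition \ref{p-lm-c-coho-isom}; hence $W$ is open and density applies.

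The main obstacle, if any, is purely a matter of invoking the density statement in the precise form needed — namely that the \emph{\Frechet-topology} image $i_{LM}(LM)$ is dense in the \emph{compact-open} space $C(S^1,M)$, i.e.\ that every continuous loop is a uniform limit of smooth loops; this is the cited Hirsch approximation result and is used identically in lemma \ref{l-good-cove-c-lm}. Everything else is formal bookkeeping with preimages and the shared index set, so I expect the proof to be short: one line for the easy direction, one invocation of density for the hard direction, and a remark that injectivity of index tuples is unchanged under the bijection.
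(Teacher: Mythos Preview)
Your proposal is correct and follows essentially the same approach as the paper: both directions rely on the identity $\bigcap_j i_{LM}^{-1}(\B_\delta(\alpha_j)) = i_{LM}^{-1}\bigl(\bigcap_j \B_\delta(\alpha_j)\bigr)$, with the easy direction immediate and the nontrivial direction handled by density of $i_{LM}(LM)$ in $C(S^1,M)$ via Hirsch's approximation theorem. Your additional remarks about openness of the metric balls and continuity of $i_{LM}$ make explicit what the paper leaves implicit from earlier results, but the argument is otherwise identical.
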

\begin{proof}
Use the notations $C = C(S^1, M)$, $L = LM$, and $i_L = i_{LM}$, and facts about the convexity radius, from the proof of proposition \ref{p-lm-c-coho-isom}.  Given $\alpha \in LM$, lemma \ref{l-good-cove-c-lm} defines corresponding elements $\B_{\delta} (i_L (\alpha)) \in U_C$ and $i_L^{-1} (\B_{\delta} (i_L (\alpha))) \in U_L$.

From properties of the inverse, as a map of sets, of a function,
\[
\bigcap_{k=1}^r i_L^{-1} (\B_{\delta} (i_L (\alpha_k))) = i_L^{-1} (\bigcap_{k=1}^r \B_{\delta} (i_L (\alpha_k))). \notag
\]
Thus, given an nonempty intersection of elements of $U_L$, $\bigcap_{k=1}^r i_L^{-1} (\B_{\delta} (i_L (\alpha_k))) \ne \emptyset$, the corresponding intersection of elements of $U_C$, $\bigcap_{k=1}^r \B_{\delta} (i_L (\alpha_k))$ is necessarily also nonempty, since otherwise its inverse image, the first intersection, couldn't be nonempty.

For the other direction, suppose given a nonempty intersection of elements of $U_C$, $\emptyset \ne \bigcap_{k=1}^r \B_{\delta} (i_L (\alpha_k)) \subset C$.  As noted in the proof of lemma \ref{l-good-cove-c-lm}, $i_L (L)$ is dense in $C$ by \citet[pages~34--35,~49]{Hirs76}, so there is some $\beta \in L$ such that $i_L (\beta) \in \bigcap_{k=1}^r \B_{\delta} (i_L (\alpha_k))$.  Then $\beta \in \bigcap_{k=1}^r i_L^{-1} (\B_{\delta} (i_L (\alpha_k)))$, the corresponding intersection of elements of $U_L$.
\end{proof}

\begin{lem}\label{l-tran-h4s4-to-h3ls}
\index{transgression!H4(S4) to H3(LS4)@$\HH^4(S^4;\ZZ) \rightarrow \HH^3(LS^4;\ZZ)$}
\index{transgression!isomorphism}
% (A title for this lemma seemed redundant).
The transgression $\tau \colon \HH^4(S^4;\ZZ) \isomto \HH^3(LS^4;\ZZ)$ is an isomorphism.
\end{lem}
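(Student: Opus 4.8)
The plan is straightforward: by Definition~\ref{d-tran-hkp1-m-to-hk-lm}, the transgression factors as $\tau = i_{LM}^{*} \circ \tau_C$, where $\tau_C \colon \HH^{4}(S^4;\ZZ) \to \HH^{3}(C(S^1,S^4);\ZZ)$ is the transgression landing in the continuous-loop-space cohomology and $i_{LM}^{*} \colon \HH^{3}(C(S^1,S^4);\ZZ) \to \HH^{3}(LS^4;\ZZ)$ is induced by the inclusion $i_{LM} \colon LS^4 \hookrightarrow C(S^1,S^4)$. So the whole task reduces to observing that each of these two maps is an isomorphism, which has already been established: Proposition~\ref{p-tran-h4s4-to-h3c} gives that $\tau_C$ is an isomorphism, and Proposition~\ref{p-lm-c-coho-isom} (with $M = S^4$) gives that $i_{LM}^{*}$ is an isomorphism in all degrees, in particular in degree $3$.

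Thus the proof I would write is essentially one line: $\tau$ is a composite of two isomorphisms, hence an isomorphism. I would simply cite Definition~\ref{d-tran-hkp1-m-to-hk-lm} for the factorization, Proposition~\ref{p-tran-h4s4-to-h3c} for the first factor, and Proposition~\ref{p-lm-c-coho-isom} for the second, and conclude.

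Since there is no genuine obstacle left — all the substantive work (computing $\HH^{*}(\Omega S^4)$ and $\HH^{*}(C(S^1,S^4))$ via the James reduced product and the path fibration, identifying the evaluation map's effect on $\HH^4$ through the reduced-suspension/smash-product picture in Lemma~\ref{l-ev-star-h4s4-h4os}, verifying the slant-product computation in Proposition~\ref{p-tran-h4s4-to-h3c}, and matching good covers of $LM$ and $C(S^1,M)$ in Proposition~\ref{p-lm-c-coho-isom}) was done in the preceding results — the only thing to be careful about is bookkeeping: confirm that the degrees line up ($k=3$, so $\HH^{k+1}=\HH^4$ on the source and $\HH^{k}=\HH^3$ on the target), and that Proposition~\ref{p-lm-c-coho-isom} indeed applies with $M=S^4$, a compact smooth manifold satisfying the standing assumptions. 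The hardest conceptual ingredient, if one counts it, is already embedded in Proposition~\ref{p-tran-h4s4-to-h3c}; here nothing further is needed.
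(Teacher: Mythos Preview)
Your proof is correct and matches the paper's own argument exactly: the paper's proof is simply ``Apply propositions \ref{p-tran-h4s4-to-h3c} and \ref{p-lm-c-coho-isom},'' which is precisely the factorization $\tau = i_{LM}^{*} \circ \tau_C$ you describe.
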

\begin{proof}
Apply propositions \ref{p-tran-h4s4-to-h3c} and \ref{p-lm-c-coho-isom}.
\end{proof}

\section{Quaternionic Line Bundle}\label{s-quat-line-bndl}
This section discusses the tautological quaternionic line bundle $E_Q$.  The following definition and note are background for the definition of $E_Q$.
\begin{defn}\label{d-q-qu-su2}
\index{quaternions!R4@$\RR^4$}
\index{quaternions!C2@$\CC^2$}
\index{quaternions!unit!SU(2)@$\SU(2)$}
\index{SU(2)@$\SU(2)$}
(Quaternions).
\citep[page~17]{Laws85} The quaternions are defined as a free real algebra over four elements $1 \in \RR, \qi, \qj, \qk$ with relations:
\begin{align}
\QH &= \qone \RR \dirsum \qi \RR \dirsum \qj \RR \dirsum \qk \RR \notag \\
 &= \{ q = a + b \qi + c \qj + d \qk \st a, b, c, d \in \RR, \notag \\
 & \qi^2 = \qj^2 = \qk^2 = -1, \qi \qj = - \qj \qi, \qj \qk = - \qk \qj, \qk \qi = - \qi \qk, \qi \qj \qk = -1 \}, \text{ with involution} \notag \\
 & q = a + b \qi + c \qj + d \qk \mapsto \overline{q} = \overline{a + b \qi + c \qj + d \qk} = a - b \qi - c \qj - d \qk, \text{ and norm} \notag \\
 &\norm{q} = \sqrt{q \overline{q}}. \notag
\end{align}

$\QH$ can be identified as a normed division algebra with $\RR^4$ with standard norm via $a + b \qi + c \qj + d \qk \mapsto (a, b, c, d)$.

$\QH$ can be identified also with $\CC^2$ with standard norm via $a + b \qi + c \qj + d \qk \mapsto (a + bi, c + di) = (z, w)$, with induced multiplication $(z_1, w_1) (z_2, w_2) = (z_1 z_2 -w_1 \overline{w_2}, z_1 w_2 + w_1 \overline{z_2})$ and involution $(z, w) \mapsto (\overline{z}, -w)$.
\end{defn}
\begin{note}\label{n-q-qu-su2}
\index{quaternions!R4@$\RR^4$}
\index{quaternions!unit!S3@$S^3$}
\index{quaternions!unit!Sp1@$\Symp(1)$}
\index{S3@$S^3$}
\index{Sp1@$\Symp(1)$}
(The Inclusion of $\Symp(1)$ into $\SO(4)$).
\citep[page~17]{Laws85} The group of unit quaternions, $S^3$ or $\Symp(1)$, can be considered a subgroup of $\SO(4)$ by identifying $\QH$ with $\RR^4$ and letting $\Symp(1)$ act by left or right multiplication.  The standard inner product on $\RR^4$ applied to $q_1, q_2 \in \QH \cong \RR^4$ is equal to $\Re(q_1 \overline{q_2})$.  Note that $\Re(q_1 \overline{q_2})$ is unchanged if both $q_1$ and $q_2$ are multiplied on the left by $u \in \Sym(1) \subset \QH$ such that $u \overline{u} = 1$; and $q_1 \overline{q_2}$ itself is unchanged if both are multiplied on the right.

If $\QH$ is identified with $\CC^2$, $\Symp(1)$ can be identified with $\SU(2)$.
\end{note}

The following definition and lemma give the construction of the vector bundle $E_Q$.  The key points are that its $\frac{p_1}{2}$ is a generator of the fourth cohomology of its base space, and that it's a relatively simple bundle, making proofs practical.
\begin{defn}\label{d-q-vb}
\index{quaternionic line bundle}
\index{EQ@$E_Q$}
(The Quaternionic Tautological Line Bundle).
\citep[pages~243--244]{MS74} \citep[pages~17--18]{Laws85} Define $E_Q \rightarrow \QH P^1$ as the rank $4$ smooth real vector bundle underlying the tautological quaternionic line bundle, with fiberwise inner product defined by identifying $\QH^2$ with $\RR^8$.  In more detail, considering $\QH^2$ as a right $\QH$ module, let the total space $E_Q = \{ (l, v) \st v \in l, \text{ a } 1-\dim \text{ subspace } \in \QH^2 \}$, with projection $(l, v) \mapsto l$.  Given $(l, v_1), (l, v_2) \in (E_Q)_l$, with $v_i = (v_{i1}, v_{i2})$, define their inner product as $((l, v_1), (l, v_2)) = (v_1, v_2) = \Re(v_{11} \overline{v_{21}} + v_{12} \overline{v_{22}})$.
\end{defn}

\begin{lem}\label{l-q-vb}
\index{quaternionic line bundle}
(Quaternionic Tautological Line Bundle Properties).
\citep[pages~243--244]{MS74} $E_Q \rightarrow \QH P^1$ is indeed a rank $4$ smooth real vector bundle with fiberwise inner product.  We will identify $\QH P^1$ with $S^4$ via stereographic projection from $S^4 \subset \RR^5 \cong \QH \dirsum \RR$ to $\QH$ and thence by homogeneous coordinates to $\QH P^1$.  We give $\QH$ the Riemannian metric from this identification, with the standard Riemannian metric on $S^4$.  The Euler class of the underlying real bundle equals the second Chern class of the underlying complex bundle; call this $u_4$, a generator of $\HH^4 (\QH P^1; \ZZ)$.  Its total Chern class is $1 + u_4$ and its total Pontryagin class is $1 - 2 u_4$; $p_1 (E_Q) = - 2 u_4$.  The bundle $E_Q$ is orientable.

The symplectic frame bundle of $E_Q$, $\Symp(E_Q)$, is a principal bundle with group $\Symp(1) \subset \SO(4)$.  We will identify it with $S^7 \rightarrow \QH P^1$, formed by taking the unit sphere sub-bundle of $E_Q$ \citep[pages~11,18]{Laws85}, dropping the now unnecessary lines in the pairs forming the total space, and using as projection the quotient map for projective space, $\QH^2 \setminus \{ (0, 0) \} \xrightarrow{\pi_{\QH P^1}} \QH P^1$.  The smooth manifold $S^7$ has the standard Riemannian metric from inclusion in $\RR^8$.  The group of unit quaternions acts freely and transitively by right multiplication on the fibers of $S^7 \rightarrow S^4$.  This is called a Hopf bundle.

The transition function (see definition \ref{d-fb-vb}) for $E_Q$, for $S^4 = D^4_{+} \cup D^4_{-}$, the union of open neighborhoods of the upper and lower hemispheres, is determined over $D^4_{+} \cap D^4_{-} \cong S^3 \cross (-1, 1)$ by the map $g_{+-}$ to $S^3$, the unit
quaternions, such that $g_{+-} (x, t) = x$, acting by left multiplication on $\QH \cong \RR^4$
\citep[page~18]{Laws85}.
\end{lem}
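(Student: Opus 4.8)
The plan is to assemble the stated facts from the standard treatments in \citet[pages~243--244]{MS74} and \citet[pages~11,~17--18]{Laws85}, supplying the identifications and sign bookkeeping forced by the conventions of this thesis. The content that needs actual argument (as opposed to a direct citation) falls into three clusters: (i) that $E_Q$ is a smooth rank-$4$ real vector bundle with the claimed transition function; (ii) the computation of the Euler, Chern, and Pontryagin classes; and (iii) the identification of $\Symp(E_Q)$ with the quaternionic Hopf bundle $S^7 \to S^4$. The remaining assertions --- the identification $\QH P^1 \cong S^4$ by stereographic projection and the choices of Riemannian metric on $\QH$ and on $S^7$ --- are purely definitional and need only be recorded.

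For (i), cover $\QH P^1$ by the two homogeneous-coordinate charts $U_+ = \{[q:1] : q \in \QH\}$ and $U_- = \{[1:q] : q \in \QH\}$, each diffeomorphic to $\QH \cong \RR^4$, with $U_+ \cap U_- = \{[q:1] : q \ne 0\}$. The maps $q \mapsto (q,1)$ over $U_+$ and $q \mapsto (1,q)$ over $U_-$ are smooth nowhere-vanishing sections of $E_Q$, exhibiting $E_Q$ as a smooth rank-$1$ quaternionic (hence rank-$4$ real) subbundle of the trivial bundle $\QH P^1 \times \QH^2$, with fiberwise inner product the restriction of the standard one on $\QH^2 \cong \RR^8$. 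On the overlap, $[q:1] = [1:q^{-1}]$ for $q \in \QH^\times$, and $(q,1) = (1,q^{-1})\cdot q$, so passing between the two local frames is quaternion multiplication by $q$; after normalizing to orthonormal symplectic frames (for a quaternionic line this amounts to choosing a unit vector) and retracting $U_+ \cap U_- \cong S^3 \times (-1,1)$ onto the equator $\{\,|q|=1\,\}$, the transition function over $D^4_+ \cap D^4_-$ becomes $g_{+-}(x,t) = x \in S^3$ acting on $\QH \cong \RR^4$ by left multiplication, which is the clutching description in \citet[page~18]{Laws85}; I would record it with the sign and direction convention of Definition \ref{d-fb-vb}.

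For (ii), view $E_Q$ as the underlying real bundle of a complex rank-$2$ bundle $\xi$ (a quaternionic line in $\QH^2$ viewed as a complex $2$-plane), which is the tautological bundle of \citet{MS74}. Since $\HH^*(\QH P^1;\ZZ) = \ZZ[u_4]/(u_4^2)$ with $u_4$ the degree-$4$ generator (dual to the $4$-cell of $\QH P^1 \cong S^4$), we have $c_1(\xi) = 0$ for degree reasons and $c_2(\xi) = u_4$ a generator, so the total Chern class is $1 + u_4$; this is the computation in \citet[pages~243--244]{MS74}. The Euler class of the underlying oriented real rank-$4$ bundle is $e(E_Q) = c_2(\xi) = u_4$ by the standard relation $e(\eta_\RR) = c_n(\eta)$. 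Since conjugation negates odd Chern classes and fixes even ones, $c(\bar\xi) = 1 + u_4$ as well, and the Pontryagin-class formula of \citet{MS74} gives $1 - p_1 + p_2 = c(\xi)c(\bar\xi) = (1+u_4)^2 = 1 + 2u_4$ (using $u_4^2 = 0$), hence $p_1(E_Q) = -2u_4$ and $p_2(E_Q) = 0$, i.e. total Pontryagin class $1 - 2u_4$. Orientability follows from the canonical orientation of $\xi_\RR$ induced by the complex structure, equivalently from the reduction of structure group to $\Symp(1) \subset \SO(4)$.

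For (iii), the unit-sphere subbundle of $E_Q$ has total space $\{(l,v) : v \in l,\ \norm{v} = 1\}$; a unit vector $v \in \QH^2 \setminus \{0\}$ determines its quaternionic span, so dropping $l$ identifies this with $S^7 = \{v \in \QH^2 : \norm{v}=1\}$ with bundle projection the restriction of $\QH^2 \setminus \{(0,0)\} \xrightarrow{\pi_{\QH P^1}} \QH P^1$; right multiplication by a unit quaternion preserves each quaternionic line and acts freely and transitively on its unit sphere, so $S^7 \to S^4$ is a principal $\Symp(1)$ bundle (the quaternionic Hopf bundle), and identifying an orthonormal symplectic frame of a line with its unique unit vector gives $\Symp(E_Q) \cong S^7$. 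I expect the main obstacle to be not any single deep point but maintaining consistency of conventions throughout: which of the tautological bundle or its dual is meant, the corresponding choice of the integral generator $u_4$, and the resulting signs in $c_2$ and $p_1$ (the thesis fixes $p_1(E_Q) = -2u_4$, which forces the tautological rather than the dual choice, with $u_4 = c_2(\xi)$); and getting the transition function into exactly the form $g_{+-}(x,t) = x$ acting by left multiplication, since that precise normalization is what gets used later when the bundle gerbe $G(E_Q)$ is constructed and pulled back to $S^3$.
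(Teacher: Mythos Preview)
Your proposal is correct and considerably more detailed than what the paper actually provides. In the paper this lemma is not given an explicit proof environment at all: it is presented as a collection of facts assembled from \citet[pages~243--244]{MS74} and \citet[pages~11,~17--18]{Laws85}, followed only by a one-sentence remark that the Riemannian metric on $\QH P^1$ is used to define the \Frechet manifold structure on $L\QH P^1$. So there is no paper-proof to compare against; your writeup supplies the verification that the paper leaves to the cited sources.

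Your three clusters (bundle structure and transition function, characteristic class computation, identification of $\Symp(E_Q)$ with the Hopf bundle) are the right decomposition, and each is handled correctly. The Pontryagin computation via $1 - p_1 + p_2 = c(\xi)c(\bar\xi) = (1+u_4)^2 = 1 + 2u_4$ is clean and matches the sign convention the thesis needs. Your closing remark about convention consistency is apt: the thesis does rely on the specific normalization $g_{+-}(x,t) = x$ by left multiplication later in lemma \ref{l-loc-sect-lspe}, so getting that form exactly right is indeed where care is required. One small point: when you write $(q,1) = (1,q^{-1})\cdot q$ you are tracking the frame transition via right scalar multiplication, which is correct for the right $\QH$-module structure in definition \ref{d-q-vb}; just make sure when you pass to the associated real bundle with standard fiber $\QH$ that the resulting transition on fibers comes out as \emph{left} multiplication by $x$, as the statement requires.
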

The Riemannian metric on $\QH P^1$ is used to define the \Frechet manifold structure on $L\QH P^1$, as in proposition \ref{p-smth-free-loop-spac-frec-mfld}, for the bundle gerbe construction functor $G$.

\section{Generalization from Universal Case}\label{s-gene-univ}
Here we attempt to show that the conjectured transgression result for the particular vector bundle $E_Q$ implies the conjecture for the general bundles $E \rightarrow M$ we consider, justifying our calling $E_Q$ the universal case.  It may not work for the bundle gerbe construction of the thesis as it stands, but it seems likely it would if the bundles $E$ had spin structures, and the bundle gerbe were built from looping $\Spin(E)$ rather than $\SO(E)$, as discussed in note \ref{n-spin-stru}.

Classifying spaces are involved in the proof, and because the bundle gerbe construction functor requires a vector bundle with a compact base space, the proof uses spaces $SG_k (\RR^s)$ that are classifying for base spaces only up to a certain CW complex dimension \citep[pages~50,~96]{Huse94}; though this can be made arbitrarily large.  The $2$-torsion related to the cohomology of the special orthogonal groups may present a problem.  It appears that if the vector bundles $E$ had spin structures, and the work outlined in note \ref{n-spin-stru} were carried out as expected, then the spin groups would be involved instead, and there would be no torsion that would interfere with the proof.

Note that to avoid stating a false proposition, we state one that is possibly vacuously true due to its initial cohomological hypothesis being false.  The idea of spin groups enters only in a comment in the proof.
\begin{prop}\label{p-p-to-dd-tran}
\index{transgression!Pontryagin class!Dixmier-Douady class}
\index{Pontryagin class!transgression!Dixmier-Douady class}
\index{Dixmier-Douady class!transgression!Pontryagin class}
\index{EQ@$E_Q$!universal case}
\index{universal case}
(The $E_Q$ Case Implies Generally that the First Pontryagin Class Transgresses to Plus or Minus Twice the Dixmier-Douady Class).
Suppose that for all sufficiently large even $k \in \NN$, and given $k$, all sufficiently large $s \ge k + 6$, $\HH^3(LSG_k (\RR^s);\ZZ) \cong \ZZ$, where $SG_k (\RR^s) \cong \SO(s) / (\SO(s - k) \cross \SO(k))$.  Then if $\tau(p_1(E_Q)) = \pm 2 DD(G(E_Q))$, then for every vector bundle meeting our hypotheses, i.e. $E \rightarrow M$ an oriented smooth vector bundle of even rank with a fiberwise inner product over a compact connected oriented smooth manifold with Riemannian metric, $\tau(p_1(E)) = \pm 2 DD(G(E))$.
\end{prop}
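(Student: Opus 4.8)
The plan is the familiar ``universal example'' argument: both $E \mapsto \tau(p_1(E))$ and $E \mapsto 2\,DD(G(E))$ are natural in the vector bundle $E$ with values in $\HH^3(LM;\ZZ)$, so an identity between them over a classifying space propagates to every $E$; the twist is that $E_Q$ is not itself a universal bundle, so I must check it nonetheless detects the relevant looped cohomology.

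First I would reduce to the tautological bundle over an oriented Grassmannian. Fix $E \to M$ of even rank $n$ as in the hypotheses. Both invariants are stable: $p_1(E \dirsum I_{2j}) = p_1(E)$ by the Whitney sum formula, while $G(E \dirsum I_{2j})$ is stably isomorphic to $G(E)$ by Proposition \ref{p-bg-cons-stab}, hence has the same Dixmier--Douady class by Note \ref{n-stab-isom}; and neither invariant depends on the fiberwise inner product, by Corollary \ref{co-inde-fibe-inne-prod} for $DD(G(-))$ and trivially for $p_1$. So for $k = n + 2j$ even and large and $s \ge k+6$ large, pick a classifying map $f \colon M \to SG_k(\RR^s)$ with $f^*\gamma \cong E \dirsum I_{2j}$, where $\gamma$ is the tautological oriented rank-$k$ bundle; equip $E \dirsum I_{2j}$ with the inner product pulled back along this isomorphism, so that it becomes a morphism of the category $\mathcal{V}$. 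Functoriality of $G$ (Proposition \ref{p-bg-func}), naturality of $DD$ under bundle gerbe morphisms (Lemma \ref{l-bg-dd-prop}), naturality of the transgression (Proposition \ref{p-tran-hkp1-m-to-hk-lm}) and of $p_1$ then give
\[
\tau(p_1(E)) = (Lf)^*\tau(p_1(\gamma)), \qquad 2\,DD(G(E)) = (Lf)^*\bigl(2\,DD(G(\gamma))\bigr),
\]
so it is enough to prove $\tau(p_1(\gamma)) = \pm 2\,DD(G(\gamma))$ in $\HH^3(LSG_k(\RR^s);\ZZ)$.

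Next I would feed in the $E_Q$ hypothesis. Stabilize $E_Q$ to $\widetilde{E_Q} = E_Q \dirsum I_{k-4}$ and pick $h \colon S^4 \cong \QH P^1 \to SG_k(\RR^s)$ with $h^*\gamma \cong \widetilde{E_Q}$. The same naturality, together with stability, gives $(Lh)^*\tau(p_1(\gamma)) = \tau(p_1(E_Q))$ and $(Lh)^*\bigl(2\,DD(G(\gamma))\bigr) = 2\,DD(G(E_Q))$, so the hypothesis $\tau(p_1(E_Q)) = \pm 2\,DD(G(E_Q))$ yields $(Lh)^*\bigl(\tau(p_1(\gamma)) \mp 2\,DD(G(\gamma))\bigr) = 0$. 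On the other hand $p_1(E_Q) = -2u_4$ for a generator $u_4$ of $\HH^4(S^4;\ZZ)$ (Lemma \ref{l-q-vb}), and $\tau \colon \HH^4(S^4;\ZZ) \to \HH^3(LS^4;\ZZ)$ is an isomorphism with $\HH^3(LS^4;\ZZ) \cong \ZZ$ (Lemma \ref{l-tran-h4s4-to-h3ls}), so $(Lh)^*\tau(p_1(\gamma)) = -2\,\tau(u_4) \neq 0$. Now invoke the standing hypothesis $\HH^3(LSG_k(\RR^s);\ZZ) \cong \ZZ$: a homomorphism $\ZZ \to \ZZ$ that carries some element to a nonzero element is multiplication by a nonzero integer, hence injective, so $\tau(p_1(\gamma)) \mp 2\,DD(G(\gamma)) = 0$. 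Pulling back along $Lf$ gives $\tau(p_1(E)) = \pm 2\,DD(G(E))$, with the same sign throughout.

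The real obstacle is not in this deduction but in the standing hypothesis $\HH^3(LSG_k(\RR^s);\ZZ) \cong \ZZ$, which is doubtful precisely because of the $2$-torsion in $\HH^*(BSO;\ZZ)$ contributing to the cohomology of $SG_k(\RR^s)$ and of its loop space. As noted in Note \ref{n-spin-stru}, if one restricts to vector bundles carrying a spin structure and rebuilds the bundle gerbe from $L\Spin(E)$ rather than $L\SO(E)$, the relevant classifying space becomes a spin Grassmannian whose low-degree integral cohomology should be torsion-free, making the hypothesis (and hence this whole argument) tenable; in that setting one would moreover expect the sharper statement $DD(G(E)) = \tau(\tfrac{p_1}{2}(E))$ to descend from its analogue for $E_Q$. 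The remaining points requiring only routine care are that the Grassmannians (or spin Grassmannians) satisfy the standing assumptions on the base manifold used by $G$ --- compact, connected, Riemannian, and orientable or handled by Note \ref{n-inde-riem-metr} --- and the bookkeeping of the ranges of $k$ and $s$ ensuring that the classifying maps over $M$ and over $S^4$ exist.
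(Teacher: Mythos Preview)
Your proposal is correct and follows essentially the same route as the paper: stabilize $E$ and $E_Q$, classify both by maps $f,h$ into an oriented Grassmannian $SG_k(\RR^s)$, use functoriality of $G$, naturality of $\tau$, $p_1$, and $DD$, and then deduce the identity over the Grassmannian from the $E_Q$ hypothesis together with injectivity of $(Lh)^{*}$ on $\HH^3(LSG_k(\RR^s);\ZZ)\cong\ZZ$. Your injectivity step is actually cleaner than the paper's --- you argue directly that $(Lh)^{*}\colon\ZZ\to\ZZ$ is nonzero (since $(Lh)^{*}\tau(p_1(\gamma))=\tau(p_1(E_Q))\neq 0$) and hence injective, whereas the paper detours through $g^{*}$ on $\HH^4$ before invoking the hypothesis; but this is a local simplification, not a different strategy.
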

\begin{proof}
When $E$ has rank $0$ the statement is trivially satisfied, so we may assume positive even rank.

We will use various facts about vector bundles universal up to a certain dimension, constructed as the canonical vector bundles over finite-dimensional Grassmann manifolds of oriented planes.  These are common knowledge to some, but references are: \citet[pages~54,~87--97]{Huse94}, \citet[pages~61--62,~145]{MS74}, \citet[pages~219--221]{NR04}, and \citet[pages~100--105]{Stee99}.

\citep[page~13--14,~25,~30--32,~54,~89--96]{Huse94} connects the ``$n$-universal'' principal bundles he mostly discusses, with vector bundles; for orientable ones, analogous logic applies.  He discusses oriented plane bundles at greater length in this context than do \citet[page~145]{MS74}.  Since the vector bundles (universal and other) we use are over compact base spaces, they are numerable \citep[page~50]{Huse94}, satisfying that hypothesis of his.  His ``$n$-universal'' means universal for bundles over CW complexes of dimension no greater than $n$ (this is not our $n = \rank (E)$).  Our vector bundles are over compact smooth manifolds, which by \citet[page~124]{Whit57} or \citet[pages~1--2]{Qin11} are CW complexes.  Also, the classifying maps in the references are continuous, but by \citet[page~213]{BT82}, they are continuously homotopic to smooth maps, suitable for the bundle gerbe construction functor.

For us, the most salient result is from \citep[page~96]{Huse94}, that for vector bundles associated to $\SO(k)$, the bundle
\begin{align}
E_{k, s} &= V_k (\RR^s) \cross_{\SO(k)} \RR^k \notag \\
 &\cong (\SO(s) / \SO(s - k)) \cross_{\SO(k)} \RR^k \notag \\
 &\rightarrow \SO(s) / (\SO(s - k) \cross \SO(k)) \notag \\
 &\cong SG_k (\RR^s) \notag
\end{align}
is $n$-universal for $s > k + n$.  This is a smooth vector bundle by \citet[pages~84--85]{Proc07}.  \citet[pages~219--221]{NR04}, who call $SG_k (\RR^s)$, $G_{+}(s,k)$, say it's simply connected and orientable.

Starting with a vector bundle $E$ of positive even rank $n$, choose even $k \ge n + 4$, sufficiently large that the proposition's hypothesis concerning $LSG_k (\RR^s)$ is true.  This allows including the rank $4$ bundle $E_Q$ in the following construction since $k \ge n + 4 > 4$, and also ensures by \citet[page~95]{Huse94} that $\pi_i (\SO(k)) \cong \pi_i (\SO)$ for $i \in [0, 4]$ since $k \ge n + 4 \ge 6 = 4 + 2 \ge i + 2$.

We require first that $s \ge k + 6$, because using the exact sequence of the fibration $\SO(k) \rightarrow V_k (\RR^s) \rightarrow SG_k (\RR^s)$  \citep[page~453]{Bred93}, the $+ 6$ ensures that $\pi_i (SG_k (\RR^s)) \cong \pi_{i - 1} (\SO(k)) \cong \pi_i (\SO) \cong \pi_i (B\SO)$ for $i \in [0, 4]$.  Further, choose $s$ sufficiently large that $E_{k, s} \rightarrow SG_k (\RR^s)$ is universal for both $E \dirsum I_{k - n}$ and $E_Q \dirsum I_{k - 4}$, where the $I$ are the product bundles with ranks as subscripts, over the base spaces of the vector bundles to which they're added.  Choose $s$ also sufficiently large that the proposition's hypothesis concerning $LSG_k (\RR^s)$ is true.  Then we have smooth orientation-preserving vector bundle morphisms that are linear isomorphisms on the fibers, from $E \dirsum I_{k - n}$ and $E_Q \dirsum I_{k - 4}$ to $E_{k, s} \rightarrow SG_k (\RR^s)$.

Since by corollary \ref{co-inde-fibe-inne-prod}, different fiberwise inner products on $E$ result in stably isomorphic bundle gerbes, we may induce fiberwise inner products on $E \dirsum I_{k - n}$ and $E_Q \dirsum I_{k - 4}$ from that on $E_{k, s}$, ensuring that those vector bundle morphisms are isometric on fibers, as needed to apply the functor $G$.

Thus we have the following commutative diagram,
\[
\begindc{\commdiag}[5]
\obj(10,20)[objEn]{$E \dirsum I_{k - n}$}
\obj(10,10)[objM]{$M$}
\obj(30,20)[objESOk]{$E_{k, s}$}
\obj(30,10)[objBSOk]{$SG_k (\RR^s)$}
\obj(50,20)[objEQ]{$E_Q  \dirsum I_{n - 4}$}
\obj(50,10)[objS4]{$S^4$}
\mor{objEn}{objM}{}
\mor{objESOk}{objBSOk}{}
\mor{objEQ}{objS4}{}
\mor{objEn}{objESOk}{}
\mor{objM}{objBSOk}{$f$}
\mor{objEQ}{objESOk}{}
\mor{objS4}{objBSOk}{$g$}[\atright, \solidarrow]
\obj(60,15)[objDiagram]{*}
\enddc
\]
and applying the bundle gerbe construction functor $G$, obtain also the following commutative diagram, with horizontal arrows belonging to bundle gerbe morphisms:
\[
\begindc{\commdiag}[5]
\obj(10,20)[objGEn]{$G(E \dirsum I_{k - n})$}
\obj(10,10)[objLM]{$LM$}
\obj(30,20)[objGESOk]{$G(E_{k, s})$}
\obj(30,10)[objLBSOk]{$LSG_k (\RR^s)$}
\obj(50,20)[objGEQ]{$G(E_Q \dirsum I_{k - 4})$}
\obj(50,10)[objLS4]{$LS^4$}
\mor{objGEn}{objLM}{}
\mor{objGESOk}{objLBSOk}{}
\mor{objGEQ}{objLS4}{}
\mor{objGEn}{objGESOk}{}
\mor{objLM}{objLBSOk}{$Lf$}
\mor{objGEQ}{objGESOk}{}
\mor{objLS4}{objLBSOk}{$Lg$}[\atright, \solidarrow]
\obj(60,15)[objDiagram]{**}
\enddc
\]
Recalling proposition \ref{p-bg-cons-stab} and note \ref{n-stab-isom}, $DD(G(E \dirsum I_{k - n})) \cong DD(G(E))$ and $DD(G(E_Q \dirsum I_{k - 4})) \cong DD(G(E_Q))$.

In the following diagram, proposition \ref{p-tran-hkp1-m-to-hk-lm} provides the transgression homomorphisms, all denoted $\tau$, and the commutativity.  Proposition \ref{p-tran-h4s4-to-h3c} says that the rightmost $\tau$ is an isomorphism.
\[
\begindc{\commdiag}[5]
\obj(10,20)[objM]{$\HH^4(M;\ZZ)$}
\obj(10,10)[objLM]{$\HH^3(LM;\ZZ)$}
\obj(40,20)[objBSOk]{$\HH^4(SG_k (\RR^s);\ZZ)$}
\obj(40,10)[objLBSOk]{$\HH^3(LSG_k (\RR^s);\ZZ)$}
\obj(70,20)[objS4]{$\HH^4(S^4;\ZZ)$}
\obj(70,10)[objLS4]{$\HH^3(LS^4;\ZZ)$}
\mor{objM}{objLM}{$\tau$}
\mor{objBSOk}{objLBSOk}{$\tau$}
\mor{objS4}{objLS4}{$\tau$}
\mor{objBSOk}{objM}{$f^{*}$}[\atright, \solidarrow]
\mor{objLBSOk}{objLM}{$(Lf)^{*}$}[\atright, \solidarrow]
\mor{objBSOk}{objS4}{$g^{*}$}
\mor{objLBSOk}{objLS4}{$(Lg)^{*}$}
\obj(80,15)[objDiagram]{***}
\enddc
\]

First we show that
\[
\tau(p_1(E_Q)) = \pm 2 DD(G(E_Q)) \Rightarrow \pm 2 DD(G(E_{k, s})) = \tau (p_1 (E_{k, s})). \notag
\]
By naturality of the Pontryagin class \citep[page~174]{MS74} and commutativity of diagram (***),
\[
\tau (p_1(E_Q)) = \tau (g^{*} p_1 (E_{k, s})) = (Lg)^{*} \tau (p_1 (E_{k, s})). \notag
\]
By commutativity of diagram (**) and naturality of the Dixmier-Douady class (lemma \ref{l-bg-dd-prop}),
\[
(Lg)^{*} 2 DD(G(E_{k, s})) = 2 (Lg)^{*} DD(G(E_{k, s})) = 2 DD(G(E_Q)). \notag
\]
Thus, using our hypothesis,
\[
(Lg)^{*} \tau (p_1 (E_{k, s})) = (Lg)^{*} (\pm 2 DD(G(E_{k, s}))), \notag
\]
and so $\pm 2 DD(G(E_{k, s})) = \tau (p_1 (E_{k, s}))$ will be true if $(Lg)^{*}$ is injective.

In diagram (***), the cohomology groups in the rightmost column are isomorphic to $\ZZ$ and the rightmost $\tau$ is an isomorphism, by lemma \ref{l-tran-h4s4-to-h3ls}.   From \citet[page~229]{NR04} the free part of $\HH^4(SG_k (\RR^s);\ZZ)$ is isomorphic to $\ZZ$.  Thus, since $0 \ne p_1(E_Q) = g^{*} p_1(E_{k, s}) \in g^{*}(\HH^4(SG_k (\RR^s);\ZZ))$, and maps of torsion into $\ZZ$ are $0$, $g^{*}$ is injective on the free part of $\HH^4(SG_k (\RR^s);\ZZ)$, and so is $\tau \circ g^{*} = (Lg)^{*} \circ \tau$.

If $\HH^3(LSG_k (\RR^s);\ZZ) \cong \ZZ$, then $(Lg)^{*}$ itself would be injective on the free part of $\HH^3(LSG_k (\RR^s);\ZZ)$, which would complete the proof that $\pm 2 DD(G(E_{k, s})) = \tau (p_1 (E_{k, s}))$.

As a comment, the hypothesis of the proposition states that $\HH^3(LSG_k (\RR^s);\ZZ) \cong \ZZ$, but it seems from brief preliminary investigation that there may be $2$-torsion making the hypothesis false.  On the other hand, from an even more cursory look, it appears that the corresponding statement likely would be true, for the third cohomology of the loop of a compact space suitably classifying for spin bundles, which we would be dealing with if the work in note \ref{n-spin-stru} were carried out.   \citet[pages~103--104]{Stee99} gives a way to define such a space.  In this case, by \citet[page~148]{McLa92}, $p_1 (E)$ is divisible by $2$, and we could reason that $DD (G(E_Q)) = \tau (\frac{p_1}{2} (E_Q))$ implies the same for the general case.

Assuming the hypothesis and its consequence just shown, we then show that
\[
\pm 2 DD(G(E_{k, s})) = \tau (p_1 (E_{k, s})) \Rightarrow \pm 2 DD(G(E)) = \tau (p_1 (E)). \notag
\]
By naturality of the Pontryagin class \citep[page~174]{MS74}, commutativity of diagrams (***) and (**), and naturality of the Dixmier-Douady class (lemma \ref{l-bg-dd-prop}),
\begin{align}
\tau (p_1(E)) &= \tau (f^{*} p_1 (E_{k, s})) \notag \\
 &= (Lf)^{*} \tau (p_1 (E_{k, s})) \notag \\
 &= (Lf)^{*} (\pm 2 DD(G(E_{k, s}))) \notag \\
 &= \pm 2 (Lf)^{*} DD(G(E_{k, s})) \notag \\
 &= \pm 2 DD(G(E)). \notag
\end{align}
\end{proof}

\section{Dixmier-Douady Class From Quaternionic Line Bundle}\label{s-dixm-doua-quat}

This section gives an overview of an approach to proving the conjecture that $\tau (p_1(E_Q)) = \pm 2 DD(G(E_Q))$.  The idea is to compute the two sides of the equation, reducing the problem to the determination of the first Chern class $c_1 (Q)$ of a principal $\UU(1)$ bundle $Q \rightarrow S^2$.

Not detailed in the thesis, this was reduced again to finding the degree of an explicit map of $\UU(1)$ torsors, a calculation that became difficult.

Also not detailed in the thesis is an approach that might view $Q \rightarrow S^2$, or the related complex line bundle, as the pullback of the Pfaffian line bundle over the restricted isotropic Grassmannian as in \citet[page~808]{SW07}, via a map whose image is in the connected component of the identity, at least if a spin structure is used as in note \ref{n-spin-stru}.  Since the Pfaffian line bundle freely generates the group of complex line bundles over the that component of the Grassmannian, the pullback map would determine $c_1 (Q)$.

Returning to the start, the following takes care of the left hand side of the equation, so that what remains is to show that $DD(G(E_Q))$ is a generator.
\begin{lem}\label{l-tau-isom-qhp1-tau-p1eq-2gen}
\index{transgression}
\index{tau@$\tau$}
\index{transgression!p12geq@$\frac{p_1}{2}(G(E_Q))$}
The transgression $\tau \colon \HH^4(\QH P^1; \ZZ) \rightarrow \HH^3(L\QH P^1; \ZZ)$ is an isomorphism.  Also, $\tau (p_1 (E_Q))$ is $2$ times a generator of $\HH^3(L\QH P^1)$.
\end{lem}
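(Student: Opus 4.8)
The plan is to assemble the statement from pieces that are, by now, essentially all in hand. For the first assertion, the transgression $\tau \colon \HH^4(\QH P^1;\ZZ) \rightarrow \HH^3(L\QH P^1;\ZZ)$ is an isomorphism: I would simply invoke the identification $\QH P^1 \cong S^4$ of lemma \ref{l-q-vb}, under which $L\QH P^1 \cong LS^4$, and then apply lemma \ref{l-tran-h4s4-to-h3ls}, which states precisely that $\tau \colon \HH^4(S^4;\ZZ) \rightarrow \HH^3(LS^4;\ZZ)$ is an isomorphism. One small point of care: the transgression is natural (proposition \ref{p-tran-hkp1-m-to-hk-lm}), so the identification $\QH P^1 \cong S^4$ being a diffeomorphism (hence inducing isomorphisms in cohomology compatible with $\tau$ on $M$ and on $LM$) is what legitimizes transporting the isomorphism statement across. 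So the first sentence of the lemma costs one line.

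For the second assertion, the key input is lemma \ref{l-q-vb}, which records $p_1(E_Q) = -2 u_4$ where $u_4$ is a generator of $\HH^4(\QH P^1;\ZZ) \cong \HH^4(S^4;\ZZ) \cong \ZZ$. Since $\tau$ is an isomorphism of groups, it carries a generator to a generator: $\tau(u_4)$ generates $\HH^3(L\QH P^1;\ZZ)$. Then $\tau(p_1(E_Q)) = \tau(-2 u_4) = -2\,\tau(u_4)$ by additivity (linearity over $\ZZ$) of the transgression, which is a group homomorphism. Hence $\tau(p_1(E_Q))$ is $-2$ times a generator, i.e. $2$ times a generator (taking $-\tau(u_4)$ as the generator if one wishes the sign positive; in any case it is $2$ times \emph{a} generator of $\HH^3(L\QH P^1;\ZZ)$). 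The only thing to verify is that $\HH^3(L\QH P^1;\ZZ) \cong \ZZ$, so that ``a generator'' is an unambiguous notion up to sign; this follows because $\HH^4(\QH P^1;\ZZ) \cong \ZZ$ and $\tau$ is an isomorphism, so the target is infinite cyclic as well. (Alternatively one reads $\HH^3(LS^4;\ZZ) \cong \ZZ$ off lemma \ref{l-s4-oc-grps} together with proposition \ref{p-lm-c-coho-isom}, which is consistent.)

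There is no real obstacle here — this lemma is a bookkeeping corollary of lemma \ref{l-tran-h4s4-to-h3ls} and lemma \ref{l-q-vb}. The only thing one must be slightly careful about is not to conflate this lemma with the conjecture it is meant to support: the lemma computes the \emph{left} side, $\tau(p_1(E_Q))$, and identifies it as $\pm 2$ times a generator; it says nothing about whether $DD(G(E_Q))$ is itself a generator of $\HH^3(L\QH P^1;\ZZ) \cong \CH^2(L\QH P^1;\underline{\UU(1)})$ (using corollary \ref{co-u1-z-sing-diag} to pass between these). That remaining comparison — showing the Dixmier-Douady class is a generator, via the suspension reduction of proposition \ref{p-dd-bg-susp-c1-pb} applied to the pullback of $G(E_Q)$ to $S^3$ and the resulting principal $\UU(1)$ bundle over $S^2$, and then computing its first Chern class — is the genuinely hard part of the conjecture, but it is explicitly outside the scope of the present lemma. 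So the proof I would write is short: identify $\QH P^1$ with $S^4$, quote lemma \ref{l-tran-h4s4-to-h3ls} for the isomorphism, quote $p_1(E_Q) = -2 u_4$ from lemma \ref{l-q-vb}, and apply $\tau$, a group isomorphism, to conclude.
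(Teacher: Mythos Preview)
Your proposal is correct and follows essentially the same approach as the paper: transport the isomorphism $\tau$ for $S^4$ (lemma \ref{l-tran-h4s4-to-h3ls}) to $\QH P^1$ via the diffeomorphism and naturality of transgression (proposition \ref{p-tran-hkp1-m-to-hk-lm}), then use $p_1(E_Q) = -2u_4$ from lemma \ref{l-q-vb} and the fact that $\tau$ is a group isomorphism. The paper cites lemma \ref{l-diff-s4-hp1} for the explicit diffeomorphism $\theta$ rather than the identification stated in lemma \ref{l-q-vb}, and phrases the second part as $\tau(\frac{p_1}{2}(E_Q))$ being a generator, but these are cosmetic differences.
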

\begin{proof}
For the first statement, $\tau' \colon \HH^4(S^4;\ZZ) \isomto \HH^3(LS^4;\ZZ)$ is an isomorphism by lemma \ref{l-tran-h4s4-to-h3ls}.  Lemma \ref{l-diff-s4-hp1} gives the diffeomorphism $\theta \colon S^4 \rightarrow \QH P^1$; $L\theta$ is also a diffeomorphism by lemma \ref{p-loop-smth-map-smth}.  By naturality of the transgression in proposition \ref{p-tran-hkp1-m-to-hk-lm}, $\tau \colon \HH^4(\QH P^1; \ZZ) \rightarrow \HH^3(L\QH P^1; \ZZ)$ equals $((L\theta)^{*})^{-1} \circ \tau' \circ \theta^{*}$; hence $\tau$ is an isomorphism.

For the second statement, from lemma \ref{l-q-vb} we know that $\frac{p_1}{2} (E_Q)$ is a generator of $\HH^4(\QH P^1;\ZZ) \cong \ZZ$.  Thus $\tau (\frac{p_1}{2} (E_Q))$ is a generator of $\HH^3(L\QH P^1)$.
\end{proof}

\begin{cor}\label{co-dd-geq-gen-imply-univ-case}
\index{EQ@$E_Q$!universal case}
\index{universal case}
$DD(G(E_Q))$ is a generator $\Rightarrow \tau(p_1(E_Q)) = \pm 2 DD(G(E_Q))$.
\end{cor}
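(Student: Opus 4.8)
The statement to prove is Corollary \ref{co-dd-geq-gen-imply-univ-case}: that if $DD(G(E_Q))$ is a generator of $\HH^3(L\QH P^1;\ZZ)$, then $\tau(p_1(E_Q)) = \pm 2\, DD(G(E_Q))$. Given the machinery already assembled, this is essentially immediate. The plan is to combine the second statement of Lemma \ref{l-tau-isom-qhp1-tau-p1eq-2gen} with the hypothesis and the fact that $\HH^3(L\QH P^1;\ZZ)$ is infinite cyclic, which follows from Lemma \ref{l-tau-isom-qhp1-tau-p1eq-2gen} (the transgression $\tau$ is an isomorphism from $\HH^4(\QH P^1;\ZZ)\cong\ZZ$) and from the identification of $\QH P^1$ with $S^4$ in Lemma \ref{l-q-vb}.

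First I would record that $\HH^3(L\QH P^1;\ZZ)\cong\ZZ$: indeed $\HH^4(\QH P^1;\ZZ)\cong\ZZ$ by Lemma \ref{l-q-vb}, and $\tau$ carries it isomorphically onto $\HH^3(L\QH P^1;\ZZ)$ by the first statement of Lemma \ref{l-tau-isom-qhp1-tau-p1eq-2gen}. Second I would invoke the second statement of that same lemma: $\tau(p_1(E_Q))$ equals $2$ times a generator of $\HH^3(L\QH P^1;\ZZ)$. Since $p_1(E_Q) = -2u_4$ with $u_4$ a generator of $\HH^4(\QH P^1;\ZZ)$ (Lemma \ref{l-q-vb}), and $\tau$ is an isomorphism, $\tau(p_1(E_Q)) = -2\,\tau(u_4)$ with $\tau(u_4)$ a generator. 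Third, by hypothesis $DD(G(E_Q))$ is itself a generator of the same infinite cyclic group, hence $DD(G(E_Q)) = \pm\,\tau(u_4)$. Combining, $\tau(p_1(E_Q)) = -2\,\tau(u_4) = \mp 2\, DD(G(E_Q)) = \pm 2\, DD(G(E_Q))$, absorbing the sign ambiguity into the ``$\pm$''.

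I do not anticipate any genuine obstacle here: the corollary is a bookkeeping consequence of Lemma \ref{l-tau-isom-qhp1-tau-p1eq-2gen} together with the elementary fact that any two generators of $\ZZ$ differ by a sign. The only point requiring a word of care is that the sign in ``$\pm 2 DD(G(E_Q))$'' is genuinely ambiguous and is meant to be, so one should phrase the conclusion so that the sign of $\tau(p_1(E_Q))$ relative to the chosen generator and the sign of $DD(G(E_Q))$ relative to it are allowed to be independently $\pm 1$; their ratio is then $\pm 1$, which is all the statement asserts. The substantive content, of course, lies not in this corollary but in verifying its hypothesis — that $DD(G(E_Q))$ is a generator — which is the subject of the subsequent sections (the pullback to $S^3$, the suspension argument of Proposition \ref{p-dd-bg-susp-c1-pb}, and the computation of $c_1(Q)$ for $Q\to S^2$); that verification is the actual hard part of the universal case, and it is not required for the present corollary.
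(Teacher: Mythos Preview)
Your proof is correct and is exactly the intended argument: the paper states this as an unproved corollary of Lemma~\ref{l-tau-isom-qhp1-tau-p1eq-2gen} precisely because it follows immediately from that lemma together with the trivial observation that any two generators of an infinite cyclic group differ by a sign. Your write-up spells out a bit more detail than necessary (tracking the specific sign through $p_1(E_Q) = -2u_4$), but the substance is the same.
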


Then, to simplify the calculation of $DD(G(E_Q))$, the bundle gerbe over the \Frechet manifold $L\QH P^1 \cong LS^4$ is pulled back to $S^3$.  To further simplify, lemma \ref{l-eq-bg-pb} uses proposition \ref{p-dd-bg-susp-c1-pb} to construct from the pullback of the bundle gerbe to $S^3$, the principal $\UU(1)$ bundle $Q \rightarrow S^2$, giving the suspension isomorphism of corollary \ref{co-susp-isom} relating the Dixmier-Douady class of the pullback bundle gerbe, in $\HH^3(S^3;\ZZ) \cong \CH^2(S^3;\underline{\UU(1)})$, to the first Chern class of the principal $\UU(1)$ bundle, $c_1 (Q) \in \HH^2(S^2;\ZZ) \cong \CH^1(S^2;\underline{\UU(1)})$.

The first Chern class $c_1 (Q)$ corresponds to the degree of the transition function of $Q$, a continuous map of $\UU(1)$ torsors.  In work not shown here, a fairly explicit form was found for this map.  It started with finite but unwieldy Fourier series, after which quadratic exponentials (see proposition \ref{p-quad-exp}) would yield intertwiners of Fock representations.  After a while, this approach was discontinued.

The approach using the Pfaffian line bundle to see that $c_1 (Q)$ is a generator was not pursued but might go like this.  Something reminiscent of that line bundle, in principal $\UU(1)$ bundle form, could be obtained by taking our $T$ bundle over $\Lagr_{res} \cross \Lagr_{res}$, choosing a fixed standard Lagrangian subspace $L$, and constructing the sub-bundle of $T$ over $\{ L \} \cross \Lagr_{res}$.  Also, the maps in following sections constructed to pull back the bundle gerbe and get sections of the pullback's $Y$ space, could perhaps be used to give an isomorphism between $Q$, and the pullback via
\[
S^2 \rightarrow LS^3 \rightarrow L\Symp(1) \rightarrow L\SO(4) \rightarrow \Orth_{res} \rightarrow \Lagr_{res} \rightarrow \{ L \} \cross \Lagr_{res} \notag
\]
of the sub-bundle of $T$.  This might require a spin structure on $E_Q$ as in note \ref{n-spin-stru}.  There are many items to be filled in, yet this approach could be workable.

However, proved by whatever means, if $c_1 (Q)$ is a generator, the following proposition shows that the conjecture for $E_Q$ is true.
\begin{prop}\label{p-tran-p12-eq-to-dd-1}
\index{transgression!H4(LHP1) to H3(LHP1)@$\HH^4(L\QH P^1;\ZZ) \rightarrow \HH^3(L\QH P^1;\ZZ)$}
If the first Chern class $c_1 (Q)$ of the principal $\UU(1)$ bundle $Q \rightarrow S^2$ constructed by lemma \ref{l-eq-bg-pb} from the bundle gerbe $G(E_Q)$ is a generator, then the transgression of $p_1 (E_Q)$ is $\pm 2 DD(G(E_Q))$.
\end{prop}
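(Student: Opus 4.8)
The plan is to reduce the statement to a single generator computation and then assemble ingredients that are already in place. By lemma \ref{l-tau-isom-qhp1-tau-p1eq-2gen} the transgression $\tau \colon \HH^4(\QH P^1;\ZZ) \to \HH^3(L\QH P^1;\ZZ)$ is an isomorphism and $\tau(p_1(E_Q))$ equals $2$ times a generator of $\HH^3(L\QH P^1;\ZZ)$ (using $p_1(E_Q) = -2u_4$ from lemma \ref{l-q-vb}). Hence, by corollary \ref{co-dd-geq-gen-imply-univ-case}, it is enough to show that $DD(G(E_Q))$ is itself a generator of $\HH^3(L\QH P^1;\ZZ)$. Note that $\HH^3(L\QH P^1;\ZZ) \cong \HH^3(LS^4;\ZZ) \cong \HH^3(C(S^1,S^4);\ZZ) \cong \ZZ$, via the diffeomorphism $\QH P^1 \cong S^4$, proposition \ref{p-lm-c-coho-isom}, and lemma \ref{l-s4-oc-grps}; so all the groups in sight are infinite cyclic.

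First I would pull the bundle gerbe $G(E_Q)$ back along the map $\phi \colon S^3 \to L\QH P^1 \cong LS^4$ of sections \ref{s-map-s3-ls4-ster-proj-homo-coor} and \ref{s-bg-cons-pb-loc-sect-pb}, obtaining the continuous bundle gerbe $\phi^* G(E_Q)$ over $S^3 \cong \Sigma S^2$. Since $S^3$ is a compact smooth manifold (hence paracompact, with every open cover refinable to a good cover), lemma \ref{l-bg-dd-prop} gives $DD(\phi^* G(E_Q)) = \phi^*\bigl(DD(G(E_Q))\bigr)$ in $\HH^3(S^3;\ZZ) \cong \ZZ$. Applying proposition \ref{p-dd-bg-susp-c1-pb} and corollary \ref{co-susp-isom} to $\phi^* G(E_Q)$ — which is precisely the input to lemma \ref{l-eq-bg-pb} — produces the principal $\UU(1)$ bundle $Q \to S^2$, with the suspension isomorphism $\HH^2(S^2;\ZZ) \isomto \HH^3(S^3;\ZZ)$ carrying $c_1(Q)$ to $DD(\phi^* G(E_Q))$ up to sign. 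Under the hypothesis that $c_1(Q)$ is a generator, $\phi^*\bigl(DD(G(E_Q))\bigr) = DD(\phi^* G(E_Q))$ is therefore a generator of $\HH^3(S^3;\ZZ) \cong \ZZ$. Since $\phi^*$ is a group homomorphism $\ZZ \to \ZZ$, and $\phi^*(n g)$ can be a generator only when $n = \pm 1$, it follows that $DD(G(E_Q))$ is a generator of $\HH^3(L\QH P^1;\ZZ)$. Then corollary \ref{co-dd-geq-gen-imply-univ-case} yields $\tau(p_1(E_Q)) = \pm 2\, DD(G(E_Q))$, which is the assertion.

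The step I expect to require the most care — though it is bookkeeping rather than new mathematics — is checking that all the constructions genuinely line up: that $\phi^* G(E_Q)$ really is the bundle gerbe over $\Sigma S^2 \cong S^3$ to which proposition \ref{p-dd-bg-susp-c1-pb} is applied inside lemma \ref{l-eq-bg-pb}, that the copy of $S^3$ used there matches the one arising as the suspension of $S^2$ with the orientation conventions fixed in section \ref{s-dd-clas-bg-susp}, and that the naturality in lemma \ref{l-bg-dd-prop} applies to the particular map $\phi$. Once these identifications are made, the sign-insensitive generator chase above closes; no quantitative computation of $c_1(Q)$ is needed for \emph{this} proposition, that being isolated as the hypothesis. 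The genuinely hard step of the overall program — actually proving $c_1(Q)$ is a generator — lies outside this statement and would be attacked either by computing the degree of the transition function of $Q$ as a map of $\UU(1)$ torsors, or by identifying $Q$ (or its associated complex line bundle) with a pullback of the Pfaffian line bundle over the restricted isotropic Grassmannian.
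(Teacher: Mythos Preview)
Your proposal is correct and follows essentially the same route as the paper: reduce via corollary \ref{co-dd-geq-gen-imply-univ-case} to showing $DD(G(E_Q))$ is a generator, pull back along the map $S^3 \to L\QH P^1$, use naturality of the Dixmier-Douady class (lemma \ref{l-bg-dd-prop}) and the suspension relation with $c_1(Q)$ (proposition \ref{p-dd-bg-susp-c1-pb}), and conclude. The only cosmetic difference is that the paper explicitly invokes lemma \ref{l-incl-s3-ls4-cont} to say the pullback map induces an \emph{isomorphism} on $\HH^3$, whereas you deduce that $DD(G(E_Q))$ is a generator from the softer observation that a homomorphism $\ZZ \to \ZZ$ hitting a generator forces its argument to be a generator; both arguments are valid and your bookkeeping caveats are exactly the identifications the paper records.
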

\begin{proof}
By corollary \ref{co-dd-geq-gen-imply-univ-case}, the statement about the trangression, which is equivalent to commutativity up to sign of square \textcircled{1} in the following diagram, is implied by $DD(G(E_Q))$ being a generator of $\HH^3(L\QH P^1;\ZZ) \cong \ZZ$.  If we can show commutativity of squares \textcircled{2} and \textcircled{3}, and that the map $i^*$ in cohomology is an isomorphism, $DD(G(E_Q))$ being a generator of $\HH^3(L\QH P^1;\ZZ) \cong \ZZ$ is a consequence of $c_1 (Q)$ being a generator of $\HH^2(S^2)$.
\[
\begindc{\commdiag}[5]

\obj(0,20)[objPBS2]{$Q$}
\obj(25,20)[objBGS3]{$i^* G(E_Q)$}
\obj(45,20)[objBGLHP1]{$G(E_Q)$}
\obj(65,20)[objVBHP1]{$E_Q$}

\obj(0,10)[objH2S2]{$\HH^2(S^2)$}
\obj(13,15)[obj5]{\textcircled{3}}
\obj(25,10)[objH3S3]{$\HH^3(S^3)$}
\obj(36,15)[obj6]{\textcircled{2}}
\obj(45,10)[objH3LHP1]{$\HH^3(L\QH P^1)$}
\obj(56,15)[obj5]{\textcircled{1}}
\obj(65,10)[objH4HP1]{$\HH^4(\QH P^1)$}

\mor{objBGS3}{objPBS2}{de-clutch}[\atright, \solidarrow]
\mor{objBGLHP1}{objBGS3}{$i^*$}[\atright, \solidarrow]
\mor{objVBHP1}{objBGLHP1}{$G$}[\atright, \solidarrow]

\mor{objH2S2}{objH3S3}{$\cong_{\Sigma}$}[\atright, \solidarrow]
\mor{objH3LHP1}{objH3S3}{$i^*$}
\mor{objH4HP1}{objH3LHP1}{$\tau$}

\mor{objPBS2}{objH2S2}{$-c_1$}[\atright, \solidarrow]
\mor{objBGS3}{objH3S3}{$DD$}
\mor{objBGLHP1}{objH3LHP1}{$DD$}
\mor{objVBHP1}{objH4HP1}{$\frac{p_1}{2}$}

\enddc
\]
In this diagram some of the arrows are maps of cohomology groups; others, labelled $G$, $c_1$, $DD$, and $\frac{p_1}{2}$, are applications of functors; and de-clutch is part of the construction that proves commutativity of its square.  ``$c_1$'' means first Chern class, ``$DD$'' Dixmier-Douady class, ``$p_1$'' first Pontryagin class, ``$\cong_{\Sigma}$'' (unreduced) suspension isomorphism, ``bg'' bundle gerbe, and ``$/$'' over.

Square \textcircled{2} commutes because the pullback of the Dixmier-Douady class of a bundle gerbe is naturally isomorphic to the Dixmier-Douady class of the pullback of the bundle gerbe, by lemma \ref{l-bg-dd-prop}.  The pullback is via the map here temporarily denoted $i = L\theta \circ LR_{S^4} \circ i_3 \colon S^3 \rightarrow L\QH P^1$, which by lemma \ref{l-incl-s3-ls4-cont} and the facts that $R_{S^4}$ (lemma \ref{l-loc-sect-lspe}) and $\theta$ (lemma \ref{l-diff-s4-hp1}) are diffeomorphisms, is continuous and induces an isomorphism in $\HH^3$.

The (unreduced) suspension isomorphism of \textcircled{3} is given by corollary \ref{co-susp-isom}, and commutativity of that square is shown in proposition \ref{p-dd-bg-susp-c1-pb}.
\end{proof}

The following section \ref{s-dd-clas-bg-susp} gives the lemma and proposition for square \textcircled{3}, then sections \ref{s-map-s3-ls4-ster-proj-homo-coor} and \ref{s-bg-cons-pb-loc-sect-pb} give the lemmas for square \textcircled{2}.

\section{Dixmier-Douady Class of Bundle Gerbe over Suspension}\label{s-dd-clas-bg-susp}

In the following definitions, lemma, and corollary, used in the subsequent proposition \ref{p-dd-bg-susp-c1-pb}, we develop from scratch an exact sequence, the connecting homomorphism of the long exact sequence of which is an isomorphism, following to an extent the Mayer-Vietoris sequence (26) and (unreduced) suspension isomorphism example in \citet[pages~94--95]{Bred97}, quoting results from section \ref{s-cech-coho} to relate the cohomology groups of good covers to the \v{C}ech cohomology of the spaces.  First some definitions to avoid a long lemma statement.

\begin{defn}\label{d-susp-isom-cov}
\index{suspension!unreduced!cover}
(The Cover for the Suspension Isomorphism).
Suppose $X$ is a topological space with a good cover $U$ indexed by a set $I$ disjoint from $\{ \pm 1 \}$.  Define the closed cones and (unreduced) suspension of $X$:
\begin{align}
CX^{-1} &= (X \cross [{-1}, 0]) / (X \cross \{ {-1} \}) \notag \\
CX^1 &= (X \cross [0, 1]) / (X \cross \{ 1 \}) \notag \\
X &= X \cross \{ 0 \} = CX^{-1} \cap CX^1 \notag \\
\Sigma X &= (X \cross [{-1}, 1]) / (X \cross \{ {-1} \} \cup X \cross \{ 1 \}) = CX^{-1} \cup CX^1. \notag
\end{align}
Use the following notation for the inclusion maps:
\begin{align}
&X \xrightarrow{s_k} CX^k \xrightarrow{r_k} \Sigma X, \text{ } k \in \{ \pm 1 \}. \notag
\end{align}
Define a good cover $U^{\Sigma}$ of the suspension by extruding the good cover of $U$ halfway to either end, and adding a cap to each end, overlapping with the extruded cover.  This cover is indexed by the set $J = \{ \pm 1 \} \cup I$:
\begin{align}
U^{\Sigma}_{-1} &= (X \cross [-1, -\frac{1}{4})) / (X \cross \{ {-1} \}) \notag \\
U^{\Sigma}_{1} &= (X \cross (\frac{1}{4}, 1]) / (X \cross \{ 1 \}) \notag \\
U^{\Sigma}_j &= U_j \cross (-\frac{1}{2}, \frac{1}{2}), \text{ } j \in I \notag
\end{align}
Pull back $U^{\Sigma}$ by the inclusion maps to get indexed good covers of the cones,
\[
r_k^* U^{\Sigma}, \text{ } k \in \{ \pm 1 \} \notag
\]
the cover for each cone including one empty set from the end cap for the other cone with index $+1$ or $-1$, and then pull back again to get an indexed good cover of $X$, which we will denote differently since we won't mention chain maps induced by $r_k \circ s_k$ (equal for either $k$).
\[
U^{\Sigma} \cap X = (r_k \circ s_k)^* U^{\Sigma}, \notag
\]
equal to $U$ plus two empty sets from the end caps with indices $\pm 1$.  The pullbacks of the covers are given by intersection of the elements of the covers with the sets the covers are being pulled back to.
\end{defn}

\begin{defn}\label{d-susp-isom-chai-comp}
\index{suspension!unreduced!chain complexes}
(Chain Complexes for the Suspension Isomorphism).
Continuing within the context of definition \ref{d-susp-isom-cov}, as a shorthand, define the following chain complexes of abelian groups as the alternating \v{C}ech cochain complexes of definition \ref{d-orde-cech-coch}:
\begin{align}
C^{*} (U^{\Sigma}) &= \check{C}^* (U^{\Sigma}, \underline{\UU(1)}) \notag \\
C^{*} (r_k^* U^{\Sigma}) &= \check{C}^* (r_k^* U^{\Sigma}, \underline{\UU(1)}) \text{ for } k \in \{ \pm 1 \} \notag \\
C^{*} (U^{\Sigma} \cap X ) &= \check{C}^* (U^{\Sigma} \cap X , \underline{\UU(1)}). \notag
\end{align}
\end{defn}
Elements of $C^p (U^{\Sigma})$ are alternating $p$-cochains $c$ defined on ordered $(p+1)$-tuples $\sigma \colon [0,p] \rightarrow J$ given by $\sigma = (i_0,\dots,i_p)$, with value $c(\sigma)$ a continuous $U(1)$-valued function on
\[
U^{\Sigma}_{\sigma} = U^{\Sigma}_{i_0,\dots,i_p} = \bigcap_{k \in [0,p]} U^{\Sigma}_{\sigma(k)} = U^{\Sigma}_{i_0} \cap \cdots \cap U^{\Sigma}_{i_p}. \notag
\]
Similarly for the other cochain complexes; their good covers share the same index set $J$ used for $U^{\Sigma}$.

\begin{lem}\label{l-susp-isom-exac-seq}
\index{suspension!unreduced!isomorphism}
\index{Mayer-Vietoris}
\index{good cover}
(The Exact Sequence for the Suspension Isomorphism).
Suppose $X$ is a topological space with a good cover $U$ indexed by a set $I$ disjoint from $\{ \pm 1 \}$.  Using the good cover $U^{\Sigma}$ and inclusion maps $r_k$, $s_k$ of \ref{d-susp-isom-cov}, and the cochain complexes of definition \ref{d-susp-isom-chai-comp}, the following sequence of chain complexes, similar to what is used for a Mayer-Vietoris sequence, is exact.
\[
0 \rightarrow C^{*} (U^{\Sigma}) \xrightarrow{(r_{-1}^{\sharp}, r_1^{\sharp})} C^{*} (r_{-1}^* U^{\Sigma}) \dirsum C^{*} (r_1^* U^{\Sigma}) \xrightarrow{s_{-1}^{\sharp} - s_1^{\sharp}} C^{*} (U^{\Sigma} \cap X ) \rightarrow 0.  \notag
\]
\end{lem}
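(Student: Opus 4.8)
The plan is to verify exactness at each of the three spots in the sequence directly from the definitions of the alternating \v{C}ech cochain complexes and the induced cochain maps, exploiting the very explicit combinatorial structure of the cover $U^{\Sigma}$. The key observation is that the chain maps $r_k^{\sharp}$ and $s_k^{\sharp}$ are, at the level of cochains, just precomposition with the inclusions of the pulled-back covers, and since all these covers carry the same index set $J = \{\pm 1\} \cup I$, each such map acts ``index-tuple by index-tuple'' via restriction of $\UU(1)$-valued functions to the appropriate intersection. So the whole statement reduces to a statement about, for each ordered $(p+1)$-tuple $\sigma$ of elements of $J$, the short sequence of abelian groups obtained by restricting continuous $\UU(1)$-valued functions from $U^{\Sigma}_{\sigma}$ to $(r_{-1}^* U^{\Sigma})_{\sigma}$, $(r_1^* U^{\Sigma})_{\sigma}$, and $(U^{\Sigma} \cap X)_{\sigma}$, i.e. to the sets $U^{\Sigma}_{\sigma} \cap CX^{-1}$, $U^{\Sigma}_{\sigma} \cap CX^1$, and $U^{\Sigma}_{\sigma} \cap X$.

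First I would record the geometry: each $U^{\Sigma}_{\sigma}$ is of the form (a nonempty intersection of elements of $U$) times an interval, or times the empty set; more precisely the interval is $[-1,-\frac14)$ if $-1$ appears in $\sigma$, $(\frac14,1]$ if $+1$ appears, the open interval $(-\frac12,\frac12)$ or its intersection with one of the end-cap intervals otherwise, and the set is empty if both $\pm 1$ appear in $\sigma$ (since $U^{\Sigma}_{-1} \cap U^{\Sigma}_1 = \emptyset$) or if the underlying intersection in $U$ is empty. In every case $U^{\Sigma}_{\sigma}$ is either empty or homeomorphic to a product of a contractible set with an interval, hence connected, so a continuous $\UU(1)$-valued function on it is determined by its value on any subset meeting it; similarly $U^{\Sigma}_{\sigma} \cap CX^{k}$ and $U^{\Sigma}_{\sigma} \cap X$ are each empty or connected. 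Then: injectivity of $(r_{-1}^{\sharp}, r_1^{\sharp})$ holds because $CX^{-1} \cup CX^1 = \Sigma X$, so a cochain vanishing on both cones vanishes everywhere. Surjectivity of $s_{-1}^{\sharp} - s_1^{\sharp}$ holds because any cochain on $U^{\Sigma} \cap X$ extends over the cone $CX^{-1}$ (the extrusion/cap structure of $U^{\Sigma}$ makes each $(r_{-1}^* U^{\Sigma})_{\sigma}$ deformation retract onto $(U^{\Sigma} \cap X)_{\sigma}$ when the latter is nonempty, and be empty or still-connected otherwise, so restriction of cochains $C^*(r_{-1}^*U^{\Sigma}) \to C^*(U^{\Sigma}\cap X)$ is onto), so $(c,0)$ maps to any prescribed $c$. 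Exactness in the middle: if $(a,b)$ satisfies $s_{-1}^{\sharp}a = s_1^{\sharp}b$, then $a$ and $b$ agree on $U^{\Sigma}\cap X$, and since $CX^{-1} \cap CX^1 = X$ and each $U^{\Sigma}_\sigma$ is connected (or empty), the common data glues to a single continuous cochain $c$ on $U^{\Sigma}$ with $r_k^{\sharp}c = $ the $k$th component; here one uses that restriction $C^*(U^{\Sigma}) \to C^*(r_k^*U^{\Sigma})$ is, spot by spot, an isomorphism whenever the relevant intersections are connected, because a continuous map into $\UU(1)$ on a connected set that agrees on a nonempty subset is determined, and conversely prescribing it on $CX^{-1}$ and $CX^1$ compatibly along $X$ determines it on the union. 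Finally I would check that the displayed maps are genuinely chain maps (they commute with the \v{C}ech coboundary $\delta$ of definition \ref{d-cech-coho} because pullback of covers commutes with $\delta$, as in the cochain-map discussion in the proof of lemma \ref{l-cech-coho-indu-map-morp-over}) and that they preserve the alternating and non-injective-vanishing conditions of definition \ref{d-orde-cech-coch}, which is immediate since precomposition with a permutation commutes with restriction.

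The main obstacle, and the only place needing real care rather than bookkeeping, is the middle-exactness gluing: one must be sure that a pair of cochains agreeing on the overlap cover $U^{\Sigma}\cap X$ really does come from a single cochain on $U^{\Sigma}$, which requires knowing that for every $(p+1)$-tuple $\sigma$ the set $U^{\Sigma}_\sigma$ is connected so that its $\UU(1)$-valued functions are rigidly determined by restriction to either cone — this is where the specific overlapping-extrusion-with-caps shape of $U^{\Sigma}$ in definition \ref{d-susp-isom-cov} is essential (a naive product cover would fail), and where the disjointness $U^{\Sigma}_{-1}\cap U^{\Sigma}_1=\emptyset$, which forces $U^{\Sigma}_\sigma=\emptyset$ whenever both polar indices occur, keeps the combinatorics clean. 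Once exactness of this short sequence of cochain complexes is in hand, lemma 2.12 of \citet[page~126]{Tenn75} yields the long exact cohomology sequence whose connecting homomorphism, combined with lemma \ref{l-cech-coho-cove} identifying the cohomology of these good covers with the \v{C}ech cohomology of the spaces, will give the suspension isomorphism of corollary \ref{co-susp-isom} and feed into proposition \ref{p-dd-bg-susp-c1-pb}.
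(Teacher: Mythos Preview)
Your overall strategy matches the paper's: reduce to a tuple-by-tuple check, then verify injectivity via $CX^{-1}\cup CX^1=\Sigma X$, surjectivity by extending a cochain on $X$ over one cone, and middle exactness by gluing. The paper does exactly this, with the surjectivity extension given explicitly by $w_{-1}([x,t])=z([x,0])$ rather than via a deformation-retract argument, but that difference is cosmetic.

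However, your justification for middle exactness contains a false claim that you lean on repeatedly. You write that ``a continuous map into $\UU(1)$ on a connected set that agrees on a nonempty subset is determined,'' and that restriction $C^{*}(U^{\Sigma})\to C^{*}(r_k^{*}U^{\Sigma})$ is therefore ``spot by spot, an isomorphism.'' Neither is true: $\UU(1)$ is not discrete, so connectedness of the domain gives no rigidity whatsoever for continuous $\UU(1)$-valued functions (think of the many continuous maps $[0,1]\to\UU(1)$ agreeing at $0$), and the restriction map is certainly not injective. Your ``main obstacle'' paragraph doubles down on this, saying one must know $U^{\Sigma}_{\sigma}$ is connected ``so that its $\UU(1)$-valued functions are rigidly determined by restriction to either cone.'' That is not the mechanism, and connectedness plays no role in the paper's proof. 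What actually makes the gluing work is the sheaf property of continuous functions together with the fact that $CX^{-1}$ and $CX^1$ are \emph{closed} in $\Sigma X$ with intersection $X$: given $a$ on $W\cap CX^{-1}$ and $b$ on $W\cap CX^1$ agreeing on $W\cap X$, the pasting lemma produces a continuous $c$ on $W$. You do mention this correct reason in passing (``prescribing it on $CX^{-1}$ and $CX^1$ compatibly along $X$ determines it on the union''), so the proof can be salvaged by deleting the connectedness discussion and keeping only that sentence.
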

\begin{proof}
Referring to definition \ref{d-orde-cech-coch}, for each degree $p$, the abelian groups in the cochain complexes are products, and the chain maps induced by the inclusions are the products of the maps induced by the inclusions, for the component abelian groups.  These products are over the same set of $p + 1$-tuples of elements of the same index set $J$ for each complex.  Thus, exactness of the short exact sequence of cochain complexes is equivalent to exactness for each degree $p \ge 0$, for each component of the product abelian groups; that is, for each fixed ordered index $p + 1$-tuple $\sigma = (i_0 \dots i_p)$.

Thus for the proof of exactness, fix $p$ and $\sigma$, and denote the intersection of elements of $U^{\Sigma}$ corresponding to $\sigma$ by $W = (\bigcap_{j = 0}^p U^{\Sigma}_j)$.  The cover pullback map $r_k^*$ maps $W$ to $W \cap CX^k$, $k \in \{ \pm 1 \}$, and $s_k^*$ maps $W \cap CX^k$ to $W \cap X$.  We will show exactness of
\begin{align}
0 \rightarrow \underline{\UU(1)}_{U^{\Sigma}} (W) &\xrightarrow{(r_{-1}^{\sharp}, r_1^{\sharp})} \underline{\UU(1)}_{r_{-1}^* U^{\Sigma}} (W \cap CX^{-1}) \dirsum \underline{\UU(1)}_{r_1^* U^{\Sigma}} (W \cap CX^1) \notag \\ &\xrightarrow{s_{-1}^{\sharp} - s_1^{\sharp}} \underline{\UU(1)}_{U^{\Sigma} \cap X } (W \cap X) \rightarrow 0 \text{, or in other words,} \notag \\
0 \rightarrow C(W, \UU(1)) &\xrightarrow{(r_{-1}^{\sharp}, r_1^{\sharp})} C (W \cap CX^{-1}, \UU(1)) \dirsum C (W \cap CX^1, \UU(1)) \notag \\
&\xrightarrow{s_{-1}^{\sharp} - s_1^{\sharp}} C (W \cap X, \UU(1)) \rightarrow 0, \notag
\end{align}
where for this purpose $C(Y, \UU(1))$ denotes the abelian group of continuous functions $Y \rightarrow \UU(1)$.

To see surjectivity of $s_{-1}^{\sharp} - s_1^{\sharp}$, consider $z \in C (W \cap X, \UU(1))$.  If $z = 0$ there is no problem.  Otherwise, $z \ne 0 \Rightarrow \{ \pm 1 \} \cap \im (\sigma) = \emptyset$, where $\sigma$ is thought of as a function $\{ 0 \dots p \} \rightarrow J$.  Thus we can set $w_{-1} \colon W \cap CX^{-1} \rightarrow \UU(1)$, $w_{-1} ([x, t]) = z ([x,0]) = z (x)$ according to our identification of $X \cross \{ 0 \}$ with $X$, $x \in W \cap X$, $t \in (-\frac{1}{2}, 0]$, and $w_1 = 0$, resulting in $s_{-1}^{\sharp} (w_{-1}) = z$ and $s_1^{\sharp} (w_1) = 0$.

To see that $(r_{-1}^{\sharp}, r_1^{\sharp})$ is injective, recall that $\Sigma X = CX^{-1} \cup CX^1$.  Suppose $y \in C (W, \UU(1))$.  That $r_{-1}^{\sharp} (y) = 0$ means that $y = 0$ on $W \cap CX^{-1}$ or $W \cap CX^{-1}$ is empty, and $r_1^{\sharp} (y) = 0$ means similarly that $y(\sigma) = 0$ or $W \cap CX^1$ is empty.  Thus $y = 0$ on all of $W$.

To see exactness in the middle, suppose that $(w_{-1}, w_1) \in C (W \cap CX^{-1}, \UU(1)) \dirsum C (W \cap CX^1, \UU(1))$ and $s_{-1}^{\sharp} (w_{-1}) - s_1^{\sharp} (w_1) = 0$; i.e. $w_{-1} = w_1$ on $W \cap X$.  Define $y \in C (W, \UU(1))$ by setting $y(\sigma)$ equal to $w_{-1}$ on $W \cap CX^{-1}$, and $y(\sigma) = w_1$ on $W \cap CX^1$; a continuous function because the intersection of those domains is $W \cap X$, the two functions agree there, and the union of the domains is $W$.
\end{proof}
\begin{note}\label{n-susp-isom}
\index{suspension!isomorphism!notes}
(Notes for the Use of the Exact Sequence for the Suspension Isomorphism).
For convenience when using the lemma, we record the following.  The connecting map $\partial \colon \CH^p (U^{\Sigma} \cap X ) \rightarrow \CH^{p+1} (U^{\Sigma})$ of the corresponding long exact sequence in cohomology is as follows, following the standard diagram chase but being specific about choices.  Differently from in the proof of the lemma, the symbols $z, w_{-1}, w_1, y$ here will indicate cochains rather than their components on particular $\sigma$.

Given a nonzero cocycle $z \in C^p (U^{\Sigma} \cap X )$, since $s_{-1}^{\sharp} - s_1^{\sharp}$ is surjective, there is a cochain $(w_{-1}, w_1) \in C^p (r_{-1}^* U^{\Sigma}) \dirsum C^p (r_1^* U^{\Sigma})$ that $z$ comes from; i.e. $s_{-1}^{\sharp} (w_{-1}) - s_1^{\sharp} (w_1) = z$.  The choice of $(w_{-1}, w_1)$ in the proof was made by letting $w_{-1}$ be an extension of $z$, and $w_1 = 0$.  In using this lemma, any convenient means may be used to obtain $(w_{-1}, w_1)$ such that $(s_{-1}^{\sharp}, s_1^{\sharp}) (w_{-1}, w_1) = z$.  We have that $z$ is a cocycle and the square below is commutative, so $(s_{-1}^{\sharp} - s_1^{\sharp}) (\delta (w_{-1}), \delta (w_1)) = 0$, whence $(\delta (w_{-1}), \delta (w_1)) = (r_{-1}^{\sharp} (y), r_1^{\sharp} (y))$ for what, given choices of $w_{-1}$ and $w_1$, is a unique $y \in C^{p+1} (U^{\Sigma})$, which as in the standard argument, is a cocycle.
\[
\begindc{\commdiag}[5]
\obj(70,20)[objCpUX]{$C^p (U^{\Sigma} \cap X )$}
\obj(35,20)[objCpU01]{$C^p (r_{-1}^* U^{\Sigma}) \dirsum C^p (r_1^* U^{\Sigma})$}
\obj(70,10)[objCpp1UX]{$C^{p+1} (U^{\Sigma} \cap X )$}
\obj(35,10)[objCpp1U01]{$C^{p+1} (r_{-1}^* U^{\Sigma}) \dirsum C^{p+1} (r_1^* U^{\Sigma})$}
\obj(0,10)[objCpp1US]{$C^{p+1} (U^{\Sigma})$}
\mor{objCpU01}{objCpUX}{$s_{-1}^{\sharp} - s_1^{\sharp}$}
\mor{objCpp1U01}{objCpp1UX}{$s_{-1}^{\sharp} - s_1^{\sharp}$}
\mor{objCpUX}{objCpp1UX}{$\delta$}
\mor{objCpU01}{objCpp1U01}{$\delta$}
\mor{objCpp1US}{objCpp1U01}{$(r_{-1}^{\sharp}, r_1^{\sharp})$}
\enddc
\]
\end{note}

\begin{cor}\label{co-susp-isom}
\index{suspension!unreduced!isomorphism}
\index{Mayer-Vietoris}
\index{good cover}
(The Unreduced Suspension Isomorphism).
Suppose $X$ is a locally connected metric space with a good cover $U$.  Given the short exact sequence of cochain complexes of lemma \ref{l-susp-isom-exac-seq}, the connecting homomorphism in cohomology of the covers induces the following homomorphism of cohomology of the spaces:
\[
\CH^p (X; \underline{\UU(1)}) \xrightarrow{\partial} \CH^{p+1} (\Sigma X; \underline{\UU(1)}), \notag
\]
an isomorphism for $p > 0$, called the (unreduced) suspension isomorphism.
\end{cor}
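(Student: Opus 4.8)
�The plan is to build the isomorphism $\partial \colon \CH^p(X;\underline{\UU(1)}) \to \CH^{p+1}(\Sigma X;\underline{\UU(1)})$ by first producing it at the level of a single good cover and then passing to the direct limit, using the results of Section~\ref{s-cech-coho} to identify the cohomology of the cover with the cohomology of the space. First I would invoke lemma~\ref{l-frec-spac}-style reasoning or the hypothesis directly: since $X$ is a locally connected metric space with a good cover $U$, and $\Sigma X$, the cones $CX^{\pm 1}$, and $X \times \{0\}$ inherit good covers $U^\Sigma$, $r_k^* U^\Sigma$, $U^\Sigma \cap X$ respectively from definition~\ref{d-susp-isom-cov} (one checks the end caps and extruded sets have contractible finite intersections, being products of contractible sets with intervals, or singletons, or empty). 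Then lemma~\ref{l-susp-isom-exac-seq} gives the short exact sequence of alternating \v{C}ech cochain complexes with $\underline{\UU(1)}$ coefficients, and lemma~2.12 of \citet[page~126]{Tenn75} yields the associated long exact sequence in cohomology of the covers, with connecting homomorphism $\partial$ as spelled out explicitly in note~\ref{n-susp-isom}.

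Next I would show that $\partial$ on cohomology of covers is an isomorphism for $p>0$. This follows from the long exact sequence together with the vanishing of the cohomology of the cones: each cone $CX^k$ is contractible, hence (since $CX^k$ is paracompact, being metrizable, and its good cover's nonempty finite intersections are contractible) by lemma~\ref{l-cech-sing-coho}, note~\ref{n-cech-sing-coho}, and corollary~\ref{co-u1-z-sing-diag} its \v{C}ech cohomology with $\underline{\UU(1)}$ coefficients vanishes in positive degrees, and in degree $0$ is just $\UU(1)$. So for $p>0$ the terms $\CH^p(r_{-1}^*U^\Sigma)\dirsum\CH^p(r_1^*U^\Sigma)$ and $\CH^{p+1}(r_{-1}^*U^\Sigma)\dirsum\CH^{p+1}(r_1^*U^\Sigma)$ flanking $\CH^p(U^\Sigma\cap X)\xrightarrow{\partial}\CH^{p+1}(U^\Sigma)$ in the long exact sequence vanish (for $p=1$ one must additionally check the map out of $\CH^0$ of the cones into $\CH^0$ of $X$ is surjective, which holds because a locally constant $\UU(1)$-valued function on $X$ extends over either cone), so $\partial$ is an isomorphism of the cohomology groups of the covers.

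Finally I would pass to the direct limit. By lemma~\ref{l-cech-coho-cove}, since $X$, $\Sigma X$, $CX^{\pm1}$ are hereditarily paracompact and every open cover refines to a good cover, the insertion homomorphisms $\CH^p(U;\underline{\UU(1)})\isomto\CH^p(X;\underline{\UU(1)})$, $\CH^{p+1}(U^\Sigma;\underline{\UU(1)})\isomto\CH^{p+1}(\Sigma X;\underline{\UU(1)})$ are isomorphisms. Composing, $\partial$ descends to the claimed isomorphism $\CH^p(X;\underline{\UU(1)})\xrightarrow{\partial}\CH^{p+1}(\Sigma X;\underline{\UU(1)})$ of cohomology of the spaces, independent of the choice of good cover by the naturality of the construction under refinement (one would check that a refinement of $U$ induces a refinement of $U^\Sigma$ compatible with the connecting maps). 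I expect the main obstacle to be the bookkeeping needed to verify that the end-cap and extruded covers of $\Sigma X$ and its cones are genuinely good covers and that their restrictions along $r_k$ and $s_k$ behave correctly — in particular handling the empty-intersection sets introduced by the caps (which is why definition~\ref{d-orde-cech-coch} allows $A(\emptyset)=\{0\}$) — and checking compatibility with refinement so the limit isomorphism is well defined; the exactness and the vanishing arguments themselves are routine once the covers are set up.
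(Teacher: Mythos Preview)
Your proposal is correct and follows essentially the same route as the paper: obtain the long exact sequence from lemma~\ref{l-susp-isom-exac-seq}, use contractibility of the cones together with corollary~\ref{co-u1-z-sing-diag} to kill the flanking terms, and invoke lemma~\ref{l-cech-coho-cove} to identify cohomology of the good covers with cohomology of the spaces. The paper differs only in that it passes to the direct limit first (obtaining the long exact sequence at the space level directly via exactness of direct limits) rather than proving the isomorphism at the cover level and then transporting it; your extra $p=1$ check on $\CH^0$ of the cones is unnecessary, since for $p>0$ both $\CH^p$ and $\CH^{p+1}$ of the cones already vanish and that alone forces $\partial$ to be an isomorphism.
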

\begin{proof}
Exactness of the short exact sequence of cochain complexes of abelian groups of lemma \ref{l-susp-isom-exac-seq} gives the long exact sequence \citep[page~333]{Rotm09} of cohomology of the covers, preserving the direct sum of the middle term of the short exact sequence \citep[page~339]{Rotm09}.  Then, exactness of direct limits \citep[page~247]{Rotm09} and the fact that they preserve direct sums, give the long exact sequence of cohomology groups of the spaces, using definition \ref{d-cech-coho}.  Using the universal property of direct limits, the direct limit insertion homomorphisms form a morphism of sequences between the long exact sequence of cohomology of the covers and the long exact sequence of cohomology of the spaces.

Since $X$ is locally connected, so are $\Sigma X$, $CX^{-1}$, and $CX^1$.  That $X$ is a metric space ensures that it, the suspension, and the cones are hereditarily paracompact \citep[page~979]{Ston48}.  By lemma \ref{l-cech-coho-cove}, the direct limit insertion homomorphisms are isomorphisms.

The contractible spaces $CX^{-1}$, $CX^1$ have zero singular cohomology for $p > 0$, and so by corollary \ref{co-u1-z-sing-diag} have zero \v{C}ech cohomology as well.  Thus the connecting homomorphisms for cohomology of the spaces and for the cohomology of the covers are isomorphisms for $p > 0$.
\end{proof}

\begin{prop}\label{p-dd-bg-susp-c1-pb}
\index{Chern class}
\index{Dixmier-Douady class}
\index{bundle gerbe!over unreduced suspension}
\index{suspension!unreduced!isomorphism}
(The Dixmier-Douady Class of a Bundle Gerbe Over an Unreduced Suspension is Isomorphic to the First Chern Class of a Constructed Principal Bundle).
A continuous bundle gerbe $(P, Y, \Sigma X)$ over the unreduced suspension of a locally connected metric space $X$ gives, by choice of sections of a naturally constructed two set open cover of $\Sigma X$, a principal $\UU(1)$ bundle $Q \rightarrow X$.  The inverse of the suspension isomorphism of corollary \ref{co-susp-isom} takes the Dixmier-Douady class of the bundle gerbe, $DD(P, Y, \Sigma X) \in \CH^2 (\Sigma X;
\underline{\UU(1)})$, to minus (or the inverse of, if $\UU(1)$ is written multiplicatively) the first Chern class of $Q$, $-c_1 (Q) \text{ or } (c_1 (Q))^{-1} \in \CH^1 (X; \underline{\UU(1)})$.
\end{prop}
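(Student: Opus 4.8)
The plan is to construct $Q$ explicitly from a bundle gerbe cocycle for a two-set cover of $\Sigma X$ and then identify that cocycle, via the connecting homomorphism of Lemma~\ref{l-susp-isom-exac-seq} and its corollary, with the transition cocycle of $Q$ up to a sign. First I would fix a good cover $U$ of $X$ and the associated good cover $U^{\Sigma}$ of $\Sigma X$ from Definition~\ref{d-susp-isom-cov}, and note the natural two-set open cover $\{ CX^{-1}_{\circ}, CX^1_{\circ} \}$ of $\Sigma X$ by slightly enlarged open cones, whose intersection deformation retracts onto $X$. Since the closed cones are contractible, by Lemma~\ref{l-bg-loc-triv} the bundle gerbe $(P, Y, \Sigma X)$ admits a set of local trivializations over this two-set cover: principal $\UU(1)$ bundles $R_{-1} \to Y_{|CX^{-1}_{\circ}}$ and $R_1 \to Y_{|CX^1_{\circ}}$ with bundle gerbe isomorphisms $\Phi_{\pm 1} \colon \delta(R_{\pm 1}) \to P_{|(\cdots)^{[2]}}$. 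I would then choose local sections $s_{\pm 1}$ of $Y$ over the two cones (possible since $Y \to \Sigma X$ has local sections and the cones are contractible, so actually over each whole cone), pull the $R_{\pm 1}$ back by $s_{\pm 1}$ to get principal $\UU(1)$ bundles over the cones, and define $Q$ over $X$ by gluing $s_{-1}^* R_{-1}$ and $s_1^* R_1$ over $X = CX^{-1}_{\circ} \cap CX^1_{\circ}$ using the transition datum coming from comparing $\Phi_{-1}$ and $\Phi_1$ over $Y^{[2]}_{|X}$ together with the section $(s_{-1}, s_1) \colon X \to Y^{[2]}$ and the bundle gerbe multiplication $m$; concretely, $m$ and $\Phi_{\pm 1}$ produce a $\UU(1)$-valued comparison function $\chi \colon X \to \UU(1)$ (after further trivializing over a good cover of $X$), and $Q$ is the principal $\UU(1)$ bundle with transition cocycle $[\chi]$. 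Its first Chern class $c_1(Q) \in \CH^1(X; \underline{\UU(1)})$ is by definition (compare Definition~\ref{d-bg-dd-clas} in degree one) the class of this cocycle.

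Next I would compute $DD(P, Y, \Sigma X)$ using a cover of $\Sigma X$ subordinate to $U^{\Sigma}$, choosing the local sections of $Y$ over $U^{\Sigma}_{\pm 1}$ to be restrictions of $s_{\pm 1}$ and over the $U^{\Sigma}_j$, $j \in I$, to be anything convenient (they exist by contractibility). The Dixmier--Douady cocycle $g_{abc}$ then has three flavors of index triples: those lying entirely in one cone, where $g_{abc}$ is a coboundary because $\delta(R_{\pm 1}) \cong P$ is canonically trivial there (Lemma~\ref{l-cano-triv-bg-dd-zero}); and triples straddling both cones, which meet $X$, and where $g_{abc}$ is forced by the gluing datum to be (a representative of) $\chi$. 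Comparing with Note~\ref{n-susp-isom}, the cochain $(w_{-1}, w_1) \in C^1(r_{-1}^* U^{\Sigma}) \oplus C^1(r_1^* U^{\Sigma})$ with $(s_{-1}^{\sharp} - s_1^{\sharp})(w_{-1}, w_1)$ equal to the degree-one cocycle $z$ representing $c_1(Q)$ on $U \cap X$ can be taken so that $\delta(w_{-1}, w_1) = (r_{-1}^{\sharp} y, r_1^{\sharp} y)$ for a degree-two cocycle $y$ on $U^{\Sigma}$, and the bundle gerbe multiplication associativity (the cocycle identity proved in Lemma~\ref{l-bg-dd-clas}) forces $y$ to represent $DD(P, Y, \Sigma X)$ up to sign. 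In other words, the Dixmier--Douady cocycle of the gerbe over $\Sigma X$ is, up to coboundary and sign, $\partial$ applied to the transition cocycle of $Q$, which is exactly the statement $\partial^{-1}(DD(P, Y, \Sigma X)) = -c_1(Q)$ once $\partial$ is known to be an isomorphism by Corollary~\ref{co-susp-isom} (valid for $p = 1 > 0$).

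The sign bookkeeping is where I expect the main obstacle. The connecting homomorphism $\partial$ of a Mayer--Vietoris-type sequence carries a sign that depends on the chosen ordering $(s_{-1}^{\sharp}, s_1^{\sharp})$ versus $(s_{-1}^{\sharp}, -s_1^{\sharp})$, and there is a second sign ambiguity from the direction of the gluing isomorphism used to define $Q$ (whether one glues $s_{-1}^* R_{-1}$ to $s_1^* R_1$ or the reverse), and a third from the convention that $P_{y_1,y_2} \cong P_{y_2,y_1}^*$ in Lemma~\ref{l-bg-mult-cano-isom} together with the reversed order of intertwiner composition versus bundle gerbe multiplication (Note~\ref{n-bg-triv-mult}, Proposition~\ref{p-bg-p}). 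Keeping these straight — so that the final answer is genuinely $-c_1(Q)$ and not $+c_1(Q)$ — requires carefully tracking which cone is ``$12$'' and which is ``$23$'' in every pullback of $m$ along $(s_i, s_j, s_k)$ with indices drawn from both cones, and using Corollary~\ref{co-bg-mult-cano-isom-elem} to evaluate the canonical isomorphisms on elements rather than reasoning only up to isomorphism. I would isolate this into a short computational lemma proved entirely on fibers (torsors), exploiting that $m(\iota \otimes \iota) = \iota$ and the uniqueness of multiplicative identities and inverses from Lemma~\ref{l-bg-mult-id-inv}, and only afterward promote it to bundles by continuity; the alternating-cochain convention (Note~\ref{n-bg-dd-clas}) is also needed here so that $g_{abc}$ genuinely defines a \v{C}ech cocycle representative compatible with the alternating cochains used in Lemma~\ref{l-susp-isom-exac-seq}. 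The remaining steps — continuity of $\chi$ and of the gluing map, well-definedness of $Q$ independent of the section choices (handled exactly as in Lemma~\ref{l-bg-dd-clas}), and naturality of $\partial$ — are routine given the machinery already assembled in the excerpt.
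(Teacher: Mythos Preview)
Your overall strategy is sound, but you have made the argument harder than necessary in two places, and the paper's approach is cleaner on both.

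First, the paper defines $Q$ far more directly: having chosen sections $t_{\pm 1} \colon V_{\pm 1} \to Y$ over the two open cones, one simply sets $Q = ((t_{-1}, t_1)^{*} P)_{|X}$, the restriction to $X$ of the pullback of $P$ along $(t_{-1}, t_1) \colon V_{-1} \cap V_1 \to Y^{[2]}$. No local trivializations $R_{\pm 1}$ of the bundle gerbe, no gluing, and no appeal to Lemma~\ref{l-bg-loc-triv} are needed. Your construction via $s_{\pm 1}^{*} R_{\pm 1}$ and a comparison datum would yield an isomorphic bundle, but establishing that isomorphism is extra work the direct definition avoids entirely.

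Second --- and this is the key trick you missed --- the paper does \emph{not} choose the sections $s_i$ over $U^{\Sigma}_i$ for $i \in I$ to be ``anything convenient''. It sets $s_i = t_1|_{U^{\Sigma}_i}$ for every $i \in I$: all the equatorial sections are restrictions of the \emph{same} cone section $t_1$. With this choice the $\sigma_{ij}$ for $i, j \in I$ and the $\sigma_{1i}$ are all identity sections (Lemma~\ref{l-bg-mult-id-inv}), so $g_{ijk} = \underline{1}$ and $g_{1ij} = \underline{1}$ automatically; the entire Dixmier--Douady cocycle is concentrated in $g_{-1ij}$, which equals the transition cocycle $h_{ij}$ of $Q$ on the nose. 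This makes the backward chase of $\partial^{-1}$ almost immediate: $r_1^{\sharp}(g) = 0$ for free (the only nonzero intersections involve $U^{\Sigma}_{-1}$, which misses $CX^1$), and one only needs to exhibit a single $w_{-1}$ with $\delta(w_{-1}) = r_{-1}^{\sharp}(g)$, done by another use of bundle gerbe multiplication. Your approach, with arbitrary $s_i$ and an appeal to Lemma~\ref{l-cano-triv-bg-dd-zero} to argue that the restricted cocycle is merely a coboundary, would oblige you to name that coboundary explicitly before you can compare with $\delta(w_{-1}, w_1)$ --- work you have not done and which the paper's choice eliminates. The sign then drops out of the explicit cochain-level chase of Note~\ref{n-susp-isom}, with none of the three ambiguities you anticipate needing resolution.
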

\begin{proof}
We use two related covers of $\Sigma X$, one used to obtain a cocycle for the first Chern class of $Q$, and one used to obtain a cocycle for the Dixmier-Douady class of the bundle gerbe.  These are defined so that as much as possible of the Dixmier-Douady cocycle definition is trivial.  The first cover is one of two open sets, used to define $Q$\:
\begin{align}
V_{-1} &= (X \cross [-1,\frac{1}{2})) / (X \cross \{ -1 \}) \notag \\
V_1 &= (X \cross (-\frac{1}{2},1]) / (X \cross \{ 1 \}), \text{ with intersection } \notag \\
V_{-1} \cap V_1 &= X \cross (-\frac{1}{2}. \frac{1}{2}), \notag
\end{align}
The second cover is made by first choosing a good cover $U$ of $X$, which is possible because the definition of a continuous bundle gerbe requires that each open cover of $X$ has a refinement that is a good cover.  Then the good cover $U^{\Sigma}$ of $\Sigma X$, used for the Dixmier-Douady class, is constructed as in definition \ref{d-susp-isom-cov}.  We will use the symbols $a$ for indices $\pm 1$, and $i$, $j$, $k$ for indices $\ne \pm 1$.

The $V_a$ are contractible sets over which, because $\Sigma X$ is hereditarily paracompact (see \citet[pages~66,~90,~96]{Span66} and use a homotopy between the identity and the map of a contractible space to one of its points), we can choose sections $t_a \colon V_a \rightarrow Y$, $a = \pm 1$.  Let $Q = ((t_{-1}, t_1)^{*} P)_{|X}$, a principal $\UU(1)$ bundle over $X$.

Now we will construct a \v{C}ech 1-cocycle representing the first Chern class of $Q$.  Since the $U^{\Sigma}_i$ are contractible, we can choose sections $\tau_{i} \colon  V_{-1} \cap V_1 \cap U^{\Sigma}_i = U^{\Sigma}_i \rightarrow (t_{-1}, t_1)^{*} P$.  These give local trivializations of $(t_{-1}, t_1)^{*} P$ defined using the continuous translation function of lemma \ref{l-pb-tran-func-tors}, with transition functions (see lemma \ref{l-tran-func-pb}) given by $\tau_{j} = h_{ji} \tau_{i}$ on $U^{\Sigma}_i \cap U^{\Sigma}_j$.  The $h_{ij}$ satisfy $h_{jk} h_{ik}^{-1} h_{ij} = 1$ because they are defined as transition functions of local trivializations of a principal $\UU(1)$ bundle.  The bundle $Q$ has the same transition functions $h_{ji}$ but restricted to $X$, and these define a \v{C}ech 1-cocycle $h$ since the $U^{\Sigma} \cap X _i = U_i$ cover $X$.  Theorem 2.1.3 of \citet[pages~62--68]{Bryl93}, adapted to argue for principal $\UU(1)$ bundles instead of complex line bundles, implies that $h^{-1}$ is a representative of the Chern class $c_1(Q)$.  For the following, denote by $\tau_{a} \colon V_a \rightarrow (t_a, t_a)^{*} P$, $a \in \{ \pm 1 \}$, the canonical identity sections of lemma \ref{l-bg-mult-id-inv}.

Since $U^{\Sigma}$ is a good cover of $\Sigma X$, its cohomology is isomorphic to that of $\Sigma X$, and by note \ref{n-bg-dd-clas}, the Dixmier-Douady class of the bundle gerbe is given directly by the class of the cocycle constructed from the good cover.  To use Dixmier-Douady class definition \ref{d-bg-dd-clas}, using lemma \ref{l-bg-mult-id-inv} define:
\begin{align}
s_a &= {t_{a}}_{|U^{\Sigma}_a} \colon U^{\Sigma}_a \rightarrow Y \notag \\
s_i &= {t_{1}}_{|U^{\Sigma}_i} \colon U^{\Sigma}_i \rightarrow Y \notag \\
\sigma_{-1 i} &= {\tau_{i}}_{| U^{\Sigma}_{-1}} \colon U^{\Sigma}_{-1} \cap U^{\Sigma}_i \rightarrow (s_{-1}, s_i)^{*} P \notag \\
\sigma_{i -1} &= \sigma_{-1 i}^{-1} \text{ (two subscripts, not subtraction)} \notag \\
\sigma_{1 i} &= {\tau_{1}}_{| U^{\Sigma}_1 \cap U^{\Sigma}_i} \colon U^{\Sigma}_1 \cap U^{\Sigma}_i \rightarrow (s_1, s_i)^{*} P \notag \\
\sigma_{i 1} &= (\sigma_{1 i})^{-1} = \sigma_{1 i} \notag \\
\sigma_{i j} &= {\tau_{1}}_{| U^{\Sigma}_i \cap U^{\Sigma}_j} \colon U^{\Sigma}_i \cap U^{\Sigma}_j \rightarrow (s_i, s_j)^{*} P. \notag
\end{align}
The $\sigma$ after the first two are identity sections as in lemma \ref{l-bg-mult-id-inv}.

With our notation, the possible injective (no repetition) $3$-tuples of indices for the Dixmier-Douady cocycle are $(-1,i,j)$, $(1,i,j)$, and $(i,j,k)$, plus permutations that can be calculated using the fact that the cocycle is an alternating function of its $3$-tuple of indices.  By definition \ref{d-orde-cech-coch}, the cocycle can be nonzero only for injective tuples.  (We use $0$ and $1$ somewhat interchangeably in this context, the former referring to a general abelian group written additively, and the latter to one incarnation of $\UU(1)$.) The cocycle is zero for the last two tuples, because all the $\sigma$'s involved are identity sections; e.g., $m(\sigma_{ij} \tensor \sigma_{jk}) = \sigma_{ik}$ suitably restricted.

Thus, the essence of the Dixmier-Douady class now lies in $g_{-1ij}$, defined by
\begin{align}
\sigma_{-1i} &= m(\sigma_{-1i} \tensor \sigma_{ij}) = g_{-1ij} \sigma_{-1j} \text{ on } U^{\Sigma}_{-1} \cap U^{\Sigma}_i \cap U^{\Sigma}_j, \text{ hence } \notag \\
g_{-1ij} &= h_{ij}, \text{ on that set.} \notag
\end{align}

Now we relate the Dixmier-Douady class of the bundle gerbe over $\Sigma X$ and the first Chern class of the principal bundle over $X$ by following backward at the co-chain level the isomorphism $\partial$ of corollary \ref{co-susp-isom} (refer to note \ref{n-susp-isom}): $\CH^2 (\Sigma X; \underline{\UU(1)}) \cong \CH^2 (U^{\Sigma}; \underline{\UU(1)}) \xrightarrow{\partial^{-1}} \CH^1 (U^{\Sigma} \cap X; \underline{\UU(1)}) \cong \CH^1 (X; \underline{\UU(1)})$.

Starting with the cocycle $g$, its image via $r_1^{\sharp}$ is zero, since the only intersections for which $g$ is nonzero are intersected with the set $U^{\Sigma}_{-1}$, and the intersection of that with $CX^1$, for $r_1^{\sharp}$, is empty.  On the other hand the domain of $g_{-1ij}$ is unaltered by intersection with $CX^{-1}$, for $r_{-1}^{\sharp}$.

To find $w_{-1} \in C^1 (r_{-1}^* U^{\Sigma})$ such that $\delta (w_{-1}) = r_{-1}^{\sharp} (g)$, use bundle gerbe multiplication to define $g^{\tau}$ similarly to the way $g_{-1ij}$ is defined:
\begin{align}
\tau_{ij} &= {\tau_{1}}_{|U^{\Sigma}_i \cap U^{\Sigma}_j} \notag \\
\tau_{i} &= m(\tau_{i} \tensor \tau_{ij}) = g^{\tau}_{ij} \tau_{j} \text{ on } U^{\Sigma}_i \cap U^{\Sigma}_j, \text{ hence } \notag \\
g^{\tau}_{ij} &= h_{ij}, \notag
\end{align}
defining $g^{\tau}$ as zero on all other $2$-tuples not permutations of $(i,j)$, and noting that its definition makes it an alternating function of $2$-tuples $(i, j)$, so that it is a $1$ co-chain.  Let $w_{-1} = r_{-1}^{\sharp} (g^{\tau})$; i.e. restrict $g^{\tau}$ to $CX^{-1}$ for the present purpose.  To evaluate $\delta (w_{-1}) \in C^2 (r_{-1}^* U^{\Sigma})$, note that since it's an alternating function of its $3$-tuple, $\delta (w_{-1})$ will be zero for all $3$-tuples up to permutation except $(-1,i,j)$, $(1,i,j)$, and $(i,j,k)$.  We have that $\delta (w_{-1})$ is zero on $(1,i,j)$ because it's a cochain in $C^2 (r_{-1}^* U^{\Sigma})$ and $U^{\Sigma}_1 \cap U^{\Sigma}_i \cap r_{-1}^* U^{\Sigma}_j = \emptyset$.  For the remaining two $3$-tuples,
\begin{align}
(\delta (w_{-1}))_{ijk} &= (w_{-1})_{jk} (w_{-1})_{ik}^{-1} (w_{-1})_{ij} \notag \\
 &= h_{jk} h_{ik}^{-1} h_{ij} = \underline{1} \text{ on } U^{\Sigma}_i \cap U^{\Sigma}_j \cap r_{-1}^* U^{\Sigma}_k \notag
\end{align}
and
\begin{align}
(\delta (w_{-1}))_{-1jk} &= (w_{-1})_{jk} (w_{-1})_{-1k}^{-1} (w_{-1})_{-1j} \notag \\
 &= h_{jk} = g_{-1jk} \text{ on } U^{\Sigma}_{-1} \cap U^{\Sigma}_j \cap r_{-1}^* U^{\Sigma}_k, \notag
\end{align}
because $(w_{-1})_{-1j} = 1 = (w_{-1})_{-1k}$ by definition of $w_{-1}$.  Thus $\delta (w_{-1}) = r_{-1}^{\sharp} (g)$.

$s_{-1}^{\sharp} (w_{-1}) = {g^{\tau}}_{|X} = h$.  The steps can be reversed to see that $\partial ([h]) = [g]$.
\end{proof}

\section{Map $S^3$ to $LS^4$; Stereoscopic Projection; Homogeneous Coordinates}\label{s-map-s3-ls4-ster-proj-homo-coor}

Now come a sequence of definitions and lemmas preparatory to pulling back the bundle gerbe over $L\QH P^1$ to $S^3$.  This is done using the adjoint to the diffeomorphism from the reduced suspension $S S^3 \rightarrow S^4$, the stereoscopic projection for $S^4$, the homogeneous coordinates for $\QH P^1$, and by constructing a diffeomorphism $S^4 \rightarrow \QH P^1$.  These lemmas include \ref{l-incl-s3-ls4-cont} and \ref{l-diff-s4-hp1}, which were referenced in the proof of proposition \ref{p-tran-p12-eq-to-dd-1}.

The following definition concerning unreduced suspensions is intended as motivation for definition \ref{d-redu-susp-sphe-home} concerning reduced suspensions, which also directly refers to the map $\alpha_x$ in its own motivation.
\begin{defn}\label{d-unre-susp-sphe-home}
\index{suspension!unreduced!sphere}
\index{loops!suspension!unreduced}
(A Homeomorphism $\Sigma S^n \rightarrow S^{n+1}$; Loops in $S^{n+1}$).
Define the map $\phi_{\Sigma}$ from the unreduced suspension of $S^n$ to $S^{n+1}$ as follows.

We define $S^n$ as the unit sphere in $\RR^{n+1}$, with basepoint $1 = (1, 0, \dots, 0)$.  We will sometimes use the same notation $x$ for a point in $S^n \subset \RR^{n+1}$ and what more properly might be called $(x, 0) \in S^{n+1} \subset \RR^{n+2}$.

Define the unreduced suspension as $(S^n \cross [-1,1]) / (S^n \cross \{-1\}) / (S^n \cross \{1\})$, denoting the points of the suspension by $[x, t]$, $x \in S^n$, $t \in [-1, 1]$, with basepoint $1 = [1, 0]$.  Then define
\begin{align}
\phi_{\Sigma} \colon \Sigma S^n &\rightarrow S^{n+1} \notag \\
[x, t] &\mapsto ((\sqrt{1 - t^2}) x, t), \text{ } t \in [-1, 1] \notag \\
[x, \cos(s)] &\mapsto (\sin(s) x, \cos(s)), \text{ } s \in [0, \pi]. \notag
\end{align}
Use $\phi_{\Sigma}$ to define the smooth structure on $\Sigma S^n$ making the map a diffeomorphism.

Define the paths
\begin{align}
\alpha_{x,\text{cyl}} \colon [0, \pi] &\rightarrow S^n \cross [-1, 1], \text{ } x \in S^n, \notag \\
s &\mapsto (x, \cos(s)) \notag \\
\alpha_{x,\Sigma} \colon [0, \pi] &\rightarrow \Sigma S^n, \text{ } x \in S^n, \notag \\
s &\mapsto [x, \cos(s)] \notag \\
\alpha_{x} = \phi_{\Sigma} \circ \alpha_x \colon [0, \pi] &\rightarrow S^{n+1}, \text{ } x \in S^n, \notag \\
s &\mapsto (\sin(s) x, \cos(s)). \notag
\end{align}
\end{defn}
\begin{note}
\index{suspension!unreduced!sphere}
\index{loops!suspension!unreduced}
(Properties of $\phi_{\Sigma}$).
The map $\phi_{\Sigma}$ is basepoint preserving; it is a homeomorphism as follows.  For $t \in (-1, 1)$, $\phi_{\Sigma}^{-1} (x, t) = [x / \sqrt{1 - t^2}, t]$, and for $t \in \{ -1, 1 \}$, $\phi_{\Sigma}^{-1} (0, t) = [1, t]$.  By looking at inverse images of open balls around points, $\phi_{\Sigma}$ is continuous, and since $\Sigma S^n$ is compact and $S^{n+1}$ Hausdorff, is a homeomorphism.  The paths are smooth.
\end{note}

\begin{defn}\label{d-redu-susp-sphe-home}
\index{suspension!reduced!sphere}
\index{loops!suspension!reduced}
(A Homeomorphism $S S^n \rightarrow S^{n+1}$; Loops in $S^{n+1}$). Define the reduced suspension $S S^n$ as $(S^n \cross [-1,1]) / ((S^n \cross \{ -1, 1 \}) \cup (\{1\} \cross [-1, 1])) \cong \Sigma S^n / (\{1\} \cross [-1, 1])$, denoting the points of the reduced suspension by $[x, t]$, $x \in S^n$, $t \in [-1, 1]$, with basepoint $1 = [1, 0]$.

Note that except for the two poles $N = (0,\dots,0,1)$, $S = (0,\dots,0,-1)$, $S^{n+1}$ as a set is the disjoint union of the images of the paths $\alpha_x$, $x \in S^n$; the two poles are in the image of every path (see definition \ref{d-unre-susp-sphe-home}).  Imagine a continuous collapsing map $S^{n+1} \rightarrow S^{n+1}$ corresponding via $\phi_{\Sigma}$ and the as yet undefined $\phi_S$ to the quotient map $\Sigma S^n \rightarrow S S^n$.  The paths collapse to loops.  We define $\phi_S$ so that $\alpha_x (s) \mapsto \beta_x (2 s)$ gives a bijection of paths $\alpha_x$ to loops $\beta_x$, the union of whose images is $S^{n+1}$, disjoint except at $1$.  These images are the circles defined by intersecting $S^{n+1}$ with the two-dimensional plane defined by the line through $1 = (1,0,\dots,0)$ and $(1,0,\dots,1)$, and the line through $1$ and $x$ interpreted as a point in $S^{n+1}$.  The point $1$ is in every such circle.  Thus define
\begin{align}
\beta_x \colon [0, 2 \pi] &\rightarrow S^{n+1}, \text{ } x \in S^n, \notag \\
s &\mapsto \frac{1 + x}{2} + \cos(s) \frac{1 - x}{2} + \sin(s) \norm{\frac{1 - x}{2}} N \notag \\
\phi_S \colon S S^n &\rightarrow S^{n+1} \notag \\
[x, \cos(s)] &\mapsto \frac{1 + x}{2} + \cos(2 s) \frac{1 - x}{2} + \sin(2 s) \norm{\frac{1 - x}{2}} N, \text{ } s \in [0, \pi], \notag \\
[x, t] &\mapsto \frac{1 + x}{2} + (2 t^2 - 1) \frac{1 - x}{2} + 2 t \sqrt{1 - t^2} \norm{\frac{1 - x}{2}} N, \text{ } t \in [-1, 1]. \notag
\end{align}
Use $\phi_S$ to define the smooth structure on $S S^n$ making the map a diffeomorphism.
\end{defn}
\begin{note}\label{n-redu-susp-sphe-home}
\index{loop!reparametrization}
(Properties of $\phi_S$).
The map $\phi_S$ is basepoint preserving and a homeomorphism.  The loops are smooth.  The three terms of the sum for $\phi_S$ are mutually orthogonal.  The map $x \mapsto \beta_x$ is the adjoint $\widehat{\phi_S}$ of $\phi_S$ \citep[pages~12--13]{Swit75}, except for transforming the loop parameter from $[-1, 1]$ to $[0, 2 \pi]$.  That is, $\widehat{\phi_S} = (x \mapsto \beta_x \circ (2 \arccos))$, or $\beta_x = \widehat{\phi_S}(x) \circ \cos(\frac{1}{2} \medspace \cdot)$.  The reparametrization of the adjoint loops is useful conceptually, to make them smooth, and to limit their Fourier series to frequencies $0, \pm 1$, which could help in calculations.
\end{note}

\begin{lem}\label{l-incl-s3-ls4-cont}
\index{inclusion!S3 LS4@$S^3 \rightarrow LS^4$!continuous}
\index{inclusion!S3 LS4@$S^3 \rightarrow LS^4$!isomorphism H3@isomorphism $\HH^3$}
(The Inclusion $S^3 \xrightarrow{i_3} LS^4$ is Continuous, and Induces an Isomorphism in $\HH^3$).
The inclusion $S^3 \xrightarrow{i_3} LS^4$ is defined as the adjoint $\widehat{\phi_S}$ of the homeomorphism $\phi_S$ of \ref{d-redu-susp-sphe-home} for $n = 3$, with the reparametrization of the loops such that for $x \in S^3$, $i_3 (x) = \beta_x$, followed by the inclusion into $LS^4$.  The map $i_3$ is continuous and induces an isomorphism in $\HH^3$ with $\ZZ$ coefficients.
\end{lem}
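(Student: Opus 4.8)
The plan is to prove the two assertions in turn: continuity of $i_3\colon S^3\to LS^4$ into the \Frechet manifold topology, and then that $i_3$ induces an isomorphism on $\HH^3(-;\ZZ)$.

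For continuity, the point to keep in mind is that $LS^4$ does not carry the compact‑open topology, so continuity into $C(S^1,S^4)$ does not suffice; one really has to see into the \Frechet topology. First I would record that, by definition \ref{d-redu-susp-sphe-home} and note \ref{n-redu-susp-sphe-home}, the map $F\colon S^3\cross S^1\to S^4$, $F(x,s)=\beta_x(s)$, is given by an explicit formula that is a fixed polynomial in $\cos s$ and $\sin s$ with coefficients depending continuously on $x$ (including at $x=1$, where all non‑constant terms vanish uniformly in $s$); hence for every $k$ the jet map $(x,s)\mapsto(\beta_x(s),\beta_x'(s),\dots,\beta_x^{(k)}(s))$ is jointly continuous $S^3\cross S^1\to C(S^1,T^{(k)}S^4)$-valued, i.e.\ continuous as a map $S^3\cross S^1\to T^{(k)}S^4$. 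By lemma \ref{l-co-top} its adjoint $S^3\to C(S^1,T^{(k)}S^4)$ is then continuous, and this adjoint is exactly $i_3$ followed by the $k$‑th jet prolongation. Since the \Frechet topology on $LS^4$ is the initial (projective) topology for these prolongation maps---the characterization of Stacey recalled in the proof of lemma \ref{l-lson-two-top}, cf.\ lemma \ref{d-alt-char-frec-top-lrn}---this gives continuity of $i_3$. I would also include the promised ``Another way'': unravel the chart $\Psi_\alpha$ of notation \ref{n-smth-free-loop-spac-frec-mfld} at $\alpha=\beta_{x_0}$. Joint continuity of the jet maps shows $\beta_x$ and all its $s$‑derivatives converge uniformly to those of $\beta_{x_0}$ as $x\to x_0$, so $\beta_x\in U_\alpha$ for $x$ near $x_0$, and $\Psi_\alpha(\beta_x)\in\Gamma(\alpha^{*}TS^4)$ depends continuously on $x$ in each \Frechet seminorm.

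For the cohomology statement, I would first observe from the formula that $\beta_x(0)=\tfrac{1+x}{2}+\tfrac{1-x}{2}=1$ and $\beta_1(s)\equiv 1$, so $i_3$ followed by the inclusion $LS^4\to C(S^1,S^4)$ factors as $S^3\xrightarrow{j_3}\Omega S^4\xrightarrow{\incl}C(S^1,S^4)$, where $\Omega S^4$ is the based (continuous) loop space and $j_3(x)=\beta_x$. By the proof of lemma \ref{l-s4-oc-grps}, the inclusion $\incl$ induces an isomorphism on $\Hot_k$ for $k\le 3$ between the simply connected spaces $\Omega S^4$ and $C(S^1,S^4)$. Next, $j_3$ is the adjoint, under the loop--suspension adjunction composed with the diffeomorphism $\phi_S\colon S S^3\to S^4$ of definition \ref{d-redu-susp-sphe-home}, of a map $S S^3\to S^4$ of degree $\pm1$: indeed $\beta_x(s)=\phi_S([x,\cos(s/2)])$ by note \ref{n-redu-susp-sphe-home}, i.e.\ $j_3$ is $\widehat{\phi_S}$ precomposed with the reparametrization $s\mapsto\cos(s/2)$, which on the parameter circle has degree $\pm1$. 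Hence $j_3$ represents $\pm$ a generator of $\Hot_3(\Omega S^4)\cong\Hot_4(S^4)\cong\ZZ$, so $(j_3)_{*}$ is an isomorphism on $\Hot_3$; composing with $\incl$, $(\incl\circ j_3)_{*}$ is an isomorphism on $\Hot_3$. Since $S^3$, $\Omega S^4$ and $C(S^1,S^4)$ are simply connected, the Hurewicz theorem and the universal coefficient theorem upgrade this to an isomorphism on $\HH^3(-;\ZZ)$. Finally, the inclusion $LS^4\to C(S^1,S^4)$ induces an isomorphism on cohomology by proposition \ref{p-lm-c-coho-isom}; cancelling it from the factorization shows $i_3^{*}$ is an isomorphism on $\HH^3$.

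The part I expect to be the main obstacle is the second one: the homotopy‑theoretic bookkeeping---tracking basepoints through the suspension--loop adjunction, verifying that the reparametrization of note \ref{n-redu-susp-sphe-home} contributes exactly a degree $\pm1$ (hence a self‑equivalence of $\Omega S^4$, harmless on $\HH^3$), and invoking the precise content of lemma \ref{l-s4-oc-grps} that $\incl$ is an isomorphism on $\Hot_{\le 3}$ rather than merely surjective. The continuity part is technically the more hands‑on, since it forces one to use the projective‑topology description of the \Frechet structure rather than the naive compact‑open one, but it is otherwise routine once the joint continuity of the jet maps is in place.
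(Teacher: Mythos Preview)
Your proof is correct and follows essentially the same route as the paper: factor through $\Omega S^4$, show the adjoint $\widehat{\phi_S}$ represents a generator of $\pi_3(\Omega S^4)\cong\pi_4(S^4)$, treat the reparametrization as a self-homeomorphism of the loop space, apply Hurewicz and UCT, then pass through the inclusion $\Omega S^4\hookrightarrow C(S^1,S^4)$ and proposition~\ref{p-lm-c-coho-isom}. The only real difference is in the continuity step: the paper dispatches it in one line by citing Stacey's theorem~3.28 (a smooth map $S^1\times S^3\to S^4$ has continuous adjoint into the \Frechet manifold $LS^4$), whereas you unpack this by hand via the projective/jet characterization of the \Frechet topology---the same mechanism made explicit, and indeed the ``Another way'' chart computation that notation~\ref{n-smth-free-loop-spac-frec-mfld} promises but the paper's proof does not actually include. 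One small wording point: the reparametrization $s\mapsto\cos(s/2)$ is more cleanly described, as the paper does, as a homeomorphism between the two parameter circles (hence inducing a self-homeomorphism of $\Omega S^4$) rather than as having ``degree $\pm1$''; but the conclusion is unaffected.
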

\begin{proof}
By lemma \ref{l-co-top}, since $\phi_S \colon S S^3 \rightarrow S^4$ is continuous, $\widehat{\phi_S} \colon S^3 \rightarrow C(S^1, S^4)$ is continuous.

$\phi_S$ is basepoint preserving and its image lies in $\Omega S^4$, the continuous loops in $S^4$ that begin and end at the basepoint $1$ of $S^4$.  The constant loop at $1$ is the basepoint of $\Omega S^4$. By \citet[page~20]{Swit75}, the adjoint correspondence of based homotopy classes of basepoint preserving maps
\[
\pi_{k+1} (S^4) = [S S^k, S^4]_{*} \rightarrow [S^k, \Omega S^4]_{*} = \pi_k (\Omega S^4) \notag
\]
is an isomorphism of groups for all $k >= 0$.  Thus $\pi_k (\Omega S^4) = 0 = \pi_k (S^3)$ for $k \in \{ 0, 1, 2 \}$, and $\pi_3 (\Omega S^4) = \ZZ = \pi_3 (S^3)$.

$\phi_S$ induces the map $\phi_S^{\sharp} \colon C(S^4, S^4) \rightarrow C(S S^3, S^4)$, a homeomorphism by lemma \ref{l-co-top}, in turn inducing a group isomorphism we will denote by $\phi_S^{*} \colon \pi_4 (S^4) = [S^4, S^4]_{*} \rightarrow [S S^3, S^4]_{*}$.  Since $[\ident]$ is a generator of $\pi_4 (S^4) \cong \ZZ$, $\phi_S^{*} ([\ident]) = [\phi_S]$ is a generator of $C(S S^3, S^4)$.  Therefore, using the isomorphism for the adjoint correspondence, $[\widehat{\phi_S}]$ is a generator of $\pi_3 (\Omega S^4) \cong \ZZ$.

$\widehat{\phi_S}_{*} \colon \pi_k (S^3) \rightarrow \pi_k (\Omega S^4)$ is an isomorphism for $k \in \{ 0, 1, 2 \}$ because its domain and codomain are both zero.  For $k = 3$, since $[\ident] \in \pi_3 (S^3) \cong \ZZ$ is a generator and $\widehat{\phi_S}_{*} ([\ident]) = [\widehat{\phi_S} \circ \ident] = [\widehat{\phi_S}] \in \pi_3 (\Omega S^4)$ is a generator, $\widehat{\phi_S}_{*}$ is once again an isomorphism.

Thus using the Hurewicz theorem and naturality of the Hurewicz maps \citep[pages~114,~185]{Swit75} $\widehat{\phi_S}$ induces an isomorphism in $\HH_k$, $k \in \{ 0, 1, 2, 3 \}$, and thus using naturality of the Universal Coefficient Theorem exact sequence \citep[page~241]{Swit75} in $\HH^k$ also.  That is, $\HH^3 (\Omega S^4) \xrightarrow{\widehat{\phi_S}^{*}} \HH^3 (S^3)$ is an isomorphism.

Now we need to connect the result for $\widehat{\phi_S}$ and $\Omega S^4$ with the desired result for $i_3$ and $LS^4$. First, note that the reparametrization to get from the loops of $\widehat{\phi_S}$ to those of $i_3$ is, by note \ref{n-redu-susp-sphe-home}, composition with $\cos(\frac{1}{2} \medspace \cdot) \colon [0, 2 \pi] \rightarrow [-1, 1]$, a homeomorphism.  By lemma \ref{l-co-top}, the map of $C(S^1, S^4)$ to itself induced by reparametrizing loops in this way, is a homeomorphism, which induces a homeomorphism of its subspace $\Omega S^4$ onto itself, and thus induces isomorphisms in cohomology.  Thus the map we denote here $\widetilde{i_3} = \cos(\frac{1}{2} \medspace \cdot)^{\sharp} (\widehat{\phi_S})$ induces an isomorphism $\HH^3 (\Omega S^4) \xrightarrow{\widetilde{i_3}^{*}} \HH^3 (S^3)$.  The loops from $\widetilde{i_3}$ are smooth.

If we compose the inclusion $\Omega S^4 \xrightarrow{\incl_{\Omega S^4}} C(S^1, S^4)$ with $\widetilde{i_3}$ to get $\widetilde{\widetilde{i_3}} = \incl_{\Omega S^4} \circ \widetilde{i_3}$, we still get an isomorphism in $\HH^3$, because that inclusion induces an isomorphism in $\HH^3$, as follows.  From items 4 and 7 of lemma \ref{l-s4-oc-grps}, $\pi_3 (C(S^1, S^4)) \cong \ZZ$.  As in the proof of that lemma, $\Omega S^4 \xrightarrow{\incl_{\Omega S^4}} C(S^1, S^4) \xrightarrow{\ev_1} S^4$ (the other lemma's basepoint name $0$ is this one's $1$) is a fibration and we have its homotopy exact sequence \citep[pages~435,~453,~455]{Bred93} including the surjection $\ZZ \rightarrow \ZZ$, hence isomorphism $\pi_3 (\Omega S^4) \xrightarrow{{\incl_{\Omega S^4}}_{*}} \pi_3 (C(S^1, S^4))$.  Since the lower homotopy groups of both spaces are zero, we can reason as above to conclude that $\HH^3 (C(S^1, S^4)) \xrightarrow{\incl_{\Omega S^4}^{*}} \HH^3 (\Omega S^4)$ is an isomorphism, and thus so is $\HH^3 (C(S^1, S^4)) \xrightarrow{\widetilde{\widetilde{i_3}}^{*}} \HH^3 (S^3)$.

Continuity of $S^3 \xrightarrow{i_3} LS^4$ is a special case of \citet[page~22]{Stac05} theorem 3.28, since the map $S^1 \cross S^3 \rightarrow S^4$, $(x, s) \mapsto i_3 (x) (s)$ is smooth.  It follows that $i_3$ induces a map $\HH^3 (LS^4) \xrightarrow{i_3^{*}} \HH^3 (S^3)$.  Proposition \ref{p-lm-c-coho-isom} gives an isomorphism $\HH^3 (C(S^1, S^4)) \xrightarrow{i_{LS^4}^{*}} \HH^3 (LS^4)$.  Since $\widetilde{\widetilde{i_3}} = i_{LS^4} \circ i_3$, $\widetilde{\widetilde{i_3}}^{*} = i_3^{*} \circ i_{LS^4}^{*}$, and since the left hand side of the equation and the right factor of the right hand side are isomorphisms, so is $i_3^{*}$.
\end{proof}

\begin{lem}\label{l-ster-s4}
\index{stereographic projection!S4@$S^4$}
\index{S4@$S^4$!stereographic projection}
\index{manifold!charts!S4@$S^4$}
\index{S4@$S^4$!manifold charts}
(Stereographic Projection for $S^4$).
Define
\begin{align}
N &= (0, 1) \in \QH \dirsum \RR \cong \RR^5 \notag \\
S &= (0, -1) \notag \\
\phi_N \colon S^4 \setminus \{ N \} &\rightarrow \QH \notag \\
 (z, y) &\mapsto \frac{z}{1 - y} \notag \\
\phi_S \colon S^4 \setminus \{ S \} &\rightarrow \QH \notag \\
 (z, y) &\mapsto \frac{\overline{z}}{1 + y}. \notag
\end{align}
$\phi_N$, $\phi_S$ are diffeomorphisms, and thought of as smooth manifold charts for $S^4$, have transition function
\begin{align}
\phi_N \circ \phi_S^{-1} \colon \QH \setminus \{ 0 \} &\rightarrow \QH \setminus \{ 0 \} \notag \\
 w &\mapsto w^{-1}. \notag
\end{align}
\end{lem}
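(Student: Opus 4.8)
The plan is to produce explicit inverses for $\phi_N$ and $\phi_S$, which at once shows both are diffeomorphisms onto $\QH$, and then to compose $\phi_N$ with $\phi_S^{-1}$ and simplify; the only quaternionic point of substance is that the conjugation built into the definition of $\phi_S$ is precisely what makes the transition map come out to be the inversion $w \mapsto w^{-1}$.

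First I would note that $\phi_N$ and $\phi_S$ are smooth: on $S^4$ the functions $1-y$ and $1+y$ vanish only at $N$ and $S$ respectively, so away from the excluded pole each map is the restriction to the embedded submanifold $S^4 \subset \QH \dirsum \RR$ of a smooth $\QH$-valued map. Next I would solve for the inverses. Imposing $z/(1-y) = w$ together with the sphere equation $\norm{z}^2 + y^2 = 1$ and eliminating $z = w(1-y)$ gives $\norm{w}^2(1-y) = 1+y$, hence $y = (\norm{w}^2 - 1)/(\norm{w}^2 + 1)$ and
\[
\phi_N^{-1}(w) = \left( \frac{2w}{\norm{w}^2 + 1},\; \frac{\norm{w}^2 - 1}{\norm{w}^2 + 1} \right);
\]
one checks directly that this point lies on $S^4$, that the formula is smooth in $w$, and that it is a two-sided inverse of $\phi_N$. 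The same computation starting from $\overline{z}/(1+y) = w$, i.e. $z = \overline{w}(1+y)$, yields
\[
\phi_S^{-1}(w) = \left( \frac{2\overline{w}}{\norm{w}^2 + 1},\; \frac{1 - \norm{w}^2}{\norm{w}^2 + 1} \right),
\]
again a smooth two-sided inverse, using $\norm{\overline{w}} = \norm{w}$. Hence both $\phi_N$ and $\phi_S$ are diffeomorphisms onto $\QH$.

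Finally I would compute the transition function. Writing $(z,y) = \phi_S^{-1}(w)$, we get $1 - y = 2\norm{w}^2/(\norm{w}^2 + 1)$, so
\[
(\phi_N \circ \phi_S^{-1})(w) = \frac{z}{1 - y} = \frac{2\overline{w}/(\norm{w}^2 + 1)}{2\norm{w}^2/(\norm{w}^2 + 1)} = \frac{\overline{w}}{\norm{w}^2}.
\]
By definition \ref{d-q-qu-su2}, $w\overline{w} = \overline{w}w = \norm{w}^2 \in \RR$ for every $w \in \QH$, so $\overline{w}/\norm{w}^2$ is exactly $w^{-1}$; this is defined precisely when $w \ne 0$, which is also the condition that $\phi_S^{-1}(w) \ne N$ (equivalently $\phi_N^{-1}(w) \ne S$), so the transition map is the self-homeomorphism $w \mapsto w^{-1}$ of $\QH \setminus \{0\}$, as claimed.

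I do not expect a genuine obstacle: the statement is a finite and standard computation. The only place that demands care is the quaternionic bookkeeping — keeping the conjugation in $\phi_S$ straight, remembering that scalars such as $1-y$ are real so their left/right placement is immaterial, and invoking the inverse formula $w^{-1} = \overline{w}/\norm{w}^2$ in place of commutativity — together with correctly tracking that both the domain and the codomain of the transition map are $\QH \setminus \{0\}$, a restriction forced by the two excluded poles.
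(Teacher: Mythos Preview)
Your proof is correct and complete. The paper states this lemma without proof, treating the computation as standard; your explicit construction of the inverses and the direct composition to obtain $\overline{w}/\norm{w}^2 = w^{-1}$ is exactly the verification one would supply, and your attention to the quaternionic bookkeeping (real scalars commuting, the role of the conjugation in $\phi_S$) is appropriate.
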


\begin{lem}\label{l-homo-coor-hp1}
\index{homogeneous coordinates!HP1@$\QH P^1$}
\index{HP1@$\QH P^1$!homogeneous coordinates}
\index{manifold!charts!HP1@$\QH P^1$}
\index{HP1@$\QH P^1$!manifold charts}
(Homogeneous Coordinates for $\QH P^1$).
Define
\begin{align}
U_0 &= \QH P^1 \setminus \{ [0, 1] \} = \{ [1, w] \st w \in \QH \} \notag \\
U_1 &= \QH P^1 \setminus \{ [1, 0] \} = \{ [w, 1] \st w \in \QH \} \notag \\
\psi_0 \colon U_0 &\rightarrow \QH \notag \\
 [1, w] &\mapsto w \notag \\
\psi_1 \colon U_1 &\rightarrow \QH \notag \\
 [w, 1] &\mapsto w. \notag
\end{align}
 $\psi_0$, $\psi_1$ are diffeomorphisms, and thought of as smooth manifold charts for $\QH P^1$, have transition function
\begin{align}
\psi_0 \circ \psi_1^{-1} \colon \QH \setminus \{ 0 \} &\rightarrow \QH \setminus \{ 0 \} \notag \\
 w &\mapsto w^{-1}. \notag
\end{align}
\end{lem}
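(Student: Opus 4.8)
The plan is to verify directly that $\psi_0$ and $\psi_1$ are bijections onto $\QH$, compute their composite on the overlap, and then invoke smoothness of the algebraic operations involved. First I would check that $\psi_0$ is a well-defined bijection: every point of $U_0$ has a unique homogeneous representative of the form $(1, w)$ (since the first coordinate is nonzero by definition of $U_0$, scale it to $1$ on the right by its inverse in $\QH$), so $[1,w] \mapsto w$ is a well-defined bijection $U_0 \to \QH$ with inverse $w \mapsto [1, w]$; symmetrically for $\psi_1$ using representatives $(w, 1)$. Then I would observe that the smooth structure on $\QH P^1$ (the smooth manifold underlying the quaternionic projective line) is precisely the one for which these two charts are coordinate charts, so smoothness of $\psi_0$, $\psi_1$ and of their inverses is built into the definition once we know the two charts are compatible; equivalently, one may take this lemma as recording the standard atlas.

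Next I would identify the overlap and compute the transition map. We have $U_0 \cap U_1 = \QH P^1 \setminus \{[0,1],[1,0]\} = \{[1,w] \st w \in \QH \setminus \{0\}\}$, so $\psi_1 \circ \psi_0^{-1}$ (and its inverse $\psi_0 \circ \psi_1^{-1}$) is defined on $\QH \setminus \{0\}$. For $w \neq 0$, starting from $[1,w]$ and rescaling the representative on the right by $w^{-1}$ gives $[w^{-1}, 1]$, since $(1,w)w^{-1} = (w^{-1}, 1)$ in the right $\QH$-module $\QH^2$; hence $\psi_1([1,w]) = w^{-1}$, i.e. $\psi_1 \circ \psi_0^{-1}(w) = w^{-1}$, and by symmetry $\psi_0 \circ \psi_1^{-1}(w) = w^{-1}$ as claimed. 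Finally I would note that $w \mapsto w^{-1}$ on $\QH \setminus \{0\}$ is smooth: quaternionic inversion is $w \mapsto \overline{w}/\norm{w}^2$, a rational function in the real coordinates of $\QH \cong \RR^4$ with nonvanishing denominator on $\QH \setminus \{0\}$, hence smooth with smooth inverse (itself). This establishes that $\psi_0, \psi_1$ form a smooth atlas, so they are diffeomorphisms onto $\QH$ with the stated transition function.

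The only genuinely substantive point — and the place where a careless argument would slip — is the bookkeeping of whether one rescales homogeneous coordinates on the left or on the right, since $\QH$ is noncommutative and $\QH^2$ is being treated as a right $\QH$-module (as in definition \ref{d-q-vb}). I would be explicit that the equivalence relation defining $\QH P^1$ is $v \sim v\lambda$ for $\lambda \in \QH^{\cross}$, so that scaling is always on the right, which is exactly what makes $(1,w)w^{-1} = (w^{-1}, 1)$ the correct manipulation and gives $w \mapsto w^{-1}$ rather than some twisted conjugate. I expect this to be essentially the whole difficulty; everything else is the standard verification that projective space is a manifold, parallel word-for-word to the transition function computation for $S^4$ in lemma \ref{l-ster-s4} (indeed the two transition functions agree, which is the point — it is what makes the diffeomorphism $S^4 \cong \QH P^1$ of lemma \ref{l-diff-s4-hp1} go through by gluing the charts $\phi_N \leftrightarrow \psi_0$, $\phi_S \leftrightarrow \psi_1$).
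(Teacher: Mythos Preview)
Your proposal is correct and carefully handles the one genuine subtlety (right scaling in the right $\QH$-module $\QH^2$). The paper itself gives no proof for this lemma at all: it is stated as a standard fact and immediately followed by the next lemma, so your verification is strictly more than what the paper provides.
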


\begin{lem}\label{l-diff-s4-hp1}
\index{S4@$S^4$!diffeomorphism to HP1@diffeomorphism to $\QH P^1$}
\index{HP1@$\QH P^1$!diffeomorphism to S4@diffeomorphism to $S^4$}
(A Diffeomorphism $S^4 \rightarrow \QH P^1$.)
Define via lemmas \ref{l-ster-s4} and \ref{l-homo-coor-hp1}
\begin{align}
\theta_N &= \psi_0^{-1} \circ \phi_N \colon S^4 \setminus \{ N \} = S^4 \setminus \{ (0, 1) \} \rightarrow U_0 = \QH P^1 \setminus \{ [0, 1] \} \notag \\
(z, y) &\mapsto [1, \frac{z}{1 - y}] = [1 - y, z] \notag \\
\theta_S &= \psi_s^{-1} \circ \phi_S \colon S^4 \setminus \{ S \} = S^4 \setminus \{ (0, -1) \} \rightarrow U_1 = \QH P^1 \setminus \{ [1, 0] \} \notag \\
(z, y) &\mapsto [\frac{\overline{z}}{1 + y}, 1] = [\overline{z}, 1 + y] \notag \\
\theta_N &= \theta_S \text{ on } S^4 \setminus \{ N, S \} \notag \\
\theta &\colon S^4 \rightarrow \QH P^1 \notag \\
\theta_{|S^4 \setminus \{ N \}} &= \theta_N \notag \\
\theta_{|S^4 \setminus \{ S \}} &= \theta_S. \notag
\end{align}
$\theta_N$, $\theta_S$, and $\theta$ are diffeomorphisms.
\end{lem}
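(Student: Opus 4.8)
The plan is to observe that $\theta_N$ and $\theta_S$ are each manifestly compositions of diffeomorphisms, and then to check that they agree on the overlap of their domains, so that they glue to a single diffeomorphism $\theta$ of the whole sphere onto $\QH P^1$.

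First I would record that by Lemma \ref{l-ster-s4} the stereographic projections $\phi_N \colon S^4 \setminus \{N\} \to \QH$ and $\phi_S \colon S^4 \setminus \{S\} \to \QH$ are diffeomorphisms, and by Lemma \ref{l-homo-coor-hp1} the homogeneous-coordinate charts $\psi_0 \colon U_0 \to \QH$ and $\psi_1 \colon U_1 \to \QH$ are diffeomorphisms, where $U_0 = \QH P^1 \setminus \{[0,1]\}$ and $U_1 = \QH P^1 \setminus \{[1,0]\}$. Hence $\theta_N = \psi_0^{-1} \circ \phi_N \colon S^4 \setminus \{N\} \to U_0$ and $\theta_S = \psi_1^{-1} \circ \phi_S \colon S^4 \setminus \{S\} \to U_1$ are diffeomorphisms onto open subsets of $\QH P^1$, being compositions of diffeomorphisms.

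The key step is to verify that $\theta_N = \theta_S$ on $S^4 \setminus \{N, S\}$. Here one uses that a point $(z,y) \in S^4 \subset \QH \oplus \RR$ satisfies $\norm{z}^2 + y^2 = 1$, hence $\norm{z}^2 = (1-y)(1+y)$, together with $z\overline{z} = \norm{z}^2 \in \RR$ and associativity of $\QH$ (and that $\QH^2$ is a right $\QH$-module). Writing the two values as $\theta_N(z,y) = [1-y,\, z]$ and $\theta_S(z,y) = [\overline{z},\, 1+y]$, right-multiplication of $(1-y,\, z)$ by the nonzero quaternion $(1-y)^{-1}\overline{z}$ yields $(\overline{z},\, z\overline{z}(1-y)^{-1}) = (\overline{z},\, 1+y)$, so the two pairs represent the same point of $\QH P^1$; for $y \neq \pm 1$ all denominators are nonzero, so this is legitimate. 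One also notes $\theta_S(N) = \theta_S(0,1) = [0,1]$ and $\theta_N(S) = \theta_N(0,-1) = [1,0]$, so that the two maps between them cover every point of $\QH P^1$.

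Finally, since $\{S^4 \setminus \{N\},\, S^4 \setminus \{S\}\}$ is an open cover of $S^4$ and the two maps agree on the overlap, they glue to a well-defined map $\theta \colon S^4 \to \QH P^1$ which is smooth because it is smooth on each member of the cover; it is a bijection because $\theta_N$, $\theta_S$ are bijections onto $U_0$, $U_1$ and the values $\theta(N) = [0,1]$, $\theta(S) = [1,0]$ fill in the two missing points; and its inverse is smooth because over the open cover $\{U_0, U_1\}$ of $\QH P^1$ it agrees with the smooth maps $\theta_N^{-1}$ and $\theta_S^{-1}$. Thus $\theta$ is a diffeomorphism, whence $\theta_N = \theta|_{S^4 \setminus \{N\}}$ and $\theta_S = \theta|_{S^4 \setminus \{S\}}$ are diffeomorphisms as well. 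The only point requiring actual computation — and the only place where one must be careful — is the overlap identity in the third paragraph, which is exactly where the defining equation $\norm{z}^2 + y^2 = 1$ of $S^4$ enters; the rest is the routine gluing-of-charts argument.
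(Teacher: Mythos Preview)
Your proof is correct. The paper actually states this lemma without proof, treating it as a routine consequence of the preceding two lemmas on stereographic projection and homogeneous coordinates; your argument supplies exactly the expected details, and the overlap computation using $z\overline{z} = (1-y)(1+y)$ is the natural way to check compatibility.
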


\section{Construct, Pull Back, Local Sections of the Bundle Gerbe}\label{s-bg-cons-pb-loc-sect-pb}

Having discussed the map that will be used to pull the bundle gerbe back to $S^3$, and having obtained diffeomorphisms involving $S^4$ and $\QH P^1$, we now proceed to construct the bundle gerbe, pull it back, construct local sections of the pullback, and use them in proposition \ref{p-dd-bg-susp-c1-pb} to get principal $\UU(1)$ bundle $Q \rightarrow S^2$.

In the bundle gerbe construction we use the symplectic frame bundle $\Symp(E_Q)$ whose total space we identify with $S^7$, rather than using its orthonormal frame bundle $\SO(E_Q)$.  This can be called the reduction of the structure group from $\SO(4)$ to $\Symp(1)$, but would be more expressively worded as an enlargement of the structure group in the other direction, since we are naturally given the $\Symp(1)$ group.  This change results in an isomorphic bundle gerbe. \begin{lem}\label{l-eq-bg}
\index{bundle gerbe!quaternionic line bundle}
\index{quaternionic line bundle!bundle gerbe}
(The Construction of the Bundle Gerbe $G(E_Q)$ from the Tautological Quaternionic Line Bundle).
Using in the bundle gerbe construction the symplectic frame bundle $\Symp(E_Q)$, with group $\Symp(1)$, rather than the orthonormal frame bundle $\SO(E_Q)$, with group $\SO(4)$, results in a bundle gerbe isomorphic to $G(E_Q)$.

For the construction of $G(E_Q)$ we obtain $Y = LS^7 \cross_{\Symp(1)} \Lagr_{res}$ (see definition \ref{d-pol-clas-bndl}), will identify $Y^{[2]} = LS^7 \cross_{\Symp(1)} (\Lagr_{res} \cross \Lagr_{res})$ (see proposition \ref{p-bg}), and $P = LS^7 \cross_{\Symp(1)} T$ (see propositions \ref{p-bg-t} and \ref{p-bg-p}).
\end{lem}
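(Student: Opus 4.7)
The plan is to exploit the reduction of structure group $\Symp(1) \hookrightarrow \SO(4)$ given in note \ref{n-q-qu-su2}, together with the fact from lemma \ref{l-q-vb} that $\Symp(E_Q)$ as a principal $\Symp(1)$ bundle is identified with the Hopf bundle $S^7 \to \QH P^1$. The starting point is the standard associated bundle identification
\[
\SO(E_Q) \cong \Symp(E_Q) \cross_{\Symp(1)} \SO(4),
\]
valid since an oriented orthonormal frame on a fiber of $E_Q$ is, after choice of a symplectic frame, given by a unique element of $\SO(4)$. This is the assertion that the inclusion $\Symp(1) \hookrightarrow \SO(4)$ lifts to an identification of the orthonormal frame bundle with the bundle associated to $\Symp(E_Q)$ via the left action of $\Symp(1)$ on $\SO(4)$.

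Next, I would apply proposition \ref{p-loop-fb-is-frec-fb} to loop both sides, obtaining a topological fiber bundle isomorphism
\[
L\SO(E_Q) \cong L\Symp(E_Q) \cross_{L\Symp(1)} L\SO(4),
\]
the right-hand side being well-defined because $\Symp(1) \cong S^3$ is connected, so proposition \ref{p-loop-smth-pb-is-frec-pb} gives $L\Symp(E_Q)$ as a \Frechet principal $L\Symp(1)$ bundle over $LM$. Then for any left $L\SO(4)$-space $F$ (in our application $F$ will be $\Lagr_{res}$, $\Lagr_{res} \cross \Lagr_{res}$, or the standard intertwiner bundle $T$, with the $L\SO(n)$-action of section \ref{s-tech-over} specialised to $n=4$), the standard associativity of the associated bundle construction gives a canonical homeomorphism
\[
\bigl(L\Symp(E_Q) \cross_{L\Symp(1)} L\SO(4)\bigr) \cross_{L\SO(4)} F \;\cong\; L\Symp(E_Q) \cross_{L\Symp(1)} F,
\]
via $[[\widetilde\gamma,g],f] \mapsto [\widetilde\gamma, g\!\cdot\! f]$. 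Chaining the two isomorphisms produces homeomorphisms $Y_{\SO} \cong Y_{\Symp}$, $Y_{\SO}^{[2]} \cong Y_{\Symp}^{[2]}$, and $P_{\SO} \cong P_{\Symp}$, all over the identity on $LM$ and equivariant for the $\UU(1)$-action on the $P$ factor. Identifying $\Symp(E_Q) = S^7$ from lemma \ref{l-q-vb} then gives exactly the descriptions $Y = LS^7 \cross_{\Symp(1)} \Lagr_{res}$, $Y^{[2]} = LS^7 \cross_{\Symp(1)} (\Lagr_{res}\cross\Lagr_{res})$, and $P = LS^7 \cross_{\Symp(1)} T$ asserted in the statement.

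To finish, I would check that these homeomorphisms intertwine the bundle gerbe multiplications. Since the multiplication defined in proposition \ref{p-bg-p} acts on the rightmost ($T$-valued) factor of the associated bundle by composition of intertwiners of Fock spaces, and since the associativity-of-associated-bundles isomorphism is the identity on this factor when representatives with matching leftmost entries are chosen (as in lemma \ref{l-iden-xi}), compatibility with $m$ is immediate. The main obstacle I anticipate is purely bookkeeping, namely ensuring that the left actions of $L\Symp(1)$ on $\Lagr_{res}$ and on $T$ used in the $\Symp$-construction really do factor through the $L\SO(n)$ action used in the original definitions via $\Symp(1) \hookrightarrow \SO(4)$; but this is tautological, because the action on $\Lagr_{res}$ is defined via $L\SO(n) \hookrightarrow \Orth_{res}$ from proposition \ref{p-lson-in-ores}, and post-composing with the inclusion $L\Symp(1) \hookrightarrow L\SO(4)$ gives the same action on the subgroup. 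With this verified, the triple $(\widehat{f}, \overline{f}, \ident_{LM})$ obtained from the chained homeomorphisms is a bundle gerbe isomorphism in the sense of definition \ref{d-bg-mor}, establishing the claim.
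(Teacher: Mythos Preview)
Your proposal is correct and follows essentially the same approach as the paper: reduce the structure group via $\Symp(1)\hookrightarrow\SO(4)$ to obtain $\SO(E_Q)\cong\Symp(E_Q)\times_{\Symp(1)}\SO(4)$, then use associativity of associated bundle constructions $(Q\times_H G)\times_G F\cong Q\times_H F$ to identify the $Y$, $Y^{[2]}$, and $P$ spaces. The only cosmetic difference is that the paper records the associativity isomorphism at the unlooped level and then remarks that ``these products and isomorphisms can be looped and are natural with respect to maps $F\to F'$'', whereas you loop first (invoking proposition \ref{p-loop-fb-is-frec-fb}) and then apply associativity at the looped level; your explicit verification that bundle gerbe multiplication is preserved and that the $L\Symp(1)$ action on $\Lagr_{res}$ and $T$ factors through $L\SO(4)$ is more detailed than the paper's brief naturality remark, but the content is the same.
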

\begin{proof}
From proposition \ref{p-bg-func}, starting with the tautological quaternionic line bundle $E_Q$ of definition \ref{d-q-vb}, we obtain the bundle gerbe $G(E_Q)$ over $L\QH P^1$.  The transition functions of $E_Q$, by lemma \ref{l-q-vb}, are determined by maps with values in $S^3 \cong \Symp(1)$, which are the principal bundle transition functions of definition \ref{l-tran-func-pb} of $\SO(E_Q)$.  Hence \citep[page~53]{KN63} we can reduce the structure group of the orthonormal frame bundle $\SO(E_Q)$ from $\SO(4)$ to $\Symp(1)$ and obtain $\Symp(E_Q)$; there is a $\Symp(1)$-equivariant map $f: \Symp(E_Q) \rightarrow \SO(E_Q)$ over the identity.  Thus there is an principal $\SO(4)$ bundle isomorphism $\Symp(E_Q) \cross_{\Symp(1)} \SO(4) \rightarrow \SO(E_Q)$, given by $[q,g] \mapsto f(q) g$.

In general, let $G$ be a topological group, $H$ a subgroup, $Q$ a principal $H$ bundle, and $F$ a space on which there is a left $G$ action.  Then $Q \cross_H G$ is a principal $G$ bundle, and $(Q \cross_H G) \cross_G F \cong Q \cross_H F$.

Putting together the principal $\SO(4)$ bundle isomorphism and the one just given for associated products, $\SO(E_Q) \cross_{\SO(4)} F \cong (\Symp(E_Q) \cross_{\Symp(1)} \SO(4)) \cross_{\SO(4)} F \cong \Symp(E_Q) \cross_{\Symp(1)} F $.  Moreover, these products and isomorphisms can be looped and are natural with respect to maps $F \rightarrow F'$.  Thus we obtain isomorphic bundle gerbes.
\end{proof}

The following lemma was referenced earlier, in the proof of proposition \ref{p-tran-p12-eq-to-dd-1}, and also is used by lemma \ref{l-eq-bg-pb}.
\begin{lem}\label{l-loc-sect-lspe}
\index{section!local!LSpE@$L\Symp(E_Q)$}
\index{bundle gerbe!over S3@over$S^3$}
(The Construction of Local Sections of the Pullback by $S^3 \xrightarrow{i_3} L\QH P^1$ of $L\Symp(E_Q)$).
The composition of $i_3$ with the rotation $R_{S^4}$ by $\frac{\pi}{2}$, of $S^4 \subset \RR^5 \cong \QH \dirsum \RR$, $\QH = \qone \RR \dirsum \qi \RR \dirsum \qj \RR \dirsum \qk \RR$, that leaves fixed all elements of summands of $\QH$ except for those of $\qi \RR$, and rotates the $\qi \RR \dirsum \RR$ plane counterclockwise (viewed with positive $\qi \RR$ to the right and positive $\RR$ up), moving $(\qi, 0) \mapsto (0,1) \mapsto (-\qi, 0) \mapsto (0,-1)$, produces for each $x \in S^3 \setminus \{ -1 \}$ a loop in $S^4$ whose image lies in the domains of both $\phi_N$ and $\phi_S$ of lemma \ref{l-ster-s4} except that for $x = \qi$, the image lies wholly only in the domain of $\phi_S$, and for $x = -\qi$, wholly only in the domain of $\phi_N$.  Defining $\pi_{\qone}, \pi_{\qi}, \pi_{\qj}, \pi_{\qk} \colon \QH = \qone \RR \dirsum \qi \RR \dirsum \qj \RR \dirsum \qk \RR \rightarrow \RR$ as projections onto the respective real-valued coordinates, so that $\pi_{\qone} \qone + \pi_{\qi} \qi + \pi_{\qj} \qj + \pi_{\qk} \qk = \ident_{\QH}$, these loops are given as follows, for $x \in S^3$ and $s \in [0, 2 \pi]$:
\begin{alignat}{2}
R_{S^4} \circ i_3 (x) (s) = (&\binphantom{+} \pi_{\qone} (&\frac{1 + x}{2} + \cos (s) \mspace{1mu} &\frac{1 - x}{2}) \qone \notag \\
&-                        &\sin (s) \norm{&\frac{1 - x}{2}} \qi \notag \\
&+ \pi_{\qj} (&\frac{1 + x}{2} + \cos (s) &\frac{1 - x}{2}) \qj \notag \\
&+ \pi_{\qk} (&\frac{1 + x}{2} + \cos (s) &\frac{1 - x}{2}) \qk, \notag \\
 &\pi_{\qi} (&\frac{1 + x}{2} + \cos (s) &\frac{1 - x}{2})). \notag
\end{alignat}
Define
\begin{align}
\widetilde{\theta_N} \colon S^4 \setminus \{ N \} &\rightarrow S^7 \subset \QH^2 \notag \\
(z, y) &\mapsto \frac{(1 - y, z)}{\sqrt{(1 - y)^2 + \norm{z}^2}} \notag \\
\widetilde{\theta_S} \colon S^4 \setminus \{ S \} &\rightarrow S^7 \subset \QH^2 \notag \\
(z, y) &\mapsto \frac{(\overline{z}, 1 + y)}{\sqrt{(1 + y)^2 + \norm{z}^2}} \notag \\
\eta_{\qi} \colon S^3 \setminus \{ \qi \} &\rightarrow LS^7 \cong L\Symp(E_Q) \notag \\
x &\mapsto \widetilde{\theta_N} \circ R_{S^4} \circ i_3 (x) \notag \\
\eta_{-\qi} \colon S^3 \setminus \{ -\qi \} &\rightarrow LS^7 \cong L\Symp(E_Q) \notag \\
x &\mapsto \widetilde{\theta_S} \circ R_{S^4} \circ i_3 (x) \notag
\end{align}
$\pi_{\QH P^1} \circ \widetilde{\theta_N} = \theta_N$ and $\pi_{\QH P^1} \circ \widetilde{\theta_S} = \theta_S$, so $L\pi_{\QH P^1} \circ \eta_{\qi} = L\theta_N \circ LR_{S^4} \circ i_3$ and $L\pi_{\QH P^1} \circ \eta_{-\qi} = L\theta_S \circ LR_{S^4} \circ i_3$.  Since $\eta_{\qi}$ and $\eta_{-\qi}$ are maps over the restrictions to their domains of $L\theta \circ LR_{S^4} \circ i_3$, they yield local sections of $(L\theta \circ LR_{S^4} \circ i_3)^{*} (L\Symp(E_Q) \rightarrow L\QH P^1)$.

These local sections give local trivializations, and the corresponding transition function $r$ is defined as follows on $S^3 \setminus \{ \pm \qi \}$.  For $(z, y) \in S^4 \subset \QH \dirsum \RR$, $(1 + y) (1 - y) = z \overline z$.  Letting $(z, y) \in S^4 \setminus \{ N, S \}$, we have also $(1 + y) \overline{z}^{-1} = \frac{z}{1 - y}$ (we refrain from writing $\frac{1 + y}{\overline{z}}$ to avoid slighting questions of commutativity).
\begin{align}
\widetilde{\theta_N} (z, y) &= \frac{(1 - y, z)}{\sqrt{(1 - y)^2 + z \overline{z}}} \notag \\
&= \frac{(1 - y) \overline{z}^{-1} \sqrt{(1 + y)^2 + z \overline{z}}}{\sqrt{(1 - y)^2 + z \overline{z}}} \frac{(\overline{z}, 1 + y)}{\sqrt{(1 + y)^2 + z \overline{z}}} \notag \\
&= \frac{(1 - y) \overline{z}^{-1} \sqrt{(1 + y)^2 + z \overline{z}}}{\sqrt{(1 - y)^2 + z \overline{z}}} \widetilde{\theta_S} (z, y), \text{ with} \notag \\
\frac{(1 - y) \overline{z}^{-1} \sqrt{(1 + y)^2 + z \overline{z}}}{\sqrt{(1 - y)^2 + z \overline{z}}} &= \frac{z \sqrt{(1 + y)^2 + z \overline{z}}}{(1 + y) \sqrt{(1 - y)^2 + z \overline{z}}} \notag \\
&= \frac{z \sqrt{(1 + y)^2 + z \overline{z}}}{\sqrt{(1 - y^2)^2 + z \overline{z} (1 + y)^2}} \notag \\
&= \frac{z \sqrt{(1 + y)^2 + z \overline{z}}}{\sqrt{(z \overline{z})^2 + z \overline{z} (1 + y)^2}} \notag \\
&= \frac{z}{\sqrt{z \overline{z}}}. \notag
\end{align}
So, defining
\begin{align}
\rho \colon \QH \setminus \{ 0 \} &\rightarrow S^3 \notag \\
z &\mapsto \frac{z}{\sqrt{z \overline{z}}}, \notag \\
\eta_{\qi} &= (L\rho \circ LR_{S^4} \circ i_3) (L\cdot) (\eta_{-\qi}) \notag \\
&= (r) (L\cdot) (\eta_{-\qi}) = r \eta_{-\qi}, \text{ letting} \notag \\
r &= L\rho \circ L\pi_{\QH} \circ LR_{S^4} \circ i_3 \colon S^3 \setminus \{ \pm \qi \} \rightarrow LS^3, \notag
\end{align}
where $\pi_{\QH} \colon \QH \dirsum \RR \rightarrow \QH$ is the projection, $L\cdot$ is the loop of the left quaternionic multiplication action of $S^3$ on $S^7$, which we also indicate by juxtaposition.  The unit quaternion $\rho(z)$ commutes with $\widetilde{\theta_N} (z, y)$ and $\widetilde{\theta_S} (z, y)$, so we could reverse the order of the factors and use right quaternionic multiplication.  Note that $z \in S^3 \Leftrightarrow y = 0 \Leftrightarrow \rho(z) = z$, and that $x \in S^2 = \{ x \in S^3 | \pi_{\qi} (x) = 0 \} \Leftrightarrow L\pi_{\QH} \circ LR_{S^4} \circ i_3 \in LS^3$; in this case $r(x) = L\pi_{\QH} \circ LR_{S^4} \circ i_3 (x)$.
\end{lem}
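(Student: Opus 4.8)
The plan is to verify the displayed formulas one after another, treating the lemma as a chain of elementary quaternionic and linear-algebra computations. First I would record the explicit form of $i_3$: by the reparametrization in note \ref{n-redu-susp-sphe-home}, $i_3(x) = \beta_x$ with $\beta_x$ the loop of definition \ref{d-redu-susp-sphe-home} for $n = 3$. Writing $S^4 \subset \RR^5 \cong \QH \dirsum \RR$ with $1 = (\qone, 0)$ and $N = (0,1)$, $S = (0,-1)$ as in lemma \ref{l-ster-s4}, the loop $\beta_x(s)$ has $\QH$-part $z'(s) = \frac{1+x}{2} + \cos(s)\frac{1-x}{2}$ and $N$-coordinate $\sin(s)\norm{\frac{1-x}{2}}$. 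The rotation $R_{S^4}$ is the orthogonal map of $\RR^5$ fixing $\qone\RR \dirsum \qj\RR \dirsum \qk\RR$ pointwise and rotating the plane $\qi\RR \dirsum \RR$ by $\pi/2$ as specified; applying it replaces the pair consisting of the $\qi$-component of $z'(s)$ and the $N$-coordinate of $\beta_x(s)$ by the pair consisting of minus that $N$-coordinate and that $\qi$-component, leaving the $\qone$-, $\qj$-, $\qk$-components of $z'(s)$ untouched. That is exactly the displayed expression for $R_{S^4} \circ i_3(x)(s)$, and it holds for all $x \in S^3$.

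Next I would check well-definedness of $\eta_{\qi}$ and $\eta_{-\qi}$. Since $R_{S^4}$ sends $(\qi,0) \mapsto N$ and $(-\qi,0) \mapsto S$, we have $R_{S^4}^{-1}(N) = (\qi,0)$ and $R_{S^4}^{-1}(S) = (-\qi,0)$; and $\beta_x$ meets $(\qi,0)$ only when $x = \qi$ (the $N$-coordinate of $\beta_x(s)$ vanishes only at $s \in \{0,\pi,2\pi\}$, where $\beta_x$ takes the values $1$, $x$, $1$), and similarly $\beta_x$ meets $(-\qi,0)$ only when $x = -\qi$. Hence the loop $R_{S^4} \circ i_3(x)$ lies in the domain of $\phi_N$ for $x \ne \qi$ and in that of $\phi_S$ for $x \ne -\qi$, with the stated degenerations at $x = \pm\qi$. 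The maps $\widetilde{\theta_N}, \widetilde{\theta_S}$ are then well-defined into $S^7 = \Symp(E_Q)$ of lemma \ref{l-q-vb}: on $S^4$ one has $\norm{z}^2 + y^2 = 1$, i.e. $(1-y)(1+y) = z\overline z$, so $\sqrt{(1-y)^2 + \norm{z}^2}$ vanishes only at $N$ and $\sqrt{(1+y)^2 + \norm{z}^2}$ only at $S$, and $(1-y,z)$, $(\overline z, 1+y)$ are nonzero representatives of the quaternionic lines $\theta_N(z,y) = [1-y,z]$, $\theta_S(z,y) = [\overline z, 1+y]$ of lemma \ref{l-diff-s4-hp1}; since $\pi_{\QH P^1}$ carries a unit representative to the line it spans, $\pi_{\QH P^1} \circ \widetilde{\theta_N} = \theta_N$ and $\pi_{\QH P^1} \circ \widetilde{\theta_S} = \theta_S$. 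Composing with loops and using functoriality of looping (proposition \ref{p-loop-smth-map-smth}, lemma \ref{l-incl-s3-ls4-cont}, corollary \ref{co-smth-loop-func}) gives the two commuting triangles $L\pi_{\QH P^1} \circ \eta_{\qi} = L\theta_N \circ LR_{S^4} \circ i_3$ and $L\pi_{\QH P^1} \circ \eta_{-\qi} = L\theta_S \circ LR_{S^4} \circ i_3$; as $L\theta_N$, $L\theta_S$ are the appropriate restrictions of $L\theta$, the $\eta_{\pm\qi}$ are maps over restrictions of the base map $L\theta \circ LR_{S^4} \circ i_3 \colon S^3 \to L\QH P^1$, so by the constructive description of pullbacks in assumption \ref{a-bndl} they determine continuous local sections of $(L\theta \circ LR_{S^4} \circ i_3)^*(L\Symp(E_Q))$ over the two-set cover $\{ S^3 \setminus \{\qi\}, S^3 \setminus \{-\qi\} \}$.

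Finally I would run the transition-function computation on $S^3 \setminus \{\pm\qi\}$, i.e. on loops landing in $S^4 \setminus \{N,S\}$, where both $\widetilde{\theta_N}$ and $\widetilde{\theta_S}$ apply. Using that $z\overline z = (1-y)(1+y) = 1 - y^2$ is real and central, one gets $(1-y)\overline z^{-1} = z(1+y)^{-1}$, so $\widetilde{\theta_N}(z,y)$ and $\widetilde{\theta_S}(z,y)$ differ by the left scalar $(1-y)\overline z^{-1} \sqrt{(1+y)^2 + z\overline z}\,/\,\sqrt{(1-y)^2 + z\overline z}$; substituting $z\overline z = 1 - y^2$ throughout collapses this scalar to $z/\sqrt{z\overline z} = \rho(z)$, a unit quaternion which, being a positive real multiple of $z$, commutes with $1 \pm y$ and with $z, \overline z$, hence with both components of $\widetilde{\theta_N}(z,y)$ and $\widetilde{\theta_S}(z,y)$, so the side of multiplication is immaterial. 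Passing to loops this reads $\eta_{\qi} = r\,\eta_{-\qi}$ with $r = L\rho \circ L\pi_{\QH} \circ LR_{S^4} \circ i_3$. The closing remarks are then immediate: $\rho(z) = z$ exactly when $\norm{z} = 1$, i.e. $y = 0$, i.e. $z \in S^3$; and for $x \in S^2 = \{ x \in S^3 : \pi_{\qi}(x) = 0 \}$ the $N$-coordinate $\pi_{\qi}(z'(s))$ of $R_{S^4} \circ i_3(x)(s)$ is identically zero (since $\pi_{\qi}(\qone) = \pi_{\qi}(x) = 0$), so $L\pi_{\QH} \circ LR_{S^4} \circ i_3(x) \in LS^3$ and $r(x) = L\pi_{\QH} \circ LR_{S^4} \circ i_3(x)$. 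The one step that needs care is the transition-function manipulation: one must keep careful track of which quaternions commute so that the left scalar can legitimately be extracted and the formula for $r$ read off correctly; the rest is routine verification of the displayed identities.
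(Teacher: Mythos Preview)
Your proposal is correct and matches the spirit of the paper: this lemma carries no separate proof environment in the paper --- the statement itself is the computation, and what you have done is supply the routine verifications that the paper leaves implicit. Your treatment of the rotation $R_{S^4}$ applied to $\beta_x$, the domain analysis for $\eta_{\pm\qi}$ via $R_{S^4}^{-1}(N) = (\qi,0)$ and $R_{S^4}^{-1}(S) = (-\qi,0)$, and the simplification of the transition scalar to $z/\sqrt{z\overline z}$ using $(1 \mp y)^2 + z\overline z = 2(1 \mp y)$ are all in line with the paper's displayed manipulations.
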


Now we obtain the principal $\UU(1)$ bundle $Q \rightarrow S^2$ from the bundle gerbe for $E_Q$.  The local sections used in the step of the construction performed by proposition \ref{p-dd-bg-susp-c1-pb}, are given in the proof below.
\begin{lem}\label{l-eq-bg-pb}
\index{principal bundle!pullback G(EQ) to suspension@pullback $G(E_Q)$ to suspension}
(The Construction of the Principal $\UU(1)$ Bundle $Q \rightarrow S^2$ from the Bundle Gerbe $G(E_Q)$).
First pull back $G(E_Q)$ to $S^3$, obtaining $(L\theta \circ LR_{S^4} \circ i_3)^{*} G(E_Q)$ using the maps $i_3$ of lemma \ref{l-incl-s3-ls4-cont}, $R_{S^4}$ of lemma \ref{l-loc-sect-lspe}, and $\theta$ of lemma \ref{l-diff-s4-hp1}.  Thence by proposition \ref{p-dd-bg-susp-c1-pb}, using the local sections from lemma \ref{l-loc-sect-lspe}, construct the principal $\UU(1)$ bundle $Q \rightarrow S^2$.
\end{lem}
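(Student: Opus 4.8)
The plan is to assemble the pullback construction and the suspension-to-principal-bundle construction that immediately precede this lemma, matching the explicit data of Lemma~\ref{l-loc-sect-lspe} to the abstract data of Proposition~\ref{p-dd-bg-susp-c1-pb}. First I would set $f := L\theta \circ LR_{S^4} \circ i_3 \colon S^3 \to L\QH P^1$ and note it is continuous: $i_3$ is continuous by Lemma~\ref{l-incl-s3-ls4-cont}, while $R_{S^4}$ is a rotation of $S^4$ (hence a diffeomorphism) and $\theta$ is a diffeomorphism by Lemma~\ref{l-diff-s4-hp1}, so $LR_{S^4}$ and $L\theta$ are smooth by Proposition~\ref{p-loop-smth-map-smth} and therefore continuous. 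Since $S^3$ is a compact smooth manifold it is metrizable, paracompact, and every open cover has a good-cover refinement; hence by Lemma~\ref{l-bg-ib-pb} the pullback $f^{*} G(E_Q)$ is a well-defined continuous bundle gerbe over $S^3$, with $Y$-space $f^{*} Y$ and $P$-space $(\overline{f}^{[2]})^{*} P$, where $Y = LS^7 \cross_{\Symp(1)} \Lagr_{res}$ and $P = LS^7 \cross_{\Symp(1)} T$ as in Lemma~\ref{l-eq-bg}. By naturality of the Dixmier--Douady class (Lemma~\ref{l-bg-dd-prop}) I would also record $DD(f^{*} G(E_Q)) = f^{*} DD(G(E_Q))$.

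Next I would fix the identification $S^3 \cong \Sigma S^2$. Take $S^2 = \{ x \in S^3 : \pi_{\qi}(x) = 0 \}$, with the homeomorphism $\Sigma S^2 \to S^3$, $[x', t] \mapsto \sqrt{1 - t^2}\, x' + t\qi$, so that the two cone points of $\Sigma S^2$ become $\qi$ (at $t = 1$) and $-\qi$ (at $t = -1$) and the equator is this $S^2$. With this identification $S^3$ is a locally connected metric space, so Proposition~\ref{p-dd-bg-susp-c1-pb} applies to $f^{*} G(E_Q)$; its naturally constructed two-set cover consists of $V_{-1}$, which omits the $t = 1$ pole, and $V_1$, which omits the $t = -1$ pole, so that $V_{-1} \subset S^3 \setminus \{ \qi \}$ and $V_1 \subset S^3 \setminus \{ -\qi \}$.

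Then I would supply the sections of $f^{*} Y$ over $V_{-1}$ and $V_1$ that Proposition~\ref{p-dd-bg-susp-c1-pb} requires. Choose any $L_0 \in \Lagr_{res}$ (nonempty by Definition~\ref{d-lagr-res}); then $\widetilde{\gamma} \mapsto [\widetilde{\gamma}, L_0]$ is a continuous map $LS^7 \to Y$. By Lemma~\ref{l-loc-sect-lspe}, $L\pi_{\QH P^1} \circ \eta_{\qi} = L\theta_N \circ LR_{S^4} \circ i_3$ agrees with $f$ on $S^3 \setminus \{ \qi \}$, so $x \mapsto (x, [\eta_{\qi}(x), L_0])$ is a continuous section of $f^{*} Y$ over $S^3 \setminus \{ \qi \}$, whose restriction to $V_{-1}$ is the desired $t_{-1}$; symmetrically, $x \mapsto (x, [\eta_{-\qi}(x), L_0])$ restricted to $V_1 \subset S^3 \setminus \{ -\qi \}$ is $t_1$. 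Feeding $t_{-1}, t_1$ into Proposition~\ref{p-dd-bg-susp-c1-pb} produces the principal $\UU(1)$ bundle $Q = \big( (t_{-1}, t_1)^{*} (\overline{f}^{[2]})^{*} P \big)\big|_{S^2} \to S^2$, together with the identity $\partial\big( (c_1(Q))^{-1} \big) = DD(f^{*} G(E_Q)) = f^{*} DD(G(E_Q))$, where $\partial$ is the suspension isomorphism of Corollary~\ref{co-susp-isom}.

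The statement is in substance a splice of Lemma~\ref{l-bg-ib-pb} with Proposition~\ref{p-dd-bg-susp-c1-pb}, so the only genuine work is matching the data, and the step deserving care is the choice of identification $S^3 \cong \Sigma S^2$: one must verify that the cone points of $\Sigma S^2$ can indeed be taken to be $\pm \qi$ and that the open sets $V_{\pm 1}$ of Proposition~\ref{p-dd-bg-susp-c1-pb} lie inside the domains $S^3 \setminus \{ \mp \qi \}$ of $\eta_{\mp\qi}$, so that the explicit sections of Lemma~\ref{l-loc-sect-lspe}, after the harmless lift from $LS^7$ to the polarization-class bundle $Y$ by a fixed $L_0$, restrict to exactly the sections Proposition~\ref{p-dd-bg-susp-c1-pb} asks for. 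The remaining points — continuity of $f$, and that $S^3$ is paracompact, locally connected, metric, and admits good covers — are immediate from the cited results.
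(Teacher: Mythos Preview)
Your proposal is correct and follows essentially the same approach as the paper: pull back $G(E_Q)$ via $f = L\theta \circ LR_{S^4} \circ i_3$, identify $S^3$ with $\Sigma S^2$ along the $\qi$-axis so that the cone points are $\pm\qi$ and the two open sets of Proposition~\ref{p-dd-bg-susp-c1-pb} sit inside $S^3 \setminus \{\pm\qi\}$, lift the local sections $\eta_{\pm\qi}$ of $L\Symp(E_Q)$ to sections $x \mapsto (x,[\eta_{\pm\qi}(x),L_0])$ of the pullback $Y$-space, and feed these into Proposition~\ref{p-dd-bg-susp-c1-pb}. You supply somewhat more detail than the paper (the explicit homeomorphism $\Sigma S^2 \to S^3$, the continuity of $f$, and the point-set hypotheses on $S^3$), but the argument is the same.
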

\begin{proof}
Before the pullback, lemma \ref{l-eq-bg} constructs of $G(E_Q) = (P, Y, L\QH P^1)$.  Lemma \ref{l-loc-sect-lspe} details the composition $LR_{S^4} \circ i_3$.

To construct the local sections of the pullback $Y$ space over the two-set cover of $S^3 = \Sigma S^2$ used by proposition \ref{p-dd-bg-susp-c1-pb} to define the principal $\UU(1)$ bundle over $S^2$, note that the elements of this cover are subsets respectively of $S^3 \setminus \{ \pm \qi \}$.  Then use the restrictions of the local sections of the pullback $Y$ space induced by the local sections of $L\Symp(E_Q)$ of lemma \ref{l-loc-sect-lspe}.  In symbols, the pullback $Y$ space is $(L\theta \circ LR_{S^4} \circ i_3)^{*} (L\Symp(E) \cross_{L\Symp(1)} \Lagr_{res})$, which has local sections $t_{\qi} \colon x \mapsto (x, [\eta_{\qi} (x), L])$, $t_{-\qi} \colon x \mapsto (x, [\eta_{-\qi} (x), L])$, over $S^3 \setminus \{ \qi \}$, $S^3 \setminus \{ -\qi \}$, respectively.

Viewing $S^3$ as the unreduced suspension of the copy of $S^2 = \{ x \in S^3 | \pi_{\qi} (x) = 0 \}$, so that the suspension dimension is the $\qi$ dimension, we obtain $Q$ from proposition \ref{p-dd-bg-susp-c1-pb}.  \end{proof}

\backmatter

\bibliographystyle{nddiss2e}
\bibliography{sambler-diss}

\newpage
\phantomsection
\addcontentsline{toc}{chapter}{INDEX}
\singlespacing
\printindex
\normalspacing

\end{document}